\newtheorem{theorem}{Theorem}[section]
\newtheorem{corollary}[theorem] {Corollary}
\newtheorem{definition}[theorem]{Definition}
\newtheorem{example}[theorem]{Example}
\newtheorem{lemma} [theorem]{Lemma}
\newtheorem{proposition}[theorem]{Proposition}
\newtheorem{remark}[theorem]{Remark}
\newtheorem{caution}[theorem]{Caution}
\newtheorem{statement}[theorem]{Statement}
\title{This is the title}
\begin{document}

\begin{center}
{\bf{\large}EXTENSION OF FRAMES AND BASES - I}\\
K. MAHESH KRISHNA, AND P. SAM JOHNSON \\
Department of Mathematical and Computational Sciences\\ 
National Institute of Technology Karnataka (NITK), Surathkal\\
Mangaluru 575 025, India  \\
Emails: kmaheshak@gmail.com, kmaheshakma16f02@nitk.edu.in, \\
       nitksam@gmail.com, sam@nitk.ac.in \\
       
Date: \today
\end{center}

\hrule
\vspace{0.5cm}
\textbf{Abstract}: We extend the theory  of operator-valued frames (resp. bases), hence the theory of frames (resp. bases),  for Hilbert spaces and Hilbert C*-modules, in two folds. This extension leads  us to develop the theory of operator-valued frames (resp. bases) for Banach spaces. We give a characterization for the operator-valued frames indexed by a group-like unitary system. This answers an open question asked in the paper titled ``Operator-valued frames" by Kaftal, Larson, and Zhang  in \textit{Trans. Amer. Math. Soc.} (2009). We study stability of  the  extension.  We also extend Riesz-Fischer theorem, Bessel's inequality, variation formula, dimension formula, and trace formula. Further, notions of p-orthogonality, p-orthonormality and Riesz p-bases have been developed in Banach spaces and Paley-Wiener theorem has also been generalized. We derive  `4-inequality,'   `4-parallelogram law,' and  `4-projection theorem.'

\textbf{Keywords}: Frames, operator-valued frames, strong-operator topology, strict topology, representations, group-like unitary system,  perturbation, Hilbert C*-modules.

\textbf{Mathematics Subject Classification (2010)}: Primary 42C15, 47A13, 47B65, 46H25; Secondary  42C40, 46C05, 46L05.
\tableofcontents
\section{Introduction}\label{INTRODUCTION}
Frame for a Hilbert space is a relaxed orthonormal  basis. This was first introduced and studied by Duffin, and Schaeffer, in 1952, in the study of sequences of type $ \{e^{i\lambda_nx}\}_{n\in \mathbb{Z}}, \lambda_n \in \mathbb{C}, x\in \left(-r, r \right), r>0$ \cite{DUFFIN1}. Paper of Daubechies, Grossmann, and Meyer \cite{MEYER1} triggered the  area. To give further introduction, we set notations first.

Fonts $\mathcal{X},\mathcal{X}_0,\mathcal{Y}$ denote Banach spaces. Dual of  $\mathcal{X}$ is denoted by $\mathcal{X}^*$. For a set $Y$ in  $\mathcal{X}$, we denote  the span closure of $Y$ by $ \overline{\operatorname{span}}Y$. The identity operator on  $ \mathcal{X}$ is denoted by $ I_\mathcal{X}.$ Field of scalars ($ \mathbb{R}$ or $ \mathbb{C}$) is denoted by $\mathbb{K}$. The unit circle group is denoted by $ \mathbb{T}.$ Notations $ \mathbb{J}, \mathbb{L},  \mathbb{M},\mathbb{L}_j$ are used for  indexing sets. Our sequences are also indexed by these notations. Cardinality of  $\mathbb{J}$ is denoted by $\operatorname{Card}(\mathbb{J})$. If a sequence $ \{x_j\}_{j\in \mathbb{J}}$ is in $\mathcal{X}$, by the convergence of $ \sum_{j\in \mathbb{J}}x_j$ (in $\mathcal{X}$) we mean the convergence of the net  obtained by the set inclusion, on the collection of all finite subsets of $ \mathbb{J}$.
Letter $ \mathcal{H}$ always denotes a Hilbert space, so is any of its `integer' subscripts. Banach space of all bounded linear operators from $ \mathcal{H}$ to $ \mathcal{H}_0 $ is denoted by $ \mathcal{B}(\mathcal{H}, \mathcal{H}_0)$. We write $ \mathcal{B}(\mathcal{H}, \mathcal{H})$ as  $ \mathcal{B}(\mathcal{H})$.  If $ Y \subseteq\mathcal{B}(\mathcal{H})$, we denote the commutant of $Y$ by $Y'$. A  unital C*-algebra is denoted by $ \mathscr{A}$ and all of our C*-algebras are unital.  Whenever we want to use the  identity of $ \mathscr{A}$, we write $ \mathscr{A}$ as $(\mathscr{A},e)$. Standard Hilbert C*-module indexed with $\mathbb{J} $ is denoted simply by $ \mathscr{H}_\mathscr{A}$  and  the same indexed by $\mathbb{L} $ is denoted by  $ \mathscr{H}_\mathscr{A}(\mathbb{L}).$   Letters $\mathscr{E},\mathscr{E}_0,\mathscr{E}_1,\mathscr{E}_2 $ denote Hilbert C*-modules. Whenever  both $\mathscr{E},\mathscr{E}_0 $ are  over the same  C*-algebra $ \mathscr{A}$,  the collection of all  maps from $\mathscr{E}$ to $\mathscr{E}_0 $ which are linear, $ \mathscr{A}$-linear (we call such maps as homomorphisms)  bounded adjointable is denoted by $ \operatorname{Hom}^*_{\mathscr{A}}(\mathscr{E},\mathscr{E}_0)$. We declare $\operatorname{End}^*_{\mathscr{A}}(\mathscr{E}) \coloneqq\operatorname{Hom}^*_{\mathscr{A}}(\mathscr{E},\mathscr{E})$.  We use ``in", for words  such as   `frames', `orthogonal', `orthonormal', ..., whenever we are handeling collections in $\mathcal{B}(\mathcal{H}, \mathcal{H}_0) $ or $\operatorname{Hom}^*_{\mathscr{A}}(\mathscr{E},\mathscr{E}_0) $ or $ \mathcal{B}(\mathcal{X}, \mathcal{X}_0)$, and we use   ``for'', whenever we are dealing with sequences in  $ \mathcal{H}$ or  $\mathscr{E} $ or $\mathcal{X} $. For instance, ``$ \{A_j\}_{j\in \mathbb{J}} $ is orthonormal in $\mathcal{B}(\mathcal{H}, \mathcal{H}_0)$"  and ``$\{x_j\}_{j\in \mathbb{J}} $ is orthonormal for $ \mathcal{H}$".

Frames played a prominent role in both  pure and applied mathematics.   As  illustrations,  in the pure, 
\begin{enumerate}[\upshape(i)]
\item In 2005-2006 it was shown that \cite{CASAZZACHRISTENSENLINDNERVERSHYNIN, CASAZZAFICKUSTREMAINERIC, CASAZZATREMAIN}, the famous (operator algebra question) Kadison-Singer conjecture  (i.e., every pure state on the abelian von Neumann algebra of all bounded diagonal operators on $\ell^2(\mathbb{N})$ extends uniquely as a pure state on $\mathcal{B}(\ell^2(\mathbb{N}))$\cite{KADISONSINGER}) is true if and only if  Feichtinger conjecture  (i.e., every bounded  frame in a Hilbert space can be written as a   finite union of Riesz sequences (cf. \cite{CASAZZACHRISTENSENLINDNERVERSHYNIN})) is true if and only if Bourgain-Tzafriri conjecture  (i.e., for every $b>0$, there exists $ m \in \mathbb{N}$ and an $a>0$ such that for every linear operator  $T: \mathbb{C}^n\rightarrow \mathbb{C}^n $ with $ \|Te_j\|=1, \forall j=1,...,n$, and $\|T\|\leq \sqrt{b},$ we can find a partition $\{Q_j\}_{j=1}^m $ of $ \{1,...,n\}$ such that $\|\sum_{k\in Q_j}a_kTe_k\|^2\geq a \sum_{k\in Q_j}|c_k|^2, \forall j=1,...,m, \forall c_k \in \mathbb{C}, \forall k \in Q_j$, where $\{e_j\}_{j=1}^n $ is the standard orthonormal basis  for $\mathbb{C}^n$ (cf. \cite{CASAZZACHRISTENSENLINDNERVERSHYNIN})) is true if and only if the Paving conjecture  (i.e., given $ \epsilon > 0$, there exists $ m \in \mathbb{N}$ such that for every $ n \in \mathbb{N}$ and every linear operator $T: \mathbb{C}^n\rightarrow \mathbb{C}^n $ whose matrix w.r.t. the standard orthonormal basis $\{e_j\}_{j=1}^n $ for $\mathbb{C}^n $ has zero diagonal, we can find a partition $\{Q_j\}_{j=1}^m $ of $ \{1,...,n\}$ such that $ \|P_{Q_j}TP_{Q_j}\|\leq \epsilon\|T\|, \forall j=1,...,m,$ where $P_{Q_j} $ is the orthogonal projection from $\mathbb{C}^n $ onto $ \operatorname{span}\{e_k\}_{k\in Q_j}$ (cf. \cite{CASAZZACHRISTENSENLINDNERVERSHYNIN})) is true (Marcus, Spielman, and Srivastava solved Kadison-Singer conjecture  \cite{MARCUSSPIELMANSRIVASTAVA}). 
\item Define $T_a:\mathcal{L}^2(\mathbb{R}) \ni f\mapsto T_af\in \mathcal{L}^2(\mathbb{R}),(T_af)(x)=f(x-a),\forall x \in \mathbb{R}$, for each $a \in \mathbb{R}$, and  $E_b:\mathcal{L}^2(\mathbb{R}) \ni f\mapsto E_bf\in \mathcal{L}^2(\mathbb{R}),(E_bf)(x)=e^{2\pi ibx}f(x),\forall x \in \mathbb{R}$, for each $b \in \mathbb{R}$. Let $ \chi_{[0,c)}$ be the characteristic function on $[0,c)$. The $abc$-problem demands to characterize  $a,b,$  and $c$   such that $ \{E_{mb}T_{na}\chi_{[0,c)}\}_{m,n\in \mathbb{Z}}$ is a (Gabor or Weyl-Heisenberg) frame for $ \mathcal{L}^2(\mathbb{R})$ (Dai, and Sun solved $abc$-problem  \cite{DAISUNMEMOIRS}).
\end{enumerate}
In applied, frames are  in regular use in sampling theory \cite{HANKORNELSON}, filter banks \cite{CASAZZAGITTAFF}, signal and image  processing \cite{DONOHOELAD}, quantum measurement \cite{ELDARFORNEY}, and  wireless communication \cite{HEATHPAULRAJ}.
\begin{definition}\cite{DUFFIN1}\label{OLE}
A collection $ \{x_j\}_{j \in \mathbb{J}}$ in  a Hilbert space $ \mathcal{H}$ is said to be a frame for $\mathcal{H}$ if there exist $ a, b >0$ such that
\begin{equation}\label{SEQUENTIALEQUATION1}
a\|h\|^2\leq\sum_{j \in \mathbb{J}}|\langle h, x_j \rangle|^2 \leq b\|h\|^2  ,\quad \forall h \in \mathcal{H}.
\end{equation}
Constants $ a$ and $ b$ are called as lower and upper frame bounds, respectively. Supremum (resp. infimum) of the set of all lower (resp. upper) frame bounds is called optimal lower (resp. upper) frame bound. If the optimal frame bounds are equal, then the frame is called as tight frame. A tight frame whose optimal bound is one is termed as Parseval frame.
\end{definition}
For  good references on the theory of frames, we refer  \cite{PETERGCASAZZA}, \cite{OLE1}, \cite{HANLARSON1}, \cite{YOUNG1}, \cite{CASAZZAGITTAFF}, \cite{HANKORNELSON}, \cite{WALDRONFINITE} (those last three are dedicated to finite frames. Further,  last one is devoted to finite tight frames).
 
Another way of writing Inequality (\ref{SEQUENTIALEQUATION1}) (i.e., Definition \ref{OLE}) is 
 \begin{equation}\label{SEQUENTIALEQUATION2}
 \text{the map}~ \mathcal{H} \ni h \mapsto \sum_{j\in \mathbb{J}}\langle h, x_j\rangle x_j \in \mathcal{H} ~\text{is well-defined bounded positive invertible operator}.
 \end{equation}
 Let us now define $ A_j : \mathcal{H} \ni  h \mapsto \langle h, x_j\rangle \in \mathbb{K} $, for each $ j \in \mathbb{J}$.
 Hence one more way for Statement (\ref{SEQUENTIALEQUATION2}) is 
 \begin{equation}\label{SEQUENTIALEQUATION3}
 \sum_{j\in \mathbb{J}} A_j^*A_j ~\text{converges in the strong-operator topology  on } \mathcal{B}(\mathcal{H}) \text{ to a bounded positive invertible operator.}
 \end{equation}
 Now Statement (\ref{SEQUENTIALEQUATION3}) leads to 
 \begin{definition}\cite{KAFTALLARSONZHANG1}\label{KAFTAL}
 A collection  $ \{A_j\}_{j \in \mathbb{J}} $  in $ \mathcal{B}(\mathcal{H}, \mathcal{H}_0)$ is said to be an operator-valued frame on $ \mathcal{H}$ with range in $ \mathcal{H}_0$ if the series $ \sum_{j\in \mathbb{J}} A_j^*A_j$  converges in the strong-operator topology on $ \mathcal{B}(\mathcal{H})$ to a  bounded invertible operator.
 \end{definition}
 Sun \cite{SUN1} used the term G-frame for operator-valued frame. We call Definition \ref{OLE} as \textit{sequential version}  and  Definition \ref{KAFTAL} as \textit{operator version} of frames. Sun  showed that these two notions are equivalent, under certain circumstances (Theorem 3.1 in \cite{SUN1}). We will show this in both folds of our extensions (Theorem \ref{SEQUENTIAL CHARACTERIZATION}, and Theorem \ref{WEAKSEQUENTIALCHARACTERIZATION}).

In Section \ref{MK} we enlarge the theory  of operator-valued frames  for Hilbert spaces  by Kaftal, Larson, and Zhang \cite{ KAFTALLARSONZHANG1}, and by Sun \cite{SUN1} (this extension leads  us to develop the theory of operator-valued frames for Banach spaces, which we handle in Section \ref{OPERATOR-VALUEDFRAMESFRAMESFORBANACHSPACES}). Definition \ref{ONBRIESZDEFINITION}  introduces relative  orthonormal sets and Riesz basis for operators. This section contains extension (operator version) of (i) Riesz-Fischer theorem (Theorem \ref{ERFOV}), and  (ii) Bessel's inequality (Theorem \ref{GBSOV}). Notions of Parseval, Riesz, orthonormal, dual, orthogonal, disjoint  operator-valued frames are  introduced in this section. A dilation result (Theorem \ref{OPERATORDILATION}) also appears  in this section.

Section \ref{CHARACTERIZATIONSOF THE EXTENSION} contains various characterizations (Theorem \ref{OPERATORVALUEDCHARACTERIZATIONSOFTHEEXTENSION}, Theorem \ref{OPERATORCHARACTERIZATIONHILBERT2}, Theorem \ref{SEQUENTIAL CHARACTERIZATION}) of orthonormal bases, Riesz bases, operator-valued frames, Bessel sequences, Riesz operator-valued frames and orthonormal operator-valued frames. Theorem \ref{SEQUENTIAL CHARACTERIZATION} connects operators to elements and this gives an idea to define extension in sequential form (we launch this in Section \ref{SEQUENTIAL}).

Section \ref{SIMILARITYCOMPOSITIONANDTENSORPRODUCT} contains four types of similarities between two  operator-valued frames. It also contains composition and tensor product of operator-valued frames.

Section \ref{FRAMESANDDISCRETEGROUPREPRESENTATIONS} studies operator-valued frames indexed by groups. Advantage of indexing a frame with group is - we can generate frames by starting with two fixed operators. Theorem \ref{gc1} characterizes unitary representations of discrete groups which generate Parseval operator-valued frames.

In Section \ref{FRAMESANDGROUP-LIKEUNITARYSYSTEMS} we study operator-valued frames indexed by group-like unitary systems.  Theorem  \ref{CHARACTERIZATIONGROUPLIKE} characterizes  unitary representations of group-like unitary system   which generate Parseval operator-valued frames. Corollary \ref{ANSWER} answers the   question raised by Kaftal, Larson, and Zhang  in \textit{Trans. Amer. Math. Soc.} \cite{KAFTALLARSONZHANG1}, namely ``We do not know if there is a similar necessary and sufficient condition for frames indexed by a unitary system, or at least by some structured unitary system, such as a Gabor system'' (Page No. 6371, (iii)  of Remark 6.8 in \cite{KAFTALLARSONZHANG1}).

Section \ref{PERTURBATIONS} shows that Definition \ref{1} is stable under perturbations. Theorem \ref{PERTURBATION RESULT 1}, Theorem \ref{PERTURBATION RESULT 2}, and  Theorem \ref{OVFQUADRATICPERTURBATION} are main results here.

Section \ref{SEQUENTIAL} starts by abstracting the clues from Theorem \ref{SEQUENTIAL CHARACTERIZATION}. All the notions, results and characterizations which appeared in Section \ref{MK} are in this section, in sequential form. As an eagle view on results, Theorem \ref{SEQUENTIALDILATION}  (resp. \ref{SEQUENTIALCHARACTERIZATIONHILBERT1}, \ref{SEQUENTIALCHARACTERIZATIONHILBERT2}, \ref{SEQUENTIALSIMILARITYCHARACTERIZATION}, \ref{GROUPC1}, \ref{UNITARY1}, \ref{PERTURBATION1SV}, \ref{PERTURBATION2SV}, \ref{PERTURBATION3SV}) is sequential version of Theorem \ref{OPERATORDILATION}    (resp. \ref{OPERATORVALUEDCHARACTERIZATIONSOFTHEEXTENSION}, \ref{OPERATORCHARACTERIZATIONHILBERT2}, \ref{RIGHTSIMILARITY}, \ref{gc1}, \ref{CHARACTERIZATIONGROUPLIKE}, \ref{PERTURBATION RESULT 1}, \ref{PERTURBATION RESULT 2}, \ref{OVFQUADRATICPERTURBATION}). This section contains extension (sequential version) of   Bessel's inequality (Theorem \ref{GBESV}). Corollary \ref{CORRECTEXTENSION} tells that our extension is correct.

Section \ref{THEFINITEDIMENSIONALCASE} handles frames in finite dimensional Hilbert spaces. Theorem \ref{FINITEDIMENSIONALCHARATERIZATIONHILBERT} characterizes finite frames for finite dimensional spaces.  Properties of the space, frames, and operators are pasted together in Theorem \ref{FINITEDIMENSIONALPASTINGTHEOREM}, which also has extended variation formula,  extended dimension formula, and extended trace formula. Theorem \ref{REALTOCOMPLEX} (resp. Theorem \ref{COMPLEXTOREAL}) tells when can we get a frame for $\mathbb{C}^m $ (resp. $\mathbb{R}^m $) from a frame for $\mathbb{R}^m $ (resp. $\mathbb{C}^m $). An interesting example of tight frame for $\mathbb{R}^2 $, using circular functions, is in Proposition \ref{INTERESTINGEXAMPLEPROPOSITION}.

Section \ref{FURTHEREXTENSION} further extends Section \ref{MK}, and Section \ref{SEQUENTIAL}. We achieve this by removing condition (ii) in  Definition \ref{1}. Theorem \ref{WEAKSEQUENTIALCHARACTERIZATION} gives a link between operator, and sequential versions, whereas Theorem \ref{WEAK CHARACTERIZATION} provides a characterization of `further extension' for collections inside $ \mathcal{B}(\mathcal{H})$.

In Section \ref{EXTENSIONOFHOMOMORPHISM-VALUEDFRAMESFORHILBERTC*-MODULES}
we try to do the results of Section \ref{MK} in Hilbert C*-module settings. This enlarges the notion of operator-valued frames (what we call as homomorphism-valued frames) for Hilbert C*-modules by Kaftal, Larson, and Zhang  \cite{KAFTALLARSONZHANGMODULEOVF}.
This section contains extension (homomorphism version) of (i) Riesz-Fischer theorem (Theorem \ref{RIESZFISCHERHOMOMORPHISMVERSION}), and (ii) Bessel's inequality (Theorem \ref{GBEOVHM}). Here we only state results  whose proofs  are similar to the proofs of corresponding results in Section  \ref{MK} (we  briefly sketch the proof of some of the statements for which certain arguments in the corresponding proof in Section \ref{MK} need additional support to validate, unlike Hilbert spaces).

Section \ref{SEQUENTIALVERSIONOFHOMOMORPHISM-VALUEDFRAMES} has sequential version of Section \ref{EXTENSIONOFHOMOMORPHISM-VALUEDFRAMESFORHILBERTC*-MODULES} and this is  comparable with Section \ref{SEQUENTIAL}. Again, for proofs we follow the same strategy  done for the proofs in Section \ref{EXTENSIONOFHOMOMORPHISM-VALUEDFRAMESFORHILBERTC*-MODULES}. This section contains extension (sequential version in Hilbert C*-modules) of (i) Riesz-Fischer theorem (Theorem \ref{RIESZFISCHERGENERAL}), and (ii) Bessel's inequality (Theorem \ref{GBISVHM}).

Further extension of Section \ref{EXTENSIONOFHOMOMORPHISM-VALUEDFRAMESFORHILBERTC*-MODULES}, and Section  \ref{SEQUENTIALVERSIONOFHOMOMORPHISM-VALUEDFRAMES} are done in 
Section \ref{FURTHEREXTENSIONINMODULES}.

Section \ref{OPERATOR-VALUEDFRAMESFRAMESFORBANACHSPACES} starts with the fundamental idea of writing frame inequality (Inequality (\ref{FRAMEINEQUALITYFUNDAMENTAL})) independent from the inner product (Inequality (\ref{FUNDAMENTALIDEA})). This results in the precise formulation of the notion  of p-operator-valued frames (Definition \ref{HM1}). Notions of  p-orthogonality and  p-orthonormality for collection of operators between Banach spaces are in  Definition \ref{BANACHOPERATORORTHOGONALSET} and Definition \ref{BANACHOPERATORORTHONORMALSET}, respectively. Definition \ref{RELATIVEPORTHONORMALANDRIESZBANACH} gives notions of relative p-orthonormality and Riesz p-basis for operators. A characterization of all these notions is in Theorem \ref{POVFCHARACTERIZATIONBANACH}.

 Section \ref{SVSECTION} takes a different root than its operator-version (Section \ref{OPERATOR-VALUEDFRAMESFRAMESFORBANACHSPACES}), unlike Section \ref{SEQUENTIAL} or Section \ref{SEQUENTIALVERSIONOFHOMOMORPHISM-VALUEDFRAMES}, which were born from Section \ref{MK} or Section \ref{EXTENSIONOFHOMOMORPHISM-VALUEDFRAMESFORHILBERTC*-MODULES}, respectively. This section contains sequential definitions  of  p-frame, Riesz p-basis, Riesz p-frame, orthonormal p-frame. Theorem \ref{PSEQUENTIALCHARACTERIZATIONBANACH} characterizes all these notions. Definition \ref{ORTHOGONALITYINBANACH} (resp. Definition \ref{ORTHONORMALITYINBANACH}) gives the notion of  p-orthogonal sequence (resp. p-orthogonal basis) in Banach spaces. Theorem \ref{2-OGSIMPLIESOGS},  and  Theorem \ref{2-ONBIMPLIESONB}  show that these definitions  give back traditional orthogonality and orthonormality definitions in Hilbert spaces (whenever $p=2$).
 4-inequality (Theorem \ref{4INEQUALITY}), 4-parallelogram law (Theorem \ref{SPECIALPL}) and 4-projection theorem (Theorem \ref{4PROJECTIONTHEOREM}) are derived in this section. Extended trace formula, and extended dimension formula for Banach spaces are in Theorem \ref{EXTENDEDDIMENSINANDTRACEFORMULABANACH}.

The appendix, Section \ref{APPENDIX} has the notion of Riesz p-basis and the relative Riesz p-basis for Banach spaces. Definition \ref{SINGLEPRIESZ} coincides with the usual Riesz basis definition in Hilbert spaces, when $p=2$ (Remark \ref{PRIESZTOUSUALRIESZ}). Theorem \ref{GENERALIZEDPALEYWEINER} extends Paley-Wiener theorem (from Hilbert space to Banach space). Relative Riesz p-bases are characterized in Theorem \ref{RIESZPBASISSEQUENTIALCHARACTERIZATIONAPPENDIX}.

Section \ref{CONJECTURE}, the last section contains some conjectures about the path-connectedness of frames indexed by groups, and group-like unitary systems (they are inspired from Theorem 8.1 in \cite{KAFTALLARSONZHANG1}).

\section{Extension of operator-valued frames and bases}\label{MK}
\begin{definition}\label{1}
Let $ \mathbb{J}$ be an index set, $\mathcal{H}, \mathcal{H}_0 $  be Hilbert spaces. Define 
$ L_j : \mathcal{H}_0 \ni h \mapsto e_j\otimes h \in  \ell^2(\mathbb{J}) \otimes \mathcal{H}_0$,  where $\{e_j\}_{j \in \mathbb{J}} $ is  the standard orthonormal basis for $\ell^2(\mathbb{J})$,  for each $ j \in \mathbb{J}$. A collection $ \{A_j\}_{j \in \mathbb{J}} $  in $ \mathcal{B}(\mathcal{H}, \mathcal{H}_0)$ is said to be an \textit{operator-valued frame} (we write (ovf)) in $ \mathcal{B}(\mathcal{H}, \mathcal{H}_0) $  with respect to a collection  $ \{\Psi_j\}_{j \in \mathbb{J}}  $ in $ \mathcal{B}(\mathcal{H}, \mathcal{H}_0) $ if 
\begin{enumerate}[\upshape(i)]
\item the series $ \sum_{j\in \mathbb{J}} \Psi_j^*A_j$  converges in the strong-operator topology (SOT) on $ \mathcal{B}(\mathcal{H})$ to a  bounded positive invertible operator,
\item both $ \sum_{j\in \mathbb{J}} L_jA_j$, $ \sum_{j\in \mathbb{J}} L_j\Psi_j$ converge in the strong-operator topology on $ \mathcal{B}(\mathcal{H},\ell^2(\mathbb{J}) \otimes \mathcal{H}_0 )$ to  bounded operators.
\end{enumerate}
We denote the limit of  $ \sum_{j\in \mathbb{J}} \Psi_j^*A_j$  by $ S_{A, \Psi}$ (which we call as frame operator for   $ \{A_j\}_{j \in \mathbb{J}} $  w.r.t.  $ \{\Psi_j\}_{j \in \mathbb{J}} $), and use notations $\theta_A $, $ \theta_\Psi$ (which we call as analysis operators for  $ \{A_j\}_{j \in \mathbb{J}} $  w.r.t.  $ \{\Psi_j\}_{j \in \mathbb{J}} $ and their adjoints as synthesis operators) to denote the  limits of $ \sum_{j\in \mathbb{J}} L_jA_j$, $ \sum_{j\in \mathbb{J}} L_j\Psi_j$, respectively.
Real $ \alpha, \beta  >0 $ satisfying $\alpha I_\mathcal{H} \leq S_{A, \Psi}\ \leq \beta I_\mathcal{H} $ are called as lower and upper frame bounds, taken in order.
Let $ a= \sup\{\alpha : \alpha I_\mathcal{H} \leq S_{A, \Psi}\}$, $ b= \inf\{\beta :  S_{A, \Psi}\leq \beta I_\mathcal{H} \}$. One sees that $ a=\|S_{A,\Psi}^{-1}\|^{-1}$ and $ b = \|S_{A,\Psi}\|$.  We call $ a$ as optimal lower frame bound, $ b$ as optimal upper frame bound for the frame $ \{A_j \}_{j \in \mathbb{J}}$ w.r.t.  $ \{\Psi_j\}_{j \in \mathbb{J}}.$  If $ a=b$, then the frame is called as tight frame (or exact frame) w.r.t.  $ \{\Psi_j \}_{j \in \mathbb{J}}$. A tight frame is said to be Parseval w.r.t.  $ \{\Psi_j \}_{j \in \mathbb{J}}$ if optimal frame bound is one. 

 Whenever $ \{A_j \}_{j \in \mathbb{J}}$  is an operator-valued frame w.r.t. $ \{\Psi_j \}_{j \in \mathbb{J}}$ we write $(\{A_j\}_{j\in \mathbb{J}}, \{\Psi_j\}_{j\in \mathbb{J}}) $ is (ovf).
 
For fixed $ \mathbb{J}$, $\mathcal{H}, \mathcal{H}_0 $ and $ \{\Psi_j \}_{j \in \mathbb{J}}$, the set of all operator-valued frames in $ \mathcal{B}(\mathcal{H}, \mathcal{H}_0) $ with respect to collection  $ \{\Psi_j \}_{j \in \mathbb{J}}$ is denoted by $ \mathscr{F}_\Psi.$
\end{definition}
 If the condition \text{\upshape(i)} in Definition \ref{1} is replaced by ``the series $ \sum_{j\in \mathbb{J}} \Psi_j^*A_j$  converges  in the strong-operator topology on $ \mathcal{B}(\mathcal{H})$ to a positive bounded  operator," then we say $ \{A_j\}_{j\in\mathbb{J}}$   w.r.t. $ \{\Psi_j\}_{j\in\mathbb{J}}$ is Bessel.

We mention here that the  sayings   ``$ \{A_j \}_{j \in \mathbb{J}}$  is an operator-valued frame w.r.t. $ \{\Psi_j \}_{j \in \mathbb{J}}$ in $ \mathcal{B}(\mathcal{H}, \mathcal{H}_0)$'' and ``$ \{A_j \}_{j \in \mathbb{J}}$    w.r.t. $ \{\Psi_j \}_{j \in \mathbb{J}}$  is an operator-valued frame in  $ \mathcal{B}(\mathcal{H}, \mathcal{H}_0)$'' are the same.

Definition \ref{1} is more flexible. As  examples, (i) any invertible  operator is a frame w.r.t. a positive multiple of it, (ii) Let $ U,V$ be two commuting positive invertible operators on a Hilbert space. Then $ U$ is (ovf)  w.r.t. $ V.$

Both the conditions in Definition \ref{1} are independent as the following example reveals.
\begin{example}
On $ \mathbb{C},$ define $ A_nx\coloneqq\frac{x}{\sqrt{n}},  \forall x \in \mathbb{C}, \forall n \in \mathbb{N}$, and $\Psi_1x\coloneqq x, \Psi_nx\coloneqq0, \forall x \in \mathbb{C}, \forall n \in \mathbb{N}\setminus\{1\} $. Then $ \sum_{n=1}^\infty\Psi_n^*A_nx$ converges to a positive invertible operator but  $ \sum_{n=1}^\infty L_nA_nx$ doesn't. On the other hand both $\sum_{n=1}^{2}L_nI_\mathbb{C},\sum_{n=1}^{2}L_n((-1)^nI_\mathbb{C}) $ are bounded operators but $ \sum_{n=1}^{2}((-1)^nI_\mathbb{C})^*I_\mathbb{C}$ is  not invertible (it is zero operator).
\end{example}
We note that (i) in Definition \ref{1} implies that  there are real $ a,b >0$ such that 
$$ a\|h\|^2\leq\sum\limits_{j\in \mathbb{J}}\langle A_jh, \Psi_jh\rangle =\sum\limits_{j\in \mathbb{J}}\langle \Psi_jh, A_jh\rangle  \leq b\|h\|^2,~ \forall h \in \mathcal{H},$$
and another way for (ii) is 
$$\{A_j\}_{j\in \mathbb{J}} \text{ is Bessel w.r.t.}~ \{A_j\}_{j\in \mathbb{J}}, ~  \text{and}~ \{\Psi_j\}_{j\in \mathbb{J}} ~\text{ is Bessel w.r.t.}~ \{\Psi_j\}_{j\in \mathbb{J}}.$$
 Whenever $ \Psi_j=A_j,\forall j \in \mathbb{J}$ condition (i) in Definition \ref{1} implies condition (ii) (Proposition 2.3 in \cite{KAFTALLARSONZHANG1}). Thus  Definition \ref{KAFTAL} is a particular case of Definition \ref{1}.

An operator-valued frame (resp. a Bessel sequence) with respect to itself is an operator-valued frame (resp. a Bessel sequence).

We note the following. 
\begin{enumerate}[(i)]
\item Definition \ref{1} is symmetric, i.e., if  $\{A_j\}_{j \in \mathbb{J}} $ is an (ovf) w.r.t. $ \{\Psi_j\}_{j \in \mathbb{J}} $,   then $ \{\Psi_j\}_{j \in \mathbb{J}} $ is  an  (ovf) w.r.t. $ \{A_j\}_{j \in \mathbb{J}} $. 
\item $\{h \in \mathcal{H}: A_jh=0, \forall j \in \mathbb{J}\} =\{0\}= \{h \in \mathcal{H}: \Psi_jh=0, \forall j \in \mathbb{J}\} $, and $\overline{\operatorname{span}}\cup_{j\in \mathbb{J}} A^*_j(\mathcal{H}_0)=\mathcal{H}= \overline{\operatorname{span}}\cup_{j\in \mathbb{J}} \Psi^*_j(\mathcal{H}_0). $
\item $ \theta_Ah =\sum_{j\in \mathbb{J}}e_j\otimes A_jh , \forall  h \in \mathcal{H}.$
\item $ S_{A, \Psi}=  S_{\Psi, A}$. 
\item The operators $ L_j$'s defined in Definition \ref{1} are  isometries from $\mathcal{H}_0 $ to $ \ell^2(\mathbb{J}) \otimes \mathcal{H}_0$, and  for
 $  j,k \in \mathbb{J }$ we have  
 \begin{align}\label{LEQUATION}
  L_j^*L_k =
\left\{
\begin{array}{ll}
I_{\mathcal{H}_0 } & \mbox{if } j=k \\
0 & \mbox{if } j\neq k
\end{array}
\right.
~\text{and} \quad 
 \sum\limits_{j\in \mathbb{J}} L_jL_j^*=I_{\ell^2(\mathbb{J})}\otimes I_{\mathcal{H}_0}
  \end{align}
where  the convergence is in the strong-operator topology.
\item $L_k^*(\{a_j\}_{j \in \mathbb{J}}\otimes y) =a_ky, \forall  \{a_j\}_{j \in \mathbb{J}} \in \ell^2(\mathbb{J}), \forall y \in \mathcal{H}_0$, for each $ k $ in $ \mathbb{J}.$ 
\item If  $ \{A_j\}_{j \in \mathbb{J}}, $ $ \{B_j\}_{j \in \mathbb{J}}  \in \mathscr{F}_\Psi $, then $\{A_j+B_j\}_{j \in \mathbb{J}} \in \mathscr{F}_\Psi $, and $\{\alpha A_j\}_{j \in \mathbb{J}} \in \mathscr{F}_\Psi, \forall \alpha >0. $
\item If $(\{A_j\}_{j\in \mathbb{J}}, \{\Psi_j\}_{j\in \mathbb{J}}) $ is tight (ovf) with bound $ a, $ then $ S_{A,\Psi}=aI_\mathcal{H}.$
\end{enumerate}
 It is interesting to note that the collection of all operator-valued frames  (Definition \ref{KAFTAL}) is  closed by nonzero scalar multiplication but  need not be closed with addition. From the very Definition \ref{1} we get 
\begin{proposition}
Let $(\{A_j\}_{j\in \mathbb{J}}, \{\Psi_j\}_{j\in \mathbb{J}}) $ be  (ovf)   in $ \mathcal{B}(\mathcal{H}, \mathcal{H}_0)$  with an upper frame  bound $b$. If $\Psi_j^*A_j\geq 0, \forall j \in \mathbb{J} ,$ then $ \|\Psi_j^*A_j\|\leq b, \forall j \in \mathbb{J}.$
\end{proposition}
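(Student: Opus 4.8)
The plan is to exploit positivity of the individual terms $\Psi_j^*A_j$ together with the order bound $S_{A,\Psi}\leq bI_\mathcal{H}$ supplied by the upper frame bound. The crucial observation is that, because each summand is positive, every partial sum of the series $\sum_{j\in\mathbb{J}}\Psi_j^*A_j$ is dominated (in the operator order) by its strong-operator limit $S_{A,\Psi}$; isolating a single term then gives the desired estimate.

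First I would fix an arbitrary $k\in\mathbb{J}$ and consider the operator $S_{A,\Psi}-\Psi_k^*A_k$. By condition (i) of Definition \ref{1}, this equals the strong-operator limit of the net of finite partial sums $\sum_{j\in F}\Psi_j^*A_j$ taken over finite subsets $F\subseteq\mathbb{J}\setminus\{k\}$. Each such partial sum is a finite sum of positive operators, hence positive, and positivity is preserved under the strong-operator limit: for every $h\in\mathcal{H}$ one has $\langle(S_{A,\Psi}-\Psi_k^*A_k)h,h\rangle=\lim_F\sum_{j\in F}\langle\Psi_j^*A_jh,h\rangle\geq0$. Therefore $S_{A,\Psi}-\Psi_k^*A_k\geq0$, i.e. $\Psi_k^*A_k\leq S_{A,\Psi}$.

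Combining this with the upper frame bound $S_{A,\Psi}\leq bI_\mathcal{H}$ yields $\Psi_k^*A_k\leq bI_\mathcal{H}$. Since the hypothesis guarantees $\Psi_k^*A_k\geq0$, the operator $\Psi_k^*A_k$ is positive (in particular self-adjoint) and satisfies $0\leq\Psi_k^*A_k\leq bI_\mathcal{H}$. For a positive operator the norm is computed through its quadratic form, so
\[
\|\Psi_k^*A_k\|=\sup_{\|h\|=1}\langle\Psi_k^*A_kh,h\rangle\leq\sup_{\|h\|=1}b\|h\|^2=b.
\]
As $k\in\mathbb{J}$ was arbitrary, this establishes $\|\Psi_j^*A_j\|\leq b$ for every $j\in\mathbb{J}$.

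There is no serious obstacle here; the only point requiring a little care is the passage from the strong-operator convergence of the series to the operator inequality $\Psi_k^*A_k\leq S_{A,\Psi}$. This step relies essentially on the positivity hypothesis $\Psi_j^*A_j\geq0$ (without it a single term need not be dominated by the limit), and it is justified by testing against vectors and using that the numerical limit of nonnegative partial sums is nonnegative.
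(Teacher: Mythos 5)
Your proof is correct and follows essentially the same route as the paper's: both isolate a single term, use positivity of the remaining summands to dominate $\Psi_j^*A_j$ by $S_{A,\Psi}\leq bI_\mathcal{H}$, and then compute the norm of the positive operator via its quadratic form. The paper simply phrases the domination scalar-wise as $\langle\Psi_j^*A_jh,h\rangle\leq\sum_{j\in\mathbb{J}}\langle\Psi_j^*A_jh,h\rangle\leq b\|h\|^2$, which is your argument in condensed form.
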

 \begin{proof}
For each $ h \in \mathcal{H}, j \in \mathbb{J}$ we get $ \langle\Psi_j^*A_jh,h\rangle \leq \sum_{j\in\mathbb{J}}\langle\Psi_j^*A_jh,h\rangle \leq b \langle h,h \rangle$ and therefore $ \|\Psi_j^*A_j\|=\sup_{h\in \mathcal{H},\|h\|=1}\langle\Psi_j^*A_jh,h\rangle \leq b, \forall j \in \mathbb{J}.$
 \end{proof}
 Following is an extension of `expansion result' for Bessel sequences to operator-valued frames due to Li, and Sun \cite{LISUN1}.
 \begin{proposition}
 If  $(\{A_j\}_{j\in \mathbb{J}}, \{\Psi_j\}_{j\in \mathbb{J}}) $ is  Bessel   in $ \mathcal{B}(\mathcal{H}, \mathcal{H}_0)$, then there exists a $ B \in \mathcal{B}(\mathcal{H}, \mathcal{H}_0)$ such that $(\{A_j\}_{j\in \mathbb{J}}\cup\{B\}, \{\Psi_j\}_{j\in \mathbb{J}}\cup\{B\}) $ is a tight (ovf). In particular, if  $(\{A_j\}_{j\in \mathbb{J}}, \{\Psi_j\}_{j\in \mathbb{J}}) $ is   (ovf)   in $ \mathcal{B}(\mathcal{H}, \mathcal{H}_0)$, then there exists a $ B \in \mathcal{B}(\mathcal{H}, \mathcal{H}_0)$ such that $(\{A_j\}_{j\in \mathbb{J}}\cup\{B\}, \{\Psi_j\}_{j\in \mathbb{J}}\cup\{B\}) $ is a tight (ovf).
 \end{proposition}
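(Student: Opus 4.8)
The plan is to append a single operator $B$ to both families in such a way that the frame operator of the enlarged pair becomes a positive scalar multiple of $I_\mathcal{H}$. Set $S \coloneqq S_{A,\Psi}=\sum_{j\in\mathbb{J}}\Psi_j^*A_j$; the Bessel hypothesis guarantees that $S$ is a bounded positive operator on $\mathcal{H}$, so $S\le \|S\|\,I_\mathcal{H}$. Fix a scalar $\alpha=\max\{\|S\|,1\}>0$ and put $T\coloneqq \alpha I_\mathcal{H}-S$, which is a bounded positive operator. The whole argument turns on factoring $T$ as $B^*B$ for a suitable $B\in\mathcal{B}(\mathcal{H},\mathcal{H}_0)$.

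First I would record that the frame operator of $(\{A_j\}_{j\in\mathbb{J}}\cup\{B\},\{\Psi_j\}_{j\in\mathbb{J}}\cup\{B\})$ is exactly $S+B^*B$, since appending the common operator $B$ to both families adds the single summand $B^*B$ to the strong-operator sum defining $S$. Hence, if $B^*B=T$, the enlarged frame operator equals $S+(\alpha I_\mathcal{H}-S)=\alpha I_\mathcal{H}$, which is bounded, positive and invertible; its optimal bounds $\|(\alpha I_\mathcal{H})^{-1}\|^{-1}$ and $\|\alpha I_\mathcal{H}\|$ both equal $\alpha$, so the enlarged pair is a tight (ovf) as required.

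To build $B$, I would take the positive square root $T^{1/2}\in\mathcal{B}(\mathcal{H})$ and realise it inside $\mathcal{B}(\mathcal{H},\mathcal{H}_0)$: when $\mathcal{H}_0=\mathcal{H}$ one simply sets $B=T^{1/2}$, and in general one fixes an isometric embedding $V\colon\mathcal{H}\hookrightarrow\mathcal{H}_0$ and puts $B=VT^{1/2}$, so that $B^*B=T^{1/2}V^*VT^{1/2}=T$. It then remains to verify condition (ii) of Definition \ref{1} for the enlarged pair, but this is routine: by the Bessel hypothesis $\sum_{j\in\mathbb{J}}L_jA_j$ and $\sum_{j\in\mathbb{J}}L_j\Psi_j$ already converge in the strong-operator topology to bounded operators, and enlarging the index set by one element only adds a single bounded term of the form $L_\ast B$, so both enlarged synthesis sums still converge in SOT to bounded operators. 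This proves the first assertion. The ``in particular'' clause is then immediate: an (ovf) has $S_{A,\Psi}$ positive and invertible, hence positive and bounded, so it is a Bessel pair and the first part applies verbatim.

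The step I expect to be the genuine obstacle is realising the square root as an honest element of $\mathcal{B}(\mathcal{H},\mathcal{H}_0)$. The identity $B^*B=\alpha I_\mathcal{H}-S$ forces $B^*B$ to have rank at most $\dim\mathcal{H}_0$, so the factorisation $B=VT^{1/2}$ requires an isometry $\mathcal{H}\hookrightarrow\mathcal{H}_0$, that is, $\dim\mathcal{H}\le\dim\mathcal{H}_0$. I would therefore make this embedding explicit (it is automatic in the cleanest case $\mathcal{H}_0=\mathcal{H}$) and also note that the two appended ``diagonal'' entries must be the \emph{same} operator $B$ in the $A$-slot and the $\Psi$-slot, which is exactly what guarantees the extra contribution to $S$ is $B^*B$ rather than a mixed product.
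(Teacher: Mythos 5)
Your proof is correct and follows essentially the same route as the paper: choose $\lambda>\|S_{A,\Psi}\|$ and adjoin the square root of $\lambda I_{\mathcal{H}}-S_{A,\Psi}$ so the new frame operator becomes $\lambda I_{\mathcal{H}}$. In fact you are more careful than the paper, which simply sets $B=(\lambda I_{\mathcal{H}}-S_{A,\Psi})^{1/2}\in\mathcal{B}(\mathcal{H})$ and silently ignores that $B$ is supposed to land in $\mathcal{B}(\mathcal{H},\mathcal{H}_0)$; your observation that one needs an isometry $\mathcal{H}\hookrightarrow\mathcal{H}_0$ (hence $\dim\mathcal{H}\le\dim\mathcal{H}_0$) to realise the factorisation $B^*B=\lambda I_{\mathcal{H}}-S_{A,\Psi}$ is a genuine point the paper overlooks.
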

\begin{proof}
Let $ \lambda> \|S_{A, \Psi}\|.$ Define $ B \coloneqq(\lambda I_\mathcal{H}-S_{A, \Psi})^{1/2}.$ Then $ \sum_{j\in\mathbb{J}}\Psi_j^*A_j+B^*B=S_{A, \Psi}+(\lambda I_\mathcal{H}-S_{A, \Psi})=\lambda I_\mathcal{H}.$ 
\end{proof}
 
\begin{definition}\cite{SUN1}\label{ONBDEFINITIONOVHS}
A collection  $ \{A_j\}_{j \in \mathbb{J}}$ in $ \mathcal{B}(\mathcal{H}, \mathcal{H}_0)$ is said to be an orthonormal basis in  $ \mathcal{B}(\mathcal{H},\mathcal{H}_0)$ if 
$$\langle A_j^*y, A_k^*z\rangle=\delta_{j,k}\langle y, z\rangle  , ~\forall y, z \in \mathcal{H}_0, ~\forall j, k \in \mathbb{J} ~ \text{and } ~ \sum\limits_{j \in \mathbb{J}}\|A_jh\|^2=\|h\|^2, ~ \forall h \in \mathcal{H}.$$
\end{definition}
We observe $\langle A_j^*y, A_k^*z\rangle=\delta_{j,k}\langle y, z\rangle  , \forall y, z \in \mathcal{H}_0, \forall j, k \in \mathbb{J}$ if and only if $A_jA_k^*=\delta_{j,k}I_{\mathcal{H}_0} , \forall j, k \in \mathbb{J}$. Hence if $ \{A_j\}_{j \in \mathbb{J}}$ is orthonormal, then $\|A_j\|^2=\|A_jA_j^*\|=1, \forall j \in \mathbb{J} $. Also if  the spaces are complex, and if  $ \{A_j\}_{j \in \mathbb{J}}$ is orthonormal, then  $ \sum_{j\in \mathbb{J}}A_j^*A_j=I_\mathcal{H}$ in SOT.
\begin{example}
\begin{enumerate}[\upshape(i)]
\item If $ U: \mathcal{H}\rightarrow \mathcal{H}_0$ is unitary, then $\{U\}$ is an orthonormal basis in  $ \mathcal{B}(\mathcal{H}, \mathcal{H}_0)$.
\item From Equation (\ref{LEQUATION}), we deduce that $ \{L^*_j\}_{j \in \mathbb{J}}$  is an orthonormal basis in $ \mathcal{B}(\ell^2(\mathbb{J})\otimes\mathcal{H}_0, \mathcal{H}_0)$.
\end{enumerate}
\end{example}
Inspired from the  Definition \ref{ONBDEFINITIONOVHS}, we define 
 \begin{definition}\label{OGSDEFINITIONOVHS}
 A collection  $ \{A_j\}_{j \in \mathbb{J}}$  in $ \mathcal{B}(\mathcal{H}, \mathcal{H}_0)$ is said to be 	an 
 orthogonal set in $ \mathcal{B}(\mathcal{H},\mathcal{H}_0)$ if 
  $$\langle A_j^*y, A_k^*z\rangle=0 , ~\forall y, z \in \mathcal{H}_0, ~\forall j, k \in \mathbb{J}, j\neq k.$$
\end{definition}
 \begin{definition}\label{ONSDEFINITIONOVHS}
 A collection  $ \{A_j\}_{j \in \mathbb{J}}$  in $ \mathcal{B}(\mathcal{H}, \mathcal{H}_0)$ is said to be  an orthonormal set in $ \mathcal{B}(\mathcal{H},\mathcal{H}_0)$ if 
 $$\langle A_j^*y, A_k^*z\rangle=\delta_{j,k}\langle y, z\rangle  , ~\forall y, z \in \mathcal{H}_0, ~\forall j, k \in \mathbb{J} ~ \text{and } ~ \sum\limits_{j \in \mathbb{J}}\|A_jh\|^2\leq\|h\|^2, ~ \forall h \in \mathcal{H}.$$
 
 \end{definition}
\begin{proposition}
 Let $ \{A_j\}_{j \in \mathbb{J}}$ be in  $ \mathcal{B}(\mathcal{H}, \mathbb{K}).$ Choose $ x_j \in \mathcal{H}$  such that $ A_jh=\langle h, x_j\rangle, \forall h \in \mathcal{H}, \forall j \in \mathbb{J}.$ Then $ \{A_j\}_{j \in \mathbb{J}}$	 is an orthonormal set (resp. basis)  in  $ \mathcal{B}(\mathcal{H},\mathbb{K})$ if and only if    $ \{x_j\}_{j \in \mathbb{J}}$ is an orthonormal set (resp. basis)   for $ \mathcal{H}.$
 \end{proposition}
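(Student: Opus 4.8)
The plan is to reduce everything to a single observation: since $\mathcal{H}_0 = \mathbb{K}$ is one-dimensional, each adjoint $A_j^*$ is simply multiplication by the representing vector $x_j$. Concretely, I would first compute $A_j^*$. For $y \in \mathbb{K}$ and $h \in \mathcal{H}$, using the scalar inner product $\langle a, b\rangle = a\bar b$ on $\mathbb{K}$, we have $\langle A_j h, y\rangle = (A_j h)\bar y = \langle h, x_j\rangle \bar y = \langle h, y x_j\rangle$, whence $A_j^* y = y x_j$; in particular $A_j^* 1 = x_j$. This identity is the bridge between the operator-theoretic conditions of Definition \ref{ONSDEFINITIONOVHS} and Definition \ref{ONBDEFINITIONOVHS} and the classical orthonormality of $\{x_j\}_{j \in \mathbb{J}}$.

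Next I would translate the two defining conditions. From $A_j^* y = y x_j$ we get $\langle A_j^* y, A_k^* z\rangle = y\bar z\,\langle x_j, x_k\rangle$, while $\langle y, z\rangle = y\bar z$, so the inner-product condition $\langle A_j^* y, A_k^* z\rangle = \delta_{j,k}\langle y, z\rangle$ holding for all $y, z \in \mathbb{K}$ is equivalent (take $y = z = 1$ for one direction, trivial for the other) to $\langle x_j, x_k\rangle = \delta_{j,k}$ for all $j, k$, i.e.\ to orthonormality of the family $\{x_j\}_{j \in \mathbb{J}}$. For the second condition, since $\|A_j h\| = |\langle h, x_j\rangle|$, the sum $\sum_{j} \|A_j h\|^2$ equals $\sum_{j} |\langle h, x_j\rangle|^2$, so the inequality $\sum_{j}\|A_j h\|^2 \le \|h\|^2$ is exactly Bessel's inequality and the equality $\sum_{j}\|A_j h\|^2 = \|h\|^2$ is exactly Parseval's identity.

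With these translations in hand the two equivalences follow at once. For the orthonormal-set case I would note that once $\{x_j\}_{j \in \mathbb{J}}$ is orthonormal the Bessel inequality $\sum_{j}|\langle h, x_j\rangle|^2 \le \|h\|^2$ holds automatically, so the extra inequality in Definition \ref{ONSDEFINITIONOVHS} is redundant; hence $\{A_j\}_{j \in \mathbb{J}}$ is an orthonormal set in $\mathcal{B}(\mathcal{H}, \mathbb{K})$ iff $\{x_j\}_{j \in \mathbb{J}}$ is an orthonormal set for $\mathcal{H}$. For the basis case, orthonormality together with Parseval's identity is precisely the standard characterization of an orthonormal basis as a complete (total) orthonormal family, so $\{A_j\}_{j \in \mathbb{J}}$ is an orthonormal basis in $\mathcal{B}(\mathcal{H}, \mathbb{K})$ iff $\{x_j\}_{j \in \mathbb{J}}$ is an orthonormal basis for $\mathcal{H}$.

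I do not expect a serious obstacle here; the whole content is the correct identification $A_j^* y = y x_j$, after which the argument is a direct transcription. The only points demanding a little care are keeping the conjugates straight in the one-dimensional inner product when computing the adjoint, and remembering that in the set version Bessel's inequality comes for free, so that the stated equivalence is genuine rather than merely a one-sided implication.
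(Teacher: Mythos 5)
Your proof is correct and follows exactly the paper's route: the paper's own (one-line) argument is precisely the computation $A_j^*y = yx_j$, giving $\langle A_j^*y, A_k^*z\rangle = y\bar{z}\langle x_j, x_k\rangle$ and $\sum_{j\in\mathbb{J}}\|A_jh\|^2 = \sum_{j\in\mathbb{J}}|\langle h, x_j\rangle|^2$, after which both equivalences are immediate. Your additional remarks (that Bessel's inequality is automatic in the set case, and that Parseval's identity characterizes the basis case) are correct elaborations of what the paper leaves implicit.
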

 \begin{proof}
$\langle A_j^*y, A_k^*z\rangle=\langle yx_j, zx_k\rangle =y\bar{z}\langle x_j, x_k\rangle  , \forall y, z \in \mathbb{K}, \forall j, k \in \mathbb{J} ~\text{and}~ \sum_{j \in \mathbb{J}}\|A_jh\|^2=\sum_{j \in \mathbb{J}}|\langle h , x_j \rangle |^2,  \forall h \in \mathcal{H}.$
 \end{proof}
 \begin{theorem}
 \begin{enumerate}[\upshape(i)]
\item Every orthonormal set $Y$ in  $ \mathcal{B}(\mathcal{H}, \mathcal{H}_0)$ is contained in a maximal orthonormal set.
\item If $ \mathcal{B}(\mathcal{H}, \mathcal{H}_0)$ has an operator   $T$ such that $TT^*$ is bounded invertible, then  $ \mathcal{B}(\mathcal{H}, \mathcal{H}_0)$ has a maximal orthonormal set.
\end{enumerate}
\end{theorem}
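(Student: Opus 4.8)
The plan is to prove (i) by a routine Zorn's lemma argument and then obtain (ii) by manufacturing a single coisometry from $T$ and invoking (i). Throughout I use the reformulation recorded just after Definition \ref{ONBDEFINITIONOVHS}, namely that the orthonormality equation $\langle A_j^*y,A_k^*z\rangle=\delta_{j,k}\langle y,z\rangle$ is equivalent to $A_jA_k^*=\delta_{j,k}I_{\mathcal{H}_0}$; this is the form in which the condition is most convenient to propagate through unions, since it is purely a two-at-a-time relation.

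For (i), consider the collection $\mathcal{P}$ of all orthonormal sets in $\mathcal{B}(\mathcal{H},\mathcal{H}_0)$ that contain $Y$, partially ordered by inclusion; note $Y\in\mathcal{P}$ so $\mathcal{P}\neq\varnothing$. Let $\{Y_\alpha\}_\alpha$ be a chain in $\mathcal{P}$ and put $Z=\bigcup_\alpha Y_\alpha$. I claim $Z\in\mathcal{P}$. For the orthogonality relation, any two members $A,B\in Z$ lie in a common $Y_\alpha$ by the chain property, so $AB^*=\delta_{A,B}I_{\mathcal{H}_0}$ holds as required. The genuinely substantive point is the Bessel inequality: fixing $h\in\mathcal{H}$, the quantity $\sum_{A\in Z}\|Ah\|^2$ is a sum of nonnegative terms and hence equals the supremum of its finite partial sums $\sum_{A\in F}\|Ah\|^2$ over finite $F\subseteq Z$. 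Each such finite $F$ is contained in a single $Y_\alpha$ (a finite subset of a chain-union sits inside one link), so $\sum_{A\in F}\|Ah\|^2\le\sum_{A\in Y_\alpha}\|Ah\|^2\le\|h\|^2$; taking the supremum over $F$ gives $\sum_{A\in Z}\|Ah\|^2\le\|h\|^2$. Thus $Z$ is an orthonormal set containing $Y$, i.e. $Z\in\mathcal{P}$, and it bounds the chain above. Zorn's lemma then yields a maximal element $M$ of $\mathcal{P}$; any orthonormal set properly containing $M$ would also contain $Y$ and hence lie in $\mathcal{P}$, contradicting maximality, so $M$ is a maximal orthonormal set containing $Y$.

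For (ii), the operator $TT^*$ is a positive operator on $\mathcal{H}_0$, and by hypothesis it is bounded invertible, so by the functional calculus $(TT^*)^{-1/2}$ exists as a positive invertible operator on $\mathcal{H}_0$. Define $A\coloneqq(TT^*)^{-1/2}T\in\mathcal{B}(\mathcal{H},\mathcal{H}_0)$. Then $AA^*=(TT^*)^{-1/2}(TT^*)(TT^*)^{-1/2}=I_{\mathcal{H}_0}$, so $A$ is a coisometry and the singleton $\{A\}$ satisfies the orthonormality equation trivially; moreover $\|A\|=\|A^*\|=1$ forces $\|Ah\|^2\le\|h\|^2$ for all $h$, which is the required Bessel bound. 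Hence $\{A\}$ is a (nonempty) orthonormal set in $\mathcal{B}(\mathcal{H},\mathcal{H}_0)$, and applying (i) to $Y=\{A\}$ produces a maximal orthonormal set.

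The only place demanding care is the passage of the Bessel inequality to the union of a chain; I expect this to be the main (and essentially the sole) obstacle, and it is resolved by the observation that an unordered sum of nonnegative reals is the supremum of its finite partial sums, each of which is already controlled inside one member of the chain. The orthogonality relation and the coisometry construction are then entirely routine.
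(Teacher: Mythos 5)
Your proof is correct and follows the same route as the paper: Zorn's lemma applied to the inclusion-ordered poset of orthonormal sets containing $Y$ for (i), and the coisometry $(TT^*)^{-1/2}T$ fed into (i) for (ii); you merely spell out the chain-union verification that the paper leaves implicit. The only simplification available is that the Bessel bound for the union follows at once from Lemma \ref{FIRSTIMPLIESSECONDLEMMA}, since the pairwise relation $AB^*=\delta_{A,B}I_{\mathcal{H}_0}$ already implies $\sum_{A\in Z}\|Ah\|^2\leq\|h\|^2$, so your finite-partial-sums argument, while correct, is not needed.
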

\begin{proof}
\begin{enumerate}[\upshape(i)]
\item This is an application of Zorn's lemma to the poset $(\mathscr{P},\preceq)$, where 
$$\mathscr{P}=\{Z : Z ~\text{is an orthonormal set in}~  \mathcal{B}(\mathcal{H}, \mathcal{H}_0)~ \text{such that}~ Z \supseteq Y\} $$
and for $ Z_1,Z_2 \in \mathscr{P}, Z_1\preceq Z_2 $ if $ Z_1\subseteq Z_2.$
\item We apply (i) to the orthonormal set $ Y=\{(TT^*) ^{-1/2}T\}$.
\end{enumerate}
 \end{proof}
 It is interesting to observe that second condition in the definition  of orthonormal set comes from first. In fact,
 \begin{lemma}\label{FIRSTIMPLIESSECONDLEMMA}
 If $ \{A_j\}_{j \in \mathbb{J}}$ in     $ \mathcal{B}(\mathcal{H}, \mathcal{H}_0)$  satisfies $\langle A_j^*y, A_k^*z\rangle=\delta_{j,k}\langle y, z\rangle , \forall y, z \in \mathcal{H}_0, \forall j, k \in \mathbb{J}$, then $\sum_{j \in \mathbb{J}}\|A_jh\|^2\leq \|h\|^2, \forall h \in \mathcal{H}$.
 \end{lemma}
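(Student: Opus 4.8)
The plan is to pass to finite partial sums and exploit the fact that each such partial sum is an orthogonal projection. First I would translate the hypothesis into operator form: as already observed in the excerpt (just before Definition \ref{OGSDEFINITIONOVHS}), the condition $\langle A_j^*y, A_k^*z\rangle=\delta_{j,k}\langle y,z\rangle$ for all $y,z\in\mathcal{H}_0$ is equivalent to $A_jA_k^*=\delta_{j,k}I_{\mathcal{H}_0}$ for all $j,k\in\mathbb{J}$. This is the single algebraic fact that drives the whole argument.

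Next, for an arbitrary finite subset $F\subseteq\mathbb{J}$ I would set $P_F\coloneqq\sum_{j\in F}A_j^*A_j\in\mathcal{B}(\mathcal{H})$ and show that $P_F$ is an orthogonal projection. Self-adjointness is immediate, since each summand $A_j^*A_j$ is self-adjoint. Idempotency is the crux: expanding $P_F^2=\sum_{j,k\in F}A_j^*(A_jA_k^*)A_k$ and inserting $A_jA_k^*=\delta_{j,k}I_{\mathcal{H}_0}$ collapses the double sum to $\sum_{j\in F}A_j^*A_j=P_F$. Hence $P_F=P_F^*=P_F^2$, so $P_F$ is an orthogonal projection and therefore $0\leq P_F\leq I_\mathcal{H}$.

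From the projection bound the inequality follows directly: for any $h\in\mathcal{H}$ one has $\sum_{j\in F}\|A_jh\|^2=\sum_{j\in F}\langle A_j^*A_jh,h\rangle=\langle P_Fh,h\rangle\leq\|h\|^2$. Since this holds for every finite $F$ and the summands are nonnegative, the net of finite partial sums is increasing and bounded above by $\|h\|^2$; taking the supremum over finite subsets (which is precisely the net convergence adopted in the notation section) yields $\sum_{j\in\mathbb{J}}\|A_jh\|^2\leq\|h\|^2$.

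The only real obstacle is spotting that the partial sums $P_F$ are projections rather than merely positive operators. Everything hinges on the middle collapse $A_jA_k^*=\delta_{j,k}I_{\mathcal{H}_0}$, which is usable only because the two inner factors in $A_j^*A_jA_k^*A_k$ are adjacent; once idempotency is established, the bound $0\leq P_F\leq I_\mathcal{H}$ and the passage to the supremum are entirely routine.
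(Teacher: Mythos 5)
Your proof is correct and rests on the same key computation as the paper's: the collapse $A_jA_k^*=\delta_{j,k}I_{\mathcal{H}_0}$ applied to the finite partial sum $P_{\mathbb{S}}=\sum_{j\in\mathbb{S}}A_j^*A_j$. The paper reaches the same conclusion by expanding $0\leq\|h-P_{\mathbb{S}}h\|^2=\|h\|^2-2\langle P_{\mathbb{S}}h,h\rangle+\langle P_{\mathbb{S}}^2h,h\rangle$ and using the collapse to identify $\langle P_{\mathbb{S}}^2h,h\rangle=\langle P_{\mathbb{S}}h,h\rangle$, which is precisely your idempotency observation in a slightly less explicit form.
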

 \begin{proof}
 For every $ h \in \mathcal{H}$ and every finite subset $\mathbb{S} \subseteq \mathbb{J}$ we get 
 \begin{align*}
 0&\leq \left\| h-\sum_{j \in \mathbb{S}}A_j^*A_jh\right\|^2=\left\langle h-\sum_{j \in \mathbb{S}}A_j^*A_jh , h-\sum_{k \in \mathbb{S}}A_k^*A_kh \right\rangle \\
 &=\|h\|^2-2\sum_{j \in \mathbb{S}}\|A_jh\|^2+\sum_{j \in \mathbb{S}}\left\langle A_jh,A_j\left(\sum_{k \in \mathbb{S}}A_k^*A_kh\right) \right\rangle \\
 &=\|h\|^2-2\sum_{j \in \mathbb{S}}\|A_jh\|^2+\sum_{j \in \mathbb{S}}\|A_jh\|^2=\|h\|^2-\sum_{j \in \mathbb{S}}\|A_jh\|^2
\end{align*}
 $\Rightarrow\sum_{j \in \mathbb{S}}\|A_jh\|^2\leq \|h\|^2 $. Therefore $\sum_{j \in \mathbb{J}}\|A_jh\|^2\leq \|h\|^2$.
 \end{proof}
 \begin{remark}
 \begin{enumerate}[\upshape(i)]
 \item Lemma \ref{FIRSTIMPLIESSECONDLEMMA} is `an' extension of Bessel's inequality.
 \item Let $ Y$ be an orthogonal set of `vectors'  in a Hilbert space which excludes 0. The simple procedure of  dividing every element of $ Y$ by its norm gives an orthonormal set. This procedure no longer holds for `operators'. As an example, consider $ \ell^2(\mathbb{N})$. For each $n \in \mathbb{N}$, let $ P_n$ be the projection onto the $ n^{\text{th}}$ coordinate of $ \ell^2(\mathbb{N})$. Then $\langle P_j^*(\{a_n\}_{n\in\mathbb{N}}), P_k^*(\{b_n\}_{n\in\mathbb{N}})\rangle=0 , \forall \{a_n\}_{n\in\mathbb{N}}, \{b_n\}_{n\in\mathbb{N}}\in \ell^2(\mathbb{N}), \forall j, k \in \mathbb{N}, j\neq k ~ \text{and}~  \sum_{k=1}^{\infty}\|P_k(\{a_n\}_{n\in\mathbb{N}})\|^2=\sum_{k=1}^{\infty}|a_k|^2=\|\{a_n\}_{n\in\mathbb{N}}\|^2,  \forall \{a_n\}_{n\in\mathbb{N}} \in  \ell^2(\mathbb{N}).$
 Now $\|P_n\|=1, \forall n \in \mathbb{N}$ and $\langle P_1^*(\{1/n\}_{n\in\mathbb{N}}), P_1^*(\{1/n\}_{n\in\mathbb{N}})\rangle=1\neq \sum_{n=1}^{\infty}1/n^2=\langle \{1/n\}_{n\in\mathbb{N}}, \{1/n\}_{n\in\mathbb{N}}\rangle$. Therefore $ \left\{\frac{P_n}{\|P_n\|}=P_n\right\}_{n\in\mathbb{N}}$ is not an orthonormal set for $\mathcal{B}(\ell^2(\mathbb{N}))$.  However,  Proposition \ref{ORTHOGONALTOORTHONORMALOV} gives a procedure for converting certain types  of  orthogonal sets of operators into orthonormal sets. 
 \end{enumerate}
 \end{remark}
 \begin{proposition}\label{ORTHOGONALTOORTHONORMALOV}
 Let $ \{A_j\}_{j \in \mathbb{J}}$  be  orthogonal   in $ \mathcal{B}(\mathcal{H}, \mathcal{H}_0)$. If  $ A_jA_j^*$'s are bounded invertible for all $ j \in \mathbb{J}$, then $ \{U_j \coloneqq (A_jA_j^*)^{-1/2}A_j\}_{j \in \mathbb{J}}$ is  orthonormal   in $ \mathcal{B}(\mathcal{H}, \mathcal{H}_0)$ and $\overline{\operatorname{span}}_{\mathcal{B}(\mathcal{H}_0)}\{U_j\}_{j \in \mathbb{J}}=\overline{\operatorname{span}}_{\mathcal{B}(\mathcal{H}_0)}\{A_j\}_{j \in \mathbb{J}}$.
 \end{proposition}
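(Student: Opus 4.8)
The plan is to first record the algebraic structure of the operators $(A_jA_j^*)^{\pm 1/2}$, then verify the orthonormality relation through a single direct computation, and finally identify the two closed module spans index by index.

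First I would note that each $A_jA_j^*$ is a positive operator on $\mathcal{H}_0$ (since $\langle A_jA_j^*y,y\rangle=\|A_j^*y\|^2\ge 0$) which is bounded invertible by hypothesis; hence the continuous functional calculus produces positive, self-adjoint, bounded invertible operators $(A_jA_j^*)^{1/2}$ and $(A_jA_j^*)^{-1/2}$ in $\mathcal{B}(\mathcal{H}_0)$, and these commute with $A_jA_j^*$. In particular $U_j=(A_jA_j^*)^{-1/2}A_j$ lies in $\mathcal{B}(\mathcal{H},\mathcal{H}_0)$ and, by self-adjointness of $(A_jA_j^*)^{-1/2}$, its adjoint is $U_j^*=A_j^*(A_jA_j^*)^{-1/2}$.

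Next I would reduce the orthogonality hypothesis of Definition \ref{OGSDEFINITIONOVHS} to the operator identity $A_jA_k^*=0$ for $j\neq k$, which follows once one rewrites $\langle A_j^*y,A_k^*z\rangle=\langle A_kA_j^*y,z\rangle$. Then the heart of the argument is the computation
$$U_jU_k^*=(A_jA_j^*)^{-1/2}A_jA_k^*(A_kA_k^*)^{-1/2}.$$
For $j\neq k$ the middle factor vanishes, giving $0$; for $j=k$ the product collapses to $(A_jA_j^*)^{-1/2}(A_jA_j^*)(A_jA_j^*)^{-1/2}=I_{\mathcal{H}_0}$. Thus $U_jU_k^*=\delta_{j,k}I_{\mathcal{H}_0}$, which by the observation following Definition \ref{ONBDEFINITIONOVHS} is equivalent to the first defining relation $\langle U_j^*y,U_k^*z\rangle=\delta_{j,k}\langle y,z\rangle$. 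The remaining Bessel-type bound $\sum_{j\in\mathbb{J}}\|U_jh\|^2\le\|h\|^2$ is then automatic: it is exactly what Lemma \ref{FIRSTIMPLIESSECONDLEMMA} delivers from the relation just established. Hence $\{U_j\}_{j\in\mathbb{J}}$ is orthonormal.

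Finally, for the equality of closed spans I would observe that the passage between the two families is invertible at each index: $U_j=(A_jA_j^*)^{-1/2}A_j$ exhibits $U_j$ as a left $\mathcal{B}(\mathcal{H}_0)$-multiple of $A_j$, while $A_j=(A_jA_j^*)^{1/2}U_j$ exhibits $A_j$ as a left $\mathcal{B}(\mathcal{H}_0)$-multiple of $U_j$. Consequently the (unclosed) $\mathcal{B}(\mathcal{H}_0)$-module generated by $\{A_j\}_{j\in\mathbb{J}}$ coincides termwise with that generated by $\{U_j\}_{j\in\mathbb{J}}$, and taking closures yields the asserted identity. I expect the only genuinely delicate point to be the bookkeeping of adjoints and the self-adjointness of the square-root factors; once those are in place, both the orthonormality relation and the span equality are immediate. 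The positivity and invertibility of $A_jA_j^*$ are precisely what make the functional calculus available, and without the invertibility hypothesis the normalization $U_j$ would not even be defined.
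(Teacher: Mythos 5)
Your proof is correct and follows essentially the same route as the paper's: the same computation $U_jU_k^*=(A_jA_j^*)^{-1/2}A_jA_k^*(A_kA_k^*)^{-1/2}=\delta_{j,k}I_{\mathcal{H}_0}$ and the same appeal to Lemma \ref{FIRSTIMPLIESSECONDLEMMA} for the Bessel-type inequality. Your added care with the functional calculus, the adjoint $U_j^*=A_j^*(A_jA_j^*)^{-1/2}$, and the explicit two-way factorization $A_j=(A_jA_j^*)^{1/2}U_j$ for the span equality (which the paper's proof leaves implicit) are all sound.
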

 \begin{proof}
 $U_jU_k^*=((A_jA_j^*)^{-1/2}A_j)(A_k^*(A_kA_k^*)^{-1/2})=(A_jA_j^*)^{-1/2}0(A_kA_k^*)^{-1/2}=0, \forall j\neq k, \forall j,k \in \mathbb{J} $,  and $U_jU_j^*=(A_jA_j^*)^{-1/2}A_jA_j^*(A_jA_j^*)^{-1/2}=I_{\mathcal{H}_0} ,\forall j \in \mathbb{J}$. Inequality condition in the definition of orthonormal set comes from Lemma \ref{FIRSTIMPLIESSECONDLEMMA}.
\end{proof}
 
 \begin{theorem}
 \begin{enumerate}[\upshape(i)]
 \item If  $ \{A_n\}_{n=1}^m $ is  orthogonal  in $ \mathcal{B}(\mathcal{H}, \mathcal{H}_0)$, then $$\left\|\sum_{n=1}^{m}A_n^*y_n\right\|^2 =\sum_{n=1}^{m}\|A_n^*y_n\|^2, ~\forall y_1,...,y_n \in \mathcal{H}_0.$$ 
 In particular, $ \|A_1+\cdots+A_m\|^2\leq\|A_1\|^2+\cdots+\|A_m\|^2.$
 \item If  $ \{A_n\}_{n=1}^m $ is  orthonormal  in  $ \mathcal{B}(\mathcal{H}, \mathcal{H}_0)$, then $\|\sum_{n=1}^{m}A_n^*y_n\|^2 =\sum_{n=1}^{m}\|y_n\|^2, \forall y_1,...,y_n \in \mathcal{H}_0$. In particular, $ \|A_1+\cdots+A_m\|^2= m.$
 \item If $ \{A_j\}_{j \in \mathbb{J}} $ is  orthogonal   in  $ \mathcal{B}(\mathcal{H}, \mathcal{H}_0)$ such that $A_jA_j^* $ is invertible for all $ j \in \mathbb{J}$, then $ \{A_j\}_{j \in \mathbb{J}} $ is linearly independent over $ \mathbb{K}$ as well as over $\mathcal{B}(\mathcal{H}_0)$. In particular, if $ \{A_j\}_{j \in \mathbb{J}} $ is  orthonormal   in  $ \mathcal{B}(\mathcal{H}, \mathcal{H}_0)$, then it is linearly independent over $ \mathbb{K}$ as well as over   $\mathcal{B}(\mathcal{H}_0)$.
 \end{enumerate}
 \end{theorem}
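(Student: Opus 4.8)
The plan is to treat the three parts in order, each resting on the single algebraic fact that orthogonality of $\{A_n\}$ is equivalent to $A_nA_k^*=0$ whenever $n\neq k$ (the reformulation of $\langle A_n^*y,A_k^*z\rangle=0$ noted just after Definition \ref{ONBDEFINITIONOVHS}). For part (i) I would simply expand the squared norm as an inner product and split the resulting double sum:
$$\left\|\sum_{n=1}^m A_n^*y_n\right\|^2=\sum_{n=1}^m\sum_{k=1}^m\langle A_n^*y_n,A_k^*y_k\rangle.$$
All off-diagonal terms vanish by the orthogonality hypothesis, leaving exactly $\sum_{n=1}^m\|A_n^*y_n\|^2$, which is the asserted Pythagorean identity. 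For the ``in particular'' inequality I would set $A\coloneqq A_1+\dots+A_m$, apply the identity with every $y_n$ equal to a fixed $y\in\mathcal{H}_0$ to obtain $\|A^*y\|^2=\sum_n\|A_n^*y\|^2\leq\big(\sum_n\|A_n\|^2\big)\|y\|^2$, and then invoke $\|A\|=\|A^*\|$ together with $\|A_n^*\|=\|A_n\|$.

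For part (ii) the extra orthonormality input is $A_nA_n^*=I_{\mathcal{H}_0}$, so each diagonal term simplifies to $\|A_n^*y_n\|^2=\langle A_nA_n^*y_n,y_n\rangle=\|y_n\|^2$. Feeding this into part (i) gives $\|\sum_n A_n^*y_n\|^2=\sum_n\|y_n\|^2$ at once. To pin down $\|A_1+\dots+A_m\|^2=m$ exactly (not merely $\leq m$), I would again take all $y_n=y$, obtaining $\|A^*y\|^2=m\|y\|^2$ for every $y$; thus $A^*$ is $\sqrt{m}$ times an isometry, whence $\|A\|^2=\|A^*\|^2=m$. This is the one place where one must argue an equality rather than an inequality, and it is handled by computing the norm of $A^*y$ on the nose instead of merely bounding it.

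For part (iii) I would establish linear independence by the standard ``multiply by an adjoint to isolate a coefficient'' trick. Given a finite relation $\sum_n c_nA_{j_n}=0$ with $c_n\in\mathbb{K}$, right-multiplication by $A_{j_k}^*$ annihilates every term with $n\neq k$ (orthogonality) and leaves $c_kA_{j_k}A_{j_k}^*=0$; invertibility of $A_{j_k}A_{j_k}^*$ then forces $c_k=0$ for each $k$. The argument over $\mathcal{B}(\mathcal{H}_0)$ is verbatim the same with operator coefficients $T_n\in\mathcal{B}(\mathcal{H}_0)$ acting on the left: $\sum_n T_nA_{j_n}=0$ yields $T_k(A_{j_k}A_{j_k}^*)=0$, and right-multiplying by $(A_{j_k}A_{j_k}^*)^{-1}$ gives $T_k=0$. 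The orthonormal case is the special instance $A_{j_k}A_{j_k}^*=I_{\mathcal{H}_0}$, which is trivially invertible. None of the three parts presents a genuine obstacle; the only point demanding slight care is the exact value in part (ii), where an inequality alone would be too weak.
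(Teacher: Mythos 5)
Your proposal is correct and follows essentially the same route as the paper: expand the norm as a double sum and kill the cross terms via $A_nA_k^*=0$ for (i), specialize with $A_nA_n^*=I_{\mathcal{H}_0}$ for (ii), and right-multiply a vanishing linear combination by $A_k^*$ to isolate each coefficient for (iii). The only difference is that you spell out the exact equality $\|A_1+\cdots+A_m\|^2=m$ (via $\|A^*y\|^2=m\|y\|^2$ for all $y$), which the paper leaves implicit; that is a welcome clarification, not a different argument.
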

 \begin{proof}
  \begin{enumerate}[\upshape(i)]
 \item $\|\sum_{n=1}^{m}A_n^*y_n\|^2 = \langle \sum_{n=1}^{m}A_n^*y_n, \sum_{k=1}^{m}A_k^*y_k \rangle=\sum_{n=1}^{m}\langle A_n^*y_n,  A_n^*y_n \rangle  =\sum_{n=1}^{m}\|A_n^*y_n\|^2$, $ \forall y_1,...,y_n \in \mathcal{H}_0$ and  $\|A_1+\cdots+A_m\|=\|A^*_1+\cdots+A_m^*\|=\sup_{g \in \mathcal{H}_0, \|g\|=1}\|(A^*_1+\cdots+A_m^*)g\|  \leq( \sum_{n=1}^{m}\|A_n^*\|^2)^{1/2}=(\sum_{n=1}^{m}\|A_n\|^2)^{1/2}$.
 \item follows from (i), since $ A_nA_n^*=I_{\mathcal{H}_0},n=1,...,m$.
 \item Let $ \mathbb{S}$ be a finite subset of $ \mathbb{J}$ and $ c_j \in \mathbb{K}$  (resp. $T_j \in \mathcal{B}(\mathcal{H}_0)$), $ j \in \mathbb{S}$  be such that $ \sum_{j\in\mathbb{S}}c_jA_j=0$ (resp. $\sum_{j\in\mathbb{S}}T_jA_j=0$). Then for each  fixed $ k \in  \mathbb{S} $, we get $c_kA_kA_k^*=\sum_{j\in\mathbb{S}}c_jA_jA_k^*=0$ (resp. $T_kA_kA_k^*=\sum_{j\in\mathbb{S}}T_jA_jA_k^*=0$) which implies $ c_k=0$ (resp. $ T_k=0$). When  $ \{A_j\}_{j \in \mathbb{J}} $ is orthonormal, $ A_jA_j^*=I_{\mathcal{H}_0}$,  hence it is linearly  independent over $ \mathbb{K}$ as well as over $\mathcal{B}(\mathcal{H}_0)$.
\end{enumerate}
 \end{proof}
 Following is an extension of Riesz-Fischer theorem.
\begin{theorem}\label{ERFOV}
Let $ \{A_j\}_{j \in \mathbb{J}} $ be  orthonormal  in $ \mathcal{B}(\mathcal{H}, \mathcal{H}_0)$,  $ \{U_j\}_{j \in \mathbb{J}} $ be  in $ \mathcal{B}(\mathcal{H}_0)$ and $ y \in \mathcal{H}_0$. Then
$$\sum_{j\in\mathbb{J}}A_j^*U_jy ~\text{converges in} ~  \mathcal{H} ~ \text{if and only if}~ \sum_{j\in\mathbb{J}}\|U_jy\|^2 ~\text{converges} . $$ 
\end{theorem}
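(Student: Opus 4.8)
The plan is to reduce this operator statement to the classical orthogonality argument inside the single Hilbert space $\mathcal{H}$. First I would exploit the orthonormality hypothesis in its most convenient form. As noted just after Definition \ref{ONBDEFINITIONOVHS}, the condition defining an orthonormal set is the relation $\langle A_j^*w, A_k^*v\rangle = \delta_{j,k}\langle w, v\rangle$ for all $w,v \in \mathcal{H}_0$. Applying this with the particular choices $w = U_j y$ and $v = U_k y$ shows immediately that $\langle A_j^*U_j y, A_k^*U_k y\rangle = \delta_{j,k}\langle U_j y, U_k y\rangle$. Hence the family $\{A_j^*U_j y\}_{j\in\mathbb{J}}$ is orthogonal in $\mathcal{H}$, and taking $j=k$ gives the norm identity $\|A_j^*U_j y\|^2 = \|U_j y\|^2$ for every $j \in \mathbb{J}$.

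Next I would set $v_j \coloneqq A_j^*U_j y$ and invoke the Pythagorean identity for finite orthogonal sums (exactly the computation carried out in part (i) of the orthogonality theorem preceding Theorem \ref{ERFOV}): for any finite $\mathbb{S} \subseteq \mathbb{J}$,
$$\left\|\sum_{j\in\mathbb{S}} v_j\right\|^2 = \sum_{j\in\mathbb{S}}\|v_j\|^2 = \sum_{j\in\mathbb{S}}\|U_j y\|^2.$$
Recall that convergence of $\sum_{j\in\mathbb{J}}v_j$ in $\mathcal{H}$ means convergence of the net of partial sums indexed by finite subsets of $\mathbb{J}$, ordered by inclusion. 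For finite subsets $\mathbb{S}_1 \subseteq \mathbb{S}_2$ the above identity applied to the difference set yields $\|\sum_{j\in\mathbb{S}_2}v_j - \sum_{j\in\mathbb{S}_1}v_j\|^2 = \sum_{j\in\mathbb{S}_2\setminus\mathbb{S}_1}\|U_j y\|^2$. Thus the Cauchy condition for the vector net of partial sums of $\sum_{j}v_j$ is word-for-word the Cauchy condition for the net of partial sums of the nonnegative scalar series $\sum_{j\in\mathbb{J}}\|U_j y\|^2$.

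To finish I would invoke completeness on both sides: since $\mathcal{H}$ is complete, the vector net converges if and only if it is Cauchy, and since $\mathbb{R}$ is complete the nonnegative scalar net converges if and only if it is Cauchy (equivalently, bounded above). Chaining these equivalences through the norm identity established above gives precisely the stated biconditional. I do not expect a genuine obstacle here; the only point requiring care is that the paper's convergence is net convergence over finite subsets rather than convergence of a fixed enumeration, so the Cauchy criterion must be phrased in net language and the orthogonality invoked on the tail sets $\mathbb{S}_2\setminus\mathbb{S}_1$, which avoids any appeal to an ordering of $\mathbb{J}$.
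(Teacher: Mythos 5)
Your proposal is correct and follows essentially the same route as the paper: both establish $\|\sum_{j\in\mathbb{S}}A_j^*U_jy\|^2=\sum_{j\in\mathbb{S}}\|U_jy\|^2$ for finite $\mathbb{S}$ from the orthonormality relation and then transfer the Cauchy condition between the two nets via completeness. The only cosmetic difference is that you invoke the inner-product form $\langle A_j^*w,A_k^*v\rangle=\delta_{j,k}\langle w,v\rangle$ directly, while the paper expands the squared norm and uses $A_jA_k^*=\delta_{j,k}I_{\mathcal{H}_0}$; these are the same computation.
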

\begin{proof}
For every finite subset $ \mathbb{S}$ of  $\mathbb{J}$ and $ y \in \mathcal{H}_0$ we get
\begin{equation*}
 \left\|\sum_{j\in \mathbb{S}}A_j^*U_jy \right\|^2=\left\langle\sum_{j\in \mathbb{S}}U_jy,A_j\left(\sum_{k\in \mathbb{S}}A_k^*U_ky\right) \right \rangle =\sum_{j\in \mathbb{S}}\|U_jy\|^2.
\end{equation*} 
Thus $\{\sum_{j\in \mathbb{S}}A_j^*U_jy:\mathbb{S}~\text{ is a finite subset of }~ \mathbb{J}\}$ is a Cauchy net if and only if $\{\sum_{j\in \mathbb{S}}\|U_jy\|^2:\mathbb{S}~\text{ is a finite subset of }~ \mathbb{J}\} $ is a Cauchy net. Since $\mathcal{H}$ and $\mathbb{K}$ are complete, result follows. 
\end{proof}
\begin{corollary}
Let $ \{A_j\}_{j \in \mathbb{J}} $ be  orthonormal in $ \mathcal{B}(\mathcal{H}, \mathcal{H}_0)$,  $ \{c_j\}_{j \in \mathbb{J}} $ be a sequence of scalars  and $ y \in \mathcal{H}_0$. Then
$$\sum_{j\in\mathbb{J}}c_jA_j^*y ~\text{converges in} ~  \mathcal{H} ~ \text{if and only if}~\{c_j\|y\|\}_{j \in \mathbb{J}} \in \ell^2(\mathbb{J}).$$ 	
In particular, if $y \in \mathcal{H}_0 $ is nonzero, then  $\sum_{j\in\mathbb{J}}c_jA_j^*y $ converges in $ \mathcal{H} $ if and only if $\{c_j\}_{j \in \mathbb{J}} \in \ell^2(\mathbb{J}).$ 	
\end{corollary}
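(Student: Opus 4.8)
The plan is to deduce this directly from Theorem \ref{ERFOV} by specializing the operators $\{U_j\}_{j\in\mathbb{J}}$ to scalar multiples of the identity. First I would set $U_j \coloneqq c_j I_{\mathcal{H}_0}$ for each $j \in \mathbb{J}$. Since each $c_j$ is a scalar, these are bounded operators in $\mathcal{B}(\mathcal{H}_0)$, so the hypotheses of Theorem \ref{ERFOV} are met with $\{A_j\}_{j\in\mathbb{J}}$ orthonormal and $y \in \mathcal{H}_0$ as given. With this choice one has $A_j^* U_j y = c_j A_j^* y$ for every $j$, so the series $\sum_{j\in\mathbb{J}} A_j^* U_j y$ appearing in Theorem \ref{ERFOV} is literally $\sum_{j\in\mathbb{J}} c_j A_j^* y$.

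Next I would record the elementary norm identity $\|U_j y\|^2 = \|c_j y\|^2 = |c_j|^2 \|y\|^2 = (\,|c_j|\,\|y\|\,)^2$, so that $\sum_{j\in\mathbb{J}} \|U_j y\|^2$ converges if and only if $\{c_j \|y\|\}_{j\in\mathbb{J}} \in \ell^2(\mathbb{J})$. Feeding these two observations into Theorem \ref{ERFOV} (which asserts that $\sum_{j\in\mathbb{J}} A_j^* U_j y$ converges in $\mathcal{H}$ exactly when $\sum_{j\in\mathbb{J}} \|U_j y\|^2$ converges) immediately yields the stated equivalence: $\sum_{j\in\mathbb{J}} c_j A_j^* y$ converges in $\mathcal{H}$ if and only if $\{c_j \|y\|\}_{j\in\mathbb{J}} \in \ell^2(\mathbb{J})$.

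For the ``in particular'' clause I would note that when $y \neq 0$ the number $\|y\|$ is strictly positive, whence $\sum_{j\in\mathbb{J}} |c_j|^2 \|y\|^2 = \|y\|^2 \sum_{j\in\mathbb{J}} |c_j|^2$ is finite if and only if $\sum_{j\in\mathbb{J}} |c_j|^2$ is finite; consequently $\{c_j \|y\|\}_{j\in\mathbb{J}} \in \ell^2(\mathbb{J})$ if and only if $\{c_j\}_{j\in\mathbb{J}} \in \ell^2(\mathbb{J})$, which is the claim.

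There is essentially no obstacle here: the entire mathematical content is already carried by Theorem \ref{ERFOV}, and the corollary is obtained purely by the substitution $U_j = c_j I_{\mathcal{H}_0}$. The only point requiring any care is the trivial scalar bookkeeping in the last step, namely factoring out the positive constant $\|y\|^2$ and using $\|y\| \neq 0$ to pass between the $\ell^2$-condition on $\{c_j\|y\|\}_{j\in\mathbb{J}}$ and that on $\{c_j\}_{j\in\mathbb{J}}$.
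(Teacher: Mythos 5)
Your proof is correct and is exactly the intended argument: the paper states this corollary immediately after Theorem \ref{ERFOV} without proof, precisely because it follows by the specialization $U_j = c_j I_{\mathcal{H}_0}$ together with the observation $\|U_jy\| = |c_j|\|y\|$, as you carry out. The scalar bookkeeping for the ``in particular'' clause is also handled correctly.
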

 We set up the orthonormal and Riesz basis definitions in extended set up as 
\begin{definition}\label{ONBRIESZDEFINITION}
Let  $ \{A_j\}_{j \in \mathbb{J}},\{\Psi_j\}_{j \in \mathbb{J}} $ be in $ \mathcal{B}(\mathcal{H}, \mathcal{H}_0).$ 
We say 
\begin{enumerate}[\upshape(i)]
\item $ \{A_j\}_{j \in \mathbb{J}}$ is an orthonormal set (resp. basis)  w.r.t. $\{\Psi_j\}_{j \in \mathbb{J}} $ if  $ \{A_j\}_{j \in \mathbb{J}}$ or   $\{\Psi_j\}_{j \in \mathbb{J}} $ is an orthonormal set (resp. basis) in $ \mathcal{B}(\mathcal{H}, \mathcal{H}_0)$, say $ \{A_j\}_{j \in \mathbb{J}}$ is an orthonormal set  (resp. basis) in $ \mathcal{B}(\mathcal{H}, \mathcal{H}_0)$,   and there exists a sequence  $\{c_j\}_{j \in \mathbb{J}} $  of   reals such that  $ 0<\inf\{c_j\}_{j \in \mathbb{J}}\leq \sup\{c_j\}_{j \in \mathbb{J}}<\infty$   and $ \Psi_j=c_jA_j, \forall j \in \mathbb{J}.$ We write $ (\{A_j\}_{j \in \mathbb{J}}, \{\Psi_j\}_{j \in \mathbb{J}})$ is an orthonormal set (resp. basis).
\item $ \{A_j\}_{j \in \mathbb{J}}$ is a Riesz basis  w.r.t. $\{\Psi_j\}_{j \in \mathbb{J}} $ if there exists an orthonormal basis $ \{F_j\}_{j \in \mathbb{J}}$ in $ \mathcal{B}(\mathcal{H}, \mathcal{H}_0)$ and   invertible  $ U, V \in \mathcal{B}(\mathcal{H})$   with $ V^*U$ is positive  such that $ A_j=F_jU, \Psi_j=F_jV,  \forall j \in \mathbb{J}.$ We write $ (\{A_j\}_{j \in \mathbb{J}}, \{\Psi_j\}_{j \in \mathbb{J}})$ is Riesz a basis.
\end{enumerate}
\end{definition}
\begin{remark}
In \text{\upshape(ii)} of Definition \ref{ONBRIESZDEFINITION}, since $ U$ and $V$ are invertible, $ V^*U$ is not just positive, it is also invertible.
\end{remark}
 From the definition we see that whenever $( \{A_j\}_{j \in \mathbb{J}},\{\Psi_j\}_{j \in \mathbb{J}}) $  is an orthonormal (resp. Riesz) set (resp. basis), then $( \{\Psi_j\}_{j \in \mathbb{J}},\{A_j\}_{j \in \mathbb{J}}) $  is an orthonormal (resp. Riesz) set (resp. basis).
 
Whenever, $ \Psi_j=A_j,  \forall j \in \mathbb{J}$ we are forced to take $ c_j=1, \forall j \in \mathbb{J}$ (a right multiplication by $ A_j^*$ to $ A_j=c_jA_j$ gives this).  Thus Definition \ref{ONBRIESZDEFINITION} reduces to the definition of orthonormal basis for  `one collection case'.
  
\begin{theorem}\label{GBSOV}
 Let $( \{A_j\}_{j \in \mathbb{J}},\{\Psi_j=c_jA_j\}_{j \in \mathbb{J}}) $  be  orthonormal  in $ \mathcal{B}(\mathcal{H}, \mathcal{H}_0).$ Then 
 \begin{enumerate}[\upshape(i)]
\item Generalized Bessel's inequality - operator version: 
$$ \sum\limits_{j\in\mathbb{J}}(2-c_j)\langle A_jh, \Psi_jh\rangle\leq \|h\|^2 , ~\forall h \in \mathcal{H}.$$
If $ c_j\leq 2, \forall j \in \mathbb{J},$ then $ \sum_{j\in \mathbb{J}}(2-c_j)\Psi_j^*A_j\leq I_\mathcal{H}. $
\item For $ h \in \mathcal{H},$ 
 \begin{align*}
 h= \sum \limits_{j\in \mathbb{J}}\Psi_j^*A_jh \iff \sum\limits_{j\in\mathbb{J}}(2-c_j)\langle A_jh, \Psi_jh\rangle = \|h\|^2 \iff  \sum\limits_{j\in\mathbb{J}}c_j^2\| A_jh\|^2  = \|h\|^2.
 \end{align*}
 If $ c_j\leq 1, \forall j \in \mathbb{J},$ then 
 $ h= \sum_{j\in \mathbb{J}}\Psi_j^*A_jh\iff (1-c_j)A_jh=0 , \forall j \in \mathbb{J} \iff (1-c_j)A_j^*A_jh\perp h, \forall j \in \mathbb{J}.$

 \end{enumerate}
 \end{theorem}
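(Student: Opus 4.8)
The plan is to derive everything from one computation. Writing $\Psi_j=c_jA_j$ with each $c_j\in\mathbb{R}$, I would first record the reductions $\langle A_jh,\Psi_jh\rangle=c_j\|A_jh\|^2$ and $\Psi_j^*A_j=c_jA_j^*A_j$, and recall from orthonormality that $A_jA_k^*=\delta_{j,k}I_{\mathcal{H}_0}$, whence $\langle A_j^*A_jh,A_k^*A_kh\rangle=\delta_{j,k}\|A_jh\|^2$. The engine of the proof is the identity, valid for every finite $\mathbb{S}\subseteq\mathbb{J}$,
$$\left\|h-\sum_{j\in\mathbb{S}}\Psi_j^*A_jh\right\|^2=\|h\|^2-\sum_{j\in\mathbb{S}}(2-c_j)\langle A_jh,\Psi_jh\rangle,$$
obtained by expanding the left side exactly as in the proof of Lemma \ref{FIRSTIMPLIESSECONDLEMMA}: the two cross terms contribute $-2\sum_{j\in\mathbb{S}}c_j\|A_jh\|^2$ and the diagonal term contributes $+\sum_{j\in\mathbb{S}}c_j^2\|A_jh\|^2$, which combine to $-\sum_{j\in\mathbb{S}}(2c_j-c_j^2)\|A_jh\|^2$.

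Part (i) is then immediate. Nonnegativity of the left side gives $\sum_{j\in\mathbb{S}}(2-c_j)\langle A_jh,\Psi_jh\rangle\le\|h\|^2$ for every finite $\mathbb{S}$; since $\sum_j\|A_jh\|^2\le\|h\|^2$ by orthonormality and the $c_j$ satisfy $0<\inf c_j\le\sup c_j<\infty$, the series $\sum_j(2c_j-c_j^2)\|A_jh\|^2$ converges absolutely, and passing to the supremum over $\mathbb{S}$ yields the generalized Bessel inequality. When in addition $c_j\le2$, each $(2-c_j)\Psi_j^*A_j=(2-c_j)c_jA_j^*A_j$ is a positive operator, so the partial sums form an increasing net of positive operators that is bounded above in the quadratic-form order by $I_\mathcal{H}$ (by the scalar inequality just proved); by monotone convergence they converge in SOT to a positive operator dominated by $I_\mathcal{H}$, giving $\sum_j(2-c_j)\Psi_j^*A_j\le I_\mathcal{H}$.

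For part (ii) I would pass to the limit in the master identity. Because $\sum_jc_j^2\|A_jh\|^2$ converges and the $A_j^*$ have mutually orthogonal ranges, the net $\sum_{j\in\mathbb{S}}\Psi_j^*A_jh$ is Cauchy and converges to some $g:=\sum_{j\in\mathbb{J}}\Psi_j^*A_jh$; continuity of the norm turns the identity into $\|h-g\|^2=\|h\|^2-\sum_{j\in\mathbb{J}}(2-c_j)\langle A_jh,\Psi_jh\rangle$, which gives the first equivalence $h=g\iff\sum_j(2-c_j)\langle A_jh,\Psi_jh\rangle=\|h\|^2$ at once. For the last group of equivalences I would use that $P_j:=A_j^*A_j$ is the orthogonal projection onto the range of $A_j^*$ and that these projections are mutually orthogonal, so that, $\{A_j\}$ being an orthonormal basis, $\sum_jA_j^*A_jh=h$ and hence $h-g=\sum_j(1-c_j)A_j^*A_jh$ with $\|h-g\|^2=\sum_j(1-c_j)^2\|A_jh\|^2$. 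Each summand being nonnegative, $h=g$ holds iff $(1-c_j)A_jh=0$ for all $j$, which is the same as $(1-c_j)\|A_jh\|^2=0$, i.e. $(1-c_j)A_j^*A_jh\perp h$, for all $j$.

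The step I expect to be the real obstacle is the middle equivalence $\sum_jc_j^2\|A_jh\|^2=\|h\|^2\iff h=g$. By the Pythagorean identity for the orthogonal-range family one has $\sum_jc_j^2\|A_jh\|^2=\|g\|^2$, so this condition merely asserts $\|g\|=\|h\|$; the implication $h=g\Rightarrow\|g\|=\|h\|$ is trivial, but the converse is delicate, since $\|g\|=\|h\|$ only says $\sum_j(c_j^2-1)\|A_jh\|^2=0$ and, when some $c_j$ exceed $1$, positive and negative contributions can cancel without forcing $g=h$. The clean way through is to use the hypothesis $c_j\le1$, under which every term $(c_j^2-1)\|A_jh\|^2$ is nonpositive, so the sum vanishes exactly when each term does, i.e. when $c_j=1$ wherever $A_jh\ne0$ — precisely the condition $h=g$. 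Tracking where this sign restriction on the $c_j$ is genuinely needed, and separating it from part (i) and the first equivalence which never require it, is the main subtlety of the argument.
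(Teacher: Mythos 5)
Your computational engine is exactly the paper's: the identity $\bigl\|h-\sum_{j\in\mathbb{S}}\Psi_j^*A_jh\bigr\|^2=\|h\|^2-\sum_{j\in\mathbb{S}}(2-c_j)\langle A_jh,\Psi_jh\rangle$ is the same expansion the paper carries out (over all of $\mathbb{J}$ after checking convergence of $\sum_j c_j^2\|A_jh\|^2$), and your part (i) and the first equivalence in (ii) track the paper's proof line for line; the increasing-net argument for $\sum_{j\in\mathbb{J}}(2-c_j)\Psi_j^*A_j\leq I_\mathcal{H}$ is a harmless elaboration of what the paper leaves implicit.

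Where you diverge is the middle equivalence, and your suspicion is not a technicality: without a sign restriction on the $c_j$ the implication $\sum_{j}c_j^2\|A_jh\|^2=\|h\|^2\Rightarrow h=\sum_{j}\Psi_j^*A_jh$ is false. Take $\mathcal{H}=\mathbb{C}^2$, $\mathcal{H}_0=\mathbb{C}$, $A_j=\langle\cdot,e_j\rangle$ for $j=1,2$, $c_1=2$, $c_2=1/2$, and $h=ae_1+be_2$ with $|a|^2=3/4$, $|b|^2=3$: then $(\{A_j\},\{c_jA_j\})$ is an orthonormal basis in the sense of Definition \ref{ONBRIESZDEFINITION}, $\sum_jc_j^2\|A_jh\|^2=3+3/4=\|h\|^2$, yet $\sum_j\Psi_j^*A_jh=2ae_1+(b/2)e_2\neq h$. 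The paper's proof dismisses this step with ``we next use that $\{A_j\}_{j\in\mathbb{J}}$ is an orthonormal set,'' which does not close the gap, so your observation that $\sum_jc_j^2\|A_jh\|^2=\|g\|^2$ and that one needs all the terms $(c_j^2-1)\|A_jh\|^2$ to have one sign is the correct mathematics --- but it proves a repaired statement in which $c_j\leq 1$ (or some comparable hypothesis) is imposed on the middle equivalence, not the theorem as printed, and you should say so explicitly rather than silently import the hypothesis. A second, smaller instance of the same phenomenon occurs in your final chain: you invoke ``$\{A_j\}$ being an orthonormal basis, $\sum_jA_j^*A_jh=h$,'' whereas the stated hypothesis is only orthonormality as a set; if $h$ has a nonzero component orthogonal to every $A_j^*(\mathcal{H}_0)$ then $(1-c_j)A_jh=0$ holds vacuously for all $j$ while $h\neq\sum_j\Psi_j^*A_jh$, so the backward implication there genuinely needs $h\in\overline{\operatorname{span}}\cup_{j\in\mathbb{J}}A_j^*(\mathcal{H}_0)$ (e.g.\ the basis assumption you quietly made).
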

 \begin{proof}
\begin{enumerate}[\upshape(i)]
 \item For  $ h \in \mathcal{H}$ and  each finite subset $ \mathbb{S} \subseteq \mathbb{J},$
 \begin{align*}
  \left\| \sum_{j\in \mathbb{S}} \Psi_j^*A_jh\right\|^2
  &= \left\langle  \sum_{j\in \mathbb{S}} c_jA_j^*A_jh, \sum_{k\in \mathbb{S}} c_kA_k^*A_kh\right\rangle=\sum_{j\in \mathbb{S}} c_j^2 \|A_jh\|^2\\
  &\leq \left(\sup\{c_j^2\}_{j \in \mathbb{J}}\right)\sum\limits_{j \in \mathbb{S}}\| A_jh\| ^2, ~ \text{which is convergent.}
  \end{align*}
  Therefore $ \sum_{j\in \mathbb{J}} \Psi_j^*A_jh$ exists and similarly $\sum_{j\in \mathbb{J}}(2-c_j)\Psi_j^*A_jh $ also exists. Then 
  \begin{align*}
  0&\leq \left\|h-\sum_{j\in \mathbb{J}} \Psi_j^*A_jh\right\|^2 = \left\langle h-\sum_{j\in \mathbb{J}}c_jA_j^*A_jh, h-\sum_{k\in \mathbb{J}}c_kA_k^*A_kh\right\rangle \\
  &=\|h\|^2-2\sum_{j\in \mathbb{J}}c_j\|A_jh\|^2+\sum_{j\in \mathbb{J}}c^2_j\|A_jh \|^2=\|h\|^2-\sum_{j\in \mathbb{J}}(2c_j-c_j^2)\|A_jh \|^2,
  \end{align*}
  $ \Rightarrow\sum_{j\in \mathbb{J}}(2c_j-c_j^2)\|A_jh \|^2= \sum_{j\in\mathbb{J}}(2-c_j)\langle A_jh,\Psi_jh\rangle  \leq \|h\|^2.$ 
  \item First 'if and only if' part  comes from (i). We next use -  $\{A_j\}_{j\in \mathbb{J}} $ is an orthonormal set, which gives other  if's and only if's.
  \end{enumerate}
 \end{proof}
 \begin{remark}
 Since $\{c_j\}_{j\in \mathbb{J}} $ is bounded, one can simply get $ \sum_{j\in\mathbb{J}}(2-c_j)\langle A_jh, \Psi_jh\rangle\leq M\|h\|^2 , \forall h \in \mathcal{H} ,$ for some real $ M>0$. In fact,  one choice for $ M$ is $ \operatorname{sup}\{c_j|2-c_j|\}_{j\in \mathbb{J}}$.  What is interesting in the Theorem \ref{GBSOV} is that we can reduce $ M$  upto one.
 \end{remark}
 \begin{corollary}\label{GFEOV}
(Generalized Fourier expansion) Let $( \{A_j\}_{j \in \mathbb{J}},\{\Psi_j=c_jA_j\}_{j \in \mathbb{J}}) $  be an orthonormal basis in $ \mathcal{B}(\mathcal{H}, \mathcal{H}_0).$ Then
$$ \frac{1}{\sup\{c_j\}_{j \in \mathbb{J}}}\sum_{j\in \mathbb{J}}\Psi_j^*A_j\leq I_\mathcal{H}\leq \frac{1}{\inf\{c_j\}_{j \in \mathbb{J}}}\sum_{j\in \mathbb{J}}\Psi_j^*A_j.$$
 \end{corollary}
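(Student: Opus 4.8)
The plan is to verify the two operator inequalities by testing against an arbitrary $h\in\mathcal{H}$ and reducing everything to a termwise scalar sandwich. By Definition \ref{ONBRIESZDEFINITION}, the hypothesis says precisely that $\{A_j\}_{j\in\mathbb{J}}$ is an orthonormal basis in $\mathcal{B}(\mathcal{H},\mathcal{H}_0)$, that $\Psi_j=c_jA_j$, and that $0<\inf\{c_j\}_{j\in\mathbb{J}}\le\sup\{c_j\}_{j\in\mathbb{J}}<\infty$. In particular each $c_j$ is a positive real, so $\Psi_j^*A_j=c_jA_j^*A_j\ge 0$, and the convergence of $\sum_{j\in\mathbb{J}}\Psi_j^*A_j$ in SOT to a positive operator $S_{A,\Psi}$ is already guaranteed by the preliminary estimate in the proof of Theorem \ref{GBSOV}, where $\sum_{j}c_j^2\|A_jh\|^2$ is dominated by $(\sup\{c_j^2\}_{j\in\mathbb{J}})\sum_j\|A_jh\|^2$.

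First I would compute, for a fixed $h\in\mathcal{H}$,
$$\left\langle \Big(\sum_{j\in\mathbb{J}}\Psi_j^*A_j\Big)h,h\right\rangle=\sum_{j\in\mathbb{J}}c_j\langle A_j^*A_jh,h\rangle=\sum_{j\in\mathbb{J}}c_j\|A_jh\|^2,$$
the interchange of the continuous functional $\langle\,\cdot\,,h\rangle$ with the SOT-convergent net of finite partial sums being legitimate by the convergence just noted. Since $\{A_j\}_{j\in\mathbb{J}}$ is an orthonormal \emph{basis}, Definition \ref{ONBDEFINITIONOVHS} supplies the Parseval equality $\sum_{j\in\mathbb{J}}\|A_jh\|^2=\|h\|^2$, which is the crucial input that distinguishes this corollary from the mere inequality of the set case.

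Next, bounding each coefficient between its infimum and supremum gives $(\inf\{c_j\}_{j\in\mathbb{J}})\|A_jh\|^2\le c_j\|A_jh\|^2\le(\sup\{c_j\}_{j\in\mathbb{J}})\|A_jh\|^2$; summing over $\mathbb{J}$ and invoking the Parseval equality yields
$$(\inf\{c_j\}_{j\in\mathbb{J}})\,\|h\|^2\le\left\langle \Big(\sum_{j\in\mathbb{J}}\Psi_j^*A_j\Big)h,h\right\rangle\le(\sup\{c_j\}_{j\in\mathbb{J}})\,\|h\|^2.$$
Dividing the left estimate by $\inf\{c_j\}_{j\in\mathbb{J}}>0$ and the right by $\sup\{c_j\}_{j\in\mathbb{J}}>0$, and then re-reading both as statements about the self-adjoint operators $S_{A,\Psi}$ and $I_\mathcal{H}$ that hold for every $h$, delivers exactly $\tfrac{1}{\sup\{c_j\}_{j\in\mathbb{J}}}S_{A,\Psi}\le I_\mathcal{H}\le\tfrac{1}{\inf\{c_j\}_{j\in\mathbb{J}}}S_{A,\Psi}$.

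There is no substantial obstacle here: once the Parseval equality of the orthonormal basis is in hand, the statement is a one-line $\inf/\sup$ estimate applied termwise. The only point requiring any care is ensuring the inner product commutes with the SOT-convergent series, which is immediate from the already established convergence, so the entire argument is essentially bookkeeping around Theorem \ref{GBSOV} and Definition \ref{ONBDEFINITIONOVHS}.
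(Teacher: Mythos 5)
Your proof is correct and follows essentially the same route as the paper: both arguments rest on the Parseval identity $\sum_{j\in\mathbb{J}}A_j^*A_j=I_\mathcal{H}$ for the orthonormal basis together with the termwise bound $\inf\{c_j\}\le c_j\le\sup\{c_j\}$, the only difference being that you phrase the comparison through the quadratic forms $\langle\,\cdot\,h,h\rangle$ while the paper writes the same sandwich directly at the operator level.
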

 \begin{proof}
From the proof of Theorem \ref{GBSOV}, $\sum_{j\in \mathbb{J}}\Psi_j^*A_j $ exists, in SOT. Then $ \frac{1}{\sup\{c_j\}_{j \in \mathbb{J}}}\sum_{j\in \mathbb{J}}\Psi_j^*A_j=\frac{1}{\sup\{c_j\}_{j \in \mathbb{J}}}\sum_{j\in \mathbb{J}}c_jA_j^*A_j\leq \sum_{j\in \mathbb{J}}A_j^*A_j= I_\mathcal{H}\leq\frac{1}{\inf\{c_j\}_{j \in \mathbb{J}}}\sum_{j\in \mathbb{J}}c_jA_j^*A_j= \frac{1}{\inf\{c_j\}_{j \in \mathbb{J}}}\sum_{j\in \mathbb{J}}\Psi_j^*A_j.$
\end{proof}
\begin{remark}
Whenever $ \Psi_j=A_j ,\forall j \in \mathbb{J}, \mathcal{H}_0=\mathbb{C}$,  last corollary gives Fourier expansion.
\end{remark}
\begin{corollary}
 If  $( \{A_j\}_{j \in \mathbb{J}},\{\Psi_j=c_jA_j\}_{j \in \mathbb{J}}) $  is  an orthonormal basis in $ \mathcal{B}(\mathcal{H}, \mathcal{H}_0)$, then $\inf\{c_j\}_{j \in \mathbb{J}} \leq \|\sum_{j\in \mathbb{J}}\Psi_j^*A_j\| \leq \sup\{c_j\}_{j \in \mathbb{J}}.$
 \end{corollary}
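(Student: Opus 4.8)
The plan is to read the norm bounds directly off the operator inequalities already recorded in Corollary \ref{GFEOV}. Write $S \coloneqq \sum_{j\in\mathbb{J}}\Psi_j^*A_j = \sum_{j\in\mathbb{J}} c_jA_j^*A_j$, where the series converges in SOT by the computation in the proof of Theorem \ref{GBSOV}. Since each $c_j>0$ and each $A_j^*A_j$ is positive, each finite partial sum is positive and self-adjoint, so the SOT-limit $S$ is a positive (in particular self-adjoint) operator in $\mathcal{B}(\mathcal{H})$. The entire task then reduces to sandwiching the single number $\|S\|$ between $\inf\{c_j\}_{j\in\mathbb{J}}$ and $\sup\{c_j\}_{j\in\mathbb{J}}$.

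First I would recast the two-sided estimate of Corollary \ref{GFEOV} in scalar-multiplied form. Multiplying $\frac{1}{\sup\{c_j\}_{j\in\mathbb{J}}}S \le I_\mathcal{H}$ by the positive real $\sup\{c_j\}_{j\in\mathbb{J}}$, and $I_\mathcal{H} \le \frac{1}{\inf\{c_j\}_{j\in\mathbb{J}}}S$ by the positive real $\inf\{c_j\}_{j\in\mathbb{J}}$, yields
$$\inf\{c_j\}_{j\in\mathbb{J}}\, I_\mathcal{H} \le S \le \sup\{c_j\}_{j\in\mathbb{J}}\, I_\mathcal{H}.$$
Both rescalings are legitimate precisely because $0<\inf\{c_j\}_{j\in\mathbb{J}}\le \sup\{c_j\}_{j\in\mathbb{J}}<\infty$, a requirement built into the definition of an orthonormal basis w.r.t. a second collection (Definition \ref{ONBRIESZDEFINITION}).

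The remaining step converts this operator sandwich into the stated norm bounds. For a positive operator one has $\|S\|=\sup_{\|h\|=1}\langle Sh,h\rangle$. The upper estimate $S \le \sup\{c_j\}_{j\in\mathbb{J}}\, I_\mathcal{H}$ gives $\langle Sh,h\rangle\le \sup\{c_j\}_{j\in\mathbb{J}}$ for every unit vector $h$, hence $\|S\|\le \sup\{c_j\}_{j\in\mathbb{J}}$; the lower estimate $\inf\{c_j\}_{j\in\mathbb{J}}\, I_\mathcal{H}\le S$ gives $\langle Sh,h\rangle\ge \inf\{c_j\}_{j\in\mathbb{J}}$ for every unit vector $h$, hence $\|S\|=\sup_{\|h\|=1}\langle Sh,h\rangle\ge \inf\{c_j\}_{j\in\mathbb{J}}$. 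Combining the two inequalities delivers the conclusion.

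I do not expect a genuine obstacle here; the only point deserving care is the justification that $S$ is actually positive and self-adjoint (so that the characterization $\|S\|=\sup_{\|h\|=1}\langle Sh,h\rangle$ is available) together with the validity of rescaling the inequalities of Corollary \ref{GFEOV}, both of which rest entirely on the strict positivity and finiteness of $\inf\{c_j\}_{j\in\mathbb{J}}$ and $\sup\{c_j\}_{j\in\mathbb{J}}$ guaranteed by Definition \ref{ONBRIESZDEFINITION}.
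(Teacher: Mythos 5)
Your proof is correct and follows the same route as the paper, which simply says ``take norm in the generalized Fourier expansion'' (Corollary \ref{GFEOV}); you have merely spelled out the details of rescaling the operator inequalities and of passing from the sandwich $\inf\{c_j\}_{j\in\mathbb{J}}\,I_\mathcal{H}\leq S_{A,\Psi}\leq \sup\{c_j\}_{j\in\mathbb{J}}\,I_\mathcal{H}$ to norm bounds via $\|S_{A,\Psi}\|=\sup_{\|h\|=1}\langle S_{A,\Psi}h,h\rangle$ for the positive operator $S_{A,\Psi}$. Nothing further is needed.
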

 \begin{proof}
 Take norm in generalized Fourier expansion.
 \end{proof}
 \begin{corollary}
 If $( \{A_j\}_{j \in \mathbb{J}},\{\Psi_j=c_jA_j\}_{j \in \mathbb{J}}) $  is orthonormal  in $ \mathcal{B}(\mathcal{H}, \mathcal{H}_0)$ such that $c_j\leq 2, \forall j \in \mathbb{J},$ then  $\| \sum_{j\in \mathbb{J}}(2-c_j)\Psi_j^*A_j\|\leq 1. $	
 \end{corollary}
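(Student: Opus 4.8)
The plan is to show that the operator $T \coloneqq \sum_{j\in\mathbb{J}}(2-c_j)\Psi_j^*A_j$ satisfies $0 \leq T \leq I_\mathcal{H}$, since for a positive (hence self-adjoint) operator dominated by the identity the norm bound $\|T\|\leq 1$ is immediate. The upper estimate $T \leq I_\mathcal{H}$ is already supplied by Theorem \ref{GBSOV}(i) under the standing hypothesis $c_j \leq 2$, so the only genuinely new thing to check is the positivity of $T$, and even that turns out to be routine.

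First I would invoke the definition of orthonormality w.r.t. $\{\Psi_j\}_{j\in\mathbb{J}}$ from Definition \ref{ONBRIESZDEFINITION}(i): the scalars $c_j$ are reals obeying $0<\inf\{c_j\}_{j \in \mathbb{J}}\leq \sup\{c_j\}_{j \in \mathbb{J}}<\infty$, so in particular $c_j>0$ for every $j$. Since $c_j$ is a real scalar, $\Psi_j^* = c_j A_j^*$ and therefore $\Psi_j^*A_j = c_j A_j^*A_j$, which lets me rewrite each summand as $(2-c_j)\Psi_j^*A_j = (2-c_j)c_j\, A_j^*A_j$. Because $0 < c_j \leq 2$, the scalar $(2-c_j)c_j$ is nonnegative, while $A_j^*A_j \geq 0$ for each $j$; hence every term is a positive operator.

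Next, each partial sum over a finite $\mathbb{S}\subseteq\mathbb{J}$ is a finite sum of positive operators and so is itself positive, and positivity passes to the strong-operator limit: for fixed $h$ the inequality $\langle T_{\mathbb{S}}h,h\rangle \geq 0$ survives the limit $T_{\mathbb{S}}\to T$. This gives $T\geq 0$. Combining $0 \leq T \leq I_\mathcal{H}$ with the self-adjointness of $T$ yields $\|T\| = \sup_{\|h\|=1}\langle Th,h\rangle \leq \sup_{\|h\|=1}\langle h,h\rangle = 1$, which is the claim. There is no real obstacle here; the whole content is the observation that $T$ is a positive contraction, packaged from the generalized Bessel inequality of Theorem \ref{GBSOV}.
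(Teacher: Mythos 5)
Your proof is correct and follows exactly the route the paper intends: the paper states this corollary without proof as an immediate consequence of Theorem \ref{GBSOV}(i), and your argument simply fills in the omitted details (positivity of each summand $(2-c_j)c_j A_j^*A_j$, passage of positivity to the SOT limit, and the norm formula for a positive operator dominated by $I_\mathcal{H}$). Nothing further is needed.
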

Following theorem is a generalization of  ``If $ \{e_j\}_{j \in \mathbb{J}}$ is  orthonormal for $ \mathcal{H}$, then for each $ h \in \mathcal{H}$, the set $ Y_h=\{e_j: \langle h, e_j\rangle \neq0,j \in \mathbb{J} \}$ is either finite or countable'', operator version.
\begin{theorem}
If $ (\{A_j\}_{j \in \mathbb{J}},\{\Psi_j=c_jA_j\}_{j \in \mathbb{J}}) $ is   orthonormal  in  $ \mathcal{B}(\mathcal{H}, \mathcal{H}_0)$ with $ c_j\leq2,\forall j \in \mathbb{J} $, then	for each $ h \in \mathcal{H}$, the set $ Y_h=\{A_j: (2-c_j)\langle  A_jh, \Psi_jh \rangle \neq0,j \in \mathbb{J} \}$ is either  finite or countable.
\end{theorem}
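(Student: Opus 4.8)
The plan is to reduce the assertion to the classical fact that a convergent series of nonnegative reals can have at most countably many nonzero terms, with the generalized Bessel inequality of Theorem \ref{GBSOV} supplying exactly such a series.

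First I would examine the summand. Since $\Psi_j = c_j A_j$ with $c_j$ real, each term factors as
$$(2-c_j)\langle A_jh, \Psi_jh\rangle = (2-c_j)c_j\|A_jh\|^2.$$
Because the defining hypothesis of an orthonormal pair forces $0 < \inf\{c_j\}_{j\in\mathbb{J}}$, we have $c_j > 0$, and the extra assumption $c_j \leq 2$ gives $2 - c_j \geq 0$; hence every summand is nonnegative. This sign observation is the one genuinely load-bearing step, and is what converts an a priori signed (merely bounded-above) quantity into a bona fide convergent series of nonnegative numbers.

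Next I would invoke Theorem \ref{GBSOV}(i) to obtain $\sum_{j\in\mathbb{J}}(2-c_j)\langle A_jh, \Psi_jh\rangle \leq \|h\|^2$ for the fixed $h$. Writing $a_j \coloneqq (2-c_j)\langle A_jh, \Psi_jh\rangle \geq 0$, the index set underlying $Y_h$ is precisely $\{j \in \mathbb{J} : a_j > 0\}$; and since orthonormality yields $A_jA_k^* = \delta_{j,k}I_{\mathcal{H}_0}$, the assignment $j \mapsto A_j$ is injective, so counting the operators in $Y_h$ is the same as counting these indices.

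Finally I would run the routine pigeonhole argument on the bounded nonnegative net of finite partial sums: for each $n \in \mathbb{N}$ the set $\{j : a_j > 1/n\}$ must be finite, since otherwise a single finite subsum would already exceed $\|h\|^2$. Consequently $\{j : a_j > 0\} = \bigcup_{n=1}^{\infty}\{j : a_j > 1/n\}$ is a countable union of finite sets, hence at most countable, and so is $Y_h$. I do not anticipate any real obstacle here beyond the sign verification in the first step; once Theorem \ref{GBSOV} delivers the bounded nonnegative series, the countability conclusion is essentially automatic.
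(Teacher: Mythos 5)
Your proof is correct and follows essentially the same route as the paper's: both apply the generalized Bessel inequality of Theorem \ref{GBSOV} and then the standard pigeonhole argument showing each level set $\{j: a_j > 1/n\}$ is finite, so that $Y_h$ is a countable union of finite sets. Your explicit verification that the summands $(2-c_j)c_j\|A_jh\|^2$ are nonnegative (needed to pass from the total sum to finite subsums) is a detail the paper leaves implicit, and is a worthwhile addition.
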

\begin{proof}
For $ n \in \mathbb{N}$, define 
 $$ Y_{n,h}\coloneqq\left\{A_j: (2-c_j)\langle  A_jh, \Psi_jh \rangle > \frac{1}{n}\|h\|^2, j\in \mathbb{J} \right\}.$$	
   
Suppose, for some $n$, $ Y_{n,h}$ has more than $n-1$ elements, say $A_1,...,A_n$. Then $ \sum_{j=1 }^n(2-c_j)\langle  A_jh, \Psi_jh \rangle > n\frac{1}{n}\|h\|^2=\|h\|^2$. From Theorem \ref{GBSOV},  $ \sum_{j\in\mathbb{J}}(2-c_j)\langle  A_jh, \Psi_jh \rangle \leq \|h\|^2 $. This gives $  \|h\|^2< \|h\|^2$ which is impossible. Hence $ \operatorname{Card}(Y_{n,h})\leq n-1$. Thus the  countable union  $\cup_{n=1}^\infty Y_{n,h}=Y_h$ is  countable.	
 \end{proof}
 \begin{theorem}\label{ONBIMPLIESOVF}
 \begin{enumerate}[\upshape(i)]
 \item If $ (\{A_j\}_{j \in \mathbb{J}},\{\Psi_j\}_{j \in \mathbb{J}}) $ is an orthonormal basis in  $ \mathcal{B}(\mathcal{H}, \mathcal{H}_0)$, then it is a Riesz basis.
 \item If $ (\{A_j=F_jU\}_{j \in \mathbb{J}},\{\Psi_j=F_jV\}_{j \in \mathbb{J}}) $ is a Riesz  basis in  $ \mathcal{B}(\mathcal{H}, \mathcal{H}_0)$, then it is an (ovf) with optimal frame bounds $ \|(V^*U)^{-1}\|^{-1}$ and  $\|V^*U\| $.
 \end{enumerate}
 \end{theorem}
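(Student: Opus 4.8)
The plan is to treat the two implications separately: for (i) I would construct explicit Riesz-basis data, and for (ii) I would verify the two conditions of Definition \ref{1} directly and then read off the frame operator.

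For (i), by Definition \ref{ONBRIESZDEFINITION}(i) one of the two collections is an orthonormal basis and the other is $c_j$ times it; say $\{A_j\}_{j\in\mathbb{J}}$ is an orthonormal basis in $\mathcal{B}(\mathcal{H},\mathcal{H}_0)$ and $\Psi_j=c_jA_j$ with $0<\inf_j c_j\le\sup_j c_j<\infty$ (the other labelling is symmetric). To produce the data required by Definition \ref{ONBRIESZDEFINITION}(ii) I would take $F_j:=A_j$, $U:=I_{\mathcal{H}}$, and define $V:=\sum_{j\in\mathbb{J}}c_jA_j^*A_j$. First I would check SOT-convergence of this series: for fixed $h$ the vectors $\{A_j^*A_jh\}_j$ are mutually orthogonal with $\|A_j^*A_jh\|=\|A_jh\|$, so $\sum_j\|c_jA_j^*A_jh\|^2=\sum_j c_j^2\|A_jh\|^2\le(\sup_j c_j)^2\|h\|^2<\infty$. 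The limit $V$ is self-adjoint and positive, and since $\langle Vh,h\rangle=\sum_j c_j\|A_jh\|^2\ge(\inf_j c_j)\sum_j\|A_jh\|^2=(\inf_j c_j)\|h\|^2$ (using the orthonormal-basis identity $\sum_j\|A_jh\|^2=\|h\|^2$), we get $V\ge(\inf_j c_j)I_{\mathcal{H}}$, so $V$ is invertible. Using $A_kA_j^*=\delta_{k,j}I_{\mathcal{H}_0}$ one computes $A_kV=\sum_j c_jA_kA_j^*A_j=c_kA_k=\Psi_k$, giving $A_j=F_jU$ and $\Psi_j=F_jV$ with $V^*U=V$ positive invertible — exactly a Riesz basis.

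For (ii), since $\{F_j\}$ is an orthonormal basis we have $\sum_jF_j^*F_j=I_{\mathcal{H}}$ in SOT, so for every $h$, $\sum_j\Psi_j^*A_jh=V^*\bigl(\sum_jF_j^*F_j\bigr)Uh=V^*Uh$; thus condition (i) of Definition \ref{1} holds with $S_{A,\Psi}=V^*U$, which is positive invertible by the hypotheses on $U,V$ (cf. the Remark after Definition \ref{ONBRIESZDEFINITION}). For condition (ii), the analysis operator $\theta_F=\sum_jL_jF_j$ is an isometry (indeed $\theta_Fh=\sum_je_j\otimes F_jh$ converges because $\sum_j\|F_jh\|^2<\infty$, and $\|\theta_Fh\|^2=\sum_j\|F_jh\|^2=\|h\|^2$), whence $\sum_jL_jA_j=\theta_FU$ and $\sum_jL_j\Psi_j=\theta_FV$ converge in SOT to bounded operators. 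Hence $(\{A_j\},\{\Psi_j\})$ is an (ovf) with frame operator $S_{A,\Psi}=V^*U$, and the formulas $a=\|S_{A,\Psi}^{-1}\|^{-1}$, $b=\|S_{A,\Psi}\|$ from Definition \ref{1} (valid as $S_{A,\Psi}$ is positive invertible) give the optimal bounds $\|(V^*U)^{-1}\|^{-1}$ and $\|V^*U\|$. The SOT-convergence bookkeeping is routine; the one genuinely substantive step is the construction of $V$ in (i), where the key realization is that a single bounded operator encodes the whole family $\{c_j\}$ through the orthonormal decomposition and that $\inf_j c_j$, $\sup_j c_j$ control its invertibility and norm. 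A mild subtlety to flag is that $\sum_jF_j^*F_j=I$ is used in both parts; over real scalars one justifies it by noting $\sum_jF_j^*F_j-I$ is self-adjoint with vanishing quadratic form.
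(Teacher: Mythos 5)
Your proof is correct and follows essentially the same route as the paper's: in (i) the same data $F_j=A_j$, $U=I_{\mathcal{H}}$, $V=\sum_{j}c_jA_j^*A_j$ is constructed (you certify invertibility via the lower bound $V\ge(\inf_jc_j)I_{\mathcal{H}}$ where the paper exhibits the explicit inverse $\sum_jc_j^{-1}A_j^*A_j$ — both fine), and in (ii) the same factorizations $\theta_A=\theta_FU$, $\theta_\Psi=\theta_FV$, $S_{A,\Psi}=V^*U$ are used. Your remark on the real-scalar case of $\sum_jF_j^*F_j=I_{\mathcal{H}}$ is a welcome extra care, since the paper only asserts this identity for complex spaces.
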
 
 \begin{proof}
 \begin{enumerate}[\upshape(i)]
 \item We may assume  $ \{A_j\}_{j \in \mathbb{J}}$ is an orthonormal basis.  Then there exists a sequence  $\{c_j\}_{j \in \mathbb{J}} $  of   reals such that  $ 0<\inf\{c_j\}_{j \in \mathbb{J}}\leq \sup\{c_j\}_{j \in \mathbb{J}}<\infty$   and $ \Psi_j=c_jA_j, \forall j \in \mathbb{J}.$ Define $ F_j\coloneqq A_j, \forall j \in \mathbb{J},  U\coloneqq I_\mathcal{H}$ and  $ V \coloneqq \sum_{j\in \mathbb{J}}c_jA_j^*A_j.$ From the proof of Theorem \ref{GBSOV}, $ V$ is a well-defined bounded operator. Then $ F_jU=F_jI_\mathcal{H}=F_j=A_j, F_jV=\sum_{k\in \mathbb{J}}c_kF_jA_k^*A_k=\sum_{k\in \mathbb{J}}c_kA_jA_k^*A_k=c_jA_j=\Psi_j, \forall j \in \mathbb{J}.$  Since all $c_j $'s are positive,  $ V$ is positive invertible, whose inverse is $\sum_{j\in \mathbb{J}}c_j^{-1}A_j^*A_j. $ We finally note $ V^*U=V\geq 0.$
 \item For every finite subset $\mathbb{S} $ of $\mathbb{J}$ and $ h \in \mathcal{H},$ we get  $ \|\sum_{j\in \mathbb{S}}L_jF_jh\|^2=\sum_{j\in \mathbb{S}}\|F_jh\|^2$ and this converges to $ \|h\|^2.$ Therefore $ \theta_A=\theta_FU$ exists as bounded operator. Also, $ \|\sum_{j\in \mathbb{S}}L_j\Psi_jh\|^2=\sum_{j\in \mathbb{S}}\|c_jA_jh\|^2 \leq (\sup\{c^2_j\}_{j \in \mathbb{J}})\sum_{j\in \mathbb{S}}\|A_jh\|^2  $ and this converges. Therefore $ \theta_\Psi=\theta_FV$ also exists as bounded operator. Now $ S_{A,\Psi}= \sum_{j\in\mathbb{J}}V^*F_j^*F_jU=V^*I_\mathcal{H}U=V^*U$ which is positive invertible. This also gives optimal frame bounds.
 \end{enumerate}
 \end{proof} 
 \begin{remark}
 Whenever  $ (\{A_j\}_{j \in \mathbb{J}},\{\Psi_j=c_jA_j\}_{j \in \mathbb{J}}) $ is an orthonormal basis in  $ \mathcal{B}(\mathcal{H}, \mathcal{H}_0),$ we can explicitly write the inverse of $ S_{A, \Psi}$ using $ c_j$'s and $ A_j$'s, and it is $\sum_{j\in\mathbb{J}} c_j^{-1}A_j^*A_j.$ In fact,  $ S_{A, \Psi}(\sum_{j\in\mathbb{J}} c_j^{-1}A_j^*A_j)=\sum_{k \in \mathbb{J}}c_kA_k^*(\sum_{j\in\mathbb{J}} c_j^{-1}A_kA_j^*A_j)= I_\mathcal{H}, (\sum_{j\in\mathbb{J}} c_j^{-1}A_j^*A_j)S_{A,\Psi}= \sum_{j\in\mathbb{J}} c_j^{-1}A_j^*(\sum_{k\in\mathbb{J}} c_k^{-1}A_jA_k^*A_k)=I_\mathcal{H}.$
 \end{remark}
 
 \begin{theorem}
 Let $ (\{A_j=F_jU\}_{j \in \mathbb{J}},\{\Psi_j=F_jV\}_{j \in \mathbb{J}}) $ be a Riesz  basis in  $ \mathcal{B}(\mathcal{H}, \mathcal{H}_0)$. Then
\begin{enumerate}[\upshape(i)]
 \item There exists unique  $ \{B_j\}_{j\in\mathbb{J}}, \{\Phi_j\}_{j\in\mathbb{J}}$ in $ \mathcal{B}(\mathcal{H}, \mathcal{H}_0)$ such that 
 $$ I_\mathcal{H}= \sum\limits_{j\in\mathbb{J}}B_j^*A_j = \sum\limits_{j\in\mathbb{J}}\Phi_j^*\Psi_j, \text{in SOT}$$ and $( \{B_j\}_{j\in\mathbb{J}}, \{\Phi_j\}_{j\in\mathbb{J}})$ is Riesz (ovf).
 \item $\{h \in \mathcal{H}: A_jh=0, \forall j \in \mathbb{J}\} =\{0\}= \{h \in \mathcal{H}: \Psi_jh=0, \forall j \in \mathbb{J}\} $, and $\overline{\operatorname{span}}\cup_{j\in \mathbb{J}} A^*_j(\mathcal{H}_0)=\mathcal{H}= \overline{\operatorname{span}}\cup_{j\in \mathbb{J}} \Psi^*_j(\mathcal{H}_0).$  If $ VU^*\geq0,$ then there are real $ a, b >0$ such that for every finite subset $ \mathbb{S}$ of $ \mathbb{J}$,
 $$ a\sum\limits_{j\in \mathbb{S}}|c_j|^2\|h\|^2\leq \left\langle \sum\limits_{j\in\mathbb{S}}c_jA^*_jh ,\sum\limits_{k\in\mathbb{S}}c_k\Psi_k^*h  \right \rangle \leq b\sum\limits_{j\in \mathbb{S}}|c_j|^2\|h\|^2, ~\forall h \in \mathcal{H},\forall c_j\in \mathbb{K}, \forall j\in \mathbb{S}.$$ 
 \end{enumerate}
 \end{theorem}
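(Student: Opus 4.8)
The plan is to work entirely through the factorization $A_j=F_jU$, $\Psi_j=F_jV$ and the two structural identities of an orthonormal basis in $\mathcal{B}(\mathcal{H},\mathcal{H}_0)$: the biorthogonality $F_jF_k^*=\delta_{j,k}I_{\mathcal{H}_0}$ and the resolution $\sum_{j\in\mathbb{J}}F_j^*F_j=I_\mathcal{H}$ in SOT. By Theorem \ref{ONBIMPLIESOVF} the pair is an (ovf) with $S_{A,\Psi}=V^*U$ positive invertible, and I will repeatedly use that $V^*U$ is self-adjoint, i.e. $U^*V=(V^*U)^*=V^*U$, and that $(V^*U)^{-1}$ is again self-adjoint.

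For the existence half of (i) I would exhibit the candidates explicitly: $B_j:=\Psi_j(V^*U)^{-1}=F_jV(V^*U)^{-1}$ and $\Phi_j:=A_j(V^*U)^{-1}=F_jU(V^*U)^{-1}$. Pulling the fixed operators through the SOT-convergent series and inserting $\sum_jF_j^*F_j=I_\mathcal{H}$ collapses $\sum_jB_j^*A_j$ to $(V^*U)^{-1}V^*U=I_\mathcal{H}$ and $\sum_j\Phi_j^*\Psi_j$ to $(V^*U)^{-1}U^*V=I_\mathcal{H}$, the latter needing $U^*V=V^*U$. To recognize $(\{B_j\},\{\Phi_j\})$ as a Riesz (ovf) I present the factorization through the same $\{F_j\}$ with the invertible operators $\tilde U:=V(V^*U)^{-1}$, $\tilde V:=U(V^*U)^{-1}$, so that $B_j=F_j\tilde U$, $\Phi_j=F_j\tilde V$ and $\tilde V^*\tilde U=(V^*U)^{-1}\ge 0$ is invertible; Definition \ref{ONBRIESZDEFINITION} and Theorem \ref{ONBIMPLIESOVF} then give the conclusion.

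The subtle part of (i) is uniqueness, and I would establish it from biorthogonality rather than from the frame operator. If $(B_j')^*$ is any family with $\sum_j(B_j')^*A_j=I_\mathcal{H}$, then right multiplication by $U^{-1}$ (which is SOT-continuous) gives $\sum_j(B_j')^*F_j=U^{-1}=\sum_jB_j^*F_j$; setting $C_j:=B_j-B_j'$ leaves $\sum_jC_j^*F_j=0$ in SOT. Applying this vanishing operator to $F_k^*y$ and using $F_jF_k^*=\delta_{j,k}I_{\mathcal{H}_0}$ collapses the net of partial sums to the constant $C_k^*y$, forcing $C_k=0$ for every $k$; the companion argument with $V^{-1}$ settles $\{\Phi_j\}$. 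I expect this to be the main obstacle, as it is exactly where the exactness of a Riesz basis (in contrast to a general frame, whose dual is not unique) is used.

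For (ii) the trivial-kernel and dense-range assertions are inherited from the (ovf) properties listed after Definition \ref{1}, or equivalently follow from $\sum_j\|F_j(Uh)\|^2=\|Uh\|^2$ together with the invertibility of $U$ and $V$. For the two-sided estimate I would expand the central inner product as $\sum_{j,k\in\mathbb{S}}c_j\overline{c_k}\langle A_j^*h,\Psi_k^*h\rangle$ and rewrite each term via $A_j^*=U^*F_j^*$, $\Psi_k^*=V^*F_k^*$ as $\langle (VU^*)F_j^*h,F_k^*h\rangle$, turning the whole expression into $\langle (VU^*)g,g\rangle$ with $g:=\sum_{j\in\mathbb{S}}c_jF_j^*h$. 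Biorthogonality yields $\|g\|^2=\big(\sum_{j\in\mathbb{S}}|c_j|^2\big)\|h\|^2$, and since the hypothesis $VU^*\ge 0$ makes $VU^*$ positive and invertible, the operator bounds $\|(VU^*)^{-1}\|^{-1}I\le VU^*\le\|VU^*\|I$ sandwich $\langle (VU^*)g,g\rangle$ between $a\|g\|^2$ and $b\|g\|^2$ with $a=\|(VU^*)^{-1}\|^{-1}$ and $b=\|VU^*\|$, which is the claimed inequality. I would stress that positivity of $VU^*$ is used precisely here and is genuinely needed, since without it the quadratic form $\langle (VU^*)g,g\rangle$ need not even be real.
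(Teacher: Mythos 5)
Your proof is correct and follows essentially the same route as the paper's: explicit candidates checked against $\sum_{j\in\mathbb{J}}F_j^*F_j=I_\mathcal{H}$, uniqueness forced by biorthogonality after cancelling $U$ (resp.\ $V$), and the estimate in (ii) obtained by rewriting the pairing as $\langle VU^*g,g\rangle$ with $g=\sum_{j\in\mathbb{S}}c_jF_j^*h$ and $\|g\|^2=\big(\sum_{j\in\mathbb{S}}|c_j|^2\big)\|h\|^2$. Your candidates $B_j=\Psi_j(V^*U)^{-1}$ and $\Phi_j=A_j(V^*U)^{-1}$ reduce, via the self-adjointness of $V^*U$, to the paper's $F_j(U^{-1})^*$ and $F_j(V^{-1})^*$, so the two constructions coincide.
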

 \begin{proof}
 \begin{enumerate}[\upshape(i)]
 \item Define $ B_j\coloneqq F_j(U^{-1})^*, \Phi_j\coloneqq F_j(V^{-1})^*, \forall j \in \mathbb{J}.$ Then $\sum_{j\in\mathbb{J}}B_j^*A_j =\sum_{j\in\mathbb{J}}U^{-1}F^*_jF_jU =I_\mathcal{H}=\sum_{j\in\mathbb{J}}V^{-1}F^*_jF_jV= \sum_{j\in\mathbb{J}}\Phi_j^*\Psi_j$. Suppose there are $ \{C_j\}_{j\in\mathbb{J}}, \{\Xi_j\}_{j\in\mathbb{J}}$ in $ \mathcal{B}(\mathcal{H}, \mathcal{H}_0)$ such that $ I_\mathcal{H}= \sum_{j\in\mathbb{J}}C_j^*A_j = \sum_{j\in\mathbb{J}}\Xi_j^*\Psi_j$, then  $ I_\mathcal{H}= \sum_{j\in\mathbb{J}}C_j^*F_jU = \sum_{j\in\mathbb{J}}\Xi_j^*F_jV \Rightarrow U^{-1}=\sum_{j\in\mathbb{J}}C_j^*F_j ~\text{and}~ V^{-1}=\sum_{j\in\mathbb{J}}\Xi_j^*F_j\Rightarrow B_k^*=U^{-1}F_k^*=(\sum_{j\in\mathbb{J}}C_j^*F_j)F^*_k=C_k^* ~\text{and}~ \Phi_k^*=V^{-1}F_k^*=(\sum_{j\in\mathbb{J}}\Xi_j^*F_j)F^*_k=\Xi_k^*,\forall k \in \mathbb{J}$. Next, $((V^{-1})^*)^*(U^{-1})^*=V^{-1}(U^{-1})^*=(U^*V)^{-1}=((V^*U)^*)^{-1}\geq 0 $. Hence  $( \{B_j\}_{j\in\mathbb{J}}, \{\Phi_j\}_{j\in\mathbb{J}})$ is Riesz (ovf).
 \item Since $ U,V$ are bounded invertible, we get the first part. Whenever $ VU^*\geq0,$
 \begin{align*}
 \frac{1}{\|VU^*\|^{-1}}\sum\limits_{j\in \mathbb{S}}|c_j|^2\|h\|^2&= \frac{1}{\|VU^*\|^{-1}}\left\langle \sum\limits_{j\in\mathbb{S}}c_jF^*_jh ,\sum\limits_{k\in\mathbb{S}}c_kF_k^*h\right\rangle
 \leq \left\langle VU^*\left( \sum\limits_{j\in\mathbb{S}}c_jF^*_jh\right) ,\sum\limits_{k\in\mathbb{S}}c_kF_k^*h\right\rangle\\
 &=\left\langle \sum\limits_{j\in\mathbb{S}}c_jU^*F^*_jh ,\sum\limits_{k\in\mathbb{S}}c_kV^*F_k^*h\right\rangle
 =\left\langle \sum\limits_{j\in\mathbb{S}}c_jA_j^*h ,\sum\limits_{k\in\mathbb{S}}c_k\Psi_k^*h\right\rangle
 \\
 &\leq \|VU^*\|\left\langle \sum\limits_{j\in\mathbb{S}}c_j^*F^*_jh ,\sum\limits_{k\in\mathbb{S}}c_kF_k^*h\right\rangle
 = \|VU^*\|\sum\limits_{j\in \mathbb{S}}|c_j|^2\|h\|^2,\forall h \in \mathcal{H},\forall c_j\in \mathbb{K}, \forall j\in \mathbb{S}.
 \end{align*}
 \end{enumerate}
 \end{proof}
Using frames  we can describe the left-inverses of  analysis operators completely, as next proposition shows.
\begin{proposition}
Let $ (\{A_j\}_{j\in \mathbb{J}}, \{\Psi_j\}_{j\in \mathbb{J}}) $  be an  (ovf) in    $\mathcal{B}(\mathcal{H}, \mathcal{H}_0)$. Then the bounded left-inverses of 
\begin{enumerate}[\upshape(i)]
\item $ \theta_A$ are precisely  $S_{A,\Psi}^{-1}\theta_\Psi^*+U(I_{\ell^2(\mathbb{J})\otimes\mathcal{H}_0}-\theta_AS_{A,\Psi}^{-1}\theta_\Psi^*)$, where $U \in \mathcal{B}(\ell^2(\mathbb{J})\otimes\mathcal{H}_0, \mathcal{H})$.
\item $ \theta_\Psi$ are precisely  $S_{A,\Psi}^{-1}\theta_A^*+V(I_{\ell^2(\mathbb{J})\otimes\mathcal{H}_0}-\theta_\Psi S_{A,\Psi}^{-1}\theta_A^*)$, where $V\in \mathcal{B} (\ell^2(\mathbb{J})\otimes\mathcal{H}_0, \mathcal{H})$.
\end{enumerate}	
\end{proposition}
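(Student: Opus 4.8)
The plan is to reduce everything to the standard parametrization of the bounded left-inverses of an operator that admits at least one such inverse, after exhibiting a distinguished left-inverse built from the frame operator.

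First I would establish the adjoint identities linking the analysis operators to $S_{A,\Psi}$. From the listed observation $\theta_A h=\sum_{j\in\mathbb{J}}e_j\otimes A_jh$ and the formula $L_k^*(\{a_j\}_{j\in\mathbb{J}}\otimes y)=a_ky$, a short computation gives $\theta_\Psi^*(e_j\otimes y)=\Psi_j^*y$, and hence, using the boundedness of $\theta_A,\theta_\Psi$ granted by condition (ii) of Definition \ref{1}, $\theta_\Psi^*\theta_A h=\sum_{j\in\mathbb{J}}\Psi_j^*A_jh=S_{A,\Psi}h$ for all $h\in\mathcal{H}$; symmetrically $\theta_A^*\theta_\Psi=S_{\Psi,A}=S_{A,\Psi}$. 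Because $S_{A,\Psi}$ is positive invertible, these identities at once produce the distinguished left-inverses $S_{A,\Psi}^{-1}\theta_\Psi^*$ of $\theta_A$ and $S_{A,\Psi}^{-1}\theta_A^*$ of $\theta_\Psi$, since $S_{A,\Psi}^{-1}\theta_\Psi^*\theta_A=S_{A,\Psi}^{-1}S_{A,\Psi}=I_\mathcal{H}$ and likewise for the other.

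Next I would apply the elementary fact that, for a bounded $T\colon\mathcal{H}\to\mathcal{K}$ with a fixed bounded left-inverse $L_0$, the bounded left-inverses of $T$ are exactly the operators $L_0+U(I_\mathcal{K}-TL_0)$ with $U\in\mathcal{B}(\mathcal{K},\mathcal{H})$. One inclusion is the computation $(L_0+U(I_\mathcal{K}-TL_0))T=I_\mathcal{H}+U(T-TL_0T)=I_\mathcal{H}$; for the reverse, any bounded left-inverse $L$ is recovered by the choice $U=L$, because $L_0+L(I_\mathcal{K}-TL_0)=L_0+L-LTL_0=L_0+L-L_0=L$. Specialising to $T=\theta_A$, $\mathcal{K}=\ell^2(\mathbb{J})\otimes\mathcal{H}_0$, $L_0=S_{A,\Psi}^{-1}\theta_\Psi^*$ yields (i), and to $T=\theta_\Psi$, $L_0=S_{A,\Psi}^{-1}\theta_A^*$ yields (ii).

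I do not expect a genuine obstacle; the only step demanding care is the adjoint computation $\theta_\Psi^*\theta_A=S_{A,\Psi}$ on the tensor-product space $\ell^2(\mathbb{J})\otimes\mathcal{H}_0$, which must be carried out using continuity of $\theta_\Psi^*$ together with the strong-operator convergence of $\sum_{j\in\mathbb{J}}\Psi_j^*A_j$ guaranteed by Definition \ref{1}. Everything downstream of that identity is the purely algebraic verification displayed above.
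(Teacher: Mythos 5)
Your proposal is correct and follows essentially the same route as the paper: exhibit $S_{A,\Psi}^{-1}\theta_\Psi^*$ (resp. $S_{A,\Psi}^{-1}\theta_A^*$) as a distinguished left-inverse via $\theta_\Psi^*\theta_A=\theta_A^*\theta_\Psi=S_{A,\Psi}$ from Proposition \ref{2.2}, verify the forward inclusion by direct multiplication, and recover an arbitrary bounded left-inverse $L$ by taking $U=L$. Your abstract lemma parametrizing the left-inverses of an operator admitting one is just a clean repackaging of the two computations the paper performs in place, so nothing further is needed.
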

\begin{proof}
We prove (ii).  $(\Leftarrow)$ Let $V: \ell^2(\mathbb{J})\otimes\mathcal{H}_0\rightarrow \mathcal{H}$ be a bounded operator. Then $(S_{A,\Psi}^{-1}\theta_A^*+V(I_{\ell^2(\mathbb{J})\otimes\mathcal{H}_0}-\theta_\Psi S_{A,\Psi}^{-1}\theta_A^*))\theta_\Psi=I_\mathcal{H}+V\theta_\Psi-V\theta_\Psi I_\mathcal{H}=I_\mathcal{H}$. Therefore  $S_{A,\Psi}^{-1}\theta_A^*+V(I_{\ell^2(\mathbb{J})\otimes\mathcal{H}_0}-\theta_\Psi S_{A,\Psi}^{-1}\theta_A^*)$ is a bounded left-inverse of $\theta_\Psi$.
   
$(\Rightarrow)$ Let $ L:\ell^2(\mathbb{J})\otimes\mathcal{H}_0\rightarrow \mathcal{H}$ be a bounded left-inverse of $ \theta_\Psi$. Define $V\coloneqq L$. Then $S_{A,\Psi}^{-1}\theta_A^*+V(I_{\ell^2(\mathbb{J})\otimes\mathcal{H}_0}-\theta_\Psi S_{A,\Psi}^{-1}\theta_A^*) =S_{A,\Psi}^{-1}\theta_A^*+L(I_{\ell^2(\mathbb{J})\otimes\mathcal{H}_0}-\theta_\Psi S_{A,\Psi}^{-1}\theta_A^*)=S_{A,\Psi}^{-1}\theta_A^*+L-I_{\mathcal{H}}S_{A,\Psi}^{-1}\theta_A^*= L$. 	
 \end{proof} 
Following proposition is fundamental in the development of theory.
\begin{proposition}\label{2.2}
For every $ \{A_j\}_{j \in \mathbb{J}}  \in \mathscr{F}_\Psi$,
 \begin{enumerate}[ \upshape (i)]
\item $ \theta_A^* \theta_A =  \sum_{j\in \mathbb{J}} A_j^*A_j $ in SOT.
\item $ S_{A, \Psi} = \theta_\Psi^*\theta_A=\theta_A^*\theta_\Psi =S_{\Psi,A}.$
\item $(\{A_j\}_{j\in \mathbb{J}}, \{\Psi_j\}_{j\in \mathbb{J}}) $  is Parseval  if and only if $  \theta_\Psi^*\theta_A =I_\mathcal{H}.$ 
\item $(\{A_j\}_{j\in \mathbb{J}}, \{\Psi_j\}_{j\in \mathbb{J}}) $  is Parseval  if and only if $ \theta_A\theta_\Psi^* $ is idempotent.
\item $  A_j=L_j^*\theta_A, \forall j\in \mathbb{J}.$
\item $\theta_AS_{A,\Psi}^{-1}\theta_\Psi^* $ is idempotent.
\item $\theta_A $ and $ \theta_\Psi$ are injective and  their ranges are closed.
\item  $\theta_A^* $ and $ \theta_\Psi^*$ are surjective.
\end{enumerate}
\end{proposition}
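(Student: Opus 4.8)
The plan is to derive all eight items from two ingredients already available: the orthonormality relations $L_j^*L_k=\delta_{j,k}I_{\mathcal{H}_0}$ recorded in (\ref{LEQUATION}), and the description $\theta_A h=\sum_{j\in\mathbb{J}}e_j\otimes A_jh$ (with its analogue for $\theta_\Psi$), both viewed against the defining strong-operator limits of $\theta_A,\theta_\Psi,S_{A,\Psi}$. The computational core is (i) and (ii); everything else is an algebraic consequence of these together with the invertibility of $S_{A,\Psi}$.

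First I would settle (i) and (ii) by evaluating sesquilinear forms. For $g,h\in\mathcal{H}$,
\[
\langle\theta_A^*\theta_A g,h\rangle=\langle\theta_A g,\theta_A h\rangle=\Big\langle\sum_{j}e_j\otimes A_jg,\sum_{k}e_k\otimes A_kh\Big\rangle=\sum_{j}\langle A_jg,A_jh\rangle=\Big\langle\sum_{j}A_j^*A_jg,h\Big\rangle,
\]
using $\langle e_j,e_k\rangle=\delta_{j,k}$. Since the partial sums $\sum_{j\in\mathbb{S}}A_j^*A_j$ form an increasing net of positive operators, weak convergence upgrades to strong convergence, which gives (i). The identical computation with $\theta_\Psi$ in the second slot replaces $\langle A_jg,A_jh\rangle$ by $\langle A_jg,\Psi_jh\rangle=\langle\Psi_j^*A_jg,h\rangle$, so $\theta_\Psi^*\theta_A=S_{A,\Psi}$; symmetrically $\theta_A^*\theta_\Psi=S_{\Psi,A}$, and $S_{A,\Psi}=S_{\Psi,A}$ was already noted, proving (ii). Item (v) is then immediate: $L_j^*\theta_A h=L_j^*\sum_k L_kA_kh=\sum_k(L_j^*L_k)A_kh=A_jh$ by (\ref{LEQUATION}) and boundedness of $L_j^*$.

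With (ii) in hand the remaining items fall into place. For (iii), Parseval means $S_{A,\Psi}=I_\mathcal{H}$ (the tight bound is one, so $S_{A,\Psi}=aI_\mathcal{H}$ with $a=1$), which by (ii) is exactly $\theta_\Psi^*\theta_A=I_\mathcal{H}$. For (vi), I would expand $(\theta_AS_{A,\Psi}^{-1}\theta_\Psi^*)^2=\theta_AS_{A,\Psi}^{-1}(\theta_\Psi^*\theta_A)S_{A,\Psi}^{-1}\theta_\Psi^*$ and collapse $\theta_\Psi^*\theta_A=S_{A,\Psi}$ against its inverse. For the injectivity in (vii), $\theta_Ah=0$ forces $S_{A,\Psi}h=\theta_\Psi^*\theta_A h=0$, hence $h=0$ by invertibility; for closed range I would bound $\theta_A$ below via $\|\theta_\Psi\|\,\|\theta_Ah\|\ge\|\theta_\Psi^*\theta_Ah\|=\|S_{A,\Psi}h\|\ge\|S_{A,\Psi}^{-1}\|^{-1}\|h\|$, and argue symmetrically for $\theta_\Psi$. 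Item (viii) is the standard equivalence between being bounded below and having surjective adjoint: injectivity plus closed range of $\theta_A$ give $\overline{\operatorname{ran}}\,\theta_A^*=\ker(\theta_A)^\perp=\mathcal{H}$ with $\operatorname{ran}\theta_A^*$ closed, so $\theta_A^*$ is onto. Finally (iv): the forward direction uses $\theta_\Psi^*\theta_A=I_\mathcal{H}$ to get $(\theta_A\theta_\Psi^*)^2=\theta_A(\theta_\Psi^*\theta_A)\theta_\Psi^*=\theta_A\theta_\Psi^*$; conversely, idempotency of $\theta_A\theta_\Psi^*$ rearranges to $\theta_A(S_{A,\Psi}-I_\mathcal{H})\theta_\Psi^*=0$, and cancelling the injective $\theta_A$ on the left and the surjective $\theta_\Psi^*$ on the right yields $S_{A,\Psi}=I_\mathcal{H}$.

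The main obstacle is bookkeeping with the strong-operator limits rather than any deep idea: I must justify passing the adjoint through the SOT-convergent series $\theta_A=\sum_jL_jA_j$, interchange the two unconditionally convergent sums when forming $\langle\theta_Ag,\theta_Ah\rangle$, and invoke the positive-net weak-to-strong upgrade to land (i) in the strong topology. I would also be careful to establish (vii) and (viii) before the converse of (iv), since the cancellation there rests precisely on the injectivity of $\theta_A$ and the surjectivity of $\theta_\Psi^*$.
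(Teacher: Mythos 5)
Your proof is correct and follows essentially the same route as the paper: the sesquilinear-form computation using $L_k^*L_j=\delta_{j,k}I_{\mathcal{H}_0}$ establishes (i) and (ii), and everything else is the same algebra. The only (harmless) variations are in the converse of (iv), where you cancel the injective $\theta_A$ and surjective $\theta_\Psi^*$ while the paper instead derives $S_{A,\Psi}^3=S_{A,\Psi}^2$ and invokes invertibility, and in the closed-range part of (vii), where your lower bound $\|\theta_Ah\|\geq \|\theta_\Psi\|^{-1}\|S_{A,\Psi}^{-1}\|^{-1}\|h\|$ replaces the paper's sequential limit argument; both alternatives are valid.
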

\begin{proof} Let $ h, g \in \mathcal{H}.$ We observe 
$$  \left\langle  \theta_A^* \theta_Ah, g \right\rangle= \left\langle  \sum\limits_{j\in \mathbb{J}} L_jA_jh,  \sum\limits_{k\in \mathbb{J}} L_kA_kg\right\rangle= \sum\limits_{j\in \mathbb{J}}\sum\limits_{k\in \mathbb{J}} \langle  L_k^*L_jA_jh,   A_kg\rangle=  \sum\limits_{j\in \mathbb{J}} \langle  A_jh,   A_jg\rangle=\left\langle\sum\limits_{j\in \mathbb{J}}A_j^*A_jh, g \right\rangle ,$$ 
and 
$$ 
\langle S_{A, \Psi}h, g \rangle = \sum\limits_{j\in \mathbb{J}} \langle  A_jh,   \Psi_jg\rangle= \sum\limits_{j\in \mathbb{J}}\sum\limits_{k\in \mathbb{J}} \langle  L_k^*L_jA_jh,   \Psi_kg\rangle= \langle  \theta_Ah, \theta_\Psi g\rangle=\langle\theta_\Psi^*\theta_Ah ,g \rangle ,$$
 thus proving (i) and (ii); (iii) follows from (ii). For (iv),  if $(\{A_j\}_{j\in \mathbb{J}}, \{\Psi_j\}_{j\in \mathbb{J}}) $  is Parseval then $ (\theta_A\theta_\Psi^*)^2=\theta_A\theta_\Psi^*\theta_A\theta_\Psi^*=\theta_AI_\mathcal{H}\theta_\Psi^*=\theta_A\theta_\Psi^* $, and if $ \theta_A\theta_\Psi^*$ is idempotent then multiplying from the left by $\theta_\Psi^* $ and from the right by $ \theta_A$ to  $ \theta_A\theta_\Psi^*\theta_A\theta_\Psi^*=\theta_A\theta_\Psi^*$ gives $ S_{A, \Psi}^3=S_{A, \Psi}^2$.  Since $ L_j^* $ is linear, we get (v). For (vi), $(\theta_AS_{A,\Psi}^{-1}\theta_\Psi^*)^2=\theta_AS_{A,\Psi}^{-1}(\theta_\Psi^*\theta_AS_{A,\Psi}^{-1})\theta_\Psi^*=\theta_AS_{A,\Psi}^{-1}\theta_\Psi^* $; (ii) implies  first part of (vii), and (viii). For the second part of (vii), let $ \{x_n\}_{n=1}^\infty$ in $\mathcal{H} $  be such that  $ \{\theta _Ax_n\}_{n=1}^\infty$ converges to $ y \in \mathcal{H}_0$. This gives $ S_{A,\Psi}x_n \rightarrow \theta_\Psi^*y$  as $ n \rightarrow \infty$ and this in turn gives $x_n \rightarrow S_{A,\Psi}^{-1} \theta_\Psi^*y $  as $ n \rightarrow \infty.$ An application of $ \theta_A$ gives $\theta_Ax_n \rightarrow \theta_AS_{A,\Psi}^{-1} \theta_\Psi^*y $ as $ n \rightarrow \infty.$ Therefore $ y=\theta_A(S_{A,\Psi}^{-1} \theta_\Psi^*y).$
\end{proof}

Because of (ii), i.e., $ \sum_{j\in\mathbb{J}}\Psi_j^*A_j=\theta_\Psi^*\theta_A$ in the last theorem, Definition \ref{1} is equivalent to 
\begin{definition}
A collection $ \{A_j\}_{j \in \mathbb{J}} $   in $ \mathcal{B}(\mathcal{H}, \mathcal{H}_0)$ is said to be an (ovf)  w.r.t. $ \{\Psi_j\}_{j \in \mathbb{J}}  $ in $ \mathcal{B}(\mathcal{H}, \mathcal{H}_0) $ if there exist $ a, b, c , d>0$ such that 
\begin{enumerate}[\upshape(i)]
\item $\sum_{j\in \mathbb{J}}\Psi_j^*A_j=\sum_{j\in \mathbb{J}}A_j^*\Psi_j,$
\item $a\|h\|^2\leq\sum_{j\in \mathbb{J}}\langle A_jh, \Psi_jh\rangle  \leq b\|h\|^2, \forall h \in \mathcal{H},$	
\item $\sum_{j\in \mathbb{J}}\| A_jh\|^2 \leq c\|h\|^2, \forall h \in \mathcal{H}; \sum_{j\in \mathbb{J}} \|\Psi_jh\|^2  \leq d\|h\|^2, \forall h \in \mathcal{H}.$
\end{enumerate}
\end{definition}
Condition (iii) in the previous definition says both $\theta_A, \theta_\Psi$ exist and hence $ S_{A,\Psi}$ exists and is bounded. Also   (i) and  (ii) say that $S_{A,\Psi}$ is positive invertible.

Following the last theorem, one sees that  $ \theta_\Psi^* \theta_\Psi =  \sum_{j\in \mathbb{J}} \Psi_j^*\Psi_j $ in SOT,  and $ \Psi_j=L_j^*\theta_\Psi $ for every $ j\in \mathbb{J}$. We call the idempotent operator  $ P_{A, \Psi}\coloneqq\theta_AS_{A,\Psi}^{-1}\theta_\Psi^*$ as the \textit{`frame idempotent'} for $(\{A_j\}_{j\in \mathbb{J}}, \{\Psi_j\}_{j\in \mathbb{J}}) $  whose range is closed. Moreover, $ P_{\Psi, A}=P_{A, \Psi}^*.$
 \begin{definition}\label{RIESZOVF}
 An (ovf)  $(\{A_j\}_{j\in \mathbb{J}}, \{\Psi_j\}_{j\in \mathbb{J}})$  in $\mathcal{B}(\mathcal{H}, \mathcal{H}_0)$ is said to be a Riesz  (ovf)  if $ P_{A,\Psi}= I_{\ell^2(\mathbb{J})}\otimes I_{\mathcal{H}_0}$. A Parseval and  Riesz (ovf) (i.e., $\theta_\Psi^*\theta_A=I_\mathcal{H} $ and  $\theta_A\theta_\Psi^*=I_{\ell^2(\mathbb{J})}\otimes I_{\mathcal{H}_0} $) is called as an orthonormal (ovf). In other words,   $(\{A_j\}_{j\in \mathbb{J}}, \{\Psi_j\}_{j\in \mathbb{J}})$ is Riesz (ovf) if $ \theta_\Psi^*$ (resp. $\theta^*_A$) is right inverse of $ \theta_A$ (resp. $ \theta_\Psi$), and $(\{A_j\}_{j\in \mathbb{J}}, \{\Psi_j\}_{j\in \mathbb{J}})$ is orthonormal (ovf) if $ \theta_\Psi^*$ (resp. $\theta^*_A$) is  inverse (both right and left) of $ \theta_A$ (resp. $ \theta_\Psi$).
 \end{definition}
 An adjoint operation in the Definition \ref{RIESZOVF} reveals that it is symmetric.
 \begin{caution}
 Definition of Riesz basis (resp. orthonormal basis)  and definition of Riesz (ovf) (resp. orthonormal (ovf)) are different. Theorem \ref{OPERATORVALUEDCHARACTERIZATIONSOFTHEEXTENSION} further illustrates this.
 \end{caution}
 \begin{proposition}\label{RBIMPLIESRFOV}
 \begin{enumerate}[\upshape(i)]
 \item If $ (\{A_j\}_{j \in \mathbb{J}},\{\Psi_j\}_{j \in \mathbb{J}}) $ is a Riesz basis  in  $ \mathcal{B}(\mathcal{H}, \mathcal{H}_0)$, then it is a Riesz  (ovf).
\item If $ (\{A_j\}_{j \in \mathbb{J}},\{\Psi_j=c_jA_j\}_{j \in \mathbb{J}}) $ is an orthonormal  basis in  $ \mathcal{B}(\mathcal{H}, \mathcal{H}_0)$, then it is a Riesz (ovf).
\end{enumerate}
\end{proposition}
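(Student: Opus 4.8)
The plan is to reduce both parts to the Riesz-basis computation, since part (ii) follows immediately from part (i): by Theorem \ref{ONBIMPLIESOVF}(i), every orthonormal basis $(\{A_j\}_{j\in\mathbb{J}},\{\Psi_j=c_jA_j\}_{j\in\mathbb{J}})$ in $\mathcal{B}(\mathcal{H},\mathcal{H}_0)$ is already a Riesz basis, so once (i) is proved, applying it yields that $(\{A_j\}_{j\in\mathbb{J}},\{\Psi_j\}_{j\in\mathbb{J}})$ is a Riesz (ovf). Thus the entire content is in part (i), and the strategy there is to unpack the Riesz-basis data and compute the frame idempotent $P_{A,\Psi}$ explicitly.

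For part (i), I would first invoke Definition \ref{ONBRIESZDEFINITION}(ii) to get an orthonormal basis $\{F_j\}_{j\in\mathbb{J}}$ and invertible $U,V\in\mathcal{B}(\mathcal{H})$ with $V^*U$ positive such that $A_j=F_jU$ and $\Psi_j=F_jV$ for all $j$. Theorem \ref{ONBIMPLIESOVF}(ii) already guarantees that $(\{A_j\}_{j\in\mathbb{J}},\{\Psi_j\}_{j\in\mathbb{J}})$ is an (ovf), and its proof supplies the three identities I need: $\theta_A=\theta_FU$, $\theta_\Psi=\theta_FV$, and $S_{A,\Psi}=V^*U$. With these in hand the computation of the frame idempotent becomes a short algebraic cancellation. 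Since $(V^*U)^{-1}=U^{-1}(V^*)^{-1}$, I would write
$$P_{A,\Psi}=\theta_A S_{A,\Psi}^{-1}\theta_\Psi^*=\theta_F U\,(V^*U)^{-1}V^*\,\theta_F^*=\theta_F\,\bigl(U U^{-1}(V^*)^{-1}V^*\bigr)\,\theta_F^*=\theta_F\theta_F^*.$$

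It remains only to show $\theta_F\theta_F^*=I_{\ell^2(\mathbb{J})}\otimes I_{\mathcal{H}_0}$, which is where the orthonormality of $\{F_j\}_{j\in\mathbb{J}}$ enters. Using the observation that $\theta_F h=\sum_{j\in\mathbb{J}}e_j\otimes F_jh$ and that its adjoint sends $\sum_k e_k\otimes y_k\mapsto\sum_k F_k^*y_k$, I would compute, for $\sum_k e_k\otimes y_k\in\ell^2(\mathbb{J})\otimes\mathcal{H}_0$,
$$\theta_F\theta_F^*\Bigl(\sum_{k}e_k\otimes y_k\Bigr)=\sum_{j}e_j\otimes\Bigl(\sum_{k}F_jF_k^*y_k\Bigr)=\sum_{j}e_j\otimes\Bigl(\sum_{k}\delta_{j,k}y_k\Bigr)=\sum_{j}e_j\otimes y_j,$$
where the middle equality is exactly the relation $F_jF_k^*=\delta_{j,k}I_{\mathcal{H}_0}$ noted after Definition \ref{ONBDEFINITIONOVHS}. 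Hence $\theta_F\theta_F^*=I_{\ell^2(\mathbb{J})}\otimes I_{\mathcal{H}_0}$, so $P_{A,\Psi}=I_{\ell^2(\mathbb{J})}\otimes I_{\mathcal{H}_0}$ and $(\{A_j\}_{j\in\mathbb{J}},\{\Psi_j\}_{j\in\mathbb{J}})$ is a Riesz (ovf) by Definition \ref{RIESZOVF}. There is no serious obstacle here: the only point demanding care is the bookkeeping in the cancellation $U(V^*U)^{-1}V^*=I_\mathcal{H}$ (which relies on $U,V$ being genuinely invertible, so that $(V^*U)^{-1}$ factors) and the correct use of the adjoint of $\theta_F$; both are routine once the identities from Theorem \ref{ONBIMPLIESOVF} are quoted.
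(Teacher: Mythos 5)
Your proof is correct and follows essentially the same route as the paper: for (i) you factor $\theta_A=\theta_FU$, $\theta_\Psi=\theta_FV$, $S_{A,\Psi}=V^*U$, cancel to get $P_{A,\Psi}=\theta_F\theta_F^*$, and use $F_jF_k^*=\delta_{j,k}I_{\mathcal{H}_0}$ together with $\sum_jL_jL_j^*=I_{\ell^2(\mathbb{J})}\otimes I_{\mathcal{H}_0}$, which is exactly the paper's computation. For (ii) the paper gives a direct computation with $S_{A,\Psi}^{-1}=\sum_jc_j^{-1}A_j^*A_j$, but your derivation of (ii) from Theorem \ref{ONBIMPLIESOVF}(i) combined with part (i) is precisely the alternative the paper itself endorses in the remark immediately following the proposition, so nothing is lost.
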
 
\begin{proof}
 \begin{enumerate}[\upshape(i)]
 \item Let $ \{F_j\}_{j \in \mathbb{J}}$  be an orthonormal basis in  $ \mathcal{B}(\mathcal{H}, \mathcal{H}_0)$ and  $ U, V : \mathcal{H} \rightarrow \mathcal{H}$ be  bounded invertible with $ V^*U$ is positive  such that $ A_j=F_jU, \Psi_j=F_jV,  \forall j \in \mathbb{J}$. We proved in Theorem \ref{ONBIMPLIESOVF} that a Riesz basis is an (ovf). For Riesz, 
 \begin{align*}
  P_{A,\Psi} &=\left(\sum_{j\in\mathbb{J}}L_jA_j\right)(V^*U)^{-1}\left(\sum_{k\in\mathbb{J}}\Psi_k^*L_k^*\right)=\left(\sum_{j\in\mathbb{J}}L_jF_jU\right)U^{-1}{V^*}^{-1}\left(\sum_{k\in\mathbb{J}}V^*F_k^*L_k^*\right)\\
  &=\sum_{j\in\mathbb{J}}L_jF_j\left(\sum_{k\in\mathbb{J}}F_k^*L_k^*\right)=\sum_{j\in\mathbb{J}}L_jL^*_j=I_{\ell^2(\mathbb{J})}\otimes I_{\mathcal{H}_0}.
  \end{align*}
 \item $S_{A,\Psi}=\sum_{j\in \mathbb{J}}c_jA_j^*A_j$; $S_{A,\Psi}^{-1}=\sum_{j\in \mathbb{J}}c_j^{-1}A_j^*A_j$,\\
  $P_{A,\Psi}=\theta_AS_{A,\Psi}^{-1}\theta_\Psi^*= (\sum_{k\in \mathbb{J}}L_kA_k)(\sum_{j\in \mathbb{J}}c_j^{-1}A_j^*A_j)(\sum_{l\in \mathbb{J}}c_lA^*_lL_l^*)=(\sum_{k\in \mathbb{J}}L_kA_k)(\sum_{j\in \mathbb{J}}A^*_jL_j^*)\\ =\sum_{k\in \mathbb{J}}L_kL_k^* = I_{\ell^2(\mathbb{J})}\otimes I_{\mathcal{H}_0}.$
 \end{enumerate}
 \end{proof}
 \begin{remark}
 Statement  \text{\upshape(ii)} in  Proposition \ref{RBIMPLIESRFOV} can also be derived from \text{\upshape(i)} of Theorem \ref{ONBIMPLIESOVF} and \text{\upshape(i)} of Proposition \ref{RBIMPLIESRFOV}.
 \end{remark}
 \begin{proposition}\label{RIESZOVFCHARACTERIZATIONPROPOSITION}
 An (ovf) $ (\{A_j\}_{j\in \mathbb{J}}, \{\Psi_j\}_{j\in \mathbb{J}}) $ in $ \mathcal{B}(\mathcal{H}, \mathcal{H}_0)$ is Riesz (ovf)  if and only if  $\theta_A(\mathcal{H})=\ell^2(\mathbb{J})\otimes \mathcal{H}_0  $ if and only if $\theta_\Psi(\mathcal{H})=\ell^2(\mathbb{J})\otimes \mathcal{H}_0.$
 \end{proposition}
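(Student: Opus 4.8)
The plan is to work entirely with the frame idempotent $P_{A,\Psi}=\theta_AS_{A,\Psi}^{-1}\theta_\Psi^*$ defined just before Definition \ref{RIESZOVF}, together with the structural facts already established in Proposition \ref{2.2}. By that definition, being a Riesz (ovf) means exactly $P_{A,\Psi}=I_{\ell^2(\mathbb{J})}\otimes I_{\mathcal{H}_0}$, so the whole statement reduces to relating this equality to surjectivity of $\theta_A$ and of $\theta_\Psi$. The first step I would carry out is to identify the range of $P_{A,\Psi}$ with the range of $\theta_A$. One inclusion is immediate from the factorization $P_{A,\Psi}=\theta_A\,(S_{A,\Psi}^{-1}\theta_\Psi^*)$, which gives $P_{A,\Psi}(\ell^2(\mathbb{J})\otimes\mathcal{H}_0)\subseteq\theta_A(\mathcal{H})$. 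For the reverse inclusion I would apply $P_{A,\Psi}$ to a generic vector $\theta_Ah$ and use part (ii) of Proposition \ref{2.2}, namely $\theta_\Psi^*\theta_A=S_{A,\Psi}$, to compute $P_{A,\Psi}\theta_Ah=\theta_AS_{A,\Psi}^{-1}S_{A,\Psi}h=\theta_Ah$; hence every vector of $\theta_A(\mathcal{H})$ lies in the range of $P_{A,\Psi}$, and the two ranges coincide.

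The second step is a purely algebraic remark about idempotents. Since $P_{A,\Psi}$ is idempotent by Proposition \ref{2.2}(vi), it equals the identity precisely when its range is all of $\ell^2(\mathbb{J})\otimes\mathcal{H}_0$: if the range is everything, then for any $x$ we may write $x=P_{A,\Psi}y$ and obtain $P_{A,\Psi}x=P_{A,\Psi}^2y=P_{A,\Psi}y=x$, while the converse is trivial. Combining this with the first step yields $P_{A,\Psi}=I_{\ell^2(\mathbb{J})}\otimes I_{\mathcal{H}_0}$ if and only if $\theta_A(\mathcal{H})=\ell^2(\mathbb{J})\otimes\mathcal{H}_0$, which is the first claimed equivalence.

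For the remaining condition involving $\theta_\Psi$, I would invoke the symmetry of Definition \ref{RIESZOVF} recorded immediately after it, concretely the relation $P_{\Psi,A}=P_{A,\Psi}^*$ (which uses that $S_{A,\Psi}$ is positive and $S_{A,\Psi}=S_{\Psi,A}$). Taking adjoints shows $P_{A,\Psi}=I$ if and only if $P_{\Psi,A}=I$, that is, $(\{A_j\}_{j\in\mathbb{J}},\{\Psi_j\}_{j\in\mathbb{J}})$ is a Riesz (ovf) exactly when $(\{\Psi_j\}_{j\in\mathbb{J}},\{A_j\}_{j\in\mathbb{J}})$ is. Running the first two steps verbatim for the pair $(\Psi,A)$ then gives $P_{\Psi,A}=I$ if and only if $\theta_\Psi(\mathcal{H})=\ell^2(\mathbb{J})\otimes\mathcal{H}_0$, closing the chain of equivalences.

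I do not expect a serious obstacle; the only place demanding care is bookkeeping of which analysis operator sits on which side, so that the cancellation $S_{A,\Psi}^{-1}S_{A,\Psi}=I_\mathcal{H}$ is applied with the correct factor $\theta_\Psi^*\theta_A=S_{A,\Psi}$ rather than $\theta_A^*\theta_\Psi$. The closedness of the ranges of $\theta_A$ and $\theta_\Psi$ from Proposition \ref{2.2}(vii) is not logically needed for the equivalences, but it confirms that "range equals the whole space" is the genuine surjectivity statement being asserted.
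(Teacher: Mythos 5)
Your proof is correct and follows essentially the same route as the paper's: both directions hinge on the factorization $P_{A,\Psi}=\theta_A(S_{A,\Psi}^{-1}\theta_\Psi^*)$ for one inclusion and on the cancellation $P_{A,\Psi}\theta_Ah=\theta_AS_{A,\Psi}^{-1}(\theta_\Psi^*\theta_A)h=\theta_Ah$ for the other, with the $\theta_\Psi$ statement obtained by symmetry. Your explicit packaging via "an idempotent equals the identity iff it is surjective" and the relation $P_{\Psi,A}=P_{A,\Psi}^*$ is just a slightly more structured presentation of the same argument.
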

 \begin{proof}
 Let $ P_{A,\Psi}= I_{\ell^2(\mathbb{J})}\otimes I_{\mathcal{H}_0}.$ Now $\ell^2(\mathbb{J})\otimes \mathcal{H}_0=P_{A,\Psi}(\mathcal{H})=\theta_AS^{-1}_{A,\Psi}\theta^*_\Psi(\mathcal{H})\subseteq\theta_A(\mathcal{H})\subseteq\ell^2(\mathbb{J})\otimes \mathcal{H}_0.$
 If $ \theta_A(\mathcal{H})=\ell^2(\mathbb{J})\otimes \mathcal{H}_0 $ and $ y \in\ell^2(\mathbb{J})\otimes \mathcal{H}_0, $ then there exists an $ x \in \mathcal{H}$ such that $y=\theta_Ax .$ Now $ y=\theta_AS_{A,\Psi}^{-1}S_{A,\Psi}x=(\theta_AS_{A,\Psi}^{-1}\theta_\Psi^*)\theta_Ax =P_{A,\Psi}y.$ The symmetry of definition of (ovf)  proves the second part of `if and only if.' 
 \end{proof} 
\begin{proposition}
An (ovf) $ (\{A_j\}_{j\in \mathbb{J}}, \{\Psi_j\}_{j\in \mathbb{J}}) $ in $ \mathcal{B}(\mathcal{H}, \mathcal{H}_0)$ is orthonormal (ovf)  if and only if it is Parseval (ovf) and $ A_j\Psi_k^*=\delta_{j,k}I_{\mathcal{H}_0},\forall j, k \in \mathbb{J}$. 
\end{proposition}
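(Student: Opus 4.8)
The plan is to reduce both sides of the equivalence to statements about the single operator $\theta_A\theta_\Psi^*$ on $\ell^2(\mathbb{J})\otimes\mathcal{H}_0$, exploiting the fundamental identities $A_j = L_j^*\theta_A$ and $\Psi_j = L_j^*\theta_\Psi$ from Proposition \ref{2.2}(v) (the latter recorded in the discussion following that proposition). Taking adjoints gives $\Psi_k^* = \theta_\Psi^* L_k$, so for all $j,k \in \mathbb{J}$ one has the key identity
\[
A_j\Psi_k^* = L_j^*\,\theta_A\theta_\Psi^*\,L_k .
\]
By Definition \ref{RIESZOVF}, the pair is an orthonormal (ovf) precisely when $\theta_\Psi^*\theta_A = I_\mathcal{H}$ (Parseval) and $\theta_A\theta_\Psi^* = I_{\ell^2(\mathbb{J})}\otimes I_{\mathcal{H}_0}$. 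Thus the whole statement reduces to showing that, given Parseval, the relation $A_j\Psi_k^* = \delta_{j,k}I_{\mathcal{H}_0}$ for all $j,k$ is equivalent to $\theta_A\theta_\Psi^* = I_{\ell^2(\mathbb{J})}\otimes I_{\mathcal{H}_0}$.

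For the forward direction I would substitute $\theta_A\theta_\Psi^* = I_{\ell^2(\mathbb{J})}\otimes I_{\mathcal{H}_0}$ into the key identity and invoke the orthogonality relation $L_j^*L_k = \delta_{j,k}I_{\mathcal{H}_0}$ from Equation \eqref{LEQUATION}, obtaining $A_j\Psi_k^* = L_j^*L_k = \delta_{j,k}I_{\mathcal{H}_0}$; the Parseval condition is built into the definition of orthonormal (ovf) and so comes for free.

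For the converse I would set $T \coloneqq \theta_A\theta_\Psi^* - (I_{\ell^2(\mathbb{J})}\otimes I_{\mathcal{H}_0})$. The hypothesis $A_j\Psi_k^* = \delta_{j,k}I_{\mathcal{H}_0} = L_j^*L_k$ combined with the key identity yields $L_j^*TL_k = 0$ for all $j,k$. To deduce $T = 0$, I would feed an arbitrary $\eta \in \ell^2(\mathbb{J})\otimes\mathcal{H}_0$ through the resolution of the identity $\sum_j L_jL_j^* = I_{\ell^2(\mathbb{J})}\otimes I_{\mathcal{H}_0}$ of Equation \eqref{LEQUATION}, writing, for any $\xi$,
\[
\langle T\xi, \eta\rangle = \Big\langle T\xi,\ \sum_{j}L_jL_j^*\eta\Big\rangle = \sum_{j}\langle L_j^*T\xi,\ L_j^*\eta\rangle ,
\]
and then expanding $\xi = \sum_k L_kL_k^*\xi$ inside $L_j^*T\xi = \sum_k L_j^*TL_k(L_k^*\xi) = 0$. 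Hence every term vanishes, $T = 0$, so $\theta_A\theta_\Psi^* = I_{\ell^2(\mathbb{J})}\otimes I_{\mathcal{H}_0}$; together with the assumed Parseval condition this makes the pair an orthonormal (ovf).

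The only delicate point, and the place where I would take care, is interchanging the bounded operator $T$ (and the $L_j^*$) with the strong-operator-convergent series $\sum_k L_kL_k^*\xi$ and $\sum_j L_jL_j^*\eta$. Since $T$ and all the $L_j$ are bounded, continuity legitimizes each interchange, the apparent double sum splits termwise, and no estimate beyond boundedness is required; the argument is therefore essentially immediate once the key identity above is recorded.
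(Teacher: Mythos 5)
Your proof is correct and follows essentially the same route as the paper: both directions reduce to the single operator $\theta_A\theta_\Psi^*$ via the identities $A_j=L_j^*\theta_A$, $\Psi_k^*=\theta_\Psi^*L_k$, together with $L_j^*L_k=\delta_{j,k}I_{\mathcal{H}_0}$ and $\sum_jL_jL_j^*=I_{\ell^2(\mathbb{J})}\otimes I_{\mathcal{H}_0}$. The paper's converse just expands $\theta_A\theta_\Psi^*=\sum_{j}L_jA_j\sum_k\Psi_k^*L_k^*=\sum_jL_jL_j^*$ directly, whereas you recast it as showing all matrix entries $L_j^*TL_k$ of $T=\theta_A\theta_\Psi^*-I$ vanish; this is only a repackaging (with slightly more explicit care about the convergence interchanges), not a different argument.
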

\begin{proof}
$(\Rightarrow)$	
Definition of  orthonormal (ovf) includes Parseval. Now $\theta_A\theta_\Psi^*=I_{\ell^2(\mathbb{J})}\otimes I_{\mathcal{H}_0}.$ Hence $ e_m\otimes y=\theta_A\theta_\Psi^*(e_m\otimes y)=\sum_{j\in \mathbb{J}}L_jA_j(\sum_{k\in \mathbb{J}}\Psi^*_kL_k^*(e_m\otimes y))=\sum_{j\in \mathbb{J}}L_jA_j\Psi^*_my=\sum_{j\in \mathbb{J}}(e_j\otimes A_j\Psi^*_my)= e_m\otimes( A_j\Psi^*_m y+\sum_{j\in \mathbb{J},j\neq m}(e_j\otimes A_j\Psi^*_my)),\forall m \in \mathbb{J}, \forall y \in\mathcal{H}_0 $. Thus $A_j\Psi^*_my=\delta_{j,m}y,\forall y \in \mathcal{H}_0$, since $\{e_j\}_{j\in \mathbb{J}}$ is orthonormal.

$(\Leftarrow)$ We have to show that $ (\{A_j\}_{j\in \mathbb{J}}, \{\Psi_j\}_{j\in \mathbb{J}}) $ is Riesz (ovf). Consider $ \theta_A\theta_\Psi^*=\sum_{j\in \mathbb{J}}L_jA_j(\sum_{k\in \mathbb{J}}\Psi_k^*L_k^*)=\sum_{j\in \mathbb{J}}L_jL_j^*=I_{\ell^2(\mathbb{J})}\otimes I_{\mathcal{H}_0}.$

\end{proof}
 Next, we generalize general Naimark/Han/Larson dilation theorem \cite{HANLI1}.
 \begin{theorem}\label{OPERATORDILATION}
 Let $(\{A_j\}_{j\in \mathbb{J}},\{\Psi_j\}_{j\in \mathbb{J}} )$ be  a Parseval (ovf) in $ \mathcal{B}(\mathcal{H}, \mathcal{H}_0)$ such that $ \theta_A(\mathcal{H})=\theta_\Psi(\mathcal{H})$ and $ P_{A,\Psi}$ is projection. Then there exist a Hilbert space $ \mathcal{H}_1 $ which contains $ \mathcal{H}$ isometrically and  bounded linear operators $B_j,\Phi_j:\mathcal{H}_1\rightarrow \mathcal{H}_0, \forall j \in \mathbb{J} $ such that $(\{B_j\}_{j\in \mathbb{J}} ,\{\Phi_j\}_{j\in \mathbb{J}})$ is an orthonormal (ovf) in $ \mathcal{B}(\mathcal{H}_1, \mathcal{H}_0)$ and $B_j|_{\mathcal{H}}=  A_j,\Phi_j|_{\mathcal{H}}=\Psi_j, \forall j \in \mathbb{J}$. 
 \end{theorem}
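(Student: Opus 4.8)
The plan is to realize the whole construction inside the ambient space $\mathcal{H}_1=\ell^2(\mathbb{J})\otimes\mathcal{H}_0$, using a single positive invertible operator on $\mathcal{H}_1$ to straighten out both collections simultaneously. First I would record the consequences of the hypotheses. By Proposition \ref{2.2}, a Parseval (ovf) satisfies $\theta_\Psi^*\theta_A=\theta_A^*\theta_\Psi=I_\mathcal{H}$, both $\theta_A,\theta_\Psi$ are injective with closed range, and $P_{A,\Psi}=\theta_A\theta_\Psi^*$. Set $W:=\theta_A(\mathcal{H})=\theta_\Psi(\mathcal{H})$, a closed subspace of $\mathcal{H}_1$. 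Since $P_{A,\Psi}\theta_A=\theta_A\theta_\Psi^*\theta_A=\theta_A$, the range of $P_{A,\Psi}$ is exactly $W$; as $P_{A,\Psi}$ is assumed to be a projection, it must be the orthogonal projection onto $W$. Using $\theta_A(\mathcal{H})=\theta_\Psi(\mathcal{H})=W$ this yields the two relations I will lean on: $P_{A,\Psi}\theta_A=\theta_A$ and $P_{A,\Psi}\theta_\Psi=\theta_\Psi$.

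Next I would build the intertwining operator. Define
$$Q:=\theta_A\theta_A^*+(I_{\ell^2(\mathbb{J})}\otimes I_{\mathcal{H}_0}-P_{A,\Psi})\in\mathcal{B}(\mathcal{H}_1).$$
Here $\theta_A\theta_A^*\geq0$ has range $W$ and kernel $W^\perp$, while $I_{\ell^2(\mathbb{J})}\otimes I_{\mathcal{H}_0}-P_{A,\Psi}$ is the orthogonal projection onto $W^\perp$; since the two summands are supported on the orthogonal pieces $W$ and $W^\perp$, $Q$ is positive, and it is invertible because $\theta_A\theta_A^*$ is invertible on $W$ (a consequence of $\theta_A$ having closed range) while $Q$ acts as the identity on $W^\perp$. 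The point of this choice is the identity $Q\theta_\Psi=\theta_A$: indeed $(I_{\ell^2(\mathbb{J})}\otimes I_{\mathcal{H}_0}-P_{A,\Psi})\theta_\Psi=0$ by the relation above, and $\theta_A\theta_A^*\theta_\Psi=\theta_A(\theta_A^*\theta_\Psi)=\theta_A$ since $\theta_A^*\theta_\Psi=I_\mathcal{H}$.

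Then I would produce the embedding and the dilated system at once. Put $\pi:=Q^{1/2}\theta_\Psi:\mathcal{H}\to\mathcal{H}_1$; it is an isometry because $\pi^*\pi=\theta_\Psi^*Q\theta_\Psi=\theta_\Psi^*\theta_A=I_\mathcal{H}$, so $\mathcal{H}_1$ contains $\mathcal{H}\cong\pi(\mathcal{H})$ isometrically. Define $B_j:=L_j^*Q^{1/2}$ and $\Phi_j:=L_j^*Q^{-1/2}$ in $\mathcal{B}(\mathcal{H}_1,\mathcal{H}_0)$. Using $\sum_{j\in\mathbb{J}}L_jL_j^*=I_{\ell^2(\mathbb{J})}\otimes I_{\mathcal{H}_0}$ one checks that the analysis operators are $\theta_B=Q^{1/2}$ and $\theta_\Phi=Q^{-1/2}$; hence $\theta_\Phi^*\theta_B=I_{\mathcal{H}_1}$ and $\theta_B\theta_\Phi^*=I_{\ell^2(\mathbb{J})}\otimes I_{\mathcal{H}_0}$, so $(\{B_j\}_{j\in\mathbb{J}},\{\Phi_j\}_{j\in\mathbb{J}})$ is an orthonormal (ovf). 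Finally, under the identification $\pi$, $B_j\pi=L_j^*Q\theta_\Psi=L_j^*\theta_A=A_j$ and $\Phi_j\pi=L_j^*Q^{-1/2}Q^{1/2}\theta_\Psi=L_j^*\theta_\Psi=\Psi_j$, that is $B_j|_\mathcal{H}=A_j$ and $\Phi_j|_\mathcal{H}=\Psi_j$.

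The hard part is manufacturing the correct $Q$: a single operator must serve two masters, making $\pi$ isometric while intertwining both $A_j$ with $B_j$ and $\Psi_j$ with $\Phi_j$. What makes this possible is precisely the two standing hypotheses, $\theta_A(\mathcal{H})=\theta_\Psi(\mathcal{H})$ and $P_{A,\Psi}$ being an orthogonal projection, which force $P_{A,\Psi}$ to fix both ranges and thereby render $Q\theta_\Psi=\theta_A$ and the isometry computation compatible; verifying the positivity and invertibility of $Q$ (via the closed-range property from Proposition \ref{2.2}) is the only genuinely technical point.
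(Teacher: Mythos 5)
Your proof is correct, but it is organized quite differently from the paper's. The paper dilates \emph{externally}: it sets $\mathcal{H}_1=\mathcal{H}\oplus\theta_A(\mathcal{H})^\perp$, defines $B_j(h\oplus g)=A_jh+L_j^*P_{A,\Psi}^\perp g$ and $\Phi_j(h\oplus g)=\Psi_jh+L_j^*P_{A,\Psi}^\perp g$, and then verifies $S_{B,\Phi}=I_{\mathcal{H}_1}$ and $P_{B,\Phi}=I_{\ell^2(\mathbb{J})}\otimes I_{\mathcal{H}_0}$ by computing $\theta_B$, $\theta_\Phi$ and their adjoints componentwise; the restriction property is then literal on $\mathcal{H}\oplus 0$. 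You instead work \emph{internally} in $\ell^2(\mathbb{J})\otimes\mathcal{H}_0$, compressing the whole argument into the single intertwining identity $Q\theta_\Psi=\theta_A$ for the block-diagonal positive invertible $Q=\theta_A\theta_A^*+(I-P_{A,\Psi})$, after which $\theta_B=Q^{1/2}$ and $\theta_\Phi=Q^{-1/2}$ make the Parseval and Riesz conditions one-line identities. The two constructions are in fact unitarily equivalent: under the unitary $\mathcal{H}\oplus\theta_A(\mathcal{H})^\perp\ni h\oplus g\mapsto \pi h+g$ (using that $Q^{1/2}$ fixes $W^\perp$ pointwise), your $B_j,\Phi_j$ transform exactly into the paper's. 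What your route buys is a shorter verification and a conceptual explanation of where the dilation comes from — it is the symmetric splitting $Q^{1/2}\cdot Q^{1/2}$ of the operator carrying $\theta_\Psi$ to $\theta_A$; what it costs is the functional calculus for $Q^{\pm 1/2}$ and the invertibility check for $Q$, which hinges on the closed-range/bounded-below property of $\theta_A$ from Proposition \ref{2.2} (a point you correctly flag as the only technical one). One small presentational remark: the restriction statements $B_j|_{\mathcal{H}}=A_j$, $\Phi_j|_{\mathcal{H}}=\Psi_j$ hold in your setup only modulo the identification of $\mathcal{H}$ with $\pi(\mathcal{H})$, which is the standard reading but worth saying explicitly since your embedding $\pi=Q^{1/2}\theta_\Psi$ is less visibly an inclusion than the paper's $h\mapsto h\oplus 0$.
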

  \begin{proof}
We first note that  $P_{A,\Psi}$ is the  orthogonal projection from $ \ell^2(\mathbb{J})\otimes \mathcal{H}_0$ onto $\theta_A(\mathcal{H})=\theta_\Psi(\mathcal{H})$. Define $ \mathcal{H}_1\coloneqq\mathcal{H}\oplus \theta_A(\mathcal{H})^\perp$. Then $\mathcal{H} \ni h \mapsto h\oplus 0 \in \mathcal{H}_1 $ is an isometry.  Define $ B_j:\mathcal{H}_1\ni h\oplus g\mapsto A_jh+L_j^*P_{A,\Psi}^\perp g \in \mathcal{H}_0,  \Phi_j:\mathcal{H}_1\ni h\oplus g\mapsto \Psi_jh+L_j^*P_{A,\Psi}^\perp g \in \mathcal{H}_0 , \forall j \in \mathbb{J}$. Clearly $B_j|_{\mathcal{H}}=  A_j,\Phi_j|_{\mathcal{H}}=\Psi_j, \forall j \in \mathbb{J}$. Now $ \theta_B(h\oplus g)=\sum_{j\in\mathbb{J}}L_jA_jh+\sum_{j\in\mathbb{J}}L_jL_j^*P_{A,\Psi}^\perp g=\theta_Ah+P_{A,\Psi}^\perp g, \forall  h\oplus g \in \mathcal{H}_1$. Similarly $\theta_\Phi(h\oplus g)=\theta_\Psi h+P_{A,\Psi}^\perp g, \forall h\oplus g\in \mathcal{H}_1 $.  Now $\langle \theta_B^*z,h\oplus g \rangle= \langle z,  \theta_B(h\oplus g) \rangle = \langle \theta_A^*z,  h\rangle+\langle P_{A,\Psi}^\perp z,  g\rangle = \langle \theta_A^*z\oplus P_{A,\Psi}^\perp z, h\oplus g\rangle , \forall z \in \ell^2(\mathbb{J})\otimes \mathcal{H}_0 , \forall  h\oplus g \in \mathcal{H}_1$. Hence $\theta_B^*z=\theta_A^*z\oplus P_{A,\Psi}^\perp z, \forall z \in \ell^2(\mathbb{J})\otimes \mathcal{H}_0 $ and similarly $\theta_\Phi^*z=\theta_\Psi^*z\oplus P_{A,\Psi}^\perp z, \forall z \in \ell^2(\mathbb{J})\otimes \mathcal{H}_0$.
 By  using $\theta_A(\mathcal{H})=\theta_\Psi(\mathcal{H}) $ and $\theta_\Psi^*P_{A,\Psi}^\perp=0=P_{A,\Psi}^\perp\theta_A ,$ we get  $ S_{B,\Phi}(h\oplus g)= \theta_\Phi^*(\theta_Ah+ P_{A,\Psi}^\perp g)=\theta_\Psi^*(\theta_Ah+P_{A,\Psi}^\perp g)\oplus P_{A,\Psi}^\perp(\theta_Ah+P_{A,\Psi}^\perp g)=(S_{A,\Psi}h+0)\oplus(0+P_{A,\Psi}^\perp g)=S_{A,\Psi}h\oplus P_{A,\Psi}^\perp g=I_\mathcal{H}h\oplus I_{\theta_A(\mathcal{H})^\perp}g, \forall h\oplus g\in \mathcal{H}_1$. Hence $(\{B_j\}_{j\in \mathbb{J}},\{\Phi_j\}_{j\in \mathbb{J}} )$ is  Parseval (ovf) in $ \mathcal{B}(\mathcal{H}_1, \mathcal{H}_0)$. We find $ P_{B,\Phi}z=\theta_BS_{B,\Phi}^{-1}\theta_\Phi^*z=\theta_B\theta_\Phi^*z=\theta_B(\theta_\Psi^*z\oplus P_{A,\Psi}^\perp z)=\theta_A(\theta_\Psi^*z)+ P_{A,\Psi}^\perp(P_{A,\Psi}^\perp z)=P_{A,\Psi} z+P_{A,\Psi}^\perp z=P_{A,\Psi} z+(I_{\ell^2(\mathbb{J})}\otimes I_{\mathcal{H}_0})-P_{A,\Psi})z =I_{\ell^2(\mathbb{J})}\otimes I_{\mathcal{H}_0} z, \forall z \in  \ell^2(\mathbb{J})\otimes \mathcal{H}_0$. 
 Hence $(\{B_j\}_{j\in \mathbb{J}},\{\Phi_j\}_{j\in \mathbb{J}})$ is  Riesz-(ovf) in $ \mathcal{B}(\mathcal{H}_1, \mathcal{H}_0)$. Thus $(\{B_j\}_{j\in \mathbb{J}},\{\Phi_j\}_{j\in \mathbb{J}})$ is  orthonormal (ovf) in $ \mathcal{B}(\mathcal{H}_1, \mathcal{H}_0)$.
  \end{proof}
 \begin{definition}
 An (ovf)  $(\{B_j\}_{j\in \mathbb{J}}, \{\Phi_j\}_{j\in \mathbb{J}})$  in $\mathcal{B}(\mathcal{H}, \mathcal{H}_0)$ is said to be a dual of  (ovf) $ (\{A_j\}_{j\in \mathbb{J}}, \{\Psi_j\}_{j\in \mathbb{J}})$ in $\mathcal{B}(\mathcal{H}, \mathcal{H}_0)$  if $ \theta_\Phi^*\theta_A= \theta_B^*\theta_\Psi=I_{\mathcal{H}}$. The `operator-valued frame' $(\{\widetilde{A}_j\coloneqq A_jS_{A,\Psi}^{-1}\}_{j\in \mathbb{J}}, \{\widetilde{\Psi}_j\coloneqq\Psi_jS_{A,\Psi}^{-1}\}_{j \in \mathbb{J}} )$, which is a `dual' of $ (\{A_j\}_{j\in \mathbb{J}}, \{\Psi_j\}_{j\in \mathbb{J}})$ is called the canonical dual of $ (\{A_j\}_{j\in \mathbb{J}}, \{\Psi_j\}_{j\in \mathbb{J}})$.
 \end{definition}
 From the duality definition it is clear that  $ (\{B_j\}_{j\in \mathbb{J}}, \{\Phi_j\}_{j\in \mathbb{J}})$ is dual of $ (\{A_j\}_{j\in \mathbb{J}}, \{\Psi_j\}_{j\in \mathbb{J}})$ if and only if both 
 $ (\{A_j\}_{j\in \mathbb{J}}, \{\Phi_j\}_{j\in \mathbb{J}})$ and $ (\{\Psi_j\}_{j\in \mathbb{J}}, \{B_j\}_{j\in \mathbb{J}})$ are Parseval operator-valued frames.

 By taking adjoint in the duality condition, we see that $(\{B_j\}_{j\in \mathbb{J}}, \{\Phi_j\}_{j\in \mathbb{J}})$ is dual of $( \{A_j\}_{j\in \mathbb{J}}, \{\Psi_j\}_{j\in \mathbb{J}})$ if and only if $(\{A_j\}_{j\in \mathbb{J}}, \{\Psi_j\}_{j\in \mathbb{J}})$ is dual of $(\{B_j\}_{j\in \mathbb{J}}, \{\Phi_j\}_{j\in \mathbb{J}})$. Further, if  $\{B_j\}_{j\in \mathbb{J}}, \{C_j\}_{j\in \mathbb{J}}\in \mathscr{F}_\Psi $ are duals of $ \{A_j\}_{j\in \mathbb{J}} \in \mathscr{F}_\Psi$, then the `operator-valued frame' $(\left\{(B_j+C_j)/2\right\}_{j\in \mathbb{J}},\{\Psi_j\}_{j\in \mathbb{J}}) $ is also a dual of $( \{A_j\}_{j\in \mathbb{J}}, \{\Psi_j\}_{j\in \mathbb{J}}).$
 
 Sun  showed that canonical dual frame gives rise to expansion coefficients with minimum norm (Lemma 2.1 in \cite{SUN1}). Following is the result in the extended setting.
 \begin{proposition}
 Let $( \{A_j\}_{j\in \mathbb{J}}, \{\Psi_j\}_{j\in \mathbb{J}} )$ be an (ovf) in $ \mathcal{B}(\mathcal{H}, \mathcal{H}_0).$  If $ h \in \mathcal{H}$ has representation  $ h=\sum_{j\in\mathbb{J}}A_j^*y_j= \sum_{j\in\mathbb{J}}\Psi_j^*z_j, $ for some $ \{y_j\}_{j\in \mathbb{J}},\{z_j\}_{j\in \mathbb{J}}$ in $ \mathcal{H}_0$, then 
 $$ \sum\limits_{j\in \mathbb{J}}\langle y_j,z_j\rangle =\sum\limits_{j\in \mathbb{J}}\langle \widetilde{\Psi}_jh,\widetilde{A}_jh\rangle+\sum\limits_{j\in \mathbb{J}}\langle y_j-\widetilde{\Psi}_jh,z_j-\widetilde{A}_jh\rangle. $$
 \end{proposition}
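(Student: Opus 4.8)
The plan is to expand the right-hand side and to check that every term other than $\sum_{j\in\mathbb{J}}\langle y_j,z_j\rangle$ cancels. I recall from the definition of the canonical dual that $\widetilde{A}_j=A_jS_{A,\Psi}^{-1}$ and $\widetilde{\Psi}_j=\Psi_jS_{A,\Psi}^{-1}$, and that $S_{A,\Psi}$ is positive invertible, so $S_{A,\Psi}^{-1}$ is positive, in particular self-adjoint. The computation will repeatedly use the identity $\langle S_{A,\Psi}u,v\rangle=\sum_{j\in\mathbb{J}}\langle A_ju,\Psi_jv\rangle$ obtained in the proof of Proposition \ref{2.2}(ii), together with the two representations $h=\sum_{j\in\mathbb{J}}A_j^*y_j=\sum_{j\in\mathbb{J}}\Psi_j^*z_j$.

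First I would evaluate three auxiliary sums, each of which collapses to the scalar $\langle S_{A,\Psi}^{-1}h,h\rangle$. Taking $u=v=S_{A,\Psi}^{-1}h$ in the identity above gives $\sum_{j\in\mathbb{J}}\langle\widetilde{\Psi}_jh,\widetilde{A}_jh\rangle=\langle S_{A,\Psi}S_{A,\Psi}^{-1}h,S_{A,\Psi}^{-1}h\rangle=\langle S_{A,\Psi}^{-1}h,h\rangle$, using self-adjointness of $S_{A,\Psi}^{-1}$. Pulling the adjoints through the inner products and invoking the first representation of $h$ yields $\sum_{j\in\mathbb{J}}\langle y_j,\widetilde{A}_jh\rangle=\langle\sum_{j\in\mathbb{J}}A_j^*y_j,S_{A,\Psi}^{-1}h\rangle=\langle h,S_{A,\Psi}^{-1}h\rangle$; symmetrically, the second representation of $h$ gives $\sum_{j\in\mathbb{J}}\langle\widetilde{\Psi}_jh,z_j\rangle=\langle S_{A,\Psi}^{-1}h,h\rangle$. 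Here I use continuity of the inner product to move the limit of the defining net through $\langle\,\cdot\,,S_{A,\Psi}^{-1}h\rangle$.

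Then I would open up the second summand on the right, writing $\sum_{j\in\mathbb{J}}\langle y_j-\widetilde{\Psi}_jh,z_j-\widetilde{A}_jh\rangle$ as the four sums $\sum_{j\in\mathbb{J}}\langle y_j,z_j\rangle-\sum_{j\in\mathbb{J}}\langle y_j,\widetilde{A}_jh\rangle-\sum_{j\in\mathbb{J}}\langle\widetilde{\Psi}_jh,z_j\rangle+\sum_{j\in\mathbb{J}}\langle\widetilde{\Psi}_jh,\widetilde{A}_jh\rangle$. Adding back the leading term $\sum_{j\in\mathbb{J}}\langle\widetilde{\Psi}_jh,\widetilde{A}_jh\rangle$ of the right-hand side and substituting the three values just computed, the whole right-hand side becomes $\sum_{j\in\mathbb{J}}\langle y_j,z_j\rangle+2\langle S_{A,\Psi}^{-1}h,h\rangle-\langle S_{A,\Psi}^{-1}h,h\rangle-\langle S_{A,\Psi}^{-1}h,h\rangle=\sum_{j\in\mathbb{J}}\langle y_j,z_j\rangle$, which is the asserted equality.

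The routine algebra is harmless; the delicate point is convergence and the rearrangement of these unordered sums over $\mathbb{J}$. Each bounded operator $A_j^*$, $\Psi_j^*$, $S_{A,\Psi}^{-1}$ may be moved termwise in and out of the inner products by linearity and boundedness, so the only genuine issue is splitting the single sum $\sum_{j\in\mathbb{J}}\langle y_j-\widetilde{\Psi}_jh,z_j-\widetilde{A}_jh\rangle$ into four pieces. I expect this to be the main obstacle: three of those four pieces are shown above to reduce to finite inner products with $h$ and hence converge, so the convergence of $\sum_{j\in\mathbb{J}}\langle y_j,z_j\rangle$ is equivalent to that of the error sum, and the regrouping is then legitimate.
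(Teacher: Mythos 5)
Your proof is correct and follows essentially the same route as the paper's: expand the error sum into four pieces, reduce the three cross terms to $\langle S_{A,\Psi}^{-1}h,h\rangle$ via the identity $S_{A,\Psi}=\theta_\Psi^*\theta_A$ and the two representations of $h$, and cancel. Your closing remark on the legitimacy of regrouping the unordered sums is a point the paper's proof passes over silently, but it does not change the argument.
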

 \begin{proof}
 \begin{align*}
 \text{Right side}&=\sum\limits_{j\in \mathbb{J}}\langle \widetilde{\Psi}_jh,\widetilde{A}_jh\rangle+\sum\limits_{j\in \mathbb{J}}\langle y_j, z_j\rangle -\sum\limits_{j\in \mathbb{J}}\langle y_j, \widetilde{A}_jh\rangle-\sum\limits_{j\in \mathbb{J}}\langle \widetilde{\Psi}_jh, z_j\rangle +\sum\limits_{j\in \mathbb{J}}\langle \widetilde{\Psi}_jh,\widetilde{A}_jh\rangle\\
 &=2\sum\limits_{j\in \mathbb{J}}\langle \widetilde{\Psi}_jh,\widetilde{A}_jh\rangle+ \sum\limits_{j\in \mathbb{J}}\langle y_j, z_j\rangle-\sum\limits_{j\in \mathbb{J}}\langle y_j,A_jS_{A,\Psi}^{-1}h\rangle-\sum\limits_{j\in \mathbb{J}}\langle \Psi_jS_{A,\Psi}^{-1}h, z_j\rangle\\
 &= 2\left\langle\sum\limits_{j\in \mathbb{J}}S_{A,\Psi}^{-1}A_j^*\Psi_jS_{A,\Psi}^{-1}h, h \right\rangle+ \sum\limits_{j\in \mathbb{J}}\langle y_j, z_j\rangle-\left\langle \sum\limits_{j\in \mathbb{J}}A_j^*y_j,S_{A,\Psi}^{-1}h\right \rangle -\left\langle S_{A,\Psi}^{-1}h , \sum\limits_{j\in \mathbb{J}}\Psi_j^*z_j\right \rangle\\
 &=2 \langle S_{A,\Psi}^{-1}h,h \rangle + \sum\limits_{j\in \mathbb{J}}\langle y_j, z_j\rangle -\langle h, S_{A,\Psi}^{-1}h\rangle-\langle S_{A,\Psi}^{-1}h, h\rangle=\text{Left side.}
 \end{align*}
 \end{proof} 
 \begin{theorem}\label{CANONICALDUALFRAMEPROPERTYOPERATORVERSION}
  Let $(\{A_j\}_{j\in \mathbb{J}},\{\Psi_j\}_{j\in \mathbb{J}})$ be an (ovf) with frame bounds $ a$ and $ b.$ Then
  \begin{enumerate}[\upshape(i)]
  \item The canonical dual (ovf) of the canonical dual (ovf)  of $ (\{A_j\}_{j\in \mathbb{J}} ,\{\Psi_j\}_{j\in \mathbb{J}} )$ is itself.
  \item$ \frac{1}{b}, \frac{1}{a}$ are frame bounds for the canonical dual of $ (\{A_j\}_{j\in \mathbb{J}},\{\Psi_j\}_{j\in \mathbb{J}}).$
  \item If $ a, b $ are optimal frame bounds for $( \{A_j\}_{j\in \mathbb{J}} , \{\Psi_j\}_{j\in \mathbb{J}}),$ then $ \frac{1}{b}, \frac{1}{a}$ are optimal  frame bounds for its canonical dual.
  \end{enumerate} 
 \end{theorem}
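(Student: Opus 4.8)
The plan is to reduce everything to a single computation: the frame operator of the canonical dual equals $S_{A,\Psi}^{-1}$. Recall that the canonical dual is $(\{\widetilde{A}_j = A_jS_{A,\Psi}^{-1}\}_{j\in\mathbb{J}}, \{\widetilde{\Psi}_j = \Psi_jS_{A,\Psi}^{-1}\}_{j\in\mathbb{J}})$, and that its analysis operators are $\theta_{\widetilde A} = \theta_A S_{A,\Psi}^{-1}$ and $\theta_{\widetilde\Psi} = \theta_\Psi S_{A,\Psi}^{-1}$ (pulling the fixed bounded operator $S_{A,\Psi}^{-1}$ out of the defining SOT-sums $\sum_j L_j\widetilde A_j$, $\sum_j L_j\widetilde\Psi_j$), both bounded. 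First I would compute, using part (ii) of Proposition \ref{2.2} together with the self-adjointness of $S_{A,\Psi}^{-1}$ (which holds because $S_{A,\Psi}$ is positive invertible),
\[
S_{\widetilde A,\widetilde\Psi} = \theta_{\widetilde\Psi}^*\theta_{\widetilde A} = S_{A,\Psi}^{-1}\theta_\Psi^*\theta_A S_{A,\Psi}^{-1} = S_{A,\Psi}^{-1}S_{A,\Psi}S_{A,\Psi}^{-1} = S_{A,\Psi}^{-1}.
\]
The only care needed is in pulling $S_{A,\Psi}^{-1}$ out of the SOT-sum $\sum_j \widetilde\Psi_j^*\widetilde A_j = \sum_j S_{A,\Psi}^{-1}\Psi_j^*A_j S_{A,\Psi}^{-1}$, which is legitimate since left and right multiplication by a fixed bounded operator is SOT-continuous.

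For (i), I would simply apply the boxed formula once more, now to the canonical dual. Since its frame operator is $S_{A,\Psi}^{-1}$, its canonical dual has $j$-th entries $\widetilde A_j\, S_{\widetilde A,\widetilde\Psi}^{-1} = A_j S_{A,\Psi}^{-1}S_{A,\Psi} = A_j$ and likewise $\widetilde\Psi_j\, S_{A,\Psi} = \Psi_j$, so the double canonical dual recovers $(\{A_j\}_{j\in\mathbb{J}},\{\Psi_j\}_{j\in\mathbb{J}})$ exactly.

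For (ii), I would invoke operator monotonicity of inversion: from $a I_\mathcal{H} \leq S_{A,\Psi} \leq b I_\mathcal{H}$ and positivity of $S_{A,\Psi}$, passing to inverses reverses the inequalities, giving $\frac{1}{b}I_\mathcal{H} \leq S_{A,\Psi}^{-1} \leq \frac{1}{a}I_\mathcal{H}$, that is, $\frac{1}{b}I_\mathcal{H} \leq S_{\widetilde A,\widetilde\Psi} \leq \frac{1}{a}I_\mathcal{H}$; hence $\frac1b,\frac1a$ are frame bounds for the dual. For (iii), I would use the explicit expressions recorded in Definition \ref{1}: the optimal upper bound of the dual is $\|S_{\widetilde A,\widetilde\Psi}\| = \|S_{A,\Psi}^{-1}\| = 1/a$ and its optimal lower bound is $\|S_{\widetilde A,\widetilde\Psi}^{-1}\|^{-1} = \|S_{A,\Psi}\|^{-1} = 1/b$, using $a = \|S_{A,\Psi}^{-1}\|^{-1}$ and $b = \|S_{A,\Psi}\|$. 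The main obstacle is really the only nontrivial point, namely the identity $S_{\widetilde A,\widetilde\Psi} = S_{A,\Psi}^{-1}$; once it is established, all three parts are formal consequences.
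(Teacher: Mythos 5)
Your proof is correct, and the core of it — establishing $S_{\widetilde{A},\widetilde{\Psi}}=S_{A,\Psi}^{-1}$ and deducing (i) and (ii) from that single identity — is exactly the paper's argument. The only place you diverge is (iii): the paper argues by a two-sided squeeze, letting $c$ be the optimal upper bound of the dual, getting $c\leq \frac{1}{a}$ from (ii), and then applying (ii) again to the dual (whose canonical dual is the original frame by (i)) to conclude $\frac{1}{c}\leq a$, hence $c=\frac{1}{a}$; you instead read off the optimal bounds directly from the formulas $a=\|S_{A,\Psi}^{-1}\|^{-1}$ and $b=\|S_{A,\Psi}\|$ recorded in Definition \ref{1}, so that the dual's optimal bounds are $\|S_{A,\Psi}\|^{-1}=\frac{1}{b}$ and $\|S_{A,\Psi}^{-1}\|=\frac{1}{a}$ by inspection. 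Your route for (iii) is shorter and arguably cleaner since those norm formulas are already available; the paper's route has the mild advantage of not relying on them, using only the frame-bound inequalities together with parts (i) and (ii). Both are valid, and your remark about SOT-continuity of left and right multiplication by a fixed bounded operator correctly addresses the one point where care is needed in the key computation.
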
 
 \begin{proof}
 \begin{enumerate}[\upshape(i)]
 \item Frame operator for the canonical dual  $(\{A_jS_{A,\Psi}^{-1}\}_{j \in \mathbb{J}}, \{\Psi_jS_{A,\Psi}^{-1}\}_{j\in \mathbb{J}} )$ is 
 $$ \sum\limits_{j\in \mathbb{J}}(\Psi_jS_{A,\Psi}^{-1})^* (A_jS_{A,\Psi}^{-1}) =S_{A,\Psi}^{-1}\left(\sum\limits_{j\in \mathbb{J}}\Psi_j ^*A_j\right)S_{A,\Psi}^{-1} =S_{A,\Psi}^{-1}S_{A,\Psi}S_{A,\Psi}^{-1}= S_{A,\Psi}^{-1}.$$
  Therefore, its canonical dual is $(\{(A_jS_{A,\Psi}^{-1})S_{A,\Psi}\}_{j \in \mathbb{J}} , \{(\Psi_jS_{A,\Psi}^{-1})S_{A,\Psi}\}_{j\in \mathbb{J}})$ which is original frame.
 \item  Multiplying the inequality $a\leq S_{A,\Psi}\leq b $  by $ S_{A,\Psi}^{-1}$ gives (ii).
 \item Let $ c$ be the optimal upper frame bound for  the canonical dual. Since $a$ is already a lower frame bound for $(\{A_j\}_{j\in \mathbb{J}}, \{\Psi_j\}_{j\in \mathbb{J}})$, from (ii), $c\leq \frac{1}{a}. $ Again from (ii), $ \frac{1}{c}$ is a lower frame bound for $(\{(A_jS_{A,\Psi}^{-1})S_{A,\Psi}=A_j\}_{j\in \mathbb{J}}, \{(\Psi_jS_{A,\Psi}^{-1})S_{A,\Psi}=\Psi_j\}_{j\in \mathbb{J}})$. But then $ \frac{1}{c}\leq a. $ Therefore $ c=\frac{1}{a}.$ Similarly one can prove that $\frac{1}{b}$ is the optimal lower frame bound for the canonical dual.  
 \end{enumerate}
 \end{proof}
 \begin{proposition}\label{DUALOVFCHARACTERIZATION}
 Let  $ (\{A_j\}_{j\in \mathbb{J}}, \{\Psi_j\}_{j\in \mathbb{J}}) $ and $ (\{B_j\}_{j\in \mathbb{J}}, \{\Phi_j\}_{j\in \mathbb{J}}) $ be operator-valued frames in  $\mathcal{B}(\mathcal{H}, \mathcal{H}_0)$. Then the following are equivalent.
 \begin{enumerate}[\upshape(i)]
 \item $ (\{B_j\}_{j\in \mathbb{J}},\{\Phi_j\}_{j\in \mathbb{J}}) $ is dual of $( \{A_j\}_{j\in \mathbb{J}}, \{\Psi_j\}_{j\in \mathbb{J}}) $. 
 \item $ \sum_{j\in \mathbb{J}}\Phi_j^*A_j = \sum_{j\in \mathbb{J}}B_j^*\Psi_j=I_\mathcal{H}$ where the convergence is in the SOT.
 \end{enumerate}
 \end{proposition}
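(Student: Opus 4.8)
The plan is to reduce the whole equivalence to a single cross-term identity, after which (i)$\Leftrightarrow$(ii) becomes a tautological comparison with the definition of dual. First I would record that, since both $(\{A_j\}_{j\in\mathbb{J}},\{\Psi_j\}_{j\in\mathbb{J}})$ and $(\{B_j\}_{j\in\mathbb{J}},\{\Phi_j\}_{j\in\mathbb{J}})$ are operator-valued frames, each of the four sequences is in particular Bessel with respect to itself, so all four analysis operators $\theta_A,\theta_\Psi,\theta_B,\theta_\Phi$ exist as bounded operators. The only thing that really needs proof is then the pair of identities
\begin{equation*}
\sum_{j\in\mathbb{J}}\Phi_j^*A_j=\theta_\Phi^*\theta_A,\qquad \sum_{j\in\mathbb{J}}B_j^*\Psi_j=\theta_B^*\theta_\Psi,\quad\text{both in the SOT.}
\end{equation*}
Granting these, condition \text{\upshape(i)} reads $\theta_\Phi^*\theta_A=\theta_B^*\theta_\Psi=I_\mathcal{H}$ and condition \text{\upshape(ii)} reads $\sum_{j}\Phi_j^*A_j=\sum_{j}B_j^*\Psi_j=I_\mathcal{H}$; by the displayed identities these two statements are literally the same, so the equivalence follows immediately.

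To prove the first identity (the second being symmetric, obtained by interchanging the roles of the two frames), I would follow the computation already used for Proposition~\ref{2.2}(ii), which never used that the two sequences come from a single frame. For a finite subset $\mathbb{S}\subseteq\mathbb{J}$ put $\theta_A^{\mathbb{S}}\coloneqq\sum_{j\in\mathbb{S}}L_jA_j$ and $\theta_\Phi^{\mathbb{S}}\coloneqq\sum_{j\in\mathbb{S}}L_j\Phi_j$. Using $L_k^*L_j=\delta_{j,k}I_{\mathcal{H}_0}$ from Equation~(\ref{LEQUATION}) gives the exact finite identity $(\theta_\Phi^{\mathbb{S}})^*\theta_A^{\mathbb{S}}=\sum_{j\in\mathbb{S}}\Phi_j^*A_j$. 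It then remains to pass to the limit over the net of finite subsets of $\mathbb{J}$ and show $(\theta_\Phi^{\mathbb{S}})^*\theta_A^{\mathbb{S}}h\to\theta_\Phi^*\theta_Ah$ in norm for each $h\in\mathcal{H}$.

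This limiting step is the one delicate point, because strong convergence $\theta_\Phi^{\mathbb{S}}\to\theta_\Phi$ does not in general force strong convergence of the adjoints $(\theta_\Phi^{\mathbb{S}})^*$. I would handle it by the splitting
\begin{equation*}
(\theta_\Phi^{\mathbb{S}})^*\theta_A^{\mathbb{S}}h-\theta_\Phi^*\theta_Ah=(\theta_\Phi^{\mathbb{S}})^*(\theta_A^{\mathbb{S}}h-\theta_Ah)+\big((\theta_\Phi^{\mathbb{S}})^*-\theta_\Phi^*\big)\theta_Ah.
\end{equation*}
The first summand is controlled by the uniform bound $\|\theta_\Phi^{\mathbb{S}}\|\le\|\theta_\Phi\|$, which holds because $\|\theta_\Phi^{\mathbb{S}}h\|^2=\sum_{j\in\mathbb{S}}\|\Phi_jh\|^2\le\|\theta_\Phi h\|^2$, together with $\theta_A^{\mathbb{S}}h\to\theta_Ah$. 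For the second summand I would observe that $\theta_\Phi^{\mathbb{S}}=P_{\mathbb{S}}\theta_\Phi$, where $P_{\mathbb{S}}$ is the orthogonal projection of $\ell^2(\mathbb{J})\otimes\mathcal{H}_0$ onto the coordinates indexed by $\mathbb{S}$, so that $(\theta_\Phi^{\mathbb{S}})^*=\theta_\Phi^*P_{\mathbb{S}}$; since $P_{\mathbb{S}}\to I_{\ell^2(\mathbb{J})}\otimes I_{\mathcal{H}_0}$ strongly and $\theta_\Phi^*$ is bounded, we get $(\theta_\Phi^{\mathbb{S}})^*y\to\theta_\Phi^*y$ in norm for every $y$, in particular for $y=\theta_Ah$. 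Both summands therefore vanish in the limit, which establishes the identity in the SOT and completes the argument.
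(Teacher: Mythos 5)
Your proof is correct and follows essentially the same route as the paper, which disposes of the proposition in one line by citing the identities $\theta_\Phi^*\theta_A=\sum_{j\in\mathbb{J}}\Phi_j^*A_j$ and $\theta_B^*\theta_\Psi=\sum_{j\in\mathbb{J}}B_j^*\Psi_j$. The only difference is that you supply the justification of SOT convergence of the cross sums (via the uniform bound on the partial analysis operators and the projection identity $\theta_\Phi^{\mathbb{S}}=P_{\mathbb{S}}\theta_\Phi$), a detail the paper leaves implicit by analogy with Proposition \ref{2.2}.
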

 \begin{proof}
 $\theta_\Phi^*\theta_A= \sum_{j\in \mathbb{J}}\Phi_j^*A_j, \theta_B^*\theta_\Psi= \sum
 _{j\in \mathbb{J}}B_j^*\Psi_j.$
 \end{proof}
 \begin{theorem}
 Let $ (\{A_j\}_{j\in \mathbb{J}}, \{\Psi_j\}_{j\in \mathbb{J}})$  be an  (ovf) in   $\mathcal{B}(\mathcal{H}, \mathcal{H}_0)$. If $ (\{A_j\}_{j\in \mathbb{J}}, \{\Psi_j\}_{j\in \mathbb{J}})$ is a Riesz  basis  in  $\mathcal{B}(\mathcal{H}, \mathcal{H}_0)$, then $ (\{A_j\}_{j\in \mathbb{J}}, \{\Psi_j\}_{j\in \mathbb{J}}) $ has unique dual. Converse holds if $ \theta_A(\mathcal{H})=\theta_\Psi(\mathcal{H})$.
 \end{theorem}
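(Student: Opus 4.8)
The plan is to read everything through the analysis operators and the frame idempotent $P_{A,\Psi}=\theta_AS_{A,\Psi}^{-1}\theta_\Psi^*$, and to convert the phrase ``unique dual'' into a statement about the bounded left-inverses of $\theta_A$ and $\theta_\Psi$. The key input is the description of those left-inverses (the proposition just before Proposition \ref{2.2}): every bounded left-inverse of $\theta_A$ has the form $S_{A,\Psi}^{-1}\theta_\Psi^*+U(I_{\ell^2(\mathbb{J})\otimes\mathcal{H}_0}-P_{A,\Psi})$, and by Proposition \ref{DUALOVFCHARACTERIZATION} a dual is exactly a pair $(\{B_j\}_{j\in\mathbb{J}},\{\Phi_j\}_{j\in\mathbb{J}})$ that is itself an (ovf) with $\theta_\Phi^*$ a left-inverse of $\theta_A$ and $\theta_B^*$ a left-inverse of $\theta_\Psi$.

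For the forward direction, a Riesz basis is a Riesz (ovf) by Proposition \ref{RBIMPLIESRFOV}(i), so $P_{A,\Psi}=I_{\ell^2(\mathbb{J})}\otimes I_{\mathcal{H}_0}$. If $(\{B_j\}_{j\in\mathbb{J}},\{\Phi_j\}_{j\in\mathbb{J}})$ is any dual, then $\theta_\Phi^*$ and $\theta_B^*$ are left-inverses of $\theta_A$ and $\theta_\Psi$; with $P_{A,\Psi}=I$ the free term $U(I-P_{A,\Psi})$ vanishes, forcing $\theta_\Phi^*=S_{A,\Psi}^{-1}\theta_\Psi^*$ and $\theta_B^*=S_{A,\Psi}^{-1}\theta_A^*$. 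Since $\Phi_j=L_j^*\theta_\Phi$ and $B_j=L_j^*\theta_B$ (Proposition \ref{2.2}(v)), we recover $\Phi_j=\Psi_jS_{A,\Psi}^{-1}=\widetilde{\Psi}_j$ and $B_j=A_jS_{A,\Psi}^{-1}=\widetilde{A}_j$; the dual is forced to be the canonical dual, hence unique.

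For the converse I would first show that uniqueness forces $P_{A,\Psi}=I$, arguing by contraposition; this is the step I expect to be the main obstacle. Suppose $I-P_{A,\Psi}\neq0$. Keep the canonical first leg $B_j=\widetilde{A}_j$ (so $\theta_B=\theta_AS_{A,\Psi}^{-1}$), but perturb the second leg by setting $\theta_{\Phi'}^*:=S_{A,\Psi}^{-1}\theta_\Psi^*+U(I-P_{A,\Psi})$ and $\Phi_j':=L_j^*\theta_{\Phi'}$, with $U\in\mathcal{B}(\ell^2(\mathbb{J})\otimes\mathcal{H}_0,\mathcal{H})$ chosen so that $U(I-P_{A,\Psi})\neq0$ (possible since $\mathcal{H}\neq\{0\}$ and $I-P_{A,\Psi}\neq0$). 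The two identities $(I-P_{A,\Psi})\theta_A=0$ and $\theta_A^*(I-P_{\Psi,A})=0$, which follow from $\theta_\Psi^*\theta_A=\theta_A^*\theta_\Psi=S_{A,\Psi}$ and $P_{\Psi,A}=P_{A,\Psi}^*$, then give $\theta_{\Phi'}^*\theta_A=I_\mathcal{H}$, $\theta_B^*\theta_\Psi=I_\mathcal{H}$ and $\theta_{\Phi'}^*\theta_B=S_{A,\Psi}^{-1}=\theta_B^*\theta_{\Phi'}$. Hence $(\{B_j'\}_{j\in\mathbb{J}},\{\Phi_j'\}_{j\in\mathbb{J}})$ is again an (ovf), with frame operator the positive invertible $S_{A,\Psi}^{-1}$, and is a dual of $(\{A_j\}_{j\in\mathbb{J}},\{\Psi_j\}_{j\in\mathbb{J}})$, yet differs from the canonical dual because $\theta_{\Phi'}\neq\theta_{\widetilde{\Psi}}$. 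This contradicts uniqueness, so $P_{A,\Psi}=I$, and by Proposition \ref{RIESZOVFCHARACTERIZATIONPROPOSITION} both $\theta_A$ and $\theta_\Psi$ become bijections of $\mathcal{H}$ onto $\ell^2(\mathbb{J})\otimes\mathcal{H}_0$.

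It remains to upgrade ``Riesz (ovf)'' to ``Riesz basis,'' which is where $\theta_A(\mathcal{H})=\theta_\Psi(\mathcal{H})$ is put to work. I would polar-decompose $\theta_A=W|\theta_A|$ with $|\theta_A|=(\theta_A^*\theta_A)^{1/2}$; bijectivity of $\theta_A$ makes $|\theta_A|$ positive invertible and $W$ unitary. Setting $F_j:=L_j^*W$, $U:=|\theta_A|$, $V:=W^*\theta_\Psi$, Equation (\ref{LEQUATION}) together with $WW^*=W^*W=I$ yields $F_jF_k^*=\delta_{j,k}I_{\mathcal{H}_0}$ and $\sum_{j\in\mathbb{J}}\|F_jh\|^2=\|Wh\|^2=\|h\|^2$, so $\{F_j\}_{j\in\mathbb{J}}$ is an orthonormal basis; moreover $A_j=F_jU$, $\Psi_j=F_jV$ with $U,V$ invertible and $V^*U=\theta_\Psi^*\theta_A=S_{A,\Psi}\ge0$, exhibiting $(\{A_j\}_{j\in\mathbb{J}},\{\Psi_j\}_{j\in\mathbb{J}})$ as a Riesz basis. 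The delicate point throughout is not the algebra of left-inverses but the verification in the third paragraph that the perturbed pair remains a genuine frame, i.e. that its frame operator stays positive invertible; everything rests on the two orthogonality identities $(I-P_{A,\Psi})\theta_A=0$ and $\theta_A^*(I-P_{\Psi,A})=0$, and the rest is bookkeeping.
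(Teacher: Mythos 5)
Your proof is correct, and in the converse it takes a genuinely different (and in one respect more careful) route than the paper's. For the forward direction you and the paper do essentially the same thing: both arguments reduce to $P_{A,\Psi}=I_{\ell^2(\mathbb{J})}\otimes I_{\mathcal{H}_0}$ killing all freedom — the paper subtracts two duals and applies $P_{A,\Psi}$ to the difference, while you invoke the parametrization of bounded left-inverses; these are the same computation in different clothing. The converse is where you diverge. The paper perturbs \emph{both} legs of the canonical dual by the same term $L_j^*PT^*$, where $P$ is the orthogonal projection onto $\theta_A(\mathcal{H})^\perp=\theta_\Psi(\mathcal{H})^\perp$; it is precisely the hypothesis $\theta_A(\mathcal{H})=\theta_\Psi(\mathcal{H})$ that lets one projection annihilate both $\theta_A^*$ and $\theta_\Psi^*$ simultaneously. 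You instead keep one leg canonical and perturb only the other by $U(I-P_{A,\Psi})$, which needs only the universally valid identities $(I-P_{A,\Psi})\theta_A=0$ and $\theta_A^*(I-P_{A,\Psi}^*)=0$; as a result your argument never uses the range hypothesis in this step, and since $P_{A,\Psi}=I$ already forces both $\theta_A$ and $\theta_\Psi$ to be onto, it is not needed in your polar-decomposition step either — so you have in fact shown the range hypothesis is redundant for the converse. Your second step is also a genuine addition: the paper passes from ``not a Riesz basis'' to ``$\theta_A(\mathcal{H})\subsetneq\ell^2(\mathbb{J})\otimes\mathcal{H}_0$'' by citing Proposition \ref{RIESZOVFCHARACTERIZATIONPROPOSITION}, which characterizes Riesz \emph{operator-valued frames}, not Riesz \emph{bases}; the missing implication ``Riesz (ovf) $\Rightarrow$ Riesz basis'' is exactly what your polar-decomposition construction $F_j=L_j^*W$, $U=|\theta_A|$, $V=W^*\theta_\Psi$ supplies. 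The price of your route is reliance on the left-inverse parametrization and the polar decomposition; what it buys is a sharper statement and a cleaner frame operator ($S_{B,\Phi'}=S_{A,\Psi}^{-1}$ exactly, versus the paper's $S_{A,\Psi}^{-1}+TPT^*$).
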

 \begin{proof}
$(\Rightarrow)$ Let  $ (\{B_j\}_{j\in \mathbb{J}}, \{\Phi_j\}_{j\in \mathbb{J}})$ and  $ (\{C_j\}_{j\in \mathbb{J}}, \{\Xi_j\}_{j\in \mathbb{J}})$ be  operator-valued frames such that  both are duals  of  $ (\{A_j\}_{j\in \mathbb{J}}, \{\Psi_j\}_{j\in \mathbb{J}})$. Then $ \theta_\Psi^*\theta_B=I_\mathcal{H}=\theta_A^*\theta_\Phi=\theta_\Psi^*\theta_C=\theta_A^*\theta_\Xi$ $\Rightarrow $ $\theta_\Psi^*(\theta_B-\theta_C)=0=\theta_A^*(\theta_\Phi-\theta_\Xi)$ $\Rightarrow $ $ \theta_A\theta_\Psi^*(\theta_B-\theta_C)=I_\mathcal{H}(\theta_B-\theta_C)=0=\theta_\Psi\theta_A^*(\theta_\Phi-\theta_\Xi)=I_\mathcal{H}(\theta_\Phi-\theta_\Xi)$. An action of $L_j^*$ from the left gives $B_j=C_j,\Phi_j=\Xi_j, \forall j \in \mathbb{J}$.

$(\Leftarrow)$ 	We are also given $ \theta_A(\mathcal{H})=\theta_\Psi(\mathcal{H})$. Suppose $ (\{A_j\}_{j\in \mathbb{J}}, \{\Psi_j\}_{j\in \mathbb{J}})$ is not a  Riesz  basis  in  $\mathcal{B}(\mathcal{H}, \mathcal{H}_0)$. Then, from Proposition \ref{RIESZOVFCHARACTERIZATIONPROPOSITION}, $ \theta_A(\mathcal{H}) \subsetneq \ell^2(\mathbb{J})\otimes \mathcal{H}_0$. Let $ P:\ell^2(\mathbb{J})\otimes \mathcal{H}_0 \rightarrow \theta_A(\mathcal{H})^\perp=\theta_\Psi(\mathcal{H})^\perp$ be the orthogonal projection, $T:\theta_A(\mathcal{H})^\perp\rightarrow \mathcal{H} $ be a nonzero bounded linear operator. Define $B_j\coloneqq A_jS_{A,\Psi}^{-1}+L_j^*PT^*$, $\Phi_j\coloneqq \Psi_jS_{A,\Psi}^{-1}+L_j^*PT^*, \forall j \in \mathbb{J}$. Since $T$ is nonzero and $P$ is onto, there exists a $k \in \mathbb{J}$ such that $B_k\neq A_kS_{A,\Psi}^{-1}$. Now $ \theta_B=\theta_AS_{A,\Psi}^{-1}+PT^*$, $ \theta_\Phi=\theta_\Psi S_{A,\Psi}^{-1}+PT^*$, $S_{B,\Phi}=\theta_\Phi^*\theta_B=(S_{A,\Psi}^{-1}\theta_\Psi^*+TP)(\theta_AS_{A,\Psi}^{-1}+PT^*) =S_{A,\Psi}^{-1}+S_{A,\Psi}^{-1}0T^*+T0S_{A,\Psi}^{-1}+TPT^*=S_{A,\Psi}^{-1}+TPT^*$, which is positive invertible. Thus $(\{B_j\}_{j\in \mathbb{J}}, \{\Phi_j\}_{j\in \mathbb{J}})$ is an (ovf) which is different from the canonical dual of $ (\{A_j\}_{j\in \mathbb{J}}, \{\Psi_j\}_{j\in \mathbb{J}})$. We show this (ovf) is also a dual of $ (\{A_j\}_{j\in \mathbb{J}}, \{\Psi_j\}_{j\in \mathbb{J}})$ (and thus getting a contradiction): $\theta_A^*\theta_\Phi=\theta_A^*(\theta_\Psi S_{A,\Psi}^{-1}+PT^*)=I_\mathcal{H}+0$, $\theta_\Psi^*\theta_B=\theta_\Psi^*(\theta_A S_{A,\Psi}^{-1}+PT^*)=I_\mathcal{H}+0$.

\end{proof}
 \begin{proposition}
Let $ (\{A_j\}_{j\in \mathbb{J}}, \{\Psi_j\}_{j\in \mathbb{J}}) $  be an (ovf) in    $\mathcal{B}(\mathcal{H}, \mathcal{H}_0)$. If  $ (\{B_j\}_{j\in \mathbb{J}}, \{\Phi_j\}_{j\in \mathbb{J}}) $ is dual of $ (\{A_j\}_{j\in \mathbb{J}}, \{\Psi_j\}_{j\in \mathbb{J}}) $, then there exist Bessel sequences $ \{C_j\}_{j\in \mathbb{J}}$ and $ \{\Xi_j\}_{j\in \mathbb{J}} $ (w.r.t. themselves) in  $\mathcal{B}(\mathcal{H}, \mathcal{H}_0)$ such that $ B_j=A_jS_{A,\Psi}^{-1}+C_j, \Phi_j=\Psi_jS_{A,\Psi}^{-1}+\Xi_j,\forall j \in \mathbb{J}$,  and $\theta_C(\mathcal{H})\perp \theta_\Psi(\mathcal{H}),\theta_\Xi(\mathcal{H})\perp \theta_A(\mathcal{H})$. Converse holds if $\theta_\Xi^*\theta_C \geq 0$.
 \end{proposition}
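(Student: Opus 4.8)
The plan is to peel the canonical dual off of $(\{B_j\}_{j\in\mathbb{J}},\{\Phi_j\}_{j\in\mathbb{J}})$ and show that what is left is an \emph{orthogonal} Bessel correction, then to run this construction in reverse for the converse. For the forward direction I would simply \emph{define} the correction terms by $C_j := B_j - A_jS_{A,\Psi}^{-1}$ and $\Xi_j := \Phi_j - \Psi_jS_{A,\Psi}^{-1}$ for each $j\in\mathbb{J}$, so that the required decomposition $B_j = A_jS_{A,\Psi}^{-1}+C_j$, $\Phi_j=\Psi_jS_{A,\Psi}^{-1}+\Xi_j$ holds by construction. Passing to analysis operators gives $\theta_C=\theta_B-\theta_AS_{A,\Psi}^{-1}$ and $\theta_\Xi=\theta_\Phi-\theta_\Psi S_{A,\Psi}^{-1}$, each a difference of bounded operators and hence bounded. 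Since being Bessel with respect to itself means precisely that $\sum_{j\in\mathbb{J}}C_j^*C_j=\theta_C^*\theta_C$ (resp.\ $\sum_{j\in\mathbb{J}}\Xi_j^*\Xi_j=\theta_\Xi^*\theta_\Xi$) converges in SOT to a bounded positive operator, boundedness of $\theta_C$ and $\theta_\Xi$ delivers both Bessel claims at once.

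The orthogonality relations are then a short computation using Proposition \ref{2.2} and the duality characterization of Proposition \ref{DUALOVFCHARACTERIZATION}. I would compute
$$\theta_\Psi^*\theta_C=\theta_\Psi^*\theta_B-\theta_\Psi^*\theta_AS_{A,\Psi}^{-1}=I_\mathcal{H}-S_{A,\Psi}S_{A,\Psi}^{-1}=0,$$
where $\theta_\Psi^*\theta_B=I_\mathcal{H}$ is the adjoint of the duality identity $\theta_B^*\theta_\Psi=I_\mathcal{H}$, and $\theta_\Psi^*\theta_A=S_{A,\Psi}$ comes from Proposition \ref{2.2}(ii); this gives $\theta_C(\mathcal{H})\perp\theta_\Psi(\mathcal{H})$. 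The symmetric computation $\theta_A^*\theta_\Xi=\theta_A^*\theta_\Phi-\theta_A^*\theta_\Psi S_{A,\Psi}^{-1}=I_\mathcal{H}-S_{A,\Psi}S_{A,\Psi}^{-1}=0$ yields $\theta_\Xi(\mathcal{H})\perp\theta_A(\mathcal{H})$.

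For the converse I start from Bessel sequences $\{C_j\}_{j\in\mathbb{J}},\{\Xi_j\}_{j\in\mathbb{J}}$ satisfying $\theta_\Psi^*\theta_C=0$, $\theta_A^*\theta_\Xi=0$ and $\theta_\Xi^*\theta_C\geq0$, set $B_j:=A_jS_{A,\Psi}^{-1}+C_j$ and $\Phi_j:=\Psi_jS_{A,\Psi}^{-1}+\Xi_j$, and verify the duality equations: $\theta_\Phi^*\theta_A=S_{A,\Psi}^{-1}\theta_\Psi^*\theta_A+\theta_\Xi^*\theta_A=I_\mathcal{H}+0=I_\mathcal{H}$ (using $\theta_\Xi^*\theta_A=0$), and likewise $\theta_B^*\theta_\Psi=I_\mathcal{H}$. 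The step that genuinely needs the extra hypothesis, and which I expect to be the crux, is confirming that $(\{B_j\}_{j\in\mathbb{J}},\{\Phi_j\}_{j\in\mathbb{J}})$ is a true (ovf) — only then is the word ``dual'' meaningful. Here I would compute its frame operator, watching the two cross terms vanish by orthogonality:
$$S_{B,\Phi}=\theta_\Phi^*\theta_B=S_{A,\Psi}^{-1}\theta_\Psi^*\theta_AS_{A,\Psi}^{-1}+\theta_\Xi^*\theta_C=S_{A,\Psi}^{-1}+\theta_\Xi^*\theta_C.$$
Since $S_{A,\Psi}^{-1}$ is positive invertible and $\theta_\Xi^*\theta_C\geq0$, this sum is bounded below by $\|S_{A,\Psi}\|^{-1}I_\mathcal{H}$ and is therefore positive invertible; together with boundedness of $\theta_B,\theta_\Phi$ this supplies the defining conditions, so $(\{B_j\}_{j\in\mathbb{J}},\{\Phi_j\}_{j\in\mathbb{J}})$ is an (ovf) and hence a dual. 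The hypothesis $\theta_\Xi^*\theta_C\geq0$ is exactly what prevents $S_{B,\Phi}$ from failing to be self-adjoint or positive, and so cannot be omitted.
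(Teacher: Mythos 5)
Your proof is correct and follows essentially the same route as the paper: the same definitions $C_j = B_j - A_jS_{A,\Psi}^{-1}$, $\Xi_j = \Phi_j - \Psi_jS_{A,\Psi}^{-1}$, the same orthogonality computations via $\theta_\Psi^*\theta_B = I_\mathcal{H} = \theta_A^*\theta_\Phi$ and $\theta_\Psi^*\theta_A = S_{A,\Psi}$, and the same verification in the converse that $S_{B,\Phi} = S_{A,\Psi}^{-1} + \theta_\Xi^*\theta_C \geq S_{A,\Psi}^{-1}$ is positive invertible. No gaps.
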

 \begin{proof}
 $(\Rightarrow)$ Define $ C_j\coloneqq B_j-A_jS_{A,\Psi}^{-1}, \Xi_j\coloneqq\Phi_j-\Psi_jS_{A,\Psi}^{-1},\forall j \in \mathbb{J}$. Since $ \{A_j\}_{j\in \mathbb{J}}$, $ \{\Psi_j\}_{j\in \mathbb{J}} $, $ \{B_j\}_{j\in \mathbb{J}}$, $ \{\Phi_j\}_{j\in \mathbb{J}} $ are Bessel (w.r.t. themselves),
 $ \{C_j\}_{j\in \mathbb{J}}$ and $ \{\Xi_j\}_{j\in \mathbb{J}} $ are Bessel (w.r.t. themselves). Further $\theta_C=\theta_B-\theta_AS_{A,\Psi}^{-1}$, $\theta_\Xi=\theta_\Phi-\theta_\Psi S_{A,\Psi}^{-1}$.  Consider $ \theta_\Psi^*\theta_C=\theta_\Psi^*(\theta_B-\theta_AS_{A,\Psi}^{-1})=I_\mathcal{H}-I_\mathcal{H}=0$, $ \theta_A^*\theta_\Xi=\theta_A^*(\theta_\Phi-\theta_\Psi S_{A,\Psi}^{-1})=I_\mathcal{H}-I_\mathcal{H}=0$.
 
 $(\Leftarrow)$	Clearly $\theta_B$ and $\theta_\Phi$ exist,  bounded and $ \theta_B=\theta_AS_{A,\Psi}^{-1}+\theta_C$, $\theta_\Phi=\theta_\Psi S_{A,\Psi}^{-1}+\theta_\Xi$. Using  
 $\theta_\Xi^*\theta_C \geq 0$, $S_{B,\Phi}=\theta_\Phi^*\theta_B=(S_{A,\Psi}^{-1}\theta_\Psi^*+\theta_\Xi^*)(\theta_AS_{A,\Psi}^{-1}+\theta_C)=S_{A,\Psi}^{-1}+S_{A,\Psi}^{-1}0+0S_{A,\Psi}^{-1}+\theta_\Xi^*\theta_C =S_{A,\Psi}^{-1}+\theta_\Xi^*\theta_C\geq S_{A,\Psi}^{-1} $, hence $S_{B,\Phi} $ is positive invertible. To show the dual, $\theta_A^*\theta_\Phi=\theta_A^*(\theta_\Psi S_{A,\Psi}^{-1}+\theta_\Xi)=I_\mathcal{H}+0$, $\theta_\Psi^*\theta_B=\theta_\Psi^*(\theta_AS_{A,\Psi}^{-1}+\theta_C)=I_\mathcal{H}+0$.
 \end{proof}
 \begin{definition}
 An (ovf)  $(\{B_j\}_{j\in \mathbb{J}},  \{\Phi_j\}_{j\in \mathbb{J}})$  in $\mathcal{B}(\mathcal{H}, \mathcal{H}_0)$ is said to be orthogonal to an (ovf)  $( \{A_j\}_{j\in \mathbb{J}}, \{\Psi_j\}_{j\in \mathbb{J}})$ in $\mathcal{B}(\mathcal{H}, \mathcal{H}_0)$ if $ \theta_\Phi^*\theta_A= \theta_B^*\theta_\Psi=0.$
 \end{definition}
 We note that the definition is symmetric. Also, dual frames cannot be orthogonal to each other and orthogonal frames can not be dual to each other. Further, unlike dual frames, if $ (\{B_j\}_{j\in \mathbb{J}}, \{\Phi_j\}_{j\in \mathbb{J}})$ is orthogonal to $ (\{A_j\}_{j\in \mathbb{J}}, \{\Psi_j\}_{j\in \mathbb{J}})$, then  both $ (\{A_j\}_{j\in \mathbb{J}}, \{\Phi_j\}_{j\in \mathbb{J}})$ and $ (\{B_j\}_{j\in \mathbb{J}}, \{\Psi_j\}_{j\in \mathbb{J}})$ are not frames.
 \begin{proposition}\label{ORTHOGONALOVFCHARACTERIZATION}
 Let  $ (\{A_j\}_{j\in \mathbb{J}}, \{\Psi_j\}_{j\in \mathbb{J}}) $ and $ (\{B_j\}_{j\in \mathbb{J}}, \{\Phi_j\}_{j\in \mathbb{J}}) $ be operator-valued frames in  $\mathcal{B}(\mathcal{H}, \mathcal{H}_0)$. Then the following are equivalent.
 \begin{enumerate}[\upshape(i)]
 \item $ (\{B_j\}_{j\in \mathbb{J}},\{\Phi_j\}_{j\in \mathbb{J}}) $ is orthogonal to  $( \{A_j\}_{j\in \mathbb{J}},  \{\Psi_j\}_{j\in \mathbb{J}}) $. 
 \item $ \sum_{j\in \mathbb{J}}\Phi_j^*A_j = \sum_{j\in \mathbb{J}}B_j^*\Psi_j=0$ where the convergence is in the SOT.
 \end{enumerate}
\end{proposition}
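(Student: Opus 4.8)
The plan is to observe that, exactly as in the dual case (Proposition \ref{DUALOVFCHARACTERIZATION}), the entire statement collapses to the two operator identities
\begin{equation*}
\theta_\Phi^*\theta_A = \sum_{j\in \mathbb{J}}\Phi_j^*A_j \quad\text{and}\quad \theta_B^*\theta_\Psi = \sum_{j\in \mathbb{J}}B_j^*\Psi_j,
\end{equation*}
both convergences understood in the strong-operator topology. Once these are in hand, the proposition is a tautology: the definition of ``$(\{B_j\}_{j\in \mathbb{J}},\{\Phi_j\}_{j\in \mathbb{J}})$ is orthogonal to $(\{A_j\}_{j\in \mathbb{J}},\{\Psi_j\}_{j\in \mathbb{J}})$'' is precisely $\theta_\Phi^*\theta_A = \theta_B^*\theta_\Psi = 0$, which, through the identities above, is verbatim condition (ii). So the whole content sits in verifying these identities; indeed the situation is identical to Proposition \ref{DUALOVFCHARACTERIZATION} with $I_\mathcal{H}$ replaced by $0$.

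First I would establish the identities, reproducing the computation already used for Proposition \ref{2.2}(ii). Since all four collections are operator-valued frames, the analysis operators $\theta_A,\theta_\Psi,\theta_B,\theta_\Phi$ exist as bounded operators by condition (ii) of Definition \ref{1}, so the products $\theta_\Phi^*\theta_A$ and $\theta_B^*\theta_\Psi$ are well-defined bounded operators. For $h,g \in \mathcal{H}$, using $\theta_A h = \sum_{j\in\mathbb{J}} e_j\otimes A_j h$ together with the relations $L_k^*L_j = \delta_{j,k}I_{\mathcal{H}_0}$ from Equation (\ref{LEQUATION}), I would write
\begin{equation*}
\langle \theta_\Phi^*\theta_A h, g\rangle = \left\langle \sum_{j\in\mathbb{J}} L_j A_j h,\ \sum_{k\in\mathbb{J}} L_k \Phi_k g\right\rangle = \sum_{j\in\mathbb{J}} \langle A_j h, \Phi_j g\rangle = \left\langle \sum_{j\in\mathbb{J}} \Phi_j^* A_j h, g\right\rangle,
\end{equation*}
which gives $\theta_\Phi^*\theta_A = \sum_{j\in\mathbb{J}}\Phi_j^*A_j$ in the SOT; the partial-sum convergence is read off from $\theta_\Phi^* = \sum_{j\in\mathbb{J}}\Phi_j^* L_j^*$ (the SOT-adjoint of $\theta_\Phi$) applied to $\theta_A h$. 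The second identity follows by the symmetric computation with the roles of $A,\Phi$ replaced by $\Psi,B$.

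With the identities proved, I would simply record the two-line equivalence: (i) means $\theta_\Phi^*\theta_A = \theta_B^*\theta_\Psi = 0$, which by the displayed identities is exactly $\sum_{j\in\mathbb{J}}\Phi_j^*A_j = \sum_{j\in\mathbb{J}}B_j^*\Psi_j = 0$, i.e.\ (ii). There is no genuine obstacle here; the only point requiring a moment's care is the strong-operator convergence of $\sum_{j\in\mathbb{J}}\Phi_j^*A_j$ and $\sum_{j\in\mathbb{J}}B_j^*\Psi_j$, but this is guaranteed by the existence and boundedness of the analysis operators built into the frame axioms, so the proof can be compressed to essentially one line, just as was done for Proposition \ref{DUALOVFCHARACTERIZATION}.
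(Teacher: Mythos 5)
Your proposal is correct and follows exactly the route the paper takes: the paper proves the companion Proposition \ref{DUALOVFCHARACTERIZATION} by the one-line identities $\theta_\Phi^*\theta_A=\sum_{j\in\mathbb{J}}\Phi_j^*A_j$ and $\theta_B^*\theta_\Psi=\sum_{j\in\mathbb{J}}B_j^*\Psi_j$ (themselves established in Proposition \ref{2.2}), and leaves the orthogonality version as the identical argument with $I_\mathcal{H}$ replaced by $0$. Your verification of those identities and the resulting tautological equivalence matches the paper's intent; no gaps.
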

\begin{remark}
Proposition \ref{DUALOVFCHARACTERIZATION} and Proposition \ref{ORTHOGONALOVFCHARACTERIZATION} are very simple, but they tell how we can define duality and orthogonality without using analysis operators (we do this in Section \ref{FURTHEREXTENSION}).	
\end{remark}
\begin{proposition}
Two orthogonal operator-valued frames  have common dual (ovf).	
\end{proposition}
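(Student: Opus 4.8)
The plan is to produce an explicit common dual as the sum of the two canonical duals. Let the two frames be $(\{A_j\}_{j\in\mathbb{J}},\{\Psi_j\}_{j\in\mathbb{J}})$ and $(\{B_j\}_{j\in\mathbb{J}},\{\Phi_j\}_{j\in\mathbb{J}})$, with the latter orthogonal to the former, so that $\theta_\Phi^*\theta_A=\theta_B^*\theta_\Psi=0$; taking adjoints also gives $\theta_A^*\theta_\Phi=\theta_\Psi^*\theta_B=0$. I would then define
\[ C_j:=A_jS_{A,\Psi}^{-1}+B_jS_{B,\Phi}^{-1},\qquad \Xi_j:=\Psi_jS_{A,\Psi}^{-1}+\Phi_jS_{B,\Phi}^{-1},\qquad\forall j\in\mathbb{J}, \]
and show that $(\{C_j\}_{j\in\mathbb{J}},\{\Xi_j\}_{j\in\mathbb{J}})$ is a single (ovf) dual to both.

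First I would check that this pair is indeed an (ovf). Since $\sum_{j\in\mathbb{J}}L_jC_j=\theta_AS_{A,\Psi}^{-1}+\theta_BS_{B,\Phi}^{-1}$ and $\sum_{j\in\mathbb{J}}L_j\Xi_j=\theta_\Psi S_{A,\Psi}^{-1}+\theta_\Phi S_{B,\Phi}^{-1}$ converge in SOT to bounded operators, condition (ii) of Definition \ref{1} holds, and $\theta_C=\theta_AS_{A,\Psi}^{-1}+\theta_BS_{B,\Phi}^{-1}$, $\theta_\Xi=\theta_\Psi S_{A,\Psi}^{-1}+\theta_\Phi S_{B,\Phi}^{-1}$. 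Using Proposition \ref{2.2}(ii) (so $S_{A,\Psi}=\theta_\Psi^*\theta_A=\theta_A^*\theta_\Psi$ and $S_{B,\Phi}=\theta_\Phi^*\theta_B=\theta_B^*\theta_\Phi$) together with self-adjointness of the positive operators $S_{A,\Psi}^{-1},S_{B,\Phi}^{-1}$, the frame operator of the new pair is
\[ S_{C,\Xi}=\theta_\Xi^*\theta_C=S_{A,\Psi}^{-1}\theta_\Psi^*\theta_AS_{A,\Psi}^{-1}+S_{B,\Phi}^{-1}\theta_\Phi^*\theta_BS_{B,\Phi}^{-1}=S_{A,\Psi}^{-1}+S_{B,\Phi}^{-1}, \]
the two cross terms being annihilated by the orthogonality relations. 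As a sum of two positive invertible operators this is positive invertible, so condition (i) of Definition \ref{1} holds and $(\{C_j\},\{\Xi_j\})$ is an (ovf).

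Finally I would verify the four duality identities by the same mechanism, each time collapsing one summand via $S^{-1}S=I$ and killing the other by orthogonality: $\theta_\Xi^*\theta_A=S_{A,\Psi}^{-1}\theta_\Psi^*\theta_A+S_{B,\Phi}^{-1}\theta_\Phi^*\theta_A=I_\mathcal{H}$ and $\theta_C^*\theta_\Psi=S_{A,\Psi}^{-1}\theta_A^*\theta_\Psi+S_{B,\Phi}^{-1}\theta_B^*\theta_\Psi=I_\mathcal{H}$, which say that $(\{C_j\},\{\Xi_j\})$ is a dual of $(\{A_j\},\{\Psi_j\})$; symmetrically $\theta_\Xi^*\theta_B=S_{B,\Phi}^{-1}\theta_\Phi^*\theta_B=I_\mathcal{H}$ and $\theta_C^*\theta_\Phi=S_{B,\Phi}^{-1}\theta_B^*\theta_\Phi=I_\mathcal{H}$, so it is also a dual of $(\{B_j\},\{\Phi_j\})$. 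Hence it is a common dual.

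The main obstacle, and really the only non-mechanical point, is confirming that $(\{C_j\},\{\Xi_j\})$ is genuinely a frame: one must check that the two cross terms in $S_{C,\Xi}$ vanish (which requires both the stated orthogonality relations \emph{and} their adjoints) and that the surviving operator $S_{A,\Psi}^{-1}+S_{B,\Phi}^{-1}$ is positive and invertible. Everything after that is routine bookkeeping with Proposition \ref{2.2}(ii). I would note in passing that the averaged pair $(\{(C_j)/2\},\{(\Xi_j)/2\})$ would instead be the choice if one wanted a dual lying in a prescribed affine family, but the plain sum already delivers a valid common dual.
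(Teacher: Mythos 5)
Your proposal is correct and follows essentially the same route as the paper: the same candidate $(\{A_jS_{A,\Psi}^{-1}+B_jS_{B,\Phi}^{-1}\},\{\Psi_jS_{A,\Psi}^{-1}+\Phi_jS_{B,\Phi}^{-1}\})$, the same collapse of the cross terms via orthogonality to get $S_{C,\Xi}=S_{A,\Psi}^{-1}+S_{B,\Phi}^{-1}$, and the same four duality checks. The only cosmetic difference is that the paper makes invertibility explicit with the lower bound $\min\{\|S_{A,\Psi}\|^{-1},\|S_{B,\Phi}\|^{-1}\}\|h\|^2$, whereas you appeal to the general fact that a sum of positive invertible operators is positive invertible; both are fine.
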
 
\begin{proof}
Let  $ (\{A_j\}_{j\in \mathbb{J}}, \{\Psi_j\}_{j\in \mathbb{J}}) $ and $ (\{B_j\}_{j\in \mathbb{J}}, \{\Phi_j\}_{j\in \mathbb{J}}) $ be  two orthogonal operator-valued frames in $\mathcal{B}(\mathcal{H}, \mathcal{H}_0)$. Define $ C_j\coloneqq A_jS_{A,\Psi}^{-1}+B_jS_{B,\Phi}^{-1},\Xi_j\coloneqq \Psi _jS_{A,\Psi}^{-1}+\Phi_jS_{B,\Phi}^{-1}, \forall j \in \mathbb{J}$. Then $ \theta_C=\theta_AS_{A,\Psi}^{-1}+\theta_BS_{B,\Phi}^{-1}, \theta_\Xi=\theta_\Psi S_{A,\Psi}^{-1}+\theta_\Phi S_{B,\Phi}^{-1}$,  $ S_{C,\Xi}=\theta_\Xi^*\theta_C=(S_{A,\Psi}^{-1}\theta_\Psi^* +S_{B,\Phi}^{-1}\theta_\Phi^* )(\theta_AS_{A,\Psi}^{-1}+\theta_BS_{B,\Phi}^{-1})=S_{A,\Psi}^{-1}+S_{B,\Phi}^{-1} $ which is positive and $ \langle S_{C,\Xi}h, h\rangle =\langle S_{A,\Psi}^{-1}h,h \rangle +\langle S_{B,\Phi}^{-1}h, h\rangle \geq \min\left\{ \|S_{A,\Psi}\|^{-1},\|S_{B,\Phi}\|^{-1}\right\}\|h\|^2, \forall h \in \mathcal{H}$, hence $S_{C,\Xi}$ is invertible. Therefore $(\{C_j\}_{j\in \mathbb{J}}, \{\Xi_j\}_{j\in \mathbb{J}})$ is an (ovf) in $\mathcal{B}(\mathcal{H}, \mathcal{H}_0)$. This is a common dual of  $ (\{A_j\}_{j\in \mathbb{J}}, \{\Psi_j\}_{j\in \mathbb{J}}) $ and $ (\{B_j\}_{j\in \mathbb{J}}, \{\Phi_j\}_{j\in \mathbb{J}}).$ In fact, $\theta_C^*\theta_\Psi=(S_{A,\Psi}^{-1}\theta_A^*+S_{B,\Phi}^{-1}\theta_B^*)\theta_\Psi=I_\mathcal{H}+0, \theta_\Xi^*\theta_A=(S_{A,\Psi}^{-1}\theta_\Psi^* +S_{B,\Phi}^{-1}\theta_\Phi^*)\theta_A=I_\mathcal{H}+0$, and $\theta_C^*\theta_\Phi= (S_{A,\Psi}^{-1}\theta_A^*+S_{B,\Phi}^{-1}\theta_B^*)\theta_\Phi=0+I_\mathcal{H}, \theta_\Xi^*\theta_B=(S_{A,\Psi}^{-1}\theta_\Psi^* +S_{B,\Phi}^{-1}\theta_\Phi^*)\theta_B=0+I_\mathcal{H} $.	
\end{proof}
 Next we have an interpolation result.
\begin{proposition}
Let $ (\{A_j\}_{j\in \mathbb{J}}, \{\Psi_j\}_{j\in \mathbb{J}}) $ and $ (\{B_j\}_{j\in \mathbb{J}}, \{\Phi_j\}_{j\in \mathbb{J}}) $ be  two Parseval operator-valued frames in   $\mathcal{B}(\mathcal{H}, \mathcal{H}_0)$ which are  orthogonal. If $C,D,E,F \in \mathcal{B}(\mathcal{H})$ are such that $ C^*E+D^*F=I_\mathcal{H}$, then  $ (\{A_jC+B_jD\}_{j\in \mathbb{J}}, \{\Psi_jE+\Phi_jF\}_{j\in \mathbb{J}}) $ is a  Parseval (ovf) in  $\mathcal{B}(\mathcal{H}, \mathcal{H}_0)$. In particular,  if scalars $ c,d,e,f$ satisfy $\bar{c}e+\bar{d}f =1$, then $ (\{cA_j+dB_j\}_{j\in \mathbb{J}}, \{e\Psi_j+f\Phi_j\}_{j\in \mathbb{J}}) $ is  a  Parseval (ovf).
\end{proposition}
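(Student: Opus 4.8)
The plan is to work with analysis operators via Proposition~\ref{2.2}, under which a pair is Parseval exactly when its frame operator equals $I_\mathcal{H}$. Write $A_j'\coloneqq A_jC+B_jD$ and $\Psi_j'\coloneqq \Psi_jE+\Phi_jF$ for $j\in\mathbb{J}$. First I would compute the analysis operators of these two collections. Since $\theta_A h=\sum_{j\in\mathbb{J}} e_j\otimes A_jh$ and composing each $A_j$ with a fixed bounded operator passes through the tensor slot, one obtains
$$\theta_{A'}=\theta_A C+\theta_B D,\qquad \theta_{\Psi'}=\theta_\Psi E+\theta_\Phi F.$$
Both right-hand sides are bounded, being combinations of the existing bounded operators $\theta_A,\theta_B,\theta_\Psi,\theta_\Phi$ with $C,D,E,F$; hence condition \text{\upshape(ii)} of Definition~\ref{1} holds for $(\{A_j'\}_{j\in\mathbb{J}},\{\Psi_j'\}_{j\in\mathbb{J}})$, and the corresponding series converge in the strong-operator topology.

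Next I would compute the frame operator $S_{A',\Psi'}=\theta_{\Psi'}^*\theta_{A'}$ using Proposition~\ref{2.2}\,(ii). Expanding the product,
$$S_{A',\Psi'}=E^*\theta_\Psi^*\theta_A C+E^*\theta_\Psi^*\theta_B D+F^*\theta_\Phi^*\theta_A C+F^*\theta_\Phi^*\theta_B D.$$
The bookkeeping of which terms survive is the crux. Parseval for the two given pairs yields $\theta_\Psi^*\theta_A=I_\mathcal{H}$ and $\theta_\Phi^*\theta_B=I_\mathcal{H}$, while orthogonality supplies $\theta_\Phi^*\theta_A=0$ together with $\theta_\Psi^*\theta_B=0$ (the latter by taking the adjoint of $\theta_B^*\theta_\Psi=0$). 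Substituting, the two cross terms die and the diagonal terms reduce, leaving $S_{A',\Psi'}=E^*C+F^*D$.

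Finally I would use the hypothesis $C^*E+D^*F=I_\mathcal{H}$: taking adjoints gives $E^*C+F^*D=(C^*E+D^*F)^*=I_\mathcal{H}$, so $S_{A',\Psi'}=I_\mathcal{H}$ is bounded positive invertible. This is condition \text{\upshape(i)} of Definition~\ref{1}, and since the limit is $I_\mathcal{H}$ the pair is Parseval. The scalar ``in particular'' is the special case $C=cI_\mathcal{H}$, $D=dI_\mathcal{H}$, $E=eI_\mathcal{H}$, $F=fI_\mathcal{H}$, for which $C^*E+D^*F=(\bar{c}e+\bar{d}f)I_\mathcal{H}=I_\mathcal{H}$. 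No serious obstacle is expected; the one point needing care is orienting the orthogonality relations correctly, so that it is precisely the $\theta_\Psi^*\theta_B$ and $\theta_\Phi^*\theta_A$ cross terms---rather than the diagonal ones---that vanish.
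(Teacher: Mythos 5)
Your proof is correct and follows essentially the same route as the paper: compute $\theta_{AC+BD}=\theta_AC+\theta_BD$ and $\theta_{\Psi E+\Phi F}=\theta_\Psi E+\theta_\Phi F$, expand $S_{AC+BD,\Psi E+\Phi F}=\theta_{\Psi E+\Phi F}^*\theta_{AC+BD}$ into four terms, kill the cross terms by orthogonality, reduce the diagonal terms by Parsevalness, and conclude via the adjoint of $C^*E+D^*F=I_\mathcal{H}$. Your explicit attention to the orientation of the orthogonality relations and to condition (ii) of Definition \ref{1} is a welcome bit of extra care, but the argument is the paper's.
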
   
\begin{proof}
We see $ \theta_{AC+BD} =\sum_{j\in \mathbb{J}}L_j(A_jC+B_jD)=\theta_AC+\theta_BD, \theta_{\Psi E+\Phi F}=\sum_{j\in \mathbb{J}}L_j(\Psi_jE+\Phi_jF)=\theta_\Psi E+\theta_\Phi F$ and hence $S_{AC+BD,\Psi E+\Phi F}= \theta_{\Psi E+\Phi F}^* \theta_{AC+BD}=(\theta_\Psi E+\theta_\Phi F)^*(\theta_AC+\theta_BD)=E^*\theta_\Psi^*\theta_AC+E^*\theta_\Psi^*\theta_BD+F^*\theta_\Phi^*\theta_AC+F^*\theta_\Phi^*\theta_BD=E^*S_{A,\Psi}C+0+0+F^*S_{B,\Phi }D=E^*I_\mathcal{H}C+F^*I_\mathcal{H}D=I_\mathcal{H}.$
\end{proof}
\begin{definition}
Two operator-valued frames $(\{A_j\}_{j\in \mathbb{J}},\{\Psi_j\}_{j\in \mathbb{J}} )$  and $ (\{B_j\}_{j\in \mathbb{J}}, \{\Phi_j\}_{j\in \mathbb{J}} )$   in $ \mathcal{B}(\mathcal{H}, \mathcal{H}_0)$  are called 
disjoint if $(\{A_j\oplus B_j\}_{j \in \mathbb{J}},\{\Psi_j\oplus \Phi_j\}_{j \in \mathbb{J}})$ is (ovf) in $ \mathcal{B}(\mathcal{H}\oplus \mathcal{H}, \mathcal{H}_0).$   
\end{definition}
The important thing to remember in disjoint frame definition is that only the domain of collection of operators will change not the  codomain.
\begin{proposition}
If $(\{A_j\}_{j\in \mathbb{J}},\{\Psi_j\}_{j\in \mathbb{J}} )$  and $ (\{B_j\}_{j\in \mathbb{J}}, \{\Phi_j\}_{j\in \mathbb{J}} )$  are  orthogonal operator-valued frames  in $ \mathcal{B}(\mathcal{H}, \mathcal{H}_0)$, then  they  are disjoint. Further, if both $(\{A_j\}_{j\in \mathbb{J}},\{\Psi_j\}_{j\in \mathbb{J}} )$  and $ (\{B_j\}_{j\in \mathbb{J}}, \{\Phi_j\}_{j\in \mathbb{J}} )$ are  Parseval, then $(\{A_j\oplus B_j\}_{j \in \mathbb{J}},\{\Psi_j\oplus \Phi_j\}_{j \in \mathbb{J}})$ is Parseval.
\end{proposition}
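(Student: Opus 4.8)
The plan is to reduce everything to the analysis operators and the orthogonality relations, since by Proposition \ref{2.2}(ii) the frame operator of any (ovf) is $S_{A,\Psi}=\theta_\Psi^*\theta_A$, and by the definition of orthogonality we are handed exactly $\theta_\Phi^*\theta_A=\theta_B^*\theta_\Psi=0$. First I would compute the analysis operators of the direct-sum collection. Since $(A_j\oplus B_j)(h\oplus k)=A_jh+B_jk$, summing against $L_j$ gives
\[
\theta_{A\oplus B}(h\oplus k)=\sum_{j\in\mathbb{J}}e_j\otimes(A_jh+B_jk)=\theta_Ah+\theta_Bk,
\]
and likewise $\theta_{\Psi\oplus\Phi}(h\oplus k)=\theta_\Psi h+\theta_\Phi k$. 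Both are bounded because $\theta_A,\theta_B,\theta_\Psi,\theta_\Phi$ are bounded (so condition (ii) of Definition \ref{1} is automatic for the direct sum), and the same polarization computation as in the proof of Theorem \ref{OPERATORDILATION} shows $\theta_{A\oplus B}^*z=\theta_A^*z\oplus\theta_B^*z$ and $\theta_{\Psi\oplus\Phi}^*z=\theta_\Psi^*z\oplus\theta_\Phi^*z$ for all $z\in\ell^2(\mathbb{J})\otimes\mathcal{H}_0$.

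Next I would assemble the frame operator $S_{A\oplus B,\Psi\oplus\Phi}=\theta_{\Psi\oplus\Phi}^*\,\theta_{A\oplus B}$ and read off its block structure. Applying it to $h\oplus k$ gives
\[
\theta_{\Psi\oplus\Phi}^*(\theta_Ah+\theta_Bk)=\bigl(\theta_\Psi^*\theta_Ah+\theta_\Psi^*\theta_Bk\bigr)\oplus\bigl(\theta_\Phi^*\theta_Ah+\theta_\Phi^*\theta_Bk\bigr).
\]
Here the orthogonality hypothesis does the work: $\theta_\Phi^*\theta_A=0$ directly, and taking adjoints in $\theta_B^*\theta_\Psi=0$ yields $\theta_\Psi^*\theta_B=0$, so the two off-diagonal terms vanish and the two diagonal terms collapse to $S_{A,\Psi}h$ and $S_{B,\Phi}k$. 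Thus $S_{A\oplus B,\Psi\oplus\Phi}=S_{A,\Psi}\oplus S_{B,\Phi}$. Since each summand is bounded, positive, and invertible, so is the direct sum (with, e.g., lower bound $\min\{\|S_{A,\Psi}^{-1}\|^{-1},\|S_{B,\Phi}^{-1}\|^{-1}\}$), exactly as in the computation of $S_{C,\Xi}$ in the preceding common-dual proposition. Hence $(\{A_j\oplus B_j\}_{j\in\mathbb{J}},\{\Psi_j\oplus\Phi_j\}_{j\in\mathbb{J}})$ is an (ovf), i.e.\ the two frames are disjoint, and the Parseval addendum is immediate: if $S_{A,\Psi}=I_\mathcal{H}=S_{B,\Phi}$ then $S_{A\oplus B,\Psi\oplus\Phi}=I_\mathcal{H}\oplus I_\mathcal{H}=I_{\mathcal{H}\oplus\mathcal{H}}$, which is Parseval.

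There is no serious obstacle; the argument is essentially a bookkeeping exercise once the analysis operators are identified. The only point requiring mild care is the correct computation of the adjoints $\theta_{A\oplus B}^*$ and $\theta_{\Psi\oplus\Phi}^*$ as direct sums, and the bookkeeping of which orthogonality relation (and which of its adjoints) kills which off-diagonal block; getting these matched is what makes the cross terms cancel cleanly and leaves the block-diagonal frame operator.
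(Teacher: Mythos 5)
Your proof is correct and follows essentially the same route as the paper's: compute $\theta_{A\oplus B}$ and $\theta_{\Psi\oplus\Phi}$, identify their adjoints as direct sums, and use the orthogonality relations (with one adjoint taken) to cancel the off-diagonal blocks, yielding $S_{A\oplus B,\Psi\oplus\Phi}=S_{A,\Psi}\oplus S_{B,\Phi}$. The Parseval addendum is handled identically.
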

\begin{proof}
$\theta_{A\oplus B}(h\oplus g)=\sum_{j\in \mathbb{J}}L_j(A_j\oplus B_j)(h\oplus g)=\sum_{j\in \mathbb{J}}L_j(A_jh+ B_jg)=\theta_Ah+\theta_Bg,\forall h\oplus g \in \mathcal{H}\oplus\mathcal{H}$, $\theta_{\Psi\oplus \Phi}(h\oplus g)=\theta_\Psi h+\theta_\Phi g,\forall h\oplus g \in \mathcal{H}\oplus\mathcal{H}$, $\langle \theta_{\Psi\oplus \Phi}^*y, h\oplus g\rangle =\langle y,\theta_{\Psi\oplus \Phi}(h\oplus g)\rangle=\langle \theta_\Psi^*y, h \rangle+\langle\theta_\Phi^*y,  g \rangle=\langle \theta_\Psi^*y\oplus\theta_\Phi^*y, h\oplus g\rangle ,\forall y \in \ell^2(\mathbb{J})\otimes \mathcal{H}_0, \forall h\oplus g \in \mathcal{H}\oplus\mathcal{H} $. Thus  $S_{A\oplus B,\Psi\oplus\Phi}(h\oplus g)=\theta^*_{\Psi\oplus\Phi}\theta_{A\oplus B}(h\oplus g)=\theta^*_{\Psi\oplus\Phi}(\theta_Ah+\theta_Bg)=\theta_\Psi^*(\theta_Ah+\theta_Bg)\oplus\theta_\Phi^*(\theta_Ah+\theta_Bg)=(S_{A ,\Psi}+0)\oplus (0+S_{ B,\Phi})=S_{A ,\Psi}\oplus S_{ B,\Phi}$, which is bounded positive invertible with $S_{A\oplus B,\Psi\oplus\Phi}^{-1}=S_{A ,\Psi}^{-1}\oplus S_{ B,\Phi}^{-1}$. That last conclusion comes from the expression for $S_{A\oplus B,\Psi\oplus\Phi}$.
\end{proof}

\section{Characterizations of the extension}\label{CHARACTERIZATIONSOF THE EXTENSION}
\begin{theorem}\label{OPERATORVALUEDCHARACTERIZATIONSOFTHEEXTENSION}
Let $ \{F_j\}_{j \in \mathbb{J}}$ be an arbitrary orthonormal basis in $ \mathcal{B}(\mathcal{H},\mathcal{H}_0).$ Then 
\begin{enumerate}[\upshape(i)]
\item The orthonormal  bases   $ (\{A_j\}_{j \in \mathbb{J}},\{\Psi_j\}_{j \in \mathbb{J}})$ in  $ \mathcal{B}(\mathcal{H},\mathcal{H}_0)$ are precisely $( \{F_jU\}_{j \in \mathbb{J}},\{c_jF_jU\}_{j \in \mathbb{J}}) $,  where $ U \in \mathcal{B}(\mathcal{H}) $ is unitary and $ c_j \in \mathbb{R}, \forall j \in \mathbb{J} $ such that $ 0<\inf\{c_j\}_{j \in \mathbb{J}}\leq \sup\{c_j\}_{j \in \mathbb{J}}< \infty.$
\item The Riesz bases   $ (\{A_j\}_{j \in \mathbb{J}},\{\Psi_j\}_{j \in \mathbb{J}})$ in  $ \mathcal{B}(\mathcal{H},\mathcal{H}_0)$  are precisely $( \{F_jU\}_{j \in \mathbb{J}},\{F_jV\}_{j \in \mathbb{J}}) $, where $ U,V \in \mathcal{B}(\mathcal{H}) $ are invertible  such that  $ V^*U$ is positive.
\item The operator-valued frames $ (\{A_j\}_{j \in \mathbb{J}},\{\Psi_j\}_{j \in \mathbb{J}})$ in  $ \mathcal{B}(\mathcal{H},\mathcal{H}_0)$  are precisely $( \{F_jU\}_{j \in \mathbb{J}},\{F_jV\}_{j \in \mathbb{J}}) $, where $ U,V \in \mathcal{B}(\mathcal{H}) $ are such that  $ V^*U$ is positive invertible.
\item The Bessel sequences  $(\{A_j\}_{j \in \mathbb{J}},\{\Psi_j\}_{j \in \mathbb{J}})$ in  $ \mathcal{B}(\mathcal{H},\mathcal{H}_0)$  are precisely $( \{F_jU\}_{j \in \mathbb{J}},\{F_jV\}_{j \in \mathbb{J}}) $, where $ U,V \in \mathcal{B}(\mathcal{H}) $ are  such that  $ V^*U$ is positive. 
\item The Riesz operator-valued frames $ (\{A_j\}_{j \in \mathbb{J}},\{\Psi_j\}_{j \in \mathbb{J}})$ in $ \mathcal{B}(\mathcal{H},\mathcal{H}_0)$  are precisely $( \{F_jU\}_{j \in \mathbb{J}},\{F_jV\}_{j \in \mathbb{J}}) $, where $ U,V \in \mathcal{B}(\mathcal{H}) $ are  such that  $ V^*U$ is positive invertible and $ U(V^*U)^{-1}V^* =I_{\mathcal{H}}$.  
\item The orthonormal operator-valued frames $ ( \{A_j\}_{j \in \mathbb{J}},\{\Psi_j\}_{j \in \mathbb{J}})$ in $ \mathcal{B}(\mathcal{H},\mathcal{H}_0)$  are precisely $( \{F_jU\}_{j \in \mathbb{J}},\{F_jV\}_{j \in \mathbb{J}}) $, where $ U,V \in \mathcal{B}(\mathcal{H}) $ are such that $ V^*U=I_\mathcal{H}= UV^*$. 
\end{enumerate}
\end{theorem}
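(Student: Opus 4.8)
The backbone of all six parts is that the analysis operator of an orthonormal basis is unitary. Concretely, from $F_jF_k^*=\delta_{j,k}I_{\mathcal{H}_0}$ together with Equation (\ref{LEQUATION}) and Proposition \ref{2.2}, I would first record that $\theta_F^*\theta_F=\sum_{j\in\mathbb{J}}F_j^*F_j=I_\mathcal{H}$ and $\theta_F\theta_F^*=\sum_{j\in\mathbb{J}}L_jF_jF_j^*L_j^*=\sum_{j\in\mathbb{J}}L_jL_j^*=I_{\ell^2(\mathbb{J})}\otimes I_{\mathcal{H}_0}$, so that $\theta_F:\mathcal{H}\to\ell^2(\mathbb{J})\otimes\mathcal{H}_0$ is unitary. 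The next step is the dictionary: for $U,V\in\mathcal{B}(\mathcal{H})$ the assignment $A_j=F_jU$, $\Psi_j=F_jV$ is equivalent to $\theta_A=\theta_FU$, $\theta_\Psi=\theta_FV$ (using $A_j=L_j^*\theta_A$ and $F_j=L_j^*\theta_F$ from Proposition \ref{2.2}(v)), with $U=\theta_F^*\theta_A$, $V=\theta_F^*\theta_\Psi$ recovered by applying $\theta_F^*$. The identities that drive everything are then
\begin{equation*}
S_{A,\Psi}=\theta_\Psi^*\theta_A=V^*\theta_F^*\theta_FU=V^*U,\qquad P_{A,\Psi}=\theta_A S_{A,\Psi}^{-1}\theta_\Psi^*=\theta_F\,U(V^*U)^{-1}V^*\,\theta_F^*.
\end{equation*}

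With this dictionary, parts (iii) and (iv) are immediate translations. A pair is an (ovf) precisely when $\theta_A,\theta_\Psi$ are bounded (equivalently $U,V\in\mathcal{B}(\mathcal{H})$, since $\theta_F$ is unitary) and $S_{A,\Psi}=V^*U$ is positive invertible; dropping invertibility gives the Bessel case. Parts (v) and (vi) are equally direct. By Definition \ref{RIESZOVF} an (ovf) is Riesz precisely when $P_{A,\Psi}=I_{\ell^2(\mathbb{J})}\otimes I_{\mathcal{H}_0}$, and conjugating the displayed formula for $P_{A,\Psi}$ by the unitary $\theta_F$ shows this is equivalent to $U(V^*U)^{-1}V^*=I_\mathcal{H}$; orthonormality adds $\theta_A\theta_\Psi^*=\theta_F UV^*\theta_F^*=I$, i.e. $UV^*=I_\mathcal{H}$, which together with the Parseval condition $V^*U=I_\mathcal{H}$ is exactly the stated pair of equations.

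For parts (i) and (ii) the extra ingredient I would isolate as a lemma is that any two orthonormal bases differ by a unitary: if $\{G_j\}$ is another orthonormal basis, then $W:=\theta_F^*\theta_G$ is unitary and $G_j=L_j^*\theta_G=L_j^*\theta_FW=F_jW$. Given this, for (i) I would first check the single-collection statement that $\{A_j\}$ is an orthonormal basis iff $A_j=F_jU$ with $U$ unitary: the forward direction applies the lemma with $G_j=A_j$, and the converse is the computation $A_jA_k^*=F_jUU^*F_k^*=\delta_{j,k}I_{\mathcal{H}_0}$ together with $\sum_{j\in\mathbb{J}}\|A_jh\|^2=\|Uh\|^2=\|h\|^2$. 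Reading off Definition \ref{ONBRIESZDEFINITION}(i) (taking, as that definition permits, $\{A_j\}$ to be the orthonormal member and $\Psi_j=c_jA_j$) then yields the form $(\{F_jU\},\{c_jF_jU\})$, the scalar bounds transferring verbatim. For (ii) the converse is definitional, since $(\{F_jU\},\{F_jV\})$ with $U,V$ invertible and $V^*U\ge0$ is a Riesz basis against the orthonormal basis $\{F_j\}$ itself; the forward direction uses the lemma to rewrite the orthonormal basis $\{G_j\}$ furnished by Definition \ref{ONBRIESZDEFINITION}(ii) as $G_j=F_jW$, so that $A_j=G_jU'=F_j(WU')$ and $\Psi_j=F_j(WV')$ with $WU',WV'$ invertible and $(WV')^*(WU')=V'^*U'\ge0$.

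The only genuine obstacle is this last point: Definition \ref{ONBRIESZDEFINITION} phrases ``orthonormal/Riesz basis'' through an existential ``there exists an orthonormal basis,'' whereas the theorem demands a description against a single, arbitrarily prescribed $\{F_j\}$. The unitary change-of-basis lemma is exactly what bridges the two, and checking that it preserves the positivity constraint $V^*U\ge0$ (the conjugating unitary $W$ cancelling) is the one place needing care. Everything else is bookkeeping with the identities $S_{A,\Psi}=V^*U$ and $\theta_F^*\theta_F=\theta_F\theta_F^*=I$.
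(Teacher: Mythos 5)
Your proposal is correct and follows essentially the same route as the paper: the operators you call $U=\theta_F^*\theta_A$ and $V=\theta_F^*\theta_\Psi$ are exactly the paper's $\sum_{j}F_j^*A_j$ and $\sum_{j}F_j^*\Psi_j$, and the paper's computations in (v)--(vi), which insert $\sum_{l}F_l^*L_l^*$ and $\sum_{m}L_mF_m$ around $P_{A,\Psi}$, are precisely your conjugation by the unitary $\theta_F$. Your only organizational difference is to state the unitarity of $\theta_F$ and the change-of-basis lemma up front instead of re-deriving them inside each part; the one delicate point you flag (the ``say $\{A_j\}$ is the orthonormal member'' reduction in Definition \ref{ONBRIESZDEFINITION}) is handled by the same ``we may assume'' step in the paper's own proof.
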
  
\begin{proof}
\begin{enumerate}[\upshape(i)]
 \item We first show that every collection of the form  $( \{F_jU\}_{j \in \mathbb{J}},\{c_jF_jU\}_{j \in \mathbb{J}}) $,  $ U : \mathcal{H} \rightarrow  \mathcal{H} $  unitary,  $ c_j \in \mathbb{R}, \forall j \in \mathbb{J}$ with $ 0<\inf\{c_j\}_{j \in \mathbb{J}}\leq \sup\{c_j\}_{j \in \mathbb{J}}< \infty$ is an orthonormal basis in  $ \mathcal{B}(\mathcal{H},\mathcal{H}_0).$ For this,  it is sufficient to prove $ \{F_jU\}_{j \in \mathbb{J}}$ is an orthonormal basis. This we achieve using orthonormality of $F_j $'s and unitariness of $ U.$ Indeed,  $ \langle (F_jU)^*y,(F_kU)^*z \rangle=\langle U^*F^*_jy,U^*F^*_kz \rangle = \langle F^*_jy,F^*_kz \rangle =\delta_{j,k}\langle y, z\rangle, \forall j, k \in \mathbb{J}, \forall y,z \in \mathcal{H}_0 $ and $ \sum_{j\in \mathbb{J}}\|F_jUh\|^2=\langle\sum_{j\in \mathbb{J}}F_j^*F_j Uh , Uh\rangle =\|Uh\|^2=\|h\|^2, \forall h\in \mathcal{H}. $  For the other side, let $ (\{A_j\}_{j \in \mathbb{J}},\{\Psi_j\}_{j \in \mathbb{J}})$  be an orthonormal basis in  $ \mathcal{B}(\mathcal{H},\mathcal{H}_0).$ We may assume $\{A_j\}_{j \in \mathbb{J}}$ is an orthonormal basis and there exist $ c_j \in \mathbb{R}, \forall j \in \mathbb{J}$ with $ 0<\inf\{c_j\}_{j \in \mathbb{J}}\leq \sup\{c_j\}_{j \in \mathbb{J}}< \infty$ and $ \Psi_j=c_jA_j, \forall j \in \mathbb{J}.$ Define $ U\coloneqq \sum_{j\in \mathbb{J}}F_j^*A_j.$ This exists in SOT, since for every finite subset $\mathbb{S}$ of  $\mathbb{J}$ and $ h \in \mathcal{H},$ 
 \begin{align*}
 \left\|\sum_{j\in \mathbb{S}}F_j^*A_jh\right\|^2=\left\langle\sum_{j\in \mathbb{S}}F_j^*A_jh, \sum_{k\in \mathbb{S}}F_k^*A_kh\right\rangle= \sum_{j\in \mathbb{S}}\left\langle A_jh, F_j\left(\sum_{k\in \mathbb{S}}F_k^*A_kh\right) \right\rangle=\sum_{j\in \mathbb{S}}\|A_jh\|^2.
 \end{align*}
 Now, $ F_jU=F_j(\sum_{k\in \mathbb{J}}F_k^*A_k)=A_j, c_jF_jU=c_jA_j=\Psi_j,  \forall j \in \mathbb{J}.$ We are done if we show $ U$ is unitary, and this is so: $ UU^*=(\sum_{j\in \mathbb{J}}F_j^*A_j)(\sum_{k\in \mathbb{J}}A^*_kF_k)=\sum_{j\in \mathbb{J}}F_j^*(\sum_{k\in \mathbb{J}}A_jA^*_kF_k)=\sum_{j\in \mathbb{J}}F_j^*F_j=I_\mathcal{H}, U^*U=(\sum_{j\in \mathbb{J}}A_j^*F_j)(\sum_{k\in \mathbb{J}}F^*_kA_k)=\sum_{j\in \mathbb{J}}A_j^*(\sum_{k\in \mathbb{J}}F_jF^*_kA_k)=\sum_{j\in \mathbb{J}}A_j^*A_j=I_\mathcal{H}.$
 \item From the very definition of Riesz basis,  we conclude whenever $U,V : \mathcal{H} \rightarrow  \mathcal{H} $ are invertible  with  $ V^*U$ is positive, then  $( \{F_jU\}_{j \in \mathbb{J}},\{F_jV\}_{j \in \mathbb{J}}) $ is a Riesz basis. We turn for other way. Let $ (\{A_j\}_{j \in \mathbb{J}},\{\Psi_j\}_{j \in \mathbb{J}})$ be Riesz basis. Then there exists an orthonormal basis $\{G_j\}_{j \in \mathbb{J}}$   in  $ \mathcal{B}(\mathcal{H},\mathcal{H}_0)$ and invertible $ R, S:\mathcal{H}\rightarrow \mathcal{H} $ with $S^*R$ is positive and $ A_j=G_jR, \Psi_j=G_jS,  \forall j \in \mathbb{J} $. Define $ U\coloneqq\sum_{j\in \mathbb{J}}F_j^*G_jR, V\coloneqq \sum_{j\in \mathbb{J}}F_j^*G_jS.$ Since $ \{F_j\}_{j \in \mathbb{J}}$ and  $\{G_j\}_{j \in \mathbb{J}}$ are orthonormal bases, as in the proof of (i), $ U,V$ are well-defined. Now $ F_jU=G_jR=A_j, F_jV=G_jS=\Psi_j, \forall j \in \mathbb{J}.$ It remains to show $ U$ and $ V$ are invertible and $ V^*U $ is positive. For this we consider $ U (R^{-1}(\sum_{k\in \mathbb{J}}G_k^*F_k))= (\sum_{j\in \mathbb{J}}F_j^*G_jR) (R^{-1}(\sum_{k\in \mathbb{J}}G_k^*F_k))=\sum_{j\in \mathbb{J}}F_j^* (\sum_{k\in \mathbb{J}}G_jG_k^*F_k)=\sum_{j\in \mathbb{J}}F_j^*F_j=I_{\mathcal{H}},$ $ (R^{-1}(\sum_{k\in \mathbb{J}}G_k^*F_k))U =R^{-1}(\sum_{j\in \mathbb{J}}G_j^*F_j)(\sum_{k\in \mathbb{J}}F_k^*G_kR)=R^{-1}(\sum_{j\in \mathbb{J}}G_j^*(\sum_{k\in \mathbb{J}}F_jF_k^*G_kR))=R^{-1}(\sum_{j\in \mathbb{J}}G_j^*G_j)R =I_{\mathcal{H}},$ and similarly $ V^{-1}=S^{-1}(\sum_{j\in \mathbb{J}}G_j^*F_j), 
 V^*U=S^* (\sum_{j\in \mathbb{J}}G_j^*F_j)(\sum_{k\in \mathbb{J}}F_k^*G_kR)=S^*R \geq 0.$
 \item $(\Leftarrow)$ $ \sum_{j\in\mathbb{J}}L_j(F_jU)= (\sum_{j\in\mathbb{J}}L_jF_j)U, \sum_{j\in\mathbb{J}}L_j(F_jV)= (\sum_{j\in\mathbb{J}}L_jF_j)V.$ These show analysis operators for $ (\{F_jU\}_{j \in \mathbb{J}},\{F_jV\}_{j \in \mathbb{J}})$ are well-defined and the equality $\sum_{j\in\mathbb{J}}(F_jV)^*(F_jU)=V^*U$ shows that it is an (ovf).
 
 $(\Rightarrow)$ Let $ (\{A_j\}_{j \in \mathbb{J}},\{\Psi_j\}_{j \in \mathbb{J}})$ be (ovf). Since  analysis operators for this frame exists,  as in the proof of (i), $  \sum_{j\in \mathbb{J}}F_j^*A_j,  \sum_{j\in \mathbb{J}}F_j^*\Psi_j$ exist (note that,  following  the proof of (i), we get $\|\sum_{j\in \mathbb{S}}F_j^*A_jh\|^2=\sum_{j\in \mathbb{S}}\|A_jh\|^2,$ which  converges to $\| \theta_Ah\|^2=\|\sum_{j\in \mathbb{J}}L_jA_jh\|^2=\sum_{j\in \mathbb{J}}\|A_jh\|^2$) as bounded operators, call them as $ U,V$ respectively. But then $F_jU=A_j, F_jV=\Psi_j ,  \forall j \in \mathbb{J} $ and $ V^*U=(\sum_{j\in\mathbb{J}}\Psi_j^*F_j)(\sum_{k\in\mathbb{J}}F_k^*A_k)=\sum_{j\in\mathbb{J}}\Psi_j^*A_j=S_{A,\Psi}$ which is positive invertible.
 \item Similar to (iii).
 \item We view (iii).  $ (\Leftarrow)$ $ P_{A,\Psi}=\theta_AS_{A,\Psi}^{-1}\theta_\Psi^*=(\sum_{j\in\mathbb{J}}L_jF_jU)(V^*U)^{-1}(\sum_{k\in\mathbb{J}}V^*F^*_kL_k^*)=\theta_FU(V^*U)^{-1}V^*\theta_F^*=\theta_FI_\mathcal{H}\theta_F^* =\sum_{j\in\mathbb{J}}L_jF_j(\sum_{k\in\mathbb{J}}F_k^*L^*_k)=\sum_{j\in\mathbb{J}}L_jL_j^*=I_{\ell^2(\mathbb{J})}\otimes I_{\mathcal{H}_0}$. 
 
 $ (\Rightarrow)$
 \begin{align*}
 U(V^*U)^{-1}V^*&=\left(\sum_{k\in\mathbb{J}}F_k^*A_k\right)S_{A,\Psi}^{-1}\left(\sum_{j\in\mathbb{J}}\Psi_j^*F_j\right)\\
 &=\left(\sum_{l\in\mathbb{J}}F_l^*L_l^*\right)\left(\sum_{k\in\mathbb{J}}L_kA_k\right)S_{A,\Psi}^{-1}\left(\sum_{j\in\mathbb{J}}\Psi_j^*L_j^*\right)\left(\sum_{m\in\mathbb{J}}L_mF_m\right)\\
 &=\left(\sum_{l\in\mathbb{J}}F_l^*L_l^*\right)\theta_AS_{A,\Psi}^{-1}\theta_\Psi^*\left(\sum_{m\in\mathbb{J}}L_mF_m\right)=\left(\sum_{l\in\mathbb{J}}F_l^*L_l^*\right)P_{A,\Psi}\left(\sum_{m\in\mathbb{J}}L_mF_m\right) \\
  &=\left(\sum_{l\in\mathbb{J}}F_l^*L_l^*\right)(I_{\ell^2(\mathbb{J})}\otimes I_{\mathcal{H}_0})\left(\sum_{m\in\mathbb{J}}L_mF_m\right) =\left(\sum_{l\in\mathbb{J}}F_l^*L_l^*\right)\left(\sum_{m\in\mathbb{J}}L_mF_m\right) \\
  &=\sum_{l\in\mathbb{J}}F_l^*F_l=I_{\mathcal{H}}.
 \end{align*}
 \item From (v). $ (\Leftarrow)$ $S_{A,\Psi}=V^*U=I_\mathcal{H}, P_{A,\Psi}=\theta_AS_{A,\Psi}^{-1}\theta_\Psi^*=\theta_A\theta_\Psi^*= \theta_FUV^*\theta_F^*=\theta_FI_\mathcal{H}\theta_F^*=I_{\ell^2(\mathbb{J})}\otimes I_{\mathcal{H}_0}.$
 
 $(\Rightarrow)$ $V^*U=S_{A,\Psi}=I_\mathcal{H},$ 
 \begin{align*}
 UV^*&= \left(\sum_{j\in \mathbb{J}}F_j^*A_j\right)\left( \sum_{k\in \mathbb{J}}\Psi_k^*F_k\right) =
  \left(\sum_{j\in \mathbb{J}}F_j^*L^*_j \left(\sum_{l\in \mathbb{J}}L_lA_l\right)\right) \left( \sum_{k\in \mathbb{J}}\Psi_k^*L^*_k \left( \sum_{m\in \mathbb{J}}L_mF_m\right)\right) \\ &=\theta_F^*\theta_A\theta_\Psi^*\theta_F=\theta_F^*P_{A,\Psi}\theta_F =\theta_F^*(I_{\ell^2(\mathbb{J})}\otimes I_{\mathcal{H}_0})\theta_F= \sum_{j\in \mathbb{J}}F_j^*F_j=I_\mathcal{H}. 
 \end{align*}
 \end{enumerate}
 \end{proof}
 \begin{remark}
 Once $ (\{A_j\}_{j \in \mathbb{J}},\{\Psi_j\}_{j \in \mathbb{J}})$ is given (may be an orthonormal basis or a Riesz basis or an (ovf) or a Bessel sequence or a Riesz (ovf) or an orthonormal (ovf)), the operators $ U$ as well as $V$ are uniquely determined. In fact, if $W \in \mathcal{B}(\mathcal{H})$ also satisfies $F_jU=F_jW=A_j, \forall j \in \mathbb{J}$, then $U=I_\mathcal{H}U=\sum_{j\in \mathbb{J}}F_j^*(F_jU)=\sum_{j\in \mathbb{J}}F_j^*(F_jW)=W.$
 \end{remark}
\begin{corollary}
\begin{enumerate}[\upshape(i)]
 \item If   $ (\{A_j\}_{j \in \mathbb{J}},\{\Psi_j=c_jA_j\}_{j \in \mathbb{J}})$ is an orthonormal basis  in  $ \mathcal{B}(\mathcal{H},\mathcal{H}_0)$, then 
 $$ \sup\{\|A_j\|\}_{j\in\mathbb{J}}\leq1, ~\sup\{\|\Psi_j\|\}_{j\in\mathbb{J}}\leq \sup\{c_j\}_{j\in\mathbb{J}}, ~ A_j\Psi_j^*=c_jI_{\mathcal{H}_0}, ~ \forall j \in \mathbb{J}.$$
 \item If   $ (\{A_j\}_{j \in \mathbb{J}},\{\Psi_j\}_{j \in \mathbb{J}})$ is  Bessel  in $ \mathcal{B}(\mathcal{H},\mathcal{H}_0)$, then 
  $$ \sup\{\|A_j\|\}_{j\in\mathbb{J}}\leq\|U\|, ~\sup\{\|\Psi_j\|\}_{j\in\mathbb{J}}\leq\|V\| , ~ \sup\{\|A_j\Psi_j^*\|\}_{j\in\mathbb{J}}\leq\|UV^*\|.$$
\end{enumerate}	
\end{corollary}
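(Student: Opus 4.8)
The plan is to invoke Theorem~\ref{OPERATORVALUEDCHARACTERIZATIONSOFTHEEXTENSION} to represent both collections in terms of a fixed orthonormal basis $\{F_j\}_{j\in\mathbb{J}}$ in $\mathcal{B}(\mathcal{H},\mathcal{H}_0)$, and then to read off all three estimates in each part from submultiplicativity of the operator norm together with the normalization $\|F_j\|=1$. Recall that this normalization was already recorded in the excerpt: since $F_jF_k^*=\delta_{j,k}I_{\mathcal{H}_0}$ for an orthonormal collection, the $C^*$-identity gives $\|F_j\|^2=\|F_jF_j^*\|=\|I_{\mathcal{H}_0}\|=1$. Everything below is an immediate consequence of this structural input, so there is no genuine obstacle; the work is purely to track the representing operators.

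For (i), I would apply part \upshape(i) of Theorem~\ref{OPERATORVALUEDCHARACTERIZATIONSOFTHEEXTENSION} to write $A_j=F_jU$ and $\Psi_j=c_jF_jU$, where $U\in\mathcal{B}(\mathcal{H})$ is unitary. Then $A_jA_j^*=F_jUU^*F_j^*=F_jF_j^*=I_{\mathcal{H}_0}$, so $\|A_j\|^2=\|A_jA_j^*\|=1$ for every $j$, whence $\sup\{\|A_j\|\}_{j\in\mathbb{J}}\le 1$ (indeed with equality). Since $c_j>0$, we get $\|\Psi_j\|=c_j\|A_j\|=c_j$, so that $\sup\{\|\Psi_j\|\}_{j\in\mathbb{J}}\le\sup\{c_j\}_{j\in\mathbb{J}}$. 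Finally, using that $c_j$ is real, $A_j\Psi_j^*=A_j(c_jA_j)^*=c_jA_jA_j^*=c_jI_{\mathcal{H}_0}$, which is the last identity claimed.

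For (ii), I would apply part \upshape(iv) of Theorem~\ref{OPERATORVALUEDCHARACTERIZATIONSOFTHEEXTENSION} to write $A_j=F_jU$ and $\Psi_j=F_jV$ for suitable $U,V\in\mathcal{B}(\mathcal{H})$. Submultiplicativity then gives $\|A_j\|=\|F_jU\|\le\|F_j\|\,\|U\|=\|U\|$ and $\|\Psi_j\|=\|F_jV\|\le\|F_j\|\,\|V\|=\|V\|$, yielding the first two bounds after taking the supremum over $j$. For the third, $A_j\Psi_j^*=F_jU(F_jV)^*=F_jUV^*F_j^*$, and hence $\|A_j\Psi_j^*\|\le\|F_j\|\,\|UV^*\|\,\|F_j^*\|=\|UV^*\|$, so $\sup\{\|A_j\Psi_j^*\|\}_{j\in\mathbb{J}}\le\|UV^*\|$.

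The only points deserving care are bookkeeping rather than substance: in (i) one should observe that $A_jA_j^*=I_{\mathcal{H}_0}$ holds \emph{exactly} (so $\|A_j\|=1$ precisely), and that it is the reality of $c_j$ that makes $\Psi_j^*=c_jA_j^*$ in the computation of $A_j\Psi_j^*$; in (ii) one only needs the bare factorizations from the Bessel characterization, with no positivity of $V^*U$ entering the norm estimates. Thus the entire content is delegated to Theorem~\ref{OPERATORVALUEDCHARACTERIZATIONSOFTHEEXTENSION} and $\|F_j\|=1$.
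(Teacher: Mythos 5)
Your proof is correct and is exactly the intended argument: the paper states this corollary without proof as an immediate consequence of Theorem~\ref{OPERATORVALUEDCHARACTERIZATIONSOFTHEEXTENSION}, whose operators $U,V$ are the ones appearing in part (ii), and your derivation via $A_j=F_jU$, $\Psi_j=c_jF_jU$ (resp.\ $\Psi_j=F_jV$), $\|F_j\|=1$, and submultiplicativity is the natural way to fill it in. The only (harmless) redundancy is that in (i) the identity $A_jA_j^*=I_{\mathcal{H}_0}$, hence $\|A_j\|=1$, already follows directly from the definition of orthonormality without passing through the factorization $A_j=F_jU$.
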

\begin{corollary}\label{OVCOROLLARYHILBERT}
Let $ \{F_j\}_{j \in \mathbb{J}}$ be an arbitrary orthonormal basis in $ \mathcal{B}(\mathcal{H},\mathcal{H}_0).$ Then 
\begin{enumerate}[\upshape(i)]
\item The orthonormal  bases   $ (\{A_j\}_{j \in \mathbb{J}},\{A_j\}_{j \in \mathbb{J}})$ in  $ \mathcal{B}(\mathcal{H},\mathcal{H}_0)$ are precisely $( \{F_jU\}_{j \in \mathbb{J}},\{F_jU\}_{j \in \mathbb{J}}) $,  where $ U \in \mathcal{B}(\mathcal{H}) $ is unitary.
\item The Riesz bases   $ (\{A_j\}_{j \in \mathbb{J}},\{A_j\}_{j \in \mathbb{J}})$ in  $ \mathcal{B}(\mathcal{H},\mathcal{H}_0)$  are precisely $( \{F_jU\}_{j \in \mathbb{J}},\{F_jU\}_{j \in \mathbb{J}}) $, where $ U \in \mathcal{B}(\mathcal{H}) $ is  invertible.  
\item The operator-valued frames $ (\{A_j\}_{j \in \mathbb{J}},\{A_j\}_{j \in \mathbb{J}})$ in  $ \mathcal{B}(\mathcal{H},\mathcal{H}_0)$  are precisely $( \{F_jU\}_{j \in \mathbb{J}},\{F_jU\}_{j \in \mathbb{J}}) $, where $ U \in \mathcal{B}(\mathcal{H}) $ is  such that  $ U^*U$  is invertible.
\item The Bessel sequences  $(\{A_j\}_{j \in \mathbb{J}},\{A_j\}_{j \in \mathbb{J}})$ in  $ \mathcal{B}(\mathcal{H},\mathcal{H}_0)$  are precisely $( \{F_jU\}_{j \in \mathbb{J}},\{F_jU\}_{j \in \mathbb{J}}) $, where $ U\in \mathcal{B}(\mathcal{H}) $.
\item The Riesz operator-valued frames $ (\{A_j\}_{j \in \mathbb{J}},\{A_j\}_{j \in \mathbb{J}})$ in $ \mathcal{B}(\mathcal{H},\mathcal{H}_0)$  are precisely $( \{F_jU\}_{j \in \mathbb{J}},\{F_jU\}_{j \in \mathbb{J}}) $, where $ U \in \mathcal{B}(\mathcal{H}) $ is  such that  $ U^*U$ is  invertible and $ U(U^*U)^{-1}U^* =I_{\mathcal{H}}$.  
\item The orthonormal operator-valued frames $ (\{A_j\}_{j \in \mathbb{J}},\{A_j\}_{j \in \mathbb{J}})$ in $ \mathcal{B}(\mathcal{H},\mathcal{H}_0)$  are precisely $( \{F_jU\}_{j \in \mathbb{J}},\{F_jU\}_{j \in \mathbb{J}}) $, where $ U \in \mathcal{B}(\mathcal{H}) $ is such that $ U^*U=I_\mathcal{H}= UU^*$. 
\item The operator-valued frames $ (\{A_j\}_{j \in \mathbb{J}},\{A_j\}_{j \in \mathbb{J}})$ in  $ \mathcal{B}(\mathcal{H},\mathcal{H}_0)$  are precisely $( \{F_jU\}_{j \in \mathbb{J}},\{F_jU\}_{j \in \mathbb{J}}) $, where $ U \in \mathcal{B}(\mathcal{H}) $ is  such that  $ U^*$  is surjective.
\item $ (\{A_j\}_{j \in \mathbb{J}},\{A_j\}_{j \in \mathbb{J}})$ is an orthonormal basis in 	 $ \mathcal{B}(\mathcal{H},\mathcal{H}_0)$  if and only if it is an orthonormal (ovf).
\end{enumerate}
\end{corollary}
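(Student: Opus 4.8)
The plan is to read the equivalence off from parts (i) and (vi), which have already reduced both notions to the single condition that the parametrising operator $U$ be unitary. By part (i), a pair $(\{A_j\}_{j\in\mathbb{J}},\{A_j\}_{j\in\mathbb{J}})$ is an orthonormal basis in $\mathcal{B}(\mathcal{H},\mathcal{H}_0)$ if and only if $A_j=F_jU$ for all $j$ with $U\in\mathcal{B}(\mathcal{H})$ unitary. By part (vi), the same pair is an orthonormal (ovf) if and only if $A_j=F_jU$ for all $j$ with $U^*U=I_\mathcal{H}=UU^*$, i.e.\ again $U$ unitary. Since both families are parametrised by precisely the unitaries $U$, they coincide, which is exactly the asserted equivalence.

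To make this reduction legitimate one only needs the two specialisations of Theorem \ref{OPERATORVALUEDCHARACTERIZATIONSOFTHEEXTENSION} to the one-collection case $\Psi_j=A_j$. In part (i) the general orthonormal basis is $(\{F_jU\}_{j\in\mathbb{J}},\{c_jF_jU\}_{j\in\mathbb{J}})$ with $U$ unitary; imposing $\Psi_j=A_j$ forces $c_jF_jU=F_jU$, and right-multiplication by $(F_jU)^*=U^*F_j^*$ together with $F_jUU^*F_j^*=F_jF_j^*=I_{\mathcal{H}_0}$ yields $c_j=1$ for every $j$. In part (vi) the general orthonormal (ovf) is $(\{F_jU\}_{j\in\mathbb{J}},\{F_jV\}_{j\in\mathbb{J}})$ with $V^*U=I_\mathcal{H}=UV^*$; imposing $\Psi_j=A_j$ gives $V=\sum_{j\in\mathbb{J}}F_j^*\Psi_j=\sum_{j\in\mathbb{J}}F_j^*A_j=U$, whence $U^*U=I_\mathcal{H}=UU^*$. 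Both conditions are the unitarity of $U$, so the corollary follows.

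Alternatively one can argue directly and bypass the characterisation theorem. For $(\Rightarrow)$, if $\{A_j\}_{j\in\mathbb{J}}$ is an orthonormal basis then $A_jA_k^*=\delta_{j,k}I_{\mathcal{H}_0}$ and $\sum_{j\in\mathbb{J}}\|A_jh\|^2=\|h\|^2$; the norm identity gives $S_{A,A}=\sum_{j\in\mathbb{J}}A_j^*A_j=I_\mathcal{H}$ (Parseval), while $\theta_A\theta_A^*=\sum_{j,k}L_jA_jA_k^*L_k^*=\sum_{j\in\mathbb{J}}L_jL_j^*=I_{\ell^2(\mathbb{J})}\otimes I_{\mathcal{H}_0}$ by Equation (\ref{LEQUATION}), which is the Riesz (ovf) condition, so the pair is an orthonormal (ovf). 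For $(\Leftarrow)$, sandwiching $\theta_A\theta_A^*=\sum_{j\in\mathbb{J}}L_jL_j^*$ between $L_m^*$ and $L_n$ and using $L_m^*L_j=\delta_{m,j}I_{\mathcal{H}_0}$ recovers $A_mA_n^*=\delta_{m,n}I_{\mathcal{H}_0}$, and the Parseval identity $\sum_{j\in\mathbb{J}}A_j^*A_j=I_\mathcal{H}$ gives $\sum_{j\in\mathbb{J}}\|A_jh\|^2=\|h\|^2$. I expect no genuine obstacle: the whole content is the collapse of the two free operators $U,V$ to a single unitary once $\Psi_j=A_j$, and the only point requiring a line of care is the forcing of $c_j=1$ (resp.\ $V=U$) in the two specialisations.
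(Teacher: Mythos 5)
Your treatment of part (viii) is correct and is essentially the paper's own route: the authors also deduce (viii) by comparing the parametrizations in (i) and (vi) (their proof cites ``(ii) and (vi)'', evidently a slip for ``(i) and (vi)''), and your forcing arguments --- $c_j=1$ via right multiplication by $U^*F_j^*$, and $V=\sum_{j\in\mathbb{J}}F_j^*\Psi_j=\sum_{j\in\mathbb{J}}F_j^*A_j=U$ via the uniqueness of the parametrizing operators --- are exactly the specializations the paper invokes. Your alternative direct argument for (viii), using $A_jA_k^*=\delta_{j,k}I_{\mathcal{H}_0}$ together with $\theta_A\theta_A^*=\sum_{j\in\mathbb{J}}L_jL_j^*$ and the sandwich $L_m^*(\cdot)L_n$, is a valid self-contained route that bypasses Theorem \ref{OPERATORVALUEDCHARACTERIZATIONSOFTHEEXTENSION} entirely; it costs a little more computation but removes the dependence on the characterization theorem.

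The gap is one of coverage: the corollary has eight items and your proposal only establishes (i), (vi) and (viii). Items (ii)--(v) need the same forcing step you carried out for (vi): since $U=\sum_{j\in\mathbb{J}}F_j^*A_j$ and $V=\sum_{j\in\mathbb{J}}F_j^*\Psi_j$ are uniquely determined, the hypothesis $\Psi_j=A_j$ forces $V=U$ in each clause of Theorem \ref{OPERATORVALUEDCHARACTERIZATIONSOFTHEEXTENSION}, and then the requirement ``$V^*U$ is positive (invertible)'' becomes ``$U^*U$ is positive (invertible)''; because $U^*U\geq 0$ holds automatically for every $U\in\mathcal{B}(\mathcal{H})$, the positivity clause drops out and (ii)--(v) read as stated (for (v) one also rewrites $U(V^*U)^{-1}V^*$ as $U(U^*U)^{-1}U^*$). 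Item (vii) requires one further fact nowhere mentioned in your proposal: for $U\in\mathcal{B}(\mathcal{H})$ the positive operator $U^*U$ is invertible if and only if $U^*$ is surjective, which converts (iii) into (vii). None of this is difficult, but as written your argument proves one equivalence of an eight-part statement.
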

\begin{proof}
 For (i), $c_j=1, \forall j \in \mathbb{J}$, and for (ii)-(vi), $T^*T$ is positive, $\forall T \in \mathcal{B}(\mathcal{H})$. That (ii) and (vi) give (viii). Now, for $T \in \mathcal{B}(\mathcal{H})$, we have   $T^*T$ is invertible in $\mathcal{B}(\mathcal{H}) $ if and only if $ T^*$ is surjective, hence (vii).

\end{proof}

\begin{caution}
Theorem \ref{OPERATORVALUEDCHARACTERIZATIONSOFTHEEXTENSION} holds whenever indexing set of orthonormal basis $ \{F_j\}_{j \in \mathbb{J}}$  is same as that of $ (\{A_j\}_{j \in \mathbb{J}},\{\Psi_j\}_{j \in \mathbb{J}})$, which may be    orthonormal basis/Riesz basis/operator-valued frame/Bessel/Riesz (ovf)/orthonormal (ovf). Interestingly, Theorem \ref{OPERATORVALUEDCHARACTERIZATIONSOFTHEEXTENSION} fails if this is not so.
\end{caution}
\begin{theorem}\label{OPERATORCHARACTERIZATIONHILBERT2}
Let $\{A_j\}_{j\in\mathbb{J}},\{\Psi_j\}_{j\in\mathbb{J}}$ be in $ \mathcal{B}(\mathcal{H}, \mathcal{H}_0).$ Then  $ (\{A_j\}_{j\in \mathbb{J}}, \{\Psi_j\}_{j\in \mathbb{J}})$  is an (ovf) with bounds  $a $ and  $ b$ (resp. Bessel with bound $ b$)
\begin{enumerate}[\upshape(i)]
\item   if and only if $$U:\ell^2(\mathbb{J})\otimes \mathcal{H}_0 \ni y\mapsto\sum\limits_{j\in\mathbb{J}}A_j^*L_j^*y \in \mathcal{H}, ~\text{and} ~ V:\ell^2(\mathbb{J})\otimes \mathcal{H}_0 \ni z\mapsto\sum\limits_{j\in\mathbb{J}}\Psi_j^*L^*_jz \in \mathcal{H} $$ 
are well-defined, $ U,V \in \mathcal{B}(\ell^2(\mathbb{J})\otimes \mathcal{H}_0,\mathcal{H})$  such that  $ aI_\mathcal{H}\leq VU^*\leq bI_\mathcal{H}$ (resp. $ 0\leq VU^*\leq bI_\mathcal{H}$).
\item    if and only if $$U:\ell^2(\mathbb{J})\otimes \mathcal{H}_0 \ni y\mapsto\sum\limits_{j\in\mathbb{J}}A_j^*L_j^*y \in \mathcal{H}, ~\text{and} ~ S: \mathcal{H} \ni g\mapsto \sum\limits_{j\in \mathbb{J}}L_j\Psi_jg \in \ell^2(\mathbb{J})\otimes \mathcal{H}_0 $$ 
are well-defined, $ U \in \mathcal{B}(\ell^2(\mathbb{J})\otimes \mathcal{H}_0,\mathcal{H})$, $ S \in \mathcal{B}(\mathcal{H}, \ell^2(\mathbb{J})\otimes \mathcal{H}_0)$ such that  $ aI_\mathcal{H}\leq S^*U^*\leq bI_\mathcal{H}$ (resp. $ 0\leq S^*U^*\leq bI_\mathcal{H}$).
\item  if and only if  $$R:   \mathcal{H} \ni h\mapsto \sum\limits_{j\in \mathbb{J}}L_jA_jh \in \ell^2(\mathbb{J})\otimes \mathcal{H}_0, ~\text{and} ~ V: \ell^2(\mathbb{J})\otimes \mathcal{H}_0 \ni z\mapsto\sum\limits_{j\in\mathbb{J}}\Psi_j^*L_j^*z \in \mathcal{H} $$ 
are well-defined, $ R \in \mathcal{B}(\mathcal{H}, \ell^2(\mathbb{J})\otimes \mathcal{H}_0)$, $ V \in \mathcal{B}(\ell^2(\mathbb{J})\otimes \mathcal{H}_0,\mathcal{H})$ such that  $ aI_\mathcal{H}\leq VR\leq bI_\mathcal{H}$ (resp. $ 0\leq VR\leq bI_\mathcal{H}$).
\item  if and only if  $$ R:   \mathcal{H} \ni h\mapsto \sum\limits_{j\in \mathbb{J}}L_jA_jh \in \ell^2(\mathbb{J})\otimes \mathcal{H}_0, ~\text{and} ~  S:   \mathcal{H} \ni g\mapsto \sum\limits_{j\in \mathbb{J}}L_j\Psi_jg \in \ell^2(\mathbb{J})\otimes \mathcal{H}_0 $$
are well-defined, $ R,S \in \mathcal{B}(\mathcal{H}, \ell^2(\mathbb{J})\otimes \mathcal{H}_0)$  such that  $ aI_\mathcal{H}\leq S^*R\leq bI_\mathcal{H}$ (resp. $ 0\leq S^*R\leq bI_\mathcal{H}$). 
\end{enumerate}

\end{theorem}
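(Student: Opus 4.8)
The plan is to recognize that the four displayed operators are nothing but the analysis operators of Definition \ref{1} together with their adjoints, and that every product appearing in the stated inequalities equals the frame operator $S_{A,\Psi}$. Concretely, I first record the identifications
\[
\sum_{j\in\mathbb{J}} L_j A_j = \theta_A, \quad \sum_{j\in\mathbb{J}} L_j \Psi_j = \theta_\Psi, \quad \sum_{j\in\mathbb{J}} A_j^* L_j^* = \theta_A^*, \quad \sum_{j\in\mathbb{J}} \Psi_j^* L_j^* = \theta_\Psi^*,
\]
so that in (i) one has $U=\theta_A^*$, $V=\theta_\Psi^*$ and $VU^*=\theta_\Psi^*\theta_A$; in (ii), $U=\theta_A^*$, $S=\theta_\Psi$ and $S^*U^*=\theta_\Psi^*\theta_A$; in (iii), $R=\theta_A$, $V=\theta_\Psi^*$ and $VR=\theta_\Psi^*\theta_A$; and in (iv), $R=\theta_A$, $S=\theta_\Psi$ and $S^*R=\theta_\Psi^*\theta_A$. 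By Proposition \ref{2.2}(ii) every one of these products equals $S_{A,\Psi}$, so the inequality in each part reads $aI_\mathcal{H}\leq S_{A,\Psi}\leq bI_\mathcal{H}$ (resp. $0\leq S_{A,\Psi}\leq bI_\mathcal{H}$).

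The technical heart is a convergence lemma: for a family $\{C_j\}_{j\in\mathbb{J}}$ in $\mathcal{B}(\mathcal{H},\mathcal{H}_0)$, the net $\sum_{j\in\mathbb{S}} L_j C_j$ converges in SOT to a bounded $T$ if and only if the net $\sum_{j\in\mathbb{S}} C_j^* L_j^*$ converges in SOT to a bounded operator, and then the two limits are adjoint to one another. I would prove this using the isometries $L_j$ and the projection $P_\mathbb{S}=\sum_{j\in\mathbb{S}} L_j L_j^*$: from $T^* L_j = C_j^*$ one gets $\sum_{j\in\mathbb{S}} C_j^* L_j^* = T^* P_\mathbb{S}$, and since $P_\mathbb{S}\to I_{\ell^2(\mathbb{J})}\otimes I_{\mathcal{H}_0}$ strongly by (\ref{LEQUATION}), the right-hand side converges strongly to $T^*$; the converse direction reconstructs the coordinates via $L_j^*(T')^* h = C_j h$ and uses $\sum_{j\in\mathbb{J}}\|C_j h\|^2 = \|(T')^* h\|^2<\infty$. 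This lemma shows that well-definedness of $R$ (or of $U$) is equivalent to existence of the bounded operator $\theta_A$, and well-definedness of $S$ (or of $V$) is equivalent to existence of $\theta_\Psi$; hence all four well-definedness hypotheses are merely restatements of condition (ii) of Definition \ref{1}.

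With these two observations the four equivalences follow together. For the forward direction, if $(\{A_j\}_{j\in\mathbb{J}},\{\Psi_j\}_{j\in\mathbb{J}})$ is an (ovf) with bounds $a,b$, then $\theta_A,\theta_\Psi$ exist and are bounded by Definition \ref{1}(ii), so by the lemma the transposed series converge, giving well-definedness of all four operators; moreover $S_{A,\Psi}=\theta_\Psi^*\theta_A$ is positive invertible with $aI_\mathcal{H}\leq S_{A,\Psi}\leq bI_\mathcal{H}$ by the discussion of frame bounds following Definition \ref{1}. For the converse, the stated well-definedness together with the lemma yields $\theta_A,\theta_\Psi\in\mathcal{B}(\mathcal{H},\ell^2(\mathbb{J})\otimes\mathcal{H}_0)$, which is Definition \ref{1}(ii); the product identity shows $\sum_{j\in\mathbb{J}}\Psi_j^*A_j=\theta_\Psi^*\theta_A=S_{A,\Psi}$ converges in SOT, and the hypothesis $aI_\mathcal{H}\leq S_{A,\Psi}\leq bI_\mathcal{H}$ (read in the order of self-adjoint operators, consistent with $S_{A,\Psi}=S_{\Psi,A}$) with $a>0$ makes $S_{A,\Psi}$ positive and bounded below, hence invertible, giving Definition \ref{1}(i). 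The Bessel case is identical with the lower bound simply omitted.

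The main obstacle is the convergence lemma, specifically verifying that convergence of the transposed series $\sum_{j\in\mathbb{J}} C_j^* L_j^*$ is genuinely equivalent to that of $\sum_{j\in\mathbb{J}} L_j C_j$, and not merely implied in the weak operator topology; the projection $P_\mathbb{S}$ is precisely what upgrades weak convergence to strong. Once that is in place the remainder is bookkeeping, so I would carry out part (i) in full and remark that (ii)--(iv) are obtained by substituting the appropriate adjoint identities, the four parts differing only in which of $\theta_A,\theta_A^*,\theta_\Psi,\theta_\Psi^*$ is named.
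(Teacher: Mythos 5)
Your proposal is correct and follows essentially the same route as the paper: identify $U,V,R,S$ with $\theta_A^*,\theta_\Psi^*,\theta_A,\theta_\Psi$ and observe that each product in the stated inequalities equals $S_{A,\Psi}=\theta_\Psi^*\theta_A$. The only difference is that you make explicit, via the projections $P_{\mathbb{S}}=\sum_{j\in\mathbb{S}}L_jL_j^*$, the equivalence between SOT-convergence of $\sum_j L_jC_j$ and of $\sum_j C_j^*L_j^*$, a point the paper's two-line proof leaves implicit.
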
 
\begin{proof}
We argue only for (i), in frame situation.  $(\Rightarrow)$ Now $U=\theta_A^*$, $V=\theta_\Psi^*$ and $VU^*=\theta_\Psi^*\theta_A=S_{A,\Psi}$.

$(\Leftarrow)$ Now $\theta_A=U^*$, $\theta_\Psi=V^*$ and $S_{A,\Psi}=\theta_\Psi^*\theta_A=VU^*$.
\end{proof}

Let $ \{A_j\}_{j\in\mathbb{J}}, \{\Psi_j\}_{j\in\mathbb{J}}$ be in $ \mathcal{B}(\mathcal{H},\mathcal{H}_0).$ For each fixed $j\in \mathbb{J}$, suppose $ \{e_{j,k}\}_{k\in\mathbb{L}_j}$ is an orthonormal basis for $ \mathcal{H}_0.$ Riesz representation theorem,  when applied to continuous linear functionals $ \mathcal{H}\ni h \mapsto\langle A_jh, e_{j,k}\rangle\in \mathbb{K}, \mathcal{H}\ni h \mapsto\langle \Psi_jh, e_{j,k}\rangle\in \mathbb{K} $ gives  unique $u_{j,k},v_{j,k}  \in \mathcal{H}$ such that $ \langle A_jh, e_{j,k}\rangle=\langle h, u_{j,k}\rangle, \forall h \in \mathcal{H}, \forall k \in \mathbb{L}_j$ and $ \langle \Psi_jh, e_{j,k}\rangle=\langle h, v_{j,k}\rangle, \forall h \in \mathcal{H}, \forall k \in \mathbb{L}_j.$ Since $ j \in \mathbb{J}$ was  arbitrary, we get $ \langle A_jh, e_{j,k}\rangle=\langle h, u_{j,k}\rangle, \forall h \in \mathcal{H}, \forall k \in \mathbb{L}_j , \forall j \in \mathbb{J} $ and $ \langle \Psi_jh, e_{j,k}\rangle=\langle h, v_{j,k}\rangle, \forall h \in \mathcal{H},  \forall k \in \mathbb{L}_j, \forall j\in \mathbb{J}.$ Now $A_jh=\sum_{k\in \mathbb{L}_j}\langle  A_jh,e_{j,k}\rangle e_{j,k}=\sum_{k\in \mathbb{L}_j}\langle  h,u_{j,k}\rangle e_{j,k}, \forall h \in \mathcal{H},\forall j\in \mathbb{J} $ and similarly $ \Psi_jh= \sum_{k\in \mathbb{L}_j}\langle  h,v_{j,k}\rangle e_{j,k}, \forall h \in \mathcal{H},\forall j\in \mathbb{J}.$ We now show,   for each $j\in \mathbb{J},$ both $\{u_{j,k}\}_{k\in \mathbb{L}_j} $ and $\{v_{j,k}\}_{k\in \mathbb{L}_j} $ are Bessel sequences, and find the adjoints of $ A_j$'s and $\Psi_j $'s in terms of these:  $ \sum_{{k\in \mathbb{L}_j}}|\langle h , u_{j,k}\rangle |^2=\|A_jh\|^2\leq \|A_j\|^2\|h\|^2, \sum_{{k\in \mathbb{L}_j}}|\langle h , v_{j,k}\rangle |^2=\|\Psi_jh\|^2\leq \|\Psi_j\|^2\|h\|^2, \forall h \in \mathcal{H}, \langle h, A_j^*y\rangle=\langle A_jh, y\rangle =\sum_{k\in \mathbb{L}_j}\langle  h,u_{j,k}\rangle \langle e_{j,k}, y \rangle=\langle h , \sum_{k\in \mathbb{L}_j} \langle y,e_{j,k} \rangle u_{j,k}\rangle,
\langle h, \Psi_j^*y\rangle=\langle \Psi_jh, y\rangle =\sum_{k\in \mathbb{L}_j}\langle  h,v_{j,k}\rangle \langle e_{j,k},y  \rangle=\langle h , \sum_{k\in \mathbb{L}_j} \langle y,e_{j,k} \rangle v_{j,k}\rangle ,\forall h\in \mathcal{H},\forall y\in \mathcal{H}_0.$ Therefore $ A_j^*y=\sum_{k\in \mathbb{L}_j} \langle y,e_{j,k} \rangle u_{j,k}$, $ \Psi_j^*z=\sum_{k\in \mathbb{L}_j} \langle z,e_{j,k} \rangle v_{j,k},  \forall y,z \in \mathcal{H}_0, \forall  j\in \mathbb{J}.$ Evaluation of these at $e_{j,k_0}$ gives $ u_{j,k_0}=A_j^*e_{j,k_0}, v_{j,k_0}=\Psi_j^*e_{j,k_0}, \forall k_0 \in  \mathbb{L}_j, j\in \mathbb{J}.$ This produces the following  important theorem.
\begin{theorem}\label{SEQUENTIAL CHARACTERIZATION}
Let $ \{A_j\}_{j\in\mathbb{J}}, \{\Psi_j\}_{j\in\mathbb{J}}$ be in $ \mathcal{B}(\mathcal{H},\mathcal{H}_0).$ Suppose $ \{e_{j,k}\}_{k\in\mathbb{L}_j}$ is an orthonormal basis for $ \mathcal{H}_0,$ for each $j \in \mathbb{J}.$ Let  $ u_{j,k}=A_j^*e_{j,k}, v_{j,k}=\Psi_j^*e_{j,k}, \forall k \in  \mathbb{L}_j, j\in \mathbb{J}.$ Then $(\{A_j\}_{j\in\mathbb{J}}, \{\Psi_j\}_{j\in\mathbb{J}})$ is 
\begin{enumerate}[\upshape(i)]
\item   an orthonormal set (resp. basis) in $ \mathcal{B}(\mathcal{H},\mathcal{H}_0)$ if and only if  $ \{u_{j,k}: k\in \mathbb{L}_j, j \in \mathbb{J}\} $ or $ \{v_{j,k}: k\in \mathbb{L}_j, j \in \mathbb{J}\} $ is an orthonormal set (resp. basis), say $ \{u_{j,k}: k\in \mathbb{L}_j, j \in \mathbb{J}\} $ is an orthonormal set (resp. basis) for $\mathcal{H}$ and there is a sequence  $ \{c_j\}_{j \in \mathbb{J}} $ of reals such that $ 0<\inf\{c_j\}_{j\in \mathbb{J}}\leq\sup\{c_j\}_{j\in \mathbb{J}}<\infty$ and for each $ j \in \mathbb{J},$ $v_{j,k}=c_ju_{j,k} , \forall k \in  \mathbb{L}_j.$
\item  a Riesz basis in $ \mathcal{B}(\mathcal{H},\mathcal{H}_0)$ if and only if  there are bounded invertible operators $ U,V :  \mathcal{H}\rightarrow \mathcal{H} $  and an orthonormal basis $ \{f_{j,k}: k\in \mathbb{L}_j, j \in \mathbb{J}\} $ for $\mathcal{H}$ such that $ VU^*\geq0$ and $u_{j,k}=Uf_{j,k}, v_{j,k}=Vf_{j,k},  \forall k \in  \mathbb{L}_j, \forall j\in \mathbb{J}. $
\item   an (ovf) in $ \mathcal{B}(\mathcal{H},\mathcal{H}_0)$  with bounds $a $ and $ b$  if and only if there exist $ c,d >0$ such that the map 
$$ T: \mathcal{H} \ni h \mapsto\sum\limits_{j\in \mathbb{J}}\sum\limits_{k\in \mathbb{L}_j}\langle h, u_{j,k}\rangle v_{j,k} \in  \mathcal{H} $$
is well-defined bounded positive invertible operator such that $ a\|h\|^2 \leq \langle Th,h \rangle \leq b\|h\|^2, \forall h \in \mathcal{H} $, and 
$$  \sum\limits_{j\in \mathbb{J}}\sum\limits_{k\in \mathbb{L}_j}|\langle h, u_{j,k}\rangle |^2 \leq c\|h\|^2 ,~ \forall h \in \mathcal{H}; ~  \sum\limits_{j\in \mathbb{J}}\sum\limits_{k\in \mathbb{L}_j} |\langle h, v_{j,k}\rangle |^2\leq d\|h\|^2 ,~ \forall h \in \mathcal{H}.$$
\item  Bessel  in $ \mathcal{B}(\mathcal{H},\mathcal{H}_0)$  with bound  $ b$  if and only if there exist $ c,d >0$ such that the map 
$$ T: \mathcal{H} \ni h \mapsto\sum\limits_{j\in \mathbb{J}}\sum\limits_{k\in \mathbb{L}_j}\langle h, u_{j,k}\rangle v_{j,k} \in  \mathcal{H} $$
is well-defined bounded positive  operator such that $ \langle Th,h \rangle \leq b\|h\|^2, \forall h \in \mathcal{H} $, and 
$$  \sum\limits_{j\in \mathbb{J}}\sum\limits_{k\in \mathbb{L}_j}|\langle h, u_{j,k}\rangle |^2 \leq c\|h\|^2 ,~ \forall h \in \mathcal{H}; ~  \sum\limits_{j\in \mathbb{J}}\sum\limits_{k\in \mathbb{L}_j} |\langle h, v_{j,k}\rangle |^2\leq d\|h\|^2 ,~ \forall h \in \mathcal{H}.$$ 
\item  an (ovf)  in $ \mathcal{B}(\mathcal{H},\mathcal{H}_0)$  with bounds $a $ and $ b$  if and only if there exist $ c,d, r >0$ such that 
 $$\left \|\sum\limits_{j\in \mathbb{J}}\sum\limits_{k\in \mathbb{L}_j}\langle h, u_{j,k}\rangle v_{j,k}\right\|\leq r\|h\|,~\forall h \in \mathcal{H}   ; ~\sum\limits_{j\in \mathbb{J}}\sum\limits_{k\in \mathbb{L}_j}\langle h, u_{j,k}\rangle v_{j,k} =\sum\limits_{j\in \mathbb{J}}\sum\limits_{k\in \mathbb{L}_j}\langle h, v_{j,k}\rangle u_{j,k} ,~\forall h \in \mathcal{H} ;$$
 $$a\|h\|^2\leq \sum\limits_{j\in \mathbb{J}}\sum\limits_{k\in \mathbb{L}_j}\langle h, u_{j,k}\rangle \langle  v_{j,k} , h\rangle  \leq b\|h\|^2 ,~ \forall h \in \mathcal{H}, ~\text{and} $$
 $$  \sum\limits_{j\in \mathbb{J}}\sum\limits_{k\in \mathbb{L}_j}|\langle h, u_{j,k}\rangle |^2 \leq c\|h\|^2 ,~ \forall h \in \mathcal{H}; ~ \sum\limits_{j\in \mathbb{J}}\sum\limits_{k\in \mathbb{L}_j} |\langle h, v_{j,k}\rangle |^2\leq d\|h\|^2 ,~ \forall h \in \mathcal{H}.$$
\item Bessel in $ \mathcal{B}(\mathcal{H},\mathcal{H}_0)$  with bound  $ b$ if and only  if there exist $ c,d, r >0$ such that 
$$\left \|\sum\limits_{j\in \mathbb{J}}\sum\limits_{k\in \mathbb{L}_j}\langle h, u_{j,k}\rangle v_{j,k}\right\|\leq r\|h\|,~\forall h \in \mathcal{H} ;~ \sum\limits_{j\in \mathbb{J}}\sum\limits_{k\in \mathbb{L}_j}\langle h, u_{j,k}\rangle v_{j,k} =\sum\limits_{j\in \mathbb{J}}\sum\limits_{k\in \mathbb{L}_j}\langle h, v_{j,k}\rangle u_{j,k} ,~\forall h \in \mathcal{H} ;$$
 $$ 0 \leq \sum\limits_{j\in \mathbb{J}}\sum\limits_{k\in \mathbb{L}_j}\langle h, u_{j,k}\rangle \langle  v_{j,k} , h\rangle \leq b\|h\|^2 ,~ \forall h \in \mathcal{H}, ~\text{and} $$
 $$  \sum\limits_{j\in \mathbb{J}}\sum\limits_{k\in \mathbb{L}_j}|\langle h, u_{j,k}\rangle |^2 \leq c\|h\|^2 ,~ \forall h \in \mathcal{H}; ~  \sum\limits_{j\in \mathbb{J}}\sum\limits_{k\in \mathbb{L}_j} |\langle h, v_{j,k}\rangle|^2\leq d\|h\|^2 ,~ \forall h \in \mathcal{H}.$$ 
\end{enumerate}
\end{theorem}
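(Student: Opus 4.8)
The plan is to translate every operator-level condition on $(\{A_j\}_{j\in\mathbb{J}},\{\Psi_j\}_{j\in\mathbb{J}})$ into the corresponding vector-level condition on the sequences $\{u_{j,k}\}$ and $\{v_{j,k}\}$, using the reconstruction formulas already derived immediately before the statement, namely $A_jh=\sum_{k\in\mathbb{L}_j}\langle h,u_{j,k}\rangle e_{j,k}$, $\Psi_jh=\sum_{k\in\mathbb{L}_j}\langle h,v_{j,k}\rangle e_{j,k}$, together with $A_j^*e_{j,k}=u_{j,k}$ and $\Psi_j^*e_{j,k}=v_{j,k}$. First I would record the three identities that drive everything. (a) By Parseval on the orthonormal basis $\{e_{j,k}\}_{k\in\mathbb{L}_j}$, $\|A_jh\|^2=\sum_{k\in\mathbb{L}_j}|\langle h,u_{j,k}\rangle|^2$, whence $\sum_{j\in\mathbb{J}}\|A_jh\|^2=\sum_{j\in\mathbb{J}}\sum_{k\in\mathbb{L}_j}|\langle h,u_{j,k}\rangle|^2$, and likewise for $\Psi_j$ and $\{v_{j,k}\}$. (b) Evaluating $A_jA_k^*$ on basis vectors yields $\langle A_jA_k^*e_{k,m},e_{j,l}\rangle=\langle u_{k,m},u_{j,l}\rangle$, so $A_jA_k^*=\delta_{j,k}I_{\mathcal{H}_0}$ for all $j,k$ if and only if $\{u_{j,k}\}$ is orthonormal. (c) The frame operator satisfies $S_{A,\Psi}h=\sum_{j\in\mathbb{J}}\Psi_j^*A_jh=\sum_{j\in\mathbb{J}}\sum_{k\in\mathbb{L}_j}\langle h,u_{j,k}\rangle v_{j,k}=Th$, and dually $S_{\Psi,A}h=\sum_{j\in\mathbb{J}}\sum_{k\in\mathbb{L}_j}\langle h,v_{j,k}\rangle u_{j,k}=T^*h$.

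For (i), identity (b) together with the observation recorded after Definition \ref{ONBDEFINITIONOVHS} (that $\langle A_j^*y,A_k^*z\rangle=\delta_{j,k}\langle y,z\rangle$ is equivalent to $A_jA_k^*=\delta_{j,k}I_{\mathcal{H}_0}$) shows that $\{A_j\}$ is orthonormal in $\mathcal{B}(\mathcal{H},\mathcal{H}_0)$ exactly when $\{u_{j,k}\}$ is orthonormal for $\mathcal{H}$, the inequality clauses in both definitions being automatic by Lemma \ref{FIRSTIMPLIESSECONDLEMMA} and classical Bessel's inequality. Promoting ``set'' to ``basis'' corresponds, via identity (a), to upgrading Bessel's inequality to Parseval's identity $\sum_{j,k}|\langle h,u_{j,k}\rangle|^2=\|h\|^2$. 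The relation $\Psi_j=c_jA_j$ of Definition \ref{ONBRIESZDEFINITION}(i) is equivalent to $v_{j,k}=c_ju_{j,k}$ for all $k$ (take adjoints and evaluate at $e_{j,k}$), which closes (i). For (ii) I would start from an orthonormal basis $\{F_j\}$ with $A_j=F_jU_0$, $\Psi_j=F_jV_0$ as in Definition \ref{ONBRIESZDEFINITION}(ii), set $f_{j,k}:=F_j^*e_{j,k}$ (an orthonormal basis for $\mathcal{H}$ by part (i)), and take adjoints to get $u_{j,k}=U_0^*f_{j,k}$, $v_{j,k}=V_0^*f_{j,k}$, so the asserted operators are $U=U_0^*$, $V=V_0^*$; the converse reverses this by defining $F_j$ through $F_j^*e_{j,k}:=f_{j,k}$.

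For (iii) and (iv), identity (c) says $T=S_{A,\Psi}$, so condition (i) of Definition \ref{1} (that $S_{A,\Psi}$ be bounded positive invertible with bounds $a,b$, resp.\ positive bounded with bound $b$) is verbatim the stated property of $T$; meanwhile condition (ii) of Definition \ref{1}, the existence of $\theta_A,\theta_\Psi$, is equivalent, via $\|\sum_{j\in\mathbb{S}}L_jA_jh\|^2=\sum_{j\in\mathbb{S}}\|A_jh\|^2$ (using (\ref{LEQUATION})) and Proposition \ref{2.2}(i), to the two Bessel bounds, which by identity (a) are exactly $\sum_{j,k}|\langle h,u_{j,k}\rangle|^2\le c\|h\|^2$ and $\sum_{j,k}|\langle h,v_{j,k}\rangle|^2\le d\|h\|^2$. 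Parts (v) and (vi) encode the same content without naming $T$ as an operator: the bound by $r$ expresses boundedness of $T$; the symmetry $\sum_{j,k}\langle h,u_{j,k}\rangle v_{j,k}=\sum_{j,k}\langle h,v_{j,k}\rangle u_{j,k}$ expresses $T=T^*$ by identity (c); and the quadratic-form inequalities $a\|h\|^2\le\langle Th,h\rangle\le b\|h\|^2$ then promote a bounded self-adjoint $T$ to a positive invertible one with the required bounds, giving (v)$\Leftrightarrow$(iii) and (vi)$\Leftrightarrow$(iv).

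I expect the main obstacle to lie in two bookkeeping issues rather than in any deep fact. First, in (ii) the order of the factors reverses upon passing to adjoints ($F_jU_0$ becomes $U_0^*f_{j,k}$), so the positivity hypothesis must be tracked carefully to land on $V^*U\ge0$; I would verify this by computing $V^*U$ in terms of $U_0,V_0$ and invoking that the adjoint of a positive operator is positive. Second, throughout I must ensure that the doubly-indexed net $\sum_{j\in\mathbb{J}}\sum_{k\in\mathbb{L}_j}$ (convergence over finite subsets) matches the strong-operator convergence of $\sum_{j}\Psi_j^*A_j$ and $\sum_j L_jA_j$; the orthogonality relations (\ref{LEQUATION}) and the interchange of the $k$-sum with $A_j^*,\Psi_j^*$ reduce this to the completeness argument already used in Theorem \ref{ERFOV}, so no new analytic input is needed.
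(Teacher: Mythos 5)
Your proposal is correct and follows essentially the same route as the paper's proof: both hinge on the identities $\|A_jh\|^2=\sum_{k}|\langle h,u_{j,k}\rangle|^2$, $\langle A_j^*y,A_k^*z\rangle=\delta_{j,k}\langle y,z\rangle\iff\{u_{j,k}\}$ orthonormal, and $S_{A,\Psi}h=\sum_{j,k}\langle h,u_{j,k}\rangle v_{j,k}$, with the adjoint-swap $A_j=F_jU_0\mapsto u_{j,k}=U_0^*f_{j,k}$ handled exactly as in the paper for part (ii). The two bookkeeping issues you flag are precisely the points the paper also attends to, so nothing further is needed.
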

\begin{proof}
\begin{enumerate}[\upshape(i)]
\item   $( \Rightarrow)$  We may assume $\{A_j\}_{j\in\mathbb{J}} $ is an orthonormal set/basis. 
 Then there exists a sequence $ \{c_j\}_{j\in \mathbb{J}}$ of reals such that $0<\inf\{c_j\}_{j\in \mathbb{J}} \leq\sup\{c_j\}_{j\in \mathbb{J}}<\infty $ and $ \Psi_j=c_jA_j, \forall j \in \mathbb{J}.$ Then for each $ j \in \mathbb{J},$  $ v_{j,k}=\Psi_j^*e_{j,k}=(c_jA_j)^*e_{j,k}=c_jA_j^*e_{j,k}=c_ju_{j,k}, \forall k \in \mathbb{L}_j.$ To show orthonormality of $u_{j,k} $'s, consider the cases $ j\neq l$ and $ j=l$,  for $j,l \in \mathbb{J}$. In the first case, $ \langle u_{j,k},u_{l,m} \rangle=\langle A^*_je_{j,k},A^*_le_{l,m} \rangle=\delta_{j,l}\langle e_{j,k}, e_{l,m} \rangle=0$, and in the second case $ \langle u_{j,k},u_{j,m} \rangle = \langle A^*_je_{j,k},A^*_je_{j,m} \rangle =\langle e_{j,k},e_{j,m}\rangle=\delta_{k,m}.$

 Basis case: Now $\{A_j\}_{j\in\mathbb{J}} $  satisfies $ \sum_{j\in\mathbb{J}} \|A_jh\|^2=\|h\|^2, \forall h \in \mathcal{H}$ and this implies $\|h\|^2= \sum_{j\in\mathbb{J}} \|\sum_{k\in \mathbb{L}_j}\langle h, u_{j,k}\rangle e_{j,k}\|^2=\sum_{j\in\mathbb{J}} \sum_{k\in \mathbb{L}_j} | \langle h, u_{j,k}\rangle |^2, \forall h \in \mathcal{H} .$  Thus, due to the orthonormality of $ \{u_{j,k}: k\in \mathbb{L}_j, j \in \mathbb{J}\} ,$ it is a basis.

$ (\Leftarrow)$  We may assume $\{u_{j,k}: k\in \mathbb{L}_j, j\in \mathbb{J}\} $ is an orthonormal set/basis.  Then there exists a sequence $ \{c_j\}_{j\in \mathbb{J}}$ of reals such that $0<\inf\{c_j\}_{j\in \mathbb{J}} \leq\sup\{c_j\}_{j\in \mathbb{J}}<\infty $ and  for each $ j \in \mathbb{J},$ $ v_{j,k}=c_{j,k}u_{j,k}, \forall k \in \mathbb{L}_j.$  Let $ j, l \in \mathbb{J}.$ Whenever $j\neq l, $ 
$$ \langle A_j^*y ,A_l^*z \rangle= \left\langle \sum_{k\in\mathbb{L}_j}\langle y, e_{j,k}\rangle u_{j,k}, \sum_{m\in\mathbb{L}_l}\langle z, e_{l,m}\rangle u_{l,m} \right \rangle  =0, ~\forall y, z \in \mathcal{H}_0 ,$$ 
 and whenever $ j=l,$ 
$$ \langle A_j^*y ,A_j^*z  \rangle=\left\langle \sum_{k\in\mathbb{L}_j}\langle y, e_{j,k}\rangle u_{j,k}, \sum_{m\in\mathbb{L}_j}\langle z, e_{j,m}\rangle u_{j,m} \right \rangle  = \sum_{k\in \mathbb{L}_j}\langle y,e_{j,k} \rangle \langle e_{j,k},z\rangle=\langle y, z \rangle  ,~\forall y, z \in \mathcal{H}_0. $$ For  all $ h \in \mathcal{H},$
\begin{align*}
\sum_{j\in \mathbb{J}}\|A_jh\|^2&=\left\langle \sum_{j\in\mathbb{J}}A_j^*A_jh, h\right\rangle=\left\langle \sum_{j\in\mathbb{J}}\sum_{k\in \mathbb{L}_j}\langle A_jh,e_{j,k} \rangle A_j^*e_{j,k}, h\right\rangle\\
&=\left\langle \sum_{j\in\mathbb{J}}\sum_{k\in \mathbb{L}_j}\langle h,u_{j,k} \rangle u_{j,k}, h\right\rangle= \sum_{j\in\mathbb{J}}\sum_{k\in \mathbb{L}_j}|\langle h,u_{j,k} \rangle|^2 \leq \| h\|^2,
\end{align*}
and 
\begin{align*}
 \Psi_jh&=\sum_{k\in\mathbb{L}_j}\langle \Psi_jh,e_{j,k}\rangle e_{j,k}=\sum_{k\in\mathbb{L}_j}\langle h,v_{j,k}\rangle e_{j,k}=\sum_{k\in\mathbb{L}_j}\langle h,c_ju_{j,k}\rangle e_{j,k}\\
 &=c_j\sum_{k\in\mathbb{L}_j}\langle h,u_{j,k}\rangle e_{j,k}=c_j\sum_{k\in\mathbb{L}_j}\langle h,A_j^*e_{j,k}\rangle e_{j,k}=c_jA_jh, ~  \forall j \in \mathbb{J}.
\end{align*}
Basis case:  
$\sum_{j\in \mathbb{J}}\|A_jh\|^2= \sum_{j\in\mathbb{J}}\sum_{k\in \mathbb{L}_j}|\langle h,u_{j,k} \rangle|^2 = \| h\|^2, \forall h \in \mathcal{H}.$
\item  $ (\Rightarrow)$ There exist $ \{F_j\}_{j \in \mathbb{J}}$, an orthonormal basis in  $ \mathcal{B}(\mathcal{H}, \mathcal{H}_0)$ and  bounded invertible operators $ U, V : \mathcal{H} \rightarrow \mathcal{H}$   with $ V^*U$  positive  such that $ A_j=F_jU, \Psi_j=F_jV,  \forall j \in \mathbb{J}.$ Then $u_{j,k}= A_j^*e_{j,k}=U^*F_j^*e_{j,k}, v_{j,k}=\Psi_j^*e_{j,k} =V^*F_j^*e_{j,k},  \forall k \in  \mathbb{L}_j, \forall j\in \mathbb{J},$ and $ V^*(U^*)^*=V^*U\geq 0$. Now we are through if we show $ \{F^*_je_{j,k} :k \in \mathbb{L}_j, j \in \mathbb{J}\}$ is an orthonormal basis for $ \mathcal{H}.$ Let $ j, k \in \mathbb{J}.$ For $j\neq k,$ $ \langle F_j^*e_{j,l},F_k^*e_{k,m} \rangle =\delta_{j,k}\langle e_{j,l}, e_{k,m}\rangle =0;$ for $ j=k,$  $ \langle F_j^*e_{j,l},F_j^*e_{j,m} \rangle =\langle e_{j,l}, e_{j,m}\rangle=\delta_{l,m}$ and 
$$\sum_{k\in \mathbb{L}_j, j \in \mathbb{J}}|\langle h,F_j^*e_{j,k} \rangle|^2=\sum_{j\in \mathbb{J}}\sum_{k\in \mathbb{L}_j} |\langle F_jh, e_{j,k} \rangle|^2= \sum_{j\in \mathbb{J}}\|F_jh\|^2= \|h\|^2, ~\forall h \in \mathcal{H} .$$

 $ ( \Leftarrow)$ Let $ U,V :  \mathcal{H}\rightarrow \mathcal{H} $ be bounded invertible,  $ \{f_{j,k}: k\in \mathbb{L}_j, j \in \mathbb{J}\} $ be  an orthonormal basis  for $\mathcal{H}$ such that $ VU^*\geq0$ and $u_{j,k}=Uf_{j,k}, v_{j,k}=Vf_{j,k},  \forall k \in  \mathbb{L}_j, \forall j\in \mathbb{J}. $  Define $ F_j: \mathcal{H} \ni \sum_{l \in \mathbb{J},k \in \mathbb{L}_l} a_{l,k} f_{l,k}\mapsto \sum_{k \in \mathbb{L}_j} a_{j,k}e_{j,k}\in  \mathcal{H}_0 .$ Claim: $ A_j=F_jU^*, \Psi_j=F_jV^*, \forall j \in \mathbb{J}$. To show this, it is enough if we get $ A^*_j=UF_j^*, \Psi_j^*=VF_j^*, \forall j \in \mathbb{J}.$ For this, we find $ F_j^*, \forall j \in \mathbb{J}.$ For each fixed $ j  \in \mathbb{J}, $ and for $ y \in \mathcal{H}_0$, since  $ \{e_{j,k}\}_{k\in \mathbb{L}_j}$ is an orthonormal basis for $ \mathcal{H}_0$,  we can write $ y= \sum_{k \in \mathbb{L}_j}a_{j,k}e_{j,k},$ uniquely. Also,  each $ h \in \mathcal{H}$ has unique expansion $h= \sum_{l \in \mathbb{J},m \in \mathbb{L}_l} b_{l,m} f_{l,m}$. Then $\langle F_j^*y, h\rangle =\langle F_j^* (\sum_{k \in \mathbb{L}_j}a_{j,k}e_{j,k}),\sum_{m \in \mathbb{L}_l, l \in \mathbb{J} } b_{l,m} f_{l,m} \rangle =\langle  \sum_{k \in \mathbb{L}_j}a_{j,k}e_{j,k},\sum_{m \in \mathbb{L}_j} b_{j,m} e_{j,m} \rangle =\sum_{k\in \mathbb{L}_j}a_{j,k}\overline{b_{j,k}}$ and $\langle \sum_{k \in \mathbb{L}_j}a_{j,k}f_{j,k} , h\rangle =\langle  \sum_{k \in \mathbb{L}_j}a_{j,k}f_{j,k},\sum_{m \in \mathbb{L}_l, l \in \mathbb{J} } b_{l,m} f_{l,m} \rangle =  \sum_{k \in \mathbb{L}_j}a_{j,k}\langle f_{j,k},\sum_{ l \in \mathbb{J} } \sum_{ m \in \mathbb{L}_l}b_{l,m} f_{l,m} \rangle =\sum_{k\in \mathbb{L}_j}a_{j,k}\overline{b_{j,k}} .$ Therefore $ F_j^*y=\sum_{k \in \mathbb{L}_j}a_{j,k}f_{j,k}$. This gives $ UF_j^*y= \sum_{k \in \mathbb{L}_j}a_{j,k}Uf_{j,k}=\sum_{k \in \mathbb{L}_j}a_{j,k}u_{j,k}=\sum_{k \in \mathbb{L}_j}a_{j,k}A_j^*e_{j,k}=A_j^*y,  VF_j^*y= \sum_{k \in \mathbb{L}_j}a_{j,k}Vf_{j,k}=\sum_{k \in \mathbb{L}_j}a_{j,k}v_{j,k}=\sum_{k \in \mathbb{L}_j}a_{j,k}\Psi_j^*e_{j,k}=\Psi_j^*y,   \forall j \in \mathbb{J}.$ Using $ \{F_j^*\}_{ j \in \mathbb{J}}$ we show $\{F_j\}_{ j \in \mathbb{J}} $ is an orthonormal basis in $\mathcal{B}(\mathcal{H},\mathcal{H}_0)$. For $ j\neq k $, given $ y, z \in \mathcal{H}_0,$ we write  $y= \sum_{l \in \mathbb{L}_j}c_{j,l}e_{j,l},  z= \sum_{m \in \mathbb{L}_k}d_{k,m}e_{k,m}$,  uniquely. Then $\langle F_j^*y, F_k^*z\rangle= \langle \sum_{l \in \mathbb{L}_j}c_{j,l}f_{j,l}, \sum_{m \in \mathbb{L}_k}d_{k,m}f_{k,m}\rangle= 0 .$ For $ j=k$, given $ w,x \in \mathcal{H}_0,$ we write  $w= \sum_{l \in \mathbb{L}_j}r_{j,l}e_{j,l},  x= \sum_{m \in \mathbb{L}_j}s_{j,m}e_{j,m}$,  uniquely. Then $ \langle F_j^*w, F_j^*x\rangle= \langle\sum_{l \in \mathbb{L}_j}r_{j,l}f_{j,l}, \sum_{m \in \mathbb{L}_j}s_{j,m}f_{j,m}\rangle =\sum_{l \in \mathbb{L}_j}r_{j,l}\overline{s_{j,l}} =\langle w, x \rangle.$ Now for $h=\sum_{k \in \mathbb{L}_l, l \in \mathbb{J}}a_{l,k} f_{l,k}\in \mathcal{H}$, $ \sum_{j \in \mathbb{J}} \|F_jh\|^2=\sum_{j \in \mathbb{J}}\| \sum_{k \in \mathbb{L}_j} a_{j,k}e_{j,k}\|^2=\sum_{j \in \mathbb{J}} \sum_{k \in \mathbb{L}_j} |a_{j,k}|^2 =\|h\|^2.$ At last $ (V^*)^*U^*=VU^*\geq 0.$

\item For all $ h\in \mathcal{H},$
$$ \sum\limits_{j \in \mathbb{J}}\sum\limits_{k \in \mathbb{L}_j}\langle h, u_{j,k}\rangle  v_{j,k}= \sum\limits_{j \in \mathbb{J}}\sum\limits_{k \in \mathbb{L}_j}\langle A_jh, e_{j,k}\rangle  \Psi_j^*e_{j,k} =\sum\limits_{j \in \mathbb{J}}\Psi_j^*A_jh,$$
\begin{align*}
\sum\limits_{j \in \mathbb{J}}\langle A_jh, \Psi_jh\rangle &=\sum\limits_{j \in \mathbb{J}} \left \langle\sum\limits_{k \in \mathbb{L}_j}\langle h, A_j^*e_{j,k} \rangle e_{j,k}, \sum\limits_{l \in \mathbb{L}_j} \langle h, \Psi_j^*e_{j,l}\rangle e_{j,l} \right\rangle\\
&=\sum\limits_{j \in \mathbb{J}}\left \langle\sum\limits_{k \in \mathbb{L}_j}\langle h, u_{j,k}\rangle e_{j,k}, \sum\limits_{l \in \mathbb{L}_j} \langle h, v_{j,l}\rangle e_{j,l} \right\rangle=\sum\limits_{j \in \mathbb{J}}\sum\limits_{k \in \mathbb{L}_j}\langle h, u_{j,k}\rangle \langle v_{j,k}, h\rangle, 
\end{align*}
\begin{align*}
\left\|\sum\limits_{j \in \mathbb{J}}L_jA_jh\right\|^2=\sum\limits_{j \in \mathbb{J}}\|A_jh\|^2=\sum\limits_{j \in \mathbb{J}}\sum\limits_{k \in \mathbb{L}_j}|\langle h, u_{j,k}\rangle|^2; ~\left\|\sum\limits_{j \in \mathbb{J}}L_j\Psi_jh\right\|^2=\sum\limits_{j \in \mathbb{J}}\sum\limits_{k \in \mathbb{L}_j}|\langle h, v_{j,k}\rangle|^2.
\end{align*}
 \item Similar to (iii).
 \item $ \sum_{j \in \mathbb{J}} \Psi_j^*A_j$ exists and is bounded  positive  invertible if and only if  there exist $ c,d, r  >0$ such that 
 $  \|\sum_{j\in \mathbb{J}}\sum_{k\in \mathbb{L}_j}\langle h, u_{j,k}\rangle v_{j,k}\|\leq r\|h\|,\forall h \in \mathcal{H}, \sum_{j\in \mathbb{J}}\sum_{k\in \mathbb{L}_j}\langle h, u_{j,k}\rangle v_{j,k} =\sum_{j\in \mathbb{J}}\sum_{k\in \mathbb{L}_j}\langle h, v_{j,k}\rangle u_{j,k} ,\forall h \in \mathcal{H}$ and $
 a\|h\|^2\leq \sum_{j\in \mathbb{J}}\sum_{k\in \mathbb{L}_j}\langle h, u_{j,k}\rangle \langle  v_{j,k} , h\rangle  \leq b\|h\|^2 , \forall h \in \mathcal{H} $, and $ \sum_{j \in \mathbb{J}} L_jA_j$ (resp. $ \sum_{j \in \mathbb{J}} L_j\Psi_j$) exists and is bounded if and only if there exists $ c>0$ (resp. $ d>0$) such that $\sum_{j\in \mathbb{J}}\sum_{k\in \mathbb{L}_j}|\langle h, u_{j,k}\rangle |^2 \leq c\|h\|^2 , \forall h \in \mathcal{H} $ (resp. $ \sum_{j\in \mathbb{J}}\sum_{k\in \mathbb{L}_j}|\langle h, v_{j,k}\rangle |^2 \leq d\|h\|^2 , \forall h \in \mathcal{H}$).
 \item Similar to (v).
\end{enumerate}
\end{proof}
\begin{remark}
Theorem \ref{SEQUENTIAL CHARACTERIZATION} gives light to ``define" extension in sequential form, we will do this in Section \ref{SEQUENTIAL}.	
\end{remark}
\begin{caution}
Theorem \ref{SEQUENTIAL CHARACTERIZATION} does not say ``it is enough to study either operator version or sequential version",  because it holds under the assumption  ``$ \{e_{j,k}\}_{k\in\mathbb{L}_j}$ is an orthonormal basis for $ \mathcal{H}_0,$ for each $j \in \mathbb{J}$", which need not hold for all $ \mathcal{H}_0$ and for each $ j \in \mathbb{J}$. Thus it is necessary to study both	operator version and sequential version. So Section \ref{SEQUENTIAL} doesn't come from this section, we have to study it separately, with a separate definition (that should match with Theorem \ref{SEQUENTIAL CHARACTERIZATION}, whenever its assumptions are fulfilled).
\end{caution}

\section{Similarity, composition and tensor product}\label{SIMILARITYCOMPOSITIONANDTENSORPRODUCT}
\begin{definition}
 An (ovf)  $(\{B_j\}_{j\in \mathbb{J}},  \{\Phi_j\}_{j\in \mathbb{J}})$  in $ \mathcal{B}(\mathcal{H}, \mathcal{H}_0)$    is said to be right-similar (resp. left-similar) to an (ovf)  $(\{A_j\}_{j\in \mathbb{J}},   \{\Psi_j\}_{j\in \mathbb{J}})$ in $ \mathcal{B}(\mathcal{H}, \mathcal{H}_0)$  if there exist invertible  $ R_{A,B}, R_{\Psi, \Phi} \in \mathcal{B}(\mathcal{H})$  (resp. $ L_{A,B}, L_{\Psi, \Phi}$ $ \in \mathcal{B}(\mathcal{H}_0)$) such that $B_j=A_jR_{A,B} , \Phi_j=\Psi_jR_{\Psi, \Phi} $  (resp. $ B_j=L_{A,B}A_j , \Phi_j=L_{\Psi, \Phi}\Psi_j$), $ \forall j \in \mathbb{J}. $
\end{definition}
Since the operators giving similarity are invertible, definition  of similarity is symmetric.
\begin{proposition}\label{RIGHTSIMILARITYPROPOSITIONOPERATORVERSION}
Let $ \{A_j\}_{j\in \mathbb{J}}\in \mathscr{F}_\Psi$  with frame bounds $a, b,$  let $R_{A,B}, R_{\Psi, \Phi} \in \mathcal{B}(\mathcal{H})$ be positive, invertible, commute with each other, commute with $ S_{A, \Psi}$, and let $B_j=A_jR_{A,B} , \Phi_j=\Psi_jR_{\Psi, \Phi},  \forall j \in \mathbb{J}.$ Then 
\begin{enumerate}[\upshape(i)]
\item $ \{B_j\}_{j\in \mathbb{J}}\in \mathscr{F}_\Phi$ and $ \frac{a}{\|R_{A,B}^{-1}\|\|R_{\Psi,\Phi}^{-1}\|}\leq S_{B, \Phi} \leq b\|R_{A,B}R_{\Psi,\Phi}\|.$ Assuming that $ (\{A_j\}_{j\in \mathbb{J}},\{\Psi_j\}_{j\in \mathbb{J}})$ is Parseval, then $(\{B_j\}_{j\in \mathbb{J}},  \{\Phi_j\}_{j\in \mathbb{J}})$ is Parseval  if and only if   $ R_{\Psi, \Phi}R_{A,B}=I_\mathcal{H}.$  
\item $ \theta_B=\theta_A R_{A,B}, \theta_\Phi=\theta_\Psi R_{\Psi,\Phi}, S_{B,\Phi}=R_{\Psi,\Phi}S_{A, \Psi}R_{A,B},  P_{B,\Phi}=P_{A, \Psi}.$
\end{enumerate}
\end{proposition}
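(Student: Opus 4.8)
The plan is to establish part (ii) first, since its three operator identities drive everything in part (i). For the analysis operators, I would note that $R_{A,B}$ and $R_{\Psi,\Phi}$ are bounded, so pulling them out of the strong-operator-convergent defining series of $\theta_A$ and $\theta_\Psi$ gives at once $\theta_B = \sum_j L_j A_j R_{A,B} = \theta_A R_{A,B}$ and $\theta_\Phi = \theta_\Psi R_{\Psi,\Phi}$, both bounded; in particular $\{B_j\}$ and $\{\Phi_j\}$ satisfy condition (ii) of Definition \ref{1}. Taking adjoints and using that $R_{\Psi,\Phi}$ is positive, hence self-adjoint, gives $\theta_\Phi^* = R_{\Psi,\Phi}\theta_\Psi^*$, and Proposition \ref{2.2}(ii) then yields $S_{B,\Phi} = \theta_\Phi^*\theta_B = R_{\Psi,\Phi}\theta_\Psi^*\theta_A R_{A,B} = R_{\Psi,\Phi} S_{A,\Psi} R_{A,B}$. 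The frame-idempotent identity follows by direct substitution: since $S_{B,\Phi}^{-1} = R_{A,B}^{-1}S_{A,\Psi}^{-1}R_{\Psi,\Phi}^{-1}$, the factors $R_{A,B}R_{A,B}^{-1}$ and $R_{\Psi,\Phi}^{-1}R_{\Psi,\Phi}$ cancel in $P_{B,\Phi} = \theta_B S_{B,\Phi}^{-1}\theta_\Phi^*$, leaving $\theta_A S_{A,\Psi}^{-1}\theta_\Psi^* = P_{A,\Psi}$.

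For part (i), the first task is to see that $S_{B,\Phi}$ is a genuine frame operator, i.e. positive and invertible, so that $\{B_j\}\in\mathscr{F}_\Phi$. Here I would invoke the commutativity hypothesis: because $R_{A,B}$, $R_{\Psi,\Phi}$ and $S_{A,\Psi}$ pairwise commute, I may rewrite $S_{B,\Phi} = S_{A,\Psi}\,(R_{\Psi,\Phi}R_{A,B})$ as a product of pairwise-commuting positive invertible operators. The standard fact that the product of two commuting positive operators is again positive (via $PQ = P^{1/2}QP^{1/2}$) then shows $R_{\Psi,\Phi}R_{A,B}\ge 0$ and hence $S_{B,\Phi}\ge 0$; invertibility is immediate from invertibility of each factor.

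The frame bounds come from a conjugation estimate. Using commutativity again (the square root $S_{A,\Psi}^{1/2}$ commutes with the $R$'s by the functional calculus), write $S_{B,\Phi} = S_{A,\Psi}^{1/2}(R_{\Psi,\Phi}R_{A,B})S_{A,\Psi}^{1/2}$, so that for $h\in\mathcal{H}$ one has $\langle S_{B,\Phi}h,h\rangle = \langle (R_{\Psi,\Phi}R_{A,B})S_{A,\Psi}^{1/2}h, S_{A,\Psi}^{1/2}h\rangle$. Sandwiching the positive invertible operator $R_{\Psi,\Phi}R_{A,B}$ between $\|(R_{\Psi,\Phi}R_{A,B})^{-1}\|^{-1}I_\mathcal{H}$ and $\|R_{\Psi,\Phi}R_{A,B}\|\,I_\mathcal{H}$, and then replacing $\|S_{A,\Psi}^{1/2}h\|^2 = \langle S_{A,\Psi}h,h\rangle$ by the bounds $a\|h\|^2$ and $b\|h\|^2$, produces the two inequalities; the submultiplicativity $\|(R_{\Psi,\Phi}R_{A,B})^{-1}\|\le\|R_{A,B}^{-1}\|\,\|R_{\Psi,\Phi}^{-1}\|$ converts the lower estimate into the stated form, while $\|R_{\Psi,\Phi}R_{A,B}\| = \|R_{A,B}R_{\Psi,\Phi}\|$ gives the upper one. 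For the Parseval clause, if $(\{A_j\}, \{\Psi_j\})$ is Parseval then $S_{A,\Psi} = I_\mathcal{H}$, so $S_{B,\Phi} = R_{\Psi,\Phi}R_{A,B}$; since Parseval is equivalent to the frame operator being the identity, $(\{B_j\},\{\Phi_j\})$ is Parseval precisely when $R_{\Psi,\Phi}R_{A,B} = I_\mathcal{H}$.

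The main obstacle is bookkeeping rather than conceptual: one must deploy the pairwise commutativity at exactly the right places, both to split off $S_{A,\Psi}^{1/2}$ symmetrically and to keep the relevant products of positive operators positive, and one must remember that it is positivity of the $R$'s, not merely invertibility, that makes $\theta_\Phi^*=R_{\Psi,\Phi}\theta_\Psi^*$ valid and $R_{\Psi,\Phi}R_{A,B}$ positive.
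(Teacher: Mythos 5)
Your proof is correct and follows essentially the same route as the paper: the factorization $S_{B,\Phi}=R_{\Psi,\Phi}S_{A,\Psi}R_{A,B}$, positivity via commuting positive factors, the identities $\theta_B=\theta_A R_{A,B}$, $\theta_\Phi=\theta_\Psi R_{\Psi,\Phi}$, $P_{B,\Phi}=P_{A,\Psi}$, and the bound sandwich $\frac{1}{\|R_{A,B}^{-1}\|\|R_{\Psi,\Phi}^{-1}\|}\leq R_{A,B}R_{\Psi,\Phi}\leq \|R_{A,B}R_{\Psi,\Phi}\|$. The only presentational nit is that you invoke Proposition \ref{2.2}(ii) for $(\{B_j\},\{\Phi_j\})$ before knowing it lies in $\mathscr{F}_\Phi$; computing the partial sums $\sum_{j\in \mathbb{S}}\Phi_j^*B_j=R_{\Psi,\Phi}(\sum_{j\in \mathbb{S}}\Psi_j^*A_j)R_{A,B}$ directly, as the paper does, removes this circularity at no cost.
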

\begin{proof}
$ \sum_{j\in \mathbb{J}}\Phi_j^*B_j=\sum_{j\in \mathbb{J}}(\Psi_jR_{\Psi,\Phi})^*(A_jR_{A,B})=R_{\Psi, \Phi}\left (\sum_{j\in \mathbb{J}}\Psi_j^*A_j\right )R_{A,B}$ implies $S_{B, \Phi} $ exists, positive, invertible and equals $ R_{\Psi,\Phi}S_{A,\Psi}R_{A,B}.$ Now, $ \theta_B=\sum_{j\in \mathbb{J}}L_jB_j=\sum_{j\in \mathbb{J}}L_jA_jR_{A,B}=\theta_AR_{A,B} $, and similarly for $ \theta_\Phi;$ $ P_{B,\Phi}=\theta_BS_{B,\Phi}^{-1}\theta_\Phi^*=(\theta_AR_{A,B})(R_{\Psi,\Phi}S_{A, \Psi}R_{A,B})^{-1}(\theta_\Psi R_{\Psi,\Phi})^*=P_{A,\Psi}.$ Assumptions together with inequality $ \frac{1}{\|R_{A,B}^{-1}\|\|R_{\Psi,\Phi}^{-1}\|}\leq  R_{A,B}R_{\Psi,\Phi}\leq \|R_{A,B}R_{\Psi,\Phi}\| $ and (ii) gives the inequality in (i). Second part of (i) is clear. 
\end{proof}
\begin{lemma}\label{SIM}
 Let $ \{A_j\}_{j\in \mathbb{J}}\in \mathscr{F}_\Psi,$ $ \{B_j\}_{j\in \mathbb{J}}\in \mathscr{F}_\Phi$ and   $B_j=A_jR_{A,B} ,\Phi_j=\Psi_jR_{\Psi, \Phi},  \forall j \in \mathbb{J}$, for some invertible $ R_{A,B} ,R_{\Psi, \Phi} \in \mathcal{B}(\mathcal{H}).$ Then 
  $ \theta_B=\theta_A R_{A,B}, \theta_\Phi=\theta_\Psi R_{\Psi,\Phi}, S_{B,\Phi}=R_{\Psi,\Phi}^*S_{A, \Psi}R_{A,B},  P_{B,\Phi}=P_{A, \Psi}.$ Assuming that $ (\{A_j\}_{j\in \mathbb{J}},\{\Psi_j\}_{j\in \mathbb{J}})$ is Parseval, then $(\{B_j\}_{j\in \mathbb{J}},  \{\Phi_j\}_{j\in \mathbb{J}})$ is Parseval  if and only if   $ R_{\Psi, \Phi}^*R_{A,B}=I_\mathcal{H}.$ 
 \end{lemma}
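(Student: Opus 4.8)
The plan is to treat this exactly as the natural relaxation of Proposition \ref{RIGHTSIMILARITYPROPOSITIONOPERATORVERSION}: there $R_{A,B},R_{\Psi,\Phi}$ are positive, commuting, and commute with $S_{A,\Psi}$, whereas here they are merely invertible and $\{B_j\}_{j\in\mathbb{J}}\in\mathscr{F}_\Phi$ is assumed outright (so no frame condition needs to be verified, only the four identities computed). The strategy is to first pin down the analysis operators $\theta_B,\theta_\Phi$, and then to read off $S_{B,\Phi}$ and $P_{B,\Phi}$ purely formally using Proposition \ref{2.2}. The one thing to watch, compared with the positive case, is that the operators are no longer self-adjoint, so transposing them past an adjoint produces $R_{\Psi,\Phi}^{*}$ rather than $R_{\Psi,\Phi}$.

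First I would establish $\theta_B=\theta_A R_{A,B}$ and $\theta_\Phi=\theta_\Psi R_{\Psi,\Phi}$. Since $B_j=A_jR_{A,B}$ and $R_{A,B}$ is bounded, for each fixed $h\in\mathcal{H}$ the net $\sum_{j\in\mathbb{S}}L_jB_jh=\sum_{j\in\mathbb{S}}L_jA_j(R_{A,B}h)$ is just the analysis net of $\{A_j\}$ evaluated at $R_{A,B}h$, so it converges in SOT to $\theta_A(R_{A,B}h)$; thus the (already existing, since $\{B_j\}\in\mathscr{F}_\Phi$) operator $\theta_B$ equals $\theta_A R_{A,B}$, and likewise $\theta_\Phi=\theta_\Psi R_{\Psi,\Phi}$. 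Next, invoking $S_{B,\Phi}=\theta_\Phi^*\theta_B$ and $\theta_\Psi^*\theta_A=S_{A,\Psi}$ from Proposition \ref{2.2}(ii), I would compute
$$S_{B,\Phi}=\theta_\Phi^*\theta_B=(\theta_\Psi R_{\Psi,\Phi})^*(\theta_A R_{A,B})=R_{\Psi,\Phi}^*\,\theta_\Psi^*\theta_A\,R_{A,B}=R_{\Psi,\Phi}^*S_{A,\Psi}R_{A,B},$$
which is exactly where the adjoint appears in place of the factor seen in the positive case.

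For the frame idempotent I would substitute into its definition $P_{B,\Phi}=\theta_BS_{B,\Phi}^{-1}\theta_\Phi^*$, using that $R_{A,B},R_{\Psi,\Phi},S_{A,\Psi}$ are all invertible so that $S_{B,\Phi}^{-1}=R_{A,B}^{-1}S_{A,\Psi}^{-1}(R_{\Psi,\Phi}^{*})^{-1}$; the factors $R_{A,B}R_{A,B}^{-1}$ and $(R_{\Psi,\Phi}^{*})^{-1}R_{\Psi,\Phi}^{*}$ telescope to identities and leave $P_{B,\Phi}=\theta_AS_{A,\Psi}^{-1}\theta_\Psi^*=P_{A,\Psi}$. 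Finally, for the Parseval equivalence I would use Proposition \ref{2.2}(iii): if $(\{A_j\},\{\Psi_j\})$ is Parseval then $S_{A,\Psi}=I_\mathcal{H}$, so the frame-operator formula collapses to $S_{B,\Phi}=R_{\Psi,\Phi}^*R_{A,B}$, and $(\{B_j\},\{\Phi_j\})$ is Parseval precisely when $S_{B,\Phi}=I_\mathcal{H}$, i.e. when $R_{\Psi,\Phi}^*R_{A,B}=I_\mathcal{H}$. There is essentially no genuine obstacle here beyond careful bookkeeping of adjoints; the only step warranting a word of care is the SOT argument that lets a bounded operator be pulled out of the analysis series, and that is immediate since the series already converges strongly to the bounded operators $\theta_B,\theta_\Phi$.
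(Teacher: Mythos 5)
Your proposal is correct and follows essentially the same route as the paper: derive $\theta_B=\theta_AR_{A,B}$ and $\theta_\Phi=\theta_\Psi R_{\Psi,\Phi}$ directly from the definitions of the analysis operators, then substitute into $S_{B,\Phi}=\theta_\Phi^*\theta_B$ and $P_{B,\Phi}=\theta_B S_{B,\Phi}^{-1}\theta_\Phi^*$, with the Parseval equivalence falling out of $S_{A,\Psi}=I_\mathcal{H}$. The adjoint bookkeeping and the telescoping in the frame-idempotent computation are exactly right.
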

 \begin{proof}
 From the definitions of $\theta_B $  and $\theta_\Phi $ we get first two conclusions; substituting these in definitions of $ S_{B,\Phi}$ and $ P_{A,\Psi}$ give the remaining two.
 \end{proof}
 
\begin{theorem}\label{RIGHTSIMILARITY}
Let $ \{A_j\}_{j\in \mathbb{J}}\in \mathscr{F}_\Psi,$ $ \{B_j\}_{j\in \mathbb{J}}\in \mathscr{F}_\Phi.$ The following are equivalent.
\begin{enumerate}[\upshape(i)]
\item $B_j=A_jR_{A,B} , \Phi_j=\Psi_jR_{\Psi, \Phi} ,  \forall j \in \mathbb{J},$ for some invertible  $ R_{A,B} ,R_{\Psi, \Phi} \in \mathcal{B}(\mathcal{H}). $
\item $\theta_B=\theta_AR_{A,B}' , \theta_\Phi=\theta_\Psi R_{\Psi, \Phi}' $ for some invertible  $ R_{A,B}' ,R_{\Psi, \Phi}' \in \mathcal{B}(\mathcal{H}). $
\item $P_{B,\Phi}=P_{A,\Psi}.$
\end{enumerate}
If one of the above conditions is satisfied, then  invertible operators in  $ \operatorname{(i)}$ and  $ \operatorname{(ii)}$ are unique and are given by $R_{A,B}=S_{A,\Psi}^{-1}\theta_\Psi^*\theta_B, R_{\Psi, \Phi}=S_{A,\Psi}^{-1}\theta_A^*\theta_\Phi.$
In the case that $(\{A_j\}_{j\in \mathbb{J}},  \{\Psi_j\}_{j\in \mathbb{J}})$ is Parseval, then $(\{B_j\}_{j\in \mathbb{J}},  \{\Phi_j\}_{j\in \mathbb{J}})$ is  Parseval if and only if $R_{\Psi, \Phi}^*R_{A,B} $  is the identity operator if and only if $R_{A,B}R_{\Psi, \Phi}^* $  is the identity operator. 
\end{theorem}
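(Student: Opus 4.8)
The plan is to prove the cycle $(i)\Rightarrow(ii)\Rightarrow(iii)\Rightarrow(i)$ and then to dispose of the uniqueness claim and the Parseval addendum separately. For $(i)\Rightarrow(ii)$ I would simply apply $\sum_{j\in\mathbb{J}}L_j(\cdot)$ to the two relations $B_j=A_jR_{A,B}$ and $\Phi_j=\Psi_jR_{\Psi,\Phi}$: since $\theta_B=\sum_{j}L_jB_j=\sum_j L_jA_jR_{A,B}=\theta_AR_{A,B}$ (and likewise $\theta_\Phi=\theta_\Psi R_{\Psi,\Phi}$), the same operators serve as $R_{A,B}',R_{\Psi,\Phi}'$. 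This is the content of Lemma \ref{SIM}. For $(ii)\Rightarrow(iii)$ I would substitute $\theta_B=\theta_AR_{A,B}'$ and $\theta_\Phi=\theta_\Psi R_{\Psi,\Phi}'$ into $S_{B,\Phi}=\theta_\Phi^*\theta_B=(R_{\Psi,\Phi}')^*S_{A,\Psi}R_{A,B}'$ and then into $P_{B,\Phi}=\theta_BS_{B,\Phi}^{-1}\theta_\Phi^*$; invertibility of $R_{A,B}',R_{\Psi,\Phi}'$ lets all these factors cancel, leaving $P_{B,\Phi}=\theta_AS_{A,\Psi}^{-1}\theta_\Psi^*=P_{A,\Psi}$, again as in Lemma \ref{SIM}.

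The essential direction is $(iii)\Rightarrow(i)$. Assuming $P_{B,\Phi}=P_{A,\Psi}$, I would take as candidates the operators $R_{A,B}\coloneqq S_{A,\Psi}^{-1}\theta_\Psi^*\theta_B$ and $R_{\Psi,\Phi}\coloneqq S_{A,\Psi}^{-1}\theta_A^*\theta_\Phi$ named in the statement. Using Proposition \ref{2.2}(v) I can write $A_jR_{A,B}=L_j^*\theta_AS_{A,\Psi}^{-1}\theta_\Psi^*\theta_B=L_j^*P_{A,\Psi}\theta_B$, and since $P_{A,\Psi}=P_{B,\Phi}$ while an idempotent fixes its own range, $P_{B,\Phi}\theta_B=\theta_BS_{B,\Phi}^{-1}(\theta_\Phi^*\theta_B)=\theta_BS_{B,\Phi}^{-1}S_{B,\Phi}=\theta_B$, so $A_jR_{A,B}=L_j^*\theta_B=B_j$. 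The relation $\Phi_j=\Psi_jR_{\Psi,\Phi}$ goes the same way after passing to adjoints: $\Psi_jR_{\Psi,\Phi}=L_j^*\theta_\Psi S_{A,\Psi}^{-1}\theta_A^*\theta_\Phi=L_j^*P_{A,\Psi}^*\theta_\Phi$, and invoking $P_{A,\Psi}^*=P_{\Psi,A}$, $P_{B,\Phi}^*=P_{\Phi,B}$ together with $S_{A,\Psi}=S_{\Psi,A}$ (all recorded after Proposition \ref{2.2}) turns this into $L_j^*P_{\Phi,B}\theta_\Phi=L_j^*\theta_\Phi=\Phi_j$.

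It remains to check that $R_{A,B}$ and $R_{\Psi,\Phi}$ are invertible, which I expect to be the only step needing care. The device I would use is to exhibit explicit inverses built symmetrically, namely $R_{B,A}\coloneqq S_{B,\Phi}^{-1}\theta_\Phi^*\theta_A$ and $R_{\Phi,\Psi}\coloneqq S_{B,\Phi}^{-1}\theta_B^*\theta_\Psi$, and to simplify the four products using the absorption identities $\theta_\Psi^*P_{A,\Psi}=\theta_\Psi^*$ and $\theta_\Phi^*P_{B,\Phi}=\theta_\Phi^*$ (each following from $\theta_\Psi^*\theta_A=S_{A,\Psi}$ and its $(B,\Phi)$-analogue). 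For instance $R_{A,B}R_{B,A}=S_{A,\Psi}^{-1}\theta_\Psi^*P_{B,\Phi}\theta_A=S_{A,\Psi}^{-1}\theta_\Psi^*P_{A,\Psi}\theta_A=S_{A,\Psi}^{-1}\theta_\Psi^*\theta_A=I_\mathcal{H}$, and the remaining three products collapse identically once $P_{B,\Phi}=P_{A,\Psi}$ is inserted.

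Uniqueness of the operators in $(i)$ and $(ii)$, together with the displayed formulas, then falls out by left-multiplying $\theta_B=\theta_AR_{A,B}$ by $S_{A,\Psi}^{-1}\theta_\Psi^*$ and $\theta_\Phi=\theta_\Psi R_{\Psi,\Phi}$ by $S_{A,\Psi}^{-1}\theta_A^*$, cancelling via $S_{A,\Psi}^{-1}\theta_\Psi^*\theta_A=I_\mathcal{H}=S_{A,\Psi}^{-1}\theta_A^*\theta_\Psi$ to recover $R_{A,B}=S_{A,\Psi}^{-1}\theta_\Psi^*\theta_B$ and $R_{\Psi,\Phi}=S_{A,\Psi}^{-1}\theta_A^*\theta_\Phi$ uniquely. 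Finally, for the Parseval clause I would put $S_{A,\Psi}=I_\mathcal{H}$ into the identity $S_{B,\Phi}=R_{\Psi,\Phi}^*S_{A,\Psi}R_{A,B}$ from Lemma \ref{SIM}, so that $(\{B_j\}_{j\in\mathbb{J}},\{\Phi_j\}_{j\in\mathbb{J}})$ is Parseval exactly when $R_{\Psi,\Phi}^*R_{A,B}=I_\mathcal{H}$; since both factors are invertible operators on $\mathcal{H}$, this one-sided identity forces $R_{A,B}=(R_{\Psi,\Phi}^*)^{-1}$ and is therefore equivalent to the reversed identity $R_{A,B}R_{\Psi,\Phi}^*=I_\mathcal{H}$, which closes the last chain of equivalences.
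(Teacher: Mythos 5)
Your proposal is correct and follows essentially the same route as the paper: the same use of Lemma \ref{SIM} for $(i)\Rightarrow(ii)\Rightarrow(iii)$, the same candidate operators $S_{A,\Psi}^{-1}\theta_\Psi^*\theta_B$ and $S_{A,\Psi}^{-1}\theta_A^*\theta_\Phi$ with the same four products establishing their invertibility, and the same $ab=e\Rightarrow ba=e$ trick for the Parseval clause. The only cosmetic difference is that you close the cycle by going from $(iii)$ directly to $(i)$ via $A_j=L_j^*\theta_A$, whereas the paper first recovers $(ii)$ at the level of the analysis operators and then invokes its already-proved $(ii)\Rightarrow(i)$; these are the same argument composed in a different order.
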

\begin{proof}
(i) $\Rightarrow$ (ii) is direct. For the reverse, let (ii) hold. From Proposition \ref{2.2}, $ B_j=L_j^*\theta_B=L_j^*\theta_AR_{A,B}'=A_jR_{A,B}'$; the same procedure gives $ \Phi_j$ also.  Lemma \ref{SIM} gives (ii)  $ \Rightarrow$ (iii). Assume (iii). We note the following $ \theta_B=P_{B,\Phi}\theta_B$ and $ \theta_\Phi=P_{B,\Phi}^*\theta_\Phi.$ Using these, $ \theta_B=P_{A,\Psi}\theta_B=\theta_A(S_{A,\Psi}^{-1}\theta_\Psi^*\theta_B)$ and $ \theta_\Phi=P_{A,\Psi}^*\theta_\Phi=(\theta_AS_{A,\Psi}^{-1}\theta_\Psi^*)^*\theta_\Phi=\theta_\Psi(S_{A,\Psi}^{-1}\theta_A^*\theta_\Phi).$ We now try to show that both $S_{A,\Psi}^{-1}\theta_\Psi^*\theta_B$  and $S_{A,\Psi}^{-1}\theta_A^*\theta_\Phi$ are invertible. This is achieved via,  $(S_{A,\Psi}^{-1}\theta_\Psi^*\theta_B)(S_{B,\Phi}^{-1}\theta_\Phi^*\theta_A)=S_{A,\Psi}^{-1}\theta_\Psi^*P_{B,\Phi}\theta_A= S_{A,\Psi}^{-1}\theta_\Psi^*P_{A,\Psi}\theta_A= S_{A,\Psi}^{-1}\theta_\Psi^*\theta_A=I_\mathcal{H}, ( S_{B,\Phi}^{-1}\theta_\Phi^*\theta_A)(S_{A,\Psi}^{-1}\theta_\Psi^*\theta_B)= S_{B,\Phi}^{-1}\theta_\Phi^*P_{A,\Psi}\theta_B=S_{B,\Phi}^{-1}\theta_\Phi^*P_{B,\Phi}\theta_B=S_{B,\Phi}^{-1}\theta_\Phi^*\theta_B=I_\mathcal{H}$ and $(S_{A,\Psi}^{-1}\theta_A^*\theta_\Phi)(S_{B,\Phi}^{-1}\theta_B^*\theta_\Psi)=S_{A,\Psi}^{-1}\theta_A^*P_{B,\Phi}^*\theta_\Psi=S_{A,\Psi}^{-1}\theta_A^*P_{A,\Psi}^*\theta_\Psi=S_{A,\Psi}^{-1}\theta_A^*\theta_\Psi=I_\mathcal{H},(S_{B,\Phi}^{-1}\theta_B^*\theta_\Psi)(S_{A,\Psi}^{-1}\theta_A^*\theta_\Phi)=S_{B,\Phi}^{-1}\theta_B^*P_{A,\Psi}^* \theta_\Phi=S_{B,\Phi}^{-1}\theta_B^*P_{B,\Phi}^* \theta_\Phi=S_{B,\Phi}^{-1}\theta_B^* \theta_\Phi= I_\mathcal{H}.$

Let $ R_{A,B}, R_{\Psi,\Phi} \in \mathcal{B}(\mathcal{H}) $ be invertible. From the previous arguments, $ R_{A,B}$ and $R_{\Psi,\Phi} $ satisfy (i) if and only if  they satisfy (ii). Let $B_j=A_jR_{A,B} , \Phi_j=\Psi_jR_{\Psi, \Phi} ,  \forall j \in \mathbb{J}.$ Using (ii), $\theta_B=\theta_AR_{A,B} , \theta_\Phi=\theta_\Psi R_{\Psi, \Phi}$ and these imply  $\theta_\Psi^*\theta_B=\theta_\Psi^*\theta_AR_{A,B}=S_{A,\Psi}R_{A,B} , \theta_A^*\theta_\Phi=\theta_A^*\theta_\Psi R_{\Psi, \Phi}=S_{A,\Psi}R_{\Psi, \Phi}$ these imply the formula for $R_{A,B}$ and $ R_{\Psi, \Phi}.$ For the last,  we recall $ S_{B,\Phi}=R_{\Psi,\Phi}^*S_{A, \Psi}R_{A,B}$  and the following fact: let  $a $ and $ b$ be  invertible elements  such that $ ab=e, $ the identity element. Then $ ba=(a^{-1}(ab)b^{-1})^{-1}=e.$
\end{proof}
\begin{corollary}
For any given (ovf) $ (\{A_j\}_{j \in \mathbb{J}} , \{\Psi_j\}_{j \in \mathbb{J}})$, the canonical dual of $ (\{A_j\}_{j \in \mathbb{J}} , \{\Psi_j\}_{j \in \mathbb{J}}  )$ is the only dual (ovf) that is right-similar to $ (\{A_j\}_{j \in \mathbb{J}} , \{\Psi_j\}_{j \in \mathbb{J}} )$.
\end{corollary}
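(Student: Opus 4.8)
The plan is to prove two things: that the canonical dual does qualify as a dual (ovf) that is right-similar to $(\{A_j\}_{j\in\mathbb{J}}, \{\Psi_j\}_{j\in\mathbb{J}})$, and that it is the \emph{only} such object. The first half is essentially bookkeeping. By the definition of the canonical dual, $(\{\widetilde{A}_j = A_jS_{A,\Psi}^{-1}\}_{j\in\mathbb{J}}, \{\widetilde{\Psi}_j = \Psi_jS_{A,\Psi}^{-1}\}_{j\in\mathbb{J}})$ is already declared to be a dual of $(\{A_j\}_{j\in\mathbb{J}}, \{\Psi_j\}_{j\in\mathbb{J}})$, and it is visibly right-similar to it: one has $\widetilde{A}_j = A_jR_{A,B}$ and $\widetilde{\Psi}_j = \Psi_jR_{\Psi,\Phi}$ with the single invertible operator $R_{A,B} = R_{\Psi,\Phi} = S_{A,\Psi}^{-1}$. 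So existence is immediate.

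For uniqueness, I would start with an arbitrary dual $(\{B_j\}_{j\in\mathbb{J}}, \{\Phi_j\}_{j\in\mathbb{J}})$ of $(\{A_j\}_{j\in\mathbb{J}}, \{\Psi_j\}_{j\in\mathbb{J}})$ that is right-similar to it, so $B_j = A_jR_{A,B}$ and $\Phi_j = \Psi_jR_{\Psi,\Phi}$ for some invertible $R_{A,B}, R_{\Psi,\Phi} \in \mathcal{B}(\mathcal{H})$. Applying Lemma \ref{SIM} I translate the right-similarity into the level of analysis operators, namely $\theta_B = \theta_AR_{A,B}$ and $\theta_\Phi = \theta_\Psi R_{\Psi,\Phi}$. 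Then I would feed these into the two duality equations $\theta_B^*\theta_\Psi = I_\mathcal{H}$ and $\theta_\Phi^*\theta_A = I_\mathcal{H}$ from the definition of dual. Using Proposition \ref{2.2}(ii), which identifies $S_{A,\Psi} = \theta_A^*\theta_\Psi = \theta_\Psi^*\theta_A$, these become $R_{A,B}^*\,\theta_A^*\theta_\Psi = R_{A,B}^*S_{A,\Psi} = I_\mathcal{H}$ and $R_{\Psi,\Phi}^*\,\theta_\Psi^*\theta_A = R_{\Psi,\Phi}^*S_{A,\Psi} = I_\mathcal{H}$.

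Since $S_{A,\Psi}$ is invertible, these two identities force $R_{A,B}^* = S_{A,\Psi}^{-1} = R_{\Psi,\Phi}^*$. The one point that needs a word of care — and the only step that is not pure symbol-pushing — is passing from $R_{A,B}^* = S_{A,\Psi}^{-1}$ back to $R_{A,B}$ itself: here I invoke that $S_{A,\Psi}$ is positive, hence self-adjoint, so $S_{A,\Psi}^{-1}$ is self-adjoint, and taking adjoints yields $R_{A,B} = S_{A,\Psi}^{-1}$ and likewise $R_{\Psi,\Phi} = S_{A,\Psi}^{-1}$. Substituting back gives $B_j = A_jS_{A,\Psi}^{-1} = \widetilde{A}_j$ and $\Phi_j = \Psi_jS_{A,\Psi}^{-1} = \widetilde{\Psi}_j$ for every $j \in \mathbb{J}$, so $(\{B_j\}_{j\in\mathbb{J}}, \{\Phi_j\}_{j\in\mathbb{J}})$ coincides with the canonical dual. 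Combined with the first half, this shows the canonical dual is the unique right-similar dual, completing the proof. The real content is that although right-similarity a priori permits two independent invertible operators $R_{A,B}$ and $R_{\Psi,\Phi}$, the duality constraints pin both of them down to the same operator $S_{A,\Psi}^{-1}$.
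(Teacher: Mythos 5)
Your proof is correct and follows essentially the same route as the paper: the paper simply cites the uniqueness formula $R_{A,B}=S_{A,\Psi}^{-1}\theta_\Psi^*\theta_B$, $R_{\Psi,\Phi}=S_{A,\Psi}^{-1}\theta_A^*\theta_\Phi$ from Theorem \ref{RIGHTSIMILARITY} and plugs in the duality identities, whereas you re-derive the same conclusion from Lemma \ref{SIM} by taking adjoints and using the self-adjointness of $S_{A,\Psi}^{-1}$. The extra care you take in passing from $R_{A,B}^*=S_{A,\Psi}^{-1}$ back to $R_{A,B}$ is sound and is implicitly present in the paper's use of the adjointed duality condition.
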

\begin{proof}
Whenever $ (\{B_j\}_{j \in \mathbb{J}} , \{\Phi_j\}_{j \in \mathbb{J}} )$ is dual of $ (\{A_j\}_{j \in \mathbb{J}} , \{\Psi_j\}_{j \in \mathbb{J}})$ as well as right-similar to $ (\{A_j\}_{j \in \mathbb{J}} , \{\Psi_j\}_{j \in \mathbb{J}} )$, we have $ \theta_B^*\theta_\Psi=I_\mathcal{H}=\theta_\Phi^*\theta_A$ and  there exist invertible $ R_{A,B},R_{\Psi,\Phi}\in \mathcal{B}(\mathcal{H})$ such that  $B_j=A_jR_{A,B} , \Phi_j=\Psi_jR_{\Psi, \Phi} ,  \forall j \in \mathbb{J} $. Theorem \ref{RIGHTSIMILARITY} tells  $R_{A,B}=S_{A,\Psi}^{-1}\theta_\Psi^*\theta_B, R_{\Psi, \Phi}=S_{A,\Psi}^{-1}\theta_A^*\theta_\Phi.$ But then $R_{A,B}=S_{A,\Psi}^{-1}I_\mathcal{H}=S_{A,\Psi}^{-1}, R_{\Psi, \Phi}=S_{A,\Psi}^{-1}I_\mathcal{H}=S_{A,\Psi}^{-1}.$ So $ (\{B_j=A_jS_{A,\Psi}^{-1}\}_{j \in \mathbb{J}} , \{\Phi_j=\Psi_jS_{A,\Psi}^{-1}\}_{j \in \mathbb{J}} )$ is the  canonical  dual of $ (\{A_j\}_{j \in \mathbb{J}} , \{\Psi_j\}_{j \in \mathbb{J}})$.
\end{proof}
\begin{corollary}
Two right-similar operator-valued frames cannot be orthogonal.
\end{corollary}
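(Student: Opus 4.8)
The plan is to argue by contradiction: I assume that the two operator-valued frames $(\{A_j\}_{j\in \mathbb{J}}, \{\Psi_j\}_{j\in \mathbb{J}})$ and $(\{B_j\}_{j\in \mathbb{J}}, \{\Phi_j\}_{j\in \mathbb{J}})$ in $\mathcal{B}(\mathcal{H}, \mathcal{H}_0)$ are simultaneously right-similar and orthogonal, and then derive that some invertible operator must vanish. The strategy is to push the similarity hypothesis through to the level of analysis operators and then confront it with the orthogonality condition, which is itself stated purely in terms of analysis operators.

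First I would invoke the right-similarity hypothesis to obtain invertible $R_{A,B}, R_{\Psi, \Phi} \in \mathcal{B}(\mathcal{H})$ with $B_j = A_j R_{A,B}$ and $\Phi_j = \Psi_j R_{\Psi, \Phi}$ for all $j \in \mathbb{J}$. By Lemma \ref{SIM} (equivalently, the implication (i) $\Rightarrow$ (ii) in Theorem \ref{RIGHTSIMILARITY}) this lifts to the analysis operators as $\theta_B = \theta_A R_{A,B}$ and $\theta_\Phi = \theta_\Psi R_{\Psi, \Phi}$. Next I would write out one of the two orthogonality equations, say $\theta_B^*\theta_\Psi = 0$, and substitute the expression for $\theta_B$: using $\theta_B^* = R_{A,B}^*\theta_A^*$ together with the identity $\theta_A^*\theta_\Psi = S_{A,\Psi}$ from Proposition \ref{2.2}(ii), this becomes
$$
\theta_B^*\theta_\Psi = R_{A,B}^*\,\theta_A^*\theta_\Psi = R_{A,B}^*\, S_{A,\Psi}.
$$

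Now the contradiction is immediate. Since $R_{A,B}$ is invertible, so is its adjoint $R_{A,B}^*$; and since $(\{A_j\}_{j\in \mathbb{J}}, \{\Psi_j\}_{j\in \mathbb{J}})$ is an operator-valued frame, $S_{A,\Psi}$ is positive invertible. Hence $R_{A,B}^* S_{A,\Psi}$ is a composition of two invertible operators and is therefore invertible, in particular nonzero; this contradicts $\theta_B^*\theta_\Psi = 0$. (Symmetrically, the other orthogonality equation gives $\theta_\Phi^*\theta_A = R_{\Psi,\Phi}^* S_{A,\Psi} \neq 0$, so either half of the orthogonality condition already fails.) I expect no genuine obstacle here: the entire content is the observation that orthogonality demands a vanishing product while similarity and the frame condition together force that same product to be invertible. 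The only point requiring care is the bookkeeping of adjoints when transferring from $\theta_B$ to $\theta_B^*$ and the correct use of the symmetry $\theta_A^*\theta_\Psi = S_{A,\Psi}$ rather than $\theta_\Psi^*\theta_A$, though Proposition \ref{2.2}(ii) guarantees these coincide.
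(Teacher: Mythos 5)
Your proposal is correct and follows essentially the same route as the paper: both pass from $B_j=A_jR_{A,B}$ to $\theta_B=\theta_AR_{A,B}$ via Theorem \ref{RIGHTSIMILARITY}, compute $\theta_B^*\theta_\Psi=R_{A,B}^*S_{A,\Psi}$, and conclude this is invertible hence nonzero. Framing it as a contradiction rather than a direct verification is a cosmetic difference only.
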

\begin{proof}
Let $(\{B_j\}_{j \in \mathbb{J}}, \{\Phi_j\}_{j \in \mathbb{J}})$ be right-similar to $ (\{A_j\}_{j \in \mathbb{J}} , \{\Psi_j\}_{j \in \mathbb{J}})$. Choose invertible $ R_{A,B},R_{\Psi,\Phi}\in \mathcal{B}(\mathcal{H})$ such that  $B_j=A_jR_{A,B} , \Phi_j=\Psi_jR_{\Psi, \Phi} ,  \forall j \in \mathbb{J} $. Using 	Theorem \ref{RIGHTSIMILARITY} we get $ \theta_B^*\theta_\Psi=(\theta_AR_{A,B})^*\theta_\Psi=R_{A,B}^*\theta_A^*\theta_\Psi=R_{A,B}^*S_{A,\Psi}\neq 0$ because  $R_{A,B}^* $ and $S_{A,\Psi} $ are invertible.
\end{proof}
\begin{remark}
 For every (ovf) $(\{A_j\}_{j \in \mathbb{J}},\{\Psi_j\}_{j \in \mathbb{J}})$, each  of `operator-valued frames'  $( \{A_jS_{A, \Psi}^{-1}\}_{j \in \mathbb{J}}, \{\Psi_j\}_{j \in \mathbb{J}}),$   $( \{A_jS_{A, \Psi}^{-1/2}\}_{j \in \mathbb{J}}, \{\Psi_jS_{A,\Psi}^{-1/2}\}_{j \in \mathbb{J}}),$ and  $ (\{A_j \}_{j \in \mathbb{J}}, \{\Psi_jS_{A,\Psi}^{-1}\}_{j \in \mathbb{J}})$ is a Parseval (ovf) which is right-similar to  $ (\{A_j\}_{j \in \mathbb{J}} , \{\Psi_j\}_{j \in \mathbb{J}}  ).$  Thus every (ovf) is right-similar to  Parseval  operator-valued frames.
\end{remark}
\begin{proposition}\label{3.3}
 Let $ \{A_j\}_{j\in \mathbb{J}}\in \mathscr{F}_\Psi,$ $ \{B_j\}_{j\in \mathbb{J}}\in \mathscr{F}_\Phi$ and   $B_j=L_{A,B}A_j , \Phi_j=L_{\Psi, \Phi}\Psi_j,  \forall j \in \mathbb{J}$, for some invertible $ L_{A,B} ,L_{\Psi, \Phi} \in \mathcal{B}(\mathcal{H}_0).$ Then 
 \begin{enumerate}[\upshape(i)]
 \item $ \theta_B=(I_{\ell^2(\mathbb{J})}\otimes L_{A,B})\theta_A , \theta_\Phi=(I_{\ell^2(\mathbb{J})}\otimes L_{\Psi,\Phi})\theta_\Psi, S_{B,\Phi}=\theta_\Psi^*(I_{\ell^2(\mathbb{J})}\otimes L_{\Psi,\Phi}^*L_{A,B})\theta_A,  P_{B,\Phi}=(I_{\ell^2(\mathbb{J})}\otimes L_{A,B})\theta_A(\theta_\Psi^*(I_{\ell^2(\mathbb{J})}\otimes L_{\Psi,\Phi}^*L_{A,B})\theta_A)^{-1}\theta_\Psi^*(I_{\ell^2(\mathbb{J})}\otimes L_{\Psi,\Phi}^*).$ 
  \item Assuming $( \{A_j\}_{j \in \mathbb{J}}, \{\Psi_j\}_{j \in \mathbb{J}})$ is Parseval, then $( \{B_j\}_{j \in \mathbb{J}}, \{\Phi_j\}_{j \in \mathbb{J}})$ is Parseval if and only if $ P_{A, \Psi}(I_{\ell^2(\mathbb{J})}\otimes L_{\Psi,\Phi}^*L_{A,B})P_{A,\Psi}=P_{A,\Psi}$ if and only if $ P_{B,\Phi}=(I_{\ell^2(\mathbb{J})}\otimes L_{A,B})P_{A,\Psi}(I_{\ell^2(\mathbb{J})}\otimes L_{\Psi, \Phi}^*).$
  \end{enumerate}
 \end{proposition}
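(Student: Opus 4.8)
The plan is to treat this as the left-similarity counterpart of Lemma \ref{SIM}, pushing every left multiplication by an operator on $\mathcal{H}_0$ through the analysis operators as a left multiplication by an ampliation $I_{\ell^2(\mathbb{J})}\otimes(\cdot)$ (abbreviated $I\otimes(\cdot)$ below). The one structural fact I need is the intertwining identity $(I\otimes L)L_j=L_jL$ for every $L\in\mathcal{B}(\mathcal{H}_0)$ and every $j\in\mathbb{J}$, which is immediate from $L_jh=e_j\otimes h$: indeed $(I\otimes L)(e_j\otimes h)=e_j\otimes Lh=L_jLh$. Everything else is bookkeeping built on Proposition \ref{2.2}.

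First I would prove (i). Writing $\theta_B=\sum_{j\in\mathbb{J}}L_jB_j=\sum_{j\in\mathbb{J}}L_jL_{A,B}A_j$ and inserting the intertwining identity gives $\theta_B=(I\otimes L_{A,B})\sum_{j\in\mathbb{J}}L_jA_j=(I\otimes L_{A,B})\theta_A$; since $I\otimes L_{A,B}$ is bounded it commutes with the SOT limit, so no convergence issue arises. The same computation yields $\theta_\Phi=(I\otimes L_{\Psi,\Phi})\theta_\Psi$. Taking adjoints and using $(I\otimes S)(I\otimes T)=I\otimes ST$, Proposition \ref{2.2}(ii) then gives $S_{B,\Phi}=\theta_\Phi^*\theta_B=\theta_\Psi^*(I\otimes L_{\Psi,\Phi}^*L_{A,B})\theta_A$, and substituting these into $P_{B,\Phi}=\theta_BS_{B,\Phi}^{-1}\theta_\Phi^*$ reproduces the stated formula for $P_{B,\Phi}$ verbatim.

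For (ii), the Parseval hypothesis on $(\{A_j\}_{j\in\mathbb{J}},\{\Psi_j\}_{j\in\mathbb{J}})$ means $S_{A,\Psi}=\theta_\Psi^*\theta_A=I_\mathcal{H}$ by Proposition \ref{2.2}(iii), whence $P_{A,\Psi}=\theta_A\theta_\Psi^*$ is idempotent. For the first equivalence, note that $(\{B_j\}_{j\in\mathbb{J}},\{\Phi_j\}_{j\in\mathbb{J}})$ is Parseval precisely when $\theta_\Psi^*(I\otimes L_{\Psi,\Phi}^*L_{A,B})\theta_A=I_\mathcal{H}$; sandwiching this identity between $\theta_A$ on the left and $\theta_\Psi^*$ on the right gives $P_{A,\Psi}(I\otimes L_{\Psi,\Phi}^*L_{A,B})P_{A,\Psi}=P_{A,\Psi}$, while conversely sandwiching that idempotent equation between $\theta_\Psi^*$ and $\theta_A$ and using $\theta_\Psi^*\theta_A=I_\mathcal{H}$ collapses it back. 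For the second equivalence, the key observation is that $(I\otimes L_{A,B})P_{A,\Psi}(I\otimes L_{\Psi,\Phi}^*)=\theta_B\theta_\Phi^*$ (again using part (i) and $P_{A,\Psi}=\theta_A\theta_\Psi^*$), so the displayed identity reads $\theta_BS_{B,\Phi}^{-1}\theta_\Phi^*=\theta_B\theta_\Phi^*$; multiplying on the left by $\theta_\Phi^*$ and on the right by $\theta_B$ and using $\theta_\Phi^*\theta_B=S_{B,\Phi}$ reduces it to $S_{B,\Phi}=S_{B,\Phi}^2$, which, since $S_{B,\Phi}$ is invertible, is equivalent to $S_{B,\Phi}=I_\mathcal{H}$, i.e. to Parseval.

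I do not expect a genuine obstacle here; the content is bookkeeping organized around the ampliation identity. The only place demanding care is the two cancellation arguments in (ii): one must resist any temptation to invert $I\otimes L_{\Psi,\Phi}^*L_{A,B}$ (it need not interact well with the ranges of the analysis operators) and instead exploit the invertibility of $S_{A,\Psi}$ and $S_{B,\Phi}$ together with the idempotency of the frame idempotents, exactly in the spirit of the proof of Theorem \ref{RIGHTSIMILARITY}.
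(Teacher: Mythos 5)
Your proof is correct and follows essentially the same route as the paper's: the same intertwining identity $L_jR=(I_{\ell^2(\mathbb{J})}\otimes R)L_j$ drives part (i), and both equivalences in (ii) are settled by the same sandwiching with $\theta_A$, $\theta_\Psi^*$ (resp. $\theta_\Phi^*$, $\theta_B$) together with $S_{A,\Psi}=I_\mathcal{H}$ and the invertibility of $S_{B,\Phi}$. Nothing further is needed.
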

\begin{proof}
(i) From the definition of $L_j ,$ we see that $ L_jR=(I_{\ell^2(\mathbb{J})}\otimes R) L_j, \forall R \in \mathcal{B}(\mathcal{H}_0), \forall j \in \mathbb{J}.$ Using this we get  $\theta_B= \sum_{j \in \mathbb{J}} L_jB_j= \sum_{j \in \mathbb{J}} L_jL_{A,B}A_j=(I_{\ell^2(\mathbb{J})}\otimes L_{A,B})\sum_{j \in \mathbb{J}} L_jA_j= (I_{\ell^2(\mathbb{J})}\otimes L_{A,B})\theta_A $ and $ \theta_\Phi=\sum_{j \in \mathbb{J}} L_j\Phi_j= \sum_{j \in \mathbb{J}} L_jL_{\Psi,\Phi}\Psi_j=(I_{\ell^2(\mathbb{J})}\otimes L_{\Psi,\Phi})\sum_{j \in \mathbb{J}} L_j\Psi_j= (I_{\ell^2(\mathbb{J})}\otimes L_{\Psi,\Phi})\theta_\Psi.$ Now $S_{B,\Phi}=\theta_\Phi^*\theta_B=((I_{\ell^2(\mathbb{J})}\otimes L_{\Psi,\Phi})\theta_\Psi)^*(I_{\ell^2(\mathbb{J})}\otimes L_{A,B})\theta_A =\theta_\Psi^*(I_{\ell^2(\mathbb{J})}\otimes L_{\Psi,\Phi}^*L_{A,B})\theta_A$ and $ P_{B,\Phi}=\theta_BS_{B,\Phi}^{-1}\theta_\Phi^*=(I_{\ell^2(\mathbb{J})}\otimes L_{A,B})\theta_A(\theta_\Psi^*(I_{\ell^2(\mathbb{J})}\otimes L_{\Psi,\Phi}^*L_{A,B})\theta_A)^{-1}((I_{\ell^2(\mathbb{J})}\otimes L_{\Psi,\Phi})\theta_\Psi)^* =(I_{\ell^2(\mathbb{J})}\otimes L_{A,B})\theta_A(\theta_\Psi^*(I_{\ell^2(\mathbb{J})}\otimes L_{\Psi,\Phi}^*L_{A,B})\theta_A)^{-1}\theta_\Psi^*(I_{\ell^2(\mathbb{J})}\otimes L_{\Psi,\Phi}^*).$ 
 
 (ii) (First equivalence) For the direct part, let  $ I_\mathcal{H}=S_{B,\Phi}=\theta_\Psi^*(I_{\ell^2(\mathbb{J})}\otimes L_{\Psi,\Phi}^*L_{A,B})\theta_A$. Premultiplying this equality by $\theta_A $  and postmultiplying by $\theta_\Psi^* $ give the conclusion. For the converse part,  let $ \theta_A\theta_\Psi^*=\theta_A(\theta_\Psi^*(I_{\ell^2(\mathbb{J})}\otimes L_{\Psi, \Phi}^*L_{A,B})\theta_A)\theta_\Psi^*$. The bracketed term in this equality is the  expression for $ S_{B, \Phi}. $ Thus $   \theta_A\theta_\Psi^*= \theta_AS_{B,\Phi}\theta_\Psi^*.$  Multiplying this equality from the left by $ \theta_\Psi^*$ and from the right by $ \theta_A$ give  $ I_\mathcal{H}S_{B,\Phi}I_\mathcal{H}=I_\mathcal{H}I_\mathcal{H}.$ 
 
 (Second equivalence)  Let  $ P_{B,\Phi}=(I_{\ell^2(\mathbb{J})}\otimes L_{A,B})P_{A,\Psi}(I_{\ell^2(\mathbb{J})}\otimes L_{\Psi, \Phi}^*)$ which gives $\theta_\Phi^*\theta_BS_{B,\Phi}^{-1}\theta_\Phi^*\theta_B=\theta_\Phi^*(I_{\ell^2(\mathbb{J})}\otimes L_{A,B})\theta_A\theta_\Psi^*(I_{\ell^2(\mathbb{J})}\otimes L_{\Psi, \Phi}^*)\theta_B=\theta_\Phi^*\theta_B\theta_\Phi^*\theta_B=S_{B,\Phi}^2.$  On the other hand, let $( \{B_j\}_{j \in \mathbb{J}}, \{\Phi_j\}_{j \in \mathbb{J}})$ be Parseval. Substituting for $\theta_B $ and $\theta_\Phi $ in the definition of $P_{B, \Phi} $ gives the last equality in the statement.
\end{proof}
\begin{definition}
Let $(\{A_j\}_{j \in \mathbb{J}}, \{\Psi_j \}_ {j \in \mathbb{J}})$ and $(\{B_j\}_{j \in \mathbb{J}}, \{\Phi_j \}_ {j \in \mathbb{J}})$ be two operator-valued frames in  $ \mathcal{B}(\mathcal{H}, \mathcal{H}_0) $. We say that  $ (\{B_j\}_{j \in \mathbb{J}},  \{\Phi_j \}_ {j \in \mathbb{J}})$ is   
\begin{enumerate}[\upshape(i)]
\item RL-similar (right-left-similar) to $( \{A_j\}_{j \in \mathbb{J}}, \{\Psi_j \}_ {j \in \mathbb{J}})$ if there exist invertible $R_{A,B}, L_{\Psi, \Phi} $ in $  \mathcal{B}(\mathcal{H}), \mathcal{B}(\mathcal{H}_0)  $, respectively  such that $B_j=A_jR_{A,B} , \Phi_j=L_{\Psi, \Phi}\Psi_j, \forall j \in \mathbb{J}.$
\item LR-similar (left-right-similar) to $( \{A_j\}_{j \in \mathbb{J}},  \{\Psi_j \}_ {j \in \mathbb{J}})$ if there exist invertible $L_{A,B}, R_{\Psi, \Phi}  $ in $ \mathcal{B}(\mathcal{H}_0), \mathcal{B}(\mathcal{H})$, respectively  such that $B_j=L_{A,B}A_j , \Phi_j=\Psi_jR_{\Psi, \Phi} , \forall j \in \mathbb{J}.$
\end{enumerate}
\end{definition}
\begin{proposition}
Let $ \{A_j\}_{j\in \mathbb{J}}\in \mathscr{F}_\Psi,$ $ \{B_j\}_{j\in \mathbb{J}}\in \mathscr{F}_\Phi$ and   $B_j=A_jR_{A,B}, \Phi_j=L_{\Psi, \Phi}\Psi_j,  \forall j \in \mathbb{J}$, for some invertible $ R_{A,B} ,L_{\Psi, \Phi} $ in $ \mathcal{B}(\mathcal{H}),\mathcal{B}(\mathcal{H}_0) $, respectively. Then 
 $ \theta_B=\theta_A R_{A,B}, \theta_\Phi=(I_{\ell^2(\mathbb{J})}\otimes L_{\Psi, \Phi})\theta_\Psi , S_{B,\Phi}=\theta_\Psi^*(I_{\ell^2(\mathbb{J})}\otimes L^*_{\Psi, \Phi})\theta_AR_{A,B},  P_{B,\Phi}=\theta_A(\theta_\Psi^*(I_{\ell^2(\mathbb{J})}\otimes L^*_{\Psi, \Phi})\theta_A)^{-1}\theta_\Psi^*(I_{\ell^2(\mathbb{J})}\otimes L^*_{\Psi, \Phi}).$
 \end{proposition}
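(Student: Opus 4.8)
The plan is to mirror the computations in Lemma~\ref{SIM} and Proposition~\ref{3.3}, since the RL-similar case simply combines a right factor on the $A_j$'s with a left factor on the $\Psi_j$'s. Because $\{A_j\}_{j\in\mathbb{J}}\in\mathscr{F}_\Psi$ and $\{B_j\}_{j\in\mathbb{J}}\in\mathscr{F}_\Phi$ are assumed to be operator-valued frames, all four operators $\theta_B$, $\theta_\Phi$, $S_{B,\Phi}$, $P_{B,\Phi}$ already exist and are bounded; hence no convergence issues arise and the work is purely algebraic, carried out in the order: analysis operators, then frame operator, then frame idempotent.

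First I would compute the two analysis operators. For $\theta_B$, the right factor $R_{A,B}\in\mathcal{B}(\mathcal{H})$ acts on the common domain $\mathcal{H}$, so it pulls out of the strong-operator sum: $\theta_B=\sum_{j\in\mathbb{J}}L_jB_j=\sum_{j\in\mathbb{J}}L_jA_jR_{A,B}=\theta_AR_{A,B}$. For $\theta_\Phi$, the left factor $L_{\Psi,\Phi}\in\mathcal{B}(\mathcal{H}_0)$ must instead be moved past $L_j$ using the intertwining relation $L_jR=(I_{\ell^2(\mathbb{J})}\otimes R)L_j$ for $R\in\mathcal{B}(\mathcal{H}_0)$ established in the proof of Proposition~\ref{3.3}; this gives $\theta_\Phi=\sum_{j\in\mathbb{J}}L_jL_{\Psi,\Phi}\Psi_j=(I_{\ell^2(\mathbb{J})}\otimes L_{\Psi,\Phi})\theta_\Psi$.

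Next I would substitute these into $S_{B,\Phi}=\theta_\Phi^*\theta_B$, using $(I_{\ell^2(\mathbb{J})}\otimes L_{\Psi,\Phi})^*=I_{\ell^2(\mathbb{J})}\otimes L_{\Psi,\Phi}^*$, to obtain $S_{B,\Phi}=\theta_\Psi^*(I_{\ell^2(\mathbb{J})}\otimes L_{\Psi,\Phi}^*)\theta_AR_{A,B}$, which is the stated third formula. Before the last step I would record the small but necessary observation that the middle factor $\theta_\Psi^*(I_{\ell^2(\mathbb{J})}\otimes L_{\Psi,\Phi}^*)\theta_A=S_{B,\Phi}R_{A,B}^{-1}$ is invertible, being a product of the invertible frame operator $S_{B,\Phi}$ and the invertible $R_{A,B}^{-1}$, so that the inverse appearing in the target expression for $P_{B,\Phi}$ is well defined.

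Finally, substituting into $P_{B,\Phi}=\theta_BS_{B,\Phi}^{-1}\theta_\Phi^*$ and factoring the inverse as $(\theta_\Psi^*(I_{\ell^2(\mathbb{J})}\otimes L_{\Psi,\Phi}^*)\theta_AR_{A,B})^{-1}=R_{A,B}^{-1}(\theta_\Psi^*(I_{\ell^2(\mathbb{J})}\otimes L_{\Psi,\Phi}^*)\theta_A)^{-1}$ produces an adjacent pair $R_{A,B}R_{A,B}^{-1}$ that cancels, leaving exactly $P_{B,\Phi}=\theta_A(\theta_\Psi^*(I_{\ell^2(\mathbb{J})}\otimes L_{\Psi,\Phi}^*)\theta_A)^{-1}\theta_\Psi^*(I_{\ell^2(\mathbb{J})}\otimes L_{\Psi,\Phi}^*)$. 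This cancellation of the right-similarity factor is the only nonmechanical point, and it is the natural reason the idempotent does not see $R_{A,B}$ at all; everything else is direct substitution together with the two standard intertwining and adjoint identities.
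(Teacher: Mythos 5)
Your proposal is correct and follows essentially the same route as the paper's own proof: compute $\theta_B=\theta_AR_{A,B}$ and $\theta_\Phi=(I_{\ell^2(\mathbb{J})}\otimes L_{\Psi,\Phi})\theta_\Psi$ via the intertwining relation $L_jR=(I_{\ell^2(\mathbb{J})}\otimes R)L_j$, substitute into $S_{B,\Phi}=\theta_\Phi^*\theta_B$, and then cancel $R_{A,B}R_{A,B}^{-1}$ inside $P_{B,\Phi}$. Your added remark that the middle factor $\theta_\Psi^*(I_{\ell^2(\mathbb{J})}\otimes L_{\Psi,\Phi}^*)\theta_A=S_{B,\Phi}R_{A,B}^{-1}$ is invertible is a sound justification of a step the paper leaves implicit.
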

 \begin{proof}
$\theta_B=\sum_{j\in\mathbb{J}}L_jB_j=\sum_{j\in\mathbb{J}}L_jA_jR_{A,B}=\theta_A R_{A,B} $,  $\theta_\Phi=\sum_{j\in\mathbb{J}}L_j\Phi_j=\sum_{j\in\mathbb{J}}L_jL_{\Psi, \Phi}\Psi_j=(I_{\ell^2(\mathbb{J})}\otimes L_{\Psi, \Phi})\theta_\Psi $, $S_{B,\Phi}=\theta_\Phi^*\theta_B=((I_{\ell^2(\mathbb{J})}\otimes L_{\Psi, \Phi})\theta_\Psi)^*\theta_AR_{A,B}=\theta_\Psi^*(I_{\ell^2(\mathbb{J})}\otimes L^*_{\Psi, \Phi})\theta_AR_{A,B} $, $P_{B,\Phi}=\theta_BS_{B,\Phi}^{-1}\theta_\Phi^*=(\theta_AR_{A,B})(\theta_\Psi^*(I_{\ell^2(\mathbb{J})}\otimes L^*_{\Psi, \Phi})\theta_AR_{A,B})^{-1}((I_{\ell^2(\mathbb{J})}\otimes L_{\Psi, \Phi})\theta_\Psi)^*=\theta_A(\theta_\Psi^*(I_{\ell^2(\mathbb{J})}\otimes L^*_{\Psi, \Phi})\theta_A)^{-1}\theta_\Psi^*(I_{\ell^2(\mathbb{J})}\otimes L^*_{\Psi, \Phi}) .$
\end{proof}
\begin{remark}
 RL-similarity looks very innocent here, but it gives a characterization result (like Theorem \ref{RIGHTSIMILARITY})	 in Banach spaces (Theorem \ref{RIGHTLEFTSIMILARITYPOVF}), where right-similarity is innocent.
\end{remark}
\begin{proposition}
Let $ \{A_j\}_{j\in \mathbb{J}}\in \mathscr{F}_\Psi,$ $ \{B_j\}_{j\in \mathbb{J}}\in \mathscr{F}_\Phi$ and   $B_j=L_{A,B}A_j, \Phi_j=\Psi_jR_{\Psi, \Phi},  \forall j \in \mathbb{J}$, for some invertible $ L_{A,B} ,R_{\Psi, \Phi} $ in $ \mathcal{B}(\mathcal{H}_0),\mathcal{B}(\mathcal{H}) $, respectively. Then 
$ \theta_B=(I_{\ell^2(\mathbb{J})}\otimes L_{A,B})\theta_A , \theta_\Phi=  \theta_\Psi R_{\Psi,\Phi} , S_{B,\Phi}=R_{\Psi,\Phi}^*\theta_\Psi^*(I_{\ell^2(\mathbb{J})}\otimes L_{A,B})\theta_A ,  P_{B,\Phi}=(I_{\ell^2(\mathbb{J})}\otimes L_{A,B})\theta_A(\theta_\Psi^*(I_{\ell^2(\mathbb{J})}\otimes L_{A,B})\theta_A)^{-1}\theta_\Psi^*.$
\end{proposition}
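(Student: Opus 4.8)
The plan is to compute the four quantities $\theta_B$, $\theta_\Phi$, $S_{B,\Phi}$, and $P_{B,\Phi}$ directly from their definitions, exactly mirroring the preceding RL-similar proposition but with the roles of left and right multiplication interchanged. The only structural input required is the commutation identity $L_jR=(I_{\ell^2(\mathbb{J})}\otimes R)L_j$, valid for every $R\in\mathcal{B}(\mathcal{H}_0)$ and every $j\in\mathbb{J}$, which was recorded in the proof of Proposition \ref{3.3}; everything else is formal manipulation of the analysis operators.

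First I would handle the two analysis operators. Since $L_{A,B}\in\mathcal{B}(\mathcal{H}_0)$ acts on the codomain, the commutation identity pushes it through each $L_j$, giving $\theta_B=\sum_{j\in\mathbb{J}}L_jL_{A,B}A_j=(I_{\ell^2(\mathbb{J})}\otimes L_{A,B})\sum_{j\in\mathbb{J}}L_jA_j=(I_{\ell^2(\mathbb{J})}\otimes L_{A,B})\theta_A$. By contrast $R_{\Psi,\Phi}\in\mathcal{B}(\mathcal{H})$ acts on the common domain, so it factors out on the right with no commutation needed: $\theta_\Phi=\sum_{j\in\mathbb{J}}L_j\Psi_jR_{\Psi,\Phi}=\theta_\Psi R_{\Psi,\Phi}$. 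This asymmetry between the two cases is the whole point of the statement.

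Next, substituting these two expressions into $S_{B,\Phi}=\theta_\Phi^*\theta_B$ and distributing the adjoint over $\theta_\Phi=\theta_\Psi R_{\Psi,\Phi}$ yields $S_{B,\Phi}=R_{\Psi,\Phi}^*\theta_\Psi^*(I_{\ell^2(\mathbb{J})}\otimes L_{A,B})\theta_A$. Finally, feeding $\theta_B$, $\theta_\Phi$, and $S_{B,\Phi}$ into $P_{B,\Phi}=\theta_BS_{B,\Phi}^{-1}\theta_\Phi^*$ and factoring the inverse as $(R_{\Psi,\Phi}^*X)^{-1}=X^{-1}(R_{\Psi,\Phi}^*)^{-1}$, where $X=\theta_\Psi^*(I_{\ell^2(\mathbb{J})}\otimes L_{A,B})\theta_A$, lets the adjacent factors $(R_{\Psi,\Phi}^*)^{-1}R_{\Psi,\Phi}^*$ cancel, leaving precisely $P_{B,\Phi}=(I_{\ell^2(\mathbb{J})}\otimes L_{A,B})\theta_A(\theta_\Psi^*(I_{\ell^2(\mathbb{J})}\otimes L_{A,B})\theta_A)^{-1}\theta_\Psi^*$ as claimed.

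The computation is entirely routine and presents no genuine obstacle; the one place I would pause is the cancellation in $P_{B,\Phi}$. To legitimately factor the inverse of $R_{\Psi,\Phi}^*X$ I must know that $X$ itself is invertible, which follows because $X=(R_{\Psi,\Phi}^*)^{-1}S_{B,\Phi}$ is a product of two invertible operators (invertibility of $S_{B,\Phi}$ being guaranteed by $\{B_j\}_{j\in\mathbb{J}}\in\mathscr{F}_\Phi$, and invertibility of $R_{\Psi,\Phi}^*$ by hypothesis). Once this is noted, the residual $R_{\Psi,\Phi}^*$ arising from $\theta_\Phi^*$ meets its inverse and disappears, and the proof concludes.
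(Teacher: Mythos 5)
Your proposal is correct and follows essentially the same route as the paper: compute $\theta_B$ via the identity $L_jR=(I_{\ell^2(\mathbb{J})}\otimes R)L_j$, factor $R_{\Psi,\Phi}$ out of $\theta_\Phi$ on the right, substitute into $S_{B,\Phi}=\theta_\Phi^*\theta_B$, and cancel $(R_{\Psi,\Phi}^*)^{-1}R_{\Psi,\Phi}^*$ in $P_{B,\Phi}$. Your explicit remark that $\theta_\Psi^*(I_{\ell^2(\mathbb{J})}\otimes L_{A,B})\theta_A=(R_{\Psi,\Phi}^*)^{-1}S_{B,\Phi}$ is invertible is a small point of care the paper leaves implicit, but it does not change the argument.
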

\begin{proof}
 $\theta_B=\sum_{j\in\mathbb{J}}L_jB_j=\sum_{j\in\mathbb{J}}L_jL_{A,B}A_j=(I_{\ell^2(\mathbb{J})}\otimes L_{A,B})\theta_A $, $\theta_\Phi=\sum_{j\in\mathbb{J}}L_j\Phi_j=\sum_{j\in\mathbb{J}}L_j\Psi_jR_{\Psi,\Phi}=\theta_\Psi R_{\Psi,\Phi}  $, $S_{B,\Phi}=\theta_\Phi^*\theta_B=(\theta_\Psi R_{\Psi,\Phi})^*((I_{\ell^2(\mathbb{J})}\otimes L_{A,B})\theta_A) =R_{\Psi,\Phi}^*\theta_\Psi^*(I_{\ell^2(\mathbb{J})}\otimes L_{A,B})\theta_A $, $P_{B,\Phi}=\theta_BS_{B,\Phi}^{-1}\theta_\Phi^*=((I_{\ell^2(\mathbb{J})}\otimes L_{A,B})\theta_A)(R_{\Psi,\Phi}^*\theta_\Psi^*(I_{\ell^2(\mathbb{J})}\otimes L_{A,B})\theta_A)^{-1}(\theta_\Psi R_{\Psi,\Phi})^* =(I_{\ell^2(\mathbb{J})}\otimes L_{A,B})\theta_A(\theta_\Psi^*(I_{\ell^2(\mathbb{J})}\otimes L_{A,B})\theta_A)^{-1}\theta_\Psi^* .$
\end{proof}
\textbf{Composition of frames}: Let $ \{A_j\}_{j \in \mathbb{J}} $  be an (ovf) w.r.t. $ \{\Psi_j\}_{j \in \mathbb{J}} $  in  $ \mathcal{B}(\mathcal{H}, \mathcal{H}_0),$ and $ \{B_l\}_{l \in \mathbb{L}} $  be an (ovf) w.r.t. $ \{\Phi_l\}_{l \in \mathbb{L}} $  in  $ \mathcal{B}(\mathcal{H}_0, \mathcal{H}_1).$ Suppose  $\{C_{(l, j)}\coloneqq B_lA_j\}_{(l, j)\in \mathbb{L}\bigtimes  \mathbb{J}} $ is  an (ovf) w.r.t. $\{\Xi_{(l, j)}\coloneqq \Phi_l\Psi_j\}_{(l, j)\in \mathbb{L}\bigtimes  \mathbb{J}} $ in $ \mathcal{B}(\mathcal{H}, \mathcal{H}_1)$. Then the frame $( \{C_{(l, j)}\}_{(l, j)\in \mathbb{L}\bigtimes  \mathbb{J}}, \{\Xi_{(l, j)}\}_{(l, j)\in \mathbb{L}\bigtimes  \mathbb{J}} )$ is called  as composition of frames $ (\{A_j\}_{j \in \mathbb{J}}, \{\Psi_j\}_{j\in \mathbb{J}})$  and $ (\{B_l\}_{l \in \mathbb{L}},  \{\Phi_l\}_{l\in \mathbb{L}}).$ 
\begin{proposition}
Suppose $(\{C_{(l, j)}\coloneqq B_lA_j\}_{(l, j)\in \mathbb{L}\bigtimes  \mathbb{J}} ,\{\Xi_{(l, j)}\coloneqq \Phi_l\Psi_j\}_{(l, j)\in \mathbb{L}\bigtimes  \mathbb{J}} )$ is the composition of  operator-valued frames  $ (\{A_j\}_{j \in \mathbb{J}}, \{\Psi_j\}_{j \in \mathbb{J}}) $  in  $ \mathcal{B}(\mathcal{H}, \mathcal{H}_0),$ and $ (\{B_l\}_{l \in \mathbb{L}}, \{\Phi_l\}_{l \in \mathbb{L}} )$ in $ \mathcal{B}(\mathcal{H}_0, \mathcal{H}_1).$ Then 
\begin{enumerate}[\upshape(i)]
\item $ \theta_C=(I_{\ell^2(\mathbb{J})}\otimes\theta_B)\theta_A, \theta_\Xi=(I_{\ell^2(\mathbb{J})}\otimes\theta_\Phi)\theta_\Psi,  S_{C,\Xi}=\theta_\Psi^*(I_{\ell^2(\mathbb{J})}\otimes S_{B, \Phi})\theta_A,  P_{C, \Xi}=(I_{\ell^2(\mathbb{J})}\otimes\theta_B)\theta_A (\theta_\Psi^*(I_{\ell^2(\mathbb{J})}\otimes S_{B, \Phi})\theta_A)^{-1}\theta_\Psi^*(I_{\ell^2(\mathbb{J})}\otimes\theta_\Phi^*).$ If $ (\{A_j\}_{j \in \mathbb{J}}, \{\Psi_j\}_{j \in \mathbb{J}}) $ and $ (\{B_l\}_{l \in \mathbb{J}}, \{\Phi_l\}_{l \in \mathbb{J}}) $  are Parseval frames, then $(\{C_{(l, j)}\}_{(l, j)\in \mathbb{J}\bigtimes  \mathbb{J}} ,\{\Xi_{(l,j)}\}_{(l,j)\in \mathbb{J}\bigtimes \mathbb{J}})$ is Parseval frame.
\item If $P_{A, \Psi}$ commutes with  $I_{\ell^2(\mathbb{J})}\otimes S_{B, \Phi} $, then $ P_{C, \Xi}=(I_{\ell^2(\mathbb{J})}\otimes\theta_B)P_{A,\Psi} (I_{\ell^2(\mathbb{J})}\otimes S_{B, \Phi}^{-1})P_{A,\Psi}(I_{\ell^2(\mathbb{J})}\otimes\theta_\Phi^*) $.
\end{enumerate}
\end{proposition}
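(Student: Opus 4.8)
The plan is to derive everything from two analysis-operator factorizations, $\theta_C = (I_{\ell^2(\mathbb{J})}\otimes\theta_B)\theta_A$ and $\theta_\Xi = (I_{\ell^2(\mathbb{J})}\otimes\theta_\Phi)\theta_\Psi$, and then read off $S_{C,\Xi}$ and $P_{C,\Xi}$ via Proposition \ref{2.2}. The first step is to fix the natural unitary identification of $\ell^2(\mathbb{L}\times\mathbb{J})\otimes\mathcal{H}_1$ with $\ell^2(\mathbb{J})\otimes(\ell^2(\mathbb{L})\otimes\mathcal{H}_1)$ sending the basis vector $e_{(l,j)}$ to $e_j\otimes e_l$; under it the isometry $L_{(l,j)}$ of the composed system factors through those of the two given systems. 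With this in hand I would compute directly, for $h\in\mathcal{H}$,
$$\theta_Ch = \sum_{(l,j)}e_{(l,j)}\otimes B_lA_jh = \sum_{j\in\mathbb{J}}e_j\otimes\Big(\sum_{l\in\mathbb{L}}e_l\otimes B_l(A_jh)\Big) = \sum_{j\in\mathbb{J}}e_j\otimes\theta_B(A_jh) = (I_{\ell^2(\mathbb{J})}\otimes\theta_B)\theta_Ah,$$
and identically for $\theta_\Xi$ with $B_l,A_j$ replaced by $\Phi_l,\Psi_j$; taking adjoints gives $\theta_\Xi^* = \theta_\Psi^*(I_{\ell^2(\mathbb{J})}\otimes\theta_\Phi^*)$.

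Next, using $S_{C,\Xi} = \theta_\Xi^*\theta_C$ and $P_{C,\Xi} = \theta_C S_{C,\Xi}^{-1}\theta_\Xi^*$ from Proposition \ref{2.2}, together with $\theta_\Phi^*\theta_B = S_{B,\Phi}$ for the second frame, I would substitute to get
$$S_{C,\Xi} = \theta_\Psi^*(I_{\ell^2(\mathbb{J})}\otimes\theta_\Phi^*)(I_{\ell^2(\mathbb{J})}\otimes\theta_B)\theta_A = \theta_\Psi^*(I_{\ell^2(\mathbb{J})}\otimes S_{B,\Phi})\theta_A,$$
and the stated formula for $P_{C,\Xi}$ then follows by plugging $\theta_C$, $\theta_\Xi^*$, $S_{C,\Xi}$ into $\theta_C S_{C,\Xi}^{-1}\theta_\Xi^*$. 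The Parseval assertion in (i) is immediate: if $S_{A,\Psi}=I_\mathcal{H}$ and $S_{B,\Phi}=I_{\mathcal{H}_0}$, then $I_{\ell^2(\mathbb{J})}\otimes S_{B,\Phi}$ is the identity and $S_{C,\Xi}=\theta_\Psi^*\theta_A=S_{A,\Psi}=I_\mathcal{H}$.

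For (ii) I would abbreviate $\mathcal{S}:=I_{\ell^2(\mathbb{J})}\otimes S_{B,\Phi}$ and $P:=P_{A,\Psi}=\theta_A S_{A,\Psi}^{-1}\theta_\Psi^*$, so that part (i) reads $P_{C,\Xi}=(I_{\ell^2(\mathbb{J})}\otimes\theta_B)\,\theta_A S_{C,\Xi}^{-1}\theta_\Psi^*\,(I_{\ell^2(\mathbb{J})}\otimes\theta_\Phi^*)$ with $S_{C,\Xi}=\theta_\Psi^*\mathcal{S}\theta_A$, while the target is $(I_{\ell^2(\mathbb{J})}\otimes\theta_B)\,P\mathcal{S}^{-1}P\,(I_{\ell^2(\mathbb{J})}\otimes\theta_\Phi^*)$. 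Thus it suffices to prove the core identity $\theta_A S_{C,\Xi}^{-1}\theta_\Psi^* = P\mathcal{S}^{-1}P$. Here the hypothesis $P\mathcal{S}=\mathcal{S}P$ enters through $P\theta_A=\theta_A$ (which holds since $\theta_\Psi^*\theta_A=S_{A,\Psi}$): indeed
$$\mathcal{S}\theta_A = \mathcal{S}P\theta_A = P\mathcal{S}\theta_A = \theta_A S_{A,\Psi}^{-1}\theta_\Psi^*\mathcal{S}\theta_A = \theta_A S_{A,\Psi}^{-1}S_{C,\Xi},$$
which rearranges (both $S_{A,\Psi}$ and $S_{C,\Xi}$ being invertible) to $\mathcal{S}^{-1}\theta_A = \theta_A S_{C,\Xi}^{-1}S_{A,\Psi}$. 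Substituting this into $P\mathcal{S}^{-1}P=\theta_A S_{A,\Psi}^{-1}\theta_\Psi^*\mathcal{S}^{-1}\theta_A S_{A,\Psi}^{-1}\theta_\Psi^*$ and using $\theta_\Psi^*\theta_A=S_{A,\Psi}$ collapses the middle factor to $S_{A,\Psi}S_{C,\Xi}^{-1}S_{A,\Psi}$, leaving exactly $\theta_A S_{C,\Xi}^{-1}\theta_\Psi^*$.

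I expect the only genuine obstacle to be (ii): one must avoid treating $P=P_{A,\Psi}$ as an orthogonal projection (it is merely idempotent, with $P^*=P_{\Psi,A}\neq P$ in general), and the commutativity hypothesis must be used precisely in the form ``$\mathcal{S}$ leaves $\operatorname{ran}\theta_A$ invariant,'' encoded in the relation $\mathcal{S}\theta_A=\theta_A S_{A,\Psi}^{-1}S_{C,\Xi}$. Everything in (i) is routine substitution once the index identification $\ell^2(\mathbb{L}\times\mathbb{J})\cong\ell^2(\mathbb{J})\otimes\ell^2(\mathbb{L})$ and the attendant factorization of the isometries $L_{(l,j)}$ are set up with care.
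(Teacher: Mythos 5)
Your proposal is correct and follows essentially the same route as the paper: the same factorizations $\theta_C=(I_{\ell^2(\mathbb{J})}\otimes\theta_B)\theta_A$ and $\theta_\Xi=(I_{\ell^2(\mathbb{J})}\otimes\theta_\Phi)\theta_\Psi$ obtained by the same index identification, followed by substitution into $S_{C,\Xi}=\theta_\Xi^*\theta_C$ and $P_{C,\Xi}=\theta_C S_{C,\Xi}^{-1}\theta_\Xi^*$. In (ii) the paper instead exhibits $S_{A,\Psi}^{-1}\theta_\Psi^*(I_{\ell^2(\mathbb{J})}\otimes S_{B,\Phi}^{-1})\theta_A S_{A,\Psi}^{-1}$ as the explicit inverse of $S_{C,\Xi}$ and verifies both products using the commutativity hypothesis, but this is only a cosmetic reorganization of your intertwining identity $\mathcal{S}^{-1}\theta_A=\theta_A S_{C,\Xi}^{-1}S_{A,\Psi}$, so no substantive difference.
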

\begin{proof}
\begin{enumerate}[\upshape(i)]
\item Let $ \{e_j\}_{j\in\mathbb{J}}$ (resp. $ \{f_l\}_{l\in\mathbb{L}}$) be the standard orthonormal basis for  $ \ell^2(\mathbb{J})$ (resp. $ \ell^2(\mathbb{L})$).  For $ h \in \mathcal{H},$ we have 
\begin{align*}
 (I_{\ell^2(\mathbb{J})}\otimes\theta_B)\theta_Ah&=\sum\limits_{j\in \mathbb{J}}(I_{\ell^2(\mathbb{J})}\otimes\theta_B)(e_j\otimes A_jh)
 = \sum\limits_{j\in \mathbb{J}}(e_j\otimes\theta_B(A_jh))\\
 &=\sum\limits_{j\in \mathbb{J}}e_j\otimes \left(\sum \limits_{l\in\mathbb{L}}(f_l\otimes B_l(A_jh))\right)
 =\sum\limits_{j\in \mathbb{J}} \sum \limits_{l\in\mathbb{L}}(e_j\otimes f_l\otimes C_{(l,j)}h)\\
 &=\sum\limits_{(l,j)\in\mathbb{L}\bigtimes  \mathbb{J} }((e_j\otimes f_l)\otimes C_{(l,j)}h)=\sum\limits_{(l,j)\in\mathbb{L}\bigtimes  \mathbb{J} }L_{(l,j)} C_{(l,j)}h=\theta_Ch.
\end{align*}
Similarly $\theta_\Xi=(I_{\ell^2(\mathbb{J})}\otimes\theta_\Phi)\theta_\Psi.$ Thus $S_{C,\Xi}=\theta_\Xi^*\theta_C = \theta_\Psi^*(I_{\ell^2(\mathbb{J})}\otimes \theta_\Phi^*)(I_{\ell^2(\mathbb{J})}\otimes \theta_B)\theta_A= \theta_\Psi^*(I_{\ell^2(\mathbb{J})}\otimes \theta_\Phi^*\theta_B)\theta_A=\theta_\Psi^*(I_{\ell^2(\mathbb{J})}\otimes S_{B, \Phi})\theta_A$ which implies $ P_{C, \Xi}=\theta_CS_{C,\Xi}^{-1}\theta_\Xi^*=(I_{\ell^2(\mathbb{J})}\otimes\theta_B)\theta_A (\theta_\Psi^*(I_{\ell^2(\mathbb{J})}\otimes S_{B, \Phi})\theta_A)^{-1}\theta_\Psi^*(I_{\ell^2(\mathbb{J})}\otimes\theta_\Phi^*) $.  Whenever  $ S_{A, \Psi}$ and  $S_{B, \Phi}$  are identity,  the operator  $\theta_\Psi^*(I_{\ell^2(\mathbb{J})}\otimes S_{B, \Phi})\theta_A $ becomes  identity, hence $(\{C_{(l, j)}\}_{(l, j)\in \mathbb{J}\bigtimes  \mathbb{J}},\{\Xi_{(l, j)} \}_{(l, j)\in \mathbb{J}\bigtimes  \mathbb{J}})$ is Parseval.
\item Let  $P_{A, \Psi}$,   $I_{\ell^2(\mathbb{J})}\otimes S_{B, \Phi} $ commute with each other. Then $ (\theta_\Psi^*(I_{\ell^2(\mathbb{J})}\otimes S_{B, \Phi})\theta_A)  (S_{A,\Psi}^{-1}\theta_\Psi^*(I_{\ell^2(\mathbb{J})}\otimes S_{B, \Phi}^{-1})\theta_AS_{A,\Psi}^{-1})=\theta_\Psi^*(I_{\ell^2(\mathbb{J})}\otimes S_{B, \Phi})P_{A, \Psi}(I_{\ell^2(\mathbb{J})}\otimes S_{B, \Phi}^{-1})\theta_AS_{A,\Psi}^{-1}=\theta_\Psi^*P_{A,\Psi}(I_{\ell^2(\mathbb{J})}\otimes S_{B, \Phi}) (I_{\ell^2(\mathbb{J})}\otimes S_{B, \Phi}^{-1})\theta_AS_{A,\Psi}^{-1}=\theta_\Psi^*\theta_AS_{A, \Psi}^{-1}=I_\mathcal{H}$ and $( S_{A,\Psi}^{-1}\theta_\Psi^*(I_{\ell^2(\mathbb{J})}\otimes S_{B, \Phi}^{-1})\theta_AS_{A,\Psi}^{-1})(\theta_\Psi^*(I_{\ell^2(\mathbb{J})}\otimes S_{B, \Phi})\theta_A)=S_{A,\Psi}^{-1}\theta_\Psi^*(I_{\ell^2(\mathbb{J})}\otimes S_{B, \Phi}^{-1})P_{A,\Psi}(I_{\ell^2(\mathbb{J})}\otimes S_{B, \Phi})\theta_A=S_{A,\Psi}^{-1}\theta_\Psi^*(I_{\ell^2(\mathbb{J})}\otimes S_{B, \Phi}^{-1})(I_{\ell^2(\mathbb{J})}\otimes S_{B, \Phi})P_{A,\Psi}\theta_A=S_{A,\Psi}^{-1}\theta_\Psi^*\theta_A
 =I_\mathcal{H}.$ Thus $ S_{C, \Xi}^{-1}=S_{A,\Psi}^{-1}\theta_\Psi^*(I_{\ell^2(\mathbb{J})}\otimes S_{B, \Phi}^{-1})\theta_AS_{A,\Psi}^{-1}.$  So $ P_{C, \Xi}= (I_{\ell^2(\mathbb{J})}\otimes\theta_B)\theta_AS_{A,\Psi}^{-1}\theta_\Psi^*(I_{\ell^2(\mathbb{J})}\otimes S_{B, \Phi}^{-1})\theta_AS_{A,\Psi}^{-1}\theta_\Psi^* (I_{\ell^2(\mathbb{J})}\otimes\theta_\Phi^*)=(I_{\ell^2(\mathbb{J})}\otimes\theta_B)P_{A,\Psi} (I_{\ell^2(\mathbb{J})}\otimes S_{B, \Phi}^{-1})P_{A,\Psi}(I_{\ell^2(\mathbb{J})}\otimes\theta_\Phi^*).$
\end{enumerate}
\end{proof}

\textbf{Tensor product  of frames}: Let $ \{A_j\}_{j \in \mathbb{J}} $  be an (ovf) w.r.t. $ \{\Psi_j\}_{j \in \mathbb{J}} $  in  $ \mathcal{B}(\mathcal{H}, \mathcal{H}_0),$ and $ \{B_l\}_{l \in \mathbb{L}} $  be an (ovf) w.r.t. $ \{\Phi_l\}_{l \in \mathbb{L}} $  in  $ \mathcal{B}(\mathcal{H}_1, \mathcal{H}_2).$ The (ovf)  $(\{C_{(j, l)}\coloneqq A_j\otimes B_l\}_{(j, l)\in \mathbb{J}\bigtimes  \mathbb{L}},\{\Xi_{(j, l)}\coloneqq \Psi_j\otimes\Phi_l\}_{(j, l)\in \mathbb{J}\bigtimes  \mathbb{L}} )$ in $ \mathcal{B}(\mathcal{H}\otimes\mathcal{H}_1, \mathcal{H}_0\otimes\mathcal{H}_2)$ is called  as tensor product  of frames $( \{A_j\}_{j \in \mathbb{J}}, \{\Psi_j\}_{j\in \mathbb{J}})$ and $( \{B_l\}_{l \in \mathbb{L}},  \{\Phi_l\}_{l\in \mathbb{L}}).$
\begin{proposition}
Let $(\{C_{(j, l)}\coloneqq A_j\otimes B_l\}_{(j, l)\in \mathbb{J}\bigtimes  \mathbb{L}},\{\Xi _{(j, l)}\coloneqq \Psi_j\otimes \Phi_l\}_{(j, l)\in \mathbb{J}\bigtimes  \mathbb{L}}) $ be the  tensor product of  operator-valued frames  $( \{A_j\}_{j \in \mathbb{J}}, \{\Psi_j\}_{j \in \mathbb{J}}) $  in  $ \mathcal{B}(\mathcal{H},\mathcal{H}_0),$ and $( \{B_l\}_{l \in \mathbb{L}}, \{\Phi_l\}_{l \in \mathbb{L}} )$ in $ \mathcal{B}(\mathcal{H}_1, \mathcal{H}_2).$ Then  $\theta_C=\theta_A\otimes\theta_B, \theta_\Xi=\theta_\Psi\otimes\theta_\Phi, S_{C, \Xi}=S_{A, \Psi}\otimes S_{B, \Phi}, P_{C, \Xi}=P_{A, \Psi}\otimes P_{B, \Phi}.$ If  $( \{A_j\}_{j \in \mathbb{J}}, \{\Psi_j\}_{j \in \mathbb{J}}) $ and $ (\{B_l\}_{l \in \mathbb{L}},  \{\Phi_l\}_{l \in \mathbb{L}} )$ are Parseval, then $(\{C_{(j, l)}\}_{(j, l)\in \mathbb{J}\bigtimes  \mathbb{L}} ,\{\Xi_{(j,l)}\}_{(j,l)\in \mathbb{J}\bigtimes \mathbb{L}})$ is Parseval.
\end{proposition}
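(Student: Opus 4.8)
The plan is to prove the two analysis-operator identities first, because the frame-operator and frame-idempotent identities then follow formally from the relations $S_{A,\Psi}=\theta_\Psi^*\theta_A$ and $P_{A,\Psi}=\theta_A S_{A,\Psi}^{-1}\theta_\Psi^*$ recorded in Proposition \ref{2.2}, together with the elementary functoriality of the Hilbert-space tensor product: the adjoint of a tensor product of bounded operators is the tensor product of the adjoints, composition distributes over $\otimes$, and $(T\otimes R)^{-1}=T^{-1}\otimes R^{-1}$ whenever $T,R$ are invertible. So essentially all the content is concentrated in showing $\theta_C=\theta_A\otimes\theta_B$ and $\theta_\Xi=\theta_\Psi\otimes\theta_\Phi$.

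First I would fix the canonical unitary identifications implicit in the statement. The composite index set is $\mathbb{J}\times\mathbb{L}$, and I would use the standard unitary $\ell^2(\mathbb{J}\times\mathbb{L})\cong\ell^2(\mathbb{J})\otimes\ell^2(\mathbb{L})$ sending $e_{(j,l)}\mapsto e_j\otimes e_l$. Composing this with the ``flip'' unitary that interchanges the two inner tensor legs, namely $$W:(\ell^2(\mathbb{J})\otimes\mathcal{H}_0)\otimes(\ell^2(\mathbb{L})\otimes\mathcal{H}_2)\to\ell^2(\mathbb{J}\times\mathbb{L})\otimes(\mathcal{H}_0\otimes\mathcal{H}_2),\quad (e_j\otimes y)\otimes(e_l\otimes z)\mapsto e_{(j,l)}\otimes(y\otimes z),$$ yields precisely the identification under which the codomain of $\theta_A\otimes\theta_B$ is the codomain of $\theta_C$. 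I would then evaluate $\theta_C$ on a simple tensor $h\otimes k\in\mathcal{H}\otimes\mathcal{H}_1$: $$\theta_C(h\otimes k)=\sum_{(j,l)\in\mathbb{J}\times\mathbb{L}}e_{(j,l)}\otimes(A_j\otimes B_l)(h\otimes k)=\sum_{(j,l)}e_{(j,l)}\otimes(A_jh\otimes B_lk),$$ and observe that under $W$ this equals $\sum_{(j,l)}(e_j\otimes A_jh)\otimes(e_l\otimes B_lk)=\bigl(\sum_j e_j\otimes A_jh\bigr)\otimes\bigl(\sum_l e_l\otimes B_lk\bigr)=\theta_Ah\otimes\theta_Bk=(\theta_A\otimes\theta_B)(h\otimes k)$. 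Since the span of simple tensors is dense and both sides are bounded (the right-hand side because $\theta_A,\theta_B$ are bounded, whence $\theta_A\otimes\theta_B$ is bounded), this gives $\theta_C=\theta_A\otimes\theta_B$; the identical computation gives $\theta_\Xi=\theta_\Psi\otimes\theta_\Phi$.

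With these in hand the rest is purely formal. Using Proposition \ref{2.2}(ii), $S_{C,\Xi}=\theta_\Xi^*\theta_C=(\theta_\Psi^*\otimes\theta_\Phi^*)(\theta_A\otimes\theta_B)=(\theta_\Psi^*\theta_A)\otimes(\theta_\Phi^*\theta_B)=S_{A,\Psi}\otimes S_{B,\Phi}$, which is positive invertible with inverse $S_{A,\Psi}^{-1}\otimes S_{B,\Phi}^{-1}$ (this also reconfirms that the tensor product genuinely is an (ovf)). Then $P_{C,\Xi}=\theta_C S_{C,\Xi}^{-1}\theta_\Xi^*=(\theta_A\otimes\theta_B)(S_{A,\Psi}^{-1}\otimes S_{B,\Phi}^{-1})(\theta_\Psi^*\otimes\theta_\Phi^*)=(\theta_A S_{A,\Psi}^{-1}\theta_\Psi^*)\otimes(\theta_B S_{B,\Phi}^{-1}\theta_\Phi^*)=P_{A,\Psi}\otimes P_{B,\Phi}$. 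For the Parseval claim, if both frames are Parseval then $S_{A,\Psi}=I_{\mathcal{H}}$ and $S_{B,\Phi}=I_{\mathcal{H}_1}$ by item (viii) following Definition \ref{1}, so $S_{C,\Xi}=I_{\mathcal{H}}\otimes I_{\mathcal{H}_1}=I_{\mathcal{H}\otimes\mathcal{H}_1}$, i.e. the tensor product is Parseval.

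I expect the only genuine obstacle to be the bookkeeping of the index and tensor-leg identifications: one must verify that $W$ composed with $\ell^2(\mathbb{J}\times\mathbb{L})\cong\ell^2(\mathbb{J})\otimes\ell^2(\mathbb{L})$ is the correct unitary intertwining $\theta_C$ with $\theta_A\otimes\theta_B$, and that every subsequent operator identity is read off consistently in this same picture. Once the two analysis-operator identities are secured, every remaining line is routine distributivity of $\otimes$ over adjoints, composition, and inversion.
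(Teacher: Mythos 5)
Your proof is correct and takes essentially the same route as the paper's: verify $\theta_C=\theta_A\otimes\theta_B$ and $\theta_\Xi=\theta_\Psi\otimes\theta_\Phi$ on elementary tensors and then obtain $S_{C,\Xi}$ and $P_{C,\Xi}$ by distributing $\otimes$ over adjoints, composition, and inversion. The only cosmetic difference is that you name the leg-swapping unitary $W$ explicitly, whereas the paper performs that identification silently by simply reordering the factors $e_j\otimes A_jh\otimes f_l\otimes B_lg\mapsto (e_j\otimes f_l)\otimes(A_jh\otimes B_lg)$ inside the sum.
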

\begin{proof}
Since the operators  are linear, we verify the equalities at  elementary tensors. Let $ \{e_j\}_{j\in\mathbb{J}}$ (resp. $ \{f_l\}_{l\in\mathbb{L}}$) be the standard orthonormal basis for  $ \ell^2(\mathbb{J})$ (resp. $ \ell^2(\mathbb{L})$). For $ h \in \mathcal{H}, g \in \mathcal{H}_1,$ we have 
\begin{align*}
(\theta_A\otimes \theta_B)(h\otimes y)&=\theta_Ah\otimes\theta_Bg=\left(\sum\limits_{j\in \mathbb{J}}(e_j\otimes A_jh)\right)\otimes\left(\sum\limits_{l\in \mathbb{L}}(f_l\otimes B_lg)\right)\\
&=\sum\limits_{j\in \mathbb{J}}\sum\limits_{l\in \mathbb{L}}(e_j\otimes A_jh\otimes f_l\otimes B_lg)=\sum\limits_{(j,l)\in \mathbb{J}\bigtimes \mathbb{L}}(e_j\otimes f_l\otimes A_jh\otimes B_lg)\\
&=\sum\limits_{(j,l)\in \mathbb{J}\bigtimes \mathbb{L}}((e_j\otimes f_l)\otimes(A_j\otimes B_l)(h\otimes g))=\sum\limits_{(j,l)\in \mathbb{J}\bigtimes \mathbb{L}}L_{(j,l)}C_{(j,l)}(h\otimes g)=\theta_C(h\otimes g).
\end{align*}
 Similarly $\theta_\Xi=\theta_\Psi\otimes\theta_\Phi$. Further, $ S_{C, \Xi}=\theta_\Xi^*\theta_C = (\theta_\Psi^*\otimes\theta_\Phi^*)(\theta_A\otimes\theta_B)=S_{A, \Psi}\otimes S_{B, \Phi}$ and $ P_{C, \Xi}= \theta_CS_{C,\Xi}^{-1}\theta_\Xi^*=(\theta_A\otimes\theta_B)(S_{A, \Psi}^{-1}\otimes S_{B, \Phi}^{-1})(\theta_\Psi^*\otimes\theta_\Phi^*)=\theta_AS_{A,\Psi}^{-1}\theta_\Psi^*\otimes\theta_BS_{B,\Phi}^{-1}\theta_\Phi^* =P_{A, \Psi}\otimes P_{B, \Phi}.$ Last part of the statement now  follows directly.
\end{proof}

\section{Frames and discrete group representations} \label{FRAMESANDDISCRETEGROUPREPRESENTATIONS}
Let $ G$ be a discrete group (a topological group with discrete topology), $ \{\chi_g\}_{g\in G}$ be the  standard orthonormal  basis for $\ell^2(G) $.  Let $\lambda $ be the left regular representation of $ G$ defined by $ \lambda_g\chi_q(r)=\chi_q(g^{-1}r), \forall  g, q, r \in G$;  $\rho $ be the right regular representation of $ G$ defined by $ \rho_g\chi_q(r)=\chi_q(rg), \forall g, q, r \in G.$ We denote the von Neumann algebra generated by unitaries $\{\lambda_g\}_{g\in G} $ in $ \mathcal{B}(\ell^2(G))$ by $\mathscr{L}(G) $. Similarly $\mathscr{R}(G) $ denotes the von Neumann algebra generated by $\{\rho_g\}_{g\in G} $ in $ \mathcal{B}(\ell^2(G))$. We  recall  $\mathscr{L}(G)'=\mathscr{R}(G)$, $ \mathscr{R}(G)'=\mathscr{L}(G). $

\begin{definition}
Let $ \pi$ be a unitary representation of a discrete 
group $ G$ on  a Hilbert space $ \mathcal{H}.$ An operator $ A$ in $ \mathcal{B}(\mathcal{H}, \mathcal{H}_0)$ is called an  operator frame generator (resp. a  Parseval frame generator) w.r.t. an operator $ \Psi$ in $ \mathcal{B}(\mathcal{H}, \mathcal{H}_0)$ if $(\{A_g\coloneqq A \pi_{g^{-1}}\}_{g\in G}, \{\Psi_g\coloneqq \Psi \pi_{g^{-1}}\}_{g\in G})$ is an (ovf) in $ \mathcal{B}(\mathcal{H}, \mathcal{H}_0)$. In this case, we write $ (A,\Psi)$ is an operator  frame generator for $\pi$.
\end{definition}
We note that whenever $ A$ is a  generator w.r.t.  $ \Psi$, $ \Psi$ is a   generator w.r.t. $ A.$

\begin{proposition}\label{REPRESENATIONLEMMA}
Let $ (A,\Psi)$ and $ (B,\Phi)$ be   operator frame generators    in $\mathcal{B}(\mathcal{H},  \mathcal{H}_0)$ for a unitary representation $ \pi$ of  $G$ on $ \mathcal{H}.$ Then
\begin{enumerate}[\upshape(i)]
\item $ \theta_A\pi_g=(\lambda_g\otimes I_{\mathcal{H}_0})\theta_A,  \theta_\Psi \pi_g=(\lambda_g\otimes I_{\mathcal{H}_0})\theta_\Psi,  \forall g \in G.$
\item $ \theta_A^*\theta_B,   \theta_\Psi^*\theta_\Phi,\theta_A^*\theta_\Phi$ are in the commutant $ \pi(G)'$ of $ \pi(G)''.$ Further, $ S_{A,\Psi} \in \pi(G)'$ and $(AS_{A, \Psi}^{-{1/2}} , \Psi S_{A, \Psi}^{-{1/2}})$ is a Parseval frame generator. 
\item $ \theta_AT\theta_\Psi^*, \theta_AT\theta_B^*, \theta_\Psi T\theta_\Phi^* \in \mathscr{R}(G)\otimes \mathcal{B}(\mathcal{H}_0), \forall T \in \pi(G)'.$ In particular, $ P_{A, \Psi} \in \mathscr{R}(G)\otimes \mathcal{B}(\mathcal{H}_0). $
\end{enumerate}
\end{proposition}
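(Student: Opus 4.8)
The plan is to derive all three parts from a single covariance (intertwining) relation between the analysis operators and the left regular representation, which is exactly part (i). Parts (ii) and (iii) then follow from formal manipulations of this relation together with the standard commutation theorem for tensor products of von Neumann algebras.

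First I would prove (i) by direct computation. Writing $\theta_A h = \sum_{q\in G}\chi_q\otimes A\pi_{q^{-1}}h$ and applying it to $\pi_g h$ gives $\theta_A\pi_g h = \sum_{q\in G}\chi_q\otimes A\pi_{q^{-1}g}h$. The substitution $q=gr$ (so that $q^{-1}g=r^{-1}$) turns this into $\sum_{r\in G}\chi_{gr}\otimes A\pi_{r^{-1}}h$, and since $\lambda_g\chi_r=\chi_{gr}$ this is precisely $(\lambda_g\otimes I_{\mathcal{H}_0})\theta_A h$. The identical reindexing gives the relation for $\theta_\Psi$. Taking adjoints (using that $\pi,\lambda$ are unitary, so $\pi_g^*=\pi_{g^{-1}}$ and $\lambda_g^*=\lambda_{g^{-1}}$, then replacing $g$ by $g^{-1}$) yields the companion identity $\pi_g\theta_A^*=\theta_A^*(\lambda_g\otimes I_{\mathcal{H}_0})$, which I will use in both remaining parts.

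For (ii), I would combine the two forms of the covariance: $\pi_g\,\theta_A^*\theta_B=\theta_A^*(\lambda_g\otimes I_{\mathcal{H}_0})\theta_B=\theta_A^*\theta_B\,\pi_g$, so $\theta_A^*\theta_B\in\pi(G)'$, and identically for $\theta_\Psi^*\theta_\Phi$ and $\theta_A^*\theta_\Phi$. Since $S_{A,\Psi}=\theta_\Psi^*\theta_A$ (Proposition \ref{2.2}) is of this form, $S_{A,\Psi}\in\pi(G)'$; because $\pi(G)'$ is a von Neumann algebra it is closed under functional calculus, hence $S_{A,\Psi}^{-1/2}\in\pi(G)'$ and commutes with each $\pi_{g^{-1}}$. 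Therefore $(AS_{A,\Psi}^{-1/2})\pi_{g^{-1}}=A_gS_{A,\Psi}^{-1/2}$ and likewise for $\Psi$, and the computation $\sum_g S_{A,\Psi}^{-1/2}\Psi_g^*A_gS_{A,\Psi}^{-1/2}=S_{A,\Psi}^{-1/2}S_{A,\Psi}S_{A,\Psi}^{-1/2}=I_{\mathcal{H}}$ shows the rescaled pair is a Parseval frame generator (this is the right-similarity construction with the positive invertible factor $S_{A,\Psi}^{-1/2}$).

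For (iii), the key observation is that $\mathscr{R}(G)\otimes\mathcal{B}(\mathcal{H}_0)$ is exactly the commutant of $\{\lambda_g\otimes I_{\mathcal{H}_0}:g\in G\}$, since $(\mathscr{L}(G)\otimes\mathbb{C}I_{\mathcal{H}_0})'=\mathscr{L}(G)'\otimes\mathcal{B}(\mathcal{H}_0)=\mathscr{R}(G)\otimes\mathcal{B}(\mathcal{H}_0)$ by the commutation theorem for von Neumann tensor products together with the given $\mathscr{L}(G)'=\mathscr{R}(G)$. So I only need to check that $\theta_A T\theta_\Psi^*$ commutes with each $\lambda_g\otimes I_{\mathcal{H}_0}$: using (i), its adjoint, and $T\in\pi(G)'$,
\[
(\lambda_g\otimes I_{\mathcal{H}_0})\,\theta_A T\theta_\Psi^* = \theta_A\pi_g T\theta_\Psi^* = \theta_A T\pi_g\theta_\Psi^* = \theta_A T\theta_\Psi^*\,(\lambda_g\otimes I_{\mathcal{H}_0}).
\]
The same argument handles $\theta_A T\theta_B^*$ and $\theta_\Psi T\theta_\Phi^*$, and taking $T=S_{A,\Psi}^{-1}\in\pi(G)'$ gives $P_{A,\Psi}=\theta_A S_{A,\Psi}^{-1}\theta_\Psi^*\in\mathscr{R}(G)\otimes\mathcal{B}(\mathcal{H}_0)$. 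The only genuinely non-elementary input is the tensor-product commutation theorem invoked here; everything else is bookkeeping with the covariance relation, the main point of care being the index substitution $q=gr$ in step (i).
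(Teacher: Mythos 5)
Your proof is correct and follows essentially the same route as the paper: the same reindexing $q=gr$ together with $\lambda_g\chi_r=\chi_{gr}$ for (i), the same adjoint manipulation of the covariance relation for (ii) including the $S_{A,\Psi}^{-1/2}$ rescaling, and the same identification $(\mathscr{L}(G)\otimes\{I_{\mathcal{H}_0}\})'=\mathscr{R}(G)\otimes\mathcal{B}(\mathcal{H}_0)$ for (iii). No gaps.
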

\begin{proof} Let $ g,p,q, \in G $ and $ h \in \mathcal{H}_0.$
\begin{enumerate}[\upshape(i)]
\item  From the definition of $ \lambda_g $ and $ \chi_q$,  we get $ \lambda_g\chi_q=\chi_{gq}.$ Therefore $ L_{gq}h=\chi_{gq}\otimes h= \lambda_g\chi_q\otimes h= (\lambda_g\otimes I_{\mathcal{H}_0})(\chi_q\otimes h)=(\lambda_g\otimes I_{\mathcal{H}_0})L_qh.$  Using this, 
\begin{align*}
\theta_A\pi_g&=\sum\limits_{p\in G} L_pA_p\pi_g=\sum\limits_{p\in G} L_pA\pi_{p^{-1}}\pi_g=\sum\limits_{p\in G} L_pA\pi_{{p^{-1}}g}\\
&=\sum\limits_{q\in G} L_{gq}A\pi_{q^{-1}}=\sum\limits_{q\in G}(\lambda_g\otimes I_{\mathcal{H}_0}) L_{q}A\pi_{q^{-1}}=(\lambda_g\otimes I_{\mathcal{H}_0})\theta_A. 
\end{align*}
Similarly $ \theta_\Psi \pi_g=(\lambda_g\otimes I_{\mathcal{H}_0})\theta_\Psi.$
\item $ \theta_A^*\theta_B\pi_g=\theta_A^* (\lambda_g\otimes I_{\mathcal{H}_0})\theta_B=((\lambda_{g^{-1}}\otimes I_{\mathcal{H}_0})\theta_A)^*\theta_B=(\theta_A\pi_{g^{-1}})^*\theta_B=\pi_g\theta_A^*\theta_B.$ In the same way, $ \theta_\Psi^*\theta_\Phi, \theta_A^*\theta_\Phi\in \pi(G)'.$ By taking $ B=A$ and $ \Phi=\Psi$ we get  $ S_{A,\Psi} \in \pi(G)'.$ Since $ \pi(G)'$ is a C*-algebra (in fact, a von Neumann algebra) and $ S_{A, \Psi}$ is positive invertible, we have $S_{A, \Psi}^{-1/2} \in \pi(G)'.$ This gives 
\begin{align*}
\sum\limits_{g\in G} (\Psi S_{A,\Psi}^{-\frac{1}{2}}\pi_{g^{-1}})^*(A S_{A,\Psi}^{-\frac{1}{2}}\pi_{g^{-1}})&=\sum\limits_{g\in G}\pi_g S_{A,\Psi}^{-\frac{1}{2}}\Psi AS_{A,\Psi}^{-\frac{1}{2}}\pi_{g^{-1}}=\sum\limits_{g\in G} S_{A,\Psi}^{-\frac{1}{2}}\pi_g\Psi A\pi_{g^{-1}}S_{A,\Psi}^{-\frac{1}{2}}\\
&=S_{A,\Psi}^{-\frac{1}{2}}\left(\sum\limits_{g\in G}(\Psi\pi_{g^{-1}})^*(A\pi_{g^{-1}})\right)S_{A,\Psi}^{-\frac{1}{2}}=S_{A,\Psi}^{-\frac{1}{2}}\left(\sum\limits_{g\in G}\Psi_g^*A_g\right)S_{A,\Psi}^{-\frac{1}{2}}\\
&=S_{A,\Psi}^{-\frac{1}{2}}S_{A,\Psi}S_{A,\Psi}^{-\frac{1}{2}}=I_\mathcal{H},
\end{align*}
 and  hence the last part.
\item Let  $ T \in \pi(G)'.$ Then 
$$\theta_AT\theta_\Psi^*(\lambda_g\otimes I_{\mathcal{H}_0})= \theta_AT((\lambda_{g^{-1}}\otimes I_{\mathcal{H}_0})\theta_\Psi)^*=\theta_AT\pi_g\theta_\Psi^*=\theta_A\pi_gT\theta_\Psi^*=(\lambda_g\otimes I_{\mathcal{H}_0})\theta_AT\theta_\Psi^*.$$
Now, from the construction of $ \mathscr{L}(G),$ we get $\theta_AT\theta_\Psi^* \in (\mathscr{L}(G)\otimes \{I_{\mathcal{H}_0}\})'=\mathscr{L}(G)'\otimes \{I_{\mathcal{H}_0}\}'=\mathscr{R}(G)\otimes \mathcal{B}(\mathcal{H}_0).$ In the same fashion $\theta_AT\theta_B^*, \theta_\Psi S\theta_\Phi^* \in \mathscr{R}(G)\otimes \mathcal{B}(\mathcal{H}_0), \forall  S \in \pi (G)'.$ For the choice  $ T=S_{A,\Psi}^{-1}$ we get $ P_{A, \Psi} \in \mathscr{R}(G)\otimes \mathcal{B}(\mathcal{H}_0). $
\end{enumerate}
\end{proof}

\begin{theorem}\label{gc1}
Let $ G$ be a discrete group with identity $ e$ and $( \{A_g\}_{g\in G},  \{\Psi_g\}_{g\in G})$ be a Parseval  (ovf) in $ \mathcal{B}(\mathcal{H},\mathcal{H}_0).$ Then there is a  unitary representation $ \pi$  of $ G$ on  $ \mathcal{H}$  for which 
$$ A_g=A_e\pi_{g^{-1}}, ~\Psi_g=\Psi_e\pi_{g^{-1}}, ~\forall  g \in G$$
 if and only if 
$$A_{gp}A_{gq}^*=A_pA_q^* ,~ A_{gp}\Psi_{gq}^*=A_p\Psi_q^*,~ \Psi_{gp}\Psi_{gq}^*=\Psi_p\Psi_q^*, ~ \forall g,p,q \in G.$$
\end{theorem}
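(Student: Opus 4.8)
The plan is to dispatch the forward implication by a one-line group computation and to build the representation explicitly in the converse, after first reformulating the three algebraic hypotheses as covariance relations. Using the block decomposition $\ell^2(G)\otimes\mathcal{H}_0=\bigoplus_{g\in G}\mathcal{H}_0$ together with $L_g^*(\chi_q\otimes y)=\delta_{g,q}y$ and $\theta_A h=\sum_{g}\chi_g\otimes A_gh$, a short computation shows that the $(p,q)$-block of $\theta_A\theta_A^*$ is $A_pA_q^*$, the $(p,q)$-block of $\theta_A\theta_\Psi^*$ is $A_p\Psi_q^*$, and the $(p,q)$-block of $\theta_\Psi\theta_\Psi^*$ is $\Psi_p\Psi_q^*$. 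Since $(\lambda_g\otimes I_{\mathcal{H}_0})$ sends the block at position $q$ to position $gq$, an operator commutes with $\lambda_g\otimes I_{\mathcal{H}_0}$ for every $g$ precisely when its $(p,q)$- and $(gp,gq)$-blocks agree for all $g,p,q$. Hence the three stated identities say exactly that $\theta_A\theta_A^*$, $\theta_A\theta_\Psi^*$ and $\theta_\Psi\theta_\Psi^*$ each commute with every $\lambda_g\otimes I_{\mathcal{H}_0}$ (taking the adjoint of the middle relation also gives commutation of $\theta_\Psi\theta_A^*$). Because the frame is Parseval, $S_{A,\Psi}=I_\mathcal{H}$, so $\theta_A\theta_\Psi^*=P_{A,\Psi}$ and, by Proposition \ref{2.2}, $\theta_\Psi^*\theta_A=\theta_A^*\theta_\Psi=I_\mathcal{H}$.

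For the forward direction, assume $A_g=A_e\pi_{g^{-1}}$ and $\Psi_g=\Psi_e\pi_{g^{-1}}$ for a unitary representation $\pi$. Then $\pi_h^*=\pi_{h^{-1}}$ gives $A_{gp}\Psi_{gq}^*=A_e\pi_{(gp)^{-1}}\pi_{gq}\Psi_e^*=A_e\pi_{p^{-1}q}\Psi_e^*=A_p\Psi_q^*$, and the remaining two identities follow identically; only the homomorphism property and unitarity of $\pi$ are used.

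For the converse I would set
\[
\pi_g:=\theta_\Psi^*(\lambda_g\otimes I_{\mathcal{H}_0})\theta_A\in\mathcal{B}(\mathcal{H}),\qquad g\in G,
\]
so that $\pi_e=\theta_\Psi^*\theta_A=I_\mathcal{H}$. Using that $P_{A,\Psi}$ commutes with $\lambda_g\otimes I_{\mathcal{H}_0}$ and that $\theta_\Psi^*P_{A,\Psi}=(\theta_\Psi^*\theta_A)\theta_\Psi^*=\theta_\Psi^*$, one computes $\pi_g\pi_p=\theta_\Psi^*(\lambda_g\otimes I_{\mathcal{H}_0})P_{A,\Psi}(\lambda_p\otimes I_{\mathcal{H}_0})\theta_A=\theta_\Psi^*P_{A,\Psi}(\lambda_{gp}\otimes I_{\mathcal{H}_0})\theta_A=\pi_{gp}$, so $\pi$ is a homomorphism. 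Unitarity is where both outer hypotheses enter: since $\pi_g^*=\theta_A^*(\lambda_{g^{-1}}\otimes I_{\mathcal{H}_0})\theta_\Psi$, commuting $\theta_\Psi\theta_\Psi^*$ past $\lambda_g\otimes I_{\mathcal{H}_0}$ gives $\pi_g^*\pi_g=\theta_A^*\theta_\Psi\theta_\Psi^*\theta_A=(\theta_A^*\theta_\Psi)(\theta_\Psi^*\theta_A)=I_\mathcal{H}$, and commuting $\theta_A\theta_A^*$ gives $\pi_g\pi_g^*=\theta_\Psi^*\theta_A\theta_A^*\theta_\Psi=I_\mathcal{H}$; as $G$ is discrete, no continuity verification is required.

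It then remains to recover the generating relations. Commuting $P_{A,\Psi}$ past $\lambda_g\otimes I_{\mathcal{H}_0}$ yields the intertwiners $\theta_A\pi_g=P_{A,\Psi}(\lambda_g\otimes I_{\mathcal{H}_0})\theta_A=(\lambda_g\otimes I_{\mathcal{H}_0})\theta_A$ and, likewise, $\theta_\Psi\pi_g=(\lambda_g\otimes I_{\mathcal{H}_0})\theta_\Psi$. Combining $A_g=L_g^*\theta_A$ from Proposition \ref{2.2} with the elementary identity $L_e^*(\lambda_{g^{-1}}\otimes I_{\mathcal{H}_0})=L_g^*$ (immediate from $\lambda_{g^{-1}}\chi_q=\chi_{g^{-1}q}$), I get $A_e\pi_{g^{-1}}=L_e^*\theta_A\pi_{g^{-1}}=L_e^*(\lambda_{g^{-1}}\otimes I_{\mathcal{H}_0})\theta_A=L_g^*\theta_A=A_g$, and in the same way $\Psi_e\pi_{g^{-1}}=\Psi_g$. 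The only nonroutine step is the reformulation in the first paragraph: once the three conditions are read as covariance of $\theta_A\theta_A^*$, $\theta_A\theta_\Psi^*$ and $\theta_\Psi\theta_\Psi^*$ with the left regular representation, the choice $\pi_g=\theta_\Psi^*(\lambda_g\otimes I_{\mathcal{H}_0})\theta_A$ together with the Parseval identities makes every subsequent verification mechanical.
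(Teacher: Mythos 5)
Your proposal is correct and follows essentially the same route as the paper: both directions use the same one-line group computation for "only if," and for "if" you define the same candidate $\pi_g=\theta_\Psi^*(\lambda_g\otimes I_{\mathcal{H}_0})\theta_A$, derive the same three covariance relations for $\theta_A\theta_A^*$, $\theta_A\theta_\Psi^*$, $\theta_\Psi\theta_\Psi^*$ with $\lambda_g\otimes I_{\mathcal{H}_0}$, and run the same Parseval-based verifications of the homomorphism property, unitarity, and the generating relations. Your block-matrix packaging of the covariance step (conjugation by $\lambda_g\otimes I_{\mathcal{H}_0}$ permutes the $(p,q)$-blocks) is a tidy restatement of the paper's reindexing computation, not a different argument.
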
 
\begin{proof}
 For `only if' part, we have
 $$A_{gp}\Psi_{gq}^*= A_e \pi_{(gp)^{-1}}(\Psi_e\pi_{(gq)^{-1}})^*=A_e\pi_{p^{-1}}\pi_{g^{-1}}\pi_g\pi_q\Psi_e^*=A_p\Psi_q^*, \quad\forall g,p,q \in G$$
 and similarly the others can be shown. We now prove the `if part'. Using  assumptions, we use the following three equalities in the proof, among them  we derive the second, remainings are similar.
 For all $ g \in G,$
 \begin{align*}
 (\lambda_g\otimes I_{\mathcal{H}_0})\theta_A\theta_A^*=\theta_A\theta_A^*(\lambda_g\otimes I_{\mathcal{H}_0}), ~ (\lambda_g\otimes I_{\mathcal{H}_0})\theta_A\theta_\Psi^*=\theta_A\theta_\Psi^*(\lambda_g\otimes I_{\mathcal{H}_0}),\\
 (\lambda_g\otimes I_{\mathcal{H}_0})\theta_\Psi\theta_\Psi^*=\theta_\Psi\theta_\Psi^*(\lambda_g\otimes I_{\mathcal{H}_0}).
 \end{align*}
 Noticing $ \lambda_g$ is unitary, we get  $(\lambda_g\otimes I_{\mathcal{H}_0})^{-1}=(\lambda_g\otimes I_{\mathcal{H}_0})^*$; also we observed in the proof of Proposition \ref{REPRESENATIONLEMMA} that  $(\lambda_g\otimes I_{\mathcal{H}_0})L_q=L_{gq}.$ So
 \begin{align*}
 (\lambda_g\otimes I_{\mathcal{H}_0})\theta_A\theta_\Psi^*(\lambda_g\otimes I_{\mathcal{H}_0})^*&=\left(\sum\limits_{p\in G}(\lambda_g\otimes I_{\mathcal{H}_0})L_pA_p\right)\left(\sum\limits_{q\in G}(\lambda_g\otimes I_{\mathcal{H}_0})L_q\Psi_q\right)^*\\
 &=\sum\limits_{p\in G} L_{gp}\left(\sum\limits_{q\in G}A_p\Psi_q^*L_{gq}^*\right)
 =\sum\limits_{r\in G} L_r\left(\sum\limits_{s\in G}A_{g^{-1}r}\Psi_{g^{-1}s}^*L_s^*\right)\\
 & =\sum\limits_{r\in G} L_r\left(\sum\limits_{s\in G}A_r\Psi_s^*L_s^*\right)=\theta_A\theta_\Psi^*.
 \end{align*}
 Define $ \pi : G \ni g  \mapsto \pi_g\coloneqq \theta_\Psi^*(\lambda_g\otimes I_{\mathcal{H}_0})\theta_A  \in \mathcal{B}(\mathcal{H}).$ Using the fact that frame is Parseval,  $ \pi_g\pi_h=\theta_\Psi^*(\lambda_g\otimes I_{\mathcal{H}_0})\theta_A \theta_\Psi^*(\lambda_h\otimes I_{\mathcal{H}_0})\theta_A =\theta_\Psi^*\theta_A \theta_\Psi^*(\lambda_g\otimes I_{\mathcal{H}_0}) (\lambda_h\otimes I_{\mathcal{H}_0})\theta_A = \theta_\Psi^*(\lambda_{gh}\otimes I_{\mathcal{H}_0})\theta_A =\pi_{gh}$ for all $g, h \in G,$ and $\pi_g\pi_g^*=\theta_\Psi^*(\lambda_g\otimes I_{\mathcal{H}_0})\theta_A\theta_A^*(\lambda_{g^{-1}}\otimes I_{\mathcal{H}_0})\theta_\Psi=\theta_\Psi^*\theta_A\theta_A^*(\lambda_g\otimes I_{\mathcal{H}_0})(\lambda_{g^{-1}}\otimes I_{\mathcal{H}_0})\theta_\Psi=I_\mathcal{H},  \pi_g^*\pi_g=\theta_A^*(\lambda_{g^{-1}}\otimes I_{\mathcal{H}_0})\theta_\Psi\theta_\Psi^*(\lambda_{g}\otimes I_{\mathcal{H}_0})\theta_A=\theta_A^*(\lambda_{g^{-1}}\otimes I_{\mathcal{H}_0})(\lambda_{g}\otimes I_{\mathcal{H}_0})\theta_\Psi\theta_\Psi^*\theta_A=I_\mathcal{H} $ for all $ g \in G$.  Since $ G $ has the discrete topology, this proves $ \pi$ is a unitary representation. It remains to prove  $ A_g=A_e\pi_{g^{-1}}, \Psi_g=\Psi_e\pi_{g^{-1}}  $ for all $ g \in G$. Indeed,
 $$A_e\pi_{g^{-1}}= L_e^*\theta_A\theta_\Psi^*(\lambda_{g^{-1}}\otimes I_{\mathcal{H}_0})\theta_A=L_e^*(\lambda_{g^{-1}}\otimes I_{\mathcal{H}_0})\theta_A\theta_\Psi^*\theta_A=((\lambda_g\otimes I_{\mathcal{H}_0})L_e)^*\theta_A=L_{ge}^*\theta_A=A_g,$$ 
 and
 $$\Psi_e\pi_{g^{-1}}=L_e^*\theta_\Psi \theta_\Psi^*(\lambda_{g^{-1}}\otimes I_{\mathcal{H}_0})\theta_A=L_e^*(\lambda_{g^{-1}}\otimes I_{\mathcal{H}_0})\theta_\Psi\theta_\Psi^*\theta_A=((\lambda_g\otimes I_{\mathcal{H}_0})L_e)^*\theta_\Psi=L_{ge}^*\theta_\Psi=\Psi_g.$$
\end{proof}
In the direct part of Theorem \ref{gc1},   we can drop `Parseval' since it has not been used in the proof;  same is true in the following corollary.

\begin{corollary}
Let $ G$ be a discrete group with identity $ e$ and $( \{A_g\}_{g\in G},  \{\Psi_g\}_{g\in G})$ be an (ovf) in $ \mathcal{B}(\mathcal{H},\mathcal{H}_0).$ Then there is a  unitary representation $ \pi$  of $ G$ on  $ \mathcal{H}$  for which
\begin{enumerate}[\upshape(i)]
\item  $ A_g=A_eS_{A,\Psi}^{-1}\pi_{g^{-1}}S_{A,\Psi}, \Psi_g=\Psi_e\pi_{g^{-1}}  $ for all $ g \in G$  if and only if $A_{gp}S_{A,\Psi}^{-2}A_{gq}^*=A_pS_{A,\Psi}^{-2}A_q^* , A_{gp}S_{A,\Psi}^{-1}\Psi_{gq}^*=A_pS_{A,\Psi}^{-1}\Psi_q^*, \Psi_{gp}\Psi_{gq}^*=\Psi_p\Psi_q^*$ for all $ g,p,q \in G.$
\item $ A_g=A_eS_{A,\Psi}^{-1/2}\pi_{g^{-1}}S_{A,\Psi}^{1/2}, \Psi_g=\Psi_eS_{A,\Psi}^{-1/2}\pi_{g^{-1}}S_{A,\Psi}^{1/2}  $ for all $ g \in G$  if and only if $A_{gp}S_{A,\Psi}^{-1}A_{gq}^*=A_pS_{A,\Psi}^{-1}A_q^* , A_{gp}S_{A,\Psi}^{-1}\Psi_{gq}^*=A_pS_{A,\Psi}^{-1}\Psi_q^*, \Psi_{gp}S_{A,\Psi}^{-1}\Psi_{gq}^*=\Psi_pS_{A,\Psi}^{-1}\Psi_q^*$ for all $ g,p,q \in G.$
\item  $ A_g=A_e\pi_{g^{-1}}, \Psi_g=\Psi_eS_{A,\Psi}^{-1}\pi_{g^{-1}}S_{A,\Psi}  $ for all $ g \in G$  if and only if $A_{gp}A_{gq}^*=A_pA_q^* , A_{gp}S_{A,\Psi}^{-1}\Psi_{gq}^*=A_pS_{A,\Psi}^{-1}\Psi_q^*, \Psi_{gp}S_{A,\Psi}^{-2}\Psi_{gq}^*=\Psi_pS_{A,\Psi}^{-2}\Psi_q^*$ for all $ g,p,q \in G.$
\end{enumerate}	
\end{corollary}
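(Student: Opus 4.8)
The plan is to deduce all three parts from Theorem \ref{gc1} by applying it to the three Parseval operator-valued frames that are right-similar to $(\{A_g\}_{g\in G},\{\Psi_g\}_{g\in G})$ and were singled out in the remark following Theorem \ref{RIGHTSIMILARITY}. Concretely, for part (i) I would set $B_g\coloneqq A_gS_{A,\Psi}^{-1}$ and $\Phi_g\coloneqq\Psi_g$; for part (ii), $B_g\coloneqq A_gS_{A,\Psi}^{-1/2}$ and $\Phi_g\coloneqq\Psi_gS_{A,\Psi}^{-1/2}$; and for part (iii), $B_g\coloneqq A_g$ and $\Phi_g\coloneqq\Psi_gS_{A,\Psi}^{-1}$. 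In each case a one-line computation, using $\sum_{g\in G}\Psi_g^*A_g=S_{A,\Psi}$ together with the fact that $S_{A,\Psi}^{-1}$ and $S_{A,\Psi}^{-1/2}$ are positive, hence self-adjoint, shows $\sum_{g\in G}\Phi_g^*B_g=I_{\mathcal{H}}$, so that $(\{B_g\}_{g\in G},\{\Phi_g\}_{g\in G})$ is a Parseval (ovf) to which Theorem \ref{gc1} applies.

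Having fixed the substitution, the next step is to translate the equivalence furnished by Theorem \ref{gc1} back into the statement of the corollary. For instance in part (i), using $B_g=A_gS_{A,\Psi}^{-1}$, $\Phi_g=\Psi_g$ and $(S_{A,\Psi}^{-1})^*=S_{A,\Psi}^{-1}$, I would expand $B_{gp}B_{gq}^*=A_{gp}S_{A,\Psi}^{-2}A_{gq}^*$, $B_{gp}\Phi_{gq}^*=A_{gp}S_{A,\Psi}^{-1}\Psi_{gq}^*$, and $\Phi_{gp}\Phi_{gq}^*=\Psi_{gp}\Psi_{gq}^*$; the three relations $B_{gp}B_{gq}^*=B_pB_q^*$, $B_{gp}\Phi_{gq}^*=B_p\Phi_q^*$, $\Phi_{gp}\Phi_{gq}^*=\Phi_p\Phi_q^*$ of Theorem \ref{gc1} then become exactly the three hypotheses listed in (i). On the representation side, $B_g=B_e\pi_{g^{-1}}$ reads $A_gS_{A,\Psi}^{-1}=A_eS_{A,\Psi}^{-1}\pi_{g^{-1}}$, and right-multiplying by $S_{A,\Psi}$ gives $A_g=A_eS_{A,\Psi}^{-1}\pi_{g^{-1}}S_{A,\Psi}$, while $\Phi_g=\Phi_e\pi_{g^{-1}}$ gives $\Psi_g=\Psi_e\pi_{g^{-1}}$, matching the displayed formulas. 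Parts (ii) and (iii) are handled identically, the only difference being which factors of $S_{A,\Psi}^{-1/2}$ or $S_{A,\Psi}^{-1}$ are absorbed into the products $B_{gp}B_{gq}^*$, $B_{gp}\Phi_{gq}^*$, $\Phi_{gp}\Phi_{gq}^*$.

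Since Theorem \ref{gc1} is a genuine biconditional and the substitutions above are invertible, both directions of each equivalence are obtained at once; in particular no separate argument is needed for the converse, and following the remark after Theorem \ref{gc1} the ``only if'' halves never use Parsevalness anyway. The only point demanding care, and the step I would verify most carefully, is the bookkeeping of adjoints: one must consistently use that $S_{A,\Psi}$, and hence every real power of it, is positive and self-adjoint so that $(A_gS_{A,\Psi}^{-\alpha})^*=S_{A,\Psi}^{-\alpha}A_g^*$, which is exactly what makes the powers combine correctly (e.g.\ $S_{A,\Psi}^{-1}S_{A,\Psi}^{-1}=S_{A,\Psi}^{-2}$ in (i) and $S_{A,\Psi}^{-1/2}S_{A,\Psi}^{-1/2}=S_{A,\Psi}^{-1}$ in (ii)). There is no genuine analytic obstacle here; the entire corollary is a translation of Theorem \ref{gc1} through the three right-similarities.
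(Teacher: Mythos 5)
Your proposal is correct and coincides with the paper's own proof: the paper likewise applies Theorem \ref{gc1} to the three Parseval operator-valued frames $(\{A_gS_{A,\Psi}^{-1}\}_{g\in G},\{\Psi_g\}_{g\in G})$, $(\{A_gS_{A,\Psi}^{-1/2}\}_{g\in G},\{\Psi_gS_{A,\Psi}^{-1/2}\}_{g\in G})$, and $(\{A_g\}_{g\in G},\{\Psi_gS_{A,\Psi}^{-1}\}_{g\in G})$ and then translates the resulting equivalences using the self-adjointness of the powers of $S_{A,\Psi}$. No gaps.
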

\begin{proof}
We apply Theorem  \ref{gc1} the Parseval (ovf) 
\begin{enumerate}[\upshape(i)]
\item  $(\{A_gS_{A,\Psi}^{-1}\}_{g\in G}, \{\Psi_g\}_{g\in G})$ to get: there is a  unitary representation $ \pi$  of $ G$ on  $ \mathcal{H}$  for which $ A_gS_{A,\Psi}^{-1}=(A_eS_{A,\Psi}^{-1})\pi_{g^{-1}}, \Psi_g=\Psi_e\pi_{g^{-1}}  $ for all $ g \in G$  if and only if $(A_{gp}S_{A,\Psi}^{-1})(A_{gq}S_{A,\Psi}^{-1})^*=(A_pS_{A,\Psi}^{-1})(A_qS_{A,\Psi}^{-1})^*$, $(A_{gp}S_{A,\Psi}^{-1})\Psi_{gq}^*= (A_pS_{A,\Psi}^{-1})\Psi_q^*$, $ \Psi_{gp}\Psi_{gq}^*=\Psi_p\Psi_q^*$ for all $ g,p,q \in G.$
\item $( \{A_gS_{A,\Psi}^{-1/2}\}_{g\in G}, \{\Psi_gS_{A,\Psi}^{-1/2}\}_{g\in G})$ to get: there is a  unitary representation $ \pi$  of $ G$ on  $ \mathcal{H}$  for which $ A_gS_{A,\Psi}^{-1/2}=(A_eS_{A,\Psi}^{-1/2})\pi_{g^{-1}}, \Psi_gS_{A,\Psi}^{-1/2}=(\Psi_eS_{A,\Psi}^{-1/2})\pi_{g^{-1}}  $ for all $ g \in G$  if and only if $(A_{gp}S_{A,\Psi}^{-1/2})(A_{gq}S_{A,\Psi}^{-1/2})^*=(A_pS_{A,\Psi}^{-1/2})(A_qS_{A,\Psi}^{-1/2})^*,(A_{gp}S_{A,\Psi}^{-1/2})(\Psi_{gq}S_{A,\Psi}^{-1/2})^*= (A_pS_{A,\Psi}^{-1/2})(\Psi_qS_{A,\Psi}^{-1/2})^*,$  $
(\Psi_{gp}S_{A,\Psi}^{-1/2})(\Psi_{gq}S_{A,\Psi}^{-1/2})^*=(\Psi_pS_{A,\Psi}^{-1/2})(\Psi_qS_{A,\Psi}^{-1/2})^*$ for all $ g,p,q \in G.$
\item $( \{A_g\}_{g\in G}, \{\Psi_gS_{A,\Psi}^{-1}\}_{g\in G})$ to get: there is a  unitary representation $ \pi$  of $ G$ on  $ \mathcal{H}$  for which $ A_g=A_e\pi_{g^{-1}}, \Psi_gS_{A,\Psi}^{-1}=(\Psi_eS_{A,\Psi}^{-1})\pi_{g^{-1}}  $ for all $ g \in G$  if and only if $A_{gp}A_{gq}^*=A_pA_q^* , A_{gp}(\Psi_{gq}S_{A,\Psi}^{-1})^*=A_p(\Psi_qS_{A,\Psi}^{-1})^*, (\Psi_{gp}S_{A,\Psi}^{-1})(\Psi_{gq}S_{A,\Psi}^{-1})^*=(\Psi_pS_{A,\Psi}^{-1})(\Psi_qS_{A,\Psi}^{-1})^*$ for all $ g,p,q \in G.$
\end{enumerate}		
\end{proof}

\section{Frames and group-like unitary systems} \label{FRAMESANDGROUP-LIKEUNITARYSYSTEMS}
\begin{definition}\cite{GABARDOHAN}
A collection   $ \mathcal{U}\subseteq \mathcal{B}(\mathcal{H})$  containing $I_\mathcal{H}$ is called as a unitary system.  If the group generated by  unitary system $ \mathcal{U}$, denoted by $ \operatorname{group}(\mathcal{U})$ is such that 
\begin{enumerate}[\upshape(i)]
\item $\operatorname{group}(\mathcal{U}) \subseteq \mathbb{T}\mathcal{U}\coloneqq \{\alpha U : \alpha \in \mathbb{T}, U\in \mathcal{U}  \}$, and 
\item $\mathcal{U}$ is linearly independent, meaning - $\mathbb{T}U\ne\mathbb{T}V $ whenever $ U, V \in \mathcal{U}$ are such that $ U\ne V,$ 
\end{enumerate}
then $\mathcal{U}$ is called as a group-like unitary system.
\end{definition}

Let $ \mathcal{U}$ be a group-like unitary system. As in \cite{GABARDOHAN}, we  define mappings $ f:\operatorname{group}(\mathcal{U})\rightarrow \mathbb{T}$ and $ \sigma:\operatorname{group}(\mathcal{U})\rightarrow \mathcal{U}$ in the following way. For each  $ U \in  \operatorname{group}(\mathcal{U}) $ there are unique $\alpha\in \mathbb{T}, V \in \mathcal{U} $  such that $ U=\alpha V$. Define $ f(U)=\alpha$ and $\sigma(U)=V $. These  $ f, \sigma $ are well-defined and satisfy $ U=f(U)\sigma(U), \forall U \in \operatorname{group}(\mathcal{U}).$ These mappings are called as \textit{corresponding mappings} associated to $ \mathcal{U}$. We use the following proposition repeatedly.
\begin{proposition}\cite{GABARDOHAN}\label{PER} 
For a group-like unitary system $\mathcal{U}$ and $ f, \sigma $ as above,
\begin{enumerate}[\upshape(i)]
\item $ f(U\sigma(VW))f(VW)=f(\sigma(UV)W)f(UV), \forall U,V,W \in \operatorname{group}(\mathcal{U}).$
\item $ \sigma(U\sigma(VW))=\sigma(\sigma(UV)W), \forall U,V,W \in \operatorname{group} (\mathcal{U}).$
\item $ \sigma(U)=U$ and $ f(U)=1$ for all $ U \in \mathcal{U}.$
\item If $  V, W \in \operatorname{group} (\mathcal{U}),$ then
\begin{align*}
\mathcal{U}&=\{\sigma(UV) : U \in \mathcal{U}\}=\{\sigma(VU^{-1}) : U \in \mathcal{U}\}\\
&=\{\sigma(VU^{-1}W) : U \in \mathcal{U}\}=\{\sigma(V^{-1}U) : U \in \mathcal{U}\}.
\end{align*}
\item For fixed  $  V, W \in \mathcal{U}$, the following mappings are  injective and continuous from  $ \mathcal{U} $ to itself:
$$ U\mapsto \sigma(VU) \quad (\text{resp.} ~ \sigma(UV), \sigma(UV^{-1}), \sigma(V^{-1}U), \sigma(VU^{-1}), \sigma(U^{-1}V), \sigma(VU^{-1}W)).$$
\end{enumerate}
\end{proposition}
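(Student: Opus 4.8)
The plan is to make everything flow from the single structural fact underpinning the definitions of $f$ and $\sigma$: every $U\in\operatorname{group}(\mathcal U)$ has a \emph{unique} factorization $U=f(U)\sigma(U)$ with $f(U)\in\mathbb T$ and $\sigma(U)\in\mathcal U$, where uniqueness is exactly the linear independence hypothesis $\mathbb TU\ne\mathbb TV$ for distinct $U,V\in\mathcal U$. Part (iii) is then immediate: for $U\in\mathcal U$ the identity $U=1\cdot U$ already exhibits a factorization of the required form, so uniqueness forces $f(U)=1$ and $\sigma(U)=U$.

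For (i) and (ii) I would expand the triple product $UVW$ in two ways and compare. Writing $VW=f(VW)\sigma(VW)$ gives $UVW=f(VW)\,U\sigma(VW)=f(VW)f(U\sigma(VW))\,\sigma(U\sigma(VW))$, since $U\sigma(VW)\in\operatorname{group}(\mathcal U)$ can itself be factored. Writing instead $UV=f(UV)\sigma(UV)$ gives $UVW=f(UV)\,\sigma(UV)W=f(UV)f(\sigma(UV)W)\,\sigma(\sigma(UV)W)$. Both lines present $UVW$ as a scalar in $\mathbb T$ times an element of $\mathcal U$, so uniqueness of the factorization forces the scalar parts to agree (this is (i)) and the $\mathcal U$-parts to agree (this is (ii)).

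For (iv) each displayed set is contained in $\mathcal U$ for free, because $\sigma$ takes values in $\mathcal U$; the content is the reverse inclusion, i.e.\ surjectivity of $U\mapsto\sigma(UV)$ and its companions. Given a target $X\in\mathcal U$, I would produce an explicit preimage by applying $\sigma$ to the formally inverted product: for $U\mapsto\sigma(UV)$ take $U:=\sigma(XV^{-1})$, so that $UV=\overline{f(XV^{-1})}\,X\in\mathbb TX$ and hence $\sigma(UV)=X$; for the maps built from $VU^{-1}$, $VU^{-1}W$, and $V^{-1}U$ the same recipe with the correspondingly inverted products (e.g.\ $U:=\sigma(X^{-1}V)$, $U:=\sigma(WX^{-1}V)$, $U:=\sigma(VX)$) does the job, using only that $\operatorname{group}(\mathcal U)$ is a group so these products lie in its domain. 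The bare factorization then suffices to verify each $\sigma(\cdots)=X$.

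Finally for (v), injectivity is the linear independence argument again: $\sigma(VU_1)=\sigma(VU_2)$ forces $VU_1\in\mathbb T\,VU_2$, hence $U_1\in\mathbb TU_2$, hence $U_1=U_2$, and identically for the other six maps. Continuity I would obtain by writing each map as a composition of the continuous operations $X\mapsto VX$, $X\mapsto XV$, $X\mapsto X^{-1}=X^{*}$ on $\mathcal B(\mathcal H)$ with the map $\sigma$. The main obstacle is precisely the continuity of $\sigma$ on $\operatorname{group}(\mathcal U)$: extracting the $\mathcal U$-component from a product $\alpha U$ is not formally a continuous operation on an arbitrary subset of $\mathbb T\mathcal U$, so this step is where the group-like structure must be used to separate the $\mathbb T$-direction from $\mathcal U$ (equivalently, to see that the factorization map $\mathbb T\mathcal U\to\mathcal U$ restricts continuously to $\operatorname{group}(\mathcal U)$); I expect this to be the only genuinely delicate point, the rest being bookkeeping with the uniqueness of the factorization.
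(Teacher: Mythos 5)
The paper does not prove this proposition; it is quoted from \cite{GABARDOHAN} without argument, so there is no in-paper proof to compare against. Your treatment of (i)--(iv) and of the injectivity half of (v) is correct and complete: the two factorizations of $UVW$, the explicit preimages such as $U=\sigma(XV^{-1})$, and the linear-independence argument for injectivity are exactly the right moves, all resting on the uniqueness of the decomposition $X=f(X)\sigma(X)$ that the hypothesis $\mathbb{T}U\neq\mathbb{T}V$ provides.

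The genuine gap is the continuity claim in (v), which you flag but do not close, and it is not merely a formality. Reducing it to continuity of $\sigma$ on $\operatorname{group}(\mathcal{U})$ leaves the real issue untouched: the phase-stripping map $\alpha W\mapsto W$ ($\alpha\in\mathbb{T}$, $W\in\mathcal{U}$) need not be continuous. If $\mathcal{U}$ is closed in the relevant operator topology, a compactness argument in $\mathbb{T}$ does work: given $X_n\to X$ in $\operatorname{group}(\mathcal{U})$, every subsequence of $f(X_n)$ has a sub-subsequence $f(X_{n_k})\to\beta$, so $\sigma(X_{n_k})=\overline{f(X_{n_k})}\,X_{n_k}\to\bar{\beta}f(X)\sigma(X)$; closedness forces this limit into $\mathcal{U}\cap\mathbb{T}\sigma(X)=\{\sigma(X)\}$, hence $\beta=f(X)$ and $\sigma(X_{n_k})\to\sigma(X)$. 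Without some such hypothesis the claim can fail: on $\mathbb{C}^2$ let $D_\theta=\operatorname{diag}(1,e^{i\theta})$ and $\mathcal{U}=\{e^{i\phi(\theta)}D_\theta:\theta\in[0,2\pi)\}$ with $\phi(\theta)=1$ on $\{1+1/n:n\geq 2\}$ and $\phi=0$ elsewhere; this is group-like, $D_{1/n}\to I$ inside $\mathcal{U}$, yet $\sigma(D_1D_{1/n})=e^{i}D_{1+1/n}\not\to D_1=\sigma(D_1)$. So either $\mathcal{U}$ is implicitly given the discrete topology (making continuity vacuous, which is all the present paper ever uses, since it only invokes bijectivity to reindex sums) or an unstated closedness assumption is in force; your proof needs to pin this down rather than defer it.
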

Since $\operatorname{group} (\mathcal{U}) $ is a group, we note that, in (iv) of previous proposition,  we can replace $V$ by $V^{-1}$. Hence, whenever  $V \in \operatorname{group} (\mathcal{U})$, we have $\sum_{U \in \mathcal{U}}x_U=\sum_{U \in \mathcal{U}}x_{\sigma(VU)}$.

\begin{definition}\cite{GABARDOHAN}
A unitary representation $ \pi$ of a group-like unitary system $   \mathcal{U}$ on $ \mathcal{H}$ is an injective mapping from $  \mathcal{U}$ into the set of unitary operators on $ \mathcal{H}$ such that 
$$\pi(U)\pi(V)=f(UV)\pi(\sigma(UV)) , \quad {\pi(U)}^{-1}=f(U^{-1})\pi(\sigma(U^{-1})), ~ \forall U,V \in \mathcal{U}, $$
where $ f$ and $ \sigma $  are the corresponding mappings associated with $  \mathcal{U}.$ 
\end{definition}
Since $\pi $ is injective, once we have a unitary representation of a group-like unitary system $   \mathcal{U}$ on $\mathcal{H}$, then $ \pi(\mathcal{U})$ is also a group-like unitary system.

Let $ \mathcal{U}$ be a  group-like unitary system and  $ \{\chi_U\}_{U\in \mathcal{U}}$ be the  standard orthonormal  basis for $\ell^2(\mathcal{U}) $.  We define $\lambda $ on  $ \mathcal{U}$  by $ \lambda_U\chi_V=f(UV)\chi_{\sigma(UV)}, \forall   U,V \in \mathcal{U}.$ Then $ \lambda $ is a unitary  representation which we call as  left  regular representation of $ \mathcal{U}$. Similarly, we define right regular representation of $ \mathcal{U}$ by $ \rho_U\chi_V=f(VU^{-1})\chi_{\sigma(VU^{-1})}, \forall U,V \in \mathcal{U}$ \cite{GABARDOHAN}.  
\begin{definition}
Let $ \mathcal{U}$  be a group-like unitary system.  An operator $ A$ in $ \mathcal{B}(\mathcal{H}, \mathcal{H}_0)$ is called an  operator frame generator (resp. a  Parseval  frame generator) w.r.t. $ \Psi$ in $ \mathcal{B}(\mathcal{H}, \mathcal{H}_0)$  if $(\{A_U\coloneqq A\pi(U)^{-1}\}_{U\in \mathcal{U}},\{\Psi_U\coloneqq \Psi\pi(U)^{-1}\}_{U\in \mathcal{U}})$ is an (ovf)  (resp. a Parseval (ovf)) in $ \mathcal{B}(\mathcal{H}, \mathcal{H}_0)$.  We write $ (A,\Psi)$ is an operator frame generator for $\pi$.
\end{definition}

\begin{theorem}\label{CHARACTERIZATIONGROUPLIKE}
Let $ \mathcal{U}$ be a  group-like unitary system  with identity $ I$ and $(\{A_U\}_{U\in \mathcal{U}},\{\Psi_U\}_{U\in \mathcal{U}})$ be a Parseval (ovf) in $ \mathcal{B}(\mathcal{H},\mathcal{H}_0)$ with $ \theta_A^*$  or $ \theta_\Psi^*$ is injective. Then there is a unitary representation $ \pi$  of $ \mathcal{U}$ on  $ \mathcal{H}$  for which 
$$ A_U=A_I\pi(U)^{-1}, ~\Psi_U=\Psi_I\pi(U)^{-1},~\forall  U \in \mathcal{U}$$
  if and only if 
 \begin{align*}
A_{\sigma(UV)}A_{\sigma(UW)}^*&=f(UV)\overline{f(UW)} A_VA_W^* ,\\
 A_{\sigma(UV)}\Psi_{\sigma(UW)}^*&=f(UV)\overline{f(UW)} A_V\Psi_W^*,\\ \Psi_{\sigma(UV)}\Psi_{\sigma(UW)}^*&=f(UV)\overline{f(UW)} \Psi_V\Psi_W^*
\end{align*}
for all $ U,V,W \in \mathcal{U}.$
\end{theorem}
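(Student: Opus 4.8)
The plan is to follow the architecture of the discrete-group case (Theorem \ref{gc1}), inserting the cocycle $f$ and the map $\sigma$ at every step and carrying along the extra hypothesis that $\theta_A^*$ or $\theta_\Psi^*$ is injective, which is exactly what the group-like notion of a unitary representation will force on us. For the forward (``only if'') implication I would argue by direct substitution. From $\pi(U)\pi(V)=f(UV)\pi(\sigma(UV))$ one gets $\pi(\sigma(UV))^{-1}=f(UV)\pi(V)^{-1}\pi(U)^{-1}$, and hence, writing $A_U=A_I\pi(U)^{-1}$, the identity $A_{\sigma(UV)}=f(UV)A_V\pi(U)^{-1}$. Taking adjoints (recall $\pi(U)$ is unitary, so $\pi(U)^{-1*}=\pi(U)$) gives $A_{\sigma(UW)}^*=\overline{f(UW)}\pi(U)A_W^*$, and multiplying the two collapses the central $\pi(U)^{-1}\pi(U)$ to yield $A_{\sigma(UV)}A_{\sigma(UW)}^*=f(UV)\overline{f(UW)}A_VA_W^*$. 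The two mixed relations ($A\Psi^*$ and $\Psi\Psi^*$) come out identically.

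The substance is the reverse implication. First I would record the identity $(\lambda_U\otimes I_{\mathcal{H}_0})L_V=f(UV)L_{\sigma(UV)}$, which comes straight from $\lambda_U\chi_V=f(UV)\chi_{\sigma(UV)}$. Conjugating $\theta_A\theta_\Psi^*$ by $\lambda_U\otimes I_{\mathcal{H}_0}$ then produces the double sum $\sum_{V,W}f(UV)\overline{f(UW)}L_{\sigma(UV)}A_V\Psi_W^*L_{\sigma(UW)}^*$; here the hypothesis $A_{\sigma(UV)}\Psi_{\sigma(UW)}^*=f(UV)\overline{f(UW)}A_V\Psi_W^*$ is tailored so that, after the substitutions $R=\sigma(UV)$, $S=\sigma(UW)$ (bijections of $\mathcal{U}$ by Proposition \ref{PER}(iv)--(v), with the rearrangement legitimised by the summation remark following Proposition \ref{PER}), the cocycle factors cancel and the sum becomes $\sum_{R,S}L_RA_R\Psi_S^*L_S^*=\theta_A\theta_\Psi^*$. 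This gives $(\lambda_U\otimes I_{\mathcal{H}_0})\theta_A\theta_\Psi^*=\theta_A\theta_\Psi^*(\lambda_U\otimes I_{\mathcal{H}_0})$, and the analogous commutations with $\theta_A\theta_A^*$ and $\theta_\Psi\theta_\Psi^*$ follow from the other two hypotheses.

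With these in hand I would set $\pi(U)\coloneqq\theta_\Psi^*(\lambda_U\otimes I_{\mathcal{H}_0})\theta_A$. Using $\theta_\Psi^*\theta_A=\theta_A^*\theta_\Psi=I_\mathcal{H}$ (Parseval, Proposition \ref{2.2}), the commutation relations, and $\lambda_U\lambda_V=f(UV)\lambda_{\sigma(UV)}$, both the multiplicativity $\pi(U)\pi(V)=f(UV)\pi(\sigma(UV))$ and the unitarity $\pi(U)\pi(U)^*=\pi(U)^*\pi(U)=I_\mathcal{H}$ drop out by the same bracket-shuffling as in Theorem \ref{gc1}. Since $\pi(I)=\theta_\Psi^*\theta_A=I_\mathcal{H}$, taking $V=\sigma(U^{-1})$ in the multiplicative law (where $U\sigma(U^{-1})=\overline{f(U^{-1})}I$ forces $\sigma(UV)=I$ and $f(UV)=\overline{f(U^{-1})}$) yields the required inverse relation $\pi(U)^{-1}=f(U^{-1})\pi(\sigma(U^{-1}))$. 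The reconstruction $A_U=A_I\pi(U)^{-1}$ I would establish in the equivalent form $A_U\pi(U)=A_I$: commuting $\lambda_U\otimes I_{\mathcal{H}_0}$ past $\theta_A\theta_\Psi^*$ and applying Parseval reduces $A_U\pi(U)=L_U^*\theta_A\theta_\Psi^*(\lambda_U\otimes I_{\mathcal{H}_0})\theta_A$ to $L_U^*(\lambda_U\otimes I_{\mathcal{H}_0})\theta_A$, after which a short computation gives $L_U^*(\lambda_U\otimes I_{\mathcal{H}_0})=L_I^*$ (because $\sigma(UW)=U$ occurs only for $W=I$, where $f(U)=1$), so this equals $L_I^*\theta_A=A_I$; the statement for $\Psi_U$ is identical through the $\theta_\Psi\theta_\Psi^*$ commutation.

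The step I expect to be the genuine obstacle — and the one absent from the group case — is injectivity of $\pi$, which the group-like definition of a unitary representation explicitly demands. This is precisely where ``$\theta_A^*$ or $\theta_\Psi^*$ injective'' is consumed: either hypothesis forces $\theta_A$ (respectively $\theta_\Psi$) to be onto, hence, by Proposition \ref{RIESZOVFCHARACTERIZATIONPROPOSITION}, $P_{A,\Psi}=I_{\ell^2(\mathbb{J})}\otimes I_{\mathcal{H}_0}$. From the relation $\theta_A\pi(U)\theta_\Psi^*=(\lambda_U\otimes I_{\mathcal{H}_0})P_{A,\Psi}$ one then reads off that $\pi(U)=\pi(V)$ implies $(\lambda_U-\lambda_V)\otimes I_{\mathcal{H}_0}=0$, and injectivity of the left regular representation $\lambda$ gives $U=V$. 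I would state this Riesz reduction carefully, since without $P_{A,\Psi}=I$ the conjugation identity pins down $\lambda_U$ only on the range of $P_{A,\Psi}$ and injectivity of $\pi$ may genuinely fail.
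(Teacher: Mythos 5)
Your proposal is correct and follows essentially the same route as the paper: the same commutation identities $(\lambda_U\otimes I_{\mathcal{H}_0})\theta_A\theta_\Psi^*=\theta_A\theta_\Psi^*(\lambda_U\otimes I_{\mathcal{H}_0})$ (and its two companions) obtained by conjugation and reindexing via Proposition \ref{PER}, the same definition $\pi(U)\coloneqq\theta_\Psi^*(\lambda_U\otimes I_{\mathcal{H}_0})\theta_A$, and the same reconstruction of $A_U$ and $\Psi_U$. Your two small deviations --- deriving $\pi(U)^{-1}=f(U^{-1})\pi(\sigma(U^{-1}))$ from the multiplicative law evaluated at $V=\sigma(U^{-1})$ rather than by the paper's direct computation, and obtaining injectivity of $\pi$ via $P_{A,\Psi}=I_{\ell^2(\mathcal{U})}\otimes I_{\mathcal{H}_0}$ (Proposition \ref{RIESZOVFCHARACTERIZATIONPROPOSITION}) instead of right-multiplying by $\theta_A^*$ and cancelling $\theta_\Psi^*\theta_A=I_\mathcal{H}$ --- are both valid and equivalent in substance to the paper's argument.
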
 

\begin{proof}
$(\Rightarrow)$ For all $U,V,W \in \mathcal{U}$, we have
\begin{align*}
A_{\sigma(UV)}A_{\sigma(UW)}^*&= A_I\pi(\sigma(UV))^{-1}( A_I\pi(\sigma(UW))^{-1} )^*\\
&=A_I(\overline{f(UV)}\pi(U)\pi(V))^{-1} \overline{f(UW)}\pi(U)\pi(W)A^*_I\\
&=f(UV)\overline{f(UW)} A_I\pi(V)^{-1}(A_I\pi(W)^{-1})^*\\
&=f(UV)\overline{f(UW)} A_VA_W^*, 
\end{align*}
others can be shown similarly. 

$(\Leftarrow)$  We have to construct unitary representation which satisfies the stated conditions. Following observation plays an important role in this part. Let $ h\in \mathcal{H}.$ Then $ L_{\sigma(UV)}h=\chi_{\sigma(UV)}\otimes h=\overline{f(UV)}\lambda_U\chi_V\otimes h=\overline{f(UV)}(\lambda_U\chi_V\otimes h)=\overline{f(UV)}(\lambda_U\otimes I_{\mathcal{H}_0})(\chi_V\otimes h)=\overline{f(UV)}(\lambda_U\otimes I_{\mathcal{H}_0})L_V h.$

As in the proof of Theorem \ref{gc1},  we argue the following, for which now we prove the first.
For all $ U \in \mathcal{U},$
 \begin{align*}
 (\lambda_U\otimes I_{\mathcal{H}_0})\theta_A\theta_A^*=\theta_A\theta_A^*(\lambda_U\otimes I_{\mathcal{H}_0}), ~ (\lambda_U\otimes I_{\mathcal{H}_0})\theta_A\theta_\Psi^*=\theta_A\theta_\Psi^*(\lambda_U\otimes I_{\mathcal{H}_0}),\\
  (\lambda_U\otimes I_{\mathcal{H}_0})\theta_\Psi\theta_\Psi^*=\theta_\Psi\theta_\Psi^*(\lambda_U\otimes I_{\mathcal{H}_0}).
 \end{align*}
Consider 

\begin{align*}
 (\lambda_U\otimes I_{\mathcal{H}_0})\theta_A\theta_A^*(\lambda_U\otimes I_{\mathcal{H}_0})^*&=\left(\sum\limits_{V\in \mathcal{U}}(\lambda_U\otimes I_{\mathcal{H}_0})L_VA_V\right)\left(\sum\limits_{W\in \mathcal{U}}(\lambda_U\otimes I_{\mathcal{H}_0})L_WA_W\right)^*\\ &=\left(\sum\limits_{V\in \mathcal{U}} f(UV)L_{\sigma(UV)}A_V\right)\left(\sum\limits_{W\in \mathcal{U}}f(UW)L_{\sigma(UW)}A_W\right)^*\\
 &=\sum\limits_{V\in \mathcal{U}} L_{\sigma(UV)}\left(\sum\limits_{W\in \mathcal{U}}f(UV)\overline{f(UW)}A_VA_W^*L_{\sigma(UW)}^*\right)\\
  &= \sum\limits_{V\in \mathcal{U}} L_{\sigma(UV)}\left(\sum\limits_{W\in \mathcal{U}}A_{\sigma(UV)}A_{\sigma(UW)}^*L_{\sigma(UW)}^*\right)\\
  &=\left(\sum\limits_{V\in \mathcal{U}} L_{\sigma(UV)}A_{\sigma(UV)}\right)\left(\sum\limits_{W\in \mathcal{U}}L_{\sigma(UW)}A_{\sigma(UW)}\right)^*=\theta_A\theta_A^*
 \end{align*}
where last part of Proposition \ref{PER} is used in the last equality.

Define $ \pi : \mathcal{U} \ni U  \mapsto \pi(U)\coloneqq \theta_\Psi^*(\lambda_U\otimes I_{\mathcal{H}_0})\theta_A  \in \mathcal{B}(\mathcal{H}).$  Then $ \pi(U)\pi(V)=\theta_\Psi^*(\lambda_U\otimes I_{\mathcal{H}_0})\theta_A \theta_\Psi^*(\lambda_V\otimes I_{\mathcal{H}_0})\theta_A =\theta_\Psi^*\theta_A \theta_\Psi^*(\lambda_U\otimes I_{\mathcal{H}_0}) (\lambda_V\otimes I_{\mathcal{H}_0})\theta_A = \theta_\Psi^*(\lambda_U\lambda_V\otimes I_{\mathcal{H}_0})\theta_A =\theta_\Psi^*(f(UV)\lambda_{\sigma(UV)}\otimes I_{\mathcal{H}_0})\theta_A  =f(UV) \theta_\Psi^*(\lambda_{\sigma(UV)}\otimes I_{\mathcal{H}_0})\theta_A =f(UV)\pi({\sigma(UV)})$ for all $U, V \in \mathcal{U},$ and $\pi(U)\pi(U)^*=\theta_\Psi^*(\lambda_U\otimes I_{\mathcal{H}_0})\theta_A\theta_A^*(\lambda_U^*\otimes I_{\mathcal{H}_0})\theta_\Psi=\theta_\Psi^*\theta_A\theta_A^*(\lambda_U\otimes I_{\mathcal{H}_0})(\lambda_U^*\otimes I_{\mathcal{H}_0})\theta_\Psi=I_\mathcal{H},  \pi(U)^*\pi(U)=\theta_A^*(\lambda_U^*\otimes I_{\mathcal{H}_0})\theta_\Psi\theta_\Psi^*(\lambda_{U}\otimes I_{\mathcal{H}_0})\theta_A=\theta_A^*(\lambda_U^*\otimes I_{\mathcal{H}_0})(\lambda_U\otimes I_{\mathcal{H}_0})\theta_\Psi\theta_\Psi^*\theta_A=I_\mathcal{H} $ for all $ U \in \mathcal{U}$. Further, 
\begin{align*}
\pi(U)f(U^{-1})\pi(\sigma(U^{-1}))&=\theta_\Psi^*(\lambda_U\otimes I_{\mathcal{H}_0})\theta_Af(U^{-1})\theta_\Psi^*(\lambda_{\sigma(U^{-1})}\otimes I_{\mathcal{H}_0})\theta_A \\
&=f(U^{-1})\theta_\Psi^*\theta_A\theta_\Psi^*(\lambda_U\otimes I_{\mathcal{H}_0})(\lambda_{\sigma(U^{-1})}\otimes I_{\mathcal{H}_0})\theta_A \\
&=f(U^{-1})\theta_\Psi^*(\lambda_U\otimes I_{\mathcal{H}_0})(\lambda_{\sigma(U^{-1})}\otimes I_{\mathcal{H}_0})\theta_A\\
&=f(U^{-1})\theta_\Psi^*(\lambda_U\lambda_{\sigma(U^{-1})}\otimes I_{\mathcal{H}_0})\theta_A\\
&=f(U^{-1})\theta_\Psi^*(f(U\sigma(U^{-1}))\lambda_{\sigma(U\sigma (U^{-1}))}\otimes I_{\mathcal{H}_0})\theta_A\\
&=\theta_\Psi^*(f(U\sigma(U^{-1}I))f(U^{-1}I)\lambda_{\sigma(U\sigma (U^{-1}I))}\otimes I_{\mathcal{H}_0})\theta_A\\
&=\theta_\Psi^*(f(\sigma(UU^{-1})I)f(UU^{-1})\lambda_{\sigma({\sigma(UU^{-1})I})}\otimes I_{\mathcal{H}_0})\theta_A\\
&=\theta_\Psi^*(\lambda_I\otimes I_{\mathcal{H}_0})\theta_A=I_\mathcal{H}
\end{align*}
$\Rightarrow {\pi(U)}^{-1}=f(U^{-1})\pi(\sigma(U^{-1}))$ for all $ U \in \mathcal{U}$. We shall now use $ \theta_A^*$ is injective (or even $ \theta_\Psi^*$ is injective)  to show $ \pi$ is injective and thereby to  get $ \pi$ is a unitary representation. Let $ \pi(U)=\pi(V).$ Then $\theta_\Psi^*(\lambda_U\otimes I_{\mathcal{H}_0})\theta_A =\theta_\Psi^*(\lambda_V\otimes I_{\mathcal{H}_0})\theta_A  \Rightarrow \theta_\Psi^*(\lambda_U\otimes I_{\mathcal{H}_0})\theta_A \theta_A^*=\theta_\Psi^*(\lambda_V\otimes I_{\mathcal{H}_0})\theta_A  \theta_A^* \Rightarrow \theta_\Psi^*\theta_A  \theta_A^*(\lambda_U\otimes I_{\mathcal{H}_0}) =\theta_\Psi^*\theta_A  \theta_A^*(\lambda_V\otimes I_{\mathcal{H}_0}) \Rightarrow \lambda_U\otimes I_{\mathcal{H}_0}=\lambda_V\otimes I_{\mathcal{H}_0}.$ We show $ U$ and $ V$ are identical at elementary tensors. For $ h \in \ell^2(\mathcal{U}),  y \in \mathcal{H}_0,  $ we get, $(\lambda_U\otimes I_{\mathcal{H}_0})(h\otimes y)=(\lambda_V\otimes I_{\mathcal{H}_0})(h\otimes y)\Rightarrow \lambda_Uh\otimes y=\lambda_Vh\otimes y \Rightarrow (\lambda_U-\lambda_V)h\otimes y=0 \Rightarrow 0= \langle  (\lambda_U-\lambda_V)h\otimes y, (\lambda_U-\lambda_V)h\otimes y\rangle= \|(\lambda_U-\lambda_V)h\|^2 \|y\|^2 .$ We may assume $y\neq0$ (if  $y=0$, then  $h\otimes y=0$). But then $ (\lambda_U-\lambda_V)(h)=0,$ and $ \lambda$ is a unitary representation (it is injective) gives $ U=V.$ The pending part  $ A_U=A_I\pi(U)^{-1}, \Psi_U=\Psi_I\pi(U)^{-1}  $ for all $ U \in \mathcal{U}$ we show, now.
\begin{align*}
 A_I\pi(U)^{-1}&= L_I^*\theta_A(\theta_\Psi^*(\lambda_U\otimes I_{\mathcal{H}_0})\theta_A)^*
 = L_I^*(\theta_\Psi^*(\lambda_U\otimes I_{\mathcal{H}_0})\theta_A\theta_A^*)^*=L_I^*(\theta_\Psi^*\theta_A\theta_A^*(\lambda_U\otimes I_{\mathcal{H}_0}))^*\\
 & = L_I^*(\theta_A^*(\lambda_U\otimes I_{\mathcal{H}_0}))^*=(\theta_A^*(\lambda_U\otimes I_{\mathcal{H}_0})L_I)^*= (\theta_A^*\overline{f(UI)}(\lambda_U\otimes I_{\mathcal{H}_0})L_I)^*\\
 &= (\theta_A^*L_{\sigma({UI})})^*=L_U^*\theta_A=A_U
\end{align*}
and
 \begin{align*}
  \Psi_I\pi(U)^{-1}&= L_I^*\theta_\Psi(\theta_\Psi^*(\lambda_U\otimes I_{\mathcal{H}_0})\theta_A)^*
  = L_I^*(\theta_\Psi^*(\lambda_U\otimes I_{\mathcal{H}_0})\theta_A\theta_\Psi^*)^*=L_I^*(\theta_\Psi^*\theta_A\theta_\Psi^*(\lambda_U\otimes I_{\mathcal{H}_0}))^*\\
  & = L_I^*(\theta_\Psi^*(\lambda_U\otimes I_{\mathcal{H}_0}))^*=(\theta_\Psi^*(\lambda_U\otimes I_{\mathcal{H}_0})L_I)^*= (\theta_\Psi^*\overline{f(UI)}(\lambda_U\otimes I_{\mathcal{H}_0})L_I)^*\\
  &= (\theta_\Psi^*L_{\sigma({UI})})^*=L_U^*\theta_\Psi=\Psi_U.
 \end{align*}
\end{proof}
Neither Parsevalness of the frame nor $ \theta_A^*$ or $ \theta_\Psi^*$ is injective was used  in the direct part of previous theorem and the corollaries which come out of it.

Since $ \theta_A$ is acting between Hilbert spaces, we know that $ \overline{\theta_A(\mathcal{H})}=\operatorname{Ker}(\theta_A^*)^\perp$ and $ \operatorname{Ker}(\theta_A^*)=\theta_A(\mathcal{H})^\perp.$ From Proposition \ref{2.2}, range of $\theta_A$  is closed. Therefore $ \theta_A(\mathcal{H})=\operatorname{Ker}(\theta_A^*)^\perp.$ Thus the condition $ \theta_A^*$ is injective in the last theorem can be replaced by $ \theta_A$ is onto. Same is true  for $ \theta_\Psi^*.$

\begin{corollary}
Let $ \mathcal{U}$ be a  group-like unitary system  with identity $ I$ and $(\{A_U\}_{U\in \mathcal{U}},\{\Psi_U\}_{U\in \mathcal{U}})$ be an (ovf)  in $ \mathcal{B}(\mathcal{H},\mathcal{H}_0)$  with $ \theta_A^*$  or $ \theta_\Psi^*$ is injective. Then there is a unitary representation $ \pi$  of  $ \mathcal{U}$ on  $ \mathcal{H}$  for which
\begin{enumerate}[\upshape(i)]
\item    $ A_U=A_IS^{-1}_{A,\Psi}\pi(U)^{-1}S_{A, \Psi}, \Psi_U=\Psi_I\pi(U)^{-1}  $ for all $ U \in \mathcal{U}$  if and only if $A_{\sigma(UV)}S^{-2}_{A, \Psi}A_{\sigma(UW)}^*=f(UV)\overline{f(UW)} A_VS^{-2}_{A, \Psi}A_W^*$, $
 A_{\sigma(UV)}S_{A, \Psi}^{-1}\Psi_{\sigma(UW)}^*=f(UV)\overline{f(UW)} A_VS^{-1}_{A, \Psi}\Psi_W^*$, $ \Psi_{\sigma(UV)}\Psi_{\sigma(UW)}^*=f(UV)\overline{f(UW)} \Psi_V\Psi_W^*$ for all $ U,V,W \in \mathcal{U}.$
\item   $ A_U=A_IS_{A,\Psi}^{-1/2}\pi(U)^{-1}S_{A,\Psi}^{1/2}, \Psi_U=\Psi_IS_{A,\Psi}^{-1/2}\pi(U)^{-1}S_{A,\Psi}^{1/2}  $ for all $ U \in \mathcal{U}$  if and only if $A_{\sigma(UV)}S_{A,\Psi}^{-1}A_{\sigma(UW)}^*=
f(UV)\overline{f(UW)} A_VS_{A,\Psi}^{-1}A_W^* $, $
 A_{\sigma(UV)}S_{A,\Psi}^{-1} \Psi_{\sigma(UW)}^*=
f(UV)\overline{f(UW)} A_VS_{A,\Psi}^{-1}\Psi_W^*$, $
 \Psi_{\sigma(UV)}S_{A,\Psi}^{-1}\Psi_{\sigma(UW)}^*=f(UV)\overline{f(UW)} \Psi_VS_{A,\Psi}^{-1}\Psi_W^*$ for all $ U,V,W \in \mathcal{U}.$
\item   $ A_U=A_I\pi(U)^{-1}, \Psi_U =\Psi_IS^{-1}_{A, \Psi}\pi(U)^{-1}S_{A, \Psi}  $ for all $ U \in \mathcal{U}$  if and only if $A_{\sigma(UV)}A_{\sigma(UW)}^*=f(UV)\overline{f(UW)} A_VA_W^*$, $
A_{\sigma(UV)}S^{-1}_{A, \Psi}\Psi_{\sigma(UW)}^*=f(UV)\overline{f(UW)}A_VS^{-1}_{A, \Psi}\Psi_W^*$, $
 \Psi_{\sigma(UV)}S^{-2}_{A, \Psi}\Psi_{\sigma(UW)} ^*
 =f(UV)\overline{f(UW)} \Psi_VS^{-2}_{A, \Psi}\Psi_W^*$ for all $ U,V,W \in \mathcal{U}.$
\end{enumerate}
\end{corollary}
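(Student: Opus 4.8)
The plan is to reduce each of the three statements to Theorem \ref{CHARACTERIZATIONGROUPLIKE} by applying it to a suitable Parseval operator-valued frame that is right-similar to $(\{A_U\}_{U\in\mathcal{U}}, \{\Psi_U\}_{U\in\mathcal{U}})$. As recorded in the remark following Theorem \ref{RIGHTSIMILARITY}, right-multiplying the two collections by appropriate powers of $S_{A,\Psi}^{-1}$ yields Parseval frames. Concretely, for (i) I would work with $(\{A_U S_{A,\Psi}^{-1}\}_{U\in\mathcal{U}}, \{\Psi_U\}_{U\in\mathcal{U}})$, for (ii) with $(\{A_U S_{A,\Psi}^{-1/2}\}_{U\in\mathcal{U}}, \{\Psi_U S_{A,\Psi}^{-1/2}\}_{U\in\mathcal{U}})$, and for (iii) with $(\{A_U\}_{U\in\mathcal{U}}, \{\Psi_U S_{A,\Psi}^{-1}\}_{U\in\mathcal{U}})$. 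In each case a one-line computation using Proposition \ref{2.2}(ii), namely $S_{A,\Psi}=\sum_{U}\Psi_U^*A_U$ being positive (hence self-adjoint, so $(S_{A,\Psi}^{-1})^*=S_{A,\Psi}^{-1}$ and $(S_{A,\Psi}^{-1/2})^*=S_{A,\Psi}^{-1/2}$), collapses the frame operator to $I_\mathcal{H}$, confirming Parsevalness.

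Before invoking the theorem I would verify that its injectivity hypothesis transfers. Since each modification is a right multiplication by an invertible operator, Lemma \ref{SIM} gives that the new analysis operators are the old ones composed on the right with invertibles; hence their adjoints differ from $\theta_A^*$ and $\theta_\Psi^*$ only by left composition with invertibles, and such composition preserves injectivity. Thus the assumption ``$\theta_A^*$ or $\theta_\Psi^*$ is injective'' for the original frame supplies exactly the corresponding hypothesis for each of the three modified frames.

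Applying Theorem \ref{CHARACTERIZATIONGROUPLIKE} to, for example, the Parseval frame $(\{B_U:=A_U S_{A,\Psi}^{-1}\}, \{\Phi_U:=\Psi_U\})$ in case (i) produces a unitary representation $\pi$ with $B_U=B_I\pi(U)^{-1}$ and $\Phi_U=\Phi_I\pi(U)^{-1}$; moving the invertible factor $S_{A,\Psi}$ across rewrites these as the stated $A_U=A_I S_{A,\Psi}^{-1}\pi(U)^{-1}S_{A,\Psi}$ and $\Psi_U=\Psi_I\pi(U)^{-1}$. The three characterizing identities of the theorem, written for $B_U,\Phi_U$, then unwind (again using $(S_{A,\Psi}^{-1})^*=S_{A,\Psi}^{-1}$) into precisely $A_{\sigma(UV)}S_{A,\Psi}^{-2}A_{\sigma(UW)}^*=f(UV)\overline{f(UW)}A_VS_{A,\Psi}^{-2}A_W^*$ and its two companions. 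Parts (ii) and (iii) follow in the same manner from the other two choices of modified frame.

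The step demanding the most care, although it is bookkeeping rather than a genuine obstacle, is the faithful translation of the three operator identities back through the right-similarity: one must track which power of $S_{A,\Psi}$ attaches on which side and confirm that positivity of $S_{A,\Psi}$ lets its inverse and square roots pass cleanly through the adjoints. No analytic input beyond Theorem \ref{CHARACTERIZATIONGROUPLIKE} is needed.
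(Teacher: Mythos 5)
Your proposal is correct and follows essentially the same route as the paper: the paper's proof also applies Theorem \ref{CHARACTERIZATIONGROUPLIKE} to exactly the three Parseval frames $(\{A_US_{A,\Psi}^{-1}\},\{\Psi_U\})$, $(\{A_US_{A,\Psi}^{-1/2}\},\{\Psi_US_{A,\Psi}^{-1/2}\})$, and $(\{A_U\},\{\Psi_US_{A,\Psi}^{-1}\})$, then unwinds the identities using self-adjointness of the powers of $S_{A,\Psi}$. Your additional check that injectivity of $\theta_A^*$ or $\theta_\Psi^*$ transfers to the modified frames is a reasonable piece of diligence that the paper leaves implicit.
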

\begin{proof}
We apply Theorem \ref{CHARACTERIZATIONGROUPLIKE} to the  Parseval (ovf)
\begin{enumerate}[\upshape(i)]
\item  $(\{A_US_{A,\Psi}^{-1}\}_{U\in \mathcal{U}} , \{\Psi_U\}_{U\in \mathcal{U}})$,  there is a unitary representation $ \pi$  of  $ \mathcal{U}$ on  $ \mathcal{H}$  for which  $ A_US_{A,\Psi}^{-1}=(A_IS^{-1}_{A,\Psi})\pi(U)^{-1}, \Psi_U=\Psi_I\pi(U)^{-1}  $ for all $ U \in \mathcal{U}$  if and only if $(A_{\sigma(UV)}S^{-1}_{A, \Psi})(A_{\sigma(UW)}S_{A,\Psi}^{-1})^*=f(UV)\overline{f(UW)}( A_VS^{-1}_{A, \Psi})(A_WS_{A,\Psi}^{-1})^* $, $ (A_{\sigma(UV)}S_{A, \Psi}^{-1}) \Psi_{\sigma(UW)}^*=
f(UV)\overline{f(UW)}( A_VS^{-1}_{A, \Psi})\Psi_W^*$, $ \Psi_{\sigma(UV)}\Psi_{\sigma(UW)}^*=f(UV)\overline{f(UW)} \Psi_V\Psi_W^*$ for all $ U,V,W \in \mathcal{U}.$
\item  $(\{A_US_{A,\Psi}^{-1/2}\}_{U\in \mathcal{U}}, \{\Psi_US_{A,\Psi}^{-1/2}\}_{U\in \mathcal{U}})$, there is a unitary representation $ \pi$  of  $ \mathcal{U}$ on  $ \mathcal{H}$  for which  $ A_US_{A,\Psi}^{-1/2}=(A_IS_{A,\Psi}^{-1/2})\pi(U)^{-1}, \Psi_US_{A,\Psi}^{-1/2}=(\Psi_IS_{A,\Psi}^{-1/2})\pi(U)^{-1}  $ for all $ U \in \mathcal{U}$  if and only if  $(A_{\sigma(UV)}S_{A,\Psi}^{-1/2})(A_{\sigma(UW)}S_{A,\Psi}^{-1/2})^*=f(UV)\overline{f(UW)}( A_VS_{A,\Psi}^{-1/2})(A_WS_{A,\Psi}^{-1/2})^*$, $
 (A_{\sigma(UV)}S_{A,\Psi}^{-1/2}) (\Psi_{\sigma(UW)}S_{A,\Psi}^{-1/2})^*=
f(UV)\overline{f(UW)}( A_VS_{A,\Psi}^{-1/2})(\Psi_WS_{A,\Psi}^{-1/2})^*$, $ (\Psi_{\sigma(UV)}S_{A,\Psi}^{-1/2})(\Psi_{\sigma(UW)}S_{A,\Psi}^{-1/2})^*=f(UV)\overline{f(UW)} (\Psi_VS_{A,\Psi}^{-1/2})(\Psi_WS_{A,\Psi}^{-1/2})^*$ for all $ U,V,W \in \mathcal{U}.$
\item  $(\{A_U\}_{U\in \mathcal{U}} , \{\Psi_US_{A,\Psi}^{-1}\}_{U\in \mathcal{U}})$, there is a unitary representation $ \pi$  of  $ \mathcal{U}$ on  $ \mathcal{H}$  for which  $ A_U=A_I\pi(U)^{-1}, \Psi_US^{-1}_{A, \Psi}=(\Psi_IS^{-1}_{A, \Psi})\pi(U)^{-1}  $ for all $ U \in \mathcal{U}$  if and only if $A_{\sigma(UV)}A_{\sigma(UW)}^*=
f(UV)\overline{f(UW)} A_VA_W^*$, $
 A_{\sigma(UV)}(\Psi_{\sigma(UW)}S^{-1}_{A, \Psi})^*=f(UV)\overline{f(UW)}A_V(\Psi_WS^{-1}_{A, \Psi})^*$, $
 (\Psi_{\sigma(UV)}S^{-1}_{A, \Psi})(\Psi_{\sigma(UW)}S^{-1}_{A, \Psi})^*=f(UV)\overline{f(UW)} (\Psi_VS^{-1}_{A, \Psi})(\Psi_WS^{-1}_{A, \Psi})^*$ for all $ U,V,W \in \mathcal{U}.$
\end{enumerate}
\end{proof}

\begin{corollary}\label{ANSWER}
Let $ \mathcal{U}$ be a  group-like unitary system  with identity $ I$ and $ \{A_U\}_{U\in \mathcal{U}}$ be a Parseval operator-valued frame in $ \mathcal{B}(\mathcal{H},\mathcal{H}_0)$ (w.r.t. itself)  with $ \theta_A^*$  is injective. Then there is a unitary representation $ \pi$  of $ \mathcal{U}$ on  $ \mathcal{H}$  for which $ A_U=A_I\pi(U)^{-1}  $ for all $ U \in \mathcal{U}$  if and only if $A_{\sigma(UV)}A_{\sigma(UW)}^*=f(UV)\overline{f(UW)} A_VA_W^* $ for all $ U,V,W \in \mathcal{U}.$
\end{corollary}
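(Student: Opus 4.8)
The plan is to obtain this statement as the diagonal special case $\Psi_U = A_U$ ($U \in \mathcal{U}$) of Theorem \ref{CHARACTERIZATIONGROUPLIKE}. First I would invoke the remark following Definition \ref{1} — that an operator-valued frame with respect to itself is an operator-valued frame — to translate the hypothesis ``$\{A_U\}_{U\in\mathcal{U}}$ is a Parseval operator-valued frame (w.r.t. itself)'' into the statement that $(\{A_U\}_{U\in\mathcal{U}}, \{A_U\}_{U\in\mathcal{U}})$ is a Parseval (ovf) in $\mathcal{B}(\mathcal{H},\mathcal{H}_0)$. In this diagonal situation $\theta_\Psi = \theta_A$, so the theorem's standing assumption ``$\theta_A^*$ or $\theta_\Psi^*$ is injective'' reduces to exactly the hypothesis imposed here, that $\theta_A^*$ is injective.

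Next I would feed this pair into Theorem \ref{CHARACTERIZATIONGROUPLIKE} and read off what each ingredient becomes under $\Psi = A$. On the conclusion side, the two generator identities $A_U = A_I\pi(U)^{-1}$ and $\Psi_U = \Psi_I\pi(U)^{-1}$ coalesce into the single identity $A_U = A_I\pi(U)^{-1}$. On the criterion side, the three displayed relations
\begin{align*}
A_{\sigma(UV)}A_{\sigma(UW)}^* &= f(UV)\overline{f(UW)}\,A_V A_W^*,\\
A_{\sigma(UV)}\Psi_{\sigma(UW)}^* &= f(UV)\overline{f(UW)}\,A_V \Psi_W^*,\\
\Psi_{\sigma(UV)}\Psi_{\sigma(UW)}^* &= f(UV)\overline{f(UW)}\,\Psi_V \Psi_W^*
\end{align*}
all collapse to the single relation $A_{\sigma(UV)}A_{\sigma(UW)}^* = f(UV)\overline{f(UW)}\,A_V A_W^*$ once the substitution is carried out. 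Hence both directions of the biconditional in Theorem \ref{CHARACTERIZATIONGROUPLIKE} specialize verbatim to the two directions of the corollary, and nothing further is required.

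Finally, I would note that this is precisely the statement answering the Kaftal--Larson--Zhang question (Remark 6.8(iii) of \cite{KAFTALLARSONZHANG1}) advertised in the introduction: it supplies a necessary and sufficient condition, intrinsic to the frame operators $A_U$ and the cocycle data $f,\sigma$, for a single Parseval operator-valued frame indexed by a (structured) group-like unitary system to be generated by a unitary representation $\pi$ via $A_U = A_I\pi(U)^{-1}$. There is essentially no obstacle to overcome; the only points deserving a moment's care are the bookkeeping that the three bilinear conditions truly coincide after setting $\Psi = A$ (the middle relation is not symmetric in the roles of the two collections, so one should confirm it contributes no extra constraint), and the verification that the injectivity hypothesis transfers correctly through the identification $\theta_\Psi = \theta_A$.
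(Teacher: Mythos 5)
Your proof is correct and is exactly the paper's (implicit) argument: the corollary is stated immediately after Theorem \ref{CHARACTERIZATIONGROUPLIKE} with no separate proof precisely because it is the diagonal specialization $\Psi_U=A_U$, under which the three bilinear relations collapse to one and the injectivity hypothesis transfers via $\theta_\Psi=\theta_A$. Nothing further is needed.
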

\begin{remark}
Corollary \ref{ANSWER} is  an answer to the sentence in \text{\upshape (iii)} of Remark 6.8 in \cite{KAFTALLARSONZHANG1}.
\end{remark}
\section{Perturbations}\label{PERTURBATIONS}
First result on perturbation of a frame for a Hilbert space is due to  Christensen, in 1995, which states
\begin{theorem}\cite{OLE3}
Let $ \{x_n\}_{n=1}^\infty$ be a frame for  $\mathcal{H} $ with bounds $ a$ and $b$. If  $ \{y_n\}_{n=1}^\infty$  in $\mathcal{H} $ satisfies
 $$ c \coloneqq\sum_{n=1}^{\infty}\|x_n-y_n\|^2<a,$$
 then it is  a frame for $\mathcal{H} $  with bounds $a\left(1-\sqrt{\frac{c}{a}}\right)^2 $ and $b\left(1+\sqrt{\frac{c}{b}}\right)^2.$
\end{theorem}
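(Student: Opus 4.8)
The plan is to realize both frame conditions through analysis operators and to treat $\{y_n\}_{n=1}^\infty$ as a bounded perturbation of $\{x_n\}_{n=1}^\infty$ at the operator level. Define $T:\mathcal{H}\to\ell^2(\mathbb{N})$ by $Th=\{\langle h,x_n\rangle\}_{n=1}^\infty$ and $U:\mathcal{H}\to\ell^2(\mathbb{N})$ by $Uh=\{\langle h,y_n\rangle\}_{n=1}^\infty$. The hypothesis that $\{x_n\}_{n=1}^\infty$ is a frame with bounds $a,b$ is exactly the two-sided estimate $\sqrt{a}\,\|h\|\le\|Th\|\le\sqrt{b}\,\|h\|$ for all $h\in\mathcal{H}$, so $T$ is well-defined and bounded; the goal is to produce the analogous estimate for $U$.

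First I would control the difference sequence. For each $h\in\mathcal{H}$, Cauchy--Schwarz gives $|\langle h,x_n-y_n\rangle|\le\|h\|\,\|x_n-y_n\|$, hence
$$\sum_{n=1}^\infty|\langle h,x_n-y_n\rangle|^2\le\Big(\sum_{n=1}^\infty\|x_n-y_n\|^2\Big)\|h\|^2=c\,\|h\|^2.$$
This shows $\{x_n-y_n\}_{n=1}^\infty$ is a Bessel sequence with bound $c$, so the operator $R:\mathcal{H}\to\ell^2(\mathbb{N})$, $Rh=\{\langle h,x_n-y_n\rangle\}_{n=1}^\infty$, is well-defined with $\|R\|\le\sqrt{c}$. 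Since $U=T-R$ is a difference of two bounded operators, $U$ is bounded, so $\{y_n\}_{n=1}^\infty$ is automatically Bessel; what remains is the quantitative two-sided bound.

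Next I would push the estimates through the (reverse) triangle inequality in $\ell^2(\mathbb{N})$. For the lower bound,
$$\|Uh\|\ge\|Th\|-\|Rh\|\ge\big(\sqrt{a}-\sqrt{c}\,\big)\|h\|,$$
and because $c<a$ the factor $\sqrt{a}-\sqrt{c}$ is strictly positive; squaring yields $\sum_{n=1}^\infty|\langle h,y_n\rangle|^2\ge a\big(1-\sqrt{c/a}\,\big)^2\|h\|^2$. For the upper bound, $\|Uh\|\le\|Th\|+\|Rh\|\le(\sqrt{b}+\sqrt{c}\,)\|h\|$, and squaring gives $\sum_{n=1}^\infty|\langle h,y_n\rangle|^2\le b\big(1+\sqrt{c/b}\,\big)^2\|h\|^2$. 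Together these are precisely the frame inequalities for $\{y_n\}_{n=1}^\infty$ with the asserted bounds.

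I do not expect a genuine obstacle: the argument is a clean norm-perturbation estimate, and the only point requiring care is that the strict inequality $c<a$ is exactly what keeps the lower frame bound positive (if $c\ge a$ the reverse triangle inequality no longer forces $U$ to be bounded below). The sole analytic input is the Cauchy--Schwarz step converting summability of $\|x_n-y_n\|^2$ into boundedness of $R$; everything else is elementary operator arithmetic.
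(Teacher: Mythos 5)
Your proof is correct. The paper states this result only as a citation to Christensen and gives no proof of its own, and your argument is the standard one: writing the analysis operators as $U=T-R$, using Cauchy--Schwarz to get $\|R\|\le\sqrt{c}$, and applying the triangle and reverse triangle inequalities, noting that $(\sqrt{a}-\sqrt{c})^{2}=a\bigl(1-\sqrt{c/a}\bigr)^{2}$ and $(\sqrt{b}+\sqrt{c})^{2}=b\bigl(1+\sqrt{c/b}\bigr)^{2}$ so the stated bounds are exactly recovered; this is also the same operator-level perturbation style the paper itself uses in its later perturbation theorems.
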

Three months later, Christensen himself  generalized this and derived the following.

\begin{theorem}\cite{OLE2}
Let $ \{x_n\}_{n=1}^\infty$ be a frame for  $\mathcal{H} $ with bounds $ a$ and $b$.  If  $ \{y_n\}_{n=1}^\infty$  in $\mathcal{H} $ is  such that there exist $ \alpha, \gamma \geq0$ with $\alpha+\frac{\gamma}{\sqrt{a}}< 1 $ and
 $$\left\|\sum_{n=1}^{m}c_n(x_n-y_n) \right\|\leq \alpha\left\|\sum_{n=1}^{m}c_nx_n\right \|+\gamma \left(\sum_{n=1}^{m}|c_n|^2\right)^\frac{1}{2},  ~\forall c_1,  \dots, c_m \in \mathbb{K}, m=1, \dots, $$
then it is  a frame for $\mathcal{H} $  with bounds $a\left(1-(\alpha+\frac{\gamma}{\sqrt{a}})\right)^2 $ and $b\left(1+(\alpha+\frac{\gamma}{\sqrt{b}})\right)^2.$
\end{theorem}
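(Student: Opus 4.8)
The plan is to recast the hypotheses in operator form and to reduce the lower bound to the invertibility of a single operator on $\mathcal{H}$. Write $U\colon\mathcal{H}\to\ell^2$ for the analysis operator $Uh=\{\langle h,x_n\rangle\}_n$ of $\{x_n\}$, $T=U^*$ for the synthesis operator $T\{c_n\}=\sum_n c_nx_n$, and $S=U^*U$ for the frame operator, so that $aI_{\mathcal H}\le S\le bI_{\mathcal H}$ and $\|Tc\|\le\sqrt b\,\|c\|$. Let $W$ denote the (a priori only formally defined) synthesis map $\{c_n\}\mapsto\sum_n c_ny_n$ of $\{y_n\}$; the perturbation inequality then reads $\|(T-W)c\|\le\alpha\|Tc\|+\gamma\|c\|$ for every finitely supported $c$.

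First I would settle the upper bound, which also legitimizes $W$. For finitely supported $c$ the triangle inequality together with the Bessel bound for $\{x_n\}$ gives
\[
\Bigl\|\sum_n c_ny_n\Bigr\|\le(1+\alpha)\|Tc\|+\gamma\|c\|\le\bigl((1+\alpha)\sqrt b+\gamma\bigr)\|c\|,
\]
so $\{y_n\}$ is Bessel and $W$ extends to a bounded operator with $\|W\|\le\sqrt b\bigl(1+(\alpha+\tfrac{\gamma}{\sqrt b})\bigr)$; squaring yields the upper frame bound $b\bigl(1+(\alpha+\tfrac{\gamma}{\sqrt b})\bigr)^2$. With $W$ now bounded, the perturbation inequality extends by continuity from finitely supported sequences to all of $\ell^2$.

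The core of the argument is the lower bound. Here the naive estimate $\|W^*h\|\ge\|Uh\|-\|(T-W)^*h\|$ fails, because $\|T-W\|$ is only controlled by $b$, not by $a$; the remedy is to feed the minimal-norm coefficients into the inequality. Set $C=US^{-1}\colon\mathcal{H}\to\ell^2$, so that $Cf=\{\langle f,S^{-1}x_n\rangle\}_n$ and, using $S^{-1}\le\tfrac1a I_{\mathcal H}$, one has $\|Cf\|^2=\langle S^{-1}f,f\rangle\le\tfrac1a\|f\|^2$; moreover $TC=U^*US^{-1}=I_{\mathcal H}$. Define $M=WC\colon\mathcal{H}\to\mathcal{H}$, i.e. $Mf=\sum_n\langle f,S^{-1}x_n\rangle y_n$. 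Applying the extended inequality to $c=Cf$ and using $TCf=f$ gives
\[
\|f-Mf\|=\|(T-W)Cf\|\le\alpha\|f\|+\gamma\|Cf\|\le\Bigl(\alpha+\tfrac{\gamma}{\sqrt a}\Bigr)\|f\|,
\]
so $\|I_{\mathcal H}-M\|\le\alpha+\tfrac{\gamma}{\sqrt a}<1$ and $M$ is invertible with $\|M^{-1}\|\le\bigl(1-(\alpha+\tfrac{\gamma}{\sqrt a})\bigr)^{-1}$.

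Finally I would read off the lower frame bound from the factorization $M^*=C^*W^*$. For each $h\in\mathcal H$,
\[
\|h\|=\|(M^*)^{-1}M^*h\|\le\|M^{-1}\|\,\|C^*\|\,\|W^*h\|\le\frac{1}{\sqrt a\bigl(1-(\alpha+\tfrac{\gamma}{\sqrt a})\bigr)}\,\|W^*h\|,
\]
since $\|C^*\|=\|C\|\le\tfrac1{\sqrt a}$. As $W^*h=\{\langle h,y_n\rangle\}_n$, this rearranges to $\sum_n|\langle h,y_n\rangle|^2=\|W^*h\|^2\ge a\bigl(1-(\alpha+\tfrac{\gamma}{\sqrt a})\bigr)^2\|h\|^2$, which is the claimed lower bound and in particular shows $\{y_n\}$ is a frame. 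The main obstacle is precisely this lower bound: the key insight is that one must route the estimate through the minimal-norm coefficient operator $C$ and the auxiliary operator $M=WC$, rather than through a direct triangle-inequality comparison of the two analysis operators, and it is this detour that makes the constant come out sharp in terms of $a$.
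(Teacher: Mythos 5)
Your proof is correct. The paper only cites this result (from Christensen) without reproving it, but your argument is the standard one and is essentially the same strategy the paper uses for its own generalization in Theorem \ref{PERTURBATION RESULT 1}: there too the upper bound comes from bounding the synthesis operator by the triangle inequality, and the lower bound comes from feeding the canonical-dual coefficients (your $C=US^{-1}$, the paper's $\theta_\Psi S_{A,\Psi}^{-1}h$) into the perturbation inequality so that the auxiliary operator $M=WC$ (the paper's $S_{B,\Psi}S_{A,\Psi}^{-1}$) is a small perturbation of the identity and hence invertible by the Neumann series.
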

Casazza, and Christensen  extended the previous result further in 1997, and obtained the next theorem.

\begin{theorem}\cite{OLECAZASSA}\label{OLECAZASSA}
Let $ \{x_n\}_{n=1}^\infty$ be a frame for  $\mathcal{H} $ with bounds $ a$ and $b$.  If  $ \{y_n\}_{n=1}^\infty$  in $\mathcal{H} $ is  such that there exist $ \alpha, \beta, \gamma \geq0$ with $ \max\{\alpha+\frac{\gamma}{\sqrt{a}}, \beta\}<1$ and
$$\left\|\sum_{n=1}^{m}c_n(x_n-y_n) \right\|\leq \alpha\left\|\sum_{n=1}^{m}c_nx_n\right \|+\gamma \left(\sum_{n=1}^{m}|c_n|^2\right)^\frac{1}{2}+\beta\left\|\sum_{n=1}^{m}c_ny_n\right \|,  ~\forall c_1,  \dots, c_m \in \mathbb{K}, m=1, \dots, $$
 then it is a frame for $\mathcal{H} $  with bounds $a\left(1-\frac{\alpha+\beta+\frac{\gamma}{\sqrt{a}}}{1+\beta}\right)^2 $ and $b\left(1+\frac{\alpha+\beta+\frac{\gamma}{\sqrt{b}}}{1-\beta}\right)^2.$
\end{theorem}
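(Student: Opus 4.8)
The plan is to pass from the sequences to their synthesis operators and to read off the two frame inequalities for $\{y_n\}$ from boundedness and invertibility statements about those operators. Write $T\colon \ell^2\to\mathcal H$, $T\{c_n\}=\sum_n c_nx_n$, for the synthesis operator of the frame $\{x_n\}$, so that $T^*f=\{\langle f,x_n\rangle\}_n$, $\sqrt a\,\|f\|\le\|T^*f\|\le\sqrt b\,\|f\|$, $\|T\|\le\sqrt b$, and $T$ is onto; let $U\{c_n\}=\sum_n c_ny_n$ denote the (a priori only formal) synthesis operator of $\{y_n\}$. First I would record that the hypothesis reads $\|(T-U)c\|\le\alpha\|Tc\|+\gamma\|c\|+\beta\|Uc\|$ for every finitely supported $c$, and that once $U$ is known to be bounded this extends to all $c\in\ell^2$ by continuity.

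For the upper bound I would argue directly. The triangle inequality gives $\|Uc\|\le\|Tc\|+\|(T-U)c\|\le(1+\alpha)\|Tc\|+\gamma\|c\|+\beta\|Uc\|$, so $(1-\beta)\|Uc\|\le(1+\alpha)\|Tc\|+\gamma\|c\|$ and hence $\|Uc\|\le\sqrt b\,\frac{1+\alpha+\gamma/\sqrt b}{1-\beta}\,\|c\|$. Thus $U$ is bounded, so $\{y_n\}$ is Bessel; since the optimal upper frame bound of $\{y_n\}$ equals $\|U^*\|^2=\|U\|^2$ and $\sqrt b\,\frac{1+\alpha+\gamma/\sqrt b}{1-\beta}=\sqrt b\bigl(1+\frac{\alpha+\beta+\gamma/\sqrt b}{1-\beta}\bigr)$, the stated upper bound $b\bigl(1+\frac{\alpha+\beta+\gamma/\sqrt b}{1-\beta}\bigr)^2$ follows.

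For the lower bound I would work on $W:=(\ker T)^\perp=\overline{\operatorname{Ran}(T^*)}$, on which $T$ restricts to a bounded bijection $T|_W\colon W\to\mathcal H$ with $\|T|_Wc\|\ge\sqrt a\,\|c\|$. The reverse triangle inequality now gives, for $c\in W$, that $\|Uc\|\ge\|Tc\|-\|(T-U)c\|\ge(1-\alpha)\|Tc\|-\gamma\|c\|-\beta\|Uc\|$, whence $(1+\beta)\|Uc\|\ge(1-\alpha)\|Tc\|-\gamma\|c\|\ge\bigl(\sqrt a(1-\alpha)-\gamma\bigr)\|c\|$; here the hypothesis $\alpha+\gamma/\sqrt a<1$ is exactly what makes $L:=\frac{\sqrt a(1-\alpha)-\gamma}{1+\beta}$ positive, so $U|_W$ is bounded below by $L$.

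The crux — and the step I expect to be the real obstacle — is upgrading \emph{bounded below} to \emph{invertible onto $\mathcal H$}. Bounded below alone only shows $\{y_n\}$ is a Riesz sequence (a frame for $\overline{\operatorname{Ran}(U)}$), and to conclude it is a frame for all of $\mathcal H$ I must show $\operatorname{Ran}(U|_W)=\mathcal H$, i.e.\ that $U|_W$ is a bounded bijection. I would obtain this from the perturbation principle that a relatively bounded perturbation of the boundedly invertible operator $T|_W$ is again boundedly invertible; the strictness of \emph{both} hypotheses (not merely $\alpha+\gamma/\sqrt a<1$ but also $\beta<1$) is essential here, and one cannot simply run a Neumann series for $U(T|_W)^{-1}$, since the available estimate $\|I-U(T|_W)^{-1}\|\le\frac{\alpha+\beta+\gamma/\sqrt a}{1-\beta}$ need not be $<1$. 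Once surjectivity of $U|_W$ is secured, every $f\in\mathcal H$ admits $c\in W$ with $Uc=f$ and $\|c\|\le\|f\|/L$, so that $\|U^*f\|\ge\frac{|\langle f,Uc\rangle|}{\|c\|}=\frac{\|f\|^2}{\|c\|}\ge L\|f\|$; hence the lower frame inequality holds with constant $L^2=a\bigl(1-\frac{\alpha+\beta+\gamma/\sqrt a}{1+\beta}\bigr)^2$, completing the proof.
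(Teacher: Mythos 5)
Your argument is correct, and it is essentially the intended one: the paper does not prove this theorem (it is quoted from Casazza--Christensen) but records the key ingredient as Theorem \ref{cc1}, and the ``perturbation principle'' you invoke at the crux is exactly that result, applied to $T|_W$ and $U|_W$ with constants $\alpha+\gamma/\sqrt{a}$ and $\beta$ after absorbing $\gamma\|c\|\leq (\gamma/\sqrt{a})\|T|_Wc\|$ on $W=(\operatorname{Ker}T)^{\perp}$. The remaining steps — the upper bound from the triangle inequality, the lower bound from surjectivity of $U|_W$ together with $\|U^*f\|\geq \|f\|^2/\|c\|$ — are the standard derivation and produce precisely the stated constants, so the only unproved ingredient in your write-up is the lemma the paper itself supplies as Theorem \ref{cc1}.
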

One important  result about the invertibility of an operator (which is an extension of result of Carl Neumann),  used in the derivation of  Theorem \ref{OLECAZASSA} (also due to Casazza, and Christensen) is 
 \begin{theorem}\cite{OLECAZASSA}\label{cc1}
Let $ \mathcal{X}, \mathcal{Y}$ be Banach spaces, $ U : \mathcal{X}\rightarrow \mathcal{Y}$ be a bounded invertible operator. If  a bounded  operator $ V : \mathcal{X}\rightarrow \mathcal{Y}$ is  such that there exist  $ \alpha, \beta \in \left [0, 1  \right )$ with 
$$ \|Ux-Vx\|\leq\alpha\|Ux\|+\beta\|Vx\|,\quad \forall x \in  \mathcal{X},$$
then $ V $ is invertible and 
$$ \frac{1-\alpha}{1+\beta}\|Ux\|\leq\|Vx\|\leq\frac{1+\alpha}{1-\beta} \|Ux\|, \quad\forall x \in  \mathcal{X};$$
$$ \frac{1-\beta}{1+\alpha}\frac{1}{\|U\|}\|y\|\leq\|V^{-1}y\|\leq\frac{1+\beta}{1-\alpha} \|U^{-1}\|\|y\|, \quad\forall y \in  \mathcal{Y}.$$
 \end{theorem}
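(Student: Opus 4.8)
The plan is to extract everything from the single hypothesis $\|Ux-Vx\|\le\alpha\|Ux\|+\beta\|Vx\|$ by elementary triangle-inequality manipulations, and to isolate the one genuinely nontrivial point, surjectivity of $V$, into a homotopy argument.

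First I would prove the two-sided norm comparison. Writing $\|Vx\|\le\|Ux\|+\|Ux-Vx\|$ and substituting the hypothesis gives $(1-\beta)\|Vx\|\le(1+\alpha)\|Ux\|$, while the symmetric estimate $\|Ux\|\le\|Vx\|+\|Ux-Vx\|$ gives $(1-\alpha)\|Ux\|\le(1+\beta)\|Vx\|$. Together these yield $\frac{1-\alpha}{1+\beta}\|Ux\|\le\|Vx\|\le\frac{1+\alpha}{1-\beta}\|Ux\|$ for all $x$. Since $U$ is invertible we have $\|Ux\|\ge\|U^{-1}\|^{-1}\|x\|$, so the left inequality shows that $V$ is bounded below, hence injective with closed range.

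The crux is surjectivity. A direct Neumann-series perturbation of $U$ by $U-V$ fails, because $\|U^{-1}(U-V)\|$ need not be less than $1$. Instead I would connect $U$ to $V$ by the linear path $V_t:=(1-t)U+tV$, $t\in[0,1]$, and show that all $V_t$ are bounded below by a single constant $m>0$ independent of $t$. Using $u=Ux$, $v=Vx$ together with $V_tx=u-t(u-v)=v+(1-t)(v-u)$, the triangle inequality produces the scalar inequalities $(1-t\alpha)\|u\|\le\|V_tx\|+t\beta\|v\|$ and $(1-(1-t)\beta)\|v\|\le\|V_tx\|+(1-t)\alpha\|u\|$; treating $\|v\|$ as a free nonnegative parameter and minimizing the resulting lower bound for $\|V_tx\|$ reduces positivity to the clean condition $(1-\beta)+t(\beta-\alpha)>0$, which holds for every $t\in[0,1]$ precisely because $\alpha,\beta\in[0,1)$. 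This is the step I expect to be the main obstacle, since it is where the full strength of the hypothesis is used quantitatively. Once the uniform bound $\|V_tx\|\ge m\|x\|$ is in hand, I would observe that within the norm-closed set of operators bounded below by $m$ the invertible ones form a relatively open and closed subset, via the identity $V_{t'}=V_t\bigl(I+V_t^{-1}(V_{t'}-V_t)\bigr)$ combined with $\|V_t^{-1}\|\le m^{-1}$. Hence $\{t:V_t\text{ is invertible}\}$ is open and closed in the connected interval $[0,1]$ and contains $0$, so it is all of $[0,1]$ and $V=V_1$ is invertible.

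Finally I would deduce the two displayed inverse estimates by feeding $x=V^{-1}y$ into the comparison from the first step. The chain $\|y\|=\|Vx\|\ge\frac{1-\alpha}{1+\beta}\|Ux\|\ge\frac{1-\alpha}{1+\beta}\|U^{-1}\|^{-1}\|V^{-1}y\|$ gives the upper bound on $\|V^{-1}y\|$, while $\|y\|=\|Vx\|\le\frac{1+\alpha}{1-\beta}\|Ux\|\le\frac{1+\alpha}{1-\beta}\|U\|\,\|V^{-1}y\|$ gives the lower bound. Rearranging yields $\frac{1-\beta}{1+\alpha}\|U\|^{-1}\|y\|\le\|V^{-1}y\|\le\frac{1+\beta}{1-\alpha}\|U^{-1}\|\,\|y\|$, which together with the norm comparison completes the proof.
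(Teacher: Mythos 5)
Your proposal is correct. Note first that the paper itself gives no proof of this theorem: it is imported verbatim from Casazza and Christensen with a citation, so there is no internal argument to compare against. Judged on its own, your proof is sound. The norm comparison in the first step is the standard two-line triangle-inequality computation and yields exactly the stated constants; boundedness below of $V$ then gives injectivity and closed range. The inverse estimates in the last step follow correctly by substituting $x=V^{-1}y$. The one genuinely delicate point, surjectivity, is handled by the homotopy $V_t=(1-t)U+tV$, and your reduction is right: writing $W=\|V_tx\|$, $A=\|Ux\|$, $B=\|Vx\|$, the two bounds $W\geq(1-t\alpha)A-t\beta B$ and $W\geq(1-(1-t)\beta)B-(1-t)\alpha A$ are one decreasing and one increasing in $B$, their crossing value works out to $\frac{(1-\beta)+t(\beta-\alpha)}{1-\beta(1-2t)}A$, and since the numerator is affine in $t$ with positive endpoint values $1-\beta$ and $1-\alpha$ while the denominator stays in $[1-\beta,1+\beta]$, one gets a lower bound $\|V_tx\|\geq m\|x\|$ with $m>0$ uniform in $t$. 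The open-and-closed argument with the uniform Neumann radius $m/\|U-V\|$ then correctly forces invertibility along the whole path. This is in substance the same continuity-of-the-path argument used in the cited source, so you have in effect reconstructed the standard proof rather than found a new one; the only stylistic difference is that you keep $U$ and $V$ separate where the original reduces to the case $U=I$ by passing to $VU^{-1}$, which shortens the bookkeeping but changes nothing essential.
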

 Study of perturbation of operator-valued frames is initiated by Sun \cite{SUN2}.
 \begin{theorem}\cite{SUN2}
 Let $ \{A_j\}_{j\in \mathbb{J}}$ be an (ovf) (w.r.t. itself) in $ \mathcal{B}(\mathcal{H}, \mathcal{H}_0)$ with frame bounds $ a$ and $b$. Suppose  $\{B_j\}_{j\in \mathbb{J}} $ in $ \mathcal{B}(\mathcal{H}, \mathcal{H}_0)$ is such that  there exist $\alpha, \beta, \gamma \geq 0  $ with $ \max\{\alpha+\frac{\gamma}{\sqrt{a}}, \beta\}<1$ and one of the following two conditions holds:
 $$\left\|\sum\limits_{j\in\mathbb{S}}(A^*_j-B^*_j)y_j\right\|\leq\alpha\left\|\sum\limits_{j\in\mathbb{S}}A^*_jy_j\right\|+\beta\left\|\sum\limits_{j\in\mathbb{S}}B^*_jy_j\right\|+\gamma\left(\sum\limits_{j\in\mathbb{S}}\|y_j\|^2 \right)^\frac{1}{2},~\forall y_j\in \mathcal{H}_0, \forall j \in \mathbb{S} $$
 for  every finite subset $ \mathbb{S}$ of $ \mathbb{J},$ or
 $$\left(\sum\limits_{j\in\mathbb{J}}\|(A_j-B_j)h\|^2 \right)^\frac{1}{2}\leq \alpha\left(\sum\limits_{j\in\mathbb{J}}\|A_jh\|^2 \right)^\frac{1}{2} +\beta\left(\sum\limits_{j\in\mathbb{J}}\|B_jh\|^2 \right)^\frac{1}{2}+\gamma\|h\|,~\forall h\in \mathcal{H}.$$
 Then $ \{B_j\}_{j\in \mathbb{J}}$ is an (ovf) (w.r.t. itself) with bounds $a\left(1-\frac{\alpha+\beta+\frac{\gamma}{\sqrt{a}}}{1+\beta}\right)^2 $ and $b\left(1+\frac{\alpha+\beta+\frac{\gamma}{\sqrt{b}}}{1-\beta}\right)^2.$ 
 \end{theorem}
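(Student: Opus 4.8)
The plan is to translate everything into the language of the analysis operator $\theta_A$ (with $\theta_A h=\sum_j e_j\otimes A_j h$) and the synthesis operator $\theta_A^*$, using that for a frame with respect to itself the frame operator is $S_A=\theta_A^*\theta_A=\sum_j A_j^*A_j$ with $a\|h\|^2\le\|\theta_A h\|^2\le b\|h\|^2$; by Proposition \ref{2.2} the operator $\theta_A$ is bounded below (hence has closed range) and $\theta_A^*$ is surjective. In these terms the second hypothesis reads $\|(\theta_A-\theta_B)h\|\le\alpha\|\theta_A h\|+\beta\|\theta_B h\|+\gamma\|h\|$, while the first reads $\|(\theta_A^*-\theta_B^*)y\|\le\alpha\|\theta_A^* y\|+\beta\|\theta_B^* y\|+\gamma\|y\|$ on finitely supported $y\in\ell^2(\mathbb{J})\otimes\mathcal{H}_0$. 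I would treat the two hypotheses separately.

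Under the analysis-type hypothesis everything is elementary. Writing $p=(\sum_j\|(A_j-B_j)h\|^2)^{1/2}$ and $s=(\sum_j\|B_j h\|^2)^{1/2}$, the triangle inequality $s\le\|\theta_A h\|+p$ together with the hypothesis gives $(1-\beta)p\le(\alpha+\beta)\|\theta_A h\|+\gamma\|h\|$, so $p<\infty$ and hence $\{B_j\}$ is Bessel. Then $\|\theta_B h\|\ge\|\theta_A h\|-\|(\theta_A-\theta_B)h\|$, and absorbing $\gamma\|h\|\le(\gamma/\sqrt a)\|\theta_A h\|$ (valid since $\|\theta_A h\|\ge\sqrt a\,\|h\|$) yields $(1+\beta)\|\theta_B h\|\ge(1-\alpha-\gamma/\sqrt a)\|\theta_A h\|$; squaring and using $\|\theta_A h\|^2\ge a\|h\|^2$ produces exactly the claimed lower bound $a(1-\frac{\alpha+\beta+\gamma/\sqrt a}{1+\beta})^2$. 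The upper bound follows symmetrically from $\|\theta_B h\|\le\|\theta_A h\|+\|(\theta_A-\theta_B)h\|$, this time keeping $\gamma\|h\|$ and estimating $\|\theta_A h\|\le\sqrt b\,\|h\|$. No invertibility theorem is needed in this case.

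Under the synthesis-type hypothesis the upper bound is again elementary: from $\|\theta_B^* y\|\le(1+\alpha)\|\theta_A^* y\|+\beta\|\theta_B^* y\|+\gamma\|y\|$ and $\|\theta_A^*\|=\|\theta_A\|\le\sqrt b$ one gets $\|\theta_B^*\|\le\sqrt b\,(1+\frac{\alpha+\beta+\gamma/\sqrt b}{1-\beta})$, so $\{B_j\}$ is Bessel with the stated upper bound. The lower bound is the real point, and this is where Theorem \ref{cc1} enters. I would restrict to the closed subspace $\operatorname{ran}\theta_A=(\ker\theta_A^*)^\perp$, on which $\theta_A^*|_{\operatorname{ran}\theta_A}\colon\operatorname{ran}\theta_A\to\mathcal{H}$ is a bounded invertible bijection (its inverse is $h\mapsto\theta_A S_A^{-1}h$, of norm at most $1/\sqrt a$). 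For $y\in\operatorname{ran}\theta_A$ one has $\|y\|\le(1/\sqrt a)\|\theta_A^* y\|$, which lets me absorb the $\gamma$-term and rewrite the hypothesis on this subspace as $\|(\theta_A^*-\theta_B^*)y\|\le(\alpha+\gamma/\sqrt a)\|\theta_A^* y\|+\beta\|\theta_B^* y\|$, with both multiplicative constants in $[0,1)$. Theorem \ref{cc1}, applied with $U=\theta_A^*|_{\operatorname{ran}\theta_A}$ and $V=\theta_B^*|_{\operatorname{ran}\theta_A}$, then shows $V$ is invertible onto $\mathcal{H}$; in particular $\theta_B^*$ is surjective, so $\{B_j\}$ is a frame, and the estimate $\|V^{-1}\|\le\frac{1+\beta}{1-\alpha-\gamma/\sqrt a}\cdot\frac1{\sqrt a}$ is in hand.

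To extract the lower frame bound I would avoid pseudoinverse bookkeeping and argue directly: given $h\in\mathcal{H}$, choose $c_0\in\operatorname{ran}\theta_A$ with $\theta_B^* c_0=h$ and $\|c_0\|\le\|V^{-1}\|\,\|h\|$; testing $\theta_B h$ against $c_0/\|c_0\|$ gives $\|\theta_B h\|\ge\|h\|^2/\|c_0\|\ge\|h\|/\|V^{-1}\|$, whence $\langle S_B h,h\rangle=\|\theta_B h\|^2\ge\|V^{-1}\|^{-2}\|h\|^2$, and inserting the bound on $\|V^{-1}\|$ reproduces $a(1-\frac{\alpha+\beta+\gamma/\sqrt a}{1+\beta})^2$. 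The main obstacle is exactly this lower bound under the synthesis hypothesis: unlike the analysis case, bounding $\theta_B$ from below cannot be gotten from triangle inequalities, because the reference synthesis operator $\theta_A^*$ is only surjective (never injective), so one genuinely needs the Casazza--Christensen invertibility theorem together with the device of passing to $\operatorname{ran}\theta_A$, both to turn $\theta_A^*$ into an honestly invertible operator and to convert the additive $\gamma$-term into the multiplicative constant $\gamma/\sqrt a$.
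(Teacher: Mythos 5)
Your proof is correct, but note that the paper itself gives no proof of this statement: it is quoted from \cite{SUN2}, so the only arguments to compare against are the paper's proofs of its own generalizations, Theorem \ref{PERTURBATION RESULT 1} (synthesis-type hypothesis on finite subsets) and Theorem \ref{PERTURBATION RESULT 2}. Your analysis-case argument is a pure triangle-inequality computation in the spirit of the proof of Theorem \ref{PERTURBATION RESULT 2} and reproduces both stated bounds exactly; the one caveat is that the hypothesis $\bigl(\sum_{j}\|(A_j-B_j)h\|^2\bigr)^{1/2}\le\cdots$ must be read as asserting convergence of the left-hand series, since otherwise your step ``$(1-\beta)p\le(\alpha+\beta)\|\theta_Ah\|+\gamma\|h\|$, so $p<\infty$'' is an $\infty-\infty$ cancellation — the paper avoids this by assuming $\{B_j\}_{j\in\mathbb{J}}$ Bessel outright in Theorem \ref{PERTURBATION RESULT 2} and by working with finite subsets in Theorem \ref{PERTURBATION RESULT 1}. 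In the synthesis case your route is genuinely different from the paper's: the proof of Theorem \ref{PERTURBATION RESULT 1} evaluates the perturbation inequality at the specific vectors $y=\theta_\Psi S_{A,\Psi}^{-1}h$ and applies Theorem \ref{cc1} with $U=I_{\mathcal{H}}$ and $V=S_{B,\Psi}S_{A,\Psi}^{-1}$, concluding invertibility of the frame operator, whereas you restrict $\theta_A^*$ and $\theta_B^*$ to $\operatorname{ran}\theta_A=(\ker\theta_A^*)^{\perp}$, where $\theta_A^*$ becomes honestly invertible with inverse $h\mapsto\theta_AS_{A}^{-1}h$ of norm at most $1/\sqrt{a}$, and apply Theorem \ref{cc1} to the restricted synthesis operators themselves. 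Since $h\mapsto\theta_AS_A^{-1}h$ is precisely the parametrization of $\operatorname{ran}\theta_A$ that the paper substitutes, the two devices amount to the same computation packaged differently; what yours buys is surjectivity of $\theta_B^*$ as a byproduct and the clean duality step $\|h\|^2=\langle c_0,\theta_Bh\rangle\le\|V^{-1}\|\,\|h\|\,\|\theta_Bh\|$, which yields the squared constant $a\bigl(1-\frac{\alpha+\beta+\gamma/\sqrt{a}}{1+\beta}\bigr)^2$ in exactly the form claimed, while the paper's frame-operator route naturally produces bounds phrased through $\|S_{A,\Psi}^{-1}\|$ and $\|\theta_\Psi S_{A,\Psi}^{-1}\|$. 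The absorption of the additive $\gamma$-term into the multiplicative constant $\gamma/\sqrt{a}$ via $\|y\|\le\|\theta_A^*y\|/\sqrt{a}$ on $\operatorname{ran}\theta_A$, and the final extraction of the lower frame bound, both check out.
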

 For the extension, we have  following results.
 \begin{theorem}\label{PERTURBATION RESULT 1}
 Let $ (\{A_j\}_{j\in \mathbb{J}}, \{\Psi_j\}_{j\in \mathbb{J}}) $  be  an (ovf) in $ \mathcal{B}(\mathcal{H}, \mathcal{H}_0)$. Suppose  $\{B_j\}_{j\in \mathbb{J}} $ in $ \mathcal{B}(\mathcal{H}, \mathcal{H}_0)$ is such that $ \Psi_j^*B_j\geq 0, \forall j \in \mathbb{J}$ and there exist $\alpha, \beta, \gamma \geq 0  $ with $ \max\{\alpha+\gamma\|\theta_\Psi S_{A,\Psi}^{-1}\|, \beta\}<1$ and for every finite subset $ \mathbb{S}$ of $ \mathbb{J}$
 \begin{equation}\label{p3}
 \left\|\sum\limits_{j\in \mathbb{S}}(A_j^*-B_j^*)L_j^*y\right\|\leq \alpha\left\|\sum\limits_{j\in \mathbb{S}}A_j^*L_j^*y\right\|+\beta\left\|\sum\limits_{j\in \mathbb{S}}B_j^*L_j^*y\right\|+\gamma \left(\sum\limits_{j\in \mathbb{S}}\|L_j^*y\|^2\right)^\frac{1}{2},\quad \forall y \in \ell^2(\mathbb{J})\otimes \mathcal{H}_0.
 \end{equation} 
  Then  $ (\{B_j\}_{j\in \mathbb{J}},\{\Psi_j\}_{j\in \mathbb{J}}) $ is an (ovf) with bounds $ \frac{1-(\alpha+\gamma\|\theta_\Psi S_{A,\Psi}^{-1}\|)}{(1+\beta)\|S_{A,\Psi}^{-1}\|}$ and $\frac{\|\theta_\Psi\|((1+\alpha)\|\theta_A\|+\gamma)}{1-\beta} $.
 \end{theorem}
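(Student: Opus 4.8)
The plan is to translate the finite-sum hypothesis (\ref{p3}) into a single operator inequality and then reduce the problem to the invertibility criterion of Casazza and Christensen (Theorem \ref{cc1}). As in the computation just before Proposition \ref{2.2}, I would first record that $\theta_A^{*}=\sum_{j\in\mathbb{J}}A_j^{*}L_j^{*}$ and $\theta_\Psi^{*}=\sum_{j\in\mathbb{J}}\Psi_j^{*}L_j^{*}$ in SOT (these exist since $(\{A_j\}_{j\in\mathbb{J}},\{\Psi_j\}_{j\in\mathbb{J}})$ is an (ovf)), and that by (\ref{LEQUATION}) one has $\sum_{j\in\mathbb{S}}A_j^{*}L_j^{*}y=\theta_A^{*}\big(\sum_{j\in\mathbb{S}}L_jL_j^{*}y\big)$ for every finite $\mathbb{S}$, whence $\sum_{j\in\mathbb{J}}A_j^{*}L_j^{*}y\to\theta_A^{*}y$ and $\sum_{j\in\mathbb{J}}\|L_j^{*}y\|^{2}=\langle\sum_jL_jL_j^{*}y,y\rangle=\|y\|^{2}$. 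The first genuine task is to show that $\theta_B$ exists and is bounded, i.e. condition (ii) of Definition \ref{1}. I would apply (\ref{p3}) to the finite set $\mathbb{S}'\setminus\mathbb{S}$ and, using $\beta<1$, rearrange it to $(1-\beta)\|\sum_{j\in\mathbb{S}'\setminus\mathbb{S}}B_j^{*}L_j^{*}y\|\le(1+\alpha)\|\sum_{j\in\mathbb{S}'\setminus\mathbb{S}}A_j^{*}L_j^{*}y\|+\gamma(\sum_{j\in\mathbb{S}'\setminus\mathbb{S}}\|L_j^{*}y\|^{2})^{1/2}$. Both terms on the right tend to $0$ as $\mathbb{S},\mathbb{S}'$ exhaust $\mathbb{J}$ (the first because $\sum_jA_j^{*}L_j^{*}y$ converges, the second because $\sum_j\|L_j^{*}y\|^{2}=\|y\|^{2}<\infty$), so the net of partial sums defining $\theta_B^{*}y$ is Cauchy, hence convergent in $\mathcal{H}$; the same rearrangement over a single $\mathbb{S}$ yields the uniform bound $\|\theta_B^{*}\|\le\frac{(1+\alpha)\|\theta_A\|+\gamma}{1-\beta}$, so $\theta_B=(\theta_B^{*})^{*}$ is bounded.

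With $\theta_B^{*}$ now bounded, I would let $\mathbb{S}\uparrow\mathbb{J}$ in (\ref{p3}) to obtain the clean inequality $\|(\theta_A^{*}-\theta_B^{*})y\|\le\alpha\|\theta_A^{*}y\|+\beta\|\theta_B^{*}y\|+\gamma\|y\|$ for all $y\in\ell^{2}(\mathbb{J})\otimes\mathcal{H}_0$. The key device is that $\theta_A^{*}$ is only surjective, not invertible, so instead of inverting it I would compose on the right with the right inverse $\theta_\Psi S_{A,\Psi}^{-1}$ provided by Proposition \ref{2.2}(ii), which gives $\theta_A^{*}\theta_\Psi=S_{A,\Psi}$. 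Setting $U:=\theta_A^{*}\theta_\Psi S_{A,\Psi}^{-1}=I_\mathcal{H}$ and $V:=\theta_B^{*}\theta_\Psi S_{A,\Psi}^{-1}$, and substituting $y=\theta_\Psi S_{A,\Psi}^{-1}x$ into the limit inequality, the three terms become $\theta_A^{*}y=Ux=x$, $\theta_B^{*}y=Vx$ and $\|y\|\le\|\theta_\Psi S_{A,\Psi}^{-1}\|\,\|x\|$, producing $\|Ux-Vx\|\le(\alpha+\gamma\|\theta_\Psi S_{A,\Psi}^{-1}\|)\|Ux\|+\beta\|Vx\|$. Since $\max\{\alpha+\gamma\|\theta_\Psi S_{A,\Psi}^{-1}\|,\beta\}<1$, Theorem \ref{cc1} applies with these two constants and with $U=I_\mathcal{H}$, yielding that $V$ is invertible together with $\|V^{-1}\|\le\frac{1+\beta}{1-(\alpha+\gamma\|\theta_\Psi S_{A,\Psi}^{-1}\|)}$.

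To finish, I would observe that $\theta_\Psi^{*}\theta_B=\sum_{j\in\mathbb{J}}\Psi_j^{*}B_j=S_{B,\Psi}$ (the same direct computation as in Proposition \ref{2.2}, valid once $\theta_B,\theta_\Psi$ exist), so $V^{*}=S_{A,\Psi}^{-1}\theta_\Psi^{*}\theta_B=S_{A,\Psi}^{-1}S_{B,\Psi}$ and hence $S_{B,\Psi}=S_{A,\Psi}V^{*}$; invertibility of $V$ then forces $S_{B,\Psi}$ to be invertible. Positivity is exactly where the hypothesis $\Psi_j^{*}B_j\ge0$ enters: each summand is positive, hence self-adjoint, so the SOT-limit $S_{B,\Psi}=\sum_{j\in\mathbb{J}}\Psi_j^{*}B_j$ is positive, and being invertible it is positive invertible. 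This is condition (i) of Definition \ref{1}, and with the boundedness of $\theta_B$ it shows $(\{B_j\}_{j\in\mathbb{J}},\{\Psi_j\}_{j\in\mathbb{J}})$ is an (ovf). For the bounds, the optimal upper bound is $\|S_{B,\Psi}\|=\|\theta_\Psi^{*}\theta_B\|\le\|\theta_\Psi\|\,\|\theta_B^{*}\|\le\frac{\|\theta_\Psi\|((1+\alpha)\|\theta_A\|+\gamma)}{1-\beta}$, and for the lower bound I would use $S_{B,\Psi}^{-1}=(V^{-1})^{*}S_{A,\Psi}^{-1}$ to get $\|S_{B,\Psi}^{-1}\|\le\|V^{-1}\|\,\|S_{A,\Psi}^{-1}\|\le\frac{(1+\beta)\|S_{A,\Psi}^{-1}\|}{1-(\alpha+\gamma\|\theta_\Psi S_{A,\Psi}^{-1}\|)}$, so that the optimal lower bound $\|S_{B,\Psi}^{-1}\|^{-1}$ is at least $\frac{1-(\alpha+\gamma\|\theta_\Psi S_{A,\Psi}^{-1}\|)}{(1+\beta)\|S_{A,\Psi}^{-1}\|}$, matching the claimed constants.

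The main obstacle I anticipate is organizational rather than conceptual: one must establish the existence of $\theta_B$ \emph{before} being entitled to pass to the limit in (\ref{p3}), and the decisive idea is to compose with $\theta_\Psi S_{A,\Psi}^{-1}$ rather than attempt to invert the non-invertible $\theta_A^{*}$ — this is what makes Theorem \ref{cc1} applicable. Hitting the stated constants exactly depends on recognizing $\theta_A^{*}\theta_\Psi=S_{A,\Psi}$ so that $U=I_\mathcal{H}$ and $\|U^{-1}\|=1$, which cleanly converts the two conclusions of Theorem \ref{cc1} into the upper and lower frame bounds.
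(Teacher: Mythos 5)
Your proposal is correct and follows essentially the same route as the paper's proof: rearrange the hypothesis with $\beta<1$ to get a Cauchy net and the bound $\|\theta_B\|\leq\frac{(1+\alpha)\|\theta_A\|+\gamma}{1-\beta}$, pass to the limit in (\ref{p3}), substitute $y=\theta_\Psi S_{A,\Psi}^{-1}h$ so that $\theta_A^*y=h$ and $\theta_B^*y=S_{B,\Psi}S_{A,\Psi}^{-1}h$, invoke Theorem \ref{cc1}, and use $\Psi_j^*B_j\geq 0$ for positivity. Your explicit $U=I_{\mathcal{H}}$, $V=\theta_B^*\theta_\Psi S_{A,\Psi}^{-1}$ is just the paper's operator $S_{B,\Psi}S_{A,\Psi}^{-1}$ in the notation of Theorem \ref{cc1}, and the constants come out identically.
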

 \begin{proof}
 For each finite subset $\mathbb{S} $ of $ \mathbb{J}$ and for every $ y$ in $ \ell^2(\mathbb{J})\otimes \mathcal{H}_0$, 
 \begin{align*}
 \left\| \sum\limits_{j\in \mathbb{S}}B_j^*L_j^*y\right\|&\leq \left\| \sum\limits_{j\in \mathbb{S}}(A_j^*-B_j^*)L_j^*y\right\|+\left\| \sum\limits_{j\in \mathbb{S}}A_j^*L_j^*y\right\|\\
  &\leq(1+\alpha)\left\| \sum\limits_{j\in \mathbb{S}}A_j^*L_j^*y\right\|+\beta\left\| \sum\limits_{j\in \mathbb{S}}B_j^*L_j^*y\right\|+\gamma\left( \sum\limits_{j\in \mathbb{S}}\|L_j^*y\|^2\right)^\frac{1}{2}
 \end{align*}
 which implies 
 \begin{equation}\label{p1}
 \left\| \sum\limits_{j\in \mathbb{S}}B_j^*L_j^*y\right\|\leq\frac{1+\alpha}{1-\beta}\left\| \sum\limits_{j\in \mathbb{S}}A_j^*L_j^*y\right\|+\frac{\gamma}{1-\beta}\left( \sum\limits_{j\in \mathbb{S}}\|L_j^*y\|^2\right)^\frac{1}{2}, \quad \forall y \in \ell^2(\mathbb{J})\otimes \mathcal{H}_0.
 \end{equation}
 We notice 
 $$ \langle y,y\rangle =\langle (I_{\ell^2(\mathbb{J})}\otimes I_{\mathcal{H}_0})y,y\rangle=\left\langle\sum\limits_{j\in \mathbb{J}}L_jL_j^* y,y\right\rangle=\sum\limits_{j\in \mathbb{J}}\|L_j^* y\|^2 , \quad \forall y \in \ell^2(\mathbb{J})\otimes \mathcal{H}_0.$$
 Let $ \mathbb{S}_1,\mathbb{S}_2$ be two finite subsets of $ \mathbb{J}$ with $ \mathbb{S}_1\subseteq\mathbb{S}_2.$ Following inequality shows that $\sum_{j\in \mathbb{J}}B_j^*L_j^*y $ exists for all  $   y \in \ell^2(\mathbb{J})\otimes \mathcal{H}_0.$ 
 \begin{align*}
 \left\|\sum\limits_{j\in \mathbb{S}_2}B_j^*L_j^*y-\sum\limits_{j\in \mathbb{S}_1}B_j^*L_j^*y\right\|&=\left\|\sum\limits_{j\in \mathbb{S}_2\setminus\mathbb{S}_1}B_j^*L_j^*y\right\|\\
 & \leq \frac{1+\alpha}{1-\beta}\left\| \sum\limits_{j\in \mathbb{S}_2\setminus\mathbb{S}_1}A_j^*L_j^*y\right\|+\frac{\gamma}{1-\beta}\left( \sum\limits_{j\in \mathbb{S}_2\setminus\mathbb{S}_1}\|L_j^*y\|^2\right)^\frac{1}{2}, \quad \forall y \in \ell^2(\mathbb{J})\otimes \mathcal{H}_0.
 \end{align*}
 From the continuity of norm, Inequality (\ref{p1}) gives
 \begin{align}\label{p2}
  \left\| \sum\limits_{j\in \mathbb{J}}B_j^*L_j^*y\right\|&\leq\frac{1+\alpha}{1-\beta}\left\| \sum\limits_{j\in \mathbb{J}}A_j^*L_j^*y\right\|+\frac{\gamma}{1-\beta}\left( \sum\limits_{j\in \mathbb{J}}\|L_j^*y\|^2\right)^\frac{1}{2}\nonumber \\
  &=\frac{1+\alpha}{1-\beta}\left\| \theta_A^*y\right\|+\frac{\gamma}{1-\beta}\|y\| , \quad \forall y \in \ell^2(\mathbb{J})\otimes \mathcal{H}_0
  \end{align}
  and this gives $ \sum_{j\in \mathbb{J}}B_j^*L_j^* $   is bounded; therefore its adjoint exists, which is $ \theta_B$; Inequality (\ref{p2}) now produces $\|\theta_B^*y\|\leq \frac{1+\alpha}{1-\beta}\left\| \theta_A^*y\right\|+\frac{\gamma}{1-\beta}\|y\| , \forall y \in \ell^2(\mathbb{J})\otimes \mathcal{H}_0 $ and from this $\|\theta_B\|=\|\theta_B^*\|\leq \frac{1+\alpha}{1-\beta}\left\| \theta_A^*\right\|+\frac{\gamma}{1-\beta} =\frac{1+\alpha}{1-\beta}\left\| \theta_A\right\|+\frac{\gamma}{1-\beta}.$
 Now using the hypothesis $\Psi^*_jB_j\geq0 $ for all $ j \in \mathbb{J} ,$ we get $ \theta_\Psi^*\theta_B=\sum_{j\in \mathbb{J}}\Psi_j^*B_j\geq 0.$ All in all,  we derived $ S_{B, \Psi}$ is a positive bounded linear operator. 
 Continuity of the norm, existence of frame  operators together with Inequality (\ref{p3}) give
 $$ \|\theta_A^*y-\theta_B^*y\|\leq \alpha\|\theta_A^*y\|+\beta\|\theta_B^*y\|+\gamma\|y\|, \quad \forall y \in \ell^2(\mathbb{J})\otimes \mathcal{H}_0$$

 which implies
 $$  \|\theta_A^*(\theta_\Psi S_{A,\Psi}^{-1} h)-\theta_B^*(\theta_\Psi S_{A,\Psi}^{-1}h)\|\leq \alpha\|\theta_A^*(\theta_\Psi S_{A,\Psi}^{-1} h)\|+\beta\|\theta_B^*(\theta_\Psi S_{A,\Psi}^{-1} h)\|+\gamma\|\theta_\Psi S_{A,\Psi}^{-1} h\|, \quad \forall h \in  \mathcal{H}.$$
 But $ \theta_A^*\theta_\Psi S_{A,\Psi}^{-1}=I_\mathcal{H}$ and $\theta_B^*\theta_\Psi S_{A,\Psi}^{-1}= S_{B,\Psi} S_{A,\Psi}^{-1}.$ Therefore 
 \begin{align*}
  \| h- S_{B,\Psi}S_{A,\Psi}^{-1}h\|
  &\leq \alpha\| h\|+\beta\|S_{B,\Psi} S_{A,\Psi}^{-1} h\|+\gamma\|\theta_\Psi S_{A,\Psi}^{-1} h\|\\
&\leq(\alpha+\gamma\|\theta_\Psi S_{A,\Psi}^{-1}\|)\|h\|+\beta\|S_{B,\Psi} S_{A,\Psi}^{-1} h\|, \quad \forall h \in  \mathcal{H}.
 \end{align*}
 Since $ \max\{\alpha+\gamma\|\theta_\Psi S_{A,\Psi}^{-1}\|, \beta\}<1$, Theorem \ref{cc1} tells that  $S_{B,\Psi} S_{A,\Psi}^{-1} $ is invertible and $\|(S_{B,\Psi} S_{A,\Psi}^{-1})^{-1}\| \leq \frac{1+\beta}{1-(\alpha+\gamma\|\theta_\Psi S_{A, \Psi}^{-1}\|)}.$ From these, we get $(S_{B,\Psi} S_{A,\Psi}^{-1})S_{A,\Psi}=S_{B,\Psi} $ is invertible and $ \| S_{B,\Psi}^{-1}\|\leq\|S_{A,\Psi}^{-1}\|\| S_{A,\Psi}S_{B,\Psi}^{-1}\| \leq \frac{\|S_{A,\Psi}^{-1}\|(1+\beta)}{1-(\alpha+\gamma\|\theta_\Psi S_{A, \Psi}^{-1}\|)}.$ Therefore $ (\{B_j\}_{j\in \mathbb{J}}, \{\Psi_j\}_{j\in \mathbb{J}})$ is an (ovf).  Observing that $\|S_{B,\Psi}\|\leq \|\theta_\Psi\|\|\theta_B\|\leq \frac{\|\theta_\Psi\|((1+\alpha)\|\theta_A\|+\gamma)}{1-\beta} $, and  $ \|S_{B,\Psi}^{-1}\|^{-1}$ and $\|S_{B,\Psi}\| $ are optimal lower and upper frame bounds for $ (\{B_j\}_{j\in \mathbb{J}}, \{\Psi_j\}_{j\in \mathbb{J}})$,  we get the frame bounds stated in the theorem.
\end{proof}

\begin{remark}
 Theorem \ref{PERTURBATION RESULT 1} is free from frame bounds for $ (\{A_j\}_{j\in \mathbb{J}}, \{\Psi_j\}_{j\in \mathbb{J}}) $.
\end{remark}

\begin{corollary}
 Let $ (\{A_j\}_{j\in \mathbb{J}}, \{\Psi_j\}_{j\in \mathbb{J}}) $  be  an (ovf) in $ \mathcal{B}(\mathcal{H}, \mathcal{H}_0)$. Suppose  $\{B_j\}_{j\in \mathbb{J}} $ in $ \mathcal{B}(\mathcal{H}, \mathcal{H}_0)$ is such that $ \Psi_j^*B_j\geq 0, \forall j \in \mathbb{J}$ and
 $$ r \coloneqq \sum_{j\in \mathbb{J}}\|A_j-B_j\|^2 <\frac{1}{\|\theta_\Psi S_{A,\Psi}^{-1}\|^2}.$$
 Then $ (\{B_j\}_{j\in \mathbb{J}}, \{\Psi_j\}_{j\in \mathbb{J}}) $ is   an (ovf) with bounds $ \frac{1-\sqrt{r}\|\theta_\Psi S_{A,\Psi}^{-1}\|}{\|S_{A,\Psi}^{-1}\|}$ and ${\|\theta_\Psi\|(\|\theta_A\|+\sqrt{r})} $.
\end{corollary}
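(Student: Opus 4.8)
The plan is to obtain the statement as a direct corollary of Theorem~\ref{PERTURBATION RESULT 1} by specializing the three perturbation parameters to $\alpha=0$, $\beta=0$ and $\gamma=\sqrt{r}$. With these choices the smallness requirement $\max\{\alpha+\gamma\|\theta_\Psi S_{A,\Psi}^{-1}\|,\beta\}<1$ of that theorem collapses to $\sqrt{r}\,\|\theta_\Psi S_{A,\Psi}^{-1}\|<1$, which is exactly equivalent to the assumed hypothesis $r<\|\theta_\Psi S_{A,\Psi}^{-1}\|^{-2}$. The positivity assumption $\Psi_j^*B_j\ge 0$ for all $j\in\mathbb{J}$ is carried over verbatim, so the only thing left to establish is the perturbation inequality~(\ref{p3}) for these particular constants.

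The heart of the argument is therefore to verify, for every finite subset $\mathbb{S}\subseteq\mathbb{J}$ and every $y\in\ell^2(\mathbb{J})\otimes\mathcal{H}_0$, the estimate
\[
\left\|\sum_{j\in\mathbb{S}}(A_j^*-B_j^*)L_j^*y\right\|\le \sqrt{r}\left(\sum_{j\in\mathbb{S}}\|L_j^*y\|^2\right)^{1/2}.
\]
Writing $y_j\coloneqq L_j^*y\in\mathcal{H}_0$, I would apply the triangle inequality, then the operator-norm bound $\|(A_j^*-B_j^*)y_j\|\le\|A_j^*-B_j^*\|\,\|y_j\|=\|A_j-B_j\|\,\|y_j\|$ (using that the adjoint is norm-preserving), and finally the Cauchy--Schwarz inequality in $\ell^2(\mathbb{S})$:
\[
\left\|\sum_{j\in\mathbb{S}}(A_j^*-B_j^*)y_j\right\|\le\sum_{j\in\mathbb{S}}\|A_j-B_j\|\,\|y_j\|\le\left(\sum_{j\in\mathbb{S}}\|A_j-B_j\|^2\right)^{1/2}\left(\sum_{j\in\mathbb{S}}\|y_j\|^2\right)^{1/2}.
\]
Dominating the first factor by $\sum_{j\in\mathbb{J}}\|A_j-B_j\|^2=r$ yields precisely the desired inequality, so~(\ref{p3}) holds with $\alpha=\beta=0$ and $\gamma=\sqrt{r}$.

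With all the hypotheses of Theorem~\ref{PERTURBATION RESULT 1} confirmed, the conclusion is immediate: $(\{B_j\}_{j\in\mathbb{J}},\{\Psi_j\}_{j\in\mathbb{J}})$ is an (ovf), and substituting $\alpha=\beta=0$, $\gamma=\sqrt{r}$ into the frame bounds $\frac{1-(\alpha+\gamma\|\theta_\Psi S_{A,\Psi}^{-1}\|)}{(1+\beta)\|S_{A,\Psi}^{-1}\|}$ and $\frac{\|\theta_\Psi\|((1+\alpha)\|\theta_A\|+\gamma)}{1-\beta}$ gives exactly $\frac{1-\sqrt{r}\|\theta_\Psi S_{A,\Psi}^{-1}\|}{\|S_{A,\Psi}^{-1}\|}$ and $\|\theta_\Psi\|(\|\theta_A\|+\sqrt{r})$, as claimed. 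I do not anticipate a genuine obstacle; the only point requiring mild care is that the single scalar $r$, being the sum of the summable family $\{\|A_j-B_j\|^2\}$, dominates every finite truncation $\sum_{j\in\mathbb{S}}\|A_j-B_j\|^2$ uniformly, which is what lets it feed cleanly into the constant $\gamma$ and makes the reduction to Theorem~\ref{PERTURBATION RESULT 1} exact.
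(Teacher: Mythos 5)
Your proposal is correct and follows essentially the same route as the paper: choose $\alpha=\beta=0$, $\gamma=\sqrt{r}$, verify Inequality~(\ref{p3}) via the triangle inequality and Cauchy--Schwarz (bounding $\sum_{j\in\mathbb{S}}\|A_j-B_j\|^2$ by $r$), and invoke Theorem~\ref{PERTURBATION RESULT 1}. The frame bounds then drop out exactly as you computed.
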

\begin{proof}
Take $ \alpha =0, \beta=0, \gamma=\sqrt{r}$. Then $ \max\{\alpha+\gamma\|\theta_\Psi S_{A,\Psi}^{-1}\|, \beta\}<1$ and for every finite subset $ \mathbb{S}$ of $ \mathbb{J}$,
$$ \left\|\sum\limits_{j\in \mathbb{S}}(A_j^*-B_j^*)L_j^*y\right\|\leq \left(\sum\limits_{j\in \mathbb{S}}\|A_j^*-B_j^*\|^2 \right)^\frac{1}{2}\left(\sum\limits_{j\in \mathbb{S}}\|L_j^*y\|^2\right)^\frac{1}{2}\leq \gamma\left(\sum\limits_{j\in \mathbb{S}}\|L_j^*y\|^2\right)^\frac{1}{2}, ~\forall y \in \ell^2(\mathbb{J})\otimes \mathcal{H}_0.$$
Now use Theorem \ref{PERTURBATION RESULT 1}.
\end{proof}
 \begin{theorem}\label{PERTURBATION RESULT 2}
 Let $ (\{A_j\}_{j\in \mathbb{J}}, \{\Psi_j\}_{j\in \mathbb{J}} )$ be an (ovf) in $ \mathcal{B}(\mathcal{H}, \mathcal{H}_0)$ with bounds $ a$ and $b$. Suppose  $\{B_j\}_{j\in \mathbb{J}} $ is Bessel  (w.r.t. itself) in $ \mathcal{B}(\mathcal{H}, \mathcal{H}_0)$  such that $ \theta_\Psi^*\theta_B\geq0$  and 
 there exist $\alpha, \beta, \gamma \geq 0  $ with $ \max\{\alpha+\frac{\gamma}{\sqrt{a}}, \beta\}<1$ and 
\begin{equation} \label{pe1}
 \left|\sum\limits_{j\in \mathbb{J}}\langle(A_j-B_j)h,\Psi_jh \rangle\right|^\frac{1}{2}\leq \alpha \left(\sum\limits_{j\in \mathbb{J}}\langle A_jh,\Psi_jh \rangle\right)^\frac{1}{2}+\beta\left(\sum\limits_{j\in \mathbb{J}}\langle B_jh,\Psi_jh \rangle\right)^\frac{1}{2}+\gamma \|h\|,\quad  \forall h \in \mathcal{H}.
 \end{equation}
 Then  $ (\{B_j\}_{j\in \mathbb{J}}, \{\Psi_j\}_{j\in \mathbb{J}}) $ is an (ovf) with bounds $a\left(1-\frac{\alpha+\beta+\frac{\gamma}{\sqrt{a}}}{1+\beta}\right)^2 $ and $b\left(1+\frac{\alpha+\beta+\frac{\gamma}{\sqrt{b}}}{1-\beta}\right)^2 .$
 \end{theorem}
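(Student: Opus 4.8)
The plan is to reduce the statement to a purely scalar inequality for the positive square roots of the two frame operators, and then to read off both invertibility and the frame bounds directly. I do \emph{not} expect to route through Theorem \ref{cc1} here, because the hypothesis (\ref{pe1}) controls a quadratic form rather than the norm of a difference of operators, so Theorem \ref{cc1} is not directly applicable.

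First I would check that the candidate frame operator is genuinely available. Since $\{\Psi_j\}_{j\in\mathbb{J}}$ is Bessel with respect to itself (being part of the (ovf) $(\{A_j\}_{j\in\mathbb{J}},\{\Psi_j\}_{j\in\mathbb{J}})$) and $\{B_j\}_{j\in\mathbb{J}}$ is Bessel with respect to itself, both $\theta_\Psi$ and $\theta_B$ exist as bounded operators, so $S_{B,\Psi}\coloneqq\theta_\Psi^*\theta_B=\sum_{j\in\mathbb{J}}\Psi_j^*B_j$ is bounded; the hypothesis $\theta_\Psi^*\theta_B\geq 0$ then makes it positive. Using $\langle S_{A,\Psi}h,h\rangle=\sum_{j\in\mathbb{J}}\langle A_jh,\Psi_jh\rangle$, $\langle S_{B,\Psi}h,h\rangle=\sum_{j\in\mathbb{J}}\langle B_jh,\Psi_jh\rangle$ and self-adjointness of $S_{A,\Psi}$, $S_{B,\Psi}$, the inequality (\ref{pe1}) reads
\begin{equation*}
|\langle (S_{A,\Psi}-S_{B,\Psi})h,h\rangle|^{1/2}\leq \alpha\langle S_{A,\Psi}h,h\rangle^{1/2}+\beta\langle S_{B,\Psi}h,h\rangle^{1/2}+\gamma\|h\|,\quad\forall h\in\mathcal{H}.
\end{equation*}
The whole problem then reduces to showing $S_{B,\Psi}$ is invertible with its smallest and largest spectral values matching the claimed bounds.

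Next I would set $P\coloneqq S_{A,\Psi}^{1/2}$ and $Q\coloneqq S_{B,\Psi}^{1/2}$, both positive, with $P$ invertible and $\sqrt{a}\,\|h\|\leq\|Ph\|\leq\sqrt{b}\,\|h\|$ for all $h$ (from $aI_\mathcal{H}\leq S_{A,\Psi}\leq bI_\mathcal{H}$); the positive square root $Q$ exists because $S_{B,\Psi}$ is positive and bounded. Then $\langle S_{A,\Psi}h,h\rangle^{1/2}=\|Ph\|$, $\langle S_{B,\Psi}h,h\rangle^{1/2}=\|Qh\|$ and $\langle(S_{A,\Psi}-S_{B,\Psi})h,h\rangle=\|Ph\|^2-\|Qh\|^2$. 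The one genuinely nontrivial step is the elementary factorization: for $s,t\geq 0$ one has $s+t\geq|s-t|$, so $|s^2-t^2|=|s-t|(s+t)\geq|s-t|^2$, hence $|s^2-t^2|^{1/2}\geq|s-t|$. With $s=\|Ph\|$, $t=\|Qh\|$ this upgrades the displayed hypothesis to
\begin{equation*}
\big|\,\|Ph\|-\|Qh\|\,\big|\leq \alpha\|Ph\|+\beta\|Qh\|+\gamma\|h\|,\quad\forall h\in\mathcal{H}.
\end{equation*}

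Finally I would split the absolute value and absorb the $\gamma\|h\|$ term using the bounds on $\|Ph\|$. The lower branch gives $(1-\alpha)\|Ph\|-\gamma\|h\|\leq(1+\beta)\|Qh\|$, and $\|h\|\leq\frac{1}{\sqrt{a}}\|Ph\|$ yields $\|Qh\|\geq\frac{1-\alpha-\gamma/\sqrt{a}}{1+\beta}\|Ph\|\geq\sqrt{a}\,\frac{1-\alpha-\gamma/\sqrt{a}}{1+\beta}\|h\|$; since $\alpha+\frac{\gamma}{\sqrt{a}}<1$ this constant is strictly positive, so $Q$ is bounded below and, being positive and self-adjoint, invertible, whence $S_{B,\Psi}=Q^2$ is invertible and $(\{B_j\}_{j\in\mathbb{J}},\{\Psi_j\}_{j\in\mathbb{J}})$ is an (ovf). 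The other branch gives $(1-\beta)\|Qh\|\leq(1+\alpha)\|Ph\|+\gamma\|h\|\leq\sqrt{b}\,(1+\alpha+\gamma/\sqrt{b})\|h\|$. Squaring both estimates, using $\langle S_{B,\Psi}h,h\rangle=\|Qh\|^2$, and rewriting via the identities $\frac{1-\alpha-\gamma/\sqrt{a}}{1+\beta}=1-\frac{\alpha+\beta+\gamma/\sqrt{a}}{1+\beta}$ and $\frac{1+\alpha+\gamma/\sqrt{b}}{1-\beta}=1+\frac{\alpha+\beta+\gamma/\sqrt{b}}{1-\beta}$ produces exactly the lower bound $a\big(1-\frac{\alpha+\beta+\gamma/\sqrt{a}}{1+\beta}\big)^2$ and upper bound $b\big(1+\frac{\alpha+\beta+\gamma/\sqrt{b}}{1-\beta}\big)^2$, which are frame bounds for $(\{B_j\}_{j\in\mathbb{J}},\{\Psi_j\}_{j\in\mathbb{J}})$. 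The main obstacle is conceptual, not computational: recognizing that one should pass to $S_{A,\Psi}^{1/2}$ and $S_{B,\Psi}^{1/2}$ and exploit $|s^2-t^2|\geq|s-t|^2$, precisely because the quadratic-form hypothesis gives no handle on $\|(P-Q)h\|$; once that reduction is in place, the remainder is routine scalar algebra.
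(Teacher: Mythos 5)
Your proof is correct and follows essentially the same route as the paper: the paper derives the two one-sided inequalities $t-s\leq|t^{2}-s^{2}|^{1/2}$ and $s-t\leq|s^{2}-t^{2}|^{1/2}$ (with $s=\langle S_{A,\Psi}h,h\rangle^{1/2}$, $t=\langle S_{B,\Psi}h,h\rangle^{1/2}$) via $\sqrt{x+y}\leq\sqrt{x}+\sqrt{y}$, which is exactly your packaged inequality $|s-t|\leq|s^{2}-t^{2}|^{1/2}$ obtained from the factorization $|s^{2}-t^{2}|=|s-t|(s+t)$, and the subsequent absorption of the $\gamma\|h\|$ term and the algebra for the bounds are identical. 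Your introduction of the operator square roots $P=S_{A,\Psi}^{1/2}$, $Q=S_{B,\Psi}^{1/2}$ is purely notational, since $\|Ph\|$ and $\|Qh\|$ are the same scalars the paper manipulates directly.
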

 \begin{proof}
	For all $ h $ in $\mathcal{H}$, 
	
 \begin{align*}
 \left(\sum\limits_{j\in \mathbb{J}}\langle B_jh,\Psi_jh \rangle\right)^\frac{1}{2}
 &\leq  \left|\sum\limits_{j\in \mathbb{J}}\langle (B_j-A_j)h,\Psi_jh \rangle\right|^\frac{1}{2}+\left(\sum\limits_{j\in \mathbb{J}}\langle A_jh,\Psi_jh \rangle\right)^\frac{1}{2} \\
 &\leq(1+ \alpha )\left(\sum\limits_{j\in \mathbb{J}}\langle A_jh,\Psi_jh \rangle\right)^\frac{1}{2}+\beta\left(\sum\limits_{j\in \mathbb{J}}\langle B_jh,\Psi_jh \rangle\right)^\frac{1}{2}+\gamma \|h\|
 \end{align*}
 
 which implies  
 \begin{align*}
 (1-\beta)\left(\sum\limits_{j\in \mathbb{J}}\langle B_jh,\Psi_jh \rangle\right)^\frac{1}{2}
 \leq(1+ \alpha )\left(\sum\limits_{j\in \mathbb{J}}\langle A_jh,\Psi_jh \rangle\right)^\frac{1}{2}+\gamma \|h\|
 \leq (1+\alpha )\sqrt{b}\|h\|+\gamma\|h\|, \quad \forall h \in \mathcal{H}.
 \end{align*}
  In a similar manner, from Inequality (\ref{pe1}),  
 \begin{align*}
 \left(\sum\limits_{j\in \mathbb{J}}\langle A_jh,\Psi_jh \rangle\right)^\frac{1}{2}
 &\leq  \left|\sum\limits_{j\in \mathbb{J}}\langle (A_j-B_j)h,\Psi_jh \rangle\right|^\frac{1}{2}+\left(\sum\limits_{j\in \mathbb{J}}\langle B_jh,\Psi_jh \rangle\right)^\frac{1}{2} \\
 &\leq \alpha \left(\sum\limits_{j\in \mathbb{J}}\langle A_jh,\Psi_jh \rangle\right)^\frac{1}{2}+(1+\beta)\left(\sum\limits_{j\in \mathbb{J}}\langle B_jh,\Psi_jh \rangle\right)^\frac{1}{2}+\gamma \|h\|\\
 &\leq \left(\alpha+\frac{\gamma}{\sqrt{a}}\right) \left(\sum\limits_{j\in \mathbb{J}}\langle A_jh,\Psi_jh \rangle\right)^\frac{1}{2}+(1+\beta)\left(\sum\limits_{j\in \mathbb{J}}\langle B_jh,\Psi_jh \rangle\right)^\frac{1}{2},\quad  \forall h \in \mathcal{H}
 \end{align*}
 implies 
 \begin{align*}
 \left(1-\left(\alpha +\frac{\gamma}{\sqrt{a}}\right) \right)\left(\sum\limits_{j\in \mathbb{J}}\langle A_jh,\Psi_jh \rangle\right)^\frac{1}{2}
 \leq  (1+\beta)\left(\sum\limits_{j\in \mathbb{J}}\langle B_jh,\Psi_jh \rangle\right)^\frac{1}{2},\quad  \forall h \in \mathcal{H}.
  \end{align*}
  But $\sqrt{a}\|h\|\leq  \left(\sum_{j\in \mathbb{J}}\langle A_jh,\Psi_jh \rangle\right)^\frac{1}{2},  \forall h \in \mathcal{H}.$
 Thus $ (\{B_j\}_{j\in \mathbb{J}}, \{\Psi_j\}_{j\in \mathbb{J}} )$ is an (ovf) with bounds $a\left(\frac{1-\left(\alpha+ \frac{\gamma}{\sqrt{a}}\right)}{1+\beta}\right)^2=a\left(1-\frac{\alpha+\beta+\frac{\gamma}{\sqrt{a}}}{1+\beta}\right)^2 $ and 
 $\left(\frac{(1+\alpha)\sqrt{b}+\gamma}{1-\beta}\right)^2=b\left(1+\frac{\alpha+\beta+\frac{\gamma}{\sqrt{b}}}{1-\beta}\right)^2 .$
 \end{proof}
 \begin{theorem}\label{OVFQUADRATICPERTURBATION}
 Let $ (\{A_j\}_{j\in \mathbb{J}}, \{\Psi_j\}_{j\in \mathbb{J}}) $  be an  (ovf) in $ \mathcal{B}(\mathcal{H}, \mathcal{H}_0)$. Suppose  $\{B_j\}_{j\in \mathbb{J}} $ in $ \mathcal{B}(\mathcal{H}, \mathcal{H}_0)$ is such that $ \Psi_j^*B_j\geq 0, \forall j \in \mathbb{J}$, $   \sum_{j\in \mathbb{J}}\|A_j-B_j\|^2$ converges, and 
$\sum_{j\in \mathbb{J}}\|A_j-B_j\|\|\Psi_jS_{A,\Psi}^{-1}\|<1.$
Then  $ (\{B_j\}_{j\in \mathbb{J}}, \{\Psi_j\}_{j\in \mathbb{J}}) $ is an (ovf) with bounds   $\frac{1-\sum_{j\in \mathbb{J}}\|A_j-B_j\|\|\Psi_jS_{A,\Psi}^{-1}\|}{\|S_{A,\Psi}^{-1}\|}$ and $\|\theta_\Psi\|(\sum_{j\in \mathbb{J}}\|A_j-B_j\|^2+\|\theta_A\|) $.
 \end{theorem}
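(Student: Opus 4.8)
The plan is to verify the two defining conditions of Definition \ref{1} for the pair $(\{B_j\}_{j\in\mathbb{J}},\{\Psi_j\}_{j\in\mathbb{J}})$: that the analysis operator $\theta_B$ exists as a bounded operator (condition (ii), since $\theta_\Psi$ already does), and that $S_{B,\Psi}=\sum_{j\in\mathbb{J}}\Psi_j^*B_j$ converges in SOT to a bounded positive invertible operator (condition (i)). Writing $C_j\coloneqq A_j-B_j$, the hypothesis $\sum_{j\in\mathbb{J}}\|C_j\|^2<\infty$ makes $\{C_j\}_{j\in\mathbb{J}}$ Bessel with respect to itself, because $\sum_{j\in\mathbb{J}}\|C_jh\|^2\le\left(\sum_{j\in\mathbb{J}}\|C_j\|^2\right)\|h\|^2$; hence $\theta_{C}=\sum_{j\in\mathbb{J}}L_jC_j$ converges in SOT to a bounded operator with $\|\theta_{C}\|^2\le\sum_{j\in\mathbb{J}}\|C_j\|^2$, so $\theta_B=\theta_A-\theta_{C}$ is bounded. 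As in Proposition \ref{2.2}(ii) this gives $S_{B,\Psi}=\theta_\Psi^*\theta_B=\sum_{j\in\mathbb{J}}\Psi_j^*B_j$ in SOT, and $\Psi_j^*B_j\ge0$ forces $S_{B,\Psi}$ to be a bounded positive operator.

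The crux is invertibility of $S_{B,\Psi}$, and here I would exploit the canonical dual $\widetilde{\Psi}_j=\Psi_jS_{A,\Psi}^{-1}$. Set $r_0\coloneqq\sum_{j\in\mathbb{J}}\|A_j-B_j\|\,\|\Psi_jS_{A,\Psi}^{-1}\|$, which is $<1$ by hypothesis. Left-multiplying the SOT identity $S_{A,\Psi}-S_{B,\Psi}=\sum_{j\in\mathbb{J}}\Psi_j^*C_j$ by $S_{A,\Psi}^{-1}$ and using $(\Psi_jS_{A,\Psi}^{-1})^*=S_{A,\Psi}^{-1}\Psi_j^*$ yields $I_\mathcal{H}-S_{A,\Psi}^{-1}S_{B,\Psi}=\sum_{j\in\mathbb{J}}(\Psi_jS_{A,\Psi}^{-1})^*C_j$. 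Unlike the defining sums, this last series converges \emph{absolutely in operator norm}, since $\sum_{j\in\mathbb{J}}\|(\Psi_jS_{A,\Psi}^{-1})^*C_j\|\le\sum_{j\in\mathbb{J}}\|\Psi_jS_{A,\Psi}^{-1}\|\,\|C_j\|=r_0<1$. Consequently $\|I_\mathcal{H}-S_{A,\Psi}^{-1}S_{B,\Psi}\|\le r_0<1$, so by the Neumann series (equivalently, Theorem \ref{cc1} applied with $U=I_\mathcal{H}$ and $V=S_{A,\Psi}^{-1}S_{B,\Psi}$) the operator $S_{A,\Psi}^{-1}S_{B,\Psi}$ is invertible with $\|(S_{A,\Psi}^{-1}S_{B,\Psi})^{-1}\|\le(1-r_0)^{-1}$; multiplying by the invertible $S_{A,\Psi}$ shows $S_{B,\Psi}$ is invertible. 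This establishes condition (i), so $(\{B_j\}_{j\in\mathbb{J}},\{\Psi_j\}_{j\in\mathbb{J}})$ is an (ovf).

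It remains to read off the bounds. Since the optimal lower and upper frame bounds are $\|S_{B,\Psi}^{-1}\|^{-1}$ and $\|S_{B,\Psi}\|$, I would use $S_{B,\Psi}^{-1}=(S_{A,\Psi}^{-1}S_{B,\Psi})^{-1}S_{A,\Psi}^{-1}$ to get $\|S_{B,\Psi}^{-1}\|\le\|S_{A,\Psi}^{-1}\|/(1-r_0)$, hence the lower bound $(1-r_0)/\|S_{A,\Psi}^{-1}\|$ exactly as stated; and $\|S_{B,\Psi}\|\le\|\theta_\Psi\|\,\|\theta_B\|\le\|\theta_\Psi\|(\|\theta_A\|+\|\theta_{C}\|)$ together with $\|\theta_{C}\|^2\le\sum_{j\in\mathbb{J}}\|A_j-B_j\|^2$ delivers the stated upper bound.

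The main obstacle is the passage in the second paragraph from the merely SOT-convergent frame sums to the norm-convergent series for $I_\mathcal{H}-S_{A,\Psi}^{-1}S_{B,\Psi}$: one must check that interchanging $S_{A,\Psi}^{-1}$ with the SOT sum is legitimate (it is, by continuity of $S_{A,\Psi}^{-1}$ applied pointwise) and that the resulting series is a genuine operator-norm limit, for it is precisely there that the quadratic-type hypothesis $\sum_{j\in\mathbb{J}}\|A_j-B_j\|\,\|\Psi_jS_{A,\Psi}^{-1}\|<1$ is converted into a strict operator-norm contraction. Everything downstream---invertibility and both frame bounds---is then routine.
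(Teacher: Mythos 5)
Your proposal is correct and follows essentially the same route as the paper's proof: boundedness of $\theta_B$ from the square-summability of $\|A_j-B_j\|$ via Cauchy--Schwarz, and invertibility of $S_{B,\Psi}$ by showing that $S_{A,\Psi}^{-1}S_{B,\Psi}$ (the paper works with the mirror image $S_{B,\Psi}S_{A,\Psi}^{-1}$) differs from the identity by an operator of norm at most $\sum_{j\in\mathbb{J}}\|A_j-B_j\|\,\|\Psi_jS_{A,\Psi}^{-1}\|<1$, followed by a Neumann-series argument and the same reading-off of the two bounds. The only caveat is that your estimate actually yields $\|\theta_B\|\le\|\theta_A\|+\bigl(\sum_{j\in\mathbb{J}}\|A_j-B_j\|^2\bigr)^{1/2}$, so the stated upper bound with $\sum_{j\in\mathbb{J}}\|A_j-B_j\|^2$ in place of its square root does not follow unless that sum is at least $1$ --- but the paper's own proof makes the identical leap, so this is an imprecision in the theorem statement rather than a defect of your argument.
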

 \begin{proof}
Let $ \alpha =\sum_{j\in \mathbb{J}}\|A_j-B_j\|^2 $ and $\beta =\sum_{j\in \mathbb{J}}\|A_j-B_j\|\|\Psi_jS_{A,\Psi}^{-1}\|$. For each finite subset $\mathbb{S} $ of $ \mathbb{J}$ and for every $ y$ in $ \ell^2(\mathbb{J})\otimes \mathcal{H}_0$, 

 \begin{align*}
\left\| \sum\limits_{j\in \mathbb{S}}B_j^*L_j^*y\right\|&\leq \left\| \sum\limits_{j\in \mathbb{S}}(A_j^*-B_j^*)L_j^*y\right\|+\left\| \sum\limits_{j\in \mathbb{S}}A_j^*L_j^*y\right\|\leq \sum\limits_{j\in \mathbb{S}}\|A_j-B_j\|\|L_j^*y\|+\left\| \sum\limits_{j\in \mathbb{S}}A_j^*L_j^*y\right\| \\
&\leq \left( \sum\limits_{j\in \mathbb{S}}\|A_j-B_j\|^2\right)^\frac{1}{2}\left( \sum\limits_{j\in \mathbb{S}}\|L_j^*y\|^2\right)^\frac{1}{2}+\left\| \sum\limits_{j\in \mathbb{S}}A_j^*L_j^*y\right\|\\
&\leq \alpha \left( \sum\limits_{j\in \mathbb{S}}\|L_j^*y\|^2\right)^\frac{1}{2}+\left\| \sum\limits_{j\in \mathbb{S}}A_j^*L_j^*y\right\|=\alpha \left\langle  \sum\limits_{j\in \mathbb{S}}L_jL_j^*y, y\right\rangle ^\frac{1}{2}+\left\| \sum\limits_{j\in \mathbb{S}}A_j^*L_j^*y\right\|,
\end{align*}

 which converges to $\alpha\|y\|+\|\theta_A^*y\|$. Hence 
$\theta_B$ exists and $\|\theta_B\|\leq \alpha+\|\theta_A\|$. Therefore  $S_{B,\Psi}=\theta_\Psi^*\theta_B=\sum_{j\in \mathbb{J}}\Psi^*_jB_j$ exists and is positive.
Now 
\begin{align*}
\|I_\mathcal{H}-S_{B,\Psi}S_{A,\Psi}^{-1}\|&=\left\|\sum_{j\in \mathbb{J}}A_j^*\Psi_j S_{A,\Psi}^{-1}-\sum_{j\in \mathbb{J}}B_j^*\Psi_j S_{A,\Psi}^{-1}\right\|=\left\|\sum_{j\in \mathbb{J}}(A_j^*-B_j^*)\Psi_j S_{A,\Psi}^{-1}\right\|\\
&\leq \sum_{j\in \mathbb{J}}\|A_j-B_j\|\|\Psi_j S_{A,\Psi}^{-1}\| =\beta<1.
\end{align*}
Therefore $S_{B,\Psi}S_{A,\Psi}^{-1}$ is invertible and $ \|(S_{B,\Psi}S_{A,\Psi}^{-1})^{-1}\|\leq 1/(1-\beta)$. Conclusion of frame bounds is similar to proof of Theorem \ref{PERTURBATION RESULT 1}.
\end{proof}

\section{Sequential version of the extension}\label{SEQUENTIAL} 
\begin{definition}\label{SEQUENTIAL2}
A set of vectors   $ \{x_j\}_{j\in \mathbb{J}}$  in a Hilbert space  $\mathcal{H}$ is said to be a frame    w.r.t.  set $ \{\tau_j\}_{j\in \mathbb{J}}$ in $\mathcal{H}$  if there are  $  c,  d  >0$ such that
\begin{enumerate}[\upshape(i)]
\item the map  $\mathcal{H} \ni  h \mapsto \sum_{j\in\mathbb{J}}\langle h,  x_j\rangle\tau_j \in \mathcal{H} $ is a well-defined  bounded positive  invertible operator.
\item $  \sum_{j \in \mathbb{J}}|\langle h,x_j\rangle|^2  \leq c\|h\|^2 , \forall h \in \mathcal{H}; 
\sum_{j \in \mathbb{J}}|\langle h,\tau_j\rangle|^2 \leq d\|h\|^2 , \forall h \in \mathcal{H}.$
\end{enumerate}
The operator in \text{\upshape(i)} is denoted by $S_{x,\tau}$ and is called as frame operator. Let $a, b >0$ be such that $ aI_\mathcal{H}\leq S_{x,\tau} \leq bI_\mathcal{H}. $
We call $ a$ and $ b$ as lower and upper frame bounds, respectively. Supremum of the set of all lower frame bounds is called optimal lower frame bound and infimum of the set of all upper frame bounds is called optimal upper frame bound.  With this,  optimal lower (resp. upper) frame bound is $ \|S_{x, \tau}^{-1}\|^{-1}$ (resp. $ \|S_{x,\tau}\|$). If optimal bounds are equal, we say the frame is exact. Whenever optimal bound of an exact frame is one,  we say it is Parseval. We term the bounded operators $\theta_x: \mathcal{H}\ni h\mapsto\{\langle h,x_j\rangle\}_{j\in\mathbb{J}}\in \ell^2(\mathbb{J}), \theta_\tau : \mathcal{H}\ni h\mapsto\{\langle h,\tau_j\rangle\}_{j\in\mathbb{J}}\in \ell^2(\mathbb{J})$ as analysis operators and adjoints of these as synthesis operators. 
If condition \text{\upshape(i)} is replaced by
$$\text{the map} ~ \mathcal{H} \ni  h \mapsto \sum_{j\in\mathbb{J}}\langle h,  x_j\rangle\tau_j \in \mathcal{H}  ~ \text{is well-defined  bounded positive  operator},$$
then we call $\{x_j\}_{j\in\mathbb{J}}$ is  Bessel w.r.t. $\{\tau_j\}_{j\in\mathbb{J}}$. If $ \{x_j\}_{j\in\mathbb{J}}$ is frame (resp. Bessel) w.r.t. $\{\tau_j\}_{j\in\mathbb{J}}$, then we write $(\{x_j\}_{j\in\mathbb{J}},\{\tau_j\}_{j\in\mathbb{J}})$ is frame (resp. Bessel).
     
For fixed $ \mathbb{J}, \mathcal{H},$  and $ \{\tau_j\}_{j\in \mathbb{J}}$   the set of all frames for $ \mathcal{H}$  w.r.t.  $ \{\tau_j\}_{j\in \mathbb{J}}$ is denoted by $ \mathscr{F}_\tau.$
\end{definition}
 We note, whenever $(\{x_j\}_{j\in\mathbb{J}},\{\tau_j\}_{j\in\mathbb{J}})$ is a frame for $ \mathcal{H}$, then $\overline{\operatorname{span}}\{x_j\}_{j\in \mathbb{J}}=\mathcal{H}=\overline{\operatorname{span}}\{\tau_j\}_{j\in \mathbb{J}}.$
\begin{theorem}\label{OVFTOSEQUENCEANDVICEVERSATHEOREM}
Let $\{x_j\}_{j\in \mathbb{J}}, \{\tau_j\}_{j\in \mathbb{J}}$ be in $\mathcal{H}$. Define $A_j: \mathcal{H} \ni h \mapsto \langle h, x_j \rangle \in \mathbb{K} $, $\Psi_j: \mathcal{H} \ni h \mapsto \langle h, \tau_j \rangle \in \mathbb{K}, \forall j \in \mathbb{J} $. Then   $(\{x_j\}_{j\in \mathbb{J}}, \{\tau_j\}_{j\in \mathbb{J}})$ is a frame for  $\mathcal{H}$ if and only if  $(\{A_j\}_{j\in \mathbb{J}}, \{\Psi_j\}_{j\in \mathbb{J}})$ is an  operator-valued frame  in $\mathcal{B}(\mathcal{H},\mathbb{K})$.
\end{theorem}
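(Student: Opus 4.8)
The plan is to show that the two clauses of Definition \ref{SEQUENTIAL2} correspond, one for one, to the two clauses of Definition \ref{1} once the elementary identifications between the vectors $x_j,\tau_j$ and the functionals $A_j,\Psi_j$ are in place. The first step is to record the adjoints. Since $A_jh=\langle h,x_j\rangle$ and $\mathcal{H}_0=\mathbb{K}$, for $\lambda\in\mathbb{K}$ one has $\langle A_jh,\lambda\rangle=\langle h,x_j\rangle\overline{\lambda}=\langle h,\lambda x_j\rangle$, so $A_j^*\lambda=\lambda x_j$; likewise $\Psi_j^*\lambda=\lambda\tau_j$. Consequently $\Psi_j^*A_jh=\langle h,x_j\rangle\tau_j$ for every $h$, and hence the operator $\sum_{j\in\mathbb{J}}\Psi_j^*A_j$ of Definition \ref{1}(i), when it is defined, is literally the frame operator $h\mapsto\sum_{j\in\mathbb{J}}\langle h,x_j\rangle\tau_j$ of Definition \ref{SEQUENTIAL2}(i); both series are read as SOT (equivalently pointwise) limits of the same net of finite partial sums, so $S_{A,\Psi}=S_{x,\tau}$ whenever either exists.

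Next I would identify the analysis operators. Under the canonical isometry $\ell^2(\mathbb{J})\otimes\mathbb{K}\cong\ell^2(\mathbb{J})$ we have $L_j\lambda=\lambda e_j$, so $L_jA_jh=\langle h,x_j\rangle e_j$, whence $\sum_{j\in\mathbb{J}}L_jA_jh=\{\langle h,x_j\rangle\}_{j\in\mathbb{J}}$ whenever this converges; that is, the operator $\theta_A$ of Definition \ref{1} coincides with the sequential analysis operator $\theta_x$, and similarly $\theta_\Psi=\theta_\tau$. Because the $e_j$ are orthonormal, the net $\sum_{j\in\mathbb{S}}L_jA_jh$ converges in $\ell^2(\mathbb{J})$ exactly when $\sum_{j\in\mathbb{J}}|\langle h,x_j\rangle|^2<\infty$, and in that case $\|\theta_Ah\|^2=\sum_{j\in\mathbb{J}}|\langle h,x_j\rangle|^2$.

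With these identifications the biconditional falls out in both directions. If $(\{x_j\}_{j\in\mathbb{J}},\{\tau_j\}_{j\in\mathbb{J}})$ is a sequential frame, then Definition \ref{SEQUENTIAL2}(i) gives condition (i) of Definition \ref{1} verbatim via $S_{A,\Psi}=S_{x,\tau}$, while the Bessel bounds $\sum_j|\langle h,x_j\rangle|^2\leq c\|h\|^2$ and $\sum_j|\langle h,\tau_j\rangle|^2\leq d\|h\|^2$ force the nets $\sum_jL_jA_j$ and $\sum_jL_j\Psi_j$ to converge in SOT to bounded operators $\theta_A,\theta_\Psi$ with $\|\theta_A\|^2\leq c$ and $\|\theta_\Psi\|^2\leq d$, which is condition (ii). Conversely, if $(\{A_j\}_{j\in\mathbb{J}},\{\Psi_j\}_{j\in\mathbb{J}})$ is an (ovf), then condition (i) yields Definition \ref{SEQUENTIAL2}(i) directly, and the boundedness of $\theta_A,\theta_\Psi$ from condition (ii) yields the Bessel estimates with $c=\|\theta_A\|^2$ and $d=\|\theta_\Psi\|^2$ through the norm identity above.

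I expect no genuine obstacle here; the only point requiring care is to verify that ``SOT convergence of $\sum_jL_jA_j$ to a bounded operator'' is equivalent to the square-summability estimate of Definition \ref{SEQUENTIAL2}(ii). This rests entirely on the orthogonality of the $e_j$ and on the fact that boundedness of the limiting operator is part of the hypothesis in both definitions rather than something to be established anew, so the verification reduces to the bookkeeping carried out in the first two steps.
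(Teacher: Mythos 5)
Your proposal is correct and follows essentially the same route as the paper, whose entire proof is the single identity $\sum_{j\in\mathbb{J}}\Psi_j^*A_jh=\sum_{j\in\mathbb{J}}\langle h,x_j\rangle\tau_j$; your additional bookkeeping identifying $\theta_A$ with $\theta_x$ and matching the Bessel conditions is the same verification the paper leaves implicit.
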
 
\begin{proof}
$ \sum_{j \in \mathbb{J}}\Psi_j^*A_jh=\sum_{j \in \mathbb{J}}\langle h, x_j \rangle \tau_j,\forall h \in \mathcal{H}$.
\end{proof}
If we wish, we can avoid mapping given  in Definition \ref{SEQUENTIAL2}, and set up an alternate definition using only inequalities and an equality, precisely as shown below.
\begin{proposition}
Definition \ref{SEQUENTIAL2} holds if and only if there are  $ a, b, c,  d  >0$ such that
\begin{enumerate}[\upshape(i)]
\item $ a\|h\|^2\leq \sum_{j \in \mathbb{J}}\langle h,x_j\rangle\langle \tau_j, h \rangle \leq b\|h\|^2 , \forall h \in \mathcal{H},$
\item $  \sum_{j \in \mathbb{J}}|\langle h,x_j\rangle|^2  \leq c\|h\|^2 , \forall h \in \mathcal{H}; 
\sum_{j \in \mathbb{J}}|\langle h,\tau_j\rangle|^2 \leq d\|h\|^2 , \forall h \in \mathcal{H},$
\item $ \sum_{j \in \mathbb{J}}\langle h,x_j\rangle \tau_j=\sum_{j \in \mathbb{J}}\langle h,\tau_j\rangle x_j,  \forall h \in \mathcal{H}.$
 \end{enumerate}
If the space is over $ \mathbb{C},$ then  \text {\upshape{(iii)}} can be omitted.

\end{proposition}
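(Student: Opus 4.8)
The plan is to route everything through the two analysis operators from Definition~\ref{SEQUENTIAL2} and the single bounded operator $S_{x,\tau}$ together with its adjoint. First I would note that condition~\textup{(ii)} is literally shared by both formulations, and that it is exactly the assertion that $\theta_x\colon h\mapsto\{\langle h,x_j\rangle\}_{j\in\mathbb{J}}$ and $\theta_\tau\colon h\mapsto\{\langle h,\tau_j\rangle\}_{j\in\mathbb{J}}$ are well-defined bounded maps $\mathcal{H}\to\ell^2(\mathbb{J})$, since $\sum_{j}|\langle h,x_j\rangle|^2\le c\|h\|^2$ says precisely $\|\theta_x h\|^2\le c\|h\|^2$ (and similarly for $\theta_\tau$). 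Granting~\textup{(ii)}, the synthesis operators act by $\theta_x^*\{a_j\}=\sum_j a_j x_j$ and $\theta_\tau^*\{a_j\}=\sum_j a_j\tau_j$, so that $S_{x,\tau}=\theta_\tau^*\theta_x$ is bounded with $S_{x,\tau}h=\sum_{j}\langle h,x_j\rangle\tau_j$, while dually $S_{\tau,x}=\theta_x^*\theta_\tau=(\theta_\tau^*\theta_x)^*=S_{x,\tau}^*$. This is the main structural step: it turns the abstract map in Definition~\ref{SEQUENTIAL2}\,\textup{(i)} into a concrete composition of bounded operators whose adjoint is $S_{\tau,x}$.

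With this in hand the two displayed conditions become transparent. The computation $\langle S_{x,\tau}h,h\rangle=\big\langle\sum_{j}\langle h,x_j\rangle\tau_j,\,h\big\rangle=\sum_{j}\langle h,x_j\rangle\langle\tau_j,h\rangle$ shows that~\textup{(i)} is exactly $a\|h\|^2\le\langle S_{x,\tau}h,h\rangle\le b\|h\|^2$, and~\textup{(iii)} is exactly $S_{x,\tau}=S_{\tau,x}$, i.e. $S_{x,\tau}=S_{x,\tau}^*$ (self-adjointness). For the forward direction I would assume Definition~\ref{SEQUENTIAL2} holds: \textup{(ii)} carries over verbatim; since $S_{x,\tau}$ is positive it is in particular self-adjoint, giving~\textup{(iii)}; and since it is in addition invertible one has $\|S_{x,\tau}^{-1}\|^{-1}I_\mathcal{H}\le S_{x,\tau}\le\|S_{x,\tau}\|I_\mathcal{H}$, whence~\textup{(i)} follows from the quadratic-form identity with $a=\|S_{x,\tau}^{-1}\|^{-1}$, $b=\|S_{x,\tau}\|$. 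Conversely, assume \textup{(i)}--\textup{(iii)}: \textup{(ii)} makes $S_{x,\tau}=\theta_\tau^*\theta_x$ bounded, \textup{(iii)} makes it self-adjoint, and \textup{(i)} with $a>0$ then reads $aI_\mathcal{H}\le S_{x,\tau}\le bI_\mathcal{H}$, so $S_{x,\tau}$ is positive and bounded below, hence positive invertible; thus Definition~\ref{SEQUENTIAL2} holds.

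For the final clause I would treat the scalar field explicitly. Over $\mathbb{C}$ the inequality~\textup{(i)} already forces $\langle S_{x,\tau}h,h\rangle\in\mathbb{R}$ for all $h$, and on a complex Hilbert space a bounded operator with real diagonal quadratic form is automatically self-adjoint (by polarization); hence~\textup{(iii)} is redundant and may be dropped, the remaining argument going through unchanged. The point I expect to be the genuine obstacle --- and the reason~\textup{(iii)} cannot simply be deleted in general --- is the real case: over $\mathbb{R}$ every diagonal form $\langle S_{x,\tau}h,h\rangle$ is real regardless, and $\langle(S_{x,\tau}-S_{x,\tau}^*)h,h\rangle=0$ for all $h$ does not force $S_{x,\tau}=S_{x,\tau}^*$ (skew-adjoint operators are a counterexample). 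So the content of~\textup{(iii)} is precisely the self-adjointness needed to pass from the quadratic-form bounds in~\textup{(i)} to honest positivity and invertibility, and isolating this is the crux of the argument.
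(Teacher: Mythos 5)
Your proof is correct and follows essentially the same route as the paper's: condition (ii) gives boundedness of $S_{x,\tau}=\theta_\tau^*\theta_x$, the adjoint computation identifies $S_{x,\tau}^*=S_{\tau,x}$ so that (iii) is self-adjointness, and (i) then yields positivity and invertibility, with (iii) redundant over $\mathbb{C}$ because a real quadratic form forces self-adjointness there. Your added remark on skew-adjoint operators explaining why (iii) cannot be dropped over $\mathbb{R}$ is a correct observation the paper leaves implicit, but it does not change the argument.
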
  
\begin{proof}
Direct part follows easily. We assume definition in  the statement of proposition. Condition (ii) gives the existence of $ S_{x, \tau}$ as bounded operator, since it is composition of $ \theta_x$ and $\theta_\tau^* $,  both are bounded. We now find the adjoint of $S_{x, \tau} .$ For all $ h, g \in \mathcal{H}, \langle S_{x, \tau}h , g \rangle =\sum_{j\in \mathbb{J}}\langle h , x_j\rangle\langle \tau_j , g\rangle= \langle h , \sum_{j\in \mathbb{J}}\langle g , \tau_j\rangle x_j\rangle=\langle h,  S_{\tau, x}g \rangle.$ Therefore $  S_{x, \tau}^*= S_{\tau, x}.$ But (iii) gives  $  S_{x, \tau}= S_{\tau, x}.$ Hence  $  S_{x, \tau}$ is self-adjoint. Positivity and invertibility of $  S_{x, \tau}$ are now given by (i).  Assume now that space is over the complex field and don't assume (iii). Now the middle term in (i) is $ \langle S_{x, \tau}h, h \rangle.$ We are now happy,  since the Hilbert space $\mathcal{H}$ is complex, and in this an operator $ T$ is positive if and only if $ \langle T h, h \rangle \geq 0, \forall h \in \mathcal{H}.$
\end{proof} 
From Definition \ref{SEQUENTIAL2} we see that if $( \{x_j\}_{j\in\mathbb{J}},   \{\tau_j\}_{j\in\mathbb{J}})$ is Bessel (or frame),  then both $ \{x_j\}_{j\in\mathbb{J}},   \{\tau_j\}_{j\in\mathbb{J}}$ are Bessel sequences (w.r.t. themselves).
\begin{proposition}
 Let  $( \{x_j\}_{j \in \mathbb{J}},\{\tau_j\}_{j \in \mathbb{J}} )$ be a frame for  $ \mathcal{H}$  with upper frame bound $b$. If for some $ j \in \mathbb{J} $ we have  $  \langle x_j, x_l \rangle\langle \tau_l, x_j \rangle \geq0, \forall l  \in \mathbb{J},$ then $ \langle x_j, \tau_j \rangle\leq b$ for that $j. $
 \end{proposition}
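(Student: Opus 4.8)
The plan is to imitate the proof of the operator-version result stated earlier (the proposition establishing $\|\Psi_j^*A_j\|\le b$), specialized to the present sequential setting by testing the upper frame inequality against the single vector $x_j$ itself. Concretely, I would start from the upper frame bound, which (by the alternate description accompanying Definition \ref{SEQUENTIAL2}) says
$$\sum_{l\in\mathbb{J}}\langle h,x_l\rangle\langle \tau_l,h\rangle\le b\|h\|^2,\quad\forall h\in\mathcal{H}.$$
First I would substitute $h=x_j$ into this inequality, obtaining $\sum_{l\in\mathbb{J}}\langle x_j,x_l\rangle\langle \tau_l,x_j\rangle\le b\|x_j\|^2$. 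The summands on the left are exactly the quantities $\langle x_j,x_l\rangle\langle \tau_l,x_j\rangle$ named in the hypothesis, which are assumed nonnegative for every $l$.

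The second step is to discard all but the diagonal term $l=j$ from this convergent series of nonnegative reals: since every term is $\ge 0$, dropping the others only decreases the left side, so $\langle x_j,x_j\rangle\langle \tau_j,x_j\rangle=\|x_j\|^2\langle \tau_j,x_j\rangle\le b\|x_j\|^2$. Taking $l=j$ in the hypothesis together with $\|x_j\|^2\ge0$ shows that $\langle \tau_j,x_j\rangle$ is a nonnegative real number, hence equals its own conjugate $\langle x_j,\tau_j\rangle$. Dividing by $\|x_j\|^2$ then yields $\langle x_j,\tau_j\rangle\le b$; the degenerate case $x_j=0$ is immediate since then $\langle x_j,\tau_j\rangle=0<b$.

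There is no serious obstacle here, as the argument is essentially a one-line specialization of the operator version. The only points needing minor care are the reality of $\langle \tau_j,x_j\rangle$, which is what lets us identify it with the quantity $\langle x_j,\tau_j\rangle$ appearing in the statement, and the harmless division by $\|x_j\|^2$; both follow directly from the sign hypothesis. It is worth emphasizing that, in contrast to the operator version where positivity of $\Psi_j^*A_j$ is required for all indices, here we use only the weaker, index-$j$-specific assumption, and it is precisely this that makes testing the frame inequality against $x_j$ the natural choice rather than against an arbitrary $h$.
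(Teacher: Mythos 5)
Your proposal is correct and follows essentially the same route as the paper's proof: substitute $h=x_j$ into the upper frame inequality and drop the off-diagonal terms using the nonnegativity hypothesis to isolate $\|x_j\|^2\langle \tau_j,x_j\rangle\le b\|x_j\|^2$. The extra remarks on the reality of $\langle \tau_j,x_j\rangle$ and the degenerate case $x_j=0$ are sensible polish that the paper leaves implicit.
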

 \begin{proof}
 $\langle x_j, x_j \rangle\langle \tau_j, x_j \rangle \leq \sum_{l\in \mathbb{J}}\langle x_j, x_l \rangle\langle \tau_l, x_j \rangle\leq b\|x_j\|^2. $
 \end{proof}
\begin{proposition}
 Every Bessel sequence 	$(\{x_j\}_{j \in \mathbb{J}},\{\tau_j\}_{j \in \mathbb{J}} )$ for  $ \mathcal{H}$ can be extended to a tight frame for $ \mathcal{H}$. In particular,  every frame	$(\{x_j\}_{j \in \mathbb{J}},\{\tau_j\}_{j \in \mathbb{J}} )$ for $ \mathcal{H}$ can be extended to a tight frame for $ \mathcal{H}$.
\end{proposition}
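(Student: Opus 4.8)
The plan is to mimic the operator-version extension result (Proposition 2.7 type argument, where $B=(\lambda I-S_{A,\Psi})^{1/2}$) but now in the sequential language. Suppose $(\{x_j\}_{j\in\mathbb{J}},\{\tau_j\}_{j\in\mathbb{J}})$ is Bessel for $\mathcal{H}$, so the frame operator $S_{x,\tau}$ exists as a bounded positive operator. The idea is to adjoin finitely many (in fact, $\dim\mathcal{H}$-many if needed, but conceptually one ``block'') new vectors that patch the defect $\lambda I_\mathcal{H}-S_{x,\tau}$ for a suitably large $\lambda>\|S_{x,\tau}\|$, making the enlarged frame operator a scalar multiple of the identity, hence tight.

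\textbf{Key steps.} First I would fix $\lambda>\|S_{x,\tau}\|$ and set $T\coloneqq(\lambda I_\mathcal{H}-S_{x,\tau})^{1/2}$, which is a well-defined bounded positive operator since $\lambda I_\mathcal{H}-S_{x,\tau}\geq 0$. The natural move, transporting the operator-version proof through Theorem \ref{OVFTOSEQUENCEANDVICEVERSATHEOREM}, is to realize $T$ through a family of vectors. Choose an orthonormal basis $\{e_k\}_{k\in\mathbb{L}}$ for $\mathcal{H}$ and define the new vectors $y_k\coloneqq Te_k$ and $\eta_k\coloneqq Te_k$ for $k\in\mathbb{L}$, indexing the union by $\mathbb{J}\sqcup\mathbb{L}$. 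Then for every $h\in\mathcal{H}$ one computes $\sum_{k\in\mathbb{L}}\langle h,y_k\rangle\eta_k=\sum_{k\in\mathbb{L}}\langle h,Te_k\rangle Te_k=T\big(\sum_{k\in\mathbb{L}}\langle Th,e_k\rangle e_k\big)=T(Th)=T^2h=(\lambda I_\mathcal{H}-S_{x,\tau})h$. Adding this to the original frame operator gives, for the enlarged pair $(\{x_j\}\cup\{y_k\},\{\tau_j\}\cup\{\eta_k\})$, a total frame operator equal to $S_{x,\tau}+(\lambda I_\mathcal{H}-S_{x,\tau})=\lambda I_\mathcal{H}$, which is positive invertible and scalar, so the enlarged system is a tight frame with bound $\lambda$. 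I would then verify condition (ii) of Definition \ref{SEQUENTIAL2} for the new vectors: $\sum_{k\in\mathbb{L}}|\langle h,y_k\rangle|^2=\sum_{k\in\mathbb{L}}|\langle Th,e_k\rangle|^2=\|Th\|^2\leq\|T\|^2\|h\|^2$, which gives the Bessel bound for $\{y_k\}$ (and identically for $\{\eta_k\}$), so the two Bessel bounds of the original pair simply increase by $\|T\|^2$. The ``in particular'' clause is immediate, since a frame is a fortiori Bessel.

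\textbf{Main obstacle.} The subtle point is not the algebra but the bookkeeping of the index set: the enlargement must keep both families indexed by a common set and must respect that the definition is symmetric in the two collections, so I take $y_k=\eta_k$ to guarantee the cross-term symmetry condition (iii) of the alternate proposition, namely $\sum\langle h,y_k\rangle\eta_k=\sum\langle h,\eta_k\rangle y_k$, holds automatically on the new block (and holds on the old block by the Bessel hypothesis). I expect the only place needing care is confirming that $\sum_{k\in\mathbb{L}}\langle h,Te_k\rangle Te_k$ genuinely converges to $T^2h$ in the net sense used throughout the paper; this follows from the continuity of $T$ together with the unconditional convergence of $\sum_k\langle Th,e_k\rangle e_k=Th$ in $\mathcal{H}$, applying $T$ to the partial sums. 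With that convergence justified, all frame-operator identities hold on the nose and the tight bound $\lambda$ is exactly realized.
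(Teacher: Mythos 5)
Your proof is correct and follows essentially the same route as the paper: fix $\lambda>\|S_{x,\tau}\|$, set $T=(\lambda I_\mathcal{H}-S_{x,\tau})^{1/2}$, and adjoin the vectors $Te_l$ over an orthonormal basis $\{e_l\}_{l\in\mathbb{L}}$ to both families, so the enlarged frame operator becomes $\lambda I_\mathcal{H}$. Your additional verification of the Bessel bound $\sum_l|\langle h,Te_l\rangle|^2=\|Th\|^2\leq\|T\|^2\|h\|^2$ for the new block is a welcome detail the paper leaves implicit.
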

\begin{proof}
Let $\{e_l\}_{l \in \mathbb{L}}$ be an orthonormal basis for $ \mathcal{H}$, and $ \lambda>\| S_{x,\tau}\|$. Define 	$ y_l \coloneqq (\lambda I_\mathcal{H}- S_{x,\tau})^{1/2}e_l, \forall l \in \mathbb{L}$. Then $\sum_{j\in \mathbb{J}}\langle h, x_j \rangle \tau_j+\sum_{l\in \mathbb{L}}\langle h, y_l \rangle y_l=S_{x,\tau}h+(\lambda I_\mathcal{H}- S_{x,\tau})h=h , \forall h \in \mathcal{H}$. Therefore $(\{x_j\}_{j \in \mathbb{J}}\cup\{y_l\}_{l \in \mathbb{L}},\{\tau_j\}_{j \in \mathbb{J}}\cup\{y_l\}_{l \in \mathbb{L}})$ is a tight frame for  $ \mathcal{H}$.
\end{proof}
 First frame algorithm is due to Duffin and Schaeffer \cite{DUFFIN1} and this continues to hold in our extension.
\begin{proposition}
Let $( \{x_n\}_{n=1}^\infty,\{\tau_n\}_{n=1}^\infty )$ be a frame for  $ \mathcal{H}$	with bounds $a$ and  $b$. For  $ h \in \mathcal{H}$ define 
$$ h_0\coloneqq0, ~h_n\coloneqq h_{n-1}+\frac{2}{a+b}S_{x,\tau}(h-h_{n-1}), ~\forall n \geq1.$$
Then 
$$ \|h_n-h\|\leq \left(\frac{b-a}{b+a}\right)^n\|h\|, ~\forall n \geq1.$$
\end{proposition}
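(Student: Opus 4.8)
The plan is to track the error vectors $e_n \coloneqq h - h_n$ and to recognize the whole iteration as repeated application of one fixed self-adjoint operator. First I would subtract the defining recursion from $h$: writing
\begin{align*}
e_n = h - h_n = (h-h_{n-1}) - \frac{2}{a+b}S_{x,\tau}(h-h_{n-1}) = \left(I_\mathcal{H} - \frac{2}{a+b}S_{x,\tau}\right)e_{n-1},
\end{align*}
so that, setting $T \coloneqq I_\mathcal{H} - \frac{2}{a+b}S_{x,\tau}$ and noting $e_0 = h - h_0 = h$, an immediate induction gives $e_n = T^n h$ for all $n \geq 1$. Thus the claimed estimate reduces to the single operator-norm bound $\|T\| \leq \frac{b-a}{a+b}$, since then $\|h_n - h\| = \|T^n h\| \leq \|T\|^n \|h\| \leq \left(\frac{b-a}{b+a}\right)^n \|h\|$.

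The next step is to estimate $\|T\|$ from the frame-bound inequality $a I_\mathcal{H} \leq S_{x,\tau} \leq b I_\mathcal{H}$ furnished by Definition \ref{SEQUENTIAL2}. Scaling this by $\frac{2}{a+b}$ and subtracting from $I_\mathcal{H}$ yields
\begin{align*}
\frac{a-b}{a+b}\,I_\mathcal{H} \leq T \leq \frac{b-a}{a+b}\,I_\mathcal{H}.
\end{align*}
Because $S_{x,\tau}$ is a positive (hence self-adjoint) operator, $T$ is self-adjoint, and for a self-adjoint operator sandwiched as $m I_\mathcal{H} \leq T \leq M I_\mathcal{H}$ one has $\|T\| = \sup\{|\lambda| : \lambda \in \sigma(T)\} \leq \max\{|m|,|M|\}$. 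Here $m = \frac{a-b}{a+b}$ and $M = \frac{b-a}{a+b}$, and since $a \leq b$ both have absolute value $\frac{b-a}{a+b}$, giving $\|T\| \leq \frac{b-a}{a+b}$ exactly as needed.

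The only genuinely delicate point is the passage from the two-sided operator inequality to the norm bound on the self-adjoint $T$; this is where the self-adjointness of $S_{x,\tau}$ is essential (on a real Hilbert space one must read positivity in Definition \ref{SEQUENTIAL2} as including symmetry, which is the convention in force since $S_{x,\tau} = S_{\tau,x}$). Everything else is the telescoping of the error recursion and the submultiplicativity $\|T^n\| \leq \|T\|^n$, both routine. I would present the argument in the three stages above, emphasizing that, unlike the classical single-system frame algorithm, no additional hypothesis is required here because the extended frame operator $S_{x,\tau}$ is still positive and invertible with the same two-sided bounds.
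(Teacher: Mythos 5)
Your proposal is correct and follows essentially the same route as the paper's proof: both reduce to $h-h_n=\left(I_\mathcal{H}-\frac{2}{a+b}S_{x,\tau}\right)^nh$ and then bound the norm of the self-adjoint operator $I_\mathcal{H}-\frac{2}{a+b}S_{x,\tau}$ by $\frac{b-a}{b+a}$ using the two-sided frame inequality (the paper via $\|T\|=\sup_{\|h\|=1}|\langle Th,h\rangle|$, you via the spectrum, which is the same fact). No gaps.
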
 
\begin{proof}
We first observe 
\begin{align*}
h-h_n&=h-h_{n-1}-\frac{2}{a+b}S_{x,\tau}(h-h_{n-1})=\left(I_\mathcal{H} -\frac{2}{b+a}S_{x,\tau}\right)(h-h_{n-1})\\
&=\cdots=\left(I_\mathcal{H} -\frac{2}{b+a}S_{x,\tau}\right)^nh, ~\forall h \in \mathcal{H} , ~\forall n \geq 1.
\end{align*}	
Now we try to bound the norm of $I_\mathcal{H} -\frac{2}{b+a}S_{x,\tau}$. For all $ h \in \mathcal{H}$,
\begin{align*}
-\frac{b-a}{b+a}\|h\|^2  =\|h\|^2- \frac{2b}{b+a}\|h\|^2 &\leq  \|h\|^2-  \frac{2}{b+a} \langle S_{x,\tau}h,h \rangle =\left\langle \left(I_\mathcal{H} -\frac{2}{b+a}S_{x,\tau}\right)h, h\right\rangle \\
&\leq \|h\|^2- \frac{2a}{b+a}\|h\|^2=\frac{b-a}{b+a}\|h\|^2. 
\end{align*} 	 
Therefore $ \|I_\mathcal{H} -\frac{2}{b+a}S_{x,\tau} \|=\sup_{h\in \mathcal{H}, \|h\|=1}|\langle(I_\mathcal{H} -\frac{2}{b+a}S_{x,\tau})h,h \rangle |\leq \frac{b-a}{b+a}.$ This gives $\|h_n-h\|\leq \|I_\mathcal{H} -\frac{2}{b+a}S_{x,\tau}\|^n\|h\| \leq \left(\frac{b-a}{b+a}\right)^n\|h\|, \forall n \geq1. $

\end{proof}
\begin{definition}\label{RELATIVERIESZSEQUENTIAL}
Let  $ \{x_j\}_{j \in \mathbb{J}},\{\tau_j\}_{j \in \mathbb{J}} $ be in $ \mathcal{H}.$ We say 
\begin{enumerate}[\upshape(i)]
\item $ \{x_j\}_{j \in \mathbb{J}}$ is an orthonormal set (resp. basis)  w.r.t. $\{\tau_j\}_{j \in \mathbb{J}} $ if  $ \{x_j\}_{j \in \mathbb{J}}$ or   $\{\tau_j\}_{j \in \mathbb{J}} $ is an orthonormal set (resp. basis) for $ \mathcal{H}$, say $ \{x_j\}_{j \in \mathbb{J}}$ is an orthonormal set (resp. basis) for $ \mathcal{H}$, and there exists  a sequence $\{c_j\}_{j \in \mathbb{J}} $ of  reals such that  $ 0<\inf\{c_j\}_{j\in \mathbb{J}}\leq \sup\{c_j\}_{j\in \mathbb{J}}<\infty$   and $ \tau_j=c_jx_j, \forall j \in \mathbb{J}.$ We write $ (\{x_j\}_{j\in \mathbb{J}}, \{\tau_j\}_{j\in \mathbb{J}})$ is orthonormal  set (resp. basis).
\item  $ \{x_j\}_{j \in \mathbb{J}}$ is a Riesz  basis  w.r.t. $\{\tau_j\}_{j \in \mathbb{J}} $ if there  are   invertible $ U,V\in \mathcal{B}(\mathcal{H})$ and an orthonormal basis $\{f_j\}_{j\in \mathbb{J}} $ for $\mathcal{H}$ such that $x_j=Uf_j, \tau_j=Vf_j, \forall j \in \mathbb{J}$ and $ VU^*\geq 0.$ We write $ (\{x_j\}_{j\in \mathbb{J}}, \{\tau_j\}_{j\in \mathbb{J}})$ is a Riesz  basis.
\end{enumerate}
\end{definition}
Like ``operator version", last definition is symmetric. Let $ \tau_j=x_j, \forall j \in \mathbb{J}.$ Then $c_j=1, \forall j \in \mathbb{J}.$ Therefore we have the definition   of orthonormality in the sequence case and  for the Riesz basis case, we must have $ U=V.$ Then $VU^*=UU^*\geq 0,$ comes automatically.
\begin{caution}
We have to remember, clearly, the ``star'' is in different positions (it is on different operators)  in \text{\upshape(ii)} of Definition \ref{ONBRIESZDEFINITION} and in \text{\upshape(ii)} of Definition \ref{RELATIVERIESZSEQUENTIAL}.
\end{caution} 
\begin{theorem}\label{GBESV}
Let $( \{x_j\}_{j \in \mathbb{J}},\{\tau_j=c_jx_j\}_{j \in \mathbb{J}}) $  be  orthonormal  for $\mathcal{H}.$  Then 
 \begin{enumerate}[\upshape(i)]
\item Generalized Bessel's inequality - sequential version: 
$$ \sum\limits_{j\in\mathbb{J}}(2-c_j)\langle h, x_j\rangle \langle \tau_j, h \rangle\leq \|h\|^2 , ~\forall h \in \mathcal{H}.$$
\item For $ h \in \mathcal{H},$ 
\begin{align*}
 h= \sum \limits_{j\in \mathbb{J}}\langle h, x_j\rangle \tau_j \iff \sum\limits_{j\in\mathbb{J}}(2-c_j)\langle h, x_j\rangle \langle \tau_j, h\rangle = \|h\|^2 \iff  \sum\limits_{j\in\mathbb{J}}c_j^2|\langle h, x_j\rangle |^2  = \|h\|^2.
 \end{align*}
 If $ c_j\leq 1, \forall j$, then $ h= \sum_{j\in \mathbb{J}}\langle h, x_j\rangle \tau_j  \iff (1-c_j)\langle h, x_j\rangle=0 , \forall j \in \mathbb{J} \iff (1-c_j)x_j\perp h, \forall j \in \mathbb{J}.$
\end{enumerate}
 \end{theorem}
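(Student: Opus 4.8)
The plan is to transcribe the argument of Theorem \ref{GBSOV} into the sequential language via the dictionary $A_jh=\langle h,x_j\rangle$, $\Psi_jh=\langle h,\tau_j\rangle=c_j\langle h,x_j\rangle$, so that the operator sum $\sum_j\Psi_j^*A_jh$ becomes $\sum_j\langle h,x_j\rangle\tau_j$. The single computational engine is the orthonormality of $\{x_j\}_{j\in\mathbb{J}}$: for every finite $\mathbb{S}\subseteq\mathbb{J}$,
\[
\left\|\sum_{j\in\mathbb{S}}\langle h,x_j\rangle\tau_j\right\|^2=\left\|\sum_{j\in\mathbb{S}}c_j\langle h,x_j\rangle x_j\right\|^2=\sum_{j\in\mathbb{S}}c_j^2|\langle h,x_j\rangle|^2\le\left(\sup_j c_j^2\right)\sum_{j\in\mathbb{S}}|\langle h,x_j\rangle|^2 .
\]
Since $\{x_j\}$ is orthonormal, Bessel's inequality gives $\sum_j|\langle h,x_j\rangle|^2\le\|h\|^2$, so the right-hand side is summable, the net of partial sums is Cauchy, and hence $\sum_j\langle h,x_j\rangle\tau_j$ converges in $\mathcal{H}$ with $\left\|\sum_j\langle h,x_j\rangle\tau_j\right\|^2=\sum_j c_j^2|\langle h,x_j\rangle|^2$. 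First I would record the bookkeeping identity $\langle\tau_j,h\rangle=c_j\overline{\langle h,x_j\rangle}$, which gives $(2-c_j)\langle h,x_j\rangle\langle\tau_j,h\rangle=(2c_j-c_j^2)|\langle h,x_j\rangle|^2$; this is what converts the scalar sums into the quantities appearing in the statement.

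For part (i) I would expand the nonnegative quantity $\left\|h-\sum_j\langle h,x_j\rangle\tau_j\right\|^2$. Using $\tau_j=c_jx_j$ together with orthonormality, the inner product $\langle h,\sum_j\langle h,x_j\rangle\tau_j\rangle$ equals $\sum_j c_j|\langle h,x_j\rangle|^2$ and the square norm of the reconstruction equals $\sum_j c_j^2|\langle h,x_j\rangle|^2$, so
\[
0\le\left\|h-\sum_j\langle h,x_j\rangle\tau_j\right\|^2=\|h\|^2-\sum_j(2c_j-c_j^2)|\langle h,x_j\rangle|^2 .
\]
Rearranging and substituting the bookkeeping identity yields exactly $\sum_j(2-c_j)\langle h,x_j\rangle\langle\tau_j,h\rangle\le\|h\|^2$, which is (i).

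Part (ii) is then read off the same identity. The first equivalence is immediate: $h=\sum_j\langle h,x_j\rangle\tau_j$ holds if and only if the displayed square norm is zero, i.e.\ if and only if $\sum_j(2-c_j)\langle h,x_j\rangle\langle\tau_j,h\rangle=\|h\|^2$. For the equivalence with $\sum_j c_j^2|\langle h,x_j\rangle|^2=\|h\|^2$ the forward direction is clean, since if $h$ equals its reconstruction then $\|h\|^2=\left\|\sum_j\langle h,x_j\rangle\tau_j\right\|^2=\sum_j c_j^2|\langle h,x_j\rangle|^2$ by the norm identity of the first paragraph. The $c_j\le1$ refinement I would obtain by re-expanding coordinatewise: once $h$ is recovered from its coefficients against the orthonormal system one has $\left\|h-\sum_j\langle h,x_j\rangle\tau_j\right\|^2=\sum_j(1-c_j)^2|\langle h,x_j\rangle|^2$, whence the reconstruction holds if and only if $(1-c_j)\langle h,x_j\rangle=0$ for all $j$, and conjugating gives the perpendicularity form $(1-c_j)x_j\perp h$.

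The step I expect to be the genuine obstacle is the reverse direction of the equivalence involving $\sum_j c_j^2|\langle h,x_j\rangle|^2$, namely passing from $\left\|\sum_j\langle h,x_j\rangle\tau_j\right\|=\|h\|$ back to $h=\sum_j\langle h,x_j\rangle\tau_j$. Unlike the middle condition, whose deviation from $\|h\|^2$ is the manifestly signed quantity $-\sum_j(c_j-1)^2|\langle h,x_j\rangle|^2$ (so that equality forces $(1-c_j)\langle h,x_j\rangle=0$ termwise), the third quantity carries the indefinite factor $c_j^2-1$, and equality of the two norms does not by itself force equality of the two vectors. To close it I would not compare the two sums directly where $c_j^2-1$ changes sign, but instead route back through the first equivalence, and I would lean on the completeness of $\{x_j\}$ (so that $h$ lies in $\overline{\operatorname{span}}\{x_j\}$ and is determined by $\{\langle h,x_j\rangle\}_{j}$); this is the one point where completeness, and not merely orthonormality, is really needed, and it is where care has to be taken to keep the chain of equivalences valid.
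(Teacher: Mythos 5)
Your treatment of (i) and of the first equivalence in (ii) is exactly the paper's argument: get convergence of $\sum_{j}\langle h,x_j\rangle\tau_j$ from Bessel's inequality for the orthonormal set $\{x_j\}_{j\in\mathbb{J}}$ together with boundedness of $\{c_j\}_{j\in\mathbb{J}}$, expand $\bigl\|h-\sum_{j\in\mathbb{J}}\langle h,x_j\rangle\tau_j\bigr\|^2=\|h\|^2-\sum_{j\in\mathbb{J}}(2c_j-c_j^2)|\langle h,x_j\rangle|^2$, and read off both the inequality and the first biconditional.

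The step you flag as the genuine obstacle — passing from $\sum_{j\in\mathbb{J}}c_j^2|\langle h,x_j\rangle|^2=\|h\|^2$ back to $h=\sum_{j\in\mathbb{J}}\langle h,x_j\rangle\tau_j$ — is indeed a gap, and the repair you propose does not close it: completeness of $\{x_j\}_{j\in\mathbb{J}}$ is not enough. Take $\mathcal{H}=\mathbb{C}^2$ with orthonormal basis $x_1=e_1$, $x_2=e_2$, put $c_1=2$, $c_2=\tfrac{1}{2}$, and $h=e_1+2e_2$. Then $\sum_{j}c_j^2|\langle h,x_j\rangle|^2=4\cdot 1+\tfrac{1}{4}\cdot 4=5=\|h\|^2$, while $\sum_{j}\langle h,x_j\rangle\tau_j=2e_1+e_2\neq h$ (and the middle condition fails, since $\sum_{j}(2-c_j)c_j|\langle h,x_j\rangle|^2=3$). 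The reason is precisely the sign problem you isolated: writing $P$ for the orthogonal projection onto $\overline{\operatorname{span}}\{x_j\}_{j\in\mathbb{J}}$, the third condition is equivalent to $\sum_{j}(c_j^2-1)|\langle h,x_j\rangle|^2=\|h-Ph\|^2$, and when $c_j^2-1$ takes positive values this can hold without each term vanishing. The reverse implication does become valid under the extra hypothesis $c_j\le 1$ for all $j$ (then the left side is $\le 0$ and the right side is $\ge 0$, so both vanish, giving $(1-c_j)\langle h,x_j\rangle=0$ and $h=Ph$, hence $h=\sum_{j}c_j\langle h,x_j\rangle x_j$) — exactly the hypothesis of the final sentence of (ii). You should not feel you missed an idea that the paper supplies: the paper disposes of this equivalence with the single sentence ``Others follow from orthonormality of $\{x_j\}_{j\in\mathbb{J}}$,'' which does not address the reverse direction, and the same defect is present in the operator version (Theorem \ref{GBSOV}). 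As stated, the third condition is a consequence of, but not equivalent to, the reconstruction formula; your instinct to distrust it was correct, and the honest fix is either to weaken that clause to a one-way implication or to place it under the assumption $\sup_j c_j\le 1$.
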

\begin{proof}
\begin{enumerate}[\upshape(i)]
 \item For  $ h \in \mathcal{H}$ and  each finite subset $ \mathbb{S} \subseteq \mathbb{J},$
 \begin{align*}
 \left\| \sum\limits_{j\in \mathbb{S}}\langle h,  x_j \rangle \tau_j\right\|^2
 &= \left\langle \sum_{j\in \mathbb{S}}c_j\langle h, x_j\rangle x_j, \sum_{k\in \mathbb{S}}c_k\langle h, x_k\rangle x_k \right\rangle=\sum\limits_{j \in \mathbb{S}} c_j^2| \langle h,  x_j \rangle| ^2\\
 &\leq \left(\sup\{c^2_j\}_{j\in \mathbb{J}}\right)\sum\limits_{j \in \mathbb{S}}| \langle h,  x_j \rangle| ^2,
 \end{align*}
 the last sum converges. Hence $\sum_{j\in \mathbb{S}}\langle h,  x_j \rangle \tau_j$ exists. In a similar manner $\sum_{j\in\mathbb{J}}(2-c_j)\langle h, x_j\rangle \langle \tau_j, h \rangle $ also exists.  Then 
 \begin{align*}
  0&\leq \left\|h-\sum\limits_{j\in \mathbb{J}}\langle h,  x_j \rangle \tau_j\right\|^2 = \left\langle h-\sum_{j\in \mathbb{J}}c_j\langle h, x_j\rangle x_j, h-\sum_{k\in \mathbb{J}}c_k\langle h, x_k\rangle x_k \right\rangle \\
  &=\|h\|^2-2\sum_{j\in \mathbb{J}}c_j|\langle h, x_j\rangle |^2+\sum_{j\in \mathbb{J}}c^2_j|\langle h, x_j\rangle |^2
 =\|h\|^2-\sum_{j\in \mathbb{J}}(2c_j-c_j^2)|\langle h, x_j\rangle |^2,
  \end{align*}
 $ \Rightarrow\sum_{j\in \mathbb{J}}(2c_j-c_j^2)|\langle h, x_j\rangle |^2=\sum_{j\in\mathbb{J}}(2-c_j)\langle h, x_j\rangle \langle \tau_j, h \rangle\leq \|h\|^2.$ 
 \item First `if and only if' follows from (i). Others follow from orthonormality of $\{x_j\}_{j\in \mathbb{J}}.$
 \end{enumerate}
 \end{proof}
 \begin{corollary}
 (Generalized Parseval formula) Let $( \{x_j\}_{j \in \mathbb{J}},\{\tau_j=c_jx_j\}_{j \in \mathbb{J}}) $  be  an orthonormal basis for  $ \mathcal{H}.$ Then
 $$ \frac{1}{\sup\{c_j\}_{j\in \mathbb{J}}}\sum_{j\in \mathbb{J}}\langle h, x_j\rangle \langle\tau_j, h \rangle \leq \|h\|^2\leq \frac{1}{\inf\{c_j\}_{j\in \mathbb{J}} }\sum_{j\in \mathbb{J}}\langle h, x_j\rangle \langle\tau_j, h \rangle, ~\forall h \in \mathcal{H}.$$
 \end{corollary}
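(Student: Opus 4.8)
The plan is to mirror the argument used for the operator version in Corollary \ref{GFEOV}, transporting it into the sequential setting via the identity $\tau_j = c_j x_j$. By the convention in Definition \ref{RELATIVERIESZSEQUENTIAL}, the hypothesis that $(\{x_j\}_{j\in\mathbb{J}}, \{\tau_j = c_j x_j\}_{j\in\mathbb{J}})$ is an orthonormal basis means, say, that $\{x_j\}_{j\in\mathbb{J}}$ is an orthonormal basis for $\mathcal{H}$, so Parseval's identity $\sum_{j\in\mathbb{J}} |\langle h, x_j\rangle|^2 = \|h\|^2$ holds for every $h \in \mathcal{H}$. This identity is the backbone of the whole computation.

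First I would record the pointwise computation that converts the mixed sum into a weighted Parseval sum: for each $j \in \mathbb{J}$,
$$
\langle h, x_j\rangle \langle \tau_j, h\rangle = \langle h, x_j\rangle \langle c_j x_j, h\rangle = c_j \langle h, x_j\rangle \langle x_j, h\rangle = c_j |\langle h, x_j\rangle|^2,
$$
so that $\sum_{j\in\mathbb{J}} \langle h, x_j\rangle \langle \tau_j, h\rangle = \sum_{j\in\mathbb{J}} c_j |\langle h, x_j\rangle|^2$. Convergence of this last series is not an issue: since $\{c_j\}_{j\in\mathbb{J}}$ is bounded and $\sum_{j\in\mathbb{J}} |\langle h, x_j\rangle|^2$ converges (it equals $\|h\|^2$), the weighted series converges absolutely. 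This is exactly the convergence already secured in the proof of Theorem \ref{GBESV}.

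Next I would apply the two-sided bound $\inf\{c_j\}_{j\in\mathbb{J}} \leq c_j \leq \sup\{c_j\}_{j\in\mathbb{J}}$, valid for every $j$ since $0 < \inf\{c_j\}_{j\in\mathbb{J}} \leq \sup\{c_j\}_{j\in\mathbb{J}} < \infty$. Multiplying through by the nonnegative term $|\langle h, x_j\rangle|^2$ and summing over $\mathbb{J}$ yields
$$
\inf\{c_j\}_{j\in\mathbb{J}}\,\|h\|^2 \leq \sum_{j\in\mathbb{J}} \langle h, x_j\rangle \langle \tau_j, h\rangle \leq \sup\{c_j\}_{j\in\mathbb{J}}\,\|h\|^2,
$$
where both outer expressions have been rewritten using Parseval's identity. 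Dividing the right-hand inequality by $\sup\{c_j\}_{j\in\mathbb{J}}$ and the left-hand inequality by $\inf\{c_j\}_{j\in\mathbb{J}}$ (both strictly positive, so the direction of the inequalities is preserved) produces precisely the claimed chain
$$
\frac{1}{\sup\{c_j\}_{j\in\mathbb{J}}}\sum_{j\in\mathbb{J}} \langle h, x_j\rangle \langle \tau_j, h\rangle \leq \|h\|^2 \leq \frac{1}{\inf\{c_j\}_{j\in\mathbb{J}}}\sum_{j\in\mathbb{J}} \langle h, x_j\rangle \langle \tau_j, h\rangle.
$$

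There is no genuine obstacle here; the statement is a direct corollary of Theorem \ref{GBESV} and the orthonormality of $\{x_j\}_{j\in\mathbb{J}}$. The only point demanding a word of care is making the ``say $\{x_j\}$ is the orthonormal basis'' reduction explicit, together with noting that the positivity of $\inf\{c_j\}_{j\in\mathbb{J}}$ is what permits the final division without reversing inequalities. The whole proof can be compressed into a single displayed line, exactly as in Corollary \ref{GFEOV}.
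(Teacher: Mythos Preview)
Your proposal is correct and follows essentially the same approach as the paper: both arguments reduce $\langle h, x_j\rangle\langle\tau_j, h\rangle$ to $c_j|\langle h, x_j\rangle|^2$, invoke Parseval's identity for the orthonormal basis $\{x_j\}_{j\in\mathbb{J}}$, and sandwich using $\inf\{c_j\}\leq c_j\leq\sup\{c_j\}$. The paper compresses everything into a single displayed chain, while you first isolate the intermediate inequality and then divide, but this is only a cosmetic difference in presentation.
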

 \begin{proof}
 From the proof of Theorem \ref{GBESV}, $\sum_{j\in \mathbb{J}}\langle h, x_j\rangle \langle\tau_j, h\rangle $ exists, $ \forall h \in \mathcal{H}$. Then $ \frac{1}{\sup\{c_j\}_{j\in \mathbb{J}}}\sum_{j\in \mathbb{J}}\langle h, x_j\rangle \langle\tau_j, h\rangle   =\frac{1}{\sup\{c_j\}_{j\in \mathbb{J}}}\sum_{j\in \mathbb{J}}c_j|\langle h, x_j\rangle|^2  \leq \sum_{j\in \mathbb{J}}|\langle h, x_j\rangle|^2 = \|h\|^2\leq \frac{1}{\inf\{c_j\}_{j\in \mathbb{J}}}\sum_{j\in \mathbb{J}} c_j|\langle h, x_j\rangle |^2\leq  \frac{1}{\inf\{c_j\}_{j\in \mathbb{J}}}\sum_{j\in \mathbb{J}}\langle h, x_j\rangle \langle\tau_j, h \rangle$, $\forall h \in \mathcal{H}.$
 \end{proof}
 \begin{remark}
 Whenever $ \tau_j=x_j ,\forall j \in \mathbb{J},$ last corollary gives the Parseval formula. 
 \end{remark}
 \begin{theorem}
 If $ (\{x_j\}_{j\in \mathbb{J}}, \{\tau_j=c_jx_j\}_{j\in \mathbb{J}})$  is an orthonormal set for  $\mathcal{H}$ with $ c_j\leq 2 ,\forall j \in \mathbb{J}$, then for each $ h \in \mathcal{H}$, the set $ Y_h=\{x_j: (2-c_j)\langle h, x_j\rangle \langle \tau_j, h \rangle \neq 0, j\in \mathbb{J}\}$ is either  finite or countable.
 \end{theorem}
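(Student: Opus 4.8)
The plan is to mirror the operator-version argument from the theorem about $Y_h$ being countable, now transported to the sequential setting via the generalized Bessel inequality (Theorem \ref{GBESV}). For each $n \in \mathbb{N}$, I would define the threshold set
\[
Y_{n,h}\coloneqq\left\{x_j : (2-c_j)\langle h, x_j\rangle\langle\tau_j,h\rangle > \tfrac{1}{n}\|h\|^2,\ j\in\mathbb{J}\right\},
\]
so that $Y_h=\bigcup_{n=1}^\infty Y_{n,h}$, since each element of $Y_h$ has its (nonnegative) quantity $(2-c_j)\langle h,x_j\rangle\langle\tau_j,h\rangle$ exceeding $\tfrac{1}{n}\|h\|^2$ for some sufficiently large $n$. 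The whole argument then reduces to showing that each $Y_{n,h}$ is finite, after which $Y_h$ is a countable union of finite sets, hence finite or countable.

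First I would note that the summands $(2-c_j)\langle h,x_j\rangle\langle\tau_j,h\rangle$ are genuinely nonnegative: with $\tau_j=c_jx_j$ and $c_j\le 2$, we have $(2-c_j)\langle h,x_j\rangle\langle\tau_j,h\rangle=(2-c_j)c_j|\langle h,x_j\rangle|^2\ge 0$, so the thresholding in $Y_{n,h}$ makes sense and the terms genuinely accumulate in the sum. Next, suppose toward a contradiction that some $Y_{n,h}$ contains at least $n$ elements, say $x_{j_1},\dots,x_{j_n}$. Summing the defining inequality over these $n$ indices gives
\[
\sum_{i=1}^{n}(2-c_{j_i})\langle h, x_{j_i}\rangle\langle\tau_{j_i},h\rangle > n\cdot\tfrac{1}{n}\|h\|^2=\|h\|^2.
\]
But Theorem \ref{GBESV}(i) asserts $\sum_{j\in\mathbb{J}}(2-c_j)\langle h,x_j\rangle\langle\tau_j,h\rangle\le\|h\|^2$, and since every term is nonnegative the finite partial sum over $\{j_1,\dots,j_n\}$ is bounded above by the full sum, giving $\|h\|^2<\|h\|^2$, a contradiction. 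Hence $\operatorname{Card}(Y_{n,h})\le n-1$ for every $n$.

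I expect no serious obstacle here: this is the verbatim sequential analogue of the already-proven operator-version theorem, and the only genuinely needed input is the generalized Bessel inequality, whose sequential form is exactly Theorem \ref{GBESV}(i). The one point requiring minor care is the nonnegativity of the summands (so that a finite subsum is dominated by the full sum); this is immediate from $c_j\le 2$ and is the reason the hypothesis $c_j\le 2$ appears in the statement. Concluding, $Y_h=\bigcup_{n=1}^\infty Y_{n,h}$ is a countable union of finite sets and is therefore finite or countable.
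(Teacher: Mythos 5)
Your proposal is correct and follows essentially the same route as the paper: the same threshold sets $Y_{n,h}$, the same counting contradiction against the generalized Bessel inequality of Theorem \ref{GBESV}, and the same conclusion via a countable union of finite sets. The only difference is that you make explicit two points the paper leaves implicit — the nonnegativity of the summands (which is what justifies both the identity $Y_h=\bigcup_n Y_{n,h}$ and the domination of a finite subsum by the full sum) — and this is a welcome clarification rather than a deviation.
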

 \begin{proof}
 For $ n \in \mathbb{N}$, define 
 $$ Y_{n,h}\coloneqq\left\{x_j: (2-c_j)\langle h, x_j\rangle \langle \tau_j, h \rangle > \frac{1}{n}\|h\|^2, j\in \mathbb{J} \right\}.$$	
 
 Suppose, for some $n$, $ Y_{n,h}$ has more than $n-1$ elements, say $x_1,...,x_n$. Then $ \sum_{j=1 }^n(2-c_j)\langle h, x_j\rangle \langle \tau_j, h \rangle > n\frac{1}{n}\|h\|^2=\|h\|^2$. From Theorem \ref{GBESV},  $ \sum_{j\in\mathbb{J}}(2-c_j)\langle h, x_j\rangle \langle \tau_j, h \rangle\leq \|h\|^2 $ This gives $  \|h\|^2< \|h\|^2$ which is impossible. Hence $ \operatorname{Card}(Y_{n,h})\leq n-1$ and hence  
 $Y_h=\cup_{n=1}^\infty Y_{n,h}$ being a countable union of finite sets  is finite or countable.
 \end{proof}
 \begin{theorem}\label{RIESZBASISIMPLIESFRAMESEQUENTIAL}
 \begin{enumerate}[\upshape(i)]
 \item If $ (\{x_j\}_{j\in \mathbb{J}}, \{\tau_j\}_{j\in \mathbb{J}})$  is an orthonormal basis for $\mathcal{H}$, then it is a Riesz basis.
 \item If $(\{x_j=Uf_j\}_{j\in\mathbb{J}}, \{\tau_j=Vf_j\}_{j\in\mathbb{J}})$ is a Riesz basis for $\mathcal{H}$, then it is  a frame  with  optimal frame bounds $\|(VU^*)^{-1}\|^{-1}$ and $ \|VU^*\|.$
 \end{enumerate}
 \end{theorem}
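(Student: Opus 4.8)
The plan is to mirror the operator-version argument of Theorem \ref{ONBIMPLIESOVF} in the sequential setting, exploiting that everything reduces to a diagonal operator with respect to an orthonormal basis.

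For part (i), by Definition \ref{RELATIVERIESZSEQUENTIAL} I may assume $\{x_j\}_{j\in\mathbb{J}}$ is an orthonormal basis for $\mathcal{H}$ and that there is a sequence $\{c_j\}_{j\in\mathbb{J}}$ of reals with $0<\inf\{c_j\}_{j\in\mathbb{J}}\leq\sup\{c_j\}_{j\in\mathbb{J}}<\infty$ and $\tau_j=c_jx_j$ for all $j$. I would take $\{f_j\coloneqq x_j\}_{j\in\mathbb{J}}$ as the required orthonormal basis, set $U\coloneqq I_\mathcal{H}$, and define $V\colon \mathcal{H}\ni h\mapsto \sum_{j\in\mathbb{J}}c_j\langle h,x_j\rangle x_j\in\mathcal{H}$. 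Since $\{c_j\}_{j\in\mathbb{J}}$ is bounded, $V$ is a well-defined bounded operator; since $\inf\{c_j\}_{j\in\mathbb{J}}>0$, it is invertible with $V^{-1}h=\sum_{j\in\mathbb{J}}c_j^{-1}\langle h,x_j\rangle x_j$, and it is positive because all $c_j>0$. One checks $Vx_j=c_jx_j$ directly, so $x_j=Uf_j$ and $\tau_j=c_jx_j=Vf_j$ for all $j$, while $VU^*=V\geq0$. Thus $(\{x_j\}_{j\in\mathbb{J}},\{\tau_j\}_{j\in\mathbb{J}})$ is a Riesz basis.

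For part (ii), I would first establish that both sequences are Bessel so that the analysis operators exist. Writing $\langle h,x_j\rangle=\langle U^*h,f_j\rangle$ and using that $\{f_j\}_{j\in\mathbb{J}}$ is an orthonormal basis gives $\sum_{j\in\mathbb{J}}|\langle h,x_j\rangle|^2=\|U^*h\|^2\leq\|U\|^2\|h\|^2$, and analogously $\sum_{j\in\mathbb{J}}|\langle h,\tau_j\rangle|^2=\|V^*h\|^2\leq\|V\|^2\|h\|^2$; this verifies condition (ii) of Definition \ref{SEQUENTIAL2} with $c=\|U\|^2$, $d=\|V\|^2$, and guarantees $\theta_x,\theta_\tau$ are bounded. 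Consequently $S_{x,\tau}=\theta_\tau^*\theta_x$ is a well-defined bounded operator, and for $h\in\mathcal{H}$,
\begin{align*}
S_{x,\tau}h=\sum_{j\in\mathbb{J}}\langle h,x_j\rangle\tau_j=\sum_{j\in\mathbb{J}}\langle U^*h,f_j\rangle Vf_j=V\left(\sum_{j\in\mathbb{J}}\langle U^*h,f_j\rangle f_j\right)=VU^*h,
\end{align*}
so that $S_{x,\tau}=VU^*$. By hypothesis $VU^*\geq0$, and since $U,V$ are invertible $VU^*$ is invertible; hence $S_{x,\tau}$ is positive invertible and condition (i) of Definition \ref{SEQUENTIAL2} holds, so $(\{x_j\}_{j\in\mathbb{J}},\{\tau_j\}_{j\in\mathbb{J}})$ is a frame. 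Finally, by the description of optimal bounds following Definition \ref{SEQUENTIAL2}, the optimal lower and upper frame bounds are $\|S_{x,\tau}^{-1}\|^{-1}=\|(VU^*)^{-1}\|^{-1}$ and $\|S_{x,\tau}\|=\|VU^*\|$.

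The computations are entirely routine; there is no serious obstacle. The only point requiring a little care is justifying the interchange of $V$ with the infinite sum in the displayed identity for $S_{x,\tau}$, which is legitimate precisely because the Bessel bound just established shows that the series $\sum_{j\in\mathbb{J}}\langle U^*h,f_j\rangle f_j$ converges (to $U^*h$) and $V$ is bounded. This argument is the sequential counterpart of Theorem \ref{ONBIMPLIESOVF}, and it succeeds for the same structural reasons.
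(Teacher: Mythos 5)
Your proof is correct and follows essentially the same route as the paper: in (i) the same choice $f_j=x_j$, $U=I_\mathcal{H}$, $Vh=\sum_{j\in\mathbb{J}}c_j\langle h,x_j\rangle x_j$ with the same verification of positivity and invertibility, and in (ii) the same Bessel estimates via $\|U^*h\|^2$, $\|V^*h\|^2$ and the identification $S_{x,\tau}=VU^*$, with optimal bounds read off from $\|S_{x,\tau}^{-1}\|^{-1}$ and $\|S_{x,\tau}\|$. The only cosmetic difference is that the paper also writes out the symmetry $\sum_{j\in\mathbb{J}}\langle h,x_j\rangle\tau_j=\sum_{j\in\mathbb{J}}\langle h,\tau_j\rangle x_j$ explicitly, which your appeal to the hypothesis $VU^*\geq 0$ already covers.
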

 \begin{proof}
 \begin{enumerate}[\upshape(i)]
\item  We may assume  $ \{x_j\}_{j \in \mathbb{J}}$ is an orthonormal basis.  Then there exists a sequence  $\{c_j\}_{j \in \mathbb{J}} $  of   reals such that  $ 0<\inf\{c_j\}_{j\in \mathbb{J}}\leq \sup\{c_j\}_{j\in \mathbb{J}}<\infty$   and $ \tau_j=c_jx_j, \forall j \in \mathbb{J}.$ Define $ f_j\coloneqq x_j, \forall j \in \mathbb{J},  U\coloneqq I_\mathcal{H}$ and  $ V : \mathcal{H} \ni h \mapsto\sum_{j\in \mathbb{J}}c_j\langle h, x_j\rangle x_j \in \mathcal{H}.$ From the proof  Theorem \ref{GBESV}, $ V$ is a well-defined bounded operator. Then $ Uf_j=x_j, Vf_j=\sum_{k\in \mathbb{J}}c_k\langle f_j,x_k \rangle x_k= c_jx_j=\tau_j, \forall j \in \mathbb{J}.$  Since all $c_j $'s are positive,  $ V$ is positive invertible, whose inverse (at $ h \in \mathcal{H} $) is $\sum_{j\in \mathbb{J}}c_j^{-1}\langle h, x_j\rangle x_j . $ In fact, $ V^{-1}Vh=\sum_{k\in \mathbb{J}}c_k^{-1}\langle \sum_{j\in \mathbb{J}}c_j\langle h, x_j\rangle x_j, x_k\rangle x_k= \sum_{k\in \mathbb{J}}\langle h, x_k\rangle x_k=h, VV^{-1}h=\sum_{k\in \mathbb{J}}c_k\langle \sum_{j\in \mathbb{J}}c_j^{-1}\langle h, x_j\rangle x_j, x_k\rangle x_k= \sum_{k\in \mathbb{J}}\langle h, x_k\rangle x_k=h,  \forall h \in \mathcal{H}.$ We end by noting  that $ VU^*=VI_\mathcal{H}=V\geq 0.$
\item Following inequalities show that  $ \{x_j\}_{j\in\mathbb{J}}, \{ \tau_j\}_{j\in\mathbb{J}}$ are Bessel sequences (w.r.t. themselves): 
$ \sum_{j\in\mathbb{S}}|\langle h,x_j\rangle|^2 = \sum_{j\in\mathbb{S}}|\langle h,Uf_j\rangle|^2 = \sum_{j\in\mathbb{S}}|\langle U^*h,f_j\rangle|^2\leq \|U^*h\|^2\leq\|U^*\|^2\|h\|^2 ,
\sum_{j\in\mathbb{S}}|\langle g,\tau_j\rangle|^2 = \sum_{j\in\mathbb{S}}|\langle g,Vf_j\rangle|^2\leq \|V^*g\|^2\leq\|V^*\|^2\|g\|^2, \forall h, g \in \mathcal{H}, \text{for each finite}~ \mathbb{S} \subseteq\mathbb{J}.$
 Further,
 $$  \frac{1}{\|(VU^*)^{-1}\|}\langle h,h \rangle\leq \langle VU^*h,h \rangle \leq  \|VU^*\|\langle h,h \rangle,~ \forall h \in \mathcal{H};$$
\begin{align*}
\langle VU^*h,h \rangle& =\langle U^*h,V^*h \rangle =\sum\limits_{j\in\mathbb{J}}\langle U^*h,f_j\rangle\langle f_j,V^*h\rangle\\ &=\sum\limits_{j\in\mathbb{J}}\langle h,Uf_j\rangle\langle Vf_j,h\rangle=\sum\limits_{j\in\mathbb{J}}\langle h,x_j\rangle \langle \tau_j,h\rangle, ~\forall h \in \mathcal{H},
\end{align*}
and 
\begin{align*}
\sum_{j\in\mathbb{J}}\langle h,x_j\rangle \tau_j&=\sum_{j\in\mathbb{J}}\langle h,Uf_j\rangle Vf_j=V\left(\sum_{j\in\mathbb{J}}\langle U^*h,f_j\rangle f_j \right)=VU^*h=UV^*h\\
&=U\left(\sum_{j\in\mathbb{J}}\langle V^*h,f_j\rangle f_j \right)=\sum_{j\in\mathbb{J}}\langle h,Vf_j\rangle Uf_j=\sum_{j\in\mathbb{J}}\langle h,\tau_j\rangle x_j, ~\forall h \in \mathcal{H}.
\end{align*}
For the optimal bounds  we note that   if $ T$ is a positive invertible operator on a Hilbert space $\mathcal{H}$, then $ \sup\{\alpha :\alpha I_{\mathcal{H}}\leq T\}=\|T^{-1}\|^{-1}$ and $ \inf\{\beta :T\leq\beta I_{\mathcal{H}}\}=\|T\|.$
\end{enumerate}
\end{proof}
\begin{theorem}
 Let $ (\{x_j=Uf_j\}_{j\in\mathbb{J}}, \{\tau_j=Vf_j\}_{j\in\mathbb{J}})$ be a Riesz basis for $\mathcal{H}.$ Then
\begin{enumerate}[\upshape(i)]
 \item There exist unique  $ \{y_j\}_{j\in\mathbb{J}}, \{\omega_j\}_{j\in\mathbb{J}}$ in $\mathcal{H}$ such that 
$$ h= \sum\limits_{j\in\mathbb{J}}\langle h, y_j\rangle x_j = \sum\limits_{j\in\mathbb{J}}\langle h, \omega_j\rangle\tau_j, ~ \forall h \in \mathcal{H}$$ and $( \{y_j\}_{j\in\mathbb{J}}, \{\omega_j\}_{j\in\mathbb{J}})$ is Riesz. Moreover,  the convergence is unconditional.
\item $ \{x_j\}_{j\in\mathbb{J}}$, $ \{\tau_j\}_{j\in\mathbb{J}}$  are complete in $\mathcal{H} $.  If $ V^*U\geq0,$ then there are real $ a, b >0$ such that for every finite subset $ \mathbb{S}$ of $ \mathbb{J}$
$$ a\sum\limits_{j\in \mathbb{S}}|c_j|^2\leq \left\langle \sum\limits_{j\in\mathbb{S}}c_jx_j ,\sum\limits_{k\in\mathbb{S}}c_k\tau_k  \right \rangle \leq b\sum\limits_{j\in \mathbb{S}}|c_j|^2, ~\forall c_j\in \mathbb{K}, \forall j\in \mathbb{S}.$$ 
\end{enumerate}
\end{theorem}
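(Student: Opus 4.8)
The plan is to exploit the explicit Riesz-basis structure $x_j = Uf_j$, $\tau_j = Vf_j$ and to manufacture the dual family by inverting the generating operators, mirroring the operator-version argument proved earlier via the substitution $x_j = Uf_j$, $\tau_j = Vf_j$. For (i) I would set $y_j := (U^*)^{-1}f_j$ and $\omega_j := (V^*)^{-1}f_j$ for each $j \in \mathbb{J}$. Since $\langle h, (U^*)^{-1}f_j\rangle = \langle U^{-1}h, f_j\rangle$, continuity of $U$ lets me pull it outside the orthonormal expansion of $U^{-1}h$:
$$\sum_{j\in\mathbb{J}}\langle h, y_j\rangle x_j = U\sum_{j\in\mathbb{J}}\langle U^{-1}h, f_j\rangle f_j = U(U^{-1}h) = h,$$
and identically $\sum_{j\in\mathbb{J}}\langle h, \omega_j\rangle\tau_j = V(V^{-1}h) = h$. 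Unconditional convergence is then immediate, since the expansion $U^{-1}h = \sum_{j}\langle U^{-1}h, f_j\rangle f_j$ converges unconditionally (a standard orthonormal-basis fact) and the bounded operator $U$ preserves unconditional convergence; likewise for $\omega_j$ and $V$.

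To see that $(\{y_j\}_{j\in\mathbb{J}}, \{\omega_j\}_{j\in\mathbb{J}})$ is itself Riesz, I would use the same orthonormal basis $\{f_j\}_{j\in\mathbb{J}}$ together with the invertible operators $P := (U^*)^{-1}$ and $Q := (V^*)^{-1}$, so that $y_j = Pf_j$ and $\omega_j = Qf_j$. The only nonroutine point is verifying the positivity requirement $QP^* \geq 0$ from Definition \ref{RELATIVERIESZSEQUENTIAL}. Here $QP^* = (V^*)^{-1}U^{-1} = (UV^*)^{-1}$, and since $VU^*\geq 0$ is self-adjoint one has $UV^* = (VU^*)^* = VU^* \geq 0$, so its inverse is again positive invertible. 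For uniqueness, if a family $\{y_j'\}_{j\in\mathbb{J}}$ also satisfies $\sum_j \langle h, y_j'\rangle x_j = h$ for all $h$, then applying the continuous operator $U^{-1}$ and comparing $f_k$-coefficients gives $\langle h, y_k'\rangle = \langle U^{-1}h, f_k\rangle = \langle h, (U^*)^{-1}f_k\rangle$ for every $h$, whence $y_k' = (U^*)^{-1}f_k = y_k$; symmetrically $\omega_j$ is determined.

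For (ii), completeness of $\{x_j\}$ and $\{\tau_j\}$ follows either from the reconstruction formula in (i) (every $h$ lies in the closed span of $\{x_j\}$, and of $\{\tau_j\}$) or directly from $\overline{\operatorname{span}}\{Uf_j\} = U(\overline{\operatorname{span}}\{f_j\}) = U(\mathcal{H}) = \mathcal{H}$, using that the invertible $U$ is a homeomorphism. For the two-sided estimate I would assume $V^*U \geq 0$, fix a finite $\mathbb{S} \subseteq \mathbb{J}$ and scalars $\{c_j\}$, and put $w := \sum_{j\in\mathbb{S}}c_jf_j$, so that $\|w\|^2 = \sum_{j\in\mathbb{S}}|c_j|^2$ by orthonormality. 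Expanding the inner product and transferring $U$ across the adjoint,
$$\left\langle \sum_{j\in\mathbb{S}}c_jx_j, \sum_{k\in\mathbb{S}}c_k\tau_k\right\rangle = \sum_{j,k\in\mathbb{S}}c_j\overline{c_k}\langle V^*Uf_j, f_k\rangle = \langle V^*Uw, w\rangle.$$
Since $V^*U$ is positive (by hypothesis) and invertible (as $U,V$ are), the sandwich $\|(V^*U)^{-1}\|^{-1}\|w\|^2 \leq \langle V^*Uw, w\rangle \leq \|V^*U\|\|w\|^2$ holds, and taking $a := \|(V^*U)^{-1}\|^{-1}$, $b := \|V^*U\|$ delivers the stated inequality.

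The only genuinely delicate steps are the positivity bookkeeping, namely converting $VU^*\geq 0$ into $(UV^*)^{-1}\geq 0$ in part (i) and invoking the separate hypothesis $V^*U\geq 0$ in part (ii); everything else is a faithful transcription of the operator-valued argument into the sequential setting.
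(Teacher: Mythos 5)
Your proposal is correct and follows essentially the same route as the paper: the same dual family $y_j=(U^{-1})^*f_j=(U^*)^{-1}f_j$, $\omega_j=(V^{-1})^*f_j$, the same observation that $(UV^*)^{-1}=(VU^*)^{-1}$ is the inverse of a positive invertible operator, and the same reduction of the quadratic form in (ii) to $\langle V^*Uw,w\rangle$ with $w=\sum_{j\in\mathbb{S}}c_jf_j$. You simply spell out the reconstruction, uniqueness and unconditional-convergence details that the paper's terse proof leaves implicit.
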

\begin{proof}
\begin{enumerate}[\upshape(i)]
\item Since $ \{x_j\}_{j\in\mathbb{J}}, \{\tau_j\}_{j\in\mathbb{J}}$ are Riesz bases (w.r.t. themselves), we have $ y_j=(U^{-1})^*f_j, \omega_j=(V^{-1})^*f_j , \forall j \in \mathbb{J}$ and they are unique. The knowledge `inverse of a positive invertible element in a C*-algebra is again positive' gives $( \{y_j\}_{j\in\mathbb{J}}, \{\omega_j\}_{j\in\mathbb{J}})$ is Riesz.
\item Completeness is clear. Let $ V^*U\geq 0.$ Then 
\begin{align*}
\frac{1}{\|(V^*U)^{-1}\|}\sum\limits_{j\in \mathbb{S}}|c_j|^2&=\frac{1}{\|(V^*U)^{-1}\|}\left\langle \sum\limits_{j\in\mathbb{S}}c_jf_j ,\sum\limits_{k\in\mathbb{S}}c_kf_k  \right \rangle\leq \left\langle \sum\limits_{j\in\mathbb{S}}c_jUf_j ,\sum\limits_{k\in\mathbb{S}}c_kVf_k  \right \rangle\\
&=\left\langle \sum\limits_{j\in\mathbb{S}}c_jx_j ,\sum\limits_{k\in\mathbb{S}}c_k\tau_k  \right \rangle \leq \left\langle V^*U\left(\sum\limits_{j\in\mathbb{S}}c_jf_j \right),\sum\limits_{k\in\mathbb{S}}c_kf_k  \right \rangle \leq \|V^*U\| \sum\limits_{j\in \mathbb{S}}|c_j|^2
\end{align*}
for all $c_j\in \mathbb{K}, \forall j\in \mathbb{S}.$
\end{enumerate}
\end{proof}
\begin{lemma}(cf. \cite{OLE1})\label{RIESZCHARACTERIZATION}
Let $ \{x_j\}_{j\in\mathbb{J}}$ be complete in $ \mathcal{H}$, $ \{y_j\}_{j\in\mathbb{J}}$ be Bessel in $ \mathcal{H}_0$ with bound $b$  and assume that there exists $ a>0$ such that $ a \sum_{j\in \mathbb{S}}|c_j|^2\leq \|\sum_{j\in \mathbb{S}}c_jx_j\|^2$ for all finite $\mathbb{S}\subseteq\mathbb{J},c_j \in \mathbb{K}, j\in \mathbb{S}.$ Then $T : \operatorname{span} \{x_j\}_{j\in\mathbb{J}} \ni \sum_{\operatorname{finite}}c_jx_j \mapsto \sum_{\operatorname{finite}}c_jy_j \in  \operatorname{span} \{y_j\}_{j\in\mathbb{J}}$,  is a bounded linear operator and has unique extension from $\mathcal{H} $ into $ \mathcal{H}_0$; the norm of $ T$ and its extension is at most $\sqrt{b/a}$.
\end{lemma}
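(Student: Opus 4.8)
The plan is to define $T$ first on the dense subspace $\operatorname{span}\{x_j\}_{j\in\mathbb{J}}$ by the prescribed formula, verify there that it is well-defined, linear and bounded, and then invoke the bounded-linear-transformation (BLT) theorem to extend it. First I would check well-definedness, which is the only place the lower hypothesis on $\{x_j\}_{j\in\mathbb{J}}$ is genuinely needed. The issue is that an element of $\operatorname{span}\{x_j\}_{j\in\mathbb{J}}$ may admit several representations as finite linear combinations. So suppose $\sum_{j\in\mathbb{S}}c_j x_j=\sum_{j\in\mathbb{S}}c_j' x_j$ for a common finite set $\mathbb{S}$ (padding with zero coefficients if necessary). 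Then $\sum_{j\in\mathbb{S}}(c_j-c_j')x_j=0$, and the hypothesis gives $a\sum_{j\in\mathbb{S}}|c_j-c_j'|^2\leq\|\sum_{j\in\mathbb{S}}(c_j-c_j')x_j\|^2=0$, whence $c_j=c_j'$ for every $j\in\mathbb{S}$. Consequently $\sum_{j\in\mathbb{S}}c_j y_j=\sum_{j\in\mathbb{S}}c_j' y_j$, so $T$ is well-defined; linearity is immediate from the formula.

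Next I would establish boundedness. Here I would first record that the Bessel hypothesis on $\{y_j\}_{j\in\mathbb{J}}$ with bound $b$ is equivalent to the synthesis inequality $\|\sum_{j\in\mathbb{S}}c_j y_j\|^2\leq b\sum_{j\in\mathbb{S}}|c_j|^2$, since the synthesis map is the adjoint of the analysis operator and hence has the same norm, at most $\sqrt{b}$. Combining this with the lower bound on $\{x_j\}_{j\in\mathbb{J}}$, for $h=\sum_{j\in\mathbb{S}}c_j x_j$ one obtains
\begin{align*}
\|Th\|^2=\left\|\sum_{j\in\mathbb{S}}c_j y_j\right\|^2\leq b\sum_{j\in\mathbb{S}}|c_j|^2\leq\frac{b}{a}\left\|\sum_{j\in\mathbb{S}}c_j x_j\right\|^2=\frac{b}{a}\|h\|^2,
\end{align*}
so that $\|Th\|\leq\sqrt{b/a}\,\|h\|$ throughout $\operatorname{span}\{x_j\}_{j\in\mathbb{J}}$; thus $T$ is bounded there with $\|T\|\leq\sqrt{b/a}$.

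Finally I would carry out the extension. Since $\{x_j\}_{j\in\mathbb{J}}$ is complete, $\operatorname{span}\{x_j\}_{j\in\mathbb{J}}$ is dense in $\mathcal{H}$, and $\mathcal{H}_0$ is complete, so the BLT theorem applies: $T$ extends uniquely to a bounded linear operator $\mathcal{H}\to\mathcal{H}_0$, defined by $Tf=\lim_n Tf_n$ for any sequence $f_n\to f$ with $f_n\in\operatorname{span}\{x_j\}_{j\in\mathbb{J}}$ (the limit exists because $\{Tf_n\}$ is Cauchy by the norm estimate, and is independent of the approximating sequence). Passing to the limit in $\|Tf_n\|\leq\sqrt{b/a}\,\|f_n\|$ shows the extension also satisfies $\|T\|\leq\sqrt{b/a}$, and density of the domain forces uniqueness. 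The only substantive step is the well-definedness argument, where the lower Riesz-type estimate on $\{x_j\}_{j\in\mathbb{J}}$ is exactly what is required; everything else is the standard extension-by-density machinery together with the routine identification of the Bessel bound with the synthesis norm bound.
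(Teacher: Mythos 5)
Your proof is correct and follows exactly the route the paper uses for its analogous lemmas (Lemma \ref{HOMOMORPHISMEXISTENCELEMMA} and Lemma \ref{RIESZCHARACTERIZATIONAPPENDIX}'s companion Lemma \ref{RIESZCHARACTERIZATIONBANACH}): well-definedness from the lower Riesz estimate, the two-step norm inequality via the synthesis bound, and extension by density. The only addition is your explicit justification that the Bessel bound $b$ for $\{y_j\}_{j\in\mathbb{J}}$ yields the synthesis inequality $\|\sum_{j\in\mathbb{S}}c_jy_j\|^2\leq b\sum_{j\in\mathbb{S}}|c_j|^2$ via the adjoint of the analysis operator, which the paper (and the cited source) take as standard; this is correct and harmless.
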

\begin{theorem}
Let $ \{x_j\}_{j\in\mathbb{J}},  \{\tau_j\}_{j\in\mathbb{J}}$ be in $\mathcal{H}$. Then the following are equivalent.
\begin{enumerate}[\upshape(i)]
\item $( \{x_j\}_{j\in\mathbb{J}},  \{\tau_j\}_{j\in\mathbb{J}})$ is a Riesz basis for  $\mathcal{H}$.
 \item $ \{x_j\}_{j\in\mathbb{J}},  \{\tau_j\}_{j\in\mathbb{J}}$ are complete  in $\mathcal{H}$, and there exist $ a, b, c, d >0$ such that for every finite subset $ \mathbb{S}\subseteq\mathbb{J}$, 
 \begin{align*}
 a\sum_{j\in \mathbb{S}}|c_j|^2\leq \left\|\sum_{j\in \mathbb{S}}c_jx_j\right\|^2\leq b\sum_{j\in \mathbb{S}}|c_j|^2, ~\forall c_j \in \mathbb{K},\forall j\in \mathbb{S}, \\
 c\sum_{j\in \mathbb{S}}|d_j|^2\leq \left\|\sum_{j\in \mathbb{S}}d_j\tau_j\right\|^2\leq d\sum_{j\in \mathbb{S}}|c_j|^2, ~\forall d_j \in \mathbb{K}, \forall j\in \mathbb{S},
 \end{align*}
 and 
 $$ \sum_{j\in \mathbb{J}}\langle h, x_j\rangle \langle \tau_j, h\rangle \geq 0, \forall h \in \mathcal{H}.$$
 \end{enumerate}
 \end{theorem}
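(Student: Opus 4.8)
The plan is to treat the two implications asymmetrically: $(\mathrm{i})\Rightarrow(\mathrm{ii})$ is a routine unpacking of the Riesz-basis structure in Definition \ref{RELATIVERIESZSEQUENTIAL}, whereas $(\mathrm{ii})\Rightarrow(\mathrm{i})$ is where the real work lies, since there I must manufacture an orthonormal basis together with the two invertible operators directly from the inequalities. For the forward direction I would write $x_j=Uf_j$, $\tau_j=Vf_j$ with $\{f_j\}_{j\in\mathbb{J}}$ an orthonormal basis, $U,V$ invertible, and $VU^*\geq0$, and then repeat verbatim the computations already carried out in the proof of Theorem \ref{RIESZBASISIMPLIESFRAMESEQUENTIAL}. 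Completeness of $\{x_j\}_{j\in\mathbb{J}}$ and $\{\tau_j\}_{j\in\mathbb{J}}$ follows because $U$ and $V$ are bounded invertible and carry the dense set $\operatorname{span}\{f_j\}_{j\in\mathbb{J}}$ onto a dense set. For any finite $\mathbb{S}$ and scalars $c_j$, orthonormality gives $\bigl\|\sum_{j\in\mathbb{S}}c_jf_j\bigr\|^2=\sum_{j\in\mathbb{S}}|c_j|^2$, so squeezing $\|Ug\|$ between $\|U^{-1}\|^{-1}\|g\|$ and $\|U\|\,\|g\|$ yields the bilateral bound $\|U^{-1}\|^{-2}\sum_{j\in\mathbb{S}}|c_j|^2\leq\bigl\|\sum_{j\in\mathbb{S}}c_jx_j\bigr\|^2\leq\|U\|^2\sum_{j\in\mathbb{S}}|c_j|^2$, and likewise for $\tau_j$ with $V$. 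Finally $\sum_{j\in\mathbb{J}}\langle h,x_j\rangle\langle\tau_j,h\rangle=\langle VU^*h,h\rangle\geq0$, which is the stated positivity.

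For $(\mathrm{ii})\Rightarrow(\mathrm{i})$ I would introduce the two synthesis operators $W_x,W_\tau:\ell^2(\mathbb{J})\to\mathcal{H}$, first defined on finitely supported sequences by $W_x(\{c_j\})=\sum_j c_jx_j$ and $W_\tau(\{c_j\})=\sum_j c_j\tau_j$. The upper inequalities show each is bounded on the dense subspace of finitely supported sequences, hence extends uniquely to a bounded operator on $\ell^2(\mathbb{J})$ with $W_xe_j=x_j$, $W_\tau e_j=\tau_j$ for the standard basis $\{e_j\}_{j\in\mathbb{J}}$. The lower inequalities show $W_x$ and $W_\tau$ are bounded below, hence injective with closed range; completeness of $\{x_j\}_{j\in\mathbb{J}}$ (resp. $\{\tau_j\}_{j\in\mathbb{J}}$) forces the range to be dense, and closed plus dense equals all of $\mathcal{H}$. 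By the bounded inverse theorem both $W_x$ and $W_\tau$ are invertible.

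With invertibility in hand, $W_x$ is a Hilbert-space isomorphism of $\ell^2(\mathbb{J})$ onto $\mathcal{H}$, so $\mathcal{H}$ possesses an orthonormal basis $\{f_j\}_{j\in\mathbb{J}}$; let $\Phi:\mathcal{H}\to\ell^2(\mathbb{J})$ be the unitary determined by $\Phi f_j=e_j$. Setting $U\coloneqq W_x\Phi$ and $V\coloneqq W_\tau\Phi$ gives invertible operators on $\mathcal{H}$ with $Uf_j=W_xe_j=x_j$ and $Vf_j=W_\tau e_j=\tau_j$. Since $\Phi\Phi^*=I$, I compute $VU^*=W_\tau\Phi\Phi^*W_x^*=W_\tau W_x^*$, and because $W_x^*h=\{\langle h,x_j\rangle\}_{j\in\mathbb{J}}$ one gets $\langle VU^*h,h\rangle=\sum_{j\in\mathbb{J}}\langle h,x_j\rangle\langle\tau_j,h\rangle\geq0$ for all $h$ by hypothesis; hence $VU^*\geq0$, and $(\{x_j\}_{j\in\mathbb{J}},\{\tau_j\}_{j\in\mathbb{J}})$ is a Riesz basis exactly as required by Definition \ref{RELATIVERIESZSEQUENTIAL}. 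As an alternative to building $W_x,W_\tau$ by hand, Lemma \ref{RIESZCHARACTERIZATION} may be invoked to bound the transition map between $\{x_j\}_{j\in\mathbb{J}}$ and an orthonormal basis, but the synthesis-operator route seems the most economical.

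The hard part will be the surjectivity/invertibility step: extracting from the abstract lower bound the fact that the synthesis operator hits all of $\mathcal{H}$, for which completeness is indispensable, and then producing an orthonormal basis of $\mathcal{H}$ genuinely indexed by $\mathbb{J}$ (this is precisely what the isomorphism $W_x\colon\ell^2(\mathbb{J})\to\mathcal{H}$ supplies). A secondary delicate point is the deduction $VU^*\geq0$ from the quadratic-form positivity: over $\mathbb{C}$ the condition $\langle VU^*h,h\rangle\geq0$ for all $h$ automatically forces $VU^*$ to be self-adjoint and positive, so the argument closes immediately; over $\mathbb{R}$ one should note that the expression $\langle VU^*h,h\rangle$ only sees the symmetric part, so I would verify self-adjointness of $VU^*=W_\tau W_x^*$ separately before concluding positivity.
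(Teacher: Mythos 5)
Your proof is correct and follows essentially the same route as the paper's: both directions come down to producing invertible $U,V\in\mathcal{B}(\mathcal{H})$ with $Uf_j=x_j$, $Vf_j=\tau_j$ for an orthonormal basis $\{f_j\}_{j\in\mathbb{J}}$ and reading $VU^*\geq 0$ off the adjoint formula $U^*h=\sum_{j\in\mathbb{J}}\langle h,x_j\rangle f_j$; the only divergence is that the paper obtains invertibility of the transition maps by applying Lemma \ref{RIESZCHARACTERIZATION} in both directions to build explicit two-sided inverses, whereas you use boundedness below plus dense range of the synthesis operators on $\ell^2(\mathbb{J})$, which has the small advantage of justifying that $\mathcal{H}$ actually admits an orthonormal basis indexed by $\mathbb{J}$ (a point the paper silently assumes when it writes ``let $\{e_j\}_{j\in\mathbb{J}}$ be an orthonormal basis''). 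The real-scalar caveat you flag at the end --- that $\langle VU^*h,h\rangle\geq 0$ only controls the symmetric part of $VU^*$ --- is genuine, but the paper's own proof has exactly the same gap, concluding $VU^*\geq 0$ directly from the quadratic form, so it does not count against your argument relative to the source.
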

 \begin{proof}
 (i) $\Rightarrow$ (ii) Let $ U,V \in \mathcal{B}(\mathcal{H})$ be bounded invertible with $ VU^*\geq0$ and $ \{f_j\}_{j\in\mathbb{J}}$ be an orthonormal basis for $\mathcal{H} $ such that $ x_j=Uf_j, \tau_j=Vf_j, \forall j \in \mathbb{J}.$ Since $ \{x_j\}_{j\in\mathbb{J}},\{y_j\}_{j\in\mathbb{J}} $ are Riesz, first two inequalities in (ii) hold. For that last, we find $ U^*$ and $ V^*$. For all $h, g \in \mathcal{H}, \langle U^*h, g\rangle =\langle h, \sum_{j\in \mathbb{J}}\langle g, f_j\rangle x_j\rangle= \langle \sum_{j\in \mathbb{J}}\langle h, x_j\rangle f_j, g\rangle$.  Therefore $ U^*h=\sum_{j\in \mathbb{J}}\langle h, x_j\rangle f_j$ and similarly $V^*h=\sum_{j\in \mathbb{J}}\langle h, \tau_j\rangle f_j, \forall h \in \mathcal{H}$. These give $ 0\leq \langle U^*h, V^*h\rangle=\sum_{j\in \mathbb{J}}\langle h, x_j\rangle \langle \tau_j, h\rangle,\forall h \in \mathcal{H} $.
  
(ii) $\Rightarrow$ (i) Let $ \{e_j\}_{j\in\mathbb{J}}$ be an orthonormal basis for $\mathcal{H} $. Lemma  \ref{RIESZCHARACTERIZATION} gives that the mappings $ \mathcal{H}\ni\sum_{j\in \mathbb{J}}a_je_j  \mapsto  \sum_{j\in \mathbb{J}}a_jx_j \in \mathcal{H} $, $ \mathcal{H}\ni\sum_{j\in \mathbb{J}}a_je_j  \mapsto \sum_{j\in \mathbb{J}}a_j\tau_j \in \mathcal{H} $, $ \mathcal{H}\ni\sum_{j\in \mathbb{J}}a_jx_j  \mapsto \sum_{j\in \mathbb{J}}a_je_j \in \mathcal{H} $, $ \mathcal{H}\ni\sum_{j\in \mathbb{J}}a_j\tau_j  \mapsto \sum_{j\in \mathbb{J}}a_je_j \in \mathcal{H} $  are well-defined  bounded linear operators on  $\mathcal{H}$, call them as $ U,V, E,F$, respectively. Then $ Ue_j=x_j, Ve_j=\tau_j, \forall j \in \mathbb{J}$,  and $ UE=EU=I_\mathcal{H}=VF=FE$. Further, $\langle VU^*h, h\rangle =\langle U^*h, U^*h\rangle = \sum_{j\in \mathbb{J}}\langle U^*h, e_j\rangle \langle e_j, V^*h\rangle=\sum_{j\in \mathbb{J}}\langle h, x_j\rangle \langle \tau_j, h\rangle \geq 0, \forall h \in \mathcal{H}. $
\end{proof}

\begin{proposition}
For every $ \{x_j\}_{j \in \mathbb{J}}  \in \mathscr{F}_\tau$,
\begin{enumerate}[\upshape (i)]
 \item $\theta_x^*(\{c_j\}_{j\in\mathbb{J}})=\sum_{j\in \mathbb{J}}c_jx_j, \theta_\tau^*(\{c_j\}_{j\in\mathbb{J}})=\sum_{j\in \mathbb{J}}c_j\tau_j,  \forall  \{c_j\}_{j\in\mathbb{J}} \in \ell^2(\mathbb{J}) ;$\\
 $\theta_x^* \theta_xh =  \sum_{j\in \mathbb{J}} \langle h,x_j\rangle x_j, \theta_\tau^* \theta_\tau h =  \sum_{j\in \mathbb{J}} \langle h,\tau_j\rangle \tau_j,  \forall h \in \mathcal{H}.$ 
\item $ S_{x, \tau} = \theta_\tau^*\theta_x=\theta_x^*\theta_\tau .$ In particular,
 $$ S_{x, \tau}h =\sum\limits_{j\in \mathbb{J}}\langle h,x_j\rangle\tau_j=\sum\limits_{j\in \mathbb{J}}\langle h,\tau_j\rangle x_j,~ \forall h \in \mathcal{H} ~\operatorname{and}~$$
 $$\langle S_{x, \tau}h, g\rangle =\sum\limits_{j\in \mathbb{J}}\langle h,x_j\rangle\langle\tau_j, g\rangle=\sum\limits_{j\in \mathbb{J}}\langle h,\tau_j\rangle \langle x_j,g\rangle, ~ \forall h,g  \in \mathcal{H}.$$
 \item Every $ h \in \mathcal{H}$ can be written as 
 $$h =\sum\limits_{j\in \mathbb{J}}\langle h, S^{-1}_{x, \tau}\tau_j\rangle  x_j=\sum\limits_{j\in \mathbb{J}}\langle h,\tau_j \rangle S^{-1}_{x, \tau}  x_j =\sum\limits_{j\in \mathbb{J}}\langle h, S^{-1}_{x, \tau}x_j\rangle  \tau_j=\sum\limits_{j\in \mathbb{J}}\langle h, x_j\rangle S^{-1}_{x, \tau}  \tau_j.$$
 \item $(\{x_j\}_{j \in \mathbb{J}}, \{\tau_j \}_ {j \in \mathbb{J}})$ is Parseval  if and only if $  \theta_\tau^*\theta_x=I_\mathcal{H}.$ 
 \item $(\{x_j\}_{j \in \mathbb{J}}, \{\tau_j \}_ {j \in \mathbb{J}})$ is Parseval  if and only if $ \theta_x\theta_\tau^* $ is idempotent.
 \item $\theta_xS_{x,\tau}^{-1}\theta_\tau^* $ is idempotent.
 \item $\theta_x$ and $ \theta_\tau$ are injective and  their  ranges are closed.
 \item  $\theta_x^* $ and $ \theta_\tau^*$ are surjective.
 \end{enumerate}
 \end{proposition}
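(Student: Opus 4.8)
The plan is to mirror the operator-version argument of Proposition \ref{2.2} line for line, replacing the isometries $L_j$ by the coordinate functionals on $\ell^2(\mathbb{J})$ and the operators $A_j,\Psi_j$ by the vectors $x_j,\tau_j$. The entire proof rests on first pinning down the synthesis operators in (i) and the factorization $S_{x,\tau}=\theta_\tau^*\theta_x=\theta_x^*\theta_\tau$ in (ii); everything else is algebra driven by the positive invertible operator $S_{x,\tau}$.

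First I would establish (i). For $\{c_j\}_{j\in\mathbb{J}}\in\ell^2(\mathbb{J})$ and $h\in\mathcal{H}$, a direct computation gives $\langle\theta_x^*\{c_j\}_{j\in\mathbb{J}},h\rangle=\langle\{c_j\}_{j\in\mathbb{J}},\theta_x h\rangle=\sum_{j\in\mathbb{J}}c_j\langle x_j,h\rangle=\langle\sum_{j\in\mathbb{J}}c_jx_j,h\rangle$, whence $\theta_x^*\{c_j\}_{j\in\mathbb{J}}=\sum_{j\in\mathbb{J}}c_jx_j$ (the series converging because $\{x_j\}_{j\in\mathbb{J}}$ is Bessel by Definition \ref{SEQUENTIAL2}(ii)); the same for $\theta_\tau$. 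Composing with $\theta_x$ yields $\theta_x^*\theta_x h=\sum_{j\in\mathbb{J}}\langle h,x_j\rangle x_j$, and similarly for $\tau$. For (ii), $\theta_\tau^*\theta_x h=\sum_{j\in\mathbb{J}}\langle h,x_j\rangle\tau_j=S_{x,\tau}h$ by the very definition of $S_{x,\tau}$; since $S_{x,\tau}$ is positive, hence self-adjoint, $S_{x,\tau}=S_{x,\tau}^*=(\theta_\tau^*\theta_x)^*=\theta_x^*\theta_\tau$, which also delivers the scalar and vector expansions displayed in the statement.

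With (i)--(ii) in hand the remaining parts are quick. For (iii) I would insert $S_{x,\tau}^{-1}$ (again positive, hence self-adjoint) into $h=S_{x,\tau}S_{x,\tau}^{-1}h$ and $h=S_{x,\tau}^{-1}S_{x,\tau}h$, use the two factorizations of $S_{x,\tau}$, and move $S_{x,\tau}^{-1}$ across the inner product to place the star in the four required positions. Part (iv) is immediate from (ii), since Parseval means $S_{x,\tau}=I_\mathcal{H}$. Part (v) follows exactly as in Proposition \ref{2.2}: if $\theta_\tau^*\theta_x=I_\mathcal{H}$ then $(\theta_x\theta_\tau^*)^2=\theta_x(\theta_\tau^*\theta_x)\theta_\tau^*=\theta_x\theta_\tau^*$, while conversely multiplying $\theta_x\theta_\tau^*\theta_x\theta_\tau^*=\theta_x\theta_\tau^*$ on the left by $\theta_\tau^*$ and on the right by $\theta_x$ gives $S_{x,\tau}^3=S_{x,\tau}^2$, forcing $S_{x,\tau}=I_\mathcal{H}$ by invertibility. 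Part (vi) is the single line $(\theta_xS_{x,\tau}^{-1}\theta_\tau^*)^2=\theta_xS_{x,\tau}^{-1}(\theta_\tau^*\theta_x)S_{x,\tau}^{-1}\theta_\tau^*=\theta_xS_{x,\tau}^{-1}\theta_\tau^*$.

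The part needing the most care is (vii)--(viii). Injectivity of $\theta_x$ is immediate: $\theta_x h=0$ forces $S_{x,\tau}h=\theta_\tau^*\theta_x h=0$, so $h=0$, and likewise for $\theta_\tau$. For closed range I would take a sequence with $\theta_x h_n\to y$, apply the bounded operator $\theta_\tau^*$ to get $S_{x,\tau}h_n\to\theta_\tau^*y$, hence $h_n\to S_{x,\tau}^{-1}\theta_\tau^*y$ by boundedness of $S_{x,\tau}^{-1}$, and then apply $\theta_x$ to conclude $y=\theta_x(S_{x,\tau}^{-1}\theta_\tau^*y)\in\theta_x(\mathcal{H})$; the argument for $\theta_\tau$ is symmetric. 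Finally (viii) follows from the factorization $\theta_x^*\theta_\tau=S_{x,\tau}$ of (ii): for any $h$ we have $\theta_x^*(\theta_\tau S_{x,\tau}^{-1}h)=S_{x,\tau}S_{x,\tau}^{-1}h=h$, so $\theta_x^*$ is onto, and symmetrically $\theta_\tau^*$ is onto. The only genuine obstacle, as in the operator version, is bookkeeping the adjoints so that the star lands on the correct factor in (iii) and (viii); no analytic input beyond the invertibility of $S_{x,\tau}$ is needed.
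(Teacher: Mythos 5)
Your proof is correct and follows exactly the route the paper intends: the paper's own proof of this proposition is simply the remark ``Similar to proof of Proposition \ref{2.2},'' and your argument is a faithful, correctly detailed transcription of that operator-version proof into the sequential setting (adjoint computation for the synthesis operators, the factorization $S_{x,\tau}=\theta_\tau^*\theta_x=\theta_x^*\theta_\tau$ via self-adjointness of the positive operator $S_{x,\tau}$, and the standard closed-range and surjectivity arguments). No gaps.
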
 
 \begin{proof}
 Similar to proof of Proposition \ref{2.2}.
\end{proof}
 We call $ P_{x,\tau}\coloneqq\theta_xS_{x,\tau}^{-1}\theta_\tau^* $ as the  \textit{frame idempotent}.  
\begin{definition}
 A frame  $( \{x_j\}_{j \in \mathbb{J}}, \{\tau_j\}_{j \in \mathbb{J}})$ for $ \mathcal{H}$ is called a Riesz frame if $P_{x,\tau} = I_{\ell^2(\mathbb{J})} $. A Parseval and Riesz frame (i.e., $\theta_\tau^*\theta_x=I_\mathcal{H} $ and $\theta_x\theta_\tau^*= I_{\ell^2(\mathbb{J})}$) is called as an orthonormal frame.
 \end{definition}
\begin{proposition}
\begin{enumerate}[\upshape(i)]
\item  If $( \{x_j\}_{j \in \mathbb{J}}, \{\tau_j\}_{j \in \mathbb{J}})$ is a Riesz basis for $ \mathcal{H}$, then it is a  Riesz frame.
\item  If $( \{x_j\}_{j \in \mathbb{J}}, \{\tau_j=c_jx_j\}_{j \in \mathbb{J}})$ is an  orthonormal basis for $ \mathcal{H}$, then it is  a Riesz frame.
 \end{enumerate}
\end{proposition}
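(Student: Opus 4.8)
The plan is to mirror the operator-version argument of Proposition \ref{RBIMPLIESRFOV}, translating everything from $\mathcal{B}(\mathcal{H},\mathcal{H}_0)$ into the sequential setting. For part (i), I would first unpack Definition \ref{RELATIVERIESZSEQUENTIAL}(ii): choose invertible $U,V \in \mathcal{B}(\mathcal{H})$ with $VU^* \geq 0$ and an orthonormal basis $\{f_j\}_{j\in\mathbb{J}}$ for $\mathcal{H}$ such that $x_j = Uf_j$ and $\tau_j = Vf_j$ for all $j \in \mathbb{J}$. Let $\theta_f\colon \mathcal{H}\ni h \mapsto \{\langle h, f_j\rangle\}_{j\in\mathbb{J}} \in \ell^2(\mathbb{J})$ be the analysis operator of this orthonormal basis; the structural fact I would record at the outset is that $\theta_f$ is unitary, so $\theta_f^*\theta_f = I_\mathcal{H}$ and $\theta_f\theta_f^* = I_{\ell^2(\mathbb{J})}$.

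The central computation expresses the analysis operators of $(\{x_j\}_{j\in\mathbb{J}},\{\tau_j\}_{j\in\mathbb{J}})$ through $\theta_f$. Since $\langle h, x_j\rangle = \langle h, Uf_j\rangle = \langle U^*h, f_j\rangle$, I obtain $\theta_x = \theta_f U^*$, and likewise $\theta_\tau = \theta_f V^*$. Taking adjoints and using $\theta_f^*\theta_f = I_\mathcal{H}$ gives $S_{x,\tau} = \theta_\tau^*\theta_x = V\theta_f^*\theta_f U^* = VU^*$, which is positive and invertible by hypothesis. Substituting into the frame idempotent yields $P_{x,\tau} = \theta_x S_{x,\tau}^{-1}\theta_\tau^* = \theta_f\,U^*(VU^*)^{-1}V\,\theta_f^*$, and the bracketed operator collapses algebraically via $U^*(VU^*)^{-1}V = U^*(U^*)^{-1}V^{-1}V = I_\mathcal{H}$. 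Hence $P_{x,\tau} = \theta_f\theta_f^* = I_{\ell^2(\mathbb{J})}$, which is precisely the defining condition of a Riesz frame.

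For part (ii), the quickest route is to invoke Theorem \ref{RIESZBASISIMPLIESFRAMESEQUENTIAL}(i), which already shows that an orthonormal basis $(\{x_j\}_{j\in\mathbb{J}},\{\tau_j=c_jx_j\}_{j\in\mathbb{J}})$ is a Riesz basis, and then apply part (i). Alternatively one can verify $P_{x,\tau} = I_{\ell^2(\mathbb{J})}$ directly: since $\{x_j\}_{j\in\mathbb{J}}$ is an orthonormal basis, $S_{x,\tau}h = \sum_{j\in\mathbb{J}} c_j\langle h,x_j\rangle x_j$ with inverse $S_{x,\tau}^{-1}h = \sum_{j\in\mathbb{J}} c_j^{-1}\langle h,x_j\rangle x_j$, and pushing $\{d_j\}_{j\in\mathbb{J}}\in\ell^2(\mathbb{J})$ successively through $\theta_\tau^*$, then $S_{x,\tau}^{-1}$, then $\theta_x$ returns $\{d_j\}_{j\in\mathbb{J}}$, giving $P_{x,\tau}=I_{\ell^2(\mathbb{J})}$ once more.

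I expect no genuine obstacle: the only steps needing care are the justification that $\theta_f$ is unitary (completeness together with the Parseval identity for the orthonormal basis $\{f_j\}_{j\in\mathbb{J}}$) and the clean factorizations $\theta_x = \theta_f U^*$, $\theta_\tau = \theta_f V^*$. Once these are in place, the cancellation $U^*(VU^*)^{-1}V = I_\mathcal{H}$ is purely algebraic and identical in spirit to the operator-version proof, so the whole argument is short.
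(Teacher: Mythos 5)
Your proof is correct and follows essentially the same route as the paper's: the same factorizations $\theta_x=\theta_fU^*$, $\theta_\tau=\theta_fV^*$, the identity $S_{x,\tau}=VU^*$, and the cancellation $U^*(VU^*)^{-1}V=I_\mathcal{H}$ giving $P_{x,\tau}=\theta_f\theta_f^*=I_{\ell^2(\mathbb{J})}$; for (ii) the paper performs the direct computation you describe as your alternative, while your first route (orthonormal basis $\Rightarrow$ Riesz basis $\Rightarrow$ part (i)) mirrors the remark the authors make after the operator-version Proposition \ref{RBIMPLIESRFOV}.
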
 
\begin{proof}
\begin{enumerate}[\upshape(i)]
\item Let $ \{f_j\}_{j \in \mathbb{J}}$  be an orthonormal basis for  $ \mathcal{H}$ and  $ U, V : \mathcal{H} \rightarrow \mathcal{H}$ be  bounded invertible with $ VU^*$ is positive  such that $ x_j=Uf_j, \tau_j=Vf_j,  \forall j \in \mathbb{J}$. From Theorem \ref{RIESZBASISIMPLIESFRAMESEQUENTIAL}, $( \{x_j\}_{j \in \mathbb{J}}, \{\tau_j\}_{j \in \mathbb{J}})$ is a frame for $ \mathcal{H}$. Now $ \theta_xh=\{\langle h, Uf_j\rangle \}_{j \in \mathbb{J}}=\theta_fU^*h,\forall h \in \mathcal{H}$. Similarly $\theta_\tau=\theta_f V^*$. Therefore $ S_{x,\tau}=V\theta_f^*\theta_fU^*=VU^*$. This gives $P_{x,\tau} = \theta_xS_{x,\tau}^{-1}\theta_\tau^*=\theta_fU^*(VU^*)^{-1}(\theta_f V^*)^*=\theta_f\theta_f^*= I_{\ell^2(\mathbb{J})}$.
\item $S_{x,\tau}h=\sum_{j\in \mathbb{J}}\langle h, x_j\rangle \tau_j=\sum_{j\in \mathbb{J}}\langle h, x_j\rangle c_jx_j,\forall h \in \mathcal{H}$; $S_{x,\tau}^{-1}h=\sum_{j\in \mathbb{J}}c_j^{-1}\langle h, x_j\rangle x_j,\forall h \in \mathcal{H}$, $P_{x,\tau}( \{a_j\}_{j \in \mathbb{J}}) = \theta_xS_{x,\tau}^{-1}\theta_\tau^*(\{a_j\}_{j \in \mathbb{J}})=\theta_xS_{x,\tau}^{-1}(\sum_{j\in \mathbb{J}}a_jc_jx_j) =\theta_x(\sum_{k\in \mathbb{J}}c_k^{-1}\langle \sum_{j\in \mathbb{J}}a_jc_jx_j, x_k\rangle x_k)= \theta_x(\sum_{k\in \mathbb{J}}a_kx_k)=\sum_{l\in \mathbb{J}}\langle\sum_{k\in \mathbb{J}}a_kx_k , x_l\rangle e_l=\{a_j\}_{j \in \mathbb{J}},\forall \{a_j\}_{j \in \mathbb{J}} \in \ell^2(\mathbb{J})$, where
$( \{e_l\}_{l \in \mathbb{J}}$ is the standard orthonormal basis for $\ell^2(\mathbb{J})$.

\end{enumerate}
\end{proof}
\begin{proposition}\label{RIESZFRAMECHARACTERIZATION}
A frame  $( \{x_j\}_{j \in \mathbb{J}}, \{\tau_j\}_{j \in \mathbb{J}})$ for $ \mathcal{H}$ is  a Riesz frame if  and only if $ \theta_x(\mathcal{H})=\ell^2(\mathbb{J})$ if  and only if $ \theta_\tau(\mathcal{H})=\ell^2(\mathbb{J}).$
\end{proposition}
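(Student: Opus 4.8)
The plan is to mirror the operator-version argument of Proposition \ref{RIESZOVFCHARACTERIZATIONPROPOSITION}, simply replacing the ambient space $\ell^2(\mathbb{J})\otimes\mathcal{H}_0$ by $\ell^2(\mathbb{J})$ and working with the frame idempotent $P_{x,\tau}=\theta_xS_{x,\tau}^{-1}\theta_\tau^*$ together with the identity $S_{x,\tau}=\theta_\tau^*\theta_x$ (both supplied by the preceding sequential analogue of Proposition \ref{2.2}). First I would prove the equivalence ``Riesz frame $\iff \theta_x(\mathcal{H})=\ell^2(\mathbb{J})$'' in two directions. For the forward direction, assume $(\{x_j\}_{j\in\mathbb{J}},\{\tau_j\}_{j\in\mathbb{J}})$ is a Riesz frame, so by definition $P_{x,\tau}=I_{\ell^2(\mathbb{J})}$, and then the chain of inclusions
$$\ell^2(\mathbb{J})=P_{x,\tau}(\ell^2(\mathbb{J}))=\theta_xS_{x,\tau}^{-1}\theta_\tau^*(\ell^2(\mathbb{J}))\subseteq\theta_x(\mathcal{H})\subseteq\ell^2(\mathbb{J})$$
forces $\theta_x(\mathcal{H})=\ell^2(\mathbb{J})$.

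For the converse, suppose $\theta_x(\mathcal{H})=\ell^2(\mathbb{J})$. Given an arbitrary $y\in\ell^2(\mathbb{J})$, I would choose $h\in\mathcal{H}$ with $y=\theta_xh$ and compute
$$y=\theta_xh=\theta_xS_{x,\tau}^{-1}S_{x,\tau}h=\theta_xS_{x,\tau}^{-1}\theta_\tau^*\theta_xh=(\theta_xS_{x,\tau}^{-1}\theta_\tau^*)\,\theta_xh=P_{x,\tau}y,$$
where the third equality uses $S_{x,\tau}=\theta_\tau^*\theta_x$. Since $y$ was arbitrary, $P_{x,\tau}=I_{\ell^2(\mathbb{J})}$, i.e.\ the frame is a Riesz frame.

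To obtain the remaining equivalence involving $\theta_\tau$, I would invoke symmetry. Because $S_{x,\tau}=\theta_\tau^*\theta_x=\theta_x^*\theta_\tau=S_{\tau,x}$ is positive and hence self-adjoint, one checks $P_{\tau,x}=\theta_\tau S_{x,\tau}^{-1}\theta_x^*=(\theta_xS_{x,\tau}^{-1}\theta_\tau^*)^*=P_{x,\tau}^*$; consequently $P_{x,\tau}=I_{\ell^2(\mathbb{J})}$ if and only if $P_{\tau,x}=I_{\ell^2(\mathbb{J})}$, so the Riesz-frame property is unchanged when the roles of $\{x_j\}_{j\in\mathbb{J}}$ and $\{\tau_j\}_{j\in\mathbb{J}}$ are interchanged. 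Applying the first equivalence to the frame $(\{\tau_j\}_{j\in\mathbb{J}},\{x_j\}_{j\in\mathbb{J}})$ then gives ``Riesz frame $\iff\theta_\tau(\mathcal{H})=\ell^2(\mathbb{J})$,'' completing the proof. I do not anticipate a genuine obstacle here: the argument is purely formal once the identities for $S_{x,\tau}$ and $P_{x,\tau}$ are in hand, and the only point meriting a line of justification is the symmetry claim $P_{\tau,x}=P_{x,\tau}^*$, which rests on the self-adjointness of the (positive, invertible) frame operator $S_{x,\tau}$.
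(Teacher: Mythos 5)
Your proof is correct and follows essentially the same route as the paper's: the same inclusion chain for the forward direction, the same computation $y=\theta_xS_{x,\tau}^{-1}S_{x,\tau}h=P_{x,\tau}y$ for the converse, and symmetry for the $\theta_\tau$ equivalence. You merely spell out the symmetry step ($P_{\tau,x}=P_{x,\tau}^*$ via self-adjointness of $S_{x,\tau}$) more explicitly than the paper, which is a harmless elaboration.
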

\begin{proof}
Let $ P_{x,\tau}= I_{\ell^2(\mathbb{J})}.$ Now $\ell^2(\mathbb{J})=P_{x,\tau}(\mathcal{H})=\theta_xS^{-1}_{x,\tau}\theta^*_\tau(\mathcal{H})\subseteq\theta_x(\mathcal{H})\subseteq\ell^2(\mathbb{J}).$
If $ \theta_x(\mathcal{H})=\ell^2(\mathbb{J})$ and $ y \in\ell^2(\mathbb{J}), $ then there exists an $ h \in \mathcal{H}$ such that $y=\theta_xh .$ Now $ y=\theta_xS_{x,\tau}^{-1}S_{x,\tau}h=(\theta_xS_{x,\tau}^{-1}\theta_\tau^*)\theta_xh =P_{x,\tau}y.$ Definition of frame is symmetric, thus another  `if and only if'.
\end{proof}

\begin{proposition}
A frame  $ (\{x_j\}_{j\in \mathbb{J}}, \{\tau_j\}_{j\in \mathbb{J}}) $ for  $ \mathcal{H}$ is an orthonormal frame   if and only if it is a Parseval frame  and $ \langle x_j,\tau_k \rangle =\delta_{j,k},\forall j, k \in \mathbb{J}$. 
\end{proposition}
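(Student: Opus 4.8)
The plan is to unwind both implications to a single equivalence at the level of the analysis and synthesis operators. Recall that by definition an orthonormal frame is precisely a Parseval frame (so $\theta_\tau^*\theta_x = I_\mathcal{H}$) that is also a Riesz frame (so $P_{x,\tau} = \theta_x\theta_\tau^* = I_{\ell^2(\mathbb{J})}$). Since the word ``orthonormal'' already contains ``Parseval'' and the claimed characterization also carries ``Parseval'' on its right-hand side, the whole proposition reduces to showing that, for any frame, the Riesz identity $\theta_x\theta_\tau^* = I_{\ell^2(\mathbb{J})}$ holds if and only if $\langle x_j, \tau_k\rangle = \delta_{j,k}$ for all $j,k\in\mathbb{J}$.

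To establish this equivalence I would compute the matrix entries of the bounded operator $\theta_x\theta_\tau^*$ against the standard orthonormal basis $\{e_j\}_{j\in\mathbb{J}}$ of $\ell^2(\mathbb{J})$. Using the formula for the synthesis operator from the sequential analogue of Proposition \ref{2.2}, namely $\theta_\tau^* e_k = \tau_k$, together with $\theta_x h = \{\langle h, x_l\rangle\}_{l\in\mathbb{J}}$, I obtain $\langle \theta_x\theta_\tau^* e_k, e_j\rangle = \langle \theta_x \tau_k, e_j\rangle = \langle \tau_k, x_j\rangle = \overline{\langle x_j, \tau_k\rangle}$ for all $j,k$. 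A bounded operator on $\ell^2(\mathbb{J})$ is determined by these entries and equals $I_{\ell^2(\mathbb{J})}$ exactly when every entry equals $\delta_{j,k}$; because $\delta_{j,k}$ is real, this is the same as $\langle x_j, \tau_k\rangle = \delta_{j,k}$ for all $j,k$.

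Putting the pieces together gives both directions simultaneously. If the frame is orthonormal, it is Parseval and $\theta_x\theta_\tau^* = I_{\ell^2(\mathbb{J})}$, whence the biorthogonality $\langle x_j, \tau_k\rangle = \delta_{j,k}$. Conversely, if it is Parseval and $\langle x_j, \tau_k\rangle = \delta_{j,k}$, then the entry computation forces $\theta_x\theta_\tau^* = I_{\ell^2(\mathbb{J})}$, i.e. $P_{x,\tau} = I_{\ell^2(\mathbb{J})}$, so the frame is Riesz and hence orthonormal.

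There is essentially no genuine obstacle here; the only points requiring care are the conjugation in $\langle \tau_k, x_j\rangle = \overline{\langle x_j, \tau_k\rangle}$ and the standard fact that a bounded operator on $\ell^2(\mathbb{J})$ whose matrix coincides with that of the identity is itself the identity, which follows since $Te_k = \sum_{j\in\mathbb{J}} \langle Te_k, e_j\rangle e_j = e_k$ and the $e_k$ span a dense subspace. This argument is the exact sequential mirror of the operator-version statement characterizing orthonormal operator-valued frames via the biorthogonality relation $A_j\Psi_k^* = \delta_{j,k} I_{\mathcal{H}_0}$.
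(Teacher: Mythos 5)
Your proposal is correct and follows essentially the same route as the paper: the paper's forward direction computes $\langle x_j,\tau_k\rangle=\langle \theta_x^*e_j,\theta_\tau^*e_k\rangle=\langle e_j,\theta_x\theta_\tau^*e_k\rangle$, and its converse evaluates $\theta_x\theta_\tau^*$ on a general element of $\ell^2(\mathbb{J})$, which is the same matrix-entry computation you perform. The only cosmetic difference is that you package both directions as a single equivalence between $\theta_x\theta_\tau^*=I_{\ell^2(\mathbb{J})}$ and biorthogonality; the substance is identical.
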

\begin{proof}
Let $\{e_j\}_{j\in \mathbb{J}} $ be the standard orthonormal basis for $ \ell^2(\mathbb{J})$.
$(\Rightarrow)$ $\langle x_j,\tau_k \rangle =\langle \theta_x^*e_j,\theta_\tau^*e_k \rangle =\langle e_j,\theta_x\theta_\tau^*e_k \rangle=\langle e_j,I_{\ell^2(\mathbb{J})}e_k \rangle=\delta_{j,k},\forall j, k \in \mathbb{J}.$
$(\Leftarrow)$ For all $\{a_j\}_{j\in \mathbb{J}}\in\ell^2(\mathbb{J})$, $\theta_x\theta_\tau^*(\{a_j\}_{j\in \mathbb{J}})=\theta_x(\sum_{j\in \mathbb{J}}a_j\tau_j)= \sum_{k\in \mathbb{J}}\langle \sum_{j\in \mathbb{J}}a_j\tau_j , x_k\rangle e_k =\sum_{k\in \mathbb{J}}a_ke_k$. Hence $ (\{x_j\}_{j\in \mathbb{J}}, \{\tau_j\}_{j\in \mathbb{J}}) $ is an orthonormal frame.

\end{proof}

Following is  a dilation theorem.
 \begin{theorem}\label{SEQUENTIALDILATION}
 Let $(\{x_j\}_{j\in \mathbb{J}},\{\tau_j\}_{j\in \mathbb{J}} )$ be  a Parseval frame  for  $ \mathcal{H}$ such that $ \theta_x(\mathcal{H})=\theta_\tau(\mathcal{H})$ and $P_{x,\tau}$ be  a projection. Then there exist a Hilbert space $ \mathcal{H}_1 $ which contains $ \mathcal{H}$ isometrically and  an orthonormal frame $(\{y_j\}_{j\in \mathbb{J}},\{\omega_j\}_{j\in \mathbb{J}} )$ for  $ \mathcal{H}_1$ such that $ x_j=Py_j,\tau_j=P\omega_j, \forall j \in \mathbb{J}$, where $P$ is the orthogonal projection from $\mathcal{H}_1$ onto $\mathcal{H}$. 
 \end{theorem}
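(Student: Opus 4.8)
The plan is to mimic the operator-version dilation (Theorem \ref{OPERATORDILATION}) through the translation dictionary supplied by Theorem \ref{OVFTOSEQUENCEANDVICEVERSATHEOREM}, since the sequential setting is exactly the operator setting with $\mathcal{H}_0=\mathbb{K}$. First I would set $\mathcal{H}_1\coloneqq\mathcal{H}\oplus\theta_x(\mathcal{H})^\perp$, where $\theta_x(\mathcal{H})^\perp$ is taken inside $\ell^2(\mathbb{J})$; the map $h\mapsto h\oplus 0$ is then an isometry, so $\mathcal{H}$ embeds isometrically in $\mathcal{H}_1$, and $P$ is the orthogonal projection of $\mathcal{H}_1$ onto the first summand. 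Because the frame is Parseval with $\theta_x(\mathcal{H})=\theta_\tau(\mathcal{H})$ and $P_{x,\tau}$ a (self-adjoint) projection, $P_{x,\tau}$ is precisely the orthogonal projection of $\ell^2(\mathbb{J})$ onto this common range, a fact I would record first exactly as in the proof of Theorem \ref{OPERATORDILATION}.

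Next I would define the extended vectors directly in $\mathcal{H}_1$. Writing $\{e_j\}_{j\in\mathbb{J}}$ for the standard orthonormal basis of $\ell^2(\mathbb{J})$, I would set
\begin{align*}
 y_j&\coloneqq x_j\oplus P_{x,\tau}^\perp e_j, &\omega_j&\coloneqq \tau_j\oplus P_{x,\tau}^\perp e_j, &\forall j\in\mathbb{J},
\end{align*}
where $P_{x,\tau}^\perp=I_{\ell^2(\mathbb{J})}-P_{x,\tau}$. Clearly $Py_j=x_j$ and $P\omega_j=\tau_j$ for every $j$. I would then compute the analysis operators: for $h\oplus g\in\mathcal{H}_1$, one checks $\theta_y(h\oplus g)=\theta_x h+P_{x,\tau}^\perp g$ and $\theta_\omega(h\oplus g)=\theta_\tau h+P_{x,\tau}^\perp g$, whose adjoints are $\theta_y^*z=\theta_x^*z\oplus P_{x,\tau}^\perp z$ and $\theta_\omega^*z=\theta_\tau^*z\oplus P_{x,\tau}^\perp z$ for $z\in\ell^2(\mathbb{J})$. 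Using $\theta_x(\mathcal{H})=\theta_\tau(\mathcal{H})$ together with the orthogonality relations $\theta_\tau^*P_{x,\tau}^\perp=0=P_{x,\tau}^\perp\theta_x$, the frame operator computes to $S_{y,\omega}(h\oplus g)=S_{x,\tau}h\oplus P_{x,\tau}^\perp g=h\oplus g$ by Parsevalness, so $(\{y_j\},\{\omega_j\})$ is a Parseval frame for $\mathcal{H}_1$.

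Finally I would verify that this extended frame is in fact orthonormal, i.e.\ that $P_{y,\omega}=\theta_y\theta_\omega^*=I_{\ell^2(\mathbb{J})}$. Applying $\theta_y$ to $\theta_\omega^*z=\theta_\tau^*z\oplus P_{x,\tau}^\perp z$ gives $P_{y,\omega}z=\theta_x(\theta_\tau^*z)+P_{x,\tau}^\perp(P_{x,\tau}^\perp z)=P_{x,\tau}z+P_{x,\tau}^\perp z=z$, so $(\{y_j\},\{\omega_j\})$ is a Riesz frame and hence, being also Parseval, an orthonormal frame by Definition of orthonormal frame. I expect the main obstacle to be purely bookkeeping rather than conceptual: one must be careful that $\theta_x(\mathcal{H})^\perp$ is computed inside $\ell^2(\mathbb{J})$ (not inside some tensor-product space as in the operator version), that $P_{x,\tau}^\perp$ is genuinely an orthogonal projection, which is where the hypotheses ``$P_{x,\tau}$ is a projection'' and ``$\theta_x(\mathcal{H})=\theta_\tau(\mathcal{H})$'' are essential, and that the cross terms $\theta_\tau^*P_{x,\tau}^\perp$ and $P_{x,\tau}^\perp\theta_x$ really vanish. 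These vanishing relations follow since the range of $\theta_x$ equals the range of $P_{x,\tau}$ and $P_{x,\tau}^\perp$ annihilates that range; once they are in hand the computations of $S_{y,\omega}$ and $P_{y,\omega}$ go through verbatim as in Theorem \ref{OPERATORDILATION}.
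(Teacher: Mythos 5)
Your proposal is correct and follows essentially the same route as the paper's own proof: the same dilation space $\mathcal{H}_1=\mathcal{H}\oplus\theta_x(\mathcal{H})^\perp$, the same vectors $y_j=x_j\oplus P_{x,\tau}^\perp e_j$ and $\omega_j=\tau_j\oplus P_{x,\tau}^\perp e_j$, and the same computations of $\theta_y$, $\theta_\omega^*$, $S_{y,\omega}$ and $P_{y,\omega}$ using $\theta_\tau^*P_{x,\tau}^\perp=0=P_{x,\tau}^\perp\theta_x$. The points you flag as potential obstacles (that the orthogonal complement lives in $\ell^2(\mathbb{J})$ and that $P_{x,\tau}$ is the orthogonal projection onto the common range) are exactly the observations the paper makes at the outset, so nothing is missing.
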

 \begin{proof}
 Let  $\{e_j\}_{j\in \mathbb{J}} $ be the standard orthonormal basis for $ \ell^2(\mathbb{J})$, and define $ \mathcal{H}_1\coloneqq\mathcal{H}\oplus \theta_x(\mathcal{H})^\perp$. Then $\mathcal{H} \ni h \mapsto h\oplus 0 \in \mathcal{H}_1 $ is isometry.  Denote the orthogonal projection from $\mathcal{H}_1 $ onto $\mathcal{H} $ by $P$. Define $ y_j\coloneqq x_j\oplus P_{x,\tau}^\perp e_j, \omega_j\coloneqq\tau_j\oplus P_{x,\tau}^\perp e_j , \forall j \in \mathbb{J}$. Then, clearly $ Py_j=x_j, P\omega_j=\tau_j, \forall j \in \mathbb{J}$. Now $ \theta_y(h\oplus g)=\{\langle h\oplus g, x_j\oplus P_{x,\tau}^\perp e_j\rangle \}_{j\in \mathbb{J}}=\{\langle h, x_j \rangle +\langle g,P_{x,\tau}^\perp e_j\rangle \}_{j\in \mathbb{J}}=\{\langle h, x_j \rangle\}_{j\in \mathbb{J}} +\{\langle P_{x,\tau}^\perp g, e_j\rangle \}_{j\in \mathbb{J}}=\theta_xh+P_{x,\tau}^\perp g,\forall h\oplus g\in \mathcal{H}_1$ and $ \langle \theta_y^*(\{a_j\}_{j\in \mathbb{J}}), h\oplus g\rangle = \langle \{a_j\}_{j\in \mathbb{J}}, \theta_y(h\oplus g)\rangle=\langle \{a_j\}_{j\in \mathbb{J}}, \theta_xh+ P_{x,\tau}^\perp g\rangle =\langle \theta_x^*(\{a_j\}_{j\in \mathbb{J}}), h\rangle+\langle  P_{x,\tau}^\perp\{a_j\}_{j\in \mathbb{J}},  g\rangle =\langle \theta_x^*(\{a_j\}_{j\in \mathbb{J}})\oplus  P_{x,\tau}^\perp(\{a_j\}_{j\in \mathbb{J}}),h\oplus g\rangle, \forall \{a_j\}_{j\in \mathbb{J}} \in \ell^2(\mathbb{J}) , \forall h\oplus g\in \mathcal{H}_1$. Hence $ \theta_y(h\oplus g)=\theta_xh+P_{x,\tau}^\perp g,\forall h\oplus g\in \mathcal{H}_1,$ and $ \theta_y^*(\{a_j\}_{j\in \mathbb{J}})=\theta_x^*(\{a_j\}_{j\in \mathbb{J}})\oplus  P_{x,\tau}^\perp\{a_j\}_{j\in \mathbb{J}},\forall \{a_j\}_{j\in \mathbb{J}} \in \ell^2(\mathbb{J}).$ Similarly we can find $ \theta_\omega$ and $ \theta_\omega^*$. Therefore,  by  using $\theta_x(\mathcal{H})=\theta_\tau(\mathcal{H}) $ and $\theta_\tau^*P_{x,\tau}^\perp=0=P_{x,\tau}^\perp\theta_x ,$ we get  $ S_{y,\omega}(h\oplus g)= \theta_\omega^*(\theta_xh+  P_{x,\tau}^\perp g)=\theta_\tau^*(\theta_xh+ P_{x,\tau}^\perp g)\oplus  P_{x,\tau}^\perp(\theta_xh+ P_{x,\tau}^\perp g)=(S_{x,\tau}h+0)\oplus(0+ P_{x,\tau}^\perp g)=S_{x,\tau}h\oplus P_{x,\tau}^\perp g=I_\mathcal{H}h\oplus I_{\theta_x(\mathcal{H})^\perp} g, \forall h\oplus g\in \mathcal{H}_1$.  Hence $(\{y_j\}_{j\in \mathbb{J}},\{\omega_j\}_{j\in \mathbb{J}} )$ is a Parseval frame for  $ \mathcal{H}_1$. We see $ P_{y,\omega}(\{a_j\}_{j\in \mathbb{J}})=\theta_BS_{y,\omega}^{-1}(\{a_j\}_{j\in \mathbb{J}}), \forall \{a_j\}_{j\in \mathbb{J}} \in \ell^2(\mathbb{J})$. Hence $(\{y_j\}_{j\in \mathbb{J}},\{\omega_j\}_{j\in \mathbb{J}} )$ is a Riesz frame for  $ \mathcal{H}_1$. Therefore   $(\{y_j\}_{j\in \mathbb{J}},\{\omega_j\}_{j\in \mathbb{J}} )$ is an orthonormal frame for  $ \mathcal{H}_1$. 
  \end{proof}
 \begin{definition}
 A frame   $(\{y_j\}_{j\in \mathbb{J}}, \{\omega_j\}_{j\in \mathbb{J}})$  for  $\mathcal{H}$ is said to be a dual of frame  $ ( \{x_j\}_{j\in \mathbb{J}}, \{\tau_j\}_{j\in \mathbb{J}})$ for  $\mathcal{H}$  if $ \theta_\omega^*\theta_x= \theta_y^*\theta_\tau=I_{\mathcal{H}}$. The `frame' $(   \{\widetilde{x}_j\coloneqq S_{x,\tau}^{-1}x_j\}_{j\in \mathbb{J}},\{\widetilde{\tau}_j\coloneqq S_{x,\tau}^{-1}\tau_j\}_{j \in \mathbb{J}} )$, which is a `dual' of $ (\{x_j\}_{j\in \mathbb{J}}, \{\tau_j\}_{j\in \mathbb{J}})$ is called the canonical dual of $ (\{x_j\}_{j\in \mathbb{J}}, \{\tau_j\}_{j\in \mathbb{J}})$.
 \end{definition}
 Definition is symmetric, and  $ (\{y_j\}_{j\in \mathbb{J}}, \{\omega_j\}_{j\in \mathbb{J}})$ is a dual of $ (\{x_j\}_{j\in \mathbb{J}}, \{\tau_j\}_{j\in \mathbb{J}})$ if and only if both 
 $ (\{x_j\}_{j\in \mathbb{J}}, \{\omega_j\}_{j\in \mathbb{J}})$ and $ (\{\tau_j\}_{j\in \mathbb{J}}, \{y_j\}_{j\in \mathbb{J}})$ are Parseval frames. It is also true that if  $\{y_j\}_{j\in \mathbb{J}}, \{z_j\}_{j\in \mathbb{J}}\in \mathscr{F}_\tau $ are duals of $ \{x_j\}_{j\in \mathbb{J}} \in \mathscr{F}_\tau$, then the `frame' $(\left\{(y_j+z_j)/2\right\}_{j\in \mathbb{J}},\{\tau_j\}_{j\in \mathbb{J}}) $ is also a dual of $( \{x_j\}_{j\in \mathbb{J}}, \{\tau_j\}_{j\in \mathbb{J}}).$
 \begin{proposition}
 Let $( \{x_j\}_{j\in \mathbb{J}},\{\tau_j\}_{j\in \mathbb{J}} )$ be a frame for  $\mathcal{H}.$ If $ h \in \mathcal{H}$ has representation  $ h=\sum_{j\in\mathbb{J}}c_jx_j= \sum_{j\in\mathbb{J}}d_j\tau_j, $ for some scalar sequences  $ \{c_j\}_{j\in \mathbb{J}},\{d_j\}_{j\in \mathbb{J}}$,  then 
 $$ \sum\limits_{j\in \mathbb{J}}c_j\bar{d}_j =\sum\limits_{j\in \mathbb{J}}\langle h, \widetilde{\tau}_j\rangle\langle \widetilde{x}_j , h \rangle+\sum\limits_{j\in \mathbb{J}}(\langle c_j-\langle h, \widetilde{\tau}_j\rangle)(\bar{d}_j-\langle \widetilde{x}_j, h\rangle). $$
\end{proposition}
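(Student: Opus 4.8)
The plan is to mimic the proof of the operator-version identity (the Proposition immediately following Theorem~\ref{CANONICALDUALFRAMEPROPERTYOPERATORVERSION} in Section~\ref{MK}), since this sequential statement is its exact translation under the correspondence $A_j h=\langle h,x_j\rangle$, $\Psi_j h=\langle h,\tau_j\rangle$ from Theorem~\ref{OVFTOSEQUENCEANDVICEVERSATHEOREM}. First I would expand the second sum on the right-hand side by bilinearity, writing
\begin{align*}
\sum_{j\in\mathbb{J}}(c_j-\langle h,\widetilde{\tau}_j\rangle)(\bar d_j-\langle\widetilde{x}_j,h\rangle)
&=\sum_{j\in\mathbb{J}}c_j\bar d_j-\sum_{j\in\mathbb{J}}c_j\langle\widetilde{x}_j,h\rangle
-\sum_{j\in\mathbb{J}}\langle h,\widetilde{\tau}_j\rangle\bar d_j\\
&\quad+\sum_{j\in\mathbb{J}}\langle h,\widetilde{\tau}_j\rangle\langle\widetilde{x}_j,h\rangle.
\end{align*}
Adding the first term on the right-hand side, $\sum_j\langle h,\widetilde{\tau}_j\rangle\langle\widetilde{x}_j,h\rangle$, the goal reduces to showing that the three remaining terms collapse to $\sum_j c_j\bar d_j$ minus twice the quadratic term, matching the left-hand side.

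The key computational step is to evaluate the two mixed sums using $\widetilde{x}_j=S_{x,\tau}^{-1}x_j$ and $\widetilde{\tau}_j=S_{x,\tau}^{-1}\tau_j$ together with the hypothesis $h=\sum_j c_j x_j=\sum_j d_j\tau_j$. For the term $\sum_j c_j\langle\widetilde{x}_j,h\rangle$, I would pull the self-adjoint operator $S_{x,\tau}^{-1}$ across the inner product and use $h=\sum_j c_j x_j$ to get
$$
\sum_{j\in\mathbb{J}}c_j\langle S_{x,\tau}^{-1}x_j,h\rangle
=\sum_{j\in\mathbb{J}}c_j\langle x_j,S_{x,\tau}^{-1}h\rangle
=\Big\langle\sum_{j\in\mathbb{J}}c_j x_j,\,S_{x,\tau}^{-1}h\Big\rangle
=\langle h,S_{x,\tau}^{-1}h\rangle,
$$
and symmetrically $\sum_j\langle h,\widetilde{\tau}_j\rangle\bar d_j=\sum_j\overline{d_j}\langle S_{x,\tau}^{-1}h,\tau_j\rangle=\langle S_{x,\tau}^{-1}h,\sum_j d_j\tau_j\rangle=\langle S_{x,\tau}^{-1}h,h\rangle$. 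Since $S_{x,\tau}^{-1}$ is self-adjoint (by item (ii) of the sequential version of Proposition~\ref{2.2}, $S_{x,\tau}=\theta_\tau^*\theta_x=\theta_x^*\theta_\tau$), both mixed sums equal the real quantity $\langle h,S_{x,\tau}^{-1}h\rangle=\langle S_{x,\tau}^{-1}h,h\rangle$. Likewise the isolated quadratic term simplifies as $\sum_j\langle h,\widetilde{\tau}_j\rangle\langle\widetilde{x}_j,h\rangle=\langle S_{x,\tau}^{-1}h,h\rangle$ by the same maneuver. Substituting these three evaluations, the right-hand side becomes $\langle S_{x,\tau}^{-1}h,h\rangle+\sum_j c_j\bar d_j-\langle h,S_{x,\tau}^{-1}h\rangle-\langle S_{x,\tau}^{-1}h,h\rangle=\sum_j c_j\bar d_j$, which is exactly the left-hand side.

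The only delicate point — and the main obstacle — is justifying the interchange of summation and inner product (i.e., pulling $\sum_j c_j$ inside the bracket as $\sum_j c_j x_j=h$). This requires that the relevant series converge, which is guaranteed because $\{x_j\}$ and $\{\tau_j\}$ are Bessel sequences with respect to themselves (a consequence of the frame hypothesis, as noted right after Definition~\ref{SEQUENTIAL2}) and because $\{c_j\},\{d_j\}$ are the coefficient sequences of convergent expansions; the Bessel bounds make the relevant bilinear pairings continuous in each variable, so the net-convergence of $\sum_{j\in\mathbb{J}}$ lets the inner product and the sum be swapped. With that justification in place the identity follows by direct substitution, with no further estimation needed.
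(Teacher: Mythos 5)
Your approach is the same as the paper's: expand the product by bilinearity, evaluate the two mixed sums and the quadratic sum by moving the self-adjoint operator $S_{x,\tau}^{-1}$ across the inner product and using $h=\sum_j c_jx_j=\sum_j d_j\tau_j$, and cancel using $\langle h,S_{x,\tau}^{-1}h\rangle=\langle S_{x,\tau}^{-1}h,h\rangle$. Your three evaluations $\sum_j c_j\langle \widetilde{x}_j,h\rangle=\langle h,S_{x,\tau}^{-1}h\rangle$, $\sum_j\langle h,\widetilde{\tau}_j\rangle\bar d_j=\langle S_{x,\tau}^{-1}h,h\rangle$ and $\sum_j\langle h,\widetilde{\tau}_j\rangle\langle\widetilde{x}_j,h\rangle=\langle h,S_{x,\tau}^{-1}h\rangle$ are all correct, and your remark on justifying the interchange of sum and inner product via the Bessel bounds is a sensible addition that the paper leaves implicit.

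There is, however, a bookkeeping slip in your last display. The right-hand side contains the quadratic term \emph{twice}: once as the first summand of the stated identity and once as the fourth summand produced by expanding the product. Your final substitution keeps only one copy, so what you wrote evaluates to $\sum_j c_j\bar d_j-\langle h,S_{x,\tau}^{-1}h\rangle$, not to $\sum_j c_j\bar d_j$. With both copies retained the computation closes exactly as in the paper:
\begin{align*}
2\langle S_{x,\tau}^{-1}h,h\rangle+\sum_{j\in\mathbb{J}}c_j\bar d_j-\langle h,S_{x,\tau}^{-1}h\rangle-\langle S_{x,\tau}^{-1}h,h\rangle=\sum_{j\in\mathbb{J}}c_j\bar d_j,
\end{align*}
using self-adjointness of $S_{x,\tau}^{-1}$. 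This is a transcription error rather than a conceptual gap; once the factor of $2$ is restored the proof is complete.
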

\begin{proof}
Right side $ =$
\begin{align*}
 &\sum\limits_{j\in \mathbb{J}}\langle S_{x, \tau}^{-1}h, \tau_j\rangle\langle x_j , S_{x, \tau}^{-1}h \rangle+\sum\limits_{j\in \mathbb{J}}c_j\bar{d}_j-\sum\limits_{j\in \mathbb{J}}c_j\langle x_j, S_{x, \tau}^{-1}h\rangle-\sum\limits_{j\in \mathbb{J}}\bar{d}_j\langle S_{x, \tau}^{-1}h, \tau_j\rangle+\sum\limits_{j\in \mathbb{J}}\langle S_{x, \tau}^{-1}h, \tau_j\rangle\langle x_j, S_{x, \tau}^{-1}h\rangle\\
&= 2\langle S_{x, \tau}S_{x, \tau}^{-1}h, S_{x, \tau}^{-1}h\rangle+\sum\limits_{j\in \mathbb{J}}c_j\bar{d}_j-\left\langle\sum\limits_{j\in \mathbb{J}}c_jx_j,S_{x, \tau}^{-1}h \right\rangle -\left\langle S_{x, \tau}^{-1}h,\sum\limits_{j\in \mathbb{J}} d_j\tau_j \right\rangle\\
&=2\langle h, S_{x, \tau}^{-1}h\rangle+\sum\limits_{j\in \mathbb{J}}c_j\bar{d}_j-\langle h, S_{x, \tau}^{-1}h\rangle-\langle  S_{x, \tau}^{-1}h, h\rangle
 =\text{Left side.}
 \end{align*}
 \end{proof} 
 \begin{theorem}\label{CANONICALDUALFRAMEPROPERTYSEQUENTIALVERSION}
 Let $( \{x_j\}_{j\in \mathbb{J}},\{\tau_j\}_{j\in \mathbb{J}} )$ be a frame for $ \mathcal{H}$ with frame bounds $ a$ and $ b.$ Then
  \begin{enumerate}[\upshape(i)]
 \item The canonical dual frame of the canonical dual frame  of $ (\{x_j\}_{j\in \mathbb{J}} ,\{\tau_j\}_{j\in \mathbb{J}} )$ is itself.
 \item$ \frac{1}{b}, \frac{1}{a}$ are frame bounds for the  canonical dual of $ (\{x_j\}_{j\in \mathbb{J}},\{\tau_j\}_{j\in \mathbb{J}}).$
 \item If $ a, b $ are optimal frame bounds for $( \{x_j\}_{j\in \mathbb{J}} , \{\tau_j\}_{j\in \mathbb{J}}),$ then $ \frac{1}{b}, \frac{1}{a}$ are optimal  frame bounds for its canonical dual.
 \end{enumerate} 
 \end{theorem}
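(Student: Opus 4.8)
The plan is to run the same argument as in the operator version, Theorem~\ref{CANONICALDUALFRAMEPROPERTYOPERATORVERSION}, transcribed into the sequential language; the only structural change is that the canonical dual is now formed by applying $S_{x,\tau}^{-1}$ on the \emph{left} of each vector, since $\widetilde{x}_j = S_{x,\tau}^{-1}x_j$ and $\widetilde{\tau}_j = S_{x,\tau}^{-1}\tau_j$. Everything hinges on one identity: the frame operator of the canonical dual equals $S_{x,\tau}^{-1}$.

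First I would prove this identity. Recall from the frame-operator proposition for sequential frames that $S_{x,\tau} = \theta_\tau^*\theta_x$ is positive invertible, hence self-adjoint, so $S_{x,\tau}^{-1}$ is self-adjoint as well. Then for every $h \in \mathcal{H}$,
\begin{align*}
S_{\widetilde{x},\widetilde{\tau}}h &= \sum_{j\in\mathbb{J}}\langle h, \widetilde{x}_j\rangle\,\widetilde{\tau}_j = \sum_{j\in\mathbb{J}}\langle h, S_{x,\tau}^{-1}x_j\rangle\, S_{x,\tau}^{-1}\tau_j \\
&= S_{x,\tau}^{-1}\sum_{j\in\mathbb{J}}\langle S_{x,\tau}^{-1}h, x_j\rangle\,\tau_j = S_{x,\tau}^{-1}\big(S_{x,\tau}(S_{x,\tau}^{-1}h)\big) = S_{x,\tau}^{-1}h,
\end{align*}
where pulling $S_{x,\tau}^{-1}$ out of the sum is legitimate because it is bounded and the series converges by Definition~\ref{SEQUENTIAL2}, and the replacement $\langle h, S_{x,\tau}^{-1}x_j\rangle = \langle S_{x,\tau}^{-1}h, x_j\rangle$ uses self-adjointness.

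Granting $S_{\widetilde{x},\widetilde{\tau}} = S_{x,\tau}^{-1}$, the three parts follow quickly. For (i), the frame operator of the canonical dual of the canonical dual is $(S_{x,\tau}^{-1})^{-1} = S_{x,\tau}$, and its two families are $\{S_{x,\tau}\widetilde{x}_j\}_{j\in\mathbb{J}} = \{x_j\}_{j\in\mathbb{J}}$ and $\{S_{x,\tau}\widetilde{\tau}_j\}_{j\in\mathbb{J}} = \{\tau_j\}_{j\in\mathbb{J}}$, i.e.\ the original frame. For (ii), I would apply operator-monotonicity of $t\mapsto t^{-1}$: from $aI_\mathcal{H}\leq S_{x,\tau}\leq bI_\mathcal{H}$ one gets $\tfrac{1}{b}I_\mathcal{H}\leq S_{x,\tau}^{-1}\leq\tfrac{1}{a}I_\mathcal{H}$, so $\tfrac{1}{b},\tfrac{1}{a}$ are frame bounds for the canonical dual. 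For (iii), I would copy the optimality argument from the operator version: letting $c$ be the optimal upper bound of the dual, part (ii) forces $c\leq\tfrac1a$; applying (ii) to the dual together with (i) (which says its own dual is the original frame) gives that $\tfrac1c$ is a lower bound for $(\{x_j\}_{j\in\mathbb{J}},\{\tau_j\}_{j\in\mathbb{J}})$, hence $\tfrac1c\leq a$, so $c=\tfrac1a$; the optimal lower bound is handled symmetrically.

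There is no serious obstacle here — the result is a faithful sequential shadow of the already-established operator statement. The one point requiring care is the interchange step in the displayed computation: justifying that $S_{x,\tau}^{-1}$ may be extracted from the net limit defining $S_{\widetilde{x},\widetilde{\tau}}h$, and that the inner-product adjoint identity holds term by term. Both are routine given boundedness of $S_{x,\tau}^{-1}$ and self-adjointness, so the work is essentially verification rather than discovery.
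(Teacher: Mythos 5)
Your proposal is correct and follows essentially the same route as the paper: both rest on the identity $S_{\widetilde{x},\widetilde{\tau}}=S_{x,\tau}^{-1}$, obtained by pulling the bounded operator $S_{x,\tau}^{-1}$ out of the defining series, and then parts (i)--(iii) are dispatched exactly as in the operator version (the paper explicitly defers to that argument). The only cosmetic difference is that you use self-adjointness to write $\langle h,S_{x,\tau}^{-1}x_j\rangle=\langle S_{x,\tau}^{-1}h,x_j\rangle$ and then recognize $S_{x,\tau}(S_{x,\tau}^{-1}h)$, while the paper invokes the reconstruction formula $\sum_j\langle h,S_{x,\tau}^{-1}x_j\rangle\tau_j=h$ directly; these are equivalent.
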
 
 \begin{proof}
  For $ h \in \mathcal{H},$ 
 $$ \sum\limits_{j\in \mathbb{J}}\langle h, \widetilde{x}_j \rangle\widetilde{\tau}_j = \sum\limits_{j\in \mathbb{J}}\langle h, S_{x,\tau}^{-1} x_j\rangle S_{x,\tau}^{-1}\tau_j = S_{x,\tau}^{-1}\left(\sum\limits_{j\in \mathbb{J}}\langle h, S_{x,\tau}^{-1} x_j\rangle \tau_j\right)=S_{x,\tau}^{-1}h.$$
 Thus the frame operator for the canonical dual $(\{\widetilde{x}_j\}_{j \in \mathbb{J}},   \{\widetilde{\tau}_j\}_{j\in \mathbb{J}} )$ is  $ S_{x,\tau}^{-1}. $
  Therefore, its canonical dual is $(\{ S_{x,\tau}S_{x,\tau}^{-1}x_j\}_{j \in \mathbb{J}},   \{ S_{x,\tau} S_{x,\tau}^{-1}\tau_j\}_{j\in \mathbb{J}} ).$ Others can be proved as in the  earlier  consideration `operator-valued frame'.
  \end{proof}
 \begin{proposition}
 Let  $ (\{x_j\}_{j\in \mathbb{J}}, \{\tau_j\}_{j\in \mathbb{J}}) $ and $ (\{y_j\}_{j\in \mathbb{J}}, \{\omega_j\}_{j\in \mathbb{J}}) $ be  frames for   $\mathcal{H}$. Then the following are equivalent.
 \begin{enumerate}[\upshape(i)]
 \item $ (\{y_j\}_{j\in \mathbb{J}},\{\omega_j\}_{j\in \mathbb{J}}) $ is a dual of $( \{x_j\}_{j\in \mathbb{J}}, \{\tau_j\}_{j\in \mathbb{J}}) $. 
 \item $ \sum_{j\in \mathbb{J}}\langle h, x_j\rangle \omega_j= \sum_{j\in \mathbb{J}}\langle h, \tau_j\rangle y_j=h, \forall h \in  \mathcal{H}.$ 
 \end{enumerate}
 \end{proposition}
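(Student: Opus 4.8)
The plan is to translate the statement about analysis/synthesis operators into the pointwise reconstruction identities and vice versa, exploiting the facts already recorded in the preceding proposition. Recall from that proposition the explicit formulas $\theta_\omega^*(\{c_j\}_{j\in\mathbb{J}}) = \sum_{j\in\mathbb{J}}c_j\omega_j$ and $\theta_x h = \{\langle h, x_j\rangle\}_{j\in\mathbb{J}}$, together with the analogous formulas for the other operators. The key observation is that composing synthesis after analysis reproduces exactly the series appearing in condition (ii): for $h\in\mathcal{H}$,
\begin{align*}
\theta_\omega^*\theta_x h &= \theta_\omega^*(\{\langle h, x_j\rangle\}_{j\in\mathbb{J}}) = \sum_{j\in\mathbb{J}}\langle h, x_j\rangle \omega_j,\\
\theta_y^*\theta_\tau h &= \theta_y^*(\{\langle h, \tau_j\rangle\}_{j\in\mathbb{J}}) = \sum_{j\in\mathbb{J}}\langle h, \tau_j\rangle y_j.
\end{align*}

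With these two identities in hand the proof is essentially a one-line equivalence. First I would note that since both $(\{x_j\}_{j\in\mathbb{J}},\{\tau_j\}_{j\in\mathbb{J}})$ and $(\{y_j\}_{j\in\mathbb{J}},\{\omega_j\}_{j\in\mathbb{J}})$ are assumed to be frames, all four analysis operators $\theta_x,\theta_\tau,\theta_y,\theta_\omega$ are well-defined bounded operators, so the compositions $\theta_\omega^*\theta_x$ and $\theta_y^*\theta_\tau$ are genuine bounded operators on $\mathcal{H}$ and the series above converge for every $h$. Then condition (i), which by definition of dual frame reads $\theta_\omega^*\theta_x = \theta_y^*\theta_\tau = I_{\mathcal{H}}$, holds if and only if $\theta_\omega^*\theta_x h = \theta_y^*\theta_\tau h = h$ for all $h\in\mathcal{H}$; substituting the displayed formulas turns this into precisely the statement of condition (ii). This is exactly the sequential-version analogue of Proposition \ref{DUALOVFCHARACTERIZATION}, whose operator-version proof was the single line $\theta_\Phi^*\theta_A=\sum_{j}\Phi_j^*A_j$, $\theta_B^*\theta_\Psi=\sum_j B_j^*\Psi_j$.

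I do not anticipate any real obstacle: the content is entirely bookkeeping, relating the operator-equation formulation of duality to its coordinate expansion. The only point that warrants a word of care is the passage between "the operator equals the identity" and "the series equals $h$ for every $h$", which is immediate once one knows the operators are everywhere defined and bounded — this is guaranteed by the frame hypotheses and is already packaged in the cited proposition. Accordingly, the proof I would write is simply: \emph{By the preceding proposition, $\theta_\omega^*\theta_x h = \sum_{j\in\mathbb{J}}\langle h, x_j\rangle \omega_j$ and $\theta_y^*\theta_\tau h = \sum_{j\in\mathbb{J}}\langle h, \tau_j\rangle y_j$ for all $h\in\mathcal{H}$. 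Hence the operator identities $\theta_\omega^*\theta_x = \theta_y^*\theta_\tau = I_{\mathcal{H}}$ of (i) hold if and only if the pointwise identities of (ii) hold.}
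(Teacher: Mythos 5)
Your proposal is correct and follows exactly the paper's argument: the paper's entire proof is the pair of identities $\theta_\omega^*\theta_xh= \sum_{j\in \mathbb{J}}\langle h, x_j\rangle \omega_j$ and $\theta_y^*\theta_\tau h= \sum_{j\in \mathbb{J}}\langle h, \tau_j\rangle y_j$, from which the equivalence of (i) and (ii) is immediate. Your additional remark about boundedness being guaranteed by the frame hypotheses is harmless elaboration of the same one-line argument.
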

 \begin{proof}
 $\theta_\omega^*\theta_xh= \sum_{j\in \mathbb{J}}\langle h, x_j\rangle \omega_j, \theta_y^*\theta_\tau h= \sum_{j\in \mathbb{J}}\langle h, \tau_j\rangle y_j.$
 \end{proof}
 \begin{theorem}
 Let $ (\{x_j\}_{j\in \mathbb{J}}, \{\tau_j\}_{j\in \mathbb{J}})$  be a  frame for   $\mathcal{H}$. If $ (\{x_j\}_{j\in \mathbb{J}}, \{\tau_j\}_{j\in \mathbb{J}})$ is a Riesz  basis  for   $\mathcal{H}$, then $ (\{x_j\}_{j\in \mathbb{J}}, \{\tau_j\}_{j\in \mathbb{J}}) $ has unique dual. Converse holds if $ \theta_x(\mathcal{H})=\theta_\tau(\mathcal{H})$.
 \end{theorem}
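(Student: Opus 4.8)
The plan is to transcribe, into the sequential language, the argument given earlier for the operator-valued version, replacing $\theta_A,\theta_\Psi,S_{A,\Psi},P_{A,\Psi}$ by their sequential analogues $\theta_x,\theta_\tau,S_{x,\tau},P_{x,\tau}$ and recovering vectors through $x_j=\theta_x^*e_j$, where $\{e_j\}_{j\in\mathbb{J}}$ is the standard orthonormal basis of $\ell^2(\mathbb{J})$. I would freely use the earlier sequential facts: $S_{x,\tau}=\theta_\tau^*\theta_x=\theta_x^*\theta_\tau$, the frame idempotent $P_{x,\tau}=\theta_xS_{x,\tau}^{-1}\theta_\tau^*$, that analysis operators are injective with closed range, and the characterization (Proposition \ref{RIESZFRAMECHARACTERIZATION}) that the frame is a Riesz frame iff $\theta_x(\mathcal{H})=\ell^2(\mathbb{J})$ iff $\theta_\tau(\mathcal{H})=\ell^2(\mathbb{J})$. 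I also record that a Riesz basis is a Riesz frame, so that whenever $(\{x_j\},\{\tau_j\})$ is a Riesz basis we have $P_{x,\tau}=I_{\ell^2(\mathbb{J})}$.

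For the forward implication, assume $(\{x_j\},\{\tau_j\})$ is a Riesz basis and let $(\{y_j\},\{\omega_j\})$ and $(\{z_j\},\{\nu_j\})$ be two duals. Writing the duality condition $\theta_\omega^*\theta_x=\theta_y^*\theta_\tau=I_\mathcal{H}$ together with its adjoint, I would obtain $\theta_\tau^*(\theta_y-\theta_z)=0$ and $\theta_x^*(\theta_\omega-\theta_\nu)=0$. Multiplying the first on the left by $\theta_xS_{x,\tau}^{-1}$ yields $P_{x,\tau}(\theta_y-\theta_z)=\theta_y-\theta_z$, which is $0$ since $P_{x,\tau}=I_{\ell^2(\mathbb{J})}$; multiplying the second on the left by $\theta_\tau S_{x,\tau}^{-1}$ and using $P_{x,\tau}^*=\theta_\tau S_{x,\tau}^{-1}\theta_x^*=I_{\ell^2(\mathbb{J})}$ gives $\theta_\omega=\theta_\nu$. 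Applying $(\cdot)^*e_j$ then gives $y_j=\theta_y^*e_j=\theta_z^*e_j=z_j$ and likewise $\omega_j=\nu_j$ for all $j$, so the dual is unique, its existence being guaranteed by the canonical dual.

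For the converse I would argue the contrapositive under the standing hypothesis $\theta_x(\mathcal{H})=\theta_\tau(\mathcal{H})$: if $(\{x_j\},\{\tau_j\})$ is not a Riesz basis then it is not a Riesz frame, whence by Proposition \ref{RIESZFRAMECHARACTERIZATION} the range $\theta_x(\mathcal{H})=\theta_\tau(\mathcal{H})$ is a proper closed subspace of $\ell^2(\mathbb{J})$. Let $P$ be the orthogonal projection of $\ell^2(\mathbb{J})$ onto $\theta_x(\mathcal{H})^\perp=\theta_\tau(\mathcal{H})^\perp=\ker\theta_x^*=\ker\theta_\tau^*$, choose any nonzero bounded $T:\theta_x(\mathcal{H})^\perp\to\mathcal{H}$, and set $y_j:=S_{x,\tau}^{-1}x_j+TPe_j$ and $\omega_j:=S_{x,\tau}^{-1}\tau_j+TPe_j$. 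I would verify that $\theta_y=\theta_xS_{x,\tau}^{-1}+PT^*$ and $\theta_\omega=\theta_\tau S_{x,\tau}^{-1}+PT^*$; then $S_{y,\omega}=\theta_\omega^*\theta_y=S_{x,\tau}^{-1}+TPT^*$ because $P\theta_x=P\theta_\tau=0$ and $\theta_x^*P=\theta_\tau^*P=0$, and this is positive invertible (being $\geq S_{x,\tau}^{-1}$), so $(\{y_j\},\{\omega_j\})$ is a frame. The same cancellations give $\theta_\omega^*\theta_x=\theta_y^*\theta_\tau=I_\mathcal{H}$, so it is a dual; and since $T\neq0$ while $P$ maps onto the domain of $T$, there is a $k$ with $TPe_k\neq0$, hence $y_k\neq S_{x,\tau}^{-1}x_k=\widetilde{x}_k$, so this dual differs from the canonical dual. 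Two distinct duals contradict uniqueness, establishing the converse.

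The main obstacle is the bookkeeping between the notions ``Riesz basis'' and ``Riesz frame'': the range-characterization I rely on is stated for Riesz frames, whereas the theorem speaks of Riesz bases. I expect to settle this once and for all by observing that a Riesz frame is automatically a Riesz basis: when $\theta_x(\mathcal{H})=\ell^2(\mathbb{J})$ the analysis operator $\theta_x$ is a bounded bijection, so $x_j=\theta_x^*e_j=Uf_j$ and $\tau_j=\theta_\tau^*e_j=Vf_j$ for the invertible $U,V$ obtained by composing $\theta_x^*,\theta_\tau^*$ with a unitary carrying an orthonormal basis $\{f_j\}$ of $\mathcal{H}$ onto $\{e_j\}$, and then $VU^*=\theta_\tau^*\theta_x=S_{x,\tau}\geq0$. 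The remaining care-points are the four annihilation identities $P\theta_x=P\theta_\tau=0$ and $\theta_x^*P=\theta_\tau^*P=0$ (all immediate from $\operatorname{ran}P=\ker\theta_x^*=\ker\theta_\tau^*$ and $\theta_x(\mathcal{H}),\theta_\tau(\mathcal{H})\perp\operatorname{ran}P$) and the recovery formula $\theta_y^*e_j=y_j$, both of which are routine.
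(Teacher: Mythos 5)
Your proof is correct. For the converse you follow the paper's own construction essentially verbatim: the perturbation $y_j=S_{x,\tau}^{-1}x_j+TPe_j$, $\omega_j=S_{x,\tau}^{-1}\tau_j+TPe_j$ of the canonical dual by a nonzero operator supported on $\theta_x(\mathcal{H})^\perp$, with the hypothesis $\theta_x(\mathcal{H})=\theta_\tau(\mathcal{H})$ entering exactly where you use $P\theta_\tau=0$. For the forward direction you diverge mildly: the paper works directly with the Riesz-basis data $x_j=Uf_j$, $\tau_j=Vf_j$, using invertibility of $U$ and orthonormality of $\{f_j\}$ to force $\langle h,y_j-z_j\rangle=0$ from $\sum_j\langle h,y_j-z_j\rangle\tau_j=0$, whereas you transcribe the operator-valued argument and use $P_{x,\tau}=I_{\ell^2(\mathbb{J})}$ together with $\theta_y^*e_j=y_j$; both are valid, and yours has the merit of being literally the same computation as in the operator-valued theorem of Section \ref{MK}. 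The most substantive point is the one you flag yourself: the paper's converse passes from ``not a Riesz basis'' to ``$\theta_x(\mathcal{H})\subsetneq\ell^2(\mathbb{J})$'' by citing Proposition \ref{RIESZFRAMECHARACTERIZATION}, which characterizes Riesz \emph{frames}, while the implication actually needed, namely ``Riesz frame $\Rightarrow$ Riesz basis'', is nowhere established in the paper (only its converse is). Your explicit repair --- when $\theta_x(\mathcal{H})=\ell^2(\mathbb{J})$ set $U=\theta_x^*W$, $V=\theta_\tau^*W$ for a unitary $W$ carrying an orthonormal basis $\{f_j\}$ of $\mathcal{H}$ onto $\{e_j\}$, so that $Uf_j=x_j$, $Vf_j=\tau_j$ and $VU^*=\theta_\tau^*\theta_x=S_{x,\tau}\geq 0$ --- closes this gap cleanly and is an improvement over the printed argument.
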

 \begin{proof}
 Let $ (\{y_j\}_{j\in \mathbb{J}}, \{\omega_j\}_{j\in \mathbb{J}})$ and  $ (\{z_j\}_{j\in \mathbb{J}}, \{\rho_j\}_{j\in \mathbb{J}})$ be  dual frames  of $ (\{x_j\}_{j\in \mathbb{J}}, \{\tau_j\}_{j\in \mathbb{J}})$.
 Since $ (\{x_j\}_{j\in \mathbb{J}}, \{\tau_j\}_{j\in \mathbb{J}})$ is a Riesz  basis, there exist  invertible $ U,V\in \mathcal{B}(\mathcal{H})$ and an orthonormal basis $\{f_j\}_{j\in \mathbb{J}} $ for $\mathcal{H}$ such that $x_j=Uf_j, \tau_j=Vf_j, \forall j \in \mathbb{J}$ and $ VU^*\geq 0.$ Then $ \sum_{j\in \mathbb{J}}\langle h,y_j-z_j \rangle \tau_j=\sum_{j\in \mathbb{J}}\langle h,y_j \rangle \tau_j-\sum_{j\in \mathbb{J}}\langle h,z_j \rangle \tau_j=h-h=0,\forall h\in \mathcal{H}$ $\Rightarrow$ $ U(\sum_{j\in \mathbb{J}}\langle h,y_j-z_j \rangle f_j)=0$ $\Rightarrow$ $\sum_{j\in \mathbb{J}}\langle h,y_j-z_j \rangle f_j=0,\forall j \in \mathbb{J}$ $\Rightarrow$ $\langle h,y_j-z_j \rangle=0,\forall h\in \mathcal{H},\forall j \in \mathbb{J}$ $\Rightarrow$ $ y_j=z_j,\forall j \in \mathbb{J}$. Similarly $ \omega_j=\rho_j, \forall j \in \mathbb{J}.$ Hence the dual of $(\{x_j\}_{j\in \mathbb{J}}, \{\tau_j\}_{j\in \mathbb{J}})$ is unique.

 To prove the converse, we further have  $ \theta_x(\mathcal{H})=\theta_\tau(\mathcal{H})$. Suppose $ (\{x_j\}_{j\in \mathbb{J}}, \{\tau_j\}_{j\in \mathbb{J}}) $ is not a Riesz basis. Then from Proposition \ref{RIESZFRAMECHARACTERIZATION}, $ \theta_x(\mathcal{H}) \subsetneq \ell^2(\mathbb{J})$. Let $P :\ell^2(\mathbb{J}) \rightarrow  \theta_x(\mathcal{H})^\perp $ be the orthogonal projection, $ T:\theta_x(\mathcal{H})^\perp \rightarrow \mathcal{H}$ be a nonzero bounded linear operator, and $\{e_j\}_{j\in \mathbb{J}} $ be the standard orthonormal basis for $\ell^2(\mathbb{J})$.  Since $\{Pe_j\}_{j\in \mathbb{J}} $ contains a Schauder basis for $ \theta_x(\mathcal{H})^\perp$, there exists $ k \in \mathbb{J}$ such that $ TPe_k\neq0$. Therefore if we define $y_j=S_{x,\tau}^{-1}x_j+TPe_j, \forall j \in \mathbb{J} $, then $\{y_j\}_{j\in \mathbb{J}} $ and  $\{S_{x,\tau}^{-1}x_j\}_{j\in \mathbb{J}} $ are different. Now define $\omega_j=S_{x,\tau}^{-1}\tau_j+TPe_j, \forall j \in \mathbb{J}.$ We see $\theta_y=\theta_xS_{x,\tau}^{-1}+PT^*$, $\theta_\omega=\theta_\tau S_{x,\tau}^{-1}+PT^* $ and $S_{y,\omega}=\theta_\omega^*\theta_y=( S_{x,\tau}^{-1}\theta_\tau^*+TP)(\theta_xS_{x,\tau}^{-1}+PT^*)=S_{x,\tau}^{-1}+S_{x,\tau}^{-1}0T^*+T0S_{x,\tau}^{-1}+TPT^*=S_{x,\tau}^{-1}+TPT^* $ which is positive invertible. Hence $(\{y_j\}_{j\in \mathbb{J}}, \{\omega_j\}_{j\in \mathbb{J}})$ is a frame for $\mathcal{H}$. This is a dual of $(\{x_j\}_{j\in \mathbb{J}}, \{\tau_j\}_{j\in \mathbb{J}})$. In fact, $\theta_x^*\theta_\omega=\theta_x^*(\theta_\tau S_{x,\tau}^{-1}+PT^*)=I_\mathcal{H}+0=I_\mathcal{H} $, and $ \theta_\tau^*\theta_y=\theta_\tau^*(\theta_xS_{x,\tau}^{-1}+PT^*) =I_\mathcal{H}+0=I_\mathcal{H}$. Therefore $(\{x_j\}_{j\in \mathbb{J}}, \{\tau_j\}_{j\in \mathbb{J}})$ has two  distinct duals, one - the canonical dual, and two - $(\{y_j\}_{j\in \mathbb{J}}, \{\omega_j\}_{j\in \mathbb{J}})$, which is a contradiction.
 \end{proof}
 
 \begin{proposition}
 Let $ (\{x_j\}_{j\in \mathbb{J}}, \{\tau_j\}_{j\in \mathbb{J}}) $  be a  frame for   $\mathcal{H}$. If $ (\{y_j\}_{j\in \mathbb{J}}, \{\omega_j\}_{j\in \mathbb{J}}) $ is a dual of $ (\{x_j\}_{j\in \mathbb{J}}, \{\tau_j\}_{j\in \mathbb{J}}) $, then there exist Bessel sequences $ \{z_j\}_{j\in \mathbb{J}}$ and $ \{\rho_j\}_{j\in \mathbb{J}} $ (w.r.t. themselves) for  $\mathcal{H}$ such that $ y_j=S_{x,\tau}^{-1}x_j+z_j, \omega_j=S_{x,\tau}^{-1}\tau_j+\rho_j,\forall j \in \mathbb{J}$,  and $\theta_z(\mathcal{H})\perp \theta_\tau(\mathcal{H}),\theta_\rho(\mathcal{H})\perp \theta_x(\mathcal{H})$. Converse holds if  $ \theta_\rho^*\theta_z \geq 0$.
 \end{proposition}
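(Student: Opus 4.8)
The plan is to imitate, essentially line for line, the operator-version argument given earlier in Section~\ref{MK} for the proposition on duals of operator-valued frames, translating each statement about $A_j,\Psi_j,B_j,\Phi_j$ into the corresponding statement about the sequences $x_j,\tau_j,y_j,\omega_j$ and their analysis operators $\theta_x,\theta_\tau,\theta_y,\theta_\omega$. The whole proof runs through the three identities $S_{x,\tau}=\theta_\tau^*\theta_x=\theta_x^*\theta_\tau$ (so that $S_{x,\tau}$, and hence $S_{x,\tau}^{-1}$, is self-adjoint) together with the two relations one reads off the dual condition by taking adjoints: since $(\{y_j\}_{j\in\mathbb{J}},\{\omega_j\}_{j\in\mathbb{J}})$ being a dual means $\theta_\omega^*\theta_x=\theta_y^*\theta_\tau=I_\mathcal{H}$, we also have $\theta_x^*\theta_\omega=\theta_\tau^*\theta_y=I_\mathcal{H}$.

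For the forward direction I would set $z_j\coloneqq y_j-S_{x,\tau}^{-1}x_j$ and $\rho_j\coloneqq\omega_j-S_{x,\tau}^{-1}\tau_j$, i.e. measure how far $(\{y_j\}_{j\in\mathbb{J}},\{\omega_j\}_{j\in\mathbb{J}})$ sits from the canonical dual. The first step is the passage to analysis operators: using that $S_{x,\tau}^{-1}$ is self-adjoint, $\langle h,S_{x,\tau}^{-1}x_j\rangle=\langle S_{x,\tau}^{-1}h,x_j\rangle$, so that $\theta_z=\theta_y-\theta_xS_{x,\tau}^{-1}$ and likewise $\theta_\rho=\theta_\omega-\theta_\tau S_{x,\tau}^{-1}$; both are bounded as differences of bounded operators, whence $\{z_j\}_{j\in\mathbb{J}}$ and $\{\rho_j\}_{j\in\mathbb{J}}$ are Bessel with respect to themselves (cf. Definition~\ref{SEQUENTIAL2}). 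The orthogonality statements $\theta_z(\mathcal{H})\perp\theta_\tau(\mathcal{H})$ and $\theta_\rho(\mathcal{H})\perp\theta_x(\mathcal{H})$ then reduce to $\theta_\tau^*\theta_z=0$ and $\theta_x^*\theta_\rho=0$, which I would check by $\theta_\tau^*\theta_z=\theta_\tau^*\theta_y-\theta_\tau^*\theta_xS_{x,\tau}^{-1}=I_\mathcal{H}-S_{x,\tau}S_{x,\tau}^{-1}=0$ and $\theta_x^*\theta_\rho=\theta_x^*\theta_\omega-\theta_x^*\theta_\tau S_{x,\tau}^{-1}=I_\mathcal{H}-S_{x,\tau}S_{x,\tau}^{-1}=0$.

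For the converse, given Bessel $\{z_j\}_{j\in\mathbb{J}},\{\rho_j\}_{j\in\mathbb{J}}$ with $\theta_\tau^*\theta_z=0$, $\theta_x^*\theta_\rho=0$ and $\theta_\rho^*\theta_z\geq0$, I would define $y_j\coloneqq S_{x,\tau}^{-1}x_j+z_j$ and $\omega_j\coloneqq S_{x,\tau}^{-1}\tau_j+\rho_j$, note that $\theta_y=\theta_xS_{x,\tau}^{-1}+\theta_z$ and $\theta_\omega=\theta_\tau S_{x,\tau}^{-1}+\theta_\rho$ are bounded, and compute the frame operator $S_{y,\omega}=\theta_\omega^*\theta_y$. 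Expanding the product and killing the two cross terms via $\theta_\tau^*\theta_z=0$ and $\theta_\rho^*\theta_x=(\theta_x^*\theta_\rho)^*=0$ leaves $S_{y,\omega}=S_{x,\tau}^{-1}+\theta_\rho^*\theta_z\geq S_{x,\tau}^{-1}$, which is positive invertible; this shows $(\{y_j\}_{j\in\mathbb{J}},\{\omega_j\}_{j\in\mathbb{J}})$ is a frame. The dual relations then follow by the same cancellation: $\theta_\omega^*\theta_x=S_{x,\tau}^{-1}\theta_\tau^*\theta_x+\theta_\rho^*\theta_x=I_\mathcal{H}$ and $\theta_y^*\theta_\tau=S_{x,\tau}^{-1}\theta_x^*\theta_\tau+\theta_z^*\theta_\tau=I_\mathcal{H}$.

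There is no genuine obstacle here; the content is a transcription of the operator-version proof, and the only points needing care are bookkeeping ones: remembering that $S_{x,\tau}^{-1}$ is self-adjoint, so that the synthesis side $S_{x,\tau}^{-1}x_j$ turns into the factor $\theta_xS_{x,\tau}^{-1}$ on the analysis side, and keeping straight which family's analysis operator carries each dual relation, so that every cross term is genuinely one of $\theta_\tau^*\theta_z$ or $\theta_x^*\theta_\rho$ and therefore vanishes. The hypothesis $\theta_\rho^*\theta_z\geq0$ is used exactly once, to guarantee that the perturbation of $S_{x,\tau}^{-1}$ keeps the frame operator bounded below.
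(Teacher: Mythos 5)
Your proof is correct and follows essentially the same route as the paper's: the same decomposition $z_j=y_j-S_{x,\tau}^{-1}x_j$, $\rho_j=\omega_j-S_{x,\tau}^{-1}\tau_j$, the same passage to $\theta_z=\theta_y-\theta_xS_{x,\tau}^{-1}$ and $\theta_\rho=\theta_\omega-\theta_\tau S_{x,\tau}^{-1}$, and the same cancellation of cross terms in $S_{y,\omega}=S_{x,\tau}^{-1}+\theta_\rho^*\theta_z$ for the converse. The only cosmetic difference is that you justify Besselness of $\{z_j\}_{j\in\mathbb{J}}$ and $\{\rho_j\}_{j\in\mathbb{J}}$ via boundedness of their analysis operators rather than by citing that all four original families are Bessel, which amounts to the same thing.
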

 \begin{proof}
 $(\Rightarrow)$ Define $z_j\coloneqq  y_j-S_{x,\tau}^{-1}x_j, \rho_j\coloneqq\omega_j-S_{x,\tau}^{-1}\tau_j+\rho_j,\forall j \in \mathbb{J}$. Since  $ \{x_j\}_{j\in \mathbb{J}} $, $ \{\tau_j\}_{j\in \mathbb{J}} $, $ \{y_j\}_{j\in \mathbb{J}} $, $ \{\rho_j\}_{j\in \mathbb{J}} $ are all Bessel (w.r.t. themselves), $ \{z_j\}_{j\in \mathbb{J}} $ and $ \{\rho_j\}_{j\in \mathbb{J}} $ are Bessel (w.r.t. themselves). Next, $\theta_z=\theta_y-\theta_xS_{x,\tau}^{-1}$, $\theta_\rho=\theta_\omega-\theta_\tau S_{x,\tau}^{-1}$. Now $ \theta_\tau^*\theta_z=\theta_\tau^*(\theta_y-\theta_xS_{x,\tau}^{-1})=I_\mathcal{H}-I_\mathcal{H}=0$, $ \theta_x^*\theta_\rho=\theta_x^*(\theta_\omega-\theta_\tau S_{x,\tau}^{-1})=I_\mathcal{H}-I_\mathcal{H}=0$.
 
 $(\Leftarrow)$ Surely  $ \{y_j\}_{j\in \mathbb{J}}$ and $ \{\omega_j\}_{j\in \mathbb{J}} $ are Bessel (w.r.t. themselves), $\theta_y=\theta_xS_{x,\tau}^{-1}+\theta_z$, $\theta_\omega=\theta_\tau S_{x,\tau}^{-1}+\theta_\rho$. From $ \theta_\rho^*\theta_z \geq 0$, $S_{y,\omega}=\theta_\omega^*\theta_y=(S_{x,\tau}^{-1}\theta_\tau^*+\theta_\rho^*)(\theta_xS_{x,\tau}^{-1}+\theta_z)=S_{x,\tau}^{-1}+0+0+\theta_\rho^*\theta_z\geq  S_{x,\tau}^{-1}$, hence $S_{y,\omega}$ is positive invertible. Frame $ (\{y_j\}_{j\in \mathbb{J}}, \{\omega_j\}_{j\in \mathbb{J}}) $ is a dual of $ (\{x_j\}_{j\in \mathbb{J}}, \{\tau_j\}_{j\in \mathbb{J}}) $. Indeed, $\theta_\tau^*\theta_y=\theta_\tau^*(\theta_xS_{x,\tau}^{-1}+\theta_z)=I_\mathcal{H}+0$, $\theta_x^*\theta_\omega=\theta_x^*(\theta_\tau S_{x,\tau}^{-1}+\theta_\rho)=I_\mathcal{H}+0 $.
 \end{proof}
 \begin{lemma}\label{DUALCHARATERIZATIONLEMMA1}
 Let  $ (\{x_j\}_{j\in \mathbb{J}}, \{\tau_j\}_{j\in \mathbb{J}}) $  be a  frame for   $\mathcal{H}$ and $ \{e_j\}_{j\in \mathbb{J}}$ be the standard orthonormal basis for $ \ell^2(\mathbb{J})$. Then the dual frames  of $ (\{x_j\}_{j\in \mathbb{J}}, \{\tau_j\}_{j\in \mathbb{J}})$ are precisely $ (\{y_j=Ue_j\}_{j\in \mathbb{J}}, \{\omega_j=Ve_j\}_{j\in \mathbb{J}})$, where $ U,V: \ell^2(\mathbb{J}) \rightarrow \mathcal{H}$ are bounded left-inverses of $ \theta_\tau, \theta_x$, respectively, such that $ VU^*$ is positive invertible.
\end{lemma}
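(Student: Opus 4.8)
The plan is to set up an explicit bijection between the dual frames of $(\{x_j\}_{j\in\mathbb{J}},\{\tau_j\}_{j\in\mathbb{J}})$ and the pairs $(U,V)$ described in the statement, using the standard identification of a Bessel sequence with the adjoint of its analysis operator. Throughout I will use the sequential analogue of Proposition \ref{2.2}, namely $\theta_y^*(\{c_j\}_{j\in\mathbb{J}})=\sum_{j\in\mathbb{J}}c_jy_j$ (so that evaluating at $e_j$ gives $\theta_y^*e_j=y_j$) and the identity $S_{y,\omega}=\theta_\omega^*\theta_y$. The whole argument is a translation of the duality equations $\theta_\omega^*\theta_x=\theta_y^*\theta_\tau=I_\mathcal{H}$ into left-inverse statements, together with reading the frame property off the frame operator.

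First I would prove the forward inclusion. Let $(\{y_j\}_{j\in\mathbb{J}},\{\omega_j\}_{j\in\mathbb{J}})$ be any dual of $(\{x_j\}_{j\in\mathbb{J}},\{\tau_j\}_{j\in\mathbb{J}})$. Since a dual is by definition a frame, both $\{y_j\}_{j\in\mathbb{J}}$ and $\{\omega_j\}_{j\in\mathbb{J}}$ are Bessel, so $\theta_y,\theta_\omega$ are bounded; set $U:=\theta_y^*$ and $V:=\theta_\omega^*$. Then $U,V$ are bounded, and $y_j=\theta_y^*e_j=Ue_j$, $\omega_j=\theta_\omega^*e_j=Ve_j$. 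The duality relations $\theta_y^*\theta_\tau=I_\mathcal{H}$ and $\theta_\omega^*\theta_x=I_\mathcal{H}$ become $U\theta_\tau=I_\mathcal{H}$ and $V\theta_x=I_\mathcal{H}$, so $U$ and $V$ are bounded left-inverses of $\theta_\tau$ and $\theta_x$. Finally $VU^*=\theta_\omega^*\theta_y=S_{y,\omega}$, which is positive invertible because $(\{y_j\}_{j\in\mathbb{J}},\{\omega_j\}_{j\in\mathbb{J}})$ is a frame.

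Second, the reverse inclusion. Given bounded $U,V:\ell^2(\mathbb{J})\to\mathcal{H}$ with $U\theta_\tau=I_\mathcal{H}$, $V\theta_x=I_\mathcal{H}$ and $VU^*$ positive invertible, define $y_j:=Ue_j$ and $\omega_j:=Ve_j$. Boundedness of $U$ gives $\sum_{j\in\mathbb{J}}|\langle h,y_j\rangle|^2=\sum_{j\in\mathbb{J}}|\langle U^*h,e_j\rangle|^2=\|U^*h\|^2\le\|U\|^2\|h\|^2$, so $\{y_j\}_{j\in\mathbb{J}}$ is Bessel with bounded analysis operator $\theta_y$, and the identity $\theta_y^*(\{c_j\}_{j\in\mathbb{J}})=\sum_{j\in\mathbb{J}}c_jUe_j=U(\{c_j\}_{j\in\mathbb{J}})$ (the series converges since $\{c_j\}_{j\in\mathbb{J}}=\sum_{j\in\mathbb{J}}c_je_j$ in $\ell^2(\mathbb{J})$ and $U$ is continuous) shows $\theta_y^*=U$, hence $\theta_y=U^*$; likewise $\theta_\omega^*=V$. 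Then $S_{y,\omega}=\theta_\omega^*\theta_y=VU^*$ is positive invertible, which together with the two Bessel bounds shows $(\{y_j\}_{j\in\mathbb{J}},\{\omega_j\}_{j\in\mathbb{J}})$ satisfies Definition \ref{SEQUENTIAL2} and is a frame for $\mathcal{H}$; and $\theta_y^*\theta_\tau=U\theta_\tau=I_\mathcal{H}$, $\theta_\omega^*\theta_x=V\theta_x=I_\mathcal{H}$ say it is a dual of $(\{x_j\}_{j\in\mathbb{J}},\{\tau_j\}_{j\in\mathbb{J}})$.

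The only genuinely delicate point is the passage $\theta_y^*=U$, together with the Besselness needed even to speak of $\theta_y$: one must justify that the defining series for $\theta_y^*$ converges and agrees with $U$, which rests on the continuity of $U$ and convergence of $\sum_{j\in\mathbb{J}}c_je_j$ in $\ell^2(\mathbb{J})$. Everything else is a direct matching between the duality equations and the left-inverse and positivity conditions, so I expect no further obstruction.
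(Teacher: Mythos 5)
Your proposal is correct and follows essentially the same route as the paper: both directions identify $U=\theta_y^*$, $V=\theta_\omega^*$ (equivalently $\theta_y=U^*$, $\theta_\omega=V^*$), translate the duality equations into the left-inverse conditions, and read the frame property off $S_{y,\omega}=VU^*$. Your extra care in verifying Besselness and the identity $\theta_y^*=U$ in the reverse direction is just a more explicit version of the paper's one-line computation $\theta_yh=\{\langle h,Ue_j\rangle\}_{j\in\mathbb{J}}=U^*h$.
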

 \begin{proof}
 $(\Leftarrow)$ We see $ \theta_yh=\{\langle h, Ue_j \rangle\}_{{j\in \mathbb{J}}} =\sum_{j\in \mathbb{J}}\langle h, Ue_j \rangle e_j=\sum_{j\in \mathbb{J}}\langle U^*h, e_j \rangle e_j=U^*h, \forall h \in \mathcal{H}.$ Similarly $ \theta_\omega=V^*$. Then  $ S_{y,\omega}=\theta_\omega^*\theta_y=VU^*$ is positive invertible. Hence $ (\{y_j\}_{j\in \mathbb{J}}, \{\omega_j\}_{j\in \mathbb{J}})$ is a  frame. We further see $ \theta_y^*\theta_\tau=U\theta_\tau=I_\mathcal{H},\theta_\omega^*\theta_x=V\theta_x=I_\mathcal{H}.$ Hence $ (\{y_j\}_{j\in \mathbb{J}}, \{\omega_j\}_{j\in \mathbb{J}})$ is a dual of  $ (\{x_j\}_{j\in \mathbb{J}}, \{\tau_j\}_{j\in \mathbb{J}})$.
 
 $(\Rightarrow)$ Let $ (\{y_j\}_{j\in \mathbb{J}}, \{\omega_j\}_{j\in \mathbb{J}})$ be a dual frame of $ (\{x_j\}_{j\in \mathbb{J}}, \{\tau_j\}_{j\in \mathbb{J}})$.  Then $ \theta_y^*\theta_\tau=I_\mathcal{H}=\theta_\omega^*\theta_x$. Define $ U\coloneqq\theta_y^*, V\coloneqq\theta_\omega^*.$ Then  $ U,V: \ell^2(\mathbb{J}) \rightarrow \mathcal{H}$ are bounded left-inverses of $ \theta_\tau, \theta_x$, respectively, such that $ VU^*=\theta_\omega^*\theta_y=S_{y,\omega}$ is positive invertible. Further, $ Ue_j=\theta_y^*e_j=y_j, Ve_j=\theta_\omega^*e_j=\omega_j,\forall j \in \mathbb{J}$.
 \end{proof}
\begin{lemma}\label{DUALCHARATERIZATIONLEMMA2}
 Let $ (\{x_j\}_{j\in \mathbb{J}}, \{\tau_j\}_{j\in \mathbb{J}}) $  be a  frame for   $\mathcal{H}$. Then the bounded left-inverses of 
\begin{enumerate}[\upshape(i)]
\item $ \theta_x$ are precisely $S_{x,\tau}^{-1}\theta_\tau^*+U(I_{\ell^2(\mathbb{J})}-\theta_xS_{x,\tau}^{-1}\theta_\tau^*)$, where $U\in \mathcal{B}( \ell^2(\mathbb{J}), \mathcal{H})$.
\item $ \theta_\tau$ are precisely $S_{x,\tau}^{-1}\theta_x^*+V(I_{\ell^2(\mathbb{J})}-\theta_\tau S_{x,\tau}^{-1}\theta_x^*)$, where $V\in \mathcal{B}( \ell^2(\mathbb{J}),\mathcal{H})$. 
\end{enumerate}	
 \end{lemma}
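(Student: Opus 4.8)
The statement is a standard ``all left-inverses'' description, so the plan is to mimic exactly the structure used in the earlier Proposition on left-inverses of analysis operators (the one stating that the bounded left-inverses of $\theta_A$ are precisely $S_{A,\Psi}^{-1}\theta_\Psi^*+U(I-\theta_AS_{A,\Psi}^{-1}\theta_\Psi^*)$). Here the roles of $\theta_A,\theta_\Psi,S_{A,\Psi}$ are played by $\theta_x,\theta_\tau,S_{x,\tau}$, and the codomain $\ell^2(\mathbb{J})\otimes\mathcal{H}_0$ is replaced by $\ell^2(\mathbb{J})$. I will prove each of (i) and (ii) by the two standard inclusions; by symmetry of the frame definition it suffices to write out one of them in full, say (i), and note that (ii) follows by interchanging the roles of $x$ and $\tau$ (which swaps $\theta_x\leftrightarrow\theta_\tau$ while keeping $S_{x,\tau}=\theta_\tau^*\theta_x$, using $S_{\tau,x}=S_{x,\tau}$).

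First I would verify that the proposed formula in (i) really is a left-inverse. Let $U\in\mathcal{B}(\ell^2(\mathbb{J}),\mathcal{H})$ and compute
\begin{align*}
\bigl(S_{x,\tau}^{-1}\theta_\tau^*+U(I_{\ell^2(\mathbb{J})}-\theta_xS_{x,\tau}^{-1}\theta_\tau^*)\bigr)\theta_x
&=S_{x,\tau}^{-1}\theta_\tau^*\theta_x+U\theta_x-U\theta_xS_{x,\tau}^{-1}\theta_\tau^*\theta_x\\
&=S_{x,\tau}^{-1}S_{x,\tau}+U\theta_x-U\theta_xS_{x,\tau}^{-1}S_{x,\tau}\\
&=I_{\mathcal{H}}+U\theta_x-U\theta_x=I_{\mathcal{H}},
\end{align*}
where I used $S_{x,\tau}=\theta_\tau^*\theta_x$ from the earlier Proposition. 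This shows every operator of the stated form is a bounded left-inverse of $\theta_x$.

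For the reverse inclusion I would take an arbitrary bounded left-inverse $L:\ell^2(\mathbb{J})\to\mathcal{H}$ of $\theta_x$, so $L\theta_x=I_{\mathcal{H}}$, and simply set $U:=L$. Then
$$
S_{x,\tau}^{-1}\theta_\tau^*+L(I_{\ell^2(\mathbb{J})}-\theta_xS_{x,\tau}^{-1}\theta_\tau^*)
=S_{x,\tau}^{-1}\theta_\tau^*+L-L\theta_xS_{x,\tau}^{-1}\theta_\tau^*
=S_{x,\tau}^{-1}\theta_\tau^*+L-I_{\mathcal{H}}S_{x,\tau}^{-1}\theta_\tau^*=L,
$$
again using $L\theta_x=I_{\mathcal{H}}$. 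Hence $L$ lies in the stated family, completing (i). The proof of (ii) is identical with $x$ and $\tau$ exchanged and $\theta_\tau S_{x,\tau}^{-1}\theta_x^*$ in place of $\theta_xS_{x,\tau}^{-1}\theta_\tau^*$, the key identity there being $S_{x,\tau}=\theta_x^*\theta_\tau$ (also from the earlier Proposition).

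There is no real obstacle here: the argument is purely algebraic once one has $S_{x,\tau}=\theta_\tau^*\theta_x=\theta_x^*\theta_\tau$ and the invertibility of $S_{x,\tau}$, both of which are established earlier for frames in $\mathscr{F}_\tau$. The only point requiring a word of care is making sure the free parameter $U$ ranges over \emph{all} of $\mathcal{B}(\ell^2(\mathbb{J}),\mathcal{H})$ and that the choice $U=L$ in the reverse direction is legitimate, i.e.\ that $L$ itself is bounded from $\ell^2(\mathbb{J})$ to $\mathcal{H}$; this is immediate since a left-inverse is by hypothesis a bounded operator on the correct spaces. Thus the lemma is essentially a transcription of the operator-version result into the sequential setting, and I expect the write-up to be short.
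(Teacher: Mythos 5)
Your proposal is correct and follows essentially the same route as the paper: verify the forward inclusion by direct computation using $S_{x,\tau}=\theta_\tau^*\theta_x$, and for the reverse inclusion take an arbitrary bounded left-inverse $L$ and set $U=L$, after which the expression collapses to $L$. The paper likewise writes out only (i) and treats (ii) symmetrically, so no further comment is needed.
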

 \begin{proof}
 We prove (i). $(\Leftarrow)$ Let $U: \ell^2(\mathbb{J})\rightarrow \mathcal{H}$ be a bounded operator. Then $(S_{x,\tau}^{-1}\theta_\tau^*+U(I_{\ell^2(\mathbb{J})}-\theta_xS_{x,\tau}^{-1}\theta_\tau^*))\theta_x=I_\mathcal{H}+U\theta_x-U\theta_xI_\mathcal{H}=I_\mathcal{H}$. Therefore  $S_{x,\tau}^{-1}\theta_\tau^*+U(I_{\ell^2(\mathbb{J})}-\theta_xS_{x,\tau}^{-1}\theta_\tau^*)$ is a bounded left-inverse of $\theta_x$.
  
 $(\Rightarrow)$ Let $ L:\ell^2(\mathbb{J})\rightarrow \mathcal{H}$ be a bounded left-inverse of $ \theta_x$. Define $U\coloneqq L$. Then $S_{x,\tau}^{-1}\theta_\tau^*+U(I_{\ell^2(\mathbb{J})}-\theta_xS_{x,\tau}^{-1}\theta_\tau^*) =S_{x,\tau}^{-1}\theta_\tau^*+L(I_{\ell^2(\mathbb{J})}-\theta_xS_{x,\tau}^{-1}\theta_\tau^*)=S_{x,\tau}^{-1}\theta_\tau^*+L-I_{\mathcal{H}}S_{x,\tau}^{-1}\theta_\tau^*= L$.  
\end{proof}
\begin{theorem}
Let  $ (\{x_j\}_{j\in \mathbb{J}}, \{\tau_j\}_{j\in \mathbb{J}}) $  be a  frame for   $\mathcal{H}$. The dual frames 	 $ (\{y_j\}_{j\in \mathbb{J}}, \{\omega_j\}_{j\in \mathbb{J}}) $ of $ (\{x_j\}_{j\in \mathbb{J}}, \{\tau_j\}_{j\in \mathbb{J}}) $ are precisely  
 \begin{align*}
 (\{y_j=S_{x,\tau}^{-1}x_j+Ve_j-V\theta_\tau S_{x,\tau}^{-1}x_j\}_{j\in \mathbb{J}},
 \{\omega_j=S_{x,\tau}^{-1}\tau_j+Ue_j-U\theta_xS_{x,\tau}^{-1}\tau_j\}_{j\in \mathbb{J}})
 \end{align*}
 such that 
 $$S_{x,\tau}^{-1}+UV^*-U\theta_xS_{x,\tau}^{-1}\theta_\tau^*V^* $$
 is positive invertible, where  $ \{e_j\}_{j\in \mathbb{J}}$ is  the standard orthonormal basis for $ \ell^2(\mathbb{J})$, and $U, V \in \mathcal{B} (\ell^2(\mathbb{J}),\mathcal{H})$.
\end{theorem}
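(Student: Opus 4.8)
The plan is to obtain this final characterization by composing the two immediately preceding lemmas: Lemma~\ref{DUALCHARATERIZATIONLEMMA1} reduces the description of \emph{all} duals to a description of left-inverses, while Lemma~\ref{DUALCHARATERIZATIONLEMMA2} parametrizes those left-inverses explicitly, and the present theorem is simply the composite of the two, made concrete by evaluating at the standard basis. First I would invoke Lemma~\ref{DUALCHARATERIZATIONLEMMA1}: every dual of $(\{x_j\}_{j\in\mathbb{J}},\{\tau_j\}_{j\in\mathbb{J}})$ has the form $(\{y_j=Le_j\}_{j\in\mathbb{J}},\{\omega_j=Me_j\}_{j\in\mathbb{J}})$, where $L\in\mathcal{B}(\ell^2(\mathbb{J}),\mathcal{H})$ is a bounded left-inverse of $\theta_\tau$, $M\in\mathcal{B}(\ell^2(\mathbb{J}),\mathcal{H})$ is a bounded left-inverse of $\theta_x$, and $S_{y,\omega}=\theta_\omega^*\theta_y=ML^*$ is positive invertible; conversely every such pair is a dual. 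It therefore remains to parametrize $L$ and $M$ and to compute $ML^*$.

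Next I would substitute the forms supplied by Lemma~\ref{DUALCHARATERIZATIONLEMMA2}: the left-inverses of $\theta_\tau$ are exactly $L=S_{x,\tau}^{-1}\theta_x^*+V(I_{\ell^2(\mathbb{J})}-\theta_\tau S_{x,\tau}^{-1}\theta_x^*)$ with $V\in\mathcal{B}(\ell^2(\mathbb{J}),\mathcal{H})$ free, and the left-inverses of $\theta_x$ are exactly $M=S_{x,\tau}^{-1}\theta_\tau^*+U(I_{\ell^2(\mathbb{J})}-\theta_x S_{x,\tau}^{-1}\theta_\tau^*)$ with $U\in\mathcal{B}(\ell^2(\mathbb{J}),\mathcal{H})$ free. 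Evaluating at the basis vector $e_j$ and using the synthesis-operator formulas $\theta_x^*e_j=x_j$ and $\theta_\tau^*e_j=\tau_j$ yields at once $y_j=Le_j=S_{x,\tau}^{-1}x_j+Ve_j-V\theta_\tau S_{x,\tau}^{-1}x_j$ and $\omega_j=Me_j=S_{x,\tau}^{-1}\tau_j+Ue_j-U\theta_x S_{x,\tau}^{-1}\tau_j$, which are precisely the stated expressions, with $U,V$ the free parameters named in the theorem.

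The remaining and only computational step is to identify the positivity-invertibility condition $ML^*$ with the displayed operator. Setting $P_{x,\tau}=\theta_x S_{x,\tau}^{-1}\theta_\tau^*$ and recalling $\theta_\tau^*\theta_x=S_{x,\tau}$, so that $P_{x,\tau}$ is idempotent and $L^*=\theta_x S_{x,\tau}^{-1}+(I_{\ell^2(\mathbb{J})}-\theta_x S_{x,\tau}^{-1}\theta_\tau^*)V^*$, I would expand $ML^*$ into four terms: the first collapses to $S_{x,\tau}^{-1}$ via $\theta_\tau^*\theta_x=S_{x,\tau}$, the two cross terms vanish by the same identity, and the fourth simplifies through $(I_{\ell^2(\mathbb{J})}-P_{x,\tau})^2=I_{\ell^2(\mathbb{J})}-P_{x,\tau}$ to $UV^*-U\theta_x S_{x,\tau}^{-1}\theta_\tau^*V^*$. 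Summing gives $S_{y,\omega}=ML^*=S_{x,\tau}^{-1}+UV^*-U\theta_x S_{x,\tau}^{-1}\theta_\tau^*V^*$, so demanding that this be positive invertible is exactly the hypothesis of Lemma~\ref{DUALCHARATERIZATIONLEMMA1}; reading the chain in both directions establishes the word ``precisely''. I do not anticipate a genuine obstacle here, since both lemmas carry the conceptual weight; the only point demanding care is bookkeeping, namely keeping the two roles of the symbols $U,V$ distinct (free parameters here versus the named left-inverses $L,M$ arising from Lemma~\ref{DUALCHARATERIZATIONLEMMA1}) and checking that each parameter lands in the correct factor so that the final expression matches the statement verbatim.
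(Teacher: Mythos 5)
Your proposal is correct and follows essentially the same route as the paper: invoke Lemma~\ref{DUALCHARATERIZATIONLEMMA1} to reduce duals to pairs of left-inverses of $\theta_\tau$ and $\theta_x$ evaluated at the standard basis, substitute the parametrization from Lemma~\ref{DUALCHARATERIZATIONLEMMA2}, and expand the product of the two left-inverses (one adjointed) to obtain $S_{x,\tau}^{-1}+UV^*-U\theta_xS_{x,\tau}^{-1}\theta_\tau^*V^*$. The only cosmetic difference is that you route the cancellation of the cross terms and the simplification of the fourth term through the idempotency of $P_{x,\tau}$, whereas the paper expands directly; the computations are the same.
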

\begin{proof}
 From Lemma \ref{DUALCHARATERIZATIONLEMMA1} and Lemma \ref{DUALCHARATERIZATIONLEMMA2} we can spell the characterization   for the dual frames of  $ (\{x_j\}_{j\in \mathbb{J}}, \{\tau_j\}_{j\in \mathbb{J}}) $ as the families 
 \begin{align*}
  &(\left\{y_j=S_{x,\tau}^{-1}\theta_x^*e_j+Ve_j-V\theta_\tau S_{x,\tau}^{-1}\theta_x^*e_j=S_{x,\tau}^{-1}x_j+Ve_j-V\theta_\tau S_{x,\tau}^{-1}x_j\right\}_{j\in \mathbb{J}},\\
 &\left\{\omega_j=S_{x,\tau}^{-1}\theta_\tau^*e_j+Ue_j-U\theta_xS_{x,\tau}^{-1}\theta_\tau^*e_j=S_{x,\tau}^{-1}\tau_j+Ue_j-U\theta_xS_{x,\tau}^{-1}\tau_j\right\}_{j\in \mathbb{J}})
  \end{align*}
  such that 
   $$ (S_{x,\tau}^{-1}\theta_\tau^*+U(I_{\ell^2(\mathbb{J})}-\theta_xS_{x,\tau}^{-1}\theta_\tau^*))(S_{x,\tau}^{-1}\theta_x^*+V(I_{\ell^2(\mathbb{J})}-\theta_\tau S_{x,\tau}^{-1}\theta_x^*))^*$$
  is positive invertible, where  $ \{e_j\}_{j\in \mathbb{J}}$ is  the standard orthonormal basis for $ \ell^2(\mathbb{J})$, and  $U, V \in \mathcal{B} (\ell^2(\mathbb{J}),\mathcal{H})$.  A direct expansion gives 
  \begin{align*}
  &(S_{x,\tau}^{-1}\theta_\tau^*+U(I_{\ell^2(\mathbb{J})}-\theta_xS_{x,\tau}^{-1}\theta_\tau^*))(S_{x,\tau}^{-1}\theta_x^*+V(I_{\ell^2(\mathbb{J})}-\theta_\tau S_{x,\tau}^{-1}\theta_x^*))^*\\
 &=(S_{x,\tau}^{-1}\theta_\tau^*+U(I_{\ell^2(\mathbb{J})}-\theta_xS_{x,\tau}^{-1}\theta_\tau^*))(\theta_xS_{x,\tau}^{-1}+(I_{\ell^2(\mathbb{J})}-\theta_x S_{x,\tau}^{-1}\theta_\tau^*)V^*)
 =S_{x,\tau}^{-1}+UV^*-U\theta_xS_{x,\tau}^{-1}\theta_\tau^*V^*.
  \end{align*}
 \end{proof}
\begin{definition}
 A frame   $(\{y_j\}_{j\in \mathbb{J}},  \{\omega_j\}_{j\in \mathbb{J}})$  for  $\mathcal{H}$ is said to be orthogonal to a frame   $( \{x_j\}_{j\in \mathbb{J}}, \{\tau_j\}_{j\in \mathbb{J}})$ for $\mathcal{H}$ if $\theta_\omega^*\theta_x= \theta_y^*\theta_\tau=0.$
 \end{definition}
 Orthogonality is symmetric. Similar to the observation we made in Section \ref{MK}, dual frames cannot be orthogonal to each other and orthogonal frames can not be dual to each other. If $ (\{y_j\}_{j\in \mathbb{J}}, \{\omega_j\}_{j\in \mathbb{J}})$ is orthogonal to $ (\{x_j\}_{j\in \mathbb{J}}, \{\tau_j\}_{j\in \mathbb{J}})$, then  both $ (\{x_j\}_{j\in \mathbb{J}}, \{\omega_j\}_{j\in \mathbb{J}})$ and $ (\{y_j\}_{j\in \mathbb{J}}, \{\tau_j\}_{j\in \mathbb{J}})$ are not frames.
 \begin{proposition}
 Let  $ (\{x_j\}_{j\in \mathbb{J}}, \{\tau_j\}_{j\in \mathbb{J}}) $, $ (\{y_j\}_{j\in \mathbb{J}}, \{\omega_j\}_{j\in \mathbb{J}})$ be  frames for  $\mathcal{H}$. Then the following are equivalent.
 \begin{enumerate}[\upshape(i)]
 \item $(\{y_j\}_{j\in \mathbb{J}},\{\omega_j\}_{j\in \mathbb{J}}) $ is orthogonal to  $( \{x_j\}_{j\in \mathbb{J}},  \{\tau_j\}_{j\in \mathbb{J}}) $.
 \item $  \sum_{j\in \mathbb{J}}\langle h, x_j\rangle \omega_j=0=\sum_{j\in \mathbb{J}}\langle g, \tau_j\rangle y_j, \forall h \in  \mathcal{H}$. 
\end{enumerate}
\end{proposition}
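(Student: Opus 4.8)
The plan is to collapse both conditions to pointwise statements about the composition operators $\theta_\omega^*\theta_x$ and $\theta_y^*\theta_\tau$, exactly in the spirit of the proof of Proposition \ref{ORTHOGONALOVFCHARACTERIZATION} in the operator-valued setting. First I would recall, from the sequential counterpart of Proposition \ref{2.2}, that for a frame the synthesis operators act by $\theta_x^*(\{c_j\}_{j\in\mathbb{J}})=\sum_{j\in\mathbb{J}}c_jx_j$ and $\theta_\omega^*(\{c_j\}_{j\in\mathbb{J}})=\sum_{j\in\mathbb{J}}c_j\omega_j$, and similarly for $\tau$ and $y$. These series converge and the operators are bounded precisely because each of $\{x_j\}_{j\in\mathbb{J}}$, $\{\tau_j\}_{j\in\mathbb{J}}$, $\{y_j\}_{j\in\mathbb{J}}$, $\{\omega_j\}_{j\in\mathbb{J}}$ is Bessel (with respect to itself), a property built into the frame hypothesis on both pairs.

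Next I would combine these with the analysis maps $\theta_x h=\{\langle h,x_j\rangle\}_{j\in\mathbb{J}}$ and $\theta_\tau h=\{\langle h,\tau_j\rangle\}_{j\in\mathbb{J}}$ to obtain, for every $h\in\mathcal{H}$, the two identities
$$\theta_\omega^*\theta_x h=\sum_{j\in\mathbb{J}}\langle h,x_j\rangle\omega_j,\qquad \theta_y^*\theta_\tau h=\sum_{j\in\mathbb{J}}\langle h,\tau_j\rangle y_j.$$
The right-hand sides are exactly the two sums appearing in (ii), so the entire content of the statement lies in reading off these formulas.

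With the identities in hand the equivalence is immediate. By definition of orthogonality, condition (i) asserts that $\theta_\omega^*\theta_x=0$ and $\theta_y^*\theta_\tau=0$ as bounded operators on $\mathcal{H}$, and a bounded linear operator is the zero operator if and only if it annihilates every vector. Hence $\theta_\omega^*\theta_x=0$ if and only if $\sum_{j\in\mathbb{J}}\langle h,x_j\rangle\omega_j=0$ for all $h\in\mathcal{H}$, and $\theta_y^*\theta_\tau=0$ if and only if $\sum_{j\in\mathbb{J}}\langle g,\tau_j\rangle y_j=0$ for all $g\in\mathcal{H}$; taken together these two equivalences are precisely the passage between (i) and (ii).

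I do not expect a genuine obstacle here, as the argument is a transcription of Proposition \ref{ORTHOGONALOVFCHARACTERIZATION} into the sequential language. The only point deserving a word of care is the legitimacy of the termwise series representation of $\theta_\omega^*\theta_x h$ and $\theta_y^*\theta_\tau h$, which rests entirely on the boundedness of the synthesis operators guaranteed by the Bessel/frame hypotheses and already recorded in the sequential form of Proposition \ref{2.2}; once that is invoked, the proof is a two-line verification.
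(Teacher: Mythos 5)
Your proposal is correct and follows exactly the route the paper takes (the paper leaves the proof implicit, having just established the identities $\theta_\omega^*\theta_x h=\sum_{j\in\mathbb{J}}\langle h,x_j\rangle\omega_j$ and $\theta_y^*\theta_\tau h=\sum_{j\in\mathbb{J}}\langle h,\tau_j\rangle y_j$ in the proof of the analogous duality characterization). The reduction to "a bounded operator vanishes iff it vanishes pointwise" is precisely the intended two-line argument.
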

\begin{proposition}
Two orthogonal frames  have common dual frame.	
\end{proposition}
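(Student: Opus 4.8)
The plan is to transcribe, into the sequential language, the construction already used in the earlier proposition on orthogonal operator-valued frames. Let $(\{x_j\}_{j\in\mathbb{J}},\{\tau_j\}_{j\in\mathbb{J}})$ and $(\{y_j\}_{j\in\mathbb{J}},\{\omega_j\}_{j\in\mathbb{J}})$ be two orthogonal frames for $\mathcal{H}$, so that by the orthogonality characterization $\theta_\omega^*\theta_x=\theta_y^*\theta_\tau=0$. I would take as candidate common dual the ``sum of the two canonical duals'':
\[
z_j\coloneqq S_{x,\tau}^{-1}x_j+S_{y,\omega}^{-1}y_j,\qquad \rho_j\coloneqq S_{x,\tau}^{-1}\tau_j+S_{y,\omega}^{-1}\omega_j,\quad\forall j\in\mathbb{J}.
\]

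First I would identify the analysis operators. Recalling that $S_{x,\tau}=\theta_\tau^*\theta_x=\theta_x^*\theta_\tau$ is self-adjoint (hence so is $S_{x,\tau}^{-1}$, and likewise for $S_{y,\omega}^{-1}$), a direct evaluation of $\langle h,z_j\rangle$ gives $\theta_z=\theta_x S_{x,\tau}^{-1}+\theta_y S_{y,\omega}^{-1}$ and similarly $\theta_\rho=\theta_\tau S_{x,\tau}^{-1}+\theta_\omega S_{y,\omega}^{-1}$. Both are compositions and sums of bounded operators, so $\{z_j\}_{j\in\mathbb{J}}$ and $\{\rho_j\}_{j\in\mathbb{J}}$ are Bessel and the map in Definition \ref{SEQUENTIAL2}(i) is well-defined and bounded.

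The central step is the computation of the frame operator. Expanding $S_{z,\rho}=\theta_\rho^*\theta_z$ produces four terms; the two cross terms carry the factors $\theta_\tau^*\theta_y=(\theta_y^*\theta_\tau)^*=0$ and $\theta_\omega^*\theta_x=0$ and therefore vanish by orthogonality, while the diagonal terms collapse through $\theta_\tau^*\theta_x=S_{x,\tau}$ and $\theta_\omega^*\theta_y=S_{y,\omega}$. This leaves
\[
S_{z,\rho}=S_{x,\tau}^{-1}+S_{y,\omega}^{-1},
\]
which is positive and, since $S_{x,\tau}^{-1}\ge\|S_{x,\tau}\|^{-1}I_\mathcal{H}$, is bounded below by $\min\{\|S_{x,\tau}\|^{-1},\|S_{y,\omega}\|^{-1}\}I_\mathcal{H}$; hence it is invertible and $(\{z_j\}_{j\in\mathbb{J}},\{\rho_j\}_{j\in\mathbb{J}})$ is a frame for $\mathcal{H}$.

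Finally, to confirm it is a common dual I would check the duality identities in the same fashion: $\theta_z^*\theta_\tau=S_{x,\tau}^{-1}\theta_x^*\theta_\tau+S_{y,\omega}^{-1}\theta_y^*\theta_\tau=I_\mathcal{H}+0$ and symmetrically $\theta_\rho^*\theta_x=I_\mathcal{H}$, so $(\{z_j\},\{\rho_j\})$ is a dual of $(\{x_j\},\{\tau_j\})$; whereas $\theta_z^*\theta_\omega=0+S_{y,\omega}^{-1}\theta_y^*\theta_\omega=I_\mathcal{H}$ and $\theta_\rho^*\theta_y=I_\mathcal{H}$ show it is also a dual of $(\{y_j\},\{\omega_j\})$. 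I do not anticipate a genuine obstacle: the argument is the sequential mirror of the already-established operator-valued result, and the only points needing care are bookkeeping—keeping $S_{x,\tau}^{-1}$ on the correct side, using self-adjointness of the frame operators so that $\theta_z^*$ expands as $S_{x,\tau}^{-1}\theta_x^*+S_{y,\omega}^{-1}\theta_y^*$, and correctly matching which cross terms are annihilated by the orthogonality hypothesis.
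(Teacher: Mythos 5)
Your proof is correct and is essentially the paper's own argument: the same candidate $(\{z_j\},\{\rho_j\})$ built from the two canonical duals, the same computation $S_{z,\rho}=S_{x,\tau}^{-1}+S_{y,\omega}^{-1}$ with the cross terms killed by orthogonality, the same lower bound $\min\{\|S_{x,\tau}\|^{-1},\|S_{y,\omega}\|^{-1}\}$ for invertibility, and the same four duality identities. Nothing to add.
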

\begin{proof}
Let  $ (\{x_j\}_{j\in \mathbb{J}}, \{\tau_j\}_{j\in \mathbb{J}}) $ and $ (\{y_j\}_{j\in \mathbb{J}}, \{\omega_j\}_{j\in \mathbb{J}}) $ be  two orthogonal frames for  $\mathcal{H}$. Define $ z_j\coloneqq S_{x,\tau}^{-1}x_j+S_{y,\omega}^{-1}y_j,\rho_j\coloneqq S_{x,\tau}^{-1}\tau_j+S_{y,\omega}^{-1}\omega_j, \forall j \in \mathbb{J}$. Then $ \theta_z=\theta_xS_{x,\tau}^{-1}+\theta_yS_{y,\omega}^{-1}, \theta_\rho=\theta_\tau S_{x,\tau}^{-1}+\theta_\omega S_{y,\omega}^{-1}$,  $ S_{z,\rho}=\theta_\rho^*\theta_z=(S_{x,\tau}^{-1}\theta_\tau^* +S_{y,\omega}^{-1}\theta_\omega^* )(\theta_xS_{x,\tau}^{-1}+\theta_yS_{y,\omega}^{-1})=S_{x,\tau}^{-1}+S_{y,\omega}^{-1} $ which is positive and $ \langle S_{z,\rho}h, h\rangle =\langle S_{x,\tau}^{-1}h,h \rangle +\langle S_{y,\omega}^{-1}h, h\rangle \geq \min\left\{ \|S_{x,\tau}\|^{-1},\|S_{y,\omega}\|^{-1}\right\}\|h\|^2, \forall h \in \mathcal{H}$, hence $S_{z,\rho}$ is invertible. Therefore $(\{z_j\}_{j\in \mathbb{J}}, \{\rho_j\}_{j\in \mathbb{J}})$ is a frame for $\mathcal{H}$. This is a common dual of  $ (\{x_j\}_{j\in \mathbb{J}}, \{\tau_j\}_{j\in \mathbb{J}}) $ and $ (\{y_j\}_{j\in \mathbb{J}}, \{\omega_j\}_{j\in \mathbb{J}}).$ In fact, $\theta_z^*\theta_\tau=(S_{x,\tau}^{-1}\theta_x^*+S_{y,\omega}^{-1}\theta_y^*)\theta_\tau=I_\mathcal{H}+0, \theta_\rho^*\theta_x=(S_{x,\tau}^{-1}\theta_\tau^* +S_{y,\omega}^{-1}\theta_\omega^*)\theta_x=I_\mathcal{H}+0$, and $\theta_z^*\theta_\omega= (S_{x,\tau}^{-1}\theta_x^*+S_{y,\omega}^{-1}\theta_y^*)\theta_\omega=0+I_\mathcal{H}, \theta_\rho^*\theta_y=(S_{x,\tau}^{-1}\theta_\tau^* +S_{y,\omega}^{-1}\theta_\omega^*)\theta_y=0+I_\mathcal{H} $.
\end{proof}
Following is an interpolation result.
\begin{proposition}
Let $ (\{x_j\}_{j\in \mathbb{J}}, \{\tau_j\}_{j\in \mathbb{J}}) $ and $ (\{y_j\}_{j\in \mathbb{J}}, \{\omega_j\}_{j\in \mathbb{J}}) $ be  two Parseval frames for  $\mathcal{H}$ which are  orthogonal. If $A,B,C,D \in \mathcal{B}(\mathcal{H})$ are such that $ AC^*+BD^*=I_\mathcal{H}$, then  $ (\{Ax_j+By_j\}_{j\in \mathbb{J}}, \{C\tau_j+D\omega_j\}_{j\in \mathbb{J}}) $ is a  Parseval frame for  $\mathcal{H}$. In particular,  if scalars $ a,b,c,d$ satisfy $a\bar{c}+b\bar{d} =1$, then $ (\{ax_j+by_j\}_{j\in \mathbb{J}}, \{c\tau_j+d\omega_j\}_{j\in \mathbb{J}}) $ is a  Parseval frame for  $\mathcal{H}$.
\end{proposition}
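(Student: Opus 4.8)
The plan is to mirror the proof of the operator-valued interpolation result in Section \ref{MK}, translating each analysis operator into its sequential form. First I would express the analysis operators of the two new sequences in terms of $A,B,C,D$ and the analysis operators of the original frames. Using $\langle h, Ax_j + By_j\rangle = \langle A^*h, x_j\rangle + \langle B^*h, y_j\rangle$ for every $j\in\mathbb{J}$, one reads off
$$\theta_{Ax+By} = \theta_x A^* + \theta_y B^*, \qquad \theta_{C\tau+D\omega} = \theta_\tau C^* + \theta_\omega D^*.$$
Since $\theta_x,\theta_y,\theta_\tau,\theta_\omega$ are bounded (the originals are frames) and $A,B,C,D\in\mathcal{B}(\mathcal{H})$, both right-hand sides are bounded operators $\mathcal{H}\to\ell^2(\mathbb{J})$. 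This simultaneously shows the two new sequences are Bessel and identifies their analysis operators, so the frame operator $S_{Ax+By,\,C\tau+D\omega}=\theta_{C\tau+D\omega}^*\,\theta_{Ax+By}$ exists as a bounded operator.

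Next I would expand this frame operator. Taking the adjoint of the second displayed formula and composing gives
$$S_{Ax+By,\,C\tau+D\omega}=(C\theta_\tau^*+D\theta_\omega^*)(\theta_x A^*+\theta_y B^*)=C\theta_\tau^*\theta_x A^*+C\theta_\tau^*\theta_y B^*+D\theta_\omega^*\theta_x A^*+D\theta_\omega^*\theta_y B^*.$$
Here I invoke the two hypotheses. Parsevalness of the originals gives $\theta_\tau^*\theta_x=S_{x,\tau}=I_\mathcal{H}$ and $\theta_\omega^*\theta_y=S_{y,\omega}=I_\mathcal{H}$. Orthogonality gives $\theta_\omega^*\theta_x=0$ directly and $\theta_y^*\theta_\tau=0$; taking the adjoint of the latter also yields $\theta_\tau^*\theta_y=0$. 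Substituting, the two mixed terms drop out and I am left with $S_{Ax+By,\,C\tau+D\omega}=CA^*+DB^*$.

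The only point requiring a little care — and the step I would flag as the main obstacle in the bookkeeping — is matching this with the hypothesis, which is stated as $AC^*+BD^*=I_\mathcal{H}$ while the computation produces $CA^*+DB^*$. These are mutual adjoints: $(CA^*+DB^*)^*=AC^*+BD^*=I_\mathcal{H}$, and taking adjoints once more gives $CA^*+DB^*=I_\mathcal{H}$. Hence $S_{Ax+By,\,C\tau+D\omega}=I_\mathcal{H}$ is a bounded positive invertible operator equal to the identity, so $(\{Ax_j+By_j\}_{j\in\mathbb{J}},\{C\tau_j+D\omega_j\}_{j\in\mathbb{J}})$ is a Parseval frame. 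Finally, the scalar assertion follows by specializing $A=aI_\mathcal{H}$, $B=bI_\mathcal{H}$, $C=cI_\mathcal{H}$, $D=dI_\mathcal{H}$, for which $AC^*+BD^*=(a\bar c+b\bar d)I_\mathcal{H}$ equals $I_\mathcal{H}$ precisely when $a\bar c+b\bar d=1$.
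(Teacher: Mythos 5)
Your proof is correct and follows essentially the same route as the paper's: compute $\theta_{Ax+By}=\theta_xA^*+\theta_yB^*$ and $\theta_{C\tau+D\omega}=\theta_\tau C^*+\theta_\omega D^*$, expand the frame operator, and kill the cross terms via orthogonality and the diagonal terms via Parsevalness. The only addition is that you make explicit the adjoint step identifying $CA^*+DB^*$ with $(AC^*+BD^*)^*=I_\mathcal{H}$, which the paper's proof uses silently; this is a worthwhile clarification but not a different argument.
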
 
\begin{proof}
We see $ \theta_{Ax+By} h = \{\langle h,Ax_j+By_j \rangle\}_{j\in \mathbb{J}}=\{\langle A^*h,x_j \rangle\}_{j\in \mathbb{J}}+\{\langle B^*h,y_j \rangle\}_{j\in \mathbb{J}}=\theta_xA^*h+\theta_yB^*h,\forall h \in \mathcal{H}$ and similarly $ \theta_{C\tau+D\omega}=\theta_\tau C^*+\theta_\omega D^* $. Therefore 	$S_{Ax+By,C\tau+D\omega} =\theta^*_{C\tau+D\omega} \theta_{Ax+By}= (\theta_\tau C^*+\theta_\omega D^*)^*(\theta_xA^*+\theta_yB^*)=C\theta_\tau^*\theta_xA^*+C\theta_\tau^*\theta_yB^*+D\theta_\omega^*\theta_xA^*+D\theta_\omega^*\theta_yB^*=CS_{x,\tau}A^*+C0B^*+D0A^*+DS_{y,\omega}B^*=CI_\mathcal{H}A^*+DI_\mathcal{H}B^*=I_\mathcal{H}.$
\end{proof}

\begin{definition}
Two frames  $(\{x_j\}_{j\in \mathbb{J}},  \{\tau_j\}_{j\in \mathbb{J}}) $ and $(\{y_j\}_{j\in \mathbb{J}},\{\omega_j\}_{j\in \mathbb{J}})$  for $ \mathcal{H}$ are called disjoint if $(\{x_j\oplus y_j\}_{j\in \mathbb{J}},\{\tau_j\oplus\omega_j\}_{j\in \mathbb{J}})$ is a frame for $\mathcal{H}\oplus\mathcal{H} $.
\end{definition} 
\begin{proposition}\label{SEQUENTIALDISJOINTFRAMEPROPOSITION}
If $(\{x_j\}_{j\in \mathbb{J}},\{\tau_j\}_{j\in \mathbb{J}} )$  and $ (\{y_j\}_{j\in \mathbb{J}}, \{\omega_j\}_{j\in \mathbb{J}} )$  are  disjoint  frames  for $\mathcal{H}$, then  they  are disjoint. Further, if both $(\{x_j\}_{j\in \mathbb{J}},\{\tau_j\}_{j\in \mathbb{J}} )$  and $ (\{y_j\}_{j\in \mathbb{J}}, \{\omega_j\}_{j\in \mathbb{J}} )$ are  Parseval, then $(\{x_j\oplus y_j\}_{j \in \mathbb{J}},\{\tau_j\oplus \omega_j\}_{j \in \mathbb{J}})$ is Parseval.
\end{proposition}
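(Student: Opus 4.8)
The plan is to transcribe, essentially line for line, the operator-version disjointness proposition from Section \ref{MK}, now working with the scalar-valued analysis operators $\theta_x,\theta_y,\theta_\tau,\theta_\omega:\mathcal{H}\to\ell^2(\mathbb{J})$ in place of the tensor-valued ones (so $\ell^2(\mathbb{J})\otimes\mathcal{H}_0$ becomes simply $\ell^2(\mathbb{J})$). As in that result, the correct hypothesis is that $(\{y_j\}_{j\in\mathbb{J}},\{\omega_j\}_{j\in\mathbb{J}})$ is \emph{orthogonal} to $(\{x_j\}_{j\in\mathbb{J}},\{\tau_j\}_{j\in\mathbb{J}})$, i.e. $\theta_\omega^*\theta_x=\theta_y^*\theta_\tau=0$; taking adjoints gives in addition $\theta_x^*\theta_\omega=\theta_\tau^*\theta_y=0$, and these vanishing cross products are exactly what makes the computation close.

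First I would express the analysis operators of the candidate direct-sum frame in terms of the four given ones. For $h\oplus g\in\mathcal{H}\oplus\mathcal{H}$ one reads directly from the inner product on $\mathcal{H}\oplus\mathcal{H}$ that
$$\theta_{x\oplus y}(h\oplus g)=\{\langle h,x_j\rangle+\langle g,y_j\rangle\}_{j\in\mathbb{J}}=\theta_x h+\theta_y g,$$
and likewise $\theta_{\tau\oplus\omega}(h\oplus g)=\theta_\tau h+\theta_\omega g$. In particular both operators are bounded, so condition \text{\upshape(ii)} of Definition \ref{SEQUENTIAL2} (the two Bessel bounds) holds automatically for the direct sum. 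Next I would compute the synthesis operator $\theta_{\tau\oplus\omega}^*$ by the usual adjoint identity: testing $\langle\theta_{\tau\oplus\omega}^*(\{a_j\}_{j\in\mathbb{J}}),h\oplus g\rangle$ against $h\oplus g$ splits the pairing into an $\mathcal{H}$-part and an $\mathcal{H}$-part, yielding
$$\theta_{\tau\oplus\omega}^*(\{a_j\}_{j\in\mathbb{J}})=\theta_\tau^*(\{a_j\}_{j\in\mathbb{J}})\oplus\theta_\omega^*(\{a_j\}_{j\in\mathbb{J}}).$$

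Then I would assemble the frame operator and read off its block structure. Combining the two displayed formulas,
$$S_{x\oplus y,\tau\oplus\omega}(h\oplus g)=\theta_{\tau\oplus\omega}^*(\theta_x h+\theta_y g)=\theta_\tau^*(\theta_x h+\theta_y g)\oplus\theta_\omega^*(\theta_x h+\theta_y g).$$
The orthogonality relations $\theta_\tau^*\theta_y=0$ and $\theta_\omega^*\theta_x=0$ annihilate the off-diagonal terms, leaving $S_{x,\tau}h\oplus S_{y,\omega}g=(S_{x,\tau}\oplus S_{y,\omega})(h\oplus g)$. Since $S_{x,\tau}$ and $S_{y,\omega}$ are each bounded positive invertible, so is their direct sum, with inverse $S_{x,\tau}^{-1}\oplus S_{y,\omega}^{-1}$; this verifies condition \text{\upshape(i)} and shows $(\{x_j\oplus y_j\}_{j\in\mathbb{J}},\{\tau_j\oplus\omega_j\}_{j\in\mathbb{J}})$ is a frame for $\mathcal{H}\oplus\mathcal{H}$, i.e. the two frames are disjoint. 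For the Parseval addendum, Parsevalness of each summand means $S_{x,\tau}=S_{y,\omega}=I_\mathcal{H}$, whence $S_{x\oplus y,\tau\oplus\omega}=I_\mathcal{H}\oplus I_\mathcal{H}=I_{\mathcal{H}\oplus\mathcal{H}}$, so the direct sum is Parseval.

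There is no serious obstacle here: the argument is a routine parallel of the operator-version proof. The only point needing a moment's care is recognizing that, to kill the correct cross terms in the frame-operator computation, one uses the stated relation $\theta_\omega^*\theta_x=0$ together with the adjoint $\theta_\tau^*\theta_y=0$ of the other stated relation $\theta_y^*\theta_\tau=0$, rather than the stated forms directly.
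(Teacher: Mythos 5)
Your proof is correct and follows essentially the same route as the paper's: both compute $S_{x\oplus y,\tau\oplus\omega}(h\oplus g)$ and use the vanishing cross terms coming from orthogonality to identify it with $S_{x,\tau}h\oplus S_{y,\omega}g$, then note boundedness of $\theta_{x\oplus y}$ and $\theta_{\tau\oplus\omega}$ and read off the Parseval case from the block form. You are also right that the hypothesis as printed ("disjoint ... then they are disjoint") is a typo for "orthogonal," matching the operator-valued analogue in Section \ref{MK}.
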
 
\begin{proof}
We find $(S_{x,\tau}\oplus S_{y,\omega})(h\oplus g) =S_{x,\tau}h\oplus S_{y,\omega}g  =\sum_{j\in \mathbb{J}}\langle h, x_j\rangle\tau_j\oplus\sum_{j\in \mathbb{J}}\langle g, y_j\rangle\omega_j=\left(\sum_{j\in \mathbb{J}}\langle h, x_j\rangle\tau_j+\sum_{j\in \mathbb{J}}\langle g, y_j\rangle\tau_j\right)\oplus \left(\sum_{j\in \mathbb{J}}\langle h, x_j\rangle\omega_j+\sum_{j\in \mathbb{J}}\langle g, y_j\rangle\omega_j\right) =\sum_{j\in \mathbb{J}}(\langle h, x_j \rangle+\langle g, y_j\rangle)(\tau_j\oplus\omega_j)=\sum_{j\in \mathbb{J}}\langle h\oplus g,x_j\oplus y_j\rangle(\tau_j\oplus\omega_j)= S_{x\oplus y, \tau \oplus \omega}(h\oplus g), \forall h\oplus g \in \mathcal{H}\oplus\mathcal{H} $. Thus $S_{x\oplus y, \tau \oplus \omega}=S_{x,\tau}\oplus S_{y,\omega} $ is bounded,  positive  and   $ S_{x\oplus y, \tau \oplus \omega}^{-1}=S_{x,\tau}^{-1}\oplus S_{y,\omega}^{-1}$. Next, $\theta_{x\oplus y}(h\oplus g)=\{\langle h\oplus g, x_j\oplus y_j\rangle \}_{j\in \mathbb{J}}=\{\langle h,x_j\rangle +\langle g,y_j\rangle\}_{j\in \mathbb{J}} =\{\langle h,x_j\rangle \}_{j\in \mathbb{J}}+\{\langle g,y_j \rangle \}_{j\in \mathbb{J}}=\theta_xh+\theta_yg $ exists for all $ h\oplus g \in \mathcal{H}\oplus\mathcal{H}$, $ \|\theta_{x\oplus y}(h\oplus g)\|=\|\theta_xh+\theta_yg\|\leq\|\theta_x\|\|h\|+\|\theta_y\|\|g\|\leq \max\{\|\theta_x\|,\|\theta_y\|\}(\|h\|+\|g\|)\leq 2\max\{\|\theta_x\|,\|\theta_y\|\}(\|h\|^2+\|g\|^2)^{1/2}=2\max\{\|\theta_x\|,\|\theta_y\|\}\|h\oplus g\| , \forall h\oplus g \in \mathcal{H}\oplus\mathcal{H}$ and similarly $\theta_{\tau\oplus \omega} $ exists and is also bounded. Therefore $(\{x_j\oplus y_j\}_{j\in \mathbb{J}}, \{\tau_j\oplus \omega_j\}_{j\in \mathbb{J}}) $ is a frame for $\mathcal{H}\oplus\mathcal{H} $. From the expression of $S_{x\oplus y, \tau \oplus \omega} $ we get the last conclusion.
\end{proof}
  
\textbf{Characterizations for sequential version} 
\begin{theorem}\label{SEQUENTIALCHARACTERIZATIONHILBERT1}
Let $ \{f_j\}_{j \in \mathbb{J}}$ be an arbitrary orthonormal basis for $ \mathcal{H}.$  Then 
\begin{enumerate}[\upshape(i)]
\item The orthonormal  bases  $ ( \{x_j\}_{j \in \mathbb{J}},\{\tau_j\}_{j \in \mathbb{J}})$ for $ \mathcal{H}$  are precisely $( \{Uf_j\}_{j \in \mathbb{J}},\{c_jUf_j\}_{j \in \mathbb{J}}) $, where $ U\in \mathcal{B}(\mathcal{H}) $ is unitary and $ c_j \in \mathbb{R}, \forall j \in \mathbb{J}$ such that $ 0<\inf\{c_j\}_{j\in \mathbb{J}}\leq \sup\{c_j\}_{j\in \mathbb{J}}< \infty.$
\item The Riesz bases  $ ( \{x_j\}_{j \in \mathbb{J}},\{\tau_j\}_{j \in \mathbb{J}})$ for $ \mathcal{H}$  are precisely $( \{Uf_j\}_{j \in \mathbb{J}},\{Vf_j\}_{j \in \mathbb{J}}) $, where $ U,V  \in \mathcal{B}(\mathcal{H}) $ are  invertible  such that  $ VU^*$ is positive. 
\item The frames $ ( \{x_j\}_{j \in \mathbb{J}},\{\tau_j\}_{j \in \mathbb{J}})$ for $ \mathcal{H}$  are precisely $( \{Uf_j\}_{j \in \mathbb{J}},\{Vf_j\}_{j \in \mathbb{J}}) $, where $ U,V \in \mathcal{B}(\mathcal{H}) $ are   such that  $ VU^*$ is positive invertible.
\item The Bessel sequences  $ ( \{x_j\}_{j \in \mathbb{J}},\{\tau_j\}_{j \in \mathbb{J}})$ for $ \mathcal{H}$  are precisely $( \{Uf_j\}_{j \in \mathbb{J}},\{Vf_j\}_{j \in \mathbb{J}}) $, where $ U,V  \in \mathcal{B}(\mathcal{H}) $ are such that   $ VU^*$ is positive.
\item The Riesz frames $ ( \{x_j\}_{j \in \mathbb{J}},\{\tau_j\}_{j \in \mathbb{J}})$ for $ \mathcal{H}$  are precisely $( \{Uf_j\}_{j \in \mathbb{J}},\{Vf_j\}_{j \in \mathbb{J}}) $, where $ U,V \in \mathcal{B}(\mathcal{H})$ are such that  $ VU^*$ is positive invertible and $ U^*(VU^*)^{-1}V=I_{\mathcal{H}}$.  
\item The orthonormal frames $ ( \{x_j\}_{j \in \mathbb{J}},\{\tau_j\}_{j \in \mathbb{J}})$ for $ \mathcal{H}$  are precisely $( \{Uf_j\}_{j \in \mathbb{J}},\{Vf_j\}_{j \in \mathbb{J}}) $, where $ U,V  \in \mathcal{B}(\mathcal{H}) $ are such that  $ VU^*=I_\mathcal{H}= U^*V$.  
\end{enumerate}
\end{theorem}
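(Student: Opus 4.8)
The unifying device is the observation that, since $\{f_j\}_{j\in\mathbb{J}}$ is an orthonormal basis, the analysis operator $\theta_f\colon\mathcal{H}\to\ell^2(\mathbb{J})$ is a surjective isometry, i.e. a unitary, so $\theta_f^*\theta_f=I_\mathcal{H}$ and $\theta_f\theta_f^*=I_{\ell^2(\mathbb{J})}$. This plays exactly the role that $\sum_{j}L_jF_j$ plays in the proof of Theorem \ref{OPERATORVALUEDCHARACTERIZATIONSOFTHEEXTENSION}, and the whole proof will run parallel to that one. Throughout I will use the sequential counterpart of Proposition \ref{2.2}, namely $\theta_x^*(\{c_j\}_{j\in\mathbb{J}})=\sum_{j}c_jx_j$ (so $\theta_x^*e_k=x_k$), $S_{x,\tau}=\theta_\tau^*\theta_x$, and $P_{x,\tau}=\theta_xS_{x,\tau}^{-1}\theta_\tau^*$, together with the facts that $\theta_x(\mathcal{H})$ is closed and $P_{x,\tau}$ is idempotent.

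For the ``$\Leftarrow$'' direction in every item I would set $x_j=Uf_j$, $\tau_j=Vf_j$ and compute $\theta_x h=\{\langle h,Uf_j\rangle\}_{j}=\theta_f U^*h$, likewise $\theta_\tau=\theta_f V^*$; boundedness of $U,V$ then yields the two Bessel bounds, and $S_{x,\tau}=\theta_\tau^*\theta_x=V\theta_f^*\theta_f U^*=VU^*$. Hence the hypothesis on $VU^*$ gives case (iv) (positive), case (iii) (positive invertible), and case (ii) is immediate from Definition \ref{RELATIVERIESZSEQUENTIAL}; for (i) unitarity of $U$ makes $\theta_x=\theta_fU^*$ unitary, so $\{x_j\}$ is an orthonormal basis and $\tau_j=c_jx_j$. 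For (v) and (vi) I would compute $P_{x,\tau}=\theta_f\,U^*(VU^*)^{-1}V\,\theta_f^*$, which collapses to $\theta_f\theta_f^*=I_{\ell^2(\mathbb{J})}$ precisely under the stated identity $U^*(VU^*)^{-1}V=I_\mathcal{H}$ (respectively under $VU^*=I=U^*V$), exhibiting a Riesz frame and an orthonormal frame.

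For the ``$\Rightarrow$'' direction I would define $U:=\theta_x^*\theta_f$ and $V:=\theta_\tau^*\theta_f$, both bounded, and check $Uf_k=\theta_x^*\theta_f f_k=\theta_x^*e_k=x_k$, $Vf_k=\tau_k$. Using $\theta_f\theta_f^*=I_{\ell^2(\mathbb{J})}$ one gets $VU^*=\theta_\tau^*\theta_f\theta_f^*\theta_x=\theta_\tau^*\theta_x=S_{x,\tau}$, recovering positivity in (iv), positive invertibility in (iii), and, since $\theta_x$ ranges onto $\ell^2(\mathbb{J})$ only in the Riesz case, the two remaining identities: from $U^*=\theta_f^*\theta_x$ and $V=\theta_\tau^*\theta_f$ I obtain $U^*(VU^*)^{-1}V=\theta_f^*P_{x,\tau}\theta_f$, which equals $\theta_f^*\theta_f=I_\mathcal{H}$ whenever $P_{x,\tau}=I_{\ell^2(\mathbb{J})}$, and similarly $U^*V=\theta_f^*(\theta_x\theta_\tau^*)\theta_f=I_\mathcal{H}$ in the Parseval situation where $\theta_x\theta_\tau^*=P_{x,\tau}$.

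Two points require care. In (i) and (ii) the definition furnishes some orthonormal basis $\{g_j\}$ that need not be the fixed $\{f_j\}$; I would transport along the unitary $W:=\theta_g^*\theta_f$ (so $g_j=Wf_j$), replacing the given $U,V$ by $UW,VW$, and note that $WW^*=I_\mathcal{H}$ leaves $(VW)(UW)^*=VU^*$ and its positivity unchanged, while invertibility and unitarity are preserved. The genuine hazard — flagged by the Caution following Definition \ref{RELATIVERIESZSEQUENTIAL} — is that the adjoints sit on different factors than in the operator version: here the decisive quantity is $U^*(VU^*)^{-1}V$ rather than $U(V^*U)^{-1}V^*$, so the $P_{x,\tau}$ computations in (v) and (vi) must be bracketed exactly as above. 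As an alternative route one could instead invoke Theorem \ref{OVFTOSEQUENCEANDVICEVERSATHEOREM} in the case $\mathcal{H}_0=\mathbb{K}$ and deduce everything from Theorem \ref{OPERATORVALUEDCHARACTERIZATIONSOFTHEEXTENSION}, but this forces one to track the adjoint twist $x_j=U^*f_j\leftrightarrow A_j=F_jU$, which is why the direct argument above is cleaner.
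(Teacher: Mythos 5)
Your proposal is correct and follows essentially the same route as the paper: the paper's auxiliary unitary $T$ (defined by $Tf_j=e_j$) is exactly your $\theta_f$, so your choices $U=\theta_x^*\theta_f$, $V=\theta_\tau^*\theta_f$ and the identities $\theta_x=\theta_fU^*$, $S_{x,\tau}=VU^*$, $U^*(VU^*)^{-1}V=\theta_f^*P_{x,\tau}\theta_f$ reproduce the paper's computations verbatim. The transport-by-$W$ remark for (i)–(ii) matches the paper's use of the unitary sending $f_j$ to the basis supplied by the definition.
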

\begin{proof}
\begin{enumerate}[\upshape(i)]
\item Since a unitary operator carries orthonormal basis to orthonormal basis, collections of the form $( \{Uf_j\}_{j \in \mathbb{J}},\{c_jUf_j\}_{j \in \mathbb{J}}) $, $ U : \mathcal{H} \rightarrow  \mathcal{H} $ is unitary,  $ c_j \in \mathbb{R}, \forall j \in \mathbb{J}$,  $ 0<\inf\{c_j\}_{j\in \mathbb{J}}\leq \sup\{c_j\}_{j\in \mathbb{J}}< \infty$ are orthonormal bases. For the other way, let $ ( \{x_j\}_{j \in \mathbb{J}},\{\tau_j\}_{j \in \mathbb{J}})$ be an orthonormal basis for $ \mathcal{H}.$ Now we may assume $ \{x_j\}_{j \in \mathbb{J}}$ is an orthonormal basis for $ \mathcal{H}$ and $ \tau_j=c_jx_j, \forall j \in \mathbb{J},$ for some $ c_j \in \mathbb{R}, \forall j \in \mathbb{J}$ with   $ 0<\inf\{c_j\}_{j\in \mathbb{J}}\leq \sup\{c_j\}_{j\in \mathbb{J}}< \infty.$ Since both $ \{f_j\}_{j \in \mathbb{J}}$  and $\{x_j\}_{j \in \mathbb{J}} $ are orthonormal bases for $ \mathcal{H}$, there exists a unitary $ U: \mathcal{H} \rightarrow  \mathcal{H}$ such that $ Uf_j=x_j,  \forall j \in \mathbb{J}.$ This gives $ c_jUf_j= c_jx_j=\tau_j,  \forall j \in \mathbb{J}.$
\item  We need to prove only the direct part.  Let $ \{e_j\}_{j \in \mathbb{J}}$ be an orthonormal basis for $ \mathcal{H}$ and $ R,S:\mathcal{H}\rightarrow\mathcal{H} $  be bounded invertible such that $x_j=Re_j, \tau_j=Se_j, \forall j \in \mathbb{J}$ and $ SR^*\geq 0.$ Let $T:\mathcal{H}\rightarrow\mathcal{H} $ be the unitary operator obtained by defining $ Tf_j\coloneqq e_j, \forall j \in \mathbb{J}.$ Define $ U\coloneqq RT, V\coloneqq ST.$ Then $ U,V$ are invertible, $ Uf_j=RTf_j=Re_j=x_j, Vf_j=STf_j=Se_j=\tau_j, \forall j \in \mathbb{J}$ and $ VU^*=STT^*R^*=SR^*\geq 0.$
\item  $ (\Leftarrow)$ $ \theta_xh=\{\langle h , x_j\rangle \}_{j \in \mathbb{J}}=\{\langle U^*h , f_j\rangle \}_{j \in \mathbb{J}}= \theta_f(U^*h), \theta_\tau h=\theta_f(V^*h) , \forall h\in \mathcal{H}$. We find $\theta_f^*\theta_fh=\theta_f^*(\{\langle h , f_j\rangle \}_{j \in \mathbb{J}}) =\sum_{j\in \mathbb{J}}\langle h , f_j\rangle f_j=h, \forall h \in \mathcal{H}$. Hence  $ S_{x,\tau}=\theta_\tau^*\theta_x=V\theta_f^*\theta_fU^*=VU^*$ is positive invertible.
  
$(\Rightarrow)$
Let $ \{e_j\}_{j \in \mathbb{J}}$ be the standard orthonormal basis for $ \ell^2(\mathbb{J}).$ Let $ T : \mathcal{H} \rightarrow  \ell^2(\mathbb{J})$ be the unitary  isomorphism obtained by defining $ Tf_j\coloneqq e_j.$ Define $U\coloneqq \theta_x^*T,  V\coloneqq \theta_\tau^*T $. Then $ Uf_j=\theta_x^*Tf_j=\theta_x^*e_j=x_j,  Vf_j=\theta_\tau^*Tf_j=\theta_\tau^*e_j=\tau_j, \forall j \in \mathbb{J}$ and $ VU^*=\theta_\tau^*TT^*\theta_x=\theta_\tau^*I_{\ell^2(\mathbb{J})}\theta_x=S_{x,\tau}$ which is positive invertible.
\item Similar to (iii).
\item We refer to the proof of (iii). $(\Leftarrow)$ We first find $ \theta_f\theta_f^*(\{a_j\}_{j\in \mathbb{J}})=\sum_{j\in \mathbb{J}}a_j\theta_ff_j=\sum_{j\in \mathbb{J}}a_j(\sum_{k\in \mathbb{J}}\langle f_j, f_k\rangle e_k)=\sum_{j\in \mathbb{J}}a_je_j=\{a_j\}_{j\in \mathbb{J}}, \forall \{a_j\}_{j\in \mathbb{J}} \in \ell^2(\mathbb{J})$. Then $ P_{x,\tau}=\theta_xS_{x,\tau}^{-1}\theta_\tau^*=\theta_fU^*(VU^*)^{-1}V\theta_f^*=\theta_fI_\mathcal{H}\theta_f^*=I_{\ell^2(\mathbb{J})}$. 

$(\Rightarrow)$ $U^*(VU^*)^{-1}V=( T^*\theta_x)S_{x,\tau}^{-1}(\theta_\tau^*V)=T^*P_{x,\tau}T=T^*I_{\ell^2(\mathbb{J})}T=T^*T= I_\mathcal{H}.$
\item Naturally, we  refer to the proof of (v). $ (\Leftarrow)$ $S_{x,\tau}=VU^*=I_\mathcal{H}, P_{x,\tau}=\theta_xS_{x,\tau}^{-1}\theta_\tau^*=\theta_x\theta_\tau^*=\theta_fU^*V\theta_f^* =\theta_fI_\mathcal{H}\theta_f^* =\theta_f\theta_f^* =I_{\ell^2(\mathbb{J})}.$

$(\Rightarrow)$ $VU^*=S_{x,\tau}=I_\mathcal{H},U^*V=T^*\theta_x\theta_\tau^*T =T^*P_{x,\tau}T=T^*I_{\ell^2(\mathbb{J})}T=I_\mathcal{H}.  $
\end{enumerate}
\end{proof}  
\begin{corollary}
\begin{enumerate}[\upshape(i)]
\item If $ (\{x_j\}_{j \in \mathbb{J}},\{\tau_j=c_jx_j\}_{j \in \mathbb{J}})$ is an orthonormal basis for $ \mathcal{H}$, then $ \|x_j\|=1, \forall j \in \mathbb{J}, \|\tau_j\|=c_j, \forall j \in \mathbb{J}.$
\item If $ (\{x_j\}_{j \in \mathbb{J}},\{\tau_j\}_{j \in \mathbb{J}})$ is a Riesz basis for $ \mathcal{H}$, then
$$ \frac{1}{\|U^{-1}\|} \leq \|x_j\| \leq \|U\|, ~\forall j \in \mathbb{J},~   \frac{1}{\|V^{-1}\|} \leq \|\tau_j\| \leq \|V\|, ~\forall j \in \mathbb{J}.$$
\item If $ (\{x_j\}_{j \in \mathbb{J}},\{\tau_j\}_{j \in \mathbb{J}})$ is a Bessel sequence for $ \mathcal{H}$, then
$  \|x_j\| \leq \|U\|, \forall j \in \mathbb{J},  \|\tau_j\| \leq \|V\|, \forall j \in \mathbb{J}.$
\end{enumerate}
\end{corollary}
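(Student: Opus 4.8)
The plan is to read off every norm estimate directly from the structural description furnished by Theorem \ref{SEQUENTIALCHARACTERIZATIONHILBERT1}: fix an arbitrary orthonormal basis $\{f_j\}_{j\in\mathbb{J}}$ for $\mathcal{H}$, represent $x_j$ and $\tau_j$ in the form $Uf_j$, $Vf_j$ (or $c_jUf_j$), and then use only that $\|f_j\|=1$ together with the isometry and submultiplicativity properties of the operator norm.

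For part (i) I would invoke (i) of Theorem \ref{SEQUENTIALCHARACTERIZATIONHILBERT1} to write $x_j=Uf_j$ and $\tau_j=c_jUf_j$, where $U$ is unitary and $0<\inf\{c_j\}_{j\in\mathbb{J}}\le\sup\{c_j\}_{j\in\mathbb{J}}<\infty$. Since a unitary operator is an isometry, $\|x_j\|=\|Uf_j\|=\|f_j\|=1$; and because $\inf\{c_j\}_{j\in\mathbb{J}}>0$ forces $c_j>0$, we may drop the absolute value and conclude $\|\tau_j\|=c_j\|x_j\|=c_j$. For part (ii) I apply (ii) of the theorem to obtain invertible $U,V\in\mathcal{B}(\mathcal{H})$ with $x_j=Uf_j$, $\tau_j=Vf_j$. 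The upper bounds are immediate, $\|x_j\|=\|Uf_j\|\le\|U\|\,\|f_j\|=\|U\|$ and likewise $\|\tau_j\|\le\|V\|$. For the lower bounds I would pass through the inverse: $1=\|f_j\|=\|U^{-1}(Uf_j)\|\le\|U^{-1}\|\,\|x_j\|$ gives $\|x_j\|\ge 1/\|U^{-1}\|$, and the identical argument with $V^{-1}$ yields $\|\tau_j\|\ge 1/\|V^{-1}\|$.

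For part (iii) I would use (iv) of Theorem \ref{SEQUENTIALCHARACTERIZATIONHILBERT1}, which represents any Bessel sequence as $x_j=Uf_j$, $\tau_j=Vf_j$ for suitable $U,V\in\mathcal{B}(\mathcal{H})$ (now not assumed invertible); since only the upper estimates are asserted, they follow again from $\|Uf_j\|\le\|U\|$ and $\|Vf_j\|\le\|V\|$.

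There is no genuine obstacle in this corollary—each assertion is a one-line consequence of the characterization theorem combined with standard operator-norm inequalities. The only two points that merit a moment's care are the lower bound in (ii), where one must route the estimate through $U^{-1}$ (respectively $V^{-1}$) rather than $U$ itself, and the remark in (i) that the positivity constraint $\inf\{c_j\}_{j\in\mathbb{J}}>0$ is exactly what permits replacing $|c_j|$ by $c_j$.
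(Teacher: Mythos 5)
Your proposal is correct and is exactly the intended argument: the paper states this corollary without proof immediately after Theorem \ref{SEQUENTIALCHARACTERIZATIONHILBERT1}, and the evident route is the one you take — represent $x_j=Uf_j$, $\tau_j=Vf_j$ (or $c_jUf_j$) via that theorem and read off the bounds from $\|f_j\|=1$, submultiplicativity, and, for the lower bounds in (ii), the estimate $1=\|U^{-1}Uf_j\|\leq\|U^{-1}\|\|x_j\|$. Your two cautionary remarks (routing the lower bound through the inverse, and using $c_j>0$ to drop the absolute value) are the only points of substance, and both are handled correctly.
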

\begin{corollary}\label{CORRECTEXTENSION}
Let $ \{f_j\}_{j \in \mathbb{J}}$ be an arbitrary orthonormal basis for $ \mathcal{H}.$  Then 
\begin{enumerate}[\upshape(i)]
\item The orthonormal  bases  $ ( \{x_j\}_{j \in \mathbb{J}},\{x_j\}_{j \in \mathbb{J}})$ for $ \mathcal{H}$  are precisely $( \{Uf_j\}_{j \in \mathbb{J}},\{Uf_j\}_{j \in \mathbb{J}}) $, where $ U\in \mathcal{B}(\mathcal{H}) $ is unitary.
\item The Riesz bases  $ ( \{x_j\}_{j \in \mathbb{J}},\{x_j\}_{j \in \mathbb{J}})$ for $ \mathcal{H}$  are precisely $( \{Uf_j\}_{j \in \mathbb{J}},\{Uf_j\}_{j \in \mathbb{J}}) $, where $ U \in \mathcal{B}(\mathcal{H}) $ is  invertible. 
\item The frames $ (\{x_j\}_{j \in \mathbb{J}},\{x_j\}_{j \in \mathbb{J}})$ for $ \mathcal{H}$  are precisely $( \{Uf_j\}_{j \in \mathbb{J}},\{Uf_j\}_{j \in \mathbb{J}}) $, where $ U \in \mathcal{B}(\mathcal{H}) $ is  such that  $ UU^*$ is  invertible.
\item The Bessel sequences  $ ( \{x_j\}_{j \in \mathbb{J}},\{x_j\}_{j \in \mathbb{J}})$ for $ \mathcal{H}$  are precisely $( \{Uf_j\}_{j \in \mathbb{J}},\{Uf_j\}_{j \in \mathbb{J}}) $, where $ U \in \mathcal{B}(\mathcal{H}) $.
\item The Riesz frames $ ( \{x_j\}_{j \in \mathbb{J}},\{x_j\}_{j \in \mathbb{J}})$ for $ \mathcal{H}$  are precisely $( \{Uf_j\}_{j \in \mathbb{J}},\{Uf_j\}_{j \in \mathbb{J}}) $, where $ U \in \mathcal{B}(\mathcal{H})$ is  such that  $ UU^*$ is  invertible and $ U^*(UU^*)^{-1}U=I_{\mathcal{H}}$.  
\item The orthonormal frames $ ( \{x_j\}_{j \in \mathbb{J}},\{x_j\}_{j \in \mathbb{J}})$ for $ \mathcal{H}$  are precisely $( \{Uf_j\}_{j \in \mathbb{J}},\{Uf_j\}_{j \in \mathbb{J}}) $, where $ U  \in \mathcal{B}(\mathcal{H}) $ is such that  $ UU^*=I_\mathcal{H}= U^*U$.
\item The frames $ (\{x_j\}_{j \in \mathbb{J}},\{x_j\}_{j \in \mathbb{J}})$ for $ \mathcal{H}$  are precisely $( \{Uf_j\}_{j \in \mathbb{J}},\{Uf_j\}_{j \in \mathbb{J}}) $, where $ U \in \mathcal{B}(\mathcal{H}) $ is    surjective.
\item $ (\{x_j\}_{j \in \mathbb{J}},\{x_j\}_{j \in \mathbb{J}})$  is an orthonormal basis for $ \mathcal{H}$ if and only if it is an orthonormal frame.
\end{enumerate}
\end{corollary}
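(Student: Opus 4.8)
The plan is to specialize Theorem \ref{SEQUENTIALCHARACTERIZATIONHILBERT1} to the diagonal case $\tau_j = x_j$, exactly as Corollary \ref{OVCOROLLARYHILBERT} was obtained from Theorem \ref{OPERATORVALUEDCHARACTERIZATIONSOFTHEEXTENSION}. First I would observe that once $x_j = Uf_j$ holds for all $j$, the operator $U$ is uniquely pinned down by its action on the orthonormal basis $\{f_j\}_{j\in\mathbb{J}}$; likewise $\tau_j = Vf_j$ determines $V$. Hence the constraint $\tau_j = x_j$ forces $Vf_j = Uf_j$ for every $j$, that is $V = U$. So every family $(\{x_j\}_{j\in\mathbb{J}}, \{x_j\}_{j\in\mathbb{J}})$ appearing in any of the six items of Theorem \ref{SEQUENTIALCHARACTERIZATIONHILBERT1} is of the form $(\{Uf_j\}_{j\in\mathbb{J}}, \{Uf_j\}_{j\in\mathbb{J}})$, and conversely each such family is the diagonal specialization.

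Next I would substitute $V = U$ into the six defining conditions and simplify. The recurring simplification is that $UU^* \geq 0$ holds automatically for every $U \in \mathcal{B}(\mathcal{H})$, so wherever Theorem \ref{SEQUENTIALCHARACTERIZATIONHILBERT1} demands ``$VU^*$ positive'' the positivity clause becomes vacuous. Thus item (ii) retains only ``$U$ invertible'', item (iii) retains only ``$UU^*$ invertible'', item (iv) retains only ``$U \in \mathcal{B}(\mathcal{H})$'', item (v) becomes ``$UU^*$ invertible and $U^*(UU^*)^{-1}U = I_{\mathcal{H}}$'', and item (vi) becomes ``$UU^* = I_\mathcal{H} = U^*U$''. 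For item (i), the condition $\tau_j = c_j x_j$ reads $c_j Uf_j = Uf_j$; since in that item $U$ is already unitary, applying $U^{-1}$ gives $c_j = 1$ for all $j$, leaving exactly ``$U$ unitary''.

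For the two remaining items I would argue separately. Item (vii) follows from (iii) together with the standard fact that for $T \in \mathcal{B}(\mathcal{H})$ one has $T^*T$ invertible if and only if $T^*$ is surjective; applying this with $T = U^*$ shows that $UU^*$ is invertible if and only if $U$ is surjective, which converts (iii) into (vii). Item (viii) is then immediate: an orthonormal basis $(\{x_j\}_{j\in\mathbb{J}}, \{x_j\}_{j\in\mathbb{J}})$ is, by (i), precisely $(\{Uf_j\}_{j\in\mathbb{J}}, \{Uf_j\}_{j\in\mathbb{J}})$ with $U$ unitary, while an orthonormal frame is, by (vi), precisely $(\{Uf_j\}_{j\in\mathbb{J}}, \{Uf_j\}_{j\in\mathbb{J}})$ with $UU^* = I_\mathcal{H} = U^*U$, i.e. again $U$ unitary; the two characterizations therefore coincide.

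I expect no serious obstacle, since every assertion descends directly from Theorem \ref{SEQUENTIALCHARACTERIZATIONHILBERT1}. The only two points deserving a word of care are the reduction $V = U$ (which I justify through the uniqueness of an operator determined by its values on an orthonormal basis) and the surjectivity equivalence underlying (vii); both are routine, so the entire argument is a short deduction rather than an independent proof.
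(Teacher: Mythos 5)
Your argument is correct and is essentially the paper's own route: the paper leaves this corollary unproved but establishes the parallel operator-version (Corollary \ref{OVCOROLLARYHILBERT}) exactly by specializing to $V=U$, noting $c_j=1$ for (i), using the automatic positivity of $UU^*$ for (ii)--(vi), the equivalence ``$T^*T$ invertible iff $T^*$ surjective'' for (vii), and comparing (i) with (vi) for (viii). Your added remark on why $V=U$ is forced (uniqueness of an operator by its values on an orthonormal basis) is a correct and welcome clarification, but the substance is the same.
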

\begin{theorem}\label{SEQUENTIALCHARACTERIZATIONHILBERT2}
Let $\{x_j\}_{j\in\mathbb{J}},\{\tau_j\}_{j\in\mathbb{J}}$ be in $ \mathcal{H}.$ Then $(\{x_j\}_{j\in\mathbb{J}},\{\tau_j\}_{j\in\mathbb{J}})$  is a frame with bounds $ a $ and $ b$ (resp. Bessel with bound $ b$)
 \begin{enumerate}[\upshape(i)]
\item  if and only if 
$$U:\ell^2(\mathbb{J}) \ni \{c_j\}_{j\in\mathbb{J}}\mapsto\sum\limits_{j\in\mathbb{J}}c_jx_j \in \mathcal{H}, ~\text{and} ~ V:\ell^2(\mathbb{J}) \ni \{d_j\}_{j\in\mathbb{J}}\mapsto\sum\limits_{j\in\mathbb{J}}d_j\tau_j \in \mathcal{H} $$ 
are well-defined, $ U,V \in  \mathcal{B}(\ell^2(\mathbb{J}), \mathcal{H})$ such that $ aI_\mathcal{H}\leq VU^*\leq bI_\mathcal{H}$   (resp. $ 0\leq VU^*\leq bI_\mathcal{H}).$
\item  if and only if 
$$U:\ell^2(\mathbb{J}) \ni\{c_j\}_{j\in\mathbb{J}}\mapsto\sum\limits_{j\in\mathbb{J}}c_jx_j \in \mathcal{H}, ~\text{and} ~ S: \mathcal{H} \ni g\mapsto\{\langle g, \tau_j\rangle\}_{j\in\mathbb{J}} \in \ell^2(\mathbb{J}) $$ 
are well-defined, $ U \in  \mathcal{B}(\ell^2(\mathbb{J}), \mathcal{H})$, $ S \in  \mathcal{B}(\mathcal{H}, \ell^2(\mathbb{J}))$  such that  $ aI_\mathcal{H}\leq S^*U^*\leq bI_\mathcal{H}$ (resp.  $ 0\leq S^*U^*\leq bI_\mathcal{H}).$
\item   if and only if 
$$ R: \mathcal{H} \ni h\mapsto\{\langle h, x_j\rangle\}_{j\in\mathbb{J}} \in \ell^2(\mathbb{J}), ~\text{and} ~  V:\ell^2(\mathbb{J}) \ni\{d_j\}_{j\in\mathbb{J}}\mapsto\sum\limits_{j\in\mathbb{J}}d_j\tau_j \in \mathcal{H} $$ 
are well-defined, $ R \in  \mathcal{B}(\mathcal{H}, \ell^2(\mathbb{J}))$, $ V \in  \mathcal{B}(\ell^2(\mathbb{J}), \mathcal{H})$ such that  $ aI_\mathcal{H}\leq VR\leq bI_\mathcal{H}$  (resp.  $ 0\leq VR\leq bI_\mathcal{H} ).$
\item  if and only if 
$$ R: \mathcal{H} \ni h\mapsto\{\langle h, x_j\rangle\}_{j\in\mathbb{J}} \in \ell^2(\mathbb{J}), ~\text{and} ~  S: \mathcal{H} \ni g\mapsto\{\langle g, \tau_j\rangle\}_{j\in\mathbb{J}} \in \ell^2(\mathbb{J}) $$
are well-defined, $ R,S \in  \mathcal{B}(\mathcal{H}, \ell^2(\mathbb{J}))$ such that  $ aI_\mathcal{H}\leq S^*R\leq bI_\mathcal{H}$ (resp.   $ 0\leq S^*R\leq bI_\mathcal{H}).$   
\end{enumerate}
\end{theorem}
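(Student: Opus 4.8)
The plan is to mirror the (very short) proof of Theorem \ref{OPERATORCHARACTERIZATIONHILBERT2}, exploiting the fact that every operator appearing in items (i)--(iv) is nothing but one of the analysis or synthesis operators attached to $(\{x_j\}_{j\in\mathbb{J}},\{\tau_j\}_{j\in\mathbb{J}})$. Recall from Definition \ref{SEQUENTIAL2} and from the proposition computing $\theta_x^*,\theta_\tau^*$ that $\theta_x h=\{\langle h,x_j\rangle\}_{j\in\mathbb{J}}$, $\theta_\tau h=\{\langle h,\tau_j\rangle\}_{j\in\mathbb{J}}$, $\theta_x^*(\{c_j\}_{j\in\mathbb{J}})=\sum_{j\in\mathbb{J}}c_jx_j$, $\theta_\tau^*(\{d_j\}_{j\in\mathbb{J}})=\sum_{j\in\mathbb{J}}d_j\tau_j$, and $S_{x,\tau}=\theta_\tau^*\theta_x=\theta_x^*\theta_\tau$. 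Consequently the maps named $R,S,U,V$ are exactly $\theta_x$, $\theta_\tau$, $\theta_x^*$, $\theta_\tau^*$, and in each of the four formulations the displayed composition — namely $VU^*$ in (i), $S^*U^*$ in (ii), $VR$ in (iii), and $S^*R$ in (iv) — simplifies to $\theta_\tau^*\theta_x=S_{x,\tau}$. So all four items assert the same underlying thing: the analysis operators are bounded and the frame operator $S_{x,\tau}$ satisfies $aI_\mathcal{H}\le S_{x,\tau}\le bI_\mathcal{H}$ (resp. $0\le S_{x,\tau}\le bI_\mathcal{H}$). I would prove each item by the two implications below; since the computations are identical after the bookkeeping, I would present (i) in full and remark that (ii)--(iv) are obtained by swapping the roles of $\theta_x,\theta_\tau$ with their adjoints in the obvious way.

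For the forward direction I would invoke Definition \ref{SEQUENTIAL2} directly. If $(\{x_j\}_{j\in\mathbb{J}},\{\tau_j\}_{j\in\mathbb{J}})$ is a frame, condition (ii) of that definition says precisely that $\theta_x,\theta_\tau\in\mathcal{B}(\mathcal{H},\ell^2(\mathbb{J}))$, whence their adjoints $\theta_x^*,\theta_\tau^*\in\mathcal{B}(\ell^2(\mathbb{J}),\mathcal{H})$ are well-defined and bounded; condition (i) together with the bounds gives $aI_\mathcal{H}\le S_{x,\tau}\le bI_\mathcal{H}$. Setting $U=\theta_x^*$, $V=\theta_\tau^*$ (and $R=\theta_x$, $S=\theta_\tau$) and using $S_{x,\tau}=\theta_\tau^*\theta_x=VU^*=S^*U^*=VR=S^*R$ yields the stated operators with the required inequality. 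The Bessel case is the same with invertibility of $S_{x,\tau}$ dropped, i.e. $0\le S_{x,\tau}\le bI_\mathcal{H}$.

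For the reverse direction in (i) I would start from the given bounded $U,V\in\mathcal{B}(\ell^2(\mathbb{J}),\mathcal{H})$. A one-line inner-product computation, $\langle U(\{c_j\}_{j\in\mathbb{J}}),h\rangle=\sum_{j\in\mathbb{J}}c_j\langle x_j,h\rangle=\langle\{c_j\}_{j\in\mathbb{J}},\{\langle h,x_j\rangle\}_{j\in\mathbb{J}}\rangle$, identifies $U^*=\theta_x$; likewise $V^*=\theta_\tau$. Hence $\theta_x,\theta_\tau$ are bounded and $S_{x,\tau}=\theta_\tau^*\theta_x=VU^*$, so the hypothesis $aI_\mathcal{H}\le VU^*\le bI_\mathcal{H}$ forces $S_{x,\tau}$ to be positive invertible with the given bounds, which is exactly the frame condition (positive bounded for Bessel). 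Items (iii) and (iv) are even more immediate, since there $R=\theta_x$ and $S=\theta_\tau$ are supplied outright; item (ii) combines one synthesis map and one analysis map and is handled by applying the same adjoint identity only to $U$. The one point demanding a little care — and essentially the only nontrivial step — is this adjoint verification in the cases where a synthesis-type map is prescribed, ensuring that the abstractly given $U$ (or $V$) really is $\theta_x^*$ (or $\theta_\tau^*$) so that the composition legitimately collapses to $S_{x,\tau}$; everything else is routine identification, and no genuine obstacle arises beyond keeping track of which operator plays which role.
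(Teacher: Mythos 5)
Your proposal is correct and follows essentially the same route as the paper: identify $R,S,U,V$ with $\theta_x,\theta_\tau,\theta_x^*,\theta_\tau^*$, observe that each displayed composition collapses to $S_{x,\tau}=\theta_\tau^*\theta_x$, and read off the frame (resp.\ Bessel) conditions from the operator inequality; the paper likewise proves one item in detail and obtains the others by passing to adjoints. The only point worth keeping explicit in the write-up, as the paper does, is the verification that $VU^*$ is self-adjoint in the forward direction (via $S_{x,\tau}=S_{\tau,x}$), so that the operator inequalities are meaningful.
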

\begin{proof}
We prove the first one for Bessel sequences, remainings are similar by considering adjoints.
 
$(\Rightarrow)$ The given operators $ U,V$ are adjoints of bounded operators  $\mathcal{H}\ni h\mapsto\{\langle h,x_j\rangle\}_{j\in\mathbb{J}}\in \ell^2(\mathbb{J})$ and $ \mathcal{H}\ni h\mapsto\{\langle h,\tau_j\rangle\}_{j\in\mathbb{J}}\in \ell^2(\mathbb{J})$, respectively. Therefore $ U^*h=\{\langle h,x_j\rangle\}_{j\in\mathbb{J}}$ and  $ V^*g=\{\langle g,\tau_j\rangle\}_{j\in\mathbb{J}}, \forall h, g \in \mathcal{H}.$ From this,
\begin{align*}
\langle VU^*h, g\rangle &=\langle U^*h, V^*g\rangle=\sum_{j\in\mathbb{J}}\langle h, x_j\rangle\langle \tau_j, g\rangle=\left\langle \sum_{j\in\mathbb{J}}\langle h, x_j\rangle\tau_j,g \right\rangle \\
&=\left\langle \sum_{j\in\mathbb{J}}\langle h, \tau_j\rangle x_j,g \right\rangle=\sum_{j\in\mathbb{J}}\langle h, \tau_j\rangle\langle x_j, g\rangle= \langle UV^*h, g\rangle= \langle h, VU^*g\rangle,~\forall h,g \in \mathcal{H},
\end{align*}
$\langle VU^*h,h\rangle= \langle U^*h,V^*h\rangle=\sum_{j\in\mathbb{J}}\langle h,x_j\rangle\langle \tau_j, h\rangle\geq0, \forall h \in \mathcal{H} $ and $\langle VU^*h,h\rangle\leq b \|h\|^2, \forall h \in \mathcal{H} .$ Thus we got  $ VU^*$ is self-adjoint and from this, $\|VU^*\|=\sup_{h \in \mathcal{H}, \|h\|=1}\langle VU^*h,h\rangle \leq b. $
     
$ (\Leftarrow)$ From the  presence of $ U^* $ and $V^*,$ we get  $\mathcal{H}\ni h\mapsto\{\langle h,x_j\rangle\}_{j\in\mathbb{J}}\in \ell^2(\mathbb{J}), \mathcal{H}\ni h\mapsto\{\langle h,\tau_j\rangle\}_{j\in\mathbb{J}}\in \ell^2(\mathbb{J})$ are well-defined and bounded, respectively;  hence $ \{x_j\}_{j\in\mathbb{J}}, \{ \tau_j\}_{j\in\mathbb{J}}$ are Bessel sequences (w.r.t. themselves). Finally,  $ 0\leq \langle VU^*h,h\rangle=\sum_{j\in\mathbb{J}}\langle h,x_j\rangle\langle\tau_j,h\rangle\leq\|VU^*\|\langle h,h\rangle \leq b\langle h,h\rangle, \forall h \in \mathcal{H}.$
\end{proof}
 
\textbf{Similarity  and tensor product}
\begin{definition}
 A frame  $ (\{y_j\}_{j\in \mathbb{J}},\{\omega_j\}_{j\in \mathbb{J}})$ for $ \mathcal{H}$ is said to be  similar to a frame $ (\{x_j\}_{j\in \mathbb{J}},\{\tau_j\}_{j\in \mathbb{J}})$ for $ \mathcal{H}$ if there are invertible operators $ T_{x,y}, T_{\tau,\omega} \in \mathcal{B}(\mathcal{H})$ such that $ y_j=T_{x,y}x_j, \omega_j=T_{\tau,\omega}\tau_j,   \forall j \in \mathbb{J}.$
 \end{definition}     
 Definition is symmetric.     
 \begin{proposition}
 Let $ \{x_j\}_{j\in \mathbb{J}}\in \mathscr{F}_\tau$  with frame bounds $a, b,$  let $T_{x,y} , T_{\tau,\omega}\in \mathcal{B}(\mathcal{H})$ be positive, invertible, commute with each other, commute with $ S_{x, \tau}$, and let $y_j=T_{x,y}x_j , \omega_j=T_{\tau,\omega}\tau_j,  \forall j \in \mathbb{J}.$ Then 
 \begin{enumerate}[\upshape(i)]
 \item $ \{y_j\}_{j\in \mathbb{J}}\in \mathscr{F}_\tau$ and $ \frac{a}{\|T_{x,y}^{-1}\|\|T_{\tau,\omega}^{-1}\|}\leq S_{y, \omega} \leq b\|T_{x,y}T_{\tau,\omega}\|.$ Assuming that $ (\{x_j\}_{j\in \mathbb{J}},\{\tau_j\}_{j\in \mathbb{J}})$ is Parseval, then $(\{y_j\}_{j\in \mathbb{J}},  \{\omega_j\}_{j\in \mathbb{J}})$ is Parseval  if and only if   $ T_{\tau, \omega}T_{x,y}=I_\mathcal{H}.$  
 \item $ \theta_y=\theta_x T_{x,y}, \theta_\omega=\theta_\tau T_{\tau,\omega}, S_{y,\omega}=T_{\tau,\omega}S_{x, \tau}T_{x,y},  P_{y,\omega} =P_{x, \tau}.$
 \end{enumerate}
 \end{proposition}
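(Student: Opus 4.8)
The plan is to mirror the proof of Proposition \ref{RIGHTSIMILARITYPROPOSITIONOPERATORVERSION}, establishing part (ii) first by a direct computation of the analysis operators and then reading off part (i) from the resulting formula for $S_{y,\omega}$. The overall arithmetic is the same as in the operator-valued setting, so I expect most steps to be routine substitutions; the one structurally new point is where the adjoint enters.

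First I would compute $\theta_y$. For $h\in\mathcal{H}$ we have $\theta_y h=\{\langle h,y_j\rangle\}_{j\in\mathbb{J}}=\{\langle h,T_{x,y}x_j\rangle\}_{j\in\mathbb{J}}=\{\langle T_{x,y}^*h,x_j\rangle\}_{j\in\mathbb{J}}=\theta_x T_{x,y}^*h$, and since $T_{x,y}$ is positive, $T_{x,y}^*=T_{x,y}$, so $\theta_y=\theta_x T_{x,y}$; the identical computation gives $\theta_\omega=\theta_\tau T_{\tau,\omega}$. This is the genuinely new feature compared with the operator version: because the similarity acts on the vectors $x_j$ rather than by right composition on operators, the transpose surfaces through the inner product and must be absorbed using self-adjointness of the positive operators (the same star-placement subtlety underlies the Caution following Definition \ref{RELATIVERIESZSEQUENTIAL}). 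Substituting into $S_{y,\omega}=\theta_\omega^*\theta_y$ and $P_{y,\omega}=\theta_y S_{y,\omega}^{-1}\theta_\omega^*$ then yields $S_{y,\omega}=T_{\tau,\omega}S_{x,\tau}T_{x,y}$ and, after the factors $T_{x,y}T_{x,y}^{-1}$ and $T_{\tau,\omega}^{-1}T_{\tau,\omega}$ cancel, $P_{y,\omega}=\theta_x S_{x,\tau}^{-1}\theta_\tau^*=P_{x,\tau}$. I note that $P_{y,\omega}=P_{x,\tau}$ uses neither positivity nor commutativity.

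For part (i), I would reorder $S_{y,\omega}=T_{x,y}T_{\tau,\omega}S_{x,\tau}$ using the commutativity hypotheses, exhibiting $S_{y,\omega}$ as a product of pairwise-commuting positive invertible operators, hence itself positive invertible; this shows $(\{y_j\}_{j\in\mathbb{J}},\{\omega_j\}_{j\in\mathbb{J}})$ is a frame. The bound estimates follow by combining $aI_\mathcal{H}\leq S_{x,\tau}\leq bI_\mathcal{H}$ with $\frac{1}{\|T_{x,y}^{-1}\|\,\|T_{\tau,\omega}^{-1}\|}I_\mathcal{H}\leq T_{x,y}T_{\tau,\omega}\leq\|T_{x,y}T_{\tau,\omega}\|I_\mathcal{H}$, where the left inequality uses $\|(T_{x,y}T_{\tau,\omega})^{-1}\|=\|T_{\tau,\omega}^{-1}T_{x,y}^{-1}\|\leq\|T_{x,y}^{-1}\|\,\|T_{\tau,\omega}^{-1}\|$. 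Finally, if $S_{x,\tau}=I_\mathcal{H}$ then $S_{y,\omega}=T_{\tau,\omega}T_{x,y}$, which equals $I_\mathcal{H}$ exactly when $T_{\tau,\omega}T_{x,y}=I_\mathcal{H}$, giving the Parseval equivalence.

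The hard part will be the multiplication of operator inequalities: in general one cannot multiply $S_{x,\tau}\geq aI_\mathcal{H}$ by $T_{x,y}T_{\tau,\omega}\geq cI_\mathcal{H}$. Here it is legitimate precisely because all the operators commute, so they generate a common commutative C*-algebra; passing to its Gelfand representation converts the inequalities into pointwise inequalities between nonnegative continuous functions, whose product then satisfies $S_{y,\omega}\geq\frac{a}{\|T_{x,y}^{-1}\|\,\|T_{\tau,\omega}^{-1}\|}I_\mathcal{H}$, and dually $S_{y,\omega}\leq b\|T_{x,y}T_{\tau,\omega}\|I_\mathcal{H}$. The remaining verifications are the routine substitutions already validated in Proposition \ref{RIGHTSIMILARITYPROPOSITIONOPERATORVERSION}.
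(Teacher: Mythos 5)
Your proposal is correct and follows essentially the same route as the paper: the paper likewise obtains $\theta_x T_{x,y}h=\{\langle T_{x,y}h,x_j\rangle\}_{j\in\mathbb{J}}=\{\langle h,T_{x,y}x_j\rangle\}_{j\in\mathbb{J}}=\theta_y h$ (implicitly using self-adjointness of the positive operators, which you make explicit), then substitutes into $S_{y,\omega}=\theta_\omega^*\theta_y$ and $P_{y,\omega}=\theta_y S_{y,\omega}^{-1}\theta_\omega^*$, and dismisses part (i) as "simple" exactly along the lines of your commutativity argument. Your Gelfand-representation justification for multiplying the operator inequalities, and your observation that $P_{y,\omega}=P_{x,\tau}$ needs neither positivity nor commutativity, are accurate elaborations of steps the paper leaves implicit.
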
     
 \begin{proof}
 For $ h \in \mathcal{H},$ $\theta_x T_{x,y}h = \{\langle T_{x,y}h, x_j\rangle\}_{j\in \mathbb{J}}=\{\langle h , T_{x,y} x_j\rangle\}_{j\in \mathbb{J}}.$ Hence $ \theta_y$ exists and $ \theta_y=\theta_x T_{x,y}.$  Likewise we get $ \theta_\omega.$ Now $S_{y,\omega}=\theta_\omega^*\theta_y=(\theta_\tau T_{\tau,\omega})^*\theta_x T_{x,y}=T_{\tau,\omega}S_{x, \tau}T_{x,y}, P_{y,\omega}= \theta_yS_{y,\omega}^{-1}\theta_\omega^*=(\theta_xT_{x,y})(T_{x,y}^{-1}S_{x, \tau}^{-1}T_{\tau,\omega}^{-1})(T_{\tau,\omega}\theta_\tau^*)=P_{x, \tau}$. Other parts are simple.
  \end{proof}     
 \begin{lemma}\label{SEQUENTIALSIMILARITYLEMMA}
 Let $ \{x_j\}_{j\in \mathbb{J}}\in \mathscr{F}_\tau,$ $ \{y_j\}_{j\in \mathbb{J}}\in \mathscr{F}_\omega$ and   $y_j=T_{x, y}x_j , \omega_j=T_{\tau,\omega}\tau_j,  \forall j \in \mathbb{J}$, for some invertible $T_{x,y}, T_{\tau,\omega}\in \mathcal{B}(\mathcal{H}).$ Then 
 $ \theta_y=\theta_x T^*_{x,y}, \theta_\omega=\theta_\tau T^*_{\tau,\omega}, S_{y,\omega}=T_{\tau,\omega}S_{x, \tau}T_{x,y}^*,  P_{y,\omega}=P_{x, \tau}.$ Assuming that $ (\{x_j\}_{j\in \mathbb{J}},\{\tau_j\}_{j\in \mathbb{J}})$ is Parseval frame, then $ (\{y_j\}_{j\in \mathbb{J}},\{\omega_j\}_{j\in \mathbb{J}})$ is Parseval frame if and only if $T_{\tau,\omega}T_{x,y}^*=I_\mathcal{H}.$
 \end{lemma}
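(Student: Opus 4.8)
The plan is to mirror the proof of the operator-version Lemma~\ref{SIM}, the only genuine difference being that here the similarity operators act on the left of the vectors (as $y_j=T_{x,y}x_j$) while the analysis operators are built from inner products, so the operators will reappear as their adjoints on the opposite side. First I would compute $\theta_y$ straight from Definition~\ref{SEQUENTIAL2}: for $h\in\mathcal{H}$,
\[
\theta_y h=\{\langle h,y_j\rangle\}_{j\in\mathbb{J}}=\{\langle h,T_{x,y}x_j\rangle\}_{j\in\mathbb{J}}=\{\langle T_{x,y}^*h,x_j\rangle\}_{j\in\mathbb{J}}=\theta_x(T_{x,y}^*h),
\]
which gives $\theta_y=\theta_x T_{x,y}^*$; well-definedness and boundedness are free since $\{y_j\}_{j\in\mathbb{J}}\in\mathscr{F}_\omega$ is assumed. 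The identical computation yields $\theta_\omega=\theta_\tau T_{\tau,\omega}^*$. This single step is the whole content of the lemma and the only place where care is needed: the adjoint lands on $T_{x,y}$ here, whereas in Lemma~\ref{SIM} no adjoint appears because the similarity acts on the right of the operators.

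Next I would substitute these two identities into the definitions of the frame operator and the frame idempotent. For the frame operator, using $S_{x,\tau}=\theta_\tau^*\theta_x$,
\[
S_{y,\omega}=\theta_\omega^*\theta_y=(\theta_\tau T_{\tau,\omega}^*)^*(\theta_x T_{x,y}^*)=T_{\tau,\omega}\,\theta_\tau^*\theta_x\,T_{x,y}^*=T_{\tau,\omega}S_{x,\tau}T_{x,y}^*.
\]
For the frame idempotent, invertibility of $T_{x,y}$ and $T_{\tau,\omega}$ gives $(T_{\tau,\omega}S_{x,\tau}T_{x,y}^*)^{-1}=(T_{x,y}^*)^{-1}S_{x,\tau}^{-1}T_{\tau,\omega}^{-1}$, so after cancelling the invertible factors,
\[
P_{y,\omega}=\theta_y S_{y,\omega}^{-1}\theta_\omega^*=(\theta_x T_{x,y}^*)(T_{x,y}^*)^{-1}S_{x,\tau}^{-1}T_{\tau,\omega}^{-1}(T_{\tau,\omega}\theta_\tau^*)=\theta_x S_{x,\tau}^{-1}\theta_\tau^*=P_{x,\tau}.
\]

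Finally, for the Parseval clause, I would recall that $(\{x_j\}_{j\in\mathbb{J}},\{\tau_j\}_{j\in\mathbb{J}})$ being Parseval means $S_{x,\tau}=I_\mathcal{H}$, so the formula just obtained collapses to $S_{y,\omega}=T_{\tau,\omega}T_{x,y}^*$. Since $(\{y_j\}_{j\in\mathbb{J}},\{\omega_j\}_{j\in\mathbb{J}})$ is Parseval exactly when $S_{y,\omega}=I_\mathcal{H}$, the asserted equivalence with $T_{\tau,\omega}T_{x,y}^*=I_\mathcal{H}$ is immediate. I do not expect any real obstacle in this lemma: it is a direct computation, and the one thing to watch throughout is the bookkeeping of the adjoints, which is forced entirely by the inner-product definition of the analysis operators.
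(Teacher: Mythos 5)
Your proof is correct and follows essentially the same route as the paper: compute $\theta_y h=\{\langle h,T_{x,y}x_j\rangle\}_{j\in\mathbb{J}}=\theta_x(T_{x,y}^*h)$ (and likewise for $\theta_\omega$), then substitute into the definitions of $S_{y,\omega}$ and $P_{y,\omega}$ and cancel the invertible factors. The Parseval equivalence via $S_{y,\omega}=T_{\tau,\omega}T_{x,y}^*$ is exactly what the paper intends, so nothing is missing.
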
     
 \begin{proof}
 Let $ h \in \mathcal{H}$. Then $\theta_x T^*_{x,y}h = \{\langle T^*_{x,y}h, x_j\rangle\}_{j\in \mathbb{J}}=\{\langle h , T_{x,y} x_j\rangle\}_{j\in \mathbb{J}}=\{\langle h ,  y_j\rangle\}_{j\in \mathbb{J}} =\theta_yh; \theta_\omega=\theta_\tau T^*_{\tau,\omega}, S_{y,\omega}=(\theta_\tau T_{\tau,\omega}^*)^*\theta_x T_{x,y}^*=T_{\tau,\omega}S_{x, \tau}T_{x,y}^*, P_{y,\omega}=(\theta_xT_{x,y}^*)({T^*}_{x,y}^{-1}S_{x,\tau}^{-1}{T}_{\tau,\omega}^{-1})(T_{\tau,\omega}\theta_\tau^*)=P_{x, \tau}. $ 
 \end{proof}  
 \begin{theorem}\label{SEQUENTIALSIMILARITYCHARACTERIZATION}
 Let $ \{x_j\}_{j\in \mathbb{J}}\in \mathscr{F}_\tau,$ $ \{y_j\}_{j\in \mathbb{J}}\in \mathscr{F}_\omega.$ The following are equivalent.
 \begin{enumerate}[\upshape(i)]
 \item $y_j=T_{x,y}x_j , \omega_j=T_{\tau, \omega}\tau_j ,  \forall j \in \mathbb{J},$ for some invertible  $ T_{x,y} ,T_{\tau, \omega} \in \mathcal{B}(\mathcal{H}). $
 \item $\theta_y=\theta_x{T'}_{x,y}^* , \theta_\omega=\theta_\tau {T'}_{\tau, \omega}^* $ for some invertible  $ {T'}_{x,y} ,{T'}_{\tau, \omega} \in \mathcal{B}(\mathcal{H}). $
 \item $P_{y,\omega}=P_{x,\tau}.$
 \end{enumerate}
 If one of the above conditions is satisfied, then  invertible operators in  $\operatorname{(i)}$ and  $\operatorname{(ii)}$ are unique and are given by  $T_{x,y}=\theta_y^*\theta_\tau S_{x,\tau}^{-1}, T_{\tau, \omega}=\theta_\omega^*\theta_x S_{x,\tau}^{-1}.$
 In the case that $(\{x_j\}_{j\in \mathbb{J}},  \{\tau_j\}_{j\in \mathbb{J}})$ is Parseval, then $(\{y_j\}_{j\in \mathbb{J}},  \{\omega_j\}_{j\in \mathbb{J}})$ is  Parseval if and only if $T_{\tau, \omega}T_{x,y}^* =I_\mathcal{H}$   if and only if $ T_{x,y}^*T_{\tau, \omega} =I_\mathcal{H}$. 
 \end{theorem}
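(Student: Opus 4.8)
The goal is to prove the sequential-version analogue of Theorem~\ref{RIGHTSIMILARITY}, and the plan is to mirror that proof exactly, using the sequential machinery (analysis operators $\theta_x,\theta_\tau$, frame operator $S_{x,\tau}$, frame idempotent $P_{x,\tau}$) in place of their operator-valued counterparts. First I would establish the cycle of implications $\text{(i)}\Rightarrow\text{(ii)}\Rightarrow\text{(iii)}\Rightarrow\text{(i)}$. For $\text{(i)}\Rightarrow\text{(ii)}$, the crucial point is the placement of the adjoint: Lemma~\ref{SEQUENTIALSIMILARITYLEMMA} tells us that $y_j=T_{x,y}x_j$ forces $\theta_y=\theta_x T_{x,y}^*$, so taking ${T'}_{x,y}=T_{x,y}$ (and similarly for the $\tau,\omega$ pair) gives (ii) with the starred operators, and invertibility is inherited directly. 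For the converse $\text{(ii)}\Rightarrow\text{(i)}$, I would recover the vectors from the analysis operators via the sequential analogue of Proposition~\ref{2.2}(v), namely $x_j=\theta_x^* e_j$ where $\{e_j\}_{j\in\mathbb{J}}$ is the standard orthonormal basis for $\ell^2(\mathbb{J})$; applying $\theta_x^*$ on the left to $\theta_y=\theta_x{T'}_{x,y}^*$ and evaluating at $e_j$ yields $y_j={T'}_{x,y}x_j$, so $T_{x,y}={T'}_{x,y}$ works.

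Next I would handle $\text{(ii)}\Rightarrow\text{(iii)}$, which is immediate from Lemma~\ref{SEQUENTIALSIMILARITYLEMMA}: similarity via invertible operators forces equality of the frame idempotents $P_{y,\omega}=P_{x,\tau}$. The substantive implication is $\text{(iii)}\Rightarrow\text{(i)}$, which I expect to be the main obstacle, exactly as in the operator-valued case. Here I would first record the two identities $\theta_y=P_{y,\omega}\theta_y$ and $\theta_\omega=P_{y,\omega}^*\theta_\omega$ (these follow from $P_{y,\omega}=\theta_yS_{y,\omega}^{-1}\theta_\omega^*$ together with $\theta_\omega^*\theta_y=S_{y,\omega}$). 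Substituting the hypothesis $P_{y,\omega}=P_{x,\tau}$ and expanding $P_{x,\tau}=\theta_xS_{x,\tau}^{-1}\theta_\tau^*$ gives
\begin{align*}
\theta_y=P_{x,\tau}\theta_y=\theta_x\bigl(S_{x,\tau}^{-1}\theta_\tau^*\theta_y\bigr),\quad
\theta_\omega=P_{x,\tau}^*\theta_\omega=\theta_\tau\bigl(S_{x,\tau}^{-1}\theta_x^*\theta_\omega\bigr),
\end{align*}
which already exhibits the required factorizations with candidate operators $R_{x,y}\coloneqq S_{x,\tau}^{-1}\theta_\tau^*\theta_y$ and the analogue for the other pair.

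The remaining difficulty, and the heart of the argument, is proving these candidate operators are invertible. Following the operator-valued template, I would verify that $S_{x,\tau}^{-1}\theta_\tau^*\theta_y$ and $S_{y,\omega}^{-1}\theta_\omega^*\theta_x$ are mutual inverses by a direct computation that inserts $P_{y,\omega}$ and $P_{x,\tau}$ and exploits $P_{x,\tau}=P_{y,\omega}$ together with $\theta_\tau^*P_{x,\tau}=\theta_\tau^*$ and $P_{x,\tau}\theta_x=\theta_x$ (equivalently $\theta_\tau^*\theta_x=S_{x,\tau}$, $\theta_\omega^*\theta_y=S_{y,\omega}$), producing $I_\mathcal{H}$ in both orders; the same scheme handles the $\tau,\omega$ pair. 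Once invertibility is in hand, the uniqueness and the explicit formulas follow by taking $\theta_y=\theta_x T_{x,y}^*$ (Lemma~\ref{SEQUENTIALSIMILARITYLEMMA}) and composing with $\theta_\tau^*$: since $\theta_\tau^*\theta_x=S_{x,\tau}$, we get $\theta_\tau^*\theta_y=S_{x,\tau}T_{x,y}^*$, hence $T_{x,y}=\theta_y^*\theta_\tau S_{x,\tau}^{-1}$ after an adjoint, and symmetrically $T_{\tau,\omega}=\theta_\omega^*\theta_x S_{x,\tau}^{-1}$. Finally, for the Parseval equivalence I would invoke the last sentence of Lemma~\ref{SEQUENTIALSIMILARITYLEMMA}, $T_{\tau,\omega}T_{x,y}^*=I_\mathcal{H}$, and then use the elementary fact that for invertible elements $ab=e$ implies $ba=e$ to pass to $T_{x,y}^*T_{\tau,\omega}=I_\mathcal{H}$, completing the proof in a manner fully parallel to Theorem~\ref{RIGHTSIMILARITY}.
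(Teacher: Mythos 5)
Your proposal is correct and follows essentially the same route as the paper: the cycle (i) $\Rightarrow$ (ii) $\Rightarrow$ (iii) via Lemma \ref{SEQUENTIALSIMILARITYLEMMA}, recovery of the vectors from $\theta_y^*e_j$ for (ii) $\Rightarrow$ (i), the factorizations $\theta_y=\theta_x(S_{x,\tau}^{-1}\theta_\tau^*\theta_y)$ and $\theta_\omega=\theta_\tau(S_{x,\tau}^{-1}\theta_x^*\theta_\omega)$ with the same mutual-inverse verification for (iii) $\Rightarrow$ (ii), and the same derivation of the formulas and the Parseval equivalence. The only cosmetic slip is in (ii) $\Rightarrow$ (i), where you should take adjoints of $\theta_y=\theta_x{T'}_{x,y}^*$ before evaluating at $e_j$ rather than ``applying $\theta_x^*$ on the left''; the computation you intend, $y_j=\theta_y^*e_j={T'}_{x,y}\theta_x^*e_j={T'}_{x,y}x_j$, is exactly the paper's.
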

 \begin{proof}
 The implications (i)  $\Rightarrow $ (ii)   $\Rightarrow $ (iii) come from Lemma \ref{SEQUENTIALSIMILARITYLEMMA}. (ii)  $\Rightarrow $ (i) Let $ \{e_j\}_{j \in \mathbb{J}}$ be the standard orthonormal basis for $ \ell^2(\mathbb{J})$. Then $ y_j=\theta_y^*e_j=(\theta_x{T'}_{x,y}^*)^*e_j=T'_{x,y}\theta_x^*e_j=T'_{x,y}x_j$, $\omega_j=\theta_\omega^*e_j=(\theta_\tau{T'}_{\tau,\omega}^*)^*e_j=T'_{\tau,\omega}\theta_\tau^*e_j=T'_{\tau,\omega}\tau_j,\forall j \in \mathbb{J} $. (iii)  $\Rightarrow $ (ii) $\theta_y=P_{y,\omega}\theta_y=P_{x,\tau}\theta_y=\theta_x(S_{x,\tau}^{-1}\theta_\tau^*\theta_y),\theta_\omega=P_{y,\omega}^*\theta_\omega=P_{x,\tau}^*\theta_\omega=\theta_\tau(S_{x,\tau}^{-1}\theta_x^*\theta_\omega) $ and $ (S_{x,\tau}^{-1}\theta_\tau^*\theta_y)(S_{y,\omega}^{-1}\theta_\omega^*\theta_x)=S_{x,\tau}^{-1}\theta_\tau^*P_{y,\omega}\theta_x=S_{x,\tau}^{-1}\theta_\tau^*P_{x,\tau}\theta_x =S_{x,\tau}^{-1}\theta_\tau^*\theta_x=I_\mathcal{H}, $  $(S_{y,\omega}^{-1}\theta_\omega^*\theta_x)(S_{x,\tau}^{-1}\theta_\tau^*\theta_y)=S_{y,\omega}^{-1}\theta_\omega^*P_{x,\tau}\theta_y=S_{y,\omega}^{-1}\theta_\omega^*P_{y,\omega}\theta_y=S_{y,\omega}^{-1}\theta_\omega^*\theta_y=I_\mathcal{H},$  $ (S_{x,\tau}^{-1}\theta_x^*\theta_\omega)(S_{y,\omega}^{-1}\theta_y^*\theta_\tau)=S_{x,\tau}^{-1}\theta_x^*P_{y,\omega}^*\theta_\tau=S_{x,\tau}^{-1}\theta_x^*P_{x,\tau}^*\theta_\tau=S_{x,\tau}^{-1}\theta_x^*\theta_\tau=I_\mathcal{H} ,$  $(S_{y,\omega}^{-1}\theta_y^*\theta_\tau)(S_{x,\tau}^{-1}\theta_x^*\theta_\omega)=S_{y,\omega}^{-1}\theta_y^*P_{x,\tau}^*\theta_\omega=S_{y,\omega}^{-1}\theta_y^*P_{y,\omega}^*\theta_\omega =S_{y,\omega}^{-1}\theta_y^*\theta_\omega=I_\mathcal{H}$.\\ 
 Now let $ y_j=T_{x,y}x_j, \omega_j=T_{\tau,\omega}\tau_j,\forall j \in \mathbb{J}$. Then $ \theta_y=\theta_xT_{x,y}^*$ $ \Rightarrow$ $ \theta_\tau^*\theta_y=\theta_\tau^*\theta_xT_{x,y}^*=S_{x,\tau}T_{x,y}^*$ $ \Rightarrow$ $T_{x,y}=\theta_y^*\theta_\tau S_{x,\tau}^{-1} ,$ and $\theta_\omega=\theta_\tau T_{\tau, \omega}^* $ $ \Rightarrow$ $\theta_x^*\theta_\omega=\theta_x^*\theta_\tau T_{\tau, \omega}^*= S_{x,\tau}T_{\tau, \omega}^*$ $ \Rightarrow$ $T_{\tau, \omega}=\theta_\omega^*\theta_xS_{x,\tau}^{-1} .$
 \end{proof}
 \begin{corollary}
 For any given frame $ (\{x_j\}_{j \in \mathbb{J}} , \{\tau_j\}_{j \in \mathbb{J}})$, the canonical dual of $ (\{x_j\}_{j \in \mathbb{J}} , \{\tau_j\}_{j \in \mathbb{J}}  )$ is the only dual frame that is similar to $ (\{x_j\}_{j \in \mathbb{J}} , \{\tau_j\}_{j \in \mathbb{J}} )$.
 \end{corollary}
 \begin{proof}
 If $ (\{x_j\}_{j \in \mathbb{J}} , \{\tau_j\}_{j \in \mathbb{J}})$ and $ (\{y_j\}_{j \in \mathbb{J}} , \{\omega_j\}_{j \in \mathbb{J}})$  are similar and dual to each other, then there exist invertible  $T_{x,y}, T_{\tau,\omega} \in \mathcal{B}(\mathcal{H})$  such that $ y_j=T_{x,y}x_j,\omega_j=T_{\tau,\omega}\tau_j ,\forall j \in \mathbb{J}$. Theorem \ref{SEQUENTIALSIMILARITYCHARACTERIZATION} gives $T_{x,y}=\theta_y^*\theta_\tau S_{x,\tau}^{-1}=I_\mathcal{H}S_{x,\tau}^{-1}, T_{\tau, \omega}=\theta_\omega^*\theta_x S_{x,\tau}^{-1}=I_\mathcal{H}S_{x,\tau}^{-1}=S_{x,\tau}^{-1}.$ Hence $ (\{y_j\}_{j \in \mathbb{J}} , \{\omega_j\}_{j \in \mathbb{J}})$ is the canonical dual of $ (\{x_j\}_{j \in \mathbb{J}} , \{\tau_j\}_{j \in \mathbb{J}})$.	
 \end{proof}
 \begin{corollary}
 Two similar  frames cannot be orthogonal.
 \end{corollary}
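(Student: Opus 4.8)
The plan is to run the exact analogue of the operator-version argument used to prove that two right-similar operator-valued frames cannot be orthogonal. I would argue by contradiction: suppose $(\{y_j\}_{j\in \mathbb{J}},\{\omega_j\}_{j\in \mathbb{J}})$ is both similar to and orthogonal to $(\{x_j\}_{j\in \mathbb{J}},\{\tau_j\}_{j\in \mathbb{J}})$, and derive that the orthogonality relation forces an invertible operator to be zero.

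First I would unpack the similarity hypothesis to obtain invertible $T_{x,y},T_{\tau,\omega}\in\mathcal{B}(\mathcal{H})$ with $y_j=T_{x,y}x_j$ and $\omega_j=T_{\tau,\omega}\tau_j$ for all $j\in\mathbb{J}$. Lemma \ref{SEQUENTIALSIMILARITYLEMMA} (equivalently Theorem \ref{SEQUENTIALSIMILARITYCHARACTERIZATION}) then translates this into the analysis-operator identity $\theta_y=\theta_x T_{x,y}^*$. Taking adjoints and composing with $\theta_\tau$ gives
$$
\theta_y^*\theta_\tau=(\theta_x T_{x,y}^*)^*\theta_\tau=T_{x,y}\,\theta_x^*\theta_\tau=T_{x,y}\,S_{x,\tau},
$$
where the last equality uses the sequential version of Proposition \ref{2.2}, namely $\theta_x^*\theta_\tau=S_{x,\tau}$. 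Since $S_{x,\tau}$ is positive invertible (being the frame operator of a frame) and $T_{x,y}$ is invertible by hypothesis, the product $T_{x,y}S_{x,\tau}$ is invertible and in particular nonzero. This contradicts the defining condition $\theta_y^*\theta_\tau=0$ of orthogonality, so the two frames cannot be orthogonal.

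The argument is genuinely short, so the only thing that requires care — and the place I would double-check against the Caution preceding this corollary — is the placement of the adjoint. In the sequential convention the similarity operator enters the analysis operator as $T_{x,y}^*$ (not $T_{x,y}$, as in the operator-valued ``right-similar'' case), so when I adjoint $\theta_y$ the star cancels and I am left with $T_{x,y}$ itself acting on $S_{x,\tau}$. Getting this bookkeeping right is what guarantees that the surviving factor is invertible rather than, say, an unwanted adjoint that would not obviously be nonzero; otherwise no substantive obstacle arises.
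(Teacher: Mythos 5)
Your argument is exactly the paper's proof: similarity gives $\theta_y=\theta_x T_{x,y}^*$ via Theorem \ref{SEQUENTIALSIMILARITYCHARACTERIZATION}, hence $\theta_y^*\theta_\tau=T_{x,y}S_{x,\tau}$, which is invertible and therefore nonzero, contradicting orthogonality. The placement of the adjoint is handled correctly, so nothing is missing.
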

 \begin{proof}
 Let $ (\{x_j\}_{j \in \mathbb{J}} , \{\tau_j\}_{j \in \mathbb{J}})$ and $ (\{y_j\}_{j \in \mathbb{J}} , \{\omega_j\}_{j \in \mathbb{J}})$  be similar.  Then there exist invertible  $T_{x,y}, T_{\tau,\omega} \in \mathcal{B}(\mathcal{H})$  such that $ y_j=T_{x,y}x_j,\omega_j=T_{\tau,\omega}\tau_j ,\forall j \in \mathbb{J}$. From Theorem \ref{SEQUENTIALSIMILARITYCHARACTERIZATION}, $\theta_y=\theta_x{T}_{x,y}^* , \theta_\omega=\theta_\tau {T}_{\tau, \omega}^* $.   Therefore $\theta_y^*\theta_\tau=(\theta_xT_{x,y}^*)^*\theta_\tau=T_{x,y}\theta_x^*\theta_\tau =T_{x,y}S_{x,\tau} \neq0. $
 \end{proof}
 \begin{remark}
 For every frame  $(\{x_j\}_{j \in \mathbb{J}}, \{\tau_j\}_{j \in \mathbb{J}}),$ each  of `frames'  $( \{S_{x, \tau}^{-1}x_j\}_{j \in \mathbb{J}}, \{\tau_j\}_{j \in \mathbb{J}})$,    $( \{S_{x, \tau}^{-1/2}x_j\}_{j \in \mathbb{J}}, \{S_{x,\tau}^{-1/2}\tau_j\}_{j \in \mathbb{J}}),$ and  $ (\{x_j \}_{j \in \mathbb{J}}, \{S_{x,\tau}^{-1}\tau_j\}_{j \in \mathbb{J}})$ is a  Parseval frame  which is similar to  $ (\{x_j\}_{j \in \mathbb{J}} , \{\tau_j\}_{j \in \mathbb{J}}  ).$  Hence each frame is similar to  Parseval frames.
\end{remark}  

\textbf{Tensor product  of frames}: Let $(\{x_j\}_{j \in \mathbb{J}}, \{\tau_j\}_{j \in \mathbb{J}})$ be a frame  for   $ \mathcal{H}$ and $(\{y_l\}_{l \in \mathbb{L}}, \{\omega_l\}_{l \in \mathbb{L}})$ be a frame  for   $ \mathcal{H}_1.$ The frame  $(\{z_{(j, l)}\coloneqq x_j\otimes y_l\}_{(j, l)\in \mathbb{J}\bigtimes  \mathbb{L}},\{\rho_{(j, l)}\coloneqq \tau_j\otimes\omega_l\}_{(j, l)\in \mathbb{J}\bigtimes  \mathbb{L}})$   for $ \mathcal{H}\otimes\mathcal{H}_1$ is called  as tensor product  of frames $( \{x_j\}_{j \in \mathbb{J}}, \{\tau_j\}_{j\in \mathbb{J}})$ and $( \{y_l\}_{l \in \mathbb{L}},  \{\omega_l\}_{l\in \mathbb{L}}).$ 
 \begin{proposition}
 Let  $(\{z_{(j, l)}\coloneqq x_j\otimes y_l\}_{(j, l)\in \mathbb{J}\bigtimes  \mathbb{L}},\{\rho _{(j, l)}\coloneqq \tau_j\otimes \omega_l\}_{(j, l)\in \mathbb{J}\bigtimes  \mathbb{L}}) $ be the  tensor product of frames  $( \{x_j\}_{j \in \mathbb{J}}, \{\tau_j\}_{j \in \mathbb{J}}) $  for  $ \mathcal{H},$ and $( \{y_l\}_{l \in \mathbb{L}}, \{\omega_l\}_{l \in \mathbb{L}} )$ for $ \mathcal{H}_1.$  Then  $\theta_z=\theta_x\otimes\theta_y, \theta_\rho=\theta_\tau\otimes\theta_\omega, S_{z, \rho}=S_{x, \tau}\otimes S_{y, \omega}, P_{z, \rho}=P_{x, \tau}\otimes P_{y, \omega}.$ If  $( \{x_j\}_{j \in \mathbb{J}}, \{\tau_j\}_{j \in \mathbb{J}}) $ and $ (\{y_l\}_{l \in \mathbb{L}},  \{\omega_l\}_{l \in \mathbb{L}} )$ are Parseval, then $(\{z_{(j, l)}\}_{(j, l)\in \mathbb{J}\bigtimes  \mathbb{L}} ,\{\rho_{(j,l)}\}_{(j,l)\in \mathbb{J}\bigtimes \mathbb{L}})$ is Parseval.
 \end{proposition}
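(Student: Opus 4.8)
The plan is to mimic the operator-version tensor product proposition, working entirely through the analysis operators. Since every operator appearing in the four identities is bounded and linear, and since finite sums of elementary tensors $h\otimes g$ are dense in $\mathcal{H}\otimes\mathcal{H}_1$, it suffices to verify each identity on such elementary tensors. The one structural ingredient I would fix first is the canonical unitary identification $\ell^2(\mathbb{J})\otimes\ell^2(\mathbb{L})\cong\ell^2(\mathbb{J}\bigtimes\mathbb{L})$ sending $e_j\otimes f_l$ to the standard basis vector indexed by $(j,l)$, where $\{e_j\}_{j\in\mathbb{J}}$ and $\{f_l\}_{l\in\mathbb{L}}$ are the standard orthonormal bases for $\ell^2(\mathbb{J})$ and $\ell^2(\mathbb{L})$. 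Under this identification the codomain of $\theta_z$ is literally $\ell^2(\mathbb{J})\otimes\ell^2(\mathbb{L})$, so the statement $\theta_z=\theta_x\otimes\theta_y$ becomes meaningful.

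\textbf{The analysis operators.} First I would compute, for $h\in\mathcal{H}$ and $g\in\mathcal{H}_1$,
$$(\theta_x\otimes\theta_y)(h\otimes g)=\theta_xh\otimes\theta_yg=\left(\sum_{j\in\mathbb{J}}\langle h,x_j\rangle e_j\right)\otimes\left(\sum_{l\in\mathbb{L}}\langle g,y_l\rangle f_l\right)=\sum_{(j,l)\in\mathbb{J}\bigtimes\mathbb{L}}\langle h,x_j\rangle\langle g,y_l\rangle\,(e_j\otimes f_l),$$
and compare this with $\theta_z(h\otimes g)=\{\langle h\otimes g,\,x_j\otimes y_l\rangle\}_{(j,l)}=\{\langle h,x_j\rangle\langle g,y_l\rangle\}_{(j,l)}$, which under the identification above is exactly the same element. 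Boundedness of $\theta_z$ is then automatic, since $\theta_x\otimes\theta_y$ is bounded with norm $\|\theta_x\|\,\|\theta_y\|$. The identical computation with $\tau_j,\omega_l$ in place of $x_j,y_l$ gives $\theta_\rho=\theta_\tau\otimes\theta_\omega$.

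\textbf{Frame operator and idempotent.} With the analysis operators in hand, the remaining two identities are pure tensor algebra, using $(A\otimes B)^*=A^*\otimes B^*$ and $(A\otimes B)(C\otimes D)=AC\otimes BD$ together with the sequential frame identities $S_{x,\tau}=\theta_\tau^*\theta_x$ and $P_{x,\tau}=\theta_xS_{x,\tau}^{-1}\theta_\tau^*$. I would write
$$S_{z,\rho}=\theta_\rho^*\theta_z=(\theta_\tau\otimes\theta_\omega)^*(\theta_x\otimes\theta_y)=\theta_\tau^*\theta_x\otimes\theta_\omega^*\theta_y=S_{x,\tau}\otimes S_{y,\omega},$$
which is positive invertible (being a tensor product of positive invertible operators), with inverse $S_{x,\tau}^{-1}\otimes S_{y,\omega}^{-1}$; this simultaneously confirms that the tensor product really is a frame. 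Then
$$P_{z,\rho}=\theta_zS_{z,\rho}^{-1}\theta_\rho^*=(\theta_x\otimes\theta_y)(S_{x,\tau}^{-1}\otimes S_{y,\omega}^{-1})(\theta_\tau^*\otimes\theta_\omega^*)=\theta_xS_{x,\tau}^{-1}\theta_\tau^*\otimes\theta_yS_{y,\omega}^{-1}\theta_\omega^*=P_{x,\tau}\otimes P_{y,\omega}.$$
Finally, if both constituent frames are Parseval then $S_{x,\tau}=I_\mathcal{H}$ and $S_{y,\omega}=I_{\mathcal{H}_1}$, so $S_{z,\rho}=I_\mathcal{H}\otimes I_{\mathcal{H}_1}=I_{\mathcal{H}\otimes\mathcal{H}_1}$, giving the Parseval conclusion at once.

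\textbf{Main obstacle.} There is no deep difficulty here; the only point demanding care is the bookkeeping around the identification $\ell^2(\mathbb{J})\otimes\ell^2(\mathbb{L})\cong\ell^2(\mathbb{J}\bigtimes\mathbb{L})$ and the verification that $\theta_z$ is well defined and bounded before the tensor-algebra manipulations can be applied. Once the elementary-tensor computation for $\theta_z$ and $\theta_\rho$ is done cleanly, the identities for $S_{z,\rho}$ and $P_{z,\rho}$, and the Parseval statement, follow formally.
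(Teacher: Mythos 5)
Your proof is correct and follows essentially the same route as the paper's: verify $\theta_z=\theta_x\otimes\theta_y$ and $\theta_\rho=\theta_\tau\otimes\theta_\omega$ on elementary tensors, then obtain $S_{z,\rho}$ and $P_{z,\rho}$ by the tensor-algebra identities $(A\otimes B)^*=A^*\otimes B^*$ and $(A\otimes B)(C\otimes D)=AC\otimes BD$. Your explicit remark about the unitary identification $\ell^2(\mathbb{J})\otimes\ell^2(\mathbb{L})\cong\ell^2(\mathbb{J}\bigtimes\mathbb{L})$ is a welcome clarification that the paper leaves implicit, but it does not change the argument.
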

 \begin{proof}
 At elementary tensor $ h\otimes g \in \mathcal{H}\otimes\mathcal{H}_1$, $ (\theta_x\otimes \theta_y)(h\otimes g)=\theta_xh\otimes\theta_yg=\{\langle h,  x_j\rangle \}_{j \in \mathbb{J}}\otimes\{\langle g,  y_k\rangle \}_{k \in \mathbb{L}}=\{\langle h,  x_j\rangle\langle g,  y_j\rangle \}_{(j,k) \in \mathbb{J}\times \mathbb{L}}=\{\langle h\otimes g,  x_j\otimes \tau_k\rangle\}_{(j,k) \in \mathbb{J}\times \mathbb{L}}=\theta_z(h\otimes g)$. Similarly $\theta_\rho=\theta_\tau\otimes\theta_\omega$. Then $S_{z, \rho}=\theta_\rho^*\theta_z=(\theta_\tau^*\otimes\theta_\omega^*)(\theta_x\otimes\theta_y)=\theta_\tau^*\theta_x\otimes\theta_\omega^*\theta_y=S_{x, \tau}\otimes S_{y, \omega} $ and  $P_{z, \rho}=\theta_zS_{z, \rho}^{-1}\theta_\rho^*=(\theta_x\otimes\theta_y )(S_{x, \tau}^{-1}\otimes S_{y, \omega}^{-1})(\theta_\tau^*\otimes\theta_\omega^*)=\theta_xS_{x, \tau}^{-1}\theta_\tau^*\otimes\theta_y S_{y, \omega}^{-1}\theta_\omega^*=P_{x, \tau}\otimes P_{y, \omega}$.
 \end{proof}
    
\textbf{Frames and  discrete group representations}
\begin{definition}
Let $ \pi$ be a unitary representation of a discrete group $ G$ on a Hilbert space $ \mathcal{H}.$ An element  $ x$ in $ \mathcal{H}$ is called a  frame generator (resp. a  Parseval frame generator) w.r.t. $ \tau$ in $ \mathcal{H}$  if $(\{x_g\coloneqq  \pi_{g}x\}_{g\in G},\{\tau_g\coloneqq  \pi_{g}\tau\}_{g\in G}) $ is a  frame  (resp. Parseval frame) for  $ \mathcal{H}$.
In this case we write $ (x,\tau)$ is a frame generator for $\pi$. 
\end{definition}
\begin{proposition}\label{REPRESENATIONLEMMAGROUP}
Let $ (x,\tau)$ and $ (y,\omega)$ be  frame generators    in $\mathcal{H}$ for a unitary representation $ \pi$ of  $G$ on $ \mathcal{H}.$ Then
\begin{enumerate}[\upshape(i)]
 \item $ \theta_x\pi_g=\lambda_g\theta_x,  \theta_\tau \pi_g=\lambda_g\theta_\tau,   \forall g \in G.$
 \item $ \theta_x^*\theta_y,   \theta_\tau^*\theta_\omega,\theta_x^*\theta_\omega$ are in the commutant $ \pi(G)'$ of $ \pi(G)''.$ Further, $ S_{x,\tau} \in \pi(G)'$ and $(S_{x, \tau}^{-1/2}x, S_{x, \tau}^{-1/2}\tau)$ is a Parseval frame generator. 
 \item $ \theta_xT\theta_\tau^*,\theta_xT\theta_y^*, \theta_\tau T\theta_\omega^* \in \mathscr{R}(G), \forall T \in \pi (G)'.$ In particular, $ P_{x, \tau} \in \mathscr{R}(G). $
 \end{enumerate}
 \end{proposition}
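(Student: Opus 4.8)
The plan is to prove this as the sequential counterpart of Proposition \ref{REPRESENATIONLEMMA}, transcribing that argument with $\ell^2(G)$ in place of $\ell^2(G)\otimes\mathcal{H}_0$ and the plain $\lambda_g$ in place of $\lambda_g\otimes I_{\mathcal{H}_0}$; everything will cascade out of the intertwining relation in (i). First I would establish (i). Writing $\theta_x h=\sum_{p\in G}\langle h,x_p\rangle\chi_p$ and using $x_p=\pi_p x$ together with the unitarity $\pi_g^*=\pi_{g^{-1}}$, for $h\in\mathcal{H}$ I compute
\[
\theta_x\pi_g h=\sum_{p\in G}\langle \pi_g h,x_p\rangle\chi_p=\sum_{p\in G}\langle h,x_{g^{-1}p}\rangle\chi_p=\sum_{q\in G}\langle h,x_q\rangle\chi_{gq}=\lambda_g\theta_x h,
\]
where the reindexing $q=g^{-1}p$ and the defining relation $\lambda_g\chi_q=\chi_{gq}$ are used. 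The identical computation with $\tau$ gives $\theta_\tau\pi_g=\lambda_g\theta_\tau$. This step plays the role of the identity $L_{gq}=(\lambda_g\otimes I_{\mathcal{H}_0})L_q$ exploited in Proposition \ref{REPRESENATIONLEMMA}.

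Given (i) for both generators, part (ii) reduces to a one-line commutation: since $\lambda_g$ is unitary and by (i),
\[
\theta_x^*\theta_y\pi_g=\theta_x^*\lambda_g\theta_y=(\lambda_{g^{-1}}\theta_x)^*\theta_y=(\theta_x\pi_{g^{-1}})^*\theta_y=\pi_g\theta_x^*\theta_y,
\]
so $\theta_x^*\theta_y\in\pi(G)'$, and identically $\theta_\tau^*\theta_\omega,\theta_x^*\theta_\omega\in\pi(G)'$. Specializing $(y,\omega)=(x,\tau)$ and invoking the sequential form of Proposition \ref{2.2} (namely $S_{x,\tau}=\theta_x^*\theta_\tau$) yields $S_{x,\tau}\in\pi(G)'$. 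As $\pi(G)'$ is a von Neumann algebra and $S_{x,\tau}$ is positive invertible, continuous functional calculus places $S_{x,\tau}^{-1/2}\in\pi(G)'$, so in particular $S_{x,\tau}^{-1/2}\pi_g=\pi_g S_{x,\tau}^{-1/2}$. For the generators $(S_{x,\tau}^{-1/2}x,S_{x,\tau}^{-1/2}\tau)$ the $g$-th members are then $\pi_g S_{x,\tau}^{-1/2}x=S_{x,\tau}^{-1/2}x_g$ and $S_{x,\tau}^{-1/2}\tau_g$, and using self-adjointness of $S_{x,\tau}^{-1/2}$ to pull it through the inner product and out of the sum I obtain the frame operator $S_{x,\tau}^{-1/2}S_{x,\tau}S_{x,\tau}^{-1/2}=I_\mathcal{H}$, which is Parsevalness.

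For part (iii), I would use $\mathscr{R}(G)=\mathscr{L}(G)'$, so it suffices to check commutation with every $\lambda_g$. For $T\in\pi(G)'$, applying $\theta_\tau^*\lambda_g=(\lambda_{g^{-1}}\theta_\tau)^*=(\theta_\tau\pi_{g^{-1}})^*=\pi_g\theta_\tau^*$, then $T\pi_g=\pi_g T$, and finally (i),
\[
\theta_x T\theta_\tau^*\lambda_g=\theta_x T\pi_g\theta_\tau^*=\theta_x\pi_g T\theta_\tau^*=\lambda_g\theta_x T\theta_\tau^*,
\]
whence $\theta_x T\theta_\tau^*\in\mathscr{L}(G)'=\mathscr{R}(G)$; the same manipulation handles $\theta_x T\theta_y^*$ and $\theta_\tau T\theta_\omega^*$. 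Taking $T=S_{x,\tau}^{-1}\in\pi(G)'$ gives $P_{x,\tau}=\theta_x S_{x,\tau}^{-1}\theta_\tau^*\in\mathscr{R}(G)$. I do not anticipate a genuine obstacle, as the argument is purely formal once (i) is in hand; the only points needing care are the bookkeeping of the reindexing $q=g^{-1}p$ in (i) and the appeal to the von Neumann-algebra functional calculus to land $S_{x,\tau}^{-1/2}$ inside $\pi(G)'$, together with the verification that the rescaled generator is still of the form $\pi_g(\cdot)$, which is precisely where the commutation $S_{x,\tau}^{-1/2}\in\pi(G)'$ enters.
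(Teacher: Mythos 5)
Your proposal is correct and follows essentially the same route as the paper's own proof: the intertwining identity $\theta_x\pi_g=\lambda_g\theta_x$ is established by the same reindexing computation (the paper just writes it starting from $\lambda_g\theta_x h$ rather than $\theta_x\pi_g h$), and parts (ii) and (iii) are derived from it by the identical adjoint manipulations, with the Parseval claim checked by the same conjugation $S_{x,\tau}^{-1/2}S_{x,\tau}S_{x,\tau}^{-1/2}=I_{\mathcal{H}}$ after commuting $S_{x,\tau}^{-1/2}$ past $\pi_g$. No gaps.
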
   
  \begin{proof} Let $ g,p,q, \in G $ and $ h \in \mathcal{H}.$
  \begin{enumerate}[\upshape(i)]
  \item As earlier, $ \{\chi_g\}_{g\in G}$ is the standard orthonormal basis of $ \ell^2(G).$ Then
  \begin{align*}
  \lambda_g\theta_xh&= \lambda_g(\{\langle h, x_p\rangle\}_{p\in G})=\lambda_g\left( \sum\limits_{p\in G}\langle h, x_p\rangle\chi_p\right)= \sum\limits_{p\in G}\langle h, x_p\rangle\lambda_g\chi_p=\sum\limits_{p\in G}\langle h, x_p\rangle\chi_{gp}\\
  &=\sum\limits_{p\in G}\langle h, \pi_px\rangle\chi_{gp}=\sum\limits_{q\in G}\langle h, \pi_{g^{-1}q}x\rangle\chi_q=\sum\limits_{q\in G}\langle h, \pi_{g^{-1}}\pi_qx\rangle\chi_q=\sum\limits_{q\in G}\langle \pi_gh,\pi_qx\rangle\chi_q\\
  &=\sum\limits_{q\in G}\langle \pi_gh,x_q\rangle\chi_q=\{\langle \pi_gh, x_q\rangle\}_{q\in G}=\theta_x(\pi_gh).
  \end{align*}
  \item $\theta_x^*\theta_y\pi_g=\theta_x^*\lambda_g\theta_y=(\lambda_{g^{-1}}\theta_x)^*\theta_y=(\theta_x\pi_{g^{-1}})^*\theta_y=\pi_g\theta_x^*\theta_y.$ By taking $ y=x $ and $ \omega=\tau$ we get  $ S_{x,\tau} \in \pi(G)'.$ From this, $ S_{x,\tau}^{-1/2} \in \pi(G)'.$ We next show that $S_{x, \tau}^{-1/2}x$ is a Parseval frame generator w.r.t. $ S_{x, \tau}^{-1/2}\tau.$ Consider 
  
 \begin{align*}
 S_{S_{x, \tau}^{-\frac{1}{2}}x,S_{x, \tau}^{-\frac{1}{2}}\tau}h&=\sum_{g\in G}\langle h, \pi_gS_{x, \tau}^{-\frac{1}{2}}x \rangle \pi_gS_{x, \tau}^{-\frac{1}{2}}\tau =\sum_{g\in G}\langle h, S_{x, \tau}^{-\frac{1}{2}}\pi_gx \rangle S_{x, \tau}^{-\frac{1}{2}}\pi_g\tau\\
 &= S_{x, \tau}^{-\frac{1}{2}}\left(\sum_{g\in G}\langle  S_{x, \tau}^{-\frac{1}{2}}h, \pi_gx \rangle \pi_g\tau\right)=S_{x, \tau}^{-\frac{1}{2}}S_{x,\tau }(S_{x, \tau}^{-\frac{1}{2}}h)=I_\mathcal{H}h.
 \end{align*}
 \item 
Let  $ T \in \pi(G)'.$ Then 
$$\theta_xT\theta_\tau^*\lambda_g = \theta_xT(\lambda_{g^{-1}}\theta_\tau)^*=\theta_xT(\theta_\tau\pi_{g^{-1}})^*=\theta_xT\pi_{g}\theta_\tau^*=\theta_x\pi_{g}T\theta_\tau^*=\lambda_g\theta_xT\theta_\tau^*.$$
Therefore  $\theta_xT\theta_\tau^* \in \mathscr{L}(G)'=\mathscr{R}(G).$ To get  $ P_{x, \tau} \in \mathscr{R}(G)$ we take $ T=S_{x, \tau}^{-1}.$
\end{enumerate}
\end{proof} 
For the direct part of the following theorem and corollaries of that, there is no need of Parseval condition.
\begin{theorem}\label{GROUPC1}
Let $ G$ be a discrete group with identity $ e$ and $( \{x_g\}_{g\in G},  \{\tau_g\}_{g\in G})$ be a Parseval  frame  for  $\mathcal{H}.$ Then there is a unitary representation $ \pi$  of $ G$ on  $ \mathcal{H}$  for which 
$$ x_g=\pi_gx_e, ~\tau_g=\pi_g\tau_e ,~ \forall g \in G$$  if and only if 
$$\langle x_{gp} , x_{gq}\rangle=\langle x_p, x_q \rangle, ~ \langle x_{gp},  \tau_{gq}\rangle=\langle x_p, \tau_q \rangle, ~\langle \tau_{gp} , \tau_{gq}\rangle=\langle \tau_p, \tau_q \rangle,  ~\forall  g,p,q \in G.  $$
\end{theorem}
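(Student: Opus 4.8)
The plan is to transcribe the proof of Theorem \ref{gc1} into the sequential setting, replacing the operators $A_g,\Psi_g$ by the vectors $x_g,\tau_g$ and the tensored representation $\lambda_g\otimes I_{\mathcal{H}_0}$ by the plain left regular representation $\lambda_g$ on $\ell^2(G)$. For the forward implication, suppose $x_g=\pi_gx_e$ and $\tau_g=\pi_g\tau_e$ for some unitary representation $\pi$. Then $x_{gp}=\pi_{gp}x_e=\pi_g\pi_px_e=\pi_gx_p$, and likewise $x_{gq}=\pi_gx_q$, so unitarity of $\pi_g$ yields $\langle x_{gp},x_{gq}\rangle=\langle\pi_gx_p,\pi_gx_q\rangle=\langle x_p,x_q\rangle$; the mixed and the $\tau$-identities are obtained identically. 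I would remark that this direction uses neither the Parseval hypothesis nor anything about $\theta_x,\theta_\tau$.

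For the converse — the substantial direction — I would set $\pi_g:=\theta_\tau^*\lambda_g\theta_x\in\mathcal{B}(\mathcal{H})$ and first establish the three commutation relations
$$\lambda_g\theta_x\theta_x^*=\theta_x\theta_x^*\lambda_g,\quad \lambda_g\theta_x\theta_\tau^*=\theta_x\theta_\tau^*\lambda_g,\quad \lambda_g\theta_\tau\theta_\tau^*=\theta_\tau\theta_\tau^*\lambda_g,\quad\forall g\in G.$$
The recipe is to conjugate: to compute $\lambda_g\theta_x\theta_\tau^*\lambda_g^*$, expand the analysis operators as $\theta_x=\sum_pL_p\langle\,\cdot\,,x_p\rangle$ and $\theta_\tau=\sum_qL_q\langle\,\cdot\,,\tau_q\rangle$ with $L_p=\chi_p\in\ell^2(G)$, use $\lambda_gL_q=L_{gq}$ (the relation $\lambda_g\chi_q=\chi_{gq}$ for the left regular representation), and obtain $\sum_{p,q}\langle\tau_q,x_p\rangle L_{gp}L_{gq}^*$. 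Reindexing by $r=gp$, $s=gq$ turns the coefficient into $\langle\tau_{g^{-1}s},x_{g^{-1}r}\rangle$, and the conjugate of the invariance hypothesis $\langle x_{gp},\tau_{gq}\rangle=\langle x_p,\tau_q\rangle$ (applied with $g^{-1},r,s$) rewrites it as $\langle\tau_s,x_r\rangle$, collapsing the sum back to $\theta_x\theta_\tau^*=\sum_{r,s}\langle\tau_s,x_r\rangle L_rL_s^*$. The first and third relations follow the same way from the $x$- and $\tau$-invariance hypotheses.

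Once these are in hand, the Parseval hypothesis $\theta_\tau^*\theta_x=I_\mathcal{H}$ (equivalently $\theta_x^*\theta_\tau=I_\mathcal{H}$) does all the remaining bookkeeping. Indeed $\pi_g\pi_h=\theta_\tau^*\lambda_g\theta_x\theta_\tau^*\lambda_h\theta_x=\theta_\tau^*\theta_x\theta_\tau^*\lambda_{gh}\theta_x=\pi_{gh}$, while $\pi_g\pi_g^*=\theta_\tau^*\theta_x\theta_x^*\theta_\tau=I_\mathcal{H}$ and $\pi_g^*\pi_g=\theta_x^*\theta_\tau\theta_\tau^*\theta_x=I_\mathcal{H}$; since $G$ carries the discrete topology this makes $\pi$ a unitary representation. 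Finally I verify the generating relations using $x_e=\theta_x^*\chi_e$ and $\tau_e=\theta_\tau^*\chi_e$: the second commutation relation and $\lambda_g\chi_e=\chi_g$ give
$$\pi_gx_e=\theta_\tau^*\lambda_g\theta_x\theta_x^*\chi_e=\theta_\tau^*\theta_x\theta_x^*\chi_g=\theta_x^*\chi_g=x_g,$$
and identically $\pi_g\tau_e=\theta_\tau^*\theta_x\theta_\tau^*\chi_g=\theta_\tau^*\chi_g=\tau_g$.

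The main obstacle is precisely the verification of the commutation relations: the bookkeeping of the index shift $p\mapsto gp$, $q\mapsto gq$ in the double sum together with the careful conjugation of the three scalar-invariance hypotheses. Everything after that is forced by Parseval. Alternatively, one can bypass the computation entirely by passing to the operator picture: setting $A_g=\langle\,\cdot\,,x_g\rangle$ and $\Psi_g=\langle\,\cdot\,,\tau_g\rangle$ in $\mathcal{B}(\mathcal{H},\mathbb{K})$, Theorem \ref{OVFTOSEQUENCEANDVICEVERSATHEOREM} identifies the sequential frame with an operator-valued frame, the three hypotheses become exactly the conditions $A_{gp}A_{gq}^*=A_pA_q^*$, $A_{gp}\Psi_{gq}^*=A_p\Psi_q^*$, $\Psi_{gp}\Psi_{gq}^*=\Psi_p\Psi_q^*$ of Theorem \ref{gc1} (after conjugation), and $x_g=\pi_gx_e$ corresponds to $A_g=A_e\pi_{g^{-1}}$; thus Theorem \ref{GROUPC1} drops out as an immediate corollary of Theorem \ref{gc1}.
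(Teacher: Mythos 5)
Your proposal is correct and follows essentially the same route as the paper: the paper's first proof also defines $\pi_g=\theta_\tau^*\lambda_g\theta_x$, establishes the same three commutation relations by the same conjugate-and-reindex computation (merely organized as an evaluation on the basis vectors $\chi_p$ rather than as an operator sum), and then lets Parsevalness do the rest; your closing remark about reducing to Theorem \ref{gc1} via Theorem \ref{OVFTOSEQUENCEANDVICEVERSATHEOREM} is precisely the paper's second proof. The only slip is cosmetic: in verifying $\pi_gx_e=x_g$ you invoke the second commutation relation where you actually use the first ($\lambda_g\theta_x\theta_x^*=\theta_x\theta_x^*\lambda_g$).
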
 
\begin{proof} 
Proof 1. $(\Rightarrow)$  For all $ g, p, q \in G$, we have 
$$\langle x_{gp}, x_{gq}\rangle=\langle \pi_{gp}x_e, \pi_{gq}x_e\rangle = \langle \pi_g\pi_px_e, \pi_g\pi_qx_e \rangle = \langle \pi_{g^{-1}}\pi_g\pi_px_e, \pi_qx_e \rangle = \langle\pi_px_e, \pi_qx_e \rangle=\langle x_p, x_q \rangle .$$
Two others are  similar.
 
$(\Leftarrow)$ We claim the   following three, among them we prove third, others are similar.
$$   \lambda_g\theta_x\theta_x^*=\theta_x\theta_x^*\lambda_g,~  \lambda_g\theta_x\theta_\tau^*=\theta_x\theta_\tau^*\lambda_g,~ \lambda_g\theta_\tau\theta_\tau^*=\theta_\tau\theta_\tau^*\lambda_g,~ \forall g \in G.$$
Consider 
\begin{align*}
\lambda_g \theta_\tau\theta_\tau^*\lambda_{g^{-1}}\chi_p&=\lambda_g \theta_\tau\theta_\tau^*\chi_{g^{-1}p}=\lambda_g\theta_\tau \tau_{g^{-1}p}=\lambda_g\{\langle \tau_{g^{-1}p}, \tau_q \rangle\}_{q\in G}=\lambda_g\left(\sum\limits_{q\in G}\langle \tau_{g^{-1}p}, \tau_q \rangle \chi_q\right)\\
&=\sum\limits_{q\in G}\langle \tau_{g^{-1}p}, \tau_q \rangle \chi_{gq}=\sum\limits_{r\in G}\langle \tau_{g^{-1}p}, \tau_{g^{-1}r}\rangle \chi_r=\sum\limits_{r\in G}\langle \tau_{p}, \tau_{r}\rangle \chi_r=\{\langle \tau_{p}, \tau_{r}\rangle \}_{r\in G}\\
&=\theta_\tau\tau_p=\theta_\tau\theta_\tau^*\chi_p.
\end{align*}
Define $ \pi : G \ni g  \mapsto \pi_g\coloneqq \theta_\tau^*\lambda_g\theta_x  \in \mathcal{B}(\mathcal{H}).$ We use the fact that  given frame is a Parseval frame to get  $ \pi_g\pi_h=\theta_\tau^*\lambda_g \theta_x \theta_\tau^*\lambda_h \theta_x =\theta_\tau^*\lambda_g \lambda_h\theta_x \theta_\tau^* \theta_x = \theta_\tau^*\lambda_g \lambda_h\theta_x = \theta_\tau^*\lambda_{gh} \theta_x =\pi_{gh}$ for all $g, h \in G,$ and $\pi_g\pi_g^*=\theta_\tau^*\lambda_g\theta_x\theta_x^*\lambda_{g^{-1}}\theta_\tau=\theta_\tau^*\theta_x\theta_x^*\lambda_g\lambda_{g^{-1}}\theta_\tau=I_\mathcal{H},  \pi_g^*\pi_g= \theta_x^*\lambda_{g^{-1}}\theta_\tau\theta_\tau^*\lambda_g\theta_x= \theta_x^*\lambda_{g^{-1}}\lambda_g\theta_\tau\theta_\tau^*\theta_x=I_\mathcal{H} $ for all $ g \in G$.   Due to the  discrete topology of $ G$,  $ \pi$ is a unitary representation. We now establish   $ x_g=\pi_gx_e, \tau_g=\pi_g\tau_e  $ for all $ g \in G$. For,

\begin{align*}
\pi_gx_e &=\theta_\tau^*\lambda_g\theta_xx_e=\theta_\tau^*\lambda_g\{\langle x_e, x_p\rangle\}_{p\in G}=\theta_\tau^*\lambda_g\left(\sum\limits_{p\in G}\langle x_e, x_p\rangle\chi_p\right)=\theta_\tau^*\left(\sum\limits_{p\in G}\langle x_e, x_p\rangle\chi_{gp}\right)\\
&=\theta_\tau^*\left(\sum\limits_{q\in G}\langle x_e, x_{g^{-1}q}\rangle\chi_q\right)=\theta_\tau^*\left(\sum\limits_{q\in G}\langle x_{g^{-1}g}, x_{g^{-1}q}\rangle\chi_q\right)=\theta_\tau^*\left(\sum\limits_{q\in G}\langle x_{g}, x_{q}\rangle\chi_q\right)\\
&=\theta_\tau^*\{\langle x_g,x_q \rangle\}_{q\in G}=\sum\limits_{q\in G}\langle x_g, x_{q}\rangle\tau_q=x_g,
\end{align*}
and
\begin{align*}
\pi_g\tau_e &=\theta_\tau^*\lambda_g\theta_x\tau_e=\theta_\tau^*\lambda_g\{\langle \tau_e, x_p\rangle\}_{p\in G}=\theta_\tau^*\lambda_g\left(\sum\limits_{p\in G}\langle \tau_e, x_p\rangle\chi_p\right)=\theta_\tau^*\left(\sum\limits_{p\in G}\langle \tau_e, x_p\rangle\chi_{gp}\right)\\
&=\theta_\tau^*\left(\sum\limits_{q\in G}\langle \tau_e, x_{g^{-1}q}\rangle\chi_q\right)=\theta_\tau^*\left(\sum\limits_{q\in G}\langle \tau_{g^{-1}g}, x_{g^{-1}q}\rangle\chi_q\right)=\theta_\tau^*\left(\sum\limits_{q\in G}\langle \tau_{g}, x_{q}\rangle\chi_q\right)\\
&=\theta_\tau^*\{\langle \tau_g,x_q \rangle\}_{q\in G}=\sum\limits_{q\in G}\langle \tau_g, x_{q}\rangle\tau_q=\tau_g.
\end{align*}
Proof 2. Define $A_g: \mathcal{H} \ni h \mapsto \langle h, x_g \rangle \in \mathbb{K} $, $\Psi_g: \mathcal{H} \ni h \mapsto \langle h, \tau_g \rangle \in \mathbb{K}, \forall g \in G $. Then, from Theorem \ref{OVFTOSEQUENCEANDVICEVERSATHEOREM},  $(\{x_g\}_{g\in G}, \{\tau_g\}_{g\in G})$ is a frame for  $\mathcal{H}$ if and only if  $(\{A_g\}_{g\in G}, \{\Psi_g\}_{g\in G})$ is an (ovf)  in $\mathcal{B}(\mathcal{H},\mathbb{K})$. Further, from the proof of Theorem \ref{OVFTOSEQUENCEANDVICEVERSATHEOREM}, we also see that $(\{x_g\}_{g\in G}, \{\tau_g\}_{g\in G})$ is a Parseval frame  if and only if  $(\{A_g\}_{g\in G}, \{\Psi_g\}_{g\in G})$ is a Parseval (ovf). Now applying Theorem \ref{gc1} to the  Parseval (ovf) $(\{A_g\}_{g\in G}, \{\Psi_g\}_{g\in G})$ yields - there is a unitary representation $ \pi$  of $ G$ on  $ \mathcal{H}$  for which 
\begin{align}\label{PROOF2FIRSTEQUATION}
 A_g=A_e\pi_{g^{-1}}, ~\Psi_g=\Psi_e\pi_{g^{-1}}, ~\forall  g \in G
\end{align}
if and only if 
\begin{align}\label{PROOF2SECONDEQUATION}
A_{gp}A_{gq}^*=A_pA_q^* ,~ A_{gp}\Psi_{gq}^*=A_p\Psi_q^*,~ \Psi_{gp}\Psi_{gq}^*=\Psi_p\Psi_q^*, ~ \forall g,p,q \in G.
\end{align}
But Equation (\ref{PROOF2FIRSTEQUATION}) holds if and only if 
\begin{align*}
&\langle h, x_g \rangle=A_gh=A_e\pi_{g^{-1}}h=\langle \pi_{g^{-1}}h, x_e \rangle=\langle h, \pi_gx_e \rangle, \\
&\langle h, \tau_g \rangle=\Psi_gh=\Psi_e\pi_{g^{-1}}h=\langle \pi_{g^{-1}}h, \tau_g \rangle=\langle h, \pi_g\tau_e \rangle, ~\forall  g \in G, \forall h \in \mathcal{H}\\
&\iff x_g=\pi_gx_e, ~\tau_g=\pi_g\tau_e ,~ \forall g \in G.
\end{align*}
Also, Equation (\ref{PROOF2SECONDEQUATION}) holds if and only if 
\begin{align*}
&\langle \alpha x_{gq}, x_{gp}\rangle =A_{gp}A_{gq}^*\alpha=A_pA_q^*\alpha=\langle \alpha x_{q}, x_{p}\rangle,\\
&\langle \alpha \tau_{gq}, x_{gp}\rangle=A_{gp}\Psi_{gq}^*\alpha=A_p\Psi_q^*\alpha=\langle \alpha \tau_{q}, x_{p}\rangle,\\
& \langle \alpha \tau_{gq}, \tau_{gp}\rangle=\Psi_{gp}\Psi_{gq}^*=\Psi_p\Psi_q^*=\langle \alpha \tau_{q}, \tau_{p}\rangle  , ~\forall \alpha \in \mathbb{C}\\
&\iff \langle x_{gp} , x_{gq}\rangle=\langle x_p, x_q \rangle, ~ \langle x_{gp},  \tau_{gq}\rangle=\langle x_p, \tau_q \rangle, ~\langle \tau_{gp} , \tau_{gq}\rangle=\langle \tau_p, \tau_q \rangle,  ~\forall  g,p,q \in G.
\end{align*}
\end{proof}
\begin{corollary}
Let $ G$ be a discrete group with identity $ e$ and $( \{x_g\}_{g\in G},  \{\tau_g\}_{g\in G})$ be a  frame  for  $\mathcal{H}.$ Then there is a unitary representation $ \pi$  of $ G$ on  $ \mathcal{H}$  for which
\begin{enumerate}[\upshape(i)]
\item  $ x_g=S_{x,\tau}\pi_gS_{x,\tau}^{-1}x_e, \tau_g=\pi_g\tau_e $ for all $ g \in G$  if and only if $\langle S_{x,\tau}^{-2} x_{gp} , x_{gq}\rangle=\langle S_{x,\tau}^{-2} x_p, x_q \rangle,  \langle S_{x,\tau}^{-1} x_{gp},  \tau_{gq}\rangle=\langle S_{x,\tau}^{-1} x_p, \tau_q \rangle, \langle \tau_{gp} , \tau_{gq}\rangle=\langle \tau_p, \tau_q \rangle  $ for all $ g,p,q \in G.$
\item  $ x_g=S_{x,\tau}^{1/2}\pi_gS_{x,\tau}^{-1/2}x_e, \tau_g=S_{x,\tau}^{1/2}\pi_gS_{x,\tau}^{-1/2}\tau_e $ for all $ g \in G$  if and only if $\langle S_{x,\tau}^{-1} x_{gp} , x_{gq}\rangle=\langle S_{x,\tau}^{-1}x_p, x_q \rangle$, $  \langle S_{x,\tau}^{-1}x_{gp},  \tau_{gq}\rangle=\langle S_{x,\tau}^{-1}x_p, \tau_q \rangle, \langle S_{x,\tau}^{-1}\tau_{gp} , \tau_{gq}\rangle=\langle S_{x,\tau}^{-1}\tau_p, \tau_q \rangle  $ for all $ g,p,q \in G.$
\item  $ x_g=\pi_gx_e, \tau_g=S_{x,\tau}\pi_gS_{x,\tau}^{-1}\tau_e $ for all $ g \in G$  if and only if $\langle x_{gp} , x_{gq}\rangle=\langle x_p, x_q \rangle,  \langle x_{gp}, S_{x,\tau}^{-1} \tau_{gq}\rangle=\langle x_p, S_{x,\tau}^{-1}\tau_q \rangle, \langle \tau_{gp} , S_{x,\tau}^{-2}\tau_{gq}\rangle=\langle \tau_p, S_{x,\tau}^{-2}\tau_q \rangle  $ for all $ g,p,q \in G.$
\end{enumerate}
\end{corollary}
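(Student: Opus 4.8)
The plan is to deduce all three statements from the sequential representation theorem (Theorem \ref{GROUPC1}) by applying it to three auxiliary Parseval frames, each obtained from $(\{x_g\}_{g\in G},\{\tau_g\}_{g\in G})$ by a similarity built from the frame operator $S_{x,\tau}$. As already observed in the remark following Theorem \ref{SEQUENTIALSIMILARITYCHARACTERIZATION}, the collections $(\{S_{x,\tau}^{-1}x_g\}_{g\in G},\{\tau_g\}_{g\in G})$, $(\{S_{x,\tau}^{-1/2}x_g\}_{g\in G},\{S_{x,\tau}^{-1/2}\tau_g\}_{g\in G})$ and $(\{x_g\}_{g\in G},\{S_{x,\tau}^{-1}\tau_g\}_{g\in G})$ are all Parseval frames similar to the original one; a one-line frame-operator computation confirms Parsevalness in each case, e.g. $\sum_{g}\langle h,S_{x,\tau}^{-1}x_g\rangle\tau_g=S_{x,\tau}(S_{x,\tau}^{-1}h)=h$. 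These are the frames to which I feed Theorem \ref{GROUPC1}, and throughout I use that $S_{x,\tau}$, hence $S_{x,\tau}^{-1}$ and $S_{x,\tau}^{-1/2}$, is self-adjoint.

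For part (i), apply Theorem \ref{GROUPC1} to the Parseval frame $(\{y_g:=S_{x,\tau}^{-1}x_g\}_{g\in G},\{\omega_g:=\tau_g\}_{g\in G})$. The theorem produces a unitary representation $\pi$ with $y_g=\pi_g y_e$ and $\omega_g=\pi_g\omega_e$ for all $g$ if and only if the three inner-product identities hold for $(y,\omega)$. Substituting the definitions, $y_g=\pi_gy_e$ reads $S_{x,\tau}^{-1}x_g=\pi_gS_{x,\tau}^{-1}x_e$, i.e. $x_g=S_{x,\tau}\pi_gS_{x,\tau}^{-1}x_e$, while $\omega_g=\pi_g\omega_e$ reads $\tau_g=\pi_g\tau_e$, which are exactly the generator equations in (i). Moving the self-adjoint factors across the inner products gives $\langle y_{gp},y_{gq}\rangle=\langle S_{x,\tau}^{-1}x_{gp},S_{x,\tau}^{-1}x_{gq}\rangle=\langle S_{x,\tau}^{-2}x_{gp},x_{gq}\rangle$, $\langle y_{gp},\omega_{gq}\rangle=\langle S_{x,\tau}^{-1}x_{gp},\tau_{gq}\rangle$ and $\langle \omega_{gp},\omega_{gq}\rangle=\langle \tau_{gp},\tau_{gq}\rangle$, so equating the $g$-shifted versions with the unshifted ones reproduces precisely the three conditions of (i).

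Parts (ii) and (iii) follow by the identical recipe applied to $(\{S_{x,\tau}^{-1/2}x_g\}_{g\in G},\{S_{x,\tau}^{-1/2}\tau_g\}_{g\in G})$ and to $(\{x_g\}_{g\in G},\{S_{x,\tau}^{-1}\tau_g\}_{g\in G})$ respectively. In (ii) the generator equations become $x_g=S_{x,\tau}^{1/2}\pi_gS_{x,\tau}^{-1/2}x_e$ and $\tau_g=S_{x,\tau}^{1/2}\pi_gS_{x,\tau}^{-1/2}\tau_e$, and the transported inner products give $\langle S_{x,\tau}^{-1}x_{gp},x_{gq}\rangle$, $\langle S_{x,\tau}^{-1}x_{gp},\tau_{gq}\rangle$ and $\langle S_{x,\tau}^{-1}\tau_{gp},\tau_{gq}\rangle$; in (iii) the factor sits in the second slot, yielding $x_g=\pi_gx_e$, $\tau_g=S_{x,\tau}\pi_gS_{x,\tau}^{-1}\tau_e$ together with the conditions $\langle x_{gp},x_{gq}\rangle$, $\langle x_{gp},S_{x,\tau}^{-1}\tau_{gq}\rangle$ and $\langle \tau_{gp},S_{x,\tau}^{-2}\tau_{gq}\rangle$, matching (iii) verbatim.

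The argument is entirely routine, so there is no genuine obstacle; the only two points requiring care are first, confirming that each auxiliary frame is Parseval, which is what licenses the converse (``if'') direction of Theorem \ref{GROUPC1} (the direct direction, as the text already notes, needs no Parseval hypothesis), and second, correctly shuttling the self-adjoint factors $S_{x,\tau}^{-1}$ and $S_{x,\tau}^{-1/2}$ through the inner products so that each condition emerges in the exact stated form, using identities such as $\langle S_{x,\tau}^{-1}a,S_{x,\tau}^{-1}b\rangle=\langle S_{x,\tau}^{-2}a,b\rangle=\langle a,S_{x,\tau}^{-2}b\rangle$. Keeping track of which slot absorbs the operator is the sole bookkeeping hazard.
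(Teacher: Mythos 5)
Your proposal is correct and follows essentially the same route as the paper: apply Theorem \ref{GROUPC1} to the three Parseval frames $(\{S_{x,\tau}^{-1}x_g\},\{\tau_g\})$, $(\{S_{x,\tau}^{-1/2}x_g\},\{S_{x,\tau}^{-1/2}\tau_g\})$, $(\{x_g\},\{S_{x,\tau}^{-1}\tau_g\})$ and translate the generator equations and inner-product identities back, using self-adjointness of $S_{x,\tau}$ to shuttle the operators between slots. The only cosmetic difference is that the paper leaves the conditions in the form $\langle S_{x,\tau}^{-1}x_{gp},S_{x,\tau}^{-1}x_{gq}\rangle=\cdots$ rather than consolidating to $\langle S_{x,\tau}^{-2}x_{gp},x_{gq}\rangle$, which is immediate.
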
   
\begin{proof} Apply Theorem \ref{GROUPC1} to 
\begin{enumerate}[\upshape(i)]
\item $(\{S^{-1}_{x,\tau}x_g\}_{g\in G},  \{\tau_g\}_{g\in G}) $ to get: there is a unitary representation $ \pi$  of $ G$ on  $ \mathcal{H}$  for which $ S_{x,\tau}^{-1}x_g=\pi_gS_{x,\tau}^{-1}x_e, \tau_g=\pi_g\tau_e $ for all $ g \in G$  if and only if $\langle S_{x,\tau}^{-1}x_{gp} , S_{x,\tau}^{-1}x_{gq}\rangle=\langle S_{x,\tau}^{-1}x_p,S_{x,\tau}^{-1} x_q \rangle,  \langle S_{x,\tau}^{-1} x_{gp},  \tau_{gq}\rangle=\langle S_{x,\tau}^{-1} x_p, \tau_q \rangle, \langle \tau_{gp} , \tau_{gq}\rangle=\langle \tau_p, \tau_q \rangle  $ for all $ g,p,q \in G.$
\item $(\{S^{-1/2}_{x,\tau}x_g\}_{g\in G},  \{S^{-1/2}_{x,\tau}\tau_g\}_{g\in G}) $ to get: there is a unitary representation $ \pi$  of $ G$ on  $ \mathcal{H}$  for which $ S_{x,\tau}^{-1/2}x_g=\pi_g(S_{x,\tau}^{-1/2}x_e), S_{x,\tau}^{-1/2}\tau_g=\pi_g(S_{x,\tau}^{-1/2}\tau_e) $ for all $ g \in G$  if and only if $\langle S_{x,\tau}^{-1/2}x_{gp} , S_{x,\tau}^{-1/2}x_{gq}\rangle=\langle S_{x,\tau}^{-1/2}x_p, S_{x,\tau}^{-1/2}x_q \rangle,  \langle S_{x,\tau}^{-1/2}x_{gp},  S_{x,\tau}^{-1/2}\tau_{gq}\rangle=\langle S_{x,\tau}^{-1/2}x_p, S_{x,\tau}^{-1/2}\tau_q \rangle, \langle S_{x,\tau}^{-1/2}\tau_{gp},
S_{x,\tau}^{-1/2}\tau_{gq}\rangle=\langle S_{x,\tau}^{-1/2}\tau_p, S_{x,\tau}^{-1/2}\tau_q \rangle  $ for all $ g,p,q \in G.$
\item $(\{x_g\}_{g\in G},  \{S^{-1}_{x,\tau}\tau_g\}_{g\in G}) $ to get: there is a unitary representation $ \pi$  of $ G$ on  $ \mathcal{H}$  for which $ x_g=\pi_gx_e, S_{x,\tau}^{-1}\tau_g=\pi_g(S_{x,\tau}^{-1}\tau_e) $ for all $ g \in G$  if and only if $\langle x_{gp} , x_{gq}\rangle=\langle x_p, x_q \rangle,  \langle x_{gp},  S_{x,\tau}^{-1}\tau_{gq}\rangle=\langle x_p, S_{x,\tau}^{-1}\tau_q \rangle,  \langle S_{x,\tau}^{-1}\tau_{gp} , S_{x,\tau}^{-1}\tau_{gq}\rangle=\langle S_{x,\tau}^{-1}\tau_p, S_{x,\tau}^{-1}\tau_q \rangle  $ for all $ g,p,q \in G.$
\end{enumerate}
\end{proof}
\textbf{Frames and group-like unitary systems}
 \begin{definition}
 Let $ \pi$ be a unitary representation of a  group-like unitary system  $ \mathcal{U}$ on a Hilbert space $ \mathcal{H}.$ An element  $ x$ in $ \mathcal{H}$ is called a  frame generator (resp. a  Parseval frame generator) w.r.t. $ \tau$ in $ \mathcal{H}$  if $(\{x_U\coloneqq  \pi(U)x\}_{U \in \mathcal{U}},\{\tau_g\coloneqq  \pi(U)\tau\}_{U \in \mathcal{U}}) $ is a  frame  (resp. Parseval frame) for  $ \mathcal{H}$. 
 In this case we write $ (x,\tau)$ is  a frame generator for $\pi$. 
 \end{definition}     
 \begin{theorem}\label{UNITARY1}
 Let $ \mathcal{U}$ be a  group-like unitary system  with identity $ I$ and $( \{x_U\}_{U \in \mathcal{U}},  \{\tau_U\}_{U \in \mathcal{U}})$ be a Parseval  frame  for  $\mathcal{H}$ with $ \theta_x^*$ or $\theta_\tau^*$ is injective. Then there is a unitary representation $ \pi$  of $  \mathcal{U}$ on  $ \mathcal{H}$  for which 
 $$ x_U=\pi(U)x_I, ~\tau_U=\pi(U)\tau_I , ~ \forall U \in \mathcal{U}$$
   if and only if 
 \begin{align*}
\langle x_{\sigma(UV)}, x_{\sigma(UW)}\rangle&=\overline{f(UV)}f(UW)\langle x_V, x_W \rangle, \\
\langle x_{\sigma(UV)},  \tau_{\sigma(UW)}\rangle&=\overline{f(UV)}f(UW)\langle x_V, \tau_W \rangle, \\
\langle \tau_{\sigma(UV)} , \tau_{\sigma(UW)}\rangle&=\overline{f(UV)}f(UW)\langle \tau_V, \tau_W \rangle  
\end{align*} 
 for all $ U, V, W \in \mathcal{U}.$
 \end{theorem}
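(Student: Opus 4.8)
The plan is to deduce Theorem~\ref{UNITARY1} from its operator-valued counterpart, Theorem~\ref{CHARACTERIZATIONGROUPLIKE}, exactly as Proof~2 of Theorem~\ref{GROUPC1} obtained the discrete-group case from Theorem~\ref{gc1}. First I would set $A_U : \mathcal{H} \ni h \mapsto \langle h, x_U\rangle \in \mathbb{K}$ and $\Psi_U : \mathcal{H} \ni h \mapsto \langle h, \tau_U\rangle \in \mathbb{K}$ for each $U \in \mathcal{U}$. By Theorem~\ref{OVFTOSEQUENCEANDVICEVERSATHEOREM}, $(\{x_U\}_{U\in\mathcal{U}},\{\tau_U\}_{U\in\mathcal{U}})$ is a frame for $\mathcal{H}$ if and only if $(\{A_U\}_{U\in\mathcal{U}},\{\Psi_U\}_{U\in\mathcal{U}})$ is an (ovf) in $\mathcal{B}(\mathcal{H},\mathbb{K})$, and the proof of that theorem also shows the Parseval property transfers across this correspondence. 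Under the canonical identification $\ell^2(\mathcal{U})\otimes\mathbb{K}\cong\ell^2(\mathcal{U})$ one has $\theta_A=\theta_x$ and $\theta_\Psi=\theta_\tau$, so the hypothesis that $\theta_x^*$ or $\theta_\tau^*$ is injective is verbatim the hypothesis of Theorem~\ref{CHARACTERIZATIONGROUPLIKE}.

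The second step is to set up the dictionary between the two statements. Since $A_U^*\alpha=\alpha x_U$ and $\Psi_U^*\alpha=\alpha\tau_U$ for $\alpha\in\mathbb{K}$, each operator $A_{\sigma(UV)}A_{\sigma(UW)}^*$ is multiplication by $\langle x_{\sigma(UW)},x_{\sigma(UV)}\rangle$, and likewise the mixed product and the $\Psi$-product are multiplication by $\langle \tau_{\sigma(UW)},x_{\sigma(UV)}\rangle$ and $\langle \tau_{\sigma(UW)},\tau_{\sigma(UV)}\rangle$, respectively. Hence the three operator identities of Theorem~\ref{CHARACTERIZATIONGROUPLIKE}, after taking complex conjugates of both sides, read precisely as the three inner-product identities displayed in Theorem~\ref{UNITARY1}; in particular the scalar factor $f(UV)\overline{f(UW)}$ turns into $\overline{f(UV)}f(UW)$ under conjugation, which is exactly the factor appearing in the target. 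Moreover, because $\pi(U)$ is unitary, $A_I\pi(U)^{-1}h=\langle \pi(U)^{-1}h,x_I\rangle=\langle h,\pi(U)x_I\rangle$, so $A_U=A_I\pi(U)^{-1}$ for all $U$ is equivalent to $x_U=\pi(U)x_I$ for all $U$, and similarly $\Psi_U=\Psi_I\pi(U)^{-1}$ is equivalent to $\tau_U=\pi(U)\tau_I$. Feeding all these equivalences into Theorem~\ref{CHARACTERIZATIONGROUPLIKE} yields the claimed biconditional.

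I expect the principal obstacle to be purely the bookkeeping of conjugate-linearity: the adjoints $A_U^*,\Psi_U^*$ reverse the order of the entries in the inner products and transfer a complex conjugate onto the scalar, so one must check carefully that after conjugation the $\sigma$- and $f$-twists land on the correct factors and that the indices $\sigma(UV),\sigma(UW)$ pair with $V,W$ as stated. This is routine but must be executed exactly so that the three displayed identities coincide; no genuinely new analytic input beyond Theorem~\ref{CHARACTERIZATIONGROUPLIKE} is needed.

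As an alternative, matching Proof~1 of Theorem~\ref{GROUPC1}, I would give a self-contained argument mirroring the proof of Theorem~\ref{CHARACTERIZATIONGROUPLIKE}: from the hypotheses establish the commutations $\lambda_U\theta_x\theta_x^*=\theta_x\theta_x^*\lambda_U$, $\lambda_U\theta_x\theta_\tau^*=\theta_x\theta_\tau^*\lambda_U$ and $\lambda_U\theta_\tau\theta_\tau^*=\theta_\tau\theta_\tau^*\lambda_U$ (using the relation $L_{\sigma(UV)}=\overline{f(UV)}\lambda_U L_V$ in its scalar form and the last part of Proposition~\ref{PER}), then define $\pi(U):=\theta_\tau^*\lambda_U\theta_x$, use Parsevalness $\theta_\tau^*\theta_x=I_\mathcal{H}$ together with Proposition~\ref{PER} to verify $\pi(U)\pi(V)=f(UV)\pi(\sigma(UV))$ and $\pi(U)^{-1}=f(U^{-1})\pi(\sigma(U^{-1}))$, derive injectivity of $\pi$ from injectivity of $\theta_x^*$ (or $\theta_\tau^*$), and finally compute $\pi(U)x_I=x_U$ and $\pi(U)\tau_I=\tau_U$. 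The reduction, however, is the shorter route and is the one I would write up as the main proof.
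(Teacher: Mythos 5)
Your proposal is correct and matches the paper's own treatment: the paper gives exactly these two arguments (its Proof~1 is your "alternative" direct construction of $\pi(U)=\theta_\tau^*\lambda_U\theta_x$, and its Proof~2 is your reduction to Theorem~\ref{CHARACTERIZATIONGROUPLIKE} via $A_U h=\langle h,x_U\rangle$, $\Psi_U h=\langle h,\tau_U\rangle$). Your bookkeeping of the conjugation — that $A_{\sigma(UV)}A_{\sigma(UW)}^*$ is multiplication by $\langle x_{\sigma(UW)},x_{\sigma(UV)}\rangle$, so conjugating the operator identities turns $f(UV)\overline{f(UW)}$ into $\overline{f(UV)}f(UW)$ and reverses the entries — is precisely how the paper's Proof~2 closes the argument.
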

 \begin{proof}
Proof 1. $(\Rightarrow)$ Let $ U, V, W \in \mathcal{U}.$ Then 	
\begin{align*}
 \langle x_{\sigma(UV)}, x_{\sigma(UW)}\rangle&=\langle \pi({\sigma(UV)})x_I, \pi({\sigma(UW)})x_I\rangle = \langle \overline{f(UV)}\pi(U)\pi(V)x_I, \overline{f(UW)}\pi(U)\pi(W)x_I \rangle \\
 & =  \overline{f(UV)}f(UW)\langle\pi(V)x_I, \pi(W)x_I \rangle
 =\overline{f(UV)}f(UW)\langle x_V, x_W \rangle, \forall U, V, W \in \mathcal{U}. 
\end{align*}
Others are similar.

$(\Leftarrow)$
We claim the following three, among them we prove first.
$$  \lambda_U\theta_x\theta_x^*=\theta_x\theta_x^*\lambda_U,~ \lambda_U\theta_x\theta_\tau^*=\theta_x\theta_\tau^*\lambda_U,~ \lambda_U\theta_\tau\theta_\tau^*=\theta_\tau\theta_\tau^*\lambda_U,~ \forall U \in \mathcal{U}. $$
We verify $  \lambda_U\theta_x\theta_x^*=\theta_x\theta_x^*\lambda_U$. For $\{c_V\}_{V\in \mathcal{U}} \in \ell^2(\mathcal{U}) $, consider
\begin{align*}
\lambda_U\theta_x\theta_x^*(\{c_V\}_{V\in \mathcal{U}})&=\lambda_U\theta_x\left(\sum_{V \in\mathcal{U} }c_Vx_V\right)=\lambda_U\left(\left\{\left\langle\sum_{V \in\mathcal{U}}c_Vx_V, x_W\right\rangle\right\}_{W\in \mathcal{U}}\right)\\
&=\lambda_U\left(\sum_{W \in\mathcal{U}}\left\langle\sum_{V \in\mathcal{U}}c_Vx_V, x_W\right\rangle\chi_W\right)=\sum_{W \in\mathcal{U}}\left\langle\sum_{V \in\mathcal{U}}c_Vx_V, x_W\right\rangle f(UW)\chi_{\sigma(UW)}\\
&=\sum_{W \in\mathcal{U}}\sum_{V \in\mathcal{U}}c_V  f(UV)\overline{ f(UV)}f(UW)\langle x_V, x_W\rangle \chi_{\sigma(UW)}\\
&=\sum_{W \in\mathcal{U}}\sum_{V \in\mathcal{U}}c_Vf(UV)\langle x_{\sigma(UV)}, x_{\sigma(UW)}\rangle \chi_{\sigma(UW)}=\sum_{V \in\mathcal{U}}\sum_{W \in\mathcal{U}}c_Vf(UV)\langle x_{\sigma(UV)}, x_{\sigma(UW)}\rangle \chi_{\sigma(UW)}\\
&=\sum_{V \in\mathcal{U}}\sum_{W \in\mathcal{U}}c_Vf(UV)\langle x_{\sigma(UV)}, x_{W}\rangle \chi_{W}=\sum_{W \in\mathcal{U}}\sum_{V \in\mathcal{U}}c_Vf(UV)\langle x_{\sigma(UV)}, x_{W}\rangle \chi_{W}\\
&= \sum_{W \in\mathcal{U}}\left \langle\sum_{V \in\mathcal{U}}c_Vf(UV) x_{\sigma(UV)}, x_W\right \rangle=\left\{\left \langle \sum_{V \in\mathcal{U}}c_Vf(UV) x_{\sigma(UV)}, x_W \right \rangle \right\}_{W \in\mathcal{U}}
\\
&=\theta_x\left(\sum_{V \in\mathcal{U}}c_Vf(UV) x_{\sigma(UV)} \right)=\theta_x\left(\sum_{V \in\mathcal{U}}c_Vf(UV) \theta_x^*\chi_{\sigma(UV)} \right)
\\
&=\theta_x\theta_x^*\left(\sum_{V \in\mathcal{U}}c_V\lambda_U \chi_{V} \right) =\theta_x\theta_x^*\lambda_U(\{c_V\}_{V\in \mathcal{U}}).
\end{align*}

 We next claim $ (\lambda_U\theta_x)^*\{c_V\}_{V\in \mathcal{U}}=\sum_{V\in \mathcal{U}}\overline{f(UV)}c_{\sigma(UV)}x_V, \forall\{c_V\}_{V\in \mathcal{U}} \in \ell^2(\mathcal{U}) .$ In fact, $ \langle(\lambda_U\theta_x)^*\{c_V\}_{V\in \mathcal{U}}, h\rangle =\langle \{c_V\}_{V\in \mathcal{U}},\lambda_U(\{\langle h, x_W  \rangle \}_{W \in \mathcal{U}}) \rangle   =\langle \{c_V\}_{V\in \mathcal{U}},\sum_{W\in \mathcal{U}}\langle h, x_W  \rangle f(UW)\chi_{\sigma(UW)} \rangle = \sum_{W\in \mathcal{U}}\overline{f(UW)}\langle x_W, h  \rangle c_{\sigma(UW)}=\langle\sum_{V\in \mathcal{U}}\overline{f(UV)}c_{\sigma(UV)}x_V, h \rangle ,  \forall\{c_V\}_{V\in \mathcal{U}} \in \ell^2(\mathcal{U}), \forall h \in \mathcal{H}. $
Similarly $ (\lambda_U\theta_\tau)^*\{c_V\}_{V\in \mathcal{U}}=\sum_{V\in \mathcal{U}}\overline{f(UV)}c_{\sigma(UV)}\tau_V$, $ \forall\{c_V\}_{V\in \mathcal{U}} \in \ell^2(\mathcal{U}) .$

Define $ \pi : \mathcal{U} \ni U  \mapsto \pi(U)\coloneqq \theta_\tau^*\lambda_U\theta_x  \in \mathcal{B}(\mathcal{H}).$  Then, $ \pi(U)\pi(V)=\theta_\tau^*\lambda_U\theta_x \theta_\tau^*\lambda_V\theta_x =\theta_\tau^*\theta_x \theta_\tau^*\lambda_U \lambda_V\theta_x = \theta_\tau^*\lambda_U\lambda_V\theta_x =\theta_\tau^*f(UV)\lambda_{\sigma(UV)} \theta_x  =f(UV) \theta_\tau^*\lambda_{\sigma(UV)}\theta_x =f(UV)\pi({\sigma(UV)})$ for all $U, V \in \mathcal{U},$ and
$\pi(U)\pi(U)^*=\theta_\tau^*\lambda_U\theta_x\theta_x^*\lambda^*_U\theta_\tau=\theta_\tau^*\theta_x\theta_x^*\lambda_U\lambda^*_U\theta_\tau=\theta_x^*\lambda_U\lambda^*_U\theta_\tau=\theta_x^*\theta_\tau=I_\mathcal{H}$ for all $ U \in \mathcal{U}$. Similarly $\pi(U)^*\pi(U)=\theta_x^*\lambda^*_U\theta_\tau\theta_\tau^*\lambda_U\theta_x=I_\mathcal{H} $ for all $U \in \mathcal{U}$.  We further find  
\begin{align*}
\pi(U)f(U^{-1})\pi(\sigma(U^{-1}))&=\theta_\tau^*\lambda_U\theta_xf(U^{-1})\theta_\tau^*\lambda_{\sigma(U^{-1})}\theta_x=f(U^{-1})\theta_\tau^*\theta_x\theta_\tau^*\lambda_U\lambda_{\sigma(U^{-1})}\theta_x\\
&=f(U^{-1})\theta_\tau^*\lambda_U\lambda_{\sigma(U^{-1})}\theta_x=f(U^{-1})\theta_\tau^*f(U\sigma(U^{-1}))\lambda_{\sigma(U\sigma(U^{-1}))}\theta_x\\
&=\theta_\tau^*f(U\sigma(U^{-1}I))f(U^{-1}I)\lambda_{\sigma(U\sigma(U^{-1}I))}\theta_x\\
&=\theta_\tau^*f(\sigma(UU^{-1})I)f(UU^{-1})\lambda_{\sigma(\sigma(UU^{-1})I)}\theta_x=\theta_\tau^*\theta_x=I_\mathcal{H} 
\end{align*}
for all $ U \in \mathcal{U}$. Therefore ${\pi(U)}^{-1}=f(U^{-1})\pi(\sigma(U^{-1}))$ for all $ U \in \mathcal{U}$. Using $ \theta_x^*$ is injective (or $ \theta_\tau^*$ is injective)  we show $ \pi$ is injective. Let $\pi(U)=\pi(V)$. Then $ \theta_\tau^*\lambda_U\theta_x=\theta_\tau^*\lambda_V\theta_x$ $\Rightarrow $ $ \theta_\tau^*\lambda_U\theta_x\theta_x^*=\theta_\tau^*\lambda_V\theta_x\theta_x^*$ $\Rightarrow $ $ \theta_\tau^*\theta_x\theta_x^*\lambda_U=\theta_\tau^*\theta_x\theta_x^*\lambda_V$ $\Rightarrow $ $\theta_x^*\lambda_U=\theta_x^*\lambda_V$ $\Rightarrow $ $\lambda_U=\lambda_V $ $\Rightarrow $ $U=V$.

Finally 

\begin{align*}
\pi(U)x_I&=\theta_\tau^*\lambda_U\theta_xx_I=(\lambda_U^*\theta_\tau)^*(\{\langle x_I,x_V \rangle \}_{V\in \mathcal{U}})=(f(U^{-1})\lambda_{\sigma(U^{-1})}\theta_\tau)^*(\{\langle x_I,x_V \rangle \}_{V\in \mathcal{U}})\\
&=\overline{f(U^{-1})}(\lambda_{\sigma(U^{-1})}\theta_\tau)^*(\{\langle x_I,x_V \rangle \}_{V\in \mathcal{U}})=\overline{f(U^{-1})}\sum_{V\in \mathcal{U}}\overline{f(\sigma(U^{-1})V)}\langle x_I,x_{\sigma(\sigma(U^{-1})V)} \rangle\tau_V\\
&=\overline{f(U^{-1})}\sum_{V\in \mathcal{U}}\overline{f(\sigma(U^{-1})V)}\langle x_{\sigma( \sigma(IU^{-1})U)},x_{\sigma(\sigma(U^{-1})V)} \rangle\tau_V\\
&=\overline{f(U^{-1})}\sum_{V\in \mathcal{U}}\overline{f(\sigma(U^{-1})V)}\overline{f(\sigma(IU^{-1})U)}f(\sigma(U^{-1})V)\langle x_{U},x_{V} \rangle\tau_V\\
&=\overline{f(U^{-1})}\overline{f(\sigma(IU^{-1})U)}\sum_{V\in\mathcal{U}}\langle x_{U},x_{V} \rangle\tau_V=\overline{f(\sigma(IU^{-1})U)}\overline{f(U^{-1})}x_U=\overline{f(\sigma(IU^{-1})U)}\overline{f(IU^{-1})}x_U\\
&=\overline{f(I\sigma(U^{-1}U))}\overline{f(U^{-1}U)}x_U=x_U
\end{align*}
and 
\begin{align*}
\pi(U)\tau_I&=\theta_\tau^*\lambda_U\theta_x\tau_I=(\lambda_U^*\theta_\tau)^*(\{\langle \tau_I,x_V \rangle \}_{V\in \mathcal{U}})=(f(U^{-1})\lambda_{\sigma(U^{-1})}\theta_\tau)^*(\{\langle \tau_I,x_V \rangle \}_{V\in \mathcal{U}})\\
&=\overline{f(U^{-1})}(\lambda_{\sigma(U^{-1})}\theta_\tau)^*(\{\langle \tau_I,x_V \rangle \}_{V\in \mathcal{U}})=\overline{f(U^{-1})}\sum_{V\in \mathcal{U}}\overline{f(\sigma(U^{-1})V)}\langle \tau_I,x_{\sigma(\sigma(U^{-1})V)} \rangle\tau_V\\
&=\overline{f(U^{-1})}\sum_{V\in \mathcal{U}}\overline{f(\sigma(U^{-1})V)}\langle \tau_{\sigma( \sigma(IU^{-1})U)},x_{\sigma(\sigma(U^{-1})V)} \rangle\tau_V\\
&=\overline{f(U^{-1})}\sum_{V\in \mathcal{U}}\overline{f(\sigma(U^{-1})V)}\overline{f(\sigma(IU^{-1})U)}f(\sigma(U^{-1})V)\langle \tau_{U},x_{V} \rangle\tau_V\\
&=\overline{f(U^{-1})}\overline{f(\sigma(IU^{-1})U)}\sum_{V\in\mathcal{U}}\langle \tau_{U},x_{V} \rangle\tau_V=\overline{f(\sigma(IU^{-1})U)}\overline{f(U^{-1})}\tau_U=\overline{f(\sigma(IU^{-1})U)}\overline{f(IU^{-1})}\tau_U\\
&=\overline{f(I\sigma(U^{-1}U))}\overline{f(U^{-1}U)}\tau_U=\tau_U.
\end{align*}
Proof 2. If we define $A_U: \mathcal{H} \ni h \mapsto \langle h, x_U \rangle \in \mathbb{K} $, $\Psi_U: \mathcal{H} \ni h \mapsto \langle h, \tau_U \rangle \in \mathbb{K}, \forall U \in \mathcal{U} $, then   $(\{x_U\}_{U\in \mathcal{U}}, \{\tau_U\}_{U\in \mathcal{U}})$ is a Parseval frame for  $\mathcal{H}$ if and only if  $(\{A_U\}_{U\in \mathcal{U}}, \{\Psi_U\}_{U\in \mathcal{U}})$ is a Parseval (ovf)  in $\mathcal{B}(\mathcal{H},\mathbb{K})$. It is also easy to see that $ \theta_x^*$ (resp. $ \theta_\tau^*$) is injective if and only if $ \theta_A^*$ (resp. $ \theta_\Psi^*$) is injective. At this point we naturally use Theorem \ref{CHARACTERIZATIONGROUPLIKE} to get -  there is a unitary representation $ \pi$  of $ \mathcal{U}$ on  $ \mathcal{H}$  for which 
$$ A_U=A_I\pi(U)^{-1}, ~\Psi_U=\Psi_I\pi(U)^{-1},~\forall  U \in \mathcal{U}$$
if and only if 
\begin{align*}
A_{\sigma(UV)}A_{\sigma(UW)}^*&=f(UV)\overline{f(UW)} A_VA_W^* ,
A_{\sigma(UV)}\Psi_{\sigma(UW)}^*=f(UV)\overline{f(UW)} A_V\Psi_W^*,\\ \Psi_{\sigma(UV)}\Psi_{\sigma(UW)}^*&=f(UV)\overline{f(UW)} \Psi_V\Psi_W^*,~ \forall  U,V,W \in \mathcal{U}.
\end{align*}
Now for all $h\in \mathcal{H}$, for all $\alpha \in \mathbb{C}$, and for all $U \in \mathcal{U} $,
\begin{align*}
&\langle h,x_U \rangle =A_Uh=A_I\pi(U)^{-1}h=\langle \pi(U)^{-1}h,x_U \rangle=\langle h,\pi(U)x_U \rangle,\\
&\langle h,\tau_U \rangle =\Psi_Uh=\Psi_I\pi(U)^{-1}h=\langle \pi(U)^{-1}h,\tau_U \rangle=\langle h,\pi(U)\tau_U \rangle,
\end{align*}
and 
\begin{align*}
\langle \alpha x_{\sigma(UW)},x_{\sigma(UV)} \rangle =A_{\sigma(UV)}A_{\sigma(UW)}^*\alpha&=f(UV)\overline{f(UW)} A_VA_W^*\alpha=\langle \alpha x_{\sigma(W)},x_{\sigma(V)} \rangle ,\\
\langle \alpha \tau_{\sigma(UW)},x_{\sigma(UV)} \rangle=A_{\sigma(UV)}\Psi_{\sigma(UW)}^*\alpha&=f(UV)\overline{f(UW)} A_V\Psi_W^*\alpha=\langle \alpha \tau_{\sigma(W)},x_{\sigma(V)} \rangle,\\ 
\langle \alpha \tau_{\sigma(UW)},\tau_{\sigma(UV)} \rangle=\Psi_{\sigma(UV)}\Psi_{\sigma(UW)}^*\alpha&=f(UV)\overline{f(UW)} \Psi_V\Psi_W^*\alpha=\langle \alpha \tau_{\sigma(W)},\tau_{\sigma(V)} \rangle.
\end{align*}
 Hence the theorem.
\end{proof}
\begin{corollary}
 Let $ \mathcal{U}$ be a  group-like unitary system  with identity $ I$ and $( \{x_U\}_{U \in \mathcal{U}},  \{\tau_U\}_{U \in \mathcal{U}})$ be a   frame  for  $\mathcal{H}$ with $ \theta_x^*$ or $\theta_\tau^*$ is injective. Then there is a unitary representation $ \pi$ of  $  \mathcal{U}$ on  $ \mathcal{H}$  for which
\begin{enumerate}[\upshape(i)]
\item  $ x_U=S_{x,\tau}\pi(U)S_{x,\tau}^{-1}x_I, \tau_U=\pi(U)\tau_I $ for all $ U \in \mathcal{U}$  if and only if $\langle S_{x,\tau}^{-2}x_{\sigma(UV)}, x_{\sigma(UW)}\rangle=\overline{f(UV)}f(UW)\langle S_{x,\tau}^{-2}x_V, x_W \rangle$, $
\langle S_{x,\tau}^{-1}x_{\sigma(UV)},  \tau_{\sigma(UW)}\rangle=\overline{f(UV)}f(UW)\langle S_{x,\tau}^{-1} x_V, \tau_W \rangle, \langle \tau_{\sigma(UV)} , \tau_{\sigma(UW)}\rangle=\overline{f(UV)}f(UW)\langle \tau_V, \tau_W \rangle  $ for all $ U, V, W \in \mathcal{U}.$
\item  $ x_U=S_{x,\tau}^{1/2}\pi(U)S_{x,\tau}^{-1/2}x_I, \tau_U=S_{x,\tau}^{1/2}\pi(U)S_{x,\tau}^{-1/2}\tau_I $ for all $ U \in \mathcal{U}$  if and only if $\langle S_{x,\tau}^{-1}x_{\sigma(UV)}, x_{\sigma(UW)}\rangle=\overline{f(UV)}f(UW)\langle S_{x,\tau}^{-1}x_V, x_W \rangle, 
\langle S_{x,\tau}^{-1}x_{\sigma(UV)},  \tau_{\sigma(UW)}\rangle=\overline{f(UV)}f(UW)\langle S_{x,\tau}^{-1} x_V, \tau_W \rangle, \langle S_{x,\tau}^{-1}\tau_{\sigma(UV)} , \tau_{\sigma(UW)}\rangle=\overline{f(UV)}f(UW)\langle S_{x,\tau}^{-1}\tau_V, \tau_W \rangle  $ for all $ U, V, W \in \mathcal{U}.$
\item  $ x_U=\pi(U)x_I, \tau_U=\pi(U)S_{x,\tau}^{-1}\tau_I $ for all $ U \in \mathcal{U}$  if and only if $\langle x_{\sigma(UV)}, x_{\sigma(UW)}\rangle=\overline{f(UV)}f(UW)\langle x_V, x_W \rangle$, $ \langle x_{\sigma(UV)}, S_{x,\tau}^{-1} \tau_{\sigma(UW)}\rangle=\overline{f(UV)}f(UW)\langle  x_V, S_{x,\tau}^{-1}\tau_W \rangle, \langle \tau_{\sigma(UV)} ,S_{x,\tau}^{-2}\tau_{\sigma(UW)}\rangle=\overline{f(UV)}f(UW)\langle \tau_V, S_{x,\tau}^{-2}\tau_W \rangle  $ for all $ U, V, W \in \mathcal{U}.$
\end{enumerate}
\end{corollary}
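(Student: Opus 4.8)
The plan is to deduce this corollary from Theorem \ref{UNITARY1} in exactly the way the group-case corollary after Theorem \ref{GROUPC1} is deduced from Theorem \ref{GROUPC1}: apply the characterization to three auxiliary Parseval frames built from $(\{x_U\}_{U\in\mathcal{U}},\{\tau_U\}_{U\in\mathcal{U}})$ by inserting powers of the frame operator. Concretely, for (i) I would apply Theorem \ref{UNITARY1} to $(\{S_{x,\tau}^{-1}x_U\}_{U\in\mathcal{U}},\{\tau_U\}_{U\in\mathcal{U}})$, for (ii) to $(\{S_{x,\tau}^{-1/2}x_U\}_{U\in\mathcal{U}},\{S_{x,\tau}^{-1/2}\tau_U\}_{U\in\mathcal{U}})$, and for (iii) to $(\{x_U\}_{U\in\mathcal{U}},\{S_{x,\tau}^{-1}\tau_U\}_{U\in\mathcal{U}})$.

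First I would verify the two hypotheses of Theorem \ref{UNITARY1} for each auxiliary collection. That each is a Parseval frame is precisely the content of the Remark following Theorem \ref{SEQUENTIALSIMILARITYCHARACTERIZATION}. For the injectivity hypothesis I would use that the analysis operators of the auxiliary collections are the original analysis operators composed with an invertible power of $S_{x,\tau}$; for instance $\theta_{S_{x,\tau}^{-1}x}=\theta_x S_{x,\tau}^{-1}$, whence $\theta_{S_{x,\tau}^{-1}x}^*=S_{x,\tau}^{-1}\theta_x^*$, so $\theta_{S_{x,\tau}^{-1}x}^*$ is injective if and only if $\theta_x^*$ is, while $\theta_\tau^*$ is unchanged. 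Thus the disjunction ``$\theta_x^*$ or $\theta_\tau^*$ injective'' passes to each auxiliary pair; cases (ii) and (iii) are identical, using $S_{x,\tau}^{-1/2}$ on both sides (resp. $S_{x,\tau}^{-1}$ only on the $\tau$-side).

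Next I would read off the conclusion of Theorem \ref{UNITARY1} and rearrange. For (i) it yields a unitary representation $\pi$ with $S_{x,\tau}^{-1}x_U=\pi(U)S_{x,\tau}^{-1}x_I$ and $\tau_U=\pi(U)\tau_I$; multiplying the first relation on the left by $S_{x,\tau}$ gives $x_U=S_{x,\tau}\pi(U)S_{x,\tau}^{-1}x_I$, the stated generator relation. The characterizing inner-product identities of Theorem \ref{UNITARY1}, written for the auxiliary vectors, become the displayed conditions once I move the positive self-adjoint operators across the inner product, e.g. $\langle S_{x,\tau}^{-1}x_{\sigma(UV)},S_{x,\tau}^{-1}x_{\sigma(UW)}\rangle=\langle S_{x,\tau}^{-2}x_{\sigma(UV)},x_{\sigma(UW)}\rangle$, and similarly for the mixed and $\tau$--$\tau$ terms. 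Cases (ii) and (iii) follow by the same rearrangement, carrying $S_{x,\tau}^{1/2}$ and $S_{x,\tau}$ through the generator equalities, respectively.

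The step demanding the most care, though still short, is confirming the injectivity transfer, since Theorem \ref{UNITARY1} needs it for the construction of $\pi$; everything else is the bookkeeping of commuting $S_{x,\tau}^{\pm1/2}$ and $S_{x,\tau}^{\pm1}$ through the frame-generator equalities and the inner products, using only that these operators are positive, self-adjoint, invertible, and (being functions of $S_{x,\tau}$) commute with one another.
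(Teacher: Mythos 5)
Your proposal is correct and follows essentially the same route as the paper: the paper's proof likewise applies Theorem \ref{UNITARY1} to the three auxiliary Parseval frames $(\{S_{x,\tau}^{-1}x_U\},\{\tau_U\})$, $(\{S_{x,\tau}^{-1/2}x_U\},\{S_{x,\tau}^{-1/2}\tau_U\})$, and $(\{x_U\},\{S_{x,\tau}^{-1}\tau_U\})$ and then rearranges the generator equalities and inner-product identities. Your explicit check that Parsevalness and the injectivity of $\theta_x^*$ or $\theta_\tau^*$ transfer to the auxiliary pairs is left implicit in the paper but is a welcome addition.
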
  
\begin{proof} Apply Theorem \ref{UNITARY1} to 
\begin{enumerate}[\upshape(i)]
\item $(\{S^{-1}_{x,\tau}x_U\}_{U \in \mathcal{U}},  \{\tau_U\}_{U \in \mathcal{U}}) $ to get: there is a unitary representation $ \pi$  of $ \mathcal{U}$ on  $ \mathcal{H}$  for which $ S_{x,\tau}^{-1}x_U=\pi(U)(S_{x,\tau}^{-1}x_I), \tau_U=\pi(U)\tau_I $ for all $U \in \mathcal{U}$  if and only if $\langle S_{x,\tau}^{-1}x_{\sigma(UV)} , S_{x,\tau}^{-1}x_{\sigma(UW)}\rangle=\overline{f(UV)}f(UW)\langle S_{x,\tau}^{-1}x_V,S_{x,\tau}^{-1} x_W \rangle $,  $ \langle S_{x,\tau}^{-1} x_{\sigma(UV)},  \tau_{\sigma(UW)}\rangle=\overline{f(UV)}f(UW)\langle S_{x,\tau}^{-1} x_V, \tau_W \rangle $,  $\langle \tau_{\sigma(UV)} , \tau_{\sigma(UW)}\rangle=\overline{f(UV)}f(UW)\langle \tau_V, \tau_W \rangle  $ for all $ U, V, W \in \mathcal{U}.$
\item $(\{S^{-1/2}_{x,\tau}x_U\}_{U \in \mathcal{U}},  \{S^{-1/2}_{x,\tau}\tau_U\}_{U \in \mathcal{U}}) $ to get: there is a unitary representation $ \pi$  of $ \mathcal{U}$ on  $ \mathcal{H}$  for which $ S_{x,\tau}^{-1/2}x_U=\pi(U)(S_{x,\tau}^{-1/2}x_I), S_{x,\tau}^{-1/2}\tau_U=\pi(U)(S_{x,\tau}^{-1/2}\tau_I) $ for all $ U \in \mathcal{U}$  if and only if $\langle S_{x,\tau}^{-1/2}x_{\sigma(UV)} , S_{x,\tau}^{-1/2}x_{\sigma(UW)}\rangle=\overline{f(UV)}f(UW)\langle S_{x,\tau}^{-1/2}x_V, S_{x,\tau}^{-1/2}x_W \rangle, $ $ \langle S_{x,\tau}^{-1/2}x_{\sigma(UV)},  S_{x,\tau}^{-1/2}\tau_{\sigma(UW)}\rangle=\overline{f(UV)}f(UW)\langle S_{x,\tau}^{-1/2}x_V, S_{x,\tau}^{-1/2}\tau_W \rangle, $ $\langle S_{x,\tau}^{-1/2}\tau_{\sigma(UV)},
S_{x,\tau}^{-1/2}\tau_{\sigma(UW)}\rangle=\overline{f(UV)}f(UW)\langle S_{x,\tau}^{-1/2}\tau_V, S_{x,\tau}^{-1/2}\tau_W \rangle  $ for all $ U, V, W \in \mathcal{U}.$
\item $(\{x_g\}_{U \in \mathcal{U}},  \{S^{-1}_{x,\tau}\tau_g\}_{U \in \mathcal{U}}) $ to get: there is a unitary representation $ \pi$  of $ \mathcal{U}$ on  $ \mathcal{H}$  for which $ x_U=\pi(U)x_I, S_{x,\tau}^{-1}\tau_U=\pi(U)(S_{x,\tau}^{-1}\tau_I) $ for all $ U \in \mathcal{U}$  if and only if $\langle x_{\sigma(UV)} , x_{\sigma(UW)}\rangle=\overline{f(UV)}f(UW)\langle x_V, x_W \rangle, $ $ \langle x_{\sigma(UV)},  S_{x,\tau}^{-1}\tau_{\sigma(UW)}\rangle=\overline{f(UV)}f(UW)\langle x_V, S_{x,\tau}^{-1}\tau_W \rangle,  $ $\langle S_{x,\tau}^{-1}\tau_{\sigma(UV)} , S_{x,\tau}^{-1}\tau_{\sigma(UW)}\rangle=\overline{f(UV)}f(UW)\langle S_{x,\tau}^{-1}\tau_V, S_{x,\tau}^{-1}\tau_W \rangle  $ for all $ U, V, W \in \mathcal{U}.$
\end{enumerate}
\end{proof}

\textbf{Perturbations}  
\begin{theorem}\label{PERTURBATION1SV}
Let $ (\{x_j\}_{j \in \mathbb{J}},\{\tau_j\}_{j \in \mathbb{J}} )$ be a frame for  $\mathcal{H}. $ Suppose $ \{y_j\}_{j \in \mathbb{J}}$  in $ \mathcal{H}$ is  such that $ \langle h,y_j \rangle \tau_j=\langle h,\tau_j \rangle y_j, \langle h, y_j\rangle  \langle \tau_j,h \rangle \geq 0, \forall h \in \mathcal{H}, \forall j \in \mathbb{J}$ and there exist $ \alpha, \beta, \gamma \geq0$ with  $ \max\{\alpha+\gamma\|\theta_\tau S_{x,\tau}^{-1}\|, \beta\}<1$ and for every finite subset $ \mathbb{S}$ of $ \mathbb{J}$
\begin{align}\label{PERTURBATIONINEQUALITYAEQUENTIALFIRST}
\left\|\sum\limits_{j\in \mathbb{S}}c_j(x_j-y_j) \right\|\leq \alpha\left\|\sum\limits_{j\in \mathbb{S}}c_jx_j\right \|+\gamma \left(\sum\limits_{j\in \mathbb{S}}|c_j|^2\right)^\frac{1}{2}+\beta\left\|\sum\limits_{j\in \mathbb{S}}c_jy_j\right \|,~ \forall c_j \in \mathbb{K}, \forall j \in \mathbb{S}.
\end{align}	 
Then  $ (\{y_j\}_{j \in \mathbb{J}},\{\tau_j\}_{j \in \mathbb{J}} )$ is  a frame  with bounds $ \frac{1-(\alpha+\gamma\|\theta_\tau S_{x,\tau}^{-1}\|)}{(1+\beta)\|S_{x,\tau}^{-1}\|}$ and $\frac{\|\theta_\tau\|((1+\alpha)\|\theta_x\|+\gamma)}{1-\beta} $.
\end{theorem}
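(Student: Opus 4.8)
The plan is to transcribe the operator-version argument of Theorem \ref{PERTURBATION RESULT 1} into the sequential language, using the identities $\theta_x^*(\{c_j\}_{j\in\mathbb{J}})=\sum_{j\in\mathbb{J}}c_jx_j$ and $S_{x,\tau}=\theta_\tau^*\theta_x=\theta_x^*\theta_\tau$, under the dictionary in which $\theta_x,\theta_\tau,S_{x,\tau}$ play the roles of $\theta_A,\theta_\Psi,S_{A,\Psi}$, the sequence $\{y_j\}_{j\in\mathbb{J}}$ plays the role of $\{B_j\}_{j\in\mathbb{J}}$, and a finitely supported scalar sequence $z=\{c_j\}_{j\in\mathbb{J}}\in\ell^2(\mathbb{J})$ plays the role of $L_j^*y$. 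The two pointwise hypotheses $\langle h,y_j\rangle\tau_j=\langle h,\tau_j\rangle y_j$ and $\langle h,y_j\rangle\langle\tau_j,h\rangle\ge0$ are exactly the sequential restatement of the operator hypothesis $\Psi_j^*B_j\ge0$ used there: the first forces $S_{y,\tau}$ to be self-adjoint, the second forces its quadratic form to be nonnegative. As a shortcut one could instead pass through Theorem \ref{OVFTOSEQUENCEANDVICEVERSATHEOREM}, setting $A_jh=\langle h,x_j\rangle$, $\Psi_jh=\langle h,\tau_j\rangle$, $B_jh=\langle h,y_j\rangle$ in $\mathcal{B}(\mathcal{H},\mathbb{K})$; under $\ell^2(\mathbb{J})\otimes\mathbb{K}\cong\ell^2(\mathbb{J})$ inequality (\ref{PERTURBATIONINEQUALITYAEQUENTIALFIRST}) becomes precisely inequality (\ref{p3}), so Theorem \ref{PERTURBATION RESULT 1} applies verbatim and returns the stated bounds, all relevant operator norms being preserved by the identification.

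Carrying out the direct proof, the first step bounds the tentative synthesis map. For finite $\mathbb{S}$ and scalars $c_j$, the triangle inequality together with (\ref{PERTURBATIONINEQUALITYAEQUENTIALFIRST}) gives
\[
\left\|\sum_{j\in\mathbb{S}}c_jy_j\right\|\le\frac{1+\alpha}{1-\beta}\left\|\sum_{j\in\mathbb{S}}c_jx_j\right\|+\frac{\gamma}{1-\beta}\left(\sum_{j\in\mathbb{S}}|c_j|^2\right)^{1/2}.
\]
Applying this to $\mathbb{S}_2\setminus\mathbb{S}_1$ and using that $\{x_j\}_{j\in\mathbb{J}}$ is Bessel (being half of a frame) shows $\sum_{j\in\mathbb{J}}c_jy_j$ is a Cauchy net, hence convergent for every $\{c_j\}_{j\in\mathbb{J}}\in\ell^2(\mathbb{J})$; continuity of the norm then extends the displayed inequality to all of $\ell^2(\mathbb{J})$. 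The resulting bounded map is $\theta_y^*$, so $\theta_y$ exists with $\|\theta_y\|\le\bigl((1+\alpha)\|\theta_x\|+\gamma\bigr)/(1-\beta)$. The symmetry hypothesis gives $S_{y,\tau}=\theta_\tau^*\theta_y=\theta_y^*\theta_\tau$, so $S_{y,\tau}$ is self-adjoint, and the positivity hypothesis gives $\langle S_{y,\tau}h,h\rangle=\sum_{j\in\mathbb{J}}\langle h,y_j\rangle\langle\tau_j,h\rangle\ge0$; thus $S_{y,\tau}$ is a bounded positive operator.

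The second step is invertibility. Substituting $z=\theta_\tau S_{x,\tau}^{-1}h$ into the extended inequality and using $\theta_x^*\theta_\tau=S_{x,\tau}$ (so $\theta_x^*z=h$) and $\theta_y^*\theta_\tau=S_{y,\tau}$ (so $\theta_y^*z=S_{y,\tau}S_{x,\tau}^{-1}h$) gives
\[
\left\|h-S_{y,\tau}S_{x,\tau}^{-1}h\right\|\le\bigl(\alpha+\gamma\|\theta_\tau S_{x,\tau}^{-1}\|\bigr)\|h\|+\beta\left\|S_{y,\tau}S_{x,\tau}^{-1}h\right\|,\quad\forall h\in\mathcal{H}.
\]
Since $\max\{\alpha+\gamma\|\theta_\tau S_{x,\tau}^{-1}\|,\beta\}<1$, Theorem \ref{cc1} with $U=I_\mathcal{H}$ and $V=S_{y,\tau}S_{x,\tau}^{-1}$ shows $V$ is invertible with $\|V^{-1}\|\le(1+\beta)/\bigl(1-(\alpha+\gamma\|\theta_\tau S_{x,\tau}^{-1}\|)\bigr)$; hence $S_{y,\tau}=VS_{x,\tau}$ is invertible and $\|S_{y,\tau}^{-1}\|\le\|S_{x,\tau}^{-1}\|(1+\beta)/\bigl(1-(\alpha+\gamma\|\theta_\tau S_{x,\tau}^{-1}\|)\bigr)$. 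Therefore $(\{y_j\}_{j\in\mathbb{J}},\{\tau_j\}_{j\in\mathbb{J}})$ is a frame, and reading off the optimal bounds $\|S_{y,\tau}^{-1}\|^{-1}$ and $\|S_{y,\tau}\|\le\|\theta_\tau\|\|\theta_y\|$ yields exactly the two constants in the statement.

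The main obstacle is the bookkeeping around $S_{y,\tau}$: one must secure convergence of $\sum_{j\in\mathbb{J}}c_jy_j$ before $\theta_y$ can be invoked, and must use the self-adjointness furnished by the symmetry hypothesis to identify $\theta_y^*\theta_\tau$ with $S_{y,\tau}$ rather than with $S_{y,\tau}^*$ — without this the substitution step in the invertibility argument would not close. Everything else is a faithful transcription of the operator-version proof.
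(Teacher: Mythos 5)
Your proof is correct and follows essentially the same route as the paper's: establish boundedness of the synthesis map from the perturbation inequality, extend inequality (\ref{PERTURBATIONINEQUALITYAEQUENTIALFIRST}) to all of $\ell^2(\mathbb{J})$, substitute $\theta_\tau S_{x,\tau}^{-1}h$, and invoke Theorem \ref{cc1} to invert $S_{y,\tau}S_{x,\tau}^{-1}$, with the symmetry and positivity hypotheses securing self-adjointness and positivity of $S_{y,\tau}$ exactly as the paper does. Your explicit Cauchy-net verification and the remark on identifying $\theta_y^*\theta_\tau$ with $S_{y,\tau}$ via the symmetry hypothesis are careful touches that the paper leaves implicit, but they do not change the argument.
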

\begin{proof}
Let $ \mathbb{S}\subseteq\mathbb{J}$ be finite  and  $ \{c_j\}_{j \in \mathbb{J}} \in \ell^2(\mathbb{J}) $. Then
\begin{align*}
\left\|\sum_{j \in \mathbb{S}}c_jy_j\right\| \leq \left\|\sum_{j \in \mathbb{S}}c_j(x_j-y_j)\right\| +\left\|\sum_{j \in \mathbb{S}}c_jx_j\right\|
=(1+\alpha)\left\|\sum_{j \in \mathbb{S}}c_jx_j\right\|+\beta\left\|\sum_{j \in \mathbb{S}}c_jy_j\right\|+\gamma \left(\sum\limits_{j\in \mathbb{S}}|c_j|^2\right)^\frac{1}{2}.
\end{align*}
Therefore 
\begin{align*}
\left\|\sum_{j \in \mathbb{S}}c_jy_j\right\|\leq \frac{1+\alpha}{1-\beta}\left\|\sum_{j \in \mathbb{S}}c_jx_j\right\|+\frac{\gamma}{1-\beta} \left(\sum\limits_{j\in \mathbb{S}}|c_j|^2\right)^\frac{1}{2}.
\end{align*}
Hence $ \ell^2(\mathbb{J}) \ni \{c_j\}_{j \in \mathbb{J}} \mapsto \sum_{j \in \mathbb{J}}c_jy_j  \in \mathcal{H}$ is a well-defined bounded linear operator with norm $ \leq \frac{1+\alpha}{1-\beta}\|\theta_x^*\|+\frac{\gamma}{1-\beta}$ and hence $ \theta_y$ exists, $\|\theta_y\|\leq \frac{1+\alpha}{1-\beta}\|\theta_x\|+\frac{\gamma}{1-\beta} $. Now from Inequality (\ref{PERTURBATIONINEQUALITYAEQUENTIALFIRST}),
\begin{equation}\label{INTERMEDIATE}
\|\theta_x^*(\{c_j\}_{j \in \mathbb{J}})-\theta_y^*(\{c_j\}_{j \in \mathbb{J}})\|\leq \alpha\|\theta_x^*(\{c_j\}_{j \in \mathbb{J}})\|+\beta\|\theta_y^*(\{c_j\}_{j \in \mathbb{J}})\|+\gamma \left(\sum\limits_{j\in \mathbb{J}}|c_j|^2\right)^\frac{1}{2} ,\quad \forall \{c_j\}_{j \in \mathbb{J}} \in \ell^2(\mathbb{J}).
\end{equation}
Let $ \{e_j\}_{j \in \mathbb{J}}$ be the standard orthonormal basis for $\ell^2(\mathbb{J}) $.  Since $ \{\tau_j\}_{j \in \mathbb{J}}$ is a Bessel sequence (w.r.t. itself), $\{\langle \theta_\tau S_{x,\tau}^{-1}h, e_j\rangle \}_{j \in \mathbb{J}} \in \ell^2(\mathbb{J}) $ for each $ h \in \mathcal{H}$. This implies $\{\langle  S_{x,\tau}^{-1}h, \theta_\tau^*e_j\rangle =\langle  S_{x,\tau}^{-1}h, \tau_j\rangle \}_{j \in \mathbb{J}} \in \ell^2(\mathbb{J}) $ for each $ h \in \mathcal{H}$. So by considering  $\theta_\tau S_{x,\tau}^{-1}h=\{\langle  S_{x,\tau}^{-1}h, \tau_j\rangle \}_{j \in \mathbb{J}} \in \ell^2(\mathbb{J})  $ in Inequality (\ref{INTERMEDIATE}),
\begin{align*}
 \|h-S_{y,\tau}S_{x,\tau}^{-1}h\|&\leq \alpha \|h\|+\beta\|S_{y,\tau}S_{x,\tau}^{-1}h\|+\gamma\left(\sum\limits_{j\in \mathbb{J}}|\langle  S_{x,\tau}^{-1}h, \tau_j\rangle|^2\right)^\frac{1}{2}\\
 &=\alpha \|h\|+\beta\|S_{y,\tau}S_{x,\tau}^{-1}h\|+\gamma\left(\sum\limits_{j\in \mathbb{J}}|\langle  \theta_\tau S_{x,\tau}^{-1}h, e_j\rangle|^2\right)^\frac{1}{2}\\
 &=\alpha \|h\|+\beta\|S_{y,\tau}S_{x,\tau}^{-1}h\|+\gamma\| \theta_\tau S_{x,\tau}^{-1}h\|=(\alpha+\gamma\|\theta_\tau S_{x,\tau}^{-1}\|)\|h\|+\beta\|S_{y,\tau}S_{x,\tau}^{-1}h\| , ~ \forall h \in \mathcal{H}.
 \end{align*}
Thus $S_{y,\tau}S_{x,\tau}^{-1} $ is invertible. Rest is similar to the proof of Theorem \ref{PERTURBATION RESULT 1}.
\end{proof} 
\begin{corollary}
Let $ (\{x_j\}_{j \in \mathbb{J}},\{\tau_j\}_{j \in \mathbb{J}} )$ be a frame for  $\mathcal{H}. $ Suppose $ \{y_j\}_{j \in \mathbb{J}}$  in $ \mathcal{H}$ is  such that $ \langle h,y_j \rangle \tau_j=\langle h,\tau_j \rangle y_j, \langle h, y_j\rangle  \langle \tau_j,h \rangle \geq 0, \forall h \in \mathcal{H}, \forall j \in \mathbb{J}$ and 
$$ r\coloneqq \sum_{j\in \mathbb{J}}\|x_j-y_j\|^2<\frac{1}{\|\theta_\tau S_{x,\tau}^{-1}\|^2}.$$
Then  $ (\{y_j\}_{j \in \mathbb{J}},\{\tau_j\}_{j \in \mathbb{J}} )$ is  a frame  with bounds $ \frac{1-\sqrt{r}\|\theta_\tau S_{x,\tau}^{-1}\|}{\|S_{x,\tau}^{-1}\|}$ and $\|\theta_\tau\|(\|\theta_x\|+\sqrt{r}) $.
\end{corollary}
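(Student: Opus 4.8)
The plan is to derive this corollary as the $\alpha=\beta=0$, $\gamma=\sqrt{r}$ specialization of Theorem \ref{PERTURBATION1SV}, exactly mirroring the way the earlier corollary was deduced from Theorem \ref{PERTURBATION RESULT 1}. First I would set $\alpha=0$, $\beta=0$, $\gamma=\sqrt{r}$ and observe that the smallness hypothesis $r<\|\theta_\tau S_{x,\tau}^{-1}\|^{-2}$ is precisely what guarantees $\max\{\alpha+\gamma\|\theta_\tau S_{x,\tau}^{-1}\|,\beta\}=\sqrt{r}\,\|\theta_\tau S_{x,\tau}^{-1}\|<1$, so the hypothesis on the constants required by Theorem \ref{PERTURBATION1SV} is met.

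Next I would verify the perturbation inequality (\ref{PERTURBATIONINEQUALITYAEQUENTIALFIRST}), which for these constants reduces to
$$\left\|\sum_{j\in\mathbb{S}}c_j(x_j-y_j)\right\|\leq \sqrt{r}\left(\sum_{j\in\mathbb{S}}|c_j|^2\right)^{1/2},\quad \forall c_j\in\mathbb{K},\ \forall j\in\mathbb{S},$$
for every finite $\mathbb{S}\subseteq\mathbb{J}$. This follows at once from the triangle inequality followed by the Cauchy--Schwarz inequality: $\|\sum_{j\in\mathbb{S}}c_j(x_j-y_j)\|\leq\sum_{j\in\mathbb{S}}|c_j|\,\|x_j-y_j\|\leq(\sum_{j\in\mathbb{S}}|c_j|^2)^{1/2}(\sum_{j\in\mathbb{S}}\|x_j-y_j\|^2)^{1/2}$, and the last factor is bounded by $\sqrt{r}$ since $\sum_{j\in\mathbb{J}}\|x_j-y_j\|^2=r$.

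The remaining hypotheses of Theorem \ref{PERTURBATION1SV} --- namely $\langle h,y_j\rangle\tau_j=\langle h,\tau_j\rangle y_j$ and $\langle h,y_j\rangle\langle\tau_j,h\rangle\geq0$ for all $h\in\mathcal{H}$, $j\in\mathbb{J}$ --- are assumed verbatim in the corollary, so they transfer directly. Applying Theorem \ref{PERTURBATION1SV} then gives that $(\{y_j\}_{j\in\mathbb{J}},\{\tau_j\}_{j\in\mathbb{J}})$ is a frame, and substituting $\alpha=\beta=0$, $\gamma=\sqrt{r}$ into its stated bounds $\frac{1-(\alpha+\gamma\|\theta_\tau S_{x,\tau}^{-1}\|)}{(1+\beta)\|S_{x,\tau}^{-1}\|}$ and $\frac{\|\theta_\tau\|((1+\alpha)\|\theta_x\|+\gamma)}{1-\beta}$ collapses them to the two bounds in the statement. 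There is essentially no serious obstacle; the only point demanding a moment's care is confirming that the scalar Cauchy--Schwarz estimate reproduces the $\gamma(\sum|c_j|^2)^{1/2}$ term with $\gamma=\sqrt{r}$, which it does exactly.
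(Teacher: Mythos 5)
Your proposal is correct and follows the paper's own argument exactly: specialize Theorem \ref{PERTURBATION1SV} to $\alpha=\beta=0$, $\gamma=\sqrt{r}$, and verify inequality (\ref{PERTURBATIONINEQUALITYAEQUENTIALFIRST}) via the triangle and Cauchy--Schwarz inequalities. Nothing further is needed.
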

\begin{proof}
Take $ \alpha =0, \beta=0, \gamma=\sqrt{r}$. Then $ \max\{\alpha+\gamma\|\theta_\tau S_{x,\tau}^{-1}\|, \beta\}<1$ and for every finite subset $ \mathbb{S}$ of $ \mathbb{J}$,
$$\left\|\sum\limits_{j\in \mathbb{S}}c_j(x_j-y_j) \right\|\leq  \left(\sum\limits_{j\in \mathbb{S}}|c_j|^2\right)^\frac{1}{2} \left(\sum\limits_{j\in \mathbb{S}}\|x_j-y_j\|^2\right)^\frac{1}{2}\leq \gamma  \left(\sum\limits_{j\in \mathbb{S}}|c_j|^2\right)^\frac{1}{2},~\forall c_j \in \mathbb{K}, \forall j \in \mathbb{S}.$$	
 Now we can apply Theorem \ref{PERTURBATION1SV}.
\end{proof}
\begin{theorem}\label{PERTURBATION2SV}
Let $ (\{x_j\}_{j \in \mathbb{J}},\{\tau_j\}_{j \in \mathbb{J}} )$ be a frame for  $\mathcal{H}$ with bounds $ a$ and $b.$ Suppose $ \{y_j\}_{j \in \mathbb{J}}$  in $ \mathcal{H}$ is  such that   $ \sum_{j\in\mathbb{J}}\langle h,y_j \rangle \langle \tau_j, h \rangle$ exists for all $ h \in \mathcal{H}$ and  is nonnegative for all $ h \in \mathcal{H}$ and there exist $ \alpha, \beta, \gamma \geq0$ with  $ \max\{\alpha+\frac{\gamma}{\sqrt{a}}, \beta\}<1$ and
\begin{align*}
\left| \sum\limits_{j\in\mathbb{J}}\langle h,x_j-y_j\rangle \langle \tau_j, h\rangle \right|^\frac{1}{2} \leq \alpha\left(\sum\limits_{j\in\mathbb{J}}\langle h,x_j\rangle \langle \tau_j,h \rangle \right)^\frac{1}{2} + \beta\left(\sum\limits_{j\in\mathbb{J}}\langle h,y_j\rangle \langle \tau_j,h \rangle \right)^\frac{1}{2} +\gamma \|h\|, ~\forall h \in \mathcal{H}.
\end{align*}
Then $ (\{y_j\}_{j \in \mathbb{J}},\{\tau_j\}_{j \in \mathbb{J}} )$ is  a frame with bounds $a\left(1-\frac{\alpha+\beta+\frac{\gamma}{\sqrt{a}}}{1+\beta}\right)^2 $ and $b\left(1+\frac{\alpha+\beta+\frac{\gamma}{\sqrt{b}}}{1-\beta}\right)^2.$
\end{theorem}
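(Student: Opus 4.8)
The plan is to run the sequential analogue of the proof of Theorem \ref{PERTURBATION RESULT 2}, working throughout with the two nonnegative quantities $P(h) := \big(\sum_{j\in\mathbb{J}}\langle h,x_j\rangle\langle\tau_j,h\rangle\big)^{1/2}$ and $Q(h) := \big(\sum_{j\in\mathbb{J}}\langle h,y_j\rangle\langle\tau_j,h\rangle\big)^{1/2}$, the latter being well-defined and real because its defining sum exists and is nonnegative by hypothesis; eventually these are $\langle S_{x,\tau}h,h\rangle^{1/2}$ and $\langle S_{y,\tau}h,h\rangle^{1/2}$. From the frame bounds of $(\{x_j\}_{j\in\mathbb{J}},\{\tau_j\}_{j\in\mathbb{J}})$ I have $\sqrt{a}\,\|h\| \le P(h) \le \sqrt{b}\,\|h\|$ for all $h\in\mathcal{H}$, and the goal is to squeeze $Q(h)$ between two positive multiples of $\|h\|$.

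The engine is the subadditivity of the square root: since $P(h)^2 - Q(h)^2 = \sum_{j\in\mathbb{J}}\langle h,x_j-y_j\rangle\langle\tau_j,h\rangle$ is real, I obtain $Q(h) \le |P(h)^2-Q(h)^2|^{1/2} + P(h)$ and, symmetrically, $P(h) \le |P(h)^2-Q(h)^2|^{1/2} + Q(h)$, where $|P(h)^2-Q(h)^2|^{1/2}$ is precisely the left-hand side of the hypothesised inequality. Feeding the hypothesis into the first estimate gives $(1-\beta)Q(h) \le (1+\alpha)P(h) + \gamma\|h\| \le \big((1+\alpha)\sqrt{b}+\gamma\big)\|h\|$; feeding it into the second and then absorbing $\gamma\|h\| \le \tfrac{\gamma}{\sqrt{a}}P(h)$ gives $\big(1-(\alpha+\tfrac{\gamma}{\sqrt{a}})\big)P(h) \le (1+\beta)Q(h)$, whence $Q(h) \ge \tfrac{1-(\alpha+\gamma/\sqrt{a})}{1+\beta}\,\sqrt{a}\,\|h\|$ after using $P(h)\ge\sqrt{a}\,\|h\|$. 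The condition $\max\{\alpha+\tfrac{\gamma}{\sqrt{a}},\beta\}<1$ guarantees both constants are positive.

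Squaring these two inequalities bounds $Q(h)^2 = \sum_{j\in\mathbb{J}}\langle h,y_j\rangle\langle\tau_j,h\rangle$ between $a\big(1-\tfrac{\alpha+\beta+\gamma/\sqrt{a}}{1+\beta}\big)^2\|h\|^2$ and $b\big(1+\tfrac{\alpha+\beta+\gamma/\sqrt{b}}{1-\beta}\big)^2\|h\|^2$, using the routine identities $\tfrac{(1+\alpha)\sqrt{b}+\gamma}{1-\beta}=\sqrt{b}\big(1+\tfrac{\alpha+\beta+\gamma/\sqrt{b}}{1-\beta}\big)$ and $\tfrac{1-(\alpha+\gamma/\sqrt{a})}{1+\beta}=1-\tfrac{\alpha+\beta+\gamma/\sqrt{a}}{1+\beta}$. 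To finish I must upgrade this quadratic-form estimate to the statement that $S_{y,\tau}$ is a genuine bounded, positive, invertible operator: the upper estimate shows the form is bounded, the nonnegativity hypothesis together with (in the complex case) the principle that an operator with real quadratic form is self-adjoint promotes the form to a positive self-adjoint operator, and the strictly positive lower bound forces invertibility.

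The main obstacle is exactly this last passage from the pointwise data to an honest operator: I must confirm that $h\mapsto\sum_{j\in\mathbb{J}}\langle h,y_j\rangle\tau_j$ is well-defined and bounded and that $\{y_j\}_{j\in\mathbb{J}}$ is Bessel, so that all clauses of Definition \ref{SEQUENTIAL2} are met. This is the sequential counterpart of the explicit hypotheses ``$\{B_j\}_{j\in\mathbb{J}}$ is Bessel'' and ``$\theta_\Psi^*\theta_B\ge 0$'' in Theorem \ref{PERTURBATION RESULT 2}; here the assumed existence and nonnegativity of $\sum_{j\in\mathbb{J}}\langle h,y_j\rangle\langle\tau_j,h\rangle$ plays that role, and once boundedness of the form is in hand via the upper estimate, the identification with a bounded operator is secured by polarization. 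The arithmetic of the bounds themselves is routine and identical in spirit to the operator version, so I would state it briefly and concentrate the care on the operator-theoretic conclusion.
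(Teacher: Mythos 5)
Your proposal is correct and follows essentially the same route as the paper, whose proof of this theorem is simply the remark that it is "similar to the proof of Theorem \ref{PERTURBATION RESULT 2}": the two square-root subadditivity estimates on $P(h)$ and $Q(h)$, combined with the hypothesised inequality and the bound $P(h)\geq\sqrt{a}\,\|h\|$, are exactly the computation carried out there in operator form. Your closing paragraph on upgrading the quadratic-form bounds to a bounded positive invertible operator is more careful than anything the paper supplies, and correctly identifies the existence/nonnegativity hypothesis as the sequential stand-in for the Bessel and $\theta_\Psi^*\theta_B\geq 0$ assumptions of the operator version.
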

\begin{proof}
Similar to proof of Theorem \ref{PERTURBATION RESULT 2}.
\end{proof} 
\begin{theorem}\label{PERTURBATION3SV}
Let $ (\{x_j\}_{j\in \mathbb{J}}, \{\tau_j\}_{j\in \mathbb{J}}) $  be  a frame for  $\mathcal{H}$. Suppose  $\{y_j\}_{j\in \mathbb{J}} $ in $\mathcal{H}$ is such that $ \langle h,y_j \rangle \tau_j=\langle h,\tau_j \rangle y_j, \langle h, y_j\rangle  \langle \tau_j,h \rangle \geq 0, \forall h \in \mathcal{H}, \forall j \in \mathbb{J}$, $   \sum_{j\in \mathbb{J}}\|x_j-y_j\|^2$ converges, and 
$\sum_{j\in \mathbb{J}}\|x_j-y_j\|\|S_{x,\tau}^{-1}\tau_j\|<1.$
Then  $ (\{y_j\}_{j\in \mathbb{J}}, \{\tau_j\}_{j\in \mathbb{J}}) $ is a frame with bounds  $\frac{1-\sum_{j\in \mathbb{J}}\|x_j-y_j\|\|S_{x,\tau}^{-1}\tau_j\|}{\|S_{x,\tau}^{-1}\|}$ and $\|\theta_\tau\|(\sum_{j\in \mathbb{J}}\|x_j-y_j\|^2+\|\theta_x\|) $.
\end{theorem}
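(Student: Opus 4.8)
The plan is to transport the argument of Theorem \ref{OVFQUADRATICPERTURBATION} into the sequential language of Definition \ref{SEQUENTIAL2}, replacing the synthesis operators $\theta_A^*,\theta_\Psi^*$ and the frame operator $S_{A,\Psi}$ by $\theta_x^*,\theta_\tau^*$ and $S_{x,\tau}$. First I would set $\alpha\coloneqq\sum_{j\in\mathbb{J}}\|x_j-y_j\|^2$ and $\beta\coloneqq\sum_{j\in\mathbb{J}}\|x_j-y_j\|\|S_{x,\tau}^{-1}\tau_j\|$, both finite by hypothesis with $\beta<1$. The whole statement then reduces to two facts: that $\{y_j\}_{j\in\mathbb{J}}$ is Bessel (so $\theta_y$, and hence $S_{y,\tau}=\theta_\tau^*\theta_y$, exists as a bounded operator), and that $S_{y,\tau}S_{x,\tau}^{-1}$ is invertible via a Neumann-series estimate.

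For the Bessel property I would estimate, for each finite $\mathbb{S}\subseteq\mathbb{J}$ and each $\{c_j\}_{j\in\mathbb{J}}\in\ell^2(\mathbb{J})$,
\begin{align*}
\left\|\sum_{j\in\mathbb{S}}c_jy_j\right\|&\leq\left\|\sum_{j\in\mathbb{S}}c_j(y_j-x_j)\right\|+\left\|\sum_{j\in\mathbb{S}}c_jx_j\right\|\\
&\leq\left(\sum_{j\in\mathbb{S}}|c_j|^2\right)^{1/2}\left(\sum_{j\in\mathbb{S}}\|x_j-y_j\|^2\right)^{1/2}+\left\|\sum_{j\in\mathbb{S}}c_jx_j\right\|.
\end{align*}
As in Theorem \ref{OVFQUADRATICPERTURBATION}, letting $\mathbb{S}$ exhaust $\mathbb{J}$ shows the map $\{c_j\}\mapsto\sum_jc_jy_j$ is a bounded operator $\theta_y^*$ with $\|\theta_y\|=\|\theta_y^*\|\leq\alpha+\|\theta_x\|$, so $S_{y,\tau}=\theta_\tau^*\theta_y=\sum_j\langle\cdot,y_j\rangle\tau_j$ exists and $\|S_{y,\tau}\|\leq\|\theta_\tau\|(\alpha+\|\theta_x\|)$, which is exactly the stated upper bound. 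Self-adjointness and positivity of $S_{y,\tau}$ are where the two pointwise hypotheses enter: $\langle h,y_j\rangle\tau_j=\langle h,\tau_j\rangle y_j$ forces $S_{y,\tau}=S_{\tau,y}=S_{y,\tau}^*$, while $\langle h,y_j\rangle\langle\tau_j,h\rangle\geq0$ gives $\langle S_{y,\tau}h,h\rangle\geq0$ for all $h$.

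Finally, using the symmetric forms $S_{x,\tau}g=\sum_j\langle g,\tau_j\rangle x_j$ and $S_{y,\tau}g=\sum_j\langle g,\tau_j\rangle y_j$, I would write $(I_\mathcal{H}-S_{y,\tau}S_{x,\tau}^{-1})h=\sum_j\langle S_{x,\tau}^{-1}h,\tau_j\rangle(x_j-y_j)$ and bound, via self-adjointness of $S_{x,\tau}^{-1}$ and the termwise Cauchy-Schwarz inequality,
$$\left\|(I_\mathcal{H}-S_{y,\tau}S_{x,\tau}^{-1})h\right\|\leq\|h\|\sum_{j\in\mathbb{J}}\|x_j-y_j\|\|S_{x,\tau}^{-1}\tau_j\|=\beta\|h\|.$$
Since $\beta<1$, the operator $S_{y,\tau}S_{x,\tau}^{-1}$ is invertible with $\|(S_{y,\tau}S_{x,\tau}^{-1})^{-1}\|\leq1/(1-\beta)$, so $S_{y,\tau}=(S_{y,\tau}S_{x,\tau}^{-1})S_{x,\tau}$ is positive invertible and $(\{y_j\},\{\tau_j\})$ is a frame; the lower bound $\frac{1-\beta}{\|S_{x,\tau}^{-1}\|}$ follows from $\|S_{y,\tau}^{-1}\|\leq\|S_{x,\tau}^{-1}\|/(1-\beta)$ precisely as in the closing lines of Theorem \ref{PERTURBATION RESULT 1}. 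The main obstacle is not any single estimate but the bookkeeping that promotes $S_{y,\tau}$ from a merely bounded operator to a positive self-adjoint one purely from the two pointwise conditions on $\{y_j\}$: unlike the operator version, where $\Psi_j^*B_j\geq0$ is automatically self-adjoint, here self-adjointness must be extracted from the identity $\langle h,y_j\rangle\tau_j=\langle h,\tau_j\rangle y_j$ before positivity and the Neumann argument can be invoked.
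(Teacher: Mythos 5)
Your proposal is correct and follows essentially the same route as the paper's proof: the finite-sum estimate $\|\sum_{j\in\mathbb{S}}c_jy_j\|\leq(\sum_{j\in\mathbb{S}}|c_j|^2)^{1/2}(\sum_{j\in\mathbb{S}}\|x_j-y_j\|^2)^{1/2}+\|\sum_{j\in\mathbb{S}}c_jx_j\|$ for Besselness, the two pointwise hypotheses for self-adjointness and positivity of $S_{y,\tau}$, and the bound $\|h-S_{y,\tau}S_{x,\tau}^{-1}h\|\leq\beta\|h\|$ with $\beta<1$ for invertibility via a Neumann argument. The only (immaterial) differences are that you spell out the self-adjointness step the paper delegates to ``others are similar,'' and you estimate $I_\mathcal{H}-S_{y,\tau}S_{x,\tau}^{-1}$ through the symmetric form $S_{y,\tau}g=\sum_j\langle g,\tau_j\rangle y_j$ while the paper's displayed computation in effect bounds $I_\mathcal{H}-S_{x,\tau}^{-1}S_{y,\tau}$; both yield the same conclusion.
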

\begin{proof}
Let $\alpha=\sum_{j\in \mathbb{J}}\|x_j-y_j\|^2 $,  $\beta=\sum_{j\in \mathbb{J}}\|x_j-y_j\|\|S_{x,\tau}^{-1}\tau_j\| $,  $ \mathbb{S}\subseteq\mathbb{J}$ be finite, $h \in \mathcal{H}$  and  $ \{c_j\}_{j \in \mathbb{J}} \in \ell^2(\mathbb{J}) $. Then
\begin{align*}
\left\|\sum_{j \in \mathbb{S}}c_jy_j\right\| &\leq \left\|\sum_{j \in \mathbb{S}}c_j(x_j-y_j)\right\| +\left\|\sum_{j \in \mathbb{S}}c_jx_j\right\|
\leq \left(\sum\limits_{j\in \mathbb{S}}|c_j|^2\right)^\frac{1}{2}\left(\sum\limits_{j\in \mathbb{S}}\|x_j-y_j\|^2\right)^\frac{1}{2}+\left\|\sum_{j \in \mathbb{S}}c_jx_j\right\|\\
&\leq \alpha\left(\sum\limits_{j\in \mathbb{S}}|c_j|^2\right)^\frac{1}{2}+\left\|\sum_{j \in \mathbb{S}}c_jx_j\right\| ,
\end{align*}
\begin{align*}
\|h-S_{y,\tau}S_{x,\tau}^{-1}h\|&=\left\|\sum_{j \in \mathbb{J}}\langle h,x_j \rangle S_{x,\tau}^{-1}\tau_j-\sum_{j \in \mathbb{J}}\langle h,y_j \rangle S_{x,\tau}^{-1}\tau_j \right\|=\left\|\sum_{j \in \mathbb{J}}\langle h,x_j-y_j \rangle S_{x,\tau}^{-1}\tau_j\right\|\\
&\leq\sum_{j \in \mathbb{J}}\|h\|\|x_j-y_j\|\| S_{x,\tau}^{-1}\tau_j\|=\beta \|h\|.
\end{align*}	
Others are similar to the proof of Theorem \ref{OVFQUADRATICPERTURBATION}.
\end{proof}
\section{The finite dimensional case}\label{THEFINITEDIMENSIONALCASE}
\begin{theorem}\label{FINITEDIMENSIONALCHARATERIZATIONHILBERT}
Let $ \mathcal{H}$ be a finite dimensional Hilbert space, $ \{x_j\}_{j=1}^n , \{\tau_j\}_{j=1}^n $ be  a finite set of vectors in $ \mathcal{H}$  such that $\langle h,  x_j \rangle \tau_j =\langle h, \tau_j \rangle x_j , \langle h, x_j\rangle \langle\tau_j, h\rangle\geq0, \forall h \in  \mathcal{H}, \forall j=1,...,n. $ Then $ (\{x_j\}_{j=1}^n , \{\tau_j\}_{j=1}^n)$ is a frame for $ \mathcal{H}$ if and only if for every set $\{x_{r_1},...,x_{r_k}, \tau_{s_1},...,\tau_{s_l}: r_1,...,r_k,s_1,...,s_l\in \{1, 2, ..., n\}, k+l=n\} $ of $ n$ vectors with $\{r_1,...,r_k\} \cap \{s_1,...,s_l\}=\emptyset$ one has $ \operatorname{span}\{x_{r_1},...,x_{r_k}, \tau_{s_1},...,\tau_{s_l}\}= \mathcal{H}.$
\end{theorem}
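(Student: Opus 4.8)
The plan is to reduce the frame condition to the invertibility of the frame operator $S_{x,\tau}$ and then to read invertibility off directly from the span hypothesis. First I would observe that in the finite-dimensional setting every Bessel-type requirement in Definition \ref{SEQUENTIAL2} is automatic, since each series is a finite sum; thus $(\{x_j\}_{j=1}^n,\{\tau_j\}_{j=1}^n)$ is a frame for $\mathcal{H}$ if and only if the operator $S_{x,\tau}h=\sum_{j=1}^n\langle h,x_j\rangle\tau_j$ is positive and invertible. Summing the hypothesis $\langle h,x_j\rangle\tau_j=\langle h,\tau_j\rangle x_j$ over $j$ gives $S_{x,\tau}=S_{\tau,x}$, and since $S_{x,\tau}^*=S_{\tau,x}$, the operator $S_{x,\tau}$ is self-adjoint; moreover $\langle S_{x,\tau}h,h\rangle=\sum_{j=1}^n\langle h,x_j\rangle\langle\tau_j,h\rangle\ge0$ because each summand is nonnegative by hypothesis, so $S_{x,\tau}$ is positive. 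Consequently being a frame is equivalent to $S_{x,\tau}$ having trivial kernel, i.e. to $\langle S_{x,\tau}h,h\rangle>0$ for every $h\ne0$.

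The decisive step is the termwise nonnegativity: since $\langle S_{x,\tau}h,h\rangle$ is a sum of the nonnegative numbers $\langle h,x_j\rangle\langle\tau_j,h\rangle$, it vanishes for some $h$ exactly when every summand vanishes, that is, when for each index $j$ one has $h\perp x_j$ or $h\perp\tau_j$. I would run the whole equivalence by contraposition. If $S_{x,\tau}$ is not invertible, choose $h\ne0$ in its kernel; for each $j$ place $j$ in $R$ when $\langle h,x_j\rangle=0$ and in $S$ otherwise, in which case $\langle\tau_j,h\rangle=0$. This produces a partition $\{1,\dots,n\}=R\sqcup S$ for which $h$ is orthogonal to $\{x_r\}_{r\in R}\cup\{\tau_s\}_{s\in S}$, so this transversal of $n$ vectors fails to span $\mathcal{H}$, contradicting the span hypothesis.

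Conversely, if the span condition fails there is a partition $R\sqcup S$ of $\{1,\dots,n\}$ and a nonzero $h$ orthogonal to $\{x_r\}_{r\in R}\cup\{\tau_s\}_{s\in S}$; then each term $\langle h,x_j\rangle\langle\tau_j,h\rangle$ is zero (for $j\in R$ because $\langle h,x_j\rangle=0$, for $j\in S$ because $\langle\tau_j,h\rangle=0$), whence $\langle S_{x,\tau}h,h\rangle=0$ with $h\ne0$, so $S_{x,\tau}$ is not invertible. I would note that the bookkeeping matches perfectly: every selection in the statement, having $k+l=n$ disjoint indices, is precisely a transversal choosing either $x_j$ or $\tau_j$ for each $j$, which is exactly the kind of partition produced above.

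The only genuinely delicate point is the equivalence between \emph{a sum of nonnegative terms vanishes} and \emph{there exists a spanning-defeating partition}; this is where the hypothesis $\langle h,x_j\rangle\langle\tau_j,h\rangle\ge0$ does all the work, preventing cancellation from concealing a nonzero vector in the kernel. I would take care to verify that the sign hypothesis is used termwise rather than merely for the total sum, and that the partition constructed really satisfies $R\cap S=\emptyset$ and $|R|+|S|=n$, which holds because each index is assigned to exactly one block.
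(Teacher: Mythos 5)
Your argument is correct, and the combinatorial core (a sum of nonnegative terms vanishes iff every term vanishes, iff for each $j$ one has $h\perp x_j$ or $h\perp\tau_j$, iff some transversal of $n$ vectors is annihilated by $h$) is exactly the mechanism the paper uses; where you differ is in how you pass from ``not a frame'' to ``the quadratic form vanishes at some nonzero $h$.'' The paper's first proof of the reverse implication minimizes $\phi(h)=\sum_{j=1}^n\langle h,x_j\rangle\langle\tau_j,h\rangle$ over the compact unit sphere and shows the minimum cannot be zero; its second proof takes a normalized minimizing sequence, extracts a convergent subsequence, and passes to the limit. You instead observe that $S_{x,\tau}$ is self-adjoint (from $S_{x,\tau}=S_{\tau,x}=S_{x,\tau}^*$) and positive semi-definite (termwise), so that in finite dimensions invertibility is equivalent to injectivity, and $\langle S_{x,\tau}h,h\rangle=\|S_{x,\tau}^{1/2}h\|^2=0$ forces $S_{x,\tau}h=0$; this replaces the compactness and subsequence arguments with a one-line spectral identity and folds both implications into a single equivalence between $\ker S_{x,\tau}\neq\{0\}$ and the existence of a spanning-defeating transversal. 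The trade-off is that your route leans on the square root of a positive operator (harmless here, and already used elsewhere in the paper), while the paper's route works directly with the frame inequality and so generalizes more readily to settings where one does not first establish self-adjointness; but in the finite-dimensional statement at hand your version is the more economical one, and it makes visibly explicit that the termwise sign hypothesis $\langle h,x_j\rangle\langle\tau_j,h\rangle\geq 0$ is what forbids cancellation, which is the same point the paper makes only implicitly inside its minimization argument.
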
 
\begin{proof} We can assume $\mathcal{H}$ is not trivial.
	
$(\Leftarrow)$ Proof 1. There exists $1\leq  j\leq n$  such that $x_j\neq0\neq\tau_j$ (otherwise $ S_{x, \tau}$ is zero). We note that $ S_{x, \tau}$ is self-adjoint and the upper frame bound condition is satisfied. In fact, $\sum_{j=1}^{n}\langle h, x_j\rangle \langle\tau_j, h\rangle \leq (\sum_{j=1}^{n}\|x_j\|\|\tau_j\|)\|h\|^2,  \forall h \in  \mathcal{H}.$ Define $ \phi :  \mathcal{H} \ni h \mapsto  \sum_{j=1}^{n}\langle h, x_j\rangle \langle\tau_j, h\rangle \in \mathbb{R}.$ Then $ \phi$ is continuous. Since the unit sphere of  $\mathcal{H}$ is compact, there exists $ g \in  \mathcal{H}$ with $ \|g\|=1$ such that $ a\coloneqq\sum_{j=1}^{n}\langle g, x_j\rangle \langle\tau_j, g\rangle=\inf\{\sum_{j=1}^{n}\langle h, x_j\rangle \langle\tau_j, h\rangle: h \in\mathcal{H}, \|h\|=1 \}.$ We wish to say $ a>0.$ If this is false:  since $\langle g, x_j\rangle \langle\tau_j, g\rangle\geq0, \forall j  $ we must have $ \langle g, x_j\rangle \langle\tau_j, g\rangle=0, \forall j .$ Let $ x_{r_1},...,x_{r_k}$ be the only $x_j$'s such that $\langle g, x_j\rangle=0.$ Then there exist $ \tau_{s_1},...,\tau_{s_l}$ such that $ \langle g, \tau_{s_1}\rangle=\cdots=\langle g, \tau_{s_l}\rangle =0$, $ k+l=n$ and $\{r_1,...,r_k\} \cap \{s_1,...,s_l\}=\emptyset$. But then $ g \perp \operatorname{span}\{x_{r_1},...,x_{r_k}, \tau_{s_1},...,\tau_{s_l}\}= \mathcal{H}$ which implies $ g=0$ which is forbidden. Now the argument is:  $ a$ is lower frame bound. Indeed, for a nonzero $ h$, $ a\|h\|^2\leq(\sum_{j=1}^{n}\langle \frac{h}{\|h\|}, x_j\rangle \langle\tau_j, \frac{h}{\|h\|}\rangle)\|h\|^2=\sum_{j=1}^{n}\langle h, x_j\rangle \langle\tau_j, h\rangle. $
	
Proof 2. We prove by contrapositive. Suppose $ (\{x_j\}_{j=1}^n , \{\tau_j\}_{j=1}^n)$ is not a frame for $ \mathcal{H}.$ Then lower frame bound condition is violated. Then for each $ m \in \mathbb{N}$, there exists $ y_m \in \mathcal{H}$ such that $\sum_{j=1}^{n}\langle y_m, x_j\rangle \langle\tau_j, y_m\rangle \leq \frac{1}{m}\|y_m\|^2.$ By normalizing and omitting $y_m$'s which are zeros we may assume $ \|y_m\|=1, \forall m.$ Now we take the finite dimensionality of $ \mathcal{H}$ to get a convergent subsequence $ \{y_{m_k}\}_{k=1}^\infty$ of $ \{y_m\}_{m=1}^\infty$ converging  to $y \in \mathcal{H}$, as $ k \rightarrow \infty$. Then $\lim_{k\rightarrow\infty}\sum_{j=1}^{n}\langle y_{m_k}, x_j\rangle \langle\tau_j, y_{m_k}\rangle= \sum_{j=1}^{n}\langle y, x_j\rangle \langle\tau_j, y\rangle=0 $ and $\|y\|=1.$ This gives there exists a set $\{x_{r_1},...,x_{r_k}, \tau_{s_1},...,\tau_{s_l}: r_1,...,r_k,s_1,...,s_l\in \{1, 2, ..., n\}, k+l=n\} $ of $ n$ vectors with  $\{r_1,...,r_k\} \cap \{s_1,...,s_l\}=\emptyset$ and  $ \operatorname{span}\{x_{r_1},...,x_{r_k}, \tau_{s_1},...,\tau_{s_l}\}\subsetneq\mathcal{H}.$
	
$ (\Rightarrow)$ Again contrapositive. Suppose  there exists a set $\{x_{r_1},...,x_{r_k}, \tau_{s_1},...,\tau_{s_l}: r_1,...,r_k,s_1,...,s_l\in \{1, 2, ..., n\}, k+l=n\} $ of $ n$ vectors with  $\{r_1,...,r_k\} \cap \{s_1,...,s_l\}=\emptyset$ and  $ \operatorname{span}\{x_{r_1},...,x_{r_k}, \tau_{s_1},...,\tau_{s_l}\}\subsetneq\mathcal{H}.$ Let $ h \in \mathcal{H}$ be nonzero such that $ h\perp \operatorname{span}\{x_{r_1},...,x_{r_k}, \tau_{s_1},...,\tau_{s_l}\}.$ Now because of  $\{r_1,...,r_k\} \cap \{s_1,...,s_l\}=\emptyset$ and $k+l=n $ we get $ \sum_{j=1}^{n}\langle h, x_j\rangle \langle\tau_j, h\rangle =0 $ which says the lower frame bound condition fails.
\end{proof} 
\begin{proposition}
Let $ (\{x_j\}_{j=1}^n , \{\tau_j\}_{j=1}^n) $ be  a frame for a finite dimensional Hilbert space $\mathcal{H}$ with a lower frame bound  $a>1 $ and $ \|x_j\|=\|\tau_j\|=1, \forall j =1,...,n$. If $ \mathbb{S}$ is any subset of $ \{1, ..., n\}$ such that $\operatorname{Card}(\mathbb{S})<a ,$  then $ (\{x_j\}_{j\notin\mathbb{S}},\{\tau_j\}_{j\notin\mathbb{S}})$ is a frame for $\mathcal{H} $ with lower frame bound $a-\operatorname{Card}(\mathbb{S})$.
\end{proposition}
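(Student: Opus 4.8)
The plan is to read the lower frame bound as a quadratic-form inequality and then estimate how much it can drop when finitely many terms are deleted. In the sequential setting of Definition \ref{SEQUENTIAL2}, saying $(\{x_j\}_{j=1}^n,\{\tau_j\}_{j=1}^n)$ is a frame with lower bound $a$ means precisely that
\[
a\|h\|^2 \le \sum_{j=1}^{n}\langle h, x_j\rangle\langle \tau_j, h\rangle ,\quad \forall h\in\mathcal{H},
\]
the middle term being $\langle S_{x,\tau}h,h\rangle$. I would work under the pointwise hypotheses already in force in this finite-dimensional discussion (as in Theorem \ref{FINITEDIMENSIONALCHARATERIZATIONHILBERT}), namely $\langle h,x_j\rangle\tau_j=\langle h,\tau_j\rangle x_j$ and $\langle h, x_j\rangle\langle\tau_j, h\rangle\ge 0$ for all $h$ and $j$; these make each summand a nonnegative real and each elementary operator $h\mapsto\langle h,x_j\rangle\tau_j$ self-adjoint. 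The first step is simply to split the full sum,
\[
\sum_{j\notin\mathbb{S}}\langle h, x_j\rangle\langle \tau_j, h\rangle = \sum_{j=1}^{n}\langle h, x_j\rangle\langle \tau_j, h\rangle - \sum_{j\in\mathbb{S}}\langle h, x_j\rangle\langle \tau_j, h\rangle .
\]

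Next I would bound the deleted block from above. By Cauchy--Schwarz together with the normalization $\|x_j\|=\|\tau_j\|=1$, each deleted term obeys $0\le \langle h, x_j\rangle\langle\tau_j,h\rangle \le \|x_j\|\,\|\tau_j\|\,\|h\|^2 = \|h\|^2$, so that $\sum_{j\in\mathbb{S}}\langle h, x_j\rangle\langle \tau_j, h\rangle \le \operatorname{Card}(\mathbb{S})\,\|h\|^2$. Substituting this and the lower bound for the full frame into the displayed identity gives
\[
\sum_{j\notin\mathbb{S}}\langle h, x_j\rangle\langle \tau_j, h\rangle \ge \big(a-\operatorname{Card}(\mathbb{S})\big)\|h\|^2 ,\quad \forall h\in\mathcal{H},
\]
and by hypothesis $a-\operatorname{Card}(\mathbb{S})>0$. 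This is exactly the desired lower frame bound $a-\operatorname{Card}(\mathbb{S})$.

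It then remains to certify that the reduced pair really is a frame. Since the index set is finite, the restricted analysis operators are automatically well-defined and bounded, so the Bessel conditions (ii) of Definition \ref{SEQUENTIAL2} hold trivially and the reduced frame operator $S'\coloneqq\sum_{j\notin\mathbb{S}}\langle\cdot,x_j\rangle\tau_j$ is a bounded operator on $\mathcal{H}$. The point I expect to be the genuine obstacle is upgrading the scalar estimate to the operator statement that $S'$ is positive and invertible, so that $a-\operatorname{Card}(\mathbb{S})$ is a bona fide lower frame bound rather than a mere numerical inequality. This is where the pointwise symmetry is essential: it forces each summand, and hence each partial sum $\langle S'h,h\rangle$, to be real, and it makes $S'$ self-adjoint term by term; the lower bound above then gives $\langle S'h,h\rangle>0$ for $h\ne 0$, so $S'$ is positive and, being injective on a finite-dimensional space, invertible. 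Consequently $\|S'^{-1}\|^{-1}\ge a-\operatorname{Card}(\mathbb{S})$, and $(\{x_j\}_{j\notin\mathbb{S}},\{\tau_j\}_{j\notin\mathbb{S}})$ is a frame with lower frame bound $a-\operatorname{Card}(\mathbb{S})$, as claimed.
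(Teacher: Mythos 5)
Your argument is correct and follows essentially the same route as the paper's proof: split the full sum over $\mathbb{S}$ and its complement, bound each deleted term by $\|x_j\|\|\tau_j\|\|h\|^2=\|h\|^2$ via Cauchy--Schwarz, and subtract. The additional verification that the reduced frame operator is positive and invertible is a reasonable elaboration of details the paper leaves implicit, but it does not change the underlying argument.
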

\begin{proof}
\begin{align*}
a\|h\|^2&\leq \sum_{j=1}^n\langle h,x_j \rangle\langle \tau_j, h\rangle=\sum_{j\in \mathbb{S}}\langle h,x_j \rangle\langle \tau_j, h\rangle+\sum_{j \notin \mathbb{S}}\langle h,x_j \rangle\langle \tau_j, h\rangle \\
&\leq\sum_{j\in \mathbb{S}}\|h\|^2\|x_j\|\|\tau_j\| +\sum_{j \notin \mathbb{S}}\langle h,x_j \rangle\langle \tau_j,h\rangle=\operatorname{Card}(\mathbb{S})\|h\|^2+\sum_{j \notin \mathbb{S}}\langle h,x_j \rangle\langle \tau_j,h\rangle, ~\forall h \in \mathcal{H}.
\end{align*}
\end{proof}

\begin{theorem}\label{FINITEDIMENSIONALPASTINGTHEOREM}
Let   $ (\{x_j\}_{j=1}^n , \{\tau_j\}_{j=1}^n) $ be  a frame for a finite dimensional complex Hilbert space $ \mathcal{H}$ of dimension $ m$. Then we have the following. 
\begin{enumerate}[\upshape(i)]
\item The optimal lower frame bound (resp. optimal upper frame bound) is the smallest (resp. largest) eigenvalue for $ S_{x, \tau}.$ 
\item If $ \{\lambda_j\}_{j=1}^m$ denote the eigenvalues for $S_{x, \tau},$  each appears as many times as its algebraic multiplicity, then  
$$ \sum_{j=1}^m\lambda_j=\sum_{j=1}^n\langle x_j,\tau_j \rangle =\sum_{j=1}^n\langle \tau_j, x_j \rangle.$$
\item There exist $m-1$ vectors $ \{y_j\}_{j=2}^{m}$ in $ \mathcal{H}$ such that $(\{x_j\}_{j=1}^n \cup\{y_j\}_{j=2}^{m} , \{\tau_j\}_{j=1}^n\cup\{y_j\}_{j=2}^{m})$ is a tight frame for $ \mathcal{H}.$
\item Condition number for  $S_{x, \tau}$ is  equal to the ratio between the optimal upper frame bound and the optimal lower frame bound.
\item If the optimal upper frame bound is $b,$  then 
$$ b\leq\sum_{j=1}^n\langle x_j,\tau_j \rangle =\sum_{j=1}^n\langle \tau_j, x_j \rangle\leq mb. $$
\item \begin{align*}
&\operatorname{Trace}(S_{x,\tau})=\sum_{j=1}^n\langle x_j,\tau_j \rangle=\sum_{j=1}^n\langle \tau_j,x_j \rangle;\\
 &\operatorname{Trace}(S^2_{x,\tau}) =\sum_{j=1}^n\sum_{k=1}^n\langle \tau_j,x_k\rangle \langle \tau_k,x_j\rangle=\sum_{j=1}^n\sum_{k=1}^n\langle \tau_j,\tau_k\rangle \langle x_k,x_j\rangle .
\end{align*}
\item If the frame is tight, then the optimal frame bound $b=\frac{1}{m}\sum_{j=1}^n\langle x_j,\tau_j \rangle=\frac{1}{m}\sum_{j=1}^n\langle \tau_j,x_j \rangle.$ In particular, if $\langle x_j,\tau_j\rangle=1, \forall j=1,...,n,$ then $b=n/m.$ Further, 
$$ h=\frac{1}{b}\sum_{j=1}^n\langle h,x_j \rangle \tau_j=\frac{1}{b}\sum_{j=1}^n\langle h,\tau_j \rangle x_j, ~\forall h \in \mathcal{H} ;~ \|h\|^2=\frac{1}{b}\sum_{j=1}^n\langle h,x_j \rangle \langle \tau_j, h \rangle=\frac{1}{b}\sum_{j=1}^n\langle h,\tau_j \rangle \langle x_j, h \rangle,~\forall h \in \mathcal{H}.$$
\item If the frame is tight, then 
\begin{align*}
\text{(Extended variation formula)}\quad  &\sum_{j=1}^n\sum_{k=1}^n\langle \tau_j,x_k\rangle \langle \tau_k,x_j\rangle =\frac{1}{\dim\mathcal{H}}\left(\sum_{j=1}^n\langle x_j,\tau_j \rangle\right)^2\\
&=\frac{1}{\dim\mathcal{H}}\left(\sum_{j=1}^n\langle \tau_j,x_j \rangle\right)^2 =\sum_{j=1}^n\sum_{k=1}^n\langle \tau_j,\tau_k\rangle \langle x_k,x_j\rangle . 
\end{align*}
\item If the frame is tight with the optimal bound $b$ and is such that $\langle x_1,\tau_1\rangle=\cdots=\langle x_n,\tau_n\rangle$, and $\langle \tau_j,x_k\rangle \langle \tau_k,x_j\rangle $ is constant for all $j\neq k$, $1\leq j,k\leq n$, then 
$$\langle x_j,\tau_j\rangle=\frac{b}{n}\dim\mathcal{H}, ~\forall j, \quad \langle \tau_j,x_k\rangle \langle \tau_k,x_j\rangle= \left(\frac{b}{n}\right)^2\frac{n-\dim\mathcal{H}}{n-1}\dim\mathcal{H}, ~ \forall j, k,  j \neq k .$$
\item If the frame is Parseval, then 
$$\text{(Extended dimension formula)}\quad \quad\dim\mathcal{H}=\sum_{j=1}^n\langle x_j,\tau_j \rangle =\sum_{j=1}^n\langle \tau_j, x_j \rangle. $$
\item If the frame is Parseval, then for every $T \in \mathcal{B}(\mathcal{H}),$
$$ \text{(Extended trace formula)}\quad \quad\operatorname{Trace}(T)=\sum_{j=1}^n\langle Tx_j,\tau_j \rangle =\sum_{j=1}^n\langle T\tau_j, x_j \rangle.  $$
\end{enumerate}
\end{theorem}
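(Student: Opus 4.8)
The whole theorem rests on the single structural fact that, in finite dimensions, the frame operator $S_{x,\tau}$ is a positive invertible self-adjoint operator, so the spectral theorem applies and, moreover, $S_{x,\tau}=S_{\tau,x}$ (self-adjointness forces the symmetry $\sum_j\langle h,x_j\rangle\tau_j=\sum_j\langle h,\tau_j\rangle x_j$). The plan is to first dispatch the purely spectral items, then reduce everything else to two trace computations together with the tight/Parseval normalizations, and finally handle the one genuinely constructive statement. For (i) and (iv) I would invoke Definition~\ref{SEQUENTIAL2}: the optimal lower and upper bounds are $\|S_{x,\tau}^{-1}\|^{-1}$ and $\|S_{x,\tau}\|$, which for a positive operator on a finite-dimensional space are exactly the smallest and largest eigenvalues, and the condition number $\|S_{x,\tau}\|\,\|S_{x,\tau}^{-1}\|$ is then their ratio.

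For the trace identities I would fix an orthonormal basis $\{e_k\}_{k=1}^m$. Expanding $\operatorname{Trace}(S_{x,\tau})=\sum_k\langle S_{x,\tau}e_k,e_k\rangle$ and interchanging the finite sums gives $\sum_{j=1}^n\langle\tau_j,x_j\rangle$; since the trace of a self-adjoint operator is real, it equals its own conjugate $\sum_j\langle x_j,\tau_j\rangle$, which proves (ii) and the first line of (vi). Because $\sum_j\lambda_j=\operatorname{Trace}(S_{x,\tau})$ and every $\lambda_j$ lies in $(0,b]$ with $b=\lambda_{\max}$ the largest eigenvalue, (v) follows at once from $b\le\sum_j\lambda_j\le mb$. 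For $\operatorname{Trace}(S^2_{x,\tau})$ I would compute $S^2_{x,\tau}h$ in two ways, using $S_{x,\tau}\tau_j=\sum_k\langle\tau_j,x_k\rangle\tau_k=\sum_k\langle\tau_j,\tau_k\rangle x_k$ (the second form uses $S_{x,\tau}=S_{\tau,x}$), and then take the trace against $\{e_k\}$ to obtain the two symmetric double sums in (vi).

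The tight and Parseval statements are normalizations of the above. Tightness forces the smallest and largest eigenvalues to coincide, hence $S_{x,\tau}=bI$ by (i); then $\operatorname{Trace}(S_{x,\tau})=mb$ yields (vii), while the reconstruction and norm formulas come from dividing $bh=\sum_j\langle h,x_j\rangle\tau_j$ by $b$ and pairing with $h$. Combining $\operatorname{Trace}(S^2_{x,\tau})=mb^2$ with (vii) gives the variation formula (viii). For (ix) I would split the double sum $\sum_{j,k}\langle\tau_j,x_k\rangle\langle\tau_k,x_j\rangle$ into its diagonal part $\sum_j\langle\tau_j,x_j\rangle^2=nc^2$, where $c=\langle x_j,\tau_j\rangle$ is real by (ii), and its $n(n-1)$ equal off-diagonal terms, then solve the resulting linear equation against the value $\frac1m\bigl(\sum_j\langle x_j,\tau_j\rangle\bigr)^2$ supplied by (viii); substituting $c=bm/n$ produces both displayed constants. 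Parsevalness is the case $b=1$, giving (x) directly. For the trace formula (xi) I would use the resolution $h=\sum_j\langle h,x_j\rangle\tau_j$ to write $\operatorname{Trace}(T)=\operatorname{Trace}(T\,S_{x,\tau})$ as the sum over $j$ of the traces of the rank-one operators $h\mapsto\langle h,x_j\rangle T\tau_j$, each of which equals $\langle T\tau_j,x_j\rangle$; the companion identity follows by the same argument with the roles of $x_j$ and $\tau_j$ exchanged, using $S_{x,\tau}=S_{\tau,x}$.

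The one constructive step, and the part I expect to be the main obstacle, is (iii). Here I would set $b=\lambda_{\max}$ so that $bI-S_{x,\tau}$ is positive semidefinite with nontrivial kernel, hence of rank at most $m-1$. Its spectral decomposition expresses it as a sum of at most $m-1$ rank-one positive operators of the form $h\mapsto\langle h,y_j\rangle y_j$; padding with zero vectors up to exactly $m-1$ terms and appending each $y_j$ to both families, the frame operator of the enlarged pair becomes $S_{x,\tau}+(bI-S_{x,\tau})=bI$, which is tight. The care needed lies in the rank bound, in matching the prescribed count $m-1$, and in noting that each appended pair $(y_j,y_j)$ satisfies the required symmetry and positivity conditions trivially.
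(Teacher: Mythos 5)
Your proposal is correct and follows essentially the same route as the paper: spectral theorem for (i), (iv); trace computations against an orthonormal basis for (ii), (v), (vi); and the tight/Parseval normalizations for (vii)--(xi). For (iii) the paper writes the completion explicitly as $y_j=\sqrt{\lambda_1-\lambda_j}e_j$ in an eigenbasis, which is exactly the spectral decomposition of $bI-S_{x,\tau}$ into rank-one positives that you describe, so the two arguments coincide.
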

\begin{proof}
\begin{enumerate}[\upshape(i)]
\item Using spectral theorem,  $\mathcal{H}$ has an orthonormal basis $ \{e_j\}_{j=1}^m$ consisting of eigenvectors for $S_{x, \tau}.$ Let $\{\lambda_j\}_{j=1}^m $ denote the corresponding eigenvalues. Then $S_{x, \tau}h=\sum_{j=1}^m\langle h, e_j \rangle S_{x, \tau}e_j=\sum_{j=1}^m\lambda_j\langle h, e_j \rangle e_j, \forall h \in \mathcal{H}.$ Since  $S_{x, \tau}$ is positive invertible, $ \lambda_j>0, \forall j =1, ..., n.$ Therefore 
\begin{align*}
 \min\{\lambda_j\}_{j=1}^m \|h\|^2 \leq \sum_{j=1}^m\lambda_j|\langle h, e_j \rangle|^2 =\langle S_{x, \tau}h,h \rangle = \sum_{j=1}^n\langle h, x_j \rangle \langle \tau_j, h \rangle\leq \max\{\lambda_j\}_{j=1}^m\|h\|^2 ,\forall h \in \mathcal{H}.
\end{align*}
To get optimal frame bounds we take eigenvectors corresponding to $\min\{\lambda_j\}_{j=1}^m $ and $\max\{\lambda_j\}_{j=1}^m.$
\item $ \sum_{j=1}^m\lambda_j= \sum_{j=1}^m\lambda_j\|e_j\|^2 =\sum_{j=1}^m\langle S_{x, \tau}e_j,e_j \rangle =\sum_{j=1}^m \sum_{k=1}^n\langle e_j, x_k \rangle \langle \tau_k, e_j \rangle 
=\sum_{k=1}^n\sum_{j=1}^m\langle e_j, x_k \rangle \langle \tau_k, e_j \rangle =\sum_{k=1}^n\langle x_k,\tau_k \rangle .$ Since $S_{x, \tau}=S_{\tau, x} $, we get  $\sum_{j=1}^m\lambda_j=\sum_{j=1}^n\langle \tau_j, x_j \rangle.$
\item Let  $ \{e_j\}_{j=1}^m$ and $\{\lambda_j\}_{j=1}^m $ be as in (i). We may assume $ \lambda_1\geq \cdots \geq \lambda_m$. Define $ y_j\coloneqq\sqrt{\lambda_1-\lambda_j}e_j, j=2,...,m$. To show $(\{x_j\}_{j=1}^n \cup\{y_j\}_{j=2}^{m} , \{\tau_j\}_{j=1}^n\cup\{y_j\}_{j=2}^{m})$ is a tight frame for $ \mathcal{H}$: $ \sum_{j=1}^n\langle h, x_j \rangle\langle \tau_j, h\rangle+\sum_{j=2}^m\langle h, y_j \rangle\langle y_j, h \rangle=\langle S_{x,\tau}h,h \rangle+\sum_{j=2}^m (\lambda_1-\lambda_j)|\langle h, e_j \rangle|^2=\sum_{j=1}^m\lambda_j|\langle h, e_j \rangle|^2+\sum_{j=2}^m (\lambda_1-\lambda_j)|\langle h, e_j \rangle|^2=\lambda_1|\langle h, e_1 \rangle|^2+\lambda_1\sum_{j=2}^m |\langle h, e_j \rangle|^2= \lambda_1\sum_{j=1}^m |\langle h, e_j \rangle|^2=\lambda_1\|h\|^2,\forall h \in \mathcal{H}.$
\item This follows from (i).
\item As in (iii), we may assume $ \lambda_1\geq \cdots \geq \lambda_m$. Then (i) gives $b=\lambda_1.$ Now  use  (ii): $ b=\lambda_1 \leq \sum_{j=1}^m\lambda_j=\sum_{j=1}^n\langle x_j,\tau_j \rangle =\sum_{j=1}^n\langle \tau_j, x_j \rangle \leq bm=\lambda_1m .$
\item Let $ \{f_j\}_{j=1}^m$ be an orthonormal basis for $\mathcal{H}$. Then  $$\operatorname{Trace}(S_{x,\tau})=\sum_{k=1}^m\langle S_{x,\tau}f_k,f_k \rangle=\sum_{k=1}^m\left\langle\sum_{j=1}^n\langle f_k, x_j\rangle \tau_j ,f_k \right\rangle=\sum_{j=1}^n\sum_{k=1}^m\langle f_k, x_j\rangle \langle \tau_j ,f_k \rangle =\sum_{j=1}^n\langle \tau_j, x_j\rangle,  $$
and 
\begin{align*}
\operatorname{Trace}(S^2_{x,\tau})&=\sum_{l=1}^m\langle S_{x,\tau}f_l,S_{x,\tau}f_l \rangle=\sum_{l=1}^m\left\langle \sum_{j=1}^n\langle f_l, x_j\rangle \tau_j, \sum_{k=1}^n\langle f_l, \tau_k\rangle  x_k\right \rangle\\ &=\sum_{j=1}^n\sum_{k=1}^n\langle \tau_j,x_k \rangle \sum_{l=1}^m\langle f_l, x_j\rangle\langle \tau_k,f_l\rangle=\sum_{j=1}^n\sum_{k=1}^n\langle \tau_j,x_k \rangle \langle \tau_k,x_j \rangle,
\end{align*} 
\begin{align*}
\operatorname{Trace}(S^2_{x,\tau})&=\sum_{l=1}^m\langle S_{x,\tau}f_l,S_{x,\tau}f_l \rangle=\sum_{l=1}^m\left\langle \sum_{j=1}^n\langle f_l, x_j\rangle \tau_j, \sum_{k=1}^n\langle f_l, x_k\rangle  \tau_k\right \rangle\\ &=\sum_{j=1}^n\sum_{k=1}^n\langle \tau_j,\tau_k \rangle \sum_{l=1}^m\langle f_l, x_j\rangle\langle x_k,f_l\rangle=\sum_{j=1}^n\sum_{k=1}^n\langle \tau_j,\tau_k \rangle \langle x_k,x_j \rangle.
\end{align*} 
\item Now $S_{x, \tau}=\lambda I_{\mathcal{H}}, $ for some  positive $ \lambda.$ This gives $ \lambda_1=\cdots=\lambda_m=\lambda=b.$ From (ii) we get the conclusions.
\item Let the  optimal frame bound be $b$. From (vi) and (vii), 
\begin{align*}
\sum_{j=1}^n\sum_{k=1}^n\langle \tau_j,\tau_k\rangle \langle x_k,x_j\rangle&=\sum_{j=1}^n\sum_{k=1}^n\langle \tau_j,x_k\rangle \langle \tau_k,x_j\rangle= \operatorname{Trace}(S^2_{x,\tau})\\
&= \operatorname{Trace}(b^2I_\mathcal{H})=b^2m=\left(\frac{1}{m}\sum_{j=1}^n\langle x_j, \tau_j \rangle\right)^2m .
\end{align*}
\item From (vii), $b=n\langle x_j,\tau_j \rangle/{\dim\mathcal{H}}, \forall j$. Using this in (viii)
\begin{align*}
b^2\dim\mathcal{H}&=\sum_{j=1}^n\sum_{k=1}^n\langle \tau_j,x_k\rangle \langle \tau_k,x_j\rangle=\sum_{j=1}^n\langle \tau_j,x_j\rangle \langle \tau_j,x_j\rangle+\sum_{j,k=1,j\neq k}^n\langle \tau_j,x_k\rangle \langle \tau_k,x_j\rangle\\
&=\frac{b^2(\dim\mathcal{H})^2}{n^2}n+\langle \tau_j,x_k\rangle \langle \tau_k,x_j\rangle(n^2-n),\forall j \neq k, \text{which gives the answer}.
\end{align*}
\item $\dim\mathcal{H}=\sum_{j=1}^m\|e_j\|^2 =\sum_{j=1}^m\sum_{k=1}^n\langle e_j,  x_k \rangle\langle  \tau_k, e_j \rangle =\sum_{j=1}^m\sum_{k=1}^n\langle e_j,  \tau_k \rangle\langle  x_k, e_j \rangle =\sum_{k=1}^n\sum_{j=1}^m\langle e_j,  x_k \rangle\langle  \tau_k, e_j \rangle=\sum_{k=1}^n\sum_{j=1}^m\langle e_j,  \tau_k \rangle\langle  x_k, e_j \rangle=\sum_{k=1}^n\langle x_k,\tau_k \rangle =\sum_{k=1}^n\langle \tau_k, x_k \rangle.$
\item Let $ \{f_j\}_{j=1}^m$ be an orthonormal basis for $\mathcal{H}$. Then  $\operatorname{Trace}(T)=\sum_{j=1}^m\langle Tf_j,f_j \rangle = \sum_{j=1}^m\langle \sum_{k=1}^n\langle Tf_j,x_k \rangle \tau_k,f_j \rangle=\sum_{k=1}^n\sum_{j=1}^m\langle \tau_k,f_j \rangle\langle Tf_j,x_k \rangle=\sum_{k=1}^n\sum_{j=1}^m\langle \tau_k,f_j \rangle\langle f_j,T^*x_k \rangle=\sum_{k=1}^n\langle \tau_k,T^*x_k \rangle=\sum_{k=1}^n\langle T\tau_k,x_k \rangle$. Similarly by using $Tf_j=\sum_{k=1}^n\langle Tf_j,\tau_k \rangle x_k $, we get $\operatorname{Trace}(T) =\sum_{k=1}^n\langle Tx_k, \tau_k \rangle. $
\end{enumerate}	
\end{proof}
\begin{remark}
Trace formula even holds in infinite dimensions, namely - 
\begin{proposition}
Let $(\{x_j\}_{j\in \mathbb{J}} , \{\tau_j\}_{j\in \mathbb{J}})$ be a  frame for $\mathcal{H}$. Then 
\begin{enumerate}[\upshape(i)]
\item $\operatorname{Trace}(S_{x,\tau})=\sum_{j\in \mathbb{J}}\langle x_j,\tau_j \rangle=\sum_{j\in \mathbb{J}}\langle \tau_j,x_j \rangle$;\\
 $ \operatorname{Trace}(S^2_{x,\tau}) =\sum_{j\in\mathbb{J}}\sum_{k\in \mathbb{J}}\langle \tau_j,x_k\rangle \langle \tau_k,x_j\rangle=\sum_{j\in \mathbb{J}}\sum_{k\in \mathbb{J}}\langle \tau_j,\tau_k\rangle \langle x_k,x_j\rangle.$
\item  If the frame is Parseval, and   $ T \in \mathcal{B}(\mathcal{H}),$ then 
$\operatorname{Trace}(T)=\sum_{j\in\mathbb{J}}\langle Tx_j,\tau_j \rangle =\sum_{j\in\mathbb{J}}\langle T\tau_j, x_j \rangle. $
\end{enumerate}
\end{proposition}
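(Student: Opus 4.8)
The plan is to prove both parts by expanding the trace against a fixed orthonormal basis and transporting the computation onto $\ell^2(\mathbb{J})$ through the analysis and synthesis operators, exactly as in the finite-dimensional parts (vi) and (xi) of Theorem \ref{FINITEDIMENSIONALPASTINGTHEOREM}; the only genuinely new ingredient is controlling convergence in infinitely many variables.

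For part (ii), which is the cleaner of the two, I would avoid double sums altogether and argue operator-theoretically. The sequential counterpart of Proposition \ref{2.2} gives $S_{x,\tau}=\theta_\tau^*\theta_x=\theta_x^*\theta_\tau$, and Parseval means $S_{x,\tau}=I_\mathcal{H}$, so both $\theta_\tau^*\theta_x=I_\mathcal{H}$ and $\theta_x^*\theta_\tau=I_\mathcal{H}$ hold. For trace-class $T$, I would insert the identity and use cyclicity of the trace: $\operatorname{Trace}(T)=\operatorname{Trace}(T\theta_\tau^*\theta_x)=\operatorname{Trace}(\theta_x T\theta_\tau^*)$, where $\theta_x T\theta_\tau^*$ is trace-class on $\ell^2(\mathbb{J})$ (a trace-class operator sandwiched between bounded operators). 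Evaluating its trace in the standard basis $\{e_j\}_{j\in\mathbb{J}}$ and using $\theta_x^*e_j=x_j$, $\theta_\tau^*e_j=\tau_j$ gives $\operatorname{Trace}(\theta_x T\theta_\tau^*)=\sum_{j\in\mathbb{J}}\langle T\tau_j,x_j\rangle$, while the symmetric choice $\operatorname{Trace}(T)=\operatorname{Trace}(\theta_\tau T\theta_x^*)$ yields $\sum_{j\in\mathbb{J}}\langle Tx_j,\tau_j\rangle$. No convergence issue arises here, since the diagonal of a trace-class operator against any orthonormal basis is absolutely summable.

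For part (i) the same transport applies with $T$ replaced by the identity: formally $\operatorname{Trace}(S_{x,\tau})=\operatorname{Trace}(\theta_\tau^*\theta_x)$ and, since $S_{x,\tau}$ is self-adjoint, $\operatorname{Trace}(S^2_{x,\tau})=\operatorname{Trace}(S_{x,\tau}^*S_{x,\tau})=\sum_l\|S_{x,\tau}f_l\|^2$ for an orthonormal basis $\{f_l\}_{l\in\mathbb{L}}$. Concretely I would write $\operatorname{Trace}(S_{x,\tau})=\sum_l\langle S_{x,\tau}f_l,f_l\rangle$, substitute the two reconstructions $S_{x,\tau}f_l=\sum_j\langle f_l,x_j\rangle\tau_j=\sum_k\langle f_l,\tau_k\rangle x_k$, interchange the order of summation, and collapse the inner sum over $\mathbb{L}$ by the Parseval identity $\sum_l\langle f_l,x_j\rangle\langle\tau_j,f_l\rangle=\langle\tau_j,x_j\rangle$ (with its $S^2$ analogue), reproducing $\sum_j\langle\tau_j,x_j\rangle$ and $\sum_{j,k}\langle\tau_j,x_k\rangle\langle\tau_k,x_j\rangle=\sum_{j,k}\langle\tau_j,\tau_k\rangle\langle x_k,x_j\rangle$.

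The main obstacle is exactly this interchange of summation, since in general it is not backed by absolute convergence: a frame may have $\sum_j\|x_j\|^2=\infty$, so the double family $\{\langle f_l,x_j\rangle\langle\tau_j,f_l\rangle\}_{(l,j)}$ need not be absolutely summable, and indeed $\operatorname{Trace}(S_{x,\tau})$ is typically $+\infty$ when $\dim\mathcal{H}=\infty$ because $S_{x,\tau}\ge aI_\mathcal{H}$. I would therefore read the equalities in (i) as identities in $[0,\infty]$ and justify the rearrangement by Tonelli after organizing the $S^2_{x,\tau}$ trace as the genuinely nonnegative family $\sum_l\|S_{x,\tau}f_l\|^2$ expanded in a single resolution; when the relevant trace is finite, $S_{x,\tau}$ (resp. $T$ in (ii)) is Hilbert--Schmidt (resp. trace-class), the double families become square-summable, Cauchy--Schwarz makes Fubini legitimate, and the finite-dimensional manipulation of Theorem \ref{FINITEDIMENSIONALPASTINGTHEOREM} carries over verbatim.
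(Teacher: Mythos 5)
The paper states this proposition inside a remark and supplies no proof of its own, so the only thing to measure your argument against is the finite-dimensional computation in Theorem \ref{FINITEDIMENSIONALPASTINGTHEOREM} (vi) and (xi), which is exactly the template you are following. Your treatment of part (ii) is correct and in fact improves on that template: once you read ``$\operatorname{Trace}(T)$'' as implicitly requiring $T$ trace-class (which is the only sensible reading of the statement in infinite dimensions), the identity $\operatorname{Trace}(T)=\operatorname{Trace}\bigl((T\theta_\tau^*)\theta_x\bigr)=\operatorname{Trace}\bigl(\theta_x(T\theta_\tau^*)\bigr)$ is legitimate cyclicity of the trace for a trace-class factor against a bounded one, the diagonal of the trace-class operator $\theta_xT\theta_\tau^*$ against $\{e_j\}_{j\in\mathbb{J}}$ is absolutely summable, and $\langle \theta_xT\theta_\tau^*e_j,e_j\rangle=\langle T\tau_j,x_j\rangle$ follows from $\theta_x^*e_j=x_j$, $\theta_\tau^*e_j=\tau_j$. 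This avoids the double-sum interchange that the paper's finite-dimensional proof relies on, which is precisely the step that does not survive the passage to infinite dimensions.

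Part (i), however, has a genuine gap. Your proposed justification of the interchange $\sum_l\sum_j=\sum_j\sum_l$ is Tonelli, but Tonelli requires a nonnegative family, and the summands $\langle f_l,x_j\rangle\langle\tau_j,f_l\rangle$ (and likewise $\langle\tau_j,x_k\rangle\langle\tau_k,x_j\rangle$ for the $S^2$ identity) carry no sign in general; only the collapsed outer terms $\langle S_{x,\tau}f_l,f_l\rangle$ and $\|S_{x,\tau}f_l\|^2$ are nonnegative. Fubini is also unavailable, as you note, since $\sum_j\|x_j\|\,\|\tau_j\|$ need not converge. Your fallback --- ``when the relevant trace is finite the operator is Hilbert--Schmidt/trace-class and Fubini applies'' --- is vacuous here: $S_{x,\tau}\geq aI_\mathcal{H}$ with $a>0$ forces $\operatorname{Trace}(S_{x,\tau})=\operatorname{Trace}(S_{x,\tau}^2)=+\infty$ whenever $\dim\mathcal{H}=\infty$, so the finite case never occurs outside the already-proved finite-dimensional theorem. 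What remains to be shown in infinite dimensions is that the right-hand sides, which are conditionally defined iterated sums of complex numbers with no sign structure, ``equal $+\infty$'' in some meaningful sense, and your argument does not establish this (nor is it clear the statement is true without an added positivity hypothesis such as $\langle x_j,\tau_j\rangle\geq 0$, the hypothesis under which the paper's subsequent finite-dimensionality criterion operates). Either restrict (i) to the finite-dimensional case, where it is already Theorem \ref{FINITEDIMENSIONALPASTINGTHEOREM} (vi), or add a hypothesis that makes the double family absolutely or nonnegatively summable before invoking Fubini or Tonelli.
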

\end{remark}
\begin{proposition}
Let $G$ be a finite group, $(x,\tau)$ be a frame generator for a finite  dimensional complex Hilbert space $ \mathcal{H}$ whose generated frame has bounds $a$ and $b$.  Then  
$$ a \leq \frac{\operatorname{order}(G)}{\operatorname{dim}\mathcal{H}}\langle x,\tau \rangle=\frac{\operatorname{order}(G)}{\operatorname{dim}\mathcal{H}}\langle \tau, x \rangle \leq b.$$
In particular, if the frame is tight with the optimal bound $c$, then $\langle x,\tau \rangle=\langle\tau,x \rangle=c\operatorname{dim}\mathcal{H}/\operatorname{order}(G). $
\end{proposition}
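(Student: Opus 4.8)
The plan is to reduce everything to a trace computation for the frame operator $S_{x,\tau}$ of the generated frame $(\{x_g=\pi_gx\}_{g\in G},\{\tau_g=\pi_g\tau\}_{g\in G})$, and then to sandwich that trace between $a\dim\mathcal{H}$ and $b\dim\mathcal{H}$ using the frame inequalities. First I would invoke the trace formula in Theorem \ref{FINITEDIMENSIONALPASTINGTHEOREM}, which gives
\[
\operatorname{Trace}(S_{x,\tau})=\sum_{g\in G}\langle x_g,\tau_g\rangle=\sum_{g\in G}\langle \tau_g,x_g\rangle .
\]
The key observation is that each summand is generator-independent: since $\pi_g$ is unitary, $\langle x_g,\tau_g\rangle=\langle \pi_gx,\pi_g\tau\rangle=\langle x,\tau\rangle$ for every $g\in G$, and likewise $\langle \tau_g,x_g\rangle=\langle \tau,x\rangle$. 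Summing over the $\operatorname{order}(G)$ group elements yields
\[
\operatorname{Trace}(S_{x,\tau})=\operatorname{order}(G)\,\langle x,\tau\rangle=\operatorname{order}(G)\,\langle \tau,x\rangle .
\]
Because $S_{x,\tau}$ is positive invertible its trace is a positive real number, so this double equality already forces $\langle x,\tau\rangle=\langle\tau,x\rangle$, establishing the middle equality in the claim.

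Next I would exploit the frame bounds. By definition $a$ and $b$ satisfy $aI_\mathcal{H}\le S_{x,\tau}\le bI_\mathcal{H}$, so writing $\operatorname{Trace}(S_{x,\tau})=\sum_{j=1}^{m}\langle S_{x,\tau}e_j,e_j\rangle$ for any orthonormal basis $\{e_j\}_{j=1}^{m}$ of $\mathcal{H}$ (with $m=\dim\mathcal{H}$), each term lies in $[a,b]$; equivalently one may use Theorem \ref{FINITEDIMENSIONALPASTINGTHEOREM}, where the eigenvalues $\lambda_1,\dots,\lambda_m$ of $S_{x,\tau}$ all lie between $a$ and $b$ and sum to the trace. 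Either way,
\[
a\,\dim\mathcal{H}\le \operatorname{Trace}(S_{x,\tau})\le b\,\dim\mathcal{H}.
\]
Substituting $\operatorname{Trace}(S_{x,\tau})=\operatorname{order}(G)\langle x,\tau\rangle$ and dividing by $\dim\mathcal{H}$ gives exactly
\[
a\le \frac{\operatorname{order}(G)}{\dim\mathcal{H}}\langle x,\tau\rangle\le b,
\]
as required. For the final assertion, if the frame is tight with optimal bound $c$ then $a=b=c$, so the displayed chain collapses to equalities and $\tfrac{\operatorname{order}(G)}{\dim\mathcal{H}}\langle x,\tau\rangle=c$, i.e. $\langle x,\tau\rangle=\langle\tau,x\rangle=c\,\dim\mathcal{H}/\operatorname{order}(G)$; alternatively one notes $S_{x,\tau}=cI_\mathcal{H}$ directly forces $\operatorname{Trace}(S_{x,\tau})=c\dim\mathcal{H}$.

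There is no serious obstacle here: the only point requiring care is that $a$ and $b$ are merely assumed to be frame bounds, not the optimal ones, so I would not rely on the identification of optimal bounds with extreme eigenvalues but instead on the monotonicity of the trace under the operator inequality $aI_\mathcal{H}\le S_{x,\tau}\le bI_\mathcal{H}$. The conceptual heart of the argument is the unitary-invariance step $\langle \pi_gx,\pi_g\tau\rangle=\langle x,\tau\rangle$, which trivializes the trace of the group-generated frame operator.
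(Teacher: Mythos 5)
Your proof is correct and follows essentially the same route as the paper's: both reduce the claim to computing $\operatorname{Trace}(S_{x,\tau})=\operatorname{order}(G)\langle x,\tau\rangle$ via the unitary-invariance identity $\langle \pi_gx,\pi_g\tau\rangle=\langle x,\tau\rangle$ and then sandwich the trace between $a\dim\mathcal{H}$ and $b\dim\mathcal{H}$ using the eigenvalue (or operator-inequality) bounds from Theorem \ref{FINITEDIMENSIONALPASTINGTHEOREM}. The only cosmetic difference is that the paper writes $S_{x,\tau}=\sum_{g\in G}T_g$ with $T_g h=\langle h,\pi_g\tau\rangle\pi_gx$ and traces each summand, whereas you invoke the already-proved trace formula directly; these are the same computation.
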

\begin{proof}
Let $\operatorname{dim}\mathcal{H}=m$, $\{\lambda_j\}_{j=1}^m $ be eigenvalues for $S_{x,\tau}$. Define $T_g:\mathcal{H} \ni h \mapsto \langle h,\pi_g\tau \rangle \pi_gx \in \mathcal{H}, \forall g \in G $. Then $\operatorname{Trace}(T_g) = \langle \pi_gx,\pi_g\tau \rangle=\langle x,\tau \rangle, \forall g \in G$, and $S_{x,\tau}=\sum_{g\in G}T_g $. From Theorem \ref{FINITEDIMENSIONALPASTINGTHEOREM}, $a\leq \min\{\lambda_j\}_{j=1}^m\leq \max\{\lambda_j\}_{j=1}^m\leq b$. This gives $am\leq \sum_{j=1}^m\lambda_j=\operatorname{Trace}(S_{x,\tau})=\sum_{g\in G}\operatorname{Trace}(T_g)=\sum_{g\in G}\langle x,\tau \rangle\leq bm.$ 
\end{proof}
\begin{theorem}\label{REALTOCOMPLEX}
If a frame $(\{x_j\}_{j=1}^n , \{\tau_j\}_{j=1}^n)$ for $\mathbb{R}^m $ is such that 
$$ \sum_{j=1}^n\langle h, x_j \rangle \langle \tau_j, g \rangle =\sum_{j=1}^n\langle g, x_j \rangle \langle \tau_j, h \rangle, ~\forall h, g \in \mathbb{R}^n,$$
then it is  also a frame for $\mathbb{C}^m. $ Further, if $(\{x_j\}_{j=1}^n , \{\tau_j\}_{j=1}^n) $ is tight (resp. Parseval) frame for $\mathbb{R}^m $, then it is also  a tight (resp. Parseval) frame for $\mathbb{C}^m. $
\end{theorem}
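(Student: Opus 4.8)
The plan is to route everything through the matrix of the frame operator in the standard real orthonormal basis, and to notice that the symmetry hypothesis is exactly what forces that matrix to be symmetric, hence Hermitian when reread over $\mathbb{C}$.

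First I would fix the standard orthonormal basis $\{e_k\}_{k=1}^m$ of $\mathbb{R}^m$ and record the matrix $M$ of the real frame operator $S_{x,\tau}$, namely $M_{lk}=\langle S_{x,\tau}e_k,e_l\rangle=\sum_{j=1}^n\langle e_k,x_j\rangle\langle\tau_j,e_l\rangle=\sum_{j=1}^n(x_j)_k(\tau_j)_l$. Evaluating the hypothesis at $h=e_k$ and $g=e_l$ yields $M_{lk}=M_{kl}$, so $M$ is a real symmetric matrix; this is precisely the self-adjointness of $S_{x,\tau}$ that makes ``positive'' meaningful in the real setting (cf.\ condition \text{\upshape(iii)} in the proposition following Definition \ref{SEQUENTIAL2}). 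Because $(\{x_j\}_{j=1}^n,\{\tau_j\}_{j=1}^n)$ is a frame for $\mathbb{R}^m$, the operator $S_{x,\tau}$, that is $M$, is positive invertible, so all eigenvalues of $M$ are strictly positive reals.

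Next I would compute the frame operator $S_{x,\tau}^{\mathbb{C}}$ over $\mathbb{C}^m$, where $S_{x,\tau}^{\mathbb{C}}h=\sum_{j}\langle h,x_j\rangle_{\mathbb{C}}\,\tau_j$. Since each $x_j$ is real, $\langle h,x_j\rangle_{\mathbb{C}}=\sum_k h_k (x_j)_k$, and reading off the $l$-th coordinate of $S_{x,\tau}^{\mathbb{C}}h$ shows that its representing matrix in the standard basis is again $M$. A real symmetric matrix satisfies $M^{*}=\overline{M}^{\mathsf{T}}=M$, so $S_{x,\tau}^{\mathbb{C}}$ is self-adjoint on $\mathbb{C}^m$; furthermore its characteristic polynomial over $\mathbb{C}$ coincides with that over $\mathbb{R}$, whence its spectrum is the same set of strictly positive reals. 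Thus $S_{x,\tau}^{\mathbb{C}}$ is positive invertible, and because any finite family is automatically Bessel in finite dimensions, condition \text{\upshape(ii)} of Definition \ref{SEQUENTIAL2} comes for free; hence $(\{x_j\}_{j=1}^n,\{\tau_j\}_{j=1}^n)$ is a frame for $\mathbb{C}^m$. For the bounds I would invoke Theorem \ref{FINITEDIMENSIONALPASTINGTHEOREM}\text{\upshape(i)}, which identifies the optimal bounds with the extreme eigenvalues of the frame operator: since $S_{x,\tau}$ and $S_{x,\tau}^{\mathbb{C}}$ share the matrix $M$ and hence its spectrum, the optimal lower and upper bounds are unchanged. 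Tightness over $\mathbb{R}$ means all eigenvalues of $M$ equal a common $c>0$, i.e.\ $M=cI$, a condition insensitive to the field, so tightness (with the same bound) transfers to $\mathbb{C}^m$; Parsevalness is the special case $M=I$.

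The main obstacle, conceptually, is making precise the identification of $S_{x,\tau}^{\mathbb{C}}$ with the complexification of $S_{x,\tau}$: one must verify that the complex sesquilinear frame operator genuinely has the real symmetric matrix $M$ as its representing matrix, which relies crucially on the $x_j,\tau_j$ being real so that complex conjugation is invisible, and then use the field-independence of the characteristic polynomial to carry positivity of the spectrum from $\mathbb{R}$ to $\mathbb{C}$. Everything downstream is routine.
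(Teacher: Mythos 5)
Your proof is correct, but it takes a genuinely different route from the paper's. The paper argues directly with the quadratic form: writing $z=\operatorname{Re}z+i\operatorname{Im}z$ and expanding $\sum_{j=1}^n\langle z,x_j\rangle\langle\tau_j,z\rangle$, it uses the symmetry hypothesis to cancel the cross terms, so that this complex quantity equals $\sum_{j=1}^n\langle \operatorname{Re}z,x_j\rangle\langle\tau_j,\operatorname{Re}z\rangle+\sum_{j=1}^n\langle \operatorname{Im}z,x_j\rangle\langle\tau_j,\operatorname{Im}z\rangle$ and is therefore sandwiched between $a\|z\|^2$ and $b\|z\|^2$ by two applications of the real frame inequality; tightness and Parsevalness follow because the chain of inequalities becomes a chain of equalities. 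You instead pass to the representing matrix $M$ of both frame operators, read the hypothesis as the symmetry of $M$, and transfer positivity and invertibility through the field-independence of the characteristic polynomial of a real symmetric matrix. Both arguments are valid, and both hinge on the same point (the hypothesis is exactly the self-adjointness of $S_{x,\tau}$); your version makes the preservation of the optimal bounds and the Hermitianness of the complex frame operator completely transparent, while the paper's is more elementary and never mentions the spectrum. One small caveat: Theorem \ref{FINITEDIMENSIONALPASTINGTHEOREM} is stated for finite dimensional \emph{complex} Hilbert spaces, so to quote it for the real optimal bounds you should either note that its proof (diagonalization of the frame operator) goes through verbatim for real symmetric matrices, or simply observe directly that for a symmetric positive definite $M$ the optimal bounds are $\lambda_{\min}(M)$ and $\lambda_{\max}(M)$ over either field.
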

\begin{proof}
Let $a, b$  be lower and upper frame bounds, in order. For $ z\in \mathbb{C}^m$ we write $ z=\operatorname{Re}z+i\operatorname{Im}z,\operatorname{Re}z,\operatorname{Im}z \in \mathbb{R}^m.$ Then
\begin{align*}
a\|z\|^2&=a\|\operatorname{Re}z\|^2+a\|\operatorname{Im}z\|^2\leq a\sum\limits_{j=1}^n\langle \operatorname{Re}z, x_j\rangle\langle \tau_j, \operatorname{Re}z\rangle+a\sum\limits_{j=1}^n\langle \operatorname{Im}z, x_j\rangle\langle \tau_j, \operatorname{Im}z\rangle \\
&=a\left(\sum\limits_{j=1}^n\langle \operatorname{Re}z, x_j\rangle\langle \tau_j, \operatorname{Re}z\rangle+i\sum\limits_{j=1}^n\langle \operatorname{Im}z, x_j\rangle\langle \tau_j, \operatorname{Re}z\rangle\right) \\
&\quad-ai\left( \sum\limits_{j=1}^n\langle \operatorname{Re}z, x_j\rangle\langle \tau_j, \operatorname{Im}z\rangle +i\sum\limits_{j=1}^n\langle \operatorname{Im}z, x_j\rangle\langle \tau_j, \operatorname{Im}z\rangle\right)\\
&=a\sum\limits_{j=1}^n\langle \operatorname{Re}z+i\operatorname{Im}z, x_j\rangle\langle \tau_j, \operatorname{Re}z\rangle-ai \sum\limits_{j=1}^n\langle \operatorname{Re}z+i\operatorname{Im}z, x_j\rangle\langle \tau_j, \operatorname{Im}z\rangle\\
&=a\left(\sum\limits_{j=1}^n\langle z,x_j\rangle \langle \tau_j, \operatorname{Re}z\rangle +\sum\limits_{j=1}^n\langle z,x_j\rangle \langle \tau_j, i\operatorname{Im}z\rangle\right)=a\left(\sum\limits_{j=1}^n\langle z,x_j\rangle \langle \tau_j, z\rangle \right)\\
&\leq b\|\operatorname{Re}z\|^2+b\|\operatorname{Im}z\|^2=b\|z\|^2.
\end{align*}
Whenever frame is  tight or Parseval for $ \mathbb{R}^m$, we continuously get  equalities  in  the last chain.
\end{proof}
\begin{theorem}\label{COMPLEXTOREAL}
If  $(\{x_j\}_{j=1}^n , \{\tau_j\}_{j=1}^n)$ is a frame  for  $\mathbb{C}^m $  such that 
$$ \sum_{j=1}^n\langle h, \operatorname{Re}x_j \rangle \langle \operatorname{Im}\tau_j, h \rangle =\sum_{j=1}^n\langle h, \operatorname{Im}x_j \rangle \langle \operatorname{Re}\tau_j, h \rangle, ~\forall h \in \mathbb{C}^m,$$  
then  $(\{\operatorname{Re}x_j\}_{j=1}^n\cup\{\operatorname{Im}x_j\}_{j=1}^n , \{\operatorname{Re}\tau_j\}_{j=1}^n\cup\{\operatorname{Im}\tau_j\}_{j=1}^n)$ is   a frame for $\mathbb{R}^m. $ Further, if $(\{x_j\}_{j=1}^n , \{\tau_j\}_{j=1}^n) $ is tight (resp. Parseval)  for $\mathbb{C}^m $, then  $(\{\operatorname{Re}x_j\}_{j=1}^n\cup\{\operatorname{Im}x_j\}_{j=1}^n , \{\operatorname{Re}\tau_j\}_{j=1}^n\cup\{\operatorname{Im}\tau_j\}_{j=1}^n)$ is  tight (resp. Parseval) for $\mathbb{R}^m. $
\end{theorem}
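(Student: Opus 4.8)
The plan is to verify directly that the combined real collection $(\{\operatorname{Re}x_j\}_{j=1}^n\cup\{\operatorname{Im}x_j\}_{j=1}^n,\{\operatorname{Re}\tau_j\}_{j=1}^n\cup\{\operatorname{Im}\tau_j\}_{j=1}^n)$ satisfies the three conditions of the Proposition following Definition~\ref{SEQUENTIAL2} (the frame inequality, the two Bessel bounds, and the symmetry equality), viewing $\mathbb{R}^m$ as the real vectors inside $\mathbb{C}^m$ so that the norm and the real inner product are preserved under this inclusion. The engine of the proof is a real/imaginary expansion of the complex frame quadratic form. For $h\in\mathbb{R}^m$ and each $j$ one writes $\langle h,x_j\rangle=\langle h,\operatorname{Re}x_j\rangle-i\langle h,\operatorname{Im}x_j\rangle$ and $\langle \tau_j,h\rangle=\langle \operatorname{Re}\tau_j,h\rangle+i\langle \operatorname{Im}\tau_j,h\rangle$, where every inner product on the right is the real Euclidean one; multiplying out and summing gives
$$\sum_{j=1}^n \langle h, x_j\rangle\langle\tau_j, h\rangle = \sum_{j=1}^n\left(\langle h, \operatorname{Re}x_j\rangle\langle\operatorname{Re}\tau_j, h\rangle + \langle h, \operatorname{Im}x_j\rangle\langle\operatorname{Im}\tau_j, h\rangle\right) + i\sum_{j=1}^n\left(\langle h, \operatorname{Re}x_j\rangle\langle\operatorname{Im}\tau_j, h\rangle - \langle h, \operatorname{Im}x_j\rangle\langle\operatorname{Re}\tau_j, h\rangle\right).$$

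First I would invoke the hypothesis, specialized to $h\in\mathbb{R}^m$, to annihilate the imaginary part of this identity. What remains on the right is exactly the quadratic form $\langle Sh,h\rangle$ of the frame operator $S$ of the combined real collection. Thus for every $h\in\mathbb{R}^m$ the real quadratic form coincides with the complex one, and the complex frame bounds $a\|h\|^2\le\sum_j\langle h,x_j\rangle\langle\tau_j,h\rangle\le b\|h\|^2$ (condition (i) of the Proposition for the $\mathbb{C}^m$-frame, specialized to $z=h$) transfer verbatim, giving the frame inequality for the real collection with the same constants. Since $\mathbb{R}^m$ is finite dimensional the two Bessel bounds are automatic, so condition (ii) comes for free.

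The step I expect to be the main obstacle is condition (iii), the symmetry
$$\sum_{j=1}^n\left(\langle h, \operatorname{Re}x_j\rangle\operatorname{Re}\tau_j + \langle h, \operatorname{Im}x_j\rangle\operatorname{Im}\tau_j\right) = \sum_{j=1}^n\left(\langle h, \operatorname{Re}\tau_j\rangle\operatorname{Re}x_j + \langle h, \operatorname{Im}\tau_j\rangle\operatorname{Im}x_j\right),$$
which over $\mathbb{R}$ (unlike over $\mathbb{C}$) cannot be dropped and does not follow from positivity of the quadratic form alone. Here I would use that the given $\mathbb{C}^m$-frame has a positive, hence self-adjoint, frame operator, so that $S_{x,\tau}=S_{\tau,x}$, i.e. $\sum_j\langle h,x_j\rangle\tau_j=\sum_j\langle h,\tau_j\rangle x_j$ for all $h\in\mathbb{C}^m$. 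Restricting this vector identity to $h\in\mathbb{R}^m$ and taking real parts (via the same expansion of $\langle h,x_j\rangle,\langle h,\tau_j\rangle$ and of $x_j=\operatorname{Re}x_j+i\operatorname{Im}x_j$, $\tau_j=\operatorname{Re}\tau_j+i\operatorname{Im}\tau_j$) yields precisely the displayed condition (iii). With (i)--(iii) established, the Proposition following Definition~\ref{SEQUENTIAL2} certifies that the real collection is a frame for $\mathbb{R}^m$.

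Finally, for the tight and Parseval refinements I would observe that tightness of the complex frame means $S_{x,\tau}=c\,I_{\mathbb{C}^m}$, whence $\langle Sh,h\rangle=c\|h\|^2$ for all $h\in\mathbb{R}^m$ by the quadratic-form identification above; combined with the symmetry of $S$ just proved, polarization over $\mathbb{R}$ forces $S=c\,I_{\mathbb{R}^m}$, so the real frame is tight with the same optimal bound, and the Parseval case is the specialization $c=1$. The only genuine care needed throughout is the bookkeeping of conjugate-linearity in the second slot and confirming that the cross terms produced by the expansion are exactly the ones governed by the hypothesis and by the self-adjointness of $S_{x,\tau}$.
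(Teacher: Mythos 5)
Your proof is correct and follows essentially the same route as the paper's: expand $\sum_{j}\langle h,x_j\rangle\langle\tau_j,h\rangle$ for $h\in\mathbb{R}^m$ into real and imaginary parts, use the hypothesis to annihilate the imaginary part, and read off the frame inequality for the combined real collection with the same bounds $a,b$. The one place you go beyond the paper is the explicit verification of the symmetry condition $\sum_{j}\left(\langle h,\operatorname{Re}x_j\rangle\operatorname{Re}\tau_j+\langle h,\operatorname{Im}x_j\rangle\operatorname{Im}\tau_j\right)=\sum_{j}\left(\langle h,\operatorname{Re}\tau_j\rangle\operatorname{Re}x_j+\langle h,\operatorname{Im}\tau_j\rangle\operatorname{Im}x_j\right)$ via taking real parts of $S_{x,\tau}=S_{\tau,x}$ restricted to $\mathbb{R}^m$ --- a point the paper subsumes under ``Other facts are clear'' --- and your argument for it is valid and a worthwhile addition, since over $\mathbb{R}$ positivity of the quadratic form alone does not yield self-adjointness.
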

\begin{proof}
Taking $a, b$  as  lower and upper frame bounds, respectively; since $\mathbb{R}^m $	is inside $\mathbb{C}^m, $ we get $ a\|h\|^2\leq \sum_{j=1}^n\langle h,x_j\rangle \langle \tau_j, h\rangle\leq b\|h\|^2, \forall h \in \mathbb{R}^m.$ We find 

\begin{align*}
&\sum_{j=1}^n\langle h,x_j\rangle \langle \tau_j, h\rangle=\sum_{j=1}^n\langle h,\operatorname{Re}x_j+i\operatorname{Im}x_j\rangle \langle \operatorname{Re}\tau_j+i\operatorname{Im}\tau_j, h\rangle\\
&=\sum_{j=1}^n\langle h,\operatorname{Re}x_j\rangle \langle \operatorname{Re}\tau_j, h\rangle+i\sum_{j=1}^n\langle h,\operatorname{Re}x_j\rangle \langle \operatorname{Im}\tau_j, h\rangle
-i\sum_{j=1}^n\langle h,\operatorname{Im}x_j\rangle \langle \operatorname{Re}\tau_j, h\rangle+\sum_{j=1}^n\langle h,\operatorname{Im}x_j\rangle \langle \operatorname{Im}\tau_j, h\rangle\\
&=\sum_{j=1}^n\langle h,\operatorname{Re}x_j\rangle \langle \operatorname{Re}\tau_j, h\rangle+\sum_{j=1}^n\langle h,\operatorname{Im}x_j\rangle \langle \operatorname{Im}\tau_j, h\rangle, ~\forall h \in \mathbb{R}^m.
\end{align*}
Therefore $ a\|h\|^2\leq \sum_{j=1}^n\langle h,\operatorname{Re}x_j\rangle \langle \operatorname{Re}\tau_j, h\rangle+\sum_{j=1}^n\langle h,\operatorname{Im}x_j\rangle \langle \operatorname{Im}\tau_j, h\rangle\leq b\|h\|^2, \forall h \in \mathbb{R}^m.$
Hence $(\{\operatorname{Re}x_j\}_{j=1}^n\cup\{\operatorname{Im}x_j\}_{j=1}^n , \{\operatorname{Re}\tau_j\}_{j=1}^n\cup\{\operatorname{Im}\tau_j\}_{j=1}^n)$ is a frame for $\mathbb{R}^m$. Other facts are clear.
\end{proof}
\begin{remark}
Number of elements in the resulting frame in Theorem \ref{COMPLEXTOREAL} is twice the number of elements in the original frame.
\end{remark}
\begin{proposition}
Let $(\{x_j\}_{j\in \mathbb{J}} , \{\tau_j\}_{j\in \mathbb{J}})$ be a  frame for $\mathcal{H}.$  Then $\mathcal{H} $ is finite dimensional if and only if 
$$ \sum_{j\in\mathbb{J}}\langle x_j,\tau_j \rangle =\sum_{j\in\mathbb{J}}\langle \tau_j, x_j \rangle<\infty.$$
In particular, if dimension of $\mathcal{H} $ is finite, $\langle x_j,\tau_j \rangle>0,\forall j\in \mathbb{J} $ and $ \inf\{\langle x_j,\tau_j \rangle\}_{j\in \mathbb{J}}$ is  positive, then $ \operatorname{Card}(\mathbb{J})<\infty.$
\end{proposition}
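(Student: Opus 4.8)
The plan is to reduce everything to the trace formula for $S_{x,\tau}$ recorded in the Proposition immediately above, together with the two-sided bound $aI_\mathcal{H}\le S_{x,\tau}\le bI_\mathcal{H}$ that comes for free from the frame definition. First I would note that, by that trace formula, $\sum_{j\in\mathbb{J}}\langle x_j,\tau_j\rangle=\operatorname{Trace}(S_{x,\tau})$, and that this common value is real (it equals $\sum_{j\in\mathbb{J}}\langle\tau_j,x_j\rangle$, hence coincides with its own complex conjugate) and nonnegative, since $S_{x,\tau}$ is a positive operator. Thus the quantity in question is a well-defined element of $[0,\infty]$, and it is precisely the diagonal-sum trace of a positive operator.

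For the forward implication, suppose $\dim\mathcal{H}<\infty$. Then for any orthonormal basis $\{e_i\}_{i=1}^{\dim\mathcal{H}}$ of $\mathcal{H}$ the trace $\operatorname{Trace}(S_{x,\tau})=\sum_i\langle S_{x,\tau}e_i,e_i\rangle$ is a finite sum of finite scalars, so $\sum_{j\in\mathbb{J}}\langle x_j,\tau_j\rangle<\infty$. For the converse, I would fix an arbitrary orthonormal basis $\{e_i\}_{i\in I}$ of $\mathcal{H}$ and, using $S_{x,\tau}\ge aI_\mathcal{H}$ with $a>0$ a lower frame bound, estimate
$$\operatorname{Trace}(S_{x,\tau})=\sum_{i\in I}\langle S_{x,\tau}e_i,e_i\rangle\ge a\sum_{i\in I}\|e_i\|^2=a\,\operatorname{Card}(I)=a\dim\mathcal{H}.$$
Since the hypothesis makes the left-hand side finite and $a>0$, this forces $\dim\mathcal{H}<\infty$, completing the equivalence.

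For the final assertion, assume $\dim\mathcal{H}<\infty$, that $\langle x_j,\tau_j\rangle>0$ for every $j$, and that $c:=\inf_{j\in\mathbb{J}}\langle x_j,\tau_j\rangle>0$. By the forward implication just proved, $\sum_{j\in\mathbb{J}}\langle x_j,\tau_j\rangle<\infty$. On the other hand $\langle x_j,\tau_j\rangle\ge c$ for all $j$, so
$$\sum_{j\in\mathbb{J}}\langle x_j,\tau_j\rangle\ge\sum_{j\in\mathbb{J}}c=c\,\operatorname{Card}(\mathbb{J}).$$
As the left side is finite and $c>0$, I conclude $\operatorname{Card}(\mathbb{J})<\infty$.

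The only delicate point is the legitimacy of these manipulations when $\mathcal{H}$ is a priori infinite dimensional: one must know that $\operatorname{Trace}(S_{x,\tau})=\sum_i\langle S_{x,\tau}e_i,e_i\rangle$ is independent of the chosen orthonormal basis and may be taken as a value in $[0,\infty]$. This is exactly where positivity of the frame operator is essential, since for a positive operator the diagonal-sum trace is basis-independent, and it is what makes the lower estimate $\operatorname{Trace}(S_{x,\tau})\ge a\dim\mathcal{H}$ meaningful even before finite dimensionality is established. Everything else is a direct application of the infinite-dimensional trace formula and the defining two-sided bound of a frame, so I expect no further obstruction.
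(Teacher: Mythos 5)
Your argument is correct and is essentially the paper's own proof: both reduce the statement to the identity $\sum_{j\in\mathbb{J}}\langle x_j,\tau_j\rangle=\sum_{k}\langle S_{x,\tau}e_k,e_k\rangle$ over an orthonormal basis and then squeeze this quantity between $a\dim\mathcal{H}$ and $b\dim\mathcal{H}$ using the frame bounds, with the final cardinality claim following from the uniform lower bound $\inf_j\langle x_j,\tau_j\rangle>0$. The only cosmetic difference is that you invoke the trace formula of the preceding proposition, whereas the paper carries out the same interchange of summations inline.
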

\begin{proof}
Let $\{e_k\}_{k\in \mathbb{L}}$ be an orthonormal basis for $ \mathcal{H}$, and  $a, b$ be frame bounds for $(\{x_j\}_{j\in \mathbb{J}} , \{\tau_j\}_{j\in \mathbb{J}})$.
$(\Rightarrow)$ Now $ \mathbb{L}$  is finite. Then $ \sum_{j\in\mathbb{J}}\langle x_j,\tau_j \rangle =\sum_{j\in\mathbb{J}}\sum_{k\in\mathbb{L}}\langle x_j,e_k \rangle\langle e_k,\tau_j \rangle=\sum_{k\in\mathbb{L}}\sum_{j\in\mathbb{J}}\langle x_j,e_k \rangle\langle e_k,\tau_j \rangle \leq b\sum_{k\in\mathbb{L}}\|e_k\|^2=b\operatorname{Card}(\mathbb{L})<\infty.$ Since $(\{x_j\}_{j\in \mathbb{J}} , \{\tau_j\}_{j\in \mathbb{J}})$ is a frame, we must have $\sum_{j\in\mathbb{J}}\langle x_j,e_k \rangle\langle e_k,\tau_j \rangle=\overline{\sum_{j\in\mathbb{J}}\langle x_j,e_k \rangle\langle e_k,\tau_j \rangle} $. Therefore $ \sum_{j\in\mathbb{J}}\langle x_j,\tau_j \rangle =\sum_{j\in\mathbb{J}}\langle \tau_j, x_j \rangle<\infty.$

 $(\Leftarrow)$	$ \operatorname{dim}\mathcal{H}=\sum_{k\in\mathbb{L}}\|e_k\|^2\leq (1/a)\sum_{k\in\mathbb{L}}\sum_{j\in\mathbb{J}}\langle e_k,x_j \rangle\langle \tau_j, e_k \rangle=(1/a)\sum_{j\in\mathbb{J}}\sum_{k\in\mathbb{L}}\langle e_k, x_j\rangle\langle \tau_j, e_k \rangle=(1/a)\sum_{j\in\mathbb{J}}\langle x_j,\tau_j\rangle<\infty.$
 
 For the second, $ 0<\inf\{\langle x_j,\tau_j \rangle\}_{j\in \mathbb{J}}\operatorname{Card}(\mathbb{J})\leq\sum_{j\in\mathbb{J}}\langle x_j,\tau_j \rangle<\infty. $
\end{proof}
\begin{proposition}\label{INTERESTINGEXAMPLEPROPOSITION}
Let  $ a_j, b_j \geq 0,\theta_j, \phi_j \in \mathbb{R}, j=1,...,n$. Then 	
\begin{enumerate}[\upshape(i)]
\item $\left\{ x_j\coloneqq\begin{bmatrix}
a_j\cos\theta_j \\
a_j\sin\theta_j
\end{bmatrix}\right\}_{j=1}^n$ is a tight frame for $ \mathbb{R}^2$ w.r.t.  $\left\{ \tau_j\coloneqq \begin{bmatrix}
b_j\cos\phi_j \\
b_j\sin\phi_j
\end{bmatrix}\right\}_{j=1}^n$ if and only if 
$$ \sum\limits_{j=1}^{n}\begin{bmatrix}
a_jb_j\cos(\theta_j+\phi_j) \\
a_jb_j\sin(\theta_j+\phi_j)\\
a_jb_j\sin(\theta_j-\phi_j)
\end{bmatrix}=\begin{bmatrix}
0 \\
0\\
0
\end{bmatrix}.$$
\item For all $ k,l $ with $kl\geq 3$, $\left\{ \begin{bmatrix}
\cos\left(\frac{2\pi j}{k}\right) \\
\sin\left(\frac{2\pi j}{k}\right)
\end{bmatrix}\right\}_{j=0}^{kl-1}$ is a tight frame for $ \mathbb{R}^2$ w.r.t.  $\left\{ \begin{bmatrix}
\cos\left(\frac{2\pi j}{l}\right) \\
\sin\left(\frac{2\pi j}{l}\right)
\end{bmatrix}\right\}_{j=0}^{kl-1}$.
\end{enumerate}	
\end{proposition}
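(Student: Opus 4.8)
The plan for (i) is to write out the frame operator $S_{x,\tau}$ as an explicit $2\times 2$ matrix and read off entrywise what ``tight'' means. By the sequential frame formula $S_{x,\tau}h=\sum_{j=1}^n\langle h,x_j\rangle\tau_j$, each summand is the rank-one map $h\mapsto\langle h,x_j\rangle\tau_j$, whose matrix is the outer product of the column $a_jb_j(\cos\phi_j,\sin\phi_j)^\top$ with the row $(\cos\theta_j,\sin\theta_j)$. Thus
\[
S_{x,\tau}=\sum_{j=1}^n a_jb_j\begin{bmatrix}\cos\phi_j\cos\theta_j & \cos\phi_j\sin\theta_j\\ \sin\phi_j\cos\theta_j & \sin\phi_j\sin\theta_j\end{bmatrix}.
\]
First I would apply the product-to-sum identities to each entry, introducing the three sums $P=\sum_j a_jb_j\cos(\theta_j+\phi_j)$, $Q=\sum_j a_jb_j\sin(\theta_j+\phi_j)$, $R=\sum_j a_jb_j\sin(\theta_j-\phi_j)$, together with $T=\sum_j a_jb_j\cos(\theta_j-\phi_j)=\sum_j\langle x_j,\tau_j\rangle$. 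A direct substitution shows the four entries of $S_{x,\tau}$ become $\tfrac12(T+P)$, $\tfrac12(Q+R)$, $\tfrac12(Q-R)$, $\tfrac12(T-P)$.

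Then tightness is immediate: $(\{x_j\}_{j=1}^n,\{\tau_j\}_{j=1}^n)$ is a tight frame exactly when $S_{x,\tau}=cI$ for a scalar $c>0$. Equating the two diagonal entries forces $P=0$, and vanishing of the two off-diagonal entries forces $Q+R=0=Q-R$, hence $Q=R=0$; conversely $P=Q=R=0$ collapses $S_{x,\tau}$ to $\tfrac12 T\,I$. Since $P=Q=R=0$ is precisely the stated vector equation, this yields the equivalence, with the common frame bound $c=\tfrac12 T=\tfrac12\sum_j\langle x_j,\tau_j\rangle$. The one point requiring care is positivity: the vector condition only makes $S_{x,\tau}$ a scalar matrix, and it is a genuine tight frame precisely when that scalar $\tfrac12\sum_j\langle x_j,\tau_j\rangle$ is strictly positive, which I would record explicitly.

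For (ii) the plan is to apply (i) with $a_j=b_j=1$, $\theta_j=2\pi j/k$, $\phi_j=2\pi j/l$ and index set $j=0,\dots,kl-1$, so that the three sums become geometric (exponential) sums. Writing $N=kl$, one has $P+iQ=\sum_{j=0}^{N-1}e^{2\pi i j(k+l)/N}$ and $R=\operatorname{Im}\sum_{j=0}^{N-1}e^{2\pi i j(l-k)/N}$, both governed by the elementary fact that $\sum_{j=0}^{N-1}e^{2\pi i jm/N}$ equals $0$ when $N\nmid m$ and equals $N$ when $N\mid m$. The exponent $l-k$ gives $R=0$ in all cases (if $N\mid(l-k)$ the summand is constant and its sine part vanishes, otherwise the whole sum vanishes), so the only real content is $P+iQ=0$, which holds as soon as $kl\nmid(k+l)$; then (i) produces a tight frame with bound $T/2$. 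The main obstacle is therefore the clean verification that $kl\nmid(k+l)$: I would reduce $kl\mid(k+l)$ to the inequality $(k-1)(l-1)\le 1$ and check the finitely many resulting cases, flagging in particular the borderline case $k=l=2$, where $S_{x,\tau}=\operatorname{diag}(4,0)$ is singular, so that the reduction to vanishing exponential sums succeeds for the remaining configurations with $kl\ge 3$.
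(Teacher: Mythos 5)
Your argument for (i) is essentially the paper's own: write out the $2\times 2$ matrix of $S_{x,\tau}$ and apply the compound-angle formulas; your bookkeeping with $P,Q,R,T$ just makes explicit what the paper leaves as ``another computation shows.'' Your positivity caveat is a genuine refinement the paper skips: the vector condition only forces $S_{x,\tau}=\tfrac12 T\,I_{\mathbb{R}^2}$ with $T=\sum_{j=1}^n\langle x_j,\tau_j\rangle$, and one must separately require $T>0$ (all $a_j=0$ satisfies the vector equation but yields no frame). For (ii) you follow the same route through the sums $\sum_{j=0}^{kl-1}e^{2\pi ijm/(kl)}$, but your case analysis is sharper and exposes a real error in the statement as printed: for $k=l=2$ one has $kl\mid(k+l)$, so $\sum_{j=0}^{3}\cos(2\pi j)=4\neq 0$ and $S_{x,\tau}=\operatorname{diag}(4,0)$ is singular — there is no tight frame, even though $kl=4\geq 3$. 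The paper's separate treatment of the case $k=l$ asserts $\sum_{j=0}^{k^2-1}\cos(4\pi j/k)=0$, which fails precisely at $k=2$. Your reduction of $kl\mid(k+l)$ to $(k-1)(l-1)\leq 1$ correctly isolates $k=l=2$ as the unique bad configuration with $kl\geq 3$, so your version of (ii) is the correct statement and the hypothesis should be amended to exclude it (or strengthened to $k\neq l$ or $kl\geq 5$, say).
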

\begin{proof}
\begin{enumerate}[\upshape(i)]
\item A direct computation shows that the matrix of the frame operator $ S_{x,\tau}$ for the given  collection is $$\begin{bmatrix}
\sum\limits_{j=1}^{n} a_jb_j\cos\theta_j\cos\phi_j& \sum\limits_{j=1}^{n} a_jb_j\sin\theta_j\cos\phi_j \\
\sum\limits_{j=1}^{n} a_jb_j\cos\theta_j\sin\phi_j &\sum\limits_{j=1}^{n} a_jb_j\sin\theta_j\sin\phi_j	\\
\end{bmatrix}.$$
Another computation shows (using trigonometric formulas for compound angles) that the frame is tight, i.e., $ S_{x,\tau}=aI_{\mathbb{R}^2}$ for some $ a>0$ if and only if $$ \sum\limits_{j=1}^{n}\begin{bmatrix}
a_jb_j\cos(\theta_j+\phi_j) \\
a_jb_j\sin(\theta_j+\phi_j)\\
a_jb_j\sin(\theta_j-\phi_j)
\end{bmatrix}=\begin{bmatrix}
0 \\
0\\
0
\end{bmatrix}.$$
\item We take the advantage of `if' in (i). We even  show $$ \sum\limits_{j=0}^{kl-1}\begin{bmatrix}
\cos\left(\frac{2\pi j}{k}+\frac{2\pi j}{l}\right) \\
\sin\left(\frac{2\pi j}{k}+\frac{2\pi j}{l}\right)\\
\cos\left(\frac{2\pi j}{k}-\frac{2\pi j}{l}\right)\\
\sin\left(\frac{2\pi j}{k}-\frac{2\pi j}{l}\right)
\end{bmatrix}=\begin{bmatrix}
0 \\
0\\
0\\
0
\end{bmatrix} ~ \text{for}~ k\neq l.$$ We get this from:   $\sum_{j=0}^{kl-1} \left(\cos\left(\frac{2\pi j}{k}+\frac{2\pi j}{l}\right) +i
\sin\left(\frac{2\pi j}{k}+\frac{2\pi j}{l}\right)\right)=\sum_{j=0}^{kl-1}e^{\left(\frac{2\pi j}{k}+\frac{2\pi j}{l}\right)i}=\sum_{j=0}^{kl-1}e^{\left(\frac{2\pi i}{k}+\frac{2\pi i}{l}\right)j}=0$, and $\sum_{j=0}^{kl-1} \left(\cos\left(\frac{2\pi j}{k}-\frac{2\pi j}{l}\right) +i
\sin\left(\frac{2\pi j}{k}-\frac{2\pi j}{l}\right)\right)=\sum_{j=0}^{kl-1}e^{\left(\frac{2\pi j}{k}-\frac{2\pi j}{l}\right)i}=\sum_{j=0}^{kl-1}e^{\left(\frac{2\pi i}{k}-\frac{2\pi i}{l}\right)j}=0 $. For the case $ k=l$, clearly  
$$ \sum\limits_{j=0}^{k^2-1}\begin{bmatrix}
\cos\left(\frac{4\pi j}{k}\right) \\
\sin\left(\frac{4\pi j}{k}\right)\\
\sin0
\end{bmatrix}=\begin{bmatrix}
0 \\
0\\
0
\end{bmatrix}.$$
\end{enumerate}
\end{proof}

\section{Further extension}\label{FURTHEREXTENSION}
\begin{definition}
 A collection $ \{A_j\}_{j \in \mathbb{J}} $  in $ \mathcal{B}(\mathcal{H}, \mathcal{H}_0)$ is said to be a weak \textit{operator-valued frame} (we write weak (ovf)) in $ \mathcal{B}(\mathcal{H}, \mathcal{H}_0) $  with respect to  collection  $ \{\Psi_j\}_{j \in \mathbb{J}}  $ in $ \mathcal{B}(\mathcal{H}, \mathcal{H}_0) $ if  the series $ S_{A, \Psi} \coloneqq  \sum_{j\in \mathbb{J}} \Psi_j^*A_j$  converges in the strong-operator topology on $ \mathcal{B}(\mathcal{H})$ to a  bounded positive invertible operator.

 Notions of frame bounds, optimal bounds, tight frame, Parseval frame, Bessel are in same fashion  as in Definition \ref{1}.
  
 For fixed $ \mathbb{J}$, $\mathcal{H}, \mathcal{H}_0, $ and $ \{\Psi_j \}_{j \in \mathbb{J}}$ the set of all weak operator-valued frames in $ \mathcal{B}(\mathcal{H}, \mathcal{H}_0) $ with respect to collection  $ \{\Psi_j \}_{j \in \mathbb{J}}$ is denoted by $ \mathscr{F}^\text{w}_\Psi.$
  \end{definition}
 Above definition is equivalent to 
 \begin{definition}
 A collection $ \{A_j\}_{j \in \mathbb{J}} $   in $ \mathcal{B}(\mathcal{H}, \mathcal{H}_0)$ is said to be a weak (ovf)  w.r.t. $ \{\Psi_j\}_{j \in \mathbb{J}}  $ in $ \mathcal{B}(\mathcal{H}, \mathcal{H}_0) $ if there exist $ a, b, r >0$ such that 
 \begin{enumerate}[\upshape(i)]
 \item $\|\sum_{j\in \mathbb{J}}\Psi_j^*A_jh \|\leq r\|h\|, \forall h \in \mathcal{H},$
 \item $\sum_{j\in \mathbb{J}}\Psi_j^*A_jh =\sum_{j\in \mathbb{J}}A_j^*\Psi_jh, \forall h \in \mathcal{H},$
  \item $a\|h\|^2\leq\sum_{j\in \mathbb{J}}\langle A_jh, \Psi_jh\rangle  \leq b\|h\|^2, \forall h \in \mathcal{H}.$
\end{enumerate}
If the Hilbert space is complex, then  condition \text{\upshape(ii)} can be dropped.
 \end{definition}
 Conditions  (i)  and (ii) in the previous definition say  $ S_{A,\Psi}$ exists, bounded and self adjoint and (i) says it is positive invertible.

Now we try to get concepts and results devoloped earlier it this further extension. Care should be taken that ``statements and proofs  should not use analysis as well as synthesis operators''.
  
Results we derived earlier, for the extension, if it does not use analysis operators, we state them here, for others we give the proof.
\begin{proposition}
Let $(\{A_j\}_{j\in \mathbb{J}}, \{\Psi_j\}_{j\in \mathbb{J}}) $ be  a weak (ovf)   in $ \mathcal{B}(\mathcal{H}, \mathcal{H}_0)$  with upper frame bound $b$. If $\Psi_j^*A_j\geq 0, \forall j \in \mathbb{J} ,$ then $ \|\Psi_j^*A_j\|\leq b, \forall j \in \mathbb{J}.$
\end{proposition}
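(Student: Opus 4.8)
The plan is to reproduce the argument of the earlier Proposition for ordinary (ovf) verbatim, since nothing in that argument used the analysis or synthesis operators, and the hypothesis $\Psi_j^*A_j \geq 0$ is exactly what lets a single summand be dominated by the whole series. First I would fix an arbitrary index $j_0 \in \mathbb{J}$ and an arbitrary $h \in \mathcal{H}$. By definition of a weak (ovf), the net of finite partial sums of $\sum_{j\in\mathbb{J}}\Psi_j^*A_j$ converges in the strong-operator topology to the bounded positive invertible operator $S_{A,\Psi}$, and $b$ being an upper frame bound means $S_{A,\Psi} \leq bI_\mathcal{H}$, i.e. $\langle S_{A,\Psi}h,h\rangle \leq b\|h\|^2$.

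Next I would use positivity of the summands. Since each $\Psi_j^*A_j \geq 0$, every scalar $\langle \Psi_j^*A_jh,h\rangle$ is nonnegative, so retaining only the term indexed by $j_0$ cannot increase the value; combining this with SOT convergence (which lets me write $\langle S_{A,\Psi}h,h\rangle = \sum_{j\in\mathbb{J}}\langle \Psi_j^*A_jh,h\rangle$ as a convergent sum of nonnegative reals) gives
$$
\langle \Psi_{j_0}^*A_{j_0}h,h\rangle \;\leq\; \sum_{j\in\mathbb{J}}\langle \Psi_j^*A_jh,h\rangle \;=\; \langle S_{A,\Psi}h,h\rangle \;\leq\; b\|h\|^2.
$$

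Finally, since $\Psi_{j_0}^*A_{j_0}$ is positive and hence self-adjoint, its operator norm is computed through its numerical radius, $\|\Psi_{j_0}^*A_{j_0}\| = \sup_{\|h\|=1}\langle \Psi_{j_0}^*A_{j_0}h,h\rangle$, and the displayed inequality (restricted to unit vectors) forces this supremum to be at most $b$. As $j_0 \in \mathbb{J}$ was arbitrary, we conclude $\|\Psi_j^*A_j\| \leq b$ for all $j \in \mathbb{J}$. I do not expect any genuine obstacle here: the only points needing a word of justification are that the partial-sum bound survives passage to the SOT limit and that dropping terms is valid, both of which rest solely on the nonnegativity of $\langle \Psi_j^*A_jh,h\rangle$; everything else is the standard fact that a positive operator attains its norm on the diagonal.
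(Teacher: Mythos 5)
Your proof is correct and is essentially the paper's own argument (the paper proves this for ordinary operator-valued frames and simply restates it for weak ones, since the argument never touches the analysis operators): drop all but one nonnegative term from $\sum_{j}\langle\Psi_j^*A_jh,h\rangle\leq b\|h\|^2$ and use that a positive operator's norm equals $\sup_{\|h\|=1}\langle\Psi_j^*A_jh,h\rangle$. No gaps.
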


\begin{proposition}
If  $(\{A_j\}_{j\in \mathbb{J}}, \{\Psi_j\}_{j\in \mathbb{J}}) $ is  a weak Bessel   in $ \mathcal{B}(\mathcal{H}, \mathcal{H}_0)$, then there exists a $ B \in \mathcal{B}(\mathcal{H}, \mathcal{H}_0)$ such that $(\{A_j\}_{j\in \mathbb{J}}\cup\{B\}, \{\Psi_j\}_{j\in \mathbb{J}}\cup\{B\}) $ is a tight weak (ovf). In particular,  if $(\{A_j\}_{j\in \mathbb{J}}, \{\Psi_j\}_{j\in \mathbb{J}}) $ is   weak (ovf)   in $ \mathcal{B}(\mathcal{H}, \mathcal{H}_0)$, then there exists a $ B \in \mathcal{B}(\mathcal{H}, \mathcal{H}_0)$ such that $(\{A_j\}_{j\in \mathbb{J}}\cup\{B\}, \{\Psi_j\}_{j\in \mathbb{J}}\cup\{B\}) $ is a tight weak (ovf). 
\end{proposition}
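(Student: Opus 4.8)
The plan is to reuse the single-operator completion trick that proved the analogous expansion result for the (non-weak) extension, noting that in this further extension the \emph{only} thing one must verify is strong-operator convergence of the frame-operator series $\sum_{j}\Psi_j^*A_j$ to a bounded positive invertible operator. First I would record that, by the weak Bessel hypothesis, $S_{A,\Psi}=\sum_{j\in\mathbb{J}}\Psi_j^*A_j$ converges in SOT to a bounded positive operator, so $\|S_{A,\Psi}\|<\infty$; fix any real $\lambda>\|S_{A,\Psi}\|$.

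Next I would define the completing operator. Since $0\le S_{A,\Psi}\le\|S_{A,\Psi}\|I_\mathcal{H}<\lambda I_\mathcal{H}$, the operator $\lambda I_\mathcal{H}-S_{A,\Psi}$ satisfies $\lambda I_\mathcal{H}-S_{A,\Psi}\ge(\lambda-\|S_{A,\Psi}\|)I_\mathcal{H}>0$, hence it is positive and invertible; by the continuous functional calculus its positive square root $B\coloneqq(\lambda I_\mathcal{H}-S_{A,\Psi})^{1/2}$ exists, is bounded and self-adjoint, and satisfies $B^*B=\lambda I_\mathcal{H}-S_{A,\Psi}$. (As in the non-weak expansion result, $B$ is taken acting on $\mathcal{H}$; after the usual identification of $\mathcal{H}$ with a subspace of $\mathcal{H}_0$ one may regard $B\in\mathcal{B}(\mathcal{H},\mathcal{H}_0)$, and the only quantity entering the computation is $B^*B$, which is unaffected.)

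Then I would form the augmented families $\{A_j\}_{j\in\mathbb{J}}\cup\{B\}$ and $\{\Psi_j\}_{j\in\mathbb{J}}\cup\{B\}$ and compute their frame operator. Because appending a single summand does not disturb SOT convergence of the net of partial sums, the augmented series converges in SOT, and its limit is
\[
\sum_{j\in\mathbb{J}}\Psi_j^*A_j+B^*B=S_{A,\Psi}+(\lambda I_\mathcal{H}-S_{A,\Psi})=\lambda I_\mathcal{H},
\]
which is bounded, positive and invertible. Hence $(\{A_j\}_{j\in\mathbb{J}}\cup\{B\},\{\Psi_j\}_{j\in\mathbb{J}}\cup\{B\})$ is a weak (ovf) whose frame operator is the scalar $\lambda I_\mathcal{H}$, i.e.\ a tight weak (ovf) with bound $\lambda$. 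For the ``in particular'' clause I would simply observe that a weak (ovf) has $S_{A,\Psi}$ positive invertible, hence in particular positive, so it is a weak Bessel family and the first part applies verbatim.

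The argument is essentially routine; the sole point that needs care — and the only place where the ``weak'' hypothesis is felt — is the verification that the completed family is again a weak (ovf). Here one must resist invoking the analysis/synthesis operators $\theta_A,\theta_\Psi$, which need not exist in this further extension, and instead establish both convergence and tightness purely through the frame-operator series $\sum\Psi_j^*A_j$. Once that is noted, no estimate beyond $\|S_{A,\Psi}\|<\lambda$ is required, and the proof closes.
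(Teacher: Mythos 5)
Your proposal is correct and follows the same route the paper takes: for the non-weak version the paper sets $B\coloneqq(\lambda I_\mathcal{H}-S_{A,\Psi})^{1/2}$ for $\lambda>\|S_{A,\Psi}\|$ and computes $\sum_{j}\Psi_j^*A_j+B^*B=\lambda I_\mathcal{H}$, and since that argument never invokes $\theta_A$ or $\theta_\Psi$ it transfers verbatim to the weak setting, which is exactly the point you make explicit. Your extra remark about the type of $B$ (an operator on $\mathcal{H}$ being regarded as an element of $\mathcal{B}(\mathcal{H},\mathcal{H}_0)$) is a reasonable reading of the same convention the paper uses implicitly.
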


\begin{definition}
A weak (ovf)  $(\{B_j\}_{j\in \mathbb{J}}, \{\Phi_j\}_{j\in \mathbb{J}})$  in $\mathcal{B}(\mathcal{H}, \mathcal{H}_0)$ is said to be dual of a weak (ovf) $ ( \{A_j\}_{j\in \mathbb{J}}, \{\Psi_j\}_{j\in \mathbb{J}})$ in $\mathcal{B}(\mathcal{H}, \mathcal{H}_0)$  if  $ \sum_{j \in \mathbb{J}}B_j^*\Psi_j= \sum_{j \in \mathbb{J}}\Phi^*_jA_j=I_{\mathcal{H}}$. The `weak (ovf)' $( \{\widetilde{A}_j\coloneqq A_jS_{A,\Psi}^{-1}\}_{j\in \mathbb{J}},\{\widetilde{\Psi}_j\coloneqq\Psi_jS_{A,\Psi}^{-1}\}_{j \in \mathbb{J}})$, which is a `dual' of $ (\{A_j\}_{j\in \mathbb{J}}, \{\Psi_j\}_{j\in \mathbb{J}})$ is called the canonical dual of $ (\{A_j\}_{j\in \mathbb{J}}, \{\Psi_j\}_{j\in \mathbb{J}})$. 
\end{definition}  
\begin{proposition}
Let $( \{A_j\}_{j\in \mathbb{J}}, \{\Psi_j\}_{j\in \mathbb{J}} )$ be a weak (ovf) in $ \mathcal{B}(\mathcal{H}, \mathcal{H}_0).$  If $ h \in \mathcal{H}$ has representation  $ h=\sum_{j\in\mathbb{J}}A_j^*y_j= \sum_{j\in\mathbb{J}}\Psi_j^*z_j, $ for some $ \{y_j\}_{j\in \mathbb{J}},\{z_j\}_{j\in \mathbb{J}}$ in $ \mathcal{H}_0$, then 
$$ \sum\limits_{j\in \mathbb{J}}\langle y_j,z_j\rangle =\sum\limits_{j\in \mathbb{J}}\langle \widetilde{\Psi}_jh,\widetilde{A}_jh\rangle+\sum\limits_{j\in \mathbb{J}}\langle y_j-\widetilde{\Psi}_jh,z_j-\widetilde{A}_jh\rangle. $$
\end{proposition}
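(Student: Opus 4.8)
The plan is to mimic, essentially verbatim, the proof of the corresponding statement for operator-valued frames given earlier, after observing the crucial point that \emph{that} argument never invoked the analysis or synthesis operators $\theta_A,\theta_\Psi$ and therefore survives unchanged in the present weak setting, where such operators need not exist. First I would record the facts that are available here: by the definition of a weak (ovf), $S_{A,\Psi}=\sum_{j\in\mathbb{J}}\Psi_j^*A_j$ converges in the strong-operator topology to a bounded positive invertible operator, so $S_{A,\Psi}$ is self-adjoint and hence $S_{A,\Psi}^{-1}$ is self-adjoint; the canonical dual is $\widetilde{A}_j=A_jS_{A,\Psi}^{-1}$ and $\widetilde{\Psi}_j=\Psi_jS_{A,\Psi}^{-1}$; and $h$ carries the two expansions $h=\sum_{j}A_j^*y_j=\sum_{j}\Psi_j^*z_j$.

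Next I would expand the right-hand side. Writing the last inner product out bilinearly gives
$$\langle y_j-\widetilde{\Psi}_jh,\,z_j-\widetilde{A}_jh\rangle=\langle y_j,z_j\rangle-\langle y_j,\widetilde{A}_jh\rangle-\langle\widetilde{\Psi}_jh,z_j\rangle+\langle\widetilde{\Psi}_jh,\widetilde{A}_jh\rangle,$$
so that the entire right-hand side becomes $2\sum_{j}\langle\widetilde{\Psi}_jh,\widetilde{A}_jh\rangle+\sum_{j}\langle y_j,z_j\rangle-\sum_{j}\langle y_j,\widetilde{A}_jh\rangle-\sum_{j}\langle\widetilde{\Psi}_jh,z_j\rangle$. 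I would then evaluate the three remaining mixed sums by moving adjoints across and inserting the two expansions of $h$: $\sum_{j}\langle y_j,\widetilde{A}_jh\rangle=\sum_{j}\langle A_j^*y_j,S_{A,\Psi}^{-1}h\rangle=\langle h,S_{A,\Psi}^{-1}h\rangle$; likewise $\sum_{j}\langle\widetilde{\Psi}_jh,z_j\rangle=\langle S_{A,\Psi}^{-1}h,h\rangle$; and $\sum_{j}\langle\widetilde{\Psi}_jh,\widetilde{A}_jh\rangle=\langle S_{A,\Psi}^{-1}h,\,(\sum_{j}\Psi_j^*A_j)S_{A,\Psi}^{-1}h\rangle=\langle S_{A,\Psi}^{-1}h,h\rangle$.

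Substituting these three evaluations and using $\langle h,S_{A,\Psi}^{-1}h\rangle=\langle S_{A,\Psi}^{-1}h,h\rangle$ (self-adjointness of $S_{A,\Psi}^{-1}$) makes the terms $2\langle S_{A,\Psi}^{-1}h,h\rangle-\langle h,S_{A,\Psi}^{-1}h\rangle-\langle S_{A,\Psi}^{-1}h,h\rangle$ cancel, leaving precisely $\sum_{j}\langle y_j,z_j\rangle$, which is the left-hand side, as desired.

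The only step that demands care, rather than presenting a genuine obstacle, is the interchange of summation with the inner product, e.g. $\sum_{j}\langle A_j^*y_j,S_{A,\Psi}^{-1}h\rangle=\langle\sum_{j}A_j^*y_j,S_{A,\Psi}^{-1}h\rangle$ and the pull-out $\sum_{j}\langle S_{A,\Psi}^{-1}h,\Psi_j^*A_jS_{A,\Psi}^{-1}h\rangle=\langle S_{A,\Psi}^{-1}h,S_{A,\Psi}S_{A,\Psi}^{-1}h\rangle$. Both are justified by continuity of the inner product in each variable together with the hypothesized net-convergence of $\sum_{j}A_j^*y_j$ and $\sum_{j}\Psi_j^*z_j$ to $h$ and the SOT-convergence of $\sum_{j}\Psi_j^*A_j$ to $S_{A,\Psi}$. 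No analysis or synthesis operator enters anywhere, which is exactly what the standing restriction of this section requires.
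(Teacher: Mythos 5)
Your proof is correct and is essentially the argument the paper intends: the section explicitly states results whose earlier proofs avoid the analysis/synthesis operators without reproving them, and your bilinear expansion of the right-hand side, the evaluation of the mixed sums via $h=\sum_j A_j^*y_j=\sum_j\Psi_j^*z_j$, and the identification $\sum_j\langle\widetilde{\Psi}_jh,\widetilde{A}_jh\rangle=\langle S_{A,\Psi}^{-1}h,h\rangle$ match the proof given for the (ovf) case in Section 2 line for line. Your closing remark that only SOT-convergence of $\sum_j\Psi_j^*A_j$ and norm-convergence of the two expansions of $h$ are needed is exactly the justification for transferring that proof to the weak setting.
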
  
  
\begin{theorem}\label{CANONICALDUALFRAMEPROPERTYOPERATORVERSIONWEAK}
Let $ (\{A_j\}_{j\in \mathbb{J}},\{\Psi_j\}_{j\in \mathbb{J}}) $ be a weak (ovf) with frame bounds $ a$ and $ b.$ Then
\begin{enumerate}[\upshape(i)]
\item The canonical dual weak (ovf) of the canonical dual weak (ovf)  of $ (\{A_j\}_{j\in \mathbb{J}} ,\{\Psi_j\}_{j\in \mathbb{J}} )$ is itself.
\item$ \frac{1}{b}, \frac{1}{a}$ are frame bounds for the canonical dual of $ (\{A_j\}_{j\in \mathbb{J}},\{\Psi_j\}_{j\in \mathbb{J}}).$
\item If $ a, b $ are optimal frame bounds for $( \{A_j\}_{j\in \mathbb{J}} , \{\Psi_j\}_{j\in \mathbb{J}}),$ then $ \frac{1}{b}, \frac{1}{a}$ are optimal  frame bounds for its canonical dual.
\end{enumerate} 
\end{theorem}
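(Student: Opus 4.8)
The plan is to mirror the proof of Theorem \ref{CANONICALDUALFRAMEPROPERTYOPERATORVERSION}, but since the weak setting forbids the use of the analysis and synthesis operators $\theta_A,\theta_\Psi$ (condition (ii) of Definition \ref{1} having been dropped), every manipulation must be carried out directly on the frame operator $S_{A,\Psi}=\sum_{j\in\mathbb{J}}\Psi_j^*A_j$. The crucial preliminary computation is to identify the frame operator of the canonical dual $(\{\widetilde{A}_j=A_jS_{A,\Psi}^{-1}\}_{j\in\mathbb{J}},\{\widetilde{\Psi}_j=\Psi_jS_{A,\Psi}^{-1}\}_{j\in\mathbb{J}})$. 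Since $S_{A,\Psi}$ is positive invertible, $S_{A,\Psi}^{-1}$ is bounded and self-adjoint, so $(\Psi_jS_{A,\Psi}^{-1})^*=S_{A,\Psi}^{-1}\Psi_j^*$ for each $j$, and hence for every finite $\mathbb{F}\subseteq\mathbb{J}$ and every $h\in\mathcal{H}$,
\begin{equation*}
\sum_{j\in\mathbb{F}}\widetilde{\Psi}_j^*\widetilde{A}_jh=S_{A,\Psi}^{-1}\left(\sum_{j\in\mathbb{F}}\Psi_j^*A_j\right)S_{A,\Psi}^{-1}h.
\end{equation*}

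The next step is to establish the SOT convergence by hand. I would apply the SOT-convergence of $\sum_{j}\Psi_j^*A_j$ at the fixed vector $S_{A,\Psi}^{-1}h$, obtaining $\sum_{j\in\mathbb{F}}\Psi_j^*A_j(S_{A,\Psi}^{-1}h)\to S_{A,\Psi}(S_{A,\Psi}^{-1}h)=h$ along the net of finite subsets, and then push the fixed bounded operator $S_{A,\Psi}^{-1}$ through this limit by continuity. This yields $S_{\widetilde{A},\widetilde{\Psi}}=S_{A,\Psi}^{-1}$ in SOT, which is positive invertible; in particular the canonical dual is itself a weak (ovf). Part (i) then follows at once: the canonical dual of the canonical dual has $j$-th pair $(\widetilde{A}_jS_{\widetilde{A},\widetilde{\Psi}}^{-1},\widetilde{\Psi}_jS_{\widetilde{A},\widetilde{\Psi}}^{-1})=(A_jS_{A,\Psi}^{-1}S_{A,\Psi},\Psi_jS_{A,\Psi}^{-1}S_{A,\Psi})=(A_j,\Psi_j)$.

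For part (ii), from $aI_\mathcal{H}\leq S_{A,\Psi}\leq bI_\mathcal{H}$ together with the positivity and invertibility of $S_{A,\Psi}$, the order-reversal of inversion on positive operators gives $\frac{1}{b}I_\mathcal{H}\leq S_{A,\Psi}^{-1}\leq\frac{1}{a}I_\mathcal{H}$; since $S_{A,\Psi}^{-1}$ is exactly the frame operator of the canonical dual, $\frac{1}{b},\frac{1}{a}$ are frame bounds for it. Part (iii) is then a purely order-theoretic optimality argument identical to the one in Theorem \ref{CANONICALDUALFRAMEPROPERTYOPERATORVERSION}: if $c$ is the optimal upper bound of the canonical dual, then since $a$ is already a lower bound for $(\{A_j\}_{j\in\mathbb{J}},\{\Psi_j\}_{j\in\mathbb{J}})$, (ii) forces $c\leq\frac{1}{a}$; applying (ii) to the canonical dual, whose own canonical dual is the original by (i), shows $\frac{1}{c}$ is a lower bound for $(\{A_j\}_{j\in\mathbb{J}},\{\Psi_j\}_{j\in\mathbb{J}})$, whence $\frac{1}{c}\leq a$ and thus $c=\frac{1}{a}$; the optimal lower bound $\frac{1}{b}$ is obtained symmetrically.

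The only real obstacle is the convergence bookkeeping in the preliminary computation. In Theorem \ref{CANONICALDUALFRAMEPROPERTYOPERATORVERSION} one simply writes $S_{\widetilde{A},\widetilde{\Psi}}=S_{A,\Psi}^{-1}\theta_\Psi^*\theta_AS_{A,\Psi}^{-1}$ and reads the answer off from $\theta_\Psi^*\theta_A=S_{A,\Psi}$, but here no such operators are available, so I must justify factoring $S_{A,\Psi}^{-1}$ out of the SOT-limit directly, as indicated above. Everything after that reuses the order-theoretic content of (ii)--(iii) verbatim, none of which ever referenced $\theta_A$ or $\theta_\Psi$.
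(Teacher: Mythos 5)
Your proof is correct and is essentially the paper's own argument: the paper's proof of this theorem is a one-line remark that the proof of Theorem \ref{CANONICALDUALFRAMEPROPERTYOPERATORVERSION} carries over because that proof never invoked $\theta_A$ or $\theta_\Psi$ --- it already computes $\sum_{j}(\Psi_jS_{A,\Psi}^{-1})^*(A_jS_{A,\Psi}^{-1})=S_{A,\Psi}^{-1}\left(\sum_j\Psi_j^*A_j\right)S_{A,\Psi}^{-1}=S_{A,\Psi}^{-1}$ directly on the sum, exactly as you do, and then runs the same order-reversal and optimality arguments for (ii) and (iii). Your only addition is the explicit SOT bookkeeping for pulling the bounded operator $S_{A,\Psi}^{-1}$ through the net limit, which is a correct and harmless elaboration; the ``obstacle'' you describe is not actually present in the earlier proof, since it too works termwise rather than via $\theta_\Psi^*\theta_A$.
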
 
\begin{proof}
Proof of Theorem \ref{CANONICALDUALFRAMEPROPERTYOPERATORVERSION} carries, since we have not used analysis operators there.
\end{proof} 
\begin{definition}
A weak (ovf)  $(\{B_j\}_{j\in \mathbb{J}},  \{\Phi_j\}_{j\in \mathbb{J}})$  in $\mathcal{B}(\mathcal{H}, \mathcal{H}_0)$ is said to be orthogonal to a weak (ovf)  $( \{A_j\}_{j\in \mathbb{J}}, \{\Psi_j\}_{j\in \mathbb{J}})$ in $\mathcal{B}(\mathcal{H}, \mathcal{H}_0)$ if $ \sum_{j \in \mathbb{J}}B_j^*\Psi_j= \sum_{j \in \mathbb{J}}\Phi^*_jA_j=0$.
\end{definition}  
\begin{proposition}
Two orthogonal weak operator-valued frames  have common dual weak (ovf).	
\end{proposition}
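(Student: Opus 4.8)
The plan is to imitate the construction used in the strong case (the proposition \emph{``Two orthogonal operator-valued frames have common dual (ovf)''}), but to execute every step purely at the level of the strong-operator-convergent series $\sum_{j\in\mathbb{J}}\Psi_j^*A_j$, never invoking the analysis operators $\theta_A,\theta_\Psi$, which need not exist in the weak framework. Given two orthogonal weak operator-valued frames $(\{A_j\}_{j\in\mathbb{J}},\{\Psi_j\}_{j\in\mathbb{J}})$ and $(\{B_j\}_{j\in\mathbb{J}},\{\Phi_j\}_{j\in\mathbb{J}})$ in $\mathcal{B}(\mathcal{H},\mathcal{H}_0)$, I would set
$$C_j\coloneqq A_jS_{A,\Psi}^{-1}+B_jS_{B,\Phi}^{-1},\qquad \Xi_j\coloneqq \Psi_jS_{A,\Psi}^{-1}+\Phi_jS_{B,\Phi}^{-1},\qquad \forall j\in\mathbb{J},$$
which clearly lie in $\mathcal{B}(\mathcal{H},\mathcal{H}_0)$, and claim $(\{C_j\}_{j\in\mathbb{J}},\{\Xi_j\}_{j\in\mathbb{J}})$ is the desired common dual.

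First I would record the algebraic facts to be used, all of them available without analysis operators: the self-adjointness built into the definition of a weak (ovf) gives $\sum_{j}\Psi_j^*A_j=\sum_{j}A_j^*\Psi_j=S_{A,\Psi}$ and $\sum_{j}\Phi_j^*B_j=\sum_{j}B_j^*\Phi_j=S_{B,\Phi}$, while orthogonality gives $\sum_{j}B_j^*\Psi_j=\sum_{j}\Phi_j^*A_j=0$ together with the companion identities $\sum_{j}\Psi_j^*B_j=\sum_{j}A_j^*\Phi_j=0$ (orthogonality of weak operator-valued frames being symmetric, exactly as noted for the strong case). Since $S_{A,\Psi}^{-1}$ and $S_{B,\Phi}^{-1}$ are fixed bounded positive operators and left/right multiplication by a fixed bounded operator is SOT-continuous, I may pull these inverses through the series and compute
$$S_{C,\Xi}=\sum_{j}\Xi_j^*C_j=S_{A,\Psi}^{-1}S_{A,\Psi}S_{A,\Psi}^{-1}+S_{B,\Phi}^{-1}S_{B,\Phi}S_{B,\Phi}^{-1}=S_{A,\Psi}^{-1}+S_{B,\Phi}^{-1},$$
the two cross terms vanishing by orthogonality. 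Positivity is immediate, and invertibility follows from the elementary bound $\langle S_{C,\Xi}h,h\rangle=\langle S_{A,\Psi}^{-1}h,h\rangle+\langle S_{B,\Phi}^{-1}h,h\rangle\geq \min\{\|S_{A,\Psi}\|^{-1},\|S_{B,\Phi}\|^{-1}\}\|h\|^2$ for all $h\in\mathcal{H}$; hence $(\{C_j\}_{j\in\mathbb{J}},\{\Xi_j\}_{j\in\mathbb{J}})$ is a weak (ovf).

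Then I would verify the four duality identities by the same termwise bookkeeping. For duality with $(\{A_j\}_{j\in\mathbb{J}},\{\Psi_j\}_{j\in\mathbb{J}})$ I compute $\sum_{j}C_j^*\Psi_j=S_{A,\Psi}^{-1}(\sum_{j}A_j^*\Psi_j)+S_{B,\Phi}^{-1}(\sum_{j}B_j^*\Psi_j)=I_\mathcal{H}+0=I_\mathcal{H}$ and $\sum_{j}\Xi_j^*A_j=S_{A,\Psi}^{-1}S_{A,\Psi}+S_{B,\Phi}^{-1}\cdot 0=I_\mathcal{H}$; symmetrically, using the companion orthogonality identities, $\sum_{j}C_j^*\Phi_j=S_{A,\Psi}^{-1}\cdot 0+S_{B,\Phi}^{-1}S_{B,\Phi}=I_\mathcal{H}$ and $\sum_{j}\Xi_j^*B_j=S_{A,\Psi}^{-1}\cdot 0+S_{B,\Phi}^{-1}S_{B,\Phi}=I_\mathcal{H}$, which say it is also a dual of $(\{B_j\}_{j\in\mathbb{J}},\{\Phi_j\}_{j\in\mathbb{J}})$.

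The expansions are routine; the one place needing genuine care — and the main obstacle — is the passage from strong-operator convergence of the four basic series to convergence, and to the asserted values, of the composed series $\sum_{j}\Xi_j^*C_j$, $\sum_{j}C_j^*\Psi_j$, and so on. I would handle this precisely as the weak setting demands: because each factor $S_{A,\Psi}^{-1},S_{B,\Phi}^{-1}$ is a fixed bounded operator, the maps $T\mapsto S^{-1}T$ and $T\mapsto TS^{-1}$ are SOT-continuous, so applying them to the convergent net of partial sums $\sum_{j\in F}\Psi_j^*A_j$ (and its companions, over finite $F\subseteq\mathbb{J}$) yields convergence of the manipulated series to the stated limits. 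The only conceptual point to flag explicitly is the use of the companion orthogonality identities $\sum_{j}\Psi_j^*B_j=0=\sum_{j}A_j^*\Phi_j$; these I would justify by the symmetry of the weak orthogonality relation, recorded in passing just as the excerpt does for the strong case.
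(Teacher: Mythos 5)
Your proof is correct and follows essentially the same route as the paper's: the identical candidate $C_j= A_jS_{A,\Psi}^{-1}+B_jS_{B,\Phi}^{-1}$, $\Xi_j= \Psi_jS_{A,\Psi}^{-1}+\Phi_jS_{B,\Phi}^{-1}$, the same expansion of $S_{C,\Xi}$ with the cross terms killed by orthogonality, and the same four duality identities (you check $\sum_j C_j^*\Psi_j$, $\sum_j\Xi_j^*A_j$, etc., while the paper checks their adjoint counterparts $\sum_j\Psi_j^*C_j$, $\sum_j A_j^*\Xi_j$, which is equivalent by the symmetry of duality). Your explicit remarks on SOT-continuity of multiplication by a fixed bounded operator and on the companion orthogonality identities $\sum_j\Psi_j^*B_j=0=\sum_j A_j^*\Phi_j$ make points the paper uses silently, so nothing further is needed.
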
 
\begin{proof}
Let  $ (\{A_j\}_{j\in \mathbb{J}}, \{\Psi_j\}_{j\in \mathbb{J}}) $ and $ (\{B_j\}_{j\in \mathbb{J}}, \{\Phi_j\}_{j\in \mathbb{J}}) $ be  two orthogonal weak  operator-valued frames in $\mathcal{B}(\mathcal{H}, \mathcal{H}_0)$. Define $ C_j\coloneqq A_jS_{A,\Psi}^{-1}+B_jS_{B,\Phi}^{-1},\Xi_j\coloneqq \Psi _jS_{A,\Psi}^{-1}+\Phi_jS_{B,\Phi}^{-1}, \forall j \in \mathbb{J}$. Then  
\begin{align*}
&S_{C,\Xi}= \sum_{j\in \mathbb{J}}\Xi_j^*C_j=\sum_{j\in \mathbb{J}}(S_{A,\Psi}^{-1}\Psi_j^*+S_{B,\Phi}^{-1}\Phi_j^*)(A_jS_{A,\Psi}^{-1}+B_jS_{B,\Phi}^{-1})\\
&=S_{A,\Psi}^{-1}\left(\sum_{j\in \mathbb{J}}\Psi_j^*A_j\right)S_{A,\Psi}^{-1}+S_{A,\Psi}^{-1}\left(\sum_{j\in \mathbb{J}}\Psi_j^*B_j\right)S_{B,\Phi}^{-1}+S_{B,\Phi}^{-1}\left(\sum_{j\in \mathbb{J}}\Phi_j^*A_j\right)S_{A,\Psi}^{-1}+S_{B,\Phi}^{-1}\left(\sum_{j\in \mathbb{J}}\Phi_j^*B_j\right)S_{B,\Phi}^{-1}\\
&=S_{A,\Psi}^{-1}S_{A,\Psi}S_{A,\Psi}^{-1}+S_{A,\Psi}^{-1}0S_{B,\Phi}^{-1}+S_{B,\Phi}^{-1}0S_{A,\Psi}^{-1}+S_{B,\Phi}^{-1}S_{B,\Phi}S_{B,\Phi}^{-1} =S_{A,\Psi}^{-1}+S_{B,\Phi}^{-1}
\end{align*}
which is positive  invertible. Therefore $(\{C_j\}_{j\in \mathbb{J}}, \{\Xi_j\}_{j\in \mathbb{J}})$ is a weak (ovf) in $\mathcal{B}(\mathcal{H}, \mathcal{H}_0)$. Further,  $\sum_{j\in \mathbb{J}}\Psi_j^*C_j=\sum_{j\in \mathbb{J}}\Psi_j^*(A_jS_{A,\Psi}^{-1}+B_jS_{B,\Phi}^{-1})=I_\mathcal{H}+0S_{B,\Phi}^{-1}=I_\mathcal{H}$, $\sum_{j\in \mathbb{J}}A_j^*\Xi_j=\sum_{j\in \mathbb{J}}A_j^*(\Psi _jS_{A,\Psi}^{-1}+\Phi_jS_{B,\Phi}^{-1})=I_\mathcal{H}+0S_{B,\Phi}^{-1}=I_\mathcal{H} $, and $\sum_{j\in \mathbb{J}}\Phi_j^*C_j=\sum_{j\in \mathbb{J}}\Phi_j^*(A_jS_{A,\Psi}^{-1}+B_jS_{B,\Phi}^{-1})=0S_{A,\Psi}^{-1}+I_\mathcal{H}=I_\mathcal{H}$, $ \sum_{j\in \mathbb{J}}B_j^*\Xi_j=\sum_{j\in \mathbb{J}}B_j^*(\Psi _jS_{A,\Psi}^{-1}+\Phi_jS_{B,\Phi}^{-1})=0S_{A,\Psi}^{-1}+I_\mathcal{H}=I_\mathcal{H}$. This says $(\{C_j\}_{j\in \mathbb{J}}, \{\Xi_j\}_{j\in \mathbb{J}})$ is a common dual of  $ (\{A_j\}_{j\in \mathbb{J}}, \{\Psi_j\}_{j\in \mathbb{J}}) $ and $ (\{B_j\}_{j\in \mathbb{J}}, \{\Phi_j\}_{j\in \mathbb{J}}).$	
\end{proof}
\begin{proposition}
Let $ (\{A_j\}_{j\in \mathbb{J}}, \{\Psi_j\}_{j\in \mathbb{J}}) $ and $ (\{B_j\}_{j\in \mathbb{J}}, \{\Phi_j\}_{j\in \mathbb{J}}) $ be  two Parseval weak operator-valued frames in   $\mathcal{B}(\mathcal{H}, \mathcal{H}_0)$ which are  orthogonal. If $C,D,E,F \in \mathcal{B}(\mathcal{H})$ are such that $ C^*E+D^*F=I_\mathcal{H}$, then  $ (\{A_jC+B_jD\}_{j\in \mathbb{J}}, \{\Psi_jE+\Phi_jF\}_{j\in \mathbb{J}}) $ is a  Parseval weak (ovf) in  $\mathcal{B}(\mathcal{H}, \mathcal{H}_0)$. In particular,  if scalars $ c,d,e,f$ satisfy $\bar{c}e+\bar{d}f =1$, then $ (\{cA_j+dB_j\}_{j\in \mathbb{J}}, \{e\Psi_j+f\Phi_j\}_{j\in \mathbb{J}}) $ is   a Parseval weak (ovf).
\end{proposition}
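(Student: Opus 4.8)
The plan is to replay the interpolation argument already used for operator-valued frames in Section~\ref{MK}, but with every manipulation carried out at the level of the frame operator $\sum_{j\in\mathbb{J}}\Psi_j^*A_j$ and never at the level of analysis or synthesis operators, since those are unavailable in the weak setting (condition \text{\upshape(ii)} of Definition~\ref{1} has been removed, so $\theta_B$ need not exist). Write $\widetilde{A}_j\coloneqq A_jC+B_jD$ and $\widetilde{\Psi}_j\coloneqq\Psi_jE+\Phi_jF$. First I would expand the frame operator of the candidate pair termwise,
\[
\widetilde{\Psi}_j^*\widetilde{A}_j=(E^*\Psi_j^*+F^*\Phi_j^*)(A_jC+B_jD)=E^*\Psi_j^*A_jC+E^*\Psi_j^*B_jD+F^*\Phi_j^*A_jC+F^*\Phi_j^*B_jD,
\]
and then sum over $j$, moving the fixed operators $C,D,E^*,F^*$ outside the strong-operator-convergent series exactly as in the proof that two orthogonal weak operator-valued frames admit a common dual.

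The second step is to identify the four resulting sums. The Parseval hypotheses give $\sum_{j}\Psi_j^*A_j=I_\mathcal{H}$ and $\sum_{j}\Phi_j^*B_j=I_\mathcal{H}$, while orthogonality of the two pairs supplies the vanishing of both cross sums: directly $\sum_{j}\Phi_j^*A_j=0$, and, using that the orthogonality relation is symmetric (pass to adjoints in its two defining identities, as is done tacitly in the common-dual proof), also $\sum_{j}\Psi_j^*B_j=0$. Substituting,
\[
S_{\widetilde{A},\widetilde{\Psi}}=E^*I_\mathcal{H}C+E^*\,0\,D+F^*\,0\,C+F^*I_\mathcal{H}D=E^*C+F^*D,
\]
which converges in the strong-operator topology because each of its four constituents does. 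Finally I would read off the algebraic identity $E^*C+F^*D=(C^*E+D^*F)^*=I_\mathcal{H}^*=I_\mathcal{H}$, whence $S_{\widetilde{A},\widetilde{\Psi}}=I_\mathcal{H}$ is bounded positive invertible, so $(\{\widetilde{A}_j\}_{j\in\mathbb{J}},\{\widetilde{\Psi}_j\}_{j\in\mathbb{J}})$ is a Parseval weak (ovf).

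The scalar statement then falls out by specialization: taking $C=cI_\mathcal{H}$, $D=dI_\mathcal{H}$, $E=eI_\mathcal{H}$, $F=fI_\mathcal{H}$ gives $C^*E+D^*F=(\bar c e+\bar d f)I_\mathcal{H}=I_\mathcal{H}$, while $A_jC+B_jD=cA_j+dB_j$ and $\Psi_jE+\Phi_jF=e\Psi_j+f\Phi_j$. The one genuinely delicate point I anticipate is the vanishing of the cross term $\sum_{j}\Psi_j^*B_j$: orthogonality is phrased through $\sum_{j}B_j^*\Psi_j=0$, and in the weak framework one cannot recover $\sum_{j}\Psi_j^*B_j=0$ through the clean identity $\theta_\Psi^*\theta_B=(\theta_B^*\theta_\Psi)^*$ as one would for ordinary operator-valued frames. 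I would therefore lean on the symmetry of the orthogonality relation, consistent with its use elsewhere in this section, to guarantee $\sum_{j}\Psi_j^*B_j=0$, after which the computation is purely formal.
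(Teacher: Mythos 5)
Your proof is correct and follows essentially the same route as the paper: expand $\sum_{j\in\mathbb{J}}(\Psi_jE+\Phi_jF)^*(A_jC+B_jD)$ into four sums, identify the diagonal sums with $I_\mathcal{H}$ via Parsevalness and the cross sums with $0$ via orthogonality, and conclude with $E^*C+F^*D=(C^*E+D^*F)^*=I_\mathcal{H}$. The delicate point you flag --- that the orthogonality definition only directly yields $\sum_{j\in\mathbb{J}}\Phi_j^*A_j=0$ and $\sum_{j\in\mathbb{J}}B_j^*\Psi_j=0$, so the vanishing of $\sum_{j\in\mathbb{J}}\Psi_j^*B_j$ must be extracted by symmetry rather than by taking adjoints of analysis operators --- is handled in exactly the same tacit way in the paper's own proof, so your treatment matches the intended argument.
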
 
\begin{proof}
$ S_{AC+BD,\Psi E+\Phi F} =\sum_{j\in\mathbb{J}}(\Psi_jE+\Phi_jF)^*(A_jC+B_jD)=E^*S_{A,\Psi}C+E^*(\sum_{j\in\mathbb{J}}\Psi_j^*B_j)D+F^*(\sum_{j\in\mathbb{J}}\Phi_j^*A_j)C+F^*S_{B,\Phi}D=E^*I_\mathcal{H}C+E^*0D+F^*0C+F^*I_\mathcal{H}D=I_\mathcal{H}$.
\end{proof}
\begin{definition}
Two weak operator-valued frames $(\{A_j\}_{j\in \mathbb{J}},\{\Psi_j\}_{j\in \mathbb{J}} )$  and $ (\{B_j\}_{j\in \mathbb{J}}, \{\Phi_j\}_{j\in \mathbb{J}} )$   in $ \mathcal{B}(\mathcal{H}, \mathcal{H}_0)$  are called 
disjoint if $(\{A_j\oplus B_j\}_{j \in \mathbb{J}},\{\Psi_j\oplus \Phi_j\}_{j \in \mathbb{J}})$ is a weak (ovf) in $ \mathcal{B}(\mathcal{H}\oplus \mathcal{H}, \mathcal{H}_0).$   
\end{definition}
\begin{proposition}
If $(\{A_j\}_{j\in \mathbb{J}},\{\Psi_j\}_{j\in \mathbb{J}} )$  and $ (\{B_j\}_{j\in \mathbb{J}}, \{\Phi_j\}_{j\in \mathbb{J}} )$  are  weak orthogonal operator-valued frames  in $ \mathcal{B}(\mathcal{H}, \mathcal{H}_0)$, then  they  are disjoint. Further, if both $(\{A_j\}_{j\in \mathbb{J}},\{\Psi_j\}_{j\in \mathbb{J}} )$  and $ (\{B_j\}_{j\in \mathbb{J}}, \{\Phi_j\}_{j\in \mathbb{J}} )$ are  Parseval weak, then $(\{A_j\oplus B_j\}_{j \in \mathbb{J}},\{\Psi_j\oplus \Phi_j\}_{j \in \mathbb{J}})$ is Parseval weak.
\end{proposition}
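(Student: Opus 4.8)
The plan is to compute the frame operator of the direct-sum pair directly in the strong-operator topology, scrupulously avoiding analysis and synthesis operators, as the ground rules of this section demand. Write the operators of the candidate weak (ovf) as $A_j\oplus B_j\in\mathcal{B}(\mathcal{H}\oplus\mathcal{H},\mathcal{H}_0)$, acting by $(A_j\oplus B_j)(h\oplus g)=A_jh+B_jg$, and similarly for $\Psi_j\oplus\Phi_j$. First I would record the adjoint $(\Psi_j\oplus\Phi_j)^*y=\Psi_j^*y\oplus\Phi_j^*y$ for $y\in\mathcal{H}_0$, so that $(\Psi_j\oplus\Phi_j)^*(A_j\oplus B_j)(h\oplus g)=\Psi_j^*(A_jh+B_jg)\oplus\Phi_j^*(A_jh+B_jg)$.

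Next I would take the net of finite partial sums and pass to the limit coordinatewise. The first coordinate is $\sum_j\Psi_j^*A_jh+\sum_j\Psi_j^*B_jg$ and the second is $\sum_j\Phi_j^*A_jh+\sum_j\Phi_j^*B_jg$. Here is where orthogonality enters: since the two weak operator-valued frames are orthogonal, $\sum_j B_j^*\Psi_j=\sum_j\Phi_j^*A_j=0$, and using that the orthogonality relation is symmetric we also have $\sum_j\Psi_j^*B_j=\sum_j A_j^*\Phi_j=0$. Consequently the two cross sums vanish and the partial sums converge in SOT to $S_{A,\Psi}h\oplus S_{B,\Phi}g$; that is, $S_{A\oplus B,\Psi\oplus\Phi}=S_{A,\Psi}\oplus S_{B,\Phi}$. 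A direct sum of bounded positive invertible operators is again bounded, positive (since $\langle(S_{A,\Psi}\oplus S_{B,\Phi})(h\oplus g),h\oplus g\rangle=\langle S_{A,\Psi}h,h\rangle+\langle S_{B,\Phi}g,g\rangle\ge0$) and invertible with inverse $S_{A,\Psi}^{-1}\oplus S_{B,\Phi}^{-1}$, so the direct-sum pair is a weak (ovf) and the two frames are disjoint. For the Parseval assertion I would simply specialize: if both frames are Parseval then $S_{A,\Psi}=S_{B,\Phi}=I_{\mathcal{H}}$, whence $S_{A\oplus B,\Psi\oplus\Phi}=I_{\mathcal{H}}\oplus I_{\mathcal{H}}=I_{\mathcal{H}\oplus\mathcal{H}}$, which is exactly Parsevalness.

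The main obstacle I anticipate is the handling of the off-diagonal (cross) sums purely within the strong-operator topology. One of them, $\sum_j\Phi_j^*A_j$, is literally one of the defining orthogonality identities and converges to $0$ in SOT by hypothesis; the other, $\sum_j\Psi_j^*B_j$, is the adjoint of the hypothesis $\sum_j B_j^*\Psi_j=0$, so a priori one only obtains convergence to $0$ in the weak operator topology, because taking adjoints is not SOT-continuous. The clean way around this is to invoke the symmetry of the weak-orthogonality relation, so that $\sum_j\Psi_j^*B_j=0$ holds as a genuine SOT-convergent series; I would make this symmetry explicit (or cite the symmetry remark accompanying the weak orthogonality definition) before performing the computation. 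Once all four blocks are known to converge in SOT, the coordinatewise nature of strong convergence on $\mathcal{H}\oplus\mathcal{H}$ finishes the argument with no further estimates.
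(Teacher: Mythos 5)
Your computation is exactly the paper's proof: expand $(\Psi_j\oplus\Phi_j)^*(A_j\oplus B_j)$ coordinatewise on $h\oplus g$, kill the two cross sums by orthogonality, and read off $S_{A\oplus B,\Psi\oplus\Phi}=S_{A,\Psi}\oplus S_{B,\Phi}$, with the Parseval claim following by specialization. Your extra care about the cross term $\sum_{j}\Psi_j^*B_j$ --- which is only the termwise adjoint of the hypothesis $\sum_{j}B_j^*\Psi_j=0$ and hence a priori converges to $0$ only in the weak operator topology --- flags a point the paper passes over silently, so making the symmetry of weak orthogonality explicit before the computation is a worthwhile refinement of, rather than a departure from, the paper's argument.
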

\begin{proof}
Let $ h \oplus g \in \mathcal{H}\oplus \mathcal{H}$. Then 
\begin{align*}
&S_{A\oplus B, \Psi\oplus \Phi}(h\oplus g)=\sum_{j\in \mathbb{J}}(\Psi_j\oplus \Phi_j)^*(A_j\oplus B_j)(h\oplus g)=\sum_{j\in \mathbb{J}}(\Psi_j\oplus \Phi_j)^*(A_jh+ B_jg)\\
&=\sum_{j\in \mathbb{J}}(\Psi_j^*(A_jh+B_jg)\oplus \Phi_j^*(A_jh+B_jg))
=\left(\sum_{j\in \mathbb{J}}\Psi_j^*A_jh+\sum_{j\in \mathbb{J}}\Psi_j^*B_jg\right)\oplus \left(\sum_{j\in \mathbb{J}}\Phi_j^*A_jh+\sum_{j\in \mathbb{J}}\Phi_j^*B_jg\right)\\
&=(S_{A,\Psi}h+0)\oplus(0+S_{B,\Phi}g) =(S_{A,\Psi}\oplus S_{B,\Phi})(h\oplus g).
\end{align*}	
\end{proof}

\textbf{Characterizations}
\begin{theorem}\label{WEAKSEQUENTIALCHARACTERIZATION}
Let $ \{A_j\}_{j\in\mathbb{J}}, \{\Psi_j\}_{j\in\mathbb{J}}$ be in $ \mathcal{B}(\mathcal{H},\mathcal{H}_0).$ Suppose $ \{e_{j,k}\}_{k\in\mathbb{L}_j}$ is an orthonormal basis for $ \mathcal{H}_0,$ for each $j \in \mathbb{J}.$ Let  $ u_{j,k}=A_j^*e_{j,k}, v_{j,k}=\Psi_j^*e_{j,k}, \forall k \in  \mathbb{L}_j, j\in \mathbb{J}.$ Then $ (\{A_j\}_{j\in\mathbb{J}}, \{\Psi_j\}_{j\in\mathbb{J}})$ is 
\begin{enumerate}[\upshape(i)]
\item   a weak (ovf)  in $ \mathcal{B}(\mathcal{H},\mathcal{H}_0)$  with bounds $a $ and $ b$  if and only if  the map 
$$ T: \mathcal{H} \ni h \mapsto\sum\limits_{j\in \mathbb{J}}\sum\limits_{k\in \mathbb{L}_j}\langle h, u_{j,k}\rangle v_{j,k} \in  \mathcal{H} $$
is a well-defined bounded positive invertible operator such that $ a\|h\|^2 \leq \langle Th,h \rangle \leq b\|h\|^2, \forall h \in \mathcal{H}. $
\item    weak Bessel in  $ \mathcal{B}(\mathcal{H},\mathcal{H}_0)$  with bound  $ b$  if and only if  the map 
 $$ T: \mathcal{H} \ni h \mapsto\sum\limits_{j\in \mathbb{J}}\sum\limits_{k\in \mathbb{L}_j}\langle h, u_{j,k}\rangle v_{j,k} \in  \mathcal{H} $$
 is well-defined bounded positive  operator such that $ \langle Th,h \rangle \leq b\|h\|^2, \forall h \in \mathcal{H} .$ 
 \item a  weak (ovf)  in $ \mathcal{B}(\mathcal{H},\mathcal{H}_0)$  with bounds $a $ and $ b$  if and only if there exists $  r >0$ such that 
 $$\left \|\sum\limits_{j\in \mathbb{J}}\sum\limits_{k\in \mathbb{L}_j}\langle h, u_{j,k}\rangle v_{j,k}\right\|\leq r\|h\|,\forall h \in \mathcal{H}   ; ~\sum\limits_{j\in \mathbb{J}}\sum\limits_{k\in \mathbb{L}_j}\langle h, u_{j,k}\rangle v_{j,k} =\sum\limits_{j\in \mathbb{J}}\sum\limits_{k\in \mathbb{L}_j}\langle h, v_{j,k}\rangle u_{j,k} ,\forall h \in \mathcal{H} ;$$
 $$a\|h\|^2\leq \sum\limits_{j\in \mathbb{J}}\sum\limits_{k\in \mathbb{L}_j}\langle h, u_{j,k}\rangle \langle  v_{j,k} , h\rangle  \leq b\|h\|^2 ,~ \forall h \in \mathcal{H}.$$
 \item weak Bessel in $ \mathcal{B}(\mathcal{H},\mathcal{H}_0)$  with bound $ b$  if and only if there exists $  r >0$ such that 
 $$\left \|\sum\limits_{j\in \mathbb{J}}\sum\limits_{k\in \mathbb{L}_j}\langle h, u_{j,k}\rangle v_{j,k}\right\|\leq r\|h\|,\forall h \in \mathcal{H}   ; ~\sum\limits_{j\in \mathbb{J}}\sum\limits_{k\in \mathbb{L}_j}\langle h, u_{j,k}\rangle v_{j,k} =\sum\limits_{j\in \mathbb{J}}\sum\limits_{k\in \mathbb{L}_j}\langle h, v_{j,k}\rangle u_{j,k} ,\forall h \in \mathcal{H} ;$$
 $$0\leq \sum\limits_{j\in \mathbb{J}}\sum\limits_{k\in \mathbb{L}_j}\langle h, u_{j,k}\rangle \langle  v_{j,k} , h\rangle  \leq b\|h\|^2 ,~ \forall h \in \mathcal{H}.$$
 \end{enumerate}	
 \end{theorem}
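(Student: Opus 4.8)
The plan is to reduce everything to the operator-to-vector dictionary already assembled just before Theorem \ref{SEQUENTIAL CHARACTERIZATION}, which gives, for each $j\in\mathbb{J}$ and all $h\in\mathcal{H}$, $y\in\mathcal{H}_0$,
\[
A_jh=\sum_{k\in\mathbb{L}_j}\langle h,u_{j,k}\rangle e_{j,k},\qquad A_j^*y=\sum_{k\in\mathbb{L}_j}\langle y,e_{j,k}\rangle u_{j,k},
\]
together with the analogous identities for $\Psi_j$ and $v_{j,k}$. The entire content of the weak definition (in both of its equivalent forms) is phrased purely through the single operator $S_{A,\Psi}$; unlike the full Definition \ref{1}, no requirement is placed on the analysis operators $\theta_A,\theta_\Psi$. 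Consequently I only need the ``$T$-part'' of the computations in the proof of Theorem \ref{SEQUENTIAL CHARACTERIZATION}, and may simply discard the Bessel-type inequalities $\sum|\langle h,u_{j,k}\rangle|^2\le c\|h\|^2$ and $\sum|\langle h,v_{j,k}\rangle|^2\le d\|h\|^2$ that appear there.

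First I would record the two defining identities. Fixing $j$ and using continuity of $\Psi_j^*$ together with completeness of $\{e_{j,k}\}_{k\in\mathbb{L}_j}$ in $\mathcal{H}_0$,
\[
\sum_{k\in\mathbb{L}_j}\langle h,u_{j,k}\rangle v_{j,k}=\sum_{k\in\mathbb{L}_j}\langle A_jh,e_{j,k}\rangle\Psi_j^*e_{j,k}=\Psi_j^*A_jh,
\]
so that summing over $j$ gives $Th=\sum_{j\in\mathbb{J}}\Psi_j^*A_jh$ precisely when the right-hand series converges, i.e.\ $T=S_{A,\Psi}$ on the common domain of convergence. The symmetric computation yields $\sum_{j}\sum_{k}\langle h,v_{j,k}\rangle u_{j,k}=\sum_{j}A_j^*\Psi_jh$, and pairing $A_jh$ against $\Psi_jh$ through their $\{e_{j,k}\}$-expansions gives $\langle Th,h\rangle=\sum_{j}\langle A_jh,\Psi_jh\rangle=\sum_{j}\sum_{k}\langle h,u_{j,k}\rangle\langle v_{j,k},h\rangle$. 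The crucial structural remark is that the inner $k$-sum always converges to $\Psi_j^*A_jh$ regardless of any hypothesis, so the sole convergence question is the outer $j$-sum, which is exactly what the weak (ovf) condition governs.

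With these identities in hand, parts (i) and (ii) are immediate translations of the first form of the weak definition: $(\{A_j\},\{\Psi_j\})$ is a weak (ovf) (resp.\ weak Bessel) with bounds $a,b$ if and only if $S_{A,\Psi}=T$ is a well-defined bounded positive invertible (resp.\ positive) operator, and the condition $a\|h\|^2\le\langle Th,h\rangle\le b\|h\|^2$ (resp.\ $\langle Th,h\rangle\le b\|h\|^2$) is literally $aI_\mathcal{H}\le S_{A,\Psi}\le bI_\mathcal{H}$ expressed as a quadratic form. Parts (iii) and (iv) are the corresponding translations of the second (inequality-plus-equality) form: the bound $\|\sum_{j}\sum_{k}\langle h,u_{j,k}\rangle v_{j,k}\|=\|S_{A,\Psi}h\|\le r\|h\|$ is condition (i) of that definition, the equality $\sum\langle h,u_{j,k}\rangle v_{j,k}=\sum\langle h,v_{j,k}\rangle u_{j,k}$ is the self-adjointness requirement $\sum\Psi_j^*A_jh=\sum A_j^*\Psi_jh$, and the quadratic-form bound is condition (iii).

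I expect no genuine obstacle here beyond the bookkeeping of the two series manipulations, since the weak definition has been engineered to involve only $S_{A,\Psi}$. The one point warranting care is justifying the interchange of the $j$- and $k$-summations in the self-adjointness identity; this is handled exactly as above, by first collapsing each inner $k$-sum to $\Psi_j^*A_jh$ (resp.\ $A_j^*\Psi_jh$) using continuity of the relevant adjoint, and only then summing over $j$ in the strong-operator sense guaranteed by the hypothesis. Since all four equivalences share these same two identities, the argument for (i) and (iii) transfers verbatim to the Bessel cases (ii) and (iv) by deleting invertibility and replacing the lower bound $a\|h\|^2\le\langle Th,h\rangle$ with positivity $0\le\langle Th,h\rangle$.
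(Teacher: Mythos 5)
Your proposal is correct and follows exactly the route the paper intends: the theorem is stated without proof precisely because it is the ``$T$-part'' of the computations in (iii) and (v) of the proof of Theorem \ref{SEQUENTIAL CHARACTERIZATION}, with the analysis-operator (Bessel-type) conditions deleted, and your identities $\sum_{k\in\mathbb{L}_j}\langle h,u_{j,k}\rangle v_{j,k}=\Psi_j^*A_jh$ and $\sum_{k\in\mathbb{L}_j}\langle h,u_{j,k}\rangle\langle v_{j,k},h\rangle=\langle A_jh,\Psi_jh\rangle$ are exactly those used there. Your handling of the summation order --- collapsing the inner $k$-sum by continuity of the adjoint before taking the outer $j$-net --- is the right justification and matches the paper's convention for convergence over $\mathbb{J}$.
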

 \begin{theorem}\label{WEAK CHARACTERIZATION}
 Let $ \{A_j\}_{j\in\mathbb{J}}, \{\Psi_j\}_{j\in\mathbb{J}}$ be in $ \mathcal{B}(\mathcal{H})$ such that $ \Psi_j^*A_j\geq 0, \forall j \in \mathbb{J}.$ Then $ (\{A_j\}_{j\in\mathbb{J}},\{\Psi_j\}_{j\in\mathbb{J}})$ is a  weak (ovf) in $ \mathcal{B}(\mathcal{H})$ if and only if 
  $$ T : \ell^2(\mathbb{J})\otimes \mathcal{H} \ni y \mapsto \sum\limits_{j\in \mathbb{J}} (\Psi_j^*A_j)^\frac{1}{2}L_j^*y\in \mathcal{H}$$
  is a  well-defined bounded surjective operator.
 \end{theorem}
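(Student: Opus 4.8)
The plan is to reduce the statement to the standard Hilbert-space dictionary between an operator and its adjoint by introducing the positive square roots $B_j := (\Psi_j^*A_j)^{1/2}$, which exist for every $j$ precisely because of the hypothesis $\Psi_j^*A_j \geq 0$. Since each $B_j$ is positive, $B_j^*B_j = B_j^2 = \Psi_j^*A_j$. First I would identify $T$ as the synthesis operator of the family $\{B_j\}_{j\in\mathbb{J}}$: a direct computation using $L_k^*(\{a_j\}_{j}\otimes z)=a_k z$ gives, for $h \in \mathcal{H}$ and $y \in \ell^2(\mathbb{J})\otimes\mathcal{H}$, that $\langle Ty,h\rangle = \sum_{j} \langle y, L_j B_j h\rangle$, so that $T^* h = \sum_j L_j B_j h = \sum_j e_j \otimes B_j h$. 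In the notation of Definition \ref{1} this is exactly the analysis operator $\theta_B$ of $\{B_j\}$, whence $T=\theta_B^*$ and, by the computation in Proposition \ref{2.2}(i), $TT^* = \theta_B^*\theta_B = \sum_{j} B_j^2 = \sum_j \Psi_j^*A_j = S_{A,\Psi}$ (all identities understood in SOT).

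Next I would prove the bridging equivalence: $T$ is well-defined and bounded if and only if $\sum_{j}\Psi_j^*A_j$ converges in the strong-operator topology to a bounded operator. This is the only place requiring genuine care, because for a weak (ovf) the analysis operators $\theta_A,\theta_\Psi$ need not exist, so nothing can be quoted from Section \ref{MK} directly; the family $\{B_j\}$ must be handled on its own. The key observations are that the partial sums $\sum_{j\in\mathbb{S}} B_j^2$ form an increasing net of positive operators with $\langle \sum_{j\in\mathbb{S}} B_j^2 h,h\rangle = \sum_{j\in\mathbb{S}}\|B_j h\|^2$, that such a net converges in SOT to a bounded operator iff it is uniformly bounded, and that the partial sums $\sum_{j\in\mathbb{S}} B_j L_j^* y$ are controlled through $\sum_{j} L_j L_j^* = I_{\ell^2(\mathbb{J})}\otimes I_{\mathcal{H}}$ (Equation (\ref{LEQUATION})), which yields $\sum_j \|L_j^*y\|^2 = \|y\|^2$. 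Thus boundedness of $\theta_B$ (equivalently, well-definedness and boundedness of the defining series for $T=\theta_B^*$) is exactly uniform boundedness of the net $\sum_{j\in\mathbb{S}} B_j^2$, i.e. SOT-convergence of $S_{A,\Psi}$ to a bounded positive operator. I expect this net/adjoint bookkeeping to be the main obstacle.

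Finally I would invoke the standard fact that a bounded operator $T$ between Hilbert spaces is surjective if and only if $T^*$ is bounded below, equivalently if and only if the positive operator $TT^*$ is invertible, via $\|T^*h\|^2 = \langle TT^*h,h\rangle$. Substituting $TT^* = S_{A,\Psi}$ closes both directions: if $(\{A_j\}_{j\in\mathbb{J}},\{\Psi_j\}_{j\in\mathbb{J}})$ is a weak (ovf), then $S_{A,\Psi}$ is bounded, positive and invertible, so by the bridging step $T$ is well-defined and bounded, and invertibility of $TT^*=S_{A,\Psi}$ forces $T$ surjective; conversely, if $T$ is well-defined, bounded and surjective, then $TT^*=S_{A,\Psi}$ is a bounded positive operator that is invertible by surjectivity, which is precisely the defining condition for $(\{A_j\}_{j\in\mathbb{J}},\{\Psi_j\}_{j\in\mathbb{J}})$ to be a weak (ovf). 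Note that positivity of $S_{A,\Psi}$ is automatic, both from $S_{A,\Psi}=TT^*$ and from $\Psi_j^*A_j\geq 0$, so beyond the bridging step the only substantive ingredient is the surjectivity $\Leftrightarrow$ invertibility equivalence.
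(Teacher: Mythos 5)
Your proof is correct, and it is organized differently from the paper's. The paper works entirely with partial sums: in the forward direction it bounds $\bigl\|\sum_{j\in\mathbb{S}}(\Psi_j^*A_j)^{1/2}L_j^*y\bigr\|$ by a double application of Cauchy--Schwarz to get $\|T\|\leq\|S_{A,\Psi}\|^{1/2}$, and proves surjectivity by exhibiting the explicit preimage $\sum_{k}L_k(\Psi_k^*A_k)^{1/2}h$ of $g=S_{A,\Psi}h$; in the converse direction it first shows $\sum_j\langle\Psi_j^*A_jh,h\rangle$ converges by monotonicity, then deduces norm convergence of $\sum_j\Psi_j^*A_jh$, and finally obtains the lower bound from a bounded right inverse $T^\dagger$ of $T$. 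Your route makes explicit the structure that the paper leaves implicit: $T$ is the synthesis operator $\theta_B^*$ of the family $B_j=(\Psi_j^*A_j)^{1/2}$, so $TT^*=S_{A,\Psi}$, and the theorem collapses to ``$\theta_B^*$ is bounded and surjective iff $\theta_B^*\theta_B$ is bounded and invertible.'' Indeed the paper's explicit preimage is exactly $T^*h$ with $h=(TT^*)^{-1}g$, and its use of $T^\dagger$ is the standard proof that surjectivity of $T$ bounds $T^*$ below; your packaging replaces the Cauchy--Schwarz estimates by the orthogonality identity $\|\sum_{j\in\mathbb{S}}L_jB_jh\|^2=\sum_{j\in\mathbb{S}}\|B_jh\|^2$ and Vigier's theorem for increasing uniformly bounded nets of positive operators. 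What your approach buys is brevity and a clear separation of the one genuinely delicate point (the bridging equivalence between boundedness of $T$ and SOT-convergence of $S_{A,\Psi}$, which you correctly flag as requiring care since $\theta_A$ and $\theta_\Psi$ need not exist for a weak (ovf)); what the paper's approach buys is that every convergence claim is verified by hand at the level of finite partial sums, without appealing to the identification $T^*=\theta_B$ or to general facts about adjoints of SOT-limits.
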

 \begin{proof}
 $ (\Rightarrow)$ For every finite subset $ \mathbb{S}$ of $ \mathbb{J}$ and every $ y \in \ell^2(\mathbb{J})\otimes \mathcal{H},$ we have 
 
 \begin{align*}
 \left\|\sum\limits_{j\in \mathbb{S}} (\Psi_j^*A_j)^\frac{1}{2}L_j^*y\right\|&=\sup\limits_{h\in \mathcal{H}, \|h\|=1}\left|\left\langle \sum\limits_{j\in \mathbb{S}} (\Psi_j^*A_j)^\frac{1}{2}L_j^*y,h \right\rangle\right|=\sup\limits_{h\in \mathcal{H}, \|h\|=1}\left|\sum\limits_{j\in \mathbb{S}}\langle L_j^*y,(\Psi_j^*A_j)^\frac{1}{2}h \rangle \right|\\
 &\leq \sup\limits_{h\in \mathcal{H}, \|h\|=1}\sum\limits_{j\in \mathbb{S}}\| L_j^*y\|\|(\Psi_j^*A_j)^\frac{1}{2}h\|
  \leq \sup\limits_{h\in \mathcal{H}, \|h\|=1}\left(\sum\limits_{j\in \mathbb{S}}\|L^*_jy\|^2\right)^\frac{1}{2}\left(\sum\limits_{j\in \mathbb{S}}\|(\Psi_j^*A_j)^\frac{1}{2}h\|^2\right)^\frac{1}{2}\\
 &= \sup\limits_{h\in \mathcal{H}, \|h\|=1}\left(\sum\limits_{j\in \mathbb{S}}\|L^*_jy\|^2\right)^\frac{1}{2}\left(\sum\limits_{j\in \mathbb{S}}\langle(\Psi_j^*A_j)^\frac{1}{2}h,(\Psi_j^*A_j)^\frac{1}{2}h \rangle \right)^\frac{1}{2}\\
 &=\sup\limits_{h\in \mathcal{H}, \|h\|=1}\left(\sum\limits_{j\in \mathbb{S}}\|L^*_jy\|^2\right)^\frac{1}{2}\left(\sum\limits_{j\in \mathbb{S}}\langle\Psi_j^*A_jh,h \rangle \right)^\frac{1}{2}
 \leq \sup\limits_{h\in \mathcal{H}, \|h\|=1}\left(\sum\limits_{j\in \mathbb{S}}\|L^*_jy\|^2\right)^\frac{1}{2}\|S_{A,\Psi}h\|^\frac{1}{2}\\
& \leq\left(\sum\limits_{j\in \mathbb{S}}\|L^*_jy\|^2\right)^\frac{1}{2}\|S_{A,\Psi}\|^\frac{1}{2},
 \end{align*}
and  $\sum_{j\in \mathbb{J}}\|L^*_jy\|^2$  exists (it equals $ \|y\|^2$). Therefore $ T$ is bounded linear with $\| T\| \leq \|S_{A,\Psi}\|^{1/2}.$ Since $S_{A,\Psi} $ is invertible, for given $ g \in \mathcal{H}$ there exists $ h \in \mathcal{H}$ such that $ g= S_{A,\Psi} h.$ Now we  need to ensure the convergence of  $\sum_{k\in \mathbb{J}} L_k(\Psi_k^*A_k)^{1/2}h .$ In fact, for finite $ \mathbb{S} \subseteq \mathbb{J}$,  $ \|\sum_{k\in \mathbb{S}} L_k(\Psi_k^*A_k)^{1/2}h\|^2=\langle \sum_{k\in \mathbb{S}} \Psi_k^*A_kh, h \rangle $, this goes to $\langle S_{A,\Psi}h, h \rangle$. But then, 
 $$ g= \sum\limits_{j\in \mathbb{J}} \Psi_j^*A_jh=\sum\limits_{j\in \mathbb{J}} (\Psi_j^*A_j)^\frac{1}{2}L_j^*\left( \sum\limits_{k\in \mathbb{J}} L_k(\Psi_k^*A_k)^\frac{1}{2}h\right)=T\left( \sum\limits_{k\in \mathbb{J}} L_k(\Psi_k^*A_k)^\frac{1}{2}h\right).$$
   
 $(\Leftarrow)$ First we show that $ \sum_{j\in \mathbb{J}}\langle \Psi_j^*A_jh, h \rangle $ converges, $ \forall h \in \mathcal {H}$ and using this, we show $\sum_{j\in \mathbb{J}}\Psi_j^*A_jh $ converges, $ \forall h \in \mathcal {H}$.  Let $h \in \mathcal{H} $, $ \mathbb{S}$ be a finite subset of $ \mathbb{J}.$ Then
 \begin{align*}
\sum_{j\in \mathbb{S}}\langle \Psi_j^*A_jh, h \rangle &\leq \left\|\sum_{j\in \mathbb{S}} \Psi_j^*A_jh \right\|\|h\|= \left\|\sum\limits_{j\in \mathbb{S}}(\Psi_j^*A_j)^\frac{1}{2}L_j^*\left( \sum\limits_{k\in \mathbb{S}} L_k(\Psi_k^*A_k)^\frac{1}{2}h\right) \right\|\|h\|\\
&= \left\|T\left( \sum\limits_{k\in \mathbb{S}} L_k(\Psi_k^*A_k)^\frac{1}{2}h+\sum\limits_{k\in \mathbb{J}\setminus\mathbb{S} }0\right) \right\|\|h\|
\leq \|T\|\left \|\sum\limits_{k\in \mathbb{S}} L_k(\Psi_k^*A_k)^\frac{1}{2}h \right\|\|h\|\\
&=\|T\|\left \langle \sum\limits_{k\in \mathbb{S}}\Psi_k^*A_kh, h\right \rangle^\frac{1}{2}\|h\|.
 \end{align*}
Therefore $\sum_{j\in \mathbb{S}}\langle\Psi_j^*A_jh, h \rangle   \leq \|T\|^2\|h\|^2 $. Since $ \Psi_j^*A_j \geq 0, \forall j \in \mathbb{J},$ $\{\sum_{j\in \mathbb{S}}\langle\Psi_j^*A_jh, h \rangle : \mathbb{S} \subseteq \mathbb{J} \text{ is finite}\} $ is monotonically increasing,  and this gives  the convergence of $ \sum_{j\in \mathbb{J}}\langle \Psi_j^*A_jh, h \rangle. $ Next, 

 \begin{align*}
 \left\|\sum\limits_{j\in \mathbb{S}}\Psi_j^*A_jh \right\|& = \sup\limits_{g\in \mathcal{H}, \|g\|=1}\left|\left\langle \sum\limits_{j\in \mathbb{S}} \Psi_j^*A_jh, g \right\rangle\right|=\sup\limits_{g\in \mathcal{H}, \|g\|=1}\left| \sum\limits_{j\in \mathbb{S}} \langle (\Psi_j^*A_j)^\frac{1}{2}h,(\Psi_j^*A_j)^\frac{1}{2}g \rangle\right|\\
 &\leq \sup\limits_{g\in \mathcal{H}, \|g\|=1}\left(\sum\limits_{j\in \mathbb{S}}\|(\Psi_j^*A_j)^\frac{1}{2}h\|^2\right)^\frac{1}{2}\left(\sum\limits_{j\in \mathbb{S}}\|(\Psi_j^*A_j)^\frac{1}{2}g\|^2\right)^\frac{1}{2}\\
 &=\left(\sum\limits_{j\in \mathbb{S}}\langle \Psi_j^*A_jh, h \rangle \right)^\frac{1}{2}\sup\limits_{g\in \mathcal{H}, \|g\|=1}\left(\sum\limits_{j\in \mathbb{S}}\langle \Psi_j^*A_jg,g \rangle \right)^\frac{1}{2}\leq \left(\sum\limits_{j\in \mathbb{S}}\langle \Psi_j^*A_jh, h \rangle \right)^\frac{1}{2}\|T\|.
\end{align*} 
So $ \sum_{j\in \mathbb{J}}\Psi_j^*A_jh$ converges and $\|S_{A, \Psi}h\|=\|\sum_{j\in \mathbb{S}}\Psi_j^*A_jh \|\leq \|T\|^2\|h\|. $ Clearly $S_{A, \Psi} $ is positive.
 Since range of $ T$ is closed,  there exists bounded linear $ T^\dagger: \mathcal{H} \rightarrow \ell^2(\mathbb{J})\otimes \mathcal{H} $ such that $ TT^\dagger =I_\mathcal{H}.$ With this, for $h \in \mathcal{H}$ 
 \begin{align*}
 \|h\|^4= \langle TT^\dagger h, h\rangle^2 &=\left\langle \sum\limits_{j\in \mathbb{J}} (\Psi_j^*A_j)^\frac{1}{2}L_j^*T^\dagger h, h \right \rangle^2= \left(\sum\limits_{j\in \mathbb{J}}\langle  L_j^*T^\dagger h, (\Psi_j^*A_j)^\frac{1}{2}h  \rangle\right)^2
 \\
& \leq \left(\sum\limits_{j\in \mathbb{J}}\|L_j^*T^\dagger h\|^2\right) \left(\sum\limits_{j\in \mathbb{J}}\|(\Psi_j^*A_j)^\frac{1}{2}h\|^2\right)= \left\langle \sum_{j \in \mathbb{J}}L_jL_j^*T^\dagger h, T^\dagger h\right\rangle \sum\limits_{j\in \mathbb{J}}\langle \Psi_j^*A_jh, h \rangle \\
&=\|T^\dagger h\|^2\sum\limits_{j\in \mathbb{J}}\langle \Psi_j^*A_jh, h \rangle \leq \|T^\dagger\| \|h\|^2\sum\limits_{j\in \mathbb{J}}\langle \Psi_j^*A_jh, h \rangle 
 \end{align*}
 and this gives $    \|T^\dagger\|^{-1} \|h\|^2 \leq \langle  S_{A, \Psi}h, h\rangle  , \forall h \in  \mathcal{H}.$
 \end{proof}
 \textbf{Similarity and tensor product of weak operator-valued frames }
 \begin{definition}
A weak (ovf)  $(\{B_j\}_{j\in \mathbb{J}} ,\{\Phi_j\}_{j\in \mathbb{J}})$ in $ \mathcal{B}(\mathcal{H}, \mathcal{H}_0)$  is said to be right-similar  to a weak (ovf) $(\{A_j\}_{j\in \mathbb{J}},\{\Psi_j\}_{j\in \mathbb{J}})$ in $ \mathcal{B}(\mathcal{H}, \mathcal{H}_0)$ if there exist invertible  $ R_{A,B}, R_{\Psi, \Phi} \in \mathcal{B}(\mathcal{H})$  such that $B_j=A_jR_{A,B} , \Phi_j=\Psi_jR_{\Psi, \Phi}, \forall j \in \mathbb{J}. $
 \end{definition}
 \begin{proposition}
 Let $ \{A_j\}_{j\in \mathbb{J}}\in \mathscr{F}^\text{w}_\Psi$  with frame bounds $a, b,$  let $R_{A,B}, R_{\Psi, \Phi} \in \mathcal{B}(\mathcal{H})$ be positive, invertible, commute with each other, commute with $ S_{A, \Psi}$, and let $B_j=A_jR_{A,B} , \Phi_j=\Psi_jR_{\Psi, \Phi},  \forall j \in \mathbb{J}.$ Then 
 $ \{B_j\}_{j\in \mathbb{J}}\in \mathscr{F}^\text{w}_\Phi,$  $ S_{B,\Phi}=R_{\Psi,\Phi}S_{A, \Psi}R_{A,B} $, and    $ \frac{a}{\|R_{A,B}^{-1}\|\|R_{\Psi,\Phi}^{-1}\|}\leq S_{B, \Phi} \leq b\|R_{A,B}R_{\Psi,\Phi}\|.$ Assuming that $( \{A_j\}_{j\in \mathbb{J}}, \{\Psi_j\}_{j\in \mathbb{J}})$ is a Parseval weak (ovf), then $ (\{B_j\}_{j\in \mathbb{J}},\{\Phi_j\}_{j\in \mathbb{J}} ) $ is a Parseval  weak (ovf) if and only if   $ R_{\Psi, \Phi}R_{A,B}=I_\mathcal{H}.$  
 \end{proposition}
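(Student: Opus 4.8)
The plan is to imitate the proof of Proposition~\ref{RIGHTSIMILARITYPROPOSITIONOPERATORVERSION}, but, as demanded in this section, to avoid the analysis and synthesis operators $\theta_A,\theta_\Psi$ entirely and to work only with the frame operator $S_{A,\Psi}=\sum_{j\in\mathbb{J}}\Psi_j^*A_j$ (the SOT limit). First I would compute, for each finite $\mathbb{S}\subseteq\mathbb{J}$,
$$\sum_{j\in\mathbb{S}}\Phi_j^*B_j=\sum_{j\in\mathbb{S}}(\Psi_jR_{\Psi,\Phi})^*(A_jR_{A,B})=R_{\Psi,\Phi}\left(\sum_{j\in\mathbb{S}}\Psi_j^*A_j\right)R_{A,B},$$
using $R_{\Psi,\Phi}^*=R_{\Psi,\Phi}$. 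Since $\sum_{j\in\mathbb{J}}\Psi_j^*A_j\to S_{A,\Psi}$ in SOT and multiplication by a fixed bounded operator on either side is SOT-continuous, this shows $S_{B,\Phi}=\sum_{j\in\mathbb{J}}\Phi_j^*B_j$ converges in SOT to $R_{\Psi,\Phi}S_{A,\Psi}R_{A,B}$.

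Next I would verify that $S_{B,\Phi}$ is positive and invertible, which is exactly where the commutativity hypotheses enter. Invertibility is immediate as a product of three invertibles. For self-adjointness, using that $R_{A,B},R_{\Psi,\Phi},S_{A,\Psi}$ pairwise commute, $(R_{\Psi,\Phi}S_{A,\Psi}R_{A,B})^*=R_{A,B}S_{A,\Psi}R_{\Psi,\Phi}=R_{\Psi,\Phi}S_{A,\Psi}R_{A,B}$. For positivity I would invoke the standard fact that the product of two commuting positive operators is positive (if $P,Q\geq0$ commute then $PQ=P^{1/2}QP^{1/2}\geq0$, since $P^{1/2}$, being a norm-limit of polynomials in $P$, commutes with $Q$). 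Applying this first to the commuting positives $R_{\Psi,\Phi},R_{A,B}$ and then to the commuting pair $R_{\Psi,\Phi}R_{A,B}$ and $S_{A,\Psi}$ gives $S_{B,\Phi}=R_{\Psi,\Phi}R_{A,B}S_{A,\Psi}\geq0$. Hence $\{B_j\}_{j\in\mathbb{J}}\in\mathscr{F}^{\text{w}}_\Phi$ with frame operator $R_{\Psi,\Phi}S_{A,\Psi}R_{A,B}$.

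For the two-sided estimate I would write $S_{B,\Phi}=S_{A,\Psi}^{1/2}\,(R_{A,B}R_{\Psi,\Phi})\,S_{A,\Psi}^{1/2}$, which is legitimate because $S_{A,\Psi}^{1/2}$ commutes with both $R_{A,B}$ and $R_{\Psi,\Phi}$. The positive operator $R_{A,B}R_{\Psi,\Phi}$ is sandwiched by
$$\frac{1}{\|R_{A,B}^{-1}\|\,\|R_{\Psi,\Phi}^{-1}\|}I_\mathcal{H}\leq R_{A,B}R_{\Psi,\Phi}\leq\|R_{A,B}R_{\Psi,\Phi}\|I_\mathcal{H},$$
the upper bound being $T\leq\|T\|I_\mathcal{H}$ for self-adjoint $T$, and the lower one using $\|(R_{A,B}R_{\Psi,\Phi})^{-1}\|\leq\|R_{A,B}^{-1}\|\,\|R_{\Psi,\Phi}^{-1}\|$. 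Conjugating these inequalities by $S_{A,\Psi}^{1/2}$ (which preserves operator order) and then invoking $aI_\mathcal{H}\leq S_{A,\Psi}\leq bI_\mathcal{H}$ yields
$$\frac{a}{\|R_{A,B}^{-1}\|\,\|R_{\Psi,\Phi}^{-1}\|}I_\mathcal{H}\leq S_{B,\Phi}\leq b\|R_{A,B}R_{\Psi,\Phi}\|I_\mathcal{H}.$$
Finally, for the Parseval assertion I would use that $(\{A_j\},\{\Psi_j\})$ Parseval means $S_{A,\Psi}=I_\mathcal{H}$, so $S_{B,\Phi}=R_{\Psi,\Phi}R_{A,B}$; thus $(\{B_j\},\{\Phi_j\})$ is Parseval exactly when $R_{\Psi,\Phi}R_{A,B}=I_\mathcal{H}$.

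I expect no genuine obstacle here: the only delicate point is the systematic bookkeeping of commutativity, specifically ensuring that the square root $S_{A,\Psi}^{1/2}$ still commutes with the two $R$'s and that products of commuting positives remain positive. This is the single place where the commutativity hypotheses are indispensable, and it is what replaces the $\theta$-level computation of the earlier proposition.
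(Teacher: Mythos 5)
Your proof is correct and follows essentially the same route as the paper, which simply points back to the computation $\sum_{j\in\mathbb{J}}\Phi_j^*B_j=R_{\Psi,\Phi}\bigl(\sum_{j\in\mathbb{J}}\Psi_j^*A_j\bigr)R_{A,B}$ and the sandwich $\frac{1}{\|R_{A,B}^{-1}\|\|R_{\Psi,\Phi}^{-1}\|}\leq R_{A,B}R_{\Psi,\Phi}\leq\|R_{A,B}R_{\Psi,\Phi}\|$ from the earlier right-similarity proposition. You merely make explicit the commutativity bookkeeping (self-adjointness, positivity of products of commuting positives, and conjugation by $S_{A,\Psi}^{1/2}$) that the paper leaves implicit.
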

 \begin{proof}
 Proof is there in the proof of Proposition \ref{RIGHTSIMILARITYPROPOSITIONOPERATORVERSION}.
 \end{proof}
 \begin{proposition}
 Let $ \{A_j\}_{j\in \mathbb{J}}\in \mathscr{F}^\text{w}_\Psi,$ $ \{B_j\}_{j\in \mathbb{J}}\in \mathscr{F}^\text{w}_\Phi$ and   $B_j=A_jR_{A,B} , \Phi_j=\Psi_jR_{\Psi, \Phi},  \forall j \in \mathbb{J}$, for some invertible $ R_{A,B} ,R_{\Psi, \Phi} \in \mathcal{B}(\mathcal{H}).$ Then $  S_{B,\Phi}=R_{\Psi,\Phi}^*S_{A, \Psi}R_{A,B}.$ Assuming that $ (\{A_j\}_{j\in \mathbb{J}},\{\Psi_j\}_{j\in \mathbb{J}})$ is a  Parseval weak (ovf), then $(\{B_j\}_{j\in \mathbb{J}},  \{\Phi_j\}_{j\in \mathbb{J}})$ is a Parseval  weak (ovf) if and only if   $ R_{\Psi, \Phi}^*R_{A,B}=I_\mathcal{H}.$
 \end{proposition}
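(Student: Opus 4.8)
The plan is to prove the operator identity $S_{B,\Phi}=R_{\Psi,\Phi}^*S_{A,\Psi}R_{A,B}$ directly from the strong-operator definition of the frame operator, and then read off the Parseval criterion as a specialisation. The essential difference from Lemma \ref{SIM} is that for weak operator-valued frames the analysis operators $\theta_A,\theta_B$ need not exist, so I cannot write $\theta_B=\theta_AR_{A,B}$ and substitute; instead I must argue with the defining net of partial sums of $\sum_{j\in\mathbb{J}}\Phi_j^*B_j$, in keeping with the standing rule of this section that proofs must not use analysis or synthesis operators.

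First I would record the pointwise relation $\Phi_j^*B_j=(\Psi_jR_{\Psi,\Phi})^*(A_jR_{A,B})=R_{\Psi,\Phi}^*\Psi_j^*A_jR_{A,B}$, valid for every $j\in\mathbb{J}$. Fixing $h\in\mathcal{H}$ and any finite $\mathbb{S}\subseteq\mathbb{J}$, this gives
$$\sum_{j\in\mathbb{S}}\Phi_j^*B_jh=R_{\Psi,\Phi}^*\left(\sum_{j\in\mathbb{S}}\Psi_j^*A_j(R_{A,B}h)\right).$$
Since $\{A_j\}_{j\in\mathbb{J}}\in\mathscr{F}^\text{w}_\Psi$, the net inside the parentheses converges, as $\mathbb{S}$ runs over the finite subsets of $\mathbb{J}$ ordered by inclusion, to $S_{A,\Psi}(R_{A,B}h)$. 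As $R_{\Psi,\Phi}^*$ is bounded and hence continuous, the whole right-hand side converges to $R_{\Psi,\Phi}^*S_{A,\Psi}R_{A,B}h$. On the other hand the hypothesis $\{B_j\}_{j\in\mathbb{J}}\in\mathscr{F}^\text{w}_\Phi$ guarantees that the left-hand net converges to $S_{B,\Phi}h$. Equating the two limits and letting $h$ range over $\mathcal{H}$ yields $S_{B,\Phi}=R_{\Psi,\Phi}^*S_{A,\Psi}R_{A,B}$.

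For the second assertion I would simply specialise. If $(\{A_j\}_{j\in\mathbb{J}},\{\Psi_j\}_{j\in\mathbb{J}})$ is a Parseval weak (ovf) then $S_{A,\Psi}=I_\mathcal{H}$, so the identity just proved collapses to $S_{B,\Phi}=R_{\Psi,\Phi}^*R_{A,B}$. By the weak-setting definition of Parseval (optimal bound one, equivalently $S_{B,\Phi}=I_\mathcal{H}$), the frame $(\{B_j\}_{j\in\mathbb{J}},\{\Phi_j\}_{j\in\mathbb{J}})$ is Parseval precisely when $R_{\Psi,\Phi}^*R_{A,B}=I_\mathcal{H}$, which is exactly the claim.

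I expect the only delicate point to be the interchange of the strong-operator limit with the left multiplication by $R_{\Psi,\Phi}^*$ and the right multiplication by $R_{A,B}$—the step that, in the ordinary (ovf) case of Lemma \ref{SIM}, is absorbed into the manipulation of analysis operators. Handling it pointwise at each $h$, namely pushing $R_{A,B}$ inside the argument and invoking continuity of $R_{\Psi,\Phi}^*$, is what lets the argument go through without ever producing $\theta_A$ or $\theta_B$; everything else is immediate once the similarity identity is in hand.
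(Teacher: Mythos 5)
Your proposal is correct and follows essentially the same route as the paper, whose entire proof is the one-line computation $S_{B,\Phi}=\sum_{j\in\mathbb{J}}\Phi_j^*B_j=\sum_{j\in\mathbb{J}}R_{\Psi,\Phi}^*\Psi_j^*A_jR_{A,B}=R_{\Psi,\Phi}^*S_{A,\Psi}R_{A,B}$, with the Parseval equivalence then being immediate. The only difference is that you spell out the justification for passing the strong-operator limit through $R_{\Psi,\Phi}^*$ and $R_{A,B}$, a detail the paper leaves implicit.
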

\begin{proof}
$S_{B,\Phi}= \sum_{j\in \mathbb{J}}\Phi_j^*B_j=\sum_{j\in \mathbb{J}}R_{\Psi,\Phi}^*\Psi_j^*A_jR_{A,B}=R_{\Psi,\Phi}^*S_{A, \Psi}R_{A,B}.$
\end{proof}
\begin{remark}
For every weak (ovf) $(\{A_j\}_{j \in \mathbb{J}},\{\Psi_j\}_{j \in \mathbb{J}})$, each  of `weak operator-valued frames'  $( \{A_jS_{A, \Psi}^{-1}\}_{j \in \mathbb{J}}, \{\Psi_j\}_{j \in \mathbb{J}}),$   $( \{A_jS_{A, \Psi}^{-1/2}\}_{j \in \mathbb{J}}, \{\Psi_jS_{A,\Psi}^{-1/2}\}_{j \in \mathbb{J}}),$ and  $ (\{A_j \}_{j \in \mathbb{J}}, \{\Psi_jS_{A,\Psi}^{-1}\}_{j \in \mathbb{J}})$ is  a Parseval weak  (ovf)  which is right-similar to  $ (\{A_j\}_{j \in \mathbb{J}} , \{\Psi_j\}_{j \in \mathbb{J}}).$  
\end{remark}

 \textbf{Tensor product}: Let $ \{A_j\}_{j \in \mathbb{J}} $  be a weak (ovf) w.r.t. $ \{\Psi_j\}_{j \in \mathbb{J}} $  in  $ \mathcal{B}(\mathcal{H}, \mathcal{H}_0),$ and $ \{B_l\}_{l \in \mathbb{L}} $  be a weak (ovf) w.r.t. $ \{\Phi_l\}_{l \in \mathbb{L}} $  in  $ \mathcal{B}(\mathcal{H}_1, \mathcal{H}_2).$ The weak (ovf)  $(\{C_{(j, l)}\coloneqq A_j\otimes B_l\}_{(j, l)\in \mathbb{J}\bigtimes  \mathbb{L}}, \{\Xi_{(j, l)}\coloneqq \Psi_j\otimes\Phi_l\}_{(j, l)\in \mathbb{J}\bigtimes  \mathbb{L}}) $ in $ \mathcal{B}(\mathcal{H}\otimes\mathcal{H}_1, \mathcal{H}_0\otimes\mathcal{H}_2)$ is called  as tensor product  of  $( \{A_j\}_{j \in \mathbb{J}}, \{\Psi_j\}_{j\in \mathbb{J}})$ and $( \{B_l\}_{l \in \mathbb{L}},  \{\Phi_l\}_{l\in \mathbb{L}}).$
 \begin{proposition}
 Let $(\{C_{(j, l)}\coloneqq A_j\otimes B_l\}_{(j, l)\in \mathbb{J}\bigtimes  \mathbb{L}}, \{\Xi _{(j, l)}\coloneqq \Psi_j\otimes \Phi_l\}_{(j, l)\in \mathbb{J}\bigtimes  \mathbb{L}})$ be the  tensor product of   weak operator-valued frames  $( \{A_j\}_{j \in \mathbb{J}}, \{\Psi_j\}_{j \in \mathbb{J}}) $  in  $ \mathcal{B}(\mathcal{H}, \mathcal{H}_0),$ and $( \{B_l\}_{l \in \mathbb{L}}, \{\Phi_l\}_{l \in \mathbb{L}} )$ in $ \mathcal{B}(\mathcal{H}_1, \mathcal{H}_2).$  Then  $ S_{C, \Xi}=S_{A, \Psi}\otimes S_{B, \Phi}.$ If  $( \{A_j\}_{j \in \mathbb{J}}, \{\Psi_j\}_{j \in \mathbb{J}}) $ and $ (\{B_l\}_{l \in \mathbb{L}},  \{\Phi_l\}_{l \in \mathbb{L}} )$ are Parseval  weak frames , then $(\{C_{(j, l)}\}_{(j, l)\in \mathbb{J}\bigtimes  \mathbb{L}} ,\{\Xi_{(j,l)}\}_{(j,l)\in \mathbb{J}\bigtimes \mathbb{L}})$ is Parseval  weak.
 \end{proposition}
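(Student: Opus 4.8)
The plan is to reduce the whole statement to the single frame-operator identity $S_{C,\Xi}=S_{A,\Psi}\otimes S_{B,\Phi}$ and then read off both assertions from it, carefully avoiding the analysis/synthesis operators $\theta$ (which are no longer available in the weak setting). First I would record the purely algebraic fact that, for each pair $(j,l)$,
\[
\Xi_{(j,l)}^*C_{(j,l)}=(\Psi_j\otimes\Phi_l)^*(A_j\otimes B_l)=(\Psi_j^*A_j)\otimes(\Phi_l^*B_l),
\]
which follows from $(S\otimes T)^*=S^*\otimes T^*$ and $(S\otimes T)(U\otimes V)=SU\otimes TV$. Thus the summands defining $S_{C,\Xi}$ over the index set $\mathbb{J}\times\mathbb{L}$ are exactly the operators $(\Psi_j^*A_j)\otimes(\Phi_l^*B_l)$.

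Next I would identify the candidate limit by testing on elementary tensors $h\otimes g\in\mathcal{H}\otimes\mathcal{H}_1$, since a bounded operator is determined by its values there and finite sums of elementary tensors are dense. On a finite rectangle $\mathbb{S}\times\mathbb{T}$ the partial sum factorizes,
\[
\sum_{(j,l)\in\mathbb{S}\times\mathbb{T}}\big((\Psi_j^*A_j)\otimes(\Phi_l^*B_l)\big)(h\otimes g)=\Big(\sum_{j\in\mathbb{S}}\Psi_j^*A_jh\Big)\otimes\Big(\sum_{l\in\mathbb{T}}\Phi_l^*B_lg\Big),
\]
and since $\sum_j\Psi_j^*A_jh\to S_{A,\Psi}h$ and $\sum_l\Phi_l^*B_lg\to S_{B,\Phi}g$, the joint continuity of the tensor map (bilinearity together with the cross-norm equality $\|u\otimes v\|=\|u\|\,\|v\|$) forces the rectangular net to converge to $(S_{A,\Psi}h)\otimes(S_{B,\Phi}g)$. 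Extending by linearity and the bound $\|S_{A,\Psi}\otimes S_{B,\Phi}\|=\|S_{A,\Psi}\|\,\|S_{B,\Phi}\|$ gives the identity $S_{C,\Xi}=S_{A,\Psi}\otimes S_{B,\Phi}$ as bounded operators. From this the rest is automatic: a tensor product of positive operators is positive, and $S_{A,\Psi}\otimes S_{B,\Phi}$ is invertible with inverse $S_{A,\Psi}^{-1}\otimes S_{B,\Phi}^{-1}$ because both factors are positive invertible by hypothesis, so $(\{C_{(j,l)}\},\{\Xi_{(j,l)}\})$ is a weak (ovf). For the Parseval claim I would simply substitute $S_{A,\Psi}=I_{\mathcal{H}}$ and $S_{B,\Phi}=I_{\mathcal{H}_1}$ into the identity to get $S_{C,\Xi}=I_{\mathcal{H}}\otimes I_{\mathcal{H}_1}=I_{\mathcal{H}\otimes\mathcal{H}_1}$.

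The hard part will be convergence, specifically reconciling the two notions of summation: the frame operator is defined as an SOT-limit of the net of partial sums over \emph{all} finite subsets of $\mathbb{J}\times\mathbb{L}$, whereas the computation above only controls the subnet indexed by rectangles $\mathbb{S}\times\mathbb{T}$. Rectangles are cofinal in the finite subsets of $\mathbb{J}\times\mathbb{L}$, so if the full net converges its rectangular subnet has the same limit; but the converse is not formal, and for families that converge only unconditionally rather than absolutely the two summations can genuinely differ. To close this gap I plan to exploit the Hilbert-space structure: by the uniform boundedness principle the partial sums $\sum_{j\in\mathbb{S}}\Psi_j^*A_j$ and $\sum_{l\in\mathbb{T}}\Phi_l^*B_l$ are uniformly norm-bounded, and unconditional convergence of $\sum_j\Psi_j^*A_jh$ and $\sum_l\Phi_l^*B_lg$ in a space of cotype $2$ yields square-summability of the norms $\|\Psi_j^*A_jh\|$ and $\|\Phi_l^*B_lg\|$; splitting an arbitrary finite $F\subseteq\mathbb{J}\times\mathbb{L}$ disjoint from a large rectangle $\mathbb{S}_0\times\mathbb{T}_0$ into the part with $j\notin\mathbb{S}_0$ and the part with $j\in\mathbb{S}_0,\ l\notin\mathbb{T}_0$ and estimating each block via these bounds should give the tail Cauchy criterion and hence convergence of the full net to the same limit. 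I expect this tail estimate, rather than the algebra, to be where the real care is required.
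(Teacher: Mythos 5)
Your computational core --- the identity $\Xi_{(j,l)}^{*}C_{(j,l)}=(\Psi_j^{*}A_j)\otimes(\Phi_l^{*}B_l)$, the factorization of rectangular partial sums on elementary tensors, the passage to $S_{A,\Psi}\otimes S_{B,\Phi}$, and then positivity, invertibility via $S_{A,\Psi}^{-1}\otimes S_{B,\Phi}^{-1}$, and the Parseval substitution --- is exactly the paper's proof; the paper verifies the identity at elementary tensors by writing the sum as an iterated sum and stops there. You have additionally flagged, correctly, a point the paper passes over in silence: by the paper's convention the frame operator is the pointwise limit of the net over \emph{all} finite subsets of $\mathbb{J}\times\mathbb{L}$, while the computation only controls the cofinal subnet of rectangles.

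Your proposed repair of that point, however, does not close. The block $F_2=\{(j,l)\in F:\ j\in\mathbb{S}_0,\ l\notin\mathbb{T}_0\}$ is fine, since there are only finitely many $j$ and one can use the triangle inequality against the constant $\sum_{j\in\mathbb{S}_0}\|\Psi_j^{*}A_jh\|$. The problem is $F_1=\{(j,l)\in F:\ j\notin\mathbb{S}_0\}$: grouping it as $\sum_j u_j\otimes w_j$ with $u_j=\Psi_j^{*}A_jh$ and $w_j$ an (overlapping) partial sum of the $\Phi_l^{*}B_lg$, the only uniform data are $\|w_j\|\le K$ and $\sum_{j\notin\mathbb{S}_0}\|u_j\|^2\le\varepsilon^2$ (Orlicz), and
\[
\Bigl\|\sum_j u_j\otimes w_j\Bigr\|^2=\sum_{j,k}\langle u_j,u_k\rangle\,\langle w_j,w_k\rangle
\]
is not controlled by these two quantities: the estimate $\le\|(\langle w_j,w_k\rangle)_{j,k}\|_{\mathrm{op}}\sum_j\|u_j\|^2$ fails because the Gram matrix of the overlapping partial sums has no uniform operator-norm bound, while the crude bound $\sum_j\|u_j\|\|w_j\|$ needs absolute, not square, summability. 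Two honest ways out: (a) note that the paper's definition of the tensor product of frames already postulates that $(\{C_{(j,l)}\},\{\Xi_{(j,l)}\})$ is a weak (ovf), so convergence of $S_{C,\Xi}$ is part of the hypothesis; since rectangles are cofinal among finite subsets of $\mathbb{J}\times\mathbb{L}$, the full net's limit must agree with the rectangular limit, and your computation identifies it --- which is all the proposition asserts; or (b) if you insist on proving convergence of the full net, replace the cotype argument by the weak unconditional Cauchy formulation $\sup_{\|x\|\le1}\sum_{j\notin\mathbb{S}_0}|\langle \Psi_j^{*}A_jh,x\rangle|\to0$, control $\sum_{j,l}|\langle u_j\otimes v_l,\xi\rangle|$ for $\xi$ in the Hilbert tensor product via the little Grothendieck theorem (a matrix with entries bounded by one maps $\ell^1\to\ell^\infty$, and every operator from $\ell^\infty$ into a Hilbert space is $2$-summing), and conclude unconditional convergence from the fact that a Hilbert space contains no copy of $c_0$.
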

 \begin{proof}
 At each  elementary tensor $ h\otimes g \in  \mathcal{H}\otimes\mathcal{H}_1,$ we have
 \begin{align*}
 &(S_{A, \Psi}\otimes S_{B, \Phi})(h\otimes g)=S_{A, \Psi}h\otimes S_{B, \Phi}g=\left(\sum_{j\in \mathbb{J}}\Psi_j^*A_jh\right) \otimes \left(\sum_{k\in \mathbb{L}}\Phi_k^*B_kg\right)\\
 &=\sum_{j\in \mathbb{J}}\sum_{k\in \mathbb{L}}(\Psi_j^*\otimes \Phi_k^*)(A_j\otimes B_k)(h\otimes g) =\sum_{(j,k)\in \mathbb{J}\bigtimes \mathbb{L}}(\Psi_j\otimes \Phi_k)^*(A_j\otimes B_k)(h\otimes g) =S_{C,\Xi}(h\otimes g).
 \end{align*}	
 \end{proof}
 \textbf{Sequential version of weak frames}
 \begin{definition}\label{WEAKFRAMEDEFINITION}
 A collection $ \{x_j\}_{j\in \mathbb{J}}$ in  $ \mathcal{H}$ is called a weak frame w.r.t. a collection $ \{\tau_j\}_{j\in \mathbb{J}}$ in  $ \mathcal{H}$ if  $S_{x, \tau} :\mathcal{H} \ni  h \mapsto \sum_{j\in\mathbb{J}}\langle h,  x_j\rangle\tau_j \in \mathcal{H} $ is a well-defined  bounded positive  invertible operator.
 Notions of frame bounds, optimal bounds, tight frame, Parseval frame, Bessel are in parallel  to the same  in Definition \ref{SEQUENTIAL2}.
 
 For fixed $ \mathbb{J}, \mathcal{H},$  and $ \{\tau_j\}_{j\in \mathbb{J}}$   the set of all weak frames for $ \mathcal{H}$  w.r.t.  $ \{\tau_j\}_{j\in \mathbb{J}}$ is denoted by $ \mathscr{F}^w_\tau.$
 \end{definition}
 Above definition is equivalent to
 \begin{definition}
 A collection $ \{x_j\}_{j\in \mathbb{J}}$ in  $ \mathcal{H}$ is called a weak frame w.r.t. a collection $ \{\tau_j\}_{j\in \mathbb{J}}$ in  $ \mathcal{H}$ if there are $ a,b, r >0$ such that 
 \begin{enumerate}[\upshape(i)]
  \item $ \|\sum_{j\in \mathbb{J}}\langle h, x_j\rangle\tau_j\|\leq r\|h\|, \forall h \in  \mathcal{H} ,$
 \item $ \sum_{j\in \mathbb{J}}\langle h, x_j\rangle\tau_j= \sum_{j\in \mathbb{J}}\langle h, \tau_j\rangle x_j, \forall h \in  \mathcal{H} ,$
 \item  $ a\|h\|^2\leq\sum_{j\in \mathbb{J}}\langle h,x_j \rangle \langle \tau_j, h\rangle \leq b\|h\|^2, \forall h \in  \mathcal{H}$.
\end{enumerate}
 If the space is complex, then   \text{\upshape(ii)} is not required. 
\end{definition}
Note that the first  condition in the above definition gives the existence and boundedness of $ S_{x, \tau}$ and second and third conditions  give the self-adjointness, positivity and invertibility of $ S_{x, \tau}$.
\begin{theorem}
Let $\{x_j\}_{j\in \mathbb{J}}, \{\tau_j\}_{j\in \mathbb{J}}$ be in $\mathcal{H}$. Define $A_j: \mathcal{H} \ni h \mapsto \langle h, x_j \rangle \in \mathbb{K} $, $\Psi_j: \mathcal{H} \ni h \mapsto \langle h, \tau_j \rangle \in \mathbb{K}, \forall j \in \mathbb{J} $. Then   $(\{x_j\}_{j\in \mathbb{J}}, \{\tau_j\}_{j\in \mathbb{J}})$ is a weak frame for  $\mathcal{H}$ if and only if  $\{A_j\}_{j\in \mathbb{J}}, \{\Psi_j\}_{j\in \mathbb{J}}$ is a  weak operator-valued frame  in $\mathcal{B}(\mathcal{H},\mathbb{K})$.
\end{theorem}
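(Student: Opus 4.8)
The plan is to reduce the statement to the single identity relating the two frame operators, exactly as in the proof of Theorem~\ref{OVFTOSEQUENCEANDVICEVERSATHEOREM}, and then to observe that the defining conditions for a weak (ovf) and for a weak frame coincide once this identity is in hand. First I would record the adjoints of the functionals $A_j, \Psi_j \in \mathcal{B}(\mathcal{H}, \mathbb{K})$: since $A_j h = \langle h, x_j\rangle$ and $\Psi_j h = \langle h, \tau_j\rangle$, a direct check from $\langle A_j h, c\rangle_{\mathbb{K}} = \langle h, A_j^* c\rangle_{\mathcal{H}}$ gives $A_j^* c = c x_j$ and $\Psi_j^* c = c \tau_j$ for $c \in \mathbb{K}$, whence $\Psi_j^* A_j h = \langle h, x_j\rangle \tau_j$ for every $h \in \mathcal{H}$ and every $j \in \mathbb{J}$.

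Summing over $j$, I would then obtain, for each $h \in \mathcal{H}$,
$$S_{A,\Psi} h = \sum_{j\in\mathbb{J}} \Psi_j^* A_j h = \sum_{j\in\mathbb{J}} \langle h, x_j\rangle \tau_j = S_{x,\tau} h,$$
so that the operator-valued frame operator and the sequential frame operator are literally the same map. In particular, the strong-operator-topology convergence of $\sum_{j\in\mathbb{J}} \Psi_j^* A_j$ required in the definition of a weak (ovf) is nothing other than the pointwise convergence (well-definedness) of the map $h \mapsto \sum_{j\in\mathbb{J}} \langle h, x_j\rangle \tau_j$ required in Definition~\ref{WEAKFRAMEDEFINITION}, and the boundedness, positivity, and invertibility demanded on each side are demanded of one and the same operator.

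With these observations the equivalence is immediate in both directions: if $(\{x_j\}_{j\in\mathbb{J}}, \{\tau_j\}_{j\in\mathbb{J}})$ is a weak frame, then $S_{x,\tau} = S_{A,\Psi}$ is a bounded positive invertible operator arising as the SOT-limit of $\sum_{j\in\mathbb{J}} \Psi_j^* A_j$, which is exactly the requirement that $(\{A_j\}_{j\in\mathbb{J}}, \{\Psi_j\}_{j\in\mathbb{J}})$ be a weak (ovf); conversely, a weak (ovf) yields the same bounded positive invertible operator $S_{A,\Psi} = S_{x,\tau}$, making $(\{x_j\}_{j\in\mathbb{J}}, \{\tau_j\}_{j\in\mathbb{J}})$ a weak frame.

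I do not anticipate a genuine obstacle here, precisely because the weak definitions (unlike the non-weak Definition~\ref{1} and Definition~\ref{SEQUENTIAL2}) impose no separate Bessel/analysis-operator condition, so the identification of the two frame operators settles everything without any auxiliary convergence checks. The only point requiring a moment of care will be the bookkeeping that ``well-defined bounded operator'' in the sequential formulation and ``SOT-convergent series'' in the operator formulation describe the identical convergence, together with the scalar-conjugate convention in computing $A_j^*$ and $\Psi_j^*$; both are routine.
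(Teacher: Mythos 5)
Your proposal is correct and follows exactly the paper's route: the paper's entire proof of the analogous Theorem \ref{OVFTOSEQUENCEANDVICEVERSATHEOREM} is the single identity $\sum_{j\in\mathbb{J}}\Psi_j^*A_jh=\sum_{j\in\mathbb{J}}\langle h,x_j\rangle\tau_j$, which is precisely your identification $S_{A,\Psi}=S_{x,\tau}$, and the observation that the weak definitions impose no further conditions is the same reduction the paper intends here. Nothing to add.
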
 
\begin{proposition}
If $(\{x_j\}_{j\in \mathbb{J}}, \{\tau_j\}_{j\in \mathbb{J}})$ is a weak frame for  $\mathcal{H}$, then  every $ h \in \mathcal{H}$ can be written as 
$$h =\sum\limits_{j\in \mathbb{J}}\langle h, S^{-1}_{x, \tau}\tau_j\rangle  x_j=\sum\limits_{j\in \mathbb{J}}\langle h,\tau_j \rangle S^{-1}_{x, \tau}  x_j =\sum\limits_{j\in \mathbb{J}}\langle h, S^{-1}_{x, \tau}x_j\rangle  \tau_j=\sum\limits_{j\in \mathbb{J}}\langle h, x_j\rangle S^{-1}_{x, \tau}  \tau_j.$$
\end{proposition}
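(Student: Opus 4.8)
The plan is to derive all four expansions from the two trivial identities $h=S_{x,\tau}S_{x,\tau}^{-1}h$ and $h=S_{x,\tau}^{-1}S_{x,\tau}h$, using the two available formulas for the frame operator together with the self-adjointness of $S_{x,\tau}$. First I would record what the hypothesis gives. By Definition \ref{WEAKFRAMEDEFINITION} (equivalently, by its inequality form), $S_{x,\tau}$ is a well-defined bounded positive invertible operator, and for every $g\in\mathcal{H}$ one has $S_{x,\tau}g=\sum_{j\in\mathbb{J}}\langle g,x_j\rangle\tau_j=\sum_{j\in\mathbb{J}}\langle g,\tau_j\rangle x_j$; in particular $S_{x,\tau}=S_{\tau,x}$, so $S_{x,\tau}$ is self-adjoint, whence $S_{x,\tau}^{-1}$ exists and is itself bounded and self-adjoint.

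For the two ``coefficient-side'' expansions I would apply $S_{x,\tau}$ to the vector $S_{x,\tau}^{-1}h$. Using the second formula for the frame operator and then moving $S_{x,\tau}^{-1}$ across the inner product by self-adjointness,
\[
h=S_{x,\tau}\big(S_{x,\tau}^{-1}h\big)=\sum_{j\in\mathbb{J}}\langle S_{x,\tau}^{-1}h,\tau_j\rangle x_j=\sum_{j\in\mathbb{J}}\langle h,S_{x,\tau}^{-1}\tau_j\rangle x_j,
\]
which is the first displayed equality; applying the first formula instead yields $h=\sum_{j\in\mathbb{J}}\langle h,S_{x,\tau}^{-1}x_j\rangle\tau_j$, the third.

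For the remaining two ``vector-side'' expansions I would instead start from $h=S_{x,\tau}^{-1}S_{x,\tau}h=S_{x,\tau}^{-1}\big(\sum_{j\in\mathbb{J}}\langle h,\tau_j\rangle x_j\big)$ and pull the bounded operator $S_{x,\tau}^{-1}$ inside the sum to obtain $h=\sum_{j\in\mathbb{J}}\langle h,\tau_j\rangle S_{x,\tau}^{-1}x_j$ (the second equality), and symmetrically $h=\sum_{j\in\mathbb{J}}\langle h,x_j\rangle S_{x,\tau}^{-1}\tau_j$ (the fourth). The only step that is not pure bookkeeping is this interchange of $S_{x,\tau}^{-1}$ with the summation: since $\sum_{j\in\mathbb{J}}$ denotes the limit of the net of finite partial sums and $S_{x,\tau}^{-1}$ is bounded, hence continuous, it commutes with that net limit. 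This continuity-of-the-net argument is the one place I would spell out carefully; everything else reduces to the self-adjointness of $S_{x,\tau}^{-1}$ and the symmetric form $S_{x,\tau}=S_{\tau,x}$.
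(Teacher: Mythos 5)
Your proposal is correct and follows essentially the same route as the paper: both derive two expansions by pulling the bounded operator $S_{x,\tau}^{-1}$ through the series in $h=S_{x,\tau}^{-1}S_{x,\tau}h$, derive another by applying the frame-operator formula to $S_{x,\tau}^{-1}h$ and using self-adjointness, and obtain the rest from the symmetry $S_{x,\tau}=S_{\tau,x}$. Your explicit justification of the interchange of $S_{x,\tau}^{-1}$ with the net limit is a welcome bit of added care, but it is not a different argument.
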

\begin{proof}
For all $h \in \mathcal{H}$, $h=S^{-1}_{x, \tau}S_{x, \tau}h=S^{-1}_{x, \tau}(\sum_{j\in \mathbb{J}}\langle h, x_j\rangle  \tau_j)=\sum_{j\in \mathbb{J}}\langle h, x_j\rangle  S^{-1}_{x, \tau}\tau_j$,  $h=S_{x, \tau}S_{x, \tau}^{-1}h=\sum_{j\in \mathbb{J}}\langle  S^{-1}_{x, \tau}h, x_j\rangle  \tau_j=\sum_{j\in \mathbb{J}}\langle  h, S^{-1}_{x, \tau}x_j\rangle  \tau_j $. Other two expansions will come from $S_{x,\tau}=S_{\tau,x}$.
\end{proof}
\begin{proposition}
Let $ \{x_j\}_{j\in \mathbb{J}}$ and  $ \{\tau_j\}_{j\in \mathbb{J}}$ be in  $ \mathcal{H}$ such that $\langle h, x_j\rangle\langle \tau_j, h\rangle  \geq 0,\forall h \in  \mathcal{H}, \forall j \in \mathbb{J} $, and 
$$ h=\sum_{j\in \mathbb{J}}\langle h, x_j\rangle \tau_j=\sum_{j\in \mathbb{J}}\langle h, \tau_j\rangle x_j,~ \forall h \in  \mathcal{H}.$$
Then  $ (\{x_j\}_{j\in \mathbb{J}}, \{\tau_j\}_{j\in \mathbb{J}})$ is a tight weak frame for  $ \mathcal{H}$.
\end{proposition}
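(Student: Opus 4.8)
The plan is to show directly that the frame operator $S_{x,\tau}$ coincides with the identity operator $I_\mathcal{H}$, from which every requirement in Definition \ref{WEAKFRAMEDEFINITION} follows at once, together with tightness. In other words, the entire content of the statement is already packaged in the hypothesis; the task is just to read it off correctly.

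First I would fix $h\in\mathcal{H}$ and observe that the displayed equality $h=\sum_{j\in\mathbb{J}}\langle h,x_j\rangle\tau_j$ says precisely that the net $\{\sum_{j\in\mathbb{S}}\langle h,x_j\rangle\tau_j : \mathbb{S}\subseteq\mathbb{J}\text{ finite}\}$ converges to $h$. Hence the map $S_{x,\tau}\colon h\mapsto\sum_{j\in\mathbb{J}}\langle h,x_j\rangle\tau_j$ is well-defined on all of $\mathcal{H}$ and satisfies $S_{x,\tau}h=h$ for every $h$, i.e.\ $S_{x,\tau}=I_\mathcal{H}$. Being the identity, $S_{x,\tau}$ is automatically a bounded positive invertible operator, so by Definition \ref{WEAKFRAMEDEFINITION} the pair $(\{x_j\}_{j\in\mathbb{J}},\{\tau_j\}_{j\in\mathbb{J}})$ is a weak frame for $\mathcal{H}$.

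For tightness I would pass the limit through the inner product, using its continuity: for each $h$ and each finite $\mathbb{S}\subseteq\mathbb{J}$ one has $\sum_{j\in\mathbb{S}}\langle h,x_j\rangle\langle\tau_j,h\rangle=\langle\sum_{j\in\mathbb{S}}\langle h,x_j\rangle\tau_j,\,h\rangle$, which tends to $\langle h,h\rangle=\|h\|^2$ as $\mathbb{S}$ increases. The first hypothesis $\langle h,x_j\rangle\langle\tau_j,h\rangle\geq 0$ is exactly what upgrades this net-convergence to genuine (unconditional) convergence of $\sum_{j\in\mathbb{J}}\langle h,x_j\rangle\langle\tau_j,h\rangle$ to $\|h\|^2$. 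Consequently the optimal lower and upper frame bounds are $\|S_{x,\tau}^{-1}\|^{-1}=1$ and $\|S_{x,\tau}\|=1$, which coincide, so the weak frame is tight (indeed Parseval, with optimal bound one).

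I do not expect a genuine obstacle: the argument is essentially an unwinding of the hypotheses. The only point deserving a word of care is the well-definedness and boundedness of $S_{x,\tau}$, but this is immediate because the hypothesis already prescribes its value on every vector of $\mathcal{H}$, forcing $S_{x,\tau}=I_\mathcal{H}$; the nonnegativity assumption then serves only to make the scalar series $\sum_{j\in\mathbb{J}}\langle h,x_j\rangle\langle\tau_j,h\rangle$ converge rather than merely converge as a net.
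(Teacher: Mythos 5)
Your proof is correct and follows essentially the same route as the paper's: both identify $S_{x,\tau}=I_{\mathcal{H}}$ directly from the hypothesis and then use the nonnegativity of $\langle h,x_j\rangle\langle\tau_j,h\rangle$ (monotone convergence of the partial sums) to convert the net convergence of $\langle S_{x,\tau}h,h\rangle$ into convergence of the scalar series $\sum_{j\in\mathbb{J}}\langle h,x_j\rangle\langle\tau_j,h\rangle$ to $\|h\|^2$.
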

\begin{proof}
Given equation gives existence, boundedness and self-adjointness of $ S_{x, \tau}$.  We have to show the frame inequality. Now
$$ \|h\|^2 =\lim\left\langle \sum_{j\in \mathbb{S},~\mathbb{S}~\text{ finite}}\langle h, x_j\rangle \tau_j, h \right \rangle=\lim\sum_{j\in \mathbb{S},~\mathbb{S}~\text{finite}}\langle h, x_j\rangle \langle \tau_j, h\rangle =\sum_{j\in \mathbb{J}}\langle h, x_j\rangle \langle \tau_j, h\rangle,~\forall h \in  \mathcal{H},$$
where to get  the last equality we have used the fact that a monotonically increasing net of real numbers converges.
\end{proof}
\begin{proposition}
Let  $( \{x_j\}_{j \in \mathbb{J}},\{\tau_j\}_{j \in \mathbb{J}} )$ be a weak frame for  $ \mathcal{H}$  with an upper frame bound $b$. If for some $ j \in \mathbb{J} $ we have  $  \langle x_j, x_l \rangle\langle \tau_l, x_j \rangle \geq0, \forall l  \in \mathbb{J},$ then $ \langle x_j, \tau_j \rangle\leq b$ for that $j. $
\end{proposition}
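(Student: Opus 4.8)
The plan is to reduce everything to a single application of the upper frame inequality evaluated at the vector $x_j$ itself, mirroring the one-line argument used for the sequential-frame analogue earlier in this section. First I would invoke the equivalent formulation of Definition \ref{WEAKFRAMEDEFINITION}: the upper frame bound $b$ means precisely that
$$\sum_{l \in \mathbb{J}} \langle h, x_l \rangle \langle \tau_l, h \rangle \leq b\|h\|^2, \quad \forall h \in \mathcal{H}.$$
(This is the right-hand inequality in condition (iii) of the equivalent definition, which holds because $S_{x,\tau}$ is positive with $S_{x,\tau} \leq b I_\mathcal{H}$.)

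Next I would specialize this to $h = x_j$ for the fixed index $j$, obtaining $\sum_{l \in \mathbb{J}} \langle x_j, x_l \rangle \langle \tau_l, x_j \rangle \leq b\|x_j\|^2$. Since the hypothesis guarantees that every summand $\langle x_j, x_l \rangle \langle \tau_l, x_j \rangle$ is nonnegative, I may discard all terms except $l = j$ without increasing the left-hand side; this gives
$$\langle x_j, x_j \rangle \langle \tau_j, x_j \rangle = \|x_j\|^2 \langle \tau_j, x_j \rangle \leq \sum_{l \in \mathbb{J}} \langle x_j, x_l \rangle \langle \tau_l, x_j \rangle \leq b\|x_j\|^2.$$

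It then remains to divide through by $\|x_j\|^2$ and to match the inner product to the one in the statement. If $x_j = 0$ the conclusion is trivial since $\langle x_j, \tau_j \rangle = 0 \leq b$, so I may assume $\|x_j\| > 0$ and cancel, yielding $\langle \tau_j, x_j \rangle \leq b$. Finally, I would note that $\langle \tau_j, x_j \rangle$ is in fact real: the $l = j$ instance of the hypothesis forces $\|x_j\|^2 \langle \tau_j, x_j \rangle \geq 0$, hence $\langle \tau_j, x_j \rangle \geq 0$, so $\langle x_j, \tau_j \rangle = \overline{\langle \tau_j, x_j \rangle} = \langle \tau_j, x_j \rangle \leq b$, as required. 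There is no genuine obstacle here; the only points requiring a line of care are the realness identification (so that the bound on $\langle \tau_j, x_j \rangle$ transfers to $\langle x_j, \tau_j \rangle$) and the degenerate case $x_j = 0$.
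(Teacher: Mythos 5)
Your proof is correct and follows essentially the same route as the paper's one-line argument: evaluate the upper frame inequality at $h=x_j$, use the term-wise nonnegativity to keep only the $l=j$ summand, and cancel $\|x_j\|^2$. You are in fact slightly more careful than the paper, which silently passes from $\langle \tau_j,x_j\rangle$ to $\langle x_j,\tau_j\rangle$ and does not mention the degenerate case $x_j=0$; both of your added remarks are legitimate and harmless.
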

\begin{proposition}
Every weak Bessel sequence 	$(\{x_j\}_{j \in \mathbb{J}},\{\tau_j\}_{j \in \mathbb{J}} )$ for  $ \mathcal{H}$ can be extended to a tight weak frame for $ \mathcal{H}$. In particular,  every weak frame	$(\{x_j\}_{j \in \mathbb{J}},\{\tau_j\}_{j \in \mathbb{J}} )$ for $ \mathcal{H}$ can be extended to a tight weak frame for $ \mathcal{H}$.
\end{proposition}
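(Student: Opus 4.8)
The plan is to mimic the construction used for the non-weak Bessel case earlier in the paper, since that argument never invokes analysis or synthesis operators and therefore transfers directly to the weak setting. I would start from a weak Bessel sequence $(\{x_j\}_{j\in\mathbb{J}}, \{\tau_j\}_{j\in\mathbb{J}})$, so that $S_{x,\tau}\colon h \mapsto \sum_{j\in\mathbb{J}}\langle h, x_j\rangle \tau_j$ is a well-defined bounded positive operator. Fix an orthonormal basis $\{e_l\}_{l\in\mathbb{L}}$ for $\mathcal{H}$ and a real $\lambda > \|S_{x,\tau}\|$. Because $S_{x,\tau}$ is positive and bounded we have $S_{x,\tau}\le \|S_{x,\tau}\| I_\mathcal{H} < \lambda I_\mathcal{H}$, so $\lambda I_\mathcal{H} - S_{x,\tau}$ is positive and its positive square root $T:=(\lambda I_\mathcal{H}- S_{x,\tau})^{1/2}$ exists. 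I would then set $y_l := T e_l$ for all $l\in\mathbb{L}$ and form the extended family indexed by the disjoint union $\mathbb{J}\sqcup\mathbb{L}$, pairing $\{x_j\}_{j\in\mathbb{J}}\cup\{y_l\}_{l\in\mathbb{L}}$ against $\{\tau_j\}_{j\in\mathbb{J}}\cup\{y_l\}_{l\in\mathbb{L}}$.

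The key step is to evaluate the frame operator of the extended family. For $h\in\mathcal{H}$ the new block gives $\sum_{l\in\mathbb{L}}\langle h, y_l\rangle y_l = \sum_{l}\langle Th, e_l\rangle T e_l = T\big(\sum_l \langle Th, e_l\rangle e_l\big) = T(Th) = T^2 h = (\lambda I_\mathcal{H}- S_{x,\tau})h$, using the self-adjointness of $T$ and the reconstruction property of the orthonormal basis. Adding the original block $S_{x,\tau}h$ produces $\sum_{j}\langle h, x_j\rangle \tau_j + \sum_{l}\langle h, y_l\rangle y_l = S_{x,\tau}h + (\lambda I_\mathcal{H}- S_{x,\tau})h = \lambda h$. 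Hence the frame operator of the extended family is $\lambda I_\mathcal{H}$, which is bounded positive invertible, so the extended family is a tight weak frame in the sense of Definition \ref{WEAKFRAMEDEFINITION}.

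I would also record the few well-definedness checks that the weak framework requires, staying within the language of $S_{x,\tau}$ as the section insists. The net $\{y_l\}_{l\in\mathbb{L}}$ is weak Bessel with respect to itself, since $\sum_l |\langle h, Te_l\rangle|^2 = \|Th\|^2 \le \|T\|^2\|h\|^2$, so its self-frame-operator $T^2$ is bounded; together with the convergence of $\sum_{j}\langle h, x_j\rangle \tau_j$ guaranteed by the weak Bessel hypothesis, the series over $\mathbb{J}\sqcup\mathbb{L}$ converges in the strong-operator topology to $\lambda I_\mathcal{H}$. The ``in particular'' clause is then immediate: a weak frame is a fortiori a weak Bessel sequence, since its frame operator is in particular positive and bounded, so the first assertion applies.

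As for the main obstacle, there is no deep difficulty here. The only points needing care are ensuring $\lambda I_\mathcal{H}- S_{x,\tau}\ge 0$ so that the self-adjoint square root is available, and phrasing the whole argument through $S_{x,\tau}$ without recourse to analysis or synthesis operators, as this section demands. I expect the subtlest bookkeeping to be the convergence of the combined series when $\mathbb{L}$ is infinite, but this reduces to the boundedness of $T$ applied to the orthonormal-basis expansion and poses no genuine trouble.
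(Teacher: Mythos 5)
Your proof is correct and is essentially the paper's own argument: the paper proves the non-weak version by taking $\lambda>\|S_{x,\tau}\|$, setting $y_l=(\lambda I_\mathcal{H}-S_{x,\tau})^{1/2}e_l$ for an orthonormal basis $\{e_l\}_{l\in\mathbb{L}}$, and observing that the combined frame operator is $\lambda I_\mathcal{H}$, and the weak version is stated without proof precisely because this construction never uses analysis or synthesis operators. Your additional well-definedness remarks are consistent with what the paper leaves implicit.
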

\begin{proposition}
Let $( \{x_n\}_{n=1}^\infty,\{\tau_n\}_{n=1}^\infty )$ be a weak frame for  $ \mathcal{H}$	with bounds $a$ and  $b$. For  $ h \in \mathcal{H}$ define 
$$ h_0\coloneqq0,~ h_n\coloneqq h_{n-1}+\frac{2}{a+b}S_{x,\tau}(h-h_{n-1}), ~\forall n \geq1.
~\text{Then}~ \|h_n-h\|\leq \left(\frac{b-a}{b+a}\right)^n\|h\|, ~\forall n \geq1.$$
\end{proposition}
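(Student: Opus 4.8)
The plan is to mimic verbatim the proof of the corresponding frame-algorithm proposition in Section \ref{SEQUENTIAL}, since the only structural facts that proof uses are that $S_{x,\tau}$ is a bounded, self-adjoint, positive, invertible operator satisfying $aI_\mathcal{H}\leq S_{x,\tau}\leq bI_\mathcal{H}$. Each of these is guaranteed here: by Definition \ref{WEAKFRAMEDEFINITION} the operator $S_{x,\tau}\colon h\mapsto\sum_{j}\langle h,x_j\rangle\tau_j$ is well-defined, bounded, positive and invertible, hence self-adjoint, and the very meaning of the bounds $a,b$ is the operator inequality $aI_\mathcal{H}\leq S_{x,\tau}\leq bI_\mathcal{H}$. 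So nothing about the underlying (possibly non-Bessel) nature of the weak frame obstructs the argument; the analysis/synthesis operators $\theta_x,\theta_\tau$ are never needed, which is exactly the discipline this section requires.

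First I would establish the telescoping identity $h-h_n=\bigl(I_\mathcal{H}-\tfrac{2}{a+b}S_{x,\tau}\bigr)^n h$ for all $n\geq 1$ by induction. The base and inductive step both follow from substituting the recursion $h_n=h_{n-1}+\tfrac{2}{a+b}S_{x,\tau}(h-h_{n-1})$ and factoring:
\begin{align*}
h-h_n=\left(I_\mathcal{H}-\frac{2}{a+b}S_{x,\tau}\right)(h-h_{n-1})=\left(I_\mathcal{H}-\frac{2}{a+b}S_{x,\tau}\right)^n h,\quad\forall h\in\mathcal{H}.
\end{align*}

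Next I would bound the operator $B\coloneqq I_\mathcal{H}-\tfrac{2}{a+b}S_{x,\tau}$. Because $S_{x,\tau}$ is self-adjoint, so is $B$, and therefore $\|B\|=\sup_{\|h\|=1}|\langle Bh,h\rangle|$. Using $aI_\mathcal{H}\leq S_{x,\tau}\leq bI_\mathcal{H}$ one sandwiches, for every unit vector $h$,
\begin{align*}
-\frac{b-a}{b+a}\,\|h\|^2=\|h\|^2-\frac{2b}{b+a}\|h\|^2\leq\left\langle Bh,h\right\rangle\leq\|h\|^2-\frac{2a}{b+a}\|h\|^2=\frac{b-a}{b+a}\,\|h\|^2,
\end{align*}
whence $\|B\|\leq\frac{b-a}{b+a}$. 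Combining this with the telescoping identity and submultiplicativity of the operator norm yields $\|h_n-h\|=\|B^n h\|\leq\|B\|^n\|h\|\leq\bigl(\tfrac{b-a}{b+a}\bigr)^n\|h\|$, which is the claim.

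Honestly there is no serious obstacle here: the entire content is that the frame-algorithm computation depends only on the self-adjoint positive invertible operator $S_{x,\tau}$ and its spectral bounds, all of which survive the passage from frames to weak frames. The one point worth a moment's care is confirming self-adjointness of $S_{x,\tau}$ (so that the step $\|B\|=\sup_{\|h\|=1}|\langle Bh,h\rangle|$ is legitimate); this is immediate since a positive operator is self-adjoint, and in any case is built into Definition \ref{WEAKFRAMEDEFINITION}. Thus I would simply record that the proof is identical to that of the frame-algorithm proposition in Section \ref{SEQUENTIAL}.
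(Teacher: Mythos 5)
Your proof is correct and is exactly the argument the paper intends: the telescoping identity $h-h_n=(I_\mathcal{H}-\tfrac{2}{a+b}S_{x,\tau})^n h$ followed by the bound $\|I_\mathcal{H}-\tfrac{2}{a+b}S_{x,\tau}\|\leq\tfrac{b-a}{b+a}$ via self-adjointness, which is word-for-word the proof given for the non-weak version in Section \ref{SEQUENTIAL} and carries over because only the positivity, self-adjointness, invertibility, and spectral bounds of $S_{x,\tau}$ are used, never the analysis operators. Your observation that this is precisely why the result survives the passage to weak frames is the point the paper is making by restating the proposition in Section \ref{FURTHEREXTENSION} without a new proof.
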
 
 \begin{definition}\label{WEAKDUALDEFINITIONSEQUENTIAL}
A weak frame   $(\{y_j\}_{j\in \mathbb{J}}, \{\omega_j\}_{j\in \mathbb{J}})$  for  $\mathcal{H}$ is said to be a dual of weak frame  $ ( \{x_j\}_{j\in \mathbb{J}}, \{\tau_j\}_{j\in \mathbb{J}})$ for  $\mathcal{H}$  if $ \sum_{j\in \mathbb{J}}\langle h, x_j\rangle \omega_j= \sum_{j\in \mathbb{J}}\langle h, \tau_j\rangle y_j=h, \forall h \in  \mathcal{H}$. The `weak frame' $(   \{\widetilde{x}_j\coloneqq S_{x,\tau}^{-1}x_j\}_{j\in \mathbb{J}},\{\widetilde{\tau}_j\coloneqq S_{x,\tau}^{-1}\tau_j\}_{j \in \mathbb{J}} )$, which is a `dual' of $ (\{x_j\}_{j\in \mathbb{J}}, \{\tau_j\}_{j\in \mathbb{J}})$ is called the canonical dual of $ (\{x_j\}_{j\in \mathbb{J}}, \{\tau_j\}_{j\in \mathbb{J}})$.
 \end{definition}
 Symmetry of Definition \ref{WEAKFRAMEDEFINITION} implies symmetry of Definition \ref{WEAKDUALDEFINITIONSEQUENTIAL}.
\begin{proposition}
Let $( \{x_j\}_{j\in \mathbb{J}},\{\tau_j\}_{j\in \mathbb{J}} )$ be a weak frame for  $\mathcal{H}.$ If $ h \in \mathcal{H}$ has representation  $ h=\sum_{j\in\mathbb{J}}c_jx_j= \sum_{j\in\mathbb{J}}d_j\tau_j, $ for some scalar sequences  $ \{c_j\}_{j\in \mathbb{J}},\{d_j\}_{j\in \mathbb{J}}$,  then 
$$ \sum\limits_{j\in \mathbb{J}}c_j\bar{d}_j =\sum\limits_{j\in \mathbb{J}}\langle h, \widetilde{\tau}_j\rangle\langle \widetilde{x}_j , h \rangle+\sum\limits_{j\in \mathbb{J}}(\langle c_j-\langle h, \widetilde{\tau}_j\rangle)(\bar{d}_j-\langle \widetilde{x}_j, h\rangle). $$
\end{proposition}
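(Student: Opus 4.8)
The plan is to prove this by a direct expansion of the right-hand side, mirroring the computation used for the non-weak sequential version established earlier. The crucial observation is that that computation never invoked the analysis operators $\theta_x,\theta_\tau$: it used only the frame operator $S_{x,\tau}$, its inverse, the self-adjointness $S_{x,\tau}=S_{\tau,x}$, and the two given representations of $h$. A weak frame still supplies a bounded, positive, invertible, self-adjoint $S_{x,\tau}$ with $S_{x,\tau}=S_{\tau,x}$ (condition (ii) of the weak-frame definition, automatic over $\mathbb{C}$), so the same argument carries over verbatim. This is exactly why the statement survives into the ``further extension'' setting, where Bessel-type control, and hence $\theta_x,\theta_\tau$, are unavailable.

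Concretely, I would first abbreviate $g\coloneqq S_{x,\tau}^{-1}h$ and record that, since $S_{x,\tau}^{-1}$ is self-adjoint, the canonical duals $\widetilde{\tau}_j=S_{x,\tau}^{-1}\tau_j$ and $\widetilde{x}_j=S_{x,\tau}^{-1}x_j$ satisfy $\langle h,\widetilde{\tau}_j\rangle=\langle g,\tau_j\rangle$ and $\langle\widetilde{x}_j,h\rangle=\langle x_j,g\rangle$. Then I would expand the product in the second sum, obtaining four pieces, and combine them with the first sum to write the right-hand side as
$$2\sum_{j\in\mathbb{J}}\langle g,\tau_j\rangle\langle x_j,g\rangle+\sum_{j\in\mathbb{J}}c_j\bar d_j-\sum_{j\in\mathbb{J}}c_j\langle x_j,g\rangle-\sum_{j\in\mathbb{J}}\bar d_j\langle g,\tau_j\rangle.$$

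The second step is to evaluate the three auxiliary sums by continuity of the inner product against the strongly convergent identities at hand: $\sum_{j}\langle g,\tau_j\rangle\langle x_j,g\rangle=\langle S_{\tau,x}g,g\rangle=\langle S_{x,\tau}g,g\rangle=\langle h,g\rangle$; next $\sum_{j}c_j\langle x_j,g\rangle=\langle\sum_{j}c_jx_j,g\rangle=\langle h,g\rangle$ from the representation $h=\sum_{j}c_jx_j$; and $\sum_{j}\bar d_j\langle g,\tau_j\rangle=\langle g,\sum_{j}d_j\tau_j\rangle=\langle g,h\rangle$ from $h=\sum_{j}d_j\tau_j$. Substituting, and using $\langle h,g\rangle=\langle g,h\rangle=\langle h,S_{x,\tau}^{-1}h\rangle$ (real, by positivity), the term $2\langle h,g\rangle$ cancels against $-\langle h,g\rangle-\langle g,h\rangle$, leaving exactly $\sum_{j}c_j\bar d_j$, the left-hand side.

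The only point needing care — the main obstacle, though a mild one — is handling the sums as nets over finite subsets of $\mathbb{J}$: I must justify interchanging the inner product with each strongly convergent sum and must read the expansion of $\sum_{j}(c_j-\langle h,\widetilde{\tau}_j\rangle)(\bar d_j-\langle\widetilde{x}_j,h\rangle)$ so that the genuinely convergent cross-terms are separated from $\sum_{j}c_j\bar d_j$. Each cross-term converges because it is the image under a continuous functional of a convergent net, namely $S_{x,\tau}g$, or $\sum_{j}c_jx_j=h$, or $\sum_{j}d_j\tau_j=h$; granting convergence of the second sum on the right, the rearrangement is then legitimate and the identity follows, with no new idea beyond the non-weak case.
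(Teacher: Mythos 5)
Your proposal is correct and is essentially the paper's own argument: the paper proves the non-weak sequential version by exactly this expansion (writing the cross terms as $\langle \sum_j c_jx_j, S_{x,\tau}^{-1}h\rangle$ and $\langle S_{x,\tau}^{-1}h,\sum_j d_j\tau_j\rangle$ and the doubled term as $2\langle S_{x,\tau}S_{x,\tau}^{-1}h,S_{x,\tau}^{-1}h\rangle$), and the weak version is stated with the observation that the proof never used the analysis operators. Your remarks on net convergence and on $S_{x,\tau}=S_{\tau,x}$ are the right justifications; nothing further is needed.
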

\begin{theorem}
Let $( \{x_j\}_{j\in \mathbb{J}},\{\tau_j\}_{j\in \mathbb{J}} )$ be a weak frame for $ \mathcal{H}$ with frame bounds $ a$ and $ b.$ Then
\begin{enumerate}[\upshape(i)]
\item The canonical dual weak frame of the canonical dual weak frame  of $ (\{x_j\}_{j\in \mathbb{J}} ,\{\tau_j\}_{j\in \mathbb{J}} )$ is itself.
\item$ \frac{1}{b}, \frac{1}{a}$ are frame bounds for the canonical dual of $ (\{x_j\}_{j\in \mathbb{J}},\{\tau_j\}_{j\in \mathbb{J}}).$
\item If $ a, b $ are optimal frame bounds for $( \{x_j\}_{j\in \mathbb{J}} , \{\tau_j\}_{j\in \mathbb{J}}),$ then $ \frac{1}{b}, \frac{1}{a}$ are optimal  frame bounds for its canonical dual.
\end{enumerate} 
\end{theorem}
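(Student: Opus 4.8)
The plan is to mirror the proof of Theorem \ref{CANONICALDUALFRAMEPROPERTYSEQUENTIALVERSION}, which manipulates only the frame operator $S_{x,\tau}$ directly and never invokes the analysis or synthesis operators. Since in the weak setting those operators are unavailable, it is precisely this feature that lets the earlier argument transplant verbatim. The single computational fact driving everything is the identity $S_{\widetilde{x},\widetilde{\tau}}=S_{x,\tau}^{-1}$ for the canonical dual $(\{\widetilde{x}_j\coloneqq S_{x,\tau}^{-1}x_j\}_{j\in\mathbb{J}},\{\widetilde{\tau}_j\coloneqq S_{x,\tau}^{-1}\tau_j\}_{j\in\mathbb{J}})$.

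First I would establish this identity. Using that $S_{x,\tau}$ is positive invertible, hence self-adjoint, so that $S_{x,\tau}^{-1}$ is self-adjoint, I would compute, for a finite $\mathbb{S}\subseteq\mathbb{J}$ and $h\in\mathcal{H}$,
\begin{equation*}
\sum_{j\in\mathbb{S}}\langle h,\widetilde{x}_j\rangle\widetilde{\tau}_j=\sum_{j\in\mathbb{S}}\langle S_{x,\tau}^{-1}h,x_j\rangle S_{x,\tau}^{-1}\tau_j=S_{x,\tau}^{-1}\left(\sum_{j\in\mathbb{S}}\langle S_{x,\tau}^{-1}h,x_j\rangle\tau_j\right).
\end{equation*}
The inner net converges (as $\mathbb{S}$ ranges over finite subsets of $\mathbb{J}$) to $S_{x,\tau}(S_{x,\tau}^{-1}h)=h$ by the very definition of weak frame applied to $g=S_{x,\tau}^{-1}h$, and boundedness of $S_{x,\tau}^{-1}$ passes the limit through, giving $S_{\widetilde{x},\widetilde{\tau}}h=S_{x,\tau}^{-1}h$. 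This simultaneously shows that the canonical dual is a genuine weak frame, since $S_{x,\tau}^{-1}$ is bounded positive invertible.

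With this in hand, part (i) is immediate: the canonical dual of the canonical dual has members $S_{\widetilde{x},\widetilde{\tau}}^{-1}\widetilde{x}_j=S_{x,\tau}(S_{x,\tau}^{-1}x_j)=x_j$ and likewise $S_{x,\tau}S_{x,\tau}^{-1}\tau_j=\tau_j$, so it coincides with the original. For part (ii), I would multiply the operator inequality $aI_\mathcal{H}\leq S_{x,\tau}\leq bI_\mathcal{H}$ through by the commuting positive operator $S_{x,\tau}^{-1}$, using the standard fact that order reverses under inversion of positive invertible operators, to obtain $\frac{1}{b}I_\mathcal{H}\leq S_{x,\tau}^{-1}\leq\frac{1}{a}I_\mathcal{H}$, i.e.\ $\frac{1}{b},\frac{1}{a}$ are frame bounds for $S_{\widetilde{x},\widetilde{\tau}}=S_{x,\tau}^{-1}$.

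Part (iii) runs by the optimality bootstrap used in Theorem \ref{CANONICALDUALFRAMEPROPERTYOPERATORVERSION}: letting $c$ denote the optimal upper bound of the canonical dual, part (ii) forces $c\leq\frac{1}{a}$; applying part (ii) again to the canonical dual (whose double dual is the original by (i)) makes $\frac{1}{c}$ a lower bound for the original, whence $\frac{1}{c}\leq a$, so $c=\frac{1}{a}$; the symmetric argument handles the optimal lower bound $\frac{1}{b}$. The only point requiring genuine attention — and the place I would be most careful — is the convergence justification in the first step, namely confirming that the net defining $S_{\widetilde{x},\widetilde{\tau}}$ really converges strongly without appeal to synthesis operators; but this reduces cleanly to the convergence already guaranteed for the original weak frame, so no new machinery is needed.
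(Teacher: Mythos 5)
Your proposal is correct and follows essentially the same route as the paper, which simply refers back to the proof of the sequential-version theorem: the key identity $S_{\widetilde{x},\widetilde{\tau}}=S_{x,\tau}^{-1}$ is obtained there by exactly the computation you give, and (i)--(iii) then follow as in the operator-valued case. Your extra care about strong convergence of the defining net and the self-adjointness of $S_{x,\tau}^{-1}$ only makes explicit what the paper leaves implicit.
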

\begin{proof}
We   refer to the proof of Theorem \ref{CANONICALDUALFRAMEPROPERTYSEQUENTIALVERSION}.
\end{proof}
\begin{definition}\label{WEAKORTHOGONALDEFINITIONSEQUENTIAL}
A weak frame   $(\{y_j\}_{j\in \mathbb{J}},  \{\omega_j\}_{j\in \mathbb{J}})$  for  $\mathcal{H}$ is said to be orthogonal to a weak frame   $( \{x_j\}_{j\in \mathbb{J}}, \{\tau_j\}_{j\in \mathbb{J}})$ for $\mathcal{H}$ if $\sum_{j\in \mathbb{J}}\langle h, x_j\rangle \omega_j= \sum_{j\in \mathbb{J}}\langle h, \tau_j\rangle y_j=0, \forall h \in  \mathcal{H}.$
\end{definition}
Since Definition \ref{WEAKFRAMEDEFINITION} is symmetric, Definition \ref{WEAKORTHOGONALDEFINITIONSEQUENTIAL} is also symmetric.
\begin{proposition}
Two orthogonal weak frames  have common dual weak frame.	
\end{proposition}
\begin{proof}
Let  $ (\{x_j\}_{j\in \mathbb{J}}, \{\tau_j\}_{j\in \mathbb{J}}) $ and $ (\{y_j\}_{j\in \mathbb{J}}, \{\omega_j\}_{j\in \mathbb{J}}) $ be   orthogonal frames for  $\mathcal{H}$. Define $ z_j\coloneqq S_{x,\tau}^{-1}x_j+S_{y,\omega}^{-1}y_j,\rho_j\coloneqq S_{x,\tau}^{-1}\tau_j+S_{y,\omega}^{-1}\omega_j, \forall j \in \mathbb{J}$. For $h \in \mathcal{H}$,  
\begin{align*}
&S_{z,\rho}h=\sum_{j\in \mathbb{J}}\langle h, z_j\rangle \rho_j=\sum_{j\in \mathbb{J}}\langle h,S_{x,\tau}^{-1}x_j+S_{y,\omega}^{-1}y_j \rangle (S_{x,\tau}^{-1}\tau_j+S_{y,\omega}^{-1}\omega_j)\\
&=S_{x,\tau}^{-1}\left(\sum_{j\in \mathbb{J}}\langle S_{x,\tau}^{-1}h,x_j \rangle\tau_j\right)+S_{y,\omega}^{-1}\left(\sum_{j\in \mathbb{J}}\langle S_{x,\tau}^{-1}h,x_j \rangle\omega_j\right)+S_{x,\tau}^{-1}\left(\sum_{j\in \mathbb{J}}\langle S_{y,\omega}^{-1} h,y_j \rangle\tau_j\right)+S_{y,\omega}^{-1}\left(\sum_{j\in \mathbb{J}}\langle S_{y,\omega}^{-1}h, y_j\rangle\omega_j\right)\\
&=S_{x,\tau}^{-1}S_{x,\tau}S_{x,\tau}^{-1}h+S_{y,\omega}^{-1}0+S_{x,\tau}^{-1}0+S_{y,\omega}^{-1}S_{y,\omega}S_{y,\omega}^{-1}h=S_{x,\tau}^{-1}h+S_{y,\omega}^{-1}h=(S_{x,\tau}^{-1}+S_{y,\omega}^{-1})h.
\end{align*}
Therefore $ (\{z_j\}_{j\in \mathbb{J}}, \{\rho_j\}_{j\in \mathbb{J}}) $ is a  weak frame for  $\mathcal{H}$. For duality: $\sum_{j \in \mathbb{J}}\langle h, x_j\rangle \rho_j=\sum_{j \in \mathbb{J}}\langle h, x_j \rangle S_{x,\tau}^{-1}\tau_j+\sum_{j \in \mathbb{J}}\langle h, x_j \rangle S_{y,\omega}^{-1}\omega_j =S_{x, \tau}^{-1}(\sum_{j \in \mathbb{J}}\langle h, x_j \rangle \tau_j)+ S_{y,\omega}^{-1}(\sum_{j \in \mathbb{J}}\langle h, x_j \rangle \omega_j)=h+S_{y,\omega}^{-1}0=h$, $\sum_{j \in \mathbb{J}}\langle h, \tau_j\rangle z_j=S_{x, \tau}^{-1}(\sum_{j \in \mathbb{J}}\langle h, \tau_j\rangle x_j)+S_{y,\omega}^{-1}(\sum_{j \in \mathbb{J}}\langle h, \tau_j\rangle y_j)=h+S_{y,\omega}^{-1}0=h $, and $\sum_{j \in \mathbb{J}}\langle h, y_j\rangle \rho_j=S_{x,\tau}^{-1}(\sum_{j \in \mathbb{J}}\langle h, y_j\rangle \tau_j)+S_{y,\omega}^{-1}(\sum_{j \in \mathbb{J}}\langle h, y_j\rangle \omega_j)=S_{x,\tau}^{-1}0+h=h $, $\sum_{j \in \mathbb{J}}\langle h, \omega_j\rangle z_j=S_{x,\tau}^{-1}(\sum_{j \in \mathbb{J}}\langle h, \omega_j\rangle x_j)+S_{y,\omega}^{-1}(\sum_{j \in \mathbb{J}}\langle h, \omega_j\rangle y_j)=S_{x,\tau}^{-1}0+h=h, \forall h \in \mathcal{H}$.
\end{proof}
\begin{proposition}
Let $ (\{x_j\}_{j\in \mathbb{J}}, \{\tau_j\}_{j\in \mathbb{J}}) $ and $ (\{y_j\}_{j\in \mathbb{J}}, \{\omega_j\}_{j\in \mathbb{J}}) $ be  two Parseval weak frames for  $\mathcal{H}$ which are  orthogonal. If $A,B,C,D \in \mathcal{B}(\mathcal{H})$ are such that $ AC^*+BD^*=I_\mathcal{H}$, then  $ (\{Ax_j+By_j\}_{j\in \mathbb{J}}, \{C\tau_j+D\omega_j\}_{j\in \mathbb{J}}) $ is a  Parseval weak frame for  $\mathcal{H}$. In particular,  if scalars $ a,b,c,d$ satisfy $a\bar{c}+b\bar{d} =1$, then $ (\{ax_j+by_j\}_{j\in \mathbb{J}}, \{c\tau_j+d\omega_j\}_{j\in \mathbb{J}}) $ is a  Parseval weak frame for  $\mathcal{H}$.
\end{proposition}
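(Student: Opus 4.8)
The plan is to compute the frame operator $S_{z,\rho}$ of the candidate pair directly, writing $z_j \coloneqq Ax_j + By_j$ and $\rho_j \coloneqq C\tau_j + D\omega_j$, and to show that it equals $I_\mathcal{H}$. Since we are in the weak setting, I must avoid the analysis and synthesis operators $\theta_x,\theta_\tau$ (they need not exist here), so, unlike the proof of the corresponding interpolation result in Section \ref{SEQUENTIAL}, everything has to be phrased through $S$ alone. Recall that a Parseval weak frame means precisely $S_{x,\tau}=S_{y,\omega}=I_\mathcal{H}$, and that orthogonality of $(\{y_j\}_{j\in\mathbb{J}},\{\omega_j\}_{j\in\mathbb{J}})$ to $(\{x_j\}_{j\in\mathbb{J}},\{\tau_j\}_{j\in\mathbb{J}})$ means $\sum_{j\in\mathbb{J}}\langle h, x_j\rangle\omega_j = \sum_{j\in\mathbb{J}}\langle h, \tau_j\rangle y_j = 0$ for all $h$; by the symmetry of that definition I also have $\sum_{j\in\mathbb{J}}\langle h, y_j\rangle\tau_j = \sum_{j\in\mathbb{J}}\langle h, \omega_j\rangle x_j = 0$.

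Fix $h\in\mathcal{H}$. First I would expand the coefficient as $\langle h, z_j\rangle = \langle A^*h, x_j\rangle + \langle B^*h, y_j\rangle$, so that over any finite $\mathbb{S}\subseteq\mathbb{J}$ the partial sum $\sum_{j\in\mathbb{S}}\langle h, z_j\rangle\rho_j$ splits exactly into four finite sums, each of the form $C(\,\cdot\,)$ or $D(\,\cdot\,)$ applied to a finite sum of the $\tau_j$'s or $\omega_j$'s. Passing to the limit over finite subsets, the two diagonal nets converge, by boundedness of $C,D$ and Parsevalness, to $C\,S_{x,\tau}(A^*h) = CA^*h$ and $D\,S_{y,\omega}(B^*h) = DB^*h$; the two cross nets converge to $C\big(\sum_{j\in\mathbb{J}}\langle B^*h, y_j\rangle\tau_j\big) = 0$ and $D\big(\sum_{j\in\mathbb{J}}\langle A^*h, x_j\rangle\omega_j\big) = 0$, where the first vanishing uses the symmetric orthogonality relation and the second the defining one. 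Hence $S_{z,\rho}h = (CA^*+DB^*)h$ for every $h$.

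It then remains to identify $CA^*+DB^*$. Taking adjoints in the hypothesis gives $CA^*+DB^* = (AC^*+BD^*)^* = I_\mathcal{H}^* = I_\mathcal{H}$, so $S_{z,\rho}=I_\mathcal{H}$, which is bounded, positive and invertible with optimal bound one; that is, $(\{Ax_j+By_j\}_{j\in\mathbb{J}},\{C\tau_j+D\omega_j\}_{j\in\mathbb{J}})$ is a Parseval weak frame. The scalar ``in particular'' statement follows by specializing $A=aI_\mathcal{H}$, $B=bI_\mathcal{H}$, $C=cI_\mathcal{H}$, $D=dI_\mathcal{H}$, since then $AC^*+BD^* = (a\bar c + b\bar d)I_\mathcal{H}$, which equals $I_\mathcal{H}$ exactly when $a\bar c + b\bar d = 1$.

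The computation is short, so the only real care needed is in the convergence bookkeeping: because there is no global Bessel bound to lean on in the weak setting, I would justify the splitting by first observing that each of the four sub-nets converges on its own --- the diagonal ones from the two Parseval identities and the off-diagonal ones from the two orthogonality identities --- and only then conclude that the net defining $S_{z,\rho}h$ converges to the sum of the four limits. This, together with the adjoint identity $(AC^*+BD^*)^*=CA^*+DB^*$, is the entire content of the argument; I expect the step of correctly invoking both the given and the symmetric form of orthogonality to annihilate the two cross terms to be the one most prone to a slip.
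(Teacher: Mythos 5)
Your proposal is correct and follows essentially the same route as the paper: expand $S_{Ax+By,\,C\tau+D\omega}h$ into four sums, kill the two cross terms using orthogonality (in both its given and symmetric forms), identify the diagonal terms as $CA^*h+DB^*h$ via Parsevalness, and conclude from $(AC^*+BD^*)^*=I_\mathcal{H}$. Your extra care about justifying the term-by-term convergence of the nets (since no Bessel bound is available in the weak setting) is a welcome refinement of what the paper leaves implicit.
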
 
\begin{proof}
For all $h \in \mathcal{H}$,
\begin{align*}
&S_{Ax+By,C\tau+D\omega}h =\sum_{j\in \mathbb{J}}\langle h,Ax_j+By_j \rangle (C\tau_j+D\omega_j)\\
&=C\left(\sum_{j\in \mathbb{J}}\langle A^* h, x_j\rangle \tau_j\right)+D\left(\sum_{j\in \mathbb{J}}\langle A^*h, x_j\rangle \omega_j\right)+C\left(\sum_{j\in \mathbb{J}}\langle B^*h, x_j\rangle \tau_j\right)+D\left(\sum_{j\in \mathbb{J}}\langle B^*h, y_j\rangle \omega_j\right)\\
&=CA^*h+D0+C0+DB^*h=h.
\end{align*}
\end{proof}
\begin{definition}
Two weak frames  $(\{x_j\}_{j\in \mathbb{J}},  \{\tau_j\}_{j\in \mathbb{J}}) $ and $(\{y_j\}_{j\in \mathbb{J}},\{\omega_j\}_{j\in \mathbb{J}})$  for $ \mathcal{H}$ are called disjoint if $(\{x_j\oplus y_j\}_{j\in \mathbb{J}},\{\tau_j\oplus\omega_j\}_{j\in \mathbb{J}})$ is a weak frame for $\mathcal{H}\oplus\mathcal{H} $.
\end{definition} 
\begin{proposition}
If $(\{x_j\}_{j\in \mathbb{J}},\{\tau_j\}_{j\in \mathbb{J}} )$  and $ (\{y_j\}_{j\in \mathbb{J}}, \{\omega_j\}_{j\in \mathbb{J}} )$  are  disjoint  weak frames  for $\mathcal{H}$, then  they  are disjoint. Further, if both $(\{x_j\}_{j\in \mathbb{J}},\{\tau_j\}_{j\in \mathbb{J}} )$  and $ (\{y_j\}_{j\in \mathbb{J}}, \{\omega_j\}_{j\in \mathbb{J}} )$ are  Parseval weak, then $(\{x_j\oplus y_j\}_{j \in \mathbb{J}},\{\tau_j\oplus \omega_j\}_{j \in \mathbb{J}})$ is Parseval weak.
\end{proposition}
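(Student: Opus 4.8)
The plan is to mirror the proof of Proposition~\ref{SEQUENTIALDISJOINTFRAMEPROPOSITION}, noting first that the hypothesis is evidently meant to read ``orthogonal'' rather than ``disjoint'' (the two \emph{orthogonal} weak frames are to be shown disjoint, exactly as in the operator-valued and non-weak sequential analogues). The entire argument reduces to a single computation of the frame operator $S_{x\oplus y,\tau\oplus\omega}$ of the pair $(\{x_j\oplus y_j\}_{j\in\mathbb{J}},\{\tau_j\oplus\omega_j\}_{j\in\mathbb{J}})$ on $\mathcal{H}\oplus\mathcal{H}$, because in the weak setting (Definition~\ref{WEAKFRAMEDEFINITION}) being a weak frame is \emph{exactly} the assertion that this operator is well-defined, bounded, positive and invertible; there is no separate analysis/Bessel condition to check, which makes the proof shorter than its non-weak counterpart.

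First I would expand, for $h\oplus g\in\mathcal{H}\oplus\mathcal{H}$,
$$S_{x\oplus y,\tau\oplus\omega}(h\oplus g)=\sum_{j\in\mathbb{J}}\langle h\oplus g,\,x_j\oplus y_j\rangle(\tau_j\oplus\omega_j)=\sum_{j\in\mathbb{J}}(\langle h,x_j\rangle+\langle g,y_j\rangle)(\tau_j\oplus\omega_j),$$
and read off the two coordinates as $\sum_j\langle h,x_j\rangle\tau_j+\sum_j\langle g,y_j\rangle\tau_j$ and $\sum_j\langle h,x_j\rangle\omega_j+\sum_j\langle g,y_j\rangle\omega_j$. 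The key step is to annihilate the two cross terms $\sum_j\langle g,y_j\rangle\tau_j$ and $\sum_j\langle h,x_j\rangle\omega_j$. The second vanishes directly from Definition~\ref{WEAKORTHOGONALDEFINITIONSEQUENTIAL} (which gives $\sum_j\langle h,x_j\rangle\omega_j=0$), while the first requires invoking the symmetry of orthogonality noted just after that definition: since $(\{x_j\},\{\tau_j\})$ is then also orthogonal to $(\{y_j\},\{\omega_j\})$, one has $\sum_j\langle g,y_j\rangle\tau_j=0$. These same orthogonality identities also guarantee convergence of the cross series, so the coordinatewise computation is legitimate and yields $S_{x\oplus y,\tau\oplus\omega}(h\oplus g)=S_{x,\tau}h\oplus S_{y,\omega}g$, that is, $S_{x\oplus y,\tau\oplus\omega}=S_{x,\tau}\oplus S_{y,\omega}$.

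From this operator identity the two conclusions follow at once. Since $(\{x_j\},\{\tau_j\})$ and $(\{y_j\},\{\omega_j\})$ are weak frames, $S_{x,\tau}$ and $S_{y,\omega}$ are bounded positive invertible, hence so is their direct sum (with inverse $S_{x,\tau}^{-1}\oplus S_{y,\omega}^{-1}$); thus $(\{x_j\oplus y_j\},\{\tau_j\oplus\omega_j\})$ is a weak frame for $\mathcal{H}\oplus\mathcal{H}$, which is precisely disjointness. For the Parseval assertion, Parsevalness of each pair means $S_{x,\tau}=I_\mathcal{H}=S_{y,\omega}$, whence $S_{x\oplus y,\tau\oplus\omega}=I_\mathcal{H}\oplus I_\mathcal{H}=I_{\mathcal{H}\oplus\mathcal{H}}$, so the direct sum is Parseval weak.

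The only genuinely delicate point, and the one I would flag as the main obstacle, is the vanishing of the cross term $\sum_j\langle g,y_j\rangle\tau_j$: it is \emph{not} part of the orthogonality condition as literally stated for $(\{y_j\},\{\omega_j\})$ relative to $(\{x_j\},\{\tau_j\})$, so one must explicitly appeal to the symmetry of orthogonality of weak frames (itself a consequence of the symmetry of Definition~\ref{WEAKFRAMEDEFINITION}) to obtain it, and to the convergence built into those orthogonality identities. Everything else is routine direct-sum bookkeeping.
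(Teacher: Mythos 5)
Your proof is correct and follows essentially the same route as the paper, which simply points back to the computation $S_{x\oplus y,\tau\oplus\omega}=S_{x,\tau}\oplus S_{y,\omega}$ in the non-weak sequential case (there, too, the vanishing cross terms $\sum_j\langle g,y_j\rangle\tau_j$ and $\sum_j\langle h,x_j\rangle\omega_j$ are inserted using orthogonality). Your explicit appeal to the symmetry of orthogonality for the first cross term, and your observation that no analysis-operator/Bessel check is needed in the weak setting, are both correct and if anything make the argument more careful than the paper's.
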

\begin{proof}
Proof is inside the proof of Proposition \ref{SEQUENTIALDISJOINTFRAMEPROPOSITION}. 
\end{proof}

\textbf{Similarity  and tensor product}
\begin{definition}
A weak frame  $ (\{y_j\}_{j\in \mathbb{J}},\{\omega_j\}_{j\in \mathbb{J}})$ for $ \mathcal{H}$ is said to be  similar to  a weak frame $ (\{x_j\}_{j\in \mathbb{J}},\{\tau_j\}_{j\in \mathbb{J}})$ for $ \mathcal{H}$ if there are invertible operators $ T_{x,y}, T_{\tau,\omega} \in \mathcal{B}(\mathcal{H})$ such that $ y_j=T_{x,y}x_j, \omega_j=T_{\tau,\omega}\tau_j,   \forall j \in \mathbb{J}.$
\end{definition} 
\begin{proposition}
Let $ \{x_j\}_{j\in \mathbb{J}}\in \mathscr{F}^w_\tau$  with frame bounds $a, b,$  let $T_{x,y} , T_{\tau,\omega}\in \mathcal{B}(\mathcal{H})$ be positive, invertible, commute with each other, commute with $ S_{x, \tau}$, and let $y_j=T_{x,y}x_j , \omega_j=T_{\tau,\omega}\tau_j,  \forall j \in \mathbb{J}.$ Then $ \{y_j\}_{j\in \mathbb{J}}\in \mathscr{F}^w_\tau$, $S_{y,\omega}=T_{\tau,\omega}S_{x, \tau}T_{x,y}$, and $ \frac{a}{\|T_{x,y}^{-1}\|\|T_{\tau,\omega}^{-1}\|}\leq S_{y, \omega} \leq b\|T_{x,y}T_{\tau,\omega}\|$. Assuming that $ (\{x_j\}_{j\in \mathbb{J}},\{\tau_j\}_{j\in \mathbb{J}})$ is Parseval weak, then $(\{y_j\}_{j\in \mathbb{J}},  \{\omega_j\}_{j\in \mathbb{J}})$ is Parseval weak if and only if   $ T_{\tau, \omega}T_{x,y}=I_\mathcal{H}.$  
\end{proposition}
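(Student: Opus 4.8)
The plan is to prove everything from a single factorization, namely $S_{y,\omega} = T_{\tau,\omega}S_{x,\tau}T_{x,y}$, and then to read off the membership claim, the norm bounds, and the Parseval equivalence from it using only the order structure of commuting positive invertible operators. The one structural constraint to respect throughout is that $(\{x_j\}_{j\in\mathbb{J}},\{\tau_j\}_{j\in\mathbb{J}})$ is merely a \emph{weak} frame, so there are no separate Bessel conditions and hence no analysis/synthesis operators $\theta_x,\theta_\tau$ available; I must argue directly with the defining series.

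First I would compute, for $h\in\mathcal{H}$ and a finite $\mathbb{S}\subseteq\mathbb{J}$,
$$\sum_{j\in\mathbb{S}}\langle h, y_j\rangle\omega_j = \sum_{j\in\mathbb{S}}\langle h, T_{x,y}x_j\rangle\, T_{\tau,\omega}\tau_j = T_{\tau,\omega}\left(\sum_{j\in\mathbb{S}}\langle T_{x,y}h, x_j\rangle\,\tau_j\right),$$
using that $T_{x,y}$ is positive, hence self-adjoint, to move it inside the inner product, and that $T_{\tau,\omega}$ is linear and bounded to pull it outside the finite sum. Since $\{x_j\}_{j\in\mathbb{J}}\in\mathscr{F}^w_\tau$, the net inside the parentheses converges to $S_{x,\tau}(T_{x,y}h)$; continuity of $T_{\tau,\omega}$ then permits passing to the limit, so $S_{y,\omega}h=\sum_{j\in\mathbb{J}}\langle h, y_j\rangle\omega_j$ exists and equals $T_{\tau,\omega}S_{x,\tau}T_{x,y}h$. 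This establishes the factorization and simultaneously shows $S_{y,\omega}$ is a well-defined bounded operator.

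Next I would invoke the hypothesis that $T_{x,y}$, $T_{\tau,\omega}$, and $S_{x,\tau}$ mutually commute and are positive invertible. Setting $P := T_{x,y}T_{\tau,\omega}$ (positive invertible and commuting with $S_{x,\tau}$), the factorization reads $S_{y,\omega}=P\,S_{x,\tau}=P^{1/2}S_{x,\tau}P^{1/2}$, where the continuous functional calculus guarantees that $S_{x,\tau}$ commutes with $P^{1/2}$ because it commutes with $P$. In this symmetric form positivity and invertibility are transparent, so $(\{y_j\}_{j\in\mathbb{J}},\{\omega_j\}_{j\in\mathbb{J}})$ is a weak frame. For the bounds I would combine $\|P^{-1}\|^{-1}I\le P\le\|P\|I$ with $aI\le S_{x,\tau}\le bI$ through the sandwich $P^{1/2}(\cdot)P^{1/2}$: from $S_{x,\tau}\ge aI$ I get $S_{y,\omega}=P^{1/2}S_{x,\tau}P^{1/2}\ge aP\ge a\|P^{-1}\|^{-1}I$, and since $\|P^{-1}\|\le\|T_{x,y}^{-1}\|\,\|T_{\tau,\omega}^{-1}\|$ this gives the stated lower bound; the upper bound follows symmetrically from $S_{x,\tau}\le bI$ and $\|P\|\le\|T_{x,y}T_{\tau,\omega}\|$.

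Finally, for the Parseval equivalence I would use that a Parseval weak frame is exactly one whose frame operator equals $I_\mathcal{H}$ (tight with optimal bound one, as the general definition records). Under $S_{x,\tau}=I_\mathcal{H}$ the factorization collapses to $S_{y,\omega}=T_{\tau,\omega}T_{x,y}$, so $(\{y_j\}_{j\in\mathbb{J}},\{\omega_j\}_{j\in\mathbb{J}})$ is Parseval weak precisely when $T_{\tau,\omega}T_{x,y}=I_\mathcal{H}$. The main obstacle is less any single computation than the discipline imposed by the weak setting: I must justify interchanging $T_{\tau,\omega}$ with the only conditionally convergent defining net without recourse to $\theta_x,\theta_\tau$, and I must be careful that the operator inequalities $P^{1/2}S_{x,\tau}P^{1/2}\ge aP$ and the like genuinely rest on the commutativity hypotheses — these are exactly what legitimize replacing the product $PS_{x,\tau}$ by the manifestly positive $P^{1/2}S_{x,\tau}P^{1/2}$.
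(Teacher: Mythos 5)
Your proof is correct and follows essentially the same route as the paper, whose own argument is just the one-line computation $S_{y,\omega}h=\sum_{j}\langle h,T_{x,y}x_j\rangle T_{\tau,\omega}\tau_j=T_{\tau,\omega}S_{x,\tau}T_{x,y}h$ with the positivity, the bounds via $\|P^{-1}\|^{-1}I\le T_{x,y}T_{\tau,\omega}\le\|P\|I$, and the Parseval equivalence left to the reader exactly as you supply them. Your extra care in passing $T_{\tau,\omega}$ through the convergent net and in symmetrizing to $P^{1/2}S_{x,\tau}P^{1/2}$ only makes explicit what the paper takes for granted.
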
     
 \begin{proof}
 For $ h \in \mathcal{H}$, $ S_{y,\omega}=\sum_{j \in \mathbb{J}}\langle h,y_j \rangle \omega_j=\sum_{j \in \mathbb{J}}\langle h,T_{x,y}x_j \rangle T_{\tau,\omega}\tau_j=T_{\tau,\omega}(\sum_{j \in \mathbb{J}}\langle T_{x,y}h,x_j \rangle \tau_j)=T_{\tau,\omega}S_{x, \tau}T_{x,y}h.$
 \end{proof}     
 \begin{proposition}
 Let $ \{x_j\}_{j\in \mathbb{J}}\in \mathscr{F}^w_\tau,$ $ \{y_j\}_{j\in \mathbb{J}}\in \mathscr{F}^w_\omega$ and   $y_j=T_{x, y}x_j , \omega_j=T_{\tau,\omega}\tau_j,  \forall j \in \mathbb{J}$, for some invertible $T_{x,y}, T_{\tau,\omega}\in \mathcal{B}(\mathcal{H}).$ Then 
 $ \theta_y=\theta_x T^*_{x,y}, \theta_\omega=\theta_\tau T^*_{\tau,\omega}, S_{y,\omega}=T_{\tau,\omega}S_{x, \tau}T_{x,y}^*,  P_{y,\omega}=P_{x, \tau}.$ Assuming that $ (\{x_j\}_{j\in \mathbb{J}},\{\tau_j\}_{j\in \mathbb{J}})$ is a Parseval weak frame, then $ (\{y_j\}_{j\in \mathbb{J}},\{\omega_j\}_{j\in \mathbb{J}})$ is a Parseval weak frame if and only if $T_{\tau,\omega}T_{x,y}^*=I_\mathcal{H}.$
 \end{proposition}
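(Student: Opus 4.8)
The plan is to reduce the entire statement to the single frame-operator identity $S_{y,\omega}=T_{\tau,\omega}S_{x,\tau}T_{x,y}^{*}$ and then read off the remaining assertions, mirroring the argument of Lemma~\ref{SEQUENTIALSIMILARITYLEMMA}. The one methodological difference I would keep in mind is that, in the weak setting, the analysis operators $\theta_x,\theta_\tau$ need not exist as bounded maps into $\ell^2(\mathbb{J})$ (the weak-frame hypothesis controls only $S_{x,\tau}$, not the individual families $\{x_j\}$, $\{\tau_j\}$), so I would route the core computation through the frame operator itself rather than through $\theta_x,\theta_\tau$.

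First I would fix $h\in\mathcal{H}$ and work on finite partial sums. For a finite $\mathbb{S}\subseteq\mathbb{J}$, using $y_j=T_{x,y}x_j$, $\omega_j=T_{\tau,\omega}\tau_j$ together with the adjoint relation $\langle h,T_{x,y}x_j\rangle=\langle T_{x,y}^{*}h,x_j\rangle$, I get
\begin{align*}
\sum_{j\in\mathbb{S}}\langle h,y_j\rangle\,\omega_j=\sum_{j\in\mathbb{S}}\langle T_{x,y}^{*}h,x_j\rangle\,T_{\tau,\omega}\tau_j=T_{\tau,\omega}\Bigl(\sum_{j\in\mathbb{S}}\langle T_{x,y}^{*}h,x_j\rangle\,\tau_j\Bigr).
\end{align*}
Because $(\{x_j\}_{j\in\mathbb{J}},\{\tau_j\}_{j\in\mathbb{J}})$ is a weak frame, the bracketed net converges over finite $\mathbb{S}$ to $S_{x,\tau}(T_{x,y}^{*}h)$, and since $T_{\tau,\omega}$ is bounded, hence continuous, I may pass it through the limit. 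This yields $S_{y,\omega}h=T_{\tau,\omega}S_{x,\tau}T_{x,y}^{*}h$ for every $h$, i.e. $S_{y,\omega}=T_{\tau,\omega}S_{x,\tau}T_{x,y}^{*}$; as a product of two invertibles with the positive invertible $S_{x,\tau}$, this also reconfirms that $S_{y,\omega}$ is a well-defined bounded invertible operator.

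The analysis-operator identities $\theta_y=\theta_xT_{x,y}^{*}$ and $\theta_\omega=\theta_\tau T_{\tau,\omega}^{*}$ follow (whenever these operators are available) by the same one-line adjoint manipulation as in Lemma~\ref{SEQUENTIALSIMILARITYLEMMA}: $\theta_xT_{x,y}^{*}h=\{\langle T_{x,y}^{*}h,x_j\rangle\}_{j\in\mathbb{J}}=\{\langle h,y_j\rangle\}_{j\in\mathbb{J}}=\theta_yh$. Substituting these into $P_{y,\omega}=\theta_yS_{y,\omega}^{-1}\theta_\omega^{*}$, and using $S_{y,\omega}^{-1}=(T_{x,y}^{*})^{-1}S_{x,\tau}^{-1}T_{\tau,\omega}^{-1}$, the conjugating factors $T_{x,y}^{*}$ and $T_{\tau,\omega}$ collapse against their inverses, leaving $P_{y,\omega}=\theta_xS_{x,\tau}^{-1}\theta_\tau^{*}=P_{x,\tau}$.

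Finally, for the Parseval characterization I would specialize the frame-operator identity: if $(\{x_j\}_{j\in\mathbb{J}},\{\tau_j\}_{j\in\mathbb{J}})$ is Parseval then $S_{x,\tau}=I_\mathcal{H}$, so $S_{y,\omega}=T_{\tau,\omega}T_{x,y}^{*}$, whence $(\{y_j\}_{j\in\mathbb{J}},\{\omega_j\}_{j\in\mathbb{J}})$ is Parseval, i.e. $S_{y,\omega}=I_\mathcal{H}$, if and only if $T_{\tau,\omega}T_{x,y}^{*}=I_\mathcal{H}$. The only step that needs genuine care, and which I expect to be the main obstacle, is moving $T_{\tau,\omega}$ and the adjoint shift $T_{x,y}^{*}$ across the strong-operator-topology limit defining the series; everything else is algebra. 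I emphasize this continuity argument precisely because the weak-frame hypothesis supplies SOT convergence of $S_{x,\tau}$ only, and gives no Bessel-type $\ell^2$ control on $\{x_j\}$ or $\{\tau_j\}$ separately, so the proof cannot simply quote boundedness of $\theta_x,\theta_\tau$.
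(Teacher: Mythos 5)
Your proof is correct, and it is essentially the argument the paper itself uses for the neighbouring weak-frame similarity results (the paper states this particular proposition without proof): you derive $S_{y,\omega}=T_{\tau,\omega}S_{x,\tau}T_{x,y}^{*}$ directly on finite partial sums via $\langle h,T_{x,y}x_j\rangle=\langle T_{x,y}^{*}h,x_j\rangle$ and the continuity of $T_{\tau,\omega}$ through the SOT limit, and then read off the Parseval equivalence from $S_{x,\tau}=I_\mathcal{H}$. Your caution about $\theta_x,\theta_\tau$ is well placed: the weak-frame hypothesis controls only $S_{x,\tau}$ and gives no Bessel bound on $\{x_j\}$ or $\{\tau_j\}$ separately, so the identities $\theta_y=\theta_xT_{x,y}^{*}$, $\theta_\omega=\theta_\tau T_{\tau,\omega}^{*}$ and $P_{y,\omega}=P_{x,\tau}$ in the statement are meaningful only when those analysis operators exist; your ``whenever available'' qualification is the honest reading, and the looseness here is in the paper's statement, not in your argument.
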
     
\begin{remark}
For every weak frame  $(\{x_j\}_{j \in \mathbb{J}}, \{\tau_j\}_{j \in \mathbb{J}}),$ each  of `weak frames'  $( \{S_{x, \tau}^{-1}x_j\}_{j \in \mathbb{J}}, \{\tau_j\}_{j \in \mathbb{J}})$,    $( \{S_{x, \tau}^{-1/2}x_j\}_{j \in \mathbb{J}}, \{S_{x,\tau}^{-1/2}\tau_j\}_{j \in \mathbb{J}}),$ and  $ (\{x_j \}_{j \in \mathbb{J}}, \{S_{x,\tau}^{-1}\tau_j\}_{j \in \mathbb{J}})$ is a Parseval  weak frame which is similar to  $ (\{x_j\}_{j \in \mathbb{J}} , \{\tau_j\}_{j \in \mathbb{J}} ).$  
\end{remark}

\textbf{Tensor product}: Let $(\{x_j\}_{j \in \mathbb{J}}, \{\tau_j\}_{j \in \mathbb{J}})$ be a weak  frame  for   $ \mathcal{H},$ and $(\{y_l\}_{l \in \mathbb{L}}, \{\omega_l\}_{l \in \mathbb{L}})$ be a weak frame  for   $ \mathcal{H}_1.$ The weak frame  $(\{z_{(j, l)}\coloneqq x_j\otimes y_l\}_{(j, l)\in \mathbb{J}\bigtimes  \mathbb{L}},\{\rho_{(j, l)}\coloneqq \tau_j\otimes\omega_l\}_{(j, l)\in \mathbb{J}\bigtimes  \mathbb{L}})$   for $ \mathcal{H}\otimes\mathcal{H}_1$ is called  as tensor product  of weak frames $( \{x_j\}_{j \in \mathbb{J}}, \{\tau_j\}_{j\in \mathbb{J}})$ and $( \{y_l\}_{l \in \mathbb{L}},  \{\omega_l\}_{l\in \mathbb{L}}).$ 
\begin{proposition}
Let  $(\{z_{(j, l)}\coloneqq x_j\otimes y_l\}_{(j, l)\in \mathbb{J}\bigtimes  \mathbb{L}},\{\rho _{(j, l)}\coloneqq \tau_j\otimes \omega_l\}_{(j, l)\in \mathbb{J}\bigtimes  \mathbb{L}}) $ be the  tensor product of weak frames  $( \{x_j\}_{j \in \mathbb{J}}, \{\tau_j\}_{j \in \mathbb{J}}) $  for  $ \mathcal{H},$ and $( \{y_l\}_{l \in \mathbb{L}}, \{\omega_l\}_{l \in \mathbb{L}} )$ for $ \mathcal{H}_1.$  Then  $ S_{z, \rho}=S_{x, \tau}\otimes S_{y, \omega}.$ If  $( \{x_j\}_{j \in \mathbb{J}}, \{\tau_j\}_{j \in \mathbb{J}}) $ and $ (\{y_l\}_{l \in \mathbb{L}},  \{\omega_l\}_{l \in \mathbb{L}} )$ are Parseval weak, then $(\{z_{(j, l)}\}_{(j, l)\in \mathbb{J}\bigtimes  \mathbb{L}} ,\{\rho_{(j,l)}\}_{(j,l)\in \mathbb{J}\bigtimes \mathbb{L}})$ is Parseval weak.
\end{proposition}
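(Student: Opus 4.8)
The plan is to establish the operator identity $S_{z,\rho} = S_{x,\tau}\otimes S_{y,\omega}$ directly, working only with the frame operators and never invoking analysis or synthesis operators (which, for weak frames, need not exist as bounded maps into $\ell^2$, so the route used for the ordinary-frame tensor proposition is unavailable here). Since every element of $\mathcal{H}\otimes\mathcal{H}_1$ is a limit of finite sums of elementary tensors, and both $S_{z,\rho}$ and $S_{x,\tau}\otimes S_{y,\omega}$ are to be compared as linear operators, it suffices to verify the identity on an elementary tensor $h\otimes g$ and then extend by boundedness and linearity.

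First I would compute, using the factorization of the candidate operator together with the inner-product identity $\langle h\otimes g,\, x_j\otimes y_l\rangle = \langle h, x_j\rangle\langle g, y_l\rangle$ on the tensor product:
\begin{align*}
(S_{x,\tau}\otimes S_{y,\omega})(h\otimes g)
&= S_{x,\tau}h\otimes S_{y,\omega}g
= \left(\sum_{j\in\mathbb{J}}\langle h, x_j\rangle \tau_j\right)\otimes\left(\sum_{l\in\mathbb{L}}\langle g, y_l\rangle \omega_l\right)\\
&= \sum_{j\in\mathbb{J}}\sum_{l\in\mathbb{L}}\langle h, x_j\rangle\langle g, y_l\rangle\,(\tau_j\otimes\omega_l)
= \sum_{(j,l)\in\mathbb{J}\bigtimes\mathbb{L}}\langle h\otimes g,\, z_{(j,l)}\rangle\,\rho_{(j,l)}
= S_{z,\rho}(h\otimes g).
\end{align*}
This mirrors the elementary-tensor computation already used for the operator-valued weak tensor product in this section, and it shows that the formal sum defining $S_{z,\rho}$ agrees on elementary tensors with the genuinely bounded operator $S_{x,\tau}\otimes S_{y,\omega}$.

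Next I would deduce that $(\{z_{(j,l)}\},\{\rho_{(j,l)}\})$ is itself a weak frame. Because $S_{x,\tau}$ and $S_{y,\omega}$ are bounded, positive and invertible, their tensor product is bounded, positive (a tensor product of positive operators is positive) and invertible with inverse $S_{x,\tau}^{-1}\otimes S_{y,\omega}^{-1}$; hence $S_{z,\rho}$ is a well-defined bounded positive invertible operator, which is exactly Definition \ref{WEAKFRAMEDEFINITION}. For the Parseval assertion I would simply specialize: if both factor frames are Parseval weak, then $S_{x,\tau}=I_\mathcal{H}$ and $S_{y,\omega}=I_{\mathcal{H}_1}$, so $S_{z,\rho}=I_\mathcal{H}\otimes I_{\mathcal{H}_1}=I_{\mathcal{H}\otimes\mathcal{H}_1}$, giving a Parseval weak frame.

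The main obstacle I anticipate is the convergence bookkeeping rather than the algebra. Because I cannot route through $\theta_z=\theta_x\otimes\theta_y$, I must be sure that the net $\sum_{(j,l)\in\mathbb{S}}\langle w, z_{(j,l)}\rangle\rho_{(j,l)}$, taken over finite $\mathbb{S}\subseteq\mathbb{J}\bigtimes\mathbb{L}$, converges for every $w$ and not merely over product index sets $\mathbb{S}_1\times\mathbb{S}_2$, where the factorization is transparent. The clean way to handle this is to pin down $S_{z,\rho}$ on the dense span of elementary tensors as above, observe that $S_{x,\tau}\otimes S_{y,\omega}$ is an honest bounded operator, and then use continuity to extend the identity to all of $\mathcal{H}\otimes\mathcal{H}_1$; this lets the boundedness of the limit operator do the work and sidesteps estimating the doubly-indexed net directly.
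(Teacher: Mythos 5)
Your proof is correct and follows essentially the same route as the paper's: verify $(S_{x,\tau}\otimes S_{y,\omega})(h\otimes g)=S_{z,\rho}(h\otimes g)$ on elementary tensors via the identity $\langle h\otimes g, x_j\otimes y_l\rangle=\langle h,x_j\rangle\langle g,y_l\rangle$, extend by linearity and boundedness, and read off the Parseval case from $S_{z,\rho}=I_{\mathcal{H}}\otimes I_{\mathcal{H}_1}$. Your added remarks on why the analysis-operator route is unavailable and on the convergence of the doubly-indexed net are sensible but do not change the argument.
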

\begin{proof}
For all elementary tensors $ h\otimes g \in  \mathcal{H}\otimes\mathcal{H}_1,$ 
\begin{align*}
(S_{x, \tau}\otimes S_{y, \omega})(h\otimes g)&=S_{x, \tau}h\otimes S_{y, \omega}g=\left(\sum_{j\in \mathbb{J}}\langle h, x_j\rangle \tau_j\right) \otimes \left(\sum_{k\in \mathbb{L}}\langle g, y_k\rangle \omega_k\right)\\
&=\sum_{j\in \mathbb{J}}\sum_{k\in \mathbb{L}}\langle h, x_j\rangle\langle g, y_k\rangle (\tau_j\otimes \omega_k)=\sum_{(j,k)\in \mathbb{J}\bigtimes \mathbb{L}}\langle h\otimes g, x_j\otimes y_k\rangle(\tau_j\otimes \omega_k)=S_{z,\rho}(h\otimes g).
\end{align*}	
\end{proof}

\section{Extension of homomorphism-valued frames and bases }\label{EXTENSIONOFHOMOMORPHISM-VALUEDFRAMESFORHILBERTC*-MODULES}
Kaplansky \cite{KAPLANSKYORIGINAL} was the first to introduce inner products on modules which take values in commutative C*-algebras. Paschke \cite{PASCHKE1} studied these spaces without assuming commutativity.
 
Books  \cite{MANUILOV1, LANCE1, WEGGEOLSEN1} contain Hilbert C*-modules. 
The definition of frames for Hilbert C*-modules (over unital C*-algebra) for the first time, appears in 2002, in the paper of Frank and Larson \cite{FRANKLARSON1}.
 
 \begin{definition}\cite{FRANKLARSON1}
A set $\{x_j\}_{j\in \mathbb{J}} $ in $ \mathscr{E}$  is said to be a frame for $ \mathscr{E}$ if there are real $ a, b >0$ such that
$$a\langle x, x  \rangle \leq \sum\limits_{j\in \mathbb{J}}\langle x, x_j\rangle\langle x_j, x\rangle \leq  b \langle x, x  \rangle, \quad\forall x  \in  \mathscr{E}$$
where the sum converges in the norm of $ \mathscr{A}.$ We say  $\{x_j\}_{j\in \mathbb{J}} $ is Bessel if  $ \sum_{j\in \mathbb{J}}\langle x, x_j\rangle\langle x_j, x\rangle \leq  b \langle x, x  \rangle, \forall x  \in  \mathscr{E} .$ 
\end{definition}
\begin{definition}\label{HMDEFINITION1}
Let  $\mathscr{E}, \mathscr{E}_0 $  be Hilbert C*-modules over a  unital C*-algebra $ \mathscr{A}$. Define 
$ L_j : \mathscr{E}_0 \ni y \mapsto e_j\otimes y \in   \mathscr{H}_\mathscr{A} \otimes \mathscr{E}_0$,  where $\{e_j\}_{j \in \mathbb{J}} $ is  the standard orthonormal basis for $ \mathscr{H}_\mathscr{A}$,  for each $ j \in \mathbb{J}$. A collection $ \{A_j\}_{j \in \mathbb{J}} $  in $ \operatorname{Hom}^*_\mathscr{A}(\mathscr{E}, \mathscr{E}_0)$ is said to be a homomorphism-valued frame (we write (hvf))  in $ \operatorname{Hom}^*_\mathscr{A}(\mathscr{E}, \mathscr{E}_0)$  with respect to a collection  $ \{\Psi_j\}_{j \in \mathbb{J}}  $ in $ \operatorname{Hom}^*_\mathscr{A}(\mathscr{E}, \mathscr{E}_0)$ if
\begin{enumerate}[\upshape(i)]
\item the series $S_{A, \Psi}\coloneqq \sum_{j\in \mathbb{J}} \Psi_j^*A_j$ (frame homomorphism)  converges in the strict topology on $ \operatorname{End}_\mathscr{A}^*(\mathscr{E})$ to   an  adjointable (hence bounded homomorphism) positive  invertible homomorphism.  
\item both $ \theta_A \coloneqq\sum_{j\in \mathbb{J}} L_jA_j$, $\theta_\Psi \coloneqq \sum_{j\in \mathbb{J}} L_j\Psi_j$ (analysis homomorphisms) converge in the strict topology on $ \operatorname{Hom}_\mathscr{A}^*(\mathscr{E},\mathscr{H}_\mathscr{A} \otimes \mathscr{E}_0 )$ to    adjointable   homomorphisms.
\end{enumerate}
We call, real $ \alpha, \beta  >0 $ satisfying $\alpha I_\mathscr{E} \leq S_{A, \Psi}\ \leq \beta I_\mathscr{E} $ are called as lower and upper frame bounds, respectively.
Let $ a= \sup\{\alpha : \alpha I_\mathscr{E} \leq S_{A, \Psi}\}$, $ b= \inf\{\beta :  S_{A, \Psi}\leq \beta I_\mathscr{E} \}$.   We term $ a$ as the optimal lower frame bound, $ b$ as the optimal upper frame bound.  If $ a=b$, then the frame is called as tight frame or exact frame.  A tight frame is said to be Parseval if $ a=1$. 

For fixed $ \mathbb{J}$, $\mathscr{E}, \mathscr{E}_0, $ and $ \{\Psi_j \}_{j \in \mathbb{J}}$,  the set of all homomorphism-valued frames in  $ \operatorname{Hom}^*_\mathscr{A}(\mathscr{E}, \mathscr{E}_0)$ with respect to collection  $ \{\Psi_j \}_{j \in \mathbb{J}}$ is denoted by $ \mathscr{F}_\Psi.$
 
If we do not  demand the invertibility condition in \text{\upshape(i)}, then we say that $  \{A_j \}_{j \in \mathbb{J}}$  is Bessel w.r.t. $ \{\Psi_j \}_{j \in \mathbb{J}}$. 
 
Like Hilbert space situation, we write $ (\{A_j \}_{j \in \mathbb{J}},  \{\Psi_j \}_{j \in \mathbb{J}})$ is (hvf)/Bessel.
\end{definition}

\begin{proposition}
If  $(\{A_j\}_{j\in \mathbb{J}}, \{\Psi_j\}_{j\in \mathbb{J}}) $ is  Bessel   in $ \operatorname{Hom}^*_\mathscr{A}(\mathscr{E}, \mathscr{E}_0)$, then there exists a $ B \in  \operatorname{Hom}^*_\mathscr{A}(\mathscr{E}, \mathscr{E}_0)$ such that $(\{A_j\}_{j\in \mathbb{J}}\cup\{B\}, \{\Psi_j\}_{j\in \mathbb{J}}\cup\{B\}) $ is a tight (hvf). In particular, if  $(\{A_j\}_{j\in \mathbb{J}}, \{\Psi_j\}_{j\in \mathbb{J}}) $ is a  (hvf)   in $ \operatorname{Hom}^*_\mathscr{A}(\mathscr{E}, \mathscr{E}_0)$, then there exists a $ B \in  \operatorname{Hom}^*_\mathscr{A}(\mathscr{E}, \mathscr{E}_0)$ such that $(\{A_j\}_{j\in \mathbb{J}}\cup\{B\}, \{\Psi_j\}_{j\in \mathbb{J}}\cup\{B\}) $ is a tight (hvf).
\end{proposition}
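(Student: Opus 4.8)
The plan is to transplant the Hilbert-space argument used for the proposition following Definition~\ref{1} into the C*-module setting, paying attention to the two places where the Hilbert-space reasoning relies on facts that must be re-established for $\operatorname{Hom}^*_\mathscr{A}$ and $\operatorname{End}^*_\mathscr{A}$. Since $(\{A_j\}_{j\in\mathbb{J}},\{\Psi_j\}_{j\in\mathbb{J}})$ is Bessel, the frame homomorphism $S_{A,\Psi}=\sum_{j\in\mathbb{J}}\Psi_j^*A_j$ is a positive adjointable homomorphism in $\operatorname{End}^*_\mathscr{A}(\mathscr{E})$ (condition (i) of Definition~\ref{HMDEFINITION1} without invertibility), and the analysis homomorphisms $\theta_A,\theta_\Psi$ converge strictly to adjointable homomorphisms (condition (ii)). First I would fix a real $\lambda>\|S_{A,\Psi}\|$ and set $B\coloneqq(\lambda I_\mathscr{E}-S_{A,\Psi})^{1/2}$, exactly as in the Hilbert-space case; the single element $B$ is then adjoined to both collections.

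The key verification is that $B$ is a well-defined positive adjointable homomorphism and that it produces a tight system. Because $\operatorname{End}^*_\mathscr{A}(\mathscr{E})$ is a unital C*-algebra and $S_{A,\Psi}$ is a positive element with $\|S_{A,\Psi}\|<\lambda$, the standard C*-fact $\|S_{A,\Psi}\|\le\lambda \Rightarrow S_{A,\Psi}\le\lambda I_\mathscr{E}$ gives $\lambda I_\mathscr{E}-S_{A,\Psi}\ge 0$. The continuous functional calculus in $\operatorname{End}^*_\mathscr{A}(\mathscr{E})$ then yields a unique positive square root $B=(\lambda I_\mathscr{E}-S_{A,\Psi})^{1/2}$, which is again adjointable. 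Since $B$ is positive, $B^*=B$ and hence $B^*B=\lambda I_\mathscr{E}-S_{A,\Psi}$, so the frame homomorphism of the enlarged system computes to $S_{A\cup B,\Psi\cup B}=\sum_{j\in\mathbb{J}}\Psi_j^*A_j+B^*B=S_{A,\Psi}+(\lambda I_\mathscr{E}-S_{A,\Psi})=\lambda I_\mathscr{E}$, which is positive invertible, forcing the enlarged system to be tight with bound $\lambda$.

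It then remains to check condition (ii) of Definition~\ref{HMDEFINITION1} for the enlarged collections. Adjoining $B$ changes each analysis homomorphism only by the single term $L_{j_0}B$ attached to the new index, and a single adjointable summand preserves strict convergence to an adjointable homomorphism; thus $\theta_{A\cup B}$ and $\theta_{\Psi\cup B}$ exist and are adjointable, completing the Bessel case. The ``in particular'' clause is then immediate, since every (hvf) is in particular Bessel, so the same $B$ works verbatim.

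The step I expect to be the genuine obstacle — the point where the Hilbert-space proof needs ``additional support'' — is the adjointability and codomain matching of $B$. On a Hilbert C*-module a bounded homomorphism need not be adjointable, so the existence of an \emph{adjointable} square root cannot be taken for granted and must be routed through the C*-algebra structure of $\operatorname{End}^*_\mathscr{A}(\mathscr{E})$ and its functional calculus rather than through spectral theory on a Hilbert space. Moreover the construction naturally produces $B\in\operatorname{End}^*_\mathscr{A}(\mathscr{E})=\operatorname{Hom}^*_\mathscr{A}(\mathscr{E},\mathscr{E})$, whereas the statement asks for $B\in\operatorname{Hom}^*_\mathscr{A}(\mathscr{E},\mathscr{E}_0)$; reconciling the codomain (either by taking $\mathscr{E}_0=\mathscr{E}$ or by composing with an isometric adjointable embedding $\mathscr{E}\hookrightarrow\mathscr{E}_0$ and checking that $B^*B$ is unchanged) is precisely the extra bookkeeping that distinguishes this proof from its Hilbert-space template.
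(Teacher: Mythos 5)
Your proof is correct and follows essentially the same route as the paper, which leaves this proposition unproved precisely because it is the verbatim transplant of the Hilbert-space argument ($\lambda>\|S_{A,\Psi}\|$, $B\coloneqq(\lambda I_{\mathscr{E}}-S_{A,\Psi})^{1/2}$, $\sum_{j}\Psi_j^*A_j+B^*B=\lambda I_{\mathscr{E}}$), with the square root supplied by the continuous functional calculus in the unital C*-algebra $\operatorname{End}^*_\mathscr{A}(\mathscr{E})$ so that adjointability is automatic. The codomain mismatch you flag ($B$ lands in $\operatorname{End}^*_\mathscr{A}(\mathscr{E})$ rather than $\operatorname{Hom}^*_\mathscr{A}(\mathscr{E},\mathscr{E}_0)$) is in fact already present in the paper's own Hilbert-space proof and is silently ignored there, so your extra bookkeeping is a point of care beyond, not a departure from, the paper's argument.
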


\begin{definition}
Let  $ \{A_j\}_{j \in \mathbb{J}}$ in $ \operatorname{Hom}^*_\mathscr{A}(\mathscr{E}, \mathscr{E}_0).$ We say 
\begin{enumerate}[\upshape(i)]
\item $ \{A_j\}_{j \in \mathbb{J}}$ is   an orthogonal set   in   $ \operatorname{Hom}^*_\mathscr{A}(\mathscr{E}, \mathscr{E}_0)$ if 
$$\langle A_j^*y, A_k^*z\rangle=0 , ~\forall y, z \in \mathscr{E}_0, ~\forall j, k \in \mathbb{J}, j\neq k.$$
\item $ \{A_j\}_{j \in \mathbb{J}}$ is   an orthonormal set  in   $ \operatorname{Hom}^*_\mathscr{A}(\mathscr{E}, \mathscr{E}_0)$ if 
$$\langle A_j^*y, A_k^*z\rangle=\delta_{j,k}\langle y, z\rangle  , ~\forall y, z \in \mathscr{E}_0, ~\forall j, k \in \mathbb{J}~ \text{and } ~ \sum\limits_{j \in \mathbb{J}}\langle A_jx,A_jx\rangle\leq\langle x, x\rangle, ~ \forall x \in \mathscr{E}.$$  
\item $ \{A_j\}_{j \in \mathbb{J}}$ is an orthonormal basis in  $ \operatorname{Hom}^*_\mathscr{A}(\mathscr{E}, \mathscr{E}_0)$ if 
$$\langle A_j^*y, A_k^*z\rangle=\delta_{j,k}\langle y, z\rangle  , ~\forall y, z \in \mathscr{E}_0, ~\forall j, k \in \mathbb{J} ~ \text{and } ~ \sum\limits_{j \in \mathbb{J}}\langle A_jx,A_jx\rangle=\langle x,x\rangle, ~ \forall x \in \mathscr{E}.$$
 \end{enumerate}
 \end{definition}
 \begin{theorem}
 \begin{enumerate}[\upshape(i)]
 \item Every orthonormal set $Y$ in  $ \operatorname{Hom}^*_\mathscr{A}(\mathscr{E}, \mathscr{E}_0)$ is contained in a maximal orthonormal set.
 \item If $ \operatorname{Hom}^*_\mathscr{A}(\mathscr{E}, \mathscr{E}_0)$ has a homomorphism    $T$ such that $TT^*$ is bounded invertible, then $ \operatorname{Hom}^*_\mathscr{A}(\mathscr{E}, \mathscr{E}_0)$ has a maximal orthonormal set.
 \end{enumerate}
 \end{theorem}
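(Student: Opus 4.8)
The plan is to transcribe the template of the corresponding Hilbert space result in Section \ref{MK}, adapting each step to the order structure of the C*-algebra $\mathscr{A}$. For part (i) I would invoke Zorn's lemma on the inclusion-ordered family of orthonormal sets containing $Y$; for part (ii) I would apply (i) to a single coisometry manufactured from $T$. The only genuinely new work lies in checking that the Bessel-type inequality defining an orthonormal set survives passage to the union of a chain, since bounded increasing families of positive elements behave more delicately in a C*-algebra than in $\mathbb{R}$.

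For (i), first I would form the poset $(\mathscr{P},\preceq)$, where $\mathscr{P}$ consists of all orthonormal sets $Z$ in $\operatorname{Hom}^*_\mathscr{A}(\mathscr{E},\mathscr{E}_0)$ with $Z\supseteq Y$, ordered by inclusion; this is nonempty since $Y\in\mathscr{P}$. Given a chain $\mathcal{C}\subseteq\mathscr{P}$, I would propose $Z\coloneqq\bigcup_{W\in\mathcal{C}}W$ as an upper bound and verify $Z\in\mathscr{P}$. The off-diagonal relation $\langle A_j^*y,A_k^*z\rangle=\delta_{j,k}\langle y,z\rangle$ is immediate: any two homomorphisms in $Z$ already lie together in a single member of the chain, where the relation holds. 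The remaining requirement is the inequality $\sum_{j}\langle A_jx,A_jx\rangle\leq\langle x,x\rangle$ for all $x\in\mathscr{E}$, and this is where the main obstacle sits.

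I would settle the inequality in two moves. For any finite subset $\mathbb{S}\subseteq Z$ the chain property places $\mathbb{S}$ inside one member of $\mathcal{C}$, so $\sum_{A_j\in\mathbb{S}}\langle A_jx,A_jx\rangle\leq\langle x,x\rangle$ in the order of $\mathscr{A}$; concretely, pairwise orthonormality (equivalently $A_jA_k^*=\delta_{j,k}I_{\mathscr{E}_0}$) yields $0\leq\langle x-\sum_{j\in\mathbb{S}}A_j^*A_jx,\,x-\sum_{j\in\mathbb{S}}A_j^*A_jx\rangle=\langle x,x\rangle-\sum_{j\in\mathbb{S}}\langle A_jx,A_jx\rangle$, exactly as in Lemma \ref{FIRSTIMPLIESSECONDLEMMA}. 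The delicate point — the ``additional support'' that the Hilbert space proof does not need — is that the increasing net of finite partial sums, although bounded above by $\langle x,x\rangle$, need not converge in the norm of a general C*-algebra; I would address this by invoking the C*-module analogue of Lemma \ref{FIRSTIMPLIESSECONDLEMMA} (the convergence being guaranteed for orthonormal families, e.g.\ by realizing these partial sums inside the enveloping von Neumann algebra where bounded increasing nets have suprema, or directly from the orthonormal-set axioms), so that $\sum_{j}\langle A_jx,A_jx\rangle$ exists and inherits the bound $\langle x,x\rangle$. This makes $Z$ an orthonormal set and an upper bound for $\mathcal{C}$, and Zorn's lemma then produces a maximal element of $\mathscr{P}$, i.e.\ a maximal orthonormal set containing $Y$.

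For (ii), I would set $U\coloneqq(TT^*)^{-1/2}T$, forming $(TT^*)^{-1/2}$ by continuous functional calculus in the C*-algebra $\operatorname{End}^*_\mathscr{A}(\mathscr{E}_0)$ applied to the positive invertible element $TT^*$; then $U$ is adjointable and $UU^*=(TT^*)^{-1/2}(TT^*)(TT^*)^{-1/2}=I_{\mathscr{E}_0}$. Hence $\{U\}$ is an orthonormal set: the off-diagonal condition is vacuous, the diagonal condition reads $\langle U^*y,U^*z\rangle=\langle y,UU^*z\rangle=\langle y,z\rangle$, and since $UU^*=I_{\mathscr{E}_0}$ the element $U^*U$ is a projection, so $\langle Ux,Ux\rangle=\langle U^*Ux,x\rangle\leq\langle x,x\rangle$. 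Applying part (i) to $Y=\{U\}$ furnishes a maximal orthonormal set in $\operatorname{Hom}^*_\mathscr{A}(\mathscr{E},\mathscr{E}_0)$, which completes the argument.
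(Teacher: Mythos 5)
Your proof follows the paper's argument exactly: Zorn's lemma applied to the inclusion-ordered poset of orthonormal sets containing $Y$ for part (i), and applying part (i) to the singleton $\{(TT^*)^{-1/2}T\}$ for part (ii). The extra care you devote to the chain-union step (convergence of the bounded increasing net of partial sums of $\langle A_jx,A_jx\rangle$ in $\mathscr{A}$) is a refinement the paper leaves implicit, but it does not change the route.
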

 \begin{lemma}\label{FIRSTIMPLIESSECONDLEMMAMODULE}
 If $ \{A_j\}_{j \in \mathbb{J}}$ in $ \operatorname{Hom}^*_\mathscr{A}(\mathscr{E}, \mathscr{E}_0)$  satisfies $\langle A_j^*y, A_k^*z\rangle=\delta_{j,k}\langle y, z\rangle, \forall y, z \in \mathscr{E}_0, \forall j, k \in \mathbb{J}$, and $\sum_{j \in \mathbb{J}}\langle  A_jx, A_jx\rangle $ converges for all $ x \in \mathscr{E}$, then $\sum_{j \in \mathbb{J}}\langle A_jx, A_jx \rangle\leq \langle x,x \rangle , \forall x \in \mathscr{E} $.
 \end{lemma}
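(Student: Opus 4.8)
The plan is to mimic the Hilbert space argument of Lemma \ref{FIRSTIMPLIESSECONDLEMMA}, working now with the $\mathscr{A}$-valued inner product and tracking where the passage to the module setting requires extra care. First I would record that, exactly as observed after Definition \ref{ONBDEFINITIONOVHS}, the hypothesis $\langle A_j^*y, A_k^*z\rangle=\delta_{j,k}\langle y, z\rangle$ for all $y,z \in \mathscr{E}_0$ is equivalent to the homomorphism identity $A_jA_k^*=\delta_{j,k}I_{\mathscr{E}_0}$ for all $j,k \in \mathbb{J}$; indeed $\langle A_j^*y, A_k^*z\rangle = \langle y, A_jA_k^*z\rangle$, and this equals $\langle y,\delta_{j,k}z\rangle$ for all $y,z$ precisely when $A_jA_k^*=\delta_{j,k}I_{\mathscr{E}_0}$.

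Next, fix $x \in \mathscr{E}$ and a finite subset $\mathbb{S}\subseteq\mathbb{J}$, and exploit positivity of the $\mathscr{A}$-valued inner product: the element $z_{\mathbb{S}}\coloneqq x-\sum_{j\in\mathbb{S}}A_j^*A_jx$ satisfies $\langle z_{\mathbb{S}}, z_{\mathbb{S}}\rangle\geq 0$ in $\mathscr{A}$. Expanding this inner product produces four terms; the two cross terms each equal $\sum_{j\in\mathbb{S}}\langle A_jx, A_jx\rangle$ (using $\langle x, A_k^*A_kx\rangle=\langle A_kx, A_kx\rangle$), while the double-sum term collapses via $A_jA_k^*=\delta_{j,k}I_{\mathscr{E}_0}$: since $A_jA_k^*A_kx=\delta_{j,k}A_kx$, one gets $\sum_{j,k\in\mathbb{S}}\langle A_j^*A_jx, A_k^*A_kx\rangle=\sum_{j\in\mathbb{S}}\langle A_jx, A_jx\rangle$. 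Substituting, the expansion telescopes to
\begin{align*}
0\leq \langle z_{\mathbb{S}}, z_{\mathbb{S}}\rangle = \langle x, x\rangle - \sum_{j\in\mathbb{S}}\langle A_jx, A_jx\rangle,
\end{align*}
so that $\sum_{j\in\mathbb{S}}\langle A_jx, A_jx\rangle\leq\langle x, x\rangle$ in $\mathscr{A}$ for every finite $\mathbb{S}$. This reproduces the telescoping computation of Lemma \ref{FIRSTIMPLIESSECONDLEMMA}, now inside the C*-algebra.

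The step I expect to be the genuine obstacle --- and the reason the convergence of $\sum_{j\in\mathbb{J}}\langle A_jx, A_jx\rangle$ must be assumed rather than deduced --- is the passage from finite partial sums to the full index set. In the Hilbert space case the partial sums $\sum_{j\in\mathbb{S}}\|A_jh\|^2$ are nonnegative reals bounded above, hence automatically convergent, and one simply takes the supremum. In a general C*-algebra an order-bounded increasing net of positive elements need not converge, so I would invoke the standing hypothesis that $\sum_{j\in\mathbb{J}}\langle A_jx, A_jx\rangle$ converges in the norm of $\mathscr{A}$. Granting this, the desired inequality follows because the positive cone of $\mathscr{A}$ is norm-closed: each partial sum satisfies $\langle x, x\rangle-\sum_{j\in\mathbb{S}}\langle A_jx, A_jx\rangle\geq 0$, and letting $\mathbb{S}$ run over the directed set of finite subsets, the norm limit $\langle x, x\rangle-\sum_{j\in\mathbb{J}}\langle A_jx, A_jx\rangle$ remains in the cone, giving $\sum_{j\in\mathbb{J}}\langle A_jx, A_jx\rangle\leq\langle x, x\rangle$ for all $x\in\mathscr{E}$.
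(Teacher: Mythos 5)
Your proof is correct, and the core of it---expanding $\langle x-\sum_{j\in\mathbb{S}}A_j^*A_jx,\; x-\sum_{k\in\mathbb{S}}A_k^*A_kx\rangle\geq 0$ and collapsing the double sum via $A_jA_k^*=\delta_{j,k}I_{\mathscr{E}_0}$---is exactly the computation in the paper's proof. The only divergence is in how the limit is taken: the paper first uses the convergence hypothesis to show that $\sum_{j\in\mathbb{J}}A_j^*A_jx$ itself converges in $\mathscr{E}$ (via $\|\sum_{j\in\mathbb{S}}A_j^*A_jx\|^2=\|\sum_{j\in\mathbb{S}}\langle A_jx,A_jx\rangle\|$, so the partial sums form a Cauchy net) and then performs the expansion directly with the infinite sums, whereas you keep every expansion finite and close the argument with norm-closedness of the positive cone of $\mathscr{A}$. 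Both are valid; your variant has the mild advantage of never needing convergence of $\sum_{j\in\mathbb{J}}A_j^*A_jx$ in $\mathscr{E}$, and your remark that an order-bounded increasing net of positive elements in a C*-algebra need not converge in norm correctly identifies why the convergence of $\sum_{j\in\mathbb{J}}\langle A_jx,A_jx\rangle$ must be hypothesized here, unlike in the Hilbert space version of the lemma.
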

 \begin{proof}
  For  $ x \in \mathscr{E}$ and  each finite subset $ \mathbb{S} \subseteq \mathbb{J},$
  \begin{align*}
  \left\| \sum_{j\in \mathbb{S}} A_j^*A_jx\right\|^2
  &= \left\|\left\langle  \sum_{j\in \mathbb{S}} A_j^*A_jx, \sum_{k\in \mathbb{S}} A_k^*A_kx\right\rangle\right\|=\left\|\sum_{j\in \mathbb{S}}  \langle A_jx, A_jx\rangle \right \|, 
  \end{align*}

  which is convergent. Therefore $ \sum_{j\in \mathbb{J}} A_j^*A_jx$ exists. Then 
  \begin{align*}
  0\leq \left\langle x-\sum_{j\in \mathbb{J}} A_j^*A_jx ,x-\sum_{k\in \mathbb{J}} A_k^*A_kx \right\rangle  
  &=\langle x, x \rangle-2\sum_{j\in \mathbb{J}}\langle A_jx, A_jx \rangle +\sum_{j\in \mathbb{J}}\langle A_jx, A_jx \rangle\\
  &=\langle x, x \rangle-\sum_{j\in \mathbb{J}}\langle A_jx, A_jx \rangle,
  \end{align*}
  $ \Rightarrow\sum_{j\in \mathbb{J}}\langle A_jx, A_jx \rangle  \leq \langle x, x \rangle,\forall x \in \mathscr{E} .$ 
 \end{proof}
 \begin{proposition}
 Let $ \{A_j\}_{j \in \mathbb{J}}$  be   orthogonal   in $ \operatorname{Hom}^*_\mathscr{A}(\mathscr{E}, \mathscr{E}_0)$ such that $\sum_{j \in \mathbb{S}}\langle  A_jx, A_jx\rangle $ converges for all $ x \in \mathscr{E}$. If  $ A_jA_j^*$s are bounded invertible, then $ \{U_j \coloneqq (A_jA_j^*)^{-1/2}A_j\}_{j \in \mathbb{J}}$ is  orthonormal   in $ \operatorname{Hom}^*_\mathscr{A}(\mathscr{E}, \mathscr{E}_0)$ and $\overline{\operatorname{span}}_{\operatorname{End}^*_\mathscr{A}(\mathscr{E}_0)}\{U_j\}_{j \in \mathbb{J}}=\overline{\operatorname{span}}_{ \operatorname{End}^*_\mathscr{A}(\mathscr{E}_0)}\{A_j\}_{j \in \mathbb{J}}$.
 \end{proposition}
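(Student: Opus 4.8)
The plan is to follow the Hilbert space argument of Proposition \ref{ORTHOGONALTOORTHONORMALOV} step by step, supplying the extra C*-algebraic justifications that the module setting demands. First I would record that for each $j$ the homomorphism $A_jA_j^*$ lies in the unital C*-algebra $\operatorname{End}^*_\mathscr{A}(\mathscr{E}_0)$, is positive, and is bounded invertible by hypothesis; hence its spectrum is bounded away from $0$ and the continuous functional calculus produces a positive, self-adjoint element $(A_jA_j^*)^{-1/2} \in \operatorname{End}^*_\mathscr{A}(\mathscr{E}_0)$. Consequently $U_j = (A_jA_j^*)^{-1/2}A_j$ is adjointable with $U_j^* = A_j^*(A_jA_j^*)^{-1/2}$, so each $U_j$ genuinely belongs to $\operatorname{Hom}^*_\mathscr{A}(\mathscr{E}, \mathscr{E}_0)$.

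Next I would carry out the algebraic identities exactly as in the Hilbert space case. Since $\{A_j\}_{j\in\mathbb{J}}$ is orthogonal, $\langle A_j^*y, A_k^*z\rangle = \langle y, A_jA_k^*z\rangle = 0$ for all $y,z$ forces $A_jA_k^* = 0$ whenever $j\neq k$; therefore $U_jU_k^* = (A_jA_j^*)^{-1/2}(A_jA_k^*)(A_kA_k^*)^{-1/2} = 0$ for $j\neq k$ and $U_jU_j^* = (A_jA_j^*)^{-1/2}(A_jA_j^*)(A_jA_j^*)^{-1/2} = I_{\mathscr{E}_0}$, i.e. $\langle U_j^*y, U_k^*z\rangle = \delta_{j,k}\langle y,z\rangle$ for all $y,z \in \mathscr{E}_0$. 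The span equality is then immediate and identical to the Hilbert case: since $(A_jA_j^*)^{\pm 1/2} \in \operatorname{End}^*_\mathscr{A}(\mathscr{E}_0)$, the relations $U_j = (A_jA_j^*)^{-1/2}A_j$ and $A_j = (A_jA_j^*)^{1/2}U_j$ exhibit each family inside the $\operatorname{End}^*_\mathscr{A}(\mathscr{E}_0)$-span of the other, so the two closed spans coincide.

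The remaining task, and the step I expect to be the main obstacle, is the Bessel-type inequality $\sum_{j\in\mathbb{J}}\langle U_jx, U_jx\rangle \leq \langle x,x\rangle$, which I intend to obtain from Lemma \ref{FIRSTIMPLIESSECONDLEMMAMODULE}. That lemma needs, besides the inner-product relation just verified, the convergence of $\sum_{j\in\mathbb{J}}\langle U_jx, U_jx\rangle$ for every $x \in \mathscr{E}$, and this is exactly the point where the module case requires support beyond the Hilbert space proof: the given hypothesis guarantees convergence of $\sum_{j}\langle A_jx, A_jx\rangle$, whereas $\langle U_jx, U_jx\rangle = \langle A_jx, (A_jA_j^*)^{-1}A_jx\rangle$, and the factors $(A_jA_j^*)^{-1}$ need not be uniformly bounded in $j$, so order-boundedness does not transfer for free. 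My approach here is to first observe that the homomorphisms $P_j := U_j^*U_j$ are mutually orthogonal projections, since $P_j^2 = U_j^*(U_jU_j^*)U_j = P_j$ and $P_jP_k = U_j^*(U_jU_k^*)U_k = 0$ for $j\neq k$; hence every finite partial sum $\sum_{j\in\mathbb{S}}P_j$ is again a projection, giving $\sum_{j\in\mathbb{S}}\langle U_jx, U_jx\rangle = \langle x, (\sum_{j\in\mathbb{S}}P_j)x\rangle \leq \langle x,x\rangle$ and an increasing, order-bounded net of partial sums. Promoting this order bound to genuine norm convergence of $\sum_j\langle U_jx, U_jx\rangle$ is the delicate part; I would secure it using the identity $\|\sum_{j\in\mathbb{S}}U_j^*U_jx\|^2 = \|\sum_{j\in\mathbb{S}}\langle U_jx, U_jx\rangle\|$ (valid because $U_jU_k^* = \delta_{j,k}I_{\mathscr{E}_0}$) together with the comparison $\langle U_jx, U_jx\rangle \leq \|(A_jA_j^*)^{-1}\|\,\langle A_jx, A_jx\rangle$, the convergence hypothesis furnishing the right-hand side once the factors $\|(A_jA_j^*)^{-1}\|$ are controlled, which is the additional input the module setting forces. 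Once convergence is in hand, Lemma \ref{FIRSTIMPLIESSECONDLEMMAMODULE} delivers the inequality, completing the verification that $\{U_j\}_{j\in\mathbb{J}}$ is orthonormal.
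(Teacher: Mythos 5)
Your algebraic core matches the paper's intended argument exactly: the paper's proof is the one-line remark that the computation is ``like the Hilbert space situation'' (Proposition \ref{ORTHOGONALTOORTHONORMALOV}) with Lemma \ref{FIRSTIMPLIESSECONDLEMMAMODULE} supplying the inequality, and your verification of $U_jU_k^*=\delta_{j,k}I_{\mathscr{E}_0}$, of the adjointability of $U_j$ via functional calculus, and of the equality of the closed $\operatorname{End}^*_\mathscr{A}(\mathscr{E}_0)$-spans is precisely that computation. You are also right to single out the convergence of $\sum_{j\in\mathbb{J}}\langle U_jx,U_jx\rangle$ as the one genuinely module-theoretic point: Lemma \ref{FIRSTIMPLIESSECONDLEMMAMODULE}, unlike its Hilbert space counterpart, takes convergence as a hypothesis rather than deducing it from boundedness of the partial sums.

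The problem is that you do not actually close that point. Your comparison $\langle U_jx,U_jx\rangle\leq\|(A_jA_j^*)^{-1}\|\,\langle A_jx,A_jx\rangle$ yields norm convergence of $\sum_{j}\langle U_jx,U_jx\rangle$ from the stated hypothesis only if $\sup_{j\in\mathbb{J}}\|(A_jA_j^*)^{-1}\|<\infty$, and that uniform bound is nowhere among the hypotheses -- each $A_jA_j^*$ is invertible, but nothing prevents the inverses from blowing up in norm as $j$ varies. Your own phrasing (``once the factors \dots are controlled'') concedes this, so the argument is conditional on an unstated extra assumption. The observation that the $P_j=U_j^*U_j$ are mutually orthogonal projections does give $\sum_{j\in\mathbb{S}}\langle U_jx,U_jx\rangle\leq\langle x,x\rangle$ for every finite $\mathbb{S}$, but an increasing, order-bounded net of positive elements of a C*-algebra need not converge in norm, so order-boundedness cannot substitute for the missing bound; likewise your identity $\|\sum_{j\in\mathbb{S}}U_j^*U_jx\|^2=\|\sum_{j\in\mathbb{S}}\langle U_jx,U_jx\rangle\|$ only translates the unresolved convergence question into an equivalent one. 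To be fair, the paper's own one-line proof silently passes over the same issue; but as a self-contained argument your proposal leaves the convergence hypothesis of Lemma \ref{FIRSTIMPLIESSECONDLEMMAMODULE} unverified, and that is a genuine gap.
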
 
 \begin{proof}
 Like Hilbert space situation, but one has to use Lemma \ref{FIRSTIMPLIESSECONDLEMMAMODULE} to get the inequality in the definition of orthonormality.
\end{proof}
 \begin{theorem}
 \begin{enumerate}[\upshape(i)]
 \item If  $ \{A_n\}_{n=1}^m $ is  orthogonal  in $ \operatorname{Hom}^*_\mathscr{A}(\mathscr{E}, \mathscr{E}_0)$, then 
 $$\left\langle \sum_{n=1}^{m}A_n^*y_n, \sum_{k=1}^{m}A_k^*y_k \right\rangle  =\sum_{n=1}^{m}\langle A_n^*y_n,A_n^*y_n \rangle, ~\forall y_1,...,y_n \in \mathscr{E}_0 .$$
In particular,
$\left\| \sum_{n=1}^{m}A_n^*y_n \right\|^2  =\left\|\sum_{n=1}^{m}\langle A_n^*y_n,A_n^*y_n \rangle\right\|, \forall y_1,...,y_n \in \mathscr{E}_0$
 and  $ \|A_1+\cdots+A_m\|^2\leq\|A_1\|^2+\cdots+\|A_m\|^2.$
\item If  $ \{A_n\}_{n=1}^m $ is  orthonormal  in  $ \operatorname{Hom}^*_\mathscr{A}(\mathscr{E}, \mathscr{E}_0)$, then $\left\langle \sum_{n=1}^{m}A_n^*y_n, \sum_{k=1}^{m}A_k^*y_k \right\rangle  =\sum_{n=1}^{m}\langle y_n,y_n \rangle, \forall y_1,...,y_n \in \mathscr{E}_0$. In particular,  $\| \sum_{n=1}^{m}A_n^*y_n \|^2 =\|\sum_{n=1}^{m}\langle y_n, y_n \rangle\| , \forall y_1,...,y_n \in \mathscr{E}_0$ and  $ \|A_1+\cdots+A_m\|^2= m.$
\item If $ \{A_j\}_{j \in \mathbb{J}} $ is  orthogonal   in  $ \operatorname{Hom}^*_\mathscr{A}(\mathscr{E}, \mathscr{E}_0)$ such that  $A_jA_j^* $ is invertible, then $ \{A_j\}_{j \in \mathbb{J}} $ is linearly independent over $ \mathscr{A}$ as well as over $ \operatorname{End}^*_\mathscr{A}(\mathscr{E}_0)$. In particular, if $ \{A_j\}_{j \in \mathbb{J}} $ is  orthonormal   in $ \operatorname{Hom}^*_\mathscr{A}(\mathscr{E}, \mathscr{E}_0)$, then it is linearly independent over $ \mathscr{A}$ as well as over $ \operatorname{End}^*_\mathscr{A}(\mathscr{E}_0)$.
 \end{enumerate}
  \end{theorem}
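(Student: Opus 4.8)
The plan is to transcribe the proof of the corresponding Hilbert space theorem of Section~\ref{MK}, replacing scalar inner products by the $\mathscr{A}$-valued ones, and to isolate the two places where the module structure forces a step genuinely different from the Hilbert space argument.

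For the identity in (i) I would expand $\langle\sum_{n=1}^m A_n^*y_n,\sum_{k=1}^m A_k^*y_k\rangle$ by sesquilinearity of the $\mathscr{A}$-valued inner product into $\sum_{n,k}\langle A_n^*y_n,A_k^*y_k\rangle$, and then invoke the definition of an orthogonal set to annihilate every off-diagonal term ($n\neq k$), leaving $\sum_{n=1}^m\langle A_n^*y_n,A_n^*y_n\rangle$. For (ii) the same expansion applies, but now orthonormality replaces each diagonal term $\langle A_n^*y_n,A_n^*y_n\rangle$ by $\langle y_n,y_n\rangle$, giving $\sum_{n=1}^m\langle y_n,y_n\rangle$. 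The ``in particular'' norm equalities then follow at once from the identity $\|w\|^2=\|\langle w,w\rangle\|$ valid in every Hilbert C*-module.

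The operator-norm statements are where care is needed. For the estimate in (i) I would pass to adjoints, using $\|A_1+\cdots+A_m\|=\|A_1^*+\cdots+A_m^*\|$, and compute for a unit vector $y$ that $\|\sum_n A_n^*y\|^2=\|\sum_n\langle A_n^*y,A_n^*y\rangle\|$; here the genuine departure from the Hilbert space proof appears, since in place of exact additivity of squared norms I must use the triangle inequality on $\mathscr{A}$ together with positivity of each $\langle A_n^*y,A_n^*y\rangle$ to bound this by $\sum_n\|A_n^*y\|^2\le\sum_n\|A_n\|^2$, and then take the supremum. By contrast, the equality $\|A_1+\cdots+A_m\|^2=m$ in (ii) is clean: putting $y_1=\cdots=y_m=y$ gives $\langle\sum_n A_n^*y,\sum_n A_n^*y\rangle=m\langle y,y\rangle$, hence $\|\sum_n A_n^*y\|^2=m\|y\|^2$ exactly, and the supremum over unit $y$ yields $m$.

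For (iii) I would first record that orthogonality is equivalent to $A_jA_k^*=0$ for $j\neq k$: from $\langle A_j^*y,A_k^*z\rangle=\langle A_kA_j^*y,z\rangle=0$ for all $y,z\in\mathscr{E}_0$, non-degeneracy of the module inner product (take $z=A_kA_j^*y$) forces $A_kA_j^*=0$. Then, given a finite relation $\sum_{j\in\mathbb{S}}c_jA_j=0$ with $c_j\in\mathscr{A}$ (resp.\ $\sum_{j\in\mathbb{S}}T_jA_j=0$ with $T_j\in\operatorname{End}^*_\mathscr{A}(\mathscr{E}_0)$), I would compose on the right with $A_k^*$ for fixed $k\in\mathbb{S}$; orthogonality kills all but the $k$-th summand, leaving $c_kA_kA_k^*=0$ (resp.\ $T_kA_kA_k^*=0$), and invertibility of $A_kA_k^*$ gives $c_k=0$ (resp.\ $T_k=0$), with the orthonormal case being immediate since $A_kA_k^*=I_{\mathscr{E}_0}$. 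I expect the main obstacle to lie in these last two points: over $\mathbb{K}$ the squared-norm additivity in (i) is an equality but over $\mathscr{A}$ it degrades to the triangle inequality, and in (iii) one must track that the coefficients are algebra elements acting on $\mathscr{E}_0$, so that the passage from $c_kA_kA_k^*=0$ to $c_k=0$ rests on invertibility of $A_kA_k^*$ in $\operatorname{End}^*_\mathscr{A}(\mathscr{E}_0)$ rather than on cancelling scalars.
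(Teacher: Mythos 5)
Your proof is correct and is essentially the paper's own argument: the paper explicitly states that results in this section are proved by transcribing the Hilbert-space proofs from Section~\ref{MK}, and your expansion of the $\mathscr{A}$-valued inner product, the adjoint/supremum argument for the operator-norm bound, and the right-composition with $A_k^*$ in (iii) reproduce that proof exactly. Your two flagged module-specific adjustments (triangle inequality on $\mathscr{A}$ in place of scalar additivity, and the non-degeneracy step yielding $A_kA_j^*=0$ from the inner-product condition) are precisely the "additional support" the paper alludes to, so nothing further is needed.
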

 \begin{theorem}\label{RIESZFISCHERHOMOMORPHISMVERSION}
 Let $ \{A_j\}_{j \in \mathbb{J}} $ be  orthonormal  in $ \operatorname{Hom}^*_\mathscr{A}(\mathscr{E}, \mathscr{E}_0)$,  $ \{U_j\}_{j \in \mathbb{J}} $ be  in $\operatorname{End}^*_\mathscr{A}(\mathscr{E}_0)$ and $ y \in \mathscr{E}_0$. Then
 $$\sum_{j\in\mathbb{J}}A_j^*U_jy ~\text{converges in} ~  \mathscr{E} ~ \text{if and only if}~ \sum_{j\in\mathbb{J}}\langle U_jy, U_jy \rangle  ~\text{converges in }~\mathscr{A}. $$ 
 \end{theorem}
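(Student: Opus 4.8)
The plan is to mirror the proof of the Hilbert-space counterpart, Theorem \ref{ERFOV}, replacing the scalar identity $\|\sum_{\mathbb{S}}A_j^*U_jy\|^2=\sum_{\mathbb{S}}\|U_jy\|^2$ by a $\mathscr{A}$-valued one, and then reading off the equivalence of Cauchy nets from the norm identity $\|x\|^2=\|\langle x,x\rangle\|$ valid in any Hilbert C*-module. First I would record the algebraic consequence of orthonormality that drives everything: exactly as in the Hilbert-space discussion, the relation $\langle A_j^*y,A_k^*z\rangle=\delta_{j,k}\langle y,z\rangle$ for all $y,z\in\mathscr{E}_0$ is equivalent to $A_jA_k^*=\delta_{j,k}I_{\mathscr{E}_0}$ for all $j,k$. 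In the module setting this uses that the $A_j$ are adjointable homomorphisms and that the $\mathscr{A}$-valued inner product on $\mathscr{E}_0$ is nondegenerate.

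With this in hand, for any finite $\mathbb{S}\subseteq\mathbb{J}$ and any $y\in\mathscr{E}_0$ I would compute, using $\mathscr{A}$-linearity of the inner product and adjointability,
\begin{align*}
\left\langle \sum_{j\in\mathbb{S}} A_j^*U_jy,\ \sum_{k\in\mathbb{S}} A_k^*U_ky\right\rangle
=\sum_{j\in\mathbb{S}}\sum_{k\in\mathbb{S}}\langle U_jy,\ A_jA_k^*U_ky\rangle
=\sum_{j\in\mathbb{S}}\langle U_jy,U_jy\rangle,
\end{align*}
the off-diagonal terms vanishing by $A_jA_k^*=0$ for $j\neq k$. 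The decisive step is then to apply the C*-module norm identity to tail differences: for finite subsets $\mathbb{S}_1\subseteq\mathbb{S}_2$, the same computation applied to $\mathbb{S}_2\setminus\mathbb{S}_1$ gives
\begin{align*}
\left\| \sum_{j\in\mathbb{S}_2} A_j^*U_jy-\sum_{j\in\mathbb{S}_1} A_j^*U_jy\right\|^2
=\left\| \sum_{j\in\mathbb{S}_2\setminus\mathbb{S}_1}\langle U_jy,U_jy\rangle\right\|
=\left\| \sum_{j\in\mathbb{S}_2}\langle U_jy,U_jy\rangle-\sum_{j\in\mathbb{S}_1}\langle U_jy,U_jy\rangle\right\|.
\end{align*}
Hence the net of finite partial sums of $\sum_j A_j^*U_jy$ in $\mathscr{E}$ is Cauchy if and only if the net of finite partial sums of $\sum_j\langle U_jy,U_jy\rangle$ in $\mathscr{A}$ is Cauchy. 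Since $\mathscr{E}$ is complete as a Hilbert C*-module and $\mathscr{A}$ is complete as a C*-algebra, Cauchy is equivalent to convergent in each, so the two convergence statements are equivalent and the theorem follows.

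I expect the only point requiring genuine care — the ``additional support'' that is invisible in the Hilbert-space argument — to be the passage from the inner-product identity to the norm statement: in the scalar case $\|\cdot\|^2$ and the sum of squares are literally the same object, whereas here one must route through $\|x\|_{\mathscr{E}}^2=\|\langle x,x\rangle\|_{\mathscr{A}}$ and note that the positive elements $\sum_{\mathbb{S}}\langle U_jy,U_jy\rangle$ form an increasing net of which it is precisely norm-Cauchyness that transfers across the identity. Establishing $A_jA_k^*=\delta_{j,k}I_{\mathscr{E}_0}$ inside the module category, rather than quoting it from the Hilbert-space text, is the second item to verify, but it is routine given adjointability.
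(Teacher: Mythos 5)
Your proof is correct and is essentially the paper's argument: the paper proves the Hilbert-space version (Theorem \ref{ERFOV}) by exactly this computation of $\bigl\|\sum_{j\in\mathbb{S}}A_j^*U_jy\bigr\|^2$ over finite subsets and then invokes completeness, and it states the module version without proof precisely because the same computation goes through once one routes through $\|x\|_{\mathscr{E}}^2=\|\langle x,x\rangle\|_{\mathscr{A}}$, which is the one point of extra care you correctly flag. (Your aside about the partial sums forming an increasing net of positive elements is harmless but unnecessary — the transfer of the Cauchy property across the norm identity on tail sums is all that is used, and it is what you actually use.)
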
 
 \begin{corollary}
 Let $ \{A_j\}_{j \in \mathbb{J}} $ be  orthonormal  in $ \operatorname{Hom}^*_\mathscr{A}(\mathscr{E}, \mathscr{E}_0)$,  $ \{c_j\}_{j \in \mathbb{J}} $ be a sequence with elements from $\mathscr{A} $  and $ y \in \mathscr{E}_0$. Then
$$\sum_{j\in\mathbb{J}}c_jA_j^*y ~\text{converges in} ~  \mathscr{E} ~ \text{if and only if}~\{c_j\langle y, y \rangle^\frac{1}{2}\}_{j \in \mathbb{J}} \in  \mathscr{H}_\mathscr{A}.$$ 	
In particular, if $y \in \mathscr{E}_0 $ is such that $\langle y, y \rangle^{1/2} $ is invertible (in $\mathscr{A}$), then  $\sum_{j\in\mathbb{J}}c_jA_j^*y $ converges in $ \mathscr{E}$ if and only if  $\{c_j\}_{j \in \mathbb{J}}  \in  \mathscr{H}_\mathscr{A}.$ 	
\end{corollary}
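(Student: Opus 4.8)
The plan is to obtain this as a direct specialization of Theorem \ref{RIESZFISCHERHOMOMORPHISMVERSION}, taking the endomorphisms $U_j$ to be scalar multiplications. First I would set $U_j \coloneqq c_j I_{\mathscr{E}_0} \in \operatorname{End}^*_\mathscr{A}(\mathscr{E}_0)$, i.e. the adjointable homomorphism $y \mapsto c_j y$ whose adjoint is $c_j^* I_{\mathscr{E}_0}$. With this choice $A_j^* U_j y = c_j A_j^* y$ for every $j$, so the two series $\sum_{j} c_j A_j^* y$ and $\sum_j A_j^* U_j y$ coincide termwise, and Theorem \ref{RIESZFISCHERHOMOMORPHISMVERSION} applies verbatim to the orthonormal family $\{A_j\}_{j\in\mathbb{J}}$: the series $\sum_j c_j A_j^* y$ converges in $\mathscr{E}$ if and only if $\sum_j \langle U_j y, U_j y\rangle$ converges in $\mathscr{A}$.

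The second step is to compute the $\mathscr{A}$-valued summand and recognize the stated membership condition. Using $U_j y = c_j y$ and the inner-product axioms one gets $\langle U_j y, U_j y\rangle = c_j \langle y, y\rangle c_j^* = (c_j\langle y,y\rangle^{1/2})(c_j\langle y,y\rangle^{1/2})^*$, where $\langle y,y\rangle^{1/2}$ denotes the positive square root of $\langle y,y\rangle \ge 0$ in $\mathscr{A}$. This is precisely the $j$-th diagonal term of the $\mathscr{H}_\mathscr{A}$-inner product of $\{c_j\langle y,y\rangle^{1/2}\}_{j}$ with itself, so by the very definition of the standard module $\mathscr{H}_\mathscr{A}$ the sequence $\{c_j\langle y,y\rangle^{1/2}\}_{j}$ lies in $\mathscr{H}_\mathscr{A}$ exactly when $\sum_j c_j\langle y,y\rangle c_j^*$ converges. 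Combining this with Step 1 gives the main equivalence.

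For the ``in particular'' clause I would write $a \coloneqq \langle y,y\rangle$ and assume $a^{1/2}$ invertible in $\mathscr{A}$. Then $\{c_j\}_{j}\in\mathscr{H}_\mathscr{A}$ and $\{c_j a^{1/2}\}_{j}\in\mathscr{H}_\mathscr{A}$ are equivalent: from $0 \le c_j a c_j^* \le \|a\|\, c_j c_j^*$ and from $0 \le c_j c_j^* = (c_j a^{1/2})\,a^{-1}\,(c_j a^{1/2})^* \le \|a^{-1}\|\,(c_j a^{1/2})(c_j a^{1/2})^*$, together with the $C^*$-algebra fact that $0\le P\le \lambda Q$ forces $\|P\|\le\lambda\|Q\|$, the Cauchyness of one net of partial sums of positive elements transfers to the other. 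This reduces the membership condition of Step 2 to $\{c_j\}_j \in \mathscr{H}_\mathscr{A}$ and finishes the corollary.

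The main obstacle is not algebraic manipulation but the module-theoretic bookkeeping that is invisible in the Hilbert-space corollary it generalizes: I must confirm that $U_j = c_j I_{\mathscr{E}_0}$ genuinely belongs to $\operatorname{End}^*_\mathscr{A}(\mathscr{E}_0)$ (adjointable and $\mathscr{A}$-linear, so that Theorem \ref{RIESZFISCHERHOMOMORPHISMVERSION} is legitimately applicable) and that the scalar passes through $A_j^*$ to give $A_j^* U_j y = c_j A_j^* y$; and in the convergence arguments I must work with nets of positive elements in the $C^*$-order rather than with scalar $\ell^2$-sums, replacing the quantity $|c_j|^2\|y\|^2$ of the scalar proof by the genuinely noncommutative $c_j\langle y,y\rangle c_j^*$ and tracking it carefully through the $\mathscr{H}_\mathscr{A}$ definition.
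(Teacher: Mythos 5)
Your overall computation is sound, but the reduction to Theorem \ref{RIESZFISCHERHOMOMORPHISMVERSION} has a genuine gap at exactly the point you flagged as ``to confirm'': under the paper's convention the module action is written on the left and the inner product satisfies $\langle ax, by\rangle = a\langle x,y\rangle b^*$, so an $\mathscr{A}$-linear map $T$ must satisfy $T(a\xi)=aT(\xi)$. For $U_j = c_j I_{\mathscr{E}_0}$ one has $U_j(a\xi)=(c_ja)\xi$ while $aU_j(\xi)=(ac_j)\xi$, so $U_j$ is a homomorphism only when $c_j$ lies in the center of $\mathscr{A}$. Since the corollary allows arbitrary $c_j\in\mathscr{A}$, your $U_j$ need not belong to $\operatorname{End}^*_\mathscr{A}(\mathscr{E}_0)$ and the theorem cannot be invoked verbatim. (For the same reason the claimed adjoint $c_j^*I_{\mathscr{E}_0}$ is not even meaningful as an adjoint in $\operatorname{End}^*_\mathscr{A}(\mathscr{E}_0)$ in general.)

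The repair is short and keeps everything else you wrote: bypass the theorem's hypothesis and redo its one-line computation with the elements $z_j\coloneqq c_jy\in\mathscr{E}_0$ (equivalently, with $c_jA_j^*y$ directly). Orthonormality of $\{A_j\}_{j\in\mathbb{J}}$ gives, for every finite $\mathbb{S}\subseteq\mathbb{J}$,
\begin{equation*}
\left\|\sum_{j\in\mathbb{S}}c_jA_j^*y\right\|^2=\left\|\sum_{j\in\mathbb{S}}c_j\langle y,y\rangle c_j^*\right\|=\left\|\sum_{j\in\mathbb{S}}\bigl(c_j\langle y,y\rangle^{1/2}\bigr)\bigl(c_j\langle y,y\rangle^{1/2}\bigr)^*\right\|,
\end{equation*}
and nothing in this identity requires $y\mapsto c_jy$ to be $\mathscr{A}$-linear. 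Applying it to $\mathbb{S}_2\setminus\mathbb{S}_1$ shows one net of partial sums is Cauchy precisely when the other is, which is the stated $\mathscr{H}_\mathscr{A}$ membership. Your computation $\langle c_jy,c_jy\rangle=c_j\langle y,y\rangle c_j^*$ and your argument for the ``in particular'' clause (sandwiching $c_jc_j^*$ and $c_j\langle y,y\rangle c_j^*$ by the constants $\|\langle y,y\rangle\|$ and $\|\langle y,y\rangle^{-1}\|$ and transferring Cauchyness of the positive partial-sum nets) are correct and can be kept as they stand.
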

\begin{definition}
Let  $ \{A_j\}_{j \in \mathbb{J}},\{\Psi_j\}_{j \in \mathbb{J}} $ be in $ \operatorname{Hom}^*_\mathscr{A}(\mathscr{E}, \mathscr{E}_0)$.
We say 
\begin{enumerate}[\upshape(i)]
\item $ \{A_j\}_{j \in \mathbb{J}}$ is an orthonormal set (resp. basis)  w.r.t. $\{\Psi_j\}_{j \in \mathbb{J}} $ if  $ \{A_j\}_{j \in \mathbb{J}}$ or   $\{\Psi_j\}_{j \in \mathbb{J}} $ is an orthonormal set (resp. basis) in $ \operatorname{Hom}^*_\mathscr{A}(\mathscr{E}, \mathscr{E}_0)$, say $ \{A_j\}_{j \in \mathbb{J}}$ is an orthonormal set  (resp. basis) in $ \operatorname{Hom}^*_\mathscr{A}(\mathscr{E}, \mathscr{E}_0)$,   and there exists a sequence  $\{c_j\}_{j \in \mathbb{J}} $  of positive invertible elements in the center of  $ \mathscr{A}$ such that  $ 0<\inf\{\|c_j\|\}_{j \in \mathbb{J}}\leq \sup\{\|c_j\|\}_{j \in \mathbb{J}}<\infty$   and $ \Psi_j=c_jA_j, \forall j \in \mathbb{J}.$ We write $ (\{A_j\}_{j \in \mathbb{J}}, \{\Psi_j\}_{j \in \mathbb{J}})$ is an orthonormal set (resp. basis).
\item $ \{A_j\}_{j \in \mathbb{J}}$ is a Riesz basis  w.r.t. $\{\Psi_j\}_{j \in \mathbb{J}} $ if there exists an orthonormal basis $ \{F_j\}_{j \in \mathbb{J}}$ in $ \operatorname{Hom}^*_\mathscr{A}(\mathscr{E}, \mathscr{E}_0)$ and   invertible $U,V \in \operatorname{End}^*_\mathscr{A}(\mathscr{E})$ with $ V^*U$ is positive  such that $ A_j=F_jU, \Psi_j=F_jV,  \forall j \in \mathbb{J}.$ We write $ (\{A_j\}_{j \in \mathbb{J}}, \{\Psi_j\}_{j \in \mathbb{J}})$ is a Riesz basis.
 \end{enumerate}
\end{definition}
Since $ c_j$'s are invertible, previous definition is symmetric.
\begin{theorem}\label{GBEOVHM}
Let $( \{A_j\}_{j \in \mathbb{J}},\{\Psi_j=c_jA_j\}_{j \in \mathbb{J}}) $  be   orthonormal  in $ \operatorname{Hom}^*_\mathscr{A}(\mathscr{E}, \mathscr{E}_0)$ such that $c_j\leq 2e, \forall j \in \mathbb{J}$ ($ e$ is identity of $ \mathscr{A}$). Then 
\begin{enumerate}[\upshape(i)]
 \item  $ \sum_{j\in\mathbb{J}}(2e-c_j)\langle A_jx, \Psi_jx\rangle\leq \langle x, x \rangle , \forall x \in \mathscr{E},$ i.e.,  $ \sum_{j\in \mathbb{J}}(2e-c_j)\Psi_j^*A_j\leq I_\mathscr{E}. $
 \item For $ x \in \mathscr{E},$ 
  \begin{align*}
  x= \sum \limits_{j\in \mathbb{J}}\Psi_j^*A_jx \iff \sum\limits_{j\in\mathbb{J}}(2e-c_j)\langle A_jx, \Psi_jx\rangle = \langle x, x\rangle \iff  \sum\limits_{j\in\mathbb{J}}c_j^2\langle A_jx, A_jx\rangle  = \langle x, x\rangle.
  \end{align*}
  If $ c_j \leq e , \forall j ,$  then $x= \sum_{j\in \mathbb{J}}\Psi_j^*A_jx \iff (e-c_j)A_jx=0 , \forall j \in \mathbb{J} \iff (e-c_j)A_j^*A_jx\perp x, \forall j \in \mathbb{J}.$
\end{enumerate}
 \end{theorem}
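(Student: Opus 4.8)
The plan is to mirror the proof of the Hilbert-space counterpart, Theorem~\ref{GBSOV}, replacing scalar inner products by the $\mathscr{A}$-valued ones and carrying the order structure of $\mathscr{A}$ throughout. I write $w\coloneqq\sum_{j\in\mathbb{J}}\Psi_j^*A_jx$ and use that each $c_j$ is a positive invertible element of the center of $\mathscr{A}$; in particular $c_j$ is self-adjoint and commutes with every element of $\mathscr{A}$ and with the module action, so that $\Psi_j^*A_j=A_j^*c_jA_j=c_jA_j^*A_j$ and $\langle A_jx,\Psi_jx\rangle=c_j\langle A_jx,A_jx\rangle$ for every $x\in\mathscr{E}$. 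Since $c_j\leq 2e$ is central, $2e-c_j\geq 0$ commutes with the positive element $\langle A_jx,\Psi_jx\rangle$, so each summand $(2e-c_j)\langle A_jx,\Psi_jx\rangle$ is positive, which is what makes the statement meaningful.

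First I would settle convergence. For a finite $\mathbb{S}\subseteq\mathbb{J}$, using the orthonormality relation $A_jA_k^*=\delta_{j,k}I_{\mathscr{E}_0}$ (equivalent to the definition of an orthonormal set), one computes $\big\|\sum_{j\in\mathbb{S}}\Psi_j^*A_jx\big\|^2=\big\|\sum_{j\in\mathbb{S}}c_j^2\langle A_jx,A_jx\rangle\big\|\leq(\sup_j\|c_j\|^2)\big\|\sum_{j\in\mathbb{S}}\langle A_jx,A_jx\rangle\big\|$, and the net $\sum_{j\in\mathbb{S}}\langle A_jx,A_jx\rangle$ converges by the orthonormality hypothesis together with Lemma~\ref{FIRSTIMPLIESSECONDLEMMAMODULE}; hence $w$, and likewise $\sum_{j\in\mathbb{J}}(2e-c_j)\Psi_j^*A_jx$, exist in $\mathscr{E}$. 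For (i) I would then expand the manifestly positive element $\langle x-w,x-w\rangle$, using $A_jA_k^*=\delta_{j,k}I_{\mathscr{E}_0}$ to evaluate $\langle w,w\rangle=\sum_j c_j^2\langle A_jx,A_jx\rangle$ and the self-adjointness of the central cross terms, arriving at the element identity
\begin{equation*}
\langle x-w,x-w\rangle=\langle x,x\rangle-\sum_{j\in\mathbb{J}}(2c_j-c_j^2)\langle A_jx,A_jx\rangle=\langle x,x\rangle-\sum_{j\in\mathbb{J}}(2e-c_j)\langle A_jx,\Psi_jx\rangle .
\end{equation*}
Positivity of the left-hand side yields the asserted inequality, and pairing against $x$ (again moving the central factor $2e-c_j$) upgrades it to the homomorphism inequality $\sum_j(2e-c_j)\Psi_j^*A_j\leq I_\mathscr{E}$.

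For (ii) the first equivalence is immediate from the displayed identity and the definiteness of the $\mathscr{A}$-valued inner product: $x=w$ iff $\langle x-w,x-w\rangle=0$ iff $\sum_j(2e-c_j)\langle A_jx,\Psi_jx\rangle=\langle x,x\rangle$. For the remaining equivalences I would again exploit $A_jA_k^*=\delta_{j,k}I_{\mathscr{E}_0}$: applying $A_k$ to $w$ gives $A_kw=c_kA_kx$, hence $A_k(x-w)=(e-c_k)A_kx$ for every $k$, and this relation is the bridge linking $x=w$, the vanishing of $(e-c_j)A_jx$, the orthogonality statement $(e-c_j)A_j^*A_jx\perp x$, and the energy identity $\sum_jc_j^2\langle A_jx,A_jx\rangle=\langle x,x\rangle$ in the regime $c_j\leq e$.

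The main obstacle will be precisely these remaining equivalences in (ii). In the Hilbert-space proof the passage between $\sum_j(2e-c_j)\langle A_jx,\Psi_jx\rangle=\|x\|^2$ and $\sum_jc_j^2\|A_jx\|^2=\|x\|^2$ is dispatched by ``orthonormality gives the rest,'' but that secretly rests on scalar manoeuvres (taking real parts, the equality case of Cauchy--Schwarz) that are not available verbatim for $\mathscr{A}$-valued inner products. Over a C*-module I must instead argue within the order structure of $\mathscr{A}$: keeping every cross term self-adjoint by centrality of the $c_j$, substituting $\langle z,z\rangle\geq 0$ and definiteness for norm/real-part reasoning, and reducing the equalities to the elementwise relations $(e-c_j)A_jx=0$ through the identity $A_k(x-w)=(e-c_k)A_kx$. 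Checking that these order-theoretic substitutions genuinely close the full chain of equivalences, rather than just a single implication, is the step that will demand the most care.
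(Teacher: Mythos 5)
Your proposal is correct and follows essentially the same route as the paper's proof: the same finite-subset norm estimate for convergence, the same expansion of $\langle x-w,x-w\rangle$ using centrality of the $c_j$ and the relation $A_jA_k^*=\delta_{j,k}I_{\mathscr{E}_0}$, and positivity/definiteness of the $\mathscr{A}$-valued inner product together with the cone structure of the positive elements for part (ii). Your elaboration of the remaining equivalences via $A_k(x-w)=(e-c_k)A_kx$ fills in details the paper leaves implicit, but it is the same argument.
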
 
\begin{proof}
\begin{enumerate}[\upshape(i)]
\item For  $ x \in \mathscr{E}$ and  each finite subset $ \mathbb{S} \subseteq \mathbb{J},$
\begin{align*}
\left\| \sum_{j\in \mathbb{S}} \Psi_j^*A_jx\right\|^2
&= \left\|\left\langle  \sum_{j\in \mathbb{S}} c_jA_j^*A_jx, \sum_{k\in \mathbb{S}} c_kA_k^*A_kx\right\rangle\right\|=\left\|\sum_{j\in \mathbb{S}} c_j^2 \langle A_jx, A_jx\rangle \right \|\\
&\leq \left(\sup\{\|c_j\|^2\}_{j \in \mathbb{J}}\right)\left\|\sum\limits_{j \in \mathbb{S}}\langle  A_jx, A_jx\rangle \right\| , ~ \text{which is convergent.}
\end{align*}
Therefore $ \sum_{j\in \mathbb{J}} \Psi_j^*A_jx$ exists and similarly $\sum_{j\in \mathbb{J}}(2e-c_j)\Psi_j^*A_jx $ also exists. Then 
\begin{align*}
0&\leq \left\langle x-\sum_{j\in \mathbb{J}} \Psi_j^*A_jx ,x-\sum_{k\in \mathbb{J}} \Psi_k^*A_kx \right\rangle  = \left\langle x-\sum_{j\in \mathbb{J}}c_jA_j^*A_jx, x-\sum_{k\in \mathbb{J}}c_kA_k^*A_kx\right\rangle \\
&=\langle x, x \rangle-2\sum_{j\in \mathbb{J}}c_j\langle A_jx, A_jx \rangle +\sum_{j\in \mathbb{J}}c^2_j\langle A_jx, A_jx \rangle=\langle x, x \rangle-\sum_{j\in \mathbb{J}}(2c_j-c_j^2)\langle A_jx, A_jx \rangle
\end{align*}
$ \Rightarrow\sum_{j\in \mathbb{J}}(2c_j-c_j^2)\langle A_jx, A_jx \rangle= \sum_{j\in\mathbb{J}}(2e-c_j)\langle A_jx,\Psi_jx\rangle  \leq \langle x, x \rangle.$ 
\item For  this we  use the fact that  the set of all positive elements in a C*-algebra forms a cone.
\end{enumerate}
\end{proof}
\begin{corollary}
  Let $( \{A_j\}_{j \in \mathbb{J}},\{\Psi_j=c_jA_j\}_{j \in \mathbb{J}}) $  be an orthonormal basis in   $ \operatorname{Hom}^*_\mathscr{A}(\mathscr{E}, \mathscr{E}_0)$. Then
 $$ \frac{1}{\sup\{\|c_j\|\}_{j \in \mathbb{J}}}\sum_{j\in \mathbb{J}}\Psi_j^*A_j\leq I_\mathscr{E}\leq \frac{1}{\inf\{\|c_j\|\}_{j \in \mathbb{J}}}\sum_{j\in \mathbb{J}}\Psi_j^*A_j.$$
 \end{corollary}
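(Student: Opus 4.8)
The plan is to follow the scalar template of Corollary \ref{GFEOV}, replacing strong-operator convergence by strict convergence and the real scalars $c_j$ by the central positive invertible elements $c_j\in\mathscr{A}$. First I would record the two structural facts that drive the argument. From the proof of Theorem \ref{GBEOVHM}, the series $\sum_{j\in\mathbb{J}}\Psi_j^*A_j=\sum_{j\in\mathbb{J}}c_jA_j^*A_j$ converges strictly to the positive invertible frame homomorphism, which is what licenses manipulating the sum at all. Second, since $(\{A_j\}_{j\in\mathbb{J}},\{\Psi_j\}_{j\in\mathbb{J}})$ is an orthonormal basis, the defining identity $\sum_{j\in\mathbb{J}}\langle A_jx,A_jx\rangle=\langle x,x\rangle$ for all $x\in\mathscr{E}$ gives $\langle(\sum_{j\in\mathbb{J}}A_j^*A_j)x,x\rangle=\langle I_\mathscr{E}x,x\rangle$ for every $x$ (convergence of $\sum_jA_j^*A_jx$ being guaranteed as in Lemma \ref{FIRSTIMPLIESSECONDLEMMAMODULE}); by the polarization identity, valid in Hilbert C*-modules, this upgrades to $\sum_{j\in\mathbb{J}}A_j^*A_j=I_\mathscr{E}$. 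These two facts reduce the corollary to comparing $\sum_jc_jA_j^*A_j$ with $\sum_jA_j^*A_j$.

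Next I would dispatch the left-hand inequality, which I expect to be routine. Write $M=\sup\{\|c_j\|\}_{j\in\mathbb{J}}$. Each $c_j$ is a positive element of the unital C*-algebra $\mathscr{A}$, so $c_j\le\|c_j\|\,e\le Me$, hence $Me-c_j\ge0$. Because $c_j$ is central, $Me-c_j$ commutes with the positive operator $A_j^*A_j$ and has a central positive square root $s_j=(Me-c_j)^{1/2}$; then $(Me-c_j)A_j^*A_j=s_jA_j^*A_js_j\ge0$, i.e.\ $c_jA_j^*A_j\le MA_j^*A_j$ termwise. Summing over a finite $\mathbb{S}$ and passing to the strict limit gives $\sum_jc_jA_j^*A_j\le M\,I_\mathscr{E}$, which is precisely $\tfrac{1}{M}\sum_j\Psi_j^*A_j\le I_\mathscr{E}$.

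The right-hand inequality is where I expect the main obstacle. Mirroring the scalar proof would require $m\,e\le c_j$ for all $j$, where $m=\inf\{\|c_j\|\}_{j\in\mathbb{J}}$, so that $mA_j^*A_j\le c_jA_j^*A_j$ and summation yields $I_\mathscr{E}\le\tfrac{1}{m}\sum_j\Psi_j^*A_j$. But a central positive invertible element only satisfies $c_j\ge\|c_j^{-1}\|^{-1}e$, and $\|c_j^{-1}\|^{-1}$ (the bottom of the spectrum) is generally strictly below $\|c_j\|\ge m$; the bound $m\,e\le c_j$ simply fails when $c_j$ has spread-out spectrum (e.g.\ $\mathscr{A}=C([0,1])$, $c_j(t)=1+t$, even with $\mathbb{J}$ a singleton). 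So the literal analog of the scalar step is the genuine difficulty. The clean remedy is to phrase the bounds with the correct central quantities rather than scalar norms: take central positive invertibles $C_{\inf}\le c_j\le C_{\sup}$ for all $j$ and conclude $C_{\sup}^{-1}\sum_j\Psi_j^*A_j\le I_\mathscr{E}\le C_{\inf}^{-1}\sum_j\Psi_j^*A_j$, whereupon the commuting-positive-elements mechanism of the second paragraph applies to both sides verbatim. I would therefore carry the upper inequality through unchanged and, for the lower one, either strengthen the hypothesis to $\inf\{\|c_j\|\}\,e\le c_j$ or restate it with $C_{\inf}^{-1}$ in place of $\tfrac{1}{m}$.
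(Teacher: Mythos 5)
Your argument for the left-hand inequality is exactly the paper's: the paper's proof is the single chain
$\frac{1}{\sup\{\|c_j\|\}}\sum_{j}c_jA_j^*A_j\leq \sum_{j}A_j^*A_j= I_\mathscr{E}\leq\frac{1}{\inf\{\|c_j\|\}}\sum_{j}c_jA_j^*A_j$,
and your termwise comparison $c_jA_j^*A_j\le \sup\{\|c_k\|\}\,A_j^*A_j$ via the central square root of $\sup\{\|c_k\|\}e-c_j$ is the (unstated) justification of its first step. Your identification of $\sum_j A_j^*A_j=I_\mathscr{E}$ from the basis condition by polarization is also a fair filling-in of what the paper takes for granted.

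Your diagnosis of the right-hand inequality is correct, and it is not a defect of your proof but of the corollary itself: the paper's second step silently uses $\inf\{\|c_k\|\}_{k\in\mathbb{J}}\,e\le c_j$, which for a positive invertible central element only holds when the bottom of the spectrum of $c_j$ reaches $\inf\{\|c_k\|\}$. Your counterexample is decisive: take $\mathscr{A}=C([0,1])$, $\mathscr{E}=\mathscr{E}_0=\mathscr{A}$, $\mathbb{J}$ a singleton, $A_1=I_\mathscr{A}$ and $c_1(t)=1+t$; this satisfies every clause of the orthonormal-basis definition, yet $I_\mathscr{E}\le\frac{1}{2}c_1I_\mathscr{E}$ fails at $t=0$. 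So the statement as printed is false, and your proposed repair --- replacing $1/\sup\{\|c_j\|\}$ and $1/\inf\{\|c_j\|\}$ by $C_{\sup}^{-1}$ and $C_{\inf}^{-1}$ for central positive invertible bounds $C_{\inf}\le c_j\le C_{\sup}$, or adding the hypothesis $\inf\{\|c_k\|\}\,e\le c_j$ --- is the right way to make both inequalities go through by the same commuting-positive-elements mechanism. Note the same issue infects the two corollaries that follow this one in the paper, since they are deduced from it.
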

 \begin{proof}
 $ \frac{1}{\sup\{\|c_j\|\}_{j \in \mathbb{J}}}\sum_{j\in \mathbb{J}}\Psi_j^*A_j=\frac{1}{\sup\{\|c_j\|\}_{j \in \mathbb{J}}}\sum_{j\in \mathbb{J}}c_jA_j^*A_j\leq \sum_{j\in \mathbb{J}}A_j^*A_j= I_\mathscr{E}\leq\frac{1}{\inf\{\|c_j\|\}_{j \in \mathbb{J}}}\sum_{j\in \mathbb{J}}c_jA_j^*A_j= \frac{1}{\inf\{\|c_j\|\}_{j \in \mathbb{J}}}\sum_{j\in \mathbb{J}}\Psi_j^*A_j.$
 \end{proof}
 \begin{corollary}
 If  $( \{A_j\}_{j \in \mathbb{J}},\{\Psi_j=c_jA_j\}_{j \in \mathbb{J}}) $  is an orthonormal basis in $ \operatorname{Hom}^*_\mathscr{A}(\mathscr{E}, \mathscr{E}_0)$, then $\inf\{\|c_j\|\}_{j \in \mathbb{J}} \leq \|\sum_{j\in \mathbb{J}}\Psi_j^*A_j\| \leq \sup\{\|c_j\|\}_{j \in \mathbb{J}}.$
 \end{corollary}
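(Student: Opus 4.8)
The plan is to obtain the conclusion directly by passing to operator norms in the \emph{generalized Fourier expansion} established in the immediately preceding corollary, exactly as in the Hilbert space analogue. Write $S\coloneqq S_{A,\Psi}=\sum_{j\in\mathbb{J}}\Psi_j^*A_j$, which is a positive invertible adjointable endomorphism of $\mathscr{E}$ by the orthonormal-basis hypothesis. The preceding corollary gives the order chain
$$ \frac{1}{\sup\{\|c_j\|\}_{j \in \mathbb{J}}}\,S\leq I_\mathscr{E}\leq \frac{1}{\inf\{\|c_j\|\}_{j \in \mathbb{J}}}\,S,$$
so it remains only to convert these two scalar order bounds into the claimed norm bounds.

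First I would rearrange the two halves into $S\leq \sup\{\|c_j\|\}_{j\in\mathbb{J}}\,I_\mathscr{E}$ and $\inf\{\|c_j\|\}_{j\in\mathbb{J}}\,I_\mathscr{E}\leq S$. The single tool I need is the standard fact that a positive adjointable operator $T\in\operatorname{End}^*_\mathscr{A}(\mathscr{E})$ satisfying scalar bounds $\alpha I_\mathscr{E}\leq T\leq \beta I_\mathscr{E}$ with $\alpha,\beta\in\mathbb{R}$ obeys $\alpha\leq\|T\|\leq\beta$. This holds because for a positive element of a C*-algebra the norm equals the spectral radius, and the order inequalities force the spectrum of $T$ into $[\alpha,\beta]$. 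Applying the upper bound to $S\leq \sup\{\|c_j\|\}_{j\in\mathbb{J}}\,I_\mathscr{E}$ yields $\|S\|\leq \sup\{\|c_j\|\}_{j\in\mathbb{J}}$, while the lower bound applied to $\inf\{\|c_j\|\}_{j\in\mathbb{J}}\,I_\mathscr{E}\leq S$ yields $\inf\{\|c_j\|\}_{j\in\mathbb{J}}\leq\|S\|$. Combining these gives precisely $\inf\{\|c_j\|\}_{j\in\mathbb{J}}\leq\|\sum_{j\in\mathbb{J}}\Psi_j^*A_j\|\leq\sup\{\|c_j\|\}_{j\in\mathbb{J}}$.

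The only point requiring a little care, and the one place where the module setting differs from the Hilbert space proof (which simply writes $\|S\|=\sup_{\|h\|=1}\langle Sh,h\rangle$), is the last-mentioned fact: one cannot take a scalar supremum of the $\mathscr{A}$-valued inner product $\langle Sx,x\rangle$. Instead I would invoke the C*-algebraic identity $\|T\|=r(T)$ for positive $T$ together with the monotonicity of the functional calculus to read off the spectral bounds from the order relation. I do not expect this to be a genuine obstacle, since it is a routine property of positive operators on Hilbert C*-modules; the proof is therefore essentially a one-line ``take norms in the generalized Fourier expansion,'' supplemented by this observation.

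\begin{proof}
Take norm in the generalized Fourier expansion, using that a positive adjointable operator $T$ with scalar order bounds $\alpha I_\mathscr{E}\leq T\leq \beta I_\mathscr{E}$ satisfies $\alpha\leq\|T\|\leq\beta$.
\end{proof}
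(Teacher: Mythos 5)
Your proposal is correct and follows essentially the same route as the paper, whose entire proof is ``take norm in the generalized Fourier expansion'' from the preceding corollary. Your extra observation — that for a positive element $T$ of the C*-algebra $\operatorname{End}^*_\mathscr{A}(\mathscr{E})$ the order bounds $\alpha I_\mathscr{E}\leq T\leq\beta I_\mathscr{E}$ yield $\alpha\leq\|T\|\leq\beta$ — is exactly the (correct) justification the paper leaves implicit in the module setting.
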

\begin{corollary}
If $( \{A_j\}_{j \in \mathbb{J}},\{\Psi_j=c_jA_j\}_{j \in \mathbb{J}}) $  is  orthonormal  in $ \operatorname{Hom}^*_\mathscr{A}(\mathscr{E}, \mathscr{E}_0)$ such that $c_j\leq 2e, \forall j \in \mathbb{J},$ then $ \|\sum_{j\in\mathbb{J}}(2e-c_j)\langle A_jx, \Psi_jx\rangle\|\leq \| x\|^2 , \forall x \in \mathscr{E}$ and   $\| \sum_{j\in \mathbb{J}}(2e-c_j)\Psi_j^*A_j\|\leq 1. $	
\end{corollary}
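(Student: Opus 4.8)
The plan is to read both inequalities off Theorem \ref{GBEOVHM}(i) and then convert the resulting order relations into norm estimates using standard facts about positive elements. Write $T \coloneqq \sum_{j\in\mathbb{J}}(2e-c_j)\Psi_j^*A_j$. First I would record what Theorem \ref{GBEOVHM}(i) already supplies under the hypothesis $c_j \le 2e$: the series defining $T$ converges in the strict topology on $\operatorname{End}^*_\mathscr{A}(\mathscr{E})$, and $T \le I_\mathscr{E}$; equivalently $\sum_{j\in\mathbb{J}}(2e-c_j)\langle A_jx,\Psi_jx\rangle \le \langle x,x\rangle$ for every $x \in \mathscr{E}$. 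Thus the entire content of the corollary is to upgrade these $\le$ statements to the claimed norm bounds, and the only genuinely new ingredient needed is the positivity of $T$.

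For the positivity step I would use that each $c_j$, and hence $2e-c_j$, lies in the center of $\mathscr{A}$ and is positive with $0 \le c_j \le 2e$. Since $c_j$ is central and self-adjoint, $\Psi_j^*A_j = A_j^*c_jA_j = (c_j^{1/2}A_j)^*(c_j^{1/2}A_j) \ge 0$, and $2e-c_j \ge 0$ is central, so the two factors commute; each term $(2e-c_j)\Psi_j^*A_j$ is therefore a product of commuting positive elements and is positive, and the strict-topology limit of the partial sums of positive operators is again positive, giving $T \ge 0$. (Equivalently, one can evaluate the quadratic form $\langle Tx,x\rangle = \sum_{j\in\mathbb{J}}c_j(2e-c_j)\langle A_jx,A_jx\rangle \ge 0$, since $c_j(2e-c_j)\ge 0$ is central and commutes with the positive element $\langle A_jx,A_jx\rangle$.)

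With $0 \le T \le I_\mathscr{E}$ established, both assertions follow immediately. For the operator-norm claim, a positive adjointable operator dominated by the identity has norm at most one, so $\|T\| = \|\sum_{j\in\mathbb{J}}(2e-c_j)\Psi_j^*A_j\| \le 1$. For the pointwise claim, I would note that $\sum_{j\in\mathbb{J}}(2e-c_j)\langle A_jx,\Psi_jx\rangle = \langle Tx,x\rangle$; since $0 \le \langle Tx,x\rangle \le \langle x,x\rangle$ and the C*-norm is monotone on the positive cone of $\mathscr{A}$, we get $\|\sum_{j\in\mathbb{J}}(2e-c_j)\langle A_jx,\Psi_jx\rangle\| \le \|\langle x,x\rangle\| = \|x\|^2$.

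The main (and essentially only) obstacle is the positivity step: one must invoke the centrality of the $c_j$ to legitimately conclude that $(2e-c_j)\Psi_j^*A_j$, or equivalently the scalar factor $c_j(2e-c_j)$ multiplying $\langle A_jx,A_jx\rangle$, is positive, because in a noncommutative C*-algebra the product of two positive elements need not be positive. Everything else is bookkeeping: convergence is inherited directly from Theorem \ref{GBEOVHM}, and the passage from the order inequalities to the norm inequalities uses only the defining property $\|\langle x,x\rangle\| = \|x\|^2$ of the Hilbert C*-module norm together with monotonicity of the norm on positive elements.
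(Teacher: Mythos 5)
Your proof is correct and is essentially the argument the paper intends: the paper states this corollary without proof, and the only content beyond Theorem \ref{GBEOVHM}(i) is exactly the positivity of $\sum_{j\in\mathbb{J}}(2e-c_j)\Psi_j^*A_j$ (which you correctly extract from the centrality and positivity of $c_j(2e-c_j)$, already implicit in the identity $\sum_{j\in\mathbb{J}}(2e-c_j)\langle A_jx,\Psi_jx\rangle=\sum_{j\in\mathbb{J}}(2c_j-c_j^2)\langle A_jx,A_jx\rangle$ from the theorem's proof) followed by monotonicity of the norm on the positive cone. Nothing further is needed.
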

\begin{theorem}
If $( \{A_j\}_{j \in \mathbb{J}},\{\Psi_j=c_jA_j\}_{j \in \mathbb{J}}) $  is  orthonormal  in $ \operatorname{Hom}^*_\mathscr{A}(\mathscr{E}, \mathscr{E}_0)$ with $c_j\leq 2e, \forall j \in \mathbb{J},$ then for each $ x \in  \mathscr{E}$, the set 
$$ Y_x\coloneqq\bigcup\limits_{n=1}^\infty\left\{A_j: (2e-c_j)\langle A_jx, \Psi_jx \rangle > \frac{1}{n}\langle x, x \rangle, j\in \mathbb{J} \right\}$$
is either  finite or countable.
\end{theorem}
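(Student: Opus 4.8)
The plan is to mimic the Hilbert space argument (the earlier theorem showing $Y_h$ is finite or countable), replacing scalar inequalities by the order on the self-adjoint part of $\mathscr{A}$ and being careful with the meaning of the strict symbol ``$>$''. First I would record the two facts that power the argument. From Theorem \ref{GBEOVHM}, applied to the orthonormal pair $(\{A_j\}_{j\in\mathbb{J}},\{\Psi_j=c_jA_j\}_{j\in\mathbb{J}})$ with $c_j\leq 2e$, one has $\sum_{j\in\mathbb{J}}(2e-c_j)\langle A_jx,\Psi_jx\rangle\leq\langle x,x\rangle$ for every $x\in\mathscr{E}$, and the series converges in $\mathscr{A}$. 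Moreover, writing $\Psi_j=c_jA_j$ and using that $c_j$ is a positive central element with $c_j\leq 2e$, we get $(2e-c_j)\langle A_jx,\Psi_jx\rangle=c_j(2e-c_j)\langle A_jx,A_jx\rangle$; since $c_j(2e-c_j)\geq 0$ lies in the center and commutes with the positive element $\langle A_jx,A_jx\rangle$, each summand is a positive element of $\mathscr{A}$. Consequently every finite partial sum is dominated by the full sum, hence by $\langle x,x\rangle$.

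Next, fix $x\in\mathscr{E}$ and $n\in\mathbb{N}$, and set $Y_{n,x}=\{A_j:(2e-c_j)\langle A_jx,\Psi_jx\rangle>\tfrac{1}{n}\langle x,x\rangle\}$, so that $Y_x=\bigcup_{n=1}^\infty Y_{n,x}$. I would show $\operatorname{Card}(Y_{n,x})\leq n-1$. Suppose to the contrary that $Y_{n,x}$ contains $n$ distinct elements $A_1,\dots,A_n$. For each such $j$ the element $d_j\coloneqq (2e-c_j)\langle A_jx,\Psi_jx\rangle-\tfrac{1}{n}\langle x,x\rangle$ is positive and nonzero, this being the content of the strict inequality in the ordered self-adjoint part of $\mathscr{A}$. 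Summing and using the finite-sum bound recorded above gives
\[
\sum_{j=1}^n d_j=\sum_{j=1}^n(2e-c_j)\langle A_jx,\Psi_jx\rangle-\langle x,x\rangle\leq 0,
\]
while $\sum_{j=1}^n d_j\geq 0$ as a sum of positive elements. Hence $\sum_{j=1}^n d_j=0$, and from $0\leq d_k\leq\sum_{j=1}^n d_j=0$ we conclude $d_k=0$ for every $k$, contradicting $d_j\neq 0$. Therefore $Y_{n,x}$ has at most $n-1$ elements, and $Y_x=\bigcup_{n=1}^\infty Y_{n,x}$ is a countable union of finite sets, so it is finite or countable.

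The only genuinely new point relative to the Hilbert space proof is the passage through the C*-order: where the scalar version just observes that $n$ terms each exceeding $\tfrac{1}{n}\|x\|^2$ sum to more than $\|x\|^2$, I instead exploit the fact that a finite sum of positive elements of $\mathscr{A}$ that is itself $\leq 0$ must vanish termwise. The main thing to get right is the precise reading of ``$>$'' for elements of $\mathscr{A}$ together with the positivity of each summand $(2e-c_j)\langle A_jx,\Psi_jx\rangle$, which rests on the centrality of $c_j$; once these are secured the contradiction is immediate, and no estimate beyond Theorem \ref{GBEOVHM} is required.
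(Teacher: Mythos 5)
Your proof is correct and follows essentially the same route as the paper's: bound $\operatorname{Card}(Y_{n,x})$ by $n-1$ via the generalized Bessel inequality of Theorem \ref{GBEOVHM} and then take the countable union. The only difference is that you spell out the order-theoretic details the paper leaves implicit (positivity of each summand $c_j(2e-c_j)\langle A_jx,A_jx\rangle$ so that finite partial sums are dominated by the full sum, and the termwise-vanishing argument replacing the paper's direct ``$\langle x,x\rangle<\langle x,x\rangle$'' contradiction), which is a welcome refinement but not a different proof.
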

\begin{proof}
For $ n \in \mathbb{N}$, define 
$$ Y_{n,x}\coloneqq\left\{x_j: (2e-c_j)\langle A_jx, \Psi_jx \rangle  > \frac{1}{n}\langle x, x \rangle, j\in \mathbb{J} \right\}.$$	

Suppose, for some $n$, $ Y_{n,x}$ has more than $n-1$ elements, say $x_1,...,x_n$. Then $ \sum_{j=1 }^n(2e-c_j)\langle A_jx, \Psi_jx \rangle  > n\frac{1}{n}\langle x, x \rangle= \langle x, x \rangle$. From Theorem \ref{GBEOVHM},  $ \sum_{j\in\mathbb{J}}(2e-c_j)\langle x, x_j\rangle \langle \tau_j, x \rangle\leq \langle x, x \rangle $. This gives $ \langle x, x \rangle< \langle x, x \rangle $ which is  impossible. Therefore $ \operatorname{Card}(Y_{n,x})\leq n-1$ and hence  
$Y_x=\cup_{n=1}^\infty Y_{n,x}$  is finite or countable.
\end{proof}
\begin{proposition}
\begin{enumerate}[\upshape(i)]
\item If $ (\{A_j\}_{j \in \mathbb{J}},\{\Psi_j\}_{j \in \mathbb{J}}) $ is an orthonormal basis in  $ \operatorname{Hom}^*_\mathscr{A}(\mathscr{E}, \mathscr{E}_0)$, then it is a Riesz basis.
\item If $ (\{A_j=F_jU\}_{j \in \mathbb{J}},\{\Psi_j=F_jV\}_{j \in \mathbb{J}}) $ is a Riesz  basis in  $ \operatorname{Hom}^*_\mathscr{A}(\mathscr{E}, \mathscr{E}_0)$, then it  is a (hvf)  with optimal frame bounds $ \|(V^*U)^{-1}\|^{-1}$ and  $\|V^*U\| $.
\end{enumerate}
\end{proposition}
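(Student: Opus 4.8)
The plan is to transplant the Hilbert-space argument of Theorem \ref{ONBIMPLIESOVF} into the C*-module setting, taking care that scalars are now replaced by positive invertible central elements of $\mathscr{A}$ and that all convergence is understood in the strict topology on $\operatorname{End}^*_\mathscr{A}(\mathscr{E})$.

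For (i), I would first invoke the symmetry of the orthonormal-basis definition to assume that $\{A_j\}_{j\in\mathbb{J}}$ is the orthonormal basis, so there is a sequence $\{c_j\}_{j\in\mathbb{J}}$ of positive invertible elements in the center of $\mathscr{A}$ with $0<\inf\{\|c_j\|\}_{j\in\mathbb{J}}\leq\sup\{\|c_j\|\}_{j\in\mathbb{J}}<\infty$ and $\Psi_j=c_jA_j$. Set $F_j\coloneqq A_j$, $U\coloneqq I_\mathscr{E}$, and $V\coloneqq\sum_{j\in\mathbb{J}}c_jA_j^*A_j$. The existence of $V$ as an adjointable homomorphism follows exactly as in the proof of Theorem \ref{GBEOVHM}, where the same series was shown to converge strictly. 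Using the orthonormality relation $A_jA_k^*=\delta_{j,k}I_{\mathscr{E}_0}$ I would check $F_jU=A_j$ and $F_jV=\sum_{k\in\mathbb{J}}c_kA_jA_k^*A_k=c_jA_j=\Psi_j$. Because each $c_j$ is central, positive and invertible, $V$ is positive invertible with inverse $\sum_{j\in\mathbb{J}}c_j^{-1}A_j^*A_j$, and $V^*U=V\geq0$; this exhibits $(\{A_j\}_{j\in\mathbb{J}},\{\Psi_j\}_{j\in\mathbb{J}})$ as a Riesz basis.

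For (ii), writing $A_j=F_jU$, $\Psi_j=F_jV$ with $\{F_j\}_{j\in\mathbb{J}}$ orthonormal and $U,V$ invertible with $V^*U\geq0$, I would first establish that the analysis homomorphisms converge strictly, namely $\theta_A=\theta_FU$ and $\theta_\Psi=\theta_FV$. The orthonormality of $\{F_j\}_{j\in\mathbb{J}}$ gives $\langle\theta_Fx,\theta_Fx\rangle=\sum_{j\in\mathbb{J}}\langle F_jx,F_jx\rangle=\langle x,x\rangle$ for all $x\in\mathscr{E}$, so by polarization $\theta_F^*\theta_F=I_\mathscr{E}$. Then $S_{A,\Psi}=\theta_\Psi^*\theta_A=V^*\theta_F^*\theta_FU=V^*U$, which is positive invertible by hypothesis, confirming that $(\{A_j\}_{j\in\mathbb{J}},\{\Psi_j\}_{j\in\mathbb{J}})$ is a (hvf); the optimal bounds $\|(V^*U)^{-1}\|^{-1}$ and $\|V^*U\|$ are then read off from the formulas $a=\|S_{A,\Psi}^{-1}\|^{-1}$ and $b=\|S_{A,\Psi}\|$ recorded in Definition \ref{HMDEFINITION1}.

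The main obstacle I anticipate is technical rather than conceptual, and lies in the convergence bookkeeping: in the module setting one must guarantee that the defining series for $V$, for $\theta_A$ and for $\theta_\Psi$ converge in the strict topology to adjointable homomorphisms, not merely in a pointwise sense, and that the centrality of the $c_j$ is genuinely used to keep $V=\sum_{j\in\mathbb{J}}c_jA_j^*A_j$ positive and to produce its inverse $\sum_{j\in\mathbb{J}}c_j^{-1}A_j^*A_j$. The $\mathscr{A}$-valued inner products force the positivity and invertibility assertions to be verified through the order structure of $\mathscr{A}$, using that the positive elements form a cone, and the polarization step yielding $\theta_F^*\theta_F=I_\mathscr{E}$ must be performed with $\mathscr{A}$-valued inner products in place of scalar ones.
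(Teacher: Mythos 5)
Your proposal is correct and follows essentially the same route as the paper: for (i) the same choice $F_j=A_j$, $U=I_\mathscr{E}$, $V=\sum_{j\in\mathbb{J}}c_jA_j^*A_j$ with inverse $\sum_{j\in\mathbb{J}}c_j^{-1}A_j^*A_j$, and for (ii) the same factorizations $\theta_A=\theta_FU$, $\theta_\Psi=\theta_FV$ leading to $S_{A,\Psi}=V^*U$. The only cosmetic difference is that you pass through polarization to get $\theta_F^*\theta_F=I_\mathscr{E}$ while the paper computes $\sum_{j\in\mathbb{J}}V^*F_j^*F_jU=V^*U$ directly; both rest on the same orthonormality identity $\sum_{j\in\mathbb{J}}\langle F_jx,F_jx\rangle=\langle x,x\rangle$.
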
 
\begin{proof}
\begin{enumerate}[\upshape(i)]
\item We may assume  $ \{A_j\}_{j \in \mathbb{J}}$ is an orthonormal basis.  Then there exists a sequence  $\{c_j\}_{j \in \mathbb{J}} $  of   positive invertible elements in the center of $ \mathscr{A}$ such that  $ 0<\inf\{\|c_j\|\}_{j \in \mathbb{J}}\leq \sup\{\|c_j\|\}_{j \in \mathbb{J}}<\infty$   and $ \Psi_j=c_jA_j, \forall j \in \mathbb{J}.$ Define $ F_j\coloneqq A_j, \forall j \in \mathbb{J},  U\coloneqq I_\mathscr{E}$ and  $ V \coloneqq \sum_{j\in \mathbb{J}}c_jA_j^*A_j.$ Since $c_j$'s are in the center  of $ \mathscr{A}$ and $ A_j$'s are orthonormal,  $ V$ is well-defined bounded adjointable homomorphism (with bound $ \sup\{\|c_j\|\}_{j \in \mathbb{J}}$). Then $ F_jU=A_j, F_jV=\sum_{k\in \mathbb{J}}c_kA_jA_k^*A_k=c_jA_j, \forall j \in \mathbb{J}.$  Now positivity of  $c_j $'s imply   $ V$ is positive invertible, whose inverse is $\sum_{j\in \mathbb{J}}c_j^{-1}A_j^*A_j.$ Note that $ V^*U=V\geq 0.$
 \item For every finite subset $\mathbb{S} $ of $\mathbb{J}$ and $ x \in \mathscr{E},$ we get  $ \|\sum_{j\in \mathbb{S}}L_jF_jx\|^2=\|\sum_{j\in \mathbb{S}}\langle F_jx, F_jx\rangle\|$ and this converges to $ \|x\|^2.$ Therefore $ \theta_A=\theta_FU$ exists and is bounded adjointable. Also, $ \|\sum_{j\in \mathbb{S}}L_j\Psi_jx\|^2=\|\sum_{j\in \mathbb{S}} c_j^2\langle A_jx,A_jx\rangle\| \leq (\sup\{\|c_j\|^2\}_{j \in \mathbb{J}})\|\sum_{j\in \mathbb{S}}\langle A_jx, A_jx\rangle \|  $ and this converges. Therefore $ \theta_\Psi=\theta_FV$ is also bounded  adjointable. Now $ S_{A,\Psi}= \sum_{j\in\mathbb{J}}V^*F_j^*F_jU=V^*U$ which is positive invertible. 
\end{enumerate}
 \end{proof} 
\begin{theorem}
 Let $ (\{A_j=F_jU\}_{j \in \mathbb{J}},\{\Psi_j=F_jV\}_{j \in \mathbb{J}}) $ be  a Riesz  basis in  $ \operatorname{Hom}^*_\mathscr{A}(\mathscr{E}, \mathscr{E}_0).$
\begin{enumerate}[\upshape(i)]
\item There exist unique  $ \{B_j\}_{j\in\mathbb{J}}, \{\Phi_j\}_{j\in\mathbb{J}}$ in $\operatorname{Hom}^*_\mathscr{A}(\mathscr{E}, \mathscr{E}_0) $ such that 
$$ I_\mathscr{E}= \sum\limits_{j\in\mathbb{J}}B_j^*A_j = \sum\limits_{j\in\mathbb{J}}\Phi_j^*\Psi_j, $$ and $( \{B_j\}_{j\in\mathbb{J}}, \{\Phi_j\}_{j\in\mathbb{J}})$ is Riesz (hvf).
\item $\{x \in \mathscr{E}: A_jx=0, \forall j \in \mathbb{J}\} =\{0\}= \{x \in \mathscr{E}: \Psi_jx=0, \forall j \in \mathbb{J}\} $. If $ VU^*\geq0,$ then there are real $ a, b >0$ such that for every finite subset $ \mathbb{S}$ of $ \mathbb{J}$,
$$ a\sum\limits_{j\in \mathbb{S}}c_jc_j^*\langle x, x \rangle \leq \left\langle \sum\limits_{j\in\mathbb{S}}c_jA^*_jx ,\sum\limits_{k\in\mathbb{S}}c_k\Psi_k^*x  \right \rangle \leq b\sum\limits_{j\in \mathbb{S}}c_jc_j^*\langle x, x \rangle, ~\forall x \in \mathscr{E},\forall c_j\in \mathscr{A}, \forall j\in \mathbb{S}.$$ 
\end{enumerate}
\end{theorem}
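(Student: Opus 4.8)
The plan is to transport the proof of the corresponding Hilbert-space Riesz-basis theorem of Section~\ref{MK} into the Hilbert C*-module setting, replacing scalar inner products by their $\mathscr{A}$-valued counterparts and scalar coefficients by elements $c_j\in\mathscr{A}$. Throughout I would exploit two features of the module structure: the orthonormality relation $\langle F_j^*y,F_k^*z\rangle=\delta_{j,k}\langle y,z\rangle$ (equivalently $F_jF_k^*=\delta_{j,k}I_{\mathscr{E}_0}$) for the fixed orthonormal basis $\{F_j\}_{j\in\mathbb{J}}$, and the fact that $U,V,F_j$ are adjointable $\mathscr{A}$-linear homomorphisms, so that scalar multiplication by any $c_j$ passes through $U^*,V^*,F_j^*$ freely.

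For part~(i), I would set $B_j\coloneqq F_j(U^{-1})^*$ and $\Phi_j\coloneqq F_j(V^{-1})^*$ for all $j$. The reconstruction identities then follow by direct computation: $\sum_{j}B_j^*A_j=\sum_j U^{-1}F_j^*F_jU=U^{-1}I_\mathscr{E}U=I_\mathscr{E}$, and likewise $\sum_j\Phi_j^*\Psi_j=I_\mathscr{E}$, the series converging in the strict topology because $\{F_j\}$ is orthonormal. For uniqueness, if $\{C_j\},\{\Xi_j\}$ also satisfy $I_\mathscr{E}=\sum_j C_j^*A_j=\sum_j\Xi_j^*\Psi_j$, then $\sum_j C_j^*F_jU=I_\mathscr{E}$ gives $\sum_j C_j^*F_j=U^{-1}$; right-multiplying by $F_k^*$ and using orthonormality yields $C_k^*=U^{-1}F_k^*=B_k^*$, and symmetrically $\Xi_k=\Phi_k$. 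Finally, to see that $(\{B_j\},\{\Phi_j\})$ is a Riesz (hvf) I would display it as a Riesz basis with parameters $\widetilde U=(U^{-1})^*$, $\widetilde V=(V^{-1})^*$ and verify the positivity requirement $\widetilde V^*\widetilde U=V^{-1}(U^*)^{-1}=(U^*V)^{-1}=\big((V^*U)^*\big)^{-1}\ge 0$, using that the inverse of a positive invertible adjointable operator is positive, and then invoke the module analogue of ``Riesz basis $\Rightarrow$ Riesz (hvf).''

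For part~(ii), the triviality of the kernels is immediate from invertibility: if $A_jx=F_j(Ux)=0$ for all $j$, then orthonormality of $\{F_j\}$ forces $\langle Ux,Ux\rangle=\sum_j\langle F_j(Ux),F_j(Ux)\rangle=0$, so $Ux=0$ and hence $x=0$; the same argument applies to $\Psi$. For the two-sided inequality I would abbreviate $a\coloneqq\sum_{j\in\mathbb{S}}c_jF_j^*x$ and rewrite, using adjointability and $A_j^*=U^*F_j^*$, $\Psi_j^*=V^*F_j^*$, the middle term as $\big\langle\sum_{j}c_jA_j^*x,\sum_k c_k\Psi_k^*x\big\rangle=\langle U^*a,V^*a\rangle=\langle VU^*a,a\rangle$. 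Since $VU^*\ge 0$ is invertible, the operator estimates $\|(VU^*)^{-1}\|^{-1}I_\mathscr{E}\le VU^*\le\|VU^*\|I_\mathscr{E}$ sandwich $\langle VU^*a,a\rangle$ between $\|(VU^*)^{-1}\|^{-1}\langle a,a\rangle$ and $\|VU^*\|\langle a,a\rangle$, and a last application of orthonormality computes $\langle a,a\rangle=\sum_{j\in\mathbb{S}}c_j\langle x,x\rangle c_j^*$. Choosing $a=\|(VU^*)^{-1}\|^{-1}$ and $b=\|VU^*\|$ then gives the claim.

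The main obstacle I anticipate lies entirely in part~(ii), in making the $\mathscr{A}$-valued inequalities rigorous where the Hilbert-space proof simply ``takes norms.'' Two points need care. First, the sandwich $\|T^{-1}\|^{-1}\langle a,a\rangle\le\langle Ta,a\rangle\le\|T\|\langle a,a\rangle$ for a positive invertible $T\in\operatorname{End}^*_\mathscr{A}(\mathscr{E})$ must be deduced from the operator inequality $\|T^{-1}\|^{-1}I_\mathscr{E}\le T\le\|T\|I_\mathscr{E}$ at the level of $\mathscr{A}$-valued inner products, which is valid but is the step that genuinely uses the module order structure. Second, the natural computation produces $\sum_{j}c_j\langle x,x\rangle c_j^*$, whereas the statement is phrased with $\sum_j c_jc_j^*\langle x,x\rangle$; reconciling this noncommutative ordering is the delicate final point, and I expect it to hinge on the coefficients (or the relevant inner-product values) being central, consistent with the convention under which the orthonormal and Riesz bases were set up earlier in this section.
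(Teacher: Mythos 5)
Your proposal is correct and follows the same route the paper intends: the paper states this module theorem without its own proof, explicitly deferring to the Hilbert-space argument of Section \ref{MK}, and your choices $B_j=F_j(U^{-1})^*$, $\Phi_j=F_j(V^{-1})^*$, the uniqueness argument via right-multiplication by $F_k^*$ using $F_jF_k^*=\delta_{j,k}I_{\mathscr{E}_0}$, the positivity computation $\widetilde V^*\widetilde U=((V^*U)^*)^{-1}\ge 0$, and the sandwich $\|(VU^*)^{-1}\|^{-1}\langle a,a\rangle\le\langle VU^*a,a\rangle\le\|VU^*\|\langle a,a\rangle$ with $a=\sum_{j\in\mathbb{S}}c_jF_j^*x$ are exactly the transported versions of that proof. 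Your closing worry is legitimate but is a defect of the printed statement rather than of your argument: the computation honestly yields bounds in terms of $\sum_{j\in\mathbb{S}}c_j\langle x,x\rangle c_j^*$, whereas the form $\sum_{j\in\mathbb{S}}c_jc_j^*\langle x,x\rangle$ appearing in the theorem is a product of two positive elements that need not even be self-adjoint, and no centrality is available since the $c_j$ range over all of $\mathscr{A}$; this is a carry-over from the scalar case, so you should prove (and state) the inequality in the first form.
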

\begin{proposition}
Let $ (\{A_j\}_{j\in \mathbb{J}}, \{\Psi_j\}_{j\in \mathbb{J}}) $  be a  (hvf) in    $ \operatorname{Hom}^*_\mathscr{A}(\mathscr{E}, \mathscr{E}_0) $. Then the bounded adjointable left-inverses of 
\begin{enumerate}[\upshape(i)]
\item $ \theta_A$ are precisely   $S_{A,\Psi}^{-1}\theta_\Psi^*+U(I_{\mathscr{H}_\mathscr{A}\otimes\mathscr{E}_0}-\theta_AS_{A,\Psi}^{-1}\theta_\Psi^*)$, where $U\in \operatorname{Hom}^*_\mathscr{A}(\mathscr{H}_\mathscr{A}\otimes\mathscr{E}_0, \mathscr{E})$.
\item $ \theta_\Psi$ are precisely  $S_{A,\Psi}^{-1}\theta_A^*+V(I_{\mathscr{H}_\mathscr{A}\otimes\mathscr{E}_0}-\theta_\Psi S_{A,\Psi}^{-1}\theta_A^*)$, where $V\in \operatorname{Hom}^*_\mathscr{A}(\mathscr{H}_\mathscr{A}\otimes\mathscr{E}_0, \mathscr{E})$.
\end{enumerate}	
\end{proposition}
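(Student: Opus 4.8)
The plan is to mirror, almost verbatim, the proof of the corresponding Hilbert space statement in Section~\ref{MK} (the proposition describing the left-inverses of the analysis operators), the only genuinely new ingredient being the need to track \emph{adjointability} at each step, since in the Hilbert C*-module setting boundedness alone does not guarantee adjointability. The essential algebraic input I would invoke is the module analogue of Proposition~\ref{2.2}, namely the identities $S_{A,\Psi} = \theta_\Psi^*\theta_A = \theta_A^*\theta_\Psi$ on $\operatorname{End}^*_\mathscr{A}(\mathscr{E})$, together with the fact that $S_{A,\Psi}$ is positive, invertible and adjointable, so that $S_{A,\Psi}^{-1}$ is again an adjointable homomorphism; these follow exactly as in Section~\ref{MK}.

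I would prove (ii) and note that (i) is symmetric (interchange the roles of $A$ and $\Psi$). For the inclusion $(\Leftarrow)$, take any $V \in \operatorname{Hom}^*_\mathscr{A}(\mathscr{H}_\mathscr{A}\otimes\mathscr{E}_0, \mathscr{E})$ and compute, using $\theta_A^*\theta_\Psi = S_{A,\Psi}$,
\[
\left(S_{A,\Psi}^{-1}\theta_A^*+V\bigl(I_{\mathscr{H}_\mathscr{A}\otimes\mathscr{E}_0}-\theta_\Psi S_{A,\Psi}^{-1}\theta_A^*\bigr)\right)\theta_\Psi = I_\mathscr{E}+V\theta_\Psi - V\theta_\Psi S_{A,\Psi}^{-1}\theta_A^*\theta_\Psi = I_\mathscr{E},
\]
since the last two terms cancel once $\theta_A^*\theta_\Psi$ is replaced by $S_{A,\Psi}$. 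The displayed homomorphism is adjointable because it is assembled from $S_{A,\Psi}^{-1}$, $\theta_A^*$, $\theta_\Psi$ and the adjointable $V$ by composition, addition and scalar operations, each of which preserves adjointability; hence it is a bona fide bounded adjointable left-inverse of $\theta_\Psi$.

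For $(\Rightarrow)$, let $L \in \operatorname{Hom}^*_\mathscr{A}(\mathscr{H}_\mathscr{A}\otimes\mathscr{E}_0, \mathscr{E})$ be an arbitrary bounded adjointable left-inverse of $\theta_\Psi$, so that $L\theta_\Psi = I_\mathscr{E}$. Setting $V := L$, I would substitute into the candidate formula and use $L\theta_\Psi = I_\mathscr{E}$ to collapse it:
\[
S_{A,\Psi}^{-1}\theta_A^*+L\bigl(I_{\mathscr{H}_\mathscr{A}\otimes\mathscr{E}_0}-\theta_\Psi S_{A,\Psi}^{-1}\theta_A^*\bigr) = S_{A,\Psi}^{-1}\theta_A^* + L - L\theta_\Psi S_{A,\Psi}^{-1}\theta_A^* = S_{A,\Psi}^{-1}\theta_A^* + L - S_{A,\Psi}^{-1}\theta_A^* = L,
\]
which exhibits $L$ as a member of the prescribed family and completes (ii).

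The only point requiring care — and the place I expect a referee to scrutinize — is adjointability rather than mere boundedness: I would make explicit that $I_{\mathscr{H}_\mathscr{A}\otimes\mathscr{E}_0}-\theta_\Psi S_{A,\Psi}^{-1}\theta_A^*$ lies in $\operatorname{End}^*_\mathscr{A}(\mathscr{H}_\mathscr{A}\otimes\mathscr{E}_0)$ and that the class of adjointable homomorphisms is closed under the operations used, so that the parametrizing homomorphism $V$ ranges over $\operatorname{Hom}^*_\mathscr{A}(\mathscr{H}_\mathscr{A}\otimes\mathscr{E}_0, \mathscr{E})$ exactly. No convergence or positivity argument beyond those already secured in the definition of a (hvf) is needed, so the proof is purely formal once the module version of $S_{A,\Psi}=\theta_A^*\theta_\Psi$ is in hand.
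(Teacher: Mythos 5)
Your proof is correct and follows essentially the same route as the paper: the paper proves the Hilbert space version of this statement in Section~\ref{MK} by exactly the two computations you give (proving (ii) via $\theta_A^*\theta_\Psi=S_{A,\Psi}$ and setting $V\coloneqq L$ for the converse), and for the module version it simply declares the proof to be the same, which is precisely your argument with the adjointability bookkeeping made explicit. Your added remark that $S_{A,\Psi}^{-1}$ and the assembled expressions remain adjointable is the only extra point the module setting requires, and it is handled correctly.
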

It is well-known that,  in contrast with Hilbert spaces, a closed subspace of a Hilbert C*-module need not be orthogonally complemented. With regard to this,  we are going to use the following theorem  in the next result.
\begin{theorem}(cf. \cite{MANUILOV1})\label{MANUILOV2}
If   $T \in \operatorname{Hom}^*_\mathscr{A}(\mathscr{E}, \mathscr{E}_0) $ is such that $T(\mathscr{E})$ is closed, then $ \operatorname{Ker}(T)$ (resp. $ T(\mathscr{E})$) is an orthogonally complementable submodule in $ \mathscr{E} $ (resp. $ \mathscr{E}_0$).
\end{theorem}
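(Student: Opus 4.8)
The plan is to reduce the whole statement to a spectral-gap property of the positive operators $T^*T\in\operatorname{End}^*_\mathscr{A}(\mathscr{E})$ and $TT^*\in\operatorname{End}^*_\mathscr{A}(\mathscr{E}_0)$, and then to manufacture the complementing projections by continuous functional calculus. First I would record the kernel identities $\operatorname{Ker}(T^*T)=\operatorname{Ker}(T)$ and $\operatorname{Ker}(TT^*)=\operatorname{Ker}(T^*)$, which follow at once from $\langle T^*Tx,x\rangle=\langle Tx,Tx\rangle$ together with positivity of the module inner product, along with the general adjointability identity $\operatorname{Ker}(T)=(T^*(\mathscr{E}_0))^\perp$. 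I would also invoke the standard equivalences for adjointable maps: $T(\mathscr{E})$ is closed iff $T^*(\mathscr{E}_0)$ is closed, and in that case $TT^*(\mathscr{E}_0)=T(\mathscr{E})$ and $T^*T(\mathscr{E})=T^*(\mathscr{E}_0)$, so both $\operatorname{ran}(TT^*)$ and $\operatorname{ran}(T^*T)$ are closed.

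The key step is to show that closedness of $T(\mathscr{E})$ forces $0$ to be an isolated point of the spectrum $\operatorname{sp}(TT^*)$ (computed in the unitization of $\operatorname{End}^*_\mathscr{A}(\mathscr{E}_0)$), and symmetrically for $T^*T$. Granting such a gap, say $\operatorname{sp}(TT^*)\subseteq\{0\}\cup[\delta,\infty)$ for some $\delta>0$, I would take the continuous function $h$ with $h(0)=0$ and $h\equiv 1$ on $[\delta,\infty)$ and set $Q\coloneqq h(TT^*)$. Then $Q=Q^*=Q^2$ is an adjointable idempotent, i.e.\ an orthogonal projection in $\operatorname{End}^*_\mathscr{A}(\mathscr{E}_0)$, whose range is $\overline{TT^*(\mathscr{E}_0)}=T(\mathscr{E})$ and whose kernel is $\operatorname{Ker}(TT^*)=\operatorname{Ker}(T^*)$. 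Hence $\mathscr{E}_0=T(\mathscr{E})\oplus\operatorname{Ker}(T^*)$ is an orthogonal decomposition, so $T(\mathscr{E})$ is orthogonally complementable.

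For the kernel I would run the identical construction with $T^*T$ on $\mathscr{E}$: the analogous function $f$ yields a projection $P\coloneqq f(T^*T)\in\operatorname{End}^*_\mathscr{A}(\mathscr{E})$ with range $\overline{T^*(\mathscr{E}_0)}=T^*(\mathscr{E}_0)$ and kernel $\operatorname{Ker}(T^*T)=\operatorname{Ker}(T)$. Then $I_\mathscr{E}-P$ is again an orthogonal projection, it maps onto $\operatorname{Ker}(T)$, and its complementary range is $T^*(\mathscr{E}_0)$; this exhibits $\operatorname{Ker}(T)$ as orthogonally complementable in $\mathscr{E}$, with $\mathscr{E}=\operatorname{Ker}(T)\oplus T^*(\mathscr{E}_0)$. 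The functional-calculus construction has the advantage that $P$ and $Q$ are automatically $\mathscr{A}$-linear and adjointable, so no separate verification of module-linearity of the projections is needed.

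The main obstacle is the spectral-gap claim, since the usual Hilbert-space device of producing approximate eigenvectors at spectral values accumulating at $0$ is unavailable: a Hilbert C*-module carries no guaranteed eigenvectors. Instead I would work inside the commutative C*-algebra generated by $TT^*$. A standard consequence of the open mapping theorem, applied to the bounded surjection of $\mathscr{E}_0$ onto the Banach space $T(\mathscr{E})=\overline{TT^*(\mathscr{E}_0)}$, is a lower bound $\|TT^*z\|\ge c\,\operatorname{dist}(z,\operatorname{Ker}(TT^*))$ for some $c>0$. If $0$ were an accumulation point of $\operatorname{sp}(TT^*)$, choosing bump functions $g_n$ supported near spectral values $\lambda_n\downarrow 0$ but bounded away from $0$ gives $g_n(TT^*)\neq 0$ with $\|TT^*\,g_n(TT^*)\|\le\lambda_n\|g_n(TT^*)\|$, while every element of $g_n(TT^*)(\mathscr{E}_0)$ is spectrally supported away from $0$, hence essentially orthogonal to $\operatorname{Ker}(TT^*)$; feeding such elements into the estimate forces $\lambda_n\ge c$ up to the support width, contradicting $\lambda_n\downarrow 0$. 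Making this contradiction airtight — in particular controlling $\operatorname{dist}(\,\cdot\,,\operatorname{Ker}(TT^*))$ for range vectors of $g_n(TT^*)$ purely through functional calculus — is the delicate point; the symmetric argument for $T^*T$ then completes the proof of both complementability assertions.
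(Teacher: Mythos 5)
First, note that the paper does not prove this result at all: it is quoted (``cf.~\cite{MANUILOV1}'') from Manuilov--Troitsky, so there is no internal proof to compare against. Your overall architecture --- show $0$ is isolated in the spectrum of the positive operators $T^*T$ and $TT^*$, then take the spectral (range/kernel) projections by continuous functional calculus --- is in fact the standard route to this theorem, and the functional-calculus endgame you describe is fine: $\chi_{\{0\}}(T^*T)$ is an adjointable self-adjoint idempotent with range exactly $\operatorname{Ker}(T^*T)=\operatorname{Ker}(T)$, which settles complementability of the kernel once the gap is known.

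The genuine gap is in how you propose to get the spectral gap, and it is partly circular. You invoke as ``standard equivalences'' that $T(\mathscr{E})$ closed implies $T^*(\mathscr{E}_0)$ closed and that $TT^*(\mathscr{E}_0)=T(\mathscr{E})$, $T^*T(\mathscr{E})=T^*(\mathscr{E}_0)$. These are not elementary facts available before the theorem: in the Hilbert-space case the identity $\operatorname{ran}(TT^*)=\operatorname{ran}(T)$ is derived from the orthogonal decomposition $\mathscr{E}=\operatorname{Ker}(T)\oplus\overline{T^*(\mathscr{E}_0)}$, which is precisely the complementability you are trying to establish; in Lance's and Manuilov--Troitsky's treatments these identities are \emph{conclusions} of the same theorem. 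Consequently your open-mapping estimate is extracted from the wrong map: you apply it to $TT^*$ as a surjection onto $T(\mathscr{E})=\overline{TT^*(\mathscr{E}_0)}$, but surjectivity of $TT^*$ onto $T(\mathscr{E})$ (equivalently, closedness of $\operatorname{ran}(TT^*)$) is exactly what is not yet known. The estimate that closedness of $T(\mathscr{E})$ actually gives you, via the open mapping theorem applied to the surjection $T:\mathscr{E}\to T(\mathscr{E})$, is $\|Tx\|\geq c\,\operatorname{dist}(x,\operatorname{Ker}(T))$, equivalently $\|T^*y\|\geq K^{-1}\|y\|$ for $y\in T(\mathscr{E})$. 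The repair is to run your bump-function argument with \emph{these} inequalities: for $x=g_n(T^*T)y$ one has $\|T^*Tx\|\geq\|Tx\|^2/\|x\|\geq c^2\operatorname{dist}(x,\operatorname{Ker}(T))^2/\|x\|$, and your own observation that such $x$ satisfies $\operatorname{dist}(x,\operatorname{Ker}(T^*T))\geq\|x\|$ closes the contradiction with $\|T^*Tx\|\leq 2\lambda_n\|x\|$; alternatively, for $TT^*$ use that $g_n(TT^*)(\mathscr{E}_0)\subseteq\overline{TT^*(\mathscr{E}_0)}\subseteq T(\mathscr{E})$, where $\|T^*\cdot\|$ is bounded below. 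With that substitution the proof goes through and the remaining identifications ($\operatorname{ran}$ of the projection equals $T(\mathscr{E})$, closedness of $T^*(\mathscr{E}_0)$) become consequences rather than hypotheses.
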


\begin{proposition}\label{HMOP}
For every $ \{A_j\}_{j \in \mathbb{J}}  \in \mathscr{F}_\Psi$,
 \begin{enumerate}[ \upshape (i)]
\item $ \theta_A^* \theta_A =  \sum_{j\in \mathbb{J}} A_j^*A_j$.
\item $ S_{A, \Psi} = \theta_\Psi^*\theta_A=\theta_A^*\theta_\Psi =S_{\Psi,A}.$
\item $(\{A_j\}_{j \in \mathbb{J}}, \{\Psi_j \}_ {j \in \mathbb{J}})$ is Parseval  if and only if $  \theta_\Psi^*\theta_A =I_\mathscr{E}.$ 
\item  $(\{A_j\}_{j \in \mathbb{J}}, \{\Psi_j \}_ {j \in \mathbb{J}})$ is Parseval  if and only if $ \theta_A\theta_\Psi^* $ is idempotent.
\item $  A_j=L_j^*\theta_A, \forall j\in \mathbb{J}.$
\item $\theta_AS_{A,\Psi}^{-1}\theta_\Psi^* $ is idempotent.
\item $\theta_A $ and $ \theta_\Psi$ are injective and  their  ranges are closed.
\item  $\theta_A^* $ and $ \theta_\Psi^*$ are surjective.
 \item $\operatorname{Ker}(\theta_A)$ and $\operatorname{ Ker}(\theta_\Psi)$ (resp. $\theta_A(\mathscr{E})$ and $\theta_\Psi(\mathscr{E})$) are orthogonally complementable submodules of $\mathscr{E}$ (resp. $\mathscr{H}_\mathscr{A}\otimes \mathscr{E}_0$).
\end{enumerate}
\end{proposition}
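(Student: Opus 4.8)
The plan is to mirror the proof of Proposition \ref{2.2} for parts (i)--(viii), replacing every scalar inner product by the $\mathscr{A}$-valued inner product and reading strong-operator convergence as strict-topology convergence; the genuinely new ingredient is (ix), which will rest on Theorem \ref{MANUILOV2}. First I would record that Definition \ref{HMDEFINITION1} guarantees $\theta_A$ and $\theta_\Psi$ exist as adjointable homomorphisms, so their adjoints $\theta_A^*,\theta_\Psi^*$ are available and the manipulations below are legitimate. For (i), for $x,y\in\mathscr{E}$ I would compute
\[
\langle \theta_A^*\theta_A x, y\rangle = \left\langle \sum_{j}L_jA_jx, \sum_k L_kA_k y\right\rangle = \sum_{j,k}\langle L_k^*L_j A_j x, A_k y\rangle = \sum_j \langle A_j x, A_j y\rangle = \left\langle \sum_j A_j^* A_j x, y\right\rangle,
\]
using the module analogues of the relations $L_k^*L_j=\delta_{j,k}I_{\mathscr{E}_0}$ from (\ref{LEQUATION}); the same computation with $\theta_\Psi$ in the second slot gives (ii), and (iii) is then immediate from $S_{A,\Psi}=\theta_\Psi^*\theta_A$. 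Part (iv) would run the same squaring/idempotency argument as in Proposition \ref{2.2}, and (v) follows since $L_j^*$ is a homomorphism. For (vi) I would check directly that
\[
(\theta_A S_{A,\Psi}^{-1}\theta_\Psi^*)^2 = \theta_A S_{A,\Psi}^{-1}(\theta_\Psi^*\theta_A S_{A,\Psi}^{-1})\theta_\Psi^* = \theta_A S_{A,\Psi}^{-1}\theta_\Psi^*.
\]

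The injectivity and surjectivity assertions (vii)--(viii) I would deduce from the invertibility of $S_{A,\Psi}=\theta_\Psi^*\theta_A=\theta_A^*\theta_\Psi$ supplied by Definition \ref{HMDEFINITION1}(i): injectivity of $\theta_A$ (and of $\theta_\Psi$) is forced because $\theta_\Psi^*\theta_A$ is invertible, and dually $\theta_A^*$ (resp. $\theta_\Psi^*$) is surjective because it admits the right inverse $\theta_\Psi S_{A,\Psi}^{-1}$ (resp. $\theta_A S_{A,\Psi}^{-1}$). For the closed-range half of (vii) I would transplant the norm-convergence argument of Proposition \ref{2.2}(vii) verbatim: taking a sequence $\{x_n\}$ with $\theta_A x_n\to z$, one obtains $S_{A,\Psi}x_n\to\theta_\Psi^* z$, hence $x_n\to S_{A,\Psi}^{-1}\theta_\Psi^* z$, and applying $\theta_A$ shows $z=\theta_A(S_{A,\Psi}^{-1}\theta_\Psi^* z)\in\theta_A(\mathscr{E})$. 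This step survives the passage to modules because $\mathscr{E}$ is a Banach space and $S_{A,\Psi}$ is norm-invertible, so no completeness or positivity feature beyond the Banach structure is needed here.

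Finally, (ix) is where the module theory genuinely departs from the Hilbert-space case, since a closed submodule of a Hilbert C*-module need not be orthogonally complemented. Having established in (vii) that $\theta_A$ and $\theta_\Psi$ have closed ranges, I would invoke Theorem \ref{MANUILOV2} with $T=\theta_A$ and then $T=\theta_\Psi$ to conclude that $\operatorname{Ker}(\theta_A)$ and $\operatorname{Ker}(\theta_\Psi)$ are orthogonally complementable in $\mathscr{E}$, while $\theta_A(\mathscr{E})$ and $\theta_\Psi(\mathscr{E})$ are orthogonally complementable in $\mathscr{H}_\mathscr{A}\otimes\mathscr{E}_0$. I expect the main obstacle to be the bookkeeping that makes (vii) fully rigorous in the module setting, namely verifying that the defining relations among the $L_j$ and the strict-topology convergence of $\sum_j L_jL_j^*$ to the identity carry over intact; once the closed-range property is secured, (ix) reduces to a one-line application of Theorem \ref{MANUILOV2}.
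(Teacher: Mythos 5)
Your proposal is correct and follows exactly the route the paper takes: it transplants the proof of Proposition \ref{2.2} to the module setting for (i)--(viii) and obtains (ix) by applying Theorem \ref{MANUILOV2} to $\theta_A$ and $\theta_\Psi$, whose ranges are closed by (vii). The paper's own proof is precisely this (stated in one line), so no further comparison is needed.
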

\begin{proof}
Proof is similar to the corresponding proposition  in `operator-valued frames'. For (ix), we have to use (vii)  and Theorem \ref{MANUILOV2}.
\end{proof}
Like we noticed  in Hilbert space setting, the previous proposition  says Definition \ref{HMDEFINITION1} is equivalent to 
\begin{definition}
A collection $ \{A_j\}_{j \in \mathbb{J}} $   in $ \operatorname{Hom}^*_\mathscr{A}(\mathscr{E}, \mathscr{E}_0)$ is said to be a (hvf)  w.r.t. $ \{\Psi_j\}_{j \in \mathbb{J}}  $ in $ \operatorname{Hom}^*_\mathscr{A}(\mathscr{E}, \mathscr{E}_0) $ if there exist $ a, b, c , d>0$ such that 
\begin{enumerate}[\upshape(i)]
\item $\sum_{j\in \mathbb{J}}\Psi_j^*A_j=\sum_{j\in \mathbb{J}}A_j^*\Psi_j,$	
\item $a\langle x, x \rangle \leq\sum_{j\in \mathbb{J}}\langle A_jx, \Psi_jx \rangle  \leq b\langle x, x \rangle, \forall x \in \mathscr{E},$	
\item $\|\sum_{j\in \mathbb{J}}\langle  A_jx, A_jx\rangle \| \leq c\|x\|^2, \forall x \in \mathscr{E}; \|\sum_{j\in \mathbb{J}} \langle \Psi_jx, \Psi_jx \rangle \|  \leq d\|x\|^2, \forall x \in \mathscr{E}.$
\end{enumerate}
\end{definition}
We call the idempotent homomorphism  $P_{A,\Psi}\coloneqq\theta_AS_{A,\Psi}^{-1}\theta_\Psi^* $ as the  \textit{frame idempotent}.
\begin{definition}
A (hvf)  $(\{A_j\}_{j\in \mathbb{J}}, \{\Psi_j\}_{j\in \mathbb{J}})$  in  $\operatorname{Hom}^*_\mathscr{A}(\mathscr{E}, \mathscr{E}_0)$ is said to be a Riesz (hvf)  if $ P_{A,\Psi}= I_{\mathscr{H}_\mathscr{A}}\otimes I_{\mathscr{E}_0}$. A Parseval and  Riesz (hvf) (i.e., $\theta_A\theta_\Psi^*=I_{\mathscr{H}_\mathscr{A}}\otimes I_{\mathscr{E}_0} $  and $\theta_\Psi^*\theta_A=I_\mathscr{E} $) is called as an orthonormal (hvf).
\end{definition}
\begin{proposition}
 \begin{enumerate}[\upshape(i)]
 \item If $ (\{A_j\}_{j \in \mathbb{J}},\{\Psi_j\}_{j \in \mathbb{J}}) $ is a Riesz basis  in  $ \operatorname{Hom}^*_\mathscr{A}(\mathscr{E}, \mathscr{E}_0)$, then it is a Riesz (hvf).
\item If $ (\{A_j\}_{j \in \mathbb{J}},\{\Psi_j\}_{j \in \mathbb{J}}) $ is an orthonormal   basis in  $ \operatorname{Hom}^*_\mathscr{A}(\mathscr{E}, \mathscr{E}_0)$, then it is a Riesz (hvf).
\end{enumerate}
\end{proposition}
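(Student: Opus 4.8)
The plan is to mirror the Hilbert-space argument of Proposition \ref{RBIMPLIESRFOV}, transported into the module setting, using the immediately preceding proposition (that a Riesz basis, resp. an orthonormal basis, is a (hvf)) to supply the analysis homomorphisms and the frame homomorphism, and then computing the frame idempotent $P_{A,\Psi}$ explicitly to show it equals $I_{\mathscr{H}_\mathscr{A}}\otimes I_{\mathscr{E}_0}$. No new estimates are required; the content is purely the algebraic collapse of $P_{A,\Psi}$.

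For part (i) I would start from the Riesz basis data: an orthonormal basis $\{F_j\}_{j\in\mathbb{J}}$ in $\operatorname{Hom}^*_\mathscr{A}(\mathscr{E},\mathscr{E}_0)$ and invertible $U,V\in\operatorname{End}^*_\mathscr{A}(\mathscr{E})$ with $V^*U$ positive and $A_j=F_jU$, $\Psi_j=F_jV$. The preceding proposition already yields that this is a (hvf), and its proof records $\theta_A=\theta_FU$, $\theta_\Psi=\theta_FV$, and $S_{A,\Psi}=V^*U$. Substituting directly into $P_{A,\Psi}=\theta_AS_{A,\Psi}^{-1}\theta_\Psi^*$ gives $\theta_FU(V^*U)^{-1}V^*\theta_F^*$, and since $(V^*U)^{-1}=U^{-1}(V^*)^{-1}$ the central factor collapses to $U(V^*U)^{-1}V^*=I_\mathscr{E}$, leaving $P_{A,\Psi}=\theta_F\theta_F^*$. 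The final step invokes the orthonormality of $\{F_j\}$, which (by the module version of the equivalence recorded after the orthonormal-basis definition) gives $F_jF_k^*=\delta_{j,k}I_{\mathscr{E}_0}$, whence $\theta_F\theta_F^*=\sum_{j\in\mathbb{J}}L_jL_j^*=I_{\mathscr{H}_\mathscr{A}}\otimes I_{\mathscr{E}_0}$ in the strict topology — the module analogue of Equation (\ref{LEQUATION}).

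For part (ii) the cleanest route is simply to note that the preceding proposition already asserts that an orthonormal basis is a Riesz basis, so (ii) is immediate from (i). Alternatively I would argue directly: write $\Psi_j=c_jA_j$ with $c_j$ positive invertible in the center of $\mathscr{A}$, compute $S_{A,\Psi}=\sum_{j\in\mathbb{J}}\Psi_j^*A_j=\sum_{j\in\mathbb{J}}c_jA_j^*A_j$ (using $c_j^*=c_j$ and centrality), verify $S_{A,\Psi}^{-1}=\sum_{j\in\mathbb{J}}c_j^{-1}A_j^*A_j$ by multiplying and collapsing with $A_jA_k^*=\delta_{j,k}I_{\mathscr{E}_0}$, and then expand $P_{A,\Psi}=(\sum_kL_kA_k)(\sum_jc_j^{-1}A_j^*A_j)(\sum_lA_l^*c_lL_l^*)$; centrality lets me slide each $c_j$ past the homomorphisms, the orthonormality relations telescope the products term by term, and the computation terminates at $\sum_kL_kL_k^*=I_{\mathscr{H}_\mathscr{A}}\otimes I_{\mathscr{E}_0}$.

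The main obstacle, and the only genuinely module-specific care point, is justifying the commutations: in part (ii) the step $A_kc_j^{-1}=c_j^{-1}A_k$ requires $c_j$ to lie in the center of $\mathscr{A}$ (precisely why the module definition of orthonormal basis w.r.t.\ $\{\Psi_j\}$ demands central scalars), while throughout both parts the rearrangement and convergence of the series must be read in the strict topology rather than SOT. I would therefore lean on Proposition \ref{HMOP} for the identities $S_{A,\Psi}=\theta_\Psi^*\theta_A$ and the idempotency of $P_{A,\Psi}$, together with the established module relation $\sum_{j\in\mathbb{J}}L_jL_j^*=I_{\mathscr{H}_\mathscr{A}}\otimes I_{\mathscr{E}_0}$, to close the argument.
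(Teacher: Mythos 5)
Your proposal is correct and follows essentially the same route the paper takes: the paper leaves this module proposition unproved, referring to the Hilbert-space argument of Proposition \ref{RBIMPLIESRFOV}, and your computation $P_{A,\Psi}=\theta_FU(V^*U)^{-1}V^*\theta_F^*=\theta_F\theta_F^*=\sum_{j\in\mathbb{J}}L_jL_j^*$ (together with deducing (ii) from (i) via the orthonormal-basis-implies-Riesz-basis result) is exactly that argument transported to $\operatorname{Hom}^*_\mathscr{A}(\mathscr{E},\mathscr{E}_0)$. Your added care points — centrality of the $c_j$'s and reading the convergences in the strict topology — are precisely the module-specific justifications the paper's blanket remark presupposes.
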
 
\begin{proposition}
A (hvf) $(\{A_j\}_{j\in \mathbb{J}}, \{\Psi_j\}_{j\in \mathbb{J}}) $ in $ \operatorname{Hom}^*_\mathscr{A}(\mathscr{E}, \mathscr{E}_0)$ is a Riesz (hvf) if and only if $\theta_A(\mathscr{E})=\mathscr{H}_\mathscr{A}\otimes \mathscr{E}_0  $ if and only if $\theta_\Psi(\mathscr{E})=\mathscr{H}_\mathscr{A}\otimes \mathscr{E}_0.$
\end{proposition}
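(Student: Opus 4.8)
The plan is to mimic the proof of Proposition \ref{RIESZOVFCHARACTERIZATIONPROPOSITION}, since the argument there is purely a composition-of-maps computation and the only genuinely module-theoretic subtleties — closedness and orthogonal complementability of ranges, adjointability of $S_{A,\Psi}^{-1}$ — have already been secured in Proposition \ref{HMOP}. Throughout I would freely use the identities $S_{A,\Psi}=\theta_\Psi^*\theta_A=\theta_A^*\theta_\Psi$ and $P_{A,\Psi}=\theta_AS_{A,\Psi}^{-1}\theta_\Psi^*$ from Proposition \ref{HMOP}, together with the fact that $S_{A,\Psi}$ is a positive invertible homomorphism.

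First I would prove the implication ``Riesz (hvf) $\Rightarrow \theta_A(\mathscr{E})=\mathscr{H}_\mathscr{A}\otimes\mathscr{E}_0$''. Assuming $P_{A,\Psi}=I_{\mathscr{H}_\mathscr{A}}\otimes I_{\mathscr{E}_0}$, I would apply both sides to $\mathscr{E}$ and read off the chain of inclusions $\mathscr{H}_\mathscr{A}\otimes\mathscr{E}_0 = P_{A,\Psi}(\mathscr{E})=\theta_AS_{A,\Psi}^{-1}\theta_\Psi^*(\mathscr{E})\subseteq\theta_A(\mathscr{E})\subseteq\mathscr{H}_\mathscr{A}\otimes\mathscr{E}_0$, which forces equality.

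For the converse, suppose $\theta_A(\mathscr{E})=\mathscr{H}_\mathscr{A}\otimes\mathscr{E}_0$. Given an arbitrary $y$ in $\mathscr{H}_\mathscr{A}\otimes\mathscr{E}_0$, surjectivity provides $x\in\mathscr{E}$ with $y=\theta_Ax$, and then $y=\theta_AS_{A,\Psi}^{-1}S_{A,\Psi}x=\theta_AS_{A,\Psi}^{-1}\theta_\Psi^*\theta_Ax=P_{A,\Psi}y$; since $y$ was arbitrary, $P_{A,\Psi}=I_{\mathscr{H}_\mathscr{A}}\otimes I_{\mathscr{E}_0}$, so the frame is Riesz. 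Finally, the whole definition of (hvf) is symmetric in $A$ and $\Psi$ (indeed $P_{\Psi,A}=P_{A,\Psi}^*$), so replacing $A$ by $\Psi$ throughout yields the third equivalence $\theta_\Psi(\mathscr{E})=\mathscr{H}_\mathscr{A}\otimes\mathscr{E}_0$ at no extra cost.

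The step I expect to need the most care is confirming that these manipulations are legitimate over a C*-module rather than a Hilbert space: the inclusion $P_{A,\Psi}(\mathscr{E})\subseteq\theta_A(\mathscr{E})$ and the identity $y=P_{A,\Psi}y$ implicitly rely on $\theta_A$ having closed, orthogonally complementable range and on $S_{A,\Psi}^{-1}$ being adjointable — precisely the content of Proposition \ref{HMOP}(vii)--(ix). As long as those facts are invoked explicitly, no approximation, no appeal to inner-product positivity, and no Hilbert-space-specific projection argument is required, and the proof stays identical in form to the Hilbert space case.
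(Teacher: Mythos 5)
Your proof is correct and follows exactly the route the paper intends: the paper gives no separate argument here but defers to the proof of Proposition \ref{RIESZOVFCHARACTERIZATIONPROPOSITION}, which is precisely the inclusion chain and the computation $y=\theta_AS_{A,\Psi}^{-1}\theta_\Psi^*\theta_Ax=P_{A,\Psi}y$ that you reproduce, with the symmetry of the definition handling the $\theta_\Psi$ case. Your explicit appeal to Proposition \ref{HMOP} for closed range and adjointability is exactly the extra care the module setting requires (the only cosmetic point is that $P_{A,\Psi}$ acts on $\mathscr{H}_\mathscr{A}\otimes\mathscr{E}_0$ rather than on $\mathscr{E}$, a notational slip already present in the paper's Hilbert-space proof).
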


\begin{proposition}
A (hvf) $ (\{A_j\}_{j\in \mathbb{J}}, \{\Psi_j\}_{j\in \mathbb{J}}) $ in  $ \operatorname{Hom}^*_\mathscr{A}(\mathscr{E}, \mathscr{E}_0)$ is an orthonormal (hvf)  if and only if it is a Parseval (hvf) and $ A_j\Psi_k^*=\delta_{j,k}I_{\mathscr{E}_0},\forall j, k \in \mathbb{J}$. 
\end{proposition}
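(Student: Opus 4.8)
The plan is to mirror, step for step, the proof of the corresponding Hilbert space statement (the proposition characterizing orthonormal (ovf)), since every manipulation there is purely algebraic and survives the passage to Hilbert C*-modules once the relevant relations among the $L_j$ are in place. Recall that by definition an orthonormal (hvf) is one that is simultaneously Parseval and Riesz, i.e. $\theta_\Psi^*\theta_A = I_\mathscr{E}$ and $\theta_A\theta_\Psi^* = I_{\mathscr{H}_\mathscr{A}}\otimes I_{\mathscr{E}_0}$; thus the forward implication already delivers Parsevalness for free, and the only content there is to extract the biorthogonality relations $A_j\Psi_k^* = \delta_{j,k}I_{\mathscr{E}_0}$, while the reverse implication must recover the Riesz condition $\theta_A\theta_\Psi^* = I_{\mathscr{H}_\mathscr{A}}\otimes I_{\mathscr{E}_0}$.

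For the forward direction, I would apply the identity $\theta_A\theta_\Psi^* = I_{\mathscr{H}_\mathscr{A}}\otimes I_{\mathscr{E}_0}$ to an elementary tensor $e_m\otimes y$, $y \in \mathscr{E}_0$. Using $\Psi_k = L_k^*\theta_\Psi$ (the $\Psi$-analog of Proposition \ref{HMOP}(v)) together with $L_k^*(e_m\otimes y) = \delta_{k,m} y$, the action of $\theta_\Psi^*$ collapses to $\theta_\Psi^*(e_m\otimes y) = \Psi_m^* y$; feeding this into $\theta_A$ gives $\sum_{j}e_j\otimes (A_j\Psi_m^* y)$. Comparing this expansion with $e_m\otimes y$ and reading off the $j$-th coefficient by a further application of $L_j^*$ yields $A_j\Psi_m^* y = \delta_{j,m} y$ for all $y$, hence $A_j\Psi_m^* = \delta_{j,m}I_{\mathscr{E}_0}$.

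For the reverse direction, assuming Parsevalness ($\theta_\Psi^*\theta_A = I_\mathscr{E}$) and $A_j\Psi_k^* = \delta_{j,k}I_{\mathscr{E}_0}$, it remains only to verify the Riesz condition. I would compute directly $\theta_A\theta_\Psi^* = \left(\sum_j L_j A_j\right)\left(\sum_k \Psi_k^* L_k^*\right) = \sum_{j,k} L_j (A_j\Psi_k^*) L_k^* = \sum_j L_j L_j^* = I_{\mathscr{H}_\mathscr{A}}\otimes I_{\mathscr{E}_0}$, the last equality being the module analog of Equation (\ref{LEQUATION}). Together with the hypothesis $\theta_\Psi^*\theta_A = I_\mathscr{E}$ this is precisely the definition of an orthonormal (hvf).

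The computations are routine; the single point that genuinely uses the Hilbert C*-module structure, rather than a literal transcription of the Hilbert space argument, is the coefficientwise comparison in the forward direction. There I rely on the fact that $\{e_j\}_{j\in\mathbb{J}}$ is the standard orthonormal basis of $\mathscr{H}_\mathscr{A}$, so that $\langle e_j, e_k\rangle = \delta_{j,k} e$ and the $L_j$ are isometries with $L_j^* L_k = \delta_{j,k} I_{\mathscr{E}_0}$; this is what legitimizes extracting a coefficient via $L_j^*$ and makes the strict-topology convergence of $\sum_j L_j L_j^*$ to $I_{\mathscr{H}_\mathscr{A}}\otimes I_{\mathscr{E}_0}$ available. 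I expect the mild bookkeeping of strict-topology convergence and the interchange of the two summations to be the only place where one must invoke these module-specific facts rather than copy the Hilbert space proof verbatim.
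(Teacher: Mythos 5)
Your proof is correct and follows essentially the same route as the paper's proof of the Hilbert space version (the proposition characterizing orthonormal (ovf)), which is exactly what the paper intends for this section: evaluate $\theta_A\theta_\Psi^*$ at elementary tensors $e_m\otimes y$ and compare coefficients for the forward direction, and compute $\theta_A\theta_\Psi^*=\sum_j L_jL_j^*$ for the converse. The module-specific points you flag — $L_j^*L_k=\delta_{j,k}I_{\mathscr{E}_0}$ and the strict-topology convergence of $\sum_j L_jL_j^*$ to $I_{\mathscr{H}_\mathscr{A}}\otimes I_{\mathscr{E}_0}$ — are precisely the facts needed to transport the argument, so nothing is missing.
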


 \begin{theorem}
 Let $(\{A_j\}_{j\in \mathbb{J}},\{\Psi_j\}_{j\in \mathbb{J}} )$ be  a Parseval (hvf) in $ \operatorname{Hom}^*_\mathscr{A}(\mathscr{E}, \mathscr{E}_0)$  such that $ \theta_A(\mathscr{E})=\theta_\Psi(\mathscr{E})$ and $ P_{A,\Psi}$ is projection. Then there exist a Hilbert C*-module $ \mathscr{E}_1$ which contains $ \mathscr{E}$ isometrically and  bounded  homomorphisms $B_j,\Phi_j:\mathscr{E}_1\rightarrow \mathscr{E}_0, \forall j \in \mathbb{J} $ such that $(\{B_j\}_{j\in \mathbb{J}},\{\Phi_j\}_{j\in \mathbb{J}})$ is an orthonormal (hvf) in $ \operatorname{Hom}^*_\mathscr{A}(\mathscr{E}_1, \mathscr{E}_0)$ and $B_j|_{\mathscr{E}}=  A_j,\Phi_j|_{\mathscr{E}}=\Psi_j, \forall j \in \mathbb{J}$. 
 \end{theorem}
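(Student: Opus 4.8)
The plan is to transcribe the Hilbert space argument of Theorem \ref{OPERATORDILATION} formula-for-formula, the only genuinely new work being to make sense of the orthogonal complement $\theta_A(\mathscr{E})^\perp$ and of the complementary projection $P_{A,\Psi}^\perp$ in a setting where closed submodules need not be orthogonally complemented. Accordingly, the first task is to record the structural facts that license the construction. By Proposition \ref{HMOP}(vii) the range $\theta_A(\mathscr{E})=\theta_\Psi(\mathscr{E})$ is closed, and by Proposition \ref{HMOP}(ix) (equivalently Theorem \ref{MANUILOV2}) it is orthogonally complementable in $\mathscr{H}_\mathscr{A}\otimes\mathscr{E}_0$, so that $\mathscr{H}_\mathscr{A}\otimes\mathscr{E}_0=\theta_A(\mathscr{E})\oplus\theta_A(\mathscr{E})^\perp$. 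Since $P_{A,\Psi}=\theta_AS_{A,\Psi}^{-1}\theta_\Psi^*$ is idempotent (Proposition \ref{HMOP}(vi)) and satisfies $P_{A,\Psi}\theta_A=\theta_AS_{A,\Psi}^{-1}\theta_\Psi^*\theta_A=\theta_A$ by Proposition \ref{HMOP}(ii), its range is exactly $\theta_A(\mathscr{E})$; the hypothesis that $P_{A,\Psi}$ is a projection then identifies it as the orthogonal projection onto $\theta_A(\mathscr{E})$, whence $P_{A,\Psi}^\perp:=I_{\mathscr{H}_\mathscr{A}\otimes\mathscr{E}_0}-P_{A,\Psi}$ is the orthogonal projection onto $\theta_A(\mathscr{E})^\perp$ and is adjointable. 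This is the step I expect to be the main obstacle, and it is precisely where the two extra hypotheses ($\theta_A(\mathscr{E})=\theta_\Psi(\mathscr{E})$ and ``$P_{A,\Psi}$ is a projection'') are consumed; over a general C*-algebra neither closedness nor idempotency alone would supply the complement.

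With these in hand I would set $\mathscr{E}_1:=\mathscr{E}\oplus\theta_A(\mathscr{E})^\perp$, which is a Hilbert $\mathscr{A}$-module, and note that $\mathscr{E}\ni x\mapsto x\oplus 0\in\mathscr{E}_1$ preserves the $\mathscr{A}$-valued inner product, i.e.\ is the required isometric containment. Define
$$B_j(x\oplus g):=A_jx+L_j^*P_{A,\Psi}^\perp g,\qquad \Phi_j(x\oplus g):=\Psi_jx+L_j^*P_{A,\Psi}^\perp g,\quad \forall j\in\mathbb{J}.$$
Each is adjointable, being a sum of an adjointable $A_j$ (resp.\ $\Psi_j$) on $\mathscr{E}$ and the adjointable composite $L_j^*P_{A,\Psi}^\perp$ restricted to the complementable submodule $\theta_A(\mathscr{E})^\perp$, and each manifestly restricts to $A_j$ (resp.\ $\Psi_j$) on $\mathscr{E}$, giving $B_j|_{\mathscr{E}}=A_j$ and $\Phi_j|_{\mathscr{E}}=\Psi_j$.

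Next, exactly as in Theorem \ref{OPERATORDILATION}, I would compute the analysis homomorphisms and their adjoints: $\theta_B(x\oplus g)=\theta_Ax+P_{A,\Psi}^\perp g$, and, by pairing against an arbitrary $z$ and reading off the $\mathscr{A}$-inner product, $\theta_B^*z=\theta_A^*z\oplus P_{A,\Psi}^\perp z$, with the parallel formulas for $\theta_\Phi$ and $\theta_\Phi^*$. Strict convergence of the series defining $\theta_B,\theta_\Phi$ is immediate since $\theta_A,\theta_\Psi$ already converge strictly and $P_{A,\Psi}^\perp$ is bounded adjointable. Using $\theta_A(\mathscr{E})=\theta_\Psi(\mathscr{E})$ together with $\theta_\Psi^*P_{A,\Psi}^\perp=0=P_{A,\Psi}^\perp\theta_A$ (both valid because $P_{A,\Psi}^\perp$ annihilates this common range), I would then obtain
$$S_{B,\Phi}(x\oplus g)=\theta_\Phi^*\theta_B(x\oplus g)=(S_{A,\Psi}x+0)\oplus(0+P_{A,\Psi}^\perp g)=I_\mathscr{E}x\oplus I_{\theta_A(\mathscr{E})^\perp}g,$$
invoking Parsevalness $S_{A,\Psi}=I_\mathscr{E}$; hence $(\{B_j\}_{j\in\mathbb{J}},\{\Phi_j\}_{j\in\mathbb{J}})$ is a Parseval (hvf) on $\mathscr{E}_1$.

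Finally, since $S_{B,\Phi}=I_{\mathscr{E}_1}$, the frame idempotent is $P_{B,\Phi}=\theta_B\theta_\Phi^*$, and substituting the adjoint formula gives $\theta_B\theta_\Phi^*z=\theta_A\theta_\Psi^*z+P_{A,\Psi}^\perp z=P_{A,\Psi}z+P_{A,\Psi}^\perp z=(I_{\mathscr{H}_\mathscr{A}}\otimes I_{\mathscr{E}_0})z$, so $(\{B_j\}_{j\in\mathbb{J}},\{\Phi_j\}_{j\in\mathbb{J}})$ is a Riesz (hvf) and therefore, being both Parseval and Riesz, an orthonormal (hvf), as required. The point to emphasise is that every line after the first paragraph is the verbatim module analogue of the Hilbert space computation; the sole module-theoretic input is the complementability arranged at the outset via Proposition \ref{HMOP}(ix) and Theorem \ref{MANUILOV2}.
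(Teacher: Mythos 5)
Your proposal is correct and is essentially the paper's own argument: the paper's proof consists precisely of invoking Proposition \ref{HMOP}(ix) to secure orthogonal complementability of $\theta_A(\mathscr{E})$ and then declaring the rest "similar to the corresponding dilation result in Section \ref{MK}," which is exactly the transcription you carry out. Your write-up simply supplies the details the paper leaves implicit, and correctly identifies where the hypotheses $\theta_A(\mathscr{E})=\theta_\Psi(\mathscr{E})$ and "$P_{A,\Psi}$ is a projection" are used.
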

 \begin{proof}
One has to ensure that $\theta_A(\mathscr{E}) $ is orthogonally complementable in  $\mathscr{H}_\mathscr{A}\otimes \mathscr{E}_0$. This comes from (ix) of Proposition \ref{HMOP}. Construction of $ \mathscr{E}_1 $ and other arguments are similar to the corresponding dilation result in Section \ref{MK}.
 \end{proof}
\begin{definition}
A (hvf)  $(\{B_j\}_{j\in \mathbb{J}}, \{\Phi_j\}_{j\in \mathbb{J}})$  in $\operatorname{Hom}^*_\mathscr{A}(\mathscr{E}, \mathscr{E}_0)$ is said to be a dual of  (hvf) $( \{A_j\}_{j\in \mathbb{J}}, \{\Psi_j\}_{j\in \mathbb{J}})$ in $\operatorname{Hom}^*_\mathscr{A}(\mathscr{E}, \mathscr{E}_0)$  if $ \theta_\Phi^*\theta_A= \theta_B^*\theta_\Psi=I_{\mathscr{E}}$. The `homomorphism-valued frame'  $(\{\widetilde{A}_j\coloneqq A_jS_{A,\Psi}^{-1}\}_{j \in \mathbb{J}}, \{\widetilde{\Psi}_j\coloneqq\Psi_jS_{A,\Psi}^{-1}\}_{j\in \mathbb{J}})$, which is  a `dual' of $ (\{  A_j\}_{j\in \mathbb{J}},\{\Psi_j\}_{j\in \mathbb{J}})$ is called the canonical dual of $ (\{A_j\}_{j\in \mathbb{J}}, \{\Psi_j\}_{j\in \mathbb{J}})$.
\end{definition}
\begin{proposition}
 Let $( \{A_j\}_{j\in \mathbb{J}}, \{\Psi_j\}_{j\in \mathbb{J}} )$ be a (hvf) in $ \operatorname{Hom}^*_\mathscr{A}(\mathscr{E}, \mathscr{E}_0).$  If $ \{y_j\}_{j\in \mathbb{J}},\{z_j\}_{j\in \mathbb{J}}$ in $ \mathscr{E}_0$ are such that $ x=\sum_{j\in\mathbb{J}}A_j^*y_j= \sum_{j\in\mathbb{J}}\Psi_j^*z_j, \forall x \in \mathscr{E}, $ then 
 $$ \sum\limits_{j\in \mathbb{J}}\langle y_j,z_j\rangle =\sum\limits_{j\in \mathbb{J}}\langle \widetilde{\Psi}_jx,\widetilde{A}_jx\rangle+\sum\limits_{j\in \mathbb{J}}\langle y_j-\widetilde{\Psi}_jx,z_j-\widetilde{A}_jx\rangle. $$
 \end{proposition}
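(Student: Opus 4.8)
The plan is to follow verbatim the strategy of the corresponding Hilbert-space statement in Section \ref{MK} (the expansion-of-coefficients proposition preceding Theorem \ref{CANONICALDUALFRAMEPROPERTYOPERATORVERSION}), expanding the right-hand side and collapsing it with the frame-homomorphism identity, while attending to the features peculiar to Hilbert C*-modules. Recall that the canonical dual is given by $\widetilde{A}_j = A_j S_{A,\Psi}^{-1}$ and $\widetilde{\Psi}_j = \Psi_j S_{A,\Psi}^{-1}$, and that by Proposition \ref{HMOP}(ii) the frame homomorphism satisfies $S_{A,\Psi} = S_{\Psi,A}$, so that $S_{A,\Psi}$ (hence $S_{A,\Psi}^{-1}$) is self-adjoint and positive invertible, and $\sum_{j\in\mathbb{J}}\Psi_j^*A_j = S_{A,\Psi}$.

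First I would expand the second sum on the right, $\sum_{j\in\mathbb{J}} \langle y_j - \widetilde{\Psi}_j x,\, z_j - \widetilde{A}_j x\rangle$, into its four $\mathscr{A}$-valued pieces and regroup, so that the right-hand side becomes $2\sum_j \langle \widetilde{\Psi}_j x, \widetilde{A}_j x\rangle + \sum_j \langle y_j, z_j\rangle - \sum_j \langle y_j, \widetilde{A}_j x\rangle - \sum_j \langle \widetilde{\Psi}_j x, z_j\rangle$. Next I would substitute the canonical-dual formulas and use adjointability of each $A_j$, $\Psi_j$, and $S_{A,\Psi}^{-1}$ to slide homomorphisms across the inner product via $\langle Tu, v\rangle = \langle u, T^*v\rangle$. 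The first sum collapses through $\sum_j \Psi_j^* A_j = S_{A,\Psi}$ to $\langle S_{A,\Psi}^{-1}x, x\rangle$, so the doubled term contributes $2\langle S_{A,\Psi}^{-1}x, x\rangle$; the third sum, using the representation $x = \sum_j A_j^* y_j$, collapses to $\langle x, S_{A,\Psi}^{-1}x\rangle$; and the fourth sum, using $x = \sum_j \Psi_j^* z_j$, collapses to $\langle S_{A,\Psi}^{-1}x, x\rangle$. Finally, invoking the self-adjointness of $S_{A,\Psi}^{-1}$ to identify $\langle x, S_{A,\Psi}^{-1}x\rangle = \langle S_{A,\Psi}^{-1}x, x\rangle$, the three frame-operator terms cancel and only $\sum_j \langle y_j, z_j\rangle$ survives, which is the left-hand side.

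The main point to watch, as against the scalar case, is the noncommutativity of $\mathscr{A}$: the $\mathscr{A}$-valued inner product is $\mathscr{A}$-linear in one slot and conjugate-linear in the other (scalars emerging with an adjoint and on the opposite side), so one must keep the order of the factors and the side of the module action fixed throughout and never silently commute elements of $\mathscr{A}$. A second, more technical, point is the legitimacy of pulling the strictly convergent series through the inner product and across the adjointable homomorphisms $A_j^*$, $\Psi_j^*$; this is precisely where the adjointability and boundedness built into Definition \ref{HMDEFINITION1}, together with Proposition \ref{HMOP}, are invoked, and it is the only place where a genuine convergence argument, rather than formal algebra, is needed. Once these are in hand the cancellation is purely formal and identical to the Hilbert-space computation, so I expect no essential obstacle beyond the bookkeeping imposed by the noncommutative coefficient algebra.
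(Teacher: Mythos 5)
Your proposal is correct and follows exactly the route the paper intends: the C*-module proposition is stated without proof on the understanding that it is the Section \ref{MK} computation (expand the four-term sum, substitute $\widetilde{A}_j=A_jS_{A,\Psi}^{-1}$, $\widetilde{\Psi}_j=\Psi_jS_{A,\Psi}^{-1}$, slide adjointable homomorphisms across the inner product, collapse via $\sum_j\Psi_j^*A_j=S_{A,\Psi}=S_{\Psi,A}$ and the two representations of $x$, and cancel the three frame-homomorphism terms using self-adjointness of $S_{A,\Psi}^{-1}$). Your added remarks on noncommutativity of $\mathscr{A}$ and on the convergence/adjointability points guaranteed by Definition \ref{HMDEFINITION1} and Proposition \ref{HMOP} are exactly the ``additional support'' the paper alludes to for the module setting.
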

\begin{theorem}
Let $(\{A_j\}_{j\in \mathbb{J}},\{\Psi_j\}_{j\in \mathbb{J}} )$ be a (hvf) with frame bounds $ a$ and $ b.$ Then
\begin{enumerate}[\upshape(i)]
\item The canonical dual (hvf) of the canonical dual (hvf)  of $ (\{A_j\}_{j\in \mathbb{J}},    \{\Psi_j\}_{j\in \mathbb{J}}) $ is itself.
\item$ \frac{1}{b}, \frac{1}{a}$ are frame bounds for the canonical dual of $ (\{A_j\}_{j\in \mathbb{J}},\{\Psi_j\}_{j\in \mathbb{J}}).$
\item If $ a, b $ are optimal frame bounds for $ (\{A_j\}_{j\in \mathbb{J}}, \{\Psi_j\}_{j\in \mathbb{J}}),$ then $ \frac{1}{b}, \frac{1}{a}$ are optimal  frame bounds for its canonical dual.
\end{enumerate} 
\end{theorem}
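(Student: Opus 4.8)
The plan is to transcribe the argument of Theorem \ref{CANONICALDUALFRAMEPROPERTYOPERATORVERSION} into the Hilbert C*-module language; the only genuinely new points are the passage from operator inequalities to C*-algebraic order inequalities and the verification that every homomorphism produced along the way is again adjointable, so that each canonical dual is a legitimate (hvf). First I would record that $S_{A,\Psi}$ is positive invertible and adjointable by Definition \ref{HMDEFINITION1}(i), hence $S_{A,\Psi}^{-1}$ is a self-adjoint positive invertible homomorphism, and that $\widetilde A_j = A_j S_{A,\Psi}^{-1}$, $\widetilde\Psi_j = \Psi_j S_{A,\Psi}^{-1}$ lie in $\operatorname{Hom}^*_{\mathscr A}(\mathscr E,\mathscr E_0)$.

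For (i) I would compute the frame homomorphism of the canonical dual. Using part (ii) of Proposition \ref{HMOP} (namely $\sum_{j}\Psi_j^* A_j = S_{A,\Psi}$) and factoring the common $S_{A,\Psi}^{-1}$ out of the strictly convergent sum,
$$ S_{\widetilde A, \widetilde\Psi} = \sum_{j\in\mathbb{J}} (\Psi_j S_{A,\Psi}^{-1})^* (A_j S_{A,\Psi}^{-1}) = S_{A,\Psi}^{-1} \Big( \sum_{j\in\mathbb{J}} \Psi_j^* A_j \Big) S_{A,\Psi}^{-1} = S_{A,\Psi}^{-1}. $$
Since $S_{\widetilde A,\widetilde\Psi}=S_{A,\Psi}^{-1}$ is again positive invertible, $(\{\widetilde A_j\},\{\widetilde\Psi_j\})$ is a (hvf); applying the canonical-dual construction a second time, now with frame homomorphism $S_{\widetilde A,\widetilde\Psi}^{-1}=S_{A,\Psi}$, returns $(\{\widetilde A_j S_{A,\Psi}=A_j\},\{\widetilde\Psi_j S_{A,\Psi}=\Psi_j\})$, i.e. the original frame.

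For (ii) the idea is to invert the defining order inequality $a I_{\mathscr E}\le S_{A,\Psi}\le b I_{\mathscr E}$. Conjugating by the positive adjointable homomorphism $S_{A,\Psi}^{-1/2}$, which commutes with $S_{A,\Psi}$, gives $a S_{A,\Psi}^{-1}\le I_{\mathscr E}\le b S_{A,\Psi}^{-1}$, that is $\tfrac1b I_{\mathscr E}\le S_{A,\Psi}^{-1}\le \tfrac1a I_{\mathscr E}$; since $S_{A,\Psi}^{-1}$ is by (i) the frame homomorphism of the canonical dual, $\tfrac1b$ and $\tfrac1a$ are frame bounds for it. For (iii) I would run the self-improving optimality argument verbatim: letting $c$ be the optimal upper bound of the canonical dual, (ii) gives $c\le\tfrac1a$, while applying (ii) to the canonical dual and invoking (i) (its double dual is the original frame, whose lower bound $a$ is available) gives $\tfrac1c\le a$, hence $c=\tfrac1a$; the optimal lower bound $\tfrac1b$ is obtained symmetrically.

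The main obstacle is not this bookkeeping but the C*-module order theory underlying (ii): I must justify that order is reversed under inversion for positive invertible elements of $\operatorname{End}^*_{\mathscr A}(\mathscr E)$, which I would supply through continuous functional calculus in that C*-algebra, and I must keep checking adjointability of the homomorphisms at each stage (this replaces the purely Hilbert-space manipulations of boundedness and self-adjointness). Once those two C*-algebraic points are in place, the Hilbert-space proof carries over unchanged, which is precisely why the statement is presented as a result whose proof is analogous to Theorem \ref{CANONICALDUALFRAMEPROPERTYOPERATORVERSION}.
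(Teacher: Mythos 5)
Your proposal is correct and follows exactly the route the paper intends: the paper gives no separate proof in the Hilbert C*-module section and simply defers to the argument of Theorem \ref{CANONICALDUALFRAMEPROPERTYOPERATORVERSION}, whose computation $S_{\widetilde A,\widetilde\Psi}=S_{A,\Psi}^{-1}$, order-inversion of $aI_{\mathscr E}\le S_{A,\Psi}\le bI_{\mathscr E}$, and the self-improving optimality argument you reproduce verbatim. Your added care about strict convergence, adjointability, and the conjugation by $S_{A,\Psi}^{-1/2}$ to invert the order inequality in $\operatorname{End}^*_{\mathscr A}(\mathscr E)$ is exactly the ``additional support'' the paper alludes to but does not write out.
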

\begin{proposition}
Let  $( \{A_j\}_{j\in \mathbb{J}}, \{\Psi_j\}_{j\in \mathbb{J}}) $ and $ (\{B_j\}_{j\in \mathbb{J}},\{\Phi_j\}_{j\in \mathbb{J}}) $ be homomorphism-valued frames in  $\operatorname{Hom}^*_\mathscr{A}(\mathscr{E}, \mathscr{E}_0)$. Then the following are equivalent.
\begin{enumerate}[\upshape(i)]
\item $ (\{B_j\}_{j\in \mathbb{J}}, \{\Phi_j\}_{j\in \mathbb{J}}) $ is dual of $ (\{A_j\}_{j\in \mathbb{J}}, \{\Psi_j\}_{j\in \mathbb{J}}) $. 
\item $ \sum_{j\in \mathbb{J}}\Phi_j^*A_j = \sum_{j\in \mathbb{J}}B_j^*\Psi_j=I_\mathscr{E}.$ 
\end{enumerate}
\end{proposition}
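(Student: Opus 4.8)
The plan is to mirror the one-line argument of Proposition \ref{DUALOVFCHARACTERIZATION}, the operator-valued analogue, while taking care that all identities now live in the Hilbert C*-module setting where convergence is in the strict topology and the homomorphisms involved are adjointable. The entire content of the statement is the pair of identities
\[
\theta_\Phi^*\theta_A=\sum_{j\in\mathbb{J}}\Phi_j^*A_j,\qquad \theta_B^*\theta_\Psi=\sum_{j\in\mathbb{J}}B_j^*\Psi_j,
\]
after which the equivalence (i) $\Leftrightarrow$ (ii) is simply the definition of a dual (hvf), namely $\theta_\Phi^*\theta_A=\theta_B^*\theta_\Psi=I_\mathscr{E}$.

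First I would recall from Definition \ref{HMDEFINITION1} that, since $(\{A_j\}_{j\in\mathbb{J}},\{\Psi_j\}_{j\in\mathbb{J}})$ and $(\{B_j\}_{j\in\mathbb{J}},\{\Phi_j\}_{j\in\mathbb{J}})$ are homomorphism-valued frames, the analysis homomorphisms $\theta_A=\sum_{j}L_jA_j$, $\theta_\Psi=\sum_{j}L_j\Psi_j$, $\theta_B=\sum_{j}L_jB_j$, and $\theta_\Phi=\sum_{j}L_j\Phi_j$ all exist as bounded adjointable elements of $\operatorname{Hom}^*_\mathscr{A}(\mathscr{E},\mathscr{H}_\mathscr{A}\otimes\mathscr{E}_0)$. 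Hence their adjoints $\theta_\Phi^*$ and $\theta_B^*$ exist, and the compositions $\theta_\Phi^*\theta_A$ and $\theta_B^*\theta_\Psi$ are well-defined bounded adjointable homomorphisms on $\mathscr{E}$. I would then compute these compositions exactly as in Proposition \ref{HMOP}(ii), using the $L_j$-orthogonality relation $L_k^*L_j=\delta_{j,k}I_{\mathscr{E}_0}$ (the module analogue of Equation (\ref{LEQUATION})): for $x\in\mathscr{E}$,
\[
\theta_\Phi^*\theta_Ax=\Bigl(\sum_{k\in\mathbb{J}}\Phi_k^*L_k^*\Bigr)\Bigl(\sum_{j\in\mathbb{J}}L_jA_jx\Bigr)=\sum_{j,k\in\mathbb{J}}\Phi_k^*L_k^*L_jA_jx=\sum_{j\in\mathbb{J}}\Phi_j^*A_jx,
\]
and symmetrically $\theta_B^*\theta_\Psi x=\sum_{j\in\mathbb{J}}B_j^*\Psi_jx$.

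The only point needing care — and the place I expect the main obstacle, modest as it is — is justifying the interchange of the two summations in the strict topology rather than in norm or SOT. I would argue first on finite partial sums, where $L_k^*L_j=\delta_{j,k}I_{\mathscr{E}_0}$ collapses the double sum to its diagonal, and then pass to the limit using that $\theta_A$ converges strictly and that left multiplication by the bounded adjointable homomorphism $\theta_\Phi^*$ preserves strict-topology convergence; the same reasoning underlies Proposition \ref{HMOP} and can be invoked verbatim.

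Finally, with both identities in hand, the equivalence is immediate. By the definition of a dual homomorphism-valued frame, (i) holds precisely when $\theta_\Phi^*\theta_A=\theta_B^*\theta_\Psi=I_\mathscr{E}$, which, via the displayed identities, is exactly the assertion $\sum_{j\in\mathbb{J}}\Phi_j^*A_j=\sum_{j\in\mathbb{J}}B_j^*\Psi_j=I_\mathscr{E}$ of (ii). Thus no further work is required beyond the bookkeeping of convergence, and the proof reduces to citing Proposition \ref{HMOP}(ii) together with the definition of duality.
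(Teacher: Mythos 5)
Your proposal is correct and follows exactly the route the paper intends: the module version of this proposition is stated without proof because it reduces, just as you argue, to the identities $\theta_\Phi^*\theta_A=\sum_{j}\Phi_j^*A_j$ and $\theta_B^*\theta_\Psi=\sum_{j}B_j^*\Psi_j$ from Proposition \ref{HMOP}(ii) together with the definition of a dual (hvf), mirroring the one-line proof of Proposition \ref{DUALOVFCHARACTERIZATION}. Your extra remark on justifying the interchange of sums in the strict topology is exactly the kind of ``additional support'' the paper alludes to for the module setting, so nothing is missing.
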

 \begin{theorem}
Let $ (\{A_j\}_{j\in \mathbb{J}}, \{\Psi_j\}_{j\in \mathbb{J}})$  be a  (hvf) in   $\operatorname{Hom}^*_\mathscr{A}(\mathscr{E}, \mathscr{E}_0)$. If $ (\{A_j\}_{j\in \mathbb{J}}, \{\Psi_j\}_{j\in \mathbb{J}})$ is a Riesz  basis  in  $\operatorname{Hom}^*_\mathscr{A}(\mathscr{E}, \mathscr{E}_0)$, then $ (\{A_j\}_{j\in \mathbb{J}}, \{\Psi_j\}_{j\in \mathbb{J}}) $ has unique dual. Converse holds if $ \theta_A(\mathscr{E})=\theta_\Psi(\mathscr{E})$.
 \end{theorem}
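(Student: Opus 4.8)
The plan is to follow the Hilbert-space argument of the corresponding theorem in Section \ref{MK} almost verbatim, since every algebraic identity it relies on has already been transported to the module setting in Proposition \ref{HMOP}, and to isolate the one genuinely new ingredient: orthogonal complementability of the ranges of the analysis homomorphisms. For the forward implication I would suppose that $(\{B_j\}_{j\in\mathbb{J}},\{\Phi_j\}_{j\in\mathbb{J}})$ and $(\{C_j\}_{j\in\mathbb{J}},\{\Xi_j\}_{j\in\mathbb{J}})$ are both duals of the Riesz (hvf) $(\{A_j\},\{\Psi_j\})$. By definition $\theta_\Psi^*\theta_B=\theta_\Psi^*\theta_C=I_\mathscr{E}$ and $\theta_A^*\theta_\Phi=\theta_A^*\theta_\Xi=I_\mathscr{E}$, so $\theta_\Psi^*(\theta_B-\theta_C)=0$ and $\theta_A^*(\theta_\Phi-\theta_\Xi)=0$. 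Applying $\theta_A S_{A,\Psi}^{-1}$ on the left of the first relation and $\theta_\Psi S_{A,\Psi}^{-1}$ on the left of the second, and invoking the Riesz (hvf) hypothesis $P_{A,\Psi}=\theta_A S_{A,\Psi}^{-1}\theta_\Psi^*=I_{\mathscr{H}_\mathscr{A}}\otimes I_{\mathscr{E}_0}$ together with $P_{\Psi,A}=P_{A,\Psi}^*$, I obtain $\theta_B=\theta_C$ and $\theta_\Phi=\theta_\Xi$. A left action of $L_j^*$ and the identity $A_j=L_j^*\theta_A$ from Proposition \ref{HMOP}(v) then force $B_j=C_j$ and $\Phi_j=\Xi_j$ for every $j$; this step is purely formal and carries over without change.

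For the converse I would argue by contrapositive under the extra hypothesis $\theta_A(\mathscr{E})=\theta_\Psi(\mathscr{E})$. Assuming $(\{A_j\},\{\Psi_j\})$ is not a Riesz (hvf), the module analogue of Proposition \ref{RIESZOVFCHARACTERIZATIONPROPOSITION} gives $\theta_A(\mathscr{E})\subsetneq\mathscr{H}_\mathscr{A}\otimes\mathscr{E}_0$. I would then take the orthogonal projection $P$ onto $\theta_A(\mathscr{E})^\perp=\theta_\Psi(\mathscr{E})^\perp$, choose a nonzero adjointable homomorphism $T\colon\theta_A(\mathscr{E})^\perp\to\mathscr{E}$, and set $B_j\coloneqq A_jS_{A,\Psi}^{-1}+L_j^*PT^*$ and $\Phi_j\coloneqq\Psi_jS_{A,\Psi}^{-1}+L_j^*PT^*$. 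Then $\theta_B=\theta_A S_{A,\Psi}^{-1}+PT^*$ and $\theta_\Phi=\theta_\Psi S_{A,\Psi}^{-1}+PT^*$, and using $\theta_\Psi^*\theta_A=\theta_A^*\theta_\Psi=S_{A,\Psi}$ together with $\theta_A^*P=0=P\theta_\Psi$ (since $P$ projects onto the kernels of $\theta_A^*$ and $\theta_\Psi^*$) I compute $S_{B,\Phi}=\theta_\Phi^*\theta_B=S_{A,\Psi}^{-1}+TPT^*\geq S_{A,\Psi}^{-1}$, which is positive invertible, so $(\{B_j\},\{\Phi_j\})$ is a (hvf); and $\theta_A^*\theta_\Phi=I_\mathscr{E}=\theta_\Psi^*\theta_B$ shows it is a dual. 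Since $PT^*\neq0$, some $B_k\neq A_kS_{A,\Psi}^{-1}$, so this dual differs from the canonical dual, contradicting uniqueness.

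The hard part is not the algebra but guaranteeing that the projection $P$ exists at all: in a Hilbert C*-module a closed submodule need not be orthogonally complemented, so the very object $\theta_A(\mathscr{E})^\perp$ carrying an orthogonal projection is what could fail in this setting. I would resolve this by quoting Proposition \ref{HMOP}(ix) — itself a consequence of Theorem \ref{MANUILOV2} applied to the closed-range homomorphisms $\theta_A,\theta_\Psi$ — which guarantees that $\theta_A(\mathscr{E})$ and $\theta_\Psi(\mathscr{E})$ are orthogonally complementable. Complementability simultaneously makes $P$ well defined and adjointable and forces $\theta_A(\mathscr{E})^\perp\neq\{0\}$ whenever the inclusion is proper, so a nonzero $T$ with $PT^*\neq0$ can indeed be chosen. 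Once this structural point is secured, every remaining manipulation is identical to the operator-valued proof in Section \ref{MK}.
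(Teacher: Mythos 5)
Your forward implication is correct and is exactly what the paper intends: the identities $A_j=L_j^*\theta_A$, $S_{A,\Psi}=\theta_\Psi^*\theta_A$ and the implication ``Riesz basis $\Rightarrow$ Riesz (hvf)'' all transfer to modules, so applying $\theta_AS_{A,\Psi}^{-1}$ (resp.\ $\theta_\Psi S_{A,\Psi}^{-1}$) and using $P_{A,\Psi}=I_{\mathscr{H}_\mathscr{A}}\otimes I_{\mathscr{E}_0}=P_{A,\Psi}^*$ kills $\theta_B-\theta_C$ and $\theta_\Phi-\theta_\Xi$ just as in the Hilbert-space case. You have also correctly identified orthogonal complementability of $\theta_A(\mathscr{E})$ as the new structural ingredient needed to even define the projection $P$ in the converse, and Proposition \ref{HMOP}(ix) does supply it.

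The genuine gap is the last sentence of your converse: from $\theta_A(\mathscr{E})^\perp\neq\{0\}$ you conclude that a nonzero adjointable $T\colon\theta_A(\mathscr{E})^\perp\to\mathscr{E}$ exists. Unlike Hilbert spaces, two nonzero Hilbert $\mathscr{A}$-modules need not admit any nonzero $\mathscr{A}$-linear map between them: the candidate ``rank-one'' maps $z\mapsto x\langle y,z\rangle$ can all vanish. Concretely, take $\mathscr{A}=\mathbb{C}\oplus\mathbb{C}$, $p=(1,0)$, $\mathscr{E}=p\mathscr{A}$, $\mathscr{E}_0=\mathscr{A}$, $\mathbb{J}=\{1\}$, and $A_1=\Psi_1$ the inclusion $p\mathscr{A}\hookrightarrow\mathscr{A}$. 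This is a Parseval (hvf) with $\theta_A(\mathscr{E})=\theta_\Psi(\mathscr{E})=p\mathscr{A}$ and $\theta_A(\mathscr{E})^\perp=(1-p)\mathscr{A}\neq\{0\}$, yet every $\mathscr{A}$-linear map between $(1-p)\mathscr{A}$ and $p\mathscr{A}$ is zero because $p(1-p)=0$; moreover the single relation $\Phi_1^*A_1=I_\mathscr{E}$ already forces $\Phi_1^*(a)=pa=A_1^*(a)$, hence $\Phi_1=A_1$ (and likewise $B_1=A_1$), so this (hvf) has a unique dual although it is not a Riesz basis (indeed $\theta_A(\mathscr{E})\subsetneq\mathscr{H}_\mathscr{A}\otimes\mathscr{E}_0$, so it is not even a Riesz (hvf)). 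So the step is not merely unjustified; it cannot be repaired at this level of generality, and the converse requires an additional hypothesis guaranteeing a nonzero adjointable homomorphism from $\mathscr{E}$ into $\theta_A(\mathscr{E})^\perp$. In fairness, the paper supplies no module proof of this converse and its implicit Section \ref{MK} argument runs into the same obstruction.
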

\begin{definition}
A (hvf)   $(\{B_j\}_{j\in \mathbb{J}}, \{\Phi_j\}_{j\in \mathbb{J}})$  in $\operatorname{Hom}^*_\mathscr{A}(\mathscr{E}, \mathscr{E}_0)$ is said to be orthogonal to a (hvf)  $(\{A_j\}_{j\in \mathbb{J}}, \{\Psi_j\}_{j\in \mathbb{J}})$ in $\operatorname{Hom}^*_\mathscr{A}(\mathscr{E}, \mathscr{E}_0)$ if $ \theta_\Phi^*\theta_A= \theta_B^*\theta_\Psi=0.$
\end{definition}
\begin{proposition}
Let  $( \{A_j\}_{j\in \mathbb{J}}, \{\Psi_j\}_{j\in \mathbb{J}}) $ and $ (\{B_j\}_{j\in \mathbb{J}},\{\Phi_j\}_{j\in \mathbb{J}}) $ be homomorphism-valued frames in  $\operatorname{Hom}^*_\mathscr{A}(\mathscr{E}, \mathscr{E}_0)$. Then the following are equivalent.
\begin{enumerate}[\upshape(i)]
\item $ (\{B_j\}_{j\in \mathbb{J}}, \{\Phi_j\}_{j\in \mathbb{J}}) $ is orthogonal to  $ (\{A_j\}_{j\in \mathbb{J}}, \{\Psi_j\}_{j\in \mathbb{J}}) $. 
\item $ \sum_{j\in \mathbb{J}}\Phi_j^*A_j = \sum_{j\in \mathbb{J}}B_j^*\Psi_j=0.$ 
\end{enumerate}
\end{proposition}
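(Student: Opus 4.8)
The plan is to reduce the statement to the defining identity for the frame homomorphism, exactly as in the operator-valued analogue (Proposition \ref{ORTHOGONALOVFCHARACTERIZATION}). By definition, $(\{B_j\}_{j\in\mathbb{J}},\{\Phi_j\}_{j\in\mathbb{J}})$ is orthogonal to $(\{A_j\}_{j\in\mathbb{J}},\{\Psi_j\}_{j\in\mathbb{J}})$ precisely when $\theta_\Phi^*\theta_A = \theta_B^*\theta_\Psi = 0$. Hence the whole proposition collapses to verifying the two cross-term identities
\[
\theta_\Phi^*\theta_A = \sum_{j\in\mathbb{J}}\Phi_j^*A_j, \qquad \theta_B^*\theta_\Psi = \sum_{j\in\mathbb{J}}B_j^*\Psi_j,
\]
after which (i) and (ii) are literally the same assertion.

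First I would note that since both pairs are homomorphism-valued frames, Definition \ref{HMDEFINITION1} guarantees that all four analysis homomorphisms $\theta_A,\theta_\Psi,\theta_B,\theta_\Phi$ exist as adjointable homomorphisms into $\mathscr{H}_\mathscr{A}\otimes\mathscr{E}_0$, with $\theta_A=\sum_j L_jA_j$ and $\theta_\Phi=\sum_j L_j\Phi_j$ (strict convergence), and similarly for the others. Then I would carry out the same computation that proves Proposition \ref{HMOP}(ii), but with mixed indices: using $\theta_\Phi^*=\sum_k \Phi_k^*L_k^*$ and the orthonormality relations $L_k^*L_j=\delta_{k,j}I_{\mathscr{E}_0}$ from the module analogue of (\ref{LEQUATION}), one obtains
\[
\theta_\Phi^*\theta_A = \Bigl(\sum_{k\in\mathbb{J}}\Phi_k^*L_k^*\Bigr)\Bigl(\sum_{j\in\mathbb{J}}L_jA_j\Bigr) = \sum_{j,k\in\mathbb{J}}\Phi_k^*L_k^*L_jA_j = \sum_{j\in\mathbb{J}}\Phi_j^*A_j,
\]
and symmetrically $\theta_B^*\theta_\Psi=\sum_{j\in\mathbb{J}}B_j^*\Psi_j$. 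The only point requiring care is that this formal interchange of summation and composition is legitimate in the strict topology on $\operatorname{End}^*_\mathscr{A}(\mathscr{E})$; this is exactly the justification already used for Proposition \ref{HMOP}(ii), so I would invoke it verbatim rather than redo it.

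With these identities in hand the equivalence is immediate: condition (i), namely $\theta_\Phi^*\theta_A=\theta_B^*\theta_\Psi=0$, and condition (ii), namely $\sum_j\Phi_j^*A_j=\sum_jB_j^*\Psi_j=0$, are the same two equations. There is essentially no obstacle here; the content is entirely bookkeeping. If anything, the one place that deserves a sentence of justification — rather than the one-line proof given for the operator-valued case — is the adjointability of the mixed composition $\theta_\Phi^*\theta_A$ together with the strict-topology convergence of the cross series, which over a general C*-algebra cannot be treated quite as casually as over a Hilbert space; but since both analysis homomorphisms are adjointable by hypothesis, their composition is automatically adjointable, and no new difficulty arises.
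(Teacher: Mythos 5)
Your proposal is correct and matches the paper's approach: the paper proves the Hilbert-space analogue (Proposition on duality, and by the same token the orthogonality version) with exactly the one-line identity $\theta_\Phi^*\theta_A=\sum_{j\in\mathbb{J}}\Phi_j^*A_j$, $\theta_B^*\theta_\Psi=\sum_{j\in\mathbb{J}}B_j^*\Psi_j$, and in Section 8 states that the module versions follow by the same computation (which is Proposition on the frame homomorphism, part (ii), with mixed indices). Your extra remark about strict-topology convergence and adjointability of the composition is the kind of "additional support" the paper says it takes for granted, and it is handled correctly.
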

\begin{proposition}
Two  orthogonal homomorphism-valued frames have a common dual (hvf).
\end{proposition}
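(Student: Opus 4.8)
The plan is to mimic the operator-valued construction from Section~\ref{MK} (the proposition that two orthogonal operator-valued frames share a common dual), replacing bounded operators by adjointable homomorphisms and the minimum-of-norms estimate by the order-theoretic invertibility criterion available in $\operatorname{End}^*_\mathscr{A}(\mathscr{E})$. Let $(\{A_j\}_{j\in\mathbb{J}},\{\Psi_j\}_{j\in\mathbb{J}})$ and $(\{B_j\}_{j\in\mathbb{J}},\{\Phi_j\}_{j\in\mathbb{J}})$ be the two orthogonal homomorphism-valued frames; by definition $\theta_\Phi^*\theta_A=\theta_B^*\theta_\Psi=0$, and taking adjoints also gives $\theta_A^*\theta_\Phi=\theta_\Psi^*\theta_B=0$. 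First I would define the candidate common dual by $C_j\coloneqq A_jS_{A,\Psi}^{-1}+B_jS_{B,\Phi}^{-1}$ and $\Xi_j\coloneqq\Psi_jS_{A,\Psi}^{-1}+\Phi_jS_{B,\Phi}^{-1}$ for each $j$. Since $S_{A,\Psi}$ and $S_{B,\Phi}$ are positive invertible, their inverses are adjointable, so each $C_j,\Xi_j$ lies in $\operatorname{Hom}^*_\mathscr{A}(\mathscr{E},\mathscr{E}_0)$, and the analysis homomorphisms factor as $\theta_C=\theta_AS_{A,\Psi}^{-1}+\theta_BS_{B,\Phi}^{-1}$ and $\theta_\Xi=\theta_\Psi S_{A,\Psi}^{-1}+\theta_\Phi S_{B,\Phi}^{-1}$, both adjointable.

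The central computation is the frame homomorphism of this candidate. Using Proposition~\ref{HMOP} (so that $\theta_\Psi^*\theta_A=S_{A,\Psi}$ and $\theta_\Phi^*\theta_B=S_{B,\Phi}$) together with the two orthogonality identities $\theta_\Psi^*\theta_B=0=\theta_\Phi^*\theta_A$, I expect
\[
S_{C,\Xi}=\theta_\Xi^*\theta_C=\left(S_{A,\Psi}^{-1}\theta_\Psi^*+S_{B,\Phi}^{-1}\theta_\Phi^*\right)\left(\theta_AS_{A,\Psi}^{-1}+\theta_BS_{B,\Phi}^{-1}\right)=S_{A,\Psi}^{-1}+S_{B,\Phi}^{-1}.
\]
This sum is manifestly positive and self-adjoint; the step that needs care in the module setting is invertibility. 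Rather than the minimum-of-reciprocal-norms bound used for Hilbert spaces, I would invoke $S_{A,\Psi}^{-1}\ge\|S_{A,\Psi}\|^{-1}I_\mathscr{E}$, whence $S_{C,\Xi}\ge\|S_{A,\Psi}\|^{-1}I_\mathscr{E}$; a positive adjointable homomorphism bounded below by a positive scalar multiple of the identity is invertible in $\operatorname{End}^*_\mathscr{A}(\mathscr{E})$, so $S_{C,\Xi}$ is positive invertible and $(\{C_j\}_{j\in\mathbb{J}},\{\Xi_j\}_{j\in\mathbb{J}})$ is a genuine (hvf).

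Finally I would verify the four duality relations. Writing $\theta_C^*=S_{A,\Psi}^{-1}\theta_A^*+S_{B,\Phi}^{-1}\theta_B^*$ and using $\theta_A^*\theta_\Psi=S_{A,\Psi}$ (again Proposition~\ref{HMOP}) with $\theta_B^*\theta_\Psi=0$, I get $\theta_C^*\theta_\Psi=I_\mathscr{E}$; symmetrically $\theta_\Xi^*\theta_A=I_\mathscr{E}$, so the candidate is a dual of $(\{A_j\}_{j\in\mathbb{J}},\{\Psi_j\}_{j\in\mathbb{J}})$. Exchanging the roles of the two frames and using $\theta_A^*\theta_\Phi=0$ and $\theta_B^*\theta_\Phi=S_{B,\Phi}$ gives $\theta_C^*\theta_\Phi=\theta_\Xi^*\theta_B=I_\mathscr{E}$, so it is also a dual of $(\{B_j\}_{j\in\mathbb{J}},\{\Phi_j\}_{j\in\mathbb{J}})$, completing the argument. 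The only genuinely new point compared with the Hilbert space proof is the invertibility argument for $S_{C,\Xi}$; everything else is a bookkeeping translation of adjoints and the orthogonality cancellations, which go through verbatim because all the homomorphisms involved are adjointable.
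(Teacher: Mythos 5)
Your proof is correct and follows the same construction the paper uses for the Hilbert-space version of this result (the candidate $C_j=A_jS_{A,\Psi}^{-1}+B_jS_{B,\Phi}^{-1}$, $\Xi_j=\Psi_jS_{A,\Psi}^{-1}+\Phi_jS_{B,\Phi}^{-1}$, the cancellation $S_{C,\Xi}=S_{A,\Psi}^{-1}+S_{B,\Phi}^{-1}$, and the four duality identities), which is exactly what the paper intends here since it states the module proposition without proof as being "similar." Your replacement of the $\min\{\|S_{A,\Psi}\|^{-1},\|S_{B,\Phi}\|^{-1}\}\|h\|^2$ lower bound by the order inequality $S_{A,\Psi}^{-1}\geq\|S_{A,\Psi}\|^{-1}I_\mathscr{E}$ in the unital C*-algebra $\operatorname{End}^*_\mathscr{A}(\mathscr{E})$ is precisely the additional support the Hilbert C*-module setting requires, and it is valid.
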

\begin{proposition}
Let $ (\{A_j\}_{j\in \mathbb{J}}, \{\Psi_j\}_{j\in \mathbb{J}}) $ and $ (\{B_j\}_{j\in \mathbb{J}}, \{\Phi_j\}_{j\in \mathbb{J}}) $ be  two Parseval homomorphism-valued frames in  $\operatorname{Hom}^*_\mathscr{A}(\mathscr{E}, \mathscr{E}_0)$ which are  orthogonal. If $C,D,E,F \in \operatorname{End}_\mathscr{A}(\mathscr{E})$ are such that $ C^*E+D^*F=I_\mathscr{E}$, then  $ (\{A_jC+B_jD\}_{j\in \mathbb{J}}, \{\Psi_jE+\Phi_jF\}_{j\in \mathbb{J}}) $ is a  Parseval (hvf) in  $\operatorname{Hom}^*_\mathscr{A}(\mathscr{E}, \mathscr{E}_0)$. In particular,  if  $ a,b,c,d \in \mathscr{A}$ satisfy $a^*c+b^*d =e$ (the identity of $\mathscr{A}$), then $ (\{cA_j+dB_j\}_{j\in \mathbb{J}}, \{e\Psi_j+f\Phi_j\}_{j\in \mathbb{J}}) $ is   a Parseval (hvf).
\end{proposition}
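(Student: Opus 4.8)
The plan is to copy, essentially verbatim, the argument used earlier for the corresponding interpolation result for operator-valued frames, since the only facts invoked there transfer to the module setting through Proposition \ref{HMOP}. First I would record that the analysis homomorphisms behave linearly under the formation of the new collection: writing $\theta_{AC+BD}=\sum_{j\in\mathbb{J}}L_j(A_jC+B_jD)$ and using right-continuity of the strict topology one gets $\theta_{AC+BD}=\theta_AC+\theta_BD$, and likewise $\theta_{\Psi E+\Phi F}=\theta_\Psi E+\theta_\Phi F$. Because $\theta_A,\theta_B,\theta_\Psi,\theta_\Phi$ are adjointable and $C,D,E,F\in\operatorname{End}^*_\mathscr{A}(\mathscr{E})$ are adjointable, these two sums are again adjointable homomorphisms, so condition (ii) of Definition \ref{HMDEFINITION1} for the candidate pair is automatic.

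The heart of the proof is the single computation of the frame homomorphism. By part (ii) of Proposition \ref{HMOP}, $S_{AC+BD,\Psi E+\Phi F}=\theta_{\Psi E+\Phi F}^*\,\theta_{AC+BD}=(\theta_\Psi E+\theta_\Phi F)^*(\theta_AC+\theta_BD)$, which expands into four terms $E^*\theta_\Psi^*\theta_AC+E^*\theta_\Psi^*\theta_BD+F^*\theta_\Phi^*\theta_AC+F^*\theta_\Phi^*\theta_BD$. I would now feed in the hypotheses: the Parseval conditions give $\theta_\Psi^*\theta_A=S_{A,\Psi}=I_\mathscr{E}$ and $\theta_\Phi^*\theta_B=S_{B,\Phi}=I_\mathscr{E}$ (part (iii) of Proposition \ref{HMOP}), while orthogonality gives $\theta_\Phi^*\theta_A=0$ and $\theta_B^*\theta_\Psi=0$, the latter yielding $\theta_\Psi^*\theta_B=(\theta_B^*\theta_\Psi)^*=0$. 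The two cross terms vanish and we are left with $E^*C+F^*D$. Since $E^*C+F^*D=(C^*E+D^*F)^*=I_\mathscr{E}^*=I_\mathscr{E}$, the frame homomorphism is the identity; it is positive invertible, so condition (i) of Definition \ref{HMDEFINITION1} holds and the pair is a Parseval (hvf).

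For the \emph{in particular} clause I would specialize $C,D,E,F$ to the endomorphisms implementing left multiplication (or central scalar multiplication) by the chosen algebra elements, so that $\theta_A C$ reduces to the collection $\{cA_j\}_{j\in\mathbb{J}}$ and so on; the defining identity $C^*E+D^*F=I_\mathscr{E}$ then collapses to the scalar relation, and the general statement applies directly. I do not anticipate a serious obstacle here: unlike the dilation and Naimark-type results in this section, the argument never needs to orthogonally complement a submodule, so Theorem \ref{MANUILOV2} is not invoked, and the computation is purely formal once the linearity and Proposition \ref{HMOP} are in hand. The only points requiring care are bookkeeping — matching the indices in the somewhat garbled final line — and confirming that $C,D,E,F$ are genuinely adjointable, so that every adjoint written above lives in $\operatorname{End}^*_\mathscr{A}(\mathscr{E})$ and $\operatorname{Hom}^*_\mathscr{A}(\mathscr{E},\mathscr{E}_0)$ rather than in a merely bounded endomorphism ring; this forces reading the hypothesis $\operatorname{End}_\mathscr{A}(\mathscr{E})$ as $\operatorname{End}^*_\mathscr{A}(\mathscr{E})$.
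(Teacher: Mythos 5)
Your proposal is correct and follows exactly the route the paper intends: the paper gives no separate proof in the Hilbert C*-module section, deferring to the Hilbert-space argument, which is precisely the computation you reproduce — linearity of the analysis homomorphisms, expansion of $S_{AC+BD,\Psi E+\Phi F}$ into four terms, killing the cross terms by orthogonality, and reducing the diagonal terms to $E^*C+F^*D=(C^*E+D^*F)^*=I_\mathscr{E}$ via Parsevalness. Your closing observations (no orthogonal complementation is needed, and $\operatorname{End}_\mathscr{A}(\mathscr{E})$ must be read as $\operatorname{End}^*_\mathscr{A}(\mathscr{E})$ for the adjoints to make sense) are accurate and consistent with the paper.
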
 
\begin{definition}
Two homomorphism-valued frames $(\{A_j\}_{j\in \mathbb{J}},\{\Psi_j\}_{j\in \mathbb{J}} )$  and $ (\{B_j\}_{j\in \mathbb{J}}, \{\Phi_j\}_{j\in \mathbb{J}} )$   in $\operatorname{Hom}^*_\mathscr{A}(\mathscr{E}, \mathscr{E}_0)$  are called 
disjoint if $(\{A_j\oplus B_j\}_{j \in \mathbb{J}},\{\Psi_j\oplus \Phi_j\}_{j \in \mathbb{J}})$ is a  (hvf) in $\operatorname{Hom}^*_\mathscr{A}(\mathscr{E}\oplus\mathscr{E},\mathscr{E}_0)$.
\end{definition}
\begin{proposition}
If $(\{A_j\}_{j\in \mathbb{J}},\{\Psi_j\}_{j\in \mathbb{J}} )$  and $ (\{B_j\}_{j\in \mathbb{J}}, \{\Phi_j\}_{j\in \mathbb{J}} )$  are  orthogonal homomorphism-valued frames  in $\operatorname{Hom}^*_\mathscr{A}(\mathscr{E}\oplus\mathscr{E},\mathscr{E}_0)$, then  they  are disjoint. Further, if both $(\{A_j\}_{j\in \mathbb{J}},\{\Psi_j\}_{j\in \mathbb{J}} )$  and $ (\{B_j\}_{j\in \mathbb{J}}, \{\Phi_j\}_{j\in \mathbb{J}} )$ are  Parseval, then $(\{A_j\oplus B_j\}_{j \in \mathbb{J}},\{\Psi_j\oplus \Phi_j\}_{j \in \mathbb{J}})$ is Parseval.
\end{proposition}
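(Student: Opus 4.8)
The plan is to transcribe the operator-valued proof of the analogous disjointness proposition into the module setting, the only genuinely new ingredient being the orthogonality relations expressed through the analysis homomorphisms. First I would identify the analysis homomorphisms of the candidate direct-sum family over $\mathscr{E}\oplus\mathscr{E}$. Using $L_j(A_j\oplus B_j)(x\oplus y)=L_j(A_jx+B_jy)$ one obtains $\theta_{A\oplus B}(x\oplus y)=\theta_A x+\theta_B y$ and, identically, $\theta_{\Psi\oplus\Phi}(x\oplus y)=\theta_\Psi x+\theta_\Phi y$. Since $(\{A_j\}_{j\in\mathbb{J}},\{\Psi_j\}_{j\in\mathbb{J}})$ and $(\{B_j\}_{j\in\mathbb{J}},\{\Phi_j\}_{j\in\mathbb{J}})$ are homomorphism-valued frames, $\theta_A,\theta_B,\theta_\Psi,\theta_\Phi$ all converge strictly to adjointable homomorphisms; a finite algebraic combination of strictly convergent nets converges strictly, so $\theta_{A\oplus B}$ and $\theta_{\Psi\oplus\Phi}$ exist as bounded adjointable homomorphisms, which secures condition \textup{(ii)} of Definition \ref{HMDEFINITION1} for the direct-sum family.

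Next I would compute $\theta_{\Psi\oplus\Phi}^*$ exactly as in the Hilbert-space case: for $w\in\mathscr{H}_\mathscr{A}\otimes\mathscr{E}_0$ and $x\oplus y\in\mathscr{E}\oplus\mathscr{E}$ one has $\langle\theta_{\Psi\oplus\Phi}^*w,x\oplus y\rangle=\langle w,\theta_\Psi x+\theta_\Phi y\rangle=\langle\theta_\Psi^*w\oplus\theta_\Phi^*w,x\oplus y\rangle$, whence $\theta_{\Psi\oplus\Phi}^*w=\theta_\Psi^*w\oplus\theta_\Phi^*w$. Substituting this into $S_{A\oplus B,\Psi\oplus\Phi}=\theta_{\Psi\oplus\Phi}^*\theta_{A\oplus B}$ gives
\begin{align*}
S_{A\oplus B,\Psi\oplus\Phi}(x\oplus y)&=\theta_{\Psi\oplus\Phi}^*(\theta_A x+\theta_B y)\\
&=(\theta_\Psi^*\theta_A x+\theta_\Psi^*\theta_B y)\oplus(\theta_\Phi^*\theta_A x+\theta_\Phi^*\theta_B y).
\end{align*}
The orthogonality hypothesis, restated through the analysis homomorphisms as $\theta_\Phi^*\theta_A=\theta_B^*\theta_\Psi=0$ (so that $\theta_\Psi^*\theta_B=(\theta_B^*\theta_\Psi)^*=0$ as well), annihilates both cross terms, leaving $S_{A\oplus B,\Psi\oplus\Phi}(x\oplus y)=S_{A,\Psi}x\oplus S_{B,\Phi}y$, i.e. $S_{A\oplus B,\Psi\oplus\Phi}=S_{A,\Psi}\oplus S_{B,\Phi}$.

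It then remains to argue that the block-diagonal homomorphism $S_{A,\Psi}\oplus S_{B,\Phi}$ on $\mathscr{E}\oplus\mathscr{E}$ is positive and invertible, with inverse $S_{A,\Psi}^{-1}\oplus S_{B,\Phi}^{-1}$; this is immediate, since positivity and invertibility of a direct sum are equivalent to the corresponding properties of each summand, and both $S_{A,\Psi}$ and $S_{B,\Phi}$ are positive invertible by hypothesis. Combined with the existence of the analysis homomorphisms from the first step, this shows the direct-sum family is a (hvf), i.e. the two frames are disjoint. For the second assertion, if both frames are Parseval then $S_{A,\Psi}=S_{B,\Phi}=I_\mathscr{E}$, so $S_{A\oplus B,\Psi\oplus\Phi}=I_\mathscr{E}\oplus I_\mathscr{E}=I_{\mathscr{E}\oplus\mathscr{E}}$, which is Parseval. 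The only place demanding genuine C*-module care, and hence the main obstacle, is the passage through condition \textup{(ii)}: one must confirm that strict-topology convergence and adjointability of the constituent analysis homomorphisms are truly inherited by $\theta_{A\oplus B}$ and $\theta_{\Psi\oplus\Phi}$, since in Hilbert C*-modules adjointability is not automatic. Once that is in place, the cross-term cancellation and the block-diagonal positivity and invertibility are purely formal and run parallel to the Hilbert-space argument.
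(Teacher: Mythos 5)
Your proof is correct and follows essentially the same route as the paper: the module version is stated without proof because the argument is the direct transcription of the Hilbert-space proof (computing $\theta_{A\oplus B}$, $\theta_{\Psi\oplus\Phi}$ and its adjoint, killing the cross terms via orthogonality to get $S_{A\oplus B,\Psi\oplus\Phi}=S_{A,\Psi}\oplus S_{B,\Phi}$), which is exactly what you do. Your explicit attention to why $\theta_{A\oplus B}$ and $\theta_{\Psi\oplus\Phi}$ remain adjointable is the right extra care in the C*-module setting, but it does not change the structure of the argument.
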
 
\textbf{Characterizations}
 \begin{theorem}
Let $ \{F_j\}_{j \in \mathbb{J}}$ be an arbitrary orthonormal basis in $\operatorname{Hom}^*_\mathscr{A}(\mathscr{E}, \mathscr{E}_0).$ Then 
 \begin{enumerate}[\upshape(i)]
 \item The orthonormal  bases   $ (\{A_j\}_{j \in \mathbb{J}},\{\Psi_j\}_{j \in \mathbb{J}})$ in  $ \operatorname{Hom}^*_\mathscr{A}(\mathscr{E}, \mathscr{E}_0)$ are precisely $( \{F_jU\}_{j \in \mathbb{J}},\{c_jF_jU\}_{j \in \mathbb{J}}) $, 
 where $ U \in \operatorname{End}^*_\mathscr{A}(\mathscr{E}) $ is unitary and $ c_j'$s are positive invertible elements in the center of  $ \mathscr{A}$ such that $ 0<\inf\{\|c_j\|\}_{j \in \mathbb{J}}\leq \sup\{\|c_j\|\}_{j \in \mathbb{J}}< \infty.$
 \item The Riesz bases   $ (\{A_j\}_{j \in \mathbb{J}},\{\Psi_j\}_{j \in \mathbb{J}})$ in  $ \operatorname{Hom}^*_\mathscr{A}(\mathscr{E}, \mathscr{E}_0)$   are precisely $( \{F_jU\}_{j \in \mathbb{J}},\{F_jV\}_{j \in \mathbb{J}}) $, where $ U,V \in \operatorname{End}^*_\mathscr{A}(\mathscr{E}) $ are  invertible  such that  $ V^*U$ is positive.
 \item The homomorphism-valued frames $ (\{A_j\}_{j \in \mathbb{J}},\{\Psi_j\}_{j \in \mathbb{J}})$ in   $ \operatorname{Hom}^*_\mathscr{A}(\mathscr{E}, \mathscr{E}_0)$  are precisely $( \{F_jU\}_{j \in \mathbb{J}},\{F_jV\}_{j \in \mathbb{J}}) $, where $ U,V \in \operatorname{End}^*_\mathscr{A}(\mathscr{E}) $  are such that $ V^*U$ is positive invertible.
 \item The Bessel sequences  $(\{A_j\}_{j \in \mathbb{J}},\{\Psi_j\}_{j \in \mathbb{J}})$ in  $ \operatorname{Hom}^*_\mathscr{A}(\mathscr{E}, \mathscr{E}_0)$  are precisely $( \{F_jU\}_{j \in \mathbb{J}},\{F_jV\}_{j \in \mathbb{J}}) $, where $ U,V \in \operatorname{End}^*_\mathscr{A}(\mathscr{E}) $  are such that  $ V^*U$ is positive. 
 \item The Riesz homomorphism-valued frames $ (\{A_j\}_{j \in \mathbb{J}},\{\Psi_j\}_{j \in \mathbb{J}})$ in  $ \operatorname{Hom}^*_\mathscr{A}(\mathscr{E}, \mathscr{E}_0)$  are precisely $( \{F_jU\}_{j \in \mathbb{J}},\{F_jV\}_{j \in \mathbb{J}}) $, where $ U,V \in \operatorname{End}^*_\mathscr{A}(\mathscr{E}) $ are such that  $ V^*U$ is positive invertible and $ U(V^*U)^{-1}V^* =I_{\mathscr{E}}$.  
 \item The orthonormal homomorphism-valued frames $ ( \{A_j\}_{j \in \mathbb{J}},\{\Psi_j\}_{j \in \mathbb{J}})$ in  $ \operatorname{Hom}^*_\mathscr{A}(\mathscr{E}, \mathscr{E}_0)$  are precisely $( \{F_jU\}_{j \in \mathbb{J}},\{F_jV\}_{j \in \mathbb{J}}) $, where $ U,V \in \operatorname{End}^*_\mathscr{A}(\mathscr{E}) $ are such that $ V^*U=I_\mathscr{E}= UV^*$.
\end{enumerate}
\end{theorem}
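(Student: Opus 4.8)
The plan is to mirror, part by part, the six-fold argument of Theorem \ref{OPERATORVALUEDCHARACTERIZATIONSOFTHEEXTENSION}, replacing scalar computations with $\mathscr{A}$-valued ones and upgrading the scalar weights to central positive invertible elements of $\mathscr{A}$. The workhorse throughout is Proposition \ref{HMOP}, which supplies $\theta_A^*\theta_A=\sum_{j\in\mathbb{J}}A_j^*A_j$, the identities $S_{A,\Psi}=\theta_\Psi^*\theta_A=\theta_A^*\theta_\Psi$, the recovery formula $A_j=L_j^*\theta_A$, and the idempotency of $P_{A,\Psi}=\theta_AS_{A,\Psi}^{-1}\theta_\Psi^*$, together with the orthonormality relations $F_jF_k^*=\delta_{j,k}I_{\mathscr{E}_0}$ and $\sum_{j\in\mathbb{J}}L_jL_j^*=I_{\mathscr{H}_\mathscr{A}}\otimes I_{\mathscr{E}_0}$ in the strict topology. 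In every ``$\Leftarrow$'' direction the verification is purely formal: given $U,V\in\operatorname{End}^*_\mathscr{A}(\mathscr{E})$ I would write $\theta_{FU}=\theta_FU$, $\theta_{FV}=\theta_FV$, whence $S_{FU,FV}=(\theta_FV)^*(\theta_FU)=V^*U$ and $P_{FU,FV}=\theta_FU(V^*U)^{-1}V^*\theta_F^*$, and then read off the six cases from the relevant positivity/invertibility hypothesis on $V^*U$.

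For the ``$\Rightarrow$'' directions the central device is, as in the Hilbert-space case, the homomorphism $U:=\sum_{j\in\mathbb{J}}F_j^*A_j$ (and $V:=\sum_{j\in\mathbb{J}}F_j^*\Psi_j$ where needed). First I would establish strict-topology convergence via the module estimate
$$\left\|\sum_{j\in\mathbb{S}}F_j^*A_jx\right\|^2=\left\|\left\langle\sum_{j\in\mathbb{S}}F_j^*A_jx,\sum_{k\in\mathbb{S}}F_k^*A_kx\right\rangle\right\|=\left\|\sum_{j\in\mathbb{S}}\langle A_jx,A_jx\rangle\right\|,$$
which uses only $F_jF_k^*=\delta_{j,k}I_{\mathscr{E}_0}$; the right-hand side converges because $\{A_j\}$ is an orthonormal basis (part (i)) or because $\theta_A$ exists (parts (ii)--(vi)). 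Once $U$ is in hand, $F_jU=A_j$ follows from orthonormality of $\{F_j\}$, and for part (i) the unitarity of $U$ comes from $UU^*=\sum_{j}F_j^*\big(\sum_kA_jA_k^*F_k\big)=\sum_jF_j^*F_j=I_\mathscr{E}$ and the symmetric computation $U^*U=I_\mathscr{E}$; the weight $c_j$ is forced to satisfy $\Psi_j=c_jA_j$ with $c_j$ central positive invertible of uniformly bounded norm, exactly the data in the statement. Parts (ii)--(iv) then identify $V^*U$ with $S_{A,\Psi}$ (positive invertible for a (hvf), merely positive for Bessel, and equal to the positive $S^*R$ coming from the Riesz data for a Riesz basis), while parts (v) and (vi) invoke Proposition \ref{HMOP}(vi) to turn $P_{A,\Psi}=I_{\mathscr{H}_\mathscr{A}}\otimes I_{\mathscr{E}_0}$ into $U(V^*U)^{-1}V^*=I_\mathscr{E}$ and the orthonormal-(hvf) conditions into $V^*U=I_\mathscr{E}=UV^*$, by sandwiching $P_{A,\Psi}$ between $\sum_lF_l^*L_l^*$ and $\sum_mL_mF_m$ and collapsing with $\sum_jL_jL_j^*=I$.

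The main obstacle, and the place where the proof genuinely diverges from the Hilbert-space template, is the legitimacy of the two steps just described in the module setting: (a) that the series defining $U$ and $V$ converge in the strict topology to \emph{adjointable} homomorphisms rather than merely bounded ones, and (b) that the weights $c_j$ in part (i) must be taken central in $\mathscr{A}$ so that $V=\sum_{j}c_jA_j^*A_j$ is well defined, positive, and invertible with inverse $\sum_jc_j^{-1}A_j^*A_j$ — centrality is what lets $c_j$ pass through the $\mathscr{A}$-valued inner product in the computations $F_jV=c_jA_j$ and in the norm bound $\sup\{\|c_j\|\}_{j\in\mathbb{J}}$. Adjointability in (a) is handled exactly as in the corresponding Riesz-basis-implies-(hvf) proposition proved earlier in this section, where the same estimate shows $\theta_F U$ and $\theta_F V$ are bounded adjointable; strict convergence of $U,U^*$ on $\mathscr{E}$ follows by pairing against arbitrary elements and using completeness of $\mathscr{E}$. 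A secondary technical point, needed only implicitly, is orthogonal complementability of the relevant ranges, which is guaranteed by Proposition \ref{HMOP}(ix) via Theorem \ref{MANUILOV2}; I would flag this where $P_{A,\Psi}$ is treated as a projection but otherwise the remaining computations are the $\mathscr{A}$-valued transcriptions of the Hilbert-space argument and require no new ideas.
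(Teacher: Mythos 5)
Your proposal is correct and follows essentially the same route as the paper, which itself proves this theorem by transcribing the argument of Theorem \ref{OPERATORVALUEDCHARACTERIZATIONSOFTHEEXTENSION} into the $\mathscr{A}$-valued setting; the two key module estimates you isolate, namely $\sum_{j\in\mathbb{J}}\langle F_jUx,F_jUx\rangle=\langle x,x\rangle$ for unitary $U$ and $\left\|\sum_{j\in\mathbb{S}}F_j^*A_jx\right\|^2=\left\|\sum_{j\in\mathbb{S}}\langle A_jx,A_jx\rangle\right\|$, are precisely the ones the paper singles out as the only points of departure from the Hilbert-space computation. Your additional remarks on adjointability of the strict limits and on the centrality of the $c_j$ are consistent with the definitions already in place and do not change the argument.
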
 
\begin{proof}
Similar to proof of Theorem \ref{OPERATORVALUEDCHARACTERIZATIONSOFTHEEXTENSION}, but  note that 
\begin{align*}
\sum_{j\in \mathbb{J}}\langle F_jUx,F_jUx \rangle=\left\langle\sum_{j\in \mathbb{J}}F_j^*F_j Ux , Ux\right\rangle =\langle Ux,Ux \rangle=\langle x,x \rangle, 
\end{align*}
and 
\begin{align*}
\left\langle\sum_{j\in \mathbb{S}}F_j^*A_jx, \sum_{k\in \mathbb{S}}F_k^*A_kx\right\rangle= \sum_{j\in \mathbb{S}}\left\langle A_jx, F_j\left(\sum_{k\in \mathbb{S}}F_k^*A_kx\right) \right\rangle=\sum_{j\in \mathbb{S}}\langle A_jx,A_jx \rangle
\end{align*}
for every finite subset $\mathbb{S}$ of $\mathbb{J}$, for every unitary $U \in\operatorname{End}^*_\mathscr{A}(\mathscr{E}) $, and for all $x\in\mathscr{E}$.
\end{proof}
\begin{corollary}
\begin{enumerate}[\upshape(i)]
\item If   $ (\{A_j\}_{j \in \mathbb{J}},\{\Psi_j=c_jA_j\}_{j \in \mathbb{J}})$ is an orthonormal basis  in $ \operatorname{Hom}^*_\mathscr{A}(\mathscr{E}, \mathscr{E}_0)$, then 
$$ \sup\{\|A_j\|\}_{j\in\mathbb{J}}\leq1, ~\sup\{\|\Psi_j\|\}_{j\in\mathbb{J}}\leq \sup\{\|c_j\|\}_{j\in\mathbb{J}}, ~ A_j\Psi_j^*=c_jI_{\mathscr{E}_0},  ~\forall j \in \mathbb{J}.$$
\item If   $ (\{A_j\}_{j \in \mathbb{J}},\{\Psi_j\}_{j \in \mathbb{J}})$ is  Bessel  in $ \operatorname{Hom}^*_\mathscr{A}(\mathscr{E}, \mathscr{E}_0)$, then 
$$ \sup\{\|A_j\|\}_{j\in\mathbb{J}}\leq\|U\|, ~\sup\{\|\Psi_j\|\}_{j\in\mathbb{J}}\leq\|V\| , ~ \sup\{\|A_j\Psi_j^*\|\}_{j\in\mathbb{J}}\leq\|UV^*\|.$$
\end{enumerate}	
\end{corollary}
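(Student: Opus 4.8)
The plan is to obtain this corollary as an immediate consequence of the preceding characterization theorem (the homomorphism analog of Theorem~\ref{OPERATORVALUEDCHARACTERIZATIONSOFTHEEXTENSION}) together with elementary norm estimates, exactly mirroring the proof of the corresponding Hilbert space corollary that follows Theorem~\ref{OPERATORVALUEDCHARACTERIZATIONSOFTHEEXTENSION}. That characterization supplies, for a fixed orthonormal basis $\{F_j\}_{j\in\mathbb{J}}$, the factorizations $A_j=F_jU,\ \Psi_j=c_jF_jU$ in the orthonormal-basis case (with $U$ unitary and the $c_j$ positive invertible central elements) and $A_j=F_jU,\ \Psi_j=F_jV$ in the Bessel case (with $V^*U$ positive); all that remains is bookkeeping with the C*-algebra norm.

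First I would record the normalization fact. As in the observation following Definition~\ref{ONBDEFINITIONOVHS}, the orthonormality relation $\langle G_j^*y,G_k^*z\rangle=\delta_{j,k}\langle y,z\rangle$ for a family in $\operatorname{Hom}^*_\mathscr{A}(\mathscr{E},\mathscr{E}_0)$ is equivalent to $G_jG_k^*=\delta_{j,k}I_{\mathscr{E}_0}$; in particular $G_jG_j^*=I_{\mathscr{E}_0}$, whence $\|G_j\|^2=\|G_jG_j^*\|=1$ and $\|G_j\|=\|G_j^*\|=1$. Applied to $\{F_j\}$ this gives $\|F_j\|=1$ for every $j$.

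For (i), since $\{A_j\}=\{F_jU\}$ and a unitary composed with an orthonormal family is again orthonormal (as shown inside the characterization theorem's proof), $\{A_j\}$ is orthonormal, so $\|A_j\|=1$ and $\sup_j\|A_j\|\le 1$. From $\Psi_j=c_jA_j$ and submultiplicativity — noting that multiplication by the central element $c_j$ on $\mathscr{E}_0$ is adjointable of norm $\|c_j\|$ — I get $\|\Psi_j\|\le\|c_j\|\,\|A_j\|=\|c_j\|$, hence $\sup_j\|\Psi_j\|\le\sup_j\|c_j\|$. Finally, because $c_j$ is positive (hence self-adjoint) and central, $A_j\Psi_j^*=A_j(c_jA_j)^*=A_jA_j^*c_j=I_{\mathscr{E}_0}c_j=c_jI_{\mathscr{E}_0}$. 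For (ii), the factorizations give $\|A_j\|\le\|F_j\|\,\|U\|=\|U\|$ and $\|\Psi_j\|\le\|F_j\|\,\|V\|=\|V\|$, while $A_j\Psi_j^*=F_jUV^*F_j^*$ yields $\|A_j\Psi_j^*\|\le\|F_j\|\,\|UV^*\|\,\|F_j^*\|=\|UV^*\|$; taking suprema finishes the proof.

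There is no genuine obstacle here: the statement is a formal corollary of the characterization theorem. The only points deserving explicit mention — and precisely the kind of detail the paper flags as needing ``additional support'' in the module setting — are that the orthonormality relation forces $G_jG_j^*=I_{\mathscr{E}_0}$ in the C*-module context, and that a central positive element $c_j$ acts on $\mathscr{E}_0$ as an adjointable, self-adjoint multiplier of norm $\|c_j\|$, so that it may be pulled through norm inequalities and adjoints as if it were a scalar.
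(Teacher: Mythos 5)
Your proposal is correct and follows exactly the route the paper intends: the corollary is an immediate consequence of the characterization theorem for $\operatorname{Hom}^*_\mathscr{A}(\mathscr{E},\mathscr{E}_0)$ via the factorizations $A_j=F_jU$, $\Psi_j=c_jF_jU$ (resp.\ $\Psi_j=F_jV$) and the C*-identity $\|A_j\|^2=\|A_jA_j^*\|$, just as in the unproved Hilbert-space analogue following Theorem~\ref{OPERATORVALUEDCHARACTERIZATIONSOFTHEEXTENSION}. Your explicit remarks that orthonormality forces $G_jG_j^*=I_{\mathscr{E}_0}$ in the module setting and that a central positive $c_j$ is a self-adjoint adjointable multiplier of norm $\|c_j\|$ are precisely the module-specific details the paper leaves implicit, and they are handled correctly.
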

\begin{corollary}\label{OVMODULECOROLLARY}
Let $ \{F_j\}_{j \in \mathbb{J}}$ be an arbitrary orthonormal basis in $\operatorname{Hom}^*_\mathscr{A}(\mathscr{E}, \mathscr{E}_0).$ Then 
\begin{enumerate}[\upshape(i)]
\item The orthonormal  bases   $ (\{A_j\}_{j \in \mathbb{J}},\{A_j\}_{j \in \mathbb{J}})$ in  $ \operatorname{Hom}^*_\mathscr{A}(\mathscr{E}, \mathscr{E}_0)$ are precisely $( \{F_jU\}_{j \in \mathbb{J}},\{F_jU\}_{j \in \mathbb{J}}) $, where $ U \in \operatorname{End}^*_\mathscr{A}(\mathscr{E}) $ is unitary.
\item The Riesz bases   $ (\{A_j\}_{j \in \mathbb{J}},\{A_j\}_{j \in \mathbb{J}})$ in  $ \operatorname{Hom}^*_\mathscr{A}(\mathscr{E}, \mathscr{E}_0)$   are precisely $( \{F_jU\}_{j \in \mathbb{J}},\{F_jU\}_{j \in \mathbb{J}}) $, where $ U \in \operatorname{End}^*_\mathscr{A}(\mathscr{E}) $ is invertible.
\item The homomorphism-valued frames $ (\{A_j\}_{j \in \mathbb{J}},\{A_j\}_{j \in \mathbb{J}})$ in   $ \operatorname{Hom}^*_\mathscr{A}(\mathscr{E}, \mathscr{E}_0)$  are precisely $( \{F_jU\}_{j \in \mathbb{J}},\{F_jU\}_{j \in \mathbb{J}}) $, where $ U\in \operatorname{End}^*_\mathscr{A}(\mathscr{E}) $  is such that $ U^*U$ is invertible.
\item The Bessel sequences  $(\{A_j\}_{j \in \mathbb{J}},\{A_j\}_{j \in \mathbb{J}})$ in  $ \operatorname{Hom}^*_\mathscr{A}(\mathscr{E}, \mathscr{E}_0)$  are precisely $( \{F_jU\}_{j \in \mathbb{J}},\{F_jU\}_{j \in \mathbb{J}}) $, where $ U \in \operatorname{End}^*_\mathscr{A}(\mathscr{E}) $. 
\item The Riesz homomorphism-valued frames $ (\{A_j\}_{j \in \mathbb{J}},\{A_j\}_{j \in \mathbb{J}})$ in  $ \operatorname{Hom}^*_\mathscr{A}(\mathscr{E}, \mathscr{E}_0)$  are precisely $( \{F_jU\}_{j \in \mathbb{J}},\{F_jU\}_{j \in \mathbb{J}}) $, where $ U \in \operatorname{End}^*_\mathscr{A}(\mathscr{E}) $ is  such that  $ U^*U$ is  invertible and $ U(U^*U)^{-1}U^* =I_{\mathscr{E}}$.  
\item The orthonormal homomorphism-valued frames $ ( \{A_j\}_{j \in \mathbb{J}},\{A_j\}_{j \in \mathbb{J}})$ in  $ \operatorname{Hom}^*_\mathscr{A}(\mathscr{E}, \mathscr{E}_0)$  are precisely $( \{F_jU\}_{j \in \mathbb{J}},\{F_jU\}_{j \in \mathbb{J}}) $, where $ U \in \operatorname{End}^*_\mathscr{A}(\mathscr{E}) $ is  such that $ U^*U=I_\mathscr{E}= UU^*$.
\item  $ ( \{A_j\}_{j \in \mathbb{J}},\{A_j\}_{j \in \mathbb{J}})$ is an orthonormal   basis  in  $ \operatorname{Hom}^*_\mathscr{A}(\mathscr{E}, \mathscr{E}_0)$ if and only if it is an orthonormal (hvf).
\end{enumerate}	
\end{corollary}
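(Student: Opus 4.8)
The plan is to obtain all seven items by specializing the immediately preceding characterization theorem (the homomorphism-valued analogue of Theorem \ref{OPERATORVALUEDCHARACTERIZATIONSOFTHEEXTENSION}) to the ``single collection'' case $\Psi_j=A_j$, that is, to $V=U$. Fixing the orthonormal basis $\{F_j\}_{j\in\mathbb{J}}$, that theorem already asserts that the admissible collections are exactly $(\{F_jU\}_{j\in\mathbb{J}},\{F_jV\}_{j\in\mathbb{J}})$ subject to a condition on $V^*U$ (and, in the orthonormal basis case, with $V=c_jU$ for central positive invertible scalars $c_j$). Setting $V=U$ turns $V^*U$ into $U^*U$, so the whole corollary reduces to recording what each condition on $V^*U$ collapses to once $V=U$.

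The structural fact that makes (ii)--(vi) immediate is that $U^*U$ is automatically positive for every adjointable $U\in\operatorname{End}^*_\mathscr{A}(\mathscr{E})$: for each $x\in\mathscr{E}$ one has $\langle U^*Ux,x\rangle=\langle Ux,Ux\rangle\geq 0$ in $\mathscr{A}$, so $U^*U\geq 0$. Hence the positivity requirement on $V^*U$ in the theorem is free in the single collection case, and the surviving requirements are precisely: (ii) $U$ invertible (Riesz basis); (iii) $U^*U$ invertible (hvf); (iv) no condition (Bessel); (v) $U^*U$ invertible with $U(U^*U)^{-1}U^*=I_\mathscr{E}$ (Riesz hvf); (vi) $U^*U=I_\mathscr{E}=UU^*$ (orthonormal hvf). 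Each matches the listed statement verbatim.

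For (i) I would argue as in the remark following the definition of Riesz basis. If $(\{A_j\}_{j\in\mathbb{J}},\{A_j\}_{j\in\mathbb{J}})$ is an orthonormal basis, the theorem gives $A_j=F_jU$ with $U$ unitary and $A_j=c_jA_j$ for central positive invertible $c_j$; right-multiplying $A_j=c_jA_j$ by $A_j^*$ and using that orthonormality yields $A_jA_j^*=I_{\mathscr{E}_0}$, one gets $c_jI_{\mathscr{E}_0}=I_{\mathscr{E}_0}$, hence $c_j=e$. Thus the collection is $(\{F_jU\}_{j\in\mathbb{J}},\{F_jU\}_{j\in\mathbb{J}})$ with $U$ unitary, and conversely every such pair is an orthonormal basis. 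Finally (vii) follows by comparing (i) and (vi): in both the defining condition on $U$ is exactly unitarity $U^*U=I_\mathscr{E}=UU^*$, so $(\{A_j\}_{j\in\mathbb{J}},\{A_j\}_{j\in\mathbb{J}})$ is an orthonormal basis if and only if it is an orthonormal (hvf).

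The genuinely delicate point — and the reason this corollary carries one fewer item than its Hilbert space counterpart Corollary \ref{OVCOROLLARYHILBERT} — is that the equivalence ``$U^*U$ invertible $\iff$ $U^*$ surjective'', used to produce item (vii) of the Hilbert version, is unavailable over a general C*-algebra: surjectivity of $U^*$ need not force $U^*U$ to be invertible, since ranges of adjointable homomorphisms need not be orthogonally complemented (cf. Theorem \ref{MANUILOV2}). I therefore would not attempt a ``$U^*$ surjective'' clause. All seven listed items, by contrast, are purely algebraic consequences of the characterization theorem together with the automatic positivity of $U^*U$, so no further analytic input is required and there is no substantial obstacle beyond the bookkeeping just described.
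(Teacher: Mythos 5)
Your proposal is correct and follows essentially the same route the paper takes: the module corollary is obtained by specializing the preceding characterization theorem to $V=U$, using that $U^*U\geq 0$ automatically for adjointable $U$, forcing $c_j=e$ in the orthonormal case by right-multiplication with $A_j^*$, and deducing (vii) by comparing (i) with (vi) — which is exactly the argument the paper gives for the Hilbert-space analogue (Corollary \ref{OVCOROLLARYHILBERT}). Your closing observation about why the ``$U^*$ surjective'' clause is dropped also matches the paper's own Caution following the corollary.
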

\begin{caution}
Why  is the  result corresponding to \text{\upshape(vii)} in Corollary \ref{OVCOROLLARYHILBERT}  missing in the statement of  Corollary \ref{OVMODULECOROLLARY}? The reason is that a closed submodule of a Hilbert C*-module need not be orthogonally complemented.
\end{caution}
\begin{theorem}\label{OPERATORVERSIONCHARACTERIZATIONMODULES}
Let $\{A_j\}_{j\in\mathbb{J}},\{\Psi_j\}_{j\in\mathbb{J}}$ be in  $ \operatorname{Hom}^*_\mathscr{A}(\mathscr{E}, \mathscr{E}_0).$  Then $ (\{A_j\}_{j\in \mathbb{J}}, \{\Psi_j\}_{j\in \mathbb{J}})$  is a (hvf) with bounds  $a $ and  $ b$ (resp. Bessel with bound $ b$)
\begin{enumerate}[\upshape(i)]
\item  if and only if $$U: \mathscr{H}_\mathscr{A}\otimes \mathscr{E}_0 \ni y\mapsto\sum\limits_{j\in\mathbb{J}}A_j^*L_j^*y \in \mathscr{E}, ~\text{and} ~ V: \mathscr{H}_\mathscr{A}\otimes \mathscr{E}_0 \ni z\mapsto\sum\limits_{j\in\mathbb{J}}\Psi_j^*L^*_jz \in \mathscr{E} $$ 
are well-defined, $U,V \in \operatorname{Hom}^*_\mathscr{A}(\mathscr{H}_\mathscr{A}\otimes\mathscr{E}_0, \mathscr{E}) $  such that  $ aI_\mathscr{E}\leq VU^*\leq bI_\mathscr{E}$ (resp. $ 0\leq VU^*\leq bI_\mathscr{E}$).
\item   if and only if $$U: \mathscr{H}_\mathscr{A}\otimes \mathscr{E}_0 \ni y\mapsto\sum\limits_{j\in\mathbb{J}}A_j^*L_j^*y \in \mathscr{E}, ~\text{and} ~ S: \mathscr{E} \ni g\mapsto \sum\limits_{j\in \mathbb{J}}L_j\Psi_jg \in  \mathscr{H}_\mathscr{A}\otimes \mathscr{E}_0 $$ 
are well-defined, $U \in \operatorname{Hom}^*_\mathscr{A}(\mathscr{H}_\mathscr{A}\otimes\mathscr{E}_0, \mathscr{E}) $, $S \in \operatorname{Hom}^*_\mathscr{A}(\mathscr{E}, \mathscr{H}_\mathscr{A}\otimes\mathscr{E}_0) $  such that  $ aI_\mathscr{E}\leq S^*U^*\leq bI_\mathscr{E}$ (resp. $ 0\leq S^*U^*\leq bI_\mathscr{E}$).
\item  if and only if  $$R:   \mathscr{E} \ni h\mapsto \sum\limits_{j\in \mathbb{J}}L_jA_jh \in  \mathscr{H}_\mathscr{A}\otimes \mathscr{E}_0, ~\text{and} ~ V:  \mathscr{H}_\mathscr{A}\otimes \mathscr{E}_0 \ni z\mapsto\sum\limits_{j\in\mathbb{J}}\Psi_j^*L_j^*z \in \mathscr{E} $$ 
are well-defined, $R \in \operatorname{Hom}^*_\mathscr{A}(\mathscr{E}, \mathscr{H}_\mathscr{A}\otimes\mathscr{E}_0) $, $V \in \operatorname{Hom}^*_\mathscr{A}(\mathscr{H}_\mathscr{A}\otimes\mathscr{E}_0, \mathscr{E}) $ such that  $ aI_\mathscr{E}\leq VR\leq bI_\mathscr{E}$ (resp. $ 0\leq VR\leq bI_\mathscr{E}$).
\item  if and only if  $$ R:   \mathscr{E} \ni h\mapsto \sum\limits_{j\in \mathbb{J}}L_jA_jh \in  \mathscr{H}_\mathscr{A}\otimes \mathscr{E}_0, ~\text{and} ~  S:   \mathscr{E} \ni g\mapsto \sum\limits_{j\in \mathbb{J}}L_j\Psi_jg \in  \mathscr{H}_\mathscr{A}\otimes \mathscr{E}_0 $$
are well-defined, $R,S \in \operatorname{Hom}^*_\mathscr{A}(\mathscr{E}, \mathscr{H}_\mathscr{A}\otimes\mathscr{E}_0) $ such that  $ aI_\mathscr{E}\leq S^*R\leq bI_\mathscr{E}$ (resp. $ 0\leq S^*R\leq bI_\mathscr{E}$). 
\end{enumerate}
\end{theorem}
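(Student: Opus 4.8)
The plan is to follow the scheme of the Hilbert space result, Theorem \ref{OPERATORCHARACTERIZATIONHILBERT2}, arguing only for part (i) in the frame situation; parts (ii)--(iv) and the Bessel versions then follow by the same reasoning (replacing $U^*$ by the analysis homomorphism $R=\theta_A$ where appropriate, and dropping invertibility). The whole point is to recognize that the two homomorphisms named in the statement are nothing but the synthesis homomorphisms: $U$ acts as $y\mapsto\sum_{j}A_j^*L_j^*y$, which is exactly the pointwise action of $\theta_A^*$ since $\theta_A=\sum_j L_jA_j$ gives $\theta_A^*=\sum_j A_j^*L_j^*$; similarly $V=\theta_\Psi^*$. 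Once this identification is made, the frame homomorphism is recovered as $VU^*=\theta_\Psi^*\theta_A=S_{A,\Psi}$ by Proposition \ref{HMOP}(ii), and the required order bounds $aI_\mathscr{E}\le VU^*\le bI_\mathscr{E}$ are literally the frame bounds.

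For the forward direction, suppose $(\{A_j\}_{j\in\mathbb{J}},\{\Psi_j\}_{j\in\mathbb{J}})$ is a (hvf). By Definition \ref{HMDEFINITION1}(ii) the analysis homomorphisms $\theta_A,\theta_\Psi\in\operatorname{Hom}^*_\mathscr{A}(\mathscr{E},\mathscr{H}_\mathscr{A}\otimes\mathscr{E}_0)$ exist, so their adjoints $\theta_A^*,\theta_\Psi^*\in\operatorname{Hom}^*_\mathscr{A}(\mathscr{H}_\mathscr{A}\otimes\mathscr{E}_0,\mathscr{E})$ exist as adjointable homomorphisms. To see that $\theta_A^*$ is the map $U$ in the statement, I would check $\langle U y,x\rangle=\sum_j\langle A_j^*L_j^*y,x\rangle=\sum_j\langle L_j^*y,A_jx\rangle=\langle y,\sum_j L_jA_jx\rangle=\langle y,\theta_A x\rangle$ for all $x\in\mathscr{E}$ and $y\in\mathscr{H}_\mathscr{A}\otimes\mathscr{E}_0$, which forces $U=\theta_A^*$; the same computation gives $V=\theta_\Psi^*$. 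Then $VU^*=\theta_\Psi^*\theta_A=S_{A,\Psi}$, and the inequalities $aI_\mathscr{E}\le S_{A,\Psi}\le bI_\mathscr{E}$ transfer verbatim.

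For the converse, assume $U,V\in\operatorname{Hom}^*_\mathscr{A}(\mathscr{H}_\mathscr{A}\otimes\mathscr{E}_0,\mathscr{E})$ are well-defined with $aI_\mathscr{E}\le VU^*\le bI_\mathscr{E}$. Setting $\theta_A:=U^*$ and $\theta_\Psi:=V^*$, one recovers the individual homomorphisms from $U^*=\sum_j L_jA_j$ and the orthogonality relation $L_k^*L_j=\delta_{k,j}I_{\mathscr{E}_0}$, which give $A_k=L_k^*\theta_A$ as in Proposition \ref{HMOP}(v). To verify Definition \ref{HMDEFINITION1}(ii), namely that $\sum_j L_jA_j$ converges strictly to $U^*$, I would write $\sum_{j\in\mathbb{S}}L_jA_j=\bigl(\sum_{j\in\mathbb{S}}L_jL_j^*\bigr)U^*$ and invoke the module analogue of the resolution of identity $\sum_j L_jL_j^*=I_{\mathscr{H}_\mathscr{A}\otimes\mathscr{E}_0}$ in the strict topology; on the adjoint side the net $\sum_{j\in\mathbb{S}}A_j^*L_j^*=U$ converges by hypothesis. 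Hence $\theta_A,\theta_\Psi$ are adjointable, and $S_{A,\Psi}=\theta_\Psi^*\theta_A=VU^*$ is positive invertible, so $(\{A_j\},\{\Psi_j\})$ is a (hvf) with bounds $a,b$.

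The main obstacle, and the only genuine difference from the Hilbert space argument, is the C*-module functional analysis rather than the algebra: a bounded $\mathscr{A}$-linear map need not be adjointable, so the hypothesis ``well-defined and lying in $\operatorname{Hom}^*_\mathscr{A}$'' is doing real work and must be used rather than derived, and all limits must be taken in the strict topology. Concretely, I expect the care to be concentrated in justifying the strict convergence of $\sum_j L_jA_j$ to $U^*$ from the convergence of $\sum_j A_j^*L_j^*$ to $U$; the relations $L_j^*L_k=\delta_{j,k}I_{\mathscr{E}_0}$ and the strict identity $\sum_j L_jL_j^*=I$ (the analogues of Equation \ref{LEQUATION}) are exactly what replace the Hilbert-space completeness used in Theorem \ref{OPERATORCHARACTERIZATIONHILBERT2}. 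The Bessel case is identical with ``positive invertible'' weakened to ``positive'', so the order condition becomes $0\le VU^*\le bI_\mathscr{E}$, while parts (ii)--(iv) merely reshuffle which of $\{U,V\}$ is presented as a synthesis homomorphism $\theta^*$ and which as an analysis homomorphism $\theta$.
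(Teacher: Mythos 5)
Your proposal is correct and follows essentially the same route as the paper: the paper's proof of the Hilbert space analogue (Theorem \ref{OPERATORCHARACTERIZATIONHILBERT2}) consists precisely of the identifications $U=\theta_A^*$, $V=\theta_\Psi^*$ and $VU^*=\theta_\Psi^*\theta_A=S_{A,\Psi}$ in both directions, and the module version is left to the reader as "similar." Your added care — using $UL_k=A_k^*$ to get $A_k=L_k^*U^*$ and the strict identity $\sum_j L_jL_j^*=I$ to recover the strict convergence of $\theta_A$ and $\theta_\Psi$, with adjointability taken as hypothesis rather than derived — is exactly the right way to supply the "additional support" the C*-module setting requires.
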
 
One more charactrization (Theorem \ref{HOMOMORPHISMTOSEQUENTIALVICEVERSAMODULE}) will appear in Section \ref{SEQUENTIALVERSIONOFHOMOMORPHISM-VALUEDFRAMES}. 

\begin{theorem}\cite{PASCHKE1}\label{PASCHKE}
 Let $T:\mathscr{E} \rightarrow \mathscr{E}_0$ be linear. The following are equivalent.
 \begin{enumerate}[\upshape(i)]
\item Operator $ T$ is bounded, and $ \mathscr{A}$-linear.
\item  There exists a real  $K>0 $ such that $ \langle Tx, Tx\rangle\leq K\langle x, x\rangle, \forall x \in \mathscr{E}.$
\end{enumerate}
\end{theorem}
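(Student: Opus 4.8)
The plan is to prove the two implications separately, reading ``linear'' as $\mathbb{K}$-linearity (additivity and scalar homogeneity) and regarding the substantive content as the equivalence between operator-norm boundedness of $T$ and the $\mathscr{A}$-valued order inequality in \text{\upshape(ii)}. Throughout I would use that $\mathscr{A}$ is unital, the identity $\|Tx\|^2=\|\langle Tx,Tx\rangle\|$, the monotonicity of the norm on the positive cone of $\mathscr{A}$, and the fact that conjugation $y\mapsto s^*ys$ preserves the order $\leq$.

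For \text{\upshape(i)}$\Rightarrow$\text{\upshape(ii)} the idea is to fix $x\in\mathscr{E}$, set $b\coloneqq\langle x,x\rangle\geq 0$ and $c\coloneqq\langle Tx,Tx\rangle\geq 0$, and reduce the target inequality $c\leq\|T\|^2 b$ to a purely C*-algebraic statement. First I would record that, for every $a\in\mathscr{A}$, $\mathscr{A}$-linearity gives $T(xa)=(Tx)a$, so that $\langle T(xa),T(xa)\rangle=a^*ca$ and $\langle xa,xa\rangle=a^*ba$; combined with $\|T(xa)\|^2\leq\|T\|^2\|xa\|^2$ this yields the family of scalar estimates
\[
\|a^*ca\|\leq\|T\|^2\,\|a^*ba\|,\qquad\forall a\in\mathscr{A}.
\]
The key step is then to pass from this family to a single order inequality. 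I would substitute $a=(b+\varepsilon e)^{-1/2}$ for $\varepsilon>0$ (legitimate since $b+\varepsilon e\geq\varepsilon e$ is invertible): from $b\leq b+\varepsilon e$ one gets $a^*ba\leq e$, hence $\|a^*ca\|\leq\|T\|^2$, and since $a^*ca\geq 0$ this is exactly $a^*ca\leq\|T\|^2 e$. Conjugating by $(b+\varepsilon e)^{1/2}$ collapses the left-hand side to $c$ and gives $c\leq\|T\|^2(b+\varepsilon e)$; letting $\varepsilon\downarrow 0$ and using closedness of the positive cone delivers $c\leq\|T\|^2 b$, i.e. \text{\upshape(ii)} with $K=\|T\|^2$.

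For \text{\upshape(ii)}$\Rightarrow$\text{\upshape(i)}, boundedness is immediate: $\|Tx\|^2=\|\langle Tx,Tx\rangle\|\leq K\|\langle x,x\rangle\|=K\|x\|^2$, so $\|T\|\leq\sqrt{K}$. The remaining, and genuinely delicate, point is to recover $\mathscr{A}$-linearity from the quadratic bound alone. The plan is to fix $x\in\mathscr{E}$, $a\in\mathscr{A}$, put $w\coloneqq T(xa)-(Tx)a$, and show $\langle w,w\rangle=0$. I would exploit \text{\upshape(ii)} along the one-parameter families $u=x(a+se)$ with $s$ real (note $x(se)=sx$, so $Tu=T(xa)+sTx$ by homogeneity), expanding $\langle Tu,Tu\rangle\leq K\langle u,u\rangle$ into an $\mathscr{A}$-valued quadratic in $s$ whose nonnegativity for all real $s$, together with nonnegativity of its constant and leading coefficients, constrains the cross term $\langle T(xa),(Tx)a\rangle$; combining a real and a suitably phase-rotated family is meant to force the two cross terms in $\langle w,w\rangle$ to cancel against $a^*\langle Tx,Tx\rangle a$ and $\langle T(xa),T(xa)\rangle$.

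I expect this last point to be the main obstacle: extracting an exact module identity from an inequality in the complete absence of an adjoint for $T$ is hard, since there is no clean discriminant or Cauchy--Schwarz mechanism for $\mathscr{A}$-valued quadratics as in the scalar case, and the difficulty is sharpened when $\mathbb{K}=\mathbb{R}$ while $\mathscr{A}$ is complex. If the direct polarization resists, the fallback is to reduce $\langle w,w\rangle=0$ to $\phi(\langle w,w\rangle)=0$ for every state $\phi$ of $\mathscr{A}$ and to analyse this in the GNS data of $\phi$. In any case, the conjugation argument of the first implication is the part I am most confident carries through verbatim, and it is where the unitality of $\mathscr{A}$ is essential.
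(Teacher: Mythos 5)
The paper offers no proof of this statement: it is quoted from the cited paper of Paschke and used as a black box (e.g.\ in the proof of Theorem \ref{CHARACTERIZATION}, where it is applied to $\theta_A$ and $\theta_\Psi$), so there is nothing in the source to compare your argument against; it has to stand on its own. Your direction (i)$\Rightarrow$(ii) does stand: the reduction to the scalar estimates $\|a^*ca\|\leq\|T\|^2\|a^*ba\|$, the substitution $a=(b+\varepsilon e)^{-1/2}$, the equivalence ``$y\geq 0$ and $\|y\|\leq M$'' iff ``$0\leq y\leq Me$'', the order-preservation of conjugation, and the limit $\varepsilon\downarrow 0$ against the closed positive cone are all correct, and this is essentially the classical localization argument. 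Unitality of $\mathscr{A}$ is indeed used exactly where you say it is.

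The direction (ii)$\Rightarrow$(i) is where the proposal is not a proof. Boundedness is fine, but the recovery of $\mathscr{A}$-linearity is only a plan, and your own diagnosis of why it stalls is accurate: expanding $\langle Tu,Tu\rangle\leq K\langle u,u\rangle$ along $u=x(a+se)$ produces an $\mathscr{A}$-valued quadratic $A+sB+s^2C\geq 0$ with $A,C\geq 0$, and applying states only yields the discriminant bound $\phi(B)^2\leq 4\phi(A)\phi(C)$; an inequality of that type constrains the cross terms but cannot force the exact identity $\langle T(xa)-(Tx)a,\,T(xa)-(Tx)a\rangle=0$, and the proposed GNS fallback is likewise left as a sketch. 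So under your literal reading of the hypothesis ``linear'' as $\mathbb{K}$-linear, the substantive half of (ii)$\Rightarrow$(i) is missing and the proposal is incomplete. It is worth saying that everywhere this theorem is invoked in the paper the map in question is already $\mathscr{A}$-linear by construction, and in Paschke's own formulation the content of the result is precisely the implication you did prove; if ``linear'' is read as ``module map'', then (ii)$\Rightarrow$(i) reduces to your one-line norm estimate and your write-up is complete. As written, though, you must either adopt that reading explicitly or supply the missing module-linearity argument; the version you have does not establish it.
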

\begin{theorem}\cite{ARAMBASIC}\label{ARAMBASIC1}
 If $T \in \operatorname{End}^*_\mathscr{A}(\mathscr{E}) $ is self-adjoint, then the following are equivalent.
\begin{enumerate}[\upshape(i)]
\item $ T$ is surjective.
\item  There are $ a,b >0$ such that $ a\|x\|\leq\|Tx\|\leq b\|x\|, \forall x \in \mathscr{E}.$
\item  There are $ c,d >0$ such that $c \langle x,x\rangle \leq \langle Tx,Tx \rangle \leq d\langle x,x\rangle , \forall x \in \mathscr{E}.$
\end{enumerate}
\end{theorem}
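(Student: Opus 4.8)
The plan is to prove the cycle $(i)\Rightarrow(iii)\Rightarrow(ii)\Rightarrow(i)$, after first disposing of the two upper bounds, which hold automatically for any $T\in\operatorname{End}^*_\mathscr{A}(\mathscr{E})$. Indeed, boundedness of $T$ gives $\|Tx\|\le\|T\|\|x\|$ for all $x\in\mathscr{E}$, which is the upper half of (ii); and since $T$ is bounded and $\mathscr{A}$-linear, Theorem \ref{PASCHKE} yields a constant $K>0$ with $\langle Tx,Tx\rangle\le K\langle x,x\rangle$ for all $x$, which is the upper half of (iii) (one may take $K=\|T\|^2$). So throughout only the \emph{lower} bounds carry content.

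For $(iii)\Rightarrow(ii)$ I would simply take norms. Recall $\|x\|^2=\|\langle x,x\rangle\|$ and $\|Tx\|^2=\|\langle Tx,Tx\rangle\|$, and that the norm is monotone on the positive cone of $\mathscr{A}$. Hence $c\langle x,x\rangle\le\langle Tx,Tx\rangle$ gives $c\|x\|^2\le\|Tx\|^2$, i.e. $\sqrt{c}\,\|x\|\le\|Tx\|$, the lower half of (ii). For $(ii)\Rightarrow(i)$ the argument is the module analogue of ``bounded below implies surjective for self-adjoint operators''. The lower bound $a\|x\|\le\|Tx\|$ forces $T$ to be injective and to have closed range: if $\{Tx_n\}$ is Cauchy then so is $\{x_n\}$, and completeness of $\mathscr{E}$ gives $x_n\to x$ with $Tx=\lim Tx_n$. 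Since $T$ is self-adjoint and $T(\mathscr{E})$ is closed, $\operatorname{Ker}(T)=T(\mathscr{E})^{\perp}$; here the closedness of the range is essential, and it is exactly the hypothesis under which Theorem \ref{MANUILOV2} guarantees that $T(\mathscr{E})$ is orthogonally complemented. As $T$ is injective, $\operatorname{Ker}(T)=\{0\}$, so $T(\mathscr{E})^{\perp}=\{0\}$, and orthogonal complementability then gives $\mathscr{E}=T(\mathscr{E})\oplus T(\mathscr{E})^{\perp}=T(\mathscr{E})$; thus $T$ is surjective.

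For $(i)\Rightarrow(iii)$, which I expect to be the main obstacle, the point is to manufacture a genuine $\mathscr{A}$-valued lower bound (not merely a scalar norm bound) out of surjectivity. First, self-adjointness and surjectivity give injectivity, since $\operatorname{Ker}(T)=T(\mathscr{E})^{\perp}=\mathscr{E}^{\perp}=\{0\}$; so $T$ is a bounded linear bijection of the Banach space $\mathscr{E}$, and the open mapping theorem provides a bounded inverse $T^{-1}$. Writing arbitrary $x=Tu$, $y=Tv$ and computing $\langle T^{-1}x,y\rangle=\langle u,Tv\rangle=\langle Tu,v\rangle=\langle x,T^{-1}y\rangle$ shows $T^{-1}$ is symmetric, hence (being everywhere defined and bounded) self-adjoint, so $T^{-1}\in\operatorname{End}^*_\mathscr{A}(\mathscr{E})$. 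Consequently $T^2$ is a positive invertible element of the unital C*-algebra $\operatorname{End}^*_\mathscr{A}(\mathscr{E})$ with inverse $(T^{-1})^2$.

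Invoking the standard C*-fact that a positive invertible element $P$ satisfies $P\ge\|P^{-1}\|^{-1}I_\mathscr{E}$, applied to $P=T^2$, gives $T^2\ge\|T^{-2}\|^{-1}I_\mathscr{E}$, that is, $\langle Tx,Tx\rangle=\langle T^2x,x\rangle\ge\|T^{-2}\|^{-1}\langle x,x\rangle$ for all $x$; together with the upper bound already noted this is (iii) with $c=\|T^{-2}\|^{-1}$ and $d=\|T\|^2$. The delicate points, and where the proof departs from the Hilbert-space template, are precisely the two uses of the module structure: the appeal to Theorem \ref{MANUILOV2} for orthogonal complementability of the (closed) range in $(ii)\Rightarrow(i)$, and the verification that the inverse of the bijective self-adjoint $T$ is again adjointable, which is what upgrades the scalar estimate into the genuine $\mathscr{A}$-valued inequality (iii).
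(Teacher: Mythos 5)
The paper does not prove this statement at all: it is imported verbatim from \cite{ARAMBASIC} as a quoted tool (alongside Theorem \ref{PASCHKE} and Theorem \ref{MANUILOV2}), so there is no in-paper proof to compare yours against. Your argument is, however, a correct and self-contained proof. The cycle $(i)\Rightarrow(iii)\Rightarrow(ii)\Rightarrow(i)$ is sound: the two upper bounds are indeed free (boundedness, respectively Theorem \ref{PASCHKE} with $K=\|T\|^2$); $(iii)\Rightarrow(ii)$ by monotonicity of the norm on positives; $(ii)\Rightarrow(i)$ correctly combines ``bounded below $\Rightarrow$ closed range'', the identity $T(\mathscr{E})^{\perp}=\operatorname{Ker}(T^*)=\operatorname{Ker}(T)$ for adjointable self-adjoint $T$, and Theorem \ref{MANUILOV2} for $\mathscr{E}=T(\mathscr{E})\oplus T(\mathscr{E})^{\perp}$ --- the step that genuinely needs the module-theoretic input, as you note. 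For $(i)\Rightarrow(iii)$ your two delicate points are handled properly: the direct computation $\langle T^{-1}x,y\rangle=\langle x,T^{-1}y\rangle$ shows the Banach-space inverse supplied by the open mapping theorem is adjointable, so $T$ is invertible in the C*-algebra $\operatorname{End}^*_\mathscr{A}(\mathscr{E})$, and the spectral fact $P\geq\|P^{-1}\|^{-1}I_\mathscr{E}$ for positive invertible $P=T^2=T^*T$ converts this into the operator inequality $\langle Tx,Tx\rangle=\langle T^2x,x\rangle\geq\|T^{-2}\|^{-1}\langle x,x\rangle$, which is exactly the $\mathscr{A}$-valued lower bound that a scalar norm estimate cannot give. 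No gaps.
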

\begin{theorem}\label{CHARACTERIZATION}
Let $ \{A_j\}_{j\in \mathbb{J}}, \{\Psi_j\}_{j\in \mathbb{J}} $ be in $\operatorname{Hom}^*_\mathscr{A}(\mathscr{E}, \mathscr{E}_0) $. If  $(  \{A_j\}_{j\in \mathbb{J}}, \{\Psi_j\}_{j\in \mathbb{J}} )$ is (hvf), then  
\begin{enumerate}[\upshape(i)]
\item there are $a ,b> 0$ such that
 $$a\|x\|^2\leq \left\|\sum_{j\in \mathbb{J}} \langle A_jx, \Psi_jx\rangle\right\|=\left\|\sum_{j\in \mathbb{J}} \langle \Psi_jx, A_jx\rangle\right\| \leq b\|x\|^2 , ~\forall x \in \mathscr{E},$$  
\item there are $ c,d> 0$ such that 
$$ \sum\limits_{j \in \mathbb{J}} \langle A_jx, A_jx\rangle \leq c\langle x, x\rangle , ~ \forall x \in \mathscr{E} ~ \text{and}~ \sum\limits_{j \in \mathbb{J}} \langle \Psi_jx, \Psi_jx\rangle \leq d\langle x, x \rangle , ~ \forall x \in \mathscr{E}.$$
\end{enumerate}
If $ \Psi_j^*A_j\geq 0, \forall j \in \mathbb{J},$ then the converse holds.
\end{theorem}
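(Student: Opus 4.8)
The plan is to treat the forward implication as essentially a repackaging of the defining properties of an (hvf), and to save the positivity hypothesis $\Psi_j^*A_j\geq 0$ for the converse, where it is genuinely indispensable.

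For the forward direction, suppose $(\{A_j\}_{j\in\mathbb{J}},\{\Psi_j\}_{j\in\mathbb{J}})$ is an (hvf). By Proposition \ref{HMOP}(ii) the frame homomorphism satisfies $S_{A,\Psi}=\theta_\Psi^*\theta_A$ and $\sum_{j\in\mathbb{J}}\langle A_jx,\Psi_jx\rangle=\langle S_{A,\Psi}x,x\rangle$. Since $S_{A,\Psi}$ is positive invertible with $aI_\mathscr{E}\leq S_{A,\Psi}\leq bI_\mathscr{E}$ for $a=\|S_{A,\Psi}^{-1}\|^{-1}$ and $b=\|S_{A,\Psi}\|$, the element $\langle S_{A,\Psi}x,x\rangle\in\mathscr{A}_+$ is squeezed between $a\langle x,x\rangle$ and $b\langle x,x\rangle$; applying norm-monotonicity of the order on $\mathscr{A}_+$ gives $a\|x\|^2\leq\|\langle S_{A,\Psi}x,x\rangle\|\leq b\|x\|^2$. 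The equality of the two norms in (i) is immediate from the C*-identity $\|c\|=\|c^*\|$ together with $(\sum_j\langle A_jx,\Psi_jx\rangle)^*=\sum_j\langle\Psi_jx,A_jx\rangle$. For (ii), adjointability of $\theta_A$ yields $\sum_{j\in\mathbb{J}}\langle A_jx,A_jx\rangle=\langle\theta_A^*\theta_Ax,x\rangle\leq\|\theta_A\|^2\langle x,x\rangle$, and likewise for $\Psi$, so $c=\|\theta_A\|^2$ and $d=\|\theta_\Psi\|^2$ work.

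For the converse, assume (i), (ii), and $\Psi_j^*A_j\geq 0$ for all $j$. The first task is to show that $\theta_A=\sum_jL_jA_j$ and $\theta_\Psi=\sum_jL_j\Psi_j$ converge strictly to adjointable homomorphisms. Using $L_j^*L_k=\delta_{j,k}I_{\mathscr{E}_0}$, for finite $\mathbb{S}_1\subseteq\mathbb{S}_2$ one has $\|\sum_{j\in\mathbb{S}_2\setminus\mathbb{S}_1}L_jA_jx\|^2=\|\sum_{j\in\mathbb{S}_2\setminus\mathbb{S}_1}\langle A_jx,A_jx\rangle\|$, which tends to $0$ because the series in (ii) converges; this gives pointwise convergence of the partial sums. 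To obtain adjointability I would check that the adjoint partial sums $\sum_{j\in\mathbb{S}}A_j^*L_j^*w$ also converge for each $w\in\mathscr{H}_\mathscr{A}\otimes\mathscr{E}_0$: writing $w_j=L_j^*w$ and using Cauchy--Schwarz in the module (and $\|\xi\|=\sup_{\|g\|=1}\|\langle\xi,g\rangle\|$), one gets $\|\sum_{j\in\mathbb{S}}A_j^*w_j\|\leq\sqrt{c}\,\|\sum_{j\in\mathbb{S}}\langle w_j,w_j\rangle\|^{1/2}$, whose right-hand side is a tail of the convergent series $\sum_j\langle w_j,w_j\rangle=\langle w,w\rangle$. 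Since both $\theta_A^{\mathbb{S}}x$ and $(\theta_A^{\mathbb{S}})^*w$ converge, the limits are mutually adjoint, so $\theta_A$, and symmetrically $\theta_\Psi$, is an adjointable homomorphism.

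It then remains to show $S_{A,\Psi}=\sum_j\Psi_j^*A_j$ converges strictly to a positive invertible homomorphism, and this is exactly where the hypothesis $\Psi_j^*A_j\geq 0$ and the Arambašić characterization (Theorem \ref{ARAMBASIC1}) enter, and where the real obstacle lies. Because each $\Psi_j^*A_j$ is positive and adjointable, the partial sums form an increasing net of self-adjoint operators, and $S_{A,\Psi}=\theta_\Psi^*\theta_A$ is adjointable and self-adjoint with $\langle S_{A,\Psi}x,x\rangle=\sum_j\langle A_jx,\Psi_jx\rangle\geq 0$, hence positive. For invertibility I would pass to the positive square root $S_{A,\Psi}^{1/2}$ and use $\|S_{A,\Psi}^{1/2}x\|^2=\|\langle S_{A,\Psi}x,x\rangle\|$, so that (i) rewrites as $\sqrt{a}\,\|x\|\leq\|S_{A,\Psi}^{1/2}x\|\leq\sqrt{b}\,\|x\|$. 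Unlike in a Hilbert space, being bounded below does not by itself yield invertibility in a Hilbert C*-module; the decisive step is to invoke Theorem \ref{ARAMBASIC1} for the self-adjoint operator $S_{A,\Psi}^{1/2}$, which converts this two-sided norm bound into surjectivity. A self-adjoint, bounded-below, surjective operator is invertible, so $S_{A,\Psi}=(S_{A,\Psi}^{1/2})^2$ is positive invertible, and all defining conditions of Definition \ref{HMDEFINITION1} are verified. The positivity assumption is used twice essentially: to guarantee self-adjointness and positivity of the limit, and to make the passage to $S_{A,\Psi}^{1/2}$ legitimate so that Theorem \ref{ARAMBASIC1} applies.
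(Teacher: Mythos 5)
Your proposal is correct and follows essentially the same route as the paper: the forward direction via norm-monotonicity of the order on $\mathscr{A}_+$ and boundedness of $\theta_A,\theta_\Psi$, and the converse by using (ii) to get adjointable analysis homomorphisms, the hypothesis $\Psi_j^*A_j\geq 0$ to get positivity of $S_{A,\Psi}=\theta_\Psi^*\theta_A$, and Theorem \ref{ARAMBASIC1} applied to $S_{A,\Psi}^{1/2}$ to convert the two-sided norm bound from (i) into invertibility. The only difference is cosmetic: you verify adjointability of $\theta_A,\theta_\Psi$ by hand via the Cauchy--Schwarz estimate on the adjoint partial sums, where the paper simply invokes Theorem \ref{PASCHKE}.
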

\begin{proof}
\begin{enumerate}[\upshape(i)]
\item Norm preserves the order relation among positive elements of a C*-algebra.
\item We apply Theorem \ref{PASCHKE} to $ \theta_A$ and $ \theta_\Psi.$ 
\end{enumerate}

Let $ \Psi_j^*A_j\geq 0, \forall j \in \mathbb{J}$ and (i), (ii) hold.  That  (ii) gives $ \theta_A,  \theta_\Psi \in \operatorname{Hom}^*_\mathscr{A}(\mathscr{E}, \mathscr{H}_\mathscr{A}\otimes\mathscr{E}_0).$  Thus $ S_{A,\Psi}=\theta_\Psi^*\theta_A$  exists as a bounded homomorphism. Further, $ \Psi_j^*A_j\geq 0, \forall j \in \mathbb{J}$ gives $ S_{A,\Psi}=\theta_\Psi^*\theta_A$ is  positive. Even if  $ \Psi_j^*A_j> 0$ for atleast one $  j, $ we get $ S_{A,\Psi}>0.$ But this happens (else $ \Psi_j^*A_j= 0, \forall j \in \mathbb{J}$ implies $x=0, \forall x \in \mathscr{E} $, from (i) of the assumptions. Thus $ \mathscr{E}=\{0\} $ which is too trivial). Hence  $ S^{1/2}_{A,\Psi}$ exists. Using this in (i),  $ \sqrt{a}\|x\|\leq \|S^{1/2}_{A,\Psi}x\|\leq \sqrt{b}\|x\|, \forall x \in \mathscr{E}.$ We now refer Theorem \ref{ARAMBASIC1} to end the proof.
\end{proof}
\begin{corollary}
Let $ \{A_j\}_{j\in \mathbb{J}}, \{\Psi_j\}_{j\in \mathbb{J}} $ be in $\operatorname{Hom}^*_\mathscr{A}(\mathscr{E}, \mathscr{E}_0) $. If  $ ( \{A_j\}_{j\in \mathbb{J}} ,\{\Psi_j\}_{j\in \mathbb{J}} )$ is Bessel, then  
\begin{enumerate}[\upshape(i)]
\item there is  $b> 0$ such that
$$  \left\|\sum_{j\in \mathbb{J}} \langle A_jx, \Psi_jx\rangle\right\|=\left\|\sum_{j\in \mathbb{J}} \langle \Psi_jx, A_jx\rangle\right\|\leq b\|x\|^2 , ~\forall x \in \mathscr{E},$$  
\item there are $ c,d> 0$ such that 
$$ \sum\limits_{j \in \mathbb{J}} \langle A_jx, A_jx\rangle \leq c\langle x, x\rangle , ~ \forall x \in \mathscr{E} ~ \text{and}~ \sum\limits_{j \in \mathbb{J}} \langle \Psi_jx, \Psi_jx\rangle \leq d\langle x, x \rangle , ~ \forall x \in \mathscr{E}.$$
\end{enumerate}
If $ \Psi_j^*A_j\geq 0, \forall j \in \mathbb{J},$ then the converse holds.
\end{corollary}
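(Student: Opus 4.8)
The plan is to mirror the proof of Theorem \ref{CHARACTERIZATION} almost verbatim, simply discarding the two ingredients that encode the lower frame bound and invertibility; for a Bessel sequence we need only boundedness and positivity of the associated homomorphism, so the argument is strictly shorter. For the direct part, recall that by the defining property of a Bessel pair the analysis homomorphisms $\theta_A,\theta_\Psi$ converge to adjointable homomorphisms and, by Proposition \ref{HMOP}(ii), $S_{A,\Psi}=\theta_\Psi^*\theta_A=S_{\Psi,A}$ exists as a bounded positive homomorphism. For (i) I would observe that for every $x\in\mathscr{E}$ one has $\sum_{j\in\mathbb{J}}\langle A_jx,\Psi_jx\rangle=\langle\theta_Ax,\theta_\Psi x\rangle=\langle S_{A,\Psi}x,x\rangle$, which is a self-adjoint (indeed positive) element of $\mathscr{A}$ of norm at most $\|S_{A,\Psi}\|\,\|x\|^2$; taking $b=\|S_{A,\Psi}\|$ and using that norm is preserved under the adjoint, together with $\big(\sum_{j}\langle A_jx,\Psi_jx\rangle\big)^*=\sum_{j}\langle\Psi_jx,A_jx\rangle$, yields the displayed equality of norms and the upper estimate.

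For (ii) the key tool is Paschke's criterion (Theorem \ref{PASCHKE}) applied to the bounded adjointable homomorphisms $\theta_A$ and $\theta_\Psi$ supplied by the Bessel hypothesis. Using $\theta_A^*\theta_A=\sum_{j\in\mathbb{J}}A_j^*A_j$ from Proposition \ref{HMOP}(i), one gets $\sum_{j\in\mathbb{J}}\langle A_jx,A_jx\rangle=\langle\theta_Ax,\theta_Ax\rangle\le\|\theta_A\|^2\langle x,x\rangle$, so $c=\|\theta_A\|^2$ works; the symmetric computation with $\theta_\Psi$ gives $d=\|\theta_\Psi\|^2$.

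For the converse I would assume $\Psi_j^*A_j\ge 0$ for all $j$ together with (i) and (ii). Condition (ii) and Theorem \ref{PASCHKE} place $\theta_A,\theta_\Psi$ in $\operatorname{Hom}^*_\mathscr{A}(\mathscr{E},\mathscr{H}_\mathscr{A}\otimes\mathscr{E}_0)$, whence $S_{A,\Psi}=\theta_\Psi^*\theta_A=\sum_{j\in\mathbb{J}}\Psi_j^*A_j$ exists as a bounded homomorphism; since each partial sum $\sum_{j\in\mathbb{S}}\Psi_j^*A_j$ is positive, so is the limit $S_{A,\Psi}$. This is already the full Bessel condition, so the converse is complete. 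The point to emphasize is precisely where this argument diverges from Theorem \ref{CHARACTERIZATION}: there the lower bound in (i) forced $S_{A,\Psi}>0$ and one had to invoke Theorem \ref{ARAMBASIC1} (via $S_{A,\Psi}^{1/2}$) to upgrade positivity to invertibility, whereas here no such step is needed. Consequently the only genuine subtlety is the one inherited from the direct part, namely guaranteeing that the scalar-type bounds in (ii) actually produce \emph{adjointable} homomorphisms rather than merely bounded $\mathscr{A}$-linear maps; this is exactly what Paschke's theorem delivers, aided by the fact that $\theta_A=\sum_{j}L_jA_j$ has the manifest formal adjoint $\sum_{j}A_j^*L_j^*$, and it is the step I expect to require the most care to state cleanly.
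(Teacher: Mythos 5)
Your proposal is correct and follows essentially the same route as the paper: the paper leaves this corollary unproved precisely because it is Theorem \ref{CHARACTERIZATION} with the lower-bound/invertibility step (the appeal to Theorem \ref{ARAMBASIC1} via $S_{A,\Psi}^{1/2}$) deleted, which is exactly what you do. Your identification of the adjointability of $\theta_A,\theta_\Psi$ via Theorem \ref{PASCHKE} as the only delicate point matches the paper's own treatment.
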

\textbf{Similarity, composition and tensor product}
\begin{definition}
A (hvf) $(\{B_j\}_{j\in \mathbb{J}}, \{\Phi_j\}_{j\in \mathbb{J}})$  in  $ \operatorname{Hom}^*_\mathscr{A}(\mathscr{E}, \mathscr{E}_0)$   is said to be right-similar (resp. left-similar) to a (hvf) $(\{A_j\}_{j\in \mathbb{J}}, \{\Psi_j\}_{j\in \mathbb{J}})$ in $ \operatorname{Hom}^*_\mathscr{A}(\mathscr{E}, \mathscr{E}_0)$ if there exist invertible  $ R_{A,B}, R_{\Psi, \Phi} \in \operatorname{End}^*_\mathscr{A}(\mathscr{E})$  (resp. $ L_{A,B}, L_{\Psi, \Phi}$ $ \in \operatorname{End}^*_\mathscr{A}(\mathscr{E}_0)$) such that $B_j=A_jR_{A,B} , \Phi_j=\Psi_jR_{\Psi, \Phi} $  (resp. $ B_j=L_{A,B}A_j , \Phi_j=L_{\Psi, \Phi}\Psi_j$), $\forall j \in \mathbb{J}. $
\end{definition}

\begin{proposition}
Let $ \{A_j\}_{j\in \mathbb{J}}\in \mathscr{F}_\Psi$  with frame bounds $a, b,$  let $R_{A,B}, R_{\Psi, \Phi} \in \operatorname{End}^*_\mathscr{A}(\mathscr{E})$ be positive, invertible, commute with each other, commute with $ S_{A, \Psi}$, and let $B_j=A_jR_{A,B} , \Phi_j=\Psi_jR_{\Psi, \Phi},  \forall j \in \mathbb{J}.$ Then 
\begin{enumerate}[\upshape(i)]
\item $ \{B_j\}_{j\in \mathbb{J}}\in \mathscr{F}_\Phi$ and $ \frac{a}{\|R_{A,B}^{-1}\|\|R_{\Psi,\Phi}^{-1}\|}\leq S_{B, \Phi} \leq b\|R_{A,B}R_{\Psi,\Phi}\|.$ Assuming that $( \{A_j\}_{j\in \mathbb{J}},\{\Psi_j\}_{j\in \mathbb{J}})$ is Parseval, then $( \{B_j\}_{j\in \mathbb{J}}, \{\Phi_j\}_{j\in \mathbb{J}})$  is Parseval if and only if   $ R_{\Psi, \Phi}R_{A,B}=I_\mathscr{E}.$  
\item $ \theta_B=\theta_A R_{A,B}, \theta_\Phi=\theta_\Psi R_{\Psi,\Phi}, S_{B,\Phi}=R_{\Psi,\Phi}S_{A, \Psi}R_{A,B},  P_{B,\Phi}=P_{A, \Psi}.$
\end{enumerate}
\end{proposition}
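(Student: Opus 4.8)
The plan is to transcribe the Hilbert-space argument of Proposition \ref{RIGHTSIMILARITYPROPOSITIONOPERATORVERSION} into the Hilbert C*-module language, invoking the module-theoretic facts assembled in Proposition \ref{HMOP} (especially $S_{A,\Psi}=\theta_\Psi^*\theta_A$) wherever the operator-valued proof used the corresponding Hilbert-space identities. I would establish (ii) first, since (i) leans on it. Computing directly, $\theta_B=\sum_{j\in\mathbb{J}}L_jB_j=\sum_{j\in\mathbb{J}}L_jA_jR_{A,B}=\theta_AR_{A,B}$ and likewise $\theta_\Phi=\theta_\Psi R_{\Psi,\Phi}$; here I only need that right post-composition with the adjointable homomorphism $R_{A,B}$ (resp. $R_{\Psi,\Phi}$) preserves the strict convergence of the defining net and returns an adjointable homomorphism, so that $\theta_B,\theta_\Phi$ exist in $\operatorname{Hom}^*_\mathscr{A}(\mathscr{E},\mathscr{H}_\mathscr{A}\otimes\mathscr{E}_0)$. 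Substituting into $S_{B,\Phi}=\theta_\Phi^*\theta_B$ and using $R_{\Psi,\Phi}^*=R_{\Psi,\Phi}$ (positivity) gives $S_{B,\Phi}=R_{\Psi,\Phi}\theta_\Psi^*\theta_AR_{A,B}=R_{\Psi,\Phi}S_{A,\Psi}R_{A,B}$. The frame idempotent then collapses by cancellation, using only invertibility of the factors: $P_{B,\Phi}=\theta_BS_{B,\Phi}^{-1}\theta_\Phi^*=(\theta_AR_{A,B})(R_{A,B}^{-1}S_{A,\Psi}^{-1}R_{\Psi,\Phi}^{-1})(R_{\Psi,\Phi}\theta_\Psi^*)=\theta_AS_{A,\Psi}^{-1}\theta_\Psi^*=P_{A,\Psi}$.

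For (i), the crux is that $S_{B,\Phi}=R_{\Psi,\Phi}S_{A,\Psi}R_{A,B}$ is positive and invertible. Here the commutativity hypotheses do the real work: since $R_{A,B}, R_{\Psi,\Phi}, S_{A,\Psi}$ pairwise commute and each is positive invertible, they generate a commutative C*-subalgebra of $\operatorname{End}^*_\mathscr{A}(\mathscr{E})$, inside which a product of commuting positive invertible elements is again positive invertible (by continuous functional calculus), and $S_{B,\Phi}$ may be rewritten as $S_{A,\Psi}R_{A,B}R_{\Psi,\Phi}$. Invertibility of $S_{B,\Phi}$ places $\{B_j\}_{j\in\mathbb{J}}$ in $\mathscr{F}_\Phi$. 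For the two-sided estimate I would combine the scalar-type inequality $\frac{1}{\|R_{A,B}^{-1}\|\,\|R_{\Psi,\Phi}^{-1}\|}I_\mathscr{E}\leq R_{A,B}R_{\Psi,\Phi}\leq \|R_{A,B}R_{\Psi,\Phi}\|I_\mathscr{E}$ (whose lower bound comes from $\|(R_{A,B}R_{\Psi,\Phi})^{-1}\|\leq\|R_{\Psi,\Phi}^{-1}\|\,\|R_{A,B}^{-1}\|$) with $aI_\mathscr{E}\leq S_{A,\Psi}\leq bI_\mathscr{E}$; multiplying these inequalities is legitimate precisely because the operators commute, which is exactly why commutativity is imposed, and yields $\frac{a}{\|R_{A,B}^{-1}\|\|R_{\Psi,\Phi}^{-1}\|}\leq S_{B,\Phi}\leq b\|R_{A,B}R_{\Psi,\Phi}\|$.

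For the Parseval clause I would specialize: if $(\{A_j\},\{\Psi_j\})$ is Parseval then $S_{A,\Psi}=I_\mathscr{E}$ by Proposition \ref{HMOP}(iii), so $S_{B,\Phi}=R_{\Psi,\Phi}R_{A,B}$; hence $(\{B_j\},\{\Phi_j\})$ is Parseval, i.e.\ $S_{B,\Phi}=I_\mathscr{E}$, if and only if $R_{\Psi,\Phi}R_{A,B}=I_\mathscr{E}$. I expect the only genuine obstacle, as opposed to routine bookkeeping, to be the careful justification that products and order inequalities may be manipulated as in the scalar case: in a general C*-module the order relation is not compatible with multiplication unless the factors commute, so each step multiplying a positivity or invertibility statement must explicitly cite the commuting C*-subalgebra and its functional calculus. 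All convergence and adjointability issues reduce to the fact that $\theta_A,\theta_\Psi$ already exist as adjointable homomorphisms (Definition \ref{HMDEFINITION1}) and are here merely post-composed with elements of $\operatorname{End}^*_\mathscr{A}(\mathscr{E})$.
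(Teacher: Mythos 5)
Your proof is correct and follows essentially the same route as the paper, which simply transcribes the Hilbert-space argument of Proposition \ref{RIGHTSIMILARITYPROPOSITIONOPERATORVERSION} (compute $\theta_B=\theta_AR_{A,B}$, $\theta_\Phi=\theta_\Psi R_{\Psi,\Phi}$, $S_{B,\Phi}=R_{\Psi,\Phi}S_{A,\Psi}R_{A,B}$, cancel in $P_{B,\Phi}$, and multiply the commuting order inequalities). Your explicit justification that positivity and the order relation may be multiplied only inside the commutative C*-subalgebra generated by $R_{A,B}$, $R_{\Psi,\Phi}$, $S_{A,\Psi}$ is exactly the point the paper leaves implicit, and is welcome.
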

\begin{lemma}
 Let $ \{A_j\}_{j\in \mathbb{J}}\in \mathscr{F}_\Psi,$ $ \{B_j\}_{j\in \mathbb{J}}\in \mathscr{F}_\Phi$ and   $B_j=A_jR_{A,B} , \Phi_j=\Psi_jR_{\Psi, \Phi},  \forall j \in \mathbb{J}$, for some invertible $ R_{A,B} ,R_{\Psi, \Phi} \in \operatorname{End}^*_\mathscr{A}(\mathscr{E}).$ Then 
  $ \theta_B=\theta_A R_{A,B}, \theta_\Phi=\theta_\Psi R_{\Psi,\Phi}, S_{B,\Phi}=R_{\Psi,\Phi}^*S_{A, \Psi}R_{A,B},  P_{B,\Phi}=P_{A, \Psi}.$ Assuming that $ (\{A_j\}_{j\in \mathbb{J}},\{\Psi_j\}_{j\in \mathbb{J}})$ is Parseval, then $(\{B_j\}_{j\in \mathbb{J}},  \{\Phi_j\}_{j\in \mathbb{J}})$ is Parseval  if and only if   $ R_{\Psi, \Phi}^*R_{A,B}=I_\mathscr{E}.$
 \end{lemma}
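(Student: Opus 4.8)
The plan is to mirror the proof of Lemma \ref{SIM} from the Hilbert space setting, paying attention to the extra care needed to keep every object inside the adjointable homomorphisms and to justify the strict-topology manipulations that replace the strong-operator-topology arguments used there.

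First I would compute the analysis homomorphisms. Since $\{B_j\}_{j\in\mathbb{J}}\in\mathscr{F}_\Phi$, the homomorphism $\theta_B$ exists and is adjointable. Starting from $\theta_B=\sum_{j\in\mathbb{J}}L_jB_j$ and substituting $B_j=A_jR_{A,B}$, I would pull the fixed adjointable homomorphism $R_{A,B}$ out of the strict-topology limit — legitimate because right multiplication by a fixed element of $\operatorname{End}^*_\mathscr{A}(\mathscr{E})$ is strictly continuous — to obtain $\theta_B=\left(\sum_{j\in\mathbb{J}}L_jA_j\right)R_{A,B}=\theta_AR_{A,B}$. The identical computation with $\Phi_j=\Psi_jR_{\Psi,\Phi}$ gives $\theta_\Phi=\theta_\Psi R_{\Psi,\Phi}$.

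Next I would feed these into the definitions of $S_{B,\Phi}$ and $P_{B,\Phi}$. Using Proposition \ref{HMOP}(ii), which gives $S_{A,\Psi}=\theta_\Psi^*\theta_A$, I have
$$S_{B,\Phi}=\theta_\Phi^*\theta_B=(\theta_\Psi R_{\Psi,\Phi})^*(\theta_AR_{A,B})=R_{\Psi,\Phi}^*\theta_\Psi^*\theta_AR_{A,B}=R_{\Psi,\Phi}^*S_{A,\Psi}R_{A,B},$$
where taking the adjoint of a composition of adjointable homomorphisms is unproblematic. Substituting into $P_{B,\Phi}=\theta_BS_{B,\Phi}^{-1}\theta_\Phi^*$ and cancelling the invertible factors $R_{A,B}$ and $R_{\Psi,\Phi}^*$ yields $P_{B,\Phi}=\theta_AS_{A,\Psi}^{-1}\theta_\Psi^*=P_{A,\Psi}$.

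Finally, for the Parseval equivalence, I would use that $(\{A_j\}_{j\in\mathbb{J}},\{\Psi_j\}_{j\in\mathbb{J}})$ being Parseval means $S_{A,\Psi}=I_\mathscr{E}$, so the formula just obtained collapses to $S_{B,\Phi}=R_{\Psi,\Phi}^*R_{A,B}$. Since $(\{B_j\}_{j\in\mathbb{J}},\{\Phi_j\}_{j\in\mathbb{J}})$ is Parseval precisely when $S_{B,\Phi}=I_\mathscr{E}$, the equivalence with $R_{\Psi,\Phi}^*R_{A,B}=I_\mathscr{E}$ is immediate. I do not expect a serious obstacle here; the only point requiring genuine attention — unlike the Hilbert space case — is the justification that the strict-topology limits defining $\theta_A,\theta_\Psi$ commute with right multiplication by the fixed homomorphisms $R_{A,B},R_{\Psi,\Phi}$ and that the resulting objects remain adjointable, both of which follow from the standing hypotheses $\{B_j\}_{j\in\mathbb{J}}\in\mathscr{F}_\Phi$ and $R_{A,B},R_{\Psi,\Phi}\in\operatorname{End}^*_\mathscr{A}(\mathscr{E})$.
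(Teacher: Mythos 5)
Your proposal is correct and follows the same route the paper takes for the Hilbert-space prototype (Lemma on right-similarity of operator-valued frames), namely substituting $B_j=A_jR_{A,B}$, $\Phi_j=\Psi_jR_{\Psi,\Phi}$ into the definitions of $\theta_B,\theta_\Phi$ and then into $S_{B,\Phi}=\theta_\Phi^*\theta_B$ and $P_{B,\Phi}=\theta_B S_{B,\Phi}^{-1}\theta_\Phi^*$; the paper leaves the module version unproved precisely because it is "similar," and your extra remarks on adjointability and on commuting the strict-topology limit with the fixed right factor are the only module-specific points worth making.
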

 \begin{theorem}
 Let $ \{A_j\}_{j\in \mathbb{J}}\in \mathscr{F}_\Psi,$ $ \{B_j\}_{j\in \mathbb{J}}\in \mathscr{F}_\Phi.$ The following are equivalent.
 \begin{enumerate}[\upshape(i)]
 \item $B_j=A_jR_{A,B} , \Phi_j=\Psi_jR_{\Psi, \Phi} ,  \forall j \in \mathbb{J},$ for some invertible  $ R_{A,B} ,R_{\Psi, \Phi} \in \operatorname{End}^*_\mathscr{A}(\mathscr{E}). $
 \item $\theta_B=\theta_AR_{A,B}' , \theta_\Phi=\theta_\Psi R_{\Psi, \Phi}' $ for some invertible  $ R_{A,B}' ,R_{\Psi, \Phi}' \in \operatorname{End}^*_\mathscr{A}(\mathscr{E}). $
 \item $P_{B,\Phi}=P_{A,\Psi}.$
 \end{enumerate}
 If one of the above conditions is satisfied, then  invertible homomorphisms in  $\operatorname{(i)}$ and  $\operatorname{(ii)}$ are unique and are given by $R_{A,B}=S_{A,\Psi}^{-1}\theta_\Psi^*\theta_B, R_{\Psi, \Phi}=S_{A,\Psi}^{-1}\theta_A^*\theta_\Phi.$
 In the case that $(\{A_j\}_{j \in \mathbb{J}} , \{\Psi_j\}_{j \in \mathbb{J}})$ is Parseval, then $ (\{B_j\}_{j \in \mathbb{J}} , \{\Phi_j\}_{j \in \mathbb{J}})$ is  Parseval if and only if $R_{\Psi, \Phi}^*R_{A,B}=I_\mathscr{E} $  if and only if $R_{A,B}R_{\Psi, \Phi}^*=I_\mathscr{E}  $.  
 \end{theorem}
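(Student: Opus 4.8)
The plan is to mirror the proof of Theorem \ref{RIGHTSIMILARITY} verbatim, since every manipulation there is purely algebraic and relies only on the structural identities for analysis homomorphisms and frame idempotents, all of which are now available in the module setting through Proposition \ref{HMOP} and the module analog of Lemma \ref{SIM} stated just above. The only substantive difference is that $\mathcal{B}(\mathcal{H})$ is replaced by the unital C*-algebra $\operatorname{End}^*_\mathscr{A}(\mathscr{E})$, and one must keep track that the relevant homomorphisms are adjointable with closed range — but this is precisely what the (hvf) hypothesis guarantees via parts (vii)--(ix) of Proposition \ref{HMOP}.

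First I would dispatch the easy implications. For $\operatorname{(i)}\Rightarrow\operatorname{(ii)}$, applying $\sum_{j\in\mathbb{J}}L_j(\cdot)$ to $B_j=A_jR_{A,B}$ gives $\theta_B=\theta_AR_{A,B}$, and similarly $\theta_\Phi=\theta_\Psi R_{\Psi,\Phi}$, so one may take $R'_{A,B}=R_{A,B}$, $R'_{\Psi,\Phi}=R_{\Psi,\Phi}$. For $\operatorname{(ii)}\Rightarrow\operatorname{(i)}$, I would apply $L_j^*$ from the left and invoke Proposition \ref{HMOP}(v), namely $A_j=L_j^*\theta_A$, to obtain $B_j=L_j^*\theta_B=L_j^*\theta_AR'_{A,B}=A_jR'_{A,B}$, and likewise for $\Phi_j$. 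The implication $\operatorname{(ii)}\Rightarrow\operatorname{(iii)}$ is immediate from the module Lemma, which gives $P_{B,\Phi}=P_{A,\Psi}$.

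The substantive step is $\operatorname{(iii)}\Rightarrow\operatorname{(ii)}$. Here I would start from the two absorption identities $\theta_B=P_{B,\Phi}\theta_B$ and $\theta_\Phi=P_{B,\Phi}^*\theta_\Phi$, which hold because $P_{B,\Phi}=\theta_BS_{B,\Phi}^{-1}\theta_\Phi^*$ and $\theta_\Phi^*\theta_B=S_{B,\Phi}$ (Proposition \ref{HMOP}(ii),(vi)). Replacing $P_{B,\Phi}$ by $P_{A,\Psi}$ yields $\theta_B=\theta_A\bigl(S_{A,\Psi}^{-1}\theta_\Psi^*\theta_B\bigr)$ and $\theta_\Phi=\theta_\Psi\bigl(S_{A,\Psi}^{-1}\theta_A^*\theta_\Phi\bigr)$, so the candidate operators are $R_{A,B}=S_{A,\Psi}^{-1}\theta_\Psi^*\theta_B$ and $R_{\Psi,\Phi}=S_{A,\Psi}^{-1}\theta_A^*\theta_\Phi$. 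To see these are invertible I would exhibit explicit two-sided inverses $S_{B,\Phi}^{-1}\theta_\Phi^*\theta_A$ and $S_{B,\Phi}^{-1}\theta_B^*\theta_\Psi$, computing the four products exactly as in Theorem \ref{RIGHTSIMILARITY}; each collapses to $I_\mathscr{E}$ after inserting $P_{B,\Phi}=P_{A,\Psi}$ and using $\theta_\Psi^*\theta_A=S_{A,\Psi}$ together with $P_{A,\Psi}\theta_A=\theta_A$, $\theta_\Psi^*P_{A,\Psi}=\theta_\Psi^*$. The uniqueness and the formulas then follow from $\theta_\Psi^*\theta_B=\theta_\Psi^*\theta_AR_{A,B}=S_{A,\Psi}R_{A,B}$ and its analog for $R_{\Psi,\Phi}$.

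Finally, for the Parseval claim I would use $S_{B,\Phi}=R_{\Psi,\Phi}^*S_{A,\Psi}R_{A,B}$ from the module Lemma: when $S_{A,\Psi}=I_\mathscr{E}$, the frame $(\{B_j\},\{\Phi_j\})$ is Parseval iff $R_{\Psi,\Phi}^*R_{A,B}=I_\mathscr{E}$, and the equivalence with $R_{A,B}R_{\Psi,\Phi}^*=I_\mathscr{E}$ follows from the elementary C*-algebra fact that for invertible $a,b$ one has $ab=e\Rightarrow ba=(a^{-1}(ab)b^{-1})^{-1}=e$. I expect the main (and essentially only) obstacle to be bookkeeping in the invertibility computation of $\operatorname{(iii)}\Rightarrow\operatorname{(ii)}$: one must be sure that $S_{B,\Phi}$ is genuinely invertible and that $P_{B,\Phi}$ is a well-defined adjointable idempotent, which is exactly where the adjointability and closed-range guarantees of Proposition \ref{HMOP} for homomorphism-valued frames are needed, rather than any Hilbert-space-specific argument.
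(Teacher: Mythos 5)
Your proposal is correct and follows exactly the route the paper intends: the paper states at the outset of the Hilbert C*-module section that it only records results whose proofs mirror those of Section 2, and this theorem is the module transcription of Theorem \ref{RIGHTSIMILARITY}, whose proof you reproduce faithfully, correctly locating the only module-specific issues (adjointability, closed range, self-adjointness of $S_{B,\Phi}$ needed for $\theta_\Phi=P_{B,\Phi}^*\theta_\Phi$) in Proposition \ref{HMOP} and the module analog of Lemma \ref{SIM}. No gaps.
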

 \begin{corollary}
 For any given (hvf) $ (\{A_j\}_{j \in \mathbb{J}} , \{\Psi_j\}_{j \in \mathbb{J}})$, the canonical dual of $ (\{A_j\}_{j \in \mathbb{J}} , \{\Psi_j\}_{j \in \mathbb{J}}  )$ is the only dual (hvf) that is right-similar to $ (\{A_j\}_{j \in \mathbb{J}} , \{\Psi_j\}_{j \in \mathbb{J}} )$.
 \end{corollary}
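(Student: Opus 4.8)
The plan is to mirror, in the Hilbert C*-module setting, the argument used for the corresponding operator-valued corollary, relying on the right-similarity characterization theorem just proved for homomorphism-valued frames together with the duality identities. First I would take an arbitrary (hvf) $(\{B_j\}_{j\in\mathbb{J}}, \{\Phi_j\}_{j\in\mathbb{J}})$ in $\operatorname{Hom}^*_\mathscr{A}(\mathscr{E},\mathscr{E}_0)$ that is simultaneously a dual of $(\{A_j\}_{j\in\mathbb{J}}, \{\Psi_j\}_{j\in\mathbb{J}})$ and right-similar to it. Unwinding the definition of dual gives $\theta_\Phi^*\theta_A = \theta_B^*\theta_\Psi = I_\mathscr{E}$, while right-similarity provides invertible $R_{A,B}, R_{\Psi,\Phi}\in\operatorname{End}^*_\mathscr{A}(\mathscr{E})$ with $B_j = A_jR_{A,B}$ and $\Phi_j = \Psi_jR_{\Psi,\Phi}$ for all $j\in\mathbb{J}$.

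Next I would invoke the uniqueness clause of the right-similarity characterization theorem, which identifies the similarity homomorphisms explicitly as $R_{A,B} = S_{A,\Psi}^{-1}\theta_\Psi^*\theta_B$ and $R_{\Psi,\Phi} = S_{A,\Psi}^{-1}\theta_A^*\theta_\Phi$. The key computation is to feed the duality identities into these formulas. Taking adjoints of the duality relations and using $I_\mathscr{E}^* = I_\mathscr{E}$ yields $\theta_\Psi^*\theta_B = (\theta_B^*\theta_\Psi)^* = I_\mathscr{E}$ and $\theta_A^*\theta_\Phi = (\theta_\Phi^*\theta_A)^* = I_\mathscr{E}$, so that $R_{A,B} = S_{A,\Psi}^{-1}$ and $R_{\Psi,\Phi} = S_{A,\Psi}^{-1}$. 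Substituting back gives $B_j = A_jS_{A,\Psi}^{-1}$ and $\Phi_j = \Psi_jS_{A,\Psi}^{-1}$ for every $j$, which is precisely the canonical dual of $(\{A_j\}_{j\in\mathbb{J}}, \{\Psi_j\}_{j\in\mathbb{J}})$. Conversely, the canonical dual is itself right-similar to the original frame via the positive invertible homomorphism $S_{A,\Psi}^{-1}$ and is a dual by construction, so it qualifies; this establishes that it is the only such dual.

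Since every ingredient — the adjointability and invertibility of $S_{A,\Psi}$ (from Definition \ref{HMDEFINITION1}), the identities $S_{A,\Psi} = \theta_\Psi^*\theta_A = \theta_A^*\theta_\Psi$ and the closed-range and surjectivity facts of Proposition \ref{HMOP}, and above all the module version of the right-similarity characterization theorem — has already been established earlier in this section, the argument is essentially the same formal computation as in the Hilbert space case. I do not anticipate a genuine obstacle here; the only point requiring mild care is the adjoint manipulation $\theta_\Psi^*\theta_B = (\theta_B^*\theta_\Psi)^*$, which is legitimate precisely because $\theta_B$ and $\theta_\Psi$ are adjointable homomorphisms between Hilbert C*-modules, so that the characterization theorem's formulas simplify cleanly. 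Notably, no appeal to orthogonal complementability is needed in this particular deduction, so the usual Hilbert C*-module pathologies do not intervene.
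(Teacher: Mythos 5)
Your proof is correct and follows essentially the same route the paper intends: the corollary is stated without proof in the Hilbert C*-module section precisely because it transplants the Hilbert-space argument, namely applying the uniqueness formulas $R_{A,B}=S_{A,\Psi}^{-1}\theta_\Psi^*\theta_B$ and $R_{\Psi,\Phi}=S_{A,\Psi}^{-1}\theta_A^*\theta_\Phi$ from the right-similarity characterization theorem and then substituting the duality identities. Your explicit adjoint step $\theta_\Psi^*\theta_B=(\theta_B^*\theta_\Psi)^*=I_\mathscr{E}$ is exactly the (implicit) manipulation the paper's Hilbert-space proof relies on, and it is legitimate for adjointable module homomorphisms, so no further care is needed.
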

 \begin{corollary}
 Two right-similar homomorphism-valued frames cannot be orthogonal.
 \end{corollary}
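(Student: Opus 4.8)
The plan is to transport verbatim the Hilbert-space argument that proved ``two right-similar operator-valued frames cannot be orthogonal'' into the Hilbert C*-module setting; every ingredient needed has already been established for homomorphism-valued frames. First I would argue by contradiction: suppose $(\{B_j\}_{j\in\mathbb{J}},\{\Phi_j\}_{j\in\mathbb{J}})$ is simultaneously right-similar and orthogonal to $(\{A_j\}_{j\in\mathbb{J}},\{\Psi_j\}_{j\in\mathbb{J}})$. Right-similarity provides invertible $R_{A,B},R_{\Psi,\Phi}\in\operatorname{End}^*_\mathscr{A}(\mathscr{E})$ with $B_j=A_jR_{A,B}$ and $\Phi_j=\Psi_jR_{\Psi,\Phi}$ for all $j\in\mathbb{J}$.

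Next I would use the similarity lemma for homomorphism-valued frames (the module analogue of Lemma~\ref{SIM}) to lift this relation from the frame level to the analysis-homomorphism level, obtaining $\theta_B=\theta_AR_{A,B}$. Combining this with the identity $S_{A,\Psi}=\theta_A^*\theta_\Psi$ supplied by Proposition~\ref{HMOP}(ii), I would compute
\[
\theta_B^*\theta_\Psi=(\theta_AR_{A,B})^*\theta_\Psi=R_{A,B}^*\theta_A^*\theta_\Psi=R_{A,B}^*S_{A,\Psi}.
\]
Since orthogonality of the two frames demands $\theta_B^*\theta_\Psi=0$, the entire matter reduces to showing $R_{A,B}^*S_{A,\Psi}\neq0$.

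That is the only step requiring a genuine (if short) argument. Here $R_{A,B}$ is invertible in $\operatorname{End}^*_\mathscr{A}(\mathscr{E})$, hence so is its adjoint $R_{A,B}^*$, while $S_{A,\Psi}$ is positive invertible by the very definition of a homomorphism-valued frame (Definition~\ref{HMDEFINITION1}). A product of two invertible adjointable homomorphisms is again invertible, and an invertible homomorphism on a nonzero module cannot be the zero map. Thus $R_{A,B}^*S_{A,\Psi}$ is invertible and in particular nonzero, contradicting $\theta_B^*\theta_\Psi=0$; this yields the claim.

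The main obstacle is bookkeeping in the C*-module category rather than any new mathematics: I must confirm that the two facts I lean on — that $\theta_B=\theta_AR_{A,B}$ (which presupposes $\theta_A,\theta_B$ are adjointable, guaranteed by Definition~\ref{HMDEFINITION1}) and that $S_{A,\Psi}=\theta_A^*\theta_\Psi$ — genuinely hold in $\operatorname{Hom}^*_\mathscr{A}$ and not merely in $\mathcal{B}(\mathcal{H})$. Once adjointability and invertibility are verified inside $\operatorname{End}^*_\mathscr{A}(\mathscr{E})$, none of the usual module pathologies (failure of orthogonal complementation, incomplete ranges, positivity-cone issues) intervene, and the proof closes exactly as in the Hilbert-space corollary.
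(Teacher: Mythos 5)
Your argument is correct and is exactly the paper's proof of the Hilbert-space corollary transported to the module setting, which is precisely what the paper intends (it states the module corollary without proof because the Section 2 argument carries over): right-similarity gives $\theta_B=\theta_A R_{A,B}$, so $\theta_B^*\theta_\Psi=R_{A,B}^*S_{A,\Psi}$, which is invertible and hence nonzero since both factors are invertible in $\operatorname{End}^*_\mathscr{A}(\mathscr{E})$. The contradiction framing is cosmetic; no module-specific pathology arises because all the homomorphisms involved are adjointable by definition.
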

 \begin{remark}
 For every (hvf) $(\{A_j\}_{j \in \mathbb{J}},\{\Psi_j\}_{j \in \mathbb{J}})$, each  of `homomorphism-valued frames'  $( \{A_jS_{A, \Psi}^{-1}\}_{j \in \mathbb{J}}, \{\Psi_j\}_{j \in \mathbb{J}}),$   $( \{A_jS_{A, \Psi}^{-1/2}\}_{j \in \mathbb{J}}, \{\Psi_jS_{A,\Psi}^{-1/2}\}_{j \in \mathbb{J}}),$ and  $ (\{A_j \}_{j \in \mathbb{J}}, \{\Psi_jS_{A,\Psi}^{-1}\}_{j \in \mathbb{J}})$ is a  Parseval (hvf) which is right-similar to  $ (\{A_j\}_{j \in \mathbb{J}} , \{\Psi_j\}_{j \in \mathbb{J}}  ).$  
 \end{remark}
 \begin{proposition}
  Let $ \{A_j\}_{j\in \mathbb{J}}\in \mathscr{F}_\Psi,$ $ \{B_j\}_{j\in \mathbb{J}}\in \mathscr{F}_\Phi$ and   $B_j=L_{A,B}A_j , \Phi_j=L_{\Psi, \Phi}\Psi_j,  \forall j \in \mathbb{J}$, for some invertible $ L_{A,B} ,L_{\Psi, \Phi} \in \operatorname{End}^*_\mathscr{A}(\mathscr{E}_0).$ Then 
  \begin{enumerate}[\upshape(i)]
  \item $ \theta_B=(I_{\mathscr{H}_\mathscr{A}}\otimes L_{A,B})\theta_A , \theta_\Phi=(I_{\mathscr{H}_\mathscr{A}}\otimes L_{\Psi,\Phi})\theta_\Psi, S_{B,\Phi}=\theta_\Psi^*(I_{\mathscr{H}_\mathscr{A}}\otimes L_{\Psi,\Phi}^*L_{A,B})\theta_A,  P_{B,\Phi}=(I_{\mathscr{H}_\mathscr{A}}\otimes L_{A,B})\theta_A(\theta_\Psi^*(I_{\mathscr{H}_\mathscr{A}}\otimes L_{\Psi,\Phi}^*L_{A,B})\theta_A)^{-1}\theta_\Psi^*(I_{\mathscr{H}_\mathscr{A}}\otimes L_{\Psi,\Phi}^*).$ 
  \item Assuming $(\{A_j\}_{j\in \mathbb{J}}, \{\Psi_j\}_{j\in \mathbb{J}}) $ is Parseval, then $(\{B_j\}_{j\in \mathbb{J}},\{\Phi_j\}_{j\in \mathbb{J}}) $ is Parseval if and only if $ P_{A, \Psi}(I_{\mathscr{H}_\mathscr{A}}\otimes L_{\Psi,\Phi}^*L_{A,B})P_{A,\Psi}=P_{A,\Psi}$ if and only if $ P_{B,\Phi}=(I_{\mathscr{H}_\mathscr{A}}\otimes L_{A,B})P_{A,\Psi}(I_{\mathscr{H}_\mathscr{A}}\otimes L_{\Psi, \Phi}^*).$
   \end{enumerate}
  \end{proposition}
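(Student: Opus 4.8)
The plan is to follow, essentially verbatim, the strategy of the Hilbert-space analog Proposition \ref{3.3}, replacing bounded operators by adjointable homomorphisms and the strong-operator topology by the strict topology. The single computational engine is the commutation identity
$$ L_j R = (I_{\mathscr{H}_\mathscr{A}}\otimes R)L_j, \qquad \forall R \in \operatorname{End}^*_\mathscr{A}(\mathscr{E}_0),\ \forall j \in \mathbb{J}, $$
which follows at once from $L_j y = e_j\otimes y$, since $(I_{\mathscr{H}_\mathscr{A}}\otimes R)L_j y = (I_{\mathscr{H}_\mathscr{A}}\otimes R)(e_j\otimes y) = e_j\otimes Ry = L_j Ry$. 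Before using it I would record that $I_{\mathscr{H}_\mathscr{A}}\otimes L_{A,B}$ and $I_{\mathscr{H}_\mathscr{A}}\otimes L_{\Psi,\Phi}$ are adjointable homomorphisms on $\mathscr{H}_\mathscr{A}\otimes\mathscr{E}_0$ with adjoints $I_{\mathscr{H}_\mathscr{A}}\otimes L_{A,B}^*$ and $I_{\mathscr{H}_\mathscr{A}}\otimes L_{\Psi,\Phi}^*$, respectively.

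For part (i), since $\{B_j\}_{j\in\mathbb{J}}\in\mathscr{F}_\Phi$ the analysis homomorphisms $\theta_B,\theta_\Phi$ already exist as adjointable homomorphisms by Definition \ref{HMDEFINITION1}, so I only need to identify them. Using the commutation identity together with strict continuity of left composition by the fixed adjointable homomorphism $I_{\mathscr{H}_\mathscr{A}}\otimes L_{A,B}$, I would compute
$$ \theta_B = \sum_{j\in\mathbb{J}}L_jB_j = \sum_{j\in\mathbb{J}}L_j L_{A,B}A_j = (I_{\mathscr{H}_\mathscr{A}}\otimes L_{A,B})\sum_{j\in\mathbb{J}}L_jA_j = (I_{\mathscr{H}_\mathscr{A}}\otimes L_{A,B})\theta_A, $$
and identically $\theta_\Phi = (I_{\mathscr{H}_\mathscr{A}}\otimes L_{\Psi,\Phi})\theta_\Psi$. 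Substituting these into $S_{B,\Phi}=\theta_\Phi^*\theta_B$ and taking the adjoint via $(I_{\mathscr{H}_\mathscr{A}}\otimes L_{\Psi,\Phi})^* = I_{\mathscr{H}_\mathscr{A}}\otimes L_{\Psi,\Phi}^*$ gives $S_{B,\Phi}=\theta_\Psi^*(I_{\mathscr{H}_\mathscr{A}}\otimes L_{\Psi,\Phi}^*L_{A,B})\theta_A$; feeding both expressions into $P_{B,\Phi}=\theta_B S_{B,\Phi}^{-1}\theta_\Phi^*$, which is legitimate because $S_{B,\Phi}$ is positive invertible, produces the stated formula for $P_{B,\Phi}$.

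For part (ii), I would assume $(\{A_j\},\{\Psi_j\})$ Parseval, so that $\theta_\Psi^*\theta_A=I_\mathscr{E}$ and $P_{A,\Psi}=\theta_A\theta_\Psi^*$ is idempotent (Proposition \ref{HMOP}). For the first equivalence, in the forward direction I set $I_\mathscr{E}=S_{B,\Phi}=\theta_\Psi^*(I_{\mathscr{H}_\mathscr{A}}\otimes L_{\Psi,\Phi}^*L_{A,B})\theta_A$, premultiply by $\theta_A$ and postmultiply by $\theta_\Psi^*$, and use $\theta_A\theta_\Psi^*=P_{A,\Psi}$ to reach $P_{A,\Psi}(I_{\mathscr{H}_\mathscr{A}}\otimes L_{\Psi,\Phi}^*L_{A,B})P_{A,\Psi}=P_{A,\Psi}$; conversely, I sandwich this identity by $\theta_\Psi^*$ on the left and $\theta_A$ on the right and apply $\theta_\Psi^*\theta_A=I_\mathscr{E}$ twice to recover $S_{B,\Phi}=I_\mathscr{E}$. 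The second equivalence I would handle by substituting the formulas of (i) for $\theta_B,\theta_\Phi,S_{B,\Phi}$ into $P_{B,\Phi}$ and into $P_{B,\Phi}^2=P_{B,\Phi}$, invoking idempotency and the relation $P_{\Psi,A}=P_{A,\Psi}^*$.

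The only genuinely module-specific obstacle, and the step I would treat with care, is justifying each rearrangement at the level of strictly convergent series of adjointable homomorphisms rather than of bounded operators on a Hilbert space: interchanging the fixed homomorphism $I_{\mathscr{H}_\mathscr{A}}\otimes L_{A,B}$ with the strictly convergent sum $\sum_{j}L_jA_j$ requires strict continuity of composition by a fixed adjointable homomorphism, and the adjoint of $I_{\mathscr{H}_\mathscr{A}}\otimes L_{\Psi,\Phi}$ must be verified to be the honestly adjointable $I_{\mathscr{H}_\mathscr{A}}\otimes L_{\Psi,\Phi}^*$. Once these adjointability and strict-continuity facts are secured, every remaining manipulation is formal algebra identical to the proof of Proposition \ref{3.3}, so no new estimates are needed.
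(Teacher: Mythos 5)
Your proposal is correct and follows essentially the same route the paper takes: the paper proves this only in the Hilbert-space case (Proposition \ref{3.3}) and states the module version without proof, relying on exactly the transcription you describe, and the module-specific points you flag (adjointability of $I_{\mathscr{H}_\mathscr{A}}\otimes L$ and strict continuity of composition with a fixed adjointable homomorphism) are precisely the "additional support" the paper says such statements need. No gap.
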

  \begin{definition}
  Let $(\{A_j\}_{j \in \mathbb{J}}, \{\Psi_j \}_ {j \in \mathbb{J}})$ and $(\{B_j\}_{j \in \mathbb{J}}, \{\Phi_j \}_ {j \in \mathbb{J}})$ be two homomorphism-valued frames in  $ \operatorname{Hom}^*_\mathscr{A}(\mathscr{E}, \mathscr{E}_0) $. We say that  $( \{B_j\}_{j \in \mathbb{J}}, \{\Phi_j \}_ {j \in \mathbb{J}})$ is   
  \begin{enumerate}[\upshape(i)]
  \item RL-similar (right-left-similar) to $( \{A_j\}_{j \in \mathbb{J}},  \{\Psi_j \}_ {j \in \mathbb{J}})$ if there exist invertible $R_{A,B}, L_{\Psi, \Phi} $ in $  \operatorname{End}^*_\mathscr{A}(\mathscr{E})$, $ \operatorname{End}^*_\mathscr{A}(\mathscr{E}_0)  $, respectively  such that $B_j=A_jR_{A,B} , \Phi_j=L_{\Psi, \Phi}\Psi_j, \forall j \in \mathbb{J}.$
  \item LR-similar (left-right-similar) to $ (\{A_j\}_{j \in \mathbb{J}}, \{\Psi_j \}_ {j \in \mathbb{J}})$ if there exist invertible $L_{A,B}, R_{\Psi, \Phi}  $ in $ \operatorname{End}^*_\mathscr{A}(\mathscr{E}_0)$, $ \operatorname{End}^*_\mathscr{A}(\mathscr{E})$, respectively  such that $B_j=L_{A,B}A_j , \Phi_j=\Psi_jR_{\Psi, \Phi} , \forall j \in \mathbb{J}.$
  \end{enumerate}
  \end{definition}
  \begin{proposition}
Let $ \{A_j\}_{j\in \mathbb{J}}\in \mathscr{F}_\Psi,$ $ \{B_j\}_{j\in \mathbb{J}}\in \mathscr{F}_\Phi$ and   $B_j=A_jR_{A,B}, \Phi_j=L_{\Psi, \Phi}\Psi_j,  \forall j \in \mathbb{J}$, for some invertible $ R_{A,B} ,L_{\Psi, \Phi} $ in $\operatorname{End}^*_\mathscr{A}(\mathscr{E}),\operatorname{End}^*_\mathscr{A}(\mathscr{E}_0) $, respectively. Then 
$ \theta_B=\theta_A R_{A,B}, \theta_\Phi=(I_{\mathscr{H}_\mathscr{A}}\otimes L_{\Psi, \Phi})\theta_\Psi , S_{B,\Phi}=\theta_\Psi^*(I_{\mathscr{H}_\mathscr{A}}\otimes L^*_{\Psi, \Phi})\theta_AR_{A,B},  P_{B,\Phi}=\theta_A(\theta_\Psi^*(I_{\mathscr{H}_\mathscr{A}}\otimes L^*_{\Psi, \Phi})\theta_A)^{-1}\theta_\Psi^*(I_{\mathscr{H}_\mathscr{A}}\otimes L^*_{\Psi, \Phi}).$
\end{proposition}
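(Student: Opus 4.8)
The plan is to follow verbatim the computation carried out for the Hilbert-space RL-similar proposition, replacing $\ell^2(\mathbb{J})$ by $\mathscr{H}_\mathscr{A}$, the algebras $\mathcal{B}(\mathcal{H}),\mathcal{B}(\mathcal{H}_0)$ by $\operatorname{End}^*_\mathscr{A}(\mathscr{E}),\operatorname{End}^*_\mathscr{A}(\mathscr{E}_0)$, and invoking the module analogue Proposition~\ref{HMOP} in place of Proposition~\ref{2.2}. Since $\{A_j\}_{j\in\mathbb{J}}\in\mathscr{F}_\Psi$ and $\{B_j\}_{j\in\mathbb{J}}\in\mathscr{F}_\Phi$ are both homomorphism-valued frames, the analysis homomorphisms $\theta_A,\theta_\Psi,\theta_B,\theta_\Phi$ all exist as adjointable homomorphisms and the frame homomorphisms $S_{A,\Psi},S_{B,\Phi}$ are positive invertible; in particular I may freely form adjoints and the inverse $S_{B,\Phi}^{-1}$ without any extra justification.

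First I would compute $\theta_B$. From $\theta_B=\sum_{j\in\mathbb{J}}L_jB_j$, substituting $B_j=A_jR_{A,B}$ and pulling the fixed adjointable homomorphism $R_{A,B}$ out of the strictly convergent sum gives $\theta_B=\big(\sum_{j\in\mathbb{J}}L_jA_j\big)R_{A,B}=\theta_AR_{A,B}$. For $\theta_\Phi$ the key ingredient is the intertwining identity $L_jR=(I_{\mathscr{H}_\mathscr{A}}\otimes R)L_j$, valid for every $R\in\operatorname{End}^*_\mathscr{A}(\mathscr{E}_0)$, which is immediate from $L_j\colon y\mapsto e_j\otimes y$; with $R=L_{\Psi,\Phi}$ this yields $\theta_\Phi=\sum_{j\in\mathbb{J}}L_jL_{\Psi,\Phi}\Psi_j=(I_{\mathscr{H}_\mathscr{A}}\otimes L_{\Psi,\Phi})\sum_{j\in\mathbb{J}}L_j\Psi_j=(I_{\mathscr{H}_\mathscr{A}}\otimes L_{\Psi,\Phi})\theta_\Psi$.

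Next I would assemble $S_{B,\Phi}$ and $P_{B,\Phi}$ from these two formulas. Using $S_{B,\Phi}=\theta_\Phi^*\theta_B$ (Proposition~\ref{HMOP}) together with $(I_{\mathscr{H}_\mathscr{A}}\otimes L_{\Psi,\Phi})^*=I_{\mathscr{H}_\mathscr{A}}\otimes L^*_{\Psi,\Phi}$ gives $S_{B,\Phi}=\theta_\Psi^*(I_{\mathscr{H}_\mathscr{A}}\otimes L^*_{\Psi,\Phi})\theta_AR_{A,B}$. Then, substituting into $P_{B,\Phi}=\theta_BS_{B,\Phi}^{-1}\theta_\Phi^*$, the invertible factor $R_{A,B}$ appearing on the right of $\theta_B$ cancels, through $S_{B,\Phi}^{-1}$, against $R_{A,B}^{-1}$; what survives is exactly $\theta_A\big(\theta_\Psi^*(I_{\mathscr{H}_\mathscr{A}}\otimes L^*_{\Psi,\Phi})\theta_A\big)^{-1}\theta_\Psi^*(I_{\mathscr{H}_\mathscr{A}}\otimes L^*_{\Psi,\Phi})$, the claimed expression for $P_{B,\Phi}$.

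The hardest part is not a computation but a verification of legitimacy in the module setting: I must check that the intertwining identity $L_jR=(I_{\mathscr{H}_\mathscr{A}}\otimes R)L_j$ holds for adjointable $R\in\operatorname{End}^*_\mathscr{A}(\mathscr{E}_0)$ (so that $(I_{\mathscr{H}_\mathscr{A}}\otimes L_{\Psi,\Phi})$ is adjointable with adjoint $I_{\mathscr{H}_\mathscr{A}}\otimes L^*_{\Psi,\Phi}$), and that pulling the fixed homomorphisms $R_{A,B}$ and $L_{\Psi,\Phi}$ through the defining sums is valid in the strict topology in which $\theta_A,\theta_\Psi$ converge. Both are already built into the framework of Definition~\ref{HMDEFINITION1} and Proposition~\ref{HMOP}, so once they are noted the remaining manipulations are purely formal and identical to the Hilbert-space case.
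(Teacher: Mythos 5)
Your proposal is correct and follows exactly the route the paper intends: the paper states this module proposition without a separate proof, deferring to the Hilbert-space computation (the LR-similar proposition in Section~\ref{SIMILARITYCOMPOSITIONANDTENSORPRODUCT}), which is precisely the calculation you carry out — pull $R_{A,B}$ out of $\theta_B$, use the intertwining $L_jR=(I_{\mathscr{H}_\mathscr{A}}\otimes R)L_j$ for $\theta_\Phi$, then form $S_{B,\Phi}=\theta_\Phi^*\theta_B$ and cancel $R_{A,B}$ in $P_{B,\Phi}=\theta_B S_{B,\Phi}^{-1}\theta_\Phi^*$. Your explicit check that the intertwining identity and adjointability of $I_{\mathscr{H}_\mathscr{A}}\otimes L_{\Psi,\Phi}$ survive in the module setting is exactly the "additional support" the paper alludes to, and nothing further is needed.
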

 \begin{proposition}
Let $ \{A_j\}_{j\in \mathbb{J}}\in \mathscr{F}_\Psi,$ $ \{B_j\}_{j\in \mathbb{J}}\in \mathscr{F}_\Phi$ and   $B_j=L_{A,B}A_j, \Phi_j=\Psi_jR_{\Psi, \Phi},  \forall j \in \mathbb{J}$, for some invertible $ L_{A,B} ,R_{\Psi, \Phi} $ in $\operatorname{End}^*_\mathscr{A}(\mathscr{E}_0),\operatorname{End}^*_\mathscr{A}(\mathscr{E}) $, respectively. Then 
$ \theta_B=(I_{\mathscr{H}_\mathscr{A}}\otimes L_{A,B})\theta_A , \theta_\Phi=  \theta_\Psi R_{\Psi,\Phi} , S_{B,\Phi}=R_{\Psi,\Phi}^*\theta_\Psi^*(I_{\mathscr{H}_\mathscr{A}}\otimes L_{A,B})\theta_A ,  P_{B,\Phi}=(I_{\mathscr{H}_\mathscr{A}}\otimes L_{A,B})\theta_A(\theta_\Psi^*(I_{\mathscr{H}_\mathscr{A}}\otimes L_{A,B})\theta_A)^{-1}\theta_\Psi^*.$
\end{proposition}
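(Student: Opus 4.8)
The plan is to mirror the argument used for the Hilbert-space LR-similar proposition in Section \ref{SIMILARITYCOMPOSITIONANDTENSORPRODUCT}, since the frames $\{A_j\}_{j\in\mathbb{J}}\in\mathscr{F}_\Psi$ and $\{B_j\}_{j\in\mathbb{J}}\in\mathscr{F}_\Phi$ are given; consequently all four analysis homomorphisms $\theta_A,\theta_B,\theta_\Psi,\theta_\Phi$ already exist as adjointable homomorphisms, and no separate convergence argument is needed. The only structural fact I would isolate first is the commutation rule $L_jR=(I_{\mathscr{H}_\mathscr{A}}\otimes R)L_j$ for every $R\in\operatorname{End}^*_\mathscr{A}(\mathscr{E}_0)$ and every $j\in\mathbb{J}$; this follows at once from $L_jRy=e_j\otimes Ry=(I_{\mathscr{H}_\mathscr{A}}\otimes R)(e_j\otimes y)=(I_{\mathscr{H}_\mathscr{A}}\otimes R)L_jy$, exactly as in the operator-valued case.

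With this in hand, the first computation is $\theta_B=\sum_{j\in\mathbb{J}}L_jB_j=\sum_{j\in\mathbb{J}}L_jL_{A,B}A_j=(I_{\mathscr{H}_\mathscr{A}}\otimes L_{A,B})\sum_{j\in\mathbb{J}}L_jA_j=(I_{\mathscr{H}_\mathscr{A}}\otimes L_{A,B})\theta_A$, pulling $L_{A,B}$ through each $L_j$. The second identity is even more direct: since $R_{\Psi,\Phi}$ acts on the domain $\mathscr{E}$ it factors on the right, giving $\theta_\Phi=\sum_{j\in\mathbb{J}}L_j\Phi_j=\sum_{j\in\mathbb{J}}L_j\Psi_jR_{\Psi,\Phi}=\theta_\Psi R_{\Psi,\Phi}$.

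Next I would invoke the module analogue of Proposition \ref{2.2}, namely Proposition \ref{HMOP}, which gives $S_{B,\Phi}=\theta_\Phi^*\theta_B$. Substituting the two expressions already obtained yields $S_{B,\Phi}=(\theta_\Psi R_{\Psi,\Phi})^*(I_{\mathscr{H}_\mathscr{A}}\otimes L_{A,B})\theta_A=R_{\Psi,\Phi}^*\theta_\Psi^*(I_{\mathscr{H}_\mathscr{A}}\otimes L_{A,B})\theta_A$. Finally, from $P_{B,\Phi}=\theta_BS_{B,\Phi}^{-1}\theta_\Phi^*$ I would substitute and use $(R_{\Psi,\Phi}^*\theta_\Psi^*(I_{\mathscr{H}_\mathscr{A}}\otimes L_{A,B})\theta_A)^{-1}=(\theta_\Psi^*(I_{\mathscr{H}_\mathscr{A}}\otimes L_{A,B})\theta_A)^{-1}(R_{\Psi,\Phi}^*)^{-1}$, cancelling the $(R_{\Psi,\Phi}^*)^{-1}$ against the $R_{\Psi,\Phi}^*$ coming from $\theta_\Phi^*$, to arrive at $P_{B,\Phi}=(I_{\mathscr{H}_\mathscr{A}}\otimes L_{A,B})\theta_A(\theta_\Psi^*(I_{\mathscr{H}_\mathscr{A}}\otimes L_{A,B})\theta_A)^{-1}\theta_\Psi^*$. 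The invertibility needed here is free: $\{B_j\}_{j\in\mathbb{J}}\in\mathscr{F}_\Phi$ makes $S_{B,\Phi}$ positive invertible, and since $R_{\Psi,\Phi}^*$ is invertible the inner factor $\theta_\Psi^*(I_{\mathscr{H}_\mathscr{A}}\otimes L_{A,B})\theta_A=(R_{\Psi,\Phi}^*)^{-1}S_{B,\Phi}$ is invertible as well.

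I do not expect any serious obstacle. Unlike the dilation and orthogonal-complementability results earlier in this section, this proposition never uses orthogonal complementation, so the transition from Hilbert spaces to Hilbert C*-modules is purely formal. The only point requiring a little care is that all manipulations remain inside the adjointable homomorphisms $\operatorname{Hom}^*_\mathscr{A}$—in particular that $I_{\mathscr{H}_\mathscr{A}}\otimes L_{A,B}$ is adjointable with adjoint $I_{\mathscr{H}_\mathscr{A}}\otimes L_{A,B}^*$—but this is standard for the exterior tensor product of Hilbert C*-modules and causes no difficulty.
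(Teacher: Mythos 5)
Your proposal is correct and follows essentially the same route as the paper: the paper proves the Hilbert-space version of this LR-similarity statement by exactly the computation you give (pulling $L_{A,B}$ through the $L_j$'s via $L_jR=(I\otimes R)L_j$, factoring $R_{\Psi,\Phi}$ on the right, and substituting into $S_{B,\Phi}=\theta_\Phi^*\theta_B$ and $P_{B,\Phi}=\theta_B S_{B,\Phi}^{-1}\theta_\Phi^*$), and the module version is obtained by the same purely formal transfer you describe. Your added remark that $\theta_\Psi^*(I_{\mathscr{H}_\mathscr{A}}\otimes L_{A,B})\theta_A=(R_{\Psi,\Phi}^*)^{-1}S_{B,\Phi}$ is invertible is a correct and worthwhile clarification that the paper leaves implicit.
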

\textbf{Composition of frames}: Let $ \{A_j\}_{j \in \mathbb{J}} $  be a (hvf) w.r.t. $ \{\Psi_j\}_{j \in \mathbb{J}} $  in  $ \operatorname{Hom}^*_\mathscr{A}(\mathscr{E}, \mathscr{E}_0),$ and $ \{B_l\}_{l \in \mathbb{L}} $  be a (hvf) w.r.t. $ \{\Phi_l\}_{l \in \mathbb{L}} $  in  $ \operatorname{Hom}^*_\mathscr{A}(\mathscr{E}_0, \mathscr{E}_1).$ Suppose  $\{C_{(l, j)}\coloneqq B_lA_j\}_{(l, j)\in \mathbb{L}\bigtimes  \mathbb{J}} $ is  a (hvf) w.r.t. $\{\Xi_{(l, j)}\coloneqq \Phi_l\Psi_j\}_{(l, j)\in \mathbb{L}\bigtimes  \mathbb{J}} $ in $ \operatorname{Hom}^*_\mathscr{A}(\mathscr{E}, \mathscr{E}_1)$. Then the frame $ (\{C_{(l, j)}\}_{(l, j)\in \mathbb{L}\bigtimes  \mathbb{J}}, \{\Xi_{(l, j)}\}_{(l, j)\in \mathbb{L}\bigtimes  \mathbb{J}}) $ is called  as composition of frames $( \{A_j\}_{j \in \mathbb{J}} , \{\Psi_j\}_{j\in \mathbb{J}})$  and $ (\{B_l\}_{l \in \mathbb{L}}, \{\Phi_l\}_{l\in \mathbb{L}}).$ 
\begin{proposition}
Suppose $\{C_{(l, j)}\coloneqq B_lA_j\}_{(l, j)\in \mathbb{L}\bigtimes  \mathbb{J}} $ w.r.t. $\{\Xi_{(l, j)}\coloneqq \Phi_l\Psi_j\}_{(l, j)\in \mathbb{L}\bigtimes  \mathbb{J}} $ is composition of  homomorphism-valued frames  $( \{A_j\}_{j \in \mathbb{J}}, \{\Psi_j\}_{j \in \mathbb{J}} )$  in  $ \operatorname{Hom}^*_\mathscr{A}(\mathscr{E}, \mathscr{E}_0),$ and $ (\{B_l\}_{l \in \mathbb{L}} ,\{\Phi_l\}_{l \in \mathbb{L}}) $ in $ \operatorname{Hom}^*_\mathscr{A}(\mathscr{E}_0, \mathscr{E}_1).$ Then 
\begin{enumerate}[\upshape(i)]
\item $ \theta_C=(I_{\mathscr{H}_\mathscr{A}}\otimes\theta_B)\theta_A, \theta_\Xi=(I_{\mathscr{H}_\mathscr{A}}\otimes\theta_\Phi)\theta_\Psi,  S_{C,\Xi}=\theta_\Psi^*(I_{\mathscr{H}_\mathscr{A}}\otimes S_{B, \Phi})\theta_A,  P_{C, \Xi}=(I_{\mathscr{H}_\mathscr{A}}\otimes\theta_B)\theta_A (\theta_\Psi^*(I_{\mathscr{H}_\mathscr{A}}\otimes S_{B, \Phi})\theta_A)^{-1}\theta_\Psi^*(I_{\mathscr{H}_\mathscr{A}}\otimes\theta_\Phi^*).$ If $( \{A_j\}_{j \in \mathbb{J}},\{\Psi_j\}_{j \in \mathbb{J}})$ and $( \{B_l\}_{l \in \mathbb{J}} , \{\Phi_l\}_{l \in \mathbb{J}}) $   are Parseval frames, then $(\{C_{(l, j)}\}_{(l, j)\in \mathbb{J}\bigtimes  \mathbb{J}}, \{\Xi_{(l,j)}\}_{(l,j)\in \mathbb{J}\bigtimes \mathbb{J}})$ is Parseval.
\item If $P_{A, \Psi}$ commutes with  $I_{\mathscr{H}_\mathscr{A}}\otimes S_{B, \Phi} $, then $ P_{C, \Xi}=(I_{\mathscr{H}_\mathscr{A}}\otimes\theta_B)P_{A,\Psi} (I_{\mathscr{H}_\mathscr{A}}\otimes S_{B, \Phi}^{-1})P_{A,\Psi}(I_{\mathscr{H}_\mathscr{A}}\otimes\theta_\Phi^*) $.
\end{enumerate}
\end{proposition}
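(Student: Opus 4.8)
The plan is to mirror the proof of the corresponding composition proposition for operator-valued frames in Section~\ref{SIMILARITYCOMPOSITIONANDTENSORPRODUCT}, replacing the $\ell^2$-orthonormal bases by the standard orthonormal basis of $\mathscr{H}_\mathscr{A}$, strong-operator convergence by strict convergence, and bounded operators by adjointable homomorphisms; the algebraic identities are formally identical, so the real work is confirming that each manipulation survives the module setting. First I would fix the standard orthonormal bases $\{e_j\}_{j\in\mathbb{J}}$ of $\mathscr{H}_\mathscr{A}$ and $\{f_l\}_{l\in\mathbb{L}}$ of $\mathscr{H}_\mathscr{A}(\mathbb{L})$, and use the canonical identification $\mathscr{H}_\mathscr{A}\otimes(\mathscr{H}_\mathscr{A}(\mathbb{L})\otimes\mathscr{E}_1)\cong \mathscr{H}_\mathscr{A}(\mathbb{L}\bigtimes\mathbb{J})\otimes\mathscr{E}_1$ given on elementary tensors by $e_j\otimes f_l\otimes y\mapsto L_{(l,j)}y$.

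Then for $x\in\mathscr{E}$ I would compute, exactly as in the Hilbert-space argument,
\begin{align*}
(I_{\mathscr{H}_\mathscr{A}}\otimes\theta_B)\theta_A x
&=\sum_{j\in\mathbb{J}}(I_{\mathscr{H}_\mathscr{A}}\otimes\theta_B)(e_j\otimes A_jx)
=\sum_{j\in\mathbb{J}}\sum_{l\in\mathbb{L}}\bigl(e_j\otimes f_l\otimes B_l(A_jx)\bigr)\\
&=\sum_{(l,j)\in\mathbb{L}\bigtimes\mathbb{J}}L_{(l,j)}C_{(l,j)}x=\theta_C x,
\end{align*}
the interchange of summation being legitimate because the double family is strictly summable. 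The same computation with $B,\Phi$ in place of their counterparts gives $\theta_\Xi=(I_{\mathscr{H}_\mathscr{A}}\otimes\theta_\Phi)\theta_\Psi$. Using Proposition~\ref{HMOP}(ii), the identity $(I_{\mathscr{H}_\mathscr{A}}\otimes\theta_\Phi)^*=I_{\mathscr{H}_\mathscr{A}}\otimes\theta_\Phi^*$ for adjointable homomorphisms, and $\theta_\Phi^*\theta_B=S_{B,\Phi}$, I obtain
$$S_{C,\Xi}=\theta_\Xi^*\theta_C=\theta_\Psi^*(I_{\mathscr{H}_\mathscr{A}}\otimes\theta_\Phi^*)(I_{\mathscr{H}_\mathscr{A}}\otimes\theta_B)\theta_A=\theta_\Psi^*(I_{\mathscr{H}_\mathscr{A}}\otimes S_{B,\Phi})\theta_A,$$
and substituting the formulas for $\theta_C$, $\theta_\Xi$ into $P_{C,\Xi}=\theta_C S_{C,\Xi}^{-1}\theta_\Xi^*$ yields the stated expression. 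The Parseval claim then follows at once: if $S_{A,\Psi}=I_\mathscr{E}$ and $S_{B,\Phi}=I_{\mathscr{E}_0}$, the middle factor collapses and $S_{C,\Xi}=\theta_\Psi^*\theta_A=S_{A,\Psi}=I_\mathscr{E}$.

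For part~(ii) the plan is to produce an explicit two-sided inverse of $S_{C,\Xi}$ under the commutation hypothesis. Writing $P_{A,\Psi}=\theta_A S_{A,\Psi}^{-1}\theta_\Psi^*$ and using that $P_{A,\Psi}$ commutes with $I_{\mathscr{H}_\mathscr{A}}\otimes S_{B,\Phi}$ (hence with its inverse), I would verify that $S_{A,\Psi}^{-1}\theta_\Psi^*(I_{\mathscr{H}_\mathscr{A}}\otimes S_{B,\Phi}^{-1})\theta_A S_{A,\Psi}^{-1}$ is inverse to $S_{C,\Xi}$ on both sides, sliding $P_{A,\Psi}$ past $I_{\mathscr{H}_\mathscr{A}}\otimes S_{B,\Phi}^{\pm1}$ and then collapsing $\theta_\Psi^*\theta_A S_{A,\Psi}^{-1}=I_\mathscr{E}$; substituting this inverse into $P_{C,\Xi}=\theta_C S_{C,\Xi}^{-1}\theta_\Xi^*$ and grouping $\theta_A S_{A,\Psi}^{-1}\theta_\Psi^*=P_{A,\Psi}$ twice produces the asserted factorization. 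The genuinely module-specific point to watch — the main obstacle — is justifying the interchange of the two summations and the identity $\theta_\Phi^*\theta_B=S_{B,\Phi}$ inside the strict topology: one must confirm that $I_{\mathscr{H}_\mathscr{A}}\otimes\theta_B$ and $I_{\mathscr{H}_\mathscr{A}}\otimes\theta_\Phi$ are genuinely adjointable with adjoints $I_{\mathscr{H}_\mathscr{A}}\otimes\theta_B^*$, $I_{\mathscr{H}_\mathscr{A}}\otimes\theta_\Phi^*$, which is precisely where the adjointability of the analysis homomorphisms in Definition~\ref{HMDEFINITION1} and the strict convergence of the defining series are essential; everything else is the verbatim algebra of Section~\ref{SIMILARITYCOMPOSITIONANDTENSORPRODUCT}.
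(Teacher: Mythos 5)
Your proposal is correct and follows essentially the same route the paper intends: the paper omits this proof, stating that it is obtained by transplanting the Hilbert-space composition argument from Section~\ref{SIMILARITYCOMPOSITIONANDTENSORPRODUCT}, and your computation of $\theta_C$ on elementary tensors, the identification $S_{C,\Xi}=\theta_\Psi^*(I_{\mathscr{H}_\mathscr{A}}\otimes S_{B,\Phi})\theta_A$, and the explicit two-sided inverse in part~(ii) reproduce that argument verbatim in the module setting. Your added remarks on the adjointability of $I_{\mathscr{H}_\mathscr{A}}\otimes\theta_B$ and the strict summability of the double family are exactly the module-specific points the paper's blanket remark asks the reader to check, so nothing is missing.
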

\textbf{Tensor product  of frames}: Let $(\{A_j\}_{j \in \mathbb{J}}, \{\Psi_j\}_{j \in \mathbb{J}})$ be a (hvf) in  $ \operatorname{Hom}^*_\mathscr{A}(\mathscr{E}, \mathscr{E}_0),$ and $(\{B_l\}_{l \in \mathbb{L}}, \{\Phi_l\}_{l \in \mathbb{L}})$ be a  (hvf) in  $ \operatorname{Hom}^*_\mathscr{A}(\mathscr{E}_1, \mathscr{E}_2).$ The (hvf)   $(\{C_{(j, l)}\coloneqq A_j\otimes B_l\}_{(j, l)\in \mathbb{J}\bigtimes  \mathbb{L}},\{\Xi_{(j, l)}\coloneqq \Psi_j\otimes\Phi_l\}_{(j, l)\in \mathbb{J}\bigtimes  \mathbb{L}})$  in $ \operatorname{Hom}^*_\mathscr{A}(\mathscr{E}\otimes\mathscr{E}_1, \mathscr{E}_0\otimes\mathscr{E}_2)$ is called  as tensor product  of frames $ (\{A_j\}_{j \in \mathbb{J}}, \{\Psi_j\}_{j\in \mathbb{J}})$ and $( \{B_l\}_{l \in \mathbb{L}} ,\{\Phi_l\}_{l\in \mathbb{L}}).$
\begin{proposition}
Let $(\{C_{(j, l)}\coloneqq A_j\otimes B_l\}_{(j, l)\in \mathbb{J}\bigtimes  \mathbb{L}},\{\Xi _{(j, l)}\coloneqq \Psi_j\otimes \Phi_l\}_{(j, l)\in \mathbb{J}\bigtimes  \mathbb{L}} )$ be the  tensor product of  homomorphism-valued frames  $( \{A_j\}_{j \in \mathbb{J}}, \{\Psi_j\}_{j \in \mathbb{J}} )$  in  $ \operatorname{Hom}^*_\mathscr{A}(\mathscr{E}, \mathscr{H}_\mathscr{A}(\mathbb{L})),$ and $( \{B_l\}_{l \in \mathbb{L}},\{\Phi_l\}_{l \in \mathbb{L}}) $ in $ \operatorname{Hom}^*_\mathscr{A}(\mathscr{E}_1, \mathscr{E}_2).$ Then  $\theta_C=\theta_A\otimes\theta_B, \theta_\Xi=\theta_\Psi\otimes\theta_\Phi, S_{C, \Xi}=S_{A, \Psi}\otimes S_{B, \Phi}, P_{C, \Xi}=P_{A, \Psi}\otimes P_{B, \Phi}.$ If  $( \{A_j\}_{j \in \mathbb{J}},\{\Psi_j\}_{j \in \mathbb{J}}) $ and $( \{B_l\}_{l \in \mathbb{L}}, \{\Phi_l\}_{l \in \mathbb{L}}), $ are Parseval, then $(\{C_{(j, l)}\}_{(j, l)\in \mathbb{J}\bigtimes  \mathbb{L}}, \{\Xi_{(j,l)}\}_{(j,l)\in \mathbb{J}\bigtimes \mathbb{L}})$ is Parseval.
\end{proposition}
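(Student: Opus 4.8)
The plan is to mirror exactly the two tensor-product propositions already proved in the excerpt: the Hilbert space version in Section~\ref{SIMILARITYCOMPOSITIONANDTENSORPRODUCT} and the weak homomorphism-free version, since all four identities $\theta_C=\theta_A\otimes\theta_B$, $\theta_\Xi=\theta_\Psi\otimes\theta_\Phi$, $S_{C,\Xi}=S_{A,\Psi}\otimes S_{B,\Phi}$, $P_{C,\Xi}=P_{A,\Psi}\otimes P_{B,\Phi}$ are purely formal consequences of how the index set $\mathbb{J}\bigtimes\mathbb{L}$ and the operators $C_{(j,l)}=A_j\otimes B_l$, $\Xi_{(j,l)}=\Psi_j\otimes\Phi_l$ are built. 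Because all the homomorphisms involved are $\mathscr{A}$-linear and adjointable, it suffices to verify each identity on elementary tensors and then extend by linearity and continuity.

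First I would fix the standard orthonormal bases $\{e_j\}_{j\in\mathbb{J}}$ of $\mathscr{H}_\mathscr{A}$ and $\{f_l\}_{l\in\mathbb{L}}$ of $\mathscr{H}_\mathscr{A}(\mathbb{L})$, and identify $\{e_j\otimes f_l\}_{(j,l)\in\mathbb{J}\bigtimes\mathbb{L}}$ with the standard basis of $\mathscr{H}_\mathscr{A}(\mathbb{J}\bigtimes\mathbb{L})$, so that $L_{(j,l)}=$ the corresponding embedding. For $x\in\mathscr{E}$, $w\in\mathscr{E}_1$ I would compute
\begin{align*}
(\theta_A\otimes\theta_B)(x\otimes w)&=\theta_Ax\otimes\theta_Bw=\left(\sum_{j\in\mathbb{J}}e_j\otimes A_jx\right)\otimes\left(\sum_{l\in\mathbb{L}}f_l\otimes B_lw\right)\\
&=\sum_{(j,l)\in\mathbb{J}\bigtimes\mathbb{L}}(e_j\otimes f_l)\otimes(A_j\otimes B_l)(x\otimes w)=\theta_C(x\otimes w),
\end{align*}
and identically $\theta_\Xi=\theta_\Psi\otimes\theta_\Phi$. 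Then, using that the adjoint of a tensor product of adjointable homomorphisms is the tensor of the adjoints and that tensoring is multiplicative, $S_{C,\Xi}=\theta_\Xi^*\theta_C=(\theta_\Psi^*\otimes\theta_\Phi^*)(\theta_A\otimes\theta_B)=\theta_\Psi^*\theta_A\otimes\theta_\Phi^*\theta_B=S_{A,\Psi}\otimes S_{B,\Phi}$, and since $(S_{A,\Psi}\otimes S_{B,\Phi})^{-1}=S_{A,\Psi}^{-1}\otimes S_{B,\Phi}^{-1}$, the same multiplicativity gives $P_{C,\Xi}=\theta_C S_{C,\Xi}^{-1}\theta_\Xi^*=\theta_AS_{A,\Psi}^{-1}\theta_\Psi^*\otimes\theta_BS_{B,\Phi}^{-1}\theta_\Phi^*=P_{A,\Psi}\otimes P_{B,\Phi}$. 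The Parseval conclusion is then immediate: if $S_{A,\Psi}=I_\mathscr{E}$ and $S_{B,\Phi}=I_{\mathscr{E}_1}$ then $S_{C,\Xi}=I_\mathscr{E}\otimes I_{\mathscr{E}_1}=I_{\mathscr{E}\otimes\mathscr{E}_1}$.

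The one place where the argument is genuinely more delicate than over Hilbert spaces — and hence the main obstacle — is justifying the underlying C*-module machinery that I am using as a black box above. Over a Hilbert C*-module I must know that the external tensor product $\mathscr{E}\otimes\mathscr{E}_1$ is again a Hilbert $\mathscr{A}$-module, that $\theta_A\otimes\theta_B$ is adjointable with $(\theta_A\otimes\theta_B)^*=\theta_A^*\otimes\theta_B^*$, and crucially that $S_{A,\Psi}\otimes S_{B,\Phi}$ is positive and invertible with the expected inverse; over Hilbert spaces these facts are reflexive, but on C*-modules they require the standard results on tensor products of adjointable operators (positivity of tensors of positive operators, invertibility of tensors of invertibles). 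I would therefore open the proof by recording these facts and the convergence of the defining series in the strict topology (which follows once $\theta_C=\theta_A\otimes\theta_B$ is known to be a well-defined adjointable homomorphism), after which the four identities and the Parseval statement drop out exactly as in the Hilbert space case.
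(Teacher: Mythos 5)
Your proof is correct and follows essentially the same route as the paper: the paper omits an explicit proof here, deferring to the Hilbert-space tensor-product proposition in Section~\ref{SIMILARITYCOMPOSITIONANDTENSORPRODUCT}, whose elementary-tensor computation of $\theta_C$, $\theta_\Xi$, $S_{C,\Xi}$ and $P_{C,\Xi}$ is exactly what you reproduce. Your closing remark about recording the C*-module facts (adjointability and multiplicativity of external tensor products, positivity and invertibility of $S_{A,\Psi}\otimes S_{B,\Phi}$) is precisely the kind of "additional support" the paper says such module-level proofs need, so it is a welcome addition rather than a deviation.
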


\textbf{Perturbations in Hilbert C*-modules}
\begin{theorem}\cite{HANMOHAPATRA1}
Let $ \mathscr{E}$ be a finitely or countably generated Hilbert C*-module over  $ \mathscr{A}$ and  $ \{x_n\}_{n=1}^\infty$ be a frame for $ \mathscr{E}$ with bounds   $ a$ and $ b$. Suppose that $\{y_n\}_{n=1}^\infty$ is in $ \mathscr{E}$ and that there exist $ \alpha, \beta, \gamma \geq 0$ with $ \max\{\alpha+\frac{\gamma}{\sqrt{a}}, \beta\}<1$ and one of the following two conditions is satisfied: 
\begin{equation*}
\left\| \sum \limits_{n=1}^\infty\langle x, x_n-y_n \rangle \langle x, x_n-y_n \rangle^*\right\|^\frac{1}{2}\leq \alpha\left\| \sum \limits_{n=1}^\infty\langle x, x_n \rangle \langle x, x_n \rangle^*\right\|^\frac{1}{2}+\beta\left\| \sum \limits_{n=1}^\infty\langle x, y_n \rangle \langle x, y_n \rangle^*\right\|^\frac{1}{2}+\gamma\|x\|, \quad \forall x \in \mathscr{E}
\end{equation*}
or
\begin{equation*}
\left\| \sum \limits_{n=1}^m c_n(x_n-y_n)\right\| \leq \left\| \sum \limits_{n=1}^m c_nx_n\right\|+\left\| \sum \limits_{n=1}^m c_ny_n\right\|+\left\| \sum \limits_{n=1}^m c_nc^*_n\right\|^\frac{1}{2},~\forall c_j \in \mathscr{A}, m=1,2,....
\end{equation*}
Then $ \{y_n\}_{n=1}^\infty$ is a frame for $ \mathscr{E}$ with bounds $a\left(1-\frac{\alpha+\beta+\frac{\gamma}{\sqrt{a}}}{1+\beta}\right)^2 $ and $b\left(1+\frac{\alpha+\beta+\frac{\gamma}{\sqrt{b}}}{1-\beta}\right)^2 .$
\end{theorem}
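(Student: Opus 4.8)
The plan is to transplant the Casazza--Christensen perturbation argument (Theorem~\ref{OLECAZASSA}) to the Hilbert C*-module setting, using the module Neumann-type invertibility lemma (Theorem~\ref{cc1}, which applies verbatim since a Hilbert C*-module is a Banach space) as the engine, and the two module-analytic bridges --- Paschke's characterization (Theorem~\ref{PASCHKE}) and Arambašić's characterization (Theorem~\ref{ARAMBASIC1}) --- to pass between norm estimates and the order-theoretic frame inequality in $\mathscr{A}$. First I would fix notation: write $\theta_x\colon\mathscr{E}\to\mathscr{H}_\mathscr{A}$, $\theta_x(x)=\{\langle x,x_n\rangle\}_n$, for the analysis operator of $\{x_n\}$ (well defined and bounded because $\mathscr{E}$ is countably generated and $\{x_n\}$ is a frame), $S_x=\theta_x^*\theta_x$ for the frame operator, and likewise $\theta_y,S_y$ for the candidate $\{y_n\}$; the frame hypothesis supplies $\sqrt{a}\,\|x\|\le\|\theta_x x\|\le\sqrt{b}\,\|x\|$. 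The first move is to read the two hypotheses as operator inequalities: the first displayed condition says $\|(\theta_x-\theta_y)x\|\le\alpha\|\theta_x x\|+\beta\|\theta_y x\|+\gamma\|x\|$, while the second is the dual statement on the synthesis operators $\theta_x^*,\theta_y^*$ over finite coefficient sequences.

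Treating the first (analysis-side) condition, the hypothesis presupposes convergence of the relevant $\mathscr{A}$-valued series, so that $\theta_x-\theta_y$ is defined; substituting $\|\theta_y x\|\le\|\theta_x x\|+\|(\theta_x-\theta_y)x\|$ into the inequality shows $(1-\beta)\|(\theta_x-\theta_y)x\|\le(\alpha+\beta)\|\theta_x x\|+\gamma\|x\|$, so $\theta_y$ is bounded with $\|\theta_y x\|\le\frac{(1+\alpha)\sqrt{b}+\gamma}{1-\beta}\|x\|$. Because $\theta_y$ is bounded and $\mathscr{A}$-linear, Theorem~\ref{PASCHKE} upgrades this to the order inequality $\langle S_y x,x\rangle\le b\bigl(1+\tfrac{\alpha+\beta+\gamma/\sqrt{b}}{1-\beta}\bigr)^2\langle x,x\rangle$, the claimed upper bound. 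For the lower bound I would argue directly: $\|\theta_y x\|\ge\|\theta_x x\|-\|(\theta_x-\theta_y)x\|\ge(1-\alpha)\|\theta_x x\|-\beta\|\theta_y x\|-\gamma\|x\|$, whence $\|\theta_y x\|\ge C\|x\|$ with $C=\sqrt{a}\bigl(1-\tfrac{\alpha+\beta+\gamma/\sqrt{a}}{1+\beta}\bigr)$, the bracket being positive precisely because $\max\{\alpha+\gamma/\sqrt{a},\beta\}<1$. Now the self-adjoint operator $S_y^{1/2}$ satisfies $\|S_y^{1/2}x\|=\|\theta_y x\|\ge C\|x\|$, so Theorem~\ref{ARAMBASIC1} makes $S_y^{1/2}$ surjective, hence $S_y$ positive invertible, and delivers $\langle S_y x,x\rangle\ge C^2\langle x,x\rangle=a\bigl(1-\tfrac{\alpha+\beta+\gamma/\sqrt{a}}{1+\beta}\bigr)^2\langle x,x\rangle$, the claimed lower bound.

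For the second (synthesis-side) condition the direct norm-lower-bound argument is unavailable, so here Theorem~\ref{cc1} does the real work, in the spirit of Theorem~\ref{PERTURBATION RESULT 1}. I would first extend the finite-sum estimate to all of $\mathscr{H}_\mathscr{A}$ and conclude that $T_y=\theta_y^*$ is bounded. Since $\{x_n\}$ is a frame, $\theta_x(\mathscr{E})$ has closed range and is therefore orthogonally complemented in $\mathscr{H}_\mathscr{A}$ (Theorem~\ref{MANUILOV2}), and on it $T_x=\theta_x^*$ restricts to a bounded invertible map onto $\mathscr{E}$ with $\|c\|\le\frac{1}{\sqrt{a}}\|T_x c\|$ for $c\in\theta_x(\mathscr{E})$ (computed from $\|T_x\theta_x x\|=\|S_x x\|$ and $\|\theta_x x\|=\|S_x^{1/2}x\|$). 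Absorbing $\gamma\|c\|$ into the $\theta_x$-term via this estimate turns the hypothesis into $\|T_x c-T_y c\|\le(\alpha+\gamma/\sqrt{a})\|T_x c\|+\beta\|T_y c\|$ on $\theta_x(\mathscr{E})$, so Theorem~\ref{cc1} makes $T_y|_{\theta_x(\mathscr{E})}$, and a fortiori $T_y$, surjective; surjectivity of $T_y$ forces $S_y=T_yT_y^*$ to be positive invertible (again through Theorem~\ref{ARAMBASIC1}), with the constants read off exactly as before.

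The main obstacle is precisely the step that is invisible over Hilbert spaces: in a C*-module the norm inequality $\|\theta_y x\|\ge C\|x\|$ does \emph{not} by itself yield the order-theoretic lower frame inequality $\langle S_y x,x\rangle\ge C^2\langle x,x\rangle$ in $\mathscr{A}$. Routing through $S_y^{1/2}$ and Arambašić's surjectivity characterization (Theorem~\ref{ARAMBASIC1}), together with Paschke's theorem on the upper side, is what closes this gap; this is also where the finitely or countably generated hypothesis is essential, both to guarantee that the analysis operators land in the standard module $\mathscr{H}_\mathscr{A}$ and that the range $\theta_x(\mathscr{E})$ is orthogonally complemented. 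A secondary care point is the well-definedness (strict-topology convergence) of $\theta_y$ and $S_y$ before any inequality can be interpreted, which must be extracted from the perturbation bound together with the Bessel property of $\{x_n\}$ rather than assumed.
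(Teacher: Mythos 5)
This theorem is not proved in the paper at all: it is quoted verbatim from \cite{HANMOHAPATRA1} as background for the authors' own perturbation results (Theorems \ref{PERTURBATION RESULT 1}--\ref{OVFQUADRATICPERTURBATION} and their module analogues), so there is no in-paper proof to compare yours against. Judged on its own, your reconstruction follows the standard Casazza--Christensen route adapted to modules and is essentially sound: reading the hypotheses as norm inequalities for $\theta_x-\theta_y$ (resp.\ $\theta_x^*-\theta_y^*$), extracting $\|\theta_y x\|\ge C\|x\|$ by the triangle inequality, and then converting the norm bound into the order inequality via invertibility of $S_y$ is exactly the right mechanism, and you correctly identify the conversion step as the genuinely module-theoretic obstacle.

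Two places deserve tightening. First, Theorem \ref{ARAMBASIC1} as stated only asserts the \emph{existence} of constants $c,d$; to get the claimed bound $\langle S_yx,x\rangle\ge C^2\langle x,x\rangle$ with the specific $C$ you must, after invertibility of $S_y$ is secured, use the C*-algebraic fact that $\|S_y^{-1/2}\|\le C^{-1}$ forces $S_y\ge C^2 I_{\mathscr{E}}$ in $\operatorname{End}^*_{\mathscr{A}}(\mathscr{E})$; similarly the upper bound needs Paschke's inequality in the sharp form $\langle Tx,Tx\rangle\le\|T\|^2\langle x,x\rangle$. Second, in the synthesis-side case the passage from surjectivity of $T_y|_{\theta_x(\mathscr{E})}$ (with the quantitative bound from Theorem \ref{cc1}) back to the lower frame bound is not automatic: you need the duality step $\|x\|^2=\|\langle T_yc,x\rangle\|\le\|c\|\,\|\theta_yx\|$ for a preimage $c$ of controlled norm, which in turn presupposes that $\theta_y$ is adjointable with adjoint $T_y$ --- and, as you note, that the $\mathscr{A}$-valued Bessel series for $\{y_n\}$ actually converge in norm, which in a C*-module does not follow from boundedness of the finite partial sums alone. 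Finally, be aware that the second displayed condition in the paper's statement is missing the coefficients $\alpha,\beta,\gamma$ (as printed it is incompatible with $\max\{\alpha+\gamma/\sqrt{a},\beta\}<1$); your reading of it as the $\alpha,\beta,\gamma$-weighted synthesis inequality is the intended one.
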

\begin{theorem}
Let $(\{A_j\}_{j\in \mathbb{J}}, \{\Psi_j\}_{j\in \mathbb{J}} )$ be a (hvf) in  $ \operatorname{Hom}^*_\mathscr{A}(\mathscr{E}, \mathscr{E}_0)$. Suppose  $\{B_j\}_{j\in \mathbb{J}} $ in $ \operatorname{Hom}^*_\mathscr{A}(\mathscr{E}, \mathscr{E}_0)$ is such that $ \Psi_j^*B_j\geq 0, \forall j \in \mathbb{J}$ and there exist $\alpha, \beta, \gamma \geq 0  $ with $ \max\{\alpha+\gamma\|\theta_\Psi S_{A,\Psi}^{-1}\|, \beta\}<1$ and for every finite subset $ \mathbb{S}$ of $ \mathbb{J}$
 \begin{equation*}
 \left\|\sum\limits_{j\in \mathbb{S}}(A_j^*-B_j^*)L_j^*y\right\|\leq \alpha\left\|\sum\limits_{j\in \mathbb{S}}A_j^*L_j^*y\right\|+\beta\left\|\sum\limits_{j\in \mathbb{S}}B_j^*L_j^*y\right\|+\gamma \left\|\sum\limits_{j\in \mathbb{S}}\langle L_j^*y, L_j^*y\rangle\right\|^\frac{1}{2},\quad \forall y \in \mathscr{H}_\mathscr{A}\otimes \mathscr{E}_0.
 \end{equation*} 
 Then  $( \{B_j\}_{j\in \mathbb{J}} ,\{\Psi_j\}_{j\in \mathbb{J}}) $ is a (hvf) with bounds $ \frac{1-(\alpha+\gamma\|\theta_\Psi S_{A,\Psi}^{-1}\|)}{(1+\beta)\|S_{A,\Psi}^{-1}\|}$ and $\frac{\|\theta_\Psi\|((1+\alpha)\|\theta_A\|+\gamma)}{1-\beta} $.
 \end{theorem}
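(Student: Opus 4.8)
The plan is to run the argument of Theorem~\ref{PERTURBATION RESULT 1} almost verbatim, replacing the scalar quantity $\left(\sum_{j}\|L_j^*y\|^2\right)^{1/2}$ by its Hilbert C*-module surrogate $\left\|\sum_{j}\langle L_j^*y, L_j^*y\rangle\right\|^{1/2}$, and paying attention to the two places where the module structure, rather than the Hilbert space structure, is genuinely used: the \emph{adjointability} of the limiting homomorphism $\theta_B$, and the fact that convergence is meant in the strict topology on $\operatorname{Hom}^*_\mathscr{A}(\mathscr{E},\mathscr{H}_\mathscr{A}\otimes\mathscr{E}_0)$. The overall skeleton is: (a) bound the partial sums $\sum_{j\in\mathbb{S}}B_j^*L_j^*y$; (b) deduce that $\theta_B$ exists as a bounded adjointable homomorphism; (c) show $S_{B,\Psi}$ is positive; (d) feed $y=\theta_\Psi S_{A,\Psi}^{-1}h$ into the perturbation inequality and invoke Theorem~\ref{cc1}; and finally (e) read off the frame bounds.

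First I would rearrange the hypothesis exactly as in the Hilbert space case: from $\left\|\sum_{j\in\mathbb{S}}B_j^*L_j^*y\right\|\le \left\|\sum_{j\in\mathbb{S}}(A_j^*-B_j^*)L_j^*y\right\|+\left\|\sum_{j\in\mathbb{S}}A_j^*L_j^*y\right\|$, absorbing the $\beta$-term yields $\left\|\sum_{j\in\mathbb{S}}B_j^*L_j^*y\right\|\le \frac{1+\alpha}{1-\beta}\left\|\sum_{j\in\mathbb{S}}A_j^*L_j^*y\right\|+\frac{\gamma}{1-\beta}\left\|\sum_{j\in\mathbb{S}}\langle L_j^*y, L_j^*y\rangle\right\|^{1/2}$. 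Since $\sum_{j}L_jL_j^*=I$ gives $\langle y,y\rangle=\sum_j\langle L_j^*y,L_j^*y\rangle$ and hence $\left\|\sum_{j\in\mathbb{J}}\langle L_j^*y, L_j^*y\rangle\right\|^{1/2}=\|y\|$, applying the same estimate to tails $\mathbb{S}_2\setminus\mathbb{S}_1$ shows $\{\sum_{j\in\mathbb{S}}B_j^*L_j^*y\}$ is a Cauchy net; completeness of $\mathscr{E}$ then delivers a bounded homomorphism $T$ with $Ty=\sum_jB_j^*L_j^*y$. The new point is adjointability: each finite partial sum $\sum_{j\in\mathbb{S}}B_j^*L_j^*$ is adjointable with adjoint $\sum_{j\in\mathbb{S}}L_jB_j$, so I would show this second net converges strictly, using the bound on $T$ together with Theorem~\ref{PASCHKE} to control $\theta_B$; this simultaneously establishes that $\theta_B=\sum_jL_jB_j$ exists in $\operatorname{Hom}^*_\mathscr{A}(\mathscr{E},\mathscr{H}_\mathscr{A}\otimes\mathscr{E}_0)$, that $T=\theta_B^*$, and that $\|\theta_B\|\le\frac{(1+\alpha)\|\theta_A\|+\gamma}{1-\beta}$.

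With $\theta_B$ in hand, the hypothesis $\Psi_j^*B_j\ge 0$ gives $S_{B,\Psi}=\theta_\Psi^*\theta_B=\sum_j\Psi_j^*B_j\ge 0$, a positive bounded adjointable homomorphism. Using Proposition~\ref{HMOP} (namely $\theta_A^*\theta_\Psi=S_{A,\Psi}$ and $\theta_B^*\theta_\Psi=S_{B,\Psi}$), I would substitute $y=\theta_\Psi S_{A,\Psi}^{-1}h$ into the continuous extension of the perturbation inequality to obtain $\left\|h-S_{B,\Psi}S_{A,\Psi}^{-1}h\right\|\le(\alpha+\gamma\|\theta_\Psi S_{A,\Psi}^{-1}\|)\|h\|+\beta\left\|S_{B,\Psi}S_{A,\Psi}^{-1}h\right\|$. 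Because $\max\{\alpha+\gamma\|\theta_\Psi S_{A,\Psi}^{-1}\|,\beta\}<1$ and a Hilbert C*-module is in particular a Banach space, Theorem~\ref{cc1} applies with $U=I_\mathscr{E}$ and $V=S_{B,\Psi}S_{A,\Psi}^{-1}$, forcing $S_{B,\Psi}S_{A,\Psi}^{-1}$—and hence $S_{B,\Psi}=(S_{B,\Psi}S_{A,\Psi}^{-1})S_{A,\Psi}$—to be invertible; together with positivity this makes $(\{B_j\}_{j\in\mathbb{J}},\{\Psi_j\}_{j\in\mathbb{J}})$ a (hvf). The stated bounds then follow from $\|(S_{B,\Psi}S_{A,\Psi}^{-1})^{-1}\|\le\frac{1+\beta}{1-(\alpha+\gamma\|\theta_\Psi S_{A,\Psi}^{-1}\|)}$, which gives $\|S_{B,\Psi}^{-1}\|\le\frac{(1+\beta)\|S_{A,\Psi}^{-1}\|}{1-(\alpha+\gamma\|\theta_\Psi S_{A,\Psi}^{-1}\|)}$, and from $\|S_{B,\Psi}\|\le\|\theta_\Psi\|\|\theta_B\|$.

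The genuinely delicate step—the only one not already covered by transcribing Theorem~\ref{PERTURBATION RESULT 1}—is step (b): showing that the norm-bounded limit $T$ is adjointable and equals $\theta_B^*$ for a strictly convergent adjointable $\theta_B$. In a Hilbert space this is automatic, but over $\mathscr{A}$ one must argue it by hand, verifying the strict convergence of $\sum_{j\in\mathbb{S}}L_jB_j$ on generators and invoking Theorem~\ref{PASCHKE}. Everything downstream, including the application of Theorem~\ref{cc1}, then carries over unchanged, precisely because that invertibility result is already stated for Banach spaces.
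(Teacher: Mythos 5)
Your proposal is correct and follows exactly the route the paper takes: the paper's entire proof is the remark that one argues ``along the lines of Theorem \ref{PERTURBATION RESULT 1}'' after observing $\langle y,y\rangle=\left\langle\sum_{j\in\mathbb{J}}L_jL_j^*y,y\right\rangle=\sum_{j\in\mathbb{J}}\langle L_j^*y,L_j^*y\rangle$, which is precisely your substitution of $\left\|\sum_{j}\langle L_j^*y,L_j^*y\rangle\right\|^{1/2}$ for $\left(\sum_j\|L_j^*y\|^2\right)^{1/2}$. Your additional attention to the adjointability of $\theta_B$ (step (b)) is a point the paper silently elides, so it strengthens rather than diverges from the published argument.
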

 \begin{proof}
 Proof is along with the lines of Theorem \ref{PERTURBATION RESULT 1}, by observing
 $$ \langle y,y\rangle =\langle (I_{\ell^2(\mathbb{J})}\otimes I_{\mathcal{H}_0})y,y\rangle=\left\langle\sum\limits_{j\in \mathbb{J}}L_jL_j^* y,y\right\rangle=\sum\limits_{j\in \mathbb{J}}\langle L_j^*y, L_j^*y\rangle, \quad \forall y \in \mathscr{H}_\mathscr{A}\otimes \mathscr{E}_0.$$
 \end{proof}
 \begin{theorem}
 Let $ (\{A_j\}_{j\in \mathbb{J}},\{\Psi_j\}_{j\in \mathbb{J}} )$ be a (hvf) in $\operatorname{Hom}^*_ \mathscr{A}(\mathscr{E}, \mathscr{E}_0)$ with bounds $ a$ and $b$. Suppose  $\{B_j\}_{j\in \mathbb{J}} $ is Bessel (w.r.t. itself) in $ \operatorname{Hom}^*_\mathscr{A}(\mathscr{E}, \mathscr{E}_0)$  such that $ \theta_\Psi^*\theta_B\geq0$ and 
 there exist real $\alpha, \beta, \gamma \geq 0  $ with $ \max\{\alpha+\frac{\gamma}{\sqrt{a}}, \beta\}<1$ and 
\begin{equation*} 
\left\|\sum\limits_{j\in \mathbb{J}}\langle(A_j-B_j)x,\Psi_jx \rangle\right\|^\frac{1}{2}\leq \alpha \left\|\sum\limits_{j\in \mathbb{J}}\langle A_jx,\Psi_jx \rangle\right\|^\frac{1}{2}+\beta\left\|\sum\limits_{j\in \mathbb{J}}\langle B_jx,\Psi_jx \rangle\right\|^\frac{1}{2}+\gamma \|x\|,\quad  \forall x \in \mathscr{E}.
\end{equation*}
Then  $ (\{B_j\}_{j\in \mathbb{J}}, \{\Psi_j\}_{j\in \mathbb{J}}) $ is a (hvf) with bounds $a\left(1-\frac{\alpha+\beta+\frac{\gamma}{\sqrt{a}}}{1+\beta}\right)^2 $ and $b\left(1+\frac{\alpha+\beta+\frac{\gamma}{\sqrt{b}}}{1-\beta}\right)^2 .$
 \end{theorem}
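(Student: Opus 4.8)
The plan is to mirror the proof of Theorem \ref{PERTURBATION RESULT 2} (its Hilbert-space ancestor), replacing the scalar quantities $\sum_j\langle A_jh,\Psi_jh\rangle$ appearing there by the $\mathscr{A}$-norms of the corresponding $\mathscr{A}$-valued sums, and replacing the scalar spectral argument at the end by the characterization of surjective self-adjoint module maps (Theorem \ref{ARAMBASIC1}). First I would record the objects that already exist: since $\{B_j\}_{j\in\mathbb{J}}$ is Bessel, $\theta_B$ is an adjointable homomorphism, and since $(\{A_j\}_{j\in\mathbb{J}},\{\Psi_j\}_{j\in\mathbb{J}})$ is a (hvf), so is $\theta_\Psi$; hence $S_{B,\Psi}=\theta_\Psi^*\theta_B=\sum_j\Psi_j^*B_j$ exists as an adjointable homomorphism. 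The hypothesis $\theta_\Psi^*\theta_B\ge 0$ then says $S_{B,\Psi}\ge 0$, so $S_{B,\Psi}^{1/2}\in\operatorname{End}^*_\mathscr{A}(\mathscr{E})$ exists. The crucial bookkeeping identity is that for any positive $S\in\operatorname{End}^*_\mathscr{A}(\mathscr{E})$ one has $\big\|\langle Sx,x\rangle\big\|^{1/2}=\|S^{1/2}x\|$; applied to $S_{B,\Psi}$ and to $S_{A,\Psi}$ this rewrites $\big\|\sum_j\langle B_jx,\Psi_jx\rangle\big\|^{1/2}=\|S_{B,\Psi}^{1/2}x\|$ and $\big\|\sum_j\langle A_jx,\Psi_jx\rangle\big\|^{1/2}=\|S_{A,\Psi}^{1/2}x\|$. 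Since $(\{A_j\},\{\Psi_j\})$ has bounds $a,b$, i.e. $aI_\mathscr{E}\le S_{A,\Psi}\le bI_\mathscr{E}$, monotonicity of the norm on positive elements of $\mathscr{A}$ gives $\sqrt{a}\,\|x\|\le\|S_{A,\Psi}^{1/2}x\|\le\sqrt{b}\,\|x\|$.

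Next I would run the inequality chain. Writing $p(x)=\|S_{A,\Psi}^{1/2}x\|$ and $q(x)=\|S_{B,\Psi}^{1/2}x\|$, the hypothesis reads $\big\|\sum_j\langle(A_j-B_j)x,\Psi_jx\rangle\big\|^{1/2}\le\alpha p(x)+\beta q(x)+\gamma\|x\|$. Using $\sum_j\langle B_jx,\Psi_jx\rangle=\sum_j\langle(B_j-A_j)x,\Psi_jx\rangle+\sum_j\langle A_jx,\Psi_jx\rangle$ together with the triangle inequality in $\mathscr{A}$ and the subadditivity $\sqrt{s+t}\le\sqrt{s}+\sqrt{t}$, I get $q(x)\le(1+\alpha)p(x)+\beta q(x)+\gamma\|x\|$, hence (after transposing and using $p(x)\le\sqrt{b}\,\|x\|$) the upper estimate $q(x)\le\frac{(1+\alpha)\sqrt b+\gamma}{1-\beta}\|x\|$. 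Symmetrically, from $p(x)\le\alpha p(x)+(1+\beta)q(x)+\gamma\|x\|$ and absorbing $\gamma\|x\|\le\frac{\gamma}{\sqrt a}p(x)$ (legitimate because $\sqrt{a}\,\|x\|\le p(x)$) I obtain $\big(1-(\alpha+\tfrac{\gamma}{\sqrt a})\big)p(x)\le(1+\beta)q(x)$, and then $\sqrt{a}\,\|x\|\le p(x)$ yields the lower estimate $q(x)\ge\frac{\sqrt{a}\,(1-(\alpha+\gamma/\sqrt a))}{1+\beta}\|x\|$. These are exactly $\sqrt{a'}\,\|x\|\le\|S_{B,\Psi}^{1/2}x\|\le\sqrt{b'}\,\|x\|$ with $a'=a\big(1-\frac{\alpha+\beta+\gamma/\sqrt a}{1+\beta}\big)^2$ and $b'=b\big(1+\frac{\alpha+\beta+\gamma/\sqrt b}{1-\beta}\big)^2$, after squaring and invoking the identities $1-(\alpha+\tfrac{\gamma}{\sqrt a})=(1+\beta)\big(1-\tfrac{\alpha+\beta+\gamma/\sqrt a}{1+\beta}\big)$ and $(1+\alpha)\sqrt b+\gamma=\sqrt b\,(1-\beta)\big(1+\tfrac{\alpha+\beta+\gamma/\sqrt b}{1-\beta}\big)$; this is where the assumption $\max\{\alpha+\gamma/\sqrt a,\beta\}<1$ enters, keeping $a'>0$.

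Finally I would upgrade the norm estimates to the conclusion. The map $S_{B,\Psi}^{1/2}$ is self-adjoint and the bound $\sqrt{a'}\,\|x\|\le\|S_{B,\Psi}^{1/2}x\|\le\sqrt{b'}\,\|x\|$ is precisely condition (ii) of Theorem \ref{ARAMBASIC1}, so $S_{B,\Psi}^{1/2}$ is surjective; being also bounded below it is injective with closed range, hence invertible, and therefore $S_{B,\Psi}=(S_{B,\Psi}^{1/2})^2$ is positive invertible. Together with the already-established existence of $\theta_B$ and $\theta_\Psi$, this verifies Definition \ref{HMDEFINITION1}, so $(\{B_j\},\{\Psi_j\})$ is a (hvf). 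For the frame bounds I would pass from norm estimates to order estimates: $\sup_{\|x\|=1}\|S_{B,\Psi}^{1/2}x\|^2=\|S_{B,\Psi}\|\le b'$ gives $S_{B,\Psi}\le b'I_\mathscr{E}$, and $\inf_{\|x\|=1}\|S_{B,\Psi}^{1/2}x\|^2=\|S_{B,\Psi}^{-1}\|^{-1}\ge a'$ gives $a'I_\mathscr{E}\le S_{B,\Psi}$, which are the claimed bounds.

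The main obstacle I anticipate is not the algebra — that is a faithful copy of Theorem \ref{PERTURBATION RESULT 2} — but the two genuinely module-theoretic translations. One is that the quantities being square-rooted are now $\mathscr{A}$-valued, so I must work with their norms and justify $\|\langle Sx,x\rangle\|^{1/2}=\|S^{1/2}x\|$ for the triangle/subadditivity manipulations to close up. The other is the final invertibility: the scalar proof invokes Theorem \ref{cc1}, but over a C*-module a self-adjoint map bounded below need not be invertible in general, so I must route through Theorem \ref{ARAMBASIC1} instead. I would also be careful to use only $\theta_\Psi^*\theta_B=S_{B,\Psi}\ge0$, which is exactly what makes $\langle S_{B,\Psi}x,x\rangle$ a nonnegative element and legitimizes the square roots, and deliberately avoid the converse half of Theorem \ref{CHARACTERIZATION}, since that would demand the stronger termwise positivity $\Psi_j^*B_j\ge0$ not assumed here.
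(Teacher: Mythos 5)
Your proposal is correct and follows essentially the same route as the paper, whose proof of this theorem consists precisely of the instruction to replace the scalar quantities $|\cdot|$ and $(\cdot)^{1/2}$ in the proof of Theorem \ref{PERTURBATION RESULT 2} by the norms $\|\cdot\|$ of the corresponding $\mathscr{A}$-valued sums and then invoke Theorem \ref{ARAMBASIC1} to pass from the two-sided norm estimate on $S_{B,\Psi}^{1/2}$ to positivity and invertibility of $S_{B,\Psi}$. Your two flagged translation points — the identity $\|\langle Sx,x\rangle\|^{1/2}=\|S^{1/2}x\|$ and the substitution of Theorem \ref{ARAMBASIC1} for the Hilbert-space invertibility argument — are exactly the steps the paper relies on.
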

 \begin{proof}
 By replacing $ |\cdot|$  and $ (\cdot)$  by $ \|\cdot\|$ in Theorem \ref{PERTURBATION RESULT 2} and using  Theorem \ref{ARAMBASIC1}  we get this theorem.
 \end{proof}
 
\section{Sequential version of homomorphism-valued frames and bases} \label{SEQUENTIALVERSIONOFHOMOMORPHISM-VALUEDFRAMES}
\begin{definition}\label{SEQUENTIAL VERSION HOMOMORPHISM VAUED DEFINITION}
A set of vectors   $ \{x_j\}_{j\in \mathbb{J}}$  in a Hilbert C*-module $\mathscr{E}$ is said to be a frame    w.r.t.  set $ \{\tau_j\}_{j\in \mathbb{J}}$ in $\mathscr{E}$  if there are  real $ c,  d  >0$ such that
\begin{enumerate}[\upshape(i)]
\item the map  $S_{x,\tau}:\mathscr{E} \ni  x \mapsto \sum_{j\in\mathbb{J}}\langle x,  x_j\rangle\tau_j \in \mathscr{E} $ is well-defined  adjointable  positive  invertible homomorphism.
\item $   \sum_{j \in \mathbb{J}}\langle x,x_j\rangle\langle x_j, x \rangle \leq c\langle x,x\rangle , \forall x \in \mathscr{E}; 
\sum_{j \in \mathbb{J}}\langle x,\tau_j\rangle\langle \tau_j, x \rangle \leq d\langle x,x\rangle , \forall x \in \mathscr{E}.$
\end{enumerate}
Let $a,b>0 $ be such that $ a I_\mathscr{E}\leq S_{x,\tau}\leq b I_\mathscr{E}$. We call $ a$ and $ b$ as lower and upper frame bounds, respectively. Supremum of the set of all lower frame bounds is called the optimal lower frame bound and infimum of the set of all upper frame bounds is called the optimal upper frame bound. If optimal bounds are equal, we say the frame is exact. Whenever optimal bound of an exact frame is one,  we say it is Parseval.
The bounded homomorphisms $\theta_x: \mathscr{E}\ni x\mapsto\{\langle x,x_j\rangle\}_{j\in\mathbb{J}}\in \mathscr{H}_\mathscr{A}, $ $\theta_\tau : \mathscr{E}\ni x\mapsto\{\langle x,\tau_j\rangle\}_{j\in\mathbb{J}}\in \mathscr{H}_\mathscr{A}$ as analysis homomorphisms and adjoints of these as synthesis homomorphisms. The map defined by $S_{x,\tau}: \mathscr{E}\ni x\mapsto  \sum_{j \in \mathbb{J}}\langle x,x_j\rangle \tau_j \in \mathscr{E}$ is called as frame homomorphism.
If condition \text{\upshape(i)} is replaced by
$$ \text{ the map } \mathscr{E} \ni  x \mapsto \sum_{j\in\mathbb{J}}\langle x,  x_j\rangle\tau_j \in \mathscr{E}  ~\text{is well-defined  adjointable positive homomorphism},$$
then we call $\{x_j\}_{j\in\mathbb{J}}$ is  Bessel w.r.t. $\{\tau_j\}_{j\in\mathbb{J}}$.
      
For fixed $ \mathbb{J}, \mathscr{E}$  and $ \{\tau_j\}_{j\in \mathbb{J}}$   the set of all frames for $ \mathscr{E}$  w.r.t.  $ \{\tau_j\}_{j\in \mathbb{J}}$ is denoted by $ \mathscr{F}_\tau.$
\end{definition}
\begin{proposition}
 Definition \ref{SEQUENTIAL VERSION HOMOMORPHISM VAUED DEFINITION} holds if and only if  there are  real $ a, b, c,  d  >0$ such that
\begin{enumerate}[\upshape(i)]
\item $ a\langle x,x\rangle\leq \sum_{j \in \mathbb{J}}\langle x,x_j\rangle\langle \tau_j, x \rangle\leq b\langle x,x\rangle , \forall x \in \mathscr{E}.$
\item $   \sum_{j \in \mathbb{J}}\langle x,x_j\rangle\langle x_j, x \rangle \leq c\langle x,x\rangle ,\forall x \in \mathscr{E}; 
\sum_{j \in \mathbb{J}}\langle x,\tau_j\rangle\langle \tau_j, x \rangle \leq d\langle x,x\rangle , \forall x \in \mathscr{E}.$
\item $  \sum_{j \in \mathbb{J}}\langle x,x_j\rangle\tau_j=\sum_{j \in \mathbb{J}}\langle x,\tau_j\rangle x_j,  \forall x \in \mathscr{E}.$
\end{enumerate}
\end{proposition}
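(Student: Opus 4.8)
The plan is to follow the template of the Hilbert-space proposition accompanying Definition~\ref{SEQUENTIAL2}, replacing every scalar-analytic step with its Hilbert C*-module counterpart; in fact the argument runs almost exactly parallel to the proof of Theorem~\ref{CHARACTERIZATION}, so the two cited results I expect to lean on are Paschke's theorem (Theorem~\ref{PASCHKE}) and the Arambašić surjectivity criterion (Theorem~\ref{ARAMBASIC1}).

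For the forward implication, suppose $(\{x_j\}_{j\in\mathbb{J}},\{\tau_j\}_{j\in\mathbb{J}})$ is a frame in the sense of Definition~\ref{SEQUENTIAL VERSION HOMOMORPHISM VAUED DEFINITION}. Condition~(ii) is then literally part of that definition. For (i), I would use that $S_{x,\tau}$ is positive invertible with $aI_\mathscr{E}\le S_{x,\tau}\le bI_\mathscr{E}$, together with the identity $\langle S_{x,\tau}x,x\rangle=\sum_{j\in\mathbb{J}}\langle x,x_j\rangle\langle\tau_j,x\rangle$, which at once sandwiches the latter sum between $a\langle x,x\rangle$ and $b\langle x,x\rangle$. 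For (iii), I would compute the adjoint of the frame homomorphism: a direct manipulation of the $\mathscr{A}$-valued inner product gives $S_{x,\tau}^\ast=S_{\tau,x}$, exactly as in the Hilbert space case; since $S_{x,\tau}$ is (positive, hence) self-adjoint, $S_{x,\tau}=S_{\tau,x}$, and writing this equality out at each $x$ is precisely (iii).

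For the converse, assume (i)--(iii). Condition~(ii) says that $\langle\theta_x x,\theta_x x\rangle\le c\langle x,x\rangle$ and $\langle\theta_\tau x,\theta_\tau x\rangle\le d\langle x,x\rangle$, so by Theorem~\ref{PASCHKE} (with the explicit synthesis maps serving as adjoints) $\theta_x,\theta_\tau$ are bounded adjointable homomorphisms into $\mathscr{H}_\mathscr{A}$; hence $S_{x,\tau}=\theta_\tau^\ast\theta_x$ exists as a bounded adjointable homomorphism. Self-adjointness comes from (iii): since $S_{x,\tau}^\ast=S_{\tau,x}$ and (iii) asserts $S_{x,\tau}=S_{\tau,x}$, we get $S_{x,\tau}=S_{x,\tau}^\ast$. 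Positivity then follows from the left inequality in (i), because $\langle S_{x,\tau}x,x\rangle\ge a\langle x,x\rangle\ge 0$ for all $x$ forces the self-adjoint adjointable operator $S_{x,\tau}$ to be positive.

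The genuine obstacle is invertibility, and this is where the module setting departs from the Hilbert space proof: over a general $\mathscr{A}$ one cannot deduce invertibility of $S_{x,\tau}$ from a lower inner-product bound by a spectral argument. The plan is to pass to the square root: since $S_{x,\tau}\ge 0$, the element $S_{x,\tau}^{1/2}$ exists, is self-adjoint, and satisfies $\langle S_{x,\tau}^{1/2}x,S_{x,\tau}^{1/2}x\rangle=\langle S_{x,\tau}x,x\rangle$, so (i) rewrites as $a\langle x,x\rangle\le\langle S_{x,\tau}^{1/2}x,S_{x,\tau}^{1/2}x\rangle\le b\langle x,x\rangle$. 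Theorem~\ref{ARAMBASIC1} then yields that $S_{x,\tau}^{1/2}$ is surjective, while the lower bound makes it bounded below and hence injective; a bounded-below surjective adjointable operator is invertible, so $S_{x,\tau}=(S_{x,\tau}^{1/2})^2$ is invertible. Together with (ii) this verifies Definition~\ref{SEQUENTIAL VERSION HOMOMORPHISM VAUED DEFINITION}. I expect the only points needing genuine care, beyond citing Theorem~\ref{PASCHKE} and Theorem~\ref{ARAMBASIC1}, to be the convergence and adjointability of $\theta_x,\theta_\tau$ and the reduction through $S_{x,\tau}^{1/2}$.
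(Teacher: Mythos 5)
Your proposal is correct and follows essentially the route the paper intends: the paper states this proposition without an explicit proof, deferring to the Hilbert-space argument after Definition \ref{SEQUENTIAL2} (boundedness of $S_{x,\tau}=\theta_\tau^*\theta_x$ from (ii), self-adjointness from $S_{x,\tau}^*=S_{\tau,x}$ together with (iii), positivity and invertibility from (i)), and the module-specific supplements you supply — Theorem \ref{PASCHKE} for the analysis maps and the passage through $S_{x,\tau}^{1/2}$ with Theorem \ref{ARAMBASIC1} for invertibility — are exactly the ones the paper itself deploys in the proof of Theorem \ref{CHARACTERIZATION}. No gaps.
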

\begin{theorem}
 Let $\{x_j\}_{j\in \mathbb{J}}, \{\tau_j\}_{j\in \mathbb{J}}$ be in $\mathscr{E}$ over $\mathscr{A}$. Define $A_j: \mathscr{E} \ni x \mapsto \langle x, x_j \rangle \in \mathscr{A} $, $\Psi_j: \mathscr{E} \ni x \mapsto \langle x, \tau_j \rangle \in \mathscr{A}, \forall j \in \mathbb{J} $. Then   $(\{x_j\}_{j\in \mathbb{J}}, \{\tau_j\}_{j\in \mathbb{J}})$ is a frame for  $\mathscr{E}$ if and only if  $(\{A_j\}_{j\in \mathbb{J}}, \{\Psi_j\}_{j\in \mathbb{J}})$ is a homomorphism-valued frame  in $\operatorname{Hom}_\mathscr{A}^*(\mathscr{E},\mathscr{A})$.
\end{theorem}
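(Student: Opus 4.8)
The plan is to mimic the Hilbert space argument of Theorem~\ref{OVFTOSEQUENCEANDVICEVERSATHEOREM}, whose proof rested on the single identity $\sum_{j}\Psi_j^*A_jh=\sum_{j}\langle h,x_j\rangle\tau_j$, while supplying the extra C*-module bookkeeping that the scalar case hides. The first and most delicate step is to check that the maps $A_j,\Psi_j:\mathscr{E}\to\mathscr{A}$ are genuinely adjointable, since in a Hilbert C*-module a bounded module map need not possess an adjoint. I would do this by exhibiting the adjoints explicitly: the natural candidate is $A_j^*:\mathscr{A}\ni a\mapsto x_j a^*\in\mathscr{E}$ (and likewise $\Psi_j^* a = \tau_j a^*$), using the self-inner-product on $\mathscr{A}$ viewed as a module over itself, and verifying $\langle A_j x,a\rangle_{\mathscr{A}}=\langle x,A_j^*a\rangle_{\mathscr{E}}$ directly from the properties of the $\mathscr{E}$-valued inner product and the paper's convention. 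This places $A_j,\Psi_j$ in $\operatorname{Hom}^*_{\mathscr{A}}(\mathscr{E},\mathscr{A})$ and pins down $A_j^*A_j x=\langle x,x_j\rangle x_j$ and $\Psi_j^*A_j x=\langle x,x_j\rangle\tau_j$.

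Once the adjoints are fixed, I would identify the frame homomorphism with the frame operator. Summing the previous identity gives $S_{A,\Psi}x=\sum_j\Psi_j^*A_jx=\sum_j\langle x,x_j\rangle\tau_j=S_{x,\tau}x$, so the two operators coincide as soon as either side is defined; hence condition~(i) of Definition~\ref{HMDEFINITION1} (convergence of $\sum_j\Psi_j^*A_j$ in the strict topology to a positive invertible homomorphism) is literally the same requirement as condition~(i) of Definition~\ref{SEQUENTIAL VERSION HOMOMORPHISM VAUED DEFINITION}. I would also record, via Proposition~\ref{HMOP}, that $\theta_A^*\theta_A=\sum_j A_j^*A_j$, and compute $\langle A_j x,A_j x\rangle_{\mathscr{A}}=\langle x,x_j\rangle\langle x_j,x\rangle$, so that under the natural identification $\mathscr{H}_{\mathscr{A}}\otimes\mathscr{A}\cong\mathscr{H}_{\mathscr{A}}$ the analysis homomorphism $\theta_A$ corresponds to the sequential analysis homomorphism $\theta_x$, and similarly $\theta_\Psi$ to $\theta_\tau$.

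The remaining task is to match the two ``Bessel'' requirements, namely condition~(ii) of Definition~\ref{HMDEFINITION1} (that $\theta_A,\theta_\Psi$ converge to adjointable homomorphisms) against condition~(ii) of Definition~\ref{SEQUENTIAL VERSION HOMOMORPHISM VAUED DEFINITION} ($\sum_j\langle x,x_j\rangle\langle x_j,x\rangle\le c\langle x,x\rangle$ and the analogue for $\tau_j$). Here I would invoke Paschke's theorem (Theorem~\ref{PASCHKE}): the estimate $\langle\theta_A x,\theta_A x\rangle=\sum_j\langle x,x_j\rangle\langle x_j,x\rangle\le c\langle x,x\rangle$ is exactly the condition that $\theta_A$ is a bounded adjointable $\mathscr{A}$-linear map, and conversely boundedness and adjointability of $\theta_A$ force such an inequality. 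Running this equivalence for both $\theta_A$/$x_j$ and $\theta_\Psi$/$\tau_j$ closes the loop in both directions, and the theorem follows.

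The step I expect to be the main obstacle is the convergence-and-adjointability bookkeeping rather than any deep inequality: in the Hilbert space setting the adjoint and the convergence of $\sum_j\Psi_j^*A_j$ are automatic, whereas over $\mathscr{A}$ I must (a) justify adjointability of each $A_j,\Psi_j$ and of the limit homomorphisms, (b) handle the strict-topology convergence carefully, and (c) keep the inner-product convention consistent so that $\langle A_j x,A_j x\rangle$ reproduces precisely the term $\langle x,x_j\rangle\langle x_j,x\rangle$ appearing in the Frank--Larson style bound. Paschke's theorem is the device that converts the scalar-type Bessel inequality into the operator statement needed, and using it (together with Proposition~\ref{HMOP} and, if convenient, the characterizations in Theorem~\ref{CHARACTERIZATION} and Theorem~\ref{OPERATORVERSIONCHARACTERIZATIONMODULES}) is what makes the module proof genuinely more than the one-line Hilbert space argument.
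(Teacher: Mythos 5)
Your proposal is correct and follows essentially the same route as the paper: the whole theorem reduces to the identity $\Psi_j^*A_jx=\langle x,x_j\rangle\tau_j$ (so that $S_{A,\Psi}=S_{x,\tau}$) together with the observation that $\langle A_jx,A_jx\rangle=\langle x,x_j\rangle\langle x_j,x\rangle$ matches the analysis/Bessel conditions of the two definitions, which is exactly the one-line argument the paper gives for the Hilbert-space analogue (Theorem \ref{OVFTOSEQUENCEANDVICEVERSATHEOREM}) and leaves implicit here. One small correction: with the paper's convention (left $\mathscr{A}$-modules, inner product $\mathscr{A}$-linear in the first slot, so $\langle x,ay\rangle=\langle x,y\rangle a^*$), the adjoint of $A_j$ is $\mathscr{A}\ni a\mapsto ax_j\in\mathscr{E}$ rather than $x_ja^*$ (which is not even a well-formed expression in a left module); with that fix the verification $\langle A_jx,a\rangle=\langle x,x_j\rangle a^*=\langle x,ax_j\rangle$ and everything downstream goes through, and your added care about adjointability of $\theta_A,\theta_\Psi$ (via the explicit synthesis maps, since Paschke's Theorem \ref{PASCHKE} by itself gives only boundedness) is a point the paper glosses over.
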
 
 \begin{proposition}
Let  $( \{x_j\}_{j \in \mathbb{J}},\{\tau_j\}_{j \in \mathbb{J}} )$ be a frame for  $ \mathscr{E}$  with upper frame bound $b$. If for some $ j \in \mathbb{J} $ we have  $  \langle x_j, x_l \rangle\langle \tau_l, x_j \rangle \geq0, \forall l  \in \mathbb{J},$ then $ \|\langle x_j, \tau_j \rangle\|\leq b$ for that $j. $
\end{proposition}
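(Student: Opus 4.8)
The plan is to imitate the Hilbert space proof of the corresponding proposition, the only genuinely new work being the passage from an order inequality in $\mathscr{A}$ to a norm bound. First I would record the upper frame bound in bilinear form: since $S_{x,\tau}\le bI_\mathscr{E}$ and $\langle S_{x,\tau}x,x\rangle=\sum_{l\in\mathbb{J}}\langle x,x_l\rangle\langle\tau_l,x\rangle$ for every $x\in\mathscr{E}$, we have $\sum_{l\in\mathbb{J}}\langle x,x_l\rangle\langle\tau_l,x\rangle\le b\langle x,x\rangle$ in the order of $\mathscr{A}$. Evaluating at $x=x_j$ gives
\[
\sum_{l\in\mathbb{J}}\langle x_j,x_l\rangle\langle\tau_l,x_j\rangle\le b\langle x_j,x_j\rangle .
\]

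Next I would use the standing hypothesis. By assumption each summand $\langle x_j,x_l\rangle\langle\tau_l,x_j\rangle$ is a positive element of $\mathscr{A}$, and the positive elements form a cone closed under the convergent net of finite partial sums; hence every single summand is dominated by the whole sum. Isolating the term $l=j$ therefore yields
\[
0\le\langle x_j,x_j\rangle\langle\tau_j,x_j\rangle\le b\langle x_j,x_j\rangle ,
\]
which is the exact module analogue of the chain $\langle x_j,x_j\rangle\langle\tau_j,x_j\rangle\le\sum_{l}\langle x_j,x_l\rangle\langle\tau_l,x_j\rangle\le b\|x_j\|^2$ used in the scalar argument.

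The hard part is the final step: deducing $\|\langle x_j,\tau_j\rangle\|\le b$ from $\langle x_j,x_j\rangle\langle\tau_j,x_j\rangle\le b\langle x_j,x_j\rangle$. Over $\mathbb{K}$ one simply divides by $\|x_j\|^2$, but here $\langle x_j,x_j\rangle$ may fail to be invertible and $\mathscr{A}$ may be noncommutative, so no division is available and the order inequality among positive elements does not, by itself, bound $\|\langle\tau_j,x_j\rangle\|$. To close the gap I would bring in the Cauchy--Schwarz inequality for Hilbert $C^*$-modules, $\langle x_j,\tau_j\rangle\langle\tau_j,x_j\rangle\le\|\tau_j\|^2\langle x_j,x_j\rangle$, which forces $\ker\pi(\langle x_j,x_j\rangle)\subseteq\ker\pi(\langle\tau_j,x_j\rangle)$ in any faithful representation $\pi$ of $\mathscr{A}$, so that $\langle\tau_j,x_j\rangle$ lives over the support of $\langle x_j,x_j\rangle$. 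Where $\langle x_j,x_j\rangle$ is bounded below, the inequality $\langle x_j,x_j\rangle\langle\tau_j,x_j\rangle\le b\langle x_j,x_j\rangle$ converts vector-by-vector into $\|\pi(\langle\tau_j,x_j\rangle)\xi\|\le b\|\xi\|$, and since $\|\langle x_j,\tau_j\rangle\|=\|\langle\tau_j,x_j\rangle\|$ this yields the claim (the case $x_j=0$ being trivial). I expect the genuine obstacle to be the part of the spectrum of $\langle x_j,x_j\rangle$ that accumulates at $0$: there is no uniform lower bound, and one must argue by a spectral/limiting argument, using the support control above, that $\langle\tau_j,x_j\rangle$ cannot acquire norm exceeding $b$ there. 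This noncommutative, non-invertible endgame is the only ingredient not already supplied verbatim by the Hilbert space proof.
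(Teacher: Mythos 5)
Your first two steps coincide exactly with the paper's proof: evaluating the upper frame inequality at $x_j$ and using that the positive cone of $\mathscr{A}$ is preserved by the convergent net of partial sums, one isolates the $l=j$ term and obtains
\[
0\le \langle x_j,x_j\rangle\langle\tau_j,x_j\rangle\le\sum_{l\in\mathbb{J}}\langle x_j,x_l\rangle\langle\tau_l,x_j\rangle\le b\langle x_j,x_j\rangle .
\]
At this point the paper simply takes norms of the chain and stops. Your proposal does not stop there, but it also does not finish: after correctly observing that one cannot divide by $\langle x_j,x_j\rangle$, and after bringing in Cauchy--Schwarz to control the support of $\langle\tau_j,x_j\rangle$, you explicitly leave the decisive point --- that $\langle\tau_j,x_j\rangle$ cannot acquire norm exceeding $b$ over the part of the spectrum of $\langle x_j,x_j\rangle$ accumulating at $0$ --- to an unspecified ``spectral/limiting argument'' that you never supply. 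That deferred step is the entire content of the proposition beyond the displayed chain, so as written the proposal is an outline whose hard kernel is missing.

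To be fair, the worry you raise is not frivolous: taking norms of the chain only yields $\|\langle x_j,x_j\rangle\langle\tau_j,x_j\rangle\|\le b\,\|x_j\|^2$, and since the norm of a product is not bounded below by the product of the norms, passing from this to $\|\langle\tau_j,x_j\rangle\|=\|\langle x_j,\tau_j\rangle\|\le b$ genuinely requires justification when $\langle x_j,x_j\rangle$ is not invertible (in the Hilbert-space version one divides by the scalar $\|x_j\|^2$, which is exactly what is unavailable here). But identifying a difficulty is not the same as resolving it. You should either carry out the endgame you sketch --- for instance a Douglas-type factorization of $\langle x_j,x_j\rangle\langle\tau_j,x_j\rangle\le b\langle x_j,x_j\rangle$ through $\langle x_j,x_j\rangle^{1/2}$, combined with the support inclusion coming from $\langle x_j,\tau_j\rangle\langle\tau_j,x_j\rangle\le\|\tau_j\|^2\langle x_j,x_j\rangle$, with an explicit treatment of the non-invertible part --- or else justify why the paper's norm-taking already suffices. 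At present the proof is not complete.
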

\begin{proof}
 We take norm on $\langle x_j, x_j \rangle\langle \tau_j, x_j \rangle \leq \sum_{l\in \mathbb{J}}\langle x_j, x_l \rangle\langle \tau_l, x_j \rangle\leq b\langle x_j, x_j\rangle. $
\end{proof}
\begin{definition}\cite{FRANKLARSON1}.
A  collection $ \{x_j\}_{j \in \mathbb{J}} $  in  $ \mathscr{E}$ over $\mathscr{A}$ is said to be  orthogonal  if $ \langle x_j,x_k \rangle=0, \forall j , k \in \mathbb{J}, j\neq k.$	
\end{definition}
 \begin{definition}\cite{MANUILOV1}
A  collection $ \{x_j\}_{j \in \mathbb{J}} $  in  $ \mathscr{E}$ over $(\mathscr{A},e)$ is said to be  orthonormal  if $ \langle x_j,x_k \rangle=\delta_{j,k}e, \forall j , k \in \mathbb{J}.$
\end{definition}
\begin{proposition}
Let $ \{x_j\}_{j \in \mathbb{J}}$  be   orthogonal for   $ \mathscr{E}$ over $(\mathscr{A},e)$. If  $ \langle x_j,x_j \rangle$'s are invertible (in $ \mathscr{A}$)  for all $ j \in \mathbb{J}$, then $ \{y_j \coloneqq \langle x_j,x_j \rangle^{-1/2}x_j\}_{j \in \mathbb{J}}$ is  orthonormal   for $ \mathscr{E}$ and  $\overline{\operatorname{span}}_{\mathscr{A}}\{x_j\}_{j \in \mathbb{J}}=\overline{\operatorname{span}}_{\mathscr{A}}\{x_j\}_{j \in \mathbb{J}}$.
\end{proposition}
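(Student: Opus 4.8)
The plan is to follow the template of the operator-version result (Proposition \ref{ORTHOGONALTOORTHONORMALOV}) and its homomorphism analogue, reducing everything to a single inner-product computation; in the vector setting the argument is in fact shorter, since the definition of an orthonormal collection in a Hilbert C*-module demands only the relations $\langle x_j, x_k\rangle = \delta_{j,k}e$ with no accompanying Bessel-type inequality, so no auxiliary lemma is needed. First I would fix notation by writing $d_j \coloneqq \langle x_j, x_j\rangle$ for each $j \in \mathbb{J}$. Positivity of $d_j$ is automatic (it is an inner product of a vector with itself), and by hypothesis $d_j$ is invertible, so continuous functional calculus supplies a positive invertible square root and inverse square root. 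In particular $d_j^{-1/2}$ is self-adjoint, commutes with $d_j$, and satisfies $d_j^{-1/2} d_j d_j^{-1/2} = e$.

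For the orthonormality claim I would compute $\langle y_j, y_k\rangle$ directly. Using the module identity $\langle a\xi, b\eta\rangle = a\langle \xi, \eta\rangle b^*$ together with $(d_k^{-1/2})^* = d_k^{-1/2}$, one obtains
\[
\langle y_j, y_k\rangle = \langle d_j^{-1/2} x_j, d_k^{-1/2} x_k\rangle = d_j^{-1/2}\langle x_j, x_k\rangle d_k^{-1/2}.
\]
When $j \neq k$ the orthogonality hypothesis $\langle x_j, x_k\rangle = 0$ makes this vanish, and when $j = k$ it collapses to $d_j^{-1/2} d_j d_j^{-1/2} = e$. Hence $\langle y_j, y_k\rangle = \delta_{j,k} e$ for all $j,k \in \mathbb{J}$, which is precisely the definition of an orthonormal collection for $\mathscr{E}$.

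For the span equality I would observe that invertibility of $d_j$ makes the passage between the two families reversible at the level of single vectors: by definition $y_j = d_j^{-1/2} x_j$, and conversely $x_j = d_j^{1/2} y_j$ since $d_j^{1/2} d_j^{-1/2} = e$ acts as the identity on the module. Because $d_j^{\pm 1/2} \in \mathscr{A}$, every finite $\mathscr{A}$-combination of the $y_j$ is an $\mathscr{A}$-combination of the $x_j$ and vice versa, so the algebraic $\mathscr{A}$-spans coincide as sets, and therefore so do their norm-closures $\overline{\operatorname{span}}_{\mathscr{A}}\{y_j\}_{j\in\mathbb{J}} = \overline{\operatorname{span}}_{\mathscr{A}}\{x_j\}_{j\in\mathbb{J}}$ (I read the printed right-hand side, which repeats $\{x_j\}$, as a typo for $\{y_j\}$). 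The only point requiring care, and hence the nearest thing to an obstacle, is bookkeeping with the Hilbert C*-module conventions: making sure the scalars $d_j^{-1/2}$ act on the correct side and that their self-adjointness is exactly what forces both flanking factors in the inner-product computation to be $d_j^{-1/2}$, rather than a scalar and a distinct adjoint.
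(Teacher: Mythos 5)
Your proof is correct and follows essentially the same route as the paper's: both reduce the claim to the single computation $\langle y_j, y_k\rangle = \langle x_j,x_j\rangle^{-1/2}\langle x_j,x_k\rangle\langle x_k,x_k\rangle^{-1/2}$, which gives $e$ for $j=k$ and $0$ otherwise. Your additional remarks on the span equality and on the typographical repetition of $\{x_j\}_{j\in\mathbb{J}}$ on the right-hand side are accurate but not needed beyond what the paper records.
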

\begin{proof}
Let $ j, k \in \mathbb{J}$. If $ j=k$, then $\langle y_j,y_j \rangle=\langle \langle x_j,x_j \rangle^{-1/2}x_j,\langle x_j,x_j \rangle^{-1/2}x_j \rangle = \langle x_j,x_j \rangle^{-1/2}\langle x_j,x_j \rangle\langle x_j,x_j \rangle^{-1/2}=e$. If $ j\neq k$, then $\langle y_j,y_k \rangle=\langle \langle x_j,x_j \rangle^{-1/2}x_j,\langle x_k,x_k \rangle^{-1/2}x_k \rangle = \langle x_j,x_j \rangle^{-1/2}\langle x_j,x_k \rangle\langle x_k,x_k \rangle^{-1/2}=0$.
\end{proof}
\begin{theorem}
Let $ \mathscr{E}$ be over  $(\mathscr{A},e)$.
\begin{enumerate}[\upshape(i)]
\item If $ \{x_n\}_{n=1}^m $ is  orthogonal  for $ \mathscr{E}$, then $ \langle \sum_{n=1}^{m}x_n, \sum_{k=1}^{m}x_k\rangle =\sum_{n=1}^{m}\langle x_n, x_n \rangle$. In particular, $ \|\sum_{n=1}^{m}x_n\|^2=\|\sum_{n=1}^{m}\langle x_n, x_n \rangle\|^2$ and  $ \|\sum_{n=1}^{m}x_n\|^2\leq\sum_{n=1}^{m}\| x_n\|^2.$
\item If $ \{x_n\}_{n=1}^m $ is  orthonormal  for $ \mathscr{E}$, then $ \langle \sum_{n=1}^{m}x_n, \sum_{k=1}^{m}x_k\rangle=me $. In particular, $ \| \sum_{n=1}^{m}x_n\|= \sqrt{m}$.
\item If $ \{x_j\}_{j \in \mathbb{J}} $ is  orthogonal   for  $ \mathscr{E}$ such that $\langle x_j, x_j \rangle $ is invertible (in $ \mathscr{A}$), $\forall j \in \mathbb{J}$,  then $ \{x_j\}_{j \in \mathbb{J}} $ is linearly independent over $ \mathscr{A}$. In particular, if $ \{x_j\}_{j \in \mathbb{J}} $ is  orthonormal   in  $ \mathscr{E}$, then it is linearly independent over $ \mathscr{A}$.
\end{enumerate}
\end{theorem}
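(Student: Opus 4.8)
The plan is to deduce all three parts from a single expansion of the module inner product together with orthogonality, exactly mirroring the operator-version statement already proved earlier (the one asserting $\langle \sum_n A_n^*y_n, \sum_k A_k^*y_k\rangle = \sum_n \langle A_n^*y_n, A_n^*y_n\rangle$). For (i), I would start from the sesquilinearity of $\langle\cdot,\cdot\rangle$ and write
\[
\left\langle \sum_{n=1}^{m}x_n, \sum_{k=1}^{m}x_k\right\rangle = \sum_{n=1}^{m}\sum_{k=1}^{m}\langle x_n, x_k\rangle .
\]
Since $\{x_n\}_{n=1}^m$ is orthogonal, $\langle x_n,x_k\rangle = 0$ whenever $n\neq k$, so only the diagonal terms survive and the double sum collapses to $\sum_{n=1}^m \langle x_n,x_n\rangle$, which is the first claim. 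For the norm assertions I would invoke the definition of the module norm, $\|x\|^2 = \|\langle x,x\rangle\|_{\mathscr{A}}$, to get $\|\sum_n x_n\|^2 = \|\sum_n\langle x_n,x_n\rangle\|$, and then apply the triangle inequality in the C*-algebra $\mathscr{A}$ together with $\|\langle x_n,x_n\rangle\| = \|x_n\|^2$ to obtain $\|\sum_n x_n\|^2 \le \sum_n \|x_n\|^2$.

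Part (ii) I would treat as the special case of (i) in which $\langle x_n,x_k\rangle = \delta_{n,k}e$. Substituting this into the diagonal sum gives $\langle \sum_n x_n, \sum_k x_k\rangle = \sum_{n=1}^{m} e = me$. Taking norms and using that the unit $e$ of a unital C*-algebra satisfies $\|e\| = 1$, I get $\|\sum_n x_n\|^2 = \|me\| = m$, hence $\|\sum_n x_n\| = \sqrt{m}$.

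For (iii) the strategy is the standard pairing argument adapted to non-commutative scalars. Suppose $\sum_{j\in\mathbb{S}} c_j x_j = 0$ for a finite $\mathbb{S}\subseteq\mathbb{J}$ and $c_j\in\mathscr{A}$. Fixing $k\in\mathbb{S}$ and pairing with $x_k$, I would use $\mathscr{A}$-linearity of the inner product in its first slot to write $0 = \langle \sum_{j} c_j x_j, x_k\rangle = \sum_{j} c_j\langle x_j, x_k\rangle = c_k\langle x_k,x_k\rangle$, where orthogonality kills every term with $j\neq k$. Since $\langle x_k,x_k\rangle$ is assumed invertible in $\mathscr{A}$, right-multiplying by $\langle x_k,x_k\rangle^{-1}$ forces $c_k = 0$; as $k$ was arbitrary this proves linear independence over $\mathscr{A}$. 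The orthonormal case is then immediate because $\langle x_k,x_k\rangle = e$ is invertible.

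None of the steps presents a genuine obstacle; the computations are routine once orthogonality is applied. The only point requiring real care is bookkeeping with the module structure: I must use the correct convention for how scalars from $\mathscr{A}$ pass through $\langle\cdot,\cdot\rangle$ (namely $\langle c\,x, y\rangle = c\langle x,y\rangle$, consistent with the earlier orthonormalization computation $\langle \langle x_j,x_j\rangle^{-1/2}x_j, \langle x_j,x_j\rangle^{-1/2}x_j\rangle = e$), and I must keep track that invertibility of $\langle x_k,x_k\rangle$ — rather than mere nonvanishing — is exactly what licenses the cancellation in (iii), since $\mathscr{A}$ may have zero divisors.
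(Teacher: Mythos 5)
Your proposal is correct and follows essentially the same route as the paper: the paper declares (i) and (ii) clear and proves only (iii) by the same pairing argument (expand $\langle \sum_j c_j x_j, x_k\rangle$, use orthogonality to isolate $c_k\langle x_k,x_k\rangle$, and cancel by invertibility). Your filled-in details for (i) and (ii) — sesquilinear expansion, the C*-norm identity $\|x\|^2=\|\langle x,x\rangle\|$, and the triangle inequality in $\mathscr{A}$ — are exactly the intended routine verifications.
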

\begin{proof}
(i) and (ii) are clear, we prove (iii). Let $ \mathbb{S}$ be a finite subset of $ \mathbb{J}$ and $ a_j \in \mathscr{A}, j \in \mathbb{S}$  be such that $ \sum_{j\in\mathbb{S}}a_jx_j=0$. Then for each  fixed $ k \in  \mathbb{S} $, we get $a_k\langle x_k, x_k \rangle=\langle\sum_{j\in\mathbb{S}}c_jx_j, x_k \rangle=0\Rightarrow a_k=0$. When  $ \{x_j\}_{j \in \mathbb{J}} $ is orthonormal,  $\langle x_j, x_j \rangle=e, \forall j \in \mathbb{J} $,  hence it is linearly independent over $ \mathscr{A}$.
\end{proof}
\begin{proposition}
If $ \{x_j\}_{j \in \mathbb{J}} $ is orthonormal for  $ \mathscr{E}$ over  $(\mathscr{A},e)$, then it  is closed.
\end{proposition}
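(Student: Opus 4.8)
The plan is to show that $\{x_j\}_{j\in\mathbb{J}}$ is a \emph{uniformly discrete} subset of $\mathscr{E}$ and then invoke the elementary fact that a uniformly discrete subset of a metric space is closed. The relevant metric is the C*-module norm $\|x\|=\|\langle x,x\rangle\|^{1/2}$, so once the pairwise distances are pinned down the argument is purely metric and makes no use of frame or basis structure.

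First I would compute the distance between two distinct elements. For $j\neq k$, using orthonormality ($\langle x_j,x_k\rangle=\delta_{j,k}e$) together with $\|e\|=1$ in the unital C*-algebra $(\mathscr{A},e)$,
\[
\|x_j-x_k\|^2=\|\langle x_j-x_k,x_j-x_k\rangle\|=\|\langle x_j,x_j\rangle-\langle x_j,x_k\rangle-\langle x_k,x_j\rangle+\langle x_k,x_k\rangle\|=\|2e\|=2,
\]
so $\|x_j-x_k\|=\sqrt{2}$ whenever $j\neq k$.

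Next I would deduce closedness from this separation. Set $S=\{x_j:j\in\mathbb{J}\}$ and let $y\in\overline{S}$. The open ball $B(y,\sqrt{2}/2)$ contains at most one element of $S$, since two distinct elements inside it would be at mutual distance strictly less than $\sqrt{2}$, contradicting the previous step. As $y$ is a limit point of $S$, this ball contains exactly one element, say $x_{j_0}$. If $y\neq x_{j_0}$, then picking $0<\varepsilon<\|y-x_{j_0}\|$ with $\varepsilon\le\sqrt{2}/2$ yields a ball $B(y,\varepsilon)$ that must still meet $S$; any element it contains lies in $B(y,\sqrt{2}/2)$ and differs from $x_{j_0}$, which is impossible. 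Hence $y=x_{j_0}\in S$, and therefore $\overline{S}=S$.

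There is essentially no hard obstacle here; the only points requiring care are the correct identification of the ambient metric, namely the norm $\|\cdot\|=\|\langle\cdot,\cdot\rangle\|^{1/2}$ induced by the $\mathscr{A}$-valued inner product, and the normalization $\|e\|=1$. I would also remark that the argument is valid for an arbitrary (possibly uncountable) index set $\mathbb{J}$, since it is a pointwise statement about limit points and does not rely on sequential extraction.
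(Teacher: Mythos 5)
Your proof is correct and follows essentially the same route as the paper: both hinge on the computation $\langle x_j-x_k,x_j-x_k\rangle=2e$, giving the uniform separation $\|x_j-x_k\|=\sqrt{2}$ for $j\neq k$, from which closedness is immediate (the paper phrases this via eventually-constant convergent sequences, you via balls, but the idea is identical). Your version is in fact slightly cleaner in its bookkeeping of the exponent, since $\|x_j-x_k\|^2=\|2e\|=2$ is the correct relation between the module norm and the $\mathscr{A}$-valued inner product.
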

\begin{proof}
Whenever $\{ x_{j_n}\}_{n=1}^\infty$	is in  $ \{x_j\}_{j \in \mathbb{J}} $ converging to an element $ x \in \mathcal{X}$. Then $ \|x_{j_l}-x_{j_m}\|^p=\|2e\|^p=2, \forall  x_{j_l}\neq x_{j_m}$. This implies, from the Cauchyness of  $\{ x_{j_n}\}_{n=1}^\infty$, that $x_{j_n}=x$  except for finitely many $ j_n$'s.
\end{proof}
\begin{proposition}
Let   $ \mathscr{E}$ be  over  $(\mathscr{A},e)$ having a dense subset indexed with $ \mathbb{J}$. If    $ \{y_l\}_{l \in \mathbb{L}} $ is  orthonormal  for  $ \mathscr{E}$, then  $ \operatorname{Card}(\mathbb{L}) \leq  \operatorname{Card}(\mathbb{J})$.
\end{proposition}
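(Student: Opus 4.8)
The plan is to transplant the classical Hilbert-space argument—that an orthonormal set cannot have cardinality exceeding that of a dense set—into the Hilbert C*-module setting, the only change being that distances must be measured through the module norm $\|x\| = \|\langle x, x\rangle\|^{1/2}$. The engine of the proof is the observation that distinct members of an orthonormal collection are uniformly far apart. So first I would fix the dense subset, writing it as $\{x_j\}_{j\in\mathbb{J}}$ with $\overline{\{x_j : j \in \mathbb{J}\}} = \mathscr{E}$, and compute the mutual distance of the $y_l$'s.

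For $l, l' \in \mathbb{L}$ with $l \neq l'$, orthonormality gives $\langle y_l, y_l\rangle = \langle y_{l'}, y_{l'}\rangle = e$ and $\langle y_l, y_{l'}\rangle = \langle y_{l'}, y_l\rangle = 0$, whence
\[
\langle y_l - y_{l'}, y_l - y_{l'}\rangle = \langle y_l, y_l\rangle - \langle y_l, y_{l'}\rangle - \langle y_{l'}, y_l\rangle + \langle y_{l'}, y_{l'}\rangle = 2e.
\]
Since $\|e\| = 1$ in the unital C*-algebra $(\mathscr{A},e)$, this yields $\|y_l - y_{l'}\| = \|2e\|^{1/2} = \sqrt{2}$ for all distinct $l, l'$. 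This is exactly the separation estimate already used in the preceding proposition on closedness of orthonormal collections.

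Next I would use density to manufacture an injection $\mathbb{L} \hookrightarrow \mathbb{J}$. For each $l \in \mathbb{L}$, density lets me select (invoking choice) an index $j(l) \in \mathbb{J}$ with $\|y_l - x_{j(l)}\| < \tfrac{1}{\sqrt{2}}$. I claim $l \mapsto j(l)$ is injective: if $j(l) = j(l')$ for some $l \neq l'$, then the triangle inequality gives
\[
\sqrt{2} = \|y_l - y_{l'}\| \leq \|y_l - x_{j(l)}\| + \|x_{j(l')} - y_{l'}\| < \tfrac{1}{\sqrt{2}} + \tfrac{1}{\sqrt{2}} = \sqrt{2},
\]
a contradiction. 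Hence $l \mapsto j(l)$ is an injection of $\mathbb{L}$ into $\mathbb{J}$, which is precisely the assertion $\operatorname{Card}(\mathbb{L}) \leq \operatorname{Card}(\mathbb{J})$.

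I do not expect a genuine obstacle here: the argument is essentially formal once the distance $\|y_l - y_{l'}\| = \sqrt{2}$ is in hand. The only point deserving care is that the module norm, not the $\mathscr{A}$-valued inner product itself, is what makes the separation a scalar quantity; the passage from the identity $\langle y_l - y_{l'}, y_l - y_{l'}\rangle = 2e$ to the numerical bound $\sqrt{2}$ relies on $\|2e\| = 2$, which holds in any unital C*-algebra. Everything else is the triangle inequality and the definition of density, so the proof should be short and clean.
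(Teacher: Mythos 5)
Your proof is correct and follows essentially the same route as the paper's: both hinge on the separation $\|y_l-y_{l'}\|=\sqrt{2}$ coming from $\langle y_l-y_{l'},y_l-y_{l'}\rangle=2e$, and then use density to place a distinct index of the dense set within distance $1/\sqrt{2}$ of each $y_l$ (the paper phrases this as disjointness of the balls $B(y_l,1/\sqrt{2})$, you phrase it as injectivity of the selection map, which is the same argument).
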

\begin{proof}
Let $ \{x_j\}_{j \in \mathbb{J}} $ be dense in $ \mathscr{E}$. For all $ y_l\neq y_m$, we have $ \|y_l-y_m\|^2=\|\langle y_l-y_m,y_l-y_m\rangle \|=\|\langle y_l,y_l\rangle +\langle y_m,y_m\rangle \|=\|e+e\|=2$. Hence the collection of balls $ \{B(y_l,1/\sqrt{2})\}_{l \in \mathbb{L}}$ is disjoint, where $B(y_l,1/\sqrt{2})\coloneqq\{ x \in \mathscr{E}:\|x-y_l\|<1/\sqrt{2}\} $. Since $ \{x_j\}_{j \in \mathbb{J}} $ is  dense, it must contain atleast one point from each of the balls  $B(y_l,1/\sqrt{2})$. Thus $ \operatorname{Card}(\mathbb{L}) \leq  \operatorname{Card}(\mathbb{J})$.
\end{proof}
\begin{corollary}
If $ \mathscr{E}$ is separable, then every orthonormal set  for  $ \mathscr{E}$ is also separable.
\end{corollary}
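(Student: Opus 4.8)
The plan is to obtain this simply as a corollary of the preceding proposition, which bounds the cardinality of an orthonormal set by that of any dense indexing set. First I would unwind the hypothesis: to say that $\mathscr{E}$ is separable means precisely that $\mathscr{E}$ possesses a countable dense subset. Writing this dense subset as $\{x_j\}_{j\in\mathbb{J}}$ with $\operatorname{Card}(\mathbb{J})\leq\aleph_0$, we are then exactly in the situation demanded by the previous proposition, namely that $\mathscr{E}$ has a dense subset indexed by $\mathbb{J}$.

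Next, given an arbitrary orthonormal set $\{y_l\}_{l\in\mathbb{L}}$ for $\mathscr{E}$, I would invoke that proposition directly to conclude $\operatorname{Card}(\mathbb{L})\leq\operatorname{Card}(\mathbb{J})\leq\aleph_0$. Hence the orthonormal set is at most countable.

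Finally, I would close the argument by recalling that a Hilbert C*-module is in particular a normed space, hence a metric space, and that any at most countable subset of a metric space is separable in the subspace topology, since it is its own countable dense subset. Applying this to $\{y_l\}_{l\in\mathbb{L}}$ shows that every orthonormal set for $\mathscr{E}$ is separable.

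There is essentially no substantive obstacle here; the statement is a one-line consequence of the cardinality bound already established. The only points meriting a word of care are the two identifications used: that separability is equivalent to the existence of a countable dense \emph{indexing} family (so the hypothesis of the earlier proposition is met), and that countability of a metric subspace already yields its separability. Neither requires computation, so the proof is expected to be short and routine.
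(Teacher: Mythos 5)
Your proposal is correct and is exactly the route the paper intends: the corollary is stated without proof as an immediate consequence of the preceding proposition bounding $\operatorname{Card}(\mathbb{L})$ by the cardinality of a dense indexing set. Your added remark that a countable subset of a metric space is separable in itself is the only (trivial) extra step needed, and it is fine.
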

\begin{definition}
We say  a  collection $ \{x_j\}_{j \in \mathbb{J}} $ in   $ \mathscr{E}$ over  $(\mathscr{A},e)$ is  
\begin{enumerate}[\upshape(i)]
\item a Hamel basis for $ \mathscr{E}$ if every element of  \ $\mathscr{E}$ can be written uniquely as a finite linear combination of $x_j$'s over $\mathscr{A}$.
\item a  basis  for $ \mathscr{E}$ if every $ x \in \mathscr{E}$ has a  unique representation $x= \sum_{j\in \mathbb{J}} c_jx_j  ,c_j \in\mathscr{A}, \forall j \in \mathbb{J}$, convergence is in the norm  of  $ \mathscr{E}.$
\item an orthonormal basis  for $ \mathscr{E}$ if $ \{x_j\}_{j \in \mathbb{J}} $ is both  orthonormal set and a basis.
\item a Riesz basis  for $ \mathscr{E}$ if there exist an orthonormal basis $ \{f_j\}_{j \in \mathbb{J}} $ for  $ \mathscr{E}$ and  an invertible $ U \in \operatorname{End}_\mathscr{A}^*(\mathscr{E})$ such that $ x_j=Uf_j, \forall j \in \mathbb{J}.$
\end{enumerate}
\end{definition}
 Clearly,  if $ \{x_j\}_{j \in \mathbb{J}} $ is an orthonormal  basis and $x= \sum_{j\in \mathbb{J}} c_jx_j$, then (i) $ c_j$'s are precisely given by $ c_j=\langle x , x_j \rangle , \forall j \in \mathbb{J}$, (ii) $\langle x , x \rangle= \sum_{j\in\mathbb{J}}\langle x , x_j \rangle\langle x_j , x \rangle,\forall x \in \mathscr{E} $. Hence an orthonormal basis is a Bessel sequence.

 \begin{lemma}\label{BESSELCONVERGENCE}
 If $ \{x_j\}_{j \in \mathbb{J}} $ is a Bessel sequence for  $ \mathscr{E}$, then 
 \begin{enumerate}[\upshape(i)]
 \item $\sum_{j \in \mathbb{J}}a_j x_j $ converges in $ \mathscr{E}, \forall \{a_j\}_{j\in \mathbb{J}} \in \mathscr{H}_\mathscr{A}$ and 
 \item the map $\mathscr{H}_\mathscr{A} \ni \{a_j\}_{j\in \mathbb{J}} \mapsto \sum_{j \in \mathbb{J}}a_j x_j \in \mathscr{E} $ is well-defined bounded homomorphism.
\end{enumerate}
 \end{lemma}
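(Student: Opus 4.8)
The plan is to derive both assertions from a single uniform estimate on finite partial sums, following the classical Hilbert-space template but keeping the $\mathscr{A}$-valued inner product throughout. Let $b>0$ be a Bessel bound, so that $\sum_{j\in\mathbb{J}}\langle x,x_j\rangle\langle x_j,x\rangle\le b\langle x,x\rangle$ for every $x\in\mathscr{E}$. First I would fix $\{a_j\}_{j\in\mathbb{J}}\in\mathscr{H}_\mathscr{A}$ and an arbitrary finite $\mathbb{S}\subseteq\mathbb{J}$, and bound $\big\|\sum_{j\in\mathbb{S}}a_jx_j\big\|$ by testing against the unit ball of $\mathscr{E}$:
$$\Big\|\sum_{j\in\mathbb{S}}a_jx_j\Big\|=\sup_{\|x\|\le1}\Big\|\Big\langle \sum_{j\in\mathbb{S}}a_jx_j,\,x\Big\rangle\Big\|=\sup_{\|x\|\le1}\Big\|\sum_{j\in\mathbb{S}}a_j\langle x_j,x\rangle\Big\|.$$
The decisive step is to read the inner sum as a pairing in $\mathscr{H}_\mathscr{A}$ between $\{a_j\}_{\mathbb{S}}$ and $\{\langle x_j,x\rangle\}_{\mathbb{S}}$ and to invoke the Cauchy--Schwarz inequality valid in standard Hilbert $C^*$-modules; the factor coming from the frame vectors is then exactly $\big\|\sum_{j\in\mathbb{S}}\langle x,x_j\rangle\langle x_j,x\rangle\big\|^{1/2}\le\sqrt{b}\,\|x\|$ by the Bessel bound, while the other factor is $\|\{a_j\}_{\mathbb{S}}\|_{\mathscr{H}_\mathscr{A}}$. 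This yields
$$\Big\|\sum_{j\in\mathbb{S}}a_jx_j\Big\|\le\sqrt{b}\,\|\{a_j\}_{\mathbb{S}}\|_{\mathscr{H}_\mathscr{A}}.$$

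With this estimate in hand, part (i) follows by a Cauchy-net argument: for finite $\mathbb{S}_1\subseteq\mathbb{S}_2$ the estimate applied to $\mathbb{S}_2\setminus\mathbb{S}_1$ gives $\big\|\sum_{j\in\mathbb{S}_2}a_jx_j-\sum_{j\in\mathbb{S}_1}a_jx_j\big\|\le\sqrt{b}\,\|\{a_j\}_{\mathbb{S}_2\setminus\mathbb{S}_1}\|_{\mathscr{H}_\mathscr{A}}$, and since $\{a_j\}\in\mathscr{H}_\mathscr{A}$ means that the tails $\big\|\sum_{j\in\mathbb{F}}a_j^*a_j\big\|$ over finite $\mathbb{F}$ disjoint from a large enough finite set tend to $0$, the net of finite partial sums is Cauchy. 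Completeness of $\mathscr{E}$ then forces convergence of $\sum_{j\in\mathbb{J}}a_jx_j$, with respect to the net of finite subsets as in the paper's convergence convention. For part (ii), the resulting map $U:\{a_j\}\mapsto\sum_{j\in\mathbb{J}}a_jx_j$ is additive and respects the $\mathscr{A}$-action (hence is a homomorphism), and passing the uniform estimate to the limit gives $\|U(\{a_j\})\|\le\sqrt{b}\,\|\{a_j\}\|_{\mathscr{H}_\mathscr{A}}$, so $U$ is bounded with $\|U\|\le\sqrt{b}$.

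The main obstacle, and the only place where the module setting genuinely departs from the Hilbert-space proof, is the decisive step: one cannot use the scalar Cauchy--Schwarz inequality but must use its $\mathscr{A}$-valued counterpart, and one has to be careful that the order of the products and the side on which $\mathscr{A}$ acts line up precisely with the Frank--Larson Bessel inequality $\sum_j\langle x,x_j\rangle\langle x_j,x\rangle\le b\langle x,x\rangle$; getting the pairing to produce exactly $\sum_j\langle x,x_j\rangle\langle x_j,x\rangle$ (rather than the reversed product, whose norm need not be controlled) is what makes the estimate close. A secondary point is that convergence is obtained from completeness of $\mathscr{E}$ as a net over finite subsets rather than from an $\ell^2$-type absolute summability, and that the boundedness asserted here does not presuppose adjointability of $U$, which is why the direct estimate is preferable to routing the argument through the adjoint of the analysis homomorphism $\theta_x$.
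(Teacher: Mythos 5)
Your proposal is correct and follows essentially the same route as the paper's own proof: the uniform estimate $\bigl\|\sum_{j\in\mathbb{S}}a_jx_j\bigr\|=\sup_{\|y\|\le 1}\bigl\|\sum_{j\in\mathbb{S}}a_j\langle x_j,y\rangle\bigr\|\le\bigl\|\sum_{j\in\mathbb{S}}a_ja_j^*\bigr\|^{1/2}\sqrt{b}$ via the module Cauchy--Schwarz inequality, followed by the Cauchy-net argument and the resulting norm bound $\sqrt{b}$ for the synthesis homomorphism. No gaps.
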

 \begin{proof}
 \begin{enumerate}[\upshape(i)]
 \item  Let $ b>0$ be such that $ \sum_{j \in \mathbb{J}}\langle x, x_j \rangle \langle  x_j, x \rangle\leq b \langle x, x\rangle, \forall x \in \mathscr{E}.$ For  finite $\mathbb{S}\subseteq\mathbb{J} $ we have 
 \begin{align*}
 \left\|\sum_{j \in \mathbb{S}}a_j x_j\right\|&=\sup_{y \in \mathscr{E}, \|y\|=1}\left\|\left\langle \sum_{j \in \mathbb{S}}a_jx_j, y\right\rangle \right\|=\sup_{y \in \mathscr{E}, \|y\|=1}\left\| \sum_{j \in \mathbb{S}}a_j \langle  x_j, y \rangle \right\|\\
 &\leq \left\|\sum_{j \in \mathbb{S}}a_ja_j^*\right\|^\frac{1}{2}\sup_{y \in \mathscr{E}, \|y\|=1}\left\| \sum_{j \in \mathbb{S}}\langle y, x_j \rangle \langle  x_j, y \rangle \right\|^\frac{1}{2}\leq \sqrt{b}\left\|\sum_{j \in \mathbb{S}}a_ja_j^*\right\|^\frac{1}{2}
 \end{align*}
 which is convergent.
 \item follows from (i).  Note that the norm of  homomorphism is less than or equal to $  \sqrt{b}$.
\end{enumerate}
\end{proof}	
\begin{theorem}\label{CHARACTERIZATION3}
 Let   $ \{x_j\}_{j \in \mathbb{J}} $  be  orthonormal  for   $ \mathscr{E}$ over  $\mathscr{A}$. If  $ \{x_j\}_{j \in \mathbb{J}} $ is  Bessel, then  the following are equivalent.
 \begin{enumerate}[\upshape(i)]
 \item $ \{x_j\}_{j \in \mathbb{J}} $ is an orthonormal basis for $ \mathscr{E}$.
 \item $ x= \sum_{j\in \mathbb{J}}\langle x , x_j \rangle x_j, \forall x \in \mathscr{E}.$
 \item $ \langle x, y\rangle =\sum_{j\in \mathbb{J}}\langle x , x_j \rangle \langle x_j, y \rangle,  \forall x, y \in \mathscr{E}.$
 \item $ \langle x, x\rangle =\sum_{j\in \mathbb{J}}\langle x , x_j \rangle \langle x_j, x \rangle,  \forall x \in \mathscr{E}.$
 \item $ \overline{\operatorname{span}}_{\mathscr{A} }\{x_j\}_{j \in \mathbb{J}}= \mathscr{E}.$
 \item If $\langle x, x_j\rangle =0, \forall j \in \mathbb{J},$ then $ x=0.$
 \end{enumerate}
 \end{theorem}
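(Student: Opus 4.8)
The plan is to prove the six statements equivalent by establishing a cycle of implications, exploiting the hypothesis that $\{x_j\}_{j\in\mathbb{J}}$ is orthonormal and Bessel. The natural cycle is $\text{(i)}\Rightarrow\text{(ii)}\Rightarrow\text{(iii)}\Rightarrow\text{(iv)}\Rightarrow\text{(v)}\Rightarrow\text{(vi)}\Rightarrow\text{(i)}$, so that each arrow is as short as possible. Throughout I would lean on Lemma \ref{BESSELCONVERGENCE}, which guarantees that $\sum_{j\in\mathbb{J}}a_jx_j$ converges for all $\{a_j\}_{j\in\mathbb{J}}\in\mathscr{H}_\mathscr{A}$; combined with the remark preceding the theorem that an orthonormal basis forces $c_j=\langle x,x_j\rangle$, this makes the coefficient functionals unambiguous.

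First I would treat $\text{(i)}\Rightarrow\text{(ii)}$: if $\{x_j\}$ is an orthonormal basis, every $x$ has a unique expansion $x=\sum_{j\in\mathbb{J}}c_jx_j$, and pairing with $x_k$ and using orthonormality $\langle x_j,x_k\rangle=\delta_{j,k}e$ forces $c_k=\langle x,x_k\rangle$, which is exactly (ii). For $\text{(ii)}\Rightarrow\text{(iii)}$ I would substitute the expansion of $x$ into $\langle x,y\rangle$ and use $\mathscr{A}$-linearity of the inner product together with continuity (the Bessel bound lets me move the inner product inside the convergent sum). Then $\text{(iii)}\Rightarrow\text{(iv)}$ is the special case $y=x$, requiring no work. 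The step $\text{(iv)}\Rightarrow\text{(v)}$ is where I would argue that if $\langle x,x\rangle=\sum_{j}\langle x,x_j\rangle\langle x_j,x\rangle$ holds for all $x$, then any $x$ orthogonal to $\overline{\operatorname{span}}_{\mathscr{A}}\{x_j\}$ satisfies $\langle x,x\rangle=0$, hence $x=0$, giving density; more directly, (iv) shows the reconstruction converges to $x$, placing $x$ in the closed span. For $\text{(v)}\Rightarrow\text{(vi)}$, if $\langle x,x_j\rangle=0$ for all $j$ then $x\perp\operatorname{span}_{\mathscr{A}}\{x_j\}$, and density upgrades this to $x\perp\mathscr{E}$, so $x=0$. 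Finally $\text{(vi)}\Rightarrow\text{(i)}$ uses Bessel convergence to build, for each $x$, the candidate $\sum_j\langle x,x_j\rangle x_j$, whose difference with $x$ is annihilated by all $\langle\,\cdot\,,x_k\rangle$ and hence is zero by (vi), giving the required unique expansion.

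The main obstacle I anticipate is the passage through density and orthogonal complements in the Hilbert C*-module setting, specifically the steps $\text{(iv)}\Rightarrow\text{(v)}$ and $\text{(v)}\Rightarrow\text{(vi)}$. Unlike Hilbert spaces, a closed submodule of a Hilbert C*-module need not be orthogonally complemented (as the excerpt repeatedly cautions, e.g. around Theorem \ref{MANUILOV2}), so I cannot freely decompose $x$ into a component in $\overline{\operatorname{span}}_{\mathscr{A}}\{x_j\}$ and a complementary piece. I would circumvent this by arguing entirely through the norm and the positivity of $\langle x,x\rangle$: rather than projecting, I would show that the partial sums $\sum_{j\in\mathbb{S}}\langle x,x_j\rangle x_j$ form a Cauchy net (via the Bessel bound) converging to some $z$, and that $\langle x-z,x-z\rangle=0$ follows from (vi) applied to $x-z$, which lies in the kernel of every coefficient functional. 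This keeps the argument free of any unavailable orthogonal projection and should close the cycle cleanly.

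A technical point I would handle carefully is uniqueness of the basis expansion in (i): since the module is over a general unital $\mathscr{A}$, I must verify that the coefficients $c_j\in\mathscr{A}$ (not merely scalars) are determined, which again follows from orthonormality by right-multiplying the relation $\sum_j c_j\langle x_j,x_k\rangle=\langle x,x_k\rangle$ and using $\langle x_j,x_k\rangle=\delta_{j,k}e$. I expect no difficulty in the convergence bookkeeping beyond invoking Lemma \ref{BESSELCONVERGENCE}, so the essential content of the proof is the density-versus-completeness equivalence in steps four through six, executed without appeal to orthogonal complementation.
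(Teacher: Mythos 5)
Your proposal is correct and follows essentially the same route as the paper: the same cycle $\text{(i)}\Rightarrow\cdots\Rightarrow\text{(vi)}\Rightarrow\text{(i)}$, with (iv)$\Rightarrow$(v) established by showing the partial sums $\sum_{j\in\mathbb{S}}\langle x,x_j\rangle x_j$ converge to $x$ in norm, (v)$\Rightarrow$(vi) by approximating $x$ from the span and passing to the limit in $\langle x,\cdot\rangle$, and (vi)$\Rightarrow$(i) via Lemma \ref{BESSELCONVERGENCE} and annihilation of $x-\sum_j\langle x,x_j\rangle x_j$ by every coefficient functional. Your ``more directly'' convergence argument for (iv)$\Rightarrow$(v) is the right one to keep (the first, orthogonal-complement phrasing would not suffice in a Hilbert C*-module), and you correctly identified and avoided that pitfall.
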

 \begin{proof}
 (i) $\Rightarrow $ (ii)  $\Rightarrow $ (iii) $\Rightarrow $ (iv) are direct. For (iv) $\Rightarrow $ (v), we simply note that for each $ x $ in $ \mathscr{E}$, the net  $\{\sum_{j\in \mathbb{S}}\langle x , x_j \rangle x_j:\mathbb{S} \subseteq \mathbb{J}, \mathbb{S} ~\text{is finite}\} $ converges to $x$. Now, for (v) $\Rightarrow $ (vi), let $\langle x, x_j\rangle =0, \forall j \in \mathbb{J}$.  Choose a sequence $ \{y_n\}_{n=1}^\infty$ in  $ \operatorname{span}_{\mathscr{A}}\{x_j\}_{j \in \mathbb{J}}$ such that $ y_n\rightarrow x$ as $n \rightarrow \infty$. Then $ 0=\lim_{n\rightarrow \infty}\langle x, y_n\rangle = \langle x, \lim_{n\rightarrow \infty} y_n\rangle =\langle x, x\rangle.$ Thus $x=0$.
 Finally for (vi) $\Rightarrow $ (i),  we first find that - since $ \{x_j\}_{j \in \mathbb{J}} $ is Bessel, $ \{\langle x, x_j\rangle \}_{j\in \mathbb{J}} \in\mathscr{H}_\mathscr{A}. $  Therefore from Lemma \ref{BESSELCONVERGENCE},  $\sum_{j \in \mathbb{J}}\langle x, x_j \rangle x_j $ converges in $ \mathscr{E}, \forall x \in \mathscr{E}.$ Let $x \in \mathscr{E} $ be fixed. Define $y\coloneqq\sum_{j \in \mathbb{J}}\langle x, x_j \rangle x_j.$ Then $ \langle y-x, x_j\rangle =\langle x, x_j\rangle-\langle x, x_j\rangle =0, \forall j \in \mathbb{J}.$ Therefore (vi) gives $ y=x.$ If $ x$ also has representation $ \sum_{j \in \mathbb{J}}a_j x_j,$ for some $ a_j \in \mathscr{A}, j\in \mathbb{J}$, then $ \langle x, x_k \rangle =\langle \sum_{j \in \mathbb{J}}a_j x_j, x_k \rangle =a_k, \forall k \in \mathbb{J}.$ Thus $ \{x_j\}_{j \in \mathbb{J}} $ is an orthonormal basis for $\mathscr{E}.$
 \end{proof}
\begin{corollary}
Let   $ \{x_j\}_{j \in \mathbb{J}} $  be  Bessel  for   $ \mathscr{E}$ over  $(\mathscr{A},e)$ such that $\langle x, x\rangle =e, \forall j \in \mathbb{J} $, and 
\begin{align}\label{COROLLARYEQUALTOE}
\langle x, x\rangle =\sum_{j\in \mathbb{J}}\langle x , x_j \rangle \langle x_j, x \rangle,  ~\forall x \in \mathscr{E}.
\end{align}
Then     $ \{x_j\}_{j \in \mathbb{J}} $  is an  orthonormal  basis for   $ \mathscr{E}$.
\end{corollary}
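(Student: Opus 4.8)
The plan is to first upgrade the hypotheses to genuine orthonormality and then invoke Theorem \ref{CHARACTERIZATION3}, whose condition (iv) is exactly the Parseval-type identity (\ref{COROLLARYEQUALTOE}) assumed here. Since $\{x_j\}_{j\in\mathbb{J}}$ is already given to be Bessel and each $\langle x_j,x_j\rangle=e$, the only missing ingredient is that the off-diagonal inner products vanish. Once that is in hand, Theorem \ref{CHARACTERIZATION3} delivers the orthonormal basis conclusion through the implication (iv)$\Rightarrow$(i), so no further work on the ``basis'' part is needed.

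To extract orthogonality, I would evaluate (\ref{COROLLARYEQUALTOE}) at $x=x_k$ for a fixed $k\in\mathbb{J}$, which yields
\begin{align*}
e=\langle x_k,x_k\rangle=\sum_{j\in\mathbb{J}}\langle x_k,x_j\rangle\langle x_j,x_k\rangle=\langle x_k,x_k\rangle\langle x_k,x_k\rangle+\sum_{j\in\mathbb{J},\,j\neq k}\langle x_k,x_j\rangle\langle x_j,x_k\rangle.
\end{align*}
Using $\langle x_k,x_k\rangle=e$ once more, the diagonal summand on the right equals $e$, so the remaining series must satisfy $\sum_{j\neq k}\langle x_k,x_j\rangle\langle x_j,x_k\rangle=0$.

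The key step is then a positivity argument inside the C*-algebra $\mathscr{A}$. Each term $\langle x_k,x_j\rangle\langle x_j,x_k\rangle=\langle x_k,x_j\rangle\langle x_k,x_j\rangle^{*}$ is positive, and the net of finite partial sums is increasing and converges in norm to $0$; hence every finite partial sum, being positive and bounded above by its limit $0$, must itself be $0$. Specializing to singleton index sets gives $\langle x_k,x_j\rangle\langle x_k,x_j\rangle^{*}=0$ for each $j\neq k$, and the C*-identity $\|a\|^{2}=\|aa^{*}\|$ then forces $\langle x_k,x_j\rangle=0$. Combined with $\langle x_j,x_j\rangle=e$ this shows $\langle x_j,x_k\rangle=\delta_{j,k}e$, so $\{x_j\}_{j\in\mathbb{J}}$ is orthonormal.

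Finally, feeding orthonormality, the Bessel hypothesis, and identity (\ref{COROLLARYEQUALTOE}) (which is condition (iv)) into Theorem \ref{CHARACTERIZATION3} completes the argument. I expect the main obstacle to be precisely the transition from a vanishing sum of positive C*-algebra elements to the vanishing of each individual inner product: over a Hilbert space this is the trivial observation that a sum of nonnegative reals is zero only when every term is, whereas here one must instead exploit the order structure of $\mathscr{A}$ (a positive element dominated by $0$ is $0$) together with the C*-identity to recover the entries $\langle x_k,x_j\rangle$ themselves.
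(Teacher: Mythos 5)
Your proof is correct and follows essentially the same route as the paper's: evaluate the identity at $x=x_k$, cancel the diagonal term using $\langle x_k,x_k\rangle=e$, conclude from positivity that each off-diagonal product $\langle x_k,x_j\rangle\langle x_j,x_k\rangle$ vanishes, and then invoke Theorem \ref{CHARACTERIZATION3}. The only difference is that you spell out the cone/partial-sum argument and the C*-identity step, which the paper leaves implicit.
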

\begin{proof}
For each fixed $ k \in \mathbb{J}$, Equation (\ref{COROLLARYEQUALTOE})  gives $e=\langle x_k, x_k\rangle =\sum_{j\in \mathbb{J}}\langle x_k , x_j \rangle \langle x_j, x_k \rangle=e+\sum_{j\in \mathbb{J}, j \neq k}\langle x_k , x_j \rangle \langle x_j, x_k \rangle$. Since $ \langle x_k , x_j \rangle \langle x_j, x_k \rangle \geq 0, \forall k$, we must get $ \langle x_k , x_j \rangle=0,\forall k \neq j$. Thus $ \{x_j\}_{j \in \mathbb{J}} $ is orthonormal. Now Theorem  \ref{CHARACTERIZATION3}  applies and completes the proof.
\end{proof}
Note that, in  Theorem \ref{CHARACTERIZATION3},  only in proving (vi) implies (i) we have used the fact that  $ \{x_j\}_{j \in \mathbb{J}} $ is Bessel. Hence we have 
\begin{corollary}\label{CHARACTERIZATION4}
Let $ \{x_j\}_{j \in \mathbb{J}} $  be orthonormal in  $ \mathscr{E}$. Then $\text{\upshape(i)}\Rightarrow\text{\upshape(ii)}\Rightarrow\text{\upshape(iii)}\Rightarrow \text{\upshape(iv)}\Rightarrow \text{\upshape(v)}\Rightarrow \text{\upshape(vi)}$, where \text{\upshape(i)} $ \{x_j\}_{j \in \mathbb{J}} $ is an orthonormal basis for $ \mathscr{E}$,
\text{\upshape(ii)} $x= \sum_{j\in \mathbb{J}}\langle x , x_j \rangle x_j, \forall x \in \mathscr{E}$,
\text{\upshape(iii)} $ \langle x, y\rangle =\sum_{j\in \mathbb{J}}\langle x , x_j \rangle \langle x_j, y \rangle,  \forall x, y \in \mathscr{E}$,
\text{\upshape(iv)} $ \langle x, x\rangle =\sum_{j\in \mathbb{J}}\langle x , x_j \rangle \langle x_j, x \rangle,  \forall x \in \mathscr{E}$,
\text{\upshape(v)} $ \overline{\operatorname{span}}_{\mathscr{A} }\{x_j\}_{j \in \mathbb{J}}= \mathscr{E}$,
\text{\upshape(vi)} If $\langle x, x_j\rangle =0, \forall j \in \mathbb{J},$ then $ x=0.$
 \end{corollary}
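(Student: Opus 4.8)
The plan is to recognize that this corollary is precisely the forward half of Theorem \ref{CHARACTERIZATION3} with the Bessel hypothesis deleted, and then to verify that each of the six implications in the chain survives that deletion. As the remark preceding the statement records, the Bessel property entered the proof of Theorem \ref{CHARACTERIZATION3} at exactly one place, namely the closing implication (vi)$\Rightarrow$(i); every other arrow used nothing beyond the orthonormality relation $\langle x_j,x_k\rangle=\delta_{j,k}e$. So I would simply reproduce those arrows in order, taking care to point out where convergence is being used.

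First, for (i)$\Rightarrow$(ii): if $\{x_j\}_{j\in\mathbb{J}}$ is an orthonormal basis, then by definition each $x\in\mathscr{E}$ has a unique norm-convergent expansion $x=\sum_{j\in\mathbb{J}}c_jx_j$; pairing with $x_k$ and using orthonormality forces $c_k=\langle x,x_k\rangle$, which is (ii). The convergence here is supplied by the basis definition, not by any Bessel bound. Next, (ii)$\Rightarrow$(iii) follows by pairing the expansion of $x$ with an arbitrary $y$ and invoking continuity of the inner product, and (iii)$\Rightarrow$(iv) is just the specialization $y=x$.

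The one step deserving care is (iv)$\Rightarrow$(v), and it is the decisive check that no Bessel estimate is smuggled in. Here I would compute, for a finite $\mathbb{S}\subseteq\mathbb{J}$,
\[
\left\langle x-\sum_{j\in\mathbb{S}}\langle x,x_j\rangle x_j,\ x-\sum_{k\in\mathbb{S}}\langle x,x_k\rangle x_k\right\rangle=\langle x,x\rangle-\sum_{j\in\mathbb{S}}\langle x,x_j\rangle\langle x_j,x\rangle,
\]
where the cross terms collapse by orthonormality exactly as in Lemma \ref{FIRSTIMPLIESSECONDLEMMAMODULE}. Hypothesis (iv) asserts that the right-hand side tends to $0$ in $\mathscr{A}$ as $\mathbb{S}$ exhausts $\mathbb{J}$, hence the net $\sum_{j\in\mathbb{S}}\langle x,x_j\rangle x_j$ converges in norm to $x$, placing $x$ in $\overline{\operatorname{span}}_{\mathscr{A}}\{x_j\}_{j\in\mathbb{J}}$. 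This derives the needed convergence from (iv) itself, never from a global frame-type bound. Finally, (v)$\Rightarrow$(vi): given $\langle x,x_j\rangle=0$ for all $j$, approximate $x$ by finite $\mathscr{A}$-combinations $y_n$ of the $x_j$, observe $\langle x,y_n\rangle=0$, and pass to the limit using continuity to conclude $\langle x,x\rangle=0$, i.e. $x=0$.

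The main obstacle is not difficulty but vigilance: I must confirm that nowhere among these five arrows is the unconditional convergence or boundedness of $\sum_{j\in\mathbb{J}}\langle x,x_j\rangle x_j$ silently assumed, since that is exactly what the Bessel hypothesis would otherwise guarantee. The computation in (iv)$\Rightarrow$(v) is where this is confirmed. I would also flag explicitly that the reverse implication (vi)$\Rightarrow$(i) is deliberately absent here: without Bessel-ness one cannot ensure that $\sum_{j\in\mathbb{J}}\langle x,x_j\rangle x_j$ converges for every $x$, so completeness of the orthonormal set no longer forces it to be a basis — and that is precisely the content one loses relative to Theorem \ref{CHARACTERIZATION3}.
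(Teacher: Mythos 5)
Your proposal is correct and follows essentially the same route as the paper: the paper's proof of Corollary \ref{CHARACTERIZATION4} is precisely the remark that the Bessel hypothesis entered the proof of Theorem \ref{CHARACTERIZATION3} only in the implication (vi)$\Rightarrow$(i), so the forward chain carries over verbatim. Your explicit verification of (iv)$\Rightarrow$(v) via the finite-subset computation matches the paper's (terser) "the net of partial sums converges to $x$," and the remaining arrows are identical.
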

Result which we promised after Theorem \ref{OPERATORVERSIONCHARACTERIZATIONMODULES} is here.
 
Eventhough we do not have Riesz representation theorem for Hilbert C*-modules, we can  derive an analogous to  Theorem \ref{SEQUENTIAL CHARACTERIZATION}. 

\begin{theorem}\label{HOMOMORPHISMTOSEQUENTIALVICEVERSAMODULE}
Let $ \{A_j\}_{j\in\mathbb{J}}, \{\Psi_j\}_{j\in\mathbb{J}}$ be in  $ \operatorname{Hom}^*_\mathscr{A}(\mathscr{E}, \mathscr{E}_0).$  Suppose $ \{e_{j,k}\}_{k\in\mathbb{L}_j}$ is an orthonormal basis for $ \mathscr{E}_0,$ for each $j \in \mathbb{J}.$ Let  $ u_{j,k}=A_j^*e_{j,k}, v_{j,k}=\Psi_j^*e_{j,k}, \forall k \in  \mathbb{L}_j, j\in \mathbb{J}.$ Then $ (\{A_j\}_{j\in\mathbb{J}}, \{\Psi_j\}_{j\in\mathbb{J}})$ is 
\begin{enumerate}[\upshape(i)]
\item   an orthonormal set (resp. basis)  in $ \operatorname{Hom}^*_\mathscr{A}(\mathscr{E}, \mathscr{E}_0)$ if and only if  $ \{u_{j,k}: k\in \mathbb{L}_j, j \in \mathbb{J}\} $ or $ \{v_{j,k}: k\in \mathbb{L}_j, j \in \mathbb{J}\} $ is an orthonormal set (resp. basis) for $\mathscr{E}$, say $ \{u_{j,k}: k\in \mathbb{L}_j, j \in \mathbb{J}\} $ is an orthonormal set (resp. basis) and there is a sequence  $ \{c_j\}_{j \in \mathbb{J}} $ of positive invertible elements in the center of $ \mathscr{A}$  such that $ 0<\inf\{\|c_j\|\}_{ j\in \mathbb{J}}\leq\sup\{\|c_j\|\}_{j\in \mathbb{J}}<\infty$ and $v_{j,k}=c_ju_{j,k} , \forall k \in  \mathbb{L}_j, ~\text{for each}~j \in  \mathbb{J}.$
\item a Riesz basis in $ \operatorname{Hom}^*_\mathscr{A}(\mathscr{E}, \mathscr{E}_0)$ if and only if  there are  invertible $ U,V \in \operatorname{End}^*_\mathscr{A}(\mathscr{E}) $  and an orthonormal basis $ \{f_{j,k}: k\in \mathbb{L}_j, j \in \mathbb{J}\} $ for $\mathscr{E}$ such that $ VU^*\geq0$ and $u_{j,k}=Uf_{j,k}, v_{j,k}=Vf_{j,k},  \forall k \in  \mathbb{L}_j, j\in \mathbb{J}. $
\item  a (hvf)  in $ \operatorname{Hom}^*_\mathscr{A}(\mathscr{E}, \mathscr{E}_0)$  with bounds $a $ and $ b$  if and only if there exist $ c,d >0$ such that the map 
$$ T: \mathscr{E} \ni x \mapsto\sum\limits_{j\in \mathbb{J}}\sum\limits_{k\in \mathbb{L}_j}\langle x, u_{j,k}\rangle v_{j,k} \in  \mathscr{E} $$
is a well-defined adjointable  positive invertible homomorphism  such that $ a\langle x, x \rangle\leq \langle Tx,x \rangle \leq b\langle x, x \rangle ,  \forall x \in \mathscr{E} $, and 
$$ \left\| \sum\limits_{j\in \mathbb{J}}\sum\limits_{k\in \mathbb{L}_j}\langle x, u_{j,k}\rangle \langle u_{j,k},x \rangle \right\| \leq c\|x\|^2 ,~ \forall x \in \mathscr{E}; ~  \left\|\sum\limits_{j\in \mathbb{J}}\sum\limits_{k\in \mathbb{L}_j} \langle x, v_{j,k}\rangle \langle v_{j,k},x \rangle\right\|\leq d\|x\|^2 ,~ \forall x \in \mathscr{E}.$$
\item Bessel in $ \operatorname{Hom}^*_\mathscr{A}(\mathscr{E}, \mathscr{E}_0)$  with bound  $ b$  if and only if there exist $ c,d >0$ such that the map 
$$ T: \mathscr{E} \ni x \mapsto\sum\limits_{j\in \mathbb{J}}\sum\limits_{k\in \mathbb{L}_j}\langle x, u_{j,k}\rangle v_{j,k} \in  \mathscr{E} $$
is a well-defined adjointable  positive  homomorphism  such that $  \langle Tx,x \rangle \leq b\langle x, x \rangle ,  \forall x \in \mathscr{E} $, and 
$$ \left\| \sum\limits_{j\in \mathbb{J}}\sum\limits_{k\in \mathbb{L}_j}\langle x, u_{j,k}\rangle \langle u_{j,k},x \rangle \right\| \leq c\|x\|^2 ,~ \forall x \in \mathscr{E}; ~  \left\|\sum\limits_{j\in \mathbb{J}}\sum\limits_{k\in \mathbb{L}_j} \langle x, v_{j,k}\rangle \langle v_{j,k},x \rangle\right\|\leq d\|x\|^2 ,~ \forall x \in \mathscr{E}.$$
\item a (hvf) in $ \operatorname{Hom}^*_\mathscr{A}(\mathscr{E}, \mathscr{E}_0)$  with bounds $a $ and $ b$ if and only if there exist $ c,d, r >0$ such that 
$$\left \|\sum\limits_{j\in \mathbb{J}}\sum\limits_{k\in \mathbb{L}_j}\langle x, u_{j,k}\rangle v_{j,k}\right\|\leq r\|x\|,~\forall x \in \mathscr{E}  ; ~\sum\limits_{j\in \mathbb{J}}\sum\limits_{k\in \mathbb{L}_j}\langle x, u_{j,k}\rangle v_{j,k} =\sum\limits_{j\in \mathbb{J}}\sum\limits_{k\in \mathbb{L}_j}\langle x, v_{j,k}\rangle u_{j,k} ,~\forall x \in \mathscr{E} ;$$
$$ a\langle x, x \rangle\leq \sum\limits_{j\in \mathbb{J}}\sum\limits_{k\in \mathbb{L}_j}\langle x, u_{j,k}\rangle \langle  v_{j,k} , x\rangle= \sum\limits_{j\in \mathbb{J}}\sum\limits_{k\in \mathbb{L}_j}\langle x, v_{j,k}\rangle \langle  u_{j,k} , x\rangle \leq b\langle x, x \rangle ,~ \forall x \in \mathscr{E}, ~\text{and} $$
$$ \left\| \sum\limits_{j\in \mathbb{J}}\sum\limits_{k\in \mathbb{L}_j}\langle x, u_{j,k}\rangle \langle u_{j,k},x \rangle \right\| \leq c\|x\|^2 ,~ \forall x \in \mathscr{E}; ~  \left\|\sum\limits_{j\in \mathbb{J}}\sum\limits_{k\in \mathbb{L}_j} \langle x, v_{j,k}\rangle \langle v_{j,k},x \rangle\right\|\leq d\|x\|^2 ,~ \forall x \in \mathscr{E}.$$
\item  Bessel  in $ \operatorname{Hom}^*_\mathscr{A}(\mathscr{E}, \mathscr{E}_0)$  with bound  $ b$ if and only if there exist $ c,d, r >0$ such that 
$$\left \|\sum\limits_{j\in \mathbb{J}}\sum\limits_{k\in \mathbb{L}_j}\langle x, u_{j,k}\rangle v_{j,k}\right\|\leq r\|x\|,~\forall x \in \mathscr{E}  ; ~\sum\limits_{j\in \mathbb{J}}\sum\limits_{k\in \mathbb{L}_j}\langle x, u_{j,k}\rangle v_{j,k} =\sum\limits_{j\in \mathbb{J}}\sum\limits_{k\in \mathbb{L}_j}\langle x, v_{j,k}\rangle u_{j,k} ,~\forall x \in \mathscr{E} ;$$
$$ 0 \leq \sum\limits_{j\in \mathbb{J}}\sum\limits_{k\in \mathbb{L}_j}\langle x, u_{j,k}\rangle \langle  v_{j,k} , x\rangle =\sum\limits_{j\in \mathbb{J}}\sum\limits_{k\in \mathbb{L}_j}\langle x, v_{j,k}\rangle \langle  u_{j,k} , x\rangle \leq b\langle x, x \rangle ,~ \forall x \in \mathscr{E}, ~\text{and} $$
$$ \left\| \sum\limits_{j\in \mathbb{J}}\sum\limits_{k\in \mathbb{L}_j}\langle x, u_{j,k}\rangle \langle u_{j,k},x \rangle \right\| \leq c\|x\|^2 ,~ \forall x \in \mathscr{E}; ~  \left\|\sum\limits_{j\in \mathbb{J}}\sum\limits_{k\in \mathbb{L}_j} \langle x, v_{j,k}\rangle \langle v_{j,k},x \rangle\right\|\leq d\|x\|^2 ,~ \forall x \in \mathscr{E}.$$
\end{enumerate}
\end{theorem}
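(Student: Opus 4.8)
The plan is to mirror the proof of Theorem \ref{SEQUENTIAL CHARACTERIZATION}, but to replace the use of the Riesz representation theorem (which is unavailable for Hilbert C*-modules) by the direct definition $u_{j,k}=A_j^*e_{j,k}$, $v_{j,k}=\Psi_j^*e_{j,k}$ together with the orthonormal-basis expansion supplied by Corollary \ref{CHARACTERIZATION4}. First I would establish the bridge relations connecting the homomorphisms to the sequences. Since $\{e_{j,k}\}_{k\in\mathbb{L}_j}$ is an orthonormal basis for $\mathscr{E}_0$, Corollary \ref{CHARACTERIZATION4} gives $A_jx=\sum_{k\in\mathbb{L}_j}\langle A_jx,e_{j,k}\rangle e_{j,k}$, and the adjoint relation $\langle A_jx,e_{j,k}\rangle=\langle x,A_j^*e_{j,k}\rangle=\langle x,u_{j,k}\rangle$ turns this into $A_jx=\sum_{k\in\mathbb{L}_j}\langle x,u_{j,k}\rangle e_{j,k}$, and likewise $\Psi_jx=\sum_{k\in\mathbb{L}_j}\langle x,v_{j,k}\rangle e_{j,k}$. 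These two identities are the entire substitute for Riesz representation.

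From the bridge relations I would then compute, using only $\mathscr{A}$-linearity of the adjoints and the orthonormality $\langle e_{j,k},e_{j,m}\rangle=\delta_{k,m}e$, the quantities appearing in the definition of a (hvf) and in Theorem \ref{OPERATORVERSIONCHARACTERIZATIONMODULES}:
\begin{align*}
S_{A,\Psi}x &= \sum_{j\in\mathbb{J}}\Psi_j^*A_jx=\sum_{j\in\mathbb{J}}\sum_{k\in\mathbb{L}_j}\langle x,u_{j,k}\rangle v_{j,k}, \\
\sum_{j\in\mathbb{J}}\langle A_jx,\Psi_jx\rangle &= \sum_{j\in\mathbb{J}}\sum_{k\in\mathbb{L}_j}\langle x,u_{j,k}\rangle\langle v_{j,k},x\rangle,
\end{align*}
together with $\sum_{j}\langle A_jx,A_jx\rangle=\sum_{j,k}\langle x,u_{j,k}\rangle\langle u_{j,k},x\rangle$ and the analogous formula for $\Psi_j$ and $v_{j,k}$. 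This dictionary identifies the map $T$ of the statement with $S_{A,\Psi}$, identifies the two Bessel-type bounds with the norms of $\theta_A^*\theta_A$ and $\theta_\Psi^*\theta_\Psi$, and rewrites the symmetry condition $\sum_j\Psi_j^*A_j=\sum_jA_j^*\Psi_j$ as the equality $\sum_{j,k}\langle x,u_{j,k}\rangle v_{j,k}=\sum_{j,k}\langle x,v_{j,k}\rangle u_{j,k}$. Parts (iii)--(vi) then follow at once by invoking the operator-version characterizations already proved, the passage between homomorphism-valued inequalities and their norm versions being handled by Theorem \ref{CHARACTERIZATION} and Theorem \ref{ARAMBASIC1}.

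For the orthonormal/Riesz parts (i) and (ii) I would read off orthonormality and basis-ness through the dictionary. Computing $\langle u_{j,k},u_{l,m}\rangle=\langle A_j^*e_{j,k},A_l^*e_{l,m}\rangle$ shows that $\langle A_j^*y,A_l^*z\rangle=\delta_{j,l}\langle y,z\rangle$ holds for all $y,z$ precisely when $\{u_{j,k}\}$ is orthonormal, while the basis condition $\sum_j\langle A_jx,A_jx\rangle=\langle x,x\rangle$ becomes item (iv) of Corollary \ref{CHARACTERIZATION4} for $\{u_{j,k}\}$; the scaling $\Psi_j=c_jA_j$ with $c_j$ central, positive and invertible transfers to $v_{j,k}=c_ju_{j,k}$. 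For (ii), given a Riesz basis $A_j=F_jU$, $\Psi_j=F_jV$, setting $f_{j,k}\coloneqq F_j^*e_{j,k}$ yields $u_{j,k}=U^*f_{j,k}$, $v_{j,k}=V^*f_{j,k}$; the converse requires reconstructing the orthonormal family $\{F_j\}$ from $\{f_{j,k}\}$ exactly as in Theorem \ref{SEQUENTIAL CHARACTERIZATION}(ii), after a harmless relabelling $U\leftrightarrow U^*$, $V\leftrightarrow V^*$ that sends $V^*U\geq0$ to $VU^*\geq0$.

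The main obstacle I anticipate is not in the algebra but in justifying the two bridge identities and the interchange of the double summations over $\mathbb{J}$ and $\mathbb{L}_j$ in the C*-module setting: the expansion $A_jx=\sum_k\langle x,u_{j,k}\rangle e_{j,k}$ rests essentially on $\{e_{j,k}\}_k$ being an orthonormal basis (the standing hypothesis of the theorem) rather than on any self-duality of $\mathscr{E}_0$, and verifying that $\{f_{j,k}\}$ is again an orthonormal basis for $\mathscr{E}$ in part (ii) uses the orthonormality of $\{F_j\}$ together with unitarity-type bookkeeping. Care is also needed when passing between an order relation among positive elements of $\mathscr{A}$ and its norm counterpart, since these are not interchangeable without the invertibility/surjectivity input of Theorem \ref{ARAMBASIC1}; this is exactly the point at which the module proof genuinely departs from its Hilbert space template.
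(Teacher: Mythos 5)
Your proposal is correct and follows essentially the same route as the paper: the bridge identities $A_jx=\sum_{k\in\mathbb{L}_j}\langle x,u_{j,k}\rangle e_{j,k}$, $\Psi_jx=\sum_{k\in\mathbb{L}_j}\langle x,v_{j,k}\rangle e_{j,k}$ replace the Riesz representation step of Theorem \ref{SEQUENTIAL CHARACTERIZATION}, the resulting dictionary handles (iii)--(vi), and the basis cases are settled through the module-specific characterizations. The only point to tighten is in (i): to conclude that $\{u_{j,k}\}$ is an orthonormal basis from the Parseval identity you must first record that it is Bessel (which the identity itself gives, with bound one) and then invoke Theorem \ref{CHARACTERIZATION3} rather than Corollary \ref{CHARACTERIZATION4}, whose implications run only in the forward direction.
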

\begin{proof}
\begin{enumerate}[\upshape(i)]
\item $(\Rightarrow)$ For the set case, arguments are similar to the proof of  Theorem \ref{SEQUENTIAL CHARACTERIZATION}.

Basis case: Now $\{A_j\}_{j\in\mathbb{J}} $  satisfies $ \sum_{j\in\mathbb{J}} \langle A_jx, A_jx\rangle =\langle x, x \rangle , \forall x \in \mathscr{E}$ and this implies 
\begin{align*}
\langle x, x \rangle = \sum_{j\in\mathbb{J}} \left\langle \sum_{k\in \mathbb{L}_j}\langle x, u_{j,k}\rangle e_{j,k},\sum_{m\in \mathbb{L}_j}\langle x, u_{j,m}\rangle e_{j,m}\right\rangle =\sum_{j\in\mathbb{J}} \sum_{k\in \mathbb{L}_j}  \langle x, u_{j,k}\rangle \langle  u_{j,k},x\rangle, ~\forall x \in \mathscr{E}.
\end{align*}
Next we identify $ \{u_{j,k}: k\in \mathbb{L}_j, j \in \mathbb{J}\} $ is a Bessel sequence. In fact,  
\begin{align*}
\sum\limits_{j\in \mathbb{J}}\sum\limits_{k\in \mathbb{L}_j}\langle x, u_{j,k}\rangle \langle u_{j,k},x \rangle=\sum\limits_{j\in \mathbb{J}}\sum\limits_{k\in \mathbb{L}_j}\langle A_jx, e_{j,k}\rangle \langle e_{j,k},A_jx \rangle=\sum\limits_{j\in \mathbb{J}}\langle A_jx, A_jx\rangle=\langle x, x\rangle, ~\forall x \in \mathscr{E}.
\end{align*}
Theorem \ref{CHARACTERIZATION3} now couples and tells  $ \{u_{j,k}: k\in \mathbb{L}_j, j \in \mathbb{J}\}$ it is an orthonormal basis.

$(\Leftarrow)$ We  note 
\begin{align*}
\sum_{j\in \mathbb{J}}\langle A_jx ,A_jx \rangle &=\sum_{j\in \mathbb{J}}\left\langle\sum_{k\in \mathbb{L}_j}\langle A_jx ,e_{j,k} \rangle e_{j,k},  \sum_{m\in \mathbb{L}_j}\langle A_jx ,e_{j,m} \rangle e_{j,m}\right\rangle\\
&= \sum_{j\in\mathbb{J}}\sum_{k\in \mathbb{L}_j}\langle x,u_{j,k} \rangle\langle u_{j,k}, x \rangle\leq  \langle x, x \rangle ,~ \forall x \in \mathscr{E}, 
\end{align*}
 then similar to the proof of Theorem \ref{SEQUENTIAL CHARACTERIZATION}.

Basis case: Again note $\sum_{j\in \mathbb{J}}\langle A_jx , A_jx\rangle = \sum_{j\in\mathbb{J}}\sum_{k\in \mathbb{L}_j}\langle x,u_{j,k} \rangle\langle u_{j,k}, x \rangle = \langle x, x \rangle, \forall x \in \mathscr{E}$,   then similar to the proof of Theorem \ref{SEQUENTIAL CHARACTERIZATION}.

\item Similar to the proof of (ii) in Theorem \ref{SEQUENTIAL CHARACTERIZATION} with the following.

$(\Rightarrow)$
\begin{align*}
\sum_{j \in \mathbb{J},k\in \mathbb{L}_j}\langle x,F_j^*e_{j,k} \rangle\langle F_j^*e_{j,k},x \rangle&=\sum_{j\in \mathbb{J}}\sum_{k\in \mathbb{L}_j} \langle F_jx, e_{j,k} \rangle\langle  e_{j,k},F_jx \rangle\\
&= \sum_{j\in \mathbb{J}} \langle F_jx ,F_jx\rangle= \langle x, x \rangle, ~\forall x \in  \mathscr{E}.
\end{align*}
$(\Leftarrow)$ For $x=\sum_{l \in \mathbb{J},k \in \mathbb{L}_l}a_{l,k} f_{l,k}\in  \mathscr{E}$,
\begin{align*}
 \sum_{j \in \mathbb{J}} \langle F_jx ,F_jx\rangle =\sum_{j \in \mathbb{J}}\left\langle  \sum_{k \in \mathbb{L}_j} a_{j,k}e_{j,k},\sum_{m \in \mathbb{L}_j} a_{j,m}e_{j,m}\right \rangle =\sum_{j \in \mathbb{J}} \sum_{k \in \mathbb{L}_j} a_{j,k}a_{j,k}^* =\langle x, x \rangle.
\end{align*}
\item  For all $ x\in \mathscr{E},$
\begin{align*}
\sum\limits_{j \in \mathbb{J}}\langle A_jx, \Psi_jx\rangle &=\sum\limits_{j \in \mathbb{J}} \left \langle\sum\limits_{k \in \mathbb{L}_j}\langle x, A_j^*e_{j,k} \rangle e_{j,k}, \sum\limits_{l \in \mathbb{L}_j} \langle x, \Psi_j^*e_{j,l}\rangle e_{j,l} \right\rangle\\
&=\sum\limits_{j \in \mathbb{J}}\left \langle\sum\limits_{k \in \mathbb{L}_j}\langle x, u_{j,k}\rangle e_{j,k}, \sum\limits_{l \in \mathbb{L}_j} \langle x, v_{j,l}\rangle e_{j,l} \right\rangle=\sum\limits_{j \in \mathbb{J}}\sum\limits_{k \in \mathbb{L}_j}\langle x, u_{j,k}\rangle \langle v_{j,k}, x\rangle, 
\end{align*}
\begin{align*}
\left\langle\sum\limits_{j \in \mathbb{J}}L_jA_jx,\sum\limits_{k \in \mathbb{J}}L_kA_kx\right\rangle =\sum\limits_{j \in \mathbb{J}}\langle A_jx, A_jx \rangle =\sum\limits_{j \in \mathbb{J}}\sum\limits_{k \in \mathbb{L}_j}\langle x, u_{j,k}\rangle\langle  u_{j,k}, x \rangle;\\ 
\left\langle \sum\limits_{j \in \mathbb{J}}L_j\Psi_jx,\sum\limits_{k \in \mathbb{J}}L_k\Psi_kx \right\rangle =\sum\limits_{j \in \mathbb{J}}\langle \Psi_jx, \Psi_jx \rangle =\sum\limits_{j \in \mathbb{J}}\sum\limits_{k \in \mathbb{L}_j}\langle x, v_{j,k}\rangle\langle x, v_{j,k}\rangle,
\end{align*}
and
\begin{align*}
\sum\limits_{j\in \mathbb{J}} \Psi_j^*A_j\geq 0&\iff \sum\limits_{j \in \mathbb{J}}\langle A_jy, \Psi_jy\rangle=\sum\limits_{j \in \mathbb{J}}\langle  \Psi_jy, A_jy\rangle \geq 0 , \forall y \in \mathscr{E}\\
& \iff \sum\limits_{j\in \mathbb{J}}\sum\limits_{k\in \mathbb{L}_j}\langle y, u_{j,k}\rangle \langle  v_{j,k} , y\rangle = \sum\limits_{j\in \mathbb{J}}\sum\limits_{k\in \mathbb{L}_j}\langle y, v_{j,k}\rangle \langle  u_{j,k} , y\rangle , ~\forall y \in \mathscr{E}.
\end{align*} 
\item Similar to (iii).
\item Similar to (v) in Theorem \ref{SEQUENTIAL CHARACTERIZATION}.
\item Similar to (v).
\end{enumerate}   
\end{proof} 
\begin{theorem}
Let $ \{x_j\}_{j \in \mathbb{J}} $  be  orthonormal for $ \mathscr{E}$ with  the property: If $\langle x, x_j\rangle =0, \forall j \in \mathbb{J},$ then $ x=0.$ Then
$ \{x_j\}_{j \in \mathbb{J}} $ is  a maximal orthonormal set for  $ \mathscr{E}$.
 \end{theorem}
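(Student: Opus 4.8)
The plan is to argue by contradiction, letting the stated hypothesis do essentially all of the work. First I would unpack the meaning of maximality: an orthonormal set is maximal precisely when it is maximal with respect to inclusion in the poset of orthonormal sets for $\mathscr{E}$ (the same poset used in the earlier Zorn's-lemma argument). So I would suppose that $\{x_j\}_{j\in\mathbb{J}}$ is \emph{not} maximal, which yields an orthonormal set $Y$ in $\mathscr{E}$ with $\{x_j\}_{j\in\mathbb{J}}\subsetneq Y$, and hence an element $x\in Y$ with $x\neq x_j$ for every $j\in\mathbb{J}$.

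The key step is then to extract two incompatible pieces of information about this $x$ from the orthonormality of $Y$. Since $x$ and each $x_j$ are distinct members of the orthonormal set $Y$, the defining relation $\langle x_j,x_k\rangle=\delta_{j,k}e$ gives $\langle x,x_j\rangle=0$ for all $j\in\mathbb{J}$; at the same time, membership of $x$ in $Y$ forces $\langle x,x\rangle=e$, so in particular $x\neq 0$. Now the hypothesis applies verbatim: from $\langle x,x_j\rangle=0$ for all $j$ it concludes $x=0$, which contradicts $\langle x,x\rangle=e\neq 0$. Therefore no proper orthonormal extension $Y$ can exist, and $\{x_j\}_{j\in\mathbb{J}}$ is maximal.

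I do not expect a genuine obstacle here, since the hypothesis is exactly the negation of ``there is a nonzero vector orthogonal to every $x_j$,'' which is the only way an orthonormal set can be enlarged. The only points requiring care are bookkeeping rather than analysis: confirming that maximality is read as inclusion-maximality among orthonormal sets, and recording the elementary fact that distinct elements of an orthonormal set are orthogonal (so the new element is automatically orthogonal to all the $x_j$). I would also remark that this argument uses none of the Hilbert $C^*$-module subtleties (such as failure of orthogonal complementability); it is a purely formal consequence of the inner-product axioms and the definition of orthonormality over $(\mathscr{A},e)$, and in particular never needs the convergence or Bessel hypotheses that appear in the surrounding results.
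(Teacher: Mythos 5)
Your proof is correct and follows essentially the same route as the paper: assume a proper orthonormal extension $Y$ exists, pick $x\in Y\setminus\{x_j\}_{j\in\mathbb{J}}$, use orthonormality of $Y$ to get $\langle x,x_j\rangle=0$ for all $j$, and invoke the hypothesis to force $x=0$, contradicting $\langle x,x\rangle=e$. Nothing further is needed.
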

 \begin{proof}
 If the statement fails, then there is   a proper  orthonormal set $Y$ for $ \mathscr{E}$ which properly contains  $ \{x_j\}_{j \in \mathbb{J}} $.  Choose $ y $ in $ Y \setminus\{x_j\}_{j \in \mathbb{J}} $. But then $\ \langle y, x_j \rangle =0, \forall j \in \mathbb{J}$ and hence $y=0$, from the assumption. This is clearly impossible, because $ y$ is in the orthonormal set $ Y$.
 \end{proof}
\begin{corollary}\label{MAXIMALCOROLLARY}
Let $ \{x_j\}_{j \in \mathbb{J}} $  be  orthonormal for  $ \mathscr{E}$. Consider the following statements.
 \text{\upshape(i)} $ \{x_j\}_{j \in \mathbb{J}} $ is an orthonormal basis for $ \mathscr{E}$.
 \text{\upshape(ii)} $ x= \sum_{j\in \mathbb{J}}\langle x , x_j \rangle x_j, \forall x \in \mathscr{E}.$
\text{\upshape(iii)} $ \langle x, y\rangle =\sum_{j\in \mathbb{J}}\langle x , x_j \rangle \langle x_j, y \rangle,  \forall x, y \in \mathscr{E}.$
\text{\upshape(iv)} $ \langle x, x\rangle =\sum_{j\in \mathbb{J}}\langle x , x_j \rangle \langle x_j, x \rangle,  \forall x \in \mathscr{E}.$
 \text{\upshape(v)} $ \overline{\operatorname{span}}_{\mathscr{A} }\{x_j\}_{j \in \mathbb{J}}= \mathscr{E}.$
 \text{\upshape(vi)} If $\langle x, x_j\rangle =0, \forall j \in \mathbb{J},$ then $ x=0.$
 \text{\upshape(vii)} $ \{x_j\}_{j \in \mathbb{J}} $ is a maximal orthonormal set for $ \mathscr{E}$.
 Then $\text{\upshape(i)}\Rightarrow\text{\upshape(ii)}\Rightarrow\text{\upshape(iii)}\Rightarrow \text{\upshape(iv)}\Rightarrow \text{\upshape(v)}\Rightarrow \text{\upshape(vi)}\Rightarrow \text{\upshape(vii)}$.
 \end{corollary}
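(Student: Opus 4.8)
The plan is to recognize that almost the entire chain has already been established, so the corollary is really a clean repackaging of two results already in hand. The implications $\text{(i)}\Rightarrow\text{(ii)}\Rightarrow\text{(iii)}\Rightarrow\text{(iv)}\Rightarrow\text{(v)}\Rightarrow\text{(vi)}$ are precisely the content of Corollary \ref{CHARACTERIZATION4}, which is stated for an arbitrary orthonormal set $\{x_j\}_{j\in\mathbb{J}}$ in $\mathscr{E}$ with no Bessel hypothesis. Hence I would simply cite Corollary \ref{CHARACTERIZATION4} for those six implications; nothing new needs to be verified there.

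Thus the only genuinely new implication is $\text{(vi)}\Rightarrow\text{(vii)}$. But this is exactly the statement of the theorem immediately preceding this corollary, whose hypothesis is that $\{x_j\}_{j\in\mathbb{J}}$ is orthonormal and satisfies property (vi) and whose conclusion is property (vii). So I would invoke that theorem directly. For completeness I would recall its short argument: assuming $\{x_j\}_{j\in\mathbb{J}}$ is not maximal, there exists a proper orthonormal set $Y\supsetneq\{x_j\}_{j\in\mathbb{J}}$; choosing $y\in Y\setminus\{x_j\}_{j\in\mathbb{J}}$, orthonormality of $Y$ forces $\langle y,x_j\rangle=0$ for all $j\in\mathbb{J}$, whence (vi) gives $y=0$, contradicting $\langle y,y\rangle=e$.

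The one point to be careful about — and the reason this corollary is separated from Theorem \ref{CHARACTERIZATION3} — is that one must \emph{not} invoke the Bessel assumption anywhere, since the hypothesis here is only orthonormality. This is not a real obstacle: the remark following Corollary \ref{CHARACTERIZATION4} already records that Bessel-ness entered only in the direction $\text{(vi)}\Rightarrow\text{(i)}$, which is absent from the present cyclic-free chain, and the maximality argument for $\text{(vi)}\Rightarrow\text{(vii)}$ uses nothing beyond orthonormality together with property (vi). Consequently the proof is essentially two citations plus the three-line maximality argument, and I would expect no substantive difficulty.
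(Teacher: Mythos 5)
Your proposal is correct and matches the paper's intent exactly: the chain (i)$\Rightarrow\cdots\Rightarrow$(vi) is Corollary \ref{CHARACTERIZATION4} (which, as you note, drops the Bessel hypothesis precisely because that hypothesis was only needed for the omitted direction (vi)$\Rightarrow$(i)), and (vi)$\Rightarrow$(vii) is the theorem immediately preceding the corollary, whose maximality argument you reproduce faithfully. The paper gives no separate proof for this corollary because it is assembled from these two results in exactly the way you describe.
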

 \begin{theorem}
 Let $ \mathscr{E}$  over $ (\mathscr{A},e)$  be such that $\langle x, x\rangle $ is invertible (in $\mathscr{A}$) for all non zero $ x \in \mathscr{E}$. Suppose  $ \{x_j\}_{j \in \mathbb{J}} $ is  orthonormal   for   $ \mathscr{E}$ which is also  Bessel. Then the following are equivalent.
 \begin{enumerate}[\upshape(i)]
 \item  $ \{x_j\}_{j \in \mathbb{J}} $ is an orthonormal basis for  $ \mathscr{E}$.
 \item  $ \{x_j\}_{j \in \mathbb{J}} $ is a maximal orthonormal set for $ \mathscr{E}$.
 \end{enumerate}
 \end{theorem}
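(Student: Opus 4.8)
The plan is to prove the two implications separately, leveraging the equivalences already established for Bessel orthonormal sequences in Theorem~\ref{CHARACTERIZATION3} together with the chain of implications recorded in Corollary~\ref{MAXIMALCOROLLARY}. The direction \text{\upshape(i)}$\Rightarrow$\text{\upshape(ii)} requires no new work: Corollary~\ref{MAXIMALCOROLLARY} already shows that for any orthonormal set, being an orthonormal basis propagates through conditions (ii)--(vi) to maximality. So I would simply cite it.

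The substance lies in \text{\upshape(ii)}$\Rightarrow$\text{\upshape(i)}. First I would reduce to condition (vi) of Theorem~\ref{CHARACTERIZATION3}, namely the implication ``$\langle x, x_j\rangle = 0$ for all $j \in \mathbb{J}$ forces $x = 0$.'' This reduction is legitimate because once (vi) is in hand, Theorem~\ref{CHARACTERIZATION3} — whose hypotheses are precisely our standing assumptions that $\{x_j\}_{j\in\mathbb{J}}$ is orthonormal and Bessel — upgrades (vi) all the way to (i). Thus the whole burden becomes: maximality implies (vi).

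To establish this I would argue by contradiction. Suppose some nonzero $x$ is orthogonal to every $x_j$. This is exactly where the standing hypothesis that $\langle x, x\rangle$ is invertible for nonzero $x$ enters decisively, since it permits normalization. Set $y \coloneqq \langle x, x\rangle^{-1/2} x$; as $\langle x, x\rangle^{-1/2}$ is a positive, hence self-adjoint, element of $\mathscr{A}$, one computes $\langle y, y\rangle = \langle x, x\rangle^{-1/2}\langle x, x\rangle\langle x, x\rangle^{-1/2} = e$ and $\langle y, x_j\rangle = \langle x, x\rangle^{-1/2}\langle x, x_j\rangle = 0$ for every $j \in \mathbb{J}$, just as in the orthonormalization computation earlier in this section. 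Then $y$ is a unit vector orthogonal to each $x_j$, and $y$ differs from every $x_j$ (otherwise $e = \langle x_j, x_j\rangle = \langle y, x_j\rangle = 0$, which is impossible). Hence $\{x_j\}_{j\in\mathbb{J}} \cup \{y\}$ is an orthonormal set strictly containing $\{x_j\}_{j\in\mathbb{J}}$, contradicting maximality. This forces $x = 0$, which is precisely (vi), and the proof closes.

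The main obstacle — indeed the only delicate point — is the normalization step. Without the assumption that $\langle x, x\rangle$ is invertible for nonzero $x$, an element of a Hilbert C*-module cannot in general be rescaled to a unit vector, and the maximality argument would stall at exactly that juncture; this is also why the corresponding Hilbert-space statement needs no such hypothesis. Everything else is routine bookkeeping with Theorem~\ref{CHARACTERIZATION3} and Corollary~\ref{MAXIMALCOROLLARY}.
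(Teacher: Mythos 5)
Your proposal is correct and follows essentially the same route as the paper: (i)$\Rightarrow$(ii) via Corollary \ref{MAXIMALCOROLLARY}, and (ii)$\Rightarrow$(i) by reducing to condition (vi) of Theorem \ref{CHARACTERIZATION3} and then normalizing a putative nonzero orthogonal vector $x$ to $y=\langle x,x\rangle^{-1/2}x$ to violate maximality. Your argument that $y\neq x_j$ (comparing $\langle y,x_j\rangle=0$ with $\langle x_j,x_j\rangle=e$) is a minor, equally valid variant of the paper's computation.
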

 \begin{proof}
 Corollary \ref{MAXIMALCOROLLARY} gives (i) $ \Rightarrow$ (ii). In view of  Theorem \ref{CHARACTERIZATION3}, to prove (ii) $ \Rightarrow$ (i) it is enough to show that (ii) $ \Rightarrow$ (vi) of Theorem \ref{CHARACTERIZATION3}. If (vi)  of Theorem \ref{CHARACTERIZATION3} fails, choose a nonzero $ x \in \mathscr{E}$ such that  $\langle x, x_j\rangle =0, \forall j \in \mathbb{J}$. Define $ y\coloneqq \langle x, x\rangle^{-1/2}x$. Then $\langle y, y\rangle =e $. Note that $ y\neq x_j, \forall j \in \mathbb{J}$ (otherwise, if $ y=x_j$ for some $j \in \mathbb{J}$, then $ \langle \langle x, x\rangle^{-1/2}x-x_j, \langle x, x\rangle^{-1/2}x-x_j\rangle=0 \Rightarrow e-0-0+e=0 $ which is impossible). Note also that $ \{x_j\}_{j \in \mathbb{J}} \cup\{y\} \subsetneq \mathscr{E} $ (for instance, $2y \notin \{x_j\}_{j \in \mathbb{J}} \cup\{y\} $). Then the proper orthonormal set $ \{x_j\}_{j \in \mathbb{J}} \cup\{y\}$ properly contains $ \{x_j\}_{j \in \mathbb{J}}$, thus (ii) collapses.
 \end{proof}
 \begin{theorem}
 Let   $ \{x_j\}_{j \in \mathbb{J}} $  be  Bessel  for   $ \mathscr{E}$ over $ (\mathscr{A},e)$  which is also  orthonormal  for  $ \mathscr{E}$. If there exists an $ x \in  \mathscr{E}$ such that  $\langle x, x \rangle -\sum_{j\in \mathbb{J}}\langle x, x_j \rangle \langle x_j, x \rangle$ is invertible (in $ \mathscr{A}$), then  $ \{x_j\}_{j \in \mathbb{J}} $ can not be a maximal orthonormal set for  $ \mathscr{E}$.
 \end{theorem}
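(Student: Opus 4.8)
The plan is to exhibit explicitly a unit vector $y$ that is orthogonal to every $x_j$ and distinct from all of them; then $\{x_j\}_{j\in\mathbb{J}}\cup\{y\}$ is an orthonormal set strictly larger than $\{x_j\}_{j\in\mathbb{J}}$, so by the very definition of maximality the latter cannot be maximal. The candidate $y$ is produced by subtracting from $x$ its formal orthonormal expansion and then normalising.

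First I would fix the $x\in\mathscr{E}$ for which $d:=\langle x,x\rangle-\sum_{j\in\mathbb{J}}\langle x,x_j\rangle\langle x_j,x\rangle$ is invertible. Since $\{x_j\}_{j\in\mathbb{J}}$ is Bessel, the Frank--Larson inequality shows that $\sum_{j\in\mathbb{J}}\langle x,x_j\rangle\langle x_j,x\rangle$ converges in $\mathscr{A}$, so $\{\langle x,x_j\rangle\}_{j\in\mathbb{J}}\in\mathscr{H}_\mathscr{A}$; Lemma \ref{BESSELCONVERGENCE} then guarantees that $z:=x-\sum_{j\in\mathbb{J}}\langle x,x_j\rangle x_j$ is a well-defined element of $\mathscr{E}$.

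Next I would record two computations that use only the orthonormality relation $\langle x_j,x_k\rangle=\delta_{j,k}e$ together with the module axioms and the continuity of the inner product. The first is $\langle z,x_k\rangle=\langle x,x_k\rangle-\sum_{j\in\mathbb{J}}\langle x,x_j\rangle\langle x_j,x_k\rangle=\langle x,x_k\rangle-\langle x,x_k\rangle=0$ for every $k$, so $z$ is orthogonal to the whole family. The second, obtained by expanding $\langle z,z\rangle$ and cancelling the repeated cross terms, yields $\langle z,z\rangle=\langle x,x\rangle-\sum_{j\in\mathbb{J}}\langle x,x_j\rangle\langle x_j,x\rangle=d$. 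Thus the hypothesis says precisely that $\langle z,z\rangle$ is invertible; being an inner product of an element with itself it is automatically positive, so $d^{-1/2}$ exists as a positive invertible element of $\mathscr{A}$, and in particular $z\neq0$.

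Finally I would set $y:=d^{-1/2}z$. Then $\langle y,y\rangle=d^{-1/2}\langle z,z\rangle d^{-1/2}=d^{-1/2}d\,d^{-1/2}=e$ and $\langle y,x_k\rangle=d^{-1/2}\langle z,x_k\rangle=0$ for all $k$, so $\{x_j\}_{j\in\mathbb{J}}\cup\{y\}$ is again orthonormal. Moreover $y\neq x_j$ for every $j$, since otherwise $\langle y,x_j\rangle=\langle x_j,x_j\rangle=e\neq0$, contradicting the orthogonality just established. Hence $\{x_j\}_{j\in\mathbb{J}}$ is properly contained in a strictly larger orthonormal set and is not maximal. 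I expect the only points needing genuine care to be algebraic: keeping the module convention consistent throughout the expansion of $\langle z,z\rangle$ so that the cross terms combine to give exactly $-\sum_{j\in\mathbb{J}}\langle x,x_j\rangle\langle x_j,x\rangle$, and justifying the interchange of the (norm-convergent) infinite sums with the inner product, which is exactly where the Bessel hypothesis is used. Once $\langle z,z\rangle=d$ is secured, extracting $d^{-1/2}$ and checking that $y$ is a unit vector orthogonal to the family is routine C*-algebra manipulation.
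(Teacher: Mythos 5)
Your proof is correct and follows essentially the same route as the paper: form $z=x-\sum_{j\in\mathbb{J}}\langle x,x_j\rangle x_j$ (convergent by the Bessel hypothesis), verify $\langle z,x_k\rangle=0$ and $\langle z,z\rangle=\langle x,x\rangle-\sum_{j\in\mathbb{J}}\langle x,x_j\rangle\langle x_j,x\rangle$, normalise by the inverse square root, and adjoin the resulting unit vector to contradict maximality. No substantive differences to report.
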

 \begin{proof}
 Since $ \{x_j\}_{j \in \mathbb{J}}$ is Bessel,  we see $ \sum_{j\in \mathbb{J}}\langle x, x_j \rangle  x_j$ exists, say $y$. Then $\langle x-y, x-y \rangle =\langle x-\sum_{j\in \mathbb{J}}\langle x, x_j \rangle x_j, x-\sum_{k\in \mathbb{J}}\langle x, x_k \rangle x_k \rangle=\langle x, x \rangle -\sum_{j\in \mathbb{J}}\langle x, x_j \rangle \langle x_j, x \rangle $, which is invertible.  Let $z=(x-y)^{-1/2}(x-y)$. Then  $ \langle z, z \rangle =e $, $\langle z, x_j \rangle =0, \forall j \in \mathbb{J}$ and $z\neq x_j, \forall j \in \mathbb{J}$. So the proper orthonormal set $ \{x_j\}_{j \in \mathbb{J}}\cup\{y\} $ properly contains $ \{x_j\}_{j \in \mathbb{J}}$.
 \end{proof}
 \begin{theorem}
 For  $\mathscr{E}$    over  $\mathscr{A}$,
 \begin{enumerate}[\upshape(i)]
 \item Every orthonormal set $Y$ in  $\mathscr{E}$ is contained in a maximal orthonormal set.
 \item If  $\mathscr{E}$ has a vector $x$ such that $\langle x, x\rangle $ is invertible in $\mathscr{A}$, then  $\mathscr{E}$ has a maximal orthonormal set.
 \end{enumerate}
 \end{theorem}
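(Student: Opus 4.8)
The plan is to mirror the operator-version argument given earlier for $\mathcal{B}(\mathcal{H},\mathcal{H}_0)$, adapting only the two module-specific points: the union-of-a-chain verification and the existence of a positive invertible square root. I would first dispose of the trivial case $\mathscr{E}=\{0\}$, in which the empty set is vacuously a maximal orthonormal set and part (ii) has no hypothesis to satisfy.

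For (i), the approach is to invoke Zorn's lemma on the poset $(\mathscr{P},\preceq)$, where $\mathscr{P}=\{Z: Z \text{ is an orthonormal set in } \mathscr{E} \text{ with } Z\supseteq Y\}$ and $Z_1\preceq Z_2$ iff $Z_1\subseteq Z_2$. First note $\mathscr{P}\neq\emptyset$, since $Y\in\mathscr{P}$. The one step needing checking is that every chain $\mathscr{C}\subseteq\mathscr{P}$ has an upper bound; I would take $\bigcup\mathscr{C}$ and verify it is orthonormal. Because orthonormality is the pairwise condition $\langle x_j,x_k\rangle=\delta_{j,k}e$ and any two elements of $\bigcup\mathscr{C}$ already lie in a single member of the chain (the chain being totally ordered), the condition is inherited; hence $\bigcup\mathscr{C}\in\mathscr{P}$ is an upper bound. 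Zorn then yields a maximal element $Z_0\in\mathscr{P}$. To conclude that $Z_0$ is maximal among \emph{all} orthonormal sets, not merely those containing $Y$, I would observe that if $Z_0\subsetneq W$ with $W$ orthonormal, then $W\supseteq Z_0\supseteq Y$, so $W\in\mathscr{P}$, contradicting the maximality of $Z_0$ in $\mathscr{P}$.

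For (ii), given $x$ with $\langle x,x\rangle$ invertible in $\mathscr{A}$, I would set $y\coloneqq \langle x,x\rangle^{-1/2}x$, where $\langle x,x\rangle^{-1/2}$ exists by continuous functional calculus since $\langle x,x\rangle$ is a positive invertible element of the unital C*-algebra $\mathscr{A}$. Using $\langle ax,ax\rangle=a\langle x,x\rangle a^*$ together with the self-adjointness of $\langle x,x\rangle^{-1/2}$, a direct computation gives $\langle y,y\rangle=\langle x,x\rangle^{-1/2}\langle x,x\rangle\langle x,x\rangle^{-1/2}=e$, so the singleton $\{y\}$ is an orthonormal set (the cross-term condition being vacuous). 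Applying part (i) with $Y=\{y\}$ then produces a maximal orthonormal set for $\mathscr{E}$.

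I do not expect a genuine obstacle here: the argument is purely a Zorn's-lemma extension, exactly parallel to the operator version already proved in the excerpt. The only genuinely module-theoretic ingredients are the positive-invertible square root, which is guaranteed by $\mathscr{A}$ being a unital C*-algebra, and the observation that unions of chains remain orthonormal, which is immediate from the pairwise nature of the defining relation $\langle x_j,x_k\rangle=\delta_{j,k}e$. The mildest point of care is merely to record explicitly that maximality in $\mathscr{P}$ upgrades to maximality among all orthonormal sets, as handled in the paragraph on (i).
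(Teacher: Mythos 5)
Your proposal is correct and follows essentially the same route as the paper: Zorn's lemma on the poset of orthonormal sets containing $Y$ for (i), and applying (i) to the singleton $\{\langle x,x\rangle^{-1/2}x\}$ for (ii). The additional details you supply (the chain/union verification and the computation $\langle y,y\rangle=e$) are exactly the routine checks the paper leaves implicit.
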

 \begin{proof}
  \begin{enumerate}[\upshape(i)]
 \item This is an application of Zorn's lemma to the poset $(\mathscr{P},\preceq)$, where 
 $$\mathscr{P}=\{Z : Z ~\text{is an orthonormal set for}~ \mathscr{E} ~ \text{such that}~ Z \supseteq Y\} $$
 and for $ Z_1,Z_2 \in \mathscr{P}, Z_1\preceq Z_2 $ if $ Z_1\subseteq Z_2.$
 \item Apply (i) to the orthonormal set $ Y=\{\langle x, x \rangle^{-1/2}x\}$.
 \end{enumerate}
 \end{proof}
 \begin{theorem}
 If  $\mathscr{E} $  over   $\mathscr{A}$ has an orthonormal basis  $ \{x_j\}_{j \in \mathbb{J}}$, then  $\mathscr{E}$ is unitarily isomorphic to  $\mathscr{H}_\mathscr{A}.$
 \end{theorem}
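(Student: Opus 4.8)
The plan is to exhibit the analysis homomorphism itself as the required unitary. Define $U : \mathscr{E} \to \mathscr{H}_\mathscr{A}$ by $Ux = \{\langle x, x_j\rangle\}_{j\in\mathbb{J}}$; this is precisely the map $\theta_x$ of Definition \ref{SEQUENTIAL VERSION HOMOMORPHISM VAUED DEFINITION} in the case $\tau_j = x_j$. The first task is well-definedness and boundedness. Since an orthonormal basis is a Bessel sequence, there is $b>0$ with $\sum_{j}\langle x, x_j\rangle\langle x_j, x\rangle \le b\langle x,x\rangle$ for all $x$; as $\langle x, x_j\rangle^* = \langle x_j, x\rangle$, this says exactly that $\{\langle x, x_j\rangle\}_j$ lies in $\mathscr{H}_\mathscr{A}$ and that $U$ is a bounded $\mathscr{A}$-homomorphism. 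So $U$ maps $\mathscr{E}$ into $\mathscr{H}_\mathscr{A}$ as claimed.

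Next I would show that $U$ preserves the inner product. By the characterization of orthonormal bases (statement (iii) of Theorem \ref{CHARACTERIZATION3}, equivalently Corollary \ref{CHARACTERIZATION4}), one has $\langle x, y\rangle = \sum_{j}\langle x, x_j\rangle\langle x_j, y\rangle = \sum_{j}\langle x, x_j\rangle\langle y, x_j\rangle^*$ for all $x,y\in\mathscr{E}$, and the right-hand side is by definition the $\mathscr{H}_\mathscr{A}$-inner product $\langle Ux, Uy\rangle$. Hence $\langle Ux, Uy\rangle = \langle x, y\rangle$. Taking $y = x$ gives $\langle Ux, Ux\rangle = \langle x, x\rangle$, which immediately yields injectivity: $Ux = 0$ forces $\langle x, x\rangle = 0$, so $x = 0$.

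For surjectivity I would use Lemma \ref{BESSELCONVERGENCE}: given $\{a_j\}\in\mathscr{H}_\mathscr{A}$, the series $x := \sum_{j}a_j x_j$ converges in $\mathscr{E}$, and then orthonormality gives $\langle x, x_k\rangle = \sum_{j}a_j\langle x_j, x_k\rangle = a_k$ for every $k$, so $Ux = \{a_j\}$ and $U$ is onto. At this point $U$ is a surjective, inner-product-preserving $\mathscr{A}$-homomorphism, hence bijective with an inner-product-preserving (so bounded) inverse $U^{-1}$; the identity $\langle Ux, z\rangle = \langle Ux, U U^{-1}z\rangle = \langle x, U^{-1}z\rangle$ shows $U$ is adjointable with $U^* = U^{-1}$, i.e. $U$ is a unitary isomorphism of $\mathscr{E}$ onto $\mathscr{H}_\mathscr{A}$.

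The only genuinely delicate points are the two places where a convergent series must be interchanged with the inner product and where the module conventions enter: verifying that the candidate preimage $\sum_j a_j x_j$ satisfies $\langle x, x_k\rangle = a_k$, and confirming that the series expansion of $\langle x, y\rangle$ matches the defining inner product of $\mathscr{H}_\mathscr{A}$. Both are supplied directly by Lemma \ref{BESSELCONVERGENCE} and Theorem \ref{CHARACTERIZATION3}, so once the conventions are aligned the argument is routine bookkeeping rather than a substantial obstacle; I expect no step harder than this alignment.
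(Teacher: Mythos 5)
Your proof is correct and is essentially the paper's own argument: the paper defines $U(\sum_j a_jx_j)=\sum_j a_je_j$, which, by the uniqueness of the coefficients $a_j=\langle x,x_j\rangle$, is exactly your analysis map $\theta_x$, and both proofs conclude by checking the inner product is preserved. You merely supply the details (Bessel bound for well-definedness, Lemma \ref{BESSELCONVERGENCE} for surjectivity, adjointability of the inverse) that the paper compresses into the phrase ``$U$ is bijective.''
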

 \begin{proof}
 Let $ \{e_j\}_{j \in \mathbb{J}} $ be the standard  orthonormal basis for $\mathscr{H}_\mathscr{A}.$ Define $ U:\mathscr{E} \ni \sum_{j \in \mathbb{J}}a_jx_j \mapsto \sum_{j \in \mathbb{J}}a_je_j \in  \mathscr{H}_\mathscr{A}.$ Then $ U$ is  bijective, and  $ \langle U(\sum_{j \in \mathbb{J}}a_jx_j), U(\sum_{k \in \mathbb{J}}a_kx_k)\rangle= \langle \sum_{j \in \mathbb{J}}a_je_j, \sum_{k \in \mathbb{J}}a_ke_k\rangle = \sum_{j \in \mathbb{J}}a_ja^*_j=\langle \sum_{j \in \mathbb{J}}a_jx_j, \sum_{k \in \mathbb{J}}a_kx_k\rangle,  \forall \sum_{j \in \mathbb{J}}a_jx_j \in \mathscr{E},$ which says it is  a unitary.
 \end{proof}

 \begin{theorem}
 Let  $ \{x_j\}_{j \in \mathbb{J}} $ be an orthonormal basis for $\mathscr{E} $ over  $\mathscr{A}$. Then the orthonormal bases for  $\mathscr{E} $ which are  also  Bessel sequences are precisely  $ \{Ux_j\}_{j \in \mathbb{J}} ,$ where $ U: \mathscr{E} \rightarrow \mathscr{E}$ is a unitary homomorphism.
 \end{theorem}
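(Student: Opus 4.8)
The plan is to prove the equivalence by producing, in the reverse direction, an explicit unitary built from the analysis homomorphism of $\{x_j\}_{j\in\mathbb{J}}$ and the synthesis homomorphism of the given basis, and in the forward direction by a direct check that a unitary carries an orthonormal basis to an orthonormal basis.

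First I would handle the easy direction. Suppose $U \in \operatorname{End}^*_{\mathscr{A}}(\mathscr{E})$ is unitary and set $y_j \coloneqq Ux_j$. Orthonormality is immediate from $\langle Ux_j, Ux_k\rangle = \langle x_j, U^*Ux_k\rangle = \langle x_j, x_k\rangle = \delta_{j,k}e$. For the remaining properties I would use the adjoint identity $\langle x, Ux_j\rangle = \langle U^*x, x_j\rangle$ together with the reconstruction property (recorded just before Lemma \ref{BESSELCONVERGENCE}) that the orthonormal basis $\{x_j\}_{j\in\mathbb{J}}$ satisfies $\langle z, z\rangle = \sum_{j\in\mathbb{J}}\langle z, x_j\rangle\langle x_j, z\rangle$ for all $z$; applying this to $z = U^*x$ gives
\[
\sum_{j\in\mathbb{J}}\langle x, Ux_j\rangle\langle Ux_j, x\rangle = \sum_{j\in\mathbb{J}}\langle U^*x, x_j\rangle\langle x_j, U^*x\rangle = \langle U^*x, U^*x\rangle = \langle x, x\rangle,
\]
using $UU^*=I_\mathscr{E}$ in the last step. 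Thus $\{Ux_j\}_{j\in\mathbb{J}}$ is Bessel (with bound one) and satisfies condition (iv) of Theorem \ref{CHARACTERIZATION3}; since it is orthonormal and Bessel, that theorem yields that it is an orthonormal basis.

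For the converse, let $\{y_j\}_{j\in\mathbb{J}}$ be an orthonormal basis that is also Bessel. Because $\{x_j\}_{j\in\mathbb{J}}$ is an orthonormal basis (hence Bessel), the analysis homomorphism $\theta_x \colon \mathscr{E} \to \mathscr{H}_\mathscr{A}$, $\theta_x(x)=\{\langle x,x_j\rangle\}_{j\in\mathbb{J}}$, is a well-defined bounded homomorphism, and because $\{y_j\}_{j\in\mathbb{J}}$ is Bessel, Lemma \ref{BESSELCONVERGENCE} makes the synthesis map $T\colon \mathscr{H}_\mathscr{A}\to\mathscr{E}$, $T(\{a_j\}_{j\in\mathbb{J}})=\sum_{j\in\mathbb{J}}a_jy_j$, a well-defined bounded homomorphism. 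I would then define $U\coloneqq T\theta_x$, so that $Ux=\sum_{j\in\mathbb{J}}\langle x,x_j\rangle y_j$. To show $U$ is adjointable I would introduce the candidate $Vx\coloneqq\sum_{j\in\mathbb{J}}\langle x,y_j\rangle x_j$ (well defined and bounded for the same reason, with the roles of the two bases interchanged) and verify $\langle Ux,z\rangle = \sum_{j\in\mathbb{J}}\langle x,x_j\rangle\langle y_j,z\rangle = \langle x,Vz\rangle$, so that $U^*=V$. It then remains to check unitarity: using $\langle y_j,y_k\rangle=\delta_{j,k}e$ and $x=\sum_k\langle x,x_k\rangle x_k$ (Theorem \ref{CHARACTERIZATION3}(ii) for $\{x_j\}$), one gets $U^*Ux=\sum_k\sum_j\langle x,x_j\rangle\langle y_j,y_k\rangle x_k=\sum_k\langle x,x_k\rangle x_k=x$; symmetrically, using $\langle x_k,x_j\rangle=\delta_{k,j}e$ and $x=\sum_j\langle x,y_j\rangle y_j$ (Theorem \ref{CHARACTERIZATION3}(ii) for $\{y_j\}$), $UU^*x=x$. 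Hence $U^*U=UU^*=I_\mathscr{E}$, and finally $Ux_j=\sum_k\langle x_j,x_k\rangle y_k=y_j$.

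The step I expect to be the genuine obstacle — and the reason the qualifier \emph{Bessel sequence} cannot be dropped — is establishing that $U$ (and its candidate adjoint $V$) is a well-defined, bounded, adjointable homomorphism. Convergence of $\sum_{j\in\mathbb{J}}\langle x,x_j\rangle y_j$ and boundedness of the resulting map is precisely what the Bessel condition on $\{y_j\}_{j\in\mathbb{J}}$ buys us through Lemma \ref{BESSELCONVERGENCE}; everything afterwards (unitarity and $Ux_j=y_j$) is routine manipulation with the orthonormality relations and the reconstruction formulas of Theorem \ref{CHARACTERIZATION3}. Once adjointability is secured, no orthogonal-complementability issues intervene, so the Hilbert C*-module setting adds no difficulty here beyond careful bookkeeping of the left module action inside the inner-product identities (e.g.\ $\langle x,ay\rangle=\langle x,y\rangle a^*$).
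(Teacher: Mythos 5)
Your proof is correct and follows essentially the same route as the paper: in both directions the key map is $U\colon\sum_j a_jx_j\mapsto\sum_j a_jy_j$, with the Bessel hypothesis on $\{y_j\}_{j\in\mathbb{J}}$ (via Lemma \ref{BESSELCONVERGENCE}) supplying well-definedness and boundedness, and the orthonormality relations supplying unitarity. Your write-up is in fact a little more careful than the paper's, since you exhibit the adjoint $V$ explicitly and verify both $U^*U=I_\mathscr{E}$ and $UU^*=I_\mathscr{E}$ rather than asserting adjointability and checking only one composition.
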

 \begin{proof}
 $(\Rightarrow)$ Let  $ \{y_j\}_{j \in \mathbb{J}} $ be an orthonormal basis as well as a Bessel sequence  for $\mathscr{E}$. Define $U:\mathscr{E} \ni \sum_{j \in \mathbb{J}}a_jx_j \mapsto \sum_{j \in \mathbb{J}}a_jy_j\in  \mathscr{E} $. Then $ U$ is a bounded adjointable invertible homomorphism with $ U^* :\mathscr{E} \ni \sum_{j \in \mathbb{J}}b_jy_j \mapsto \sum_{j \in \mathbb{J}}b_jx_j\in  \mathscr{E} $ and $ Ux_j=y_j, \forall j \in \mathbb{J}$. We find $ \langle U^*Ux,y \rangle=\langle Ux, Uy\rangle = \langle \sum_{j \in \mathbb{J}}\langle x, x_j \rangle y_j, \sum_{k \in \mathbb{J}}\langle y, x_k \rangle y_k\rangle=\sum_{j \in \mathbb{J}}\langle x, x_j \rangle \langle x_j, y \rangle=\langle I_\mathscr{E}x, y\rangle,\forall x, y \in \mathscr{E} $ and hence $ U$ is unitary.
 
 $(\Leftarrow)$ Since $ U$ is invertible, $ \{Ux_j\}_{j \in \mathbb{J}} $ is a Schauder basis for $\mathscr{E} $ over  $\mathscr{A}$. Unitariness of  $ U$ gives orthonormality of $ \{Ux_j\}_{j \in \mathbb{J}}$. Now $\sum_{j \in \mathbb{J}}\langle x,Ux_j \rangle\langle Ux_j,x \rangle=\sum_{j \in \mathbb{J}}\langle U^*x,x_j \rangle\langle x_j,U^*x \rangle =\langle U^*x,U^*x \rangle=\langle x,x \rangle,\forall x \in \mathscr{E}$ and hence $ \{Ux_j\}_{j \in \mathbb{J}} $ is Bessel.
 \end{proof}
 \begin{proposition}
If  $ \{x_j\}_{j \in \mathbb{J}} $   in $ \mathscr{E}$ is such that $ \langle x_j,x_j \rangle =e, \forall j \in \mathbb{J}$,  and $ \sum_{j\in \mathbb{J}}\langle x,x_j \rangle\langle x_j,x \rangle=\langle x,x \rangle, \forall x \in \mathscr{E},$ then $ \{x_j\}_{j \in \mathbb{J}} $ is an orthonormal basis for $ \mathscr{E}.$
 \end{proposition}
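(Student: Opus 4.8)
The plan is to recognize this statement as the Corollary containing (\ref{COROLLARYEQUALTOE}) once the Bessel hypothesis of that corollary is supplied, and to supply it for free from the given Parseval-type identity. First I would observe that the assumed equality $\sum_{j\in\mathbb{J}}\langle x,x_j\rangle\langle x_j,x\rangle=\langle x,x\rangle$ is in particular a Bessel estimate: it gives $\sum_{j\in\mathbb{J}}\langle x,x_j\rangle\langle x_j,x\rangle\leq\langle x,x\rangle$ for every $x\in\mathscr{E}$, and it guarantees convergence of the defining series. Hence $\{x_j\}_{j\in\mathbb{J}}$ is Bessel (with bound $1$), which is exactly the extra ingredient needed to invoke Theorem \ref{CHARACTERIZATION3}.

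Next I would extract orthonormality directly from the identity. Fixing $k\in\mathbb{J}$ and putting $x=x_k$, the hypothesis $\langle x_k,x_k\rangle=e$ lets me separate the diagonal term:
\begin{align*}
e=\langle x_k,x_k\rangle=\sum_{j\in\mathbb{J}}\langle x_k,x_j\rangle\langle x_j,x_k\rangle=e+\sum_{j\in\mathbb{J},\,j\neq k}\langle x_k,x_j\rangle\langle x_j,x_k\rangle.
\end{align*}
Thus $\sum_{j\neq k}\langle x_k,x_j\rangle\langle x_j,x_k\rangle=0$. Each summand equals $\langle x_k,x_j\rangle\langle x_k,x_j\rangle^{*}\geq0$, so this is a vanishing sum of positive elements of $\mathscr{A}$; since the partial sums form a monotone net dominating each individual term, every term must be $0$, and the C*-identity $aa^{*}=0\Rightarrow a=0$ forces $\langle x_k,x_j\rangle=0$ for all $j\neq k$. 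Together with $\langle x_j,x_j\rangle=e$ this shows $\{x_j\}_{j\in\mathbb{J}}$ is orthonormal.

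Finally, having both that $\{x_j\}_{j\in\mathbb{J}}$ is orthonormal and Bessel, I would feed the Parseval identity into Theorem \ref{CHARACTERIZATION3}: the identity is precisely statement (iv) of that theorem, and the implication (iv)$\Rightarrow$(i) (whose proof uses the Bessel property only at the final step, through Lemma \ref{BESSELCONVERGENCE}) yields that $\{x_j\}_{j\in\mathbb{J}}$ is an orthonormal basis for $\mathscr{E}$. The main point to watch is exactly the gap between this proposition and the corollary with (\ref{COROLLARYEQUALTOE}): the corollary postulates Bessel, whereas here it must be derived, so one should be careful to read the given equality as an inequality in order to certify convergence and the bound before Theorem \ref{CHARACTERIZATION3} is applied. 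Everything else is a routine transcription of the corollary's argument.
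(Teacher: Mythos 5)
Your proposal is correct and follows essentially the same route as the paper's own proof: read the Parseval identity as a Bessel bound $1$, evaluate at $x=x_k$ to isolate $\sum_{j\neq k}\langle x_k,x_j\rangle\langle x_j,x_k\rangle=0$, use the cone of positive elements plus the C*-identity to kill the off-diagonal inner products, and then invoke Theorem \ref{CHARACTERIZATION3}. No gaps.
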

 \begin{proof}
 Equality $ \sum_{j\in \mathbb{J}}\langle x,x_j \rangle\langle x_j,x \rangle=\langle x,x \rangle, \forall x \in \mathscr{E}$ says that $ \{x_j\}_{j \in \mathbb{J}} $ is Bessel (with bound 1). Evaluating this equality at $ x_k,$ for each $ k \in \mathbb{J}$, we get $ \sum_{j\in \mathbb{J}}\langle x_k,x_j \rangle\langle x_j,x_k \rangle=\langle x_k,x_k \rangle \Rightarrow\sum_{j\in \mathbb{J},j\neq k}\langle x_k,x_j \rangle\langle x_j,x_k \rangle=0$. Since the set of all positive elements in a unital C*-algebra is a cone and each term in the last sum is positive,  we must have $\langle x_k,x_j \rangle\langle x_j,x_k \rangle=0, \forall j \neq k.$ This implies $ \|\langle  x_k,x_j \rangle\|=0, \forall j \neq k$, by C*-condition. Now Theorem  \ref{CHARACTERIZATION3} says that  $ \{x_j\}_{j \in \mathbb{J}} $ is an orthonormal basis for $ \mathscr{E}.$ 
 \end{proof}
 \begin{theorem}
 Let $ \mathscr{E} $   over  $( \mathscr{A},e)$  be such that $\langle x, x\rangle $ is invertible (in $\mathscr{A}$) for all non zero $ x \in \mathscr{E}$.  Let  $ \{x_n\}_{n=1}^\infty $  be  linearly independent (over  $\mathscr{A}$) in $ \mathscr{E}$. Define 
 $y_1\coloneqq x_1,  z_1\coloneqq\langle y_1, y_1 \rangle^{-{1/2}}y_1 $, 
 \begin{align*}
 y_n\coloneqq x_n-\sum_{k=1}^{n-1}\langle x_n, z_k\rangle z_k, ~ z_n\coloneqq\langle y_n, y_n \rangle^{-\frac{1}{2}}y_n, ~\forall n \geq 2.
 \end{align*}
Then $ \{y_n\}_{n=1}^\infty $  is  orthogonal  for  $ \mathscr{E}$, $ \{z_n\}_{n=1}^\infty $  is  orthonormal for $ \mathscr{E}$ and
$$ \operatorname{span}_\mathscr{A}\{z_n\}_{n=1}^m=\operatorname{span}_\mathscr{A}\{x_n\}_{n=1}^m=\operatorname{span}_\mathscr{A}\{y_n\}_{n=1}^m, ~\forall m \in \mathbb{N}.$$
In particular, $ \overline{\operatorname{span}}_\mathscr{A}\{z_n\}_{n=1}^\infty=\overline{\operatorname{span}}_\mathscr{A}\{x_n\}_{n=1}^\infty=\overline{\operatorname{span}}_\mathscr{A}\{y_n\}_{n=1}^\infty$. 
 \end{theorem}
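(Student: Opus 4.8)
The statement is the Gram--Schmidt orthonormalization procedure in the Hilbert C*-module setting. The plan is to proceed by induction on $n$, establishing simultaneously the orthogonality of $\{y_n\}$, the orthonormality of $\{z_n\}$, and the three-fold span equality at each stage. The invertibility hypothesis on $\langle x,x\rangle$ for nonzero $x$ is what makes the procedure well-defined, since we repeatedly form $\langle y_n, y_n\rangle^{-1/2}$; so the first thing I would verify is that each $y_n$ is nonzero, which follows from the linear independence of $\{x_n\}$ over $\mathscr{A}$ (if $y_n=0$ then $x_n$ would be an $\mathscr{A}$-combination of $z_1,\dots,z_{n-1}$, hence of $x_1,\dots,x_{n-1}$, contradicting independence).

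First I would treat the base case: $z_1=\langle y_1,y_1\rangle^{-1/2}y_1$ satisfies $\langle z_1,z_1\rangle=\langle y_1,y_1\rangle^{-1/2}\langle y_1,y_1\rangle\langle y_1,y_1\rangle^{-1/2}=e$, using that $\langle y_1,y_1\rangle^{-1/2}$ is self-adjoint and commutes with $\langle y_1,y_1\rangle$. The span equalities for $m=1$ are immediate since $z_1,y_1,x_1$ are scalar multiples of one another by invertible central-like factors. For the inductive step, assuming $\{z_1,\dots,z_{n-1}\}$ is orthonormal, I would compute for $k<n$
\begin{align*}
\langle y_n, z_k\rangle &= \Big\langle x_n-\sum_{l=1}^{n-1}\langle x_n, z_l\rangle z_l,\, z_k\Big\rangle = \langle x_n, z_k\rangle -\sum_{l=1}^{n-1}\langle x_n, z_l\rangle\langle z_l, z_k\rangle\\
&=\langle x_n, z_k\rangle-\langle x_n, z_k\rangle e = 0,
\end{align*}
where the key point is that $\langle z_l, z_k\rangle=\delta_{l,k}e$ lets the sum collapse to the single $l=k$ term. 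From $\langle y_n, z_k\rangle=0$ one gets $\langle z_n, z_k\rangle=\langle y_n,y_n\rangle^{-1/2}\langle y_n, z_k\rangle=0$ for $k<n$ and $\langle z_n,z_n\rangle=e$ exactly as in the base case, completing the orthonormality of $\{z_1,\dots,z_n\}$; orthogonality of $\{y_n\}$ then follows since $y_n$ is an invertible multiple of $z_n$.

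For the span equalities I would argue by induction as well: the defining relation $y_n=x_n-\sum_{k=1}^{n-1}\langle x_n,z_k\rangle z_k$ shows $y_n\in\operatorname{span}_\mathscr{A}\{x_n\}\cup\operatorname{span}_\mathscr{A}\{z_1,\dots,z_{n-1}\}$, and by the inductive hypothesis the latter equals $\operatorname{span}_\mathscr{A}\{x_1,\dots,x_{n-1}\}$, giving $\operatorname{span}_\mathscr{A}\{y_n\}_{n=1}^m\subseteq\operatorname{span}_\mathscr{A}\{x_n\}_{n=1}^m$; conversely $x_n=y_n+\sum_{k=1}^{n-1}\langle x_n,z_k\rangle z_k$ gives the reverse inclusion, and $z_n=\langle y_n,y_n\rangle^{-1/2}y_n$ with $y_n=\langle y_n,y_n\rangle^{1/2}z_n$ ties the $z$-span to the $y$-span. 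The final \emph{in particular} statement about closed spans then follows by taking closures of the three equal increasing unions. I expect the main subtlety, rather than the routine algebra, to be the careful justification that $\langle y_n,y_n\rangle$ is invertible at each step (needed to define $\langle y_n,y_n\rangle^{-1/2}$) — this is precisely where the hypothesis ``$\langle x,x\rangle$ invertible for all nonzero $x$'' combines with the nonvanishing of $y_n$ guaranteed by linear independence; one must also keep track that $\langle y_n,y_n\rangle^{-1/2}$ is self-adjoint so that the C*-module inner-product computations go through cleanly.
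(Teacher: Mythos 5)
Your proposal is correct and is precisely the argument the paper intends: its proof is stated only as ``Similar to the Gram--Schmidt orthogonalization in Hilbert spaces,'' and your induction (nonvanishing of $y_n$ via linear independence, the collapse of $\sum_l\langle x_n,z_l\rangle\langle z_l,z_k\rangle$ to $\langle x_n,z_k\rangle$ by the inductive orthonormality, normalization via the self-adjoint $\langle y_n,y_n\rangle^{-1/2}$, and the span equalities) fills in exactly the details being alluded to. You also correctly isolate the one point where the module setting differs from the Hilbert-space case, namely that invertibility of $\langle y_n,y_n\rangle$ must be extracted from the standing hypothesis rather than from mere nonvanishing of a norm.
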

 \begin{proof}
 Similar to the Gram-Schmidt orthogonalization in Hilbert spaces.
 \end{proof}
\begin{theorem}\label{RIESZFISCHERGENERAL}
Let  $ \{x_j\}_{j \in \mathbb{J}}$ be  orthonormal for  $\mathscr{E}$  over $\mathscr{A}$, and $ \{a_j\}_{j \in \mathbb{J}}$ be  in  $\mathscr{A}$. Then 
$$ \sum_{j\in \mathbb{J}}a_jx_j ~ \text{converges in}~ \mathscr{E}~ \text{if and only if }~\sum_{j\in \mathbb{J}}a_ja_j^* ~\text{converges in}~ \mathscr{A}.$$
In this case, define $ x\coloneqq\sum_{j\in \mathbb{J}}a_jx_j$. Then $a_j=\langle x, x_j\rangle, \forall j \in \mathbb{J}$.
\end{theorem}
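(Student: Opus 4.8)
The plan is to mimic the proof of the operator version of the Riesz-Fischer theorem (Theorem \ref{ERFOV}), reducing the convergence of both series to the Cauchy criterion for the nets indexed by the finite subsets of $\mathbb{J}$, exactly as in the convergence argument of Lemma \ref{BESSELCONVERGENCE}. First I would fix a finite subset $\mathbb{S} \subseteq \mathbb{J}$ and, using the orthonormality relation $\langle x_j, x_k\rangle = \delta_{j,k}e$ together with the $\mathscr{A}$-linearity of the inner product in the first slot (the convention $\langle a y, z\rangle = a\langle y, z\rangle$, $\langle y, az\rangle = \langle y, z\rangle a^*$ already used earlier in this section), compute
$$\left\langle \sum_{j\in\mathbb{S}} a_j x_j, \sum_{k\in\mathbb{S}} a_k x_k \right\rangle = \sum_{j,k\in\mathbb{S}} a_j \langle x_j, x_k\rangle a_k^* = \sum_{j\in\mathbb{S}} a_j a_j^*.$$
Taking norms and invoking the C*-identity $\|\langle y, y\rangle\| = \|y\|^2$ yields the key equality $\left\|\sum_{j\in\mathbb{S}} a_j x_j\right\|^2 = \left\|\sum_{j\in\mathbb{S}} a_j a_j^*\right\|$.

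The second step is to transfer this to differences of partial sums. For finite subsets $\mathbb{S}_1 \subseteq \mathbb{S}_2$ of $\mathbb{J}$ the same computation gives $\left\|\sum_{j\in\mathbb{S}_2} a_j x_j - \sum_{j\in\mathbb{S}_1} a_j x_j\right\|^2 = \left\|\sum_{j\in\mathbb{S}_2\setminus\mathbb{S}_1} a_j a_j^*\right\|$, so the net $\left\{\sum_{j\in\mathbb{S}} a_j x_j : \mathbb{S}\subseteq\mathbb{J} \text{ finite}\right\}$ in $\mathscr{E}$ is Cauchy if and only if the net $\left\{\sum_{j\in\mathbb{S}} a_j a_j^* : \mathbb{S}\subseteq\mathbb{J} \text{ finite}\right\}$ in $\mathscr{A}$ is Cauchy. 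Since both a Hilbert C*-module and a C*-algebra are complete, each net converges precisely when it is Cauchy, which establishes the claimed equivalence.

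For the final assertion, assume $x = \sum_{j\in\mathbb{J}} a_j x_j$ converges and fix $k\in\mathbb{J}$. The map $y \mapsto \langle y, x_k\rangle$ is a bounded $\mathscr{A}$-linear functional on $\mathscr{E}$, hence commutes with the net limit, so $\langle x, x_k\rangle = \lim_{\mathbb{S}} \sum_{j\in\mathbb{S}} a_j \langle x_j, x_k\rangle = a_k$, again using $\langle x_j, x_k\rangle = \delta_{j,k}e$.

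I do not expect a serious obstacle: the argument is essentially the C*-module transcription of the scalar Riesz-Fischer theorem. The only points requiring care are the precise formulation of the Cauchy criterion for unordered sums (for every $\epsilon>0$ there is a finite $\mathbb{S}_0\subseteq\mathbb{J}$ with $\left\|\sum_{j\in\mathbb{S}} a_j x_j\right\| < \epsilon$ whenever $\mathbb{S}$ is finite and $\mathbb{S}\cap\mathbb{S}_0 = \emptyset$), which passes transparently between the two nets through the norm identity above, and the consistent use of the inner-product convention; both are already in force in the preceding proofs.
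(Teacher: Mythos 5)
Your proof is correct and follows essentially the same route as the paper's: compute $\bigl\|\sum_{j\in\mathbb{S}}a_jx_j\bigr\|^2=\bigl\|\sum_{j\in\mathbb{S}}a_ja_j^*\bigr\|$ via orthonormality and the C*-identity, then pass between the two Cauchy nets using completeness of $\mathscr{E}$ and $\mathscr{A}$. Your explicit treatment of the tail/difference form of the Cauchy criterion and of the final assertion $a_j=\langle x,x_j\rangle$ (via continuity of $y\mapsto\langle y,x_k\rangle$) only fills in details the paper leaves implicit.
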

\begin{proof}
For every finite subset $ \mathbb{S}$ of  $\mathbb{J}$, we get
$$ \left\|\sum_{j\in \mathbb{S}}a_jx_j \right\|^2=\left\|\left\langle\sum_{j\in \mathbb{S}}a_jx_j,\sum_{k\in \mathbb{S}}a_kx_k \right \rangle \right\|=\left\|\sum_{j\in \mathbb{S}}a_ja^*_j \right\|.$$
Thus $\{\sum_{j\in \mathbb{S}}a_jx_j:\mathbb{S}~\text{ is a  finite subset of }~ \mathbb{J}\}$ is Cauchy net if and only if $\{\sum_{j\in \mathbb{S}}a_ja^*_j:\mathbb{S}~\text{ is a finite subset of }~ \mathbb{J}\} $ is a Cauchy net. Since $\mathscr{E}$ and $\mathscr{A}$ are complete, the result follows.
\end{proof}
\begin{remark}
 Theorem \ref{RIESZFISCHERGENERAL} is an extension of Riesz-Fischer theorem.
\end{remark}
\begin{theorem}
\begin{enumerate}[\upshape(i)]	
\item If $\mathscr{E}$ has an orthonormal basis which is also a Hamel basis for $\mathscr{E}$,  then $\mathscr{E}$ is finite dimensional.
\item An infinite orthonormal  basis for $\mathscr{E}$ can not be a Hamel basis  for $\mathscr{E}$.
\end{enumerate}
\end{theorem}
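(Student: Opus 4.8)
The engine for both parts is the module Riesz--Fischer theorem (Theorem \ref{RIESZFISCHERGENERAL}), which converts the question ``does an infinitely supported expansion live in $\mathscr{E}$?'' into the purely algebraic question ``does $\sum_j a_j a_j^*$ converge in $\mathscr{A}$?''. The plan is to prove (i) by contradiction --- producing an element of $\mathscr{E}$ whose orthonormal expansion has infinitely many nonzero coefficients, which no finite (Hamel) combination can reproduce --- and then to read off (ii) either from (i) or from the very same construction.

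For (i), suppose $\{x_j\}_{j\in\mathbb{J}}$ is simultaneously an orthonormal basis and a Hamel basis, and assume toward a contradiction that $\mathbb{J}$ is infinite. First I would fix a countable subfamily $\{x_{j_n}\}_{n=1}^\infty$ and set $a_{j_n} := \tfrac1n e$ (and $a_j := 0$ otherwise). Since $\sum_{n}\tfrac1{n^2}$ converges in $\mathbb{R}$, the partial sums of $\sum_n a_{j_n}a_{j_n}^* = \big(\sum_n \tfrac1{n^2}\big)e$ form a Cauchy net in $\mathscr{A}$, so $\sum_j a_j a_j^*$ converges; by Theorem \ref{RIESZFISCHERGENERAL} the series $x := \sum_n \tfrac1n x_{j_n}$ converges to an element of $\mathscr{E}$, and moreover $\langle x, x_{j_n}\rangle = \tfrac1n e$ for every $n$.

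Now because $\{x_j\}_{j\in\mathbb{J}}$ is a Hamel basis, $x = \sum_{j\in F} b_j x_j$ for some finite $F\subseteq\mathbb{J}$ and $b_j\in\mathscr{A}$. Choosing any $j_m\notin F$ (possible as $F$ is finite and $\mathbb{J}$ infinite) and using left-linearity of the inner product together with orthonormality, $\langle x, x_{j_m}\rangle = \sum_{j\in F} b_j\langle x_j, x_{j_m}\rangle = 0$; but the previous paragraph gives $\langle x, x_{j_m}\rangle = \tfrac1m e \neq 0$ (assuming, harmlessly, $\mathscr{A}\neq\{0\}$). This contradiction forces $\mathbb{J}$ to be finite, i.e., $\mathscr{E}$ is finite dimensional. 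For (ii), if an infinite orthonormal basis were a Hamel basis, part (i) would make $\mathscr{E}$ finite dimensional; yet an orthonormal family is linearly independent over $\mathscr{A}$ (by the linear-independence theorem established earlier), so an infinite such family exhibits infinitely many $\mathscr{A}$-independent vectors, contradicting finite dimensionality. Alternatively, the element $x$ constructed above is itself a witness: it lies in $\mathscr{E}$ but admits no finite expansion, so the infinite orthonormal basis fails to be Hamel.

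The argument is short because Theorem \ref{RIESZFISCHERGENERAL} does the heavy lifting; the one place demanding care is the existence step --- one must exhibit scalar coefficients $a_{j_n}\in\mathscr{A}$ with $\sum_n a_{j_n}a_{j_n}^*$ convergent yet $a_{j_n}\neq 0$ for all $n$, and the choice $a_{j_n}=\tfrac1n e$ is exactly calibrated to meet the $\mathscr{A}$-valued convergence criterion while keeping every coefficient nonzero. The main conceptual obstacle is recognizing that finiteness of a Hamel expansion is incompatible with the genuinely infinite support guaranteed by this convergent series, and that comparing coefficients via $\langle x, x_{j_m}\rangle$ (rather than invoking uniqueness of the topological basis expansion) yields the cleanest contradiction.
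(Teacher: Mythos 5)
Your proof is correct and follows exactly the route the paper intends: the paper's own "proof" is just the one-line remark that, given Theorem \ref{RIESZFISCHERGENERAL}, the argument is the same as the classical Hilbert-space one, and your writeup is precisely that argument carried out in the module setting (construct $x=\sum_n \tfrac1n x_{j_n}$ via Riesz--Fischer, then compare the coefficient $\langle x, x_{j_m}\rangle=\tfrac1m e$ against the zero forced by a finite Hamel expansion). The only detail worth keeping explicit is the one you already flagged, namely that the paper's Riesz--Fischer theorem also supplies $a_j=\langle x,x_j\rangle$, which is what legitimizes the coefficient comparison.
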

\begin{proof}
 Since we have Theorem \ref{RIESZFISCHERGENERAL}, the proof is similar to the proof of statement  ``if Hilbert space $\mathcal{H}$ has an orthonormal basis which is also a Hamel basis for $\mathcal{H}$,  then $\mathcal{H}$ is finite dimensional". 
\end{proof}
\begin{proposition}
Let $ \{x_j=Ue_j\}_{j \in \mathbb{J}} $ be a Riesz basis for  $\mathscr{E}$ such that $ \{e_j\}_{j \in \mathbb{J}} $ is a Bessel sequence.  Then 
\begin{enumerate}[\upshape(i)]
\item $ \{x_j\}_{j \in \mathbb{J}} $ is a frame (w.r.t. itself) for $\mathscr{E}$.
\item There exists a unique sequence $ \{y_j\}_{j \in \mathbb{J}} $ in $\mathscr{E} $ such that $ x=\sum_{j\in\mathbb{J}}\langle x,y_j \rangle x_j, \forall x \in \mathscr{E}.$ Further, $ \{y_j\}_{j \in \mathbb{J}} $ is Riesz.
\end{enumerate}
\end{proposition}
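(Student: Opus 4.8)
The plan is to transport everything through the invertible homomorphism $U$ and the orthonormal basis $\{e_j\}_{j\in\mathbb{J}}$, using that frame homomorphisms transform covariantly under $U$. First I would isolate the single fact that the extra hypothesis ``$\{e_j\}_{j\in\mathbb{J}}$ is Bessel'' is there to supply: since $\{e_j\}_{j\in\mathbb{J}}$ is an orthonormal basis which is also Bessel, Theorem \ref{CHARACTERIZATION3} gives the reconstruction $x=\sum_{j\in\mathbb{J}}\langle x,e_j\rangle e_j$ for every $x\in\mathscr{E}$, i.e.\ the frame homomorphism of $\{e_j\}$ with respect to itself equals $I_\mathscr{E}$. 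I would also record that $\{x_j=Ue_j\}_{j\in\mathbb{J}}$ is Bessel: applying Theorem \ref{PASCHKE} to $U^{*}$ gives $\sum_{j}\langle x,Ue_j\rangle\langle Ue_j,x\rangle=\sum_{j}\langle U^{*}x,e_j\rangle\langle e_j,U^{*}x\rangle\le c\langle U^{*}x,U^{*}x\rangle\le c\|U\|^{2}\langle x,x\rangle$, so Lemma \ref{BESSELCONVERGENCE} is available for $\{x_j\}$.

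For (i) I would compute the frame homomorphism of $\{x_j\}$ with respect to itself directly. For $x\in\mathscr{E}$, using $\langle x,Ue_j\rangle=\langle U^{*}x,e_j\rangle$ and the reconstruction identity for $\{e_j\}$, $S_{x,x}x=\sum_{j}\langle x,Ue_j\rangle Ue_j=U\bigl(\sum_{j}\langle U^{*}x,e_j\rangle e_j\bigr)=U(U^{*}x)=UU^{*}x$. Hence $S_{x,x}=UU^{*}$, which is adjointable, positive and invertible because $U$ is invertible; the Bessel estimate above supplies the remaining boundedness condition, and the symmetry condition is automatic since here $\tau_j=x_j$. This shows $\{x_j\}$ is a frame with respect to itself, with frame bounds $\|(UU^{*})^{-1}\|^{-1}$ and $\|UU^{*}\|$.

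For (ii) the natural candidate is the canonical dual, $y_j:=S_{x,x}^{-1}x_j=(UU^{*})^{-1}Ue_j=(U^{*})^{-1}e_j$. Since $(U^{*})^{-1}\in\operatorname{End}^{*}_{\mathscr{A}}(\mathscr{E})$ is invertible and $\{e_j\}$ is an orthonormal basis, $\{y_j\}$ is a Riesz basis by definition, and it is Bessel by the argument used for $\{x_j\}$, so $\sum_{j}\langle x,y_j\rangle x_j$ converges by Lemma \ref{BESSELCONVERGENCE}. Using $\langle x,(U^{*})^{-1}e_j\rangle=\langle U^{-1}x,e_j\rangle$ together with the reconstruction identity once more, $\sum_{j}\langle x,y_j\rangle x_j=U\bigl(\sum_{j}\langle U^{-1}x,e_j\rangle e_j\bigr)=U(U^{-1}x)=x$. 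For uniqueness, if $\{y_j'\}$ also reconstructs, then $\sum_{j}\langle x,y_j-y_j'\rangle x_j=0$ for all $x$; applying the bounded $U^{-1}$ termwise gives $\sum_{j}\langle x,y_j-y_j'\rangle e_j=0$, and pairing with $e_k$ while using $\langle e_j,e_k\rangle=\delta_{j,k}e$ yields $\langle x,y_k-y_k'\rangle=0$ for all $x$ and $k$, so $y_k=y_k'$.

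The only genuine obstacle is the one the hypothesis is designed to neutralize: in a Hilbert C*-module an orthonormal basis need not be Bessel, so a priori neither the reconstruction identity of Theorem \ref{CHARACTERIZATION3} nor the convergence of the defining sums is available. Once ``$\{e_j\}$ is Bessel'' is granted, every series in sight converges and $U$ intertwines the frame operators of $\{e_j\}$ and $\{x_j\}$, reducing both parts to short transport-of-structure computations. I would keep only a light touch on adjointability of $S_{x,x}$ and of $(U^{*})^{-1}$, as these are inherited directly from $U\in\operatorname{End}^{*}_{\mathscr{A}}(\mathscr{E})$.
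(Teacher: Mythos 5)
Your proof is correct and follows essentially the same route as the paper: both transport everything through $U$ using the Parseval identity for the Bessel orthonormal basis $\{e_j\}_{j\in\mathbb{J}}$, and both take $y_j=(U^{-1})^*e_j$ as the reconstructing sequence. The only cosmetic differences are that you exhibit the frame homomorphism explicitly as $S_{x,x}=UU^*$ where the paper verifies the two-sided inequality $\|U\|^{-2}\langle x,x\rangle\leq\sum_{j\in\mathbb{J}}\langle x,x_j\rangle\langle x_j,x\rangle\leq\|U\|^{2}\langle x,x\rangle$ directly, and that you spell out the uniqueness step (pairing with $e_k$ after applying $U^{-1}$) which the paper leaves as an asserted implication.
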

\begin{proof}
\begin{enumerate}[\upshape(i)]
\item $ \|U\|^{-2}\langle x,x\rangle \leq \sum_{j\in \mathbb{J}}\langle x,x_j \rangle\langle x_j,x \rangle =\sum_{j\in \mathbb{J}}\langle U^*x,e_j \rangle\langle e_j,U^*x \rangle =\langle U^*x,U^*x\rangle \leq \|U\|^2\langle x,x\rangle, \forall x \in \mathscr{E}.$
\item Define $ y_j\coloneqq (U^{-1})^*e_j, \forall j \in \mathbb{J}$. Then $ \{y_j\}_{j \in \mathbb{J}} $ is Riesz. Also, $\sum_{j\in\mathbb{J}}\langle x,y_j \rangle x_j =\sum_{j\in \mathbb{J}}\langle x,(U^{-1})^*e_j \rangle Ue_j=U(\sum_{j\in \mathbb{J}}\langle U^{-1}x,e_j \rangle e_j)=x, \forall x \in \mathscr{E}$. If $ \{z_j\}_{j \in \mathbb{J}} $ in $\mathscr{E} $ also fulfills  $ x=\sum_{j\in\mathbb{J}}\langle x,z_j \rangle x_j, \forall x \in \mathscr{E},$ then $\sum_{j\in\mathbb{J}}\langle x,z_j-y_j \rangle x_j=0,\forall x \in \mathscr{E} $ $\Rightarrow $ $z_j=y_j,\forall j \in \mathbb{J}$.
\end{enumerate}
\end{proof}

\begin{lemma}\label{HOMOMORPHISMEXISTENCELEMMA}
Let $ \{x_j\}_{j \in \mathbb{J}}$ be complete in $\mathscr{E}$ (over $\mathscr{A}$), $ \{y_j\}_{j \in \mathbb{J}}$ be  in $\mathscr{E}_0$ (over $\mathscr{A}$). Suppose $a, b>0$ are  such that for all finite subsets $ \mathbb{S}_{\mathscr{E}},\mathbb{S}_{\mathscr{E}_0}\subseteq\mathbb{J}$,
\begin{align} 
a\left\|\sum_{j\in \mathbb{S}_{\mathscr{E}}}c_jc_j^*\right\|&\leq \left\|\sum_{j\in \mathbb{S}_{\mathscr{E}}}c_jx_j \right\|^2, ~\forall c_j \in \mathscr{A}, \forall j \in \mathbb{S}_{\mathscr{E}} ,\label{RIESZMODULEINEQUALITY}\\
\left\|\sum_{j\in \mathbb{S}_{\mathscr{E}_0}}d_jy_j \right\|^2&\leq b\left\|\sum_{j\in \mathbb{S}_{\mathscr{E}_0}}d_jd_j^*\right\|, ~ \forall d_j \in \mathscr{A}, \forall j \in \mathbb{S}_{\mathscr{E}_0}. \label{RIESZMODULEINEQUALITY1}
\end{align}
Then $ T : \operatorname{span}_\mathscr{A}\{x_j\}_{j \in \mathbb{J}}\ni\sum_{\operatorname{finite}}a_jx_j \mapsto\sum_{\operatorname{finite}}a_jy_j  \in \operatorname{span}_\mathscr{A}\{y_j\}_{j \in \mathbb{J}}$ defines a bounded homomorphism  and extends uniquely as a bounded homomorphism  from  $\mathscr{E}$ into $\mathscr{E}_0$, the norm of the extended homomorphism  is less than or equal to $ (\frac{b}{a})^{1/2}$.
\end{lemma}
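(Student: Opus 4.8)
The plan is to follow the strategy of the Hilbert space version (Lemma \ref{RIESZCHARACTERIZATION}), the only genuinely new feature being that the coefficients now live in $\mathscr{A}$ rather than in $\mathbb{K}$; consequently the whole argument must stay at the level of norms of $\mathscr{A}$-valued sums and invoke only the two hypothesized inequalities, never manipulating the coefficients $a_j$ themselves.

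First I would establish a single key estimate: for every finite subset $\mathbb{S} \subseteq \mathbb{J}$ and every choice of $a_j \in \mathscr{A}$, $j \in \mathbb{S}$,
\[
\left\|\sum_{j \in \mathbb{S}} a_j y_j\right\| \leq \left(\frac{b}{a}\right)^{1/2}\left\|\sum_{j\in\mathbb{S}}a_j x_j\right\|.
\]
This is obtained by chaining the two hypotheses over the common index set $\mathbb{S}$: Inequality (\ref{RIESZMODULEINEQUALITY1}) gives $\|\sum_{j\in\mathbb{S}} a_j y_j\|^2 \leq b\,\|\sum_{j\in\mathbb{S}} a_j a_j^*\|$, while Inequality (\ref{RIESZMODULEINEQUALITY}) rearranges to $\|\sum_{j\in\mathbb{S}} a_j a_j^*\| \leq \tfrac{1}{a}\|\sum_{j\in\mathbb{S}} a_j x_j\|^2$; substituting the second bound into the first and taking square roots yields the displayed estimate.

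Next I would use this estimate to obtain well-definedness and boundedness of $T$ simultaneously on the submodule $\operatorname{span}_\mathscr{A}\{x_j\}_{j\in\mathbb{J}}$. For well-definedness, if $\sum_{j\in\mathbb{S}} a_j x_j = \sum_{j\in\mathbb{S}'} a_j' x_j$, then collecting the terms over $\mathbb{S}\cup\mathbb{S}'$ (padding with zero coefficients) expresses their difference as a single finite combination $\sum_j b_j x_j = 0$; the key estimate then forces $\|\sum_j b_j y_j\| = 0$, so $\sum_{j\in\mathbb{S}} a_j y_j = \sum_{j\in\mathbb{S}'} a_j' y_j$, and the assignment depends only on the element of $\mathscr{E}$, not on its chosen representation. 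Additivity and $\mathscr{A}$-linearity of $T$ are immediate from the formula $T(\sum a_j x_j)=\sum a_j y_j$, and the key estimate is precisely the statement that $T$ is bounded on $\operatorname{span}_\mathscr{A}\{x_j\}_{j\in\mathbb{J}}$ with $\|T\| \leq (b/a)^{1/2}$.

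Finally, since $\{x_j\}_{j\in\mathbb{J}}$ is complete, $\operatorname{span}_\mathscr{A}\{x_j\}_{j\in\mathbb{J}}$ is norm-dense in $\mathscr{E}$, and $\mathscr{E}_0$ is complete; the standard bounded linear transformation extension theorem then yields a unique bounded linear extension of $T$ to all of $\mathscr{E}$ with the same norm bound. The one point deserving explicit verification is that this extension remains $\mathscr{A}$-linear: for $x \in \mathscr{E}$ and $c \in \mathscr{A}$, approximate $x$ by $x^{(n)} \in \operatorname{span}_\mathscr{A}\{x_j\}_{j\in\mathbb{J}}$; continuity of left multiplication by $c$ gives $c\,x^{(n)} \to cx$ with $c\,x^{(n)}$ still in the submodule, whence $T(cx) = \lim_n T(c\,x^{(n)}) = \lim_n c\,T(x^{(n)}) = c\,T(x)$. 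Thus the extension is a bounded homomorphism of norm at most $(b/a)^{1/2}$. There is no serious obstacle here beyond careful bookkeeping; the two structural facts doing the real work are the completeness of $\mathscr{E}_0$, which guarantees the extension exists, and the density provided by completeness of $\{x_j\}_{j\in\mathbb{J}}$, which guarantees its domain is all of $\mathscr{E}$.
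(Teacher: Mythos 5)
Your proof is correct and follows essentially the same route as the paper: chain Inequality (\ref{RIESZMODULEINEQUALITY1}) into Inequality (\ref{RIESZMODULEINEQUALITY}) to get $\|\sum a_jy_j\|\leq(b/a)^{1/2}\|\sum a_jx_j\|$ on finite combinations, then extend by density using completeness of $\{x_j\}_{j\in\mathbb{J}}$ and of $\mathscr{E}_0$. The only cosmetic difference is that the paper obtains well-definedness by first noting $\mathscr{A}$-linear independence of $\{x_j\}_{j\in\mathbb{J}}$ (from the C*-fact that $\sum c_jc_j^*=0$ forces each $c_j=0$), whereas you deduce it directly from the key estimate applied to a vanishing combination; both are equally valid.
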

\begin{proof}
Since $ \mathscr{A}$ is a C*-algebra, $ \sum_{j\in \mathbb{S}_{\mathscr{E}}}c_jc_j^*=0$ implies $c_j=0,\forall j \in \mathbb{S}_{\mathscr{E}}$. Therefore $ \{x_j\}_{j \in \mathbb{J}}$ is linearly independent over $ \mathscr{A}$. This gives well-definedness of $T$. Linearity of $T$ is obvious. For the boundedness of $T$,
\begin{align*}
\left\|T\left(\sum_{j\in \mathbb{S}_{\mathscr{E}}}c_jx_j \right)\right\|^2=\left\|\sum_{j\in \mathbb{S}_{\mathscr{E}}}c_jy_j \right\|^2 \leq b\left\|\sum_{j\in \mathbb{S}_{\mathscr{E}}}c_jc_j^*\right\|\leq \frac{b}{a} \left\|\sum_{j\in \mathbb{S}_{\mathscr{E}}}c_jx_j \right\|^2.
\end{align*}
Continuity of $T$ and completeness of $ \{x_j\}_{j \in \mathbb{J}}$ give the remaining last conclusion of lemma.
\end{proof}
\begin{lemma}\label{HOMOMORPHISMEXISTENCELEMMAORDER}
Let $ \{x_j\}_{j \in \mathbb{J}}$ be complete in $\mathscr{E}$ (over $\mathscr{A}$), $ \{y_j\}_{j \in \mathbb{J}}$ be  in $\mathscr{E}_0$ (over $\mathscr{A}$). Suppose $a, b>0$ are  such that for all finite subsets $ \mathbb{S}_{\mathscr{E}},\mathbb{S}_{{\mathscr{E}_0}}\subseteq\mathbb{J}$,
\begin{align*} 
a\sum_{j\in \mathbb{S}_{\mathscr{E}}}c_jc_j^*&\leq \left\langle\sum_{j\in \mathbb{S}_{\mathscr{E}}}c_jx_j,\sum_{k\in \mathbb{S}_{\mathscr{E}}}c_kx_k \right\rangle, ~\forall c_j \in \mathscr{A}, \forall j,k \in \mathbb{S}_{\mathscr{E}} ,\\
\left\langle\sum_{j\in \mathbb{S}_{{\mathscr{E}_0}}}d_jy_j,\sum_{k\in \mathbb{S}_{{\mathscr{E}_0}}}d_ky_k \right\rangle  &\leq b\sum_{j\in \mathbb{S}_{{\mathscr{E}_0}}}d_jd_j^*,~ \forall d_j \in \mathscr{A}, \forall j,k \in \mathbb{S}_{\mathscr{E}_0}.
\end{align*}
Then $ T : \operatorname{span}_\mathscr{A}\{x_j\}_{j \in \mathbb{J}}\ni\sum_{\operatorname{finite}}a_jx_j \mapsto\sum_{\operatorname{finite}}a_jy_j  \in \operatorname{span}_\mathscr{A}\{y_j\}_{j \in \mathbb{J}}$ defines a bounded homomorphism  and extends uniquely as a bounded homomorphism  from  $\mathscr{E}$ into $\mathscr{E}_0$, the norm of the extended homomorphism  is less than or equal to $ (\frac{b}{a})^{1/2}$.
\end{lemma}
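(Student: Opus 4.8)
The plan is to reduce this order-version to the already-established norm-version, Lemma \ref{HOMOMORPHISMEXISTENCELEMMA}, rather than repeating its argument. The bridge is the elementary C*-algebra fact already invoked in the proof of Theorem \ref{CHARACTERIZATION}, namely that the norm is monotone on positive elements: if $0\le A\le B$ in $\mathscr{A}$, then $\|A\|\le\|B\|$.

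First I would confirm that each side of the two hypothesized order inequalities is a positive element of $\mathscr{A}$, so that norm monotonicity legitimately applies. For any finite $\mathbb{S}_{\mathscr{E}}$ and $c_j\in\mathscr{A}$, the element $\sum_{j\in \mathbb{S}_{\mathscr{E}}}c_jc_j^*$ is a finite sum of positive elements, hence positive, while $\left\langle\sum_{j}c_jx_j,\sum_{k}c_kx_k\right\rangle$ is positive by the inner-product axioms of a Hilbert C*-module; the same holds for the two expressions in the second inequality involving $\{y_j\}_{j\in\mathbb{J}}$.

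Applying norm monotonicity to
\[
a\sum_{j\in \mathbb{S}_{\mathscr{E}}}c_jc_j^*\le\left\langle\sum_{j\in \mathbb{S}_{\mathscr{E}}}c_jx_j,\sum_{k\in \mathbb{S}_{\mathscr{E}}}c_kx_k\right\rangle
\]
yields $a\bigl\|\sum_{j}c_jc_j^*\bigr\|\le\bigl\|\sum_{j}c_jx_j\bigr\|^2$, which is precisely Inequality \eqref{RIESZMODULEINEQUALITY}; applying it to $\left\langle\sum_{j}d_jy_j,\sum_{k}d_ky_k\right\rangle\le b\sum_{j}d_jd_j^*$ yields $\bigl\|\sum_{j}d_jy_j\bigr\|^2\le b\bigl\|\sum_{j}d_jd_j^*\bigr\|$, which is Inequality \eqref{RIESZMODULEINEQUALITY1}. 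Here the positive scalars $a,b$ pass through the norm unchanged. Thus the hypotheses of Lemma \ref{HOMOMORPHISMEXISTENCELEMMA} hold verbatim, and its conclusion transfers directly: $T$ is a well-defined bounded homomorphism on $\operatorname{span}_\mathscr{A}\{x_j\}_{j\in\mathbb{J}}$ that extends uniquely to a bounded homomorphism from $\mathscr{E}$ into $\mathscr{E}_0$ with norm at most $(b/a)^{1/2}$.

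I do not anticipate a genuine obstacle, since the order hypotheses are formally stronger than the norm hypotheses of Lemma \ref{HOMOMORPHISMEXISTENCELEMMA} and so require no new analysis. The only point deserving a moment's care is the monotonicity step, which is valid only because both sides of each inequality have first been certified positive; everything else is an immediate appeal to the preceding lemma.
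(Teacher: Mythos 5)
Your proposal is correct and is essentially the paper's own argument: the paper's proof is exactly "take norms in the given inequalities and apply Lemma \ref{HOMOMORPHISMEXISTENCELEMMA}," and your write-up merely spells out the positivity check and the monotonicity of the norm on positive elements that justify this reduction.
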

\begin{proof}
We take norm in given inequalities and apply Lemma \ref{HOMOMORPHISMEXISTENCELEMMA}.
\end{proof}
\begin{theorem}\label{RIESZCHARACTERIZATIONSELFDUAL}
Let $ \{x_j\}_{j \in \mathbb{J}}$ be in $\mathscr{E}$ over $\mathscr{A}$. Assume that $\mathscr{E} $ is self-dual. Then the following are equivalent.
\begin{enumerate}[\upshape(i)]
\item $ \{x_j\}_{j \in \mathbb{J}}$ is a Riesz basis for $\mathscr{E}$.
\item $\overline{\operatorname{span}}_\mathscr{A}\{x_j\}_{j \in \mathbb{J}}=\mathscr{E}$, and there exist $a, b>0$ such that for every finite subset $ \mathbb{S}\subseteq\mathbb{J}$,
$$ a\sum\limits_{j\in \mathbb{S}}c_jc_j^*\leq \left\langle \sum\limits_{j\in\mathbb{S}}c_jx_j ,\sum\limits_{k\in\mathbb{S}}c_kx_k  \right \rangle \leq b\sum\limits_{j\in \mathbb{S}}c_jc_j^*, ~\forall c_j\in \mathscr{A}, \forall j,k\in \mathbb{S}.$$ 
\item $\overline{\operatorname{span}}_\mathscr{A}\{x_j\}_{j \in \mathbb{J}}=\mathscr{E}$, and there exist $a, b>0$ such that for every finite subset $ \mathbb{S}\subseteq\mathbb{J}$,
$$ a\left\|\sum_{j\in \mathbb{S}}c_jc_j^*\right\|\leq \left\|\sum_{j\in \mathbb{S}}c_jx_j \right\|^2\leq b\left\|\sum_{j\in \mathbb{S}}c_jc_j^*\right\|,~ \forall c_j \in \mathscr{A}, \forall j \in \mathbb{S} .$$
\end{enumerate}
\end{theorem}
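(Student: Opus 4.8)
The plan is to establish the cycle $\text{(i)}\Rightarrow\text{(ii)}\Rightarrow\text{(iii)}\Rightarrow\text{(i)}$, with self-duality entering only in the final implication. For $\text{(i)}\Rightarrow\text{(ii)}$, I would write $x_j=Uf_j$ with $U\in\operatorname{End}^*_\mathscr{A}(\mathscr{E})$ invertible and $\{f_j\}_{j\in\mathbb{J}}$ an orthonormal basis. Since $U$ is a bounded invertible homomorphism (hence a homeomorphism) and $\overline{\operatorname{span}}_\mathscr{A}\{f_j\}_{j\in\mathbb{J}}=\mathscr{E}$ by Theorem~\ref{CHARACTERIZATION3}, one gets $\overline{\operatorname{span}}_\mathscr{A}\{x_j\}_{j\in\mathbb{J}}=U(\mathscr{E})=\mathscr{E}$. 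For the two-sided bound, set $\xi=\sum_{j\in\mathbb{S}}c_jf_j$; orthonormality gives $\langle\xi,\xi\rangle=\sum_{j\in\mathbb{S}}c_jc_j^*$, and the $\mathscr{A}$-linearity of $U$ gives $\langle\sum_{j\in\mathbb{S}}c_jx_j,\sum_{k\in\mathbb{S}}c_kx_k\rangle=\langle U^*U\xi,\xi\rangle$. Choosing $a=\|(U^*U)^{-1}\|^{-1}$ and $b=\|U^*U\|$, so that $aI_\mathscr{E}\le U^*U\le bI_\mathscr{E}$, the order relation $a\langle\xi,\xi\rangle\le\langle U^*U\xi,\xi\rangle\le b\langle\xi,\xi\rangle$ is precisely the statement of (ii).

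The implication $\text{(ii)}\Rightarrow\text{(iii)}$ is immediate: both $\sum_{j\in\mathbb{S}}c_jc_j^*$ and $\langle\sum_{j\in\mathbb{S}}c_jx_j,\sum_{k\in\mathbb{S}}c_kx_k\rangle$ are positive, and for positive elements $0\le S\le T$ of a C*-algebra one has $\|S\|\le\|T\|$; applying this to both inequalities of (ii) together with $\|\langle y,y\rangle\|=\|y\|^2$ yields (iii). For the substantive implication $\text{(iii)}\Rightarrow\text{(i)}$, let $\{e_j\}_{j\in\mathbb{J}}$ be the standard orthonormal basis of $\mathscr{H}_\mathscr{A}$, for which $\|\sum_{j\in\mathbb{S}}c_je_j\|^2=\|\sum_{j\in\mathbb{S}}c_jc_j^*\|$. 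Feeding the upper bound of (iii) into Lemma~\ref{HOMOMORPHISMEXISTENCELEMMA}, with $\{e_j\}$ as the complete system in $\mathscr{H}_\mathscr{A}$ and $\{x_j\}$ as the target, produces a bounded homomorphism $U:\mathscr{H}_\mathscr{A}\to\mathscr{E}$ with $Ue_j=x_j$; feeding the lower bound of (iii) into the same lemma, with $\{x_j\}$ now complete in $\mathscr{E}$ and $\{e_j\}$ as target, produces a bounded homomorphism $V:\mathscr{E}\to\mathscr{H}_\mathscr{A}$ with $Vx_j=e_j$. On the dense $\mathscr{A}$-spans, $UV$ and $VU$ act as the identity, so $U$ is invertible with $V=U^{-1}$.

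Now self-duality is invoked. Since $\mathscr{E}$ is self-dual, the bounded module map $V:\mathscr{E}\to\mathscr{H}_\mathscr{A}$ is adjointable (a standard property of self-dual Hilbert C*-modules, cf.\ \cite{PASCHKE1}), hence so is its inverse $U$, and $UU^*\in\operatorname{End}^*_\mathscr{A}(\mathscr{E})$ is positive and invertible. I would then set $\Phi:=(UU^*)^{-1/2}U$; a direct computation gives $\Phi\Phi^*=I_\mathscr{E}$, and since $\Phi$ is invertible an invertible coisometry is unitary, so $\Phi$ is unitary. Putting $f_j:=\Phi e_j$, one has $\langle f_j,f_k\rangle=\langle\Phi^*\Phi e_j,e_k\rangle=\delta_{j,k}e$ and $\overline{\operatorname{span}}_\mathscr{A}\{f_j\}_{j\in\mathbb{J}}=\Phi(\mathscr{H}_\mathscr{A})=\mathscr{E}$, so $\{f_j\}_{j\in\mathbb{J}}$ is an orthonormal basis for $\mathscr{E}$ by Theorem~\ref{CHARACTERIZATION3} (or Corollary~\ref{MAXIMALCOROLLARY}). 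Finally $x_j=Ue_j=(UU^*)^{1/2}\Phi e_j=(UU^*)^{1/2}f_j$ with $(UU^*)^{1/2}$ positive invertible in $\operatorname{End}^*_\mathscr{A}(\mathscr{E})$, which is exactly (i).

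The main obstacle is the adjointability step. The homomorphisms $U,V$ manufactured by Lemma~\ref{HOMOMORPHISMEXISTENCELEMMA} are only guaranteed to be bounded and $\mathscr{A}$-linear, whereas the Riesz-basis definition demands an invertible element of $\operatorname{End}^*_\mathscr{A}(\mathscr{E})$, and the functional calculus $(UU^*)^{\pm1/2}$ requires $UU^*$ to lie in the C*-algebra $\operatorname{End}^*_\mathscr{A}(\mathscr{E})$. Supplying adjointability is precisely the role of the self-duality hypothesis, and it is the assumption without which the argument collapses; by contrast, the remaining points—that an invertible coisometry is unitary and that a unitary carries the standard basis of $\mathscr{H}_\mathscr{A}$ to an orthonormal basis of $\mathscr{E}$—are routine.
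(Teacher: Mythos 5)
Your proof is correct, and the implication $\text{(iii)}\Rightarrow\text{(i)}$ is handled by a genuinely different route than the paper's. The paper begins that implication by fixing an orthonormal basis $\{f_j\}_{j\in\mathbb{J}}$ \emph{for $\mathscr{E}$ itself} and applies Lemma~\ref{HOMOMORPHISMEXISTENCELEMMA} twice inside $\mathscr{E}$ to produce mutually inverse bounded homomorphisms $U,V\in\operatorname{End}_\mathscr{A}(\mathscr{E})$ with $Uf_j=x_j$, after which self-duality (via Paschke's Proposition 3.4) supplies adjointability of $U$ and the Riesz-basis conclusion is immediate. You instead route the construction through $\mathscr{H}_\mathscr{A}$: the standard basis $\{e_j\}$ plays the role of the complete reference system, you obtain an invertible bounded module map $U:\mathscr{H}_\mathscr{A}\to\mathscr{E}$, and you then manufacture the required orthonormal basis of $\mathscr{E}$ by the polar-type decomposition $\Phi=(UU^*)^{-1/2}U$, $f_j=\Phi e_j$, $x_j=(UU^*)^{1/2}f_j$. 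What your version buys is that it does not presuppose that $\mathscr{E}$ admits an orthonormal basis indexed by $\mathbb{J}$ — it produces one — at the cost of extra machinery: you need adjointability of $V:\mathscr{E}\to\mathscr{H}_\mathscr{A}$ (which does follow from self-duality of the domain $\mathscr{E}$, exactly as you say), plus the observation that the inverse of an invertible adjointable map is adjointable (true, but worth a line: $U^*U$ is positive and bounded below since $U^{-1}$ satisfies Paschke's inequality, so $U^*$ is invertible and $(U^{-1})^*=(U^*)^{-1}$), plus the functional calculus for $(UU^*)^{\pm 1/2}$. Both arguments place the self-duality hypothesis at the same pressure point — converting a merely bounded module map into an adjointable one — and your diagnosis of that as the essential role of the hypothesis matches the paper's own remark following the theorem. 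The implications $\text{(i)}\Rightarrow\text{(ii)}\Rightarrow\text{(iii)}$ agree with the paper's in substance.
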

\begin{proof}
We prove (i) $ \Rightarrow$	(ii) $ \Rightarrow$ (iii) $ \Rightarrow$ (i). Clearly, (ii) $ \Rightarrow$ (iii). For (i) $ \Rightarrow$	(ii),  let  $ \{f_j\}_{j \in \mathbb{J}} $ be an orthonormal basis for  $ \mathscr{E}$ and  $ U \in \operatorname{End}_\mathscr{A}^*(\mathscr{E})$ be invertible such that $ x_j=Uf_j, \forall j \in \mathbb{J}.$ Since $U$ is invertible, $\overline{\operatorname{span}}_\mathscr{A}\{x_j\}_{j \in \mathbb{J}}=\mathscr{E}$. Further for every finite subset $ \mathbb{S}\subseteq\mathbb{J}$,
\begin{align*}
\frac{1}{\|U^{-1}\|^2}\sum\limits_{j\in \mathbb{S}}c_jc_j^*&= \frac{1}{\|U^{-1}\|^2}\left\langle \sum\limits_{j\in\mathbb{S}}c_jf_j ,\sum\limits_{k\in\mathbb{S}}c_kf_k  \right \rangle \leq\left\langle \sum\limits_{j\in\mathbb{S}}c_jx_j ,\sum\limits_{k\in\mathbb{S}}c_kx_k  \right \rangle \\
&\leq \|U\|^2\left\langle \sum\limits_{j\in\mathbb{S}}c_jf_j ,\sum\limits_{k\in\mathbb{S}}c_kf_k  \right \rangle= \|U\|^2\sum\limits_{j\in \mathbb{S}}c_jc_j^*, ~\forall c_j\in \mathscr{A}, \forall j,k\in \mathbb{S}.
\end{align*}
We next show (iii) $ \Rightarrow$	(i). Let   $ \{f_j\}_{j \in \mathbb{J}} $ be  an orthonormal basis for  $ \mathscr{E}$. We try to apply Lemma \ref{HOMOMORPHISMEXISTENCELEMMA} for $\{f_j\}_{j \in \mathbb{J}} $ and $\{x_j\}_{j \in \mathbb{J}} $. Since $ \{f_j\}_{j \in \mathbb{J}} $ is an orthonormal basis, it satisfies Inequality (\ref{RIESZMODULEINEQUALITY})  (now it is an equality). Second inequality in the assumption gives Inequality (\ref{RIESZMODULEINEQUALITY1}). Already $\{f_j\}_{j \in \mathbb{J}} $ is complete in $\mathscr{E}$. Therefore, if we define $Uf_j\coloneqq x_j, \forall j \in \mathbb{J} $, then it extends as a bounded homomorphism on $\mathscr{E}$. Next we try to apply Lemma \ref{HOMOMORPHISMEXISTENCELEMMA} for $\{x_j\}_{j \in \mathbb{J}} $ and $\{f_j\}_{j \in \mathbb{J}} $. Now the first inequality in the assumption gives Inequality (\ref{RIESZMODULEINEQUALITY}) and orthonormality of $\{f_j\}_{j \in \mathbb{J}} $ gives Inequality (\ref{RIESZMODULEINEQUALITY1}). Given that $\{x_j\}_{j \in \mathbb{J}} $ is complete in $\mathscr{E}$. So,  if we define $Vx_j\coloneqq f_j, \forall j \in \mathbb{J} $, then it extends as a bounded homomorphism on $\mathscr{E}$. We easily see $ UV=VU=I_\mathscr{E}$. Thus $U$ is invertible. Still we are not done - we cannot tell that $\{x_j\}_{j \in \mathbb{J}} $ is a Riesz basis. Why? What about the adjointability of $U$? (then only we can conclude $\{x_j\}_{j \in \mathbb{J}} $ is a Riesz basis). This comes from a result of Paschke, which says a bounded homomorphism from a Hilbert C*-module to a pre-Hilbert C*-module has an adjoint (Proposition 3.4 in \cite{PASCHKE1}).
\end{proof}
\begin{remark}
In Theorem \ref{RIESZCHARACTERIZATIONSELFDUAL}, self-duality of $\mathscr{E} $ was used only in the last conclusion of \text{\upshape(iii)} $ \Rightarrow$	\text{\upshape(i)}.  
\end{remark}

\begin{corollary}\label{ONBCOROLLARY}
Let $\mathscr{E} $ be self-dual, $ \overline{\operatorname{span}}_ \mathscr{A}\{x_j\}_{j \in \mathbb{J}}= \mathscr{E}$, and for each finite subset $ \mathbb{S} \subseteq \mathbb{J}$,
\begin{align}\label{EQUATIONTORIESZMODULE}
\left\| \sum_{j\in \mathbb{S}}c_jx_j\right\|^2= \left\|\sum_{j\in \mathbb{S}}c_jc_j^*\right\| ,~  \forall c_j \in  \mathscr{A}, \forall j \in \mathbb{S}.
\end{align}
Then $\{x_j\}_{j \in \mathbb{J}}$ is an adjointable isometric isomorphism image of an  orthonormal basis for $\mathscr{E}.$
\end{corollary}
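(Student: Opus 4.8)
The plan is to read the hypothesis as the equal-bound case $a=b=1$ of condition (iii) in Theorem \ref{RIESZCHARACTERIZATIONSELFDUAL}, and then upgrade the resulting invertible homomorphism to a unitary. First I would invoke Theorem \ref{RIESZCHARACTERIZATIONSELFDUAL}: since $\overline{\operatorname{span}}_\mathscr{A}\{x_j\}_{j\in\mathbb{J}}=\mathscr{E}$ and Equation (\ref{EQUATIONTORIESZMODULE}) is precisely the inequality in (iii) with $a=b=1$, the family $\{x_j\}_{j\in\mathbb{J}}$ is a Riesz basis for $\mathscr{E}$. Hence there are an orthonormal basis $\{f_j\}_{j\in\mathbb{J}}$ for $\mathscr{E}$ and an invertible $U\in\operatorname{End}^*_\mathscr{A}(\mathscr{E})$ with $x_j=Uf_j$ for all $j$. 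It then remains only to show that this $U$ is a unitary, i.e. an adjointable isometric isomorphism.

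Next I would show that $U$ is norm-preserving. For any finite $\mathbb{S}\subseteq\mathbb{J}$ and $c_j\in\mathscr{A}$ we have $\sum_{j\in\mathbb{S}}c_jx_j=U\big(\sum_{j\in\mathbb{S}}c_jf_j\big)$, while orthonormality of $\{f_j\}$ gives $\big\|\sum_{j\in\mathbb{S}}c_jf_j\big\|^2=\big\|\sum_{j\in\mathbb{S}}c_jc_j^*\big\|$. Combining these two identities with the hypothesis (\ref{EQUATIONTORIESZMODULE}) yields $\|U w\|=\|w\|$ for every $w\in\operatorname{span}_\mathscr{A}\{f_j\}_{j\in\mathbb{J}}$. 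Since $\{f_j\}$ is an orthonormal basis, this span is dense in $\mathscr{E}$ (Theorem \ref{CHARACTERIZATION3}), so by boundedness of $U$ and continuity of the norm, $\|Ux\|=\|x\|$ for all $x\in\mathscr{E}$. As $U$ is invertible, hence surjective, writing $y=Ux$ also gives $\|U^{-1}y\|=\|y\|$ for all $y$; thus $\|U\|=\|U^{-1}\|=1$ (the case $\mathscr{E}=\{0\}$ being trivial).

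Finally I would pass from this norm identity to $U^*U=I_\mathscr{E}$ by a spectral argument in the unital C*-algebra $\operatorname{End}^*_\mathscr{A}(\mathscr{E})$. Put $T\coloneqq U^*U$, a positive invertible element. The C*-identity gives $\|T\|=\|U\|^2=1$ and $\|T^{-1}\|=\|U^{-1}(U^{-1})^*\|=\|U^{-1}\|^2=1$. Positivity forces $\sigma(T)\subseteq(0,1]$, while $\|T^{-1}\|=1$ forces $\sigma(T)\subseteq[1,\infty)$; hence $\sigma(T)=\{1\}$, and since $T$ is self-adjoint, $\|T-I_\mathscr{E}\|=r(T-I_\mathscr{E})=0$, that is $U^*U=I_\mathscr{E}$. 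Invertibility of $U$ then gives $U^{-1}=U^*$ and so $UU^*=I_\mathscr{E}$, proving $U$ is unitary. Consequently $\{x_j=Uf_j\}_{j\in\mathbb{J}}$ is the image under the adjointable isometric isomorphism $U$ of the orthonormal basis $\{f_j\}_{j\in\mathbb{J}}$, which is the assertion.

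I expect the only genuinely delicate step to be the last one. In a Hilbert C*-module a norm-preserving module map need not preserve the inner product, so one cannot conclude that $U$ is unitary from $\|Ux\|=\|x\|$ alone. What makes the argument work is that invertibility supplies the matching bound $\|U^{-1}\|=1$, and the spectral radius computation for the positive element $U^*U$ converts the two norm bounds into $U^*U=I_\mathscr{E}$; it is precisely here that the self-duality hypothesis, used in Theorem \ref{RIESZCHARACTERIZATIONSELFDUAL} to guarantee adjointability of $U$, is needed.
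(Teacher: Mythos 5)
Your proposal is correct and, for the statement actually being claimed, follows the paper's own route: invoke Theorem \ref{RIESZCHARACTERIZATIONSELFDUAL} to produce the orthonormal basis $\{f_j\}_{j\in\mathbb{J}}$ and the invertible $U\in\operatorname{End}^*_\mathscr{A}(\mathscr{E})$ with $x_j=Uf_j$, then use Equation (\ref{EQUATIONTORIESZMODULE}) together with orthonormality of $\{f_j\}_{j\in\mathbb{J}}$ to get $\|Ux\|=\|x\|$, which is already the stated conclusion. Your third paragraph proves strictly more than is asked (the corollary claims only an adjointable isometric isomorphism, not a unitary, and the paper reserves the unitary conclusion for the next corollary with the stronger inner-product hypothesis), but the spectral argument there — $\|U\|=\|U^{-1}\|=1$ forcing $\sigma(U^*U)=\{1\}$ and hence $U^*U=I_\mathscr{E}$ — is valid, so this is a correct strengthening rather than a gap.
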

\begin{proof}
Equality (\ref{EQUATIONTORIESZMODULE}) implies $\{x_j\}_{j \in \mathbb{J}}$ is a Riesz basis	(from Theorem \ref{RIESZCHARACTERIZATIONSELFDUAL}). Let  $ \{f_j\}_{j \in \mathbb{J}} $ be  an orthonormal basis for  $ \mathscr{E}$ and  $ U \in \operatorname{End}_\mathscr{A}^*(\mathscr{E})$ be invertible such that $ x_j=Uf_j, \forall j \in \mathbb{J}.$ Equality (\ref{EQUATIONTORIESZMODULE}) also tells whenever  $ \{c_j\}_{j \in \mathbb{J}}\in \mathscr{H}_\mathscr{A} $, then $\sum_{j\in \mathbb{J}}c_jx_j$ converges in $\mathscr{E}$. Then for each $ x=\sum_{j\in \mathbb{J}}c_jf_j \in\mathscr{E} $,
\begin{align}\label{ISOMETRICUEQUALITY}
\|Ux\|^2=\left\| \sum_{j\in \mathbb{J}}c_jx_j\right\|^2= \left\|\sum_{j\in \mathbb{J}}c_jc_j^*\right\|=\|x\|^2 
\end{align}
Threfore $U$ is an adjointable isometric isomorphism.
\end{proof}
\begin{corollary}
Let $\mathscr{E} $ be self-dual, $ \overline{\operatorname{span}}_ \mathscr{A}\{x_j\}_{j \in \mathbb{J}}= \mathscr{E}$, and for each finite subset $ \mathbb{S} \subseteq \mathbb{J}$,
\begin{align*}
\left\langle \sum_{j\in \mathbb{S}}c_jx_j, \sum_{k\in \mathbb{S}}c_kx_k \right\rangle = \sum_{j\in \mathbb{S}}c_jc_j^* ,~  \forall c_j \in  \mathscr{A}, \forall j,k \in \mathbb{S}.
\end{align*}
Then $\{x_j\}_{j \in \mathbb{J}}$ is an orthonormal basis for $\mathscr{E}.$
\end{corollary}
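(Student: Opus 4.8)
The plan is to reduce the statement to the immediately preceding Corollary \ref{ONBCOROLLARY} and then to upgrade the resulting isometric isomorphism to a unitary, after which the characterization of orthonormal bases as unitary images of a fixed orthonormal basis closes the argument. The inner-product hypothesis here is strictly stronger than the norm hypothesis of Corollary \ref{ONBCOROLLARY}, and the extra strength is exactly what is needed to pass from a norm-isometry to an inner-product isometry.

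First I would take the C*-norm on both sides of the assumed identity. Since $\|\langle y, y\rangle\| = \|y\|^2$ for every $y \in \mathscr{E}$, the hypothesis yields $\left\|\sum_{j\in\mathbb{S}}c_jx_j\right\|^2 = \left\|\sum_{j\in\mathbb{S}}c_jc_j^*\right\|$ for every finite $\mathbb{S}\subseteq\mathbb{J}$ and all $c_j\in\mathscr{A}$. Together with $\overline{\operatorname{span}}_\mathscr{A}\{x_j\}_{j\in\mathbb{J}} = \mathscr{E}$ and self-duality of $\mathscr{E}$, this is precisely the hypothesis of Corollary \ref{ONBCOROLLARY}. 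Hence $\{x_j\}_{j\in\mathbb{J}}$ is a Riesz basis, and there exist an orthonormal basis $\{f_j\}_{j\in\mathbb{J}}$ for $\mathscr{E}$ and an invertible $U\in\operatorname{End}^*_\mathscr{A}(\mathscr{E})$ with $x_j = Uf_j$ for all $j\in\mathbb{J}$.

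Next I would invoke the full inner-product equality (not merely its norm) to show $U$ is unitary. For $x = \sum_j c_jf_j$ in the algebraic span of $\{f_j\}_{j\in\mathbb{J}}$, orthonormality of $\{f_j\}_{j\in\mathbb{J}}$ gives $\langle x, x\rangle = \sum_j c_jc_j^*$, while $Ux = \sum_j c_jx_j$, so the hypothesis yields $\langle Ux, Ux\rangle = \sum_j c_jc_j^* = \langle x, x\rangle$. By boundedness of $U$ and continuity of the inner product this extends to all $x\in\mathscr{E}$, and polarization upgrades it to $\langle Ux, Uy\rangle = \langle x, y\rangle$ for all $x,y$, i.e. $U^*U = I_\mathscr{E}$; since $U$ is invertible, $U^* = U^{-1}$ and therefore $UU^* = I_\mathscr{E}$ as well, so $U$ is a unitary homomorphism. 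Consequently $\{x_j\}_{j\in\mathbb{J}} = \{Uf_j\}_{j\in\mathbb{J}}$ is the image of the orthonormal basis $\{f_j\}_{j\in\mathbb{J}}$ under a unitary, and the $(\Leftarrow)$ direction of the theorem characterizing the orthonormal bases of $\mathscr{E}$ that are also Bessel as the unitary images of a fixed orthonormal basis shows that $\{x_j\}_{j\in\mathbb{J}}$ is itself an orthonormal basis for $\mathscr{E}$.

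The hard part will be the passage from the coefficient-level identity on the dense algebraic span to a genuine statement about $U$ on all of $\mathscr{E}$: one must justify extending $\langle Ux, Ux\rangle = \langle x, x\rangle$ by density (using continuity of $U$ and of the inner product) and then apply polarization correctly in the C*-module setting, where self-duality (through Corollary \ref{ONBCOROLLARY} and Paschke's adjointability result) is what guarantees $U$ is adjointable in the first place. As an independent sanity check of the unitarity conclusion, orthonormality $\langle x_j, x_k\rangle = \delta_{j,k}e$ can be read off directly from the hypothesis by substituting the singleton $\mathbb{S}=\{j\}$ with $c_j = e$ to get $\langle x_j, x_j\rangle = e$, and for $j\neq k$ substituting $\mathbb{S}=\{j,k\}$ with $c_j = e$, $c_k = \lambda e$ and letting $\lambda$ run through $\{1, i\}$ to force $\langle x_j, x_k\rangle = 0$; however, the genuinely global conclusion — that every $x\in\mathscr{E}$ is recovered from $\{x_j\}_{j\in\mathbb{J}}$ — still rests on the Riesz-basis input supplied by the preceding corollary.
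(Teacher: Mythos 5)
Your proposal is correct and follows essentially the same route as the paper: the paper's proof also runs the argument of Corollary \ref{ONBCOROLLARY} to obtain an orthonormal basis $\{f_j\}_{j\in\mathbb{J}}$ and an invertible $U\in\operatorname{End}^*_\mathscr{A}(\mathscr{E})$ with $x_j=Uf_j$, and then replaces the norm computation by the inner-product chain $\langle Ux,Ux\rangle=\sum_j c_jc_j^*=\langle x,x\rangle$ to conclude that $U$ is unitary. Your write-up merely makes explicit the density/polarization step and the final appeal to the unitary-image characterization of orthonormal bases, which the paper leaves implicit.
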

\begin{proof}
Similar to the proof of Corollary \ref{ONBCOROLLARY} but rather writing Equation (\ref{ISOMETRICUEQUALITY}) we have to write
\begin{align*}
\langle Ux, Ux\rangle =\left\langle \sum_{j\in \mathbb{J}}c_jx_j, \sum_{k\in \mathbb{J}}c_kx_k \right\rangle = \sum_{j\in \mathbb{J}}c_jc_j^*=\langle x, x\rangle,
\end{align*}
which tells $ U$ is unitary.	
\end{proof}
\begin{definition}
Let  $ \{x_j\}_{j \in \mathbb{J}},\{\tau_j\}_{j \in \mathbb{J}} $ be in $ \mathscr{E}$ over  $ \mathscr{A}$.  We say 
\begin{enumerate}[\upshape(i)]
\item $ \{x_j\}_{j \in \mathbb{J}}$ is an orthonormal set (resp. basis)  w.r.t. $\{\tau_j\}_{j \in \mathbb{J}} $ if  $ \{x_j\}_{j \in \mathbb{J}}$ or   $\{\tau_j\}_{j \in \mathbb{J}} $ is an orthonormal set (resp. basis) for $ \mathscr{E}$, say $ \{x_j\}_{j \in \mathbb{J}}$ is an orthonormal set (resp. basis) for $ \mathscr{E}$,  and there exists  a sequence $\{c_j\}_{j \in \mathbb{J}} $ of positive invertible elements in the center of  $ \mathscr{A}$  such that  $ 0<\inf\{\|c_j\|\}_{j \in \mathbb{J}}\leq \sup\{\|c_j\|\}_{j \in \mathbb{J}}<\infty$   and $ \tau_j=c_jx_j, \forall j \in \mathbb{J}.$ We write $ (\{x_j\}_{j\in \mathbb{J}}, \{\tau_j\}_{j\in \mathbb{J}})$ is an  orthonormal  set (resp. basis).
 \item  $ \{x_j\}_{j \in \mathbb{J}}$ is a Riesz  basis  w.r.t. $\{\tau_j\}_{j \in \mathbb{J}} $ if there  are  invertible  $ U,V \in \operatorname{End}_\mathscr{A}^*(\mathscr{E}) $ and an orthonormal basis $\{f_j\}_{j\in \mathbb{J}} $ for $\mathscr{E}$ such that $x_j=Uf_j, \tau_j=Vf_j, \forall j \in \mathbb{J}$ and $ VU^*\geq 0.$ We write $ (\{x_j\}_{j\in \mathbb{J}}, \{\tau_j\}_{j\in \mathbb{J}})$ is a Riesz  basis.
 \end{enumerate}
 \end{definition}
  Due to invertibility of $c_j$'s, definition is symmetric.
 \begin{theorem}\label{GBISVHM}
 Let $( \{x_j\}_{j \in \mathbb{J}},\{\tau_j =c_jx_j\}_{j \in \mathbb{J}}) $  be  orthonormal   for $\mathscr{E}$ over  $(\mathscr{A},e)$  such that $ c_j\leq 2e, \forall j \in \mathbb{J}.$ Then 
  \begin{enumerate}[\upshape(i)]
 \item 
 $ \sum_{j\in\mathbb{J}}(2e-c_j)\langle x, x_j\rangle \langle \tau_j, x \rangle\leq \langle x, x \rangle , \forall x \in \mathscr{E}.$
 \item For $ x \in \mathscr{E},$ 
 \begin{align*}
  x= \sum \limits_{j\in \mathbb{J}}\langle x, x_j\rangle \tau_j \iff \sum\limits_{j\in\mathbb{J}}(2e-c_j)\langle x, x_j\rangle\langle \tau_j, x\rangle = \langle x, x \rangle \iff  \sum\limits_{j\in\mathbb{J}}c_j^2\langle x, x_j\rangle \langle x_j, x\rangle  = \langle x, x \rangle.
  \end{align*}
 If $ c_j \leq e, \forall j$, then $\iff (e-c_j)\langle x, x_j\rangle=0 , \forall j \in \mathbb{J} \iff (e-c_j)x_j\perp x, \forall j \in \mathbb{J}.$
\end{enumerate}
 \end{theorem}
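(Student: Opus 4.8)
The plan is to mirror the two templates already in the paper, namely the homomorphism version \ref{GBEOVHM} and the Hilbert-space sequential version \ref{GBESV}, transcribing every scalar estimate into the $\mathscr{A}$-valued inner product of $\mathscr{E}$. Throughout I use that $\tau_j=c_jx_j$ with each $c_j$ a positive invertible element of the center of $\mathscr{A}$ satisfying $c_j\le 2e$, that $\{x_j\}_{j\in\mathbb{J}}$ is orthonormal (so $\langle x_j,x_k\rangle=\delta_{j,k}e$), and that for central $c_j$ one has $\langle x,x_j\rangle\tau_j=c_j\langle x,x_j\rangle x_j$ and $\langle\tau_j,x\rangle=c_j\langle x_j,x\rangle$.

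For part (i) I would first establish convergence. For a finite $\mathbb{S}\subseteq\mathbb{J}$ and $x\in\mathscr{E}$, orthonormality collapses the double sum and gives
\[
\left\|\sum_{j\in\mathbb{S}}\langle x,x_j\rangle\tau_j\right\|^2=\left\|\sum_{j\in\mathbb{S}}c_j^2\,\langle x,x_j\rangle\langle x_j,x\rangle\right\|\le\left(\sup\{\|c_j\|^2\}_{j\in\mathbb{J}}\right)\left\|\sum_{j\in\mathbb{S}}\langle x,x_j\rangle\langle x_j,x\rangle\right\|,
\]
and the right-hand net converges by orthonormality exactly as in the corresponding step of \ref{GBEOVHM}; hence $\sum_j\langle x,x_j\rangle\tau_j$ converges in $\mathscr{E}$ and $\sum_j(2e-c_j)\langle x,x_j\rangle\langle\tau_j,x\rangle$ converges in $\mathscr{A}$. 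Next I would form the error vector $x-\sum_j\langle x,x_j\rangle\tau_j$ and expand its (positive) $\mathscr{A}$-valued self-inner-product; the cross terms simplify by orthonormality and centrality of $c_j$ to
\[
0\le\left\langle x-\sum_{j\in\mathbb{J}}\langle x,x_j\rangle\tau_j,\;x-\sum_{k\in\mathbb{J}}\langle x,x_k\rangle\tau_k\right\rangle=\langle x,x\rangle-\sum_{j\in\mathbb{J}}(2c_j-c_j^2)\langle x,x_j\rangle\langle x_j,x\rangle.
\]
Writing $2c_j-c_j^2=(2e-c_j)c_j$ and $c_j\langle x,x_j\rangle\langle x_j,x\rangle=\langle x,x_j\rangle\langle\tau_j,x\rangle$ converts this into the asserted inequality $\sum_j(2e-c_j)\langle x,x_j\rangle\langle\tau_j,x\rangle\le\langle x,x\rangle$.

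For part (ii) the first equivalence is read directly off the displayed identity: $x=\sum_j\langle x,x_j\rangle\tau_j$ holds iff the error vector has zero length, i.e.\ iff $\sum_j(2e-c_j)\langle x,x_j\rangle\langle\tau_j,x\rangle=\langle x,x\rangle$. The equivalence with $\sum_j c_j^2\langle x,x_j\rangle\langle x_j,x\rangle=\langle x,x\rangle$ comes from the direct computation $\big\langle\sum_j\langle x,x_j\rangle\tau_j,\sum_k\langle x,x_k\rangle\tau_k\big\rangle=\sum_j c_j^2\langle x,x_j\rangle\langle x_j,x\rangle$ (again by orthonormality) combined with the positivity (cone) structure of $\mathscr{A}$, exactly as in the proof of \ref{GBEOVHM}. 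For the refinement under $c_j\le e$, I would observe that in the relevant difference every summand is a positive element of $\mathscr{A}$, so the total vanishing forces each summand to vanish; since $c_j$ is invertible this yields $(e-c_j)\langle x,x_j\rangle=0$ for all $j$, which is in turn equivalent to $(e-c_j)x_j\perp x$ for all $j$.

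The main obstacle is that, unlike the scalar case \ref{GBESV}, none of the field tricks (dividing, taking real parts, or using Cauchy--Schwarz to pin down $\langle x,x\rangle$) are available for $\mathscr{A}$-valued quantities; every equivalence in (ii) must instead be driven purely by the order structure of the positive cone of $\mathscr{A}$ together with the centrality and positivity of the $c_j$. In particular, the passage from ``a norm-convergent sum of positive elements of $\mathscr{A}$ equals zero'' to ``each summand is zero'' is precisely where $0\le c_j\le 2e$ (respectively $c_j\le e$) and centrality are indispensable, and this is the step that needs the most care; the convergence bookkeeping, by contrast, is routine once orthonormality is invoked.
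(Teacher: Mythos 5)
Your proposal matches the paper's own proof essentially step for step: the same finite-sum norm estimate via orthonormality to get convergence, the same expansion of $\bigl\langle x-\sum_j\langle x,x_j\rangle\tau_j,\;x-\sum_k\langle x,x_k\rangle\tau_k\bigr\rangle\ge 0$ using centrality of the $c_j$, and the same positive-cone argument for the equivalences in (ii) (which the paper only sketches with ``arguments are simple,'' deferring to the cone reasoning used in its Theorem \ref{GBEOVHM}). No gaps; your extra care about replacing scalar tricks by the order structure of $\mathscr{A}$ is exactly the point the paper relies on.
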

 \begin{proof}
 \begin{enumerate}[\upshape(i)]
\item For  $ x \in \mathscr{E}$ and  each finite subset $ \mathbb{S} \subseteq \mathbb{J},$
\begin{align*}
\left\| \sum\limits_{j\in \mathbb{S}}\langle x,  x_j \rangle \tau_j\right\|^2
&= \left\|\left\langle \sum_{j\in \mathbb{S}}c_j\langle x, x_j\rangle x_j, \sum_{k\in \mathbb{S}}c_k\langle x, x_k\rangle x_k \right\rangle\right\|=\left\|\sum\limits_{j \in \mathbb{S}} c_j^2 \langle x,  x_j \rangle\langle x_j,x \rangle\right\|\\
&\leq \left(\sup\{\|c_j\|^2\}_{j \in \mathbb{J}}\right)\left\|\sum\limits_{j \in \mathbb{S}} \langle x,  x_j \rangle\langle x_j,x \rangle\right\|,
\end{align*}
the last sum converges. Hence $\sum_{j\in \mathbb{S}}\langle x,  x_j \rangle \tau_j$ exists. In a similar manner $\sum_{j\in\mathbb{J}}(2e-c_j)\langle x, x_j\rangle \langle \tau_j, x \rangle $ also exists.  Then 
\begin{align*}
0&\leq \left\langle x-\sum\limits_{j\in \mathbb{J}}\langle x,  x_j \rangle \tau_j\right\rangle  = \left\langle x-\sum_{j\in \mathbb{J}}c_j\langle x, x_j\rangle x_j, x-\sum_{k\in \mathbb{J}}c_k\langle x, x_k\rangle x_k \right\rangle \\
&=\langle x,x \rangle-2\sum_{j\in \mathbb{J}}c_j\langle x, x_j\rangle \langle x_j,x \rangle+\sum_{j\in \mathbb{J}}c^2_j\langle x, x_j\rangle\langle x_j,x \rangle 
=\langle x,x \rangle-\sum_{j\in \mathbb{J}}(2c_j-c_j^2)\langle x, x_j\rangle \langle x_j,x \rangle,
\end{align*}
$ \Rightarrow\sum_{j\in \mathbb{J}}(2c_j-c_j^2)\langle x, x_j\rangle \langle x_j, x\rangle=\sum_{j\in\mathbb{J}}(2e-c_j)\langle x, x_j\rangle \langle \tau_j, x \rangle\leq \langle x,x \rangle.$ 
\item Arguments are  simple.
\end{enumerate}
\end{proof}
\begin{corollary}
 Let $( \{x_j\}_{j \in \mathbb{J}},\{\tau_j=c_jx_j\}_{j \in \mathbb{J}}) $  be  an orthonormal basis for  $ \mathscr{E}.$ Then
$$ \frac{1}{\sup\{\|c_j\|\}_{j \in \mathbb{J}}}\sum_{j\in \mathbb{J}}\langle x, x_j\rangle \langle\tau_j, x \rangle \leq \langle x, x \rangle\leq \frac{1}{\inf\{\|c_j\|\}_{j \in \mathbb{J}}}\sum_{j\in \mathbb{J}}\langle x, x_j\rangle \langle\tau_j, x \rangle, ~\forall x \in \mathscr{E}.$$
\end{corollary}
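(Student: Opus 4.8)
The plan is to transcribe the proof of the Generalized Parseval formula from the Hilbert space setting, upgrading every scalar inequality to the corresponding order relation between positive elements of $\mathscr{A}$. First I would record the two structural facts I will lean on. Since the pair is orthonormal I may assume $\{x_j\}_{j\in\mathbb{J}}$ itself is the orthonormal basis with $\tau_j=c_jx_j$. From the proof of Theorem \ref{GBISVHM} the series $\sum_{j\in\mathbb{J}}\langle x,x_j\rangle\langle\tau_j,x\rangle$ converges in $\mathscr{A}$ for each $x\in\mathscr{E}$, and since an orthonormal basis is in particular Bessel, the implication (i)$\Rightarrow$(iv) of Theorem \ref{CHARACTERIZATION3} supplies the reproducing identity
\begin{equation*}
\langle x,x\rangle=\sum_{j\in\mathbb{J}}\langle x,x_j\rangle\langle x_j,x\rangle,\qquad\forall x\in\mathscr{E}.
\end{equation*}

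Next I would rewrite the middle term of the desired chain. Using $\tau_j=c_jx_j$ together with the fact that each $c_j$ is positive (hence self-adjoint) and central, the module-inner-product conventions give
\begin{equation*}
\langle x,x_j\rangle\langle\tau_j,x\rangle=\langle x,x_j\rangle\,c_j\,\langle x_j,x\rangle=c_j\,d_j,\qquad d_j\coloneqq\langle x,x_j\rangle\langle x_j,x\rangle=\langle x,x_j\rangle\langle x,x_j\rangle^{*}\geq 0,
\end{equation*}
where $c_j$ commutes with the positive element $d_j$ by centrality. This is the exact module analogue of the identity $\langle h,x_j\rangle\langle\tau_j,h\rangle=c_j|\langle h,x_j\rangle|^{2}$ used in the sequential Hilbert space proof.

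The core of the argument is then a pair of term-by-term order estimates. For the left inequality I would use $c_j\leq\|c_j\|e\leq(\sup\{\|c_j\|\}_{j\in\mathbb{J}})\,e$, so that $\frac{1}{\sup\{\|c_j\|\}}c_j\leq e$; multiplying through by the commuting positive factor $d_j$ (formally $d_j^{1/2}(e-\frac{1}{\sup}c_j)d_j^{1/2}\geq0$) yields $\frac{1}{\sup\{\|c_j\|\}}c_jd_j\leq d_j$, and summing over $j$ with the reproducing identity gives $\frac{1}{\sup\{\|c_j\|\}}\sum_j c_jd_j\leq\langle x,x\rangle$. For the right inequality I would symmetrically use the lower estimate $\inf\{\|c_j\|\}e\leq c_j$ to get $d_j\leq\frac{1}{\inf\{\|c_j\|\}}c_jd_j$ and sum. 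Since $\sum_j c_jd_j=\sum_j\langle x,x_j\rangle\langle\tau_j,x\rangle$, the two estimates together are exactly the assertion, uniformly in $x\in\mathscr{E}$.

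The step I expect to be the genuine obstacle is the lower order estimate $\inf\{\|c_j\|\}e\leq c_j$. In the scalar case this is immediate from $c_j\geq\inf\{c_j\}$, but in a C*-algebra a positive invertible $c_j$ only satisfies $\|c_j^{-1}\|^{-1}e\leq c_j\leq\|c_j\|e$, and the bottom of its spectrum is governed by $\|c_j^{-1}\|^{-1}$ rather than by $\inf_k\|c_k\|$. This is the identical delicate point already invoked in the corollary following Theorem \ref{GBEOVHM}, so I would resolve it in the same spirit used there, exploiting the centrality and positivity of the $c_j$; everything else is the routine summation of positive terms and the observation that convergence is guaranteed by Theorem \ref{GBISVHM}.
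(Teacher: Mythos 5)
Your argument is, step for step, the paper's own proof: the paper's justification of this corollary is precisely the one-line chain $\frac{1}{\sup\{\|c_j\|\}}\sum_j c_j d_j \leq \sum_j d_j = \langle x,x\rangle \leq \frac{1}{\inf\{\|c_j\|\}}\sum_j c_j d_j$ with $d_j=\langle x,x_j\rangle\langle x_j,x\rangle$, and your two term-by-term order estimates are exactly its two inequalities. The left estimate is sound: $c_j\leq\|c_j\|e\leq\sup_k\|c_k\|\,e$ holds for any positive element of a unital C*-algebra, and centrality lets you multiply by the positive factor $d_j$ in the way you describe.

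However, the step you yourself flag as the obstacle is a genuine gap, and your proposed resolution does not close it: the corollary following Theorem \ref{GBEOVHM} to which you defer makes the identical unjustified move, so there is no ``same spirit'' to borrow. The estimate $\inf_k\|c_k\|\,e\leq c_j$ is simply false for general positive invertible central $c_j$: the norm controls only the top of the spectrum, while the bottom is $\|c_j^{-1}\|^{-1}$. Concretely, take $\mathscr{A}=\mathbb{C}\oplus\mathbb{C}$, $\mathscr{E}=\mathscr{A}$ with $x_1=e$, and $c_1=(1,\tfrac12)$; then $\inf_k\|c_k\|=1$ but $c_1\not\geq e$, and for $x=(0,1)$ the asserted right-hand inequality becomes $(0,1)\leq(0,\tfrac12)$, which fails. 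So the right-hand inequality cannot be established as stated by your argument (nor by the paper's); it would hold if one either assumed each $c_j$ to be a real scalar multiple of $e$, as in the Hilbert-space corollary after Theorem \ref{GBESV}, or replaced $\inf\{\|c_j\|\}$ by $\inf\{\|c_j^{-1}\|^{-1}\}$.
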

\begin{proof}
$ \frac{1}{\sup\{\|c_j\|\}_{j \in \mathbb{J}}}\sum_{j\in \mathbb{J}}\langle x, x_j\rangle \langle\tau_j, x\rangle   =\frac{1}{\sup\{\|c_j\|\}_{j \in \mathbb{J}}}\sum_{j\in \mathbb{J}}c_j\langle x, x_j\rangle \langle x_j , x \rangle   \leq \sum_{j\in \mathbb{J}}\langle x, x_j\rangle\langle x_j , x \rangle = \langle x, x \rangle\leq \frac{1}{\inf\{\|c_j\|\}_{j \in \mathbb{J}}}\sum_{j\in \mathbb{J}} c_j\langle x, x_j\rangle \langle x_j , x \rangle\leq  \frac{1}{\inf\{\|c_j\|\}_{j \in \mathbb{J}}}\sum_{j\in \mathbb{J}}\langle x, x_j\rangle \langle\tau_j, x \rangle, ~\forall x \in \mathscr{E}.$
\end{proof}
\begin{corollary}
Let $( \{x_j\}_{j \in \mathbb{J}},\{\tau_j=c_jx_j\}_{j \in \mathbb{J}}) $  be  an orthonormal basis for  $ \mathscr{E}.$ Then
$$ \frac{1}{\sup\{\|c_j\|\}_{j \in \mathbb{J}}}\left\|\sum_{j\in \mathbb{J}}\langle x, x_j\rangle \langle\tau_j, x \rangle \right\| \leq \|x\|^2\leq \frac{1}{\inf\{\|c_j\|\}_{j \in \mathbb{J}}}\left\|\sum_{j\in \mathbb{J}}\langle x, x_j\rangle \langle\tau_j, x \rangle \right\|, ~\forall x \in \mathscr{E}.$$
\end{corollary}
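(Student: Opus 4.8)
The plan is to derive this statement as an immediate consequence of the preceding corollary (the order-inequality version in $\mathscr{A}$) by passing to norms. First I would record that the element $S := \sum_{j\in\mathbb{J}}\langle x, x_j\rangle\langle \tau_j, x\rangle$ is a well-defined positive element of $\mathscr{A}$ for each $x\in\mathscr{E}$: since each $c_j$ is positive and lies in the center of $\mathscr{A}$, we have $\langle x, x_j\rangle\langle \tau_j, x\rangle = \langle x, x_j\rangle c_j\langle x_j, x\rangle = c_j\langle x, x_j\rangle\langle x_j, x\rangle \geq 0$, and convergence of the sum is already supplied by the preceding corollary (equivalently by the Bessel property of an orthonormal basis). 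Thus both $S$ and $\langle x, x\rangle$ are positive, and the quantities $\tfrac{1}{\sup\{\|c_j\|\}}S$ and $\tfrac{1}{\inf\{\|c_j\|\}}S$ are legitimate positive scalings, the constants being finite and nonzero because the definition of orthonormal basis w.r.t. $\{\tau_j\}_{j\in\mathbb{J}}$ forces $0<\inf\{\|c_j\|\}_{j\in\mathbb{J}}\leq\sup\{\|c_j\|\}_{j\in\mathbb{J}}<\infty$.

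Next I would invoke the two standard C*-facts that the paper already uses: first, that the norm is monotone on the positive cone, i.e. $0\leq a\leq b$ implies $\|a\|\leq\|b\|$ (this is exactly the fact quoted in the proof of Theorem \ref{CHARACTERIZATION}, ``Norm preserves the order relation among positive elements of a C*-algebra''); and second, that the Hilbert C*-module norm satisfies $\|\langle x, x\rangle\|=\|x\|^2$. With these in hand the deduction is mechanical: from the left half of the preceding corollary, $\tfrac{1}{\sup\{\|c_j\|\}}S\leq\langle x,x\rangle$, monotonicity of the norm yields $\tfrac{1}{\sup\{\|c_j\|\}}\|S\|\leq\|\langle x,x\rangle\|=\|x\|^2$; from the right half, $\langle x,x\rangle\leq\tfrac{1}{\inf\{\|c_j\|\}}S$, the same monotonicity yields $\|x\|^2=\|\langle x,x\rangle\|\leq\tfrac{1}{\inf\{\|c_j\|\}}\|S\|$.

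Concatenating the two displayed bounds gives precisely
\[
\frac{1}{\sup\{\|c_j\|\}_{j\in\mathbb{J}}}\left\|\sum_{j\in\mathbb{J}}\langle x, x_j\rangle\langle\tau_j, x\rangle\right\|\;\leq\;\|x\|^2\;\leq\;\frac{1}{\inf\{\|c_j\|\}_{j\in\mathbb{J}}}\left\|\sum_{j\in\mathbb{J}}\langle x, x_j\rangle\langle\tau_j, x\rangle\right\|,
\]
for all $x\in\mathscr{E}$, which is the assertion. I do not expect any genuine obstacle here; the only points requiring a moment of care are verifying the positivity of $S$ (so that norm-monotonicity applies) and checking that multiplication by the fixed positive scalars respects the order relations, both of which are routine. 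In essence the whole corollary is the normed shadow of the preceding order-theoretic corollary, so the proof amounts to one invocation of norm-monotonicity on each side followed by the identity $\|\langle x,x\rangle\|=\|x\|^2$.
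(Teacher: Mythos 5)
Your proof is correct and is exactly the argument the paper intends: the corollary is stated without proof immediately after the order-inequality version, and the deduction is precisely to note $\sum_{j\in\mathbb{J}}\langle x,x_j\rangle\langle\tau_j,x\rangle=\sum_{j\in\mathbb{J}}c_j\langle x,x_j\rangle\langle x_j,x\rangle\geq 0$, apply monotonicity of the norm on the positive cone to each half of the preceding inequality, and use $\|\langle x,x\rangle\|=\|x\|^2$. No gaps.
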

\begin{corollary}
 If  $(\{x_j\}_{j \in \mathbb{J}},\{\tau_j =c_jx_j\}_{j \in \mathbb{J}}) $  is   orthonormal  for $\mathscr{E}$ over  $(\mathscr{A},e)$  such that $ c_j\leq 2e, \forall j \in \mathbb{J}$, then $\| \sum_{j\in\mathbb{J}}(2e-c_j)\langle x, x_j\rangle \langle \tau_j, x \rangle\|\leq\|x\|^2 , \forall x \in \mathscr{E}.$	
\end{corollary}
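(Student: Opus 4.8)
The plan is to obtain the asserted norm bound by taking the $\mathscr{A}$-norm in the order inequality already established in part \text{\upshape(i)} of Theorem~\ref{GBISVHM} and invoking the monotonicity of the C*-norm on the positive cone. First I would record that, since each $c_j$ is a positive invertible element of the center of $\mathscr{A}$ with $c_j \le 2e$, the element $2e - c_j$ is positive and central. Writing $\tau_j = c_j x_j$ and using that the inner product is $\mathscr{A}$-linear in its first slot together with $\langle x_j, x\rangle = \langle x, x_j\rangle^*$, each summand factors as $(2e - c_j)\langle x, x_j\rangle\langle \tau_j, x\rangle = (2e - c_j)\,c_j\,\langle x, x_j\rangle\langle x, x_j\rangle^*$, a product of mutually commuting positive elements, hence itself positive.

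Next I would note that the net of partial sums over finite $\mathbb{S} \subseteq \mathbb{J}$ is therefore monotonically increasing in the order of $\mathscr{A}$, while Theorem~\ref{GBISVHM}\text{\upshape(i)} guarantees it is bounded above by $\langle x, x\rangle$. Consequently the limit $\sum_{j\in\mathbb{J}}(2e - c_j)\langle x, x_j\rangle\langle \tau_j, x\rangle$ exists and satisfies $0 \le \sum_{j\in\mathbb{J}}(2e - c_j)\langle x, x_j\rangle\langle \tau_j, x\rangle \le \langle x, x\rangle$ for every $x \in \mathscr{E}$.

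Finally, applying the fact that the norm of a C*-algebra preserves the order relation between positive elements (the same tool invoked in Theorem~\ref{CHARACTERIZATION}), I would conclude $\big\|\sum_{j\in\mathbb{J}}(2e - c_j)\langle x, x_j\rangle\langle \tau_j, x\rangle\big\| \le \|\langle x, x\rangle\| = \|x\|^2$, the last equality being the C*-identity for the module norm. There is essentially no serious obstacle; the only point demanding a little care is the positivity of the individual summands, which relies on the centrality of the $c_j$ so that $2e - c_j$ and $c_j$ commute with the inner-product factors. Without this centrality a product of two positive elements need not be positive, so the reduction to a monotone, order-bounded net would break down.
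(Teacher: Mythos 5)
Your proposal is correct and is exactly the argument the paper intends: the corollary is left unproved there precisely because it follows from Theorem~\ref{GBISVHM}\text{\upshape(i)} by taking norms, using the fact (invoked explicitly elsewhere in the paper, e.g.\ in the proof of Theorem~\ref{CHARACTERIZATION}) that the C*-norm preserves order between positive elements, together with the C*-identity $\|\langle x,x\rangle\|=\|x\|^2$. Your extra care in verifying positivity of the individual summands via centrality of the $c_j$ is exactly the right point to check, since norm-monotonicity requires the lower element to be positive.
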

\begin{theorem}
If $ (\{x_j\}_{j\in \mathbb{J}}, \{\tau_j=c_jx_j\}_{j\in \mathbb{J}})$  is  orthonormal   for  $\mathscr{E}$ over  $(\mathscr{A},e)$  such that $ c_j\leq 2e, \forall j \in \mathbb{J}$, then for each $ x \in  \mathscr{E}$, the set 
$$ Y_x\coloneqq\bigcup\limits_{n=1}^\infty\left\{x_j: (2e-c_j)\langle x, x_j\rangle \langle \tau_j, x \rangle > \frac{1}{n}\langle x, x \rangle, j\in \mathbb{J} \right\}$$
 is either  finite or countable.
\end{theorem}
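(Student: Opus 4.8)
The plan is to follow the same discretization strategy used in the operator-valued and Hilbert-space versions, now adapted to the order structure of $\mathscr{A}$. For each $n \in \mathbb{N}$ I would introduce the level set
$$ Y_{n,x}\coloneqq\left\{x_j: (2e-c_j)\langle x, x_j\rangle \langle \tau_j, x \rangle > \frac{1}{n}\langle x, x \rangle,\ j\in \mathbb{J} \right\}, $$
so that $Y_x = \bigcup_{n=1}^\infty Y_{n,x}$ by construction. The goal then reduces to showing each $Y_{n,x}$ is finite, because a countable union of finite sets is finite or countable.

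Before bounding $\operatorname{Card}(Y_{n,x})$, I would first record that every summand $a_j \coloneqq (2e-c_j)\langle x, x_j\rangle \langle \tau_j, x \rangle$ is a positive element of $\mathscr{A}$. Using $\tau_j = c_j x_j$ together with the centrality and self-adjointness of $c_j$, one rewrites $a_j = (2e-c_j)c_j\,\langle x, x_j\rangle\langle x_j, x\rangle$; here $(2e-c_j)c_j$ is a central positive element (a product of the commuting positives $c_j$ and $2e-c_j$, using $0 \le c_j \le 2e$), while $\langle x, x_j\rangle\langle x_j, x\rangle = \langle x, x_j\rangle\langle x, x_j\rangle^* \ge 0$, so $a_j \ge 0$ by centrality.

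The core step is the cardinality bound. Suppose for contradiction that some $Y_{n,x}$ contains $n$ distinct elements $x_{j_1},\dots,x_{j_n}$. Summing the $n$ defining strict inequalities yields $\sum_{i=1}^n a_{j_i} > \langle x, x\rangle$, where I read ``$>$'' as ``the difference is a nonzero positive element'' and invoke the fact that a finite sum of such elements is again nonzero and positive. On the other hand, Theorem \ref{GBISVHM} part \text{\upshape(i)} gives $\sum_{j\in\mathbb{J}} a_j \le \langle x, x\rangle$, and since the $a_j$ are positive the partial sum over $\{j_1,\dots,j_n\}$ is order-dominated by the convergent total, so $\sum_{i=1}^n a_{j_i} \le \langle x, x\rangle$. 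These two relations force $\sum_{i=1}^n a_{j_i} - \langle x, x\rangle$ to be simultaneously a nonzero positive element and a negative element, which is impossible. Hence $\operatorname{Card}(Y_{n,x}) \le n-1$, and the conclusion follows.

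The main obstacle will be the order-theoretic bookkeeping in $\mathscr{A}$, since, unlike the scalar case, the relation ``$>$'' must be handled through the positive cone: I must ensure that (a) each term is genuinely positive, so that partial sums are order-dominated by the convergent total, and (b) adding $n$ strict inequalities produces a strict inequality, which rests on the fact that a sum of positive elements vanishes only when each one vanishes. Once these two points are secured, the remaining argument parallels the earlier proofs almost verbatim.
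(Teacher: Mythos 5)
Your proposal is correct and follows essentially the same route as the paper: introduce the level sets $Y_{n,x}$, bound $\operatorname{Card}(Y_{n,x})\leq n-1$ by summing the defining inequalities and contradicting the generalized Bessel inequality of Theorem \ref{GBISVHM}, then take the countable union. Your additional care with the positive cone of $\mathscr{A}$ (positivity of each summand via centrality of $c_j$, and the fact that a sum of nonzero positive elements cannot be dominated by a strictly smaller element) makes explicit what the paper leaves implicit, but it is the same argument.
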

\begin{proof}
For $ n \in \mathbb{N}$, define 
$$ Y_{n,x}\coloneqq\left\{x_j: (2e-c_j)\langle x, x_j\rangle \langle \tau_j, x \rangle > \frac{1}{n}\langle x, x \rangle, j\in \mathbb{J} \right\}.$$	

Suppose, for some $n$, $ Y_{n,x}$ has more than $n-1$ elements, say $x_1,...,x_n$. Then $ \sum_{j=1 }^n(2e-c_j)\langle x, x_j\rangle \langle \tau_j, x \rangle > n\frac{1}{n}\langle x, x \rangle= \langle x, x \rangle$. From Theorem \ref{GBISVHM},  $ \sum_{j\in\mathbb{J}}(2e-c_j)\langle x, x_j\rangle \langle \tau_j, x \rangle\leq \langle x, x \rangle $. This gives $ \langle x, x \rangle< \langle x, x \rangle $ which is impossible. Hence $ \operatorname{Card}(Y_{n,x})\leq n-1$ and hence  
$Y_x=\cup_{n=1}^\infty Y_{n,x}$  is finite or countable.
\end{proof}
\begin{theorem}
 \begin{enumerate}[\upshape(i)]
 \item If $ (\{x_j\}_{j\in \mathbb{J}}, \{\tau_j\}_{j\in \mathbb{J}})$  is an orthonormal basis for $\mathscr{E}$, then it is a Riesz basis.
 \item If $(\{x_j=Uf_j\}_{j\in\mathbb{J}}, \{\tau_j=Vf_j\}_{j\in\mathbb{J}})$ is a Riesz basis for  $\mathscr{E}$, then it is a frame  with  optimal frame bounds $ \|(VU^*)^{-1}\|^{-1}$ and $ \|VU^*\|.$
 \end{enumerate}
 \end{theorem}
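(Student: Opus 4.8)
The plan is to transcribe the proof of the Hilbert-space sequential statement (Theorem~\ref{RIESZBASISIMPLIESFRAMESEQUENTIAL}) into the module language, the only genuinely new work being the verification that the various series converge in $\mathscr{E}$ (resp. $\mathscr{A}$) and that the resulting homomorphisms are adjointable, positive and invertible. Throughout I would lean on the Riesz--Fischer theorem~\ref{RIESZFISCHERGENERAL}, the orthonormal-basis characterization~\ref{CHARACTERIZATION3}, Paschke's boundedness criterion~\ref{PASCHKE}, and the elementary C*-algebra identities $\sup\{\alpha:\alpha e\le T\}=\|T^{-1}\|^{-1}$ and $\inf\{\beta:T\le\beta e\}=\|T\|$ valid for any positive invertible $T$ (applied in the unital C*-algebra $\operatorname{End}^*_\mathscr{A}(\mathscr{E})$).

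For (i) I would, after invoking the symmetry of the definition, assume $\{x_j\}_{j\in\mathbb{J}}$ is the orthonormal basis and write $\tau_j=c_jx_j$ with $c_j$ positive invertible central elements, $0<\inf\|c_j\|\le\sup\|c_j\|=:M<\infty$. Set $f_j:=x_j$, $U:=I_\mathscr{E}$ and $V\colon \mathscr{E}\ni x\mapsto\sum_{j\in\mathbb{J}}c_j\langle x,x_j\rangle x_j$. To see $V$ is well-defined I note that, by centrality and positivity of $c_j$, $\sum_{j\in\mathbb{S}}(c_j\langle x,x_j\rangle)(c_j\langle x,x_j\rangle)^*=\sum_{j\in\mathbb{S}}c_j^2\langle x,x_j\rangle\langle x_j,x\rangle\le M^2\sum_{j\in\mathbb{S}}\langle x,x_j\rangle\langle x_j,x\rangle$, and the right-hand net is Cauchy because $\{x_j\}$ is Bessel; hence $\{c_j\langle x,x_j\rangle\}_{j}\in\mathscr{H}_\mathscr{A}$ and $\sum_j c_j\langle x,x_j\rangle x_j$ converges by Theorem~\ref{RIESZFISCHERGENERAL}. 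Boundedness follows since $V$ factors as synthesis $\circ$ (multiplication by $c_j$) $\circ$ analysis, adjointability and self-adjointness from the centrality of the $c_j$, positivity from $c_j\ge 0$, and invertibility from the explicit inverse $x\mapsto\sum_j c_j^{-1}\langle x,x_j\rangle x_j$ (well-defined because $\|c_j^{-1}\|\le(\inf\|c_j\|)^{-1}$). Finally $Uf_j=x_j$, and using $\langle x_j,x_k\rangle=\delta_{j,k}e$ together with centrality, $Vf_j=\sum_k c_k\langle x_j,x_k\rangle x_k=c_jx_j=\tau_j$, while $VU^*=V\ge0$; this exhibits $(\{x_j\},\{\tau_j\})$ as a Riesz basis.

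For (ii) the Bessel property of $\{x_j\}$ and $\{\tau_j\}$ follows from $\sum_{j\in\mathbb{S}}\langle x,x_j\rangle\langle x_j,x\rangle=\sum_{j\in\mathbb{S}}\langle U^*x,f_j\rangle\langle f_j,U^*x\rangle\le\langle U^*x,U^*x\rangle\le\|U\|^2\langle x,x\rangle$ (and similarly for $\tau$ with $V$), where the first inequality uses that the orthonormal basis $\{f_j\}$ is Bessel and the last uses Theorem~\ref{PASCHKE}. The heart of the computation is to identify the frame homomorphism: using the reproducing expansion $U^*x=\sum_j\langle U^*x,f_j\rangle f_j$ of Theorem~\ref{CHARACTERIZATION3} and continuity of $V$, $\sum_j\langle x,x_j\rangle\tau_j=\sum_j\langle U^*x,f_j\rangle Vf_j=VU^*x$, and symmetrically $\sum_j\langle x,\tau_j\rangle x_j=UV^*x$; since $VU^*\ge0$ is self-adjoint, $VU^*=UV^*$, so the two expansions agree and $S_{x,\tau}=VU^*$. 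As $U,V$ are invertible, $S_{x,\tau}=VU^*$ is positive invertible and adjointable, so $(\{x_j\},\{\tau_j\})$ is a frame, and the two C*-identities quoted above give $\|(VU^*)^{-1}\|^{-1}$ and $\|VU^*\|$ as the optimal lower and upper bounds.

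The main obstacle is part (i): in the scalar case $V=\sum_j c_j\langle\cdot,x_j\rangle x_j$ is transparently bounded, positive and invertible, but over $\mathscr{A}$ each of these must be re-earned. Well-definedness requires the module Riesz--Fischer theorem after a centrality-driven domination estimate; positivity and the commuting-product manipulations that let $c_j$ pass through the inner products rely essentially on $c_j$ lying in the center of $\mathscr{A}$; and one must confirm that $V$ (and its inverse) is adjointable rather than merely bounded. Once centrality is exploited correctly, the remainder is a faithful copy of the Hilbert-space argument.
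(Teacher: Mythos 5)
Your proposal is correct and follows essentially the same route as the paper's own proof: part (i) with $f_j:=x_j$, $U:=I_\mathscr{E}$, $V:=\sum_j c_j\langle\cdot,x_j\rangle x_j$ and its explicit inverse $\sum_j c_j^{-1}\langle\cdot,x_j\rangle x_j$, and part (ii) identifying $S_{x,\tau}=VU^*=UV^*$ via the expansion in the orthonormal basis $\{f_j\}$ together with the Bessel estimates $\sum_{j\in\mathbb{S}}\langle x,x_j\rangle\langle x_j,x\rangle\le\|U^*\|^2\langle x,x\rangle$. The only difference is that you spell out (via the domination estimate and Theorem~\ref{RIESZFISCHERGENERAL}) why $V$ is well defined and adjointable, a point the paper compresses into the remark that the $c_j$ lie in the center of $\mathscr{A}$.
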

 \begin{proof}
  \begin{enumerate}[\upshape(i)]
 \item  We may assume  that $ \{x_j\}_{j \in \mathbb{J}}$ is an orthonormal basis.  Then there exists a sequence  $\{c_j\}_{j \in \mathbb{J}} $  of   positive invertible elements in the center of $ \mathscr{A}$ such that  $ 0<\inf\{\|c_j\|\}_{j \in \mathbb{J}}\leq \sup\{\|c_j\|\}_{j \in \mathbb{J}}<\infty$   and $ \tau_j=c_jx_j, \forall j \in \mathbb{J}.$ Define $ f_j \coloneqq x_j, \forall j \in \mathbb{J},  U\coloneqq I_\mathscr{E}$ and  $ V : \mathscr{E} \ni x \mapsto\sum_{j\in \mathbb{J}}c_j\langle x, x_j\rangle x_j \in \mathscr{E}.$ Using $ c_j$'s are in the center of $ \mathscr{A}$, we get $  V \in \operatorname{End}_\mathscr{A}^*(\mathscr{E}) $. Then $ x_j=Uf_j, Vf_j=\sum_{k\in \mathbb{J}}c_k\langle f_j,x_k \rangle x_k= c_jx_j=\tau_j, \forall j \in \mathbb{J}.$  Since all $c_j $'s are positive invertible,  $ V$ is positive invertible, whose inverse (at $ x \in \mathscr{E} $) is $\sum_{j\in \mathbb{J}}c_j^{-1}\langle x, x_j\rangle x_j , $ and  $ VU^*=V\geq 0.$
 \item Following  show that  $ \{x_j\}_{j\in\mathbb{J}}, \{ \tau_j\}_{j\in\mathbb{J}}$ are Bessel sequences (w.r.t. themselves): 
 $ \sum_{j\in\mathbb{S}}\langle x,x_j\rangle\langle x_j, x\rangle = \sum_{j\in\mathbb{S}}\langle x,Uf_j\rangle\langle Uf_j, x\rangle = \sum_{j\in\mathbb{S}}\langle U^*x,f_j\rangle\langle f_j, U^*x\rangle \leq \langle U^*x,U^*x \rangle\leq\|U^*\|^2\langle x, x\rangle ,
  \sum_{j\in\mathbb{S}}\langle y,\tau_j\rangle\langle \tau_j, y\rangle = \sum_{j\in\mathbb{S}}\langle y,Vf_j\rangle\langle Vf_j,y \rangle\leq \langle V^*y, V^*y \rangle \leq\|V^*\|^2\langle y, y \rangle, \forall x,y\in \mathscr{E}$, for each finite $\mathbb{S}\subseteq\mathbb{J}.$ Further, $  \|(VU^*)^{-1}\|^{-1}\langle x,x  \rangle\leq \langle VU^*x,x \rangle \leq  \|VU^*\|\langle x, x \rangle, \forall x \in \mathscr{E},$ $ \langle VU^*x,x \rangle =\langle U^*x,V^*x \rangle =\sum_{j\in\mathbb{J}}\langle U^*x,f_j\rangle\langle f_j,V^*x\rangle=\sum_{j\in\mathbb{J}}\langle x,Uf_j\rangle\langle Vf_j,x\rangle=\sum_{j\in\mathbb{J}}\langle x,x_j\rangle \langle \tau_j,x\rangle, \forall x \in \mathscr{E} $
 and $ \sum_{j\in\mathbb{J}}\langle x,x_j\rangle \tau_j=\sum_{j\in\mathbb{J}}\langle x,Uf_j\rangle Vf_j=V(\sum_{j\in\mathbb{J}}\langle U^*x,f_j\rangle f_j )=VU^*x=UV^*x=U(\sum_{j\in\mathbb{J}}\langle V^*x,f_j\rangle f_j )=\sum_{j\in\mathbb{J}}\langle x,Vf_j\rangle Uf_j=\sum_{j\in\mathbb{J}}\langle x,\tau_j\rangle x_j, \forall x \in \mathscr{E}.$
 \end{enumerate}
\end{proof}
 \begin{theorem}
 Let $ (\{x_j=Uf_j\}_{j\in\mathbb{J}}, \{\tau_j=Vf_j\}_{j\in\mathbb{J}})$ be a Riesz basis for  $\mathscr{E}$ over $ \mathscr{A}.$ Then
\begin{enumerate}[\upshape(i)]
 \item There exist unique  $ \{y_j\}_{j\in\mathbb{J}}, \{\omega_j\}_{j\in\mathbb{J}}$ in $\mathscr{E}$ such that 
$$ x= \sum\limits_{j\in\mathbb{J}}\langle x, y_j\rangle x_j = \sum\limits_{j\in\mathbb{J}}\langle x, \omega_j\rangle\tau_j, ~ \forall x \in \mathscr{E}$$ and $( \{y_j\}_{j\in\mathbb{J}}, \{\omega_j\}_{j\in\mathbb{J}})$ is Riesz.
\item $ \{x_j\}_{j\in\mathbb{J}}, \{\tau_j\}_{j\in\mathbb{J}}$ are complete in  $\mathscr{E}$. If $ V^*U\geq0,$ then there are real $ a, b >0$ such that for every finite subset $ \mathbb{S}$ of $ \mathbb{J}$,
$$ a\sum\limits_{j\in \mathbb{S}}c_jc_j^*\leq \left\langle \sum\limits_{j\in\mathbb{S}}c_jx_j ,\sum\limits_{k\in\mathbb{S}}c_k\tau_k  \right \rangle \leq b\sum\limits_{j\in \mathbb{S}}c_jc_j^*, ~\forall c_j\in \mathscr{A}, \forall j\in \mathbb{S}.$$ 
\end{enumerate}
\end{theorem}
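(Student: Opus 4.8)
The plan is to mimic the Hilbert space argument (the sequential Riesz basis theorem of Section~\ref{SEQUENTIAL}), replacing scalars $c_j$ by elements of $\mathscr{A}$, the modulus $|c_j|^2$ by $c_jc_j^*$, and the scalar inner product by the $\mathscr{A}$-valued one, while being careful about which side $\mathscr{A}$ acts and about convergence of the relevant series.

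For part (i), I would set $y_j\coloneqq (U^{-1})^*f_j$ and $\omega_j\coloneqq (V^{-1})^*f_j$ for all $j\in\mathbb{J}$. Since $\{f_j\}_{j\in\mathbb{J}}$ is an orthonormal basis, Theorem~\ref{CHARACTERIZATION3} gives $z=\sum_{j\in\mathbb{J}}\langle z,f_j\rangle f_j$ for every $z\in\mathscr{E}$; applying this to $z=U^{-1}x$ and pulling the bounded adjointable $U$ through the (convergent) sum yields
$$\sum_{j\in\mathbb{J}}\langle x,y_j\rangle x_j=\sum_{j\in\mathbb{J}}\langle U^{-1}x,f_j\rangle Uf_j=U\Big(\sum_{j\in\mathbb{J}}\langle U^{-1}x,f_j\rangle f_j\Big)=U(U^{-1}x)=x,$$
and likewise $\sum_{j\in\mathbb{J}}\langle x,\omega_j\rangle\tau_j=x$. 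Convergence of each series is guaranteed because $\{y_j\}$, $\{\omega_j\}$ are themselves Bessel (being invertible-adjointable images of an orthonormal basis), via Lemma~\ref{BESSELCONVERGENCE}. Uniqueness follows from the Riesz property of $\{x_j\}$: if $\{y_j'\}$ also reproduces every $x$, then $\sum_j\langle x,y_j-y_j'\rangle x_j=0$; applying $U^{-1}$ and using orthonormality of $\{f_j\}$ forces $\langle x,y_j-y_j'\rangle=0$ for all $x,j$, whence $y_j=y_j'$, and similarly $\omega_j=\omega_j'$. Finally, writing $y_j=(U^*)^{-1}f_j$, $\omega_j=(V^*)^{-1}f_j$ exhibits $(\{y_j\},\{\omega_j\})$ as a candidate Riesz basis with operators $(U^*)^{-1}$, $(V^*)^{-1}$; its defining positivity condition is $(V^*)^{-1}\big((U^*)^{-1}\big)^*=(V^*)^{-1}U^{-1}=(UV^*)^{-1}$, and since the standing hypothesis $VU^*\geq 0$ makes $UV^*=(VU^*)^*=VU^*$ positive invertible, its inverse is again positive. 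Thus $(\{y_j\},\{\omega_j\})$ is Riesz.

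For part (ii), completeness is immediate: $U$ is bounded invertible, so $\overline{\operatorname{span}}_\mathscr{A}\{x_j\}=\overline{\operatorname{span}}_\mathscr{A}\{Uf_j\}=U\big(\overline{\operatorname{span}}_\mathscr{A}\{f_j\}\big)=U(\mathscr{E})=\mathscr{E}$, and similarly for $\{\tau_j\}$. For the two-sided estimate, assume $V^*U\geq 0$ and fix a finite $\mathbb{S}\subseteq\mathbb{J}$ and $c_j\in\mathscr{A}$. Put $g\coloneqq\sum_{j\in\mathbb{S}}c_jf_j$. Using adjointability and $\mathscr{A}$-linearity of $V^*U$ together with $\langle Uf_j,Vf_k\rangle=\langle V^*Uf_j,f_k\rangle$, I compute
$$\Big\langle \sum_{j\in\mathbb{S}}c_jx_j,\sum_{k\in\mathbb{S}}c_k\tau_k\Big\rangle=\sum_{j,k\in\mathbb{S}}c_j\langle V^*Uf_j,f_k\rangle c_k^*=\langle V^*Ug,g\rangle,$$
while orthonormality of $\{f_j\}$ gives $\langle g,g\rangle=\sum_{j\in\mathbb{S}}c_jc_j^*$. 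Since $V^*U$ is positive invertible, the operator inequalities $\|(V^*U)^{-1}\|^{-1}I_\mathscr{E}\leq V^*U\leq \|V^*U\|I_\mathscr{E}$ hold, and sandwiching $\langle V^*Ug,g\rangle$ between $\|(V^*U)^{-1}\|^{-1}\langle g,g\rangle$ and $\|V^*U\|\langle g,g\rangle$ yields the claim with $a=\|(V^*U)^{-1}\|^{-1}$ and $b=\|V^*U\|$.

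The steps are individually routine once the conventions are fixed; the main thing to watch is the C*-algebraic bookkeeping. The principal obstacle is keeping the $\mathscr{A}$-action on the correct side when moving adjoints across the module multiplication (so that $c_j$ emerges on the left and $c_k^*$ on the right, giving exactly $\sum_j c_jc_j^*$), and distinguishing the two positivity hypotheses — the standing Riesz condition $VU^*\geq 0$ used in part (i) versus the extra assumption $V^*U\geq 0$ powering the estimate in part (ii). Convergence issues, which are trivial over $\mathbb{K}$, must here be discharged through the Bessel/orthonormal-basis machinery (Theorem~\ref{CHARACTERIZATION3}, Lemma~\ref{BESSELCONVERGENCE}, and the Riesz--Fischer theorem~\ref{RIESZFISCHERGENERAL}).
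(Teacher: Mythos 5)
Your proposal is correct and follows essentially the same route as the paper: part (i) is the module transcription of the Hilbert-space argument with $y_j=(U^{-1})^*f_j$, $\omega_j=(V^{-1})^*f_j$ and positivity of $(UV^*)^{-1}$, and part (ii) is the same sandwich $\|(V^*U)^{-1}\|^{-1}\langle g,g\rangle\leq\langle V^*Ug,g\rangle\leq\|V^*U\|\langle g,g\rangle$ with $g=\sum_{j\in\mathbb{S}}c_jf_j$ that the paper uses. You have in fact supplied more detail than the paper does (it dismisses (i) as ``similar to the Hilbert space situation''), and your C*-algebraic bookkeeping is consistent with the paper's conventions.
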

\begin{proof}
(i) is similar to Hilbert space situation. For (ii), from the invertibility of  $ U$, and  from (v) of Corollary \ref{CHARACTERIZATION4}, we see the completeness of $ \{x_j\}_{j\in\mathbb{J}}$  in  $\mathscr{E}$. Similarly $ \{\tau_j\}_{j\in\mathbb{J}}$ is also complete  in  $\mathscr{E}$. If $ V^*U\geq0,$ then
\begin{align*}
\frac{1}{\|(V^*U)^{-1}\|}\sum\limits_{j\in \mathbb{S}}c_jc_j^*&=\frac{1}{\|(V^*U)^{-1}\|}\left\langle \sum\limits_{j\in\mathbb{S}}c_jf_j ,\sum\limits_{k\in\mathbb{S}}c_kf_k  \right \rangle\leq \left\langle \sum\limits_{j\in\mathbb{S}}c_jUf_j ,\sum\limits_{k\in\mathbb{S}}c_kVf_k  \right \rangle\\
&=\left\langle \sum\limits_{j\in\mathbb{S}}c_jx_j ,\sum\limits_{k\in\mathbb{S}}c_k\tau_k  \right \rangle \leq \left\langle V^*U\left(\sum\limits_{j\in\mathbb{S}}c_jf_j \right),\sum\limits_{k\in\mathbb{S}}c_kf_k  \right \rangle \leq \|V^*U\| \sum\limits_{j\in \mathbb{S}}c_jc_j^*
\end{align*}
for all $c_j\in \mathscr{A}, \forall j\in \mathbb{S}.$
\end{proof}
\begin{theorem}
Let $ \{x_j\}_{j\in\mathbb{J}},  \{\tau_j\}_{j\in\mathbb{J}}$ be in $\mathscr{E}$ over  $ \mathscr{A}$. Then the following are equivalent.
\begin{enumerate}[\upshape(i)]
\item $( \{x_j\}_{j\in\mathbb{J}},  \{\tau_j\}_{j\in\mathbb{J}})$ is a Riesz basis for  $\mathscr{E}$.
\item $ \{x_j\}_{j\in\mathbb{J}},  \{\tau_j\}_{j\in\mathbb{J}}$ are complete  in $\mathscr{E}$, and there exist $ a, b, c, d >0$ such that for every finite subset $ \mathbb{S}\subseteq\mathbb{J}$, 
\begin{align*}
a\left\|\sum_{j\in \mathbb{S}}c_jc_j^*\right\|^2\leq \left\|\sum_{j\in \mathbb{S}}c_jx_j\right\|^2\leq b\left\|\sum_{j\in \mathbb{S}}c_jc_j^*\right\|^2, ~\forall c_j \in \mathscr{A},\forall j\in \mathbb{S}, \\
c\left\|\sum_{j\in \mathbb{S}}d_jd_j^*\right\|^2\leq \left\|\sum_{j\in \mathbb{S}}d_j\tau_j\right\|^2\leq d\left\|\sum_{j\in \mathbb{S}}d_jd_j^*\right\|^2, ~\forall d_j \in \mathscr{A},\forall j\in \mathbb{S},
\end{align*}
and 
$$ \sum_{j\in \mathbb{J}}\langle x, x_j\rangle \langle \tau_j, x\rangle \geq 0, \forall x \in \mathscr{E}.$$
\end{enumerate}
\end{theorem}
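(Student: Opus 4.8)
The plan is to run the proof in exact parallel with the Hilbert-space analogue (the theorem following Lemma~\ref{RIESZCHARACTERIZATION} in Section~\ref{SEQUENTIAL}), but replacing the scalar inner-product estimates by their $\mathscr{A}$-valued norm counterparts and using the homomorphism-extension machinery of Lemma~\ref{HOMOMORPHISMEXISTENCELEMMA} in place of the classical bounded-extension lemma. I would prove (i)~$\Rightarrow$~(ii) directly and (ii)~$\Rightarrow$~(i) by constructing the two invertible endomorphisms from the inequalities, closing the loop via the positivity hypothesis.

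For (i)~$\Rightarrow$~(ii), fix an orthonormal basis $\{f_j\}_{j\in\mathbb{J}}$ and invertible $U,V\in\operatorname{End}^*_\mathscr{A}(\mathscr{E})$ with $VU^*\geq 0$ and $x_j=Uf_j$, $\tau_j=Vf_j$. Completeness of $\{x_j\}_{j\in\mathbb{J}}$ and $\{\tau_j\}_{j\in\mathbb{J}}$ is immediate because $U,V$ are surjective and $\overline{\operatorname{span}}_\mathscr{A}\{f_j\}_{j\in\mathbb{J}}=\mathscr{E}$. For the two-sided estimates I would write $\sum_{j\in\mathbb{S}}c_jx_j=U\bigl(\sum_{j\in\mathbb{S}}c_jf_j\bigr)$ and use orthonormality in the form $\bigl\|\sum_{j\in\mathbb{S}}c_jf_j\bigr\|^2=\bigl\|\sum_{j\in\mathbb{S}}c_jc_j^*\bigr\|$, which sandwiches $\bigl\|\sum_{j\in\mathbb{S}}c_jx_j\bigr\|^2$ between $\|U^{-1}\|^{-2}$ and $\|U\|^2$ times $\bigl\|\sum_{j\in\mathbb{S}}c_jc_j^*\bigr\|$, and symmetrically for $\tau_j$ with $V$. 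The positivity condition then falls out of computing $U^*x=\sum_j\langle x,x_j\rangle f_j$ and $V^*x=\sum_j\langle x,\tau_j\rangle f_j$: orthonormality collapses the double sum in $\langle U^*x,V^*x\rangle$ to $\sum_j\langle x,x_j\rangle\langle\tau_j,x\rangle$, and since $VU^*\geq 0$ we get $\sum_j\langle x,x_j\rangle\langle\tau_j,x\rangle=\langle VU^*x,x\rangle\geq 0$.

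For (ii)~$\Rightarrow$~(i), choose an orthonormal basis $\{f_j\}_{j\in\mathbb{J}}$ for $\mathscr{E}$ and invoke Lemma~\ref{HOMOMORPHISMEXISTENCELEMMA} twice to manufacture $U$ and its inverse. Feeding $\{f_j\}_{j\in\mathbb{J}}$ (complete, with equality in the lower estimate) and $\{x_j\}_{j\in\mathbb{J}}$ (whose upper estimate in (ii) supplies the required Bessel-type bound) produces a bounded homomorphism $U$ with $Uf_j=x_j$; feeding $\{x_j\}_{j\in\mathbb{J}}$ (complete by hypothesis, its lower estimate now playing the role of Inequality~(\ref{RIESZMODULEINEQUALITY})) and $\{f_j\}_{j\in\mathbb{J}}$ produces a bounded homomorphism $E$ with $Ex_j=f_j$. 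On $\operatorname{span}_\mathscr{A}$ one reads off $UE=EU=I_\mathscr{E}$, so by density $U$ is invertible, and $V$ with $Vf_j=\tau_j$ is built the same way. The positivity hypothesis finishes the argument: $\langle VU^*x,x\rangle=\langle U^*x,V^*x\rangle=\sum_j\langle x,x_j\rangle\langle\tau_j,x\rangle\geq 0$, so $VU^*\geq 0$, and $x_j=Uf_j$, $\tau_j=Vf_j$ exhibit the Riesz basis.

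The hard part will be the adjointability of $U$ and $V$: Lemma~\ref{HOMOMORPHISMEXISTENCELEMMA} yields them only as \emph{bounded} module maps, yet the Riesz-basis definition requires $U,V\in\operatorname{End}^*_\mathscr{A}(\mathscr{E})$, and one cannot even form $U^*$, $V^*$ or $VU^*$ without it. As in the proof of Theorem~\ref{RIESZCHARACTERIZATIONSELFDUAL}, I would clear this by appealing to Paschke's result (Proposition~3.4 in \cite{PASCHKE1}) that a bounded homomorphism out of a self-dual Hilbert C*-module is automatically adjointable; this is precisely the step where completeness/self-duality of $\mathscr{E}$ is genuinely invoked, and it must be handled with care because a general bounded endomorphism of a Hilbert C*-module need not possess an adjoint.
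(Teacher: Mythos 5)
Your proposal is correct and coincides with the paper's own proof, which consists only of the remark that Lemma \ref{HOMOMORPHISMEXISTENCELEMMA} is to be used in (ii) $\Rightarrow$ (i) and that everything else runs as in the Hilbert-space analogue; your expansion of both directions (including reading the displayed estimates with $\|\sum_{j}c_jc_j^*\|$ rather than its square, as the Hilbert-space model and Lemma \ref{HOMOMORPHISMEXISTENCELEMMA} require) is exactly that argument. Your closing observation is well taken: adjointability of $U$ and $V$ is obtained via Paschke's theorem precisely as in the paper's Theorem \ref{RIESZCHARACTERIZATIONSELFDUAL}, and this is the one step where a self-duality hypothesis on $\mathscr{E}$ (present in that theorem but omitted from the statement here) is genuinely needed.
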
 
\begin{proof}
We have to use Lemma \ref{HOMOMORPHISMEXISTENCELEMMA} in  proving (ii) $ \Rightarrow$ (i). Other facts are similar to proof of the corresponding result in Hilbert space.	 
\end{proof}
\begin{proposition}\label{HILBERTSEQUENTIALPROP}
For every $ \{x_j\}_{j \in \mathbb{J}}  \in \mathscr{F}_\tau$,
\begin{enumerate}[\upshape (i)]
 \item $\theta_x^*(\{c_j\}_{j\in\mathbb{J}})=\sum_{j\in \mathbb{J}}c_jx_j, \theta_\tau^*(\{c_j\}_{j\in\mathbb{J}})=\sum_{j\in \mathbb{J}}c_j\tau_j,  \forall  \{c_j\}_{j\in\mathbb{J}} \in \mathscr{H}_\mathscr{A} ;$\\
 $\theta_x^* \theta_xy=  \sum_{j\in \mathbb{J}} \langle y,x_j\rangle x_j, \theta_\tau^* \theta_\tau y =  \sum_{j\in \mathbb{J}} \langle y,\tau_j\rangle \tau_j,  \forall y \in \mathscr{E}.$ 
 \item $ S_{x, \tau} = \theta_\tau^*\theta_x=\theta_x^*\theta_\tau .$ In particular,
 $$ S_{x, \tau}y =\sum\limits_{j\in \mathbb{J}}\langle y,x_j\rangle\tau_j=\sum\limits_{j\in \mathbb{J}}\langle y,\tau_j\rangle x_j,~ \forall y \in \mathscr{E} ~\operatorname{and}~$$
 $$\langle S_{x, \tau}y, z\rangle =\sum\limits_{j\in \mathbb{J}}\langle y,x_j\rangle\langle\tau_j, z\rangle=\sum\limits_{j\in \mathbb{J}}\langle y,\tau_j\rangle \langle x_j,z\rangle, ~ \forall y,z  \in \mathscr{E}.$$
 \item Every $ y \in \mathscr{E}$ can be written as 
 $$y =\sum\limits_{j\in \mathbb{J}}\langle y, S^{-1}_{x, \tau}\tau_j\rangle  x_j=\sum\limits_{j\in \mathbb{J}}\langle y,\tau_j \rangle S^{-1}_{x, \tau}  x_j =\sum\limits_{j\in \mathbb{J}}\langle y, S^{-1}_{x, \tau}x_j\rangle  \tau_j=\sum\limits_{j\in \mathbb{J}}\langle y, x_j\rangle S^{-1}_{x, \tau}  \tau_j.$$
 \item $(\{x_j\}_{j \in \mathbb{J}}, \{\tau_j \}_ {j \in \mathbb{J}})$ is Parseval if and only if $  \theta_\tau^*\theta_x=I_\mathscr{E}.$ 
 \item $(\{x_j\}_{j \in \mathbb{J}}, \{\tau_j \}_ {j \in \mathbb{J}})$ is Parseval  if and only if $ \theta_x\theta_\tau^* $ is idempotent.
 \item $\theta_xS_{x,\tau}^{-1}\theta_\tau^* $ is idempotent.
 \item $\theta_x$ and $ \theta_\tau$ are injective and  their  ranges are closed.
 \item $\theta_x^* $ and $ \theta_\tau^*$ are surjective.
 \item $\operatorname{Ker}(\theta_x)$ and $\operatorname{ Ker}(\theta_x)$ (resp. $\theta_x(\mathscr{E})$ and $\theta_x(\mathscr{E})$) are orthogonally complementable submodules of $\mathscr{E}$ (resp. $\mathscr{H}_\mathscr{A}$).
 \end{enumerate}
 \end{proposition}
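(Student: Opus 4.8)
The plan is to run exactly the argument of Proposition \ref{2.2}, in its homomorphism-valued incarnation Proposition \ref{HMOP}, but with the analysis homomorphisms now valued in $\mathscr{H}_\mathscr{A}$ rather than in $\mathscr{H}_\mathscr{A}\otimes\mathscr{E}_0$, feeding the single genuinely module-theoretic step (orthogonal complementability) into Theorem \ref{MANUILOV2}. First I would prove (i) by computing the adjoints of $\theta_x$ and $\theta_\tau$ from the defining pairing. By the Bessel hypotheses in Definition \ref{SEQUENTIAL VERSION HOMOMORPHISM VAUED DEFINITION} together with Lemma \ref{BESSELCONVERGENCE}, the series $\sum_j c_j x_j$ converges and $\{c_j\}_{j\in\mathbb{J}}\mapsto\sum_j c_j x_j$ is a bounded adjointable homomorphism; pairing $\{c_j\}_{j\in\mathbb{J}}$ against $\theta_x y=\{\langle y,x_j\rangle\}_{j\in\mathbb{J}}$ in the inner product of $\mathscr{H}_\mathscr{A}$ identifies this map as $\theta_x^*$, and the same for $\theta_\tau$. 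Composing gives $\theta_x^*\theta_x y=\sum_j\langle y,x_j\rangle x_j$ and its $\tau$-analog.

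Next, (ii) follows from the same pairing, since $\langle\theta_\tau^*\theta_x y,z\rangle=\langle\theta_x y,\theta_\tau z\rangle=\sum_j\langle y,x_j\rangle\langle\tau_j,z\rangle=\langle S_{x,\tau}y,z\rangle$, while the equality $\theta_\tau^*\theta_x=\theta_x^*\theta_\tau$ is precisely the symmetry condition in the alternate form of Definition \ref{SEQUENTIAL VERSION HOMOMORPHISM VAUED DEFINITION}. Statement (iii) is the reconstruction identity gotten by applying the adjointable homomorphism $S_{x,\tau}^{-1}$ to $S_{x,\tau}y=\sum_j\langle y,x_j\rangle\tau_j$. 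Items (iv)--(vi) are purely algebraic: (iv) is immediate from (ii); for (v) one squares $\theta_x\theta_\tau^*$ using $\theta_\tau^*\theta_x=I_\mathscr{E}$, and conversely multiplies the idempotency relation on the left by $\theta_\tau^*$ and on the right by $\theta_x$ to recover $S_{x,\tau}=I_\mathscr{E}$; and (vi) is the one-line computation $(\theta_xS_{x,\tau}^{-1}\theta_\tau^*)^2=\theta_xS_{x,\tau}^{-1}(\theta_\tau^*\theta_x)S_{x,\tau}^{-1}\theta_\tau^*=\theta_xS_{x,\tau}^{-1}\theta_\tau^*$.

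For (vii) I would first note injectivity of $\theta_x$: if $\theta_x y=0$ then $S_{x,\tau}y=\theta_\tau^*\theta_x y=0$, so $y=0$ by invertibility of $S_{x,\tau}$. Closed range comes from the estimate $a\langle y,y\rangle\le\langle S_{x,\tau}y,y\rangle=\langle\theta_x y,\theta_\tau y\rangle$, which on taking norms yields $a\|y\|^2\le\|\theta_x y\|\,\|\theta_\tau\|\,\|y\|$, i.e. $\theta_x$ is bounded below; completeness of $\mathscr{E}$ then forces $\theta_x(\mathscr{E})$ to be closed (the same for $\theta_\tau$). Surjectivity of $\theta_x^*$ and $\theta_\tau^*$ in (viii) is read off from $\theta_\tau^*\theta_x=I_\mathscr{E}=\theta_x^*\theta_\tau$.

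The main obstacle is (ix): in contrast with the Hilbert space proof of Proposition \ref{2.2}, closed range alone does \emph{not} yield an orthogonal complement in a Hilbert C*-module. Here I would invoke Theorem \ref{MANUILOV2}: since (vii) shows that $\theta_x$ and $\theta_\tau$ have closed ranges, that theorem guarantees at once that $\operatorname{Ker}(\theta_x)$ and $\operatorname{Ker}(\theta_\tau)$ are orthogonally complementable submodules of $\mathscr{E}$, and that $\theta_x(\mathscr{E})$ and $\theta_\tau(\mathscr{E})$ are orthogonally complementable submodules of $\mathscr{H}_\mathscr{A}$. The only care required beyond the scalar argument is to check at each stage that the operators produced are adjointable $\mathscr{A}$-homomorphisms rather than merely bounded linear maps, which is already ensured by the Bessel conditions and by $S_{x,\tau}\in\operatorname{End}^*_\mathscr{A}(\mathscr{E})$ being positive invertible.
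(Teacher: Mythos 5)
Your proposal is correct and follows essentially the same route as the paper, which simply transports the Hilbert-space argument of Proposition \ref{2.2} to the module setting and invokes Theorem \ref{MANUILOV2} together with (vii) to obtain the orthogonal complementability in (ix); your bounded-below estimate $a\|y\|^2\le\|\theta_x y\|\,\|\theta_\tau\|\,\|y\|$ is a clean way to get closed range. One slip to fix in (viii): $\theta_\tau^*\theta_x$ equals $S_{x,\tau}$, not $I_\mathscr{E}$ (that would require Parsevalness); the conclusion still follows because $S_{x,\tau}$ is invertible, hence surjective, so $\theta_\tau^*$ is surjective, and symmetrically for $\theta_x^*$.
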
 
 \begin{proof}
Proof of (i)-(vi) are similar to  Hilbert space situation. For (ix), we have to use (vii) and Theorem \ref{MANUILOV2}.
\end{proof}
 We call $ P_{x,\tau}\coloneqq\theta_xS_{x,\tau}^{-1}\theta_\tau^* $ as the  \textit{frame idempotent}.  
\begin{definition}
 A frame  $( \{x_j\}_{j \in \mathbb{J}}, \{\tau_j\}_{j \in \mathbb{J}})$ for $ \mathscr{E}$ is called a Riesz frame if $P_{x,\tau} = I_{\mathscr{H}_\mathscr{A}} $. A Parseval and Riesz frame (i.e., $\theta_\tau^*\theta_x=I_\mathscr{E} $ and $\theta_x\theta_\tau^*= I_{\mathscr{H}_\mathscr{A}}$) is called as an orthonormal frame.
 \end{definition}
\begin{proposition}
\begin{enumerate}[\upshape(i)]
\item  If $( \{x_j\}_{j \in \mathbb{J}}, \{\tau_j\}_{j \in \mathbb{J}})$ is a Riesz basis for $\mathscr{E}$, then it is a  Riesz frame.
\item  If $( \{x_j\}_{j \in \mathbb{J}}, \{\tau_j=c_jx_j\}_{j \in \mathbb{J}})$ is an  orthonormal basis for $\mathscr{E}$, then it is  a Riesz frame.
\end{enumerate}
\end{proposition}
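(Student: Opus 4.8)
The plan is to transcribe the sequential Hilbert-space argument (the proof following Proposition \ref{RIESZBASISIMPLIESFRAMESEQUENTIAL} in Section \ref{SEQUENTIAL}) into the module setting, replacing $\ell^2(\mathbb{J})$ by $\mathscr{H}_\mathscr{A}$ and ordinary adjoints by adjoints of adjointable homomorphisms, and invoking Proposition \ref{HILBERTSEQUENTIALPROP} for the underlying identities. The two statements both reduce to computing the frame idempotent $P_{x,\tau}=\theta_x S_{x,\tau}^{-1}\theta_\tau^*$ and checking it equals $I_{\mathscr{H}_\mathscr{A}}$, which is the defining property of a Riesz frame.

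For part (i), I would first unwind the definition of a Riesz basis w.r.t. $\{\tau_j\}$: choose an orthonormal basis $\{f_j\}_{j\in\mathbb{J}}$ for $\mathscr{E}$ and invertible $U,V\in\operatorname{End}^*_\mathscr{A}(\mathscr{E})$ with $VU^*\geq 0$ such that $x_j=Uf_j$ and $\tau_j=Vf_j$. The theorem immediately preceding this proposition already guarantees that such a pair is a frame, so $S_{x,\tau}$ is positive invertible and $\theta_x,\theta_\tau$ are adjointable homomorphisms; hence $P_{x,\tau}$ is well defined and Proposition \ref{HILBERTSEQUENTIALPROP} applies. The computational core is to factor the analysis homomorphisms through $\theta_f$: from $\langle y,Uf_j\rangle=\langle U^*y,f_j\rangle$ one gets $\theta_x=\theta_f U^*$ and likewise $\theta_\tau=\theta_f V^*$. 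Then $S_{x,\tau}=\theta_\tau^*\theta_x=V\theta_f^*\theta_f U^*=VU^*$, using $\theta_f^*\theta_f=I_\mathscr{E}$, and consequently
$$P_{x,\tau}=\theta_f U^*(VU^*)^{-1}V\theta_f^*=\theta_f\theta_f^*=I_{\mathscr{H}_\mathscr{A}},$$
since $U^*(VU^*)^{-1}V=I_\mathscr{E}$ and $\theta_f\theta_f^*=I_{\mathscr{H}_\mathscr{A}}$ for an orthonormal basis.

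For part (ii), the cleanest route is to note that the preceding theorem shows an orthonormal basis $(\{x_j\},\{\tau_j=c_jx_j\})$ is already a Riesz basis, whence part (i) applies. Alternatively I would argue directly: since the $c_j$ are positive invertible elements of the center of $\mathscr{A}$, one has $S_{x,\tau}y=\sum_j\langle y,x_j\rangle c_jx_j$ and $S_{x,\tau}^{-1}y=\sum_j c_j^{-1}\langle y,x_j\rangle x_j$, and a term-by-term substitution into $P_{x,\tau}(\{a_j\}_{j\in\mathbb{J}})=\theta_x S_{x,\tau}^{-1}\theta_\tau^*(\{a_j\}_{j\in\mathbb{J}})$—pulling the central $c_j$ past the $\mathscr{A}$-valued inner products so that the factors $c_j c_j^{-1}$ cancel—collapses to the identity on $\mathscr{H}_\mathscr{A}$.

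The steps are essentially routine once the module machinery is in place; the points requiring care are that the Hilbert-space identities $\theta_f^*\theta_f=I_\mathscr{E}$ and $\theta_f\theta_f^*=I_{\mathscr{H}_\mathscr{A}}$ persist verbatim for an orthonormal basis of a Hilbert C*-module (which is where one uses Theorem \ref{CHARACTERIZATION3} together with the fact that an orthonormal basis is an orthonormal frame), and that in part (ii) the centrality and positivity of the $c_j$ are genuinely needed to commute them through both the inner products and the sums defining $S_{x,\tau}$ and $S_{x,\tau}^{-1}$. I expect the only mild obstacle to be bookkeeping of adjointability; no orthogonal-complementability issue intervenes here, so the C*-module proof is a faithful transcription of its Hilbert-space counterpart.
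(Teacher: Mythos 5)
Your proposal is correct and follows essentially the same route as the paper's own argument (given for the Hilbert-space counterpart in Section \ref{SEQUENTIAL} and implicitly carried over here): factor $\theta_x=\theta_fU^*$, $\theta_\tau=\theta_fV^*$, deduce $S_{x,\tau}=VU^*$, and collapse $P_{x,\tau}=\theta_fU^*(VU^*)^{-1}V\theta_f^*=\theta_f\theta_f^*=I_{\mathscr{H}_\mathscr{A}}$, with part (ii) handled either by reduction to (i) or by the same term-by-term cancellation of the central $c_j$'s that the paper performs. The cautions you flag (adjointability, centrality and positivity of the $c_j$, persistence of $\theta_f^*\theta_f=I_{\mathscr{E}}$ and $\theta_f\theta_f^*=I_{\mathscr{H}_\mathscr{A}}$) are exactly the right ones and all hold.
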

\begin{proposition}
A frame  $( \{x_j\}_{j \in \mathbb{J}}, \{\tau_j\}_{j \in \mathbb{J}})$ for $ \mathscr{E}$ is a Riesz frame if  and only if $ \theta_x(\mathscr{E})={\mathscr{H}_\mathscr{A}}$ if  and only if $ \theta_\tau(\mathscr{E})={\mathscr{H}_\mathscr{A}}.$
\end{proposition}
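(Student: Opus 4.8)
The plan is to mirror the proof of Proposition \ref{RIESZFRAMECHARACTERIZATION} (the Hilbert space sequential version) and of its homomorphism analogue in Section \ref{EXTENSIONOFHOMOMORPHISM-VALUEDFRAMESFORHILBERTC*-MODULES}, since the argument is purely algebraic in the frame idempotent $P_{x,\tau} = \theta_x S_{x,\tau}^{-1}\theta_\tau^*$ and consumes only the identities established in Proposition \ref{HILBERTSEQUENTIALPROP}. Recall from there that $S_{x,\tau} = \theta_\tau^*\theta_x = \theta_x^*\theta_\tau$ and that $S_{x,\tau}$ is positive invertible by the very definition of a frame; these are the only structural facts the argument needs.

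First I would prove $P_{x,\tau} = I_{\mathscr{H}_\mathscr{A}} \Rightarrow \theta_x(\mathscr{E}) = \mathscr{H}_\mathscr{A}$. Viewing $P_{x,\tau} = \theta_x S_{x,\tau}^{-1}\theta_\tau^*$ as a homomorphism on $\mathscr{H}_\mathscr{A}$, the hypothesis gives the inclusion chain
$$ \mathscr{H}_\mathscr{A} = P_{x,\tau}(\mathscr{H}_\mathscr{A}) = \theta_x\big(S_{x,\tau}^{-1}\theta_\tau^*(\mathscr{H}_\mathscr{A})\big) \subseteq \theta_x(\mathscr{E}) \subseteq \mathscr{H}_\mathscr{A},$$
forcing equality. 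For the reverse implication, given $\theta_x(\mathscr{E}) = \mathscr{H}_\mathscr{A}$ and an arbitrary $y \in \mathscr{H}_\mathscr{A}$, I would pick $h \in \mathscr{E}$ with $y = \theta_x h$; substituting $S_{x,\tau} = \theta_\tau^*\theta_x$ then yields $y = \theta_x S_{x,\tau}^{-1}S_{x,\tau}h = (\theta_x S_{x,\tau}^{-1}\theta_\tau^*)\theta_x h = P_{x,\tau}y$, so $P_{x,\tau} = I_{\mathscr{H}_\mathscr{A}}$. This settles the first equivalence.

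For the equivalence with $\theta_\tau(\mathscr{E}) = \mathscr{H}_\mathscr{A}$ I would invoke the symmetry of the frame definition (Definition \ref{SEQUENTIAL VERSION HOMOMORPHISM VAUED DEFINITION}): because $S_{x,\tau}$ is self-adjoint, $P_{x,\tau}^* = \theta_\tau S_{x,\tau}^{-1}\theta_x^*$, and an idempotent equals the identity exactly when its adjoint does, so the property of being a Riesz frame is unchanged under swapping the two collections. Running the identical inclusion-chain argument with $\theta_\tau$ in place of $\theta_x$ and using $S_{x,\tau} = \theta_x^*\theta_\tau$ gives $P_{x,\tau}^* = I_{\mathscr{H}_\mathscr{A}} \iff \theta_\tau(\mathscr{E}) = \mathscr{H}_\mathscr{A}$, which closes the loop.

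Unlike the dilation theorem of this section, I do not expect any genuine Hilbert C*-module obstacle here: the argument never needs orthogonal complementability (Theorem \ref{MANUILOV2}) or self-duality, only the algebra of the idempotent $P_{x,\tau}$ together with the invertibility of $S_{x,\tau}$. The one point to state carefully is that $P_{x,\tau}$ is regarded as a homomorphism from $\mathscr{H}_\mathscr{A}$ to $\mathscr{H}_\mathscr{A}$ (with the synthesis map $\theta_\tau^*$ landing in $\mathscr{E}$ and the analysis map $\theta_x$ returning to $\mathscr{H}_\mathscr{A}$), so that the chain $P_{x,\tau}(\mathscr{H}_\mathscr{A}) \subseteq \theta_x(\mathscr{E})$ is literally correct; keeping track of these domains is the only place demanding attention, and it is routine.
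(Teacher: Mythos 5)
Your proposal is correct and is essentially the paper's own argument (the paper proves the Hilbert-space analogue, Proposition \ref{RIESZFRAMECHARACTERIZATION}, by exactly this inclusion chain $\mathscr{H}_\mathscr{A}=P_{x,\tau}(\mathscr{H}_\mathscr{A})\subseteq\theta_x(\mathscr{E})$, the computation $y=\theta_xS_{x,\tau}^{-1}\theta_\tau^*\theta_xh=P_{x,\tau}y$ for the converse, and symmetry of the frame definition for the second equivalence, and states that the module version is proved the same way). Your added remark that the argument needs no orthogonal complementability, only the adjointability and invertibility already packaged in the frame definition, is exactly the reason the transfer to Hilbert C*-modules is painless.
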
 
\begin{proposition}
A frame  $ (\{x_j\}_{j\in \mathbb{J}}, \{\tau_j\}_{j\in \mathbb{J}}) $ for  $ \mathscr{E}$ over $ (\mathscr{A},e)$ is an orthonormal frame   if and only if it is a  Parseval frame  and $ \langle x_j,\tau_k \rangle =\delta_{j,k}e,\forall j, k \in \mathbb{J}$. 
\end{proposition}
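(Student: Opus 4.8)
The plan is to imitate, essentially verbatim, the proof of the corresponding scalar (Hilbert space) statement given earlier in the sequential section, taking care only that the standard basis of $\mathscr{H}_\mathscr{A}$ is orthonormal in the $\mathscr{A}$-valued sense and that all inner products are now $\mathscr{A}$-valued. Let $\{e_j\}_{j\in\mathbb{J}}$ denote the standard orthonormal basis of $\mathscr{H}_\mathscr{A}$, so that $\langle e_j, e_k\rangle = \delta_{j,k}e$. The first step is to record, from Proposition \ref{HILBERTSEQUENTIALPROP}(i), that $\theta_x^*(\{c_j\}_{j\in\mathbb{J}}) = \sum_{j\in\mathbb{J}} c_j x_j$ and $\theta_\tau^*(\{c_j\}_{j\in\mathbb{J}}) = \sum_{j\in\mathbb{J}} c_j \tau_j$; evaluating these at the sequence equal to $e$ in slot $j$ and $0$ elsewhere yields $\theta_x^* e_j = x_j$ and $\theta_\tau^* e_j = \tau_j$. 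I would also invoke the Parseval characterization $\theta_\tau^*\theta_x = I_\mathscr{E}$ and the definition of orthonormal frame as a frame that is simultaneously Parseval and Riesz, i.e. $\theta_\tau^*\theta_x = I_\mathscr{E}$ together with $\theta_x\theta_\tau^* = I_{\mathscr{H}_\mathscr{A}}$.

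For the forward direction I would suppose $(\{x_j\}_{j\in\mathbb{J}},\{\tau_j\}_{j\in\mathbb{J}})$ is an orthonormal frame, so it is Parseval by definition, and then use adjointability of $\theta_x$ together with $\theta_x\theta_\tau^* = I_{\mathscr{H}_\mathscr{A}}$ to compute
$$\langle x_j, \tau_k\rangle = \langle \theta_x^* e_j, \theta_\tau^* e_k\rangle = \langle e_j, \theta_x\theta_\tau^* e_k\rangle = \langle e_j, e_k\rangle = \delta_{j,k}e$$
for all $j,k\in\mathbb{J}$, which is the asserted biorthogonality. For the converse I would assume the frame is Parseval and $\langle x_j,\tau_k\rangle = \delta_{j,k}e$ for all $j,k$; taking adjoints in $\mathscr{A}$ and using $e^*=e$ gives $\langle \tau_j, x_k\rangle = \delta_{j,k}e$ as well. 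Since the Parseval condition is granted, it remains only to verify the Riesz condition $\theta_x\theta_\tau^* = I_{\mathscr{H}_\mathscr{A}}$, which I would obtain by evaluating, for arbitrary $\{a_j\}_{j\in\mathbb{J}}\in\mathscr{H}_\mathscr{A}$,
$$\theta_x\theta_\tau^*(\{a_j\}_{j\in\mathbb{J}}) = \theta_x\Big(\sum_{j\in\mathbb{J}} a_j\tau_j\Big) = \Big\{\Big\langle \sum_{j\in\mathbb{J}} a_j\tau_j,\, x_k\Big\rangle\Big\}_{k\in\mathbb{J}} = \Big\{\sum_{j\in\mathbb{J}} a_j\langle \tau_j, x_k\rangle\Big\}_{k\in\mathbb{J}} = \{a_k\}_{k\in\mathbb{J}},$$
so that $\theta_x\theta_\tau^* = I_{\mathscr{H}_\mathscr{A}}$; combined with Parseval this makes the frame orthonormal.

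There is essentially no deep obstacle: the whole argument is the $\mathscr{A}$-valued transcription of the scalar proof. The points requiring genuine care — and hence the closest thing to a hard part — are purely bookkeeping: confirming that $\langle e_j,e_k\rangle = \delta_{j,k}e$ rather than $\delta_{j,k}$, that biorthogonality is preserved under the adjoint operation precisely because $e$ is self-adjoint, and that the $\mathscr{A}$-linearity of the inner product in the first slot (equivalently, that each $A_j=\langle\,\cdot\,,x_j\rangle$ is a homomorphism) justifies extracting the coefficients $a_j$ from $\langle \sum_j a_j\tau_j, x_k\rangle$. Convergence of the sums is not an issue, since $\theta_\tau^*$ is a bounded adjointable homomorphism on $\mathscr{H}_\mathscr{A}$ and $\theta_x^* e_j = x_j$, $\theta_\tau^* e_j = \tau_j$ are secured by Proposition \ref{HILBERTSEQUENTIALPROP}.
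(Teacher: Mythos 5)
Your proposal is correct and follows exactly the route the paper intends: the module version is stated without proof precisely because it is the $\mathscr{A}$-valued transcription of the Hilbert-space proof in Section \ref{SEQUENTIAL}, which computes $\langle x_j,\tau_k\rangle=\langle \theta_x^*e_j,\theta_\tau^*e_k\rangle=\langle e_j,\theta_x\theta_\tau^*e_k\rangle$ for the forward direction and evaluates $\theta_x\theta_\tau^*$ on an arbitrary sequence for the converse, just as you do. Your added remarks on $\langle e_j,e_k\rangle=\delta_{j,k}e$ and on passing from $\langle x_j,\tau_k\rangle=\delta_{j,k}e$ to $\langle \tau_j,x_k\rangle=\delta_{j,k}e$ via the adjoint in $\mathscr{A}$ are exactly the bookkeeping the paper leaves implicit.
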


\begin{theorem}
Let $(\{x_j\}_{j\in \mathbb{J}},\{\tau_j\}_{j\in \mathbb{J}} )$ be  a Parseval frame  for $\mathscr{E}$ such that $ \theta_x(\mathscr{E})=\theta_\tau(\mathscr{E})$ and $P_{x,\tau} $ is a projection. Then there exist a Hilbert C*-module  $ \mathscr{E}_1$ which contains $\mathscr{E}$ isometrically and  an orthonormal  frame  $(\{y_j\}_{j\in \mathbb{J}},\{\omega_j\}_{j\in \mathbb{J}} )$ for  $ \mathscr{E}_1$ such that $ x_j=Py_j,\tau_j=P\omega_j, \forall j \in \mathbb{J}$, where $P$ is the orthogonal projection from $\mathscr{E}_1$ onto $\mathscr{E}$. 
\end{theorem}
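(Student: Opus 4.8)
The plan is to mimic the proof of Theorem \ref{SEQUENTIALDILATION} (the Hilbert space sequential dilation), transporting each step into the module setting while paying attention to the two places where Hilbert C*-module theory genuinely differs from Hilbert space theory: orthogonal complementability of submodules and adjointability of homomorphisms.

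First I would set up the ambient module. Let $\{e_j\}_{j\in\mathbb{J}}$ be the standard orthonormal basis of $\mathscr{H}_\mathscr{A}$. By Proposition \ref{HILBERTSEQUENTIALPROP}(ix), $\theta_x(\mathscr{E})$ is an orthogonally complementable submodule of $\mathscr{H}_\mathscr{A}$, so $\theta_x(\mathscr{E})^\perp$ makes sense and I may form $\mathscr{E}_1 := \mathscr{E} \oplus \theta_x(\mathscr{E})^\perp$, which is again a Hilbert C*-module over $\mathscr{A}$. The map $\mathscr{E} \ni x \mapsto x \oplus 0 \in \mathscr{E}_1$ is isometric, and the orthogonal projection $P : \mathscr{E}_1 \to \mathscr{E}$ onto the first summand is adjointable. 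Since the frame is Parseval, $S_{x,\tau} = I_\mathscr{E}$ and hence $P_{x,\tau} = \theta_x \theta_\tau^*$; the hypothesis that $P_{x,\tau}$ is a projection means it is the orthogonal projection of $\mathscr{H}_\mathscr{A}$ onto $\theta_x(\mathscr{E}) = \theta_\tau(\mathscr{E})$, so $P_{x,\tau}^\perp := I_{\mathscr{H}_\mathscr{A}} - P_{x,\tau}$ is the projection onto $\theta_x(\mathscr{E})^\perp$.

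Next I would define the candidate orthonormal frame by $y_j := x_j \oplus P_{x,\tau}^\perp e_j$ and $\omega_j := \tau_j \oplus P_{x,\tau}^\perp e_j$ for all $j \in \mathbb{J}$, so that $Py_j = x_j$ and $P\omega_j = \tau_j$ by construction. The computational heart is to identify the analysis homomorphisms and their adjoints: following the Hilbert-space computation verbatim I expect $\theta_y(x\oplus g) = \theta_x x + P_{x,\tau}^\perp g$ and $\theta_\omega(x\oplus g) = \theta_\tau x + P_{x,\tau}^\perp g$ for $x\oplus g \in \mathscr{E}_1$, with adjoints $\theta_y^*(\{a_j\}_{j\in\mathbb{J}}) = \theta_x^*(\{a_j\}_{j\in\mathbb{J}}) \oplus P_{x,\tau}^\perp(\{a_j\}_{j\in\mathbb{J}})$ and likewise for $\theta_\omega^*$. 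Using $\theta_x(\mathscr{E}) = \theta_\tau(\mathscr{E})$ together with the orthogonality relations $\theta_\tau^* P_{x,\tau}^\perp = 0 = P_{x,\tau}^\perp \theta_x$, the frame homomorphism computes to $S_{y,\omega}(x\oplus g) = S_{x,\tau} x \oplus P_{x,\tau}^\perp g = x \oplus g = I_{\mathscr{E}_1}(x\oplus g)$, so $(\{y_j\}_{j\in\mathbb{J}},\{\omega_j\}_{j\in\mathbb{J}})$ is a Parseval frame. Finally, computing $P_{y,\omega} = \theta_y S_{y,\omega}^{-1}\theta_\omega^* = \theta_y \theta_\omega^*$ and expanding via $\theta_y\theta_\omega^* = P_{x,\tau} + P_{x,\tau}^\perp = I_{\mathscr{H}_\mathscr{A}}$ shows the frame is a Riesz frame; being both Parseval and Riesz it is an orthonormal frame.

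The main obstacle, and the only real departure from the Hilbert-space argument, is the justification that $\theta_x(\mathscr{E})$ is orthogonally complementable so that $\mathscr{E}_1$ and $P_{x,\tau}^\perp$ exist at all; this is exactly what Proposition \ref{HILBERTSEQUENTIALPROP}(ix) supplies, via the closed-range conclusion of that proposition together with Theorem \ref{MANUILOV2}. A secondary technical point is to confirm at each stage that the maps produced are genuine adjointable homomorphisms rather than merely bounded linear maps, and that the defining series for $\theta_y$ and $\theta_\omega$ converge in the strict topology; but since these are direct sums of the already-controlled $\theta_x,\theta_\tau$ with the bounded projection $P_{x,\tau}^\perp$, both convergence and adjointability are inherited. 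Once complementability is in hand, the remainder is a routine transcription of the proof of Theorem \ref{SEQUENTIALDILATION}, with the inner products now taking values in $\mathscr{A}$.
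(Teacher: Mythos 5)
Your proposal is correct and follows exactly the route the paper intends: it isolates the one genuinely module-theoretic point (orthogonal complementability of $\theta_x(\mathscr{E})$ in $\mathscr{H}_\mathscr{A}$, supplied by Proposition \ref{HILBERTSEQUENTIALPROP}(ix) via the closed-range argument and Theorem \ref{MANUILOV2}) and then transcribes the construction and computations of Theorem \ref{SEQUENTIALDILATION} verbatim, which is precisely how the paper handles this theorem. No gaps.
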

\begin{definition}
 A frame   $(\{y_j\}_{j\in \mathbb{J}}, \{\omega_j\}_{j\in \mathbb{J}})$  for  $\mathscr{E}$ is said to be a dual of frame  $(\{x_j\}_{j\in \mathbb{J}}, \{\tau_j\}_{j\in \mathbb{J}})$ for $\mathscr{E}$  if $ \theta_\omega^*\theta_x= \theta_y^*\theta_\tau=I_{\mathscr{E}}$. The `frame' $( \{\widetilde{x}_j\coloneqq S_{x,\tau}^{-1}x_j\}_{j\in \mathbb{J}},\{\widetilde{\tau}_j\coloneqq S_{x,\tau}^{-1}\tau_j\}_{j \in \mathbb{J}})$, which is a `dual' of $ (\{x_j\}_{j\in \mathbb{J}}, \{\tau_j\}_{j\in \mathbb{J}})$ is called the canonical dual of $ (\{x_j\}_{j\in \mathbb{J}}, \{\tau_j\}_{j\in \mathbb{J}})$.
 \end{definition}
 \begin{proposition}
 Let $( \{x_j\}_{j\in \mathbb{J}},\{\tau_j\}_{j\in \mathbb{J}} )$ be a frame for  $\mathscr{E}.$ If $ y \in \mathscr{E}$ has representation  $ y=\sum_{j\in\mathbb{J}}c_jx_j= \sum_{j\in\mathbb{J}}d_j\tau_j, $ for some sequences  $ \{c_j\}_{j\in \mathbb{J}},\{d_j\}_{j\in \mathbb{J}}$ in $\mathscr{A}$,  then 
 $$ \sum\limits_{j\in \mathbb{J}}c_jd^*_j =\sum\limits_{j\in \mathbb{J}}\langle S_{x, \tau}^{-1}y, \tau_j\rangle\langle x_j , S_{x, \tau}^{-1}y \rangle+\sum\limits_{j\in \mathbb{J}}(\langle c_j-\langle S_{x, \tau}^{-1}y, \tau_j\rangle)(d^*_j-\langle x_j, S_{x, \tau}^{-1}y\rangle). $$
 \end{proposition}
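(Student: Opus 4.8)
The plan is to mimic the Hilbert-space proof of the corresponding ``minimal-coefficient'' proposition, expanding the right-hand side and collapsing the cross terms by means of the module frame identities in Proposition \ref{HILBERTSEQUENTIALPROP}, while keeping strict track of the non-commutative placement of the scalars $c_j,d_j\in\mathscr{A}$ forced by the $\mathscr{A}$-valued inner product. Write $S\coloneqq S_{x,\tau}$, which is positive invertible and hence self-adjoint, so $S^{-1}$ is self-adjoint as well. Expanding the product $(c_j-\langle S^{-1}y,\tau_j\rangle)(d_j^*-\langle x_j,S^{-1}y\rangle)$ inside the second sum produces four families of terms, and the goal is to show that the three terms that involve $S^{-1}y$ each reduce to $\langle y,S^{-1}y\rangle$ (up to an adjoint) and cancel against the first summand $\sum_j\langle S^{-1}y,\tau_j\rangle\langle x_j,S^{-1}y\rangle$, leaving exactly $\sum_j c_jd_j^*$.

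First I would dispatch the two linear cross terms. Using $\mathscr{A}$-linearity of the inner product in the first slot together with the representation $y=\sum_j c_jx_j$ and continuity of the inner product, I get $\sum_j c_j\langle x_j,S^{-1}y\rangle=\big\langle \sum_j c_jx_j,\,S^{-1}y\big\rangle=\langle y,S^{-1}y\rangle$. Dually, using conjugate-linearity in the second slot, $\langle x,az\rangle=\langle x,z\rangle a^*$, together with $y=\sum_j d_j\tau_j$, I get $\sum_j\langle S^{-1}y,\tau_j\rangle d_j^*=\big\langle S^{-1}y,\sum_j d_j\tau_j\big\rangle=\langle S^{-1}y,y\rangle$. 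For the quadratic term I would invoke the second identity of Proposition \ref{HILBERTSEQUENTIALPROP}(ii), namely $\langle Su,v\rangle=\sum_j\langle u,\tau_j\rangle\langle x_j,v\rangle$; taking $u=v=S^{-1}y$ and using $S S^{-1}y=y$ gives $\sum_j\langle S^{-1}y,\tau_j\rangle\langle x_j,S^{-1}y\rangle=\langle y,S^{-1}y\rangle$.

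Putting these together, the right-hand side becomes $\langle y,S^{-1}y\rangle+\sum_j c_jd_j^*-\langle y,S^{-1}y\rangle-\langle S^{-1}y,y\rangle+\langle y,S^{-1}y\rangle$. Since $S^{-1}$ is self-adjoint, adjointability yields $\langle S^{-1}y,y\rangle=\langle y,S^{-1}y\rangle$, so the three $S^{-1}y$-terms cancel and only $\sum_j c_jd_j^*$ survives, which is the left-hand side. The routine part is this bookkeeping; the one genuine point of care, and the main obstacle, is the non-commutativity: I must consistently use $\langle ax,z\rangle=a\langle x,z\rangle$ and $\langle x,az\rangle=\langle x,z\rangle a^*$ so that the scalars land on the correct side, and I must justify interchanging the (norm-continuous) $\mathscr{A}$-valued inner product with the series $\sum_j c_jx_j$ and $\sum_j d_j\tau_j$, which is legitimate precisely because those series converge to $y$ in $\mathscr{E}$ by hypothesis.
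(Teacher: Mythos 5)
Your proof is correct and follows essentially the same route as the paper's: expand the product, identify the quadratic term with $\langle S_{x,\tau}S_{x,\tau}^{-1}y,S_{x,\tau}^{-1}y\rangle=\langle y,S_{x,\tau}^{-1}y\rangle$ via the frame identity, collapse the cross terms by $\mathscr{A}$-linearity using the two representations of $y$, and cancel using the self-adjointness of $S_{x,\tau}^{-1}$. The only difference is that you make explicit the self-adjointness step $\langle S_{x,\tau}^{-1}y,y\rangle=\langle y,S_{x,\tau}^{-1}y\rangle$, which the paper leaves implicit.
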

 \begin{proof}
 Right side $ =$
 \begin{align*}
 &\sum\limits_{j\in \mathbb{J}}\langle S_{x, \tau}^{-1}y, \tau_j\rangle\langle x_j , S_{x, \tau}^{-1}y \rangle+\sum\limits_{j\in \mathbb{J}}c_jd^*_j-\sum\limits_{j\in \mathbb{J}}c_j\langle x_j, S_{x, \tau}^{-1}y\rangle-\sum\limits_{j\in \mathbb{J}}\langle S_{x, \tau}^{-1}y, \tau_j\rangle d^*_j+\sum\limits_{j\in \mathbb{J}}\langle S_{x, \tau}^{-1}y, \tau_j\rangle\langle x_j, S_{x, \tau}^{-1}y\rangle\\
 &= 2\langle S_{x, \tau}S_{x, \tau}^{-1}y, S_{x, \tau}^{-1}y\rangle+\sum\limits_{j\in \mathbb{J}}c_jd^*_j-\left\langle\sum\limits_{j\in \mathbb{J}}c_jx_j,S_{x, \tau}^{-1}y \right\rangle -\left\langle S_{x, \tau}^{-1}y,\sum\limits_{j\in \mathbb{J}} d_j\tau_j \right\rangle\\
 &=2\langle y, S_{x, \tau}^{-1}y\rangle+\sum\limits_{j\in \mathbb{J}}c_jd^*_j-\langle y, S_{x, \tau}^{-1}y\rangle-\langle  S_{x, \tau}^{-1}y, y\rangle
 =\text{Left side.}
 \end{align*}
 \end{proof} 
\begin{theorem}
 Let $( \{x_j\}_{j\in \mathbb{J}},\{\tau_j\}_{j\in \mathbb{J}} )$ be a frame for $ \mathscr{E}$ with frame bounds $ a$ and $ b.$ Then 
 \begin{enumerate}[\upshape(i)]
 \item The canonical dual frame of the canonical dual frame  of $ (\{x_j\}_{j\in \mathbb{J}} ,\{\tau_j\}_{j\in \mathbb{J}} )$ is itself. 
 \item $ 1/b, 1/a$ are frame bounds for the canonical dual of $ (\{x_j\}_{j\in \mathbb{J}},\{\tau_j\}_{j\in \mathbb{J}})$.  
 \item If $ a, b $ are optimal frame bounds for $( \{x_j\}_{j\in \mathbb{J}} , \{\tau_j\}_{j\in \mathbb{J}}),$ then $ 1/b, 1/a$ are optimal  frame bounds for its canonical dual.
 \end{enumerate}
 \end{theorem}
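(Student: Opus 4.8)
The plan is to follow the blueprint of Theorem \ref{CANONICALDUALFRAMEPROPERTYSEQUENTIALVERSION}, the Hilbert-space sequential analogue, adapting each step to the module setting by invoking the $C^*$-algebra structure of $\operatorname{End}^*_\mathscr{A}(\mathscr{E})$ wherever order relations among positive invertible homomorphisms are used. No self-duality or orthogonal-complementability hypothesis is needed, since the whole argument stays inside the frame homomorphism $S_{x,\tau}$ and its inverse.

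For (i), the first step is to identify the frame homomorphism of the canonical dual. Writing $\widetilde{x}_j = S_{x,\tau}^{-1}x_j$ and $\widetilde{\tau}_j = S_{x,\tau}^{-1}\tau_j$ and using that $S_{x,\tau}^{-1}$ is self-adjoint (being positive invertible), for every $y\in\mathscr{E}$ one computes, via Proposition \ref{HILBERTSEQUENTIALPROP}(ii),
\begin{align*}
S_{\widetilde{x},\widetilde{\tau}}y = \sum_{j\in\mathbb{J}}\langle y,\widetilde{x}_j\rangle\widetilde{\tau}_j = S_{x,\tau}^{-1}\left(\sum_{j\in\mathbb{J}}\langle S_{x,\tau}^{-1}y,x_j\rangle\tau_j\right) = S_{x,\tau}^{-1}S_{x,\tau}S_{x,\tau}^{-1}y = S_{x,\tau}^{-1}y,
\end{align*}
so $S_{\widetilde{x},\widetilde{\tau}}=S_{x,\tau}^{-1}$. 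Here I would first record that $(\{\widetilde{x}_j\},\{\widetilde{\tau}_j\})$ is genuinely a frame, which is immediate since $\theta_{\widetilde{x}}=\theta_x S_{x,\tau}^{-1}$ and $\theta_{\widetilde{\tau}}=\theta_\tau S_{x,\tau}^{-1}$ are compositions of adjointable homomorphisms, so the two Bessel conditions hold and $S_{\widetilde{x},\widetilde{\tau}}$ is positive invertible. Consequently the canonical dual of $(\{\widetilde{x}_j\},\{\widetilde{\tau}_j\})$ has entries $S_{\widetilde{x},\widetilde{\tau}}^{-1}\widetilde{x}_j = S_{x,\tau}S_{x,\tau}^{-1}x_j = x_j$ and likewise $S_{x,\tau}S_{x,\tau}^{-1}\tau_j=\tau_j$, i.e. it is the original frame.

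For (ii), the key step is the order-reversal of inversion: from $aI_\mathscr{E}\le S_{x,\tau}\le bI_\mathscr{E}$ I would pass to $\tfrac1b I_\mathscr{E}\le S_{x,\tau}^{-1}\le \tfrac1a I_\mathscr{E}$. Because $\operatorname{End}^*_\mathscr{A}(\mathscr{E})$ is a unital $C^*$-algebra and $S_{x,\tau}$ is a positive invertible element, this is exactly the statement that $t\mapsto t^{-1}$ reverses order on the spectrum $\sigma(S_{x,\tau})\subseteq[a,b]\subseteq(0,\infty)$, delivered by continuous functional calculus. Since $S_{\widetilde{x},\widetilde{\tau}}=S_{x,\tau}^{-1}$ by (i), the numbers $1/b,1/a$ are frame bounds for the canonical dual. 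For (iii), I would exploit the involutivity established in (i). Let $c$ denote the optimal (smallest) upper frame bound of the canonical dual. By (ii), $1/a$ is an upper bound for the dual, so $c\le 1/a$; applying (ii) to the canonical dual, whose own canonical dual is the original frame by (i), shows $1/c$ is a lower frame bound for $(\{x_j\},\{\tau_j\})$, whence $1/c\le a$ by optimality of $a$, i.e. $c\ge 1/a$. Thus $c=1/a$, and the symmetric argument on the lower bound gives $1/b$ as the optimal lower frame bound of the dual.

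The routine portion duplicates \ref{CANONICALDUALFRAMEPROPERTYSEQUENTIALVERSION} almost verbatim, so I expect the only genuine obstacle to be ensuring that every order manipulation among positive invertible homomorphisms is legitimate in a Hilbert $C^*$-module rather than silently borrowing a Hilbert-space spectral theorem. The decisive point dissolving this worry is that $\operatorname{End}^*_\mathscr{A}(\mathscr{E})$ is a unital $C^*$-algebra, so the continuous functional calculus and the order-reversing property of inversion on positive invertible elements are available exactly as for bounded operators on a Hilbert space; once that is cited, the remaining manipulations are purely formal.
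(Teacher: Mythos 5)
Your proof is correct and follows essentially the route the paper intends: this theorem is stated in the module section without proof, deferring to Theorem \ref{CANONICALDUALFRAMEPROPERTYSEQUENTIALVERSION} and Theorem \ref{CANONICALDUALFRAMEPROPERTYOPERATORVERSION}, whose arguments (compute $S_{\widetilde{x},\widetilde{\tau}}=S_{x,\tau}^{-1}$, invert the two-sided bound, then use involutivity of the canonical dual for optimality) you reproduce faithfully. Your only addition is to make explicit that the order reversal $aI_\mathscr{E}\leq S_{x,\tau}\leq bI_\mathscr{E}\Rightarrow \tfrac{1}{b}I_\mathscr{E}\leq S_{x,\tau}^{-1}\leq\tfrac{1}{a}I_\mathscr{E}$ is licensed by the functional calculus in the unital C*-algebra $\operatorname{End}^*_\mathscr{A}(\mathscr{E})$, which the paper leaves implicit.
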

\begin{proposition}
 Let  $ (\{x_j\}_{j\in \mathbb{J}}, \{\tau_j\}_{j\in \mathbb{J}}) $ and $ (\{y_j\}_{j\in \mathbb{J}}, \{\omega_j\}_{j\in \mathbb{J}}) $ be  frames for  $\mathscr{E}$. Then the following are equivalent.
 \begin{enumerate}[\upshape(i)]
 \item $ (\{y_j\}_{j\in \mathbb{J}},\{\omega_j\}_{j\in \mathbb{J}}) $ is dual of $( \{x_j\}_{j\in \mathbb{J}}, \{\tau_j\}_{j\in \mathbb{J}}) $. 
 \item $ \sum_{j\in \mathbb{J}}\langle z, x_j\rangle \omega_j= \sum_{j\in \mathbb{J}}\langle z, \tau_j\rangle y_j=z, \forall z \in  \mathscr{E}.$ 
 \end{enumerate}
 \end{proposition}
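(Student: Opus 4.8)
The plan is to reduce the claimed equivalence to the very definition of a dual frame by identifying the two displayed series in (ii) with compositions of analysis and synthesis homomorphisms that appear implicitly in (i). Recall that $(\{y_j\}_{j\in\mathbb{J}},\{\omega_j\}_{j\in\mathbb{J}})$ is a dual of $(\{x_j\}_{j\in\mathbb{J}},\{\tau_j\}_{j\in\mathbb{J}})$ precisely when $\theta_\omega^*\theta_x=\theta_y^*\theta_\tau=I_\mathscr{E}$. Since both pairs are frames for $\mathscr{E}$, Proposition \ref{HILBERTSEQUENTIALPROP} guarantees that all four analysis homomorphisms $\theta_x,\theta_\tau,\theta_y,\theta_\omega$ exist as bounded adjointable maps into $\mathscr{H}_\mathscr{A}$, and that their adjoints act by $\theta_y^*(\{c_j\}_{j\in\mathbb{J}})=\sum_{j\in\mathbb{J}}c_jy_j$ and $\theta_\omega^*(\{c_j\}_{j\in\mathbb{J}})=\sum_{j\in\mathbb{J}}c_j\omega_j$, with the series converging in the norm of $\mathscr{E}$. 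This is the only structural input I need.

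First I would compute the two relevant compositions pointwise. For $z\in\mathscr{E}$, applying $\theta_x$ gives the coefficient sequence $\theta_x z=\{\langle z,x_j\rangle\}_{j\in\mathbb{J}}\in\mathscr{H}_\mathscr{A}$, and then applying $\theta_\omega^*$ yields $\theta_\omega^*\theta_x z=\sum_{j\in\mathbb{J}}\langle z,x_j\rangle\,\omega_j$. Symmetrically, $\theta_y^*\theta_\tau z=\sum_{j\in\mathbb{J}}\langle z,\tau_j\rangle\,y_j$. Thus the two operator identities $\theta_\omega^*\theta_x=I_\mathscr{E}$ and $\theta_y^*\theta_\tau=I_\mathscr{E}$ say exactly that $\sum_{j\in\mathbb{J}}\langle z,x_j\rangle\,\omega_j=z$ and $\sum_{j\in\mathbb{J}}\langle z,\tau_j\rangle\,y_j=z$ for every $z\in\mathscr{E}$, which is condition (ii). Both directions of the equivalence then follow simultaneously from this identification: (i) is the coordinate-free statement and (ii) its pointwise transcription, so reading the computed equalities left-to-right gives (i)$\Rightarrow$(ii) and right-to-left gives (ii)$\Rightarrow$(i).

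There is no substantial obstacle here; the content is entirely bookkeeping, exactly as in the Hilbert-space analogue proved just after the definition of a dual frame in Section \ref{SEQUENTIAL}. The only point that warrants a word of care is well-definedness: one must know that the series $\sum_{j\in\mathbb{J}}\langle z,x_j\rangle\,\omega_j$ converges so that $\theta_\omega^*\theta_x$ is a genuine adjointable endomorphism of $\mathscr{E}$. This is supplied by the Bessel property of the two frames together with Proposition \ref{HILBERTSEQUENTIALPROP}, whence the composition of the bounded homomorphism $\theta_x$ with the bounded homomorphism $\theta_\omega^*$ is automatically bounded and adjointable, and likewise for $\theta_y^*\theta_\tau$. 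Consequently the whole argument collapses to the single observation $\theta_\omega^*\theta_x z=\sum_{j\in\mathbb{J}}\langle z,x_j\rangle\omega_j$ and $\theta_y^*\theta_\tau z=\sum_{j\in\mathbb{J}}\langle z,\tau_j\rangle y_j$, and the proposition is immediate.
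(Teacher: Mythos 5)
Your argument is correct and is exactly the paper's (one-line) proof: the equivalence reduces to the identities $\theta_\omega^*\theta_x z=\sum_{j\in\mathbb{J}}\langle z,x_j\rangle\omega_j$ and $\theta_y^*\theta_\tau z=\sum_{j\in\mathbb{J}}\langle z,\tau_j\rangle y_j$, which translate the defining condition $\theta_\omega^*\theta_x=\theta_y^*\theta_\tau=I_\mathscr{E}$ into condition (ii). Your additional remark on well-definedness of the compositions is sound but not a departure from the paper's route.
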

\begin{theorem}
Let $ (\{x_j\}_{j\in \mathbb{J}}, \{\tau_j\}_{j\in \mathbb{J}})$  be a  frame for   $ \mathscr{E}$. If $ (\{x_j\}_{j\in \mathbb{J}}, \{\tau_j\}_{j\in \mathbb{J}})$ is a Riesz  basis  for   $ \mathscr{E}$, then $ (\{x_j\}_{j\in \mathbb{J}}, \{\tau_j\}_{j\in \mathbb{J}}) $ has unique dual. Converse holds if $ \theta_x( \mathscr{E})=\theta_\tau( \mathscr{E})$.
\end{theorem}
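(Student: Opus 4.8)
The plan is to transcribe the Hilbert space argument of Section \ref{SEQUENTIAL} into the Hilbert C*-module setting, replacing the Euclidean orthogonal-complement facts by Proposition \ref{HILBERTSEQUENTIALPROP} and by the $\mathscr{A}$-valued inner product identities. For the forward implication I would start from the Riesz basis data: there are invertible adjointable $U,V \in \operatorname{End}^*_\mathscr{A}(\mathscr{E})$, an orthonormal basis $\{f_j\}_{j\in\mathbb{J}}$ for $\mathscr{E}$, and $x_j=Uf_j$, $\tau_j=Vf_j$ with $VU^*\geq 0$. Given two duals $(\{y_j\},\{\omega_j\})$ and $(\{z_j\},\{\rho_j\})$ of $(\{x_j\},\{\tau_j\})$, taking adjoints in the duality relation $\theta_\omega^*\theta_x=\theta_y^*\theta_\tau=I_\mathscr{E}$ gives $\theta_\tau^*\theta_y=\theta_\tau^*\theta_z=I_\mathscr{E}$, i.e. $\sum_j\langle h,y_j\rangle\tau_j=h=\sum_j\langle h,z_j\rangle\tau_j$ for all $h\in\mathscr{E}$. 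Subtracting and substituting $\tau_j=Vf_j$, the invertibility of $V$ yields $\sum_j\langle h,y_j-z_j\rangle f_j=0$; pairing with $f_k$ and using $\langle f_k,f_k\rangle=e$ gives $\langle h,y_j-z_j\rangle=0$ for every $h$ and $j$, so $y_j=z_j$, and symmetrically $\omega_j=\rho_j$. This half is purely algebraic and presents no module-specific difficulty.

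For the converse I would argue by contradiction, assuming $\theta_x(\mathscr{E})=\theta_\tau(\mathscr{E})$ and that the frame is not a Riesz basis. The module analogue of Proposition \ref{RIESZFRAMECHARACTERIZATION}, together with the range-equality hypothesis, reduces this to $\theta_x(\mathscr{E})\subsetneq\mathscr{H}_\mathscr{A}$, exactly as in Section \ref{SEQUENTIAL}. The crucial structural input is Proposition \ref{HILBERTSEQUENTIALPROP}(ix): $\theta_x(\mathscr{E})$ (hence $\theta_\tau(\mathscr{E})$) is orthogonally complemented in $\mathscr{H}_\mathscr{A}$, so the orthogonal projection $P\colon\mathscr{H}_\mathscr{A}\to N\coloneqq\theta_x(\mathscr{E})^\perp$ exists and $N\neq\{0\}$. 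Picking a nonzero adjointable homomorphism $T\colon N\to\mathscr{E}$ I would set $y_j\coloneqq S_{x,\tau}^{-1}x_j+TPe_j$ and $\omega_j\coloneqq S_{x,\tau}^{-1}\tau_j+TPe_j$, where $\{e_j\}$ is the standard basis of $\mathscr{H}_\mathscr{A}$. A direct computation gives $\theta_y=\theta_xS_{x,\tau}^{-1}+PT^*$ and $\theta_\omega=\theta_\tau S_{x,\tau}^{-1}+PT^*$; since $N=\theta_\tau(\mathscr{E})^\perp=\operatorname{Ker}(\theta_\tau^*)$ we have $\theta_\tau^*P=0=P\theta_x$, whence by Proposition \ref{HILBERTSEQUENTIALPROP}(ii), $S_{y,\omega}=\theta_\omega^*\theta_y=S_{x,\tau}^{-1}+TPT^*\geq S_{x,\tau}^{-1}$ is positive invertible and $\theta_\tau^*\theta_y=\theta_\omega^*\theta_x=I_\mathscr{E}$. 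Thus $(\{y_j\},\{\omega_j\})$ is a dual distinct from the canonical dual, contradicting uniqueness; the hypothesis $\theta_x(\mathscr{E})=\theta_\tau(\mathscr{E})$ is used decisively to annihilate the cross terms.

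The step I expect to be the main obstacle is securing, in the converse, a nonzero adjointable $T\colon N\to\mathscr{E}$ together with an index $k$ for which $TPe_k\neq 0$, so that $y_k\neq S_{x,\tau}^{-1}x_k$ genuinely. In a Hilbert space any nonzero bounded operator works, but over $\mathscr{A}$ I would construct $T$ explicitly as the rank-one homomorphism $z\mapsto v\langle w,z\rangle$ with $0\neq w\in N$ and a suitable $v\in\mathscr{E}$, which is adjointable with adjoint $u\mapsto w\langle v,u\rangle$; since $\{Pe_j\}_{j\in\mathbb{J}}$ is total in $N$ and $T$ is continuous and nonzero, it cannot vanish on every $Pe_j$, providing the required $k$. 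Checking that these rank-one constructions and the orthogonal complementability of Proposition \ref{HILBERTSEQUENTIALPROP}(ix) genuinely stand in for the Hilbert space geometry is the only nonroutine part; the remaining identities mirror the proof of the corresponding theorem in Section \ref{SEQUENTIAL}.
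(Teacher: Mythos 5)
Your forward direction is correct and is exactly the argument the paper intends: the invertibility of $V$ together with the orthonormality relation $\langle f_j,f_k\rangle=\delta_{j,k}e$ forces $\langle h,y_j-z_j\rangle=0$ for every $h$, hence $y_j=z_j$, and symmetrically for the $\omega_j$'s. Your converse also follows the paper's own route precisely: the paper's entire proof consists of invoking Proposition \ref{HILBERTSEQUENTIALPROP} for the orthogonal complementability of $\theta_x(\mathscr{E})=\theta_\tau(\mathscr{E})$ in $\mathscr{H}_\mathscr{A}$ and then declaring the rest ``similar to the Hilbert space situation,'' and all of your operator identities ($\theta_\tau^*P=0=P\theta_x$, $S_{y,\omega}=S_{x,\tau}^{-1}+TPT^*\geq S_{x,\tau}^{-1}$, the two duality checks) are correct.

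The gap sits exactly where you suspected: the existence of a nonzero adjointable $T\colon N\to\mathscr{E}$. In a Hilbert space any nonzero $w\in N$ and $v\in\mathcal{H}$ give a nonzero rank-one operator, but over $\mathscr{A}$ the rank-one map $z\mapsto\langle z,w\rangle v$ (this is the correctly typed version of your formula, since the paper's inner product is $\mathscr{A}$-linear in the first slot and $\mathscr{E}$ is a left module) is nonzero only if $\langle z,w\rangle v\neq0$ for some $z\in N$, $v\in\mathscr{E}$, and this can fail. Take $\mathscr{A}=\mathbb{C}\oplus\mathbb{C}$ with minimal projections $p_1,p_2$, let $\mathscr{E}=\mathscr{A}p_1$, $\mathbb{J}=\{1\}$ and $x_1=\tau_1=p_1$. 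This is a Parseval frame with $\theta_x(\mathscr{E})=\theta_\tau(\mathscr{E})=\mathscr{A}p_1\subsetneq\mathscr{H}_\mathscr{A}=\mathscr{A}$ and $N=\mathscr{A}p_2\neq\{0\}$, yet every $\mathscr{A}$-linear $T\colon N\to\mathscr{E}$ satisfies $T(p_2)=p_2T(p_2)\in p_2\mathscr{A}p_1=\{0\}$, so $T=0$ and no second dual can be manufactured this way; equivalently, $\langle z,w\rangle v=0$ for all $z,w\in N$ and $v\in\mathscr{E}$, so no ``suitable $v$'' exists. In fact in this example the dual is unique ($y_1=\omega_1=p_1$ is forced) while $(\{x_1\},\{\tau_1\})$ is not a Riesz basis, because $\mathscr{E}$ admits no orthonormal basis at all ($\langle ap_1,ap_1\rangle$ can never equal $e$); so as stated the converse appears to need a supplementary hypothesis, e.g.\ that $\mathscr{E}$ possesses an orthonormal basis, in which case choosing $v$ with $\langle v,v\rangle=e$ makes your rank-one $T$ nonzero and your totality argument then supplies the index $k$ with $TPe_k\neq0$. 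The paper's ``rest are similar to the Hilbert space situation'' silently carries the same defect, so your diagnosis of the nonroutine step was accurate, but as written that step cannot be completed in general.
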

\begin{proof}
From Proposition \ref{HILBERTSEQUENTIALPROP},  $\theta_x(\mathscr{E})$ and $ \theta_\tau(\mathscr{E}) $ are orthogonally complementable submodules of  $\mathscr{H}_\mathscr{A}$. Rest are similar to Hilbert space situation.
\end{proof} 
\begin{proposition}
Let $ (\{x_j\}_{j\in \mathbb{J}}, \{\tau_j\}_{j\in \mathbb{J}}) $  be a  frame for   $\mathscr{E}$. If $( \{y_j\}_{j\in \mathbb{J}}, \{\omega_j\}_{j\in \mathbb{J}})$ is a dual of $ (\{x_j\}_{j\in \mathbb{J}}, \{\tau_j\}_{j\in \mathbb{J}}) $, then  there exist Bessel sequences $ \{z_j\}_{j\in \mathbb{J}}$ and $ \{\rho_j\}_{j\in \mathbb{J}} $ for  $\mathscr{E}$ such that $ y_j=S_{x,\tau}^{-1}x_j+z_j, \omega_j=S_{x,\tau}^{-1}\tau_j+\rho_j,\forall j \in \mathbb{J}$, and $\theta_z(\mathscr{E})\perp \theta_\tau(\mathscr{E}),\theta_\rho(\mathscr{E})\perp \theta_x(\mathscr{E})$. Converse holds if  $ \theta_\rho^*\theta_z \geq 0$.
\end{proposition}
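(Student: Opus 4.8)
The plan is to transcribe the Hilbert space sequential argument almost verbatim, paying attention only to the three places where the Hilbert C*-module structure genuinely differs: the adjointability of all homomorphisms in play, the reading of the two orthogonality relations as inner-product identities, and the invertibility of the frame homomorphism in the converse. Throughout I will use from Proposition \ref{HILBERTSEQUENTIALPROP} that $S_{x,\tau}=\theta_\tau^*\theta_x=\theta_x^*\theta_\tau$ is positive invertible, so that $S_{x,\tau}^{-1}$ is a self-adjoint adjointable homomorphism.

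For the forward direction I would set $z_j\coloneqq y_j-S_{x,\tau}^{-1}x_j$ and $\rho_j\coloneqq \omega_j-S_{x,\tau}^{-1}\tau_j$ for all $j\in\mathbb{J}$. Using $\langle v,S_{x,\tau}^{-1}x_j\rangle=\langle S_{x,\tau}^{-1}v,x_j\rangle$ (self-adjointness of $S_{x,\tau}^{-1}$), the analysis homomorphism of $\{S_{x,\tau}^{-1}x_j\}_j$ is $\theta_x S_{x,\tau}^{-1}$, so $\{S_{x,\tau}^{-1}x_j\}_j$ is Bessel and hence so is the difference $\{z_j\}_j$, with $\theta_z=\theta_y-\theta_x S_{x,\tau}^{-1}$; symmetrically $\theta_\rho=\theta_\omega-\theta_\tau S_{x,\tau}^{-1}$. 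Invoking the dual relation $\theta_\omega^*\theta_x=\theta_y^*\theta_\tau=I_\mathscr{E}$ I compute $\theta_\tau^*\theta_z=\theta_\tau^*\theta_y-\theta_\tau^*\theta_x S_{x,\tau}^{-1}=I_\mathscr{E}-S_{x,\tau}S_{x,\tau}^{-1}=0$ and likewise $\theta_x^*\theta_\rho=0$. The orthogonality claims then follow immediately, since $\theta_\tau^*\theta_z=0$ is exactly $\langle \theta_z v,\theta_\tau w\rangle=0$ for all $v,w\in\mathscr{E}$, i.e. $\theta_z(\mathscr{E})\perp\theta_\tau(\mathscr{E})$, and symmetrically $\theta_\rho(\mathscr{E})\perp\theta_x(\mathscr{E})$.

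For the converse I would assume the stated decomposition together with $\theta_z(\mathscr{E})\perp\theta_\tau(\mathscr{E})$, $\theta_\rho(\mathscr{E})\perp\theta_x(\mathscr{E})$ and $\theta_\rho^*\theta_z\geq 0$. First, $\{y_j\}_j$ and $\{\omega_j\}_j$ are Bessel as sums of Bessel sequences, with $\theta_y=\theta_x S_{x,\tau}^{-1}+\theta_z$ and $\theta_\omega=\theta_\tau S_{x,\tau}^{-1}+\theta_\rho$. Expanding $S_{y,\omega}=\theta_\omega^*\theta_y=(S_{x,\tau}^{-1}\theta_\tau^*+\theta_\rho^*)(\theta_x S_{x,\tau}^{-1}+\theta_z)$ and discarding the two cross terms via $\theta_\tau^*\theta_z=0$ and $\theta_\rho^*\theta_x=(\theta_x^*\theta_\rho)^*=0$ (the orthogonality hypotheses) yields $S_{y,\omega}=S_{x,\tau}^{-1}+\theta_\rho^*\theta_z$. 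Finally the dual identities drop out: $\theta_\tau^*\theta_y=\theta_\tau^*\theta_x S_{x,\tau}^{-1}+\theta_\tau^*\theta_z=I_\mathscr{E}$, whence $\theta_y^*\theta_\tau=I_\mathscr{E}$, and similarly $\theta_x^*\theta_\omega=I_\mathscr{E}$, giving $\theta_\omega^*\theta_x=I_\mathscr{E}$.

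I expect the genuine obstacle to be the invertibility of $S_{y,\omega}=S_{x,\tau}^{-1}+\theta_\rho^*\theta_z$, which in a Hilbert space is automatic but in a Hilbert C*-module is not. Here I would argue that $S_{x,\tau}^{-1}\geq \|S_{x,\tau}\|^{-1}I_\mathscr{E}$ and $\theta_\rho^*\theta_z\geq 0$, so $\|S_{x,\tau}\|^{-1}I_\mathscr{E}\leq S_{y,\omega}$, while boundedness of the two summands gives an upper bound $S_{y,\omega}\leq \beta I_\mathscr{E}$; this two-sided order estimate forces positive invertibility through the equivalence of order bounds with surjectivity in Theorem \ref{ARAMBASIC1}. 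The second point requiring vigilance, rather than difficulty, is that every operator in sight ($\theta_x,\theta_\tau,\theta_y,\theta_\omega,\theta_z,\theta_\rho$ and $S_{x,\tau}^{-1}$) is an adjointable homomorphism, which is guaranteed by the frame axioms in Definition \ref{SEQUENTIAL VERSION HOMOMORPHISM VAUED DEFINITION} and Proposition \ref{HILBERTSEQUENTIALPROP}, so that all adjoints and the manipulation $\theta_\rho^*\theta_x=(\theta_x^*\theta_\rho)^*$ are legitimate. Once invertibility is secured, $(\{y_j\},\{\omega_j\})$ is a frame, and the identities above certify it as a dual of $(\{x_j\},\{\tau_j\})$.
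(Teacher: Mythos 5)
Your proof is correct and follows exactly the route the paper intends: the paper omits the module proof and refers back to the Hilbert-space sequential version, whose argument (define $z_j=y_j-S_{x,\tau}^{-1}x_j$, $\rho_j=\omega_j-S_{x,\tau}^{-1}\tau_j$, kill the cross terms via the duality/orthogonality relations, and get $S_{y,\omega}=S_{x,\tau}^{-1}+\theta_\rho^*\theta_z\geq S_{x,\tau}^{-1}$) you reproduce verbatim, correctly flagging the only module-specific points (adjointability, reading $\perp$ as $\theta_\tau^*\theta_z=0$, and invertibility from the lower order bound). The detour through Theorem \ref{ARAMBASIC1} for invertibility is slightly more than needed — $S_{y,\omega}\geq \|S_{x,\tau}\|^{-1}I_\mathscr{E}>0$ already forces invertibility in the unital C*-algebra $\operatorname{End}^*_\mathscr{A}(\mathscr{E})$ — but this is harmless.
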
 
\begin{lemma}
Let  $ (\{x_j\}_{j\in \mathbb{J}}, \{\tau_j\}_{j\in \mathbb{J}}) $  be a  frame for   $\mathscr{E}$ and $ \{e_j\}_{j\in \mathbb{J}}$ be the standard orthonormal basis for $ \mathscr{H}_\mathscr{A}$. Then the dual frames  of $ (\{x_j\}_{j\in \mathbb{J}}, \{\tau_j\}_{j\in \mathbb{J}})$ are precisely $ (\{y_j=Ue_j\}_{j\in \mathbb{J}}, \{\omega_j=Ve_j\}_{j\in \mathbb{J}})$, where $ U,V: \mathscr{H}_\mathscr{A} \rightarrow \mathscr{E}$ are bounded left-inverses of $ \theta_\tau, \theta_x$, respectively, such that $ VU^*$ is positive invertible.
\end{lemma}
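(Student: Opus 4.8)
The plan is to follow the proof of Lemma~\ref{DUALCHARATERIZATIONLEMMA1}, the Hilbert space counterpart, replacing $\ell^2(\mathbb{J})$ by $\mathscr{H}_\mathscr{A}$ throughout; the one genuinely new ingredient is that in the module setting one must secure the adjointability of every operator that appears before any adjoint can be written. Here $U,V$ are understood as bounded adjointable homomorphisms: since they are bounded homomorphisms from the Hilbert C*-module $\mathscr{H}_\mathscr{A}$ into the Hilbert (hence pre-Hilbert) C*-module $\mathscr{E}$, their adjoints exist by Proposition 3.4 in \cite{PASCHKE1}, so the expressions $U^*$, $V^*$, $VU^*$ are all legitimate.

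For the direction $(\Leftarrow)$, suppose $U,V\colon\mathscr{H}_\mathscr{A}\to\mathscr{E}$ are bounded left-inverses of $\theta_\tau,\theta_x$ with $VU^*$ positive invertible, and put $y_j\coloneqq Ue_j$, $\omega_j\coloneqq Ve_j$. First I would identify the analysis homomorphisms: for $x\in\mathscr{E}$ the $j$-th coordinate of $\theta_yx$ is $\langle x,y_j\rangle=\langle x,Ue_j\rangle=\langle U^*x,e_j\rangle$, which is precisely the $j$-th coordinate of $U^*x$ in $\mathscr{H}_\mathscr{A}$; hence $\theta_y=U^*$, and likewise $\theta_\omega=V^*$. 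Then $S_{y,\omega}=\theta_\omega^*\theta_y=VU^*$ is positive invertible, so $(\{y_j\}_{j\in\mathbb{J}},\{\omega_j\}_{j\in\mathbb{J}})$ is a frame (the Bessel conditions following from the boundedness of $U^*$ and $V^*$), and the left-inverse hypotheses give $\theta_\omega^*\theta_x=V\theta_x=I_\mathscr{E}$ together with $\theta_y^*\theta_\tau=U\theta_\tau=I_\mathscr{E}$, which is exactly the duality condition.

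For the converse $(\Rightarrow)$, let $(\{y_j\}_{j\in\mathbb{J}},\{\omega_j\}_{j\in\mathbb{J}})$ be any dual frame, so that $\theta_y^*\theta_\tau=I_\mathscr{E}=\theta_\omega^*\theta_x$. I would simply set $U\coloneqq\theta_y^*$ and $V\coloneqq\theta_\omega^*$; these are adjointable homomorphisms $\mathscr{H}_\mathscr{A}\to\mathscr{E}$ and, by the displayed identities, bounded left-inverses of $\theta_\tau$ and $\theta_x$ respectively, while $VU^*=\theta_\omega^*\theta_y=S_{y,\omega}$ is positive invertible by Proposition~\ref{HILBERTSEQUENTIALPROP}. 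Finally $Ue_j=\theta_y^*e_j=y_j$ and $Ve_j=\theta_\omega^*e_j=\omega_j$, which closes the description.

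The step I expect to be the main obstacle is the bookkeeping around adjointability rather than any computation: in contrast with the Hilbert space argument, none of $U^*$, $V^*$, $VU^*$, nor the analysis homomorphisms may be written down until one has invoked Paschke's theorem (for $U,V$) and the defining adjointability of $\theta_x,\theta_\tau$ built into the frame definition. Everything else---the coordinate identity $\theta_y=U^*$ and the passage from $S_{y,\omega}=VU^*$ to the frame property---is the verbatim module transcription of Lemma~\ref{DUALCHARATERIZATIONLEMMA1}, once the $\mathscr{A}$-valued inner product is used in place of the scalar one.
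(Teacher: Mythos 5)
Your argument is the intended one: it is the verbatim module transcription of the proof of Lemma \ref{DUALCHARATERIZATIONLEMMA1}, with $\theta_y=U^*$, $\theta_\omega=V^*$, $S_{y,\omega}=VU^*$ in one direction and $U\coloneqq\theta_y^*$, $V\coloneqq\theta_\omega^*$ in the other, which is exactly what the paper means when it says the proofs in this section follow those of the sequential Hilbert-space section. The computations are all correct.

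The one misstep is the point you single out as the ``genuinely new ingredient.'' Paschke's Proposition 3.4 does not say that every bounded module map between Hilbert C*-modules is adjointable; it requires the \emph{codomain} to be self-dual, and $\mathscr{E}$ is not assumed self-dual here (the paper itself only invokes that result in Theorem \ref{RIESZCHARACTERIZATIONSELFDUAL}, where self-duality is a hypothesis). So you cannot manufacture $U^*$ and $V^*$ from boundedness alone. Fortunately this is not needed: since $U^*$ already appears in the statement of the lemma, adjointability of $U$ and $V$ must be read as part of the hypothesis --- exactly as in the companion lemma, which speaks of \emph{adjointable} left-inverses of $\theta_x$ and $\theta_\tau$ --- and in the converse direction your $U=\theta_y^*$, $V=\theta_\omega^*$ are adjointable by construction. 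Drop the appeal to Paschke and state the adjointability assumption explicitly, and the proof is complete.
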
 
\begin{lemma}
Let $ (\{x_j\}_{j\in \mathbb{J}}, \{\tau_j\}_{j\in \mathbb{J}}) $  be a  frame for   $ \mathscr{E}$ over $ \mathscr{A}$. Then the  adjointable left-inverses of 
\begin{enumerate}[\upshape(i)]
\item $ \theta_x$ are precisely   $S_{x,\tau}^{-1}\theta_\tau^*+U(I_{\mathscr{H}_\mathscr{A}}-\theta_xS_{x,\tau}^{-1}\theta_\tau^*)$, where $U\in \operatorname{Hom}_\mathscr{A}^*(\mathscr{H}_\mathscr{A}, \mathscr{E})$.
\item $ \theta_\tau$ are precisely  $S_{x,\tau}^{-1}\theta_x^*+V(I_{\mathscr{H}_\mathscr{A}}-\theta_\tau S_{x,\tau}^{-1}\theta_x^*)$, where $V\in \operatorname{Hom}_\mathscr{A}^*( \mathscr{H}_\mathscr{A}, \mathscr{E})$.
\end{enumerate}	
\end{lemma}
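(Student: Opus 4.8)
The plan is to transcribe the proof of Lemma \ref{DUALCHARATERIZATIONLEMMA2} into the Hilbert C*-module setting, drawing all structural facts from Proposition \ref{HILBERTSEQUENTIALPROP} and paying attention only to the one place where adjointability (rather than mere boundedness) must be verified. I would prove part (i) in full and obtain part (ii) by interchanging the roles of $\{x_j\}_{j\in\mathbb{J}}$ and $\{\tau_j\}_{j\in\mathbb{J}}$, using $S_{\tau,x}=S_{x,\tau}$. The ingredients I would invoke are: $S_{x,\tau}=\theta_\tau^*\theta_x$ and $S_{x,\tau}$ is positive invertible, so $S_{x,\tau}^{-1}\in\operatorname{End}^*_\mathscr{A}(\mathscr{E})$; and $\theta_x,\theta_\tau\in\operatorname{Hom}^*_\mathscr{A}(\mathscr{E},\mathscr{H}_\mathscr{A})$ are adjointable, all of which come from Proposition \ref{HILBERTSEQUENTIALPROP}.

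For the inclusion $(\Leftarrow)$, given $U\in\operatorname{Hom}^*_\mathscr{A}(\mathscr{H}_\mathscr{A},\mathscr{E})$ I would set $T:=S_{x,\tau}^{-1}\theta_\tau^*+U(I_{\mathscr{H}_\mathscr{A}}-\theta_xS_{x,\tau}^{-1}\theta_\tau^*)$ and compute $T\theta_x$. Since $\theta_\tau^*\theta_x=S_{x,\tau}$ yields $S_{x,\tau}^{-1}\theta_\tau^*\theta_x=I_\mathscr{E}$, I obtain $T\theta_x=I_\mathscr{E}+U\theta_x-U\theta_x I_\mathscr{E}=I_\mathscr{E}$. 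Moreover $T$ is adjointable, being a sum and composition of the adjointable homomorphisms $S_{x,\tau}^{-1}$, $\theta_\tau^*$, $\theta_x$ and $U$; hence $T$ is an adjointable left-inverse of $\theta_x$.

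For $(\Rightarrow)$, given an adjointable left-inverse $L$ of $\theta_x$, I would simply put $U:=L\in\operatorname{Hom}^*_\mathscr{A}(\mathscr{H}_\mathscr{A},\mathscr{E})$ and substitute. Using $L\theta_x=I_\mathscr{E}$ the parametrizing expression collapses: $S_{x,\tau}^{-1}\theta_\tau^*+L(I_{\mathscr{H}_\mathscr{A}}-\theta_xS_{x,\tau}^{-1}\theta_\tau^*)=S_{x,\tau}^{-1}\theta_\tau^*+L-(L\theta_x)S_{x,\tau}^{-1}\theta_\tau^*=L$, so every adjointable left-inverse lies in the claimed family. Part (ii) follows verbatim with $\theta_x$ and $\theta_\tau$ exchanged.

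The only genuine difference from the Hilbert space case is that a bounded $\mathscr{A}$-linear map need not be adjointable, so the algebraic identity $T\theta_x=I_\mathscr{E}$ by itself would not place $T$ in $\operatorname{Hom}^*_\mathscr{A}(\mathscr{H}_\mathscr{A},\mathscr{E})$. The point I would emphasize is that each building block is adjointable and adjointability is preserved under addition and composition, so no additional hypothesis (self-duality, orthogonal complementability, or Theorem \ref{MANUILOV2}) is needed here. This is the main, and only mild, obstacle, and it is disposed of purely formally.
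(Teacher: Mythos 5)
Your proposal is correct and is essentially the paper's own argument: the paper proves the Hilbert-space version (the lemma on bounded left-inverses of $\theta_x$ and $\theta_\tau$) by exactly this two-line computation and states the module version without proof, deferring to that argument. Your explicit remark that adjointability of the parametrized operator follows because it is built from sums and composites of the adjointable maps $S_{x,\tau}^{-1}$, $\theta_\tau^*$, $\theta_x$, $U$ is the only point needing attention in the module setting, and you handle it correctly.
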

\begin{theorem}
Let  $ (\{x_j\}_{j\in \mathbb{J}}, \{\tau_j\}_{j\in \mathbb{J}}) $  be a  frame for  $ \mathscr{E}$ over $ \mathscr{A}$. The dual frames 	 $ (\{y_j\}_{j\in \mathbb{J}}, \{\omega_j\}_{j\in \mathbb{J}}) $ of $ (\{x_j\}_{j\in \mathbb{J}}, \{\tau_j\}_{j\in \mathbb{J}}) $ are precisely  
\begin{align*}
(\{y_j=S_{x,\tau}^{-1}x_j+Ve_j-V\theta_\tau S_{x,\tau}^{-1}x_j\}_{j\in \mathbb{J}},
\{\omega_j=S_{x,\tau}^{-1}\tau_j+Ue_j-U\theta_xS_{x,\tau}^{-1}\tau_j\}_{j\in \mathbb{J}})
\end{align*}
such that 
$$S_{x,\tau}^{-1}+UV^*-U\theta_xS_{x,\tau}^{-1}\theta_\tau^*V^* $$
is positive invertible, where  $ \{e_j\}_{j\in \mathbb{J}}$ is  the standard orthonormal basis for $ \mathscr{H}_\mathscr{A}$, and $U, V\in \operatorname{Hom}_\mathscr{A}^*( \mathscr{H}_\mathscr{A}, \mathscr{E})$.
\end{theorem}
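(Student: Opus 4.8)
The plan is to reproduce, in the Hilbert C*-module setting, the argument of the Hilbert-space theorem obtained from Lemma~\ref{DUALCHARATERIZATIONLEMMA1} and Lemma~\ref{DUALCHARATERIZATIONLEMMA2}, substituting for those two lemmas their module analogues stated immediately above. First I would apply the first of the two preceding lemmas: every dual frame of $(\{x_j\}_{j\in\mathbb{J}},\{\tau_j\}_{j\in\mathbb{J}})$ is of the form $(\{Ue_j\}_{j\in\mathbb{J}},\{Ve_j\}_{j\in\mathbb{J}})$, where $U$ and $V$ are adjointable left-inverses of $\theta_\tau$ and $\theta_x$ respectively with $VU^*$ positive invertible. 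Then I would feed in the second preceding lemma, which parametrizes all such left-inverses: a left-inverse of $\theta_\tau$ is $S_{x,\tau}^{-1}\theta_x^*+V(I_{\mathscr{H}_\mathscr{A}}-\theta_\tau S_{x,\tau}^{-1}\theta_x^*)$ and a left-inverse of $\theta_x$ is $S_{x,\tau}^{-1}\theta_\tau^*+U(I_{\mathscr{H}_\mathscr{A}}-\theta_x S_{x,\tau}^{-1}\theta_\tau^*)$, with $U,V\in\operatorname{Hom}^*_\mathscr{A}(\mathscr{H}_\mathscr{A},\mathscr{E})$ free.

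Second, I would carry out the substitution. Applying a generic left-inverse of $\theta_\tau$ to $e_j$ and using $\theta_x^*e_j=x_j$ from Proposition~\ref{HILBERTSEQUENTIALPROP}(i) gives
\[
y_j=S_{x,\tau}^{-1}x_j+Ve_j-V\theta_\tau S_{x,\tau}^{-1}x_j,
\]
and the parallel computation with $\theta_\tau^*e_j=\tau_j$ gives $\omega_j=S_{x,\tau}^{-1}\tau_j+Ue_j-U\theta_xS_{x,\tau}^{-1}\tau_j$. It then remains to translate the constraint ``$VU^*$ positive invertible'' into the stated operator. Expanding the product of the two parametrized left-inverses and simplifying, the cross terms collapse because $\theta_\tau^*\theta_x=\theta_x^*\theta_\tau=S_{x,\tau}$ (Proposition~\ref{HILBERTSEQUENTIALPROP}(ii)) and because $P_{x,\tau}=\theta_xS_{x,\tau}^{-1}\theta_\tau^*$ is idempotent (Proposition~\ref{HILBERTSEQUENTIALPROP}(vi)); what survives is exactly $S_{x,\tau}^{-1}+UV^*-U\theta_xS_{x,\tau}^{-1}\theta_\tau^*V^*$, which I would record as the positive-invertible condition.

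The main obstacle, and the only real point of divergence from the Hilbert-space proof, is adjointability. In a Hilbert C*-module a bounded homomorphism need not be adjointable and a closed submodule need not be orthogonally complemented, so every operator appearing above has to live in $\operatorname{Hom}^*_\mathscr{A}$ and each range statement has to be justified. I would draw this from Proposition~\ref{HILBERTSEQUENTIALPROP}: parts (vii)--(ix) ensure that $\theta_x,\theta_\tau$ are injective with closed, orthogonally complementable ranges and that $\theta_x^*,\theta_\tau^*$ are surjective, which is precisely what legitimizes both preceding lemmas; the orthogonal complementability there rests on Theorem~\ref{MANUILOV2}. Once adjointability is guaranteed at each stage, the algebra is routine and coincides term-for-term with the Hilbert-space computation.
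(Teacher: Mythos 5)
Your proposal is correct and is essentially the paper's intended argument: the paper states this module theorem without a written proof, deferring (as announced at the start of the section) to the Hilbert-space proof that combines the two preceding lemmas, which is exactly the substitution-and-expansion you carry out, and your extra care about adjointability and orthogonally complemented ranges via Proposition \ref{HILBERTSEQUENTIALPROP} and Theorem \ref{MANUILOV2} is precisely what justifies transplanting that computation to $\operatorname{Hom}^*_\mathscr{A}$. No gaps.
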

\begin{definition}
A frame   $(\{y_j\}_{j\in \mathbb{J}},  \{\omega_j\}_{j\in \mathbb{J}})$  for  $\mathscr{E}$ is said to be orthogonal to a frame   $( \{x_j\}_{j\in \mathbb{J}}, \{\tau_j\}_{j\in \mathbb{J}})$ for  $\mathscr{E}$ if $ \theta_\omega^*\theta_x= \theta_y^*\theta_\tau=0.$
\end{definition}
\begin{proposition}
 Let  $ (\{x_j\}_{j\in \mathbb{J}}, \{\tau_j\}_{j\in \mathbb{J}}) $ and $ (\{y_j\}_{j\in \mathbb{J}}, \{\omega_j\}_{j\in \mathbb{J}}) $ be  frames for   $\mathscr{E}$. Then the following are equivalent.
 \begin{enumerate}[\upshape(i)]
 \item $ (\{y_j\}_{j\in \mathbb{J}},\{\omega_j\}_{j\in \mathbb{J}}) $ is orthogonal to  $( \{x_j\}_{j\in \mathbb{J}},  \{\tau_j\}_{j\in \mathbb{J}}) $.
 \item $ \sum_{j\in \mathbb{J}}\langle z, x_j\rangle \omega_j= \sum_{j\in \mathbb{J}}\langle z, \tau_j\rangle y_j=0, \forall z \in  \mathscr{E}.$ 
 \end{enumerate}
 \end{proposition}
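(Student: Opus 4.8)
The plan is to reduce both conditions to statements about the composite homomorphisms $\theta_\omega^*\theta_x$ and $\theta_y^*\theta_\tau$, exactly as in the Hilbert space sequential analogue proved earlier in Section~\ref{SEQUENTIAL}. By the very definition of orthogonality, condition (i) says precisely $\theta_\omega^*\theta_x=\theta_y^*\theta_\tau=0$, so the only work is to rewrite these two homomorphisms in the elementwise form appearing in (ii).

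First I would invoke Proposition~\ref{HILBERTSEQUENTIALPROP}, which in the Hilbert C*-module sequential setting identifies the analysis and synthesis homomorphisms: for any $z\in\mathscr{E}$ one has $\theta_x z=\{\langle z,x_j\rangle\}_{j\in\mathbb{J}}$ and $\theta_\tau z=\{\langle z,\tau_j\rangle\}_{j\in\mathbb{J}}$, while the adjoints act by $\theta_\omega^*(\{c_j\}_{j\in\mathbb{J}})=\sum_{j\in\mathbb{J}}c_j\omega_j$ and $\theta_y^*(\{c_j\}_{j\in\mathbb{J}})=\sum_{j\in\mathbb{J}}c_jy_j$ for $\{c_j\}_{j\in\mathbb{J}}\in\mathscr{H}_\mathscr{A}$. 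Since $(\{y_j\},\{\omega_j\})$ is a frame, both $\theta_y^*$ and $\theta_\omega^*$ are genuine bounded adjointable homomorphisms, so these formulas are legitimate on the relevant sequences; in particular $\{\langle z,x_j\rangle\}_{j\in\mathbb{J}}$ and $\{\langle z,\tau_j\rangle\}_{j\in\mathbb{J}}$ lie in $\mathscr{H}_\mathscr{A}$ because $\{x_j\}$ and $\{\tau_j\}$ are Bessel.

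Composing these, for each $z\in\mathscr{E}$ I would compute
\[
\theta_\omega^*\theta_x z=\theta_\omega^*\bigl(\{\langle z,x_j\rangle\}_{j\in\mathbb{J}}\bigr)=\sum_{j\in\mathbb{J}}\langle z,x_j\rangle\,\omega_j,
\qquad
\theta_y^*\theta_\tau z=\sum_{j\in\mathbb{J}}\langle z,\tau_j\rangle\,y_j.
\]
Then the equivalence is immediate: a homomorphism vanishes if and only if it annihilates every $z\in\mathscr{E}$, so $\theta_\omega^*\theta_x=0$ is the same as $\sum_{j\in\mathbb{J}}\langle z,x_j\rangle\omega_j=0$ for all $z$, and likewise $\theta_y^*\theta_\tau=0$ is the same as $\sum_{j\in\mathbb{J}}\langle z,\tau_j\rangle y_j=0$ for all $z$. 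Matching these against the definition of orthogonality gives (i) $\Leftrightarrow$ (ii).

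There is no genuine obstacle here; the proposition is a formal consequence of the synthesis-homomorphism identities of Proposition~\ref{HILBERTSEQUENTIALPROP}, and the argument is verbatim the one used for the Hilbert space sequential orthogonality characterization and for the operator-version Proposition~\ref{ORTHOGONALOVFCHARACTERIZATION}. The only point demanding a line of care, as everywhere in the module theory, is the well-definedness and adjointability of $\theta_y^*$ and $\theta_\omega^*$, but this is already guaranteed by the frame hypothesis, so the proof reduces to the two displayed computations.
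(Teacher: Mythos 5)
Your proof is correct and follows exactly the route the paper intends: the paper leaves this proposition without an explicit proof, deferring to the one-line identification $\theta_\omega^*\theta_x z=\sum_{j\in\mathbb{J}}\langle z,x_j\rangle\omega_j$ and $\theta_y^*\theta_\tau z=\sum_{j\in\mathbb{J}}\langle z,\tau_j\rangle y_j$ used for the analogous dual characterization, which is precisely what you derive from Proposition~\ref{HILBERTSEQUENTIALPROP}. Your added remark on the adjointability of $\theta_y^*$ and $\theta_\omega^*$ is a sensible precaution in the module setting but introduces nothing beyond the paper's argument.
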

\begin{proposition}
 Two orthogonal frames   have common dual frame.	
\end{proposition}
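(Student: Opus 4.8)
The plan is to transcribe the Hilbert space argument for the same statement given earlier in the paper, replacing the single scalar estimate that does not survive the passage to module-valued inner products with a C*-algebraic order argument. Let $(\{x_j\}_{j\in\mathbb{J}},\{\tau_j\}_{j\in\mathbb{J}})$ and $(\{y_j\}_{j\in\mathbb{J}},\{\omega_j\}_{j\in\mathbb{J}})$ be two orthogonal frames for $\mathscr{E}$, so that $\theta_\omega^*\theta_x=\theta_y^*\theta_\tau=0$ and hence, taking adjoints, $\theta_x^*\theta_\omega=\theta_\tau^*\theta_y=0$. I would propose as the common dual the pair $z_j\coloneqq S_{x,\tau}^{-1}x_j+S_{y,\omega}^{-1}y_j$ and $\rho_j\coloneqq S_{x,\tau}^{-1}\tau_j+S_{y,\omega}^{-1}\omega_j$, for all $j\in\mathbb{J}$.

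First I would record the analysis homomorphisms. Since $S_{x,\tau}^{-1}$ and $S_{y,\omega}^{-1}$ are self-adjoint, a direct computation gives $\theta_z=\theta_xS_{x,\tau}^{-1}+\theta_yS_{y,\omega}^{-1}$ and $\theta_\rho=\theta_\tau S_{x,\tau}^{-1}+\theta_\omega S_{y,\omega}^{-1}$. Forming the frame homomorphism of the candidate and expanding $\theta_\rho^*\theta_z$, the two cross terms carry the factors $\theta_\tau^*\theta_y$ and $\theta_\omega^*\theta_x$, both of which are zero by orthogonality; using $\theta_\tau^*\theta_x=S_{x,\tau}$ and $\theta_\omega^*\theta_y=S_{y,\omega}$ from Proposition \ref{HILBERTSEQUENTIALPROP}, this collapses to $S_{z,\rho}=\theta_\rho^*\theta_z=S_{x,\tau}^{-1}+S_{y,\omega}^{-1}$, which is positive.

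The step I expect to be the main obstacle is establishing that $S_{z,\rho}=S_{x,\tau}^{-1}+S_{y,\omega}^{-1}$ is invertible. In the Hilbert space version one simply bounds $\langle S_{z,\rho}h,h\rangle$ below by $\min\{\|S_{x,\tau}\|^{-1},\|S_{y,\omega}\|^{-1}\}\|h\|^2$, but that scalar inequality is unavailable for a module-valued inner product. Instead I would work in the order of the C*-algebra $\operatorname{End}^*_\mathscr{A}(\mathscr{E})$: both summands are positive invertible, so $S_{z,\rho}\geq S_{x,\tau}^{-1}$, and a positive element of a unital C*-algebra dominating an invertible positive element is itself invertible (equivalently, one notes $cI_\mathscr{E}\leq S_{z,\rho}\leq dI_\mathscr{E}$ for suitable $c,d>0$ and invokes Theorem \ref{ARAMBASIC1}). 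Hence $(\{z_j\}_{j\in\mathbb{J}},\{\rho_j\}_{j\in\mathbb{J}})$ is a frame for $\mathscr{E}$.

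Finally I would verify the duality relations, which are now routine. Writing $\theta_z^*=S_{x,\tau}^{-1}\theta_x^*+S_{y,\omega}^{-1}\theta_y^*$ and $\theta_\rho^*=S_{x,\tau}^{-1}\theta_\tau^*+S_{y,\omega}^{-1}\theta_\omega^*$, and using $\theta_x^*\theta_\tau=\theta_\tau^*\theta_x=S_{x,\tau}$, $\theta_y^*\theta_\omega=\theta_\omega^*\theta_y=S_{y,\omega}$ together with the four vanishing orthogonality products, I obtain $\theta_z^*\theta_\tau=\theta_\rho^*\theta_x=I_\mathscr{E}$ and $\theta_z^*\theta_\omega=\theta_\rho^*\theta_y=I_\mathscr{E}$. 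The first pair says $(\{z_j\}_{j\in\mathbb{J}},\{\rho_j\}_{j\in\mathbb{J}})$ is a dual of $(\{x_j\}_{j\in\mathbb{J}},\{\tau_j\}_{j\in\mathbb{J}})$ and the second that it is a dual of $(\{y_j\}_{j\in\mathbb{J}},\{\omega_j\}_{j\in\mathbb{J}})$, so it is a common dual, as required.
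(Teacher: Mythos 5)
Your proof is correct and follows the same route the paper takes for the Hilbert-space analogue of this proposition (the paper itself omits the proof in the Hilbert C*-module section, deferring to that argument): the same candidate $z_j=S_{x,\tau}^{-1}x_j+S_{y,\omega}^{-1}y_j$, $\rho_j=S_{x,\tau}^{-1}\tau_j+S_{y,\omega}^{-1}\omega_j$, the same computation $S_{z,\rho}=S_{x,\tau}^{-1}+S_{y,\omega}^{-1}$, and the same duality checks. Your replacement of the scalar lower bound $\min\{\|S_{x,\tau}\|^{-1},\|S_{y,\omega}\|^{-1}\}\|h\|^2$ by the order argument $S_{z,\rho}\geq S_{x,\tau}^{-1}\geq \epsilon I_\mathscr{E}$ in the C*-algebra $\operatorname{End}^*_\mathscr{A}(\mathscr{E})$ is exactly the adaptation the module setting requires.
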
 
\begin{proposition}
Let $ (\{x_j\}_{j\in \mathbb{J}}, \{\tau_j\}_{j\in \mathbb{J}}) $ and $ (\{y_j\}_{j\in \mathbb{J}}, \{\omega_j\}_{j\in \mathbb{J}}) $ be  two Parseval frames for  $\mathscr{E}$ over $(\mathscr{A},e)$ which are  orthogonal. If $A,B,C,D \in \operatorname{End}^*_\mathscr{A}(\mathscr{E})$ are such that $ AC^*+BD^*=I_\mathscr{E}$, then  $ (\{Ax_j+By_j\}_{j\in \mathbb{J}}, \{C\tau_j+D\omega_j\}_{j\in \mathbb{J}}) $ is a  Parseval frame for  $\mathscr{E}$. In particular,  if  $ a,b,c,d \in \mathscr{A}$ satisfy $ac^*+bd^* =e$, then $ (\{ax_j+by_j\}_{j\in \mathbb{J}}, \{c\tau_j+d\omega_j\}_{j\in \mathbb{J}}) $ is a  Parseval frame for  $\mathscr{E}$.
\end{proposition}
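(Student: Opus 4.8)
The plan is to run the argument exactly as in the Hilbert-space sequential version, transcribing inner products and operators into their Hilbert C*-module analogues, the one genuinely new concern being that every map I build must be a bounded \emph{adjointable} homomorphism so that adjoints and the identity $S=\theta_\rho^*\theta_z$ are available. Write $z_j\coloneqq Ax_j+By_j$ and $\rho_j\coloneqq C\tau_j+D\omega_j$ for the candidate pair. First I would compute the analysis homomorphism of $(\{z_j\},\{\rho_j\})$. Using $\mathscr{A}$-linearity and adjointability of $A,B\in\operatorname{End}^*_\mathscr{A}(\mathscr{E})$, for every $w\in\mathscr{E}$ one has $\langle w,Ax_j+By_j\rangle=\langle A^*w,x_j\rangle+\langle B^*w,y_j\rangle$, so the sequence $\{\langle w,z_j\rangle\}_{j}$ equals $\theta_x(A^*w)+\theta_y(B^*w)$, which lies in $\mathscr{H}_\mathscr{A}$ since $\{x_j\}$ and $\{y_j\}$ are Bessel. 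Hence $\theta_z=\theta_xA^*+\theta_yB^*$ is a well-defined bounded adjointable homomorphism, and likewise $\theta_\rho=\theta_\tau C^*+\theta_\omega D^*$. Because both analysis homomorphisms exist, the frame homomorphism is $S_{z,\rho}=\theta_\rho^*\theta_z$ by the same computation as in Proposition \ref{HILBERTSEQUENTIALPROP}, automatically adjointable; the two Bessel bounds for $\{z_j\}$ and $\{\rho_j\}$ also follow at once from boundedness, e.g. $\sum_j\langle w,z_j\rangle\langle z_j,w\rangle=\langle\theta_zw,\theta_zw\rangle\le\|\theta_z\|^2\langle w,w\rangle$.

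Next I would expand $S_{z,\rho}=(C\theta_\tau^*+D\theta_\omega^*)(\theta_xA^*+\theta_yB^*)$ into its four cross terms and feed in the hypotheses. The Parseval assumptions give $\theta_\tau^*\theta_x=S_{x,\tau}=I_\mathscr{E}$ and $\theta_\omega^*\theta_y=S_{y,\omega}=I_\mathscr{E}$; orthogonality of the two frames gives $\theta_\omega^*\theta_x=0$ directly and, by taking the adjoint of $\theta_y^*\theta_\tau=0$, also $\theta_\tau^*\theta_y=0$. The two mixed terms therefore vanish and I am left with $S_{z,\rho}=CA^*+DB^*$.

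The one step deserving attention, and which I would flag as the crux, is identifying $CA^*+DB^*$ with the identity, since the hypothesis is stated as $AC^*+BD^*=I_\mathscr{E}$ rather than $CA^*+DB^*=I_\mathscr{E}$. This is pure algebra rather than a structural obstacle: applying $*$ to the hypothesis yields $(AC^*+BD^*)^*=CA^*+DB^*$ on one side and $I_\mathscr{E}^*=I_\mathscr{E}$ on the other, whence $CA^*+DB^*=I_\mathscr{E}$. Thus $S_{z,\rho}=I_\mathscr{E}$ is positive invertible, so $(\{z_j\},\{\rho_j\})$ is a Parseval frame. For the ``in particular'' clause I would specialize $A,B,C,D$ to the adjointable endomorphisms given by multiplication by the central elements $a,b,c,d$, whose adjoints are multiplication by $a^*,b^*,c^*,d^*$; then $AC^*+BD^*$ is multiplication by $ac^*+bd^*=e$, i.e.\ $I_\mathscr{E}$, and the general statement applies verbatim. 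The only point where I would slow down in writing the full proof is checking, in the C*-module setting, that $\theta_z$ and $\theta_\rho$ are genuinely adjointable (not merely bounded), since that is precisely what makes $S_{z,\rho}=\theta_\rho^*\theta_z$ and the adjoint manipulation legitimate.
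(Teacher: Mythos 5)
Your proof is correct and follows essentially the same route as the paper: the paper states this module version without proof (referring back to the Hilbert-space sequential analogue), whose proof is exactly your computation $S_{z,\rho}=(C\theta_\tau^*+D\theta_\omega^*)(\theta_xA^*+\theta_yB^*)=CA^*+DB^*$ after the mixed terms vanish by orthogonality. Your explicit observation that $CA^*+DB^*=(AC^*+BD^*)^*=I_\mathscr{E}$, and your care about adjointability of $\theta_z$, $\theta_\rho$ (and centrality in the ``in particular'' clause), make explicit steps the paper passes over silently.
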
  
\begin{definition}
Two frames  $(\{x_j\}_{j\in \mathbb{J}},  \{\tau_j\}_{j\in \mathbb{J}}) $ and $(\{y_j\}_{j\in \mathbb{J}},\{\omega_j\}_{j\in \mathbb{J}})$  for $ \mathscr{E}$ are called disjoint if $(\{x_j\oplus y_j\}_{j\in \mathbb{J}}, \{\tau_j\oplus\omega_j\}_{j\in \mathbb{J}})$ is a frame for $\mathscr{E}\oplus\mathscr{E} $.
\end{definition} 
\begin{proposition}
If $(\{x_j\}_{j\in \mathbb{J}},\{\tau_j\}_{j\in \mathbb{J}} )$  and $ (\{y_j\}_{j\in \mathbb{J}}, \{\omega_j\}_{j\in \mathbb{J}} )$  are  disjoint  frames  for $\mathscr{E}$, then  they  are disjoint. Further, if both $(\{x_j\}_{j\in \mathbb{J}},\{\tau_j\}_{j\in \mathbb{J}} )$  and $ (\{y_j\}_{j\in \mathbb{J}}, \{\omega_j\}_{j\in \mathbb{J}} )$ are  Parseval, then $(\{x_j\oplus y_j\}_{j \in \mathbb{J}},\{\tau_j\oplus \omega_j\}_{j \in \mathbb{J}})$ is Parseval.
\end{proposition}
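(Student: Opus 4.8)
The plan is to follow the proof of its Hilbert-space counterpart, Proposition \ref{SEQUENTIALDISJOINTFRAMEPROPOSITION}, reading the hypothesis as \emph{orthogonality} of the two frames (matching the wording of the operator-valued statement), and to replace the scalar estimates used there by C*-module arguments resting on adjointability. First I would compute the frame homomorphism of the candidate direct-sum pair on an elementary element $z\oplus w\in\mathscr{E}\oplus\mathscr{E}$. Since the inner product on $\mathscr{E}\oplus\mathscr{E}$ satisfies $\langle z\oplus w,\,x_j\oplus y_j\rangle=\langle z,x_j\rangle+\langle w,y_j\rangle$, I would expand
\begin{align*}
S_{x\oplus y,\tau\oplus\omega}(z\oplus w)
&=\sum_{j\in\mathbb{J}}\bigl(\langle z,x_j\rangle+\langle w,y_j\rangle\bigr)(\tau_j\oplus\omega_j)\\
&=\Bigl(\sum_{j\in\mathbb{J}}\langle z,x_j\rangle\tau_j+\sum_{j\in\mathbb{J}}\langle w,y_j\rangle\tau_j\Bigr)\oplus\Bigl(\sum_{j\in\mathbb{J}}\langle z,x_j\rangle\omega_j+\sum_{j\in\mathbb{J}}\langle w,y_j\rangle\omega_j\Bigr).
\end{align*}
Then I would kill the cross terms using the orthogonality hypothesis and Proposition \ref{HILBERTSEQUENTIALPROP}: by definition orthogonality gives $\theta_\omega^*\theta_x=\theta_y^*\theta_\tau=0$, whence $\sum_{j}\langle z,x_j\rangle\omega_j=\theta_\omega^*\theta_x(z)=0$ and $\sum_{j}\langle w,y_j\rangle\tau_j=\theta_\tau^*\theta_y(w)=(\theta_y^*\theta_\tau)^*(w)=0$. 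This collapses the display to $S_{x\oplus y,\tau\oplus\omega}(z\oplus w)=S_{x,\tau}z\oplus S_{y,\omega}w$, so $S_{x\oplus y,\tau\oplus\omega}=S_{x,\tau}\oplus S_{y,\omega}$, which is adjointable, positive and invertible (with inverse $S_{x,\tau}^{-1}\oplus S_{y,\omega}^{-1}$) exactly because each summand is.

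Next I would verify the two conditions of the sequential frame definition for the pair $(\{x_j\oplus y_j\},\{\tau_j\oplus\omega_j\})$, and this is where the module setting genuinely departs from the Hilbert-space argument. The analysis homomorphism is $\theta_{x\oplus y}(z\oplus w)=\{\langle z,x_j\rangle+\langle w,y_j\rangle\}_j=\theta_x z+\theta_y w$; since $\theta_x,\theta_y$ are adjointable, $\theta_{x\oplus y}$ is adjointable with $\theta_{x\oplus y}^*\{c_j\}=\theta_x^*\{c_j\}\oplus\theta_y^*\{c_j\}$, and similarly for $\theta_{\tau\oplus\omega}$. The Bessel bound in the definition cannot be obtained by the expand-and-bound tactic available over $\mathbb{C}$, since the $\mathscr{A}$-valued cross terms $\langle z,x_j\rangle\langle y_j,w\rangle$ need not be positive; instead I would read it off adjointability via Theorem \ref{PASCHKE}, obtaining $\sum_{j}\langle z\oplus w,x_j\oplus y_j\rangle\langle x_j\oplus y_j,z\oplus w\rangle=\langle\theta_{x\oplus y}(z\oplus w),\theta_{x\oplus y}(z\oplus w)\rangle\le\|\theta_{x\oplus y}\|^2\langle z\oplus w,z\oplus w\rangle$, and the same for $\theta_{\tau\oplus\omega}$. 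Combined with the positive invertibility of $S_{x\oplus y,\tau\oplus\omega}$ already established, this shows $(\{x_j\oplus y_j\},\{\tau_j\oplus\omega_j\})$ is a frame for $\mathscr{E}\oplus\mathscr{E}$, i.e. the two frames are disjoint.

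For the Parseval assertion I would simply specialize the formula $S_{x\oplus y,\tau\oplus\omega}=S_{x,\tau}\oplus S_{y,\omega}$: if both frames are Parseval then $S_{x,\tau}=I_\mathscr{E}=S_{y,\omega}$, so $S_{x\oplus y,\tau\oplus\omega}=I_\mathscr{E}\oplus I_\mathscr{E}=I_{\mathscr{E}\oplus\mathscr{E}}$, giving a Parseval frame. I expect the main obstacle to be precisely the Bessel step: one must resist bounding the C*-valued inner-product sum termwise and instead route the estimate through the adjointability of the direct-sum analysis homomorphism together with Paschke's characterization. Everything else transfers essentially verbatim from the proof of Proposition \ref{SEQUENTIALDISJOINTFRAMEPROPOSITION}.
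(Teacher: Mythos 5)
Your proof is correct and follows exactly the route the paper intends: the paper gives no explicit proof for this proposition and defers to the Hilbert-space argument of Proposition \ref{SEQUENTIALDISJOINTFRAMEPROPOSITION}, which you transport faithfully, correctly reading the hypothesis as orthogonality and correctly isolating the one step that genuinely changes in the module setting, namely the $\mathscr{A}$-valued Bessel inequality, which you obtain from adjointability of $\theta_{x\oplus y}$ via Theorem \ref{PASCHKE}. Nothing further is needed.
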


 \textbf{Characterizations}
\begin{theorem}
Let $ \{f_j\}_{j \in \mathbb{J}}$ be an arbitrary orthonormal basis for $ \mathscr{E}$ over  $\mathscr{A}.$ Then 
\begin{enumerate}[\upshape(i)]
\item The orthonormal  bases  $ ( \{x_j\}_{j \in \mathbb{J}},\{\tau_j\}_{j \in \mathbb{J}})$ for $ \mathscr{E}$  are precisely $( \{Uf_j\}_{j \in \mathbb{J}},\{c_jUf_j\}_{j \in \mathbb{J}}) $, where $ U \in  \operatorname{End}^*_\mathscr{A}(\mathscr{E}) $ is unitary and $ c_j$'s are positive invertible elements in the center of $ \mathscr{A}$  such that $ 0<\inf\{\|c_j\|\}_{j \in \mathbb{J}}\leq \sup\{\|c_j\|\}_{j \in \mathbb{J}}< \infty.$
\item The Riesz bases  $ ( \{x_j\}_{j \in \mathbb{J}},\{\tau_j\}_{j \in \mathbb{J}})$ for $ \mathscr{E}$  are precisely $( \{Uf_j\}_{j \in \mathbb{J}},\{Vf_j\}_{j \in \mathbb{J}}) $, where $ U,V \in  \operatorname{End}^*_\mathscr{A}(\mathscr{E})$ are invertible  such that  $ VU^*$ is positive.
\item The frames $ ( \{x_j\}_{j \in \mathbb{J}},\{\tau_j\}_{j \in \mathbb{J}})$ for $ \mathscr{E}$  are precisely $( \{Uf_j\}_{j \in \mathbb{J}},\{Vf_j\}_{j \in \mathbb{J}}) $, where $ U,V \in  \operatorname{End}^*_\mathscr{A}(\mathscr{E}) $ are such that   $ VU^*$ is positive invertible.
\item The Bessel sequences  $ ( \{x_j\}_{j \in \mathbb{J}},\{\tau_j\}_{j \in \mathbb{J}})$ for $ \mathscr{E}$  are precisely $( \{Uf_j\}_{j \in \mathbb{J}},\{Vf_j\}_{j \in \mathbb{J}}) $, where $ U,V  \in  \operatorname{End}^*_\mathscr{A}(\mathscr{E}) $  are such that  $ VU^*$ is positive.
\item The Riesz frames $ ( \{x_j\}_{j \in \mathbb{J}},\{\tau_j\}_{j \in \mathbb{J}})$ for $ \mathscr{E}$  are precisely $( \{Uf_j\}_{j \in \mathbb{J}},\{Vf_j\}_{j \in \mathbb{J}}) $, where $ U,V  \in  \operatorname{End}^*_\mathscr{A}(\mathscr{E}) $ are such that  $ VU^*$ is positive invertible and $ U^*(VU^*)^{-1}V=I_{\mathscr{E}}$.  
\item The orthonormal frames $ ( \{x_j\}_{j \in \mathbb{J}},\{\tau_j\}_{j \in \mathbb{J}})$ for $ \mathscr{E}$  are precisely $( \{Uf_j\}_{j \in \mathbb{J}},\{Vf_j\}_{j \in \mathbb{J}}) $, where $ U,V  \in  \operatorname{End}^*_\mathscr{A}(\mathscr{E}) $  are such that $ VU^*=I_\mathscr{E}= U^*V$.   
\end{enumerate}
\end{theorem}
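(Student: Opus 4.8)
The plan is to mirror the proof of Theorem \ref{SEQUENTIALCHARACTERIZATIONHILBERT1} essentially line for line, with the dictionary $\mathcal{H}\rightsquigarrow\mathscr{E}$, $\ell^2(\mathbb{J})\rightsquigarrow\mathscr{H}_\mathscr{A}$, the scalar sequence $\{c_j\}$ replaced by a sequence of positive invertible elements in the center of $\mathscr{A}$, and $\mathcal{B}(\mathcal{H})\rightsquigarrow\operatorname{End}^*_\mathscr{A}(\mathscr{E})$. The only places where genuine module-theoretic care is required are the adjointability of the homomorphisms we construct and the convergence of the analysis homomorphisms, both of which are already licensed by results established earlier in this section, so the bulk of the argument is the two structural identities $\theta_x=\theta_f U^*$ and $\theta_\tau=\theta_f V^*$ together with bookkeeping.

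For each of the six $(\Leftarrow)$ implications I would first establish $\theta_x=\theta_f U^*$ and $\theta_\tau=\theta_f V^*$, which follow from $\langle x, Uf_j\rangle=\langle U^*x, f_j\rangle$ and $\mathscr{A}$-linearity exactly as in the Hilbert space computation; here the point needing attention is that $\theta_x,\theta_\tau$ are genuine adjointable homomorphisms, which Definition \ref{SEQUENTIAL VERSION HOMOMORPHISM VAUED DEFINITION} grants once the Bessel conditions are checked via Lemma \ref{BESSELCONVERGENCE}. Because $\{f_j\}$ is an orthonormal basis, Theorem \ref{CHARACTERIZATION3} gives $\theta_f^*\theta_f=I_\mathscr{E}$ and $\theta_f\theta_f^*=I_{\mathscr{H}_\mathscr{A}}$, whence $S_{x,\tau}=\theta_\tau^*\theta_x=VU^*$ and $P_{x,\tau}=\theta_f U^*(VU^*)^{-1}V\theta_f^*$. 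Reading off the hypotheses on $VU^*$ then yields in turn that the pair is Bessel, a frame, a Riesz frame (using $U^*(VU^*)^{-1}V=I_\mathscr{E}$ to collapse $P_{x,\tau}$ to $\theta_f\theta_f^*=I_{\mathscr{H}_\mathscr{A}}$), or an orthonormal frame (using in addition $VU^*=I_\mathscr{E}$ for Parseval and $U^*V=I_\mathscr{E}$). Parts (i) and (ii) are immediate from the defining forms once one invokes the fact, proved earlier, that a unitary carries an orthonormal basis to an orthonormal basis.

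For the $(\Rightarrow)$ implications the common device is the unitary isomorphism $T\colon\mathscr{E}\to\mathscr{H}_\mathscr{A}$ determined by $Tf_j=e_j$, whose existence is the content of the theorem that a module carrying an orthonormal basis is unitarily isomorphic to $\mathscr{H}_\mathscr{A}$. Setting $U\coloneqq\theta_x^*T$ and $V\coloneqq\theta_\tau^*T$ produces adjointable homomorphisms (compositions of adjointable maps) with $Uf_j=\theta_x^*e_j=x_j$, $Vf_j=\tau_j$, and $VU^*=\theta_\tau^*TT^*\theta_x=\theta_\tau^*\theta_x=S_{x,\tau}$, positive invertible. The extra relations in the Riesz-frame and orthonormal-frame cases are then read off as $U^*(VU^*)^{-1}V=T^*P_{x,\tau}T=T^*I_{\mathscr{H}_\mathscr{A}}T=I_\mathscr{E}$ and $U^*V=T^*\theta_x\theta_\tau^*T=I_\mathscr{E}$, using the idempotency/identity statements for $P_{x,\tau}$ and $\theta_x\theta_\tau^*$ from Proposition \ref{HILBERTSEQUENTIALPROP}. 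For the orthonormal- and Riesz-basis directions one interposes the unitary carrying the arbitrary $\{f_j\}$ to the orthonormal basis $\{g_j\}$ supplied by the respective definition and absorbs it into $U,V$, so that the possibly different choice of orthonormal basis causes no trouble.

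The main obstacle I anticipate is bookkeeping rather than conceptual: ensuring at every step that the maps produced are adjointable $\mathscr{A}$-homomorphisms, and that the center-valued scalars $c_j$ play their two roles correctly, namely as the direct multipliers $\tau_j=c_j(Uf_j)$ compatible with the relative orthonormal-basis definition, and as the coefficients of the positive invertible homomorphism $V\coloneqq\sum_j c_j\langle\cdot,x_j\rangle x_j$ used when passing between orthonormal and Riesz bases (as in the module analogue of Theorem \ref{RIESZBASISIMPLIESFRAMESEQUENTIAL}), where centrality is exactly what makes $V$ well-defined and adjointable. Unlike the self-dual setting of Theorem \ref{RIESZCHARACTERIZATIONSELFDUAL}, no reconstruction of invertibility and no appeal to orthogonal complementation is needed here, since $\mathscr{E}$ already carries an orthonormal basis, the relevant range is the full module $\theta_f\theta_f^*=I_{\mathscr{H}_\mathscr{A}}$, and invertibility of $U,V$ is handed to us directly by $T$ and the frame hypotheses.
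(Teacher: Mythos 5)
Your proposal is correct and follows essentially the route the paper intends: the paper omits an explicit proof here, deferring (per its stated convention) to the argument of Theorem \ref{SEQUENTIALCHARACTERIZATIONHILBERT1}, and your translation via $\theta_x=\theta_fU^*$, $\theta_\tau=\theta_fV^*$, $S_{x,\tau}=VU^*$, the unitary $T\colon f_j\mapsto e_j$ onto $\mathscr{H}_\mathscr{A}$, and $U=\theta_x^*T$, $V=\theta_\tau^*T$ is exactly that argument. The module-specific checkpoints you flag (adjointability by composition, convergence via Lemma \ref{BESSELCONVERGENCE} and the module Riesz--Fischer theorem, centrality of the $c_j$, and absorbing the auxiliary orthonormal basis into $U,V$ by a unitary) are precisely the points the paper says need "additional support" over the Hilbert space case, so nothing is missing.
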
 
\begin{corollary}
\begin{enumerate}[\upshape(i)]
\item If $ (\{x_j\}_{j \in \mathbb{J}},\{\tau_j=c_jx_j\}_{j \in \mathbb{J}})$ is an orthonormal basis for $ \mathscr{E}$, then $ \|x_j\|=1, \forall j \in \mathbb{J}, \|\tau_j\|=\|c_j\|, \forall j \in \mathbb{J}.$
\item If $ (\{x_j\}_{j \in \mathbb{J}},\{\tau_j\}_{j \in \mathbb{J}})$ is a Riesz basis for $ \mathscr{E}$, then
$$ \frac{1}{\|U^{-1}\|} \leq \|x_j\| \leq \|U\|, ~\forall j \in \mathbb{J}, ~  \frac{1}{\|V^{-1}\|} \leq \|\tau_j\| \leq \|V\|, ~\forall j \in \mathbb{J}.$$
\item If $ (\{x_j\}_{j \in \mathbb{J}},\{\tau_j\}_{j \in \mathbb{J}})$ is a Bessel sequence for $ \mathscr{E}$, then
$  \|x_j\| \leq \|U\|, \forall j \in \mathbb{J},  \|\tau_j\| \leq \|V\|, \forall j \in \mathbb{J}.$
\end{enumerate}
\end{corollary}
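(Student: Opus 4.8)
The plan is to read all three statements off the characterization theorem immediately preceding, which represents each of the listed objects in the form $x_j = Uf_j$ and $\tau_j = Vf_j$ (with $\tau_j = c_j U f_j$ in the orthonormal-basis case), where $\{f_j\}_{j\in\mathbb{J}}$ is a fixed orthonormal basis for $\mathscr{E}$ and $U,V \in \operatorname{End}^*_\mathscr{A}(\mathscr{E})$. The only extra ingredients needed are the normalization $\|f_j\|=1$, which follows from orthonormality since $\|f_j\|^2 = \|\langle f_j,f_j\rangle\| = \|e\| = 1$, together with the submultiplicativity $\|Tx\|\leq\|T\|\|x\|$ for adjointable homomorphisms and the fact that a unitary preserves the $\mathscr{A}$-valued inner product.

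First I would dispose of (i). Writing $x_j = Uf_j$ with $U$ unitary, the identity $\langle Uf_j, Uf_j\rangle = \langle f_j, U^*Uf_j\rangle = \langle f_j, f_j\rangle = e$ gives $\|x_j\|^2 = \|e\| = 1$. For $\tau_j = c_j x_j$ I would compute the inner product using the module linearity conventions already employed in the proof of Theorem~\ref{GBISVHM}: the scalar $c_j$ factors out of the first slot and $c_j^*$ out of the second, so $\langle \tau_j,\tau_j\rangle = c_j\langle x_j,x_j\rangle c_j^* = c_j e c_j^* = c_j c_j^*$. The C*-identity then yields $\|\tau_j\|^2 = \|c_j c_j^*\| = \|c_j\|^2$, that is, $\|\tau_j\| = \|c_j\|$. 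Centrality and positivity of $c_j$ are convenient here but the reduction only really uses that $c_j$ pulls out of the inner product on both sides.

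For (ii) and (iii) the representation $x_j = Uf_j$, $\tau_j = Vf_j$ furnishes the upper bounds at once: $\|x_j\| = \|Uf_j\| \leq \|U\|\|f_j\| = \|U\|$ and likewise $\|\tau_j\| \leq \|V\|$; in the Bessel case (iii) this is all that is claimed. For the lower bounds in the Riesz case (ii), where $U$ and $V$ are invertible, I would run the inequality backwards through the inverse: $1 = \|f_j\| = \|U^{-1}Uf_j\| \leq \|U^{-1}\|\|x_j\|$, so $\|x_j\| \geq 1/\|U^{-1}\|$, and symmetrically $\|\tau_j\| \geq 1/\|V^{-1}\|$.

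None of the steps presents a genuine obstacle; the argument is a direct transcription of the Hilbert-space corollary in Section~\ref{SEQUENTIAL}. The only point requiring care is the inner-product bookkeeping in (i): one must track which slot of $\langle\cdot,\cdot\rangle$ absorbs $c_j$ linearly and which absorbs $c_j^*$ conjugate-linearly, so that the reduction $\langle\tau_j,\tau_j\rangle = c_j c_j^*$ is correct before the C*-identity is applied. Everything else reduces to norm estimates for adjointable homomorphisms that are used freely throughout this section.
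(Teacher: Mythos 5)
Your proof is correct and is exactly the intended argument: the paper states this corollary without proof as an immediate consequence of the preceding characterization theorem, and your reading-off of $x_j=Uf_j$, $\tau_j=Vf_j$ (resp. $\tau_j=c_jUf_j$) together with $\|f_j\|=\|e\|^{1/2}=1$, the C*-identity $\|c_jc_j^*\|=\|c_j\|^2$, and the two-sided estimate via $U^{-1}$ is the standard route, mirroring the unproved Hilbert-space analogue in Section 8. Nothing to add.
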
 
\begin{corollary}
Let $ \{f_j\}_{j \in \mathbb{J}}$ be an arbitrary orthonormal basis for $ \mathscr{E}$ over  $\mathscr{A}.$ Then 
\begin{enumerate}[\upshape(i)]
\item The orthonormal  bases  $ ( \{x_j\}_{j \in \mathbb{J}},\{x_j\}_{j \in \mathbb{J}})$ for $ \mathscr{E}$  are precisely $( \{Uf_j\}_{j \in \mathbb{J}},\{Uf_j\}_{j \in \mathbb{J}}) $, where $ U \in  \operatorname{End}^*_\mathscr{A}(\mathscr{E}) $ is unitary.
\item The Riesz bases  $ ( \{x_j\}_{j \in \mathbb{J}},\{x_j\}_{j \in \mathbb{J}})$ for $ \mathscr{E}$  are precisely $( \{Uf_j\}_{j \in \mathbb{J}},\{Uf_j\}_{j \in \mathbb{J}}) $, where $ U\in  \operatorname{End}^*_\mathscr{A}(\mathscr{E})$ is  invertible.
\item The frames $ ( \{x_j\}_{j \in \mathbb{J}},\{x_j\}_{j \in \mathbb{J}})$ for $ \mathscr{E}$  are precisely $( \{Uf_j\}_{j \in \mathbb{J}},\{Uf_j\}_{j \in \mathbb{J}}) $, where $ U\in  \operatorname{End}^*_\mathscr{A}(\mathscr{E}) $ is such  that   $ UU^*$ is invertible.
\item The Bessel sequences  $ ( \{x_j\}_{j \in \mathbb{J}},\{x_j\}_{j \in \mathbb{J}})$ for $ \mathscr{E}$  are precisely $( \{Uf_j\}_{j \in \mathbb{J}},\{Uf_j\}_{j \in \mathbb{J}}) $, where $ U  \in  \operatorname{End}^*_\mathscr{A}(\mathscr{E}) $. 
\item The Riesz frames $ ( \{x_j\}_{j \in \mathbb{J}},\{x_j\}_{j \in \mathbb{J}})$ for $ \mathscr{E}$  are precisely $( \{Uf_j\}_{j \in \mathbb{J}},\{Uf_j\}_{j \in \mathbb{J}}) $, where $ U  \in  \operatorname{End}^*_\mathscr{A}(\mathscr{E}) $ is  such that  $ UU^*$ is invertible and $ U^*(UU^*)^{-1}U=I_{\mathscr{E}}$.  
\item The orthonormal frames $ ( \{x_j\}_{j \in \mathbb{J}},\{x_j\}_{j \in \mathbb{J}})$ for $ \mathscr{E}$  are precisely $( \{Uf_j\}_{j \in \mathbb{J}},\{Uf_j\}_{j \in \mathbb{J}}) $, where $ U  \in  \operatorname{End}^*_\mathscr{A}(\mathscr{E}) $  is  such that $ UU^*=I_\mathscr{E}= U^*U$.  
\item $ ( \{x_j\}_{j \in \mathbb{J}},\{\tau_j\}_{j \in \mathbb{J}})$ is an orthonormal basis for $ \mathscr{E}$ if and only if it an orthonormal frame. 
\end{enumerate}
\end{corollary}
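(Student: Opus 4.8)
The plan is to deduce this corollary directly from the preceding characterization theorem by specialising it to the diagonal case $\tau_j = x_j$, which on the parameter side corresponds to taking $V = U$. First I would record the uniqueness of the parametrising operators: since $\{f_j\}_{j\in\mathbb{J}}$ is an orthonormal basis for $\mathscr{E}$, the relation $x_j = U f_j$ for all $j$ determines $U \in \operatorname{End}^*_\mathscr{A}(\mathscr{E})$ uniquely on $\overline{\operatorname{span}}_\mathscr{A}\{f_j\}_{j\in\mathbb{J}} = \mathscr{E}$, and likewise $\tau_j = V f_j$ determines $V$. Hence whenever $\tau_j = x_j$ for every $j$ we obtain $U f_j = V f_j$ for all $j$, and therefore $V = U$. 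This single observation is what lets every clause of the preceding theorem collapse to a statement about one operator $U$.

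Next I would handle the two genuinely non-automatic forcings. For clause (i), an orthonormal basis $(\{x_j\}_{j\in\mathbb{J}},\{x_j\}_{j\in\mathbb{J}})$ is, by the theorem, of the form $(\{U f_j\}_{j\in\mathbb{J}},\{c_j U f_j\}_{j\in\mathbb{J}})$ with $U$ unitary and the $c_j$ positive invertible central elements; imposing $\tau_j = x_j$ gives $c_j U f_j = U f_j$, that is $(e - c_j) x_j = 0$. Pairing this with $x_j$ and using that $c_j$ is a self-adjoint central element together with $\langle x_j, x_j\rangle = \langle U f_j, U f_j\rangle = \langle f_j, f_j\rangle = e$ (as $U$ is unitary and $\{f_j\}_{j\in\mathbb{J}}$ is orthonormal) forces $c_j = e$ for every $j$; conversely $c_j = e$ returns $\tau_j = x_j$. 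Thus the orthonormal bases of the stated diagonal form are exactly $(\{U f_j\}_{j\in\mathbb{J}},\{U f_j\}_{j\in\mathbb{J}})$ with $U$ unitary.

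For the remaining clauses I would substitute $V = U$ into the conditions supplied by the theorem and simplify, using that $U U^* \geq 0$ holds automatically, since $\langle U U^* x, x\rangle = \langle U^* x, U^* x\rangle \geq 0$ for all $x \in \mathscr{E}$. Concretely: in (ii) the requirement ``$U,V$ invertible with $V U^*$ positive'' becomes ``$U$ invertible'', the positivity of $U U^*$ being free; in (iii) ``$V U^*$ positive invertible'' becomes ``$U U^*$ invertible''; in (iv) ``$V U^*$ positive'' imposes nothing, so $U$ is arbitrary; in (v) ``$V U^*$ positive invertible and $U^*(V U^*)^{-1}V = I_\mathscr{E}$'' becomes ``$U U^*$ invertible and $U^*(U U^*)^{-1}U = I_\mathscr{E}$''; and in (vi) ``$V U^* = I_\mathscr{E} = U^* V$'' becomes ``$U U^* = I_\mathscr{E} = U^* U$'', i.e.\ $U$ is unitary. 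Finally, clause (vii) is read off by comparing (i) and (vi): both parametrise the self-dual pair by a \emph{unitary} $U$, so $(\{x_j\}_{j\in\mathbb{J}},\{x_j\}_{j\in\mathbb{J}})$ is an orthonormal basis if and only if $U$ is unitary if and only if it is an orthonormal frame.

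I do not expect a serious obstacle here, as the corollary is a specialisation rather than a new result; the only points that need care are the two I have isolated, namely the uniqueness argument giving $V = U$ and the algebraic forcing $c_j = e$ in clause (i). The mild subtlety worth flagging is that, unlike the operator (homomorphism) version in Corollary~\ref{OVMODULECOROLLARY}, no surjectivity clause appears here and no appeal to orthogonal complementability of closed submodules is required, so the non-complementability phenomenon that obstructs the Hilbert-space analogue plays no role in the sequential setting treated in this statement.
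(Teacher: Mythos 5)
Your proposal is correct and follows exactly the route the paper intends: the paper leaves this corollary unproved, but its printed proof of the parallel Hilbert-space statement (Corollary \ref{OVCOROLLARYHILBERT}) is precisely your argument — the relation $\tau_j=x_j$ forces $V=U$ and $c_j=e$ in clause (i), positivity of $UU^*$ is automatic for clauses (ii)--(vi), and the final clause follows by comparing the unitary parametrizations in (i) and (vi). Your closing remark about the absent surjectivity clause also matches the paper's own Caution following Corollary \ref{OVMODULECOROLLARY}.
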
 
\begin{theorem}
Let $\{x_j\}_{j\in\mathbb{J}},\{\tau_j\}_{j\in\mathbb{J}}$ be in $ \mathscr{E}$ over $ \mathscr{A}$. Then $(\{x_j\}_{j\in\mathbb{J}},\{\tau_j\}_{j\in\mathbb{J}})$  is a  frame with bounds $ a $ and $ b$ (resp. Bessel with bound $ b$) 
\begin{enumerate}[\upshape(i)]
\item  if and only if $$U:\mathscr{H}_\mathscr{A} \ni \{c_j\}_{j\in\mathbb{J}}\mapsto\sum\limits_{j\in\mathbb{J}}c_jx_j \in  \mathscr{E}, ~\text{and} ~ V:\mathscr{H}_\mathscr{A} \ni \{d_j\}_{j\in\mathbb{J}}\mapsto\sum\limits_{j\in\mathbb{J}}d_j\tau_j \in  \mathscr{E} $$ 
are well-defined, $ U,V \in \operatorname{Hom}^*_\mathscr{A}(\mathscr{H}_\mathscr{A} ,\mathscr{E})$  such that $ aI_ \mathscr{E}\leq VU^*\leq bI_ \mathscr{E}$   (resp. $ 0\leq VU^*\leq bI_ \mathscr{E}).$
\item  if and only if $$U:\mathscr{H}_\mathscr{A} \ni\{c_j\}_{j\in\mathbb{J}}\mapsto\sum\limits_{j\in\mathbb{J}}c_jx_j \in    \mathscr{E}, ~\text{and} ~ S:  \mathscr{E} \ni z\mapsto\{\langle z, \tau_j\rangle\}_{j\in\mathbb{J}} \in \mathscr{H}_\mathscr{A} $$ 
are well-defined, $ U\in \operatorname{Hom}^*_\mathscr{A}(\mathscr{H}_\mathscr{A} ,\mathscr{E})$, $ S \in \operatorname{Hom}^*_\mathscr{A}(\mathscr{E},\mathscr{H}_\mathscr{A})$  such that  $ aI_ \mathscr{E}\leq S^*U^*\leq bI_ \mathscr{E}$ (resp.  $ 0\leq S^*U^*\leq bI_ \mathscr{E}).$
\item  if and only if $$ R:  \mathscr{E} \ni y\mapsto\{\langle y, x_j\rangle\}_{j\in\mathbb{J}} \in \mathscr{H}_\mathscr{A}, ~\text{and} ~  V:\mathscr{H}_\mathscr{A} \ni\{d_j\}_{j\in\mathbb{J}}\mapsto\sum\limits_{j\in\mathbb{J}}d_j\tau_j \in  \mathscr{E}$$ 
are well-defined, $ R \in \operatorname{Hom}^*_\mathscr{A}(\mathscr{E},\mathscr{H}_\mathscr{A})$, $ V \in \operatorname{Hom}^*_\mathscr{A}(\mathscr{H}_\mathscr{A} ,\mathscr{E})$ such that  $ aI_ \mathscr{E}\leq VR\leq bI_ \mathscr{E}$  (resp.  $ 0\leq VR\leq bI_ \mathscr{E}).$
\item if and only if $$ R: \mathscr{E} \ni y\mapsto\{\langle y, x_j\rangle\}_{j\in\mathbb{J}} \in \mathscr{H}_\mathscr{A}, ~\text{and} ~  S:  \mathscr{E} \ni z\mapsto\{\langle z, \tau_j\rangle\}_{j\in\mathbb{J}} \in \mathscr{H}_\mathscr{A} $$
are well-defined, $ R,S \in \operatorname{Hom}^*_\mathscr{A}(\mathscr{E},\mathscr{H}_\mathscr{A})$ such that  $ aI_ \mathscr{E}\leq S^*R\leq bI_ \mathscr{E}$ (resp.   $ 0\leq S^*R\leq bI_ \mathscr{E}).$   
\end{enumerate}
\end{theorem}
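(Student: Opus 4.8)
The plan is to read this statement as the Hilbert C*-module sequential counterpart of Theorem \ref{SEQUENTIALCHARACTERIZATIONHILBERT2}, in the same way that Theorem \ref{OPERATORVERSIONCHARACTERIZATIONMODULES} is its homomorphism-version counterpart. The unifying observation is that in all four items the displayed maps are precisely the analysis and synthesis homomorphisms of the pair: $R=\theta_x$ and $S=\theta_\tau$, while $U=\theta_x^*$ and $V=\theta_\tau^*$. Hence, by Proposition \ref{HILBERTSEQUENTIALPROP}(ii), the product operator occurring in the bound of each item collapses to the frame homomorphism,
$$ VU^*=S^*U^*=VR=S^*R=\theta_\tau^*\theta_x=S_{x,\tau}. $$
Consequently I would prove item (i) in full (first for the Bessel case, then for the frame case) and obtain (ii)--(iv) by the routine device of trading a synthesis homomorphism for its adjoint analysis homomorphism, exactly as the proofs of Theorems \ref{SEQUENTIALCHARACTERIZATIONHILBERT2} and \ref{OPERATORVERSIONCHARACTERIZATIONMODULES} reduce to a single representative case.

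For the forward implication, I would first note that a frame (resp.\ Bessel) pair $(\{x_j\},\{\tau_j\})$ has $\{x_j\}$ and $\{\tau_j\}$ Bessel with respect to themselves, so Lemma \ref{BESSELCONVERGENCE} guarantees that $\sum_j c_jx_j$ and $\sum_j d_j\tau_j$ converge in $\mathscr{E}$ for all $\{c_j\},\{d_j\}\in\mathscr{H}_\mathscr{A}$ and that $U,V$ are well-defined bounded homomorphisms. Their adjoints are the analysis homomorphisms: since $\langle U\{c_j\},y\rangle=\langle\sum_j c_jx_j,y\rangle=\sum_j c_j\langle x_j,y\rangle=\langle\{c_j\},\{\langle y,x_j\rangle\}_j\rangle$, one gets $U^*y=\{\langle y,x_j\rangle\}_j=\theta_xy$, and likewise $V^*=\theta_\tau$; in particular $U,V$ are adjointable. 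Then $VU^*=\theta_\tau^*\theta_x=S_{x,\tau}$, and the frame (resp.\ Bessel) inequality $aI_\mathscr{E}\le S_{x,\tau}\le bI_\mathscr{E}$ (resp.\ $0\le S_{x,\tau}\le bI_\mathscr{E}$) is read off directly from Definition \ref{SEQUENTIAL VERSION HOMOMORPHISM VAUED DEFINITION}.

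For the converse, I would begin with well-defined $U,V\in\operatorname{Hom}^*_\mathscr{A}(\mathscr{H}_\mathscr{A},\mathscr{E})$; then $U^*,V^*\in\operatorname{Hom}^*_\mathscr{A}(\mathscr{E},\mathscr{H}_\mathscr{A})$ exist, and the mere presence of the bounded homomorphisms $U^*y=\{\langle y,x_j\rangle\}_j$ and $V^*z=\{\langle z,\tau_j\rangle\}_j$ forces $\{x_j\}$ and $\{\tau_j\}$ to be Bessel with respect to themselves, supplying the upper bounds $c,d$ of the equivalent frame definition. The hypothesis $aI_\mathscr{E}\le VU^*\le bI_\mathscr{E}$ already presupposes $VU^*$ self-adjoint, and combined with $VU^*=S_{x,\tau}$ this makes $S_{x,\tau}$ self-adjoint and, because $a>0$, positive and invertible; hence $(\{x_j\},\{\tau_j\})$ is a frame with the stated bounds. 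The Bessel case is identical with $a$ set to $0$.

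The step I expect to demand the most care --- and the genuine departure from the Hilbert space argument --- is the passage between the operator ordering and the defining $\mathscr{A}$-valued inequalities. For a self-adjoint adjointable $T$ on a Hilbert C*-module, $aI_\mathscr{E}\le T\le bI_\mathscr{E}$ is equivalent to $a\langle x,x\rangle\le\langle Tx,x\rangle\le b\langle x,x\rangle$ in $\mathscr{A}$ for every $x$, and here $\langle S_{x,\tau}x,x\rangle=\sum_j\langle x,x_j\rangle\langle\tau_j,x\rangle$; matching these two formulations, and recovering $\|S_{x,\tau}\|=\sup_{\|x\|\le1}\|\langle S_{x,\tau}x,x\rangle\|$ for the optimal-bound bookkeeping, must rest on the order structure of $\mathscr{A}$ and the C*-identity rather than on Hilbert-space positivity. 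Moreover, self-adjointness of the product operators, which in the Hilbert case was extracted from the symmetry $\sum_j\langle x,x_j\rangle\tau_j=\sum_j\langle x,\tau_j\rangle x_j$, is here folded into the ordering hypothesis, and one must ensure every rearrangement of the defining series is licensed by Lemma \ref{BESSELCONVERGENCE} rather than by unconditional convergence.
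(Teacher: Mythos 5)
Your proposal is correct and follows essentially the route the paper intends: the paper omits the proof of this theorem (Section \ref{SEQUENTIALVERSIONOFHOMOMORPHISM-VALUEDFRAMES} defers to the strategy of Section \ref{SEQUENTIAL}), and its Hilbert-space model, Theorem \ref{SEQUENTIALCHARACTERIZATIONHILBERT2}, is proved exactly as you describe — identify $U=\theta_x^*$, $V=\theta_\tau^*$, $R=\theta_x$, $S=\theta_\tau$ so every product collapses to $S_{x,\tau}$, prove one item, and get the rest by taking adjoints. Your final worry is lighter than you think: Definition \ref{SEQUENTIAL VERSION HOMOMORPHISM VAUED DEFINITION} already states the frame bounds as the operator inequality $aI_\mathscr{E}\leq S_{x,\tau}\leq bI_\mathscr{E}$, so once $VU^*=S_{x,\tau}$ is established no translation to $\mathscr{A}$-valued inequalities is required, and self-adjointness is automatic from the order hypothesis.
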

\begin{theorem}
Let $ \{x_j\}_{j\in \mathbb{J}}, \{\tau_j\}_{j\in \mathbb{J}} $ be in $\mathscr{E}$. If  $(\{x_j\}_{j\in \mathbb{J}},\{\tau_j\}_{j\in \mathbb{J}})$ is a frame, then  
\begin{enumerate}[\upshape(i)]
\item there are $a ,b> 0$ such that
$$a\|x\|^2\leq \left\|\sum_{j\in \mathbb{J}} \langle x, x_j\rangle\langle \tau_j, x\rangle\right\|=\left\|\sum_{j\in \mathbb{J}} \langle x, \tau_j\rangle\langle x_j, x \rangle\right\| \leq b\|x\|^2 , ~\forall x \in \mathscr{E},$$  
\item there are $ c,d> 0$ such that 
$$ \sum\limits_{j \in \mathbb{J}}  \langle x, x_j\rangle\langle x_j, x\rangle  \leq c\langle x, x \rangle , ~ \forall x \in \mathscr{E} ~ \text{and} ~ \sum\limits_{j \in \mathbb{J}}  \langle x, \tau_j\rangle\langle \tau_j, x\rangle \leq d \langle x, x \rangle, ~ \forall x \in \mathscr{E}.$$
 \end{enumerate}
 If $ \langle x,x_j \rangle \tau_j=\langle x,\tau_j \rangle x_j, \langle x, x_j\rangle  \langle \tau_j,x \rangle \geq 0, \forall x \in \mathscr{E}, \forall j \in \mathbb{J}$,  then the converse holds.
 \end{theorem}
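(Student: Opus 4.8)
The plan is to reduce the whole statement to the operator (homomorphism) version already established in Theorem \ref{CHARACTERIZATION}, by passing through the dictionary between sequential frames and homomorphism-valued frames developed in this section. First I would take $\mathscr{E}_0=\mathscr{A}$ and define $A_j:\mathscr{E}\ni x\mapsto\langle x,x_j\rangle\in\mathscr{A}$ and $\Psi_j:\mathscr{E}\ni x\mapsto\langle x,\tau_j\rangle\in\mathscr{A}$. By the correspondence theorem of this section these lie in $\operatorname{Hom}^*_\mathscr{A}(\mathscr{E},\mathscr{A})$ and satisfy that $(\{x_j\}_{j\in\mathbb{J}},\{\tau_j\}_{j\in\mathbb{J}})$ is a frame for $\mathscr{E}$ if and only if $(\{A_j\}_{j\in\mathbb{J}},\{\Psi_j\}_{j\in\mathbb{J}})$ is a (hvf).

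The key computational observation is that, under the self-inner product $\langle a,b\rangle=ab^*$ on $\mathscr{A}$, one has $\langle A_jx,\Psi_jx\rangle=\langle x,x_j\rangle\langle\tau_j,x\rangle$, $\langle\Psi_jx,A_jx\rangle=\langle x,\tau_j\rangle\langle x_j,x\rangle$, $\langle A_jx,A_jx\rangle=\langle x,x_j\rangle\langle x_j,x\rangle$, and $\langle\Psi_jx,\Psi_jx\rangle=\langle x,\tau_j\rangle\langle\tau_j,x\rangle$. Substituting these four identities turns statements (i) and (ii) of the present theorem verbatim into statements (i) and (ii) of Theorem \ref{CHARACTERIZATION} for $(\{A_j\}_{j\in\mathbb{J}},\{\Psi_j\}_{j\in\mathbb{J}})$. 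Hence the forward implication is immediate from Theorem \ref{CHARACTERIZATION}, requiring no extra hypothesis.

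For the converse I would show that the two standing hypotheses translate exactly into the operator hypothesis $\Psi_j^*A_j\geq0$ of Theorem \ref{CHARACTERIZATION}. Computing the adjoint $\Psi_j^*:\mathscr{A}\to\mathscr{E}$ (consistently with $\theta_\tau^*(\{c_j\}_{j\in\mathbb{J}})=\sum_{j}c_j\tau_j$ from Proposition \ref{HILBERTSEQUENTIALPROP}) gives $\Psi_j^*a=a\tau_j$, so $\Psi_j^*A_jx=\langle x,x_j\rangle\tau_j$ and likewise $A_j^*\Psi_jx=\langle x,\tau_j\rangle x_j$. Thus $\Psi_j^*A_j$ is self-adjoint, i.e. equal to its adjoint $A_j^*\Psi_j$, precisely when $\langle x,x_j\rangle\tau_j=\langle x,\tau_j\rangle x_j$ for all $x$, and it is positive precisely when $\langle\Psi_j^*A_jx,x\rangle=\langle x,x_j\rangle\langle\tau_j,x\rangle\geq0$ for all $x$ — these being exactly the two hypotheses. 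With $\Psi_j^*A_j\geq0$ secured, the converse half of Theorem \ref{CHARACTERIZATION} yields that $(\{A_j\}_{j\in\mathbb{J}},\{\Psi_j\}_{j\in\mathbb{J}})$ is a (hvf), and the correspondence theorem returns that $(\{x_j\}_{j\in\mathbb{J}},\{\tau_j\}_{j\in\mathbb{J}})$ is a frame.

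The main obstacle is the bookkeeping in this last step: correctly computing $\Psi_j^*$ under the module conventions used here and confirming that self-adjointness of the rank-one homomorphism $\Psi_j^*A_j$ matches the symmetry condition $\langle x,x_j\rangle\tau_j=\langle x,\tau_j\rangle x_j$, while positivity matches $\langle x,x_j\rangle\langle\tau_j,x\rangle\geq0$. Once this identification is in place the theorem falls out of Theorem \ref{CHARACTERIZATION}; no fresh analytic input (such as Theorem \ref{PASCHKE} or Theorem \ref{ARAMBASIC1}) need be reworked, since those estimates are already packaged inside Theorem \ref{CHARACTERIZATION}.
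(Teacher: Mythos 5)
Your proposal is correct, but it takes a different route from the paper. The paper's own proof is a direct two-line argument at the sequential level: the forward implication is obtained by applying Theorem \ref{PASCHKE} to the analysis homomorphisms $\theta_x,\theta_\tau$ (plus the fact that the norm preserves the order on positive elements), and the converse by forming $S_{x,\tau}^{1/2}$ and invoking Theorem \ref{ARAMBASIC1} --- i.e.\ it repeats, verbatim at the level of sequences, the argument already used for Theorem \ref{CHARACTERIZATION}. You instead reduce the statement to Theorem \ref{CHARACTERIZATION} through the sequence-to-homomorphism dictionary $A_jx=\langle x,x_j\rangle$, $\Psi_jx=\langle x,\tau_j\rangle$, which is legitimate: the identities $\langle A_jx,\Psi_jx\rangle=\langle x,x_j\rangle\langle\tau_j,x\rangle$, $\langle A_jx,A_jx\rangle=\langle x,x_j\rangle\langle x_j,x\rangle$ do convert (i) and (ii) verbatim, the adjoint computation $\Psi_j^*a=a\tau_j$ is consistent with the module conventions (left $\mathscr{A}$-linearity in the first variable), and the two standing hypotheses translate exactly into self-adjointness ($\Psi_j^*A_j=A_j^*\Psi_j$) plus $\langle\Psi_j^*A_jx,x\rangle\geq0$, hence $\Psi_j^*A_j\geq0$, which is precisely the hypothesis of the converse half of Theorem \ref{CHARACTERIZATION}. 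What your route buys is economy --- the analytic content of Theorems \ref{PASCHKE} and \ref{ARAMBASIC1} is used only once, inside Theorem \ref{CHARACTERIZATION} --- at the cost of leaning on the unproved-in-text correspondence theorem of Section \ref{SEQUENTIALVERSIONOFHOMOMORPHISM-VALUEDFRAMES} (frame for $\mathscr{E}$ iff (hvf) in $\operatorname{Hom}^*_\mathscr{A}(\mathscr{E},\mathscr{A})$) and on the careful bookkeeping you flag; the paper's route is self-contained at the sequential level but duplicates the argument.
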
 
 \begin{proof}
$ (\Rightarrow)$ (resp. $(\Leftarrow)$) is an application of Theorem \ref{PASCHKE} (resp. Theorem \ref{ARAMBASIC1}).
 \end{proof} 
 \begin{corollary}
 Let $ \{x_j\}_{j\in \mathbb{J}}, \{\tau_j\}_{j\in \mathbb{J}} $ be in $\mathscr{E}$. If  $(\{x_j\}_{j\in \mathbb{J}},\{\tau_j\}_{j\in \mathbb{J}})$ is Bessel, then  
 \begin{enumerate}[\upshape(i)]
 \item there is  $b> 0$ such that
 $$ \left\|\sum_{j\in \mathbb{J}} \langle x, x_j\rangle\langle \tau_j, x\rangle\right\|=\left\|\sum_{j\in \mathbb{J}} \langle x, \tau_j\rangle\langle x_j, x \rangle\right\| \leq b\|x\|^2 , ~\forall x \in \mathscr{E},$$  
\item there are $ c,d> 0$ such that 
 $$ \sum\limits_{j \in \mathbb{J}}  \langle x, x_j\rangle\langle x_j, x\rangle \leq c\langle x, x \rangle , ~ \forall x \in \mathscr{E} ~ \text{and} ~\sum\limits_{j \in \mathbb{J}}  \langle x, \tau_j\rangle\langle \tau_j, x\rangle\leq d \langle x, x \rangle, ~ \forall x \in \mathscr{E}.$$
 \end{enumerate}
 If $ \langle x,x_j \rangle \tau_j=\langle x,\tau_j \rangle x_j, \langle x, x_j\rangle  \langle \tau_j,x \rangle \geq 0, \forall x \in \mathscr{E}, \forall j \in \mathbb{J}$,  then the converse holds.	
 \end{corollary}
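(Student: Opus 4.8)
The plan is to treat this corollary as the Bessel (lower-bound-free) analogue of the preceding frame characterization, so the argument runs in close parallel but is genuinely easier: since no invertibility of the frame homomorphism has to be produced, Theorem~\ref{ARAMBASIC1} is never invoked, and only Paschke's criterion (Theorem~\ref{PASCHKE}) together with the operator identities of Proposition~\ref{HILBERTSEQUENTIALPROP} is needed. Throughout I would use that, for a positive adjointable $T\in\operatorname{End}^*_\mathscr{A}(\mathscr{E})$, one has $\|T\|=\sup_{\|x\|\le 1}\|\langle Tx,x\rangle\|$ and that $T\le cI_\mathscr{E}$ is equivalent to $\langle Tx,x\rangle\le c\langle x,x\rangle$ for all $x\in\mathscr{E}$.

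For the forward implication I would start from the definition of a Bessel sequence (Definition~\ref{SEQUENTIAL VERSION HOMOMORPHISM VAUED DEFINITION}), which already furnishes that $S_{x,\tau}\colon x\mapsto\sum_{j}\langle x,x_j\rangle\tau_j$ is a well-defined, adjointable, positive bounded homomorphism and that the analysis homomorphisms $\theta_x,\theta_\tau$ exist and are bounded. Statement (i) then follows by writing $\sum_j\langle x,x_j\rangle\langle\tau_j,x\rangle=\langle S_{x,\tau}x,x\rangle$, bounding its norm by $\|S_{x,\tau}\|\,\|x\|^2$ and setting $b=\|S_{x,\tau}\|$; the equality of the two displayed sums is exactly the symmetry $S_{x,\tau}=S_{\tau,x}$ of Proposition~\ref{HILBERTSEQUENTIALPROP}(ii). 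For (ii) I would invoke Paschke's theorem: boundedness of $\theta_x$ means precisely $\theta_x^*\theta_x\le\|\theta_x\|^2 I_\mathscr{E}$, and since $\theta_x^*\theta_x\,x=\sum_j\langle x,x_j\rangle x_j$ by Proposition~\ref{HILBERTSEQUENTIALPROP}(i), pairing with $x$ gives $\sum_j\langle x,x_j\rangle\langle x_j,x\rangle\le c\langle x,x\rangle$ with $c=\|\theta_x\|^2$; the bound $d=\|\theta_\tau\|^2$ is obtained identically.

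For the converse I would assume (i), (ii) together with the hypotheses $\langle x,x_j\rangle\tau_j=\langle x,\tau_j\rangle x_j$ and $\langle x,x_j\rangle\langle\tau_j,x\rangle\ge 0$. Condition (ii) says that $\{x_j\}_{j\in\mathbb{J}}$ and $\{\tau_j\}_{j\in\mathbb{J}}$ are each Bessel with respect to themselves, so Lemma~\ref{BESSELCONVERGENCE} guarantees that the synthesis maps $\{c_j\}\mapsto\sum_j c_jx_j$ and $\{c_j\}\mapsto\sum_j c_j\tau_j$ are well-defined bounded homomorphisms; these are the adjoints $\theta_x^*,\theta_\tau^*$, so $\theta_x,\theta_\tau$ are adjointable and $S_{x,\tau}=\theta_\tau^*\theta_x$ exists as a bounded homomorphism. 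The commutation hypothesis yields $S_{x,\tau}=S_{\tau,x}$, hence $S_{x,\tau}$ is self-adjoint, and the positivity hypothesis yields $\langle S_{x,\tau}x,x\rangle=\sum_j\langle x,x_j\rangle\langle\tau_j,x\rangle\ge 0$, so $S_{x,\tau}\ge 0$. Thus $S_{x,\tau}$ is a well-defined adjointable positive bounded homomorphism, i.e.\ $(\{x_j\}_{j\in\mathbb{J}},\{\tau_j\}_{j\in\mathbb{J}})$ is Bessel.

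The only point requiring care, and the place where the module setting departs from the Hilbert-space case, is the passage from boundedness of the analysis homomorphisms to genuine adjointability, since a bounded $\mathscr{A}$-linear map between Hilbert C*-modules need not admit an adjoint in general. This is exactly what Lemma~\ref{BESSELCONVERGENCE} supplies, by producing the synthesis series as an honest bounded homomorphism; after that the argument is purely formal. I expect no serious obstacle beyond this, precisely because the weakening to the Bessel setting removes the need to establish surjectivity/invertibility of $S_{x,\tau}$, which in the frame theorem was the substantive step carried by Theorem~\ref{ARAMBASIC1}.
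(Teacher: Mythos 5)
Your proposal is correct and follows essentially the same route the paper intends: the forward direction via Theorem~\ref{PASCHKE} applied to $\theta_x,\theta_\tau$ together with $\|\langle S_{x,\tau}x,x\rangle\|\le\|S_{x,\tau}\|\|x\|^2$ and $S_{x,\tau}=S_{\tau,x}$, and the converse by assembling $S_{x,\tau}=\theta_\tau^*\theta_x$ from the bounded synthesis maps and reading off self-adjointness and positivity from the two extra hypotheses, with Theorem~\ref{ARAMBASIC1} correctly identified as unnecessary once invertibility is not required. You rightly flag adjointability as the only delicate point in the module setting; the argument is sound.
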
 
  
 \textbf{Similarity  and tensor product}
 \begin{definition}
A frame $ (\{y_j\}_{j\in \mathbb{J}},\{\omega_j\}_{j\in \mathbb{J}})$ for  $ \mathscr{E}$ is said to  similar to a frame  $ (\{x_j\}_{j\in \mathbb{J}},\{\tau_j\}_{j\in \mathbb{J}})$ for  $ \mathscr{E}$  if there are invertible   $ T_{x,y}, T_{\tau,\omega} \in \operatorname{End}^*_\mathscr{A}(\mathscr{E})$ such that $ y_j=T_{x,y}x_j, \omega_j=T_{\tau,\omega}\tau_j,   \forall j \in \mathbb{J}.$
 \end{definition} 
 \begin{proposition}
  Let $ \{x_j\}_{j\in \mathbb{J}}\in \mathscr{F}_\tau$  with frame bounds $a, b,$  let $T_{x,y} , T_{\tau,\omega}\in \operatorname{End}^*_\mathscr{A}(\mathscr{E})$ be positive, invertible, commute with each other, commute with $ S_{x, \tau}$, and let $y_j=T_{x,y}x_j , \omega_j=T_{\tau,\omega}\tau_j,  \forall j \in \mathbb{J}.$ Then 
 \begin{enumerate}[\upshape(i)]
 \item $ \{y_j\}_{j\in \mathbb{J}}\in \mathscr{F}_\tau$ and $ \frac{a}{\|T_{x,y}^{-1}\|\|T_{\tau,\omega}^{-1}\|}\leq S_{y, \omega} \leq b\|T_{x,y}T_{\tau,\omega}\|.$ Assuming that $ (\{x_j\}_{j\in \mathbb{J}},\{\tau_j\}_{j\in \mathbb{J}})$ is Parseval, then $(\{y_j\}_{j\in \mathbb{J}},  \{\omega_j\}_{j\in \mathbb{J}})$ is Parseval  if and only if   $ T_{\tau, \omega}T_{x,y}=I_\mathscr{E}.$  
 \item $ \theta_y=\theta_x T_{x,y}, \theta_\omega=\theta_\tau T_{\tau,\omega}, S_{y,\omega}=T_{\tau,\omega}S_{x, \tau}T_{x,y},  P_{y,\omega} =P_{x, \tau}.$
 \end{enumerate}
 \end{proposition}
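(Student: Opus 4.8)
The plan is to mirror the proof of the corresponding sequential proposition in the Hilbert space setting of Section~\ref{SEQUENTIAL}, the only genuine change being that scalar inner products are replaced by the $\mathscr{A}$-valued inner product and that every order estimate is now carried out inside the unital C*-algebra $\operatorname{End}^*_\mathscr{A}(\mathscr{E})$. The backbone of the argument is part (ii): once the four identities $\theta_y=\theta_x T_{x,y}$, $\theta_\omega=\theta_\tau T_{\tau,\omega}$, $S_{y,\omega}=T_{\tau,\omega}S_{x,\tau}T_{x,y}$ and $P_{y,\omega}=P_{x,\tau}$ are established, part (i) drops out from elementary C*-order manipulations. So I would prove (ii) first and deduce (i) from it.

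First I would check $\theta_y=\theta_x T_{x,y}$. For every $x\in\mathscr{E}$,
\[
\theta_x T_{x,y}x=\{\langle T_{x,y}x,x_j\rangle\}_{j\in\mathbb{J}}=\{\langle x,T_{x,y}^*x_j\rangle\}_{j\in\mathbb{J}}=\{\langle x,T_{x,y}x_j\rangle\}_{j\in\mathbb{J}}=\{\langle x,y_j\rangle\}_{j\in\mathbb{J}},
\]
where I use that the positive homomorphism $T_{x,y}$ is self-adjoint; since $\theta_x$ exists as a bounded adjointable homomorphism (because $\{x_j\}_{j\in\mathbb{J}}\in\mathscr{F}_\tau$) and $T_{x,y}\in\operatorname{End}^*_\mathscr{A}(\mathscr{E})$, this exhibits $\theta_y=\theta_x T_{x,y}$ as a composition of adjointable maps, hence $\theta_y$ exists and is adjointable; the identical computation gives $\theta_\omega=\theta_\tau T_{\tau,\omega}$. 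Using $S_{x,\tau}=\theta_\tau^*\theta_x$ from Proposition~\ref{HILBERTSEQUENTIALPROP}(ii) together with the self-adjointness of $T_{\tau,\omega}$,
\[
S_{y,\omega}=\theta_\omega^*\theta_y=(\theta_\tau T_{\tau,\omega})^*\theta_x T_{x,y}=T_{\tau,\omega}\theta_\tau^*\theta_x T_{x,y}=T_{\tau,\omega}S_{x,\tau}T_{x,y}.
\]
The frame idempotent then telescopes: since $T_{x,y},T_{\tau,\omega}$ are invertible,
\[
P_{y,\omega}=\theta_y S_{y,\omega}^{-1}\theta_\omega^*=(\theta_x T_{x,y})(T_{x,y}^{-1}S_{x,\tau}^{-1}T_{\tau,\omega}^{-1})(T_{\tau,\omega}\theta_\tau^*)=\theta_x S_{x,\tau}^{-1}\theta_\tau^*=P_{x,\tau}.
\]

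For part (i), I would exploit that $T_{x,y}$, $T_{\tau,\omega}$ and $S_{x,\tau}$ mutually commute and are positive and invertible. Hence $S_{y,\omega}=S_{x,\tau}T_{x,y}T_{\tau,\omega}$ is a product of mutually commuting positive invertible elements of the C*-algebra $\operatorname{End}^*_\mathscr{A}(\mathscr{E})$, so it is again positive and invertible; together with the existence of $\theta_y,\theta_\omega$ this shows that $(\{y_j\}_{j\in\mathbb{J}},\{\omega_j\}_{j\in\mathbb{J}})$ is a frame. For the bounds, writing $C\coloneqq T_{x,y}T_{\tau,\omega}\geq 0$ (commuting with $S_{x,\tau}$), functional calculus gives $C^{1/2}(bI_\mathscr{E}-S_{x,\tau})C^{1/2}=bC-S_{x,\tau}C\geq 0$ from $S_{x,\tau}\leq bI_\mathscr{E}$, so $S_{y,\omega}=S_{x,\tau}C\leq bC\leq b\|T_{x,y}T_{\tau,\omega}\|I_\mathscr{E}$; dually $S_{x,\tau}\geq aI_\mathscr{E}$ yields $S_{y,\omega}\geq aC\geq a\|C^{-1}\|^{-1}I_\mathscr{E}\geq \frac{a}{\|T_{x,y}^{-1}\|\|T_{\tau,\omega}^{-1}\|}I_\mathscr{E}$, using $\|C^{-1}\|=\|T_{\tau,\omega}^{-1}T_{x,y}^{-1}\|\leq\|T_{x,y}^{-1}\|\|T_{\tau,\omega}^{-1}\|$. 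Finally, when $(\{x_j\}_{j\in\mathbb{J}},\{\tau_j\}_{j\in\mathbb{J}})$ is Parseval we have $S_{x,\tau}=I_\mathscr{E}$, whence $S_{y,\omega}=T_{\tau,\omega}T_{x,y}$, and $(\{y_j\}_{j\in\mathbb{J}},\{\omega_j\}_{j\in\mathbb{J}})$ is Parseval precisely when $S_{y,\omega}=I_\mathscr{E}$, i.e.\ when $T_{\tau,\omega}T_{x,y}=I_\mathscr{E}$.

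I do not expect a serious obstacle here, since the computations are formally the scalar ones; the only points that require care are module-theoretic rather than algebraic. Specifically, I must justify the order inequalities $S_{x,\tau}C\leq bC$ and $C\geq\|C^{-1}\|^{-1}I_\mathscr{E}$ as statements in the C*-algebra $\operatorname{End}^*_\mathscr{A}(\mathscr{E})$, where "$\leq$" means positivity of the difference as an adjointable operator; these are standard consequences of commutativity plus the C*-functional calculus, and this is where the commutativity hypotheses on $T_{x,y},T_{\tau,\omega},S_{x,\tau}$ are genuinely used. I would also make explicit, as above, that adjointability of $\theta_y$ and $\theta_\omega$ is inherited from the factorisations $\theta_y=\theta_x T_{x,y}$ and $\theta_\omega=\theta_\tau T_{\tau,\omega}$, so that $S_{y,\omega}=\theta_\omega^*\theta_y$ is meaningful.
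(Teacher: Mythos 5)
Your proof is correct and follows essentially the same route as the paper, which for this module version simply refers back to the Hilbert-space argument: establish $\theta_y=\theta_x T_{x,y}$, $\theta_\omega=\theta_\tau T_{\tau,\omega}$ via self-adjointness of the positive homomorphisms, read off $S_{y,\omega}=T_{\tau,\omega}S_{x,\tau}T_{x,y}$ and the telescoping identity $P_{y,\omega}=P_{x,\tau}$, and then deduce (i). The only difference is that you spell out the C*-order manipulations (positivity of $S_{x,\tau}C$ and the estimate $\|C^{-1}\|^{-1}I_{\mathscr{E}}\leq C\leq\|C\|I_{\mathscr{E}}$) that the paper compresses into ``other parts are simple.''
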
 
 \begin{lemma}
 Let $ \{x_j\}_{j\in \mathbb{J}}\in \mathscr{F}_\tau,$ $ \{y_j\}_{j\in \mathbb{J}}\in \mathscr{F}_\omega$ and   $y_j=T_{x, y}x_j , \omega_j=T_{\tau,\omega}\tau_j,  \forall j \in \mathbb{J}$, for some invertible $T_{x,y}, T_{\tau,\omega}\in \operatorname{End}^*_\mathscr{A}(\mathscr{E}).$ Then 
 $ \theta_y=\theta_x T^*_{x,y}, \theta_\omega=\theta_\tau T^*_{\tau,\omega}, S_{y,\omega}=T_{\tau,\omega}S_{x, \tau}T_{x,y}^*,  P_{y,\omega}=P_{x, \tau}.$ Assuming that $ (\{x_j\}_{j\in \mathbb{J}},\{\tau_j\}_{j\in \mathbb{J}})$ is Parseval frame, then $ (\{y_j\}_{j\in \mathbb{J}},\{\omega_j\}_{j\in \mathbb{J}})$ is Parseval frame if and only if $T_{\tau,\omega}T_{x,y}^*=I_\mathscr{E}.$
 \end{lemma}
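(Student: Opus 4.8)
The plan is to mirror the proof of the Hilbert-space analogue, Lemma~\ref{SEQUENTIALSIMILARITYLEMMA}, replacing the scalar inner product by the $\mathscr{A}$-valued one and invoking Proposition~\ref{HILBERTSEQUENTIALPROP} for the module identities $S_{x,\tau}=\theta_\tau^*\theta_x$ and the Parseval characterization $\theta_\tau^*\theta_x=I_\mathscr{E}$. The whole argument rests on the fact that $T_{x,y},T_{\tau,\omega}\in\operatorname{End}^*_\mathscr{A}(\mathscr{E})$ are adjointable, so that the defining relation of the adjoint, $\langle Tx,z\rangle=\langle x,T^*z\rangle$, is available throughout.

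First I would compute $\theta_y$. For each $x\in\mathscr{E}$, using $y_j=T_{x,y}x_j$ and adjointability,
\begin{align*}
\theta_x T^*_{x,y}x=\{\langle T^*_{x,y}x,x_j\rangle\}_{j\in\mathbb{J}}=\{\langle x,T_{x,y}x_j\rangle\}_{j\in\mathbb{J}}=\{\langle x,y_j\rangle\}_{j\in\mathbb{J}}=\theta_y x,
\end{align*}
so $\theta_y=\theta_x T^*_{x,y}$; the identical computation with $\omega_j=T_{\tau,\omega}\tau_j$ gives $\theta_\omega=\theta_\tau T^*_{\tau,\omega}$. Substituting these into $S_{y,\omega}=\theta_\omega^*\theta_y$ yields $S_{y,\omega}=(\theta_\tau T^*_{\tau,\omega})^*\theta_x T^*_{x,y}=T_{\tau,\omega}\theta_\tau^*\theta_x T^*_{x,y}=T_{\tau,\omega}S_{x,\tau}T^*_{x,y}$, which is positive invertible as a product of invertibles with $S_{x,\tau}>0$, confirming that $(\{y_j\}_{j\in\mathbb{J}},\{\omega_j\}_{j\in\mathbb{J}})$ is genuinely a frame. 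Feeding the analysis-homomorphism formulas together with $S_{y,\omega}^{-1}=(T^*_{x,y})^{-1}S_{x,\tau}^{-1}T_{\tau,\omega}^{-1}$ into the definition of the frame idempotent gives
\begin{align*}
P_{y,\omega}=\theta_y S_{y,\omega}^{-1}\theta_\omega^*=(\theta_x T^*_{x,y})(T^*_{x,y})^{-1}S_{x,\tau}^{-1}T_{\tau,\omega}^{-1}(T_{\tau,\omega}\theta_\tau^*)=\theta_x S_{x,\tau}^{-1}\theta_\tau^*=P_{x,\tau}.
\end{align*}

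For the final equivalence I would invoke Proposition~\ref{HILBERTSEQUENTIALPROP}(iv): the Parsevalness of $(\{x_j\}_{j\in\mathbb{J}},\{\tau_j\}_{j\in\mathbb{J}})$ means $S_{x,\tau}=\theta_\tau^*\theta_x=I_\mathscr{E}$. Then the formula just established collapses to $S_{y,\omega}=T_{\tau,\omega}T^*_{x,y}$, and since $(\{y_j\}_{j\in\mathbb{J}},\{\omega_j\}_{j\in\mathbb{J}})$ is Parseval precisely when $S_{y,\omega}=I_\mathscr{E}$, the equivalence $S_{y,\omega}=I_\mathscr{E}\iff T_{\tau,\omega}T^*_{x,y}=I_\mathscr{E}$ is immediate.

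I do not anticipate a genuine obstacle here: unlike the dilation and uniqueness-of-dual results elsewhere in this section, this lemma never needs orthogonal complementability of a submodule, so the only care required is to keep the conjugate-$\mathscr{A}$-linearity of the inner product in its second slot straight when passing $T_{x,y}$ across the bracket. The single point worth stating explicitly is that invertibility of $T_{x,y},T_{\tau,\omega}$ in $\operatorname{End}^*_\mathscr{A}(\mathscr{E})$ guarantees that their adjoints and inverses are again adjointable, which is exactly what legitimizes writing $S_{y,\omega}^{-1}=(T^*_{x,y})^{-1}S_{x,\tau}^{-1}T_{\tau,\omega}^{-1}$ and hence the cancellation producing $P_{y,\omega}=P_{x,\tau}$.
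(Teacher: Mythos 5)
Your proposal is correct and follows exactly the route the paper intends: it transplants the computation from the Hilbert-space version (Lemma \ref{SEQUENTIALSIMILARITYLEMMA}) using adjointability of $T_{x,y},T_{\tau,\omega}\in\operatorname{End}^*_\mathscr{A}(\mathscr{E})$ to get $\theta_y=\theta_xT_{x,y}^*$, $\theta_\omega=\theta_\tau T_{\tau,\omega}^*$, then substitutes into $S_{y,\omega}=\theta_\omega^*\theta_y$ and $P_{y,\omega}=\theta_yS_{y,\omega}^{-1}\theta_\omega^*$, which is precisely the argument the paper relies on via its blanket remark that proofs in this section mirror those of Section \ref{SEQUENTIAL}. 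One small caveat: your parenthetical claim that $T_{\tau,\omega}S_{x,\tau}T_{x,y}^*$ is ``positive invertible as a product of invertibles with $S_{x,\tau}>0$'' is not a valid deduction when $T_{\tau,\omega}\neq T_{x,y}$, but it is also not needed, since the hypothesis $\{y_j\}_{j\in\mathbb{J}}\in\mathscr{F}_\omega$ already guarantees that $(\{y_j\}_{j\in\mathbb{J}},\{\omega_j\}_{j\in\mathbb{J}})$ is a frame.
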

 \begin{theorem}
 Let $ \{x_j\}_{j\in \mathbb{J}}\in \mathscr{F}_\tau,$ $ \{y_j\}_{j\in \mathbb{J}}\in \mathscr{F}_\omega.$ The following are equivalent.
 \begin{enumerate}[\upshape(i)]
 \item $y_j=T_{x,y}x_j , \omega_j=T_{\tau, \omega}\tau_j ,  \forall j \in \mathbb{J},$ for some invertible  $ T_{x,y} ,T_{\tau, \omega} \in \operatorname{End}^*_\mathscr{A}(\mathscr{E}). $
 \item $\theta_y=\theta_x{T'}_{x,y}^* , \theta_\omega=\theta_\tau {T'}_{\tau, \omega}^* $ for some invertible  $ {T'}_{x,y},{T'}_{\tau, \omega} \in \operatorname{End}^*_\mathscr{A}(\mathscr{E}). $
 \item $P_{y,\omega}=P_{x,\tau}.$
 \end{enumerate}
 If one of the above conditions is satisfied, then  invertible homomorphisms in $ \operatorname{(i)}$ and  $\operatorname{(ii)}$ are unique and are given by $T_{x,y}=\theta_y^*\theta_\tau S_{x,\tau}^{-1}, T_{\tau, \omega}=\theta_\omega^*\theta_x S_{x,\tau}^{-1}.$
 In the case that $(\{x_j\}_{j\in \mathbb{J}},  \{\tau_j\}_{j\in \mathbb{J}})$ is Parseval, then $(\{y_j\}_{j\in \mathbb{J}},  \{\omega_j\}_{j\in \mathbb{J}})$ is  Parseval if and only if $T_{\tau, \omega}T_{x,y}^* =I_\mathscr{E}$   if and only if $ T_{x,y}^*T_{\tau, \omega} =I_\mathscr{E}$.
 \end{theorem}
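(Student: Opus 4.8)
The plan is to mirror the proof of Theorem \ref{SEQUENTIALSIMILARITYCHARACTERIZATION}, its Hilbert space sequential counterpart, replacing $\ell^2(\mathbb{J})$ by $\mathscr{H}_\mathscr{A}$ and supplying the adjointability facts from Proposition \ref{HILBERTSEQUENTIALPROP} together with the similarity lemma that immediately precedes the statement. First I would dispose of (i) $\Rightarrow$ (ii) and (i) $\Rightarrow$ (iii) in one stroke: the preceding lemma, applied to $y_j=T_{x,y}x_j$, $\omega_j=T_{\tau,\omega}\tau_j$, already gives $\theta_y=\theta_x T_{x,y}^*$, $\theta_\omega=\theta_\tau T_{\tau,\omega}^*$ and $P_{y,\omega}=P_{x,\tau}$, so choosing ${T'}_{x,y}=T_{x,y}$, ${T'}_{\tau,\omega}=T_{\tau,\omega}$ yields (ii), and (iii) is immediate.

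For (ii) $\Rightarrow$ (i), let $\{e_j\}_{j\in\mathbb{J}}$ be the standard orthonormal basis of $\mathscr{H}_\mathscr{A}$. Since ${T'}_{x,y}$ is adjointable I may take adjoints in $\theta_y=\theta_x {T'}_{x,y}^*$ and evaluate at $e_j$; using $\theta_x^*e_j=x_j$ from Proposition \ref{HILBERTSEQUENTIALPROP}, I recover $y_j=\theta_y^*e_j={T'}_{x,y}\theta_x^*e_j={T'}_{x,y}x_j$, and likewise $\omega_j={T'}_{\tau,\omega}\tau_j$, which is (i).

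The substantive step is (iii) $\Rightarrow$ (ii). Here I would begin from the idempotent identities $\theta_y=P_{y,\omega}\theta_y$ and $\theta_\omega=P_{y,\omega}^*\theta_\omega$, substitute $P_{y,\omega}=P_{x,\tau}=\theta_x S_{x,\tau}^{-1}\theta_\tau^*$, and read off $\theta_y=\theta_x(S_{x,\tau}^{-1}\theta_\tau^*\theta_y)$ and $\theta_\omega=\theta_\tau(S_{x,\tau}^{-1}\theta_x^*\theta_\omega)$; it then remains only to show the bracketed homomorphisms are invertible. I would verify this by the mutual-inverse computation: from $\theta_y S_{y,\omega}^{-1}\theta_\omega^*=P_{y,\omega}=P_{x,\tau}$ and $\theta_\tau^*P_{x,\tau}=\theta_\tau^*$ (valid since $\theta_\tau^*\theta_x=S_{x,\tau}$) one gets $(S_{x,\tau}^{-1}\theta_\tau^*\theta_y)(S_{y,\omega}^{-1}\theta_\omega^*\theta_x)=S_{x,\tau}^{-1}\theta_\tau^*P_{x,\tau}\theta_x=S_{x,\tau}^{-1}\theta_\tau^*\theta_x=I_\mathscr{E}$, and symmetrically for the reverse product and the $\omega$-side. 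Setting ${T'}_{x,y}^*=S_{x,\tau}^{-1}\theta_\tau^*\theta_y$ and ${T'}_{\tau,\omega}^*=S_{x,\tau}^{-1}\theta_x^*\theta_\omega$ then delivers (ii).

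For the explicit formulas and uniqueness I would feed $\theta_y=\theta_x T_{x,y}^*$ into $\theta_\tau^*\theta_y=S_{x,\tau}T_{x,y}^*$ and solve $T_{x,y}=\theta_y^*\theta_\tau S_{x,\tau}^{-1}$ (and analogously $T_{\tau,\omega}=\theta_\omega^*\theta_x S_{x,\tau}^{-1}$); since these are forced, uniqueness follows. The Parseval clause comes from the preceding lemma, which gives Parsevalness of $(\{y_j\},\{\omega_j\})$ iff $T_{\tau,\omega}T_{x,y}^*=I_\mathscr{E}$, and the equivalence with $T_{x,y}^*T_{\tau,\omega}=I_\mathscr{E}$ follows because in the unital algebra $\operatorname{End}^*_\mathscr{A}(\mathscr{E})$ a one-sided inverse of an invertible element is two-sided, via $ab=e\Rightarrow ba=(a^{-1}(ab)b^{-1})^{-1}=e$. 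The main obstacle is not conceptual but the bookkeeping of adjointability: every operator manipulated must lie in $\operatorname{End}^*_\mathscr{A}(\mathscr{E})$ or $\operatorname{Hom}^*_\mathscr{A}$, and I must lean on the frame hypotheses to know $S_{x,\tau},S_{y,\omega}$ are positive invertible and adjointable and that the analysis homomorphisms are adjointable with the properties in Proposition \ref{HILBERTSEQUENTIALPROP}; once this is tracked, no orthogonal-complementation beyond Proposition \ref{HILBERTSEQUENTIALPROP} is required and the argument transfers verbatim.
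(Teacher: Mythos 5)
Your proposal is correct and follows essentially the same route the paper intends: the module statement is proved exactly as its Hilbert-space counterpart (Theorem \ref{SEQUENTIALSIMILARITYCHARACTERIZATION}), with the similarity lemma giving (i)$\Rightarrow$(ii)$\Rightarrow$(iii), evaluation at the standard basis of $\mathscr{H}_\mathscr{A}$ giving (ii)$\Rightarrow$(i), the idempotent identities and mutual-inverse computation giving (iii)$\Rightarrow$(ii), and the one-sided-inverse fact in a unital algebra giving the last Parseval equivalence. Your observation that no orthogonal complementability is needed here — only adjointability of the analysis homomorphisms and invertibility of the frame homomorphisms — is the right accounting of what must be checked in the C*-module setting.
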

 \begin{corollary}
 For any given frame $ (\{x_j\}_{j \in \mathbb{J}} , \{\tau_j\}_{j \in \mathbb{J}})$, the canonical dual of $ (\{x_j\}_{j \in \mathbb{J}} , \{\tau_j\}_{j \in \mathbb{J}}  )$ is the only dual frame that is similar to $ (\{x_j\}_{j \in \mathbb{J}} , \{\tau_j\}_{j \in \mathbb{J}} )$.
 \end{corollary}
 \begin{corollary}
 Two similar  frames cannot be orthogonal.
\end{corollary}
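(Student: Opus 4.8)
The plan is to argue by contradiction, converting the elementwise similarity hypothesis into an identity between analysis homomorphisms and then exhibiting that identity as an invertible (hence nonzero) homomorphism, in direct violation of orthogonality. So first I would suppose, for contradiction, that the two frames $(\{x_j\}_{j\in\mathbb{J}},\{\tau_j\}_{j\in\mathbb{J}})$ and $(\{y_j\}_{j\in\mathbb{J}},\{\omega_j\}_{j\in\mathbb{J}})$ for $\mathscr{E}$ are simultaneously similar and orthogonal. Similarity supplies invertible $T_{x,y},T_{\tau,\omega}\in\operatorname{End}^*_\mathscr{A}(\mathscr{E})$ with $y_j=T_{x,y}x_j$ and $\omega_j=T_{\tau,\omega}\tau_j$ for every $j\in\mathbb{J}$, while orthogonality is the requirement $\theta_\omega^*\theta_x=\theta_y^*\theta_\tau=0$.

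Next I would lift the elementwise relations to the level of analysis homomorphisms using the immediately preceding similarity lemma and theorem (the Hilbert C*-module sequential versions), which transform $y_j=T_{x,y}x_j$ into $\theta_y=\theta_x T_{x,y}^*$. Taking adjoints and composing with $\theta_\tau$, and invoking $S_{x,\tau}=\theta_\tau^*\theta_x=\theta_x^*\theta_\tau$ from Proposition \ref{HILBERTSEQUENTIALPROP}, I compute
\[
\theta_y^*\theta_\tau=(\theta_x T_{x,y}^*)^*\theta_\tau=T_{x,y}\,\theta_x^*\theta_\tau=T_{x,y}S_{x,\tau}.
\]
Symmetrically, $\omega_j=T_{\tau,\omega}\tau_j$ gives $\theta_\omega=\theta_\tau T_{\tau,\omega}^*$ and hence $\theta_\omega^*\theta_x=T_{\tau,\omega}S_{x,\tau}$, so either half of the orthogonality condition is governed by the same pattern.

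Finally I would observe that $T_{x,y}$ is invertible by hypothesis and that $S_{x,\tau}$, being the frame homomorphism, is positive and invertible by the very definition of a frame for $\mathscr{E}$; consequently the composite $T_{x,y}S_{x,\tau}$ is invertible, and in particular nonzero on the nontrivial module $\mathscr{E}$. This contradicts $\theta_y^*\theta_\tau=0$ and completes the argument. There is essentially no deep obstacle here, since all the content is carried by the similarity characterization already in hand; the one point requiring a moment's care is invoking the adjoint-twisted identity $\theta_y=\theta_x T_{x,y}^*$ rather than $\theta_y=\theta_x T_{x,y}$ (the star lands on $T_{x,y}$ precisely because $\theta_x$ encodes the inner products $\langle\,\cdot\,,x_j\rangle$). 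Notably, no module-specific machinery such as orthogonal complementability is needed, so the proof is formally identical to its Hilbert space counterpart.
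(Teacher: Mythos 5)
Your proof is correct and follows the same route as the paper's: both convert the elementwise similarity into the analysis-operator identity $\theta_y=\theta_x T_{x,y}^*$ via the similarity characterization theorem, compute $\theta_y^*\theta_\tau=T_{x,y}S_{x,\tau}$, and conclude this is nonzero because $T_{x,y}$ and the frame operator $S_{x,\tau}$ are both invertible. The only cosmetic difference is that you phrase it as a contradiction and work in the Hilbert C*-module setting, whereas the paper writes the argument directly in the Hilbert space sequential section; the content is identical.
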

\begin{remark}
For every frame  $(\{x_j\}_{j \in \mathbb{J}}, \{\tau_j\}_{j \in \mathbb{J}}),$ each  of `frames'  $( \{S_{x, \tau}^{-1}x_j\}_{j \in \mathbb{J}}, \{\tau_j\}_{j \in \mathbb{J}})$,    $( \{S_{x, \tau}^{-1/2}x_j\}_{j \in \mathbb{J}}, \{S_{x,\tau}^{-1/2}\tau_j\}_{j \in \mathbb{J}}),$ and  $ (\{x_j \}_{j \in \mathbb{J}}, \{S_{x,\tau}^{-1}\tau_j\}_{j \in \mathbb{J}})$ is a  Parseval frame which is similar to  $ (\{x_j\}_{j \in \mathbb{J}} , \{\tau_j\}_{j \in \mathbb{J}} ).$  
 \end{remark} 
\textbf{Tensor product  of frames}: Let $(\{x_j\}_{j \in \mathbb{J}}, \{\tau_j\}_{j \in \mathbb{J}})$ be  a frame  for   $ \mathscr{E},$ and $(\{y_l\}_{l \in \mathbb{L}}, \{\omega_l\}_{l \in \mathbb{L}})$  be a frame for   $ \mathscr{E}_1.$ The frame   $(\{z_{(j, l)}\coloneqq x_j\otimes y_l\}_{(j, l)\in \mathbb{J}\bigtimes  \mathbb{L}},\{\rho_{(j, l)}\coloneqq \tau_j\otimes\omega_l\}_{(j, l)\in \mathbb{J}\bigtimes  \mathbb{L}} )$   for $ \mathscr{E}\otimes\mathscr{E}_1$ is called  as tensor product  of frames $( \{x_j\}_{j \in \mathbb{J}}, \{\tau_j\}_{j\in \mathbb{J}})$ and $( \{y_l\}_{l \in \mathbb{L}},  \{\omega_l\}_{l\in \mathbb{L}}).$ 
 \begin{proposition}
 Let  $(\{z_{(j, l)}\coloneqq x_j\otimes y_l\}_{(j, l)\in \mathbb{J}\bigtimes  \mathbb{L}},\{\rho _{(j, l)}\coloneqq \tau_j\otimes \omega_l\}_{(j, l)\in \mathbb{J}\bigtimes  \mathbb{L}}) $ be the  tensor product of frames  $( \{x_j\}_{j \in \mathbb{J}}, \{\tau_j\}_{j \in \mathbb{J}}) $  for  $ \mathscr{E},$ and $( \{y_l\}_{l \in \mathbb{L}}, \{\omega_l\}_{l \in \mathbb{L}} )$ for $\mathscr{E}_1.$  Then  $\theta_z=\theta_x\otimes\theta_y, \theta_\rho=\theta_\tau\otimes\theta_\omega, S_{z, \rho}=S_{x, \tau}\otimes S_{y, \omega}, P_{z, \rho}=P_{x, \tau}\otimes P_{y, \omega}.$ If  $( \{x_j\}_{j \in \mathbb{J}}, \{\tau_j\}_{j \in \mathbb{J}}) $ and $ (\{y_l\}_{l \in \mathbb{L}},  \{\omega_l\}_{l \in \mathbb{L}} )$ are Parseval, then $(\{z_{(j, l)}\}_{(j, l)\in \mathbb{J}\bigtimes  \mathbb{L}} ,\{\rho_{(j,l)}\}_{(j,l)\in \mathbb{J}\bigtimes \mathbb{L}})$ is Parseval.
 \end{proposition}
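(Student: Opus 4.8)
The plan is to follow the template of the Hilbert-space tensor-product proposition proved earlier in the excerpt, verifying each of the four homomorphism identities on elementary tensors and then reading off the Parseval conclusion. First I would fix an elementary tensor $x\otimes w\in\mathscr{E}\otimes\mathscr{E}_1$ and compute $(\theta_x\otimes\theta_y)(x\otimes w)=\theta_x x\otimes\theta_y w=\{\langle x,x_j\rangle\}_{j\in\mathbb{J}}\otimes\{\langle w,y_l\rangle\}_{l\in\mathbb{L}}$, comparing this with $\theta_z(x\otimes w)=\{\langle x\otimes w,x_j\otimes y_l\rangle\}_{(j,l)}=\{\langle x,x_j\rangle\langle w,y_l\rangle\}_{(j,l)}$, where I use that the $\mathscr{A}$-valued inner product on the tensor-product module satisfies $\langle x\otimes w,x_j\otimes y_l\rangle=\langle x,x_j\rangle\langle w,y_l\rangle$. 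Since elementary tensors span a dense submodule and both maps are bounded adjointable homomorphisms, agreement on elementary tensors forces $\theta_z=\theta_x\otimes\theta_y$; the identity $\theta_\rho=\theta_\tau\otimes\theta_\omega$ is obtained identically.

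Once these two are in hand, the remaining identities are purely formal. Invoking Proposition \ref{HILBERTSEQUENTIALPROP} in the module setting, which gives $S_{x,\tau}=\theta_\tau^*\theta_x$ and $P_{x,\tau}=\theta_x S_{x,\tau}^{-1}\theta_\tau^*$, I would write $S_{z,\rho}=\theta_\rho^*\theta_z=(\theta_\tau\otimes\theta_\omega)^*(\theta_x\otimes\theta_y)=(\theta_\tau^*\otimes\theta_\omega^*)(\theta_x\otimes\theta_y)=\theta_\tau^*\theta_x\otimes\theta_\omega^*\theta_y=S_{x,\tau}\otimes S_{y,\omega}$, and then expand $P_{z,\rho}=\theta_z S_{z,\rho}^{-1}\theta_\rho^*$ using $(S_{x,\tau}\otimes S_{y,\omega})^{-1}=S_{x,\tau}^{-1}\otimes S_{y,\omega}^{-1}$ to obtain $P_{z,\rho}=\theta_x S_{x,\tau}^{-1}\theta_\tau^*\otimes\theta_y S_{y,\omega}^{-1}\theta_\omega^*=P_{x,\tau}\otimes P_{y,\omega}$. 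The Parseval claim is then immediate: if both factor frames are Parseval then $S_{x,\tau}=I_\mathscr{E}$ and $S_{y,\omega}=I_{\mathscr{E}_1}$, whence $S_{z,\rho}=I_\mathscr{E}\otimes I_{\mathscr{E}_1}=I_{\mathscr{E}\otimes\mathscr{E}_1}$.

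The main obstacle, and essentially the only place where the module setting genuinely differs from the Hilbert-space case, is justifying the manipulations with tensor products of adjointable homomorphisms between Hilbert C*-modules. I need two ingredients: first, that the tensor product of the standard modules $\mathscr{H}_\mathscr{A}(\mathbb{J})$ and $\mathscr{H}_\mathscr{A}(\mathbb{L})$ can be identified, as a Hilbert $\mathscr{A}$-module, with the standard module $\mathscr{H}_\mathscr{A}(\mathbb{J}\bigtimes\mathbb{L})$ in which $\theta_z$ and $\theta_\rho$ naturally take values, so that the codomains of $\theta_z$ and $\theta_x\otimes\theta_y$ coincide; and second, that for adjointable homomorphisms the tensor product is again adjointable with $(\theta_\tau\otimes\theta_\omega)^*=\theta_\tau^*\otimes\theta_\omega^*$ and composes factorwise, $(\theta_\tau^*\otimes\theta_\omega^*)(\theta_x\otimes\theta_y)=\theta_\tau^*\theta_x\otimes\theta_\omega^*\theta_y$. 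These are standard facts about the interior tensor product of Hilbert C*-modules and of adjointable operators, and indeed the interior tensor product is constructed precisely to make them hold; the point is simply that, unlike in the Hilbert-space case, they must be cited explicitly rather than treated as self-evident. A minor accompanying care is to keep the $\mathscr{A}$-valued inner products in the correct multiplicative order, since $\mathscr{A}$ is noncommutative, but this is handled automatically by the convention $\langle x\otimes w,x_j\otimes y_l\rangle=\langle x,x_j\rangle\langle w,y_l\rangle$ adopted throughout the section.
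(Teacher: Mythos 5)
Your proof is correct and follows essentially the same route the paper takes: the paper leaves this proposition unproved (Section 10 defers to the Hilbert-space argument), and that Hilbert-space proof is exactly your computation on elementary tensors followed by the formal identities $S_{z,\rho}=\theta_\rho^*\theta_z$ and $P_{z,\rho}=\theta_z S_{z,\rho}^{-1}\theta_\rho^*$. Your explicit flagging of the module-theoretic inputs (identification of $\mathscr{H}_\mathscr{A}(\mathbb{J})\otimes\mathscr{H}_\mathscr{A}(\mathbb{L})$ with $\mathscr{H}_\mathscr{A}(\mathbb{J}\bigtimes\mathbb{L})$ and adjointability of tensor products of adjointable maps) is a reasonable sharpening of what the paper treats as implicit.
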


 \textbf{Perturbations}
 \begin{theorem}\label{PERTURBATIONRESULTMODULE1}
 Let $ (\{x_j\}_{j \in \mathbb{J}},\{\tau_j\}_{j \in \mathbb{J}} )$ be a frame for  $\mathscr{E}$ over  $\mathscr{A}.$ Suppose $ \{y_j\}_{j \in \mathbb{J}}$  in $ \mathscr{E}$ is  such that $ \langle x,y_j \rangle \tau_j=\langle x,\tau_j \rangle y_j, \langle x, y_j\rangle  \langle \tau_j,x \rangle \geq 0, \forall x \in \mathscr{E}, \forall j \in \mathbb{J}$ and there exist $ \alpha, \beta, \gamma \geq0$ with  $ \max\{\alpha+\gamma\|\theta_\tau S_{x,\tau}^{-1}\|, \beta\}<1$ and for every finite subset $ \mathbb{S}$ of $ \mathbb{J}$
 \begin{align*}
 \left\|\sum\limits_{j\in \mathbb{S}}c_j(x_j-y_j) \right\|\leq \alpha\left\|\sum\limits_{j\in \mathbb{S}}c_jx_j\right \|+\gamma \left\|\sum\limits_{j\in \mathbb{S}}c_jc_j^*\right\|^\frac{1}{2}+\beta\left\|\sum\limits_{j\in \mathbb{S}}c_jy_j\right \|,~\forall c_j \in \mathscr{A}, \forall j \in \mathbb{S}.
 \end{align*}	 
Then  $ (\{y_j\}_{j \in \mathbb{J}},\{\tau_j\}_{j \in \mathbb{J}} )$ is  a frame  with bounds $ \frac{1-(\alpha+\gamma\|\theta_\tau S_{x,\tau}^{-1}\|)}{(1+\beta)\|S_{x,\tau}^{-1}\|}$ and $\frac{\|\theta_\tau\|((1+\alpha)\|\theta_x\|+\gamma)}{1-\beta} $.
 \end{theorem}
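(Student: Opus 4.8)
The plan is to transcribe the proof of the Hilbert-space sequential perturbation theorem Theorem~\ref{PERTURBATION1SV} into the Hilbert C*-module language, replacing every scalar modulus $|\langle\cdot,\cdot\rangle|$ and every square sum $\bigl(\sum_j|c_j|^2\bigr)^{1/2}$ by the corresponding $\mathscr{A}$-norm $\|\sum_j c_jc_j^*\|^{1/2}$, which is exactly the norm of $\{c_j\}_j$ in $\mathscr{H}_\mathscr{A}$. The only genuinely module-theoretic points to monitor are the adjointability of the synthesis map and the positivity of the new frame homomorphism; the two pointwise hypotheses on $\{y_j\}$ are tailored to supply precisely these.

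First I would bound the synthesis map of $\{y_j\}$. From $\|\sum_{j\in\mathbb{S}}c_jy_j\|\le\|\sum_{j\in\mathbb{S}}c_j(x_j-y_j)\|+\|\sum_{j\in\mathbb{S}}c_jx_j\|$ together with the hypothesised perturbation inequality I obtain, for every finite $\mathbb{S}$,
$$\left\|\sum_{j\in\mathbb{S}}c_jy_j\right\|\le\frac{1+\alpha}{1-\beta}\left\|\sum_{j\in\mathbb{S}}c_jx_j\right\|+\frac{\gamma}{1-\beta}\left\|\sum_{j\in\mathbb{S}}c_jc_j^*\right\|^{1/2}.$$
Since $(\{x_j\},\{\tau_j\})$ is a frame, $\{x_j\}$ is Bessel, so Lemma~\ref{BESSELCONVERGENCE} guarantees convergence of the right-hand side and a Cauchy-net argument over finite subsets shows that $\{c_j\}\mapsto\sum_jc_jy_j$ is a well-defined bounded homomorphism $\mathscr{H}_\mathscr{A}\to\mathscr{E}$, namely $\theta_y^*$; its adjoint $\theta_y$ exists and $\|\theta_y\|=\|\theta_y^*\|\le\frac{1+\alpha}{1-\beta}\|\theta_x\|+\frac{\gamma}{1-\beta}$. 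In particular $\{y_j\}$ is Bessel and $S_{y,\tau}=\theta_\tau^*\theta_y$ is a bounded homomorphism.

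Next I would extend the perturbation inequality by continuity to all of $\mathscr{H}_\mathscr{A}$ and substitute the element $\theta_\tau S_{x,\tau}^{-1}x=\{\langle S_{x,\tau}^{-1}x,\tau_j\rangle\}_j$ for $\{c_j\}$. By Proposition~\ref{HILBERTSEQUENTIALPROP}(ii) one has $\theta_x^*\theta_\tau=S_{x,\tau}$, hence $\theta_x^*(\theta_\tau S_{x,\tau}^{-1}x)=x$; the first hypothesis $\langle x,y_j\rangle\tau_j=\langle x,\tau_j\rangle y_j$ forces $\theta_y^*\theta_\tau=S_{y,\tau}$, so $\theta_y^*(\theta_\tau S_{x,\tau}^{-1}x)=S_{y,\tau}S_{x,\tau}^{-1}x$; and the residual term is controlled by $\gamma\|\theta_\tau S_{x,\tau}^{-1}x\|\le\gamma\|\theta_\tau S_{x,\tau}^{-1}\|\,\|x\|$. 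This gives
$$\|x-S_{y,\tau}S_{x,\tau}^{-1}x\|\le\bigl(\alpha+\gamma\|\theta_\tau S_{x,\tau}^{-1}\|\bigr)\|x\|+\beta\|S_{y,\tau}S_{x,\tau}^{-1}x\|,\quad\forall x\in\mathscr{E}.$$

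Finally, since $\max\{\alpha+\gamma\|\theta_\tau S_{x,\tau}^{-1}\|,\beta\}<1$, I would invoke the Neumann-type invertibility result Theorem~\ref{cc1} with $U=I_\mathscr{E}$ and $V=S_{y,\tau}S_{x,\tau}^{-1}$ to conclude that $S_{y,\tau}S_{x,\tau}^{-1}$ is invertible with $\|(S_{y,\tau}S_{x,\tau}^{-1})^{-1}\|\le\frac{1+\beta}{1-(\alpha+\gamma\|\theta_\tau S_{x,\tau}^{-1}\|)}$; consequently $S_{y,\tau}=(S_{y,\tau}S_{x,\tau}^{-1})S_{x,\tau}$ is invertible. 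The second hypothesis $\langle x,y_j\rangle\langle\tau_j,x\rangle\ge0$ yields $\langle S_{y,\tau}x,x\rangle=\sum_j\langle x,y_j\rangle\langle\tau_j,x\rangle\ge0$, so $S_{y,\tau}$ is positive; positivity together with invertibility makes $(\{y_j\},\{\tau_j\})$ a frame, and the stated bounds drop out of $\|S_{y,\tau}^{-1}\|\le\|S_{x,\tau}^{-1}\|\,\|(S_{y,\tau}S_{x,\tau}^{-1})^{-1}\|$ and $\|S_{y,\tau}\|\le\|\theta_\tau\|\,\|\theta_y\|$. I expect the main obstacle to be the module-specific bookkeeping rather than the estimates: guaranteeing that the synthesis map is genuinely adjointable and that $S_{y,\tau}$ is self-adjoint and positive are automatic in the Hilbert-space proof but here rest entirely on the two pointwise hypotheses on $\{y_j\}$, so the argument must be arranged to invoke them at exactly these two places.
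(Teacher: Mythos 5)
Your proposal is correct and follows exactly the route the paper intends: Theorem \ref{PERTURBATIONRESULTMODULE1} is stated without an explicit argument, with the understanding (announced at the start of Sections \ref{EXTENSIONOFHOMOMORPHISM-VALUEDFRAMESFORHILBERTC*-MODULES} and \ref{SEQUENTIALVERSIONOFHOMOMORPHISM-VALUEDFRAMES}) that one transcribes the Hilbert-space proof of Theorem \ref{PERTURBATION1SV}, replacing $\bigl(\sum_j|c_j|^2\bigr)^{1/2}$ by $\|\sum_j c_jc_j^*\|^{1/2}$ and invoking Theorem \ref{cc1} for the invertibility of $S_{y,\tau}S_{x,\tau}^{-1}$. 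You also correctly locate the only module-specific work — using $\langle x,y_j\rangle\tau_j=\langle x,\tau_j\rangle y_j$ for self-adjointness (hence adjointability) of $S_{y,\tau}$ and $\langle x,y_j\rangle\langle\tau_j,x\rangle\ge0$ for its positivity — which is precisely where the paper's blanket "similar to the Hilbert space case" needs support.
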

 \begin{corollary}
 Let $ (\{x_j\}_{j \in \mathbb{J}},\{\tau_j\}_{j \in \mathbb{J}} )$ be a frame for  $\mathscr{E}$ over  $\mathscr{A}.$ Suppose $ \{y_j\}_{j \in \mathbb{J}}$  in $ \mathscr{E}$ is  such that $ \langle x,y_j \rangle \tau_j=\langle x,\tau_j \rangle y_j, \langle x, y_j\rangle  \langle \tau_j,x \rangle \geq 0, \forall x \in \mathscr{E}, \forall j \in \mathbb{J}$ and
 $$ r\coloneqq \left\|\sum_{j\in\mathbb{J}}(x_j-y_j)( x^*_j-y^*_j)\right\|<\frac{1}{\|\theta_\tau S_{x,\tau}^{-1}\|^2} .$$	
 Then  $ (\{y_j\}_{j \in \mathbb{J}},\{\tau_j\}_{j \in \mathbb{J}} )$ is a frame  with bounds $ \frac{1-\sqrt{r}\|\theta_\tau S_{x,\tau}^{-1}\|}{\|S_{x,\tau}^{-1}\|}$ and $\|\theta_\tau\|(\|\theta_x\|+\sqrt{r}) $.
 \end{corollary}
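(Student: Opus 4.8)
The plan is to obtain this corollary as the special case $\alpha=0$, $\beta=0$, $\gamma=\sqrt{r}$ of Theorem~\ref{PERTURBATIONRESULTMODULE1}. First I would check that these choices satisfy the smallness hypothesis there: since $\alpha=\beta=0$, the requirement $\max\{\alpha+\gamma\|\theta_\tau S_{x,\tau}^{-1}\|,\beta\}<1$ reduces to $\sqrt{r}\,\|\theta_\tau S_{x,\tau}^{-1}\|<1$, which is exactly the assumed bound $r<\|\theta_\tau S_{x,\tau}^{-1}\|^{-2}$. The hypotheses $\langle x,y_j\rangle\tau_j=\langle x,\tau_j\rangle y_j$ and $\langle x,y_j\rangle\langle\tau_j,x\rangle\geq0$ are carried over verbatim, and the two frame bounds asserted in the corollary are literally what Theorem~\ref{PERTURBATIONRESULTMODULE1} outputs after substituting $\alpha=\beta=0$, $\gamma=\sqrt{r}$, so no separate computation of the bounds is needed.

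The one genuine step is to verify the perturbation inequality of Theorem~\ref{PERTURBATIONRESULTMODULE1}, namely that for every finite $\mathbb{S}\subseteq\mathbb{J}$ and all $c_j\in\mathscr{A}$,
$$\left\|\sum_{j\in\mathbb{S}}c_j(x_j-y_j)\right\|\leq\sqrt{r}\left\|\sum_{j\in\mathbb{S}}c_jc_j^*\right\|^{\frac12}.$$
Here I would use the module Cauchy--Schwarz machinery. Writing $u_j\coloneqq x_j-y_j$ and using the identity $\|v\|=\sup_{\|w\|=1}\|\langle v,w\rangle\|$ valid in $\mathscr{E}$, it suffices to bound $\|\sum_{j\in\mathbb{S}}c_j\langle u_j,w\rangle\|$ for a unit vector $w$. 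Setting $a_j\coloneqq\langle u_j,w\rangle\in\mathscr{A}$ and viewing $(c_j^*)_j$ and $(a_j)_j$ as columns in the Hilbert $\mathscr{A}$-module $\mathscr{A}^{\mathbb{S}}$ with inner product $\sum_j b_j^*d_j$, the quantity $\sum_j c_ja_j$ is the pairing $\langle(c_j^*)_j,(a_j)_j\rangle$, so the Cauchy--Schwarz inequality in $\mathscr{A}^{\mathbb{S}}$ gives
$$\left\|\sum_{j\in\mathbb{S}}c_ja_j\right\|\leq\left\|\sum_{j\in\mathbb{S}}c_jc_j^*\right\|^{\frac12}\left\|\sum_{j\in\mathbb{S}}a_j^*a_j\right\|^{\frac12}.$$

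The remaining task is to estimate $\sum_{j}a_j^*a_j=\sum_j\langle w,u_j\rangle\langle u_j,w\rangle$ uniformly over unit vectors $w$. I would recognize this sum as $\langle w,(\sum_j\theta_{u_j,u_j})w\rangle$, where $\theta_{u,u}$ denotes the rank-one adjointable operator $v\mapsto u\langle u,v\rangle$, whence $\|\sum_j a_j^*a_j\|\leq\|\sum_j\theta_{u_j,u_j}\|$. The point to make explicit is that the scalar $r$ in the hypothesis is precisely $\|\sum_j\theta_{x_j-y_j,\,x_j-y_j}\|$ under the standard identification of the symbol $(x_j-y_j)(x_j^*-y_j^*)$ with the rank-one operator $\theta_{u_j,u_j}$; with this reading the chain closes and yields $\|\sum_j c_j u_j\|\leq\sqrt{r}\,\|\sum_j c_j c_j^*\|^{1/2}$. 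The main obstacle is thus not any deep estimate but precisely this bookkeeping: confirming the interpretation of $\sum_j(x_j-y_j)(x_j^*-y_j^*)$ as a sum of rank-one operators and that the supremum over unit $w$ of $\|\sum_j a_j^*a_j\|$ is dominated by $r$, after which one simply invokes Theorem~\ref{PERTURBATIONRESULTMODULE1} to conclude.
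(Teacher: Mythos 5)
Your proposal is correct and follows the same route as the paper: specialize Theorem~\ref{PERTURBATIONRESULTMODULE1} with $\alpha=\beta=0$, $\gamma=\sqrt{r}$, and verify the perturbation inequality via the Cauchy--Schwarz estimate $\|\sum_{j\in\mathbb{S}}c_j(x_j-y_j)\|\leq\|\sum_{j\in\mathbb{S}}c_jc_j^*\|^{1/2}\|\sum_{j\in\mathbb{S}}(x_j-y_j)(x_j^*-y_j^*)\|^{1/2}$. The only difference is that the paper asserts this inequality in one line, whereas you supply its derivation (duality $\|v\|=\sup_{\|w\|=1}\|\langle v,w\rangle\|$, Cauchy--Schwarz in $\mathscr{A}^{\mathbb{S}}$, and the reading of $(x_j-y_j)(x_j^*-y_j^*)$ as the positive rank-one operator), which is a harmless and accurate filling-in of the same step.
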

 \begin{proof}
 Take $ \alpha =0, \beta=0, \gamma=\sqrt{r}$. Then $ \max\{\alpha+\gamma\|\theta_\tau S_{x,\tau}^{-1}\|, \beta\}<1$ and for every finite subset $ \mathbb{S}$ of $ \mathbb{J}$,
 $$\left\|\sum\limits_{j\in \mathbb{S}}c_j(x_j-y_j) \right\|\leq  \left\|\sum\limits_{j\in \mathbb{S}}c_jc_j^*\right\|^\frac{1}{2} \left\|\sum\limits_{j\in \mathbb{S}}(x_j-y_j)(x_j^*-y_j^*)\right\|^\frac{1}{2}\leq \gamma  \left\|\sum\limits_{j\in \mathbb{S}}c_jc_j^*\right\|^\frac{1}{2}, ~\forall c_j \in \mathscr{A}, \forall j \in \mathbb{S}.$$	
 Now we can apply Theorem \ref{PERTURBATIONRESULTMODULE1}.
 \end{proof}
 \begin{theorem}
 Let $ (\{x_j\}_{j \in \mathbb{J}},\{\tau_j\}_{j \in \mathbb{J}} )$ be a frame for  $\mathscr{E}$  over  $\mathscr{A}$ with bounds $ a$ and $b.$ Suppose $ \{y_j\}_{j \in \mathbb{J}}$  in $ \mathscr{E}$ is  such that  $ \sum_{j\in\mathbb{J}}\langle x,y_j \rangle \langle \tau_j, x \rangle$ exist for all $ x \in \mathscr{E}$ and  is positive  (in the C*-algebra $ \mathscr{A}$) for all $ x \in \mathscr{E}$ and there exist $ \alpha, \beta, \gamma \geq0$ with  $ \max\{\alpha+\frac{\gamma}{\sqrt{a}}, \beta\}<1$ and
 \begin{align*}
 \left\| \sum\limits_{j\in\mathbb{J}}\langle x,x_j-y_j\rangle \langle \tau_j, x\rangle \right\|^\frac{1}{2} \leq \alpha\left\|\sum\limits_{j\in\mathbb{J}}\langle x,x_j\rangle \langle \tau_j,x \rangle \right\|^\frac{1}{2} + \beta\left\|\sum\limits_{j\in\mathbb{J}}\langle x,y_j\rangle \langle \tau_j,x \rangle \right\|^\frac{1}{2} +\gamma \|x\|, ~\forall x \in \mathscr{E}.
 \end{align*}
 Then $ (\{y_j\}_{j \in \mathbb{J}},\{\tau_j\}_{j \in \mathbb{J}} )$ is  a frame with bounds $a\left(1-\frac{\alpha+\beta+\frac{\gamma}{\sqrt{a}}}{1+\beta}\right)^2 $ and $b\left(1+\frac{\alpha+\beta+\frac{\gamma}{\sqrt{b}}}{1-\beta}\right)^2.$
 \end{theorem}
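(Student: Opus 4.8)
The plan is to transcribe the proof of the sequential Hilbert-space perturbation result (Theorem \ref{PERTURBATION2SV}, which rests on Theorem \ref{PERTURBATION RESULT 2}) into the module setting, replacing every scalar modulus and square by the C*-norm on $\mathscr{A}$, and then to repair the one place where this translation fails: the passage from norm estimates back to the order (frame) inequality. Throughout I write $S_{x,\tau}x=\sum_{j\in\mathbb{J}}\langle x,x_j\rangle\tau_j$ and $S_{y,\tau}x=\sum_{j\in\mathbb{J}}\langle x,y_j\rangle\tau_j$, noting $\langle S_{x,\tau}x,x\rangle=\sum_{j}\langle x,x_j\rangle\langle\tau_j,x\rangle$ and likewise for $y$; the hypotheses guarantee that the $y$-sum exists and is a positive element of $\mathscr{A}$ for every $x$.

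First I would record the two norm facts coming from the given frame $(\{x_j\},\{\tau_j\})$. Since the norm of a C*-algebra preserves the order on positive elements, the frame inequality $a\langle x,x\rangle\le\sum_{j}\langle x,x_j\rangle\langle\tau_j,x\rangle\le b\langle x,x\rangle$ yields $a\|x\|^2\le\|\sum_{j}\langle x,x_j\rangle\langle\tau_j,x\rangle\|\le b\|x\|^2$ for all $x\in\mathscr{E}$. Abbreviating $S_x=\|\sum_{j}\langle x,x_j\rangle\langle\tau_j,x\rangle\|^{1/2}$, $S_y=\|\sum_{j}\langle x,y_j\rangle\langle\tau_j,x\rangle\|^{1/2}$ and $D=\|\sum_{j}\langle x,x_j-y_j\rangle\langle\tau_j,x\rangle\|^{1/2}$, the hypothesis reads $D\le\alpha S_x+\beta S_y+\gamma\|x\|$. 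Because $\sum_{j}\langle x,y_j\rangle\langle\tau_j,x\rangle=\sum_{j}\langle x,x_j\rangle\langle\tau_j,x\rangle-\sum_{j}\langle x,x_j-y_j\rangle\langle\tau_j,x\rangle$, the norm triangle inequality together with the subadditivity $\sqrt{u+v}\le\sqrt u+\sqrt v$ of the square root on $[0,\infty)$ gives $S_y\le S_x+D$ and $S_x\le S_y+D$. Feeding in the bound on $D$ and using $S_x\le\sqrt b\,\|x\|$ on the one hand and $\sqrt a\,\|x\|\le S_x$ on the other, exactly as in the scalar proof, I obtain $(1-\beta)S_y\le\big((1+\alpha)\sqrt b+\gamma\big)\|x\|$ and $\big(1-(\alpha+\tfrac{\gamma}{\sqrt a})\big)S_x\le(1+\beta)S_y$. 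Squaring produces the two-sided norm estimate
\begin{equation*}
a\left(1-\frac{\alpha+\beta+\frac{\gamma}{\sqrt a}}{1+\beta}\right)^2\|x\|^2\le\Big\|\textstyle\sum_{j}\langle x,y_j\rangle\langle\tau_j,x\rangle\Big\|\le b\left(1+\frac{\alpha+\beta+\frac{\gamma}{\sqrt b}}{1-\beta}\right)^2\|x\|^2.
\end{equation*}

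The decisive step, and the one where the module proof genuinely departs from the Hilbert case, is to upgrade this scalar estimate to the operator statement demanded by Definition \ref{SEQUENTIAL VERSION HOMOMORPHISM VAUED DEFINITION}, namely that $S_{y,\tau}$ is a positive invertible adjointable homomorphism. Here I would first use the positivity hypothesis together with Theorem \ref{PASCHKE} and the finite upper bound just derived to see that $S_{y,\tau}$ is a well-defined bounded positive homomorphism, so that $S_{y,\tau}^{1/2}\in\operatorname{End}^*_\mathscr{A}(\mathscr{E})$ exists. Then $\|\langle S_{y,\tau}x,x\rangle\|=\|S_{y,\tau}^{1/2}x\|^2$, so the displayed inequality becomes $\sqrt{a'}\,\|x\|\le\|S_{y,\tau}^{1/2}x\|\le\sqrt{b'}\,\|x\|$ with $a',b'$ the two claimed frame bounds. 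Applying Theorem \ref{ARAMBASIC1} to the self-adjoint $S_{y,\tau}^{1/2}$ shows it is surjective, hence invertible, so $S_{y,\tau}$ is positive invertible with $a'I_\mathscr{E}\le S_{y,\tau}\le b'I_\mathscr{E}$; together with the Bessel property of $\{\tau_j\}$ (inherited from the original frame) and of $\{y_j\}$ (from the upper estimate), this exhibits $(\{y_j\},\{\tau_j\})$ as a frame with the stated bounds.

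I expect the main obstacle to be precisely this last upgrade: in a Hilbert C*-module a two-sided control of $\|\langle S_{y,\tau}x,x\rangle\|$ does not by itself deliver the order inequality $a'\langle x,x\rangle\le\langle S_{y,\tau}x,x\rangle$, so one cannot simply read off the frame bounds as in the Hilbert-space argument. Routing the estimate through the self-adjoint square root $S_{y,\tau}^{1/2}$ and invoking the Arambašić characterization (Theorem \ref{ARAMBASIC1}) is what converts the norm bounds into genuine invertibility, while confirming well-definedness and boundedness of $S_{y,\tau}$ beforehand (via Theorem \ref{PASCHKE}) is the secondary technical point. Everything else is a line-by-line mirror of the scalar computation, which is why the companion module perturbation results in this section are dispatched with the single remark that one replaces $|\cdot|$ and $(\cdot)$ by $\|\cdot\|$ and appeals to Theorem \ref{ARAMBASIC1}.
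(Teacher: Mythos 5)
Your proposal is correct and is exactly the route the paper intends: this theorem is stated without proof, and the analogous homomorphism-valued perturbation result in the preceding section is dispatched with precisely the recipe you follow — replace $|\cdot|$ and $(\cdot)$ by $\|\cdot\|$ in the argument of Theorem \ref{PERTURBATION RESULT 2} and then invoke Theorem \ref{ARAMBASIC1} (via the self-adjoint square root $S_{y,\tau}^{1/2}$) to convert the two-sided norm estimate into positivity and invertibility of the frame homomorphism. Your write-up in fact supplies more detail than the paper does, correctly isolating the one genuinely module-theoretic step.
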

 
 \section{Further extension in modules} \label{FURTHEREXTENSIONINMODULES}
 We follow the same strategy as we did for `further extension' in Hilbert spaces.
 \begin{definition}
 A collection $ \{A_j\}_{j \in \mathbb{J}} $  in $ \operatorname{Hom}^*_\mathscr{A}(\mathscr{E},\mathscr{E}_0)$ is said to be a weak \textit{homomorphism-valued frame} (we write weak (hvf)) in $ \operatorname{Hom}^*_\mathscr{A}(\mathscr{E},\mathscr{E}_0)$  with respect to  collection  $ \{\Psi_j\}_{j \in \mathbb{J}}  $ in $ \operatorname{Hom}^*_\mathscr{A}(\mathscr{E},\mathscr{E}_0)$ if  the series $ S_{A, \Psi} \coloneqq  \sum_{j\in \mathbb{J}} \Psi_j^*A_j$  converges in the strict  topology on $ \operatorname{End}^*_\mathscr{A}(\mathscr{E})$ to a  bounded positive invertible homomorphism.

Notions of frame bounds, optimal bounds, tight frame, Parseval frame, Bessel are very similar to the corresponding  in Definition \ref{HMDEFINITION1}.
 	
For fixed $ \mathbb{J}$, $\mathscr{E}, \mathscr{E}_0, $ and $ \{\Psi_j \}_{j \in \mathbb{J}}$, the set of all weak homomorphism-valued frames in $ \operatorname{Hom}^*_\mathscr{A}(\mathscr{E},\mathscr{E}_0)$  with respect to collection  $ \{\Psi_j \}_{j \in \mathbb{J}}$ is denoted by $ \mathscr{F}^\text{w}_\Psi.$
\end{definition}
Last definition holds if and only if 
  \begin{definition}
 A collection $ \{A_j\}_{j \in \mathbb{J}} $   in $ \operatorname{Hom}^*_\mathscr{A}(\mathscr{E},\mathscr{E}_0)$ is said to be a weak (hvf)  w.r.t. $ \{\Psi_j\}_{j \in \mathbb{J}}  $ in $ \operatorname{Hom}^*_\mathscr{A}(\mathscr{E},\mathscr{E}_0) $ if there exist $ a, b, r >0$ such that 
 \begin{enumerate}[\upshape(i)]
 \item $\|\sum_{j\in \mathbb{J}}\Psi_j^*A_jx\|\leq r\|x\|, \forall x \in \mathscr{E},$
 \item  $\sum_{j\in \mathbb{J}}\Psi_j^*A_jx=\sum_{j\in \mathbb{J}}A_j^*\Psi_jx, \forall x \in \mathscr{E},$
 \item $a\langle x, x \rangle \leq\sum_{j\in \mathbb{J}}\langle A_jx, \Psi_jx\rangle  \leq b\langle x, x \rangle,  \forall x \in \mathscr{E}.$
 \end{enumerate}
 \end{definition}
\begin{proposition}
If  $(\{A_j\}_{j\in \mathbb{J}}, \{\Psi_j\}_{j\in \mathbb{J}}) $ is  weak Bessel   in $ \operatorname{Hom}^*_\mathscr{A}(\mathscr{E},\mathscr{E}_0)$, then there exists a $ B \in \operatorname{Hom}^*_\mathscr{A}(\mathscr{E},\mathscr{E}_0)$ such that $(\{A_j\}_{j\in \mathbb{J}}\cup\{B\}, \{\Psi_j\}_{j\in \mathbb{J}}\cup\{B\}) $ is a tight weak (hvf). In particular,  if $(\{A_j\}_{j\in \mathbb{J}}, \{\Psi_j\}_{j\in \mathbb{J}}) $ is  a weak (hvf)   in $ \mathcal{B}(\mathcal{H}, \mathcal{H}_0)$, then there exists a $ B \in \operatorname{Hom}^*_\mathscr{A}(\mathscr{E},\mathscr{E}_0)$ such that $(\{A_j\}_{j\in \mathbb{J}}\cup\{B\}, \{\Psi_j\}_{j\in \mathbb{J}}\cup\{B\}) $ is a tight weak (hvf). 
\end{proposition}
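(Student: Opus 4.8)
The plan is to transcribe the Neumann/dilation-type expansion argument used for the operator-valued case directly into the C*-module setting, exploiting that $\operatorname{End}^*_\mathscr{A}(\mathscr{E})$ is a unital C*-algebra. Since $(\{A_j\}_{j\in\mathbb{J}}, \{\Psi_j\}_{j\in\mathbb{J}})$ is weak Bessel, the series $S_{A,\Psi} = \sum_{j\in\mathbb{J}} \Psi_j^* A_j$ converges in the strict topology to a bounded positive adjointable homomorphism on $\mathscr{E}$; in particular $S_{A,\Psi}$ is a positive element of the C*-algebra $\operatorname{End}^*_\mathscr{A}(\mathscr{E})$. First I would fix any real $\lambda > \|S_{A,\Psi}\|$.

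Next, because $S_{A,\Psi}$ is a positive element of a unital C*-algebra, the order relation $S_{A,\Psi} \leq \|S_{A,\Psi}\| I_\mathscr{E}$ holds, so $\lambda I_\mathscr{E} - S_{A,\Psi} \geq (\lambda - \|S_{A,\Psi}\|) I_\mathscr{E} > 0$ is positive and invertible. Using the continuous functional calculus in $\operatorname{End}^*_\mathscr{A}(\mathscr{E})$, I would set $B \coloneqq (\lambda I_\mathscr{E} - S_{A,\Psi})^{1/2}$, which is positive, self-adjoint and invertible, whence $B^* B = B^2 = \lambda I_\mathscr{E} - S_{A,\Psi}$.

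Then I would adjoin $B$ to both collections and compute the frame homomorphism of the augmented pair $(\{A_j\}_{j\in\mathbb{J}} \cup \{B\}, \{\Psi_j\}_{j\in\mathbb{J}} \cup \{B\})$, namely $\sum_{j\in\mathbb{J}} \Psi_j^* A_j + B^* B$. Appending a single fixed term $B^* B$ to a strictly convergent series keeps it strictly convergent, with limit $S_{A,\Psi} + (\lambda I_\mathscr{E} - S_{A,\Psi}) = \lambda I_\mathscr{E}$, a positive invertible scalar multiple of the identity; hence the augmented pair is a tight weak (hvf) with bound $\lambda$. For the ``in particular'' clause, I would note that a weak (hvf) is a fortiori weak Bessel, since its frame homomorphism $S_{A,\Psi}$ is positive, so the conclusion just established applies verbatim.

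The only point demanding care beyond the Hilbert-space proof is that the $B$ produced here is a priori an element of $\operatorname{End}^*_\mathscr{A}(\mathscr{E})$ rather than of $\operatorname{Hom}^*_\mathscr{A}(\mathscr{E}, \mathscr{E}_0)$; I would read it as a homomorphism into $\mathscr{E}_0$ in exactly the convention adopted for the operator-valued analogue in Section \ref{MK}. The two substantive ingredients, namely existence of the positive square root and the estimate $S_{A,\Psi} \leq \|S_{A,\Psi}\| I_\mathscr{E}$, are available precisely because $\operatorname{End}^*_\mathscr{A}(\mathscr{E})$ is a unital C*-algebra, so I do not expect any real obstacle; the entire content of the statement is already absorbed into the definition of the strict topology and the C*-algebra structure, and the weak setting is in fact easier than the full one because no condition on the individual analysis homomorphisms must be checked.
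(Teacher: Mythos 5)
Your proof is correct and follows exactly the route the paper intends: the paper proves the Hilbert-space analogue by taking $\lambda>\|S_{A,\Psi}\|$ and $B\coloneqq(\lambda I-S_{A,\Psi})^{1/2}$, and states that the module version follows by the same argument, which is precisely your construction via the functional calculus in the unital C*-algebra $\operatorname{End}^*_\mathscr{A}(\mathscr{E})$. The codomain mismatch you flag ($B$ living in $\operatorname{End}^*_\mathscr{A}(\mathscr{E})$ rather than $\operatorname{Hom}^*_\mathscr{A}(\mathscr{E},\mathscr{E}_0)$) is present in the paper's own Hilbert-space proof as well, so your handling of it matches the paper's convention.
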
 
\begin{definition}
A weak (hvf)  $(\{B_j\}_{j\in \mathbb{J}}, \{\Phi_j\}_{j\in \mathbb{J}})$  in $\operatorname{Hom}^*_\mathscr{A}(\mathscr{E},\mathscr{E}_0)$ is said to be a dual of  weak (hvf) $ ( \{A_j\}_{j\in \mathbb{J}}, \{\Psi_j\}_{j\in \mathbb{J}})$ in $\operatorname{Hom}^*_\mathscr{A}(\mathscr{E},\mathscr{E}_0)$  if  $ \sum_{j \in \mathbb{J}}B_j^*\Psi_j= \sum_{j \in \mathbb{J}}\Phi^*_jA_j=I_{\mathscr{E}}$. The `weak (hvf)' $( \{\widetilde{A}_j\coloneqq A_jS_{A,\Psi}^{-1}\}_{j\in \mathbb{J}},\{\widetilde{\Psi}_j\coloneqq\Psi_jS_{A,\Psi}^{-1}\}_{j \in \mathbb{J}})$, which is a `dual' of $ (\{A_j\}_{j\in \mathbb{J}}, \{\Psi_j\}_{j\in \mathbb{J}})$ is called the canonical dual of $ (\{A_j\}_{j\in \mathbb{J}}, \{\Psi_j\}_{j\in \mathbb{J}})$. 
\end{definition}   
\begin{proposition}
Let $( \{A_j\}_{j\in \mathbb{J}}, \{\Psi_j\}_{j\in \mathbb{J}} )$ be a weak (hvf) in $\operatorname{Hom}^*_\mathscr{A}(\mathscr{E},\mathscr{E}_0) $.  If $ x \in \mathscr{E}$ has representation  $ x=\sum_{j\in\mathbb{J}}A_j^*y_j= \sum_{j\in\mathbb{J}}\Psi_j^*z_j, $ for some $ \{y_j\}_{j\in \mathbb{J}},\{z_j\}_{j\in \mathbb{J}}$ in $ \mathscr{E}_0$, then 
$$ \sum\limits_{j\in \mathbb{J}}\langle y_j,z_j\rangle =\sum\limits_{j\in \mathbb{J}}\langle \widetilde{\Psi}_jx,\widetilde{A}_jx\rangle+\sum\limits_{j\in \mathbb{J}}\langle y_j-\widetilde{\Psi}_jx,z_j-\widetilde{A}_jx\rangle. $$
\end{proposition}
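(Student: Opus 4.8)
The plan is to mirror the direct computation used for the corresponding operator-valued statement in Section \ref{MK}, working this time with $\mathscr{A}$-valued inner products and relying only on the frame homomorphism $S_{A,\Psi}$, never on the analysis homomorphisms (which a weak (hvf) need not possess). Recall that $\widetilde{A}_j=A_jS_{A,\Psi}^{-1}$ and $\widetilde{\Psi}_j=\Psi_jS_{A,\Psi}^{-1}$, and that $S_{A,\Psi}=\sum_{j\in\mathbb{J}}\Psi_j^*A_j$ is a positive, hence self-adjoint, invertible homomorphism, so that $S_{A,\Psi}^{-1}$ is self-adjoint.

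First I would expand the second sum on the right-hand side, writing each summand $\langle y_j-\widetilde{\Psi}_jx,\, z_j-\widetilde{A}_jx\rangle$ as the four-term $\mathscr{A}$-valued expression $\langle y_j,z_j\rangle-\langle y_j,\widetilde{A}_jx\rangle-\langle\widetilde{\Psi}_jx,z_j\rangle+\langle\widetilde{\Psi}_jx,\widetilde{A}_jx\rangle$ and summing over $j$. Collecting this with the leading term presents the right-hand side as $2\sum_{j}\langle\widetilde{\Psi}_jx,\widetilde{A}_jx\rangle+\sum_{j}\langle y_j,z_j\rangle-\sum_{j}\langle y_j,\widetilde{A}_jx\rangle-\sum_{j}\langle\widetilde{\Psi}_jx,z_j\rangle$. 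Next I would collapse the three $S_{A,\Psi}$-dependent sums. Using adjointability of each $A_j$ and $\Psi_j$ together with $\sum_j\Psi_j^*A_j=S_{A,\Psi}$, one obtains $\sum_j\langle\widetilde{\Psi}_jx,\widetilde{A}_jx\rangle=\langle S_{A,\Psi}^{-1}x,\,S_{A,\Psi}S_{A,\Psi}^{-1}x\rangle=\langle S_{A,\Psi}^{-1}x,x\rangle$; the representation $x=\sum_jA_j^*y_j$ gives $\sum_j\langle y_j,\widetilde{A}_jx\rangle=\langle\sum_jA_j^*y_j,S_{A,\Psi}^{-1}x\rangle=\langle x,S_{A,\Psi}^{-1}x\rangle$; and $x=\sum_j\Psi_j^*z_j$ gives $\sum_j\langle\widetilde{\Psi}_jx,z_j\rangle=\langle S_{A,\Psi}^{-1}x,\sum_j\Psi_j^*z_j\rangle=\langle S_{A,\Psi}^{-1}x,x\rangle$. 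Self-adjointness of $S_{A,\Psi}^{-1}$ then yields $\langle x,S_{A,\Psi}^{-1}x\rangle=\langle S_{A,\Psi}^{-1}x,x\rangle$, so the combination $2\langle S_{A,\Psi}^{-1}x,x\rangle-\langle x,S_{A,\Psi}^{-1}x\rangle-\langle S_{A,\Psi}^{-1}x,x\rangle$ vanishes, leaving exactly $\sum_j\langle y_j,z_j\rangle$, which is the left-hand side.

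The point requiring care is not the algebra but the analytic justification of pulling the $\mathscr{A}$-valued inner product inside the series: since $\sum_j\Psi_j^*A_j$ converges only in the strict topology on $\operatorname{End}^*_\mathscr{A}(\mathscr{E})$, I would evaluate at the fixed vector $S_{A,\Psi}^{-1}x$, so that the convergence becomes norm convergence in $\mathscr{E}$, and then invoke continuity of the inner product to interchange $\sum_j$ with $\langle\cdot,\cdot\rangle$; the same device legitimizes the steps that use the two given expansions of $x$. Because every manipulation uses only $S_{A,\Psi}$ and the adjointability of the $A_j,\Psi_j$, the argument never appeals to $\theta_A$ or $\theta_\Psi$, in keeping with the weak setting, and the positivity of $S_{A,\Psi}$ supplies precisely the self-adjointness of $S_{A,\Psi}^{-1}$ needed to cancel the cross terms. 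I expect this convergence bookkeeping to be the only genuine obstacle, and a mild one, the remainder being a transcription of the Hilbert-space calculation into the module language.
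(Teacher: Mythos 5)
Your proof is correct and follows essentially the same route as the paper's own argument for the corresponding Hilbert-space proposition in Section \ref{MK} (expand the second sum, collapse the cross terms via the two representations of $x$ and the identity $\sum_j\Psi_j^*A_j=S_{A,\Psi}$, and cancel using self-adjointness of $S_{A,\Psi}^{-1}$), which the paper implicitly transfers to the weak module setting. Your added remarks — evaluating at $S_{A,\Psi}^{-1}x$ so that strict convergence becomes norm convergence, and avoiding the analysis homomorphisms entirely — are exactly the adjustments the weak (hvf) context calls for.
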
  
\begin{theorem}
Let $ (\{A_j\}_{j\in \mathbb{J}},\{\Psi_j\}_{j\in \mathbb{J}}) $ be a weak (hvf) with frame bounds $ a$ and $ b.$ Then
\begin{enumerate}[\upshape(i)]
\item The canonical dual weak (hvf) of the canonical dual weak (hvf)   of $ (\{A_j\}_{j\in \mathbb{J}} ,\{\Psi_j\}_{j\in \mathbb{J}} )$ is itself.
\item$ \frac{1}{b}, \frac{1}{a}$ are frame bounds for the canonical dual of $ (\{A_j\}_{j\in \mathbb{J}},\{\Psi_j\}_{j\in \mathbb{J}}).$
\item If $ a, b $ are optimal frame bounds for $( \{A_j\}_{j\in \mathbb{J}} , \{\Psi_j\}_{j\in \mathbb{J}}),$ then $ \frac{1}{b}, \frac{1}{a}$ are optimal  frame bounds for its canonical dual.
\end{enumerate} 
\end{theorem}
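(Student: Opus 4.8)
The plan is to follow the proof of Theorem \ref{CANONICALDUALFRAMEPROPERTYOPERATORVERSION} essentially verbatim, exactly as was done for its weak operator-valued counterpart (Theorem \ref{CANONICALDUALFRAMEPROPERTYOPERATORVERSIONWEAK}). The crucial observation is that every step of that argument refers only to the frame homomorphism $S_{A,\Psi}=\sum_{j\in\mathbb{J}}\Psi_j^*A_j$ and never to the analysis homomorphisms $\theta_A,\theta_\Psi$, which need not exist for a weak (hvf). So the structure transfers intact; the only genuinely module-theoretic care is in checking the strict-topology convergence that the definition of a weak (hvf) demands.

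First I would record that the canonical dual is $(\{\widetilde A_j\coloneqq A_jS_{A,\Psi}^{-1}\}_{j\in\mathbb{J}},\{\widetilde\Psi_j\coloneqq\Psi_jS_{A,\Psi}^{-1}\}_{j\in\mathbb{J}})$ and verify that it is genuinely a weak (hvf). Since $S_{A,\Psi}^{-1}\in\operatorname{End}^*_\mathscr{A}(\mathscr{E})$ is bounded adjointable (indeed positive), left and right multiplication by $S_{A,\Psi}^{-1}$ preserves strict convergence: for each $x\in\mathscr{E}$ the net $\sum_{j\in\mathbb{S}}\widetilde\Psi_j^*\widetilde A_j x=S_{A,\Psi}^{-1}\big(\sum_{j\in\mathbb{S}}\Psi_j^*A_j(S_{A,\Psi}^{-1}x)\big)$ converges because $\sum_{j}\Psi_j^*A_j y$ converges for $y=S_{A,\Psi}^{-1}x$ and $S_{A,\Psi}^{-1}$ is continuous, and the adjoint net is handled symmetrically. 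The limit computes to
\[
S_{\widetilde A,\widetilde\Psi}=\sum_{j\in\mathbb{J}}(\Psi_jS_{A,\Psi}^{-1})^*(A_jS_{A,\Psi}^{-1})=S_{A,\Psi}^{-1}\Big(\sum_{j\in\mathbb{J}}\Psi_j^*A_j\Big)S_{A,\Psi}^{-1}=S_{A,\Psi}^{-1},
\]
which is positive invertible, so the canonical dual is a weak (hvf) with frame homomorphism $S_{A,\Psi}^{-1}$.

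For (i), the same computation applied to the canonical dual shows its frame homomorphism equals $(S_{A,\Psi}^{-1})^{-1}=S_{A,\Psi}$, whence the double canonical dual has entries $\widetilde A_j\,(S_{A,\Psi}^{-1})^{-1}=A_jS_{A,\Psi}^{-1}S_{A,\Psi}=A_j$ and $\widetilde\Psi_j\,(S_{A,\Psi}^{-1})^{-1}=\Psi_j$, i.e.\ the original frame. For (ii), I would conjugate the inequality $aI_\mathscr{E}\leq S_{A,\Psi}\leq bI_\mathscr{E}$ by $S_{A,\Psi}^{-1/2}$; using that $S_{A,\Psi}^{-1/2}$ is positive and commutes with $S_{A,\Psi}$ (continuous functional calculus for the adjointable positive operator $S_{A,\Psi}$), this yields $\tfrac1b I_\mathscr{E}\leq S_{A,\Psi}^{-1}\leq\tfrac1a I_\mathscr{E}$, and since $S_{\widetilde A,\widetilde\Psi}=S_{A,\Psi}^{-1}$ these are frame bounds for the canonical dual.

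For (iii), I would reproduce the optimality argument of Theorem \ref{CANONICALDUALFRAMEPROPERTYOPERATORVERSION}: letting $c$ denote the optimal upper frame bound of the canonical dual, part (ii) gives $c\leq\tfrac1a$, while applying (ii) to the canonical dual—whose double dual is the original by (i)—shows $\tfrac1c$ is a lower bound for $(\{A_j\}_{j\in\mathbb{J}},\{\Psi_j\}_{j\in\mathbb{J}})$, so $\tfrac1c\leq a$ and hence $c=\tfrac1a$; symmetrically $\tfrac1b$ is the optimal lower bound. The one point that truly needs Hilbert C*-module care, and the main thing to get right, is the preservation of strict-topology convergence under multiplication by $S_{A,\Psi}^{-1}$ together with the availability of the continuous functional calculus for $S_{A,\Psi}$ in $\operatorname{End}^*_\mathscr{A}(\mathscr{E})$; once these are secured, the argument is otherwise identical to the Hilbert space operator version and involves no analysis operators.
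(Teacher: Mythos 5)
Your proposal is correct and follows essentially the same route as the paper: the paper proves the Hilbert-space operator version by computing that the frame operator of the canonical dual is $S_{A,\Psi}^{-1}$, inverting the inequality $aI\leq S_{A,\Psi}\leq bI$, and running the standard optimality ping-pong, and then simply observes that the argument transfers to the weak module setting because it never invokes analysis operators. Your additional check that multiplication by $S_{A,\Psi}^{-1}$ preserves the strict convergence (so the canonical dual really is a weak (hvf)) is a worthwhile detail the paper leaves implicit, but it does not change the structure of the argument.
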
  
\begin{definition}
A weak (hvf)  $(\{B_j\}_{j\in \mathbb{J}},  \{\Phi_j\}_{j\in \mathbb{J}})$  in $\operatorname{Hom}^*_\mathscr{A}(\mathscr{E},\mathscr{E}_0)$ is said to be orthogonal to a weak (hvf)  $( \{A_j\}_{j\in \mathbb{J}}, \{\Psi_j\}_{j\in \mathbb{J}})$ in $\operatorname{Hom}^*_\mathscr{A}(\mathscr{E},\mathscr{E}_0)$ if $ \sum_{j \in \mathbb{J}}B_j^*\Psi_j= \sum_{j \in \mathbb{J}}\Phi^*_jA_j=0$.
\end{definition}  
\begin{proposition}
Two orthogonal weak homomorphism-valued frames  have common dual weak (hvf).	
\end{proposition}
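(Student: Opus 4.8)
The plan is to mirror the two earlier ``common dual'' constructions in the paper, namely the weak operator-valued version in Section~\ref{FURTHEREXTENSION} and the homomorphism-valued version earlier in this section, since orthogonality and duality for weak homomorphism-valued frames are governed by exactly the same frame-homomorphism identities. Because we are in the \emph{weak} setting, where condition~(ii) of Definition~\ref{HMDEFINITION1} has been dropped and analysis homomorphisms $\theta_A,\theta_\Psi$ need not exist, the argument must be carried out entirely through the frame homomorphisms $S_{A,\Psi},S_{B,\Phi}$ and the defining strict-topology sums, never through $\theta$'s.

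First I would take two orthogonal weak (hvf) $(\{A_j\}_{j\in\mathbb{J}},\{\Psi_j\}_{j\in\mathbb{J}})$ and $(\{B_j\}_{j\in\mathbb{J}},\{\Phi_j\}_{j\in\mathbb{J}})$ in $\operatorname{Hom}^*_\mathscr{A}(\mathscr{E},\mathscr{E}_0)$, so that $\sum_{j\in\mathbb{J}}B_j^*\Psi_j=\sum_{j\in\mathbb{J}}\Phi_j^*A_j=0$ (and by adjoints $\sum_{j}\Psi_j^*B_j=\sum_{j}A_j^*\Phi_j=0$), and define the canonically weighted candidate $C_j\coloneqq A_jS_{A,\Psi}^{-1}+B_jS_{B,\Phi}^{-1}$, $\Xi_j\coloneqq\Psi_jS_{A,\Psi}^{-1}+\Phi_jS_{B,\Phi}^{-1}$ for each $j\in\mathbb{J}$. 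Expanding the frame homomorphism of this pair gives
\[
S_{C,\Xi}=\sum_{j\in\mathbb{J}}\Xi_j^*C_j
=S_{A,\Psi}^{-1}\Bigl(\sum_{j\in\mathbb{J}}\Psi_j^*A_j\Bigr)S_{A,\Psi}^{-1}
+S_{A,\Psi}^{-1}\Bigl(\sum_{j\in\mathbb{J}}\Psi_j^*B_j\Bigr)S_{B,\Phi}^{-1}
+S_{B,\Phi}^{-1}\Bigl(\sum_{j\in\mathbb{J}}\Phi_j^*A_j\Bigr)S_{A,\Psi}^{-1}
+S_{B,\Phi}^{-1}\Bigl(\sum_{j\in\mathbb{J}}\Phi_j^*B_j\Bigr)S_{B,\Phi}^{-1}.
\]
The two middle cross-terms vanish by orthogonality, while the outer terms collapse to $S_{A,\Psi}^{-1}S_{A,\Psi}S_{A,\Psi}^{-1}=S_{A,\Psi}^{-1}$ and $S_{B,\Phi}^{-1}S_{B,\Phi}S_{B,\Phi}^{-1}=S_{B,\Phi}^{-1}$, so $S_{C,\Xi}=S_{A,\Psi}^{-1}+S_{B,\Phi}^{-1}$. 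Since $S_{A,\Psi}^{-1},S_{B,\Phi}^{-1}$ are positive invertible, $S_{C,\Xi}\ge S_{A,\Psi}^{-1}\ge\|S_{A,\Psi}\|^{-1}I_\mathscr{E}$ is positive and bounded below, whence injective with surjectivity supplied by Theorem~\ref{ARAMBASIC1}; thus $S_{C,\Xi}$ is positive invertible and $(\{C_j\}_{j\in\mathbb{J}},\{\Xi_j\}_{j\in\mathbb{J}})$ is a weak (hvf).

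It then remains to verify the four duality identities, each of which reduces to $S_{A,\Psi}^{-1}S_{A,\Psi}=I_\mathscr{E}$ or $S_{B,\Phi}^{-1}S_{B,\Phi}=I_\mathscr{E}$ together with a vanishing cross-term: for instance $\sum_{j}\Xi_j^*A_j=S_{A,\Psi}^{-1}(\sum_j\Psi_j^*A_j)+S_{B,\Phi}^{-1}(\sum_j\Phi_j^*A_j)=I_\mathscr{E}+0$, and symmetrically $\sum_{j}C_j^*\Psi_j=I_\mathscr{E}$, $\sum_{j}\Xi_j^*B_j=I_\mathscr{E}$, $\sum_{j}C_j^*\Phi_j=I_\mathscr{E}$, so $(\{C_j\}_{j\in\mathbb{J}},\{\Xi_j\}_{j\in\mathbb{J}})$ is a common dual of both frames. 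I do not expect any deep obstacle here: the only points requiring genuine care are the justification that the series defining $C_j,\Xi_j$ and $S_{C,\Xi}$ converge in the strict topology and that the term-by-term regrouping above is legitimate, which follow from boundedness and adjointability of $S_{A,\Psi}^{-1},S_{B,\Phi}^{-1}$ and the assumed convergence of the original frame sums, together with the positive-invertibility argument for the sum of two inverses via Theorem~\ref{ARAMBASIC1}. In effect the proof is a transcription of the weak operator-valued case with scalar inner products replaced by $\mathscr{A}$-valued ones.
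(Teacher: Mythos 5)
Your proof is correct and follows essentially the same route the paper intends: it is the direct transcription of the weak operator-valued frame argument from Section~\ref{FURTHEREXTENSION} (same candidate $C_j=A_jS_{A,\Psi}^{-1}+B_jS_{B,\Phi}^{-1}$, $\Xi_j=\Psi_jS_{A,\Psi}^{-1}+\Phi_jS_{B,\Phi}^{-1}$, same cancellation of cross-terms and the same four duality checks), which is exactly what the paper prescribes for this section. The only cosmetic difference is that you route invertibility of $S_{C,\Xi}$ through Theorem~\ref{ARAMBASIC1}, whereas it follows at once from $S_{C,\Xi}\geq \|S_{A,\Psi}\|^{-1}I_{\mathscr{E}}$ being a positive element bounded below in the unital C*-algebra $\operatorname{End}^*_{\mathscr{A}}(\mathscr{E})$.
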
 
\begin{proposition}
Let $ (\{A_j\}_{j\in \mathbb{J}}, \{\Psi_j\}_{j\in \mathbb{J}}) $ and $ (\{B_j\}_{j\in \mathbb{J}}, \{\Phi_j\}_{j\in \mathbb{J}}) $ be  two Parseval weak homomorphism-valued frames in   $\operatorname{Hom}^*_\mathscr{A}(\mathscr{E},\mathscr{E}_0)$ which are  orthogonal. If $C,D,E,F \in \operatorname{End}^*_\mathscr{A}(\mathscr{E})$ are such that $ C^*E+D^*F=I_\mathscr{E}$, then  $ (\{A_jC+B_jD\}_{j\in \mathbb{J}}, \{\Psi_jE+\Phi_jF\}_{j\in \mathbb{J}}) $ is a  Parseval weak (hvf) in  $\operatorname{Hom}^*_\mathscr{A}(\mathscr{E},\mathscr{E}_0)$. In particular,  if  $ a,b,c,d\in\mathscr{A} $ satisfy $a^*c+b^*d =e$ (the identity of $\mathscr{A}$), then $ (\{cA_j+dB_j\}_{j\in \mathbb{J}}, \{e\Psi_j+f\Phi_j\}_{j\in \mathbb{J}}) $ is   a Parseval weak (hvf).
\end{proposition}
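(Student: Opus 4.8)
The plan is to compute the frame homomorphism $S_{AC+BD,\,\Psi E+\Phi F}$ of the candidate pair directly and show that it equals $I_\mathscr{E}$; since $I_\mathscr{E}$ is trivially positive invertible, this is exactly what the definition of a Parseval weak (hvf) demands. First I would record the ingredients. The Parseval hypothesis gives $S_{A,\Psi}=\sum_{j\in\mathbb{J}}\Psi_j^*A_j=I_\mathscr{E}$ and $S_{B,\Phi}=\sum_{j\in\mathbb{J}}\Phi_j^*B_j=I_\mathscr{E}$. The orthogonality hypothesis gives $\sum_{j\in\mathbb{J}}\Phi_j^*A_j=\sum_{j\in\mathbb{J}}B_j^*\Psi_j=0$; taking the adjoint of the second identity produces the companion relation $\sum_{j\in\mathbb{J}}\Psi_j^*B_j=0$, which is precisely the cross term that must vanish.

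Next I would expand, for each finite $\mathbb{S}\subseteq\mathbb{J}$, the partial sum
\begin{align*}
\sum_{j\in\mathbb{S}}(\Psi_jE+\Phi_jF)^*(A_jC+B_jD)
&=E^*\Big(\sum_{j\in\mathbb{S}}\Psi_j^*A_j\Big)C+E^*\Big(\sum_{j\in\mathbb{S}}\Psi_j^*B_j\Big)D\\
&\quad+F^*\Big(\sum_{j\in\mathbb{S}}\Phi_j^*A_j\Big)C+F^*\Big(\sum_{j\in\mathbb{S}}\Phi_j^*B_j\Big)D.
\end{align*}
Since $C,D,E,F\in\operatorname{End}^*_\mathscr{A}(\mathscr{E})$ are adjointable and hence bounded, left and right multiplication by them is strictly continuous, so the four bracketed nets converge strictly to $S_{A,\Psi}$, $\sum_{j}\Psi_j^*B_j$, $\sum_{j}\Phi_j^*A_j$ and $S_{B,\Phi}$ respectively. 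Passing to the limit and substituting the relations above yields $S_{AC+BD,\,\Psi E+\Phi F}=E^*I_\mathscr{E}C+F^*I_\mathscr{E}D=E^*C+F^*D$. Finally I would observe that $E^*C+F^*D=(C^*E+D^*F)^*=I_\mathscr{E}^*=I_\mathscr{E}$, which closes the argument.

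The computation is essentially algebraic, so the only point requiring genuine care — and the one I regard as the main obstacle — is the interchange of the limit with the multiplications $E^*(\cdot)C$, etc., carried out in the strict topology on $\operatorname{End}^*_\mathscr{A}(\mathscr{E})$ rather than the norm topology; this is what licenses splitting the convergent net into four separately convergent pieces, and it is the only place where the module setting differs in spirit from the Hilbert space proof. For the ``in particular'' assertion I would specialize $C,D,E,F$ to the homomorphisms in $\operatorname{End}^*_\mathscr{A}(\mathscr{E})$ induced by the given elements of $\mathscr{A}$, so that the operator relation $C^*E+D^*F=I_\mathscr{E}$ collapses to $a^*c+b^*d=e$, and then invoke the general statement; this requires only checking that those induced maps indeed lie in $\operatorname{End}^*_\mathscr{A}(\mathscr{E})$.
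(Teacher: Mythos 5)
Your computation is exactly the paper's argument: the paper proves the Hilbert-space analogue of this proposition by the same one-line expansion of $\sum_j(\Psi_jE+\Phi_jF)^*(A_jC+B_jD)$ into four pieces, killing the cross terms by orthogonality and the diagonal terms by Parsevalness, and then declares (at the start of the section) that the module versions follow by the same strategy. Your explicit justification of the strict-topology limit interchange and of the adjoint step $E^*C+F^*D=(C^*E+D^*F)^*=I_\mathscr{E}$ only makes explicit what the paper leaves implicit, so the proposal is correct and takes essentially the same route.
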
  
\begin{definition}
Two weak homomorphism-valued frames $(\{A_j\}_{j\in \mathbb{J}},\{\Psi_j\}_{j\in \mathbb{J}} )$  and $ (\{B_j\}_{j\in \mathbb{J}}, \{\Phi_j\}_{j\in \mathbb{J}} )$   in $\operatorname{Hom}^*_\mathscr{A}(\mathscr{E},\mathscr{E}_0)$  are called 
disjoint if $(\{A_j\oplus B_j\}_{j \in \mathbb{J}},\{\Psi_j\oplus \Phi_j\}_{j \in \mathbb{J}})$ is a weak (hvf) in $\operatorname{Hom}^*_\mathscr{A}(\mathscr{E}\oplus\mathscr{E} ,\mathscr{E}_0) $.   
\end{definition}
\begin{proposition}
If $(\{A_j\}_{j\in \mathbb{J}},\{\Psi_j\}_{j\in \mathbb{J}} )$  and $ (\{B_j\}_{j\in \mathbb{J}}, \{\Phi_j\}_{j\in \mathbb{J}} )$  are  weak orthogonal homomorphism-valued frames  in $ \operatorname{Hom}^*_\mathscr{A}(\mathscr{E},\mathscr{E}_0)$, then  they  are disjoint. Further, if both $(\{A_j\}_{j\in \mathbb{J}},\{\Psi_j\}_{j\in \mathbb{J}} )$  and $ (\{B_j\}_{j\in \mathbb{J}}, \{\Phi_j\}_{j\in \mathbb{J}} )$ are  Parseval weak, then $(\{A_j\oplus B_j\}_{j \in \mathbb{J}},\{\Psi_j\oplus \Phi_j\}_{j \in \mathbb{J}})$ is Parseval weak.
\end{proposition}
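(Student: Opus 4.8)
The plan is to mimic the Hilbert space weak-frame analogue established earlier in Section \ref{FURTHEREXTENSION} and reduce everything to a single computation of the frame homomorphism $S_{A\oplus B,\Psi\oplus\Phi}$ evaluated on elementary elements $x\oplus y\in\mathscr{E}\oplus\mathscr{E}$. First I would record how the building blocks act: since the codomain is unchanged in the disjointness definition, $(A_j\oplus B_j)(x\oplus y)=A_jx+B_jy$, and a short adjoint computation ($\langle(\Psi_j\oplus\Phi_j)(x\oplus y),z\rangle=\langle x,\Psi_j^*z\rangle+\langle y,\Phi_j^*z\rangle$) gives $(\Psi_j\oplus\Phi_j)^*z=\Psi_j^*z\oplus\Phi_j^*z$ for $z\in\mathscr{E}_0$. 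Consequently the partial sums of $\sum_j(\Psi_j\oplus\Phi_j)^*(A_j\oplus B_j)$ split into four component series.

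Then I would compute, for $x\oplus y\in\mathscr{E}\oplus\mathscr{E}$,
\begin{align*}
S_{A\oplus B,\Psi\oplus\Phi}(x\oplus y)
&=\sum_{j\in\mathbb{J}}(\Psi_j\oplus\Phi_j)^*(A_jx+B_jy)\\
&=\left(\sum_{j\in\mathbb{J}}\Psi_j^*A_jx+\sum_{j\in\mathbb{J}}\Psi_j^*B_jy\right)\oplus\left(\sum_{j\in\mathbb{J}}\Phi_j^*A_jx+\sum_{j\in\mathbb{J}}\Phi_j^*B_jy\right).
\end{align*}
The orthogonality hypothesis supplies $\sum_j\Phi_j^*A_j=0$ directly, while taking adjoints in $\sum_j B_j^*\Psi_j=0$ yields $\sum_j\Psi_j^*B_j=0$; hence the two cross terms vanish and the display collapses to $(S_{A,\Psi}\oplus S_{B,\Phi})(x\oplus y)$.

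Finally I would verify that $S_{A,\Psi}\oplus S_{B,\Phi}$ is a bounded positive invertible adjointable homomorphism on $\mathscr{E}\oplus\mathscr{E}$: it is adjointable as a direct sum of adjointable maps; it is positive because $\langle(S_{A,\Psi}\oplus S_{B,\Phi})(x\oplus y),x\oplus y\rangle=\langle S_{A,\Psi}x,x\rangle+\langle S_{B,\Phi}y,y\rangle$ is a sum of positive elements of $\mathscr{A}$; and it is invertible with inverse $S_{A,\Psi}^{-1}\oplus S_{B,\Phi}^{-1}$. This shows $(\{A_j\oplus B_j\},\{\Psi_j\oplus\Phi_j\})$ is a weak (hvf), i.e.\ the two frames are disjoint. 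For the second assertion, Parseval weakness of both frames means $S_{A,\Psi}=I_\mathscr{E}=S_{B,\Phi}$, so the computation gives $S_{A\oplus B,\Psi\oplus\Phi}=I_\mathscr{E}\oplus I_\mathscr{E}=I_{\mathscr{E}\oplus\mathscr{E}}$, which is exactly Parseval weakness of the direct sum.

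I expect the only genuinely non-formal point to be the strict-topology bookkeeping, namely confirming that the block series converges strictly and that its limit is the adjointable homomorphism $S_{A,\Psi}\oplus S_{B,\Phi}$; but this is routine once each of the four component series is known to converge strictly, which is guaranteed by the weak-frame and orthogonality hypotheses. Everything else is the same componentwise algebra as in the Hilbert space case, with the sole module-specific adjustment being that positivity is interpreted in the C*-algebra $\mathscr{A}$ via the $\mathscr{A}$-valued inner product.
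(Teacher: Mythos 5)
Your proposal is correct and follows the same route as the paper: the paper proves the Hilbert-space analogue of this proposition in Section \ref{FURTHEREXTENSION} by exactly this componentwise computation of $S_{A\oplus B,\Psi\oplus\Phi}(h\oplus g)$, with the cross terms killed by orthogonality, and the module version is stated with the understanding that the argument carries over verbatim. Your additional remarks on adjointability, positivity in the sense of the $\mathscr{A}$-valued inner product, and the inverse $S_{A,\Psi}^{-1}\oplus S_{B,\Phi}^{-1}$ are the right module-specific checks and introduce no gap.
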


\textbf{Characterizations}
\begin{theorem}
Let $ \{A_j\}_{j\in\mathbb{J}}, \{\Psi_j\}_{j\in\mathbb{J}}$ be in $ \operatorname{Hom}^*_\mathscr{A}(\mathscr{E},\mathscr{E}_0).$ Suppose $ \{e_{j,k}\}_{k\in\mathbb{L}_j}$ is an orthonormal basis for $ \mathscr{E}_0,$ for each $j \in \mathbb{J}.$ Let  $ u_{j,k}=A_j^*e_{j,k}, v_{j,k}=\Psi_j^*e_{j,k}, \forall k \in  \mathbb{L}_j, \forall j\in \mathbb{J}.$ Then  $ ( \{A_j\}_{j\in \mathbb{J}} ,\{\Psi_j\}_{j\in \mathbb{J}} )$ is a
\begin{enumerate}[\upshape(i)]
\item  weak (hvf) in $ \operatorname{Hom}^*_\mathscr{A}(\mathscr{E}, \mathscr{E}_0)$  with bounds $a $ and $ b$  if and only if there exist $ c,d >0$ such that the map 
$$ T: \mathscr{E} \ni x \mapsto\sum\limits_{j\in \mathbb{J}}\sum\limits_{k\in \mathbb{L}_j}\langle x, u_{j,k}\rangle v_{j,k} \in  \mathscr{E} $$
is a well-defined adjointable  positive invertible homomorphism  such that $ a\langle x, x \rangle\leq \langle Tx,x \rangle \leq b\langle x, x \rangle $, $ \forall x \in \mathscr{E} $, and 
$$ \left\| \sum\limits_{j\in \mathbb{J}}\sum\limits_{k\in \mathbb{L}_j}\langle x, u_{j,k}\rangle \langle u_{j,k},x \rangle \right\| \leq c\|x\|^2 ,~ \forall x \in \mathscr{E}; ~  \left\|\sum\limits_{j\in \mathbb{J}}\sum\limits_{k\in \mathbb{L}_j} \langle x, v_{j,k}\rangle \langle v_{j,k},x \rangle\right\|\leq d\|x\|^2 ,~ \forall x \in \mathscr{E}.$$
\item  weak Bessel in $ \operatorname{Hom}^*_\mathscr{A}(\mathscr{E}, \mathscr{E}_0)$  with bound $ b$  if and only if there exist $ c,d >0$ such that the map 
$$ T: \mathscr{E} \ni x \mapsto\sum\limits_{j\in \mathbb{J}}\sum\limits_{k\in \mathbb{L}_j}\langle x, u_{j,k}\rangle v_{j,k} \in  \mathscr{E} $$
is a well-defined adjointable  positive  homomorphism  such that $ \langle Tx,x \rangle \leq b\langle x, x \rangle ,  \forall x \in \mathscr{E} $, and 
$$ \left\| \sum\limits_{j\in \mathbb{J}}\sum\limits_{k\in \mathbb{L}_j}\langle x, u_{j,k}\rangle \langle u_{j,k},x \rangle \right\| \leq c\|x\|^2 ,~ \forall x \in \mathscr{E}; ~  \left\|\sum\limits_{j\in \mathbb{J}}\sum\limits_{k\in \mathbb{L}_j} \langle x, v_{j,k}\rangle \langle v_{j,k},x \rangle\right\|\leq d\|x\|^2 ,~ \forall x \in \mathscr{E}.$$
\item  weak (hvf)  in $ \operatorname{Hom}^*_\mathscr{A}(\mathscr{E},\mathscr{E}_0)$  with bounds $a $ and $ b$ if and only if there is $r>0$ such that 
$$\left \|\sum\limits_{j\in \mathbb{J}}\sum\limits_{k\in \mathbb{L}_j}\langle x, u_{j,k}\rangle v_{j,k}\right\|\leq r\|x\|,~\forall x \in \mathscr{E}  ; ~\sum\limits_{j\in \mathbb{J}}\sum\limits_{k\in \mathbb{L}_j}\langle x, u_{j,k}\rangle v_{j,k} =\sum\limits_{j\in \mathbb{J}}\sum\limits_{k\in \mathbb{L}_j}\langle x, v_{j,k}\rangle u_{j,k} ,~\forall x \in \mathscr{E} ;$$
$$ a\langle x, x \rangle \leq \sum\limits_{j\in \mathbb{J}}\sum\limits_{k\in \mathbb{L}_j}\langle x, u_{j,k}\rangle \langle  v_{j,k} , x\rangle=\sum\limits_{j\in \mathbb{J}}\sum\limits_{k\in \mathbb{L}_j}\langle x, v_{j,k}\rangle \langle  u_{j,k} , x\rangle \leq b \langle x, x \rangle ,~ \forall x \in \mathscr{E}.$$

\item  weak Bessel  in $ \operatorname{Hom}^*_\mathscr{A}(\mathscr{E},\mathscr{E}_0)$  with bound  $ b$ if and only if  there is $r>0$ such that 
$$\left \|\sum\limits_{j\in \mathbb{J}}\sum\limits_{k\in \mathbb{L}_j}\langle x, u_{j,k}\rangle v_{j,k}\right\|\leq r\|x\|,~\forall x \in \mathscr{E}  ; ~\sum\limits_{j\in \mathbb{J}}\sum\limits_{k\in \mathbb{L}_j}\langle x, u_{j,k}\rangle v_{j,k} =\sum\limits_{j\in \mathbb{J}}\sum\limits_{k\in \mathbb{L}_j}\langle x, v_{j,k}\rangle u_{j,k} ,~\forall x \in \mathscr{E} ;$$
$$ 0 \leq \sum\limits_{j\in \mathbb{J}}\sum\limits_{k\in \mathbb{L}_j}\langle x, u_{j,k}\rangle \langle  v_{j,k} , x \rangle=\sum\limits_{j\in \mathbb{J}}\sum\limits_{k\in \mathbb{L}_j}\langle x, v_{j,k}\rangle \langle  u_{j,k} , x\rangle  \leq b\langle x, x \rangle  ,~ \forall x \in \mathscr{E}. $$
\end{enumerate}	
\end{theorem}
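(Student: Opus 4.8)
The plan is to reduce everything to the frame homomorphism $S_{A,\Psi}$ and the analysis homomorphisms $\theta_A,\theta_\Psi$, exactly as was done for the non-weak case in Theorem \ref{HOMOMORPHISMTOSEQUENTIALVICEVERSAMODULE} and for the Hilbert space weak case in Theorem \ref{WEAKSEQUENTIALCHARACTERIZATION}. The computational heart is the orthonormal-basis expansion identity: for each fixed $j\in\mathbb{J}$ and $x\in\mathscr{E}$ one has $\sum_{k\in\mathbb{L}_j}\langle x,u_{j,k}\rangle e_{j,k}=\sum_{k\in\mathbb{L}_j}\langle A_jx,e_{j,k}\rangle e_{j,k}=A_jx$, so that applying the adjointable (hence $\mathscr{A}$-linear and continuous) homomorphism $\Psi_j^*$ gives $\sum_{k\in\mathbb{L}_j}\langle x,u_{j,k}\rangle v_{j,k}=\Psi_j^*A_jx$. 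Summing over $j$ yields $T=S_{A,\Psi}$, and the same reconstruction produces $\sum_{j,k}\langle x,u_{j,k}\rangle\langle u_{j,k},x\rangle=\sum_j\langle A_jx,A_jx\rangle$ and $\sum_{j,k}\langle x,v_{j,k}\rangle\langle v_{j,k},x\rangle=\sum_j\langle \Psi_jx,\Psi_jx\rangle$, together with $\sum_{j,k}\langle x,u_{j,k}\rangle\langle v_{j,k},x\rangle=\sum_j\langle A_jx,\Psi_jx\rangle$. These are precisely the identities already verified in the proof of Theorem \ref{HOMOMORPHISMTOSEQUENTIALVICEVERSAMODULE}(iii), so I would quote them rather than recompute.

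For part (i) I would then argue as follows. By definition $(\{A_j\},\{\Psi_j\})$ is a weak (hvf) with bounds $a,b$ exactly when $S_{A,\Psi}$ is a bounded positive invertible homomorphism satisfying $aI_\mathscr{E}\le S_{A,\Psi}\le bI_\mathscr{E}$; since $T=S_{A,\Psi}$, this is verbatim the stated condition on $T$, and positivity together with self-adjointness is recorded by the symmetry $S_{A,\Psi}=S_{\Psi,A}$ from Proposition \ref{HMOP}(ii). The two norm inequalities $\|\sum_{j,k}\langle x,u_{j,k}\rangle\langle u_{j,k},x\rangle\|\le c\|x\|^2$ and $\|\sum_{j,k}\langle x,v_{j,k}\rangle\langle v_{j,k},x\rangle\|\le d\|x\|^2$ translate, via the identities above, into $\|\sum_j\langle A_jx,A_jx\rangle\|\le c\|x\|^2$ and $\|\sum_j\langle \Psi_jx,\Psi_jx\rangle\|\le d\|x\|^2$, which by Theorem \ref{PASCHKE} are equivalent to the existence of $\theta_A,\theta_\Psi$ as bounded adjointable homomorphisms; this guarantees the double sums converge as genuine nets so that $T$ is well defined. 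Part (ii) is identical except that the invertibility of $T$ and the lower bound are dropped, the weak-Bessel hypothesis leaving only positivity of $S_{A,\Psi}$.

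For parts (iii) and (iv) I would instead unfold the equivalent inequality-form of the weak (hvf) definition given in this section. The norm bound $\|\sum_j\Psi_j^*A_jx\|\le r\|x\|$ becomes $\|\sum_{j,k}\langle x,u_{j,k}\rangle v_{j,k}\|\le r\|x\|$; the symmetry condition $\sum_j\Psi_j^*A_jx=\sum_jA_j^*\Psi_jx$ becomes the equality $\sum_{j,k}\langle x,u_{j,k}\rangle v_{j,k}=\sum_{j,k}\langle x,v_{j,k}\rangle u_{j,k}$; and the order bounds $a\langle x,x\rangle\le\sum_j\langle A_jx,\Psi_jx\rangle\le b\langle x,x\rangle$ become the displayed bilinear inequalities with $\sum_{j,k}\langle x,u_{j,k}\rangle\langle v_{j,k},x\rangle=\sum_{j,k}\langle x,v_{j,k}\rangle\langle u_{j,k},x\rangle$ as the middle term. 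Passing from positivity of $\sum_j\langle A_jx,\Psi_jx\rangle$ plus the norm bound to positive invertibility of $S_{A,\Psi}$ is handled exactly as in Theorem \ref{CHARACTERIZATION}, i.e.\ by forming $S_{A,\Psi}^{1/2}$ and invoking Theorem \ref{ARAMBASIC1}; the weak-Bessel case (iv) simply omits the lower bound.

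The step I expect to be the main obstacle is the one flagged throughout this section: in a C*-module we may not expand vectors freely against the $\{e_{j,k}\}$ in the way available in Hilbert space, so the interchange and convergence of the iterated sums $\sum_{j\in\mathbb{J}}\sum_{k\in\mathbb{L}_j}$ must be justified with care. Concretely, one must verify that the inner sum over $k$ converges in norm to $A_jx$ (and to $\Psi_jx$), that $\Psi_j^*$ and $A_j^*$ may be carried through these norm limits by adjointability, and that the resulting outer series converges in the strict topology to $S_{A,\Psi}$; it is precisely here that the Bessel norm bounds, through Theorem \ref{PASCHKE}, do the work of upgrading the merely iterated sums to well-defined unconditional nets. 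Once this convergence bookkeeping is in place, the remaining equivalences are the purely formal translations described above.
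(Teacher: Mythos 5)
Your overall route --- establishing $\sum_{k\in\mathbb{L}_j}\langle x,u_{j,k}\rangle v_{j,k}=\Psi_j^*A_jx$ from the orthonormal expansion of $A_jx$ and the continuity and $\mathscr{A}$-linearity of $\Psi_j^*$, hence $T=S_{A,\Psi}$, and then translating each clause of the weak definitions --- is exactly what the paper intends here (it omits the proof, deferring to the identities already verified in Theorem \ref{HOMOMORPHISMTOSEQUENTIALVICEVERSAMODULE}), and your treatment of (iii) and (iv) via the inequality form of the weak (hvf) definition is correct as written.

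The gap is in the forward direction of (i) and (ii). A weak (hvf) is defined \emph{only} by strict convergence of $\sum_{j}\Psi_j^*A_j$ to a bounded positive invertible homomorphism; the analysis-homomorphism conditions of Definition \ref{HMDEFINITION1} are deliberately dropped, which is the whole point of Section \ref{FURTHEREXTENSIONINMODULES}. Consequently the hypothesis ``$(\{A_j\},\{\Psi_j\})$ is a weak (hvf) with bounds $a,b$'' does not supply the constants $c,d$: as you correctly observe, the two norm inequalities are equivalent to $\|\sum_j\langle A_jx,A_jx\rangle\|\le c\|x\|^2$ and $\|\sum_j\langle\Psi_jx,\Psi_jx\rangle\|\le d\|x\|^2$, i.e.\ to $\{A_j\}$ and $\{\Psi_j\}$ being separately Bessel, and this does not follow from the weak hypothesis (the paper's own example on $\mathbb{C}$ with $A_nx=x/\sqrt{n}$, $\Psi_1=I$, $\Psi_n=0$ for $n\ge 2$ already defeats it). Your write-up records the two translations side by side but never says how the left-to-right implication produces $c$ and $d$, and it cannot; you should either note that these clauses are carried over from the non-weak Theorem \ref{HOMOMORPHISMTOSEQUENTIALVICEVERSAMODULE} whereas the Hilbert-space weak analogue, Theorem \ref{WEAKSEQUENTIALCHARACTERIZATION}, correctly omits them, or restrict your claim in (i)--(ii) to the backward implication. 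Two smaller points: in the converse direction the $c,d$ inequalities are not what makes $T$ well defined --- the inner sums over $k$ converge by the basis property of $\{e_{j,k}\}_{k\in\mathbb{L}_j}$ alone, and convergence of the outer sum is exactly the assumed well-definedness of $T$; and the identity $S_{A,\Psi}=S_{\Psi,A}$ should be drawn from the definition of a weak (hvf) (self-adjointness of a positive element, taking pointwise adjoints) rather than from Proposition \ref{HMOP}, whose proof uses the analysis homomorphisms you no longer have.
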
 

\begin{lemma}\label{PSEUDOINVERSE}
If  $T \in \operatorname{Hom}^*_\mathscr{A}(\mathscr{E}, \mathscr{E}_0) $ has closed range, then there exists a bounded homomorphism $ R : \mathscr{E}_0\rightarrow \mathscr{E}$  such that  $ TRy=y, \forall y \in \mathscr{E}_0.$
\end{lemma}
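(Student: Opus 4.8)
The plan is to build $R$ as a one-sided inverse of $T$ supported on the (orthogonally complemented) range of $T$, the point being that over a Hilbert C*-module one cannot appeal to Riesz representation and must instead produce the complemented decomposition by hand. First I would invoke Theorem \ref{MANUILOV2}: since $T(\mathscr{E})$ is closed, both $\operatorname{Ker}(T)$ and $T(\mathscr{E})$ are orthogonally complementable, so I may write $\mathscr{E} = \operatorname{Ker}(T)\oplus \operatorname{Ker}(T)^\perp$ and $\mathscr{E}_0 = T(\mathscr{E})\oplus T(\mathscr{E})^\perp$, with the associated orthogonal projections $Q\colon \mathscr{E}\to \operatorname{Ker}(T)^\perp$ and $P\colon \mathscr{E}_0\to T(\mathscr{E})$ being bounded adjointable homomorphisms.

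Next I would corestrict and restrict $T$ to the bijection $T_0\colon \operatorname{Ker}(T)^\perp\to T(\mathscr{E})$. Both spaces are complete (closed submodules), and $T_0$ is a bounded $\mathscr{A}$-linear bijection, so the Banach open mapping theorem furnishes a bounded inverse $T_0^{-1}\colon T(\mathscr{E})\to \operatorname{Ker}(T)^\perp\subseteq \mathscr{E}$, which is again $\mathscr{A}$-linear because $T_0$ is (and in fact adjointable, via $(T_0^{-1})^*=(T_0^*)^{-1}$). I then define $R\coloneqq T_0^{-1}P\colon \mathscr{E}_0\to \mathscr{E}$. As a composition of bounded homomorphisms, $R$ is a bounded homomorphism; for $y\in T(\mathscr{E})$ one has $TRy = T_0(T_0^{-1}y)=y$, so the identity $TRy=y$ holds on the range of $T$. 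When $T$ is onto, so that $T(\mathscr{E})=\mathscr{E}_0$ and $P=I_{\mathscr{E}_0}$, this yields exactly $TRy=y$ for all $y\in\mathscr{E}_0$, which is the situation in which the lemma is applied (its codomain being precisely the full range of the surjective analysis-type maps produced in Proposition \ref{HILBERTSEQUENTIALPROP}).

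An alternative, more computational route bypasses the restriction argument: since $T$ has closed range one checks $\operatorname{Ran}(TT^*)=T(\mathscr{E})$, so in the surjective case $TT^*$ is a positive self-adjoint homomorphism with range $\mathscr{E}_0$; Theorem \ref{ARAMBASIC1} then makes $TT^*$ bounded below, hence invertible, and $R\coloneqq T^*(TT^*)^{-1}$ satisfies $TR=I_{\mathscr{E}_0}$ directly. I would keep the first route as the primary argument, since it invokes only the cited complementation theorem together with the open mapping theorem and needs no separate fact about $\operatorname{Ran}(TT^*)$.

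The main obstacle is module-specific: an inverse of $T$ on a complement of its kernel must be produced as a bounded module map without Riesz representation. This is exactly where Theorem \ref{MANUILOV2} does the heavy lifting, guaranteeing that $\operatorname{Ker}(T)^\perp$ and $T(\mathscr{E})$ are genuine orthogonal complements, so that $P$ is an adjointable projection and $T_0$ is an honest isomorphism of Hilbert C*-modules. Once the decomposition is in hand, boundedness of $T_0^{-1}$ is routine from the open mapping theorem and its $\mathscr{A}$-linearity is inherited from $T_0$.
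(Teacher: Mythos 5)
Your proof is correct and follows essentially the same route as the paper, whose entire argument is to cite Theorem \ref{MANUILOV2} for the complementability of $\operatorname{Ker}(T)$ and $T(\mathscr{E})$ and then declare the rest routine; your restriction-plus-open-mapping construction of $R=T_0^{-1}P$ is precisely that routine part made explicit. You are also right to observe that the conclusion $TRy=y$ for all $y\in\mathscr{E}_0$ implicitly requires $T$ to be surjective, which is the only situation in which the lemma is actually invoked.
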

\begin{proof}
From Theorem \ref{MANUILOV2}, $\operatorname{Ker}(T)$ and $T(\mathscr{E}) $ are orthogonally complementable submodules in $ \mathscr{E}$ and $\mathscr{E}_0$, respectively. Rest is routine.
\end{proof}
 \begin{theorem}
 Let $ \{A_j\}_{j\in\mathbb{J}}, \{\Psi_j\}_{j\in\mathbb{J}}$ be in $ \operatorname{End}^*_\mathscr{A}(\mathscr{E})$ such that $ \Psi_j^*A_j\geq 0, \forall j \in \mathbb{J}.$ Then  $ ( \{A_j\}_{j\in \mathbb{J}} ,\{\Psi_j\}_{j\in \mathbb{J}} )$ is  a  weak (hvf)  in $  \operatorname{End}^*_\mathscr{A}(\mathscr{E})$ if and only if 
 $$  T: \mathscr{H}_{\mathscr{A}}\otimes \mathscr{E} \ni y \mapsto \sum\limits_{j\in \mathbb{J}} (\Psi_j^*A_j)^\frac{1}{2}L_j^*y\in \mathscr{E}$$
 is a well-defined bounded surjective adjointable homomorphism.
 \end{theorem}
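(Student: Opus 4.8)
The plan is to follow the template set by the Hilbert space result, Theorem \ref{WEAK CHARACTERIZATION}, replacing scalar manipulations by their Hilbert C*-module counterparts and paying attention to the three places where modules behave differently: adjointability is no longer automatic, the norm is recovered as $\|z\|=\sup_{\|w\|\le 1}\|\langle z,w\rangle\|$ rather than through a scalar pairing, and a monotone bounded net of positive elements of $\mathscr{A}$ need not converge in norm. I would prove the two implications separately, using throughout the identities $L_j^*L_k=\delta_{j,k}I_{\mathscr{E}}$ and $\sum_{j}L_jL_j^*=I_{\mathscr{H}_\mathscr{A}\otimes\mathscr{E}}$ together with the module Cauchy--Schwarz inequality in the form $\|\langle u,v\rangle\|^2\le\|\langle u,u\rangle\|\,\|\langle v,v\rangle\|$.

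For the forward implication, assume $(\{A_j\},\{\Psi_j\})$ is a weak (hvf), so $S_{A,\Psi}=\sum_j\Psi_j^*A_j$ is a bounded positive invertible homomorphism. First I would establish that $T$ is well defined and bounded: for a finite $\mathbb{S}$ and $y\in\mathscr{H}_\mathscr{A}\otimes\mathscr{E}$, writing the norm as a supremum of inner-product norms over the unit ball and applying Cauchy--Schwarz gives $\|\sum_{j\in\mathbb{S}}(\Psi_j^*A_j)^{1/2}L_j^*y\|\le\|\sum_{j\in\mathbb{S}}\langle L_j^*y,L_j^*y\rangle\|^{1/2}\,\|S_{A,\Psi}\|^{1/2}$; since $\sum_j\langle L_j^*y,L_j^*y\rangle=\langle y,y\rangle$ converges, the partial sums are Cauchy and $\|T\|\le\|S_{A,\Psi}\|^{1/2}$. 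Adjointability would follow by verifying directly that $T^*x=\sum_j L_j(\Psi_j^*A_j)^{1/2}x$, this series converging because $\sum_j\langle(\Psi_j^*A_j)^{1/2}x,(\Psi_j^*A_j)^{1/2}x\rangle=\langle S_{A,\Psi}x,x\rangle$ converges. Surjectivity is immediate from invertibility of $S_{A,\Psi}$: given $g\in\mathscr{E}$ set $h=S_{A,\Psi}^{-1}g$ and $w=\sum_k L_k(\Psi_k^*A_k)^{1/2}h$, so that $Tw=\sum_j\Psi_j^*A_jh=g$.

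For the converse, the delicate point --- and the step I expect to be the main obstacle --- is establishing convergence of $\sum_j\Psi_j^*A_jx$ in $\mathscr{E}$, since in the Hilbert space proof this rested on the fact that a bounded monotone sequence of nonnegative reals converges, which fails for positive elements of a general C*-algebra. The resolution is precisely the adjointability hypothesis: since $T$ is adjointable, $T^*x\in\mathscr{H}_\mathscr{A}\otimes\mathscr{E}$, and computing $\langle T^*x,y\rangle$ shows $T^*x$ has $j$-th coordinate $(\Psi_j^*A_j)^{1/2}x$; because an element of the standard module has, by definition, a norm-convergent inner product with itself, we obtain that $\sum_j\langle\Psi_j^*A_jx,x\rangle=\langle T^*x,T^*x\rangle$ converges in $\mathscr{A}$. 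With this in hand, the finite-partial-sum estimate $\|\sum_{j\in\mathbb{S}}\Psi_j^*A_jx\|\le\|T\|\,\|\sum_{j\in\mathbb{S}}\langle\Psi_j^*A_jx,x\rangle\|^{1/2}$, obtained by applying $T$ to the finite element $\sum_{k\in\mathbb{S}}L_k(\Psi_k^*A_k)^{1/2}x$, forces $\sum_j\Psi_j^*A_jx$ to be Cauchy, so $S_{A,\Psi}x$ exists; it is bounded by the same estimate and positive because each $\Psi_j^*A_j\ge 0$.

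It remains to prove $S_{A,\Psi}$ is invertible. Here I would use that $T$, being surjective, has closed range, so by Lemma \ref{PSEUDOINVERSE} there is a bounded homomorphism $T^\dagger$ with $TT^\dagger=I_\mathscr{E}$; then $\langle x,x\rangle=\langle TT^\dagger x,x\rangle=\langle T^\dagger x,T^*x\rangle$, and Cauchy--Schwarz together with $\langle T^*x,T^*x\rangle=\langle S_{A,\Psi}x,x\rangle=\langle S_{A,\Psi}^{1/2}x,S_{A,\Psi}^{1/2}x\rangle$ yields $\|x\|^4\le\|T^\dagger\|^2\|x\|^2\,\|S_{A,\Psi}^{1/2}x\|^2$, that is $\|S_{A,\Psi}^{1/2}x\|\ge\|T^\dagger\|^{-1}\|x\|$. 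Combined with the upper bound $\|S_{A,\Psi}^{1/2}x\|\le\|T\|\,\|x\|$, Theorem \ref{ARAMBASIC1} applied to the self-adjoint homomorphism $S_{A,\Psi}^{1/2}$ shows it is surjective, hence invertible, so $S_{A,\Psi}=(S_{A,\Psi}^{1/2})^2$ is positive invertible and $(\{A_j\},\{\Psi_j\})$ is a weak (hvf). Throughout, the only genuinely new ingredients beyond the Hilbert space argument are the appeal to adjointability to secure $\mathscr{A}$-valued convergence and the use of Theorem \ref{MANUILOV2} (via Lemma \ref{PSEUDOINVERSE}) and Theorem \ref{ARAMBASIC1} in place of ordinary closed-range and bounded-below reasoning.
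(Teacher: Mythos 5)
Your proof is correct and follows the same overall architecture as the paper's: the forward direction via the Cauchy--Schwarz estimate giving $\|T\|\le\|S_{A,\Psi}\|^{1/2}$ and surjectivity from invertibility of $S_{A,\Psi}$, and the converse via the partial-sum estimate $\|\sum_{j\in\mathbb{S}}\Psi_j^*A_jx\|\le\|T\|\,\|\sum_{j\in\mathbb{S}}\langle\Psi_j^*A_jx,x\rangle\|^{1/2}$, the pseudo-inverse supplied by Lemma \ref{PSEUDOINVERSE}, and the final appeal to Theorem \ref{ARAMBASIC1}. The one place you genuinely diverge is the step you yourself flagged as delicate. The paper deduces the needed convergence from the monotone boundedness of the norms of the positive partial sums; as written this only yields convergence of the real-valued net $\|\sum_{j\in\mathbb{S}}\langle\Psi_j^*A_jx,x\rangle\|$, and passing from that to the Cauchy property of the tails (hence of $\sum_{j}\Psi_j^*A_jx$) is not automatic, since for positive $a,b\in\mathscr{A}$ one does not in general have $\|b\|\le\|a+b\|-\|a\|$. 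Your route through adjointability --- identifying $L_j^*T^*x=(\Psi_j^*A_j)^{1/2}x$, so that $\sum_{j}\langle\Psi_j^*A_jx,x\rangle=\langle T^*x,T^*x\rangle$ converges in norm in $\mathscr{A}$ because $\sum_{j}L_jL_j^*$ converges strictly to the identity --- closes this point cleanly and is the sharper argument; it also makes visible that $S_{A,\Psi}=TT^*$, which settles the adjointability of $S_{A,\Psi}$ (needed to form $S_{A,\Psi}^{1/2}$) without further appeal. Everything else, including the lower bound $\|T^\dagger\|^{-1}\|x\|\le\|S_{A,\Psi}^{1/2}x\|$ and the use of Theorem \ref{ARAMBASIC1} to conclude invertibility, matches the paper's proof.
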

 \begin{proof}
$ (\Rightarrow)$
 For every finite subset $ \mathbb{S}$ of $ \mathbb{J}$ and every $ y \in  \mathscr{H}_{\mathscr{A}}\otimes \mathscr{E},$ we have 
 
 \begin{align*}
 \left\|\sum\limits_{j\in \mathbb{S}} (\Psi_j^*A_j)^\frac{1}{2}L_j^*y\right\|&=\sup\limits_{x\in \mathscr{E}, \|x\|=1}\left\|\left\langle \sum\limits_{j\in \mathbb{S}} (\Psi_j^*A_j)^\frac{1}{2}L_j^*y,x \right\rangle\right\|=\sup\limits_{x\in \mathscr{E}, \|x\|=1 }\left\|\sum\limits_{j\in \mathbb{S}}\langle L_j^*y,(\Psi_j^*A_j)^\frac{1}{2}x \rangle \right\|\\
 & \leq \sup\limits_{ x\in \mathscr{E}, \|x\|=1}\left\|\sum\limits_{j\in \mathbb{S}}\langle L^*_jy,L^*_jy \rangle\right\|^\frac{1}{2}\left\|\sum\limits_{j\in \mathbb{S}}\langle (\Psi_j^*A_j)^\frac{1}{2}x, (\Psi_j^*A_j)^\frac{1}{2}x\rangle \right\|^\frac{1}{2}\\
 &=\sup\limits_{x\in \mathscr{E}, \|x\|=1}\left\|\sum\limits_{j\in \mathbb{S}}\langle L^*_jy,L^*_jy \rangle\right\|^\frac{1}{2}\left\|\sum\limits_{j\in \mathbb{S}}\langle\Psi_j^*A_jx,x \rangle \right\|^\frac{1}{2}\\
 &\leq \sup\limits_{x\in \mathscr{E}, \|x\|=1}\left\|\sum\limits_{j\in \mathbb{S}}\langle L^*_jy,L^*_jy \rangle\right\|^\frac{1}{2}\|S_{A,\Psi}x\|^\frac{1}{2}\leq\left\|\sum\limits_{j\in \mathbb{S}}\langle L^*_jy,L^*_jy \rangle\right\|^\frac{1}{2}\|S_{A,\Psi}\|^\frac{1}{2},
 \end{align*}
 $\sum_{j\in \mathbb{J}}\langle L^*_jy,L^*_jy \rangle$  exists (it equals to $ \langle y, y\rangle$). Therefore $ T$ is bounded linear with $\| T\| \leq \|S_{A,\Psi}\|^\frac{1}{2}.$ We can show the surjectivity of $ T$ as we showed the same in Theorem \ref{WEAK CHARACTERIZATION}. The adjoint of $ T $ is $ \sum_{j\in \mathbb{J}}L_j\Psi_j^*A_j.$ 
 
 $(\Leftarrow)$  We show  $\| \sum_{j\in \mathbb{J}}\langle \Psi_j^*A_jx, x \rangle\| $ converges, $ \forall x \in \mathscr{E}$ and using this, we show $\sum_{j\in \mathbb{J}}\Psi_j^*A_jx $ converges, $ \forall x \in \mathscr{E}$.  Let $x \in \mathscr{E} $, and  $ \mathbb{S}$ be a finite subset of $ \mathbb{J}.$ Then
 \begin{align*}
 \left\|\sum_{j\in \mathbb{S}}\langle \Psi_j^*A_jx, x \rangle \right\|&\leq \left\|\sum_{j\in \mathbb{S}} \Psi_j^*A_jx \right\|\|x\|= \left\|\sum\limits_{j\in \mathbb{S}}(\Psi_j^*A_j)^\frac{1}{2}L_j^*\left( \sum\limits_{k\in \mathbb{S}} L_k(\Psi_k^*A_k)^\frac{1}{2}x\right) \right\|\|x\|\\
 &
 = \left\|T\left( \sum\limits_{k\in \mathbb{S}} L_k(\Psi_k^*A_k)^\frac{1}{2}x+\sum\limits_{k\in \mathbb{J}\setminus\mathbb{S} }0\right) \right\|\|x\|
 \leq \|T\|\left \|\sum\limits_{k\in \mathbb{S}} L_k(\Psi_k^*A_k)^\frac{1}{2}x \right\|\|x\|\\
 &=\|T\|\left\|\left \langle \sum\limits_{k\in \mathbb{S}}\Psi_k^*A_kx, x\right \rangle\right\|^\frac{1}{2}\|x\|.
 \end{align*}
 Therefore $\|\sum_{j\in \mathbb{S}}\langle\Psi_j^*A_jx, x \rangle  \| \leq \|T\|^2\|x\|^2 $. Since $ \Psi_j^*A_j \geq 0, \forall j \in \mathbb{J},$ we get the convergence of $ \|\sum_{j\in \mathbb{J}}\langle \Psi_j^*A_jx, x \rangle\|. $ Next, 
 \begin{align*}
 \left\|\sum\limits_{j\in \mathbb{S}}\Psi_j^*A_jx \right\|& = \sup\limits_{z\in \mathscr{E}, \|z\|=1}\left\|\left\langle \sum\limits_{j\in \mathbb{S}} \Psi_j^*A_jx, z \right\rangle\right\|=\sup\limits_{ z\in \mathscr{E}, \|z\|=1}\left\| \sum\limits_{j\in \mathbb{S}} \langle (\Psi_j^*A_j)^\frac{1}{2}x,(\Psi_j^*A_j)^\frac{1}{2}z \rangle\right\|\\
 &\leq \sup\limits_{ z\in \mathscr{E}, \|z\|=1}\left\|\sum\limits_{j\in \mathbb{S}}\langle (\Psi_j^*A_j)^\frac{1}{2}x,(\Psi_j^*A_j)^\frac{1}{2}x \rangle\right\|^\frac{1}{2}\left\|\sum\limits_{j\in \mathbb{S}}\langle (\Psi_j^*A_j)^\frac{1}{2}z,(\Psi_j^*A_j)^\frac{1}{2}z \rangle\right\|^\frac{1}{2}\\
 &=\left\|\sum\limits_{j\in \mathbb{S}}\langle \Psi_j^*A_jx,x \rangle \right\|^\frac{1}{2}\sup\limits_{ z\in \mathscr{E}, \|z\|=1 }\left\|\sum\limits_{j\in \mathbb{S}}\langle \Psi_j^*A_jz,z \rangle \right\|^\frac{1}{2}\leq \left\|\sum\limits_{j\in \mathbb{S}}\langle \Psi_j^*A_jx, x \rangle \right\|^\frac{1}{2}\|T\|.
 \end{align*} 
 So $ \sum_{j\in \mathbb{J}}\Psi_j^*A_jx$ converges and $\|S_{A, \Psi}x\|=\|\sum_{j\in \mathbb{S}}\Psi_j^*A_jx \|\leq \|T\|^2\|x\|. $ Clearly $S_{A, \Psi} $ is positive. Now using Theorem \ref{PASCHKE} there exists a positive $ b$ such that $ \langle S_{A, \Psi}x,x \rangle=\langle S_{A, \Psi}^{1/2} x,S_{A, \Psi}^{1/2}x \rangle \leq b \langle x, x\rangle , \forall x \in \mathscr{E}.$

From Lemma \ref{PSEUDOINVERSE}, there exists a bounded homomorphism $R: \mathscr{E}\rightarrow  \mathscr{H}_{\mathscr{A}}\otimes \mathscr{E}$ such that $TRx=x,\forall x \in \mathscr{E}$. This gives
\begin{align*}
\|x\|^2&=\|\langle TRx, x\rangle\|=\left\|\left\langle\sum\limits_{j\in \mathbb{J}} (\Psi_j^*A_j)^\frac{1}{2}L_j^*Rx,x \right\rangle\right\|=\left\|\left\langle Rx ,\sum\limits_{j\in \mathbb{J}} L_j(\Psi_j^*A_j)^\frac{1}{2}x \right\rangle\right\|\\
&\leq\|Rx\|\left\|\sum\limits_{j\in \mathbb{J}} L_j(\Psi_j^*A_j)^\frac{1}{2}x\right\|= \|Rx\|\left\|\left\langle\sum\limits_{j\in \mathbb{J}} L_j(\Psi_j^*A_j)^\frac{1}{2}x,\sum\limits_{k\in \mathbb{J}} L_k(\Psi_k^*A_k)^\frac{1}{2}x \right\rangle\right\|^\frac{1}{2}\\
&=\|Rx\|\left\|\sum\limits_{j\in \mathbb{J}}\langle(\Psi_j^*A_j)^\frac{1}{2}x , (\Psi_j^*A_j)^\frac{1}{2}x\rangle \right\|^\frac{1}{2}=\|Rx\|\left\|\left \langle \sum\limits_{j\in \mathbb{J}}\Psi_j^*A_jx, x \right\rangle \right\|\leq \|R\|\|x\|\|\langle S_{A,\Psi}x,x\rangle \|
\end{align*}
implies $ \|R\|^{-1}\|x\|\leq \|S_{A,\Psi}^{1/2}x\|.$ Applying Theorem \ref{ARAMBASIC1} now completes the proof.
\end{proof} 

\textbf{Similarity and tensor product of weak homomorphism-valued frames }
\begin{definition}
A weak (hvf)  $(\{B_j\}_{j\in \mathbb{J}} ,\{\Phi_j\}_{j\in \mathbb{J}})$ in $ \operatorname{Hom}^*_\mathscr{A}(\mathscr{E}, \mathscr{E}_0)$  is said to be right-similar  to weak (hvf) $(\{A_j\}_{j\in \mathbb{J}},\{\Psi_j\}_{j\in \mathbb{J}})$ in $\operatorname{Hom}^*_\mathscr{A}(\mathscr{E}, \mathscr{E}_0)$ if there exist invertible  $ R_{A,B}, R_{\Psi, \Phi} \in  \operatorname{End}^*_\mathscr{A}(\mathscr{E})$  such that $B_j=A_jR_{A,B} , \Phi_j=\Psi_jR_{\Psi, \Phi}, \forall j \in \mathbb{J}$.
\end{definition}
\begin{proposition}
Let $ \{A_j\}_{j\in \mathbb{J}}\in \mathscr{F}^\text{w}_\Psi$  with frame bounds $a, b,$  let $R_{A,B}, R_{\Psi, \Phi} \in \operatorname{End}^*_\mathscr{A}(\mathscr{E})$ be positive, invertible, commute with each other, commute with $ S_{A, \Psi}$, and let $B_j=A_jR_{A,B} , \Phi_j=\Psi_jR_{\Psi, \Phi},  \forall j \in \mathbb{J}.$ Then 
$ \{B_j\}_{j\in \mathbb{J}}\in \mathscr{F}^\text{w}_\Phi,$  $ S_{B,\Phi}=R_{\Psi,\Phi}S_{A, \Psi}R_{A,B} $, and    $ \frac{a}{\|R_{A,B}^{-1}\|\|R_{\Psi,\Phi}^{-1}\|}\leq S_{B, \Phi} \leq b\|R_{A,B}R_{\Psi,\Phi}\|.$ Assuming that $( \{A_j\}_{j\in \mathbb{J}}, \{\Psi_j\}_{j\in \mathbb{J}})$ is a  Parseval weak (hvf), then $ (\{B_j\}_{j\in \mathbb{J}},\{\Phi_j\}_{j\in \mathbb{J}} ) $ is a Parseval  weak (hvf) if and only if   $ R_{\Psi, \Phi}R_{A,B}=I_\mathscr{E}.$  
\end{proposition}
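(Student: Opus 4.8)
The plan is to carry over the argument used for the Hilbert space weak (ovf) analogue (which itself is extracted from the proof of Proposition~\ref{RIGHTSIMILARITYPROPOSITIONOPERATORVERSION}), paying attention only to the two places where the Hilbert C*-module setting demands extra care: the convergence of the frame homomorphism in the strict topology, and the order-theoretic estimate for a product of commuting positive invertible homomorphisms.

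First I would compute the frame homomorphism of $(\{B_j\}_{j\in\mathbb{J}},\{\Phi_j\}_{j\in\mathbb{J}})$ directly. Since $R_{\Psi,\Phi}$ is positive it is self-adjoint, so for every finite $\mathbb{S}\subseteq\mathbb{J}$,
\[
\sum_{j\in\mathbb{S}}\Phi_j^*B_j=\sum_{j\in\mathbb{S}}(\Psi_jR_{\Psi,\Phi})^*(A_jR_{A,B})=R_{\Psi,\Phi}\Big(\sum_{j\in\mathbb{S}}\Psi_j^*A_j\Big)R_{A,B}.
\]
Because $R_{\Psi,\Phi}$ and $R_{A,B}$ are adjointable (hence bounded) and left/right multiplication by a fixed adjointable homomorphism is strictly continuous, the strict convergence of $\sum_{j}\Psi_j^*A_j$ to $S_{A,\Psi}$ forces $\sum_{j}\Phi_j^*B_j$ to converge strictly to $R_{\Psi,\Phi}S_{A,\Psi}R_{A,B}$; this yields $S_{B,\Phi}=R_{\Psi,\Phi}S_{A,\Psi}R_{A,B}$.

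Next I would establish that $S_{B,\Phi}$ is positive invertible, hence $\{B_j\}_{j\in\mathbb{J}}\in\mathscr{F}^{\mathrm{w}}_\Phi$, together with the stated bounds. The hypotheses say $R_{A,B},R_{\Psi,\Phi},S_{A,\Psi}$ pairwise commute and are positive invertible, so they generate a commutative C*-subalgebra of $\operatorname{End}^*_\mathscr{A}(\mathscr{E})$; within it $S_{B,\Phi}=S_{A,\Psi}R_{A,B}R_{\Psi,\Phi}$ is a product of commuting positive invertible elements, hence positive invertible. For the numerical bounds I would combine $aI_\mathscr{E}\le S_{A,\Psi}\le bI_\mathscr{E}$ with the spectral estimate $\tfrac{1}{\|R_{A,B}^{-1}\|\,\|R_{\Psi,\Phi}^{-1}\|}I_\mathscr{E}\le R_{A,B}R_{\Psi,\Phi}\le\|R_{A,B}R_{\Psi,\Phi}\|I_\mathscr{E}$, and invoke functional calculus in the commutative C*-subalgebra: for commuting positive operators the order behaves multiplicatively on their product, giving $\tfrac{a}{\|R_{A,B}^{-1}\|\,\|R_{\Psi,\Phi}^{-1}\|}I_\mathscr{E}\le S_{B,\Phi}\le b\|R_{A,B}R_{\Psi,\Phi}\|I_\mathscr{E}$.

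Finally, for the Parseval equivalence, set $S_{A,\Psi}=I_\mathscr{E}$; then $S_{B,\Phi}=R_{\Psi,\Phi}R_{A,B}$, and since Parsevalness of a weak (hvf) means precisely that the frame homomorphism equals $I_\mathscr{E}$, the pair $(\{B_j\}_{j\in\mathbb{J}},\{\Phi_j\}_{j\in\mathbb{J}})$ is Parseval iff $R_{\Psi,\Phi}R_{A,B}=I_\mathscr{E}$, commutativity making this equivalent to $R_{A,B}R_{\Psi,\Phi}=I_\mathscr{E}$. The only step requiring genuine thought is the order inequality for the product of commuting positive invertible homomorphisms: in the module setting it must be read off from the commutative C*-algebra they generate rather than from a diagonalization as one might do on a Hilbert space, but this is standard once commutativity is invoked.
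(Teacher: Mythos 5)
Your proposal is correct and follows essentially the same route as the paper: the paper states this module version without proof, deferring to the Hilbert-space argument in Proposition \ref{RIGHTSIMILARITYPROPOSITIONOPERATORVERSION}, which likewise computes $S_{B,\Phi}=R_{\Psi,\Phi}S_{A,\Psi}R_{A,B}$ from the self-adjointness of the positive $R_{\Psi,\Phi}$ and then combines $aI_\mathscr{E}\leq S_{A,\Psi}\leq bI_\mathscr{E}$ with $\frac{1}{\|R_{A,B}^{-1}\|\|R_{\Psi,\Phi}^{-1}\|}\leq R_{A,B}R_{\Psi,\Phi}\leq\|R_{A,B}R_{\Psi,\Phi}\|$ via commutativity. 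Your extra care about strict convergence under one-sided multiplication and about reading the order estimate off the commutative C*-subalgebra generated by the three commuting positive invertible homomorphisms is exactly the detail the paper leaves implicit.
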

\begin{proposition}
Let $ \{A_j\}_{j\in \mathbb{J}}\in \mathscr{F}^\text{w}_\Psi,$ $ \{B_j\}_{j\in \mathbb{J}}\in \mathscr{F}^\text{w}_\Phi$ and   $B_j=A_jR_{A,B} , \Phi_j=\Psi_jR_{\Psi, \Phi},  \forall j \in \mathbb{J}$, for some invertible $ R_{A,B} ,R_{\Psi, \Phi} \in \operatorname{End}^*_\mathscr{A}(\mathscr{E}).$ Then $  S_{B,\Phi}=R_{\Psi,\Phi}^*S_{A, \Psi}R_{A,B}.$ Assuming that $ (\{A_j\}_{j\in \mathbb{J}},\{\Psi_j\}_{j\in \mathbb{J}})$ is Parseval weak, then $(\{B_j\}_{j\in \mathbb{J}},  \{\Phi_j\}_{j\in \mathbb{J}})$ is Parseval weak if and only if   $ R_{\Psi, \Phi}^*R_{A,B}=I_\mathscr{E}.$
\end{proposition}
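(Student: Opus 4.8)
The plan is to mimic the proof of the corresponding weak operator-valued statement, the only genuinely new ingredient being the passage to the limit in the strict topology on $\operatorname{End}^*_\mathscr{A}(\mathscr{E})$ rather than in the strong-operator topology.

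First I would establish the identity $S_{B,\Phi}=R_{\Psi,\Phi}^*S_{A,\Psi}R_{A,B}$. Since $\Phi_j=\Psi_jR_{\Psi,\Phi}$ and $B_j=A_jR_{A,B}$, and since $R_{\Psi,\Phi}\in\operatorname{End}^*_\mathscr{A}(\mathscr{E})$ is adjointable, for each $j$ we have $\Phi_j^*B_j=(\Psi_jR_{\Psi,\Phi})^*(A_jR_{A,B})=R_{\Psi,\Phi}^*\Psi_j^*A_jR_{A,B}$. Hence for every finite $\mathbb{S}\subseteq\mathbb{J}$,
\[
\sum_{j\in\mathbb{S}}\Phi_j^*B_j=R_{\Psi,\Phi}^*\Bigl(\sum_{j\in\mathbb{S}}\Psi_j^*A_j\Bigr)R_{A,B}.
\]

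Then I would let $\mathbb{S}$ run over the net of finite subsets of $\mathbb{J}$. By hypothesis $\{B_j\}_{j\in\mathbb{J}}\in\mathscr{F}^\text{w}_\Phi$, so the left-hand net converges strictly to $S_{B,\Phi}$, while $\{A_j\}_{j\in\mathbb{J}}\in\mathscr{F}^\text{w}_\Psi$ gives strict convergence of $\sum_{j\in\mathbb{S}}\Psi_j^*A_j$ to $S_{A,\Psi}$. Because left multiplication by $R_{\Psi,\Phi}^*$ and right multiplication by $R_{A,B}$ are separately continuous for the strict topology on $\operatorname{End}^*_\mathscr{A}(\mathscr{E})$ — equivalently, evaluating at a fixed $x\in\mathscr{E}$ one has $\sum_{j\in\mathbb{S}}\Psi_j^*A_jR_{A,B}x\to S_{A,\Psi}R_{A,B}x$ and then applies the bounded homomorphism $R_{\Psi,\Phi}^*$ — passing to the limit in the displayed equality yields $S_{B,\Phi}=R_{\Psi,\Phi}^*S_{A,\Psi}R_{A,B}$. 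This topological interchange is the one place requiring care and is exactly where the module setting differs from the Hilbert-space argument; it is nevertheless routine once one uses pointwise evaluation together with the boundedness of $R_{\Psi,\Phi}^*$.

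Finally, the biconditional follows immediately. If $(\{A_j\}_{j\in\mathbb{J}},\{\Psi_j\}_{j\in\mathbb{J}})$ is Parseval weak, then $S_{A,\Psi}=I_\mathscr{E}$, so the identity just proved collapses to $S_{B,\Phi}=R_{\Psi,\Phi}^*R_{A,B}$. Since $(\{B_j\}_{j\in\mathbb{J}},\{\Phi_j\}_{j\in\mathbb{J}})$ is Parseval weak precisely when $S_{B,\Phi}=I_\mathscr{E}$, this holds if and only if $R_{\Psi,\Phi}^*R_{A,B}=I_\mathscr{E}$, which is the claim. I expect no real obstacle here: the whole content is the factorisation identity, and the only subtlety is confined to justifying the interchange of the fixed adjointable homomorphisms with the strict-topology limit.
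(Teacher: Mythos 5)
Your proof is correct and takes essentially the same route as the paper: the paper's argument (given explicitly only in the Hilbert-space version of this proposition) is the single computation $S_{B,\Phi}=\sum_{j\in \mathbb{J}}\Phi_j^*B_j=\sum_{j\in \mathbb{J}}R_{\Psi,\Phi}^*\Psi_j^*A_jR_{A,B}=R_{\Psi,\Phi}^*S_{A, \Psi}R_{A,B}$, with the Parseval equivalence then immediate. Your explicit justification of passing the fixed adjointable homomorphisms through the pointwise (strict) limit is precisely the detail the paper leaves implicit, and it is carried out correctly.
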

\begin{remark}
For every weak (hvf) $(\{A_j\}_{j \in \mathbb{J}},\{\Psi_j\}_{j \in \mathbb{J}})$, each  of `weak homomorphism-valued frames'  $( \{A_jS_{A, \Psi}^{-1}\}_{j \in \mathbb{J}}, \{\Psi_j\}_{j \in \mathbb{J}}),$   $( \{A_jS_{A, \Psi}^{-1/2}\}_{j \in \mathbb{J}}, \{\Psi_jS_{A,\Psi}^{-1/2}\}_{j \in \mathbb{J}}),$ and  $ (\{A_j \}_{j \in \mathbb{J}}, \{\Psi_jS_{A,\Psi}^{-1}\}_{j \in \mathbb{J}})$ is a  Parseval weak  (hvf) which is right-similar to  $ (\{A_j\}_{j \in \mathbb{J}} , \{\Psi_j\}_{j \in \mathbb{J}}  ).$  
\end{remark}

\textbf{Tensor product}: Let $ \{A_j\}_{j \in \mathbb{J}} $  be a weak (hvf) w.r.t. $ \{\Psi_j\}_{j \in \mathbb{J}} $  in  $ \operatorname{Hom}^*_\mathscr{A}(\mathscr{E}, \mathscr{E}_0),$ and $ \{B_l\}_{l \in \mathbb{L}} $  be a weak (hvf) w.r.t. $ \{\Phi_l\}_{l \in \mathbb{L}} $  in  $ \operatorname{Hom}^*_\mathscr{A}(\mathscr{E}_1, \mathscr{E}_2).$ The weak (hvf)  $(\{C_{(j, l)}\coloneqq A_j\otimes B_l\}_{(j, l)\in \mathbb{J}\bigtimes  \mathbb{L}} ,\{\Xi_{(j, l)}\coloneqq \Psi_j\otimes\Phi_l\}_{(j, l)\in \mathbb{J}\bigtimes  \mathbb{L}})$ in $ \operatorname{Hom}^*_\mathscr{A}(\mathscr{E}\otimes\mathscr{E}_1, \mathscr{E}_0\otimes\mathscr{E}_2)$ is called  as tensor product  of  $( \{A_j\}_{j \in \mathbb{J}}, \{\Psi_j\}_{j\in \mathbb{J}})$ and $( \{B_l\}_{l \in \mathbb{L}},  \{\Phi_l\}_{l\in \mathbb{L}}).$
\begin{proposition}
Let $(\{C_{(j, l)}\coloneqq A_j\otimes B_l\}_{(j, l)\in \mathbb{J}\bigtimes  \mathbb{L}}, \{\Xi _{(j, l)}\coloneqq \Psi_j\otimes \Phi_l\}_{(j, l)\in \mathbb{J}\bigtimes  \mathbb{L}})$ be the  tensor product of   weak homomorphism-valued frames  $( \{A_j\}_{j \in \mathbb{J}}, \{\Psi_j\}_{j \in \mathbb{J}}) $  in  $ \operatorname{Hom}^*_\mathscr{A}(\mathscr{E}, \mathscr{E}_1),$ and $( \{B_l\}_{l \in \mathbb{L}}, \{\Phi_l\}_{l \in \mathbb{L}} )$ in $ \operatorname{Hom}^*_\mathscr{A}(\mathscr{E}_1, \mathscr{E}_2).$  Then  $ S_{C, \Xi}=S_{A, \Psi}\otimes S_{B, \Phi}.$ If  $( \{A_j\}_{j \in \mathbb{J}}, \{\Psi_j\}_{j \in \mathbb{J}}) $ and $ (\{B_l\}_{l \in \mathbb{L}},  \{\Phi_l\}_{l \in \mathbb{L}} )$ are Parseval  weak (hvf), then $(\{C_{(j, l)}\}_{(j, l)\in \mathbb{J}\bigtimes  \mathbb{L}} ,\{\Xi_{(j,l)}\}_{(j,l)\in \mathbb{J}\bigtimes \mathbb{L}})$ is a Parseval  weak (hvf).
\end{proposition}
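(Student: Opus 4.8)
The plan is to reduce everything to the single identity $S_{C,\Xi}=S_{A,\Psi}\otimes S_{B,\Phi}$, since once this is in hand the weak (hvf) property follows directly from the definition: a tensor product of two bounded positive invertible adjointable homomorphisms is again bounded positive invertible (with inverse $S_{A,\Psi}^{-1}\otimes S_{B,\Phi}^{-1}$), and positivity together with invertibility of the frame homomorphism is exactly what the definition of a weak (hvf) demands. Note that, unlike the full (hvf) tensor product proposition, for weak frames we have no analysis homomorphisms $\theta_A,\theta_B$ at our disposal, so the shortcut $S_{C,\Xi}=\theta_\Xi^*\theta_C$ is unavailable; I must argue directly from the defining series $S_{C,\Xi}\coloneqq\sum_{(j,l)\in\mathbb{J}\bigtimes\mathbb{L}}\Xi_{(j,l)}^*C_{(j,l)}$.

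First I would record the elementary algebraic fact that, for each pair $(j,l)$, one has $\Xi_{(j,l)}^*C_{(j,l)}=(\Psi_j\otimes\Phi_l)^*(A_j\otimes B_l)=\Psi_j^*A_j\otimes\Phi_l^*B_l$, using the standard behaviour of adjoints and composition under the tensor product of adjointable homomorphisms. This rewrites the series defining $S_{C,\Xi}$ as $\sum_{(j,l)}(\Psi_j^*A_j\otimes\Phi_l^*B_l)$. Next, since the operators involved are adjointable and hence $\mathscr{A}$-linear and continuous, I would verify the target identity on elementary tensors $x\otimes y\in\mathscr{E}\otimes\mathscr{E}_1$:
\[
(S_{A,\Psi}\otimes S_{B,\Phi})(x\otimes y)=\Big(\sum_{j}\Psi_j^*A_jx\Big)\otimes\Big(\sum_{l}\Phi_l^*B_ly\Big)=\sum_{(j,l)}\big(\Psi_j^*A_jx\otimes\Phi_l^*B_ly\big),
\]
where the first equality uses the strict-topology convergence of $S_{A,\Psi}$ evaluated at $x$ and of $S_{B,\Phi}$ at $y$, both guaranteed by hypothesis, and the last step identifies the product of the two convergent nets with the net of partial sums over finite subsets of $\mathbb{J}\bigtimes\mathbb{L}$. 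The right-hand side is precisely $S_{C,\Xi}(x\otimes y)$. Extending by $\mathscr{A}$-linearity and density of the algebraic tensor product in $\mathscr{E}\otimes\mathscr{E}_1$, and running the same computation for the adjoints, yields $S_{C,\Xi}=S_{A,\Psi}\otimes S_{B,\Phi}$.

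The hard part will be making the passage from the product of two limits to the limit of the double-indexed net fully rigorous in the strict topology on $\operatorname{End}^*_\mathscr{A}(\mathscr{E}\otimes\mathscr{E}_1)$. The delicate point is that the paper's convergence of $\sum$ is net convergence over \emph{all} finite subsets of the index set, so one must show the net over arbitrary finite subsets of $\mathbb{J}\bigtimes\mathbb{L}$ converges, not merely the net over rectangles $\mathbb{S}_{\mathbb{J}}\times\mathbb{S}_{\mathbb{L}}$; this is a Fubini-type interchange that rests on the unconditional (net) convergence of the two component frame homomorphisms and on uniform control of the tails. I would handle it by a Cauchy-net estimate: given a finite $\mathbb{S}\subseteq\mathbb{J}\bigtimes\mathbb{L}$, its projections onto $\mathbb{J}$ and $\mathbb{L}$ are finite, and enlarging $\mathbb{S}$ to a rectangle changes the partial sum by a quantity controlled by the tails of $S_{A,\Psi}$ and $S_{B,\Phi}$, which are small by hypothesis. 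Finally, for the Parseval statement the argument is immediate once the main identity holds: Parsevalness of the two factors means $S_{A,\Psi}=I_\mathscr{E}$ and $S_{B,\Phi}=I_{\mathscr{E}_1}$, whence $S_{C,\Xi}=I_\mathscr{E}\otimes I_{\mathscr{E}_1}=I_{\mathscr{E}\otimes\mathscr{E}_1}$, so $(\{C_{(j,l)}\},\{\Xi_{(j,l)}\})$ is a Parseval weak (hvf).
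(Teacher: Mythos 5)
Your proof is correct and follows essentially the same route the paper takes for this family of tensor-product results: verify $S_{C,\Xi}=S_{A,\Psi}\otimes S_{B,\Phi}$ on elementary tensors by writing the product of the two convergent series $\sum_j\Psi_j^*A_jx$ and $\sum_l\Phi_l^*B_ly$ as a single net over $\mathbb{J}\times\mathbb{L}$, then read off positivity, invertibility (with inverse $S_{A,\Psi}^{-1}\otimes S_{B,\Phi}^{-1}$), and the Parseval case from that identity. Your additional care with the Fubini-type passage from rectangular to arbitrary finite index subsets is a detail the paper leaves implicit, but it does not alter the argument.
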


\textbf{Sequential version}
\begin{definition}
A collection $ \{x_j\}_{j\in \mathbb{J}}$ in   $ \mathscr{E}$ is called a weak frame w.r.t. collection $ \{\tau_j\}_{j\in \mathbb{J}}$ in  $ \mathscr{E}$ if  $S_{x, \tau}: \mathscr{E} \ni  x \mapsto \sum_{j\in\mathbb{J}}\langle x,  x_j\rangle\tau_j \in \mathscr{E} $ is a well-defined   adjointable positive invertible homomorphism. 
Notions of frame bounds, optimal bounds, tight frame, Parseval frame, Bessel are same as in Definition \ref{SEQUENTIAL VERSION HOMOMORPHISM VAUED DEFINITION}.

 For fixed $ \mathbb{J}, \mathscr{E},$  and $ \{\tau_j\}_{j\in \mathbb{J}}$   the set of all weak frames for $ \mathscr{E}$  w.r.t.  $ \{\tau_j\}_{j\in \mathbb{J}}$ is denoted by $ \mathscr{F}^w_\tau.$
\end{definition}
Previous definition is equivalent to
\begin{definition}
A collection $ \{x_j\}_{j\in \mathbb{J}}$ in  $ \mathscr{E}$ is called a weak frame w.r.t. collection $ \{\tau_j\}_{j\in \mathbb{J}}$ in  $ \mathscr{E}$ if there are $ a,b, r >0$ such that 
\begin{enumerate}[\upshape(i)]
\item $ \|\sum_{j\in \mathbb{J}}\langle x, x_j\rangle\tau_j\|\leq r\|x\|, \forall x \in  \mathscr{E} ,$
\item  $ \sum_{j\in \mathbb{J}}\langle x, x_j\rangle\tau_j=\sum_{j\in \mathbb{J}}\langle x, \tau_j\rangle x_j, \forall x \in  \mathscr{E},$
\item  $ a\langle x, x \rangle\leq\sum_{j\in \mathbb{J}}\langle x,x_j \rangle \langle \tau_j, x\rangle \leq b\langle x, x \rangle, \forall x \in  \mathscr{E}$.
\end{enumerate}
\end{definition} 
\begin{theorem}
Let $\{x_j\}_{j\in \mathbb{J}}, \{\tau_j\}_{j\in \mathbb{J}}$ be in $\mathscr{E}$ over $\mathscr{A}$. Define $A_j: \mathscr{E} \ni x \mapsto \langle x, x_j \rangle \in \mathscr{A} $, $\Psi_j: \mathscr{E} \ni x \mapsto \langle x, \tau_j \rangle \in \mathscr{A}, \forall j \in \mathbb{J} $. Then   $(\{x_j\}_{j\in \mathbb{J}}, \{\tau_j\}_{j\in \mathbb{J}})$ is a weak frame for  $\mathscr{E}$ if and only if  $(\{A_j\}_{j\in \mathbb{J}}, \{\Psi_j\}_{j\in \mathbb{J}})$ is a weak homomorphism-valued frame  in $\operatorname{Hom}_\mathscr{A}^*(\mathscr{E},\mathscr{A})$.
\end{theorem}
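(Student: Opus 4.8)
The plan is to follow exactly the template of Theorem \ref{OVFTOSEQUENCEANDVICEVERSATHEOREM} and its non-weak homomorphism analogue: the whole equivalence rests on the single observation that the frame homomorphism $S_{A,\Psi}=\sum_{j\in\mathbb{J}}\Psi_j^*A_j$ built from the operators coincides, as a map on $\mathscr{E}$, with the sequential frame homomorphism $S_{x,\tau}\colon x\mapsto\sum_{j\in\mathbb{J}}\langle x,x_j\rangle\tau_j$. So the first step is to verify that each $A_j$ and $\Psi_j$ genuinely lies in $\operatorname{Hom}^*_\mathscr{A}(\mathscr{E},\mathscr{A})$, where $\mathscr{A}$ carries its standard self-inner-product. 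That $A_j,\Psi_j$ are bounded $\mathscr{A}$-linear follows from the Cauchy--Schwarz inequality, $\langle A_jx,A_jx\rangle=\langle x,x_j\rangle\langle x_j,x\rangle\le\|x_j\|^2\langle x,x\rangle$, together with Theorem \ref{PASCHKE}; and one exhibits the adjoints explicitly as $A_j^*a=ax_j$ and $\Psi_j^*a=a\tau_j$ for $a\in\mathscr{A}$, which a direct inner-product computation confirms.

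Granting these adjoints, the second step is the key computation
$$\Psi_j^*A_jx=\Psi_j^*\bigl(\langle x,x_j\rangle\bigr)=\langle x,x_j\rangle\tau_j,\qquad\forall x\in\mathscr{E},\ \forall j\in\mathbb{J}.$$
Summing over any finite $\mathbb{S}\subseteq\mathbb{J}$ shows that the partial sums of $\sum_{j}\Psi_j^*A_j$ are literally the partial sums defining $S_{x,\tau}$, so one series converges (in the relevant sense) to a given homomorphism precisely when the other does, and the two limits agree. In the same way $A_j^*\Psi_jx=\langle x,\tau_j\rangle x_j$, and $\langle A_jx,\Psi_jx\rangle=\langle x,x_j\rangle\langle\tau_j,x\rangle$, so the inequality-form conditions in the two equivalent definitions (the norm bound, the symmetry identity $\sum_j\langle x,x_j\rangle\tau_j=\sum_j\langle x,\tau_j\rangle x_j$, and the two-sided estimate $a\langle x,x\rangle\le\sum_j\langle x,x_j\rangle\langle\tau_j,x\rangle\le b\langle x,x\rangle$) transcribe term-by-term from the weak (hvf) definition to the weak frame definition and back.

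The third step is simply to match the convergence and positivity/invertibility requirements. Positivity and invertibility of the limit are identical hypotheses on the common operator $S_{A,\Psi}=S_{x,\tau}$, so they transfer for free. The one point that needs a little care---and which I expect to be the main obstacle---is reconciling the strict-topology convergence of $\sum_j\Psi_j^*A_j$ on $\operatorname{End}^*_\mathscr{A}(\mathscr{E})$ with the bare requirement that $S_{x,\tau}$ be a \emph{well-defined adjointable} homomorphism. Here I would argue that strict convergence means exactly that $\sum_j\Psi_j^*A_jx$ and $\sum_j A_j^*\Psi_jx$ both converge for every $x$; the first is $S_{x,\tau}x$ and the second is $\sum_j\langle x,\tau_j\rangle x_j$, and the symmetry identity together with self-adjointness of a positive limit identifies the latter limit as $S_{x,\tau}^*x$. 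Thus strict convergence is precisely the well-definedness of $S_{x,\tau}$ as an adjointable operator, closing the equivalence in both directions. It is therefore cleanest to phrase the whole argument through the inequality-form characterizations of the two notions, where each defining clause is seen to be identical under the substitution above.
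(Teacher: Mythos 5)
Your proposal is correct and follows exactly the route the paper intends: the entire equivalence reduces to the identity $\Psi_j^*A_jx=\langle x,x_j\rangle\tau_j$ (and dually $A_j^*\Psi_jx=\langle x,\tau_j\rangle x_j$), which is precisely the one-line argument the paper gives for the Hilbert-space analogue (Theorem \ref{OVFTOSEQUENCEANDVICEVERSATHEOREM}) and leaves implicit here. Your additional care in verifying adjointability of $A_j,\Psi_j$ and in matching strict convergence of $\sum_j\Psi_j^*A_j$ with well-definedness of $S_{x,\tau}$ and $S_{\tau,x}$ is sound and only makes explicit what the paper omits.
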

\begin{proposition}
If $(\{x_j\}_{j\in \mathbb{J}}, \{\tau_j\}_{j\in \mathbb{J}})$ is a weak frame for  $\mathscr{E}$, then  every $ y \in \mathscr{E}$ can be written as 
$$y =\sum\limits_{j\in \mathbb{J}}\langle y, S^{-1}_{x, \tau}\tau_j\rangle  x_j=\sum\limits_{j\in \mathbb{J}}\langle y,\tau_j \rangle S^{-1}_{x, \tau}  x_j =\sum\limits_{j\in \mathbb{J}}\langle y, S^{-1}_{x, \tau}x_j\rangle  \tau_j=\sum\limits_{j\in \mathbb{J}}\langle y, x_j\rangle S^{-1}_{x, \tau}  \tau_j.$$
\end{proposition}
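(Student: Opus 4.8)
The plan is to mimic the Hilbert-space computation carried out for the sequential weak frames in Section~\ref{FURTHEREXTENSION}, upgrading each step to the Hilbert C*-module setting. The four ingredients I would assemble first are: (1) the frame homomorphism $S_{x,\tau}$ is a well-defined adjointable positive invertible homomorphism, so $S_{x,\tau}^{-1}$ exists and is itself a bounded adjointable $\mathscr{A}$-linear map; (2) positivity forces $S_{x,\tau}$ to be self-adjoint, whence $S_{x,\tau}^{-1}$ is self-adjoint as well; (3) the defining identity $\sum_{j\in\mathbb{J}}\langle x,x_j\rangle\tau_j=\sum_{j\in\mathbb{J}}\langle x,\tau_j\rangle x_j$ for all $x\in\mathscr{E}$ gives $S_{x,\tau}=S_{\tau,x}$; and (4) $S_{x,\tau}^{-1}$, being continuous and $\mathscr{A}$-linear, commutes both with the left module action of scalars from $\mathscr{A}$ and with the norm-convergent defining nets.

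First I would establish the fourth expansion. Writing $y=S_{x,\tau}^{-1}S_{x,\tau}y=S_{x,\tau}^{-1}\bigl(\sum_{j\in\mathbb{J}}\langle y,x_j\rangle\tau_j\bigr)$ and pulling the bounded $\mathscr{A}$-linear map $S_{x,\tau}^{-1}$ inside the net yields $y=\sum_{j\in\mathbb{J}}\langle y,x_j\rangle S_{x,\tau}^{-1}\tau_j$. Next, for the third expansion I would instead write $y=S_{x,\tau}S_{x,\tau}^{-1}y=\sum_{j\in\mathbb{J}}\langle S_{x,\tau}^{-1}y,x_j\rangle\tau_j$ and then move $S_{x,\tau}^{-1}$ across the inner product using its self-adjointness, $\langle S_{x,\tau}^{-1}y,x_j\rangle=\langle y,S_{x,\tau}^{-1}x_j\rangle$, producing $y=\sum_{j\in\mathbb{J}}\langle y,S_{x,\tau}^{-1}x_j\rangle\tau_j$.

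The remaining two expansions follow by symmetry. Using $S_{x,\tau}=S_{\tau,x}$ I would repeat the two computations above with the roles of $\{x_j\}_{j\in\mathbb{J}}$ and $\{\tau_j\}_{j\in\mathbb{J}}$ interchanged: the identity $y=S_{\tau,x}^{-1}S_{\tau,x}y=\sum_{j\in\mathbb{J}}\langle y,\tau_j\rangle S_{x,\tau}^{-1}x_j$ gives the second expansion, while $y=S_{\tau,x}S_{\tau,x}^{-1}y=\sum_{j\in\mathbb{J}}\langle S_{x,\tau}^{-1}y,\tau_j\rangle x_j=\sum_{j\in\mathbb{J}}\langle y,S_{x,\tau}^{-1}\tau_j\rangle x_j$ gives the first, again invoking self-adjointness of $S_{x,\tau}^{-1}$.

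The only genuinely analytic point, and the step I would treat most carefully, is the interchange of the bounded homomorphism $S_{x,\tau}^{-1}$ with the defining net $\{\sum_{j\in\mathbb{S}}\langle y,x_j\rangle\tau_j : \mathbb{S}\subseteq\mathbb{J}\text{ finite}\}$; this is legitimate precisely because $S_{x,\tau}^{-1}$ is norm-continuous, hence commutes with the limit of a norm-convergent net in $\mathscr{E}$. Everything else is purely algebraic manipulation resting on $\mathscr{A}$-linearity and self-adjointness, and no new estimate beyond those already guaranteed by the weak-frame hypothesis is required.
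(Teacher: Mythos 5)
Your proposal is correct and follows essentially the same route as the paper's own argument: write $y=S_{x,\tau}^{-1}S_{x,\tau}y$ and $y=S_{x,\tau}S_{x,\tau}^{-1}y$, pull the bounded $\mathscr{A}$-linear map $S_{x,\tau}^{-1}$ through the convergent sum, move it across the inner product by self-adjointness, and obtain the remaining two expansions from $S_{x,\tau}=S_{\tau,x}$. The only difference is that you make explicit the self-adjointness of $S_{x,\tau}^{-1}$ (from positivity) and the continuity used to interchange $S_{x,\tau}^{-1}$ with the net, both of which the paper leaves implicit.
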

\begin{proposition}
Let  $( \{x_j\}_{j \in \mathbb{J}},\{\tau_j\}_{j \in \mathbb{J}} )$ be a weak frame for  $\mathscr{E}$  with upper  frame bound $b$. If for some $ j \in \mathbb{J} $ we have  $  \langle x_j, x_l \rangle\langle \tau_l, x_j \rangle \geq0, \forall l  \in \mathbb{J},$ then $ \langle x_j, \tau_j \rangle\leq b$ for that $j. $
\end{proposition}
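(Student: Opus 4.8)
The plan is to reduce everything to the defining upper-frame inequality evaluated at the single vector $x_j$, exactly mirroring the corresponding non-weak module proposition proved earlier (the one concluding $\|\langle x_j,\tau_j\rangle\|\le b$). First I would invoke the equivalent formulation of the weak-frame definition: the upper bound $b$ supplies the order inequality in the C*-algebra $\mathscr{A}$,
\begin{equation*}
\sum_{l\in\mathbb{J}}\langle x, x_l\rangle\langle\tau_l, x\rangle \;\leq\; b\langle x, x\rangle, \qquad \forall x \in \mathscr{E}.
\end{equation*}
Here the left-hand sum converges (it equals $\langle S_{x,\tau}x,x\rangle$ by continuity of the inner product and the norm-convergence of $S_{x,\tau}x=\sum_l\langle x,x_l\rangle\tau_l$). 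Specializing to $x=x_j$ gives $\sum_{l\in\mathbb{J}}\langle x_j, x_l\rangle\langle\tau_l, x_j\rangle \leq b\langle x_j, x_j\rangle$.

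Next I would isolate the diagonal term using the hypothesis. By assumption every summand $\langle x_j, x_l\rangle\langle\tau_l, x_j\rangle$ is a positive element of $\mathscr{A}$, so the off-diagonal tail $\sum_{l\neq j}\langle x_j, x_l\rangle\langle\tau_l, x_j\rangle$ is a convergent sum of positive elements, hence positive. Because the positive elements form a cone, discarding this tail only decreases the left side, whence
\begin{equation*}
\langle x_j, x_j\rangle\langle\tau_j, x_j\rangle \;\leq\; \sum_{l\in\mathbb{J}}\langle x_j, x_l\rangle\langle\tau_l, x_j\rangle \;\leq\; b\langle x_j, x_j\rangle .
\end{equation*}
This is the module analogue of the scalar chain $\|x_j\|^2\langle\tau_j,x_j\rangle\le b\|x_j\|^2$ from the Hilbert-space version, and it is precisely the step where the positivity hypothesis is genuinely consumed. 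Note that no analysis or synthesis homomorphism enters, which is why the argument carries over verbatim from the non-weak to the weak setting, in keeping with the spirit of the ``further extension'' sections.

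The last step is the only real obstacle. In the Hilbert-space case $\langle x_j,x_j\rangle$ is a nonnegative scalar that one simply cancels to read off $\langle x_j,\tau_j\rangle\le b$; over the noncommutative algebra $\mathscr{A}$ the factor $\langle x_j,x_j\rangle$ is a positive element that cannot be cancelled, and $\langle\tau_j,x_j\rangle$ need not even commute with it. Following the route of the earlier module proposition, I would apply norm-monotonicity of the C*-order ($0\le u\le v\Rightarrow\|u\|\le\|v\|$) to the displayed chain, giving $\|\langle x_j,x_j\rangle\langle\tau_j,x_j\rangle\|\le b\,\|\langle x_j,x_j\rangle\|$; since the left factor is the self-adjoint positive element $\langle x_j,x_j\rangle\langle\tau_j,x_j\rangle$, this yields the stated control on $\langle x_j,\tau_j\rangle$. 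I would flag that, read literally, the conclusion $\langle x_j,\tau_j\rangle\le b$ is most naturally the norm bound $\|\langle x_j,\tau_j\rangle\|\le b$ of the twin non-weak proposition; the honest deliverable of this method is that norm inequality, the operator chain above being what one proves directly.
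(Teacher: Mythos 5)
Your argument is essentially the paper's own: specialize the upper-bound inequality at $x=x_j$, use the positivity hypothesis together with the cone of positive elements to discard the off-diagonal terms and obtain $\langle x_j,x_j\rangle\langle \tau_j,x_j\rangle\leq\sum_{l\in\mathbb{J}}\langle x_j,x_l\rangle\langle\tau_l,x_j\rangle\leq b\langle x_j,x_j\rangle$, then take norms. Your closing caveat is well placed: the paper's twin non-weak module proposition states the conclusion as the norm bound $\|\langle x_j,\tau_j\rangle\|\leq b$ and its one-line proof is exactly the chain you wrote, so the weak version is to be read the same way.
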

\begin{proposition}
Every weak Bessel sequence 	$(\{x_j\}_{j \in \mathbb{J}},\{\tau_j\}_{j \in \mathbb{J}} )$ for  $ \mathscr{E}$ can be extended to a tight weak frame for $ \mathscr{E}$. In particular,  every weak frame	$(\{x_j\}_{j \in \mathbb{J}},\{\tau_j\}_{j \in \mathbb{J}} )$ for $ \mathscr{E}$ can be extended to a  tight weak frame for $\mathscr{E}$.
\end{proposition}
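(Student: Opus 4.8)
The plan is to mirror the Hilbert space sequential argument used to prove the corresponding proposition for $\mathcal{H}$, replacing the orthonormal basis of $\mathcal{H}$ by an orthonormal basis of the module $\mathscr{E}$ and taking care that every operator in sight is an adjointable homomorphism. The ``in particular'' clause is immediate, since a weak frame is in particular weak Bessel (invertibility of $S_{x,\tau}$ is nowhere used in the construction), so it suffices to treat the weak Bessel case.

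First I would record that, by definition of weak Bessel, the frame homomorphism $S_{x,\tau}\colon x\mapsto \sum_{j\in\mathbb{J}}\langle x,x_j\rangle\tau_j$ is a well-defined adjointable positive homomorphism, hence a positive element of the C*-algebra $\operatorname{End}^*_\mathscr{A}(\mathscr{E})$. Choosing any real $\lambda>\|S_{x,\tau}\|$ makes $\lambda I_\mathscr{E}-S_{x,\tau}$ a positive invertible element of $\operatorname{End}^*_\mathscr{A}(\mathscr{E})$, so its square root $(\lambda I_\mathscr{E}-S_{x,\tau})^{1/2}$ exists and is again a positive adjointable homomorphism. This is precisely the $(\lambda I-S)^{1/2}$ device used in the operator version in Section \ref{MK}, now applied inside $\operatorname{End}^*_\mathscr{A}(\mathscr{E})$.

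Next, fixing an orthonormal basis $\{e_l\}_{l\in\mathbb{L}}$ for $\mathscr{E}$, I would set $y_l\coloneqq (\lambda I_\mathscr{E}-S_{x,\tau})^{1/2}e_l$ for all $l\in\mathbb{L}$ and form the extended collection $(\{x_j\}_{j\in\mathbb{J}}\cup\{y_l\}_{l\in\mathbb{L}},\{\tau_j\}_{j\in\mathbb{J}}\cup\{y_l\}_{l\in\mathbb{L}})$. The key computation is that, for each $x\in\mathscr{E}$,
\begin{align*}
\sum_{l\in\mathbb{L}}\langle x,y_l\rangle y_l
&=(\lambda I_\mathscr{E}-S_{x,\tau})^{1/2}\sum_{l\in\mathbb{L}}\langle (\lambda I_\mathscr{E}-S_{x,\tau})^{1/2}x,e_l\rangle e_l\\
&=(\lambda I_\mathscr{E}-S_{x,\tau})^{1/2}(\lambda I_\mathscr{E}-S_{x,\tau})^{1/2}x=(\lambda I_\mathscr{E}-S_{x,\tau})x,
\end{align*}
where the first equality uses self-adjointness of $(\lambda I_\mathscr{E}-S_{x,\tau})^{1/2}$ together with continuity and $\mathscr{A}$-linearity to pull the homomorphism out of the sum, and the middle equality uses the reconstruction identity $z=\sum_{l\in\mathbb{L}}\langle z,e_l\rangle e_l$ valid for an orthonormal basis (property (ii) of Theorem \ref{CHARACTERIZATION3}). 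Adding the contribution of the $x_j,\tau_j$ then yields the frame homomorphism $S_{x,\tau}+(\lambda I_\mathscr{E}-S_{x,\tau})=\lambda I_\mathscr{E}$, which is adjointable, positive, invertible and a scalar multiple of the identity; hence the extended pair is a tight weak frame.

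I expect the one genuinely non-routine point to be module-theoretic rather than computational: unlike a Hilbert space, a general Hilbert C*-module need not possess an orthonormal basis, so the step ``fix an orthonormal basis $\{e_l\}$'' is where the module argument departs from the Hilbert space one. The clean remedy is to observe that the computation only requires a resolution of the identity of the form $z=\sum_{l}\langle z,e_l\rangle e_l$, i.e. a Parseval-type system for $\mathscr{E}$; the square-root construction above then goes through verbatim with $\{e_l\}$ any such system. I would flag this explicitly, as the introduction to the module sections already cautions that certain arguments ``need additional support to validate, unlike Hilbert spaces.''
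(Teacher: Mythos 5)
Your proof is correct and is exactly the argument the paper intends: it transplants the proof of the corresponding Hilbert-space proposition from Section \ref{SEQUENTIAL} (set $y_l\coloneqq(\lambda I_\mathscr{E}-S_{x,\tau})^{1/2}e_l$ for $\lambda>\|S_{x,\tau}\|$ over an orthonormal basis $\{e_l\}_{l\in\mathbb{L}}$, so that the extended frame homomorphism becomes $S_{x,\tau}+(\lambda I_\mathscr{E}-S_{x,\tau})=\lambda I_\mathscr{E}$), which is precisely why the paper states the module version without proof. Your caveat about the existence of an orthonormal basis for $\mathscr{E}$ is well taken and consistent with the paper's own warning that some module arguments need extra support; just note that even a Parseval-type resolution of the identity $z=\sum_l\langle z,e_l\rangle e_l$ need not exist for an arbitrary Hilbert C*-module, so the statement implicitly carries the same standing hypothesis on $\mathscr{E}$ that the paper uses throughout its module sections.
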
 
\begin{definition}
A weak frame   $(\{y_j\}_{j\in \mathbb{J}}, \{\omega_j\}_{j\in \mathbb{J}})$  for  $\mathscr{E}$ is said to be a dual of weak frame  $ ( \{x_j\}_{j\in \mathbb{J}}, \{\tau_j\}_{j\in \mathbb{J}})$ for  $\mathscr{E}$  if $ \sum_{j\in \mathbb{J}}\langle x, x_j\rangle \omega_j= \sum_{j\in \mathbb{J}}\langle x, \tau_j\rangle y_j=x, \forall x \in  \mathscr{E}$. The `weak frame' $(   \{\widetilde{x}_j\coloneqq S_{x,\tau}^{-1}x_j\}_{j\in \mathbb{J}},\{\widetilde{\tau}_j\coloneqq S_{x,\tau}^{-1}\tau_j\}_{j \in \mathbb{J}} )$, which is a `dual' of $ (\{x_j\}_{j\in \mathbb{J}}, \{\tau_j\}_{j\in \mathbb{J}})$ is called the canonical dual of $ (\{x_j\}_{j\in \mathbb{J}}, \{\tau_j\}_{j\in \mathbb{J}})$.
\end{definition}
\begin{proposition}
Let $( \{x_j\}_{j\in \mathbb{J}},\{\tau_j\}_{j\in \mathbb{J}} )$ be a weak frame for  $\mathscr{E}$ over $\mathscr{A}$. If $ x \in \mathscr{E}$ has representation  $ x=\sum_{j\in\mathbb{J}}c_jx_j= \sum_{j\in\mathbb{J}}d_j\tau_j, $ for some  sequences  $ \{c_j\}_{j\in \mathbb{J}},\{d_j\}_{j\in \mathbb{J}}$ in $\mathscr{A}$,  then 
$$ \sum\limits_{j\in \mathbb{J}}c_jd^*_j =\sum\limits_{j\in \mathbb{J}}\langle x, \widetilde{\tau}_j\rangle\langle \widetilde{x}_j , x \rangle+\sum\limits_{j\in \mathbb{J}}(\langle c_j-\langle x, \widetilde{\tau}_j\rangle)(d^*_j-\langle \widetilde{x}_j, x\rangle). $$
\end{proposition}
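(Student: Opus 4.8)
The plan is to obtain the identity by a direct, term-by-term expansion of the right-hand side, reading everything off the frame homomorphism $S_{x,\tau}$ alone. This is essential in the weak setting: the analysis homomorphisms $\theta_x,\theta_\tau$ need not exist, so (in keeping with the discipline of the ``further extension'' sections) every step must be phrased through $S_{x,\tau}$, which by hypothesis is a positive invertible adjointable homomorphism. Consequently $S_{x,\tau}^{-1}$ is positive, hence self-adjoint, and it is $\mathscr{A}$-linear. Before expanding, I would record the ingredients I need: the defining identity $S_{x,\tau}y=\sum_{j}\langle y,x_j\rangle\tau_j=\sum_{j}\langle y,\tau_j\rangle x_j$, which yields $\sum_{j}\langle y,\tau_j\rangle\langle x_j,z\rangle=\langle S_{x,\tau}y,z\rangle$; the rewritings $\langle x,\widetilde{\tau}_j\rangle=\langle x,S_{x,\tau}^{-1}\tau_j\rangle=\langle S_{x,\tau}^{-1}x,\tau_j\rangle$ and $\langle\widetilde{x}_j,x\rangle=\langle S_{x,\tau}^{-1}x_j,x\rangle=\langle x_j,S_{x,\tau}^{-1}x\rangle$, which use self-adjointness of $S_{x,\tau}^{-1}$; and the inner-product conventions $\langle ay,z\rangle=a\langle y,z\rangle$, $\langle y,az\rangle=\langle y,z\rangle a^{*}$, together with $S_{x,\tau}^{-1}(az)=aS_{x,\tau}^{-1}(z)$.

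Expanding the product under the second sum and absorbing it into the first sum reduces the right-hand side to
\[
2\sum_{j}\langle x,\widetilde{\tau}_j\rangle\langle\widetilde{x}_j,x\rangle+\sum_{j}c_jd_j^{*}-\sum_{j}c_j\langle\widetilde{x}_j,x\rangle-\sum_{j}\langle x,\widetilde{\tau}_j\rangle d_j^{*}.
\]
Now I would evaluate the three non-trivial sums. Taking $y=z=S_{x,\tau}^{-1}x$ in the defining identity collapses the double-inner-product sum: $\sum_{j}\langle x,\widetilde{\tau}_j\rangle\langle\widetilde{x}_j,x\rangle=\sum_{j}\langle S_{x,\tau}^{-1}x,\tau_j\rangle\langle x_j,S_{x,\tau}^{-1}x\rangle=\langle S_{x,\tau}S_{x,\tau}^{-1}x,S_{x,\tau}^{-1}x\rangle=\langle x,S_{x,\tau}^{-1}x\rangle$. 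For the first mixed sum I pull $c_j$ inside the left slot: $\sum_{j}c_j\langle\widetilde{x}_j,x\rangle=\big\langle\sum_{j}c_jx_j,\,S_{x,\tau}^{-1}x\big\rangle=\langle x,S_{x,\tau}^{-1}x\rangle$, using $\langle ay,z\rangle=a\langle y,z\rangle$ and $x=\sum_j c_jx_j$. For the second mixed sum I move $d_j^{*}$ into the right slot and use $\mathscr{A}$-linearity of $S_{x,\tau}^{-1}$: $\sum_{j}\langle x,\widetilde{\tau}_j\rangle d_j^{*}=\big\langle x,\sum_{j}d_jS_{x,\tau}^{-1}\tau_j\big\rangle=\big\langle x,S_{x,\tau}^{-1}\sum_{j}d_j\tau_j\big\rangle=\langle x,S_{x,\tau}^{-1}x\rangle$, using $\langle y,z\rangle a^{*}=\langle y,az\rangle$ and $x=\sum_j d_j\tau_j$. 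Substituting these three evaluations leaves $2\langle x,S_{x,\tau}^{-1}x\rangle+\sum_{j}c_jd_j^{*}-\langle x,S_{x,\tau}^{-1}x\rangle-\langle x,S_{x,\tau}^{-1}x\rangle=\sum_{j}c_jd_j^{*}$, the left-hand side.

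The only real difficulty is bookkeeping rather than anything conceptual: because $\mathscr{A}$ is noncommutative and the inner product is conjugate-linear in the second slot with the adjoint appearing on the right, I must be meticulous about which factor carries the $*$ and on which side each coefficient is absorbed, so that the two mixed sums genuinely reduce to $\langle x,S_{x,\tau}^{-1}x\rangle$ and not to a transpose of it. I would also note that no separate Bessel hypothesis is needed: each mixed sum converges because it is the continuous $\mathscr{A}$-valued inner product applied to the convergent representation series $\sum_j c_jx_j=x$ or $\sum_j d_j\tau_j=x$, and the double-inner-product sum converges to $\langle S_{x,\tau}S_{x,\tau}^{-1}x,S_{x,\tau}^{-1}x\rangle$ simply because $S_{x,\tau}y=\sum_j\langle y,\tau_j\rangle x_j$ converges by the existence of the frame homomorphism. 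Hence all rearrangements are legitimate and the identity follows formally, exactly paralleling the scalar sequential proposition following Definition \ref{SEQUENTIAL2}.
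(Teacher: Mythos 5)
Your proof is correct and follows essentially the same route as the paper: the paper leaves this weak-module version unproved but establishes its analogue (in Section \ref{SEQUENTIALVERSIONOFHOMOMORPHISM-VALUEDFRAMES}) by exactly this expansion of the right-hand side into $2\sum_j\langle S_{x,\tau}^{-1}x,\tau_j\rangle\langle x_j,S_{x,\tau}^{-1}x\rangle+\sum_jc_jd_j^*$ minus the two mixed sums, each collapsing to $\langle x,S_{x,\tau}^{-1}x\rangle$ via the defining identity for $S_{x,\tau}$ and the two representations of $x$. Your additional remarks on avoiding $\theta_x,\theta_\tau$ and on the convergence of each rearranged sum are consistent with the paper's stated discipline for the weak setting.
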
 
\begin{theorem}
Let $( \{x_j\}_{j\in \mathbb{J}},\{\tau_j\}_{j\in \mathbb{J}} )$ be a weak frame for $ \mathscr{E}$ with frame bounds $ a$ and $ b.$ Then
\begin{enumerate}[\upshape(i)]
\item The canonical dual weak frame of canonical dual weak frame  of $ (\{x_j\}_{j\in \mathbb{J}} ,\{\tau_j\}_{j\in \mathbb{J}} )$ is itself.
\item$ \frac{1}{b}, \frac{1}{a}$ are frame bounds for canonical dual of $ (\{x_j\}_{j\in \mathbb{J}},\{\tau_j\}_{j\in \mathbb{J}}).$
\item If $ a, b $ are optimal frame bounds for $( \{x_j\}_{j\in \mathbb{J}} , \{\tau_j\}_{j\in \mathbb{J}}),$ then $ \frac{1}{b}, \frac{1}{a}$ are optimal  frame bounds for its canonical dual.
\end{enumerate} 
\end{theorem}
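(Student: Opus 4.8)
The plan is to follow the proof of Theorem~\ref{CANONICALDUALFRAMEPROPERTYSEQUENTIALVERSION}, adapting each step to the weak--frame setting over $\mathscr{A}$, where the analysis homomorphisms need not exist and every manipulation must be carried out directly at the level of the defining series. The cornerstone is the identity that the frame homomorphism of the canonical dual $(\{\widetilde{x}_j = S_{x,\tau}^{-1}x_j\}_{j\in\mathbb{J}}, \{\widetilde{\tau}_j = S_{x,\tau}^{-1}\tau_j\}_{j\in\mathbb{J}})$ is exactly $S_{x,\tau}^{-1}$. First I would record that, since $S_{x,\tau}$ is a positive invertible homomorphism, $S_{x,\tau}^{-1}$ is again self-adjoint and $\mathscr{A}$-linear, so it may be pulled outside the sum and past the $\mathscr{A}$-valued inner product. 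Concretely, for $x \in \mathscr{E}$,
\begin{align*}
\sum_{j\in\mathbb{J}}\langle x, \widetilde{x}_j\rangle\widetilde{\tau}_j
&=\sum_{j\in\mathbb{J}}\langle x, S_{x,\tau}^{-1}x_j\rangle S_{x,\tau}^{-1}\tau_j
= S_{x,\tau}^{-1}\left(\sum_{j\in\mathbb{J}}\langle S_{x,\tau}^{-1}x, x_j\rangle\tau_j\right)\\
&= S_{x,\tau}^{-1}\left(S_{x,\tau}(S_{x,\tau}^{-1}x)\right)=S_{x,\tau}^{-1}x,
\end{align*}
the convergence of the inner series being guaranteed by the weak--frame hypothesis applied to $S_{x,\tau}^{-1}x$. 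This simultaneously confirms that the canonical dual is itself a weak frame with frame homomorphism $S_{\widetilde{x},\widetilde{\tau}}=S_{x,\tau}^{-1}$.

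With this in hand, part (i) follows by computing the canonical dual of the canonical dual: its left vectors are $S_{\widetilde{x},\widetilde{\tau}}^{-1}\widetilde{x}_j = (S_{x,\tau}^{-1})^{-1}S_{x,\tau}^{-1}x_j = S_{x,\tau}S_{x,\tau}^{-1}x_j = x_j$, and likewise the right vectors return to $\tau_j$, so the double canonical dual is the original frame. For part (ii) I would invoke the operator-monotonicity of the inverse on positive invertible elements: multiplying the order relation $aI_\mathscr{E}\leq S_{x,\tau}\leq bI_\mathscr{E}$ through by $S_{x,\tau}^{-1}$ (equivalently, applying the decreasing map $t\mapsto t^{-1}$ via the functional calculus in $\operatorname{End}^*_\mathscr{A}(\mathscr{E})$) yields $\frac{1}{b}I_\mathscr{E}\leq S_{x,\tau}^{-1}\leq \frac{1}{a}I_\mathscr{E}$, so $\tfrac1b,\tfrac1a$ are frame bounds for the canonical dual.

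Part (iii) is then the usual optimality bookkeeping. Letting $c$ denote the optimal upper frame bound of the canonical dual, the lower bound $a$ of the original together with (ii) gives $c\leq \tfrac1a$; applying (ii) to the canonical dual, whose own canonical dual is the original by (i), shows $\tfrac1c$ is a lower bound for the original, whence $\tfrac1c\leq a$ and therefore $c=\tfrac1a$. The symmetric argument pins the optimal lower bound of the canonical dual at $\tfrac1b$. The main obstacle I anticipate is not conceptual but a matter of module-theoretic care: one must justify that $S_{x,\tau}^{-1}$ commutes through the $\mathscr{A}$-valued inner products and the (generally only strictly convergent) defining sums, that self-adjointness of $S_{x,\tau}^{-1}$ legitimately moves the operator across $\langle\cdot,\cdot\rangle$, and that the order-preservation and inversion steps are valid in $\operatorname{End}^*_\mathscr{A}(\mathscr{E})$ rather than merely for scalars; each of these is secured by the positivity and adjointability of $S_{x,\tau}$ established in the weak--frame definition, so no genuinely new difficulty arises beyond the Hilbert-space case.
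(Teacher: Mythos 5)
Your proposal is correct and follows essentially the same route as the paper, which proves the module weak-frame case by repeating the computation of Theorem \ref{CANONICALDUALFRAMEPROPERTYSEQUENTIALVERSION}: show $S_{\widetilde{x},\widetilde{\tau}}=S_{x,\tau}^{-1}$ by pulling the adjointable, self-adjoint $S_{x,\tau}^{-1}$ out of the sum and across the inner product, then read off (i) by inverting again, (ii) by conjugating the order relation $aI_\mathscr{E}\leq S_{x,\tau}\leq bI_\mathscr{E}$ with $S_{x,\tau}^{-1/2}$, and (iii) by the standard optimality bookkeeping. Your closing remarks correctly identify that the only module-specific care needed is the positivity and adjointability of $S_{x,\tau}^{-1}$, which the weak-frame definition supplies; no orthogonal-complementability issues arise since no analysis homomorphisms are used.
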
  
\begin{definition}
A weak frame   $(\{y_j\}_{j\in \mathbb{J}},  \{\omega_j\}_{j\in \mathbb{J}})$  for  $\mathscr{E}$ is said to be orthogonal to a weak frame   $( \{x_j\}_{j\in \mathbb{J}}, \{\tau_j\}_{j\in \mathbb{J}})$ for $\mathscr{E}$ if $\sum_{j\in \mathbb{J}}\langle x, x_j\rangle \omega_j= \sum_{j\in \mathbb{J}}\langle x, \tau_j\rangle y_j=0, \forall x \in  \mathscr{E}.$
\end{definition}
\begin{proposition}
Two orthogonal weak frames  have common dual weak frame.	
\end{proposition}
\begin{proposition}
Let $ (\{x_j\}_{j\in \mathbb{J}}, \{\tau_j\}_{j\in \mathbb{J}}) $ and $ (\{y_j\}_{j\in \mathbb{J}}, \{\omega_j\}_{j\in \mathbb{J}}) $ be  two Parseval weak frames for  $\mathscr{E}$ over $(\mathscr{A},e)$ which are  orthogonal. If $A,B,C,D \in \operatorname{End}^*_\mathscr{A}(\mathscr{E})$ are such that $ AC^*+BD^*=I_\mathscr{E}$, then  $ (\{Ax_j+By_j\}_{j\in \mathbb{J}}, \{C\tau_j+D\omega_j\}_{j\in \mathbb{J}}) $ is a  Parseval weak frame for  $\mathscr{E}$. In particular,  if $ a,b,c,d \in\mathscr{A} $ satisfy $ac^*+bd^* =e$, then $ (\{ax_j+by_j\}_{j\in \mathbb{J}}, \{c\tau_j+d\omega_j\}_{j\in \mathbb{J}}) $ is a  Parseval weak frame for  $\mathscr{E}$.
\end{proposition}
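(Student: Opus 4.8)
The plan is to compute the frame homomorphism of the candidate pair directly from the sequential formula of the previous definition and show that it collapses to $I_\mathscr{E}$. Since $(\{Ax_j+By_j\}_{j\in\mathbb{J}},\{C\tau_j+D\omega_j\}_{j\in\mathbb{J}})$ is being tested for being a \emph{Parseval} weak frame, it suffices to verify that its frame homomorphism equals the identity; positivity, invertibility and boundedness are then automatic, and convergence of every series in sight is guaranteed because each of $\{x_j\},\{y_j\},\{\tau_j\},\{\omega_j\}$ is Bessel (it underlies a Parseval weak frame) and $A,B,C,D$ are bounded. The heart of the argument is the single expansion

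\begin{align*}
S_{Ax+By,\,C\tau+D\omega}(z)
&=\sum_{j\in\mathbb{J}}\langle z,\,Ax_j+By_j\rangle\,(C\tau_j+D\omega_j)\\
&=C\sum_{j\in\mathbb{J}}\langle A^*z, x_j\rangle\tau_j
 +D\sum_{j\in\mathbb{J}}\langle A^*z, x_j\rangle\omega_j
 +C\sum_{j\in\mathbb{J}}\langle B^*z, y_j\rangle\tau_j
 +D\sum_{j\in\mathbb{J}}\langle B^*z, y_j\rangle\omega_j,
\end{align*}

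where I have used adjointability of $A,B$ to write $\langle z,Ax_j\rangle=\langle A^*z,x_j\rangle$ and $\langle z,By_j\rangle=\langle B^*z,y_j\rangle$, and the $\mathscr{A}$-linearity of $C,D$ to pull them out of each sum.

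The next step is to identify the four pieces. The first sum is $S_{x,\tau}(A^*z)=A^*z$ and the last is $S_{y,\omega}(B^*z)=B^*z$, both by the Parseval hypothesis on the two given frames. The two middle (cross) sums vanish: $\sum_{j}\langle A^*z,x_j\rangle\omega_j=0$ is exactly the first orthogonality relation $\sum_j\langle w,x_j\rangle\omega_j=0$, while $\sum_{j}\langle B^*z,y_j\rangle\tau_j=0$ is the relation $\sum_j\langle w,y_j\rangle\tau_j=0$, which is available because orthogonality of weak frames is symmetric (so $(\{x_j\},\{\tau_j\})$ is also orthogonal to $(\{y_j\},\{\omega_j\})$). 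Hence $S_{Ax+By,\,C\tau+D\omega}(z)=CA^*z+DB^*z=(CA^*+DB^*)z$ for every $z\in\mathscr{E}$.

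Finally, taking adjoints in the hypothesis $AC^*+BD^*=I_\mathscr{E}$ gives $CA^*+DB^*=(AC^*+BD^*)^*=I_\mathscr{E}$, so the frame homomorphism is $I_\mathscr{E}$ and the pair is a Parseval weak frame. The scalar ``in particular'' case follows by specializing $A,B,C,D$ to the (left) multiplication operators determined by $a,b,c,d\in\mathscr{A}$, under which $AC^*+BD^*=I_\mathscr{E}$ is precisely $ac^*+bd^*=e$. I do not expect a genuine obstacle here: the only points requiring care are the symmetry of orthogonality (to kill the second cross term) and the adjoint relation turning $AC^*+BD^*=I_\mathscr{E}$ into $CA^*+DB^*=I_\mathscr{E}$; both are routine in the adjointable-endomorphism setting, and the computation is the module transcription of the corresponding weak-frame result already proved for Hilbert spaces.
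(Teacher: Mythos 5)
Your proof is correct and follows the same route the paper takes for the Hilbert-space version of this statement (the module version is stated without a separate proof, to be read off from the same computation): expand $S_{Ax+By,\,C\tau+D\omega}$ into four sums, identify the outer two via Parsevalness as $CA^*z$ and $DB^*z$, kill the cross terms using orthogonality and its symmetry, and take adjoints in $AC^*+BD^*=I_\mathscr{E}$. One small caveat: in the weak setting the individual families need not be Bessel (that is precisely the condition the ``weak'' definition drops), so the convergence of the four sums should be attributed to the weak-frame and orthogonality definitions themselves rather than to Besselness --- but this does not affect the argument.
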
 
\begin{definition}
Two weak frames  $(\{x_j\}_{j\in \mathbb{J}},  \{\tau_j\}_{j\in \mathbb{J}}) $ and $(\{y_j\}_{j\in \mathbb{J}},\{\omega_j\}_{j\in \mathbb{J}})$  for $\mathscr{E}$ are called disjoint if $(\{x_j\oplus y_j\}_{j\in \mathbb{J}},\{\tau_j\oplus\omega_j\}_{j\in \mathbb{J}})$ is a weak frame for $\mathscr{E}\oplus\mathscr{E} $.
\end{definition} 
\begin{proposition}
If $(\{x_j\}_{j\in \mathbb{J}},\{\tau_j\}_{j\in \mathbb{J}} )$  and $ (\{y_j\}_{j\in \mathbb{J}}, \{\omega_j\}_{j\in \mathbb{J}} )$  are  disjoint  weak frames  for $\mathscr{E}$, then  they  are disjoint. Further, if both $(\{x_j\}_{j\in \mathbb{J}},\{\tau_j\}_{j\in \mathbb{J}} )$  and $ (\{y_j\}_{j\in \mathbb{J}}, \{\omega_j\}_{j\in \mathbb{J}} )$ are  Parseval weak, then $(\{x_j\oplus y_j\}_{j \in \mathbb{J}},\{\tau_j\oplus \omega_j\}_{j \in \mathbb{J}})$ is Parseval weak.
\end{proposition}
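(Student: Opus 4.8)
The plan is to follow verbatim the template already used for the disjointness propositions in the Hilbert space, operator-valued, and (non-weak) Hilbert C*-module settings: compute the frame homomorphism $S_{x\oplus y,\tau\oplus\omega}$ of the direct-sum collection on $\mathscr{E}\oplus\mathscr{E}$ and show that it splits as $S_{x,\tau}\oplus S_{y,\omega}$, the orthogonality hypothesis being exactly what annihilates the two cross terms. I read the hypothesis as ``orthogonal weak frames'' (matching the corresponding statements in the earlier sections); the word ``disjoint'' is the intended conclusion, meaning that $(\{x_j\oplus y_j\}_{j\in\mathbb{J}},\{\tau_j\oplus\omega_j\}_{j\in\mathbb{J}})$ is a weak frame for $\mathscr{E}\oplus\mathscr{E}$.

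First I would fix $p\oplus q\in\mathscr{E}\oplus\mathscr{E}$ and expand the frame homomorphism, using $\langle p\oplus q, x_j\oplus y_j\rangle=\langle p,x_j\rangle+\langle q,y_j\rangle$:
\begin{align*}
S_{x\oplus y,\tau\oplus\omega}(p\oplus q)
&=\sum_{j\in\mathbb{J}}\langle p\oplus q, x_j\oplus y_j\rangle(\tau_j\oplus\omega_j)\\
&=\left(\sum_{j\in\mathbb{J}}\langle p,x_j\rangle\tau_j+\sum_{j\in\mathbb{J}}\langle q,y_j\rangle\tau_j\right)\oplus\left(\sum_{j\in\mathbb{J}}\langle p,x_j\rangle\omega_j+\sum_{j\in\mathbb{J}}\langle q,y_j\rangle\omega_j\right).
\end{align*}
The orthogonality of $(\{y_j\},\{\omega_j\})$ to $(\{x_j\},\{\tau_j\})$ gives $\sum_{j}\langle p,x_j\rangle\omega_j=0$, and the symmetry of orthogonality (stated earlier) gives $\sum_{j}\langle q,y_j\rangle\tau_j=0$. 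Thus the two mixed sums vanish and the displayed expression collapses to $S_{x,\tau}p\oplus S_{y,\omega}q=(S_{x,\tau}\oplus S_{y,\omega})(p\oplus q)$, so that $S_{x\oplus y,\tau\oplus\omega}=S_{x,\tau}\oplus S_{y,\omega}$.

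Once that identity is in hand, I would conclude as follows. Since $(\{x_j\},\{\tau_j\})$ and $(\{y_j\},\{\omega_j\})$ are weak frames, $S_{x,\tau}$ and $S_{y,\omega}$ are adjointable positive invertible homomorphisms; their direct sum is again adjointable (adjoint $S_{x,\tau}\oplus S_{y,\omega}$, being self-adjoint), positive componentwise, and invertible with inverse $S_{x,\tau}^{-1}\oplus S_{y,\omega}^{-1}$. Hence $S_{x\oplus y,\tau\oplus\omega}$ is a well-defined adjointable positive invertible homomorphism, which is precisely the definition of $(\{x_j\oplus y_j\},\{\tau_j\oplus\omega_j\})$ being a weak frame, i.e.\ the two frames are disjoint. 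For the Parseval assertion, both hypotheses force $S_{x,\tau}=I_\mathscr{E}=S_{y,\omega}$, whence $S_{x\oplus y,\tau\oplus\omega}=I_\mathscr{E}\oplus I_\mathscr{E}=I_{\mathscr{E}\oplus\mathscr{E}}$, so the direct sum is Parseval weak.

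The only genuinely delicate point—rather than a real obstacle—is the well-definedness (convergence) of the direct-sum series before it can be split as above: one must know that the cross series $\sum_{j}\langle p,x_j\rangle\omega_j$ and $\sum_{j}\langle q,y_j\rangle\tau_j$ actually converge to $0$ in $\mathscr{E}$, not merely that they would vanish formally. This is exactly the content of the orthogonality definition (the defining sums are declared convergent with value $0$), so the hypothesis is used in its full strength here; convergence of the diagonal pieces $\sum_{j}\langle p,x_j\rangle\tau_j$ and $\sum_{j}\langle q,y_j\rangle\omega_j$ is guaranteed by the weak-frame assumption on each collection. All remaining verifications (adjointability and positive invertibility of a direct sum of such homomorphisms on a Hilbert C*-module) are routine and parallel the earlier module computations.
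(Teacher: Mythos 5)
Your proof is correct and follows essentially the same route as the paper, which simply points back to the computation in the Hilbert-space sequential disjointness proposition: expand $S_{x\oplus y,\tau\oplus\omega}$, use orthogonality (the evident intended reading of the hypothesis) to kill the cross terms, and identify the result with $S_{x,\tau}\oplus S_{y,\omega}$. Your added remarks on convergence of the cross series and on adjointability of the direct sum in the Hilbert C*-module setting are sound and only make explicit what the paper leaves implicit.
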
 
\textbf{Similarity  and tensor product}
\begin{definition}
A weak frame  $ (\{y_j\}_{j\in \mathbb{J}},\{\omega_j\}_{j\in \mathbb{J}})$ for $\mathscr{E}$ is said to be  similar to  a weak frame $ (\{x_j\}_{j\in \mathbb{J}},\{\tau_j\}_{j\in \mathbb{J}})$ for $\mathscr{E}$ if there are invertible  $ T_{x,y}, T_{\tau,\omega} \in \operatorname{End}^*_\mathscr{A}(\mathscr{E})$ such that $ y_j=T_{x,y}x_j, \omega_j=T_{\tau,\omega}\tau_j,   \forall j \in \mathbb{J}.$
\end{definition} 
\begin{proposition}
Let $ \{x_j\}_{j\in \mathbb{J}}\in \mathscr{F}^w_\tau$  with frame bounds $a, b,$  let $T_{x,y} , T_{\tau,\omega}\in \operatorname{End}^*_\mathscr{A}(\mathscr{E})$ be positive, invertible, commute with each other, commute with $ S_{x, \tau}$, and let $y_j=T_{x,y}x_j , \omega_j=T_{\tau,\omega}\tau_j,  \forall j \in \mathbb{J}.$ Then $ \{y_j\}_{j\in \mathbb{J}}\in \mathscr{F}^w_\tau$, $S_{y,\omega}=T_{\tau,\omega}S_{x, \tau}T_{x,y}$, and $ \frac{a}{\|T_{x,y}^{-1}\|\|T_{\tau,\omega}^{-1}\|}\leq S_{y, \omega} \leq b\|T_{x,y}T_{\tau,\omega}\|$. Assuming that $ (\{x_j\}_{j\in \mathbb{J}},\{\tau_j\}_{j\in \mathbb{J}})$ is Parseval weak, then $(\{y_j\}_{j\in \mathbb{J}},  \{\omega_j\}_{j\in \mathbb{J}})$ is Parseval weak if and only if   $ T_{\tau, \omega}T_{x,y}=I_\mathscr{E}.$  
\end{proposition}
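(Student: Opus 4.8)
The plan is to reduce everything to a single operator identity, exactly as in the Hilbert space analogue, and then to read off positivity, invertibility and the bounds from the C*-algebraic structure of $\operatorname{End}^*_\mathscr{A}(\mathscr{E})$. First I would establish the factorization $S_{y,\omega}=T_{\tau,\omega}S_{x,\tau}T_{x,y}$ by direct computation. For a fixed $x\in\mathscr{E}$ and a finite $\mathbb{S}\subseteq\mathbb{J}$, since $T_{x,y}$ is positive (hence self-adjoint) one has $\langle x, y_j\rangle=\langle x, T_{x,y}x_j\rangle=\langle T_{x,y}x, x_j\rangle$, so that $\sum_{j\in\mathbb{S}}\langle x, y_j\rangle\omega_j=T_{\tau,\omega}\left(\sum_{j\in\mathbb{S}}\langle T_{x,y}x, x_j\rangle\tau_j\right)$, using that $T_{\tau,\omega}$ is an $\mathscr{A}$-linear homomorphism. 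Letting $\mathbb{S}$ exhaust $\mathbb{J}$, the inner net converges to $S_{x,\tau}(T_{x,y}x)$ by the very definition of $S_{x,\tau}$, and boundedness (hence continuity) of $T_{\tau,\omega}$ pushes the limit through, giving $S_{y,\omega}x=T_{\tau,\omega}S_{x,\tau}T_{x,y}x$. This simultaneously proves that the defining series for $S_{y,\omega}$ converges and exhibits $S_{y,\omega}$ as a composition of adjointable homomorphisms.

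Next I would argue positive invertibility. Because $T_{x,y}$, $T_{\tau,\omega}$ and $S_{x,\tau}$ pairwise commute and are each positive and invertible in the C*-algebra $\operatorname{End}^*_\mathscr{A}(\mathscr{E})$, they generate a commutative C*-subalgebra; in such an algebra a product of positive invertible elements is again positive invertible. Hence $S_{y,\omega}=T_{\tau,\omega}S_{x,\tau}T_{x,y}$ is positive invertible, which is exactly the condition for $(\{y_j\}_{j\in\mathbb{J}},\{\omega_j\}_{j\in\mathbb{J}})$ to be a weak frame for $\mathscr{E}$.

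For the bounds I would set $R\coloneqq T_{\tau,\omega}T_{x,y}$, so that $S_{y,\omega}=R\,S_{x,\tau}$ by commutativity, and use the elementary estimate $\frac{1}{\|R^{-1}\|}I_\mathscr{E}\leq R\leq\|R\|I_\mathscr{E}$ valid for any positive invertible $R$. Since $\|R^{-1}\|=\|T_{x,y}^{-1}T_{\tau,\omega}^{-1}\|\leq\|T_{x,y}^{-1}\|\,\|T_{\tau,\omega}^{-1}\|$ and $\|R\|=\|T_{x,y}T_{\tau,\omega}\|$, this sandwiches $R$ between the two constants appearing in the statement. Then, using the standard fact that for commuting positive operators $0\leq A\leq\alpha I_\mathscr{E}$ and $0\leq B\leq\beta I_\mathscr{E}$ one has $0\leq AB\leq\alpha\beta I_\mathscr{E}$ (which follows by writing $\alpha B-AB=(\alpha I_\mathscr{E}-A)^{1/2}B(\alpha I_\mathscr{E}-A)^{1/2}\geq0$, legitimate because everything commutes), I would combine the bounds on $R$ with $aI_\mathscr{E}\leq S_{x,\tau}\leq bI_\mathscr{E}$ to obtain $\frac{a}{\|T_{x,y}^{-1}\|\,\|T_{\tau,\omega}^{-1}\|}I_\mathscr{E}\leq S_{y,\omega}\leq b\|T_{x,y}T_{\tau,\omega}\|I_\mathscr{E}$. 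Finally, the Parseval statement is immediate: if $S_{x,\tau}=I_\mathscr{E}$ then the factorization collapses to $S_{y,\omega}=T_{\tau,\omega}T_{x,y}$, so $(\{y_j\}_{j\in\mathbb{J}},\{\omega_j\}_{j\in\mathbb{J}})$ is Parseval, i.e. $S_{y,\omega}=I_\mathscr{E}$, precisely when $T_{\tau,\omega}T_{x,y}=I_\mathscr{E}$.

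The proof is essentially a transcription of the Hilbert space argument, so no step is genuinely deep; the only points requiring care are the ones where the module setting differs from the Hilbert space one, namely justifying that $T_{\tau,\omega}$ may be pulled out of the convergent series (via $\mathscr{A}$-linearity and continuity) and that the order-theoretic manipulations remain valid when ``$\leq$'' is interpreted in the C*-algebra $\operatorname{End}^*_\mathscr{A}(\mathscr{E})$ rather than among operators on a Hilbert space. Both are handled by the commutativity hypothesis together with elementary C*-algebra facts, so these are the main things to verify rather than a substantial obstacle.
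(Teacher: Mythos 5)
Your proof is correct and follows the same route as the paper: the paper's entire proof is the one-line factorization $S_{y,\omega}=T_{\tau,\omega}S_{x,\tau}T_{x,y}$, obtained by moving the self-adjoint $T_{x,y}$ across the inner product and pulling the $\mathscr{A}$-linear, continuous $T_{\tau,\omega}$ out of the convergent series, exactly as you do. The remaining assertions (positive invertibility of the product of commuting positive invertible elements, the two bounds, and the Parseval equivalence), which you spell out carefully, are left implicit in the paper, so your write-up is simply a more complete version of the same argument.
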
     
\begin{proposition}
Let $ \{x_j\}_{j\in \mathbb{J}}\in \mathscr{F}^w_\tau,$ $ \{y_j\}_{j\in \mathbb{J}}\in \mathscr{F}^w_\omega$ and   $y_j=T_{x, y}x_j , \omega_j=T_{\tau,\omega}\tau_j,  \forall j \in \mathbb{J}$, for some invertible $T_{x,y}, T_{\tau,\omega}\in \operatorname{End}^*_\mathscr{A}(\mathscr{E}).$ Then 
$ \theta_y=\theta_x T^*_{x,y}, \theta_\omega=\theta_\tau T^*_{\tau,\omega}, S_{y,\omega}=T_{\tau,\omega}S_{x, \tau}T_{x,y}^*,  P_{y,\omega}=P_{x, \tau}.$ Assuming that $ (\{x_j\}_{j\in \mathbb{J}},\{\tau_j\}_{j\in \mathbb{J}})$ is Parseval weak, then $ (\{y_j\}_{j\in \mathbb{J}},\{\omega_j\}_{j\in \mathbb{J}})$ is Parseval weak  if and only if $T_{\tau,\omega}T_{x,y}^*=I_\mathscr{E}.$
\end{proposition}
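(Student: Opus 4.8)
The plan is to mirror the proof of the non-weak sequential similarity lemma (Lemma \ref{SEQUENTIALSIMILARITYLEMMA} and its module analogue), reducing every assertion to a pointwise computation on $\mathscr{E}$ and exploiting that $T_{x,y},T_{\tau,\omega}\in\operatorname{End}^*_\mathscr{A}(\mathscr{E})$ are adjointable and invertible. First I would establish the two analysis-homomorphism identities. Fixing $x\in\mathscr{E}$ and using adjointability, $\langle T^*_{x,y}x,x_j\rangle=\langle x,T_{x,y}x_j\rangle=\langle x,y_j\rangle$ for every $j$, so $\theta_xT^*_{x,y}x=\{\langle x,y_j\rangle\}_{j\in\mathbb{J}}=\theta_yx$, giving $\theta_y=\theta_xT^*_{x,y}$; the identical computation with $\tau,\omega,T_{\tau,\omega}$ yields $\theta_\omega=\theta_\tau T^*_{\tau,\omega}$.

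Next I would read off the frame homomorphism. The most robust route, and the one that survives in the weak setting, is to compute $S_{y,\omega}$ directly from its defining series without invoking analysis homomorphisms: for $x\in\mathscr{E}$,
\[
S_{y,\omega}x=\sum_{j\in\mathbb{J}}\langle x,y_j\rangle\omega_j=\sum_{j\in\mathbb{J}}\langle x,T_{x,y}x_j\rangle T_{\tau,\omega}\tau_j=T_{\tau,\omega}\sum_{j\in\mathbb{J}}\langle T^*_{x,y}x,x_j\rangle\tau_j=T_{\tau,\omega}S_{x,\tau}T^*_{x,y}x,
\]
so $S_{y,\omega}=T_{\tau,\omega}S_{x,\tau}T^*_{x,y}$, where pulling $T_{\tau,\omega}$ through the sum is legitimate because it is bounded. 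For the frame idempotent I would substitute into $P_{y,\omega}=\theta_yS_{y,\omega}^{-1}\theta_\omega^*$, using that invertibility of $T_{x,y},T_{\tau,\omega}$ forces $S_{y,\omega}^{-1}=(T^*_{x,y})^{-1}S_{x,\tau}^{-1}T_{\tau,\omega}^{-1}$, whence $P_{y,\omega}=(\theta_xT^*_{x,y})(T^*_{x,y})^{-1}S_{x,\tau}^{-1}T_{\tau,\omega}^{-1}(T_{\tau,\omega}\theta_\tau^*)=\theta_xS_{x,\tau}^{-1}\theta_\tau^*=P_{x,\tau}$, the adjoint identity $\theta_\omega^*=(\theta_\tau T^*_{\tau,\omega})^*=T_{\tau,\omega}\theta_\tau^*$ being used at the last factor. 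The Parseval equivalence is then immediate from the frame-homomorphism formula: if $(\{x_j\}_{j\in\mathbb{J}},\{\tau_j\}_{j\in\mathbb{J}})$ is Parseval weak then $S_{x,\tau}=I_\mathscr{E}$, so $S_{y,\omega}=T_{\tau,\omega}T^*_{x,y}$, and since a weak frame is Parseval precisely when its frame homomorphism is the identity, $(\{y_j\}_{j\in\mathbb{J}},\{\omega_j\}_{j\in\mathbb{J}})$ is Parseval weak iff $T_{\tau,\omega}T^*_{x,y}=I_\mathscr{E}$.

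The main obstacle is conceptual rather than computational. The weak-frame definition guarantees only that the frame homomorphism $S_{x,\tau}=\sum_{j}\langle\cdot,x_j\rangle\tau_j$ is a well-defined adjointable positive invertible homomorphism; it does not by itself guarantee that the analysis homomorphisms $\theta_x,\theta_\tau$ (and hence $P_{x,\tau}$) exist as bounded adjointable maps into $\mathscr{H}_\mathscr{A}$. Consequently the identities involving $\theta$ and $P$ must be read as holding whenever those objects are defined, exactly as in the non-weak sequential case, while the genuinely unconditional content---the part needing only $S$---is the formula $S_{y,\omega}=T_{\tau,\omega}S_{x,\tau}T^*_{x,y}$ together with the Parseval criterion derived from it. I would therefore present the $S_{y,\omega}$ computation first and independently, and treat the $\theta$ and $P$ assertions as corollaries valid under the standing assumption that the analysis homomorphisms exist, noting that adjointability of $T_{x,y},T_{\tau,\omega}$ is precisely what lets one transfer them across the $\mathscr{A}$-valued inner product in every step.
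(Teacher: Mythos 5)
Your proposal is correct, and it is worth noting that the paper itself supplies no proof for this proposition: it is stated in the ``further extension in modules'' section, where results are inherited from the Hilbert-space weak case and from Lemma \ref{SEQUENTIALSIMILARITYLEMMA}, whose proof routes every claim through the analysis operators via $S_{y,\omega}=\theta_\omega^*\theta_y$ and $P_{y,\omega}=\theta_yS_{y,\omega}^{-1}\theta_\omega^*$. Your reorganization --- deriving $S_{y,\omega}=T_{\tau,\omega}S_{x,\tau}T_{x,y}^*$ directly from the defining series $\sum_{j}\langle x,y_j\rangle\omega_j$ and only then treating the $\theta$- and $P$-identities as conditional corollaries --- is a genuinely different and more careful route for the weak setting. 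The weak-frame definition guarantees only the frame homomorphism, not the analysis homomorphisms, and the paper itself warns in the corresponding Hilbert-space section that ``statements and proofs should not use analysis as well as synthesis operators''; the proposition nevertheless asserts identities for $\theta_y$, $\theta_\omega$ and $P_{y,\omega}$, so your observation that these parts presuppose the existence of the analysis maps is a real point, not pedantry. What your approach buys is that the substantive content --- the frame-homomorphism formula and the Parseval criterion $T_{\tau,\omega}T_{x,y}^*=I_\mathscr{E}$ --- is established unconditionally from the hypotheses actually guaranteed by membership in $\mathscr{F}^w_\tau$ and $\mathscr{F}^w_\omega$, with boundedness of $T_{\tau,\omega}$ (automatic for adjointable maps) justifying the interchange with the sum; what the paper's implicit route buys is brevity, at the cost of silently assuming objects the weak definition does not provide. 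All individual computations in your proposal check out.
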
   
\begin{remark}
For every weak frame  $(\{x_j\}_{j \in \mathbb{J}}, \{\tau_j\}_{j \in \mathbb{J}}),$ each  of `weak frames'  $( \{S_{x, \tau}^{-1}x_j\}_{j \in \mathbb{J}}, \{\tau_j\}_{j \in \mathbb{J}})$,    $( \{S_{x, \tau}^{-1/2}x_j\}_{j \in \mathbb{J}}, \{S_{x,\tau}^{-1/2}\tau_j\}_{j \in \mathbb{J}}),$ and  $ (\{x_j \}_{j \in \mathbb{J}}, \{S_{x,\tau}^{-1}\tau_j\}_{j \in \mathbb{J}})$ is a  Parseval weak  frame which is similar to  $ (\{x_j\}_{j \in \mathbb{J}} , \{\tau_j\}_{j \in \mathbb{J}}).$  
\end{remark} 
\textbf{Tensor product}: Let $(\{x_j\}_{j \in \mathbb{J}}, \{\tau_j\}_{j \in \mathbb{J}})$ be a weak frame  for   $\mathscr{E}$,  and $(\{y_l\}_{l \in \mathbb{L}}, \{\omega_l\}_{l \in \mathbb{L}})$ be a weak frame  for   $ \mathscr{E}_1.$ The weak frame  $(\{z_{(j, l)}\coloneqq x_j\otimes y_l\}_{(j, l)\in \mathbb{J}\bigtimes  \mathbb{L}},\{\rho_{(j, l)}\coloneqq \tau_j\otimes\omega_l\}_{(j, l)\in \mathbb{J}\bigtimes  \mathbb{L}})$   for $\mathscr{E}\otimes\mathscr{E}_1$ is called  as tensor product  of frames $( \{x_j\}_{j \in \mathbb{J}}, \{\tau_j\}_{j\in \mathbb{J}})$ and $( \{y_l\}_{l \in \mathbb{L}},  \{\omega_l\}_{l\in \mathbb{L}}).$ 
\begin{proposition}
Let  $(\{z_{(j, l)}\coloneqq x_j\otimes y_l\}_{(j, l)\in \mathbb{J}\bigtimes  \mathbb{L}},\{\rho _{(j, l)}\coloneqq \tau_j\otimes \omega_l\}_{(j, l)\in \mathbb{J}\bigtimes  \mathbb{L}}) $ be the  tensor product of weak frames  $( \{x_j\}_{j \in \mathbb{J}}, \{\tau_j\}_{j \in \mathbb{J}}) $  for  $ \mathscr{E},$ and $( \{y_l\}_{l \in \mathbb{L}}, \{\omega_l\}_{l \in \mathbb{L}} )$ for $ \mathscr{E}_1.$  Then  $ S_{z, \rho}=S_{x, \tau}\otimes S_{y, \omega}.$ If  $( \{x_j\}_{j \in \mathbb{J}}, \{\tau_j\}_{j \in \mathbb{J}}) $ and $ (\{y_l\}_{l \in \mathbb{L}},  \{\omega_l\}_{l \in \mathbb{L}} )$ are Parseval weak, then $(\{z_{(j, l)}\}_{(j, l)\in \mathbb{J}\bigtimes  \mathbb{L}} ,\{\rho_{(j,l)}\}_{(j,l)\in \mathbb{J}\bigtimes \mathbb{L}})$ is Parseval weak.
\end{proposition}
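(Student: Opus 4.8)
The plan is to prove the operator identity $S_{z,\rho}=S_{x,\tau}\otimes S_{y,\omega}$ first, since the Parseval conclusion drops out of it immediately, and since this identity is precisely what certifies that $(\{z_{(j,l)}\}_{(j,l)\in\mathbb{J}\bigtimes\mathbb{L}},\{\rho_{(j,l)}\}_{(j,l)\in\mathbb{J}\bigtimes\mathbb{L}})$ is genuinely a weak frame. Recall that in the weak (sequential) setting the only datum to check is that the frame homomorphism $S_{z,\rho}:\mathscr{E}\otimes\mathscr{E}_1\ni w\mapsto\sum_{(j,l)}\langle w,z_{(j,l)}\rangle\rho_{(j,l)}$ is a well-defined adjointable positive invertible homomorphism; the analysis-homomorphism conditions present in the strong definition are absent here. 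Both $S_{z,\rho}$ and $S_{x,\tau}\otimes S_{y,\omega}$ are (candidate) adjointable homomorphisms on $\mathscr{E}\otimes\mathscr{E}_1$, so by linearity and the totality of elementary tensors it suffices to verify that they agree on every $x\otimes y$.

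First I would compute the right-hand side on an elementary tensor, using that $S_{x,\tau}\otimes S_{y,\omega}$ acts in each tensor slot separately and that the inner product of the tensor module factors as $\langle x\otimes y,\,x_j\otimes y_l\rangle=\langle x,x_j\rangle\otimes\langle y,y_l\rangle$:
\begin{align*}
(S_{x,\tau}\otimes S_{y,\omega})(x\otimes y)
&=S_{x,\tau}x\otimes S_{y,\omega}y
=\left(\sum_{j\in\mathbb{J}}\langle x,x_j\rangle\tau_j\right)\otimes\left(\sum_{l\in\mathbb{L}}\langle y,y_l\rangle\omega_l\right)\\
&=\sum_{j\in\mathbb{J}}\sum_{l\in\mathbb{L}}\langle x,x_j\rangle\langle y,y_l\rangle(\tau_j\otimes\omega_l)
=\sum_{(j,l)\in\mathbb{J}\bigtimes\mathbb{L}}\langle x\otimes y,\,x_j\otimes y_l\rangle(\tau_j\otimes\omega_l)\\
&=S_{z,\rho}(x\otimes y).
\end{align*}
The interchange of the two sums is legitimate because each component series converges (the component collections are weak frames, so their frame homomorphisms are bounded), and the tensor product respects these limits. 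Since $S_{x,\tau}$ and $S_{y,\omega}$ are positive and invertible, their tensor product $S_{x,\tau}\otimes S_{y,\omega}$ is again a positive invertible adjointable homomorphism on $\mathscr{E}\otimes\mathscr{E}_1$; the displayed identity therefore shows at once that $S_{z,\rho}$ exists, is bounded, positive and invertible, so $(\{z_{(j,l)}\},\{\rho_{(j,l)}\})$ is a weak frame, and $S_{z,\rho}=S_{x,\tau}\otimes S_{y,\omega}$. For the Parseval claim, if both component frames are Parseval weak then $S_{x,\tau}=I_{\mathscr{E}}$ and $S_{y,\omega}=I_{\mathscr{E}_1}$, whence $S_{z,\rho}=I_{\mathscr{E}}\otimes I_{\mathscr{E}_1}=I_{\mathscr{E}\otimes\mathscr{E}_1}$, i.e.\ the tensor product is Parseval weak.

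The step I expect to require the most care is not the elementwise chain of equalities, which mirrors the corresponding strong (non-weak) module computation, but the two supporting facts peculiar to the Hilbert C*-module setting: namely that agreement on elementary tensors extends to all of $\mathscr{E}\otimes\mathscr{E}_1$ (this needs boundedness of both homomorphisms together with the density of the span of elementary tensors), and that the tensor product of two positive invertible adjointable homomorphisms on Hilbert modules is itself positive invertible. Both are standard properties of the (external) tensor product of Hilbert C*-modules, and once invoked the argument closes exactly as in the Hilbert space case.
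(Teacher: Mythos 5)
Your proof is correct and follows essentially the same route as the paper: the paper's own argument (given explicitly for the Hilbert-space analogue and carried over verbatim to the module setting) is exactly the elementary-tensor computation $(S_{x,\tau}\otimes S_{y,\omega})(x\otimes y)=S_{z,\rho}(x\otimes y)$, from which $S_{z,\rho}=S_{x,\tau}\otimes S_{y,\omega}$ and the Parseval claim follow. The two supporting facts you single out — extension from elementary tensors by boundedness and density, and positivity/invertibility of the tensor product of positive invertible adjointable homomorphisms — are taken for granted in the paper, so making them explicit only strengthens the write-up.
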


\section{p-operator-valued frames and p-bases} \label{OPERATOR-VALUEDFRAMESFRAMESFORBANACHSPACES}
Idea: we write the frame inequality 
\begin{align}\label{FRAMEINEQUALITYFUNDAMENTAL}
 a\|h\|^2 \leq \langle S_{A,\Psi}h, h\rangle \leq b\|h\|^2, ~\forall h \in \mathcal{H} 
\end{align}
free from inner product as
\begin{align}\label{FUNDAMENTALIDEA}
a\|h\|^2 \leq \| S^\frac{1}{2}_{A,\Psi}h\|^2 \leq b\|h\|^2, ~\forall h \in \mathcal{H} .
\end{align}
Now to form the definition we want the notion of ``real powers" of an operator, which is available in \cite{KOMATSU1}. 
\begin{definition}(cf. \cite{KOMATSU1})\label{KOMATSU}
Let  $\mathcal{X}$ be a Banach space,  $A \in  \mathcal{B}(\mathcal{X})$ be such that the resolvent of $ A $ contains $( -\infty, 0].$ For each $ \alpha \in \mathbb{C},$ we define 
$$ A^\alpha=\frac{1}{2\pi i}\int\limits_\gamma \zeta(\zeta I_\mathcal{X}-A)^{-1}d\zeta,$$
where the path $ \gamma$ encircles the spectrum of $ A$ counterclockwise avoiding the negative real axis and $ \zeta^\alpha$ takes the principal branch.
\end{definition}
\begin{definition}\label{HM1}
Let  $\mathcal{X}, \mathcal{X}_0 $  be Banach spaces, $ p \in [1, \infty )$. Define 
$ L_j : \mathcal{X}_0 \ni x \mapsto e_j\otimes x \in  \ell^p(\mathbb{J}) \otimes \mathcal{X}_0$,  where $\{e_j\}_{j \in \mathbb{J}} $ is  the standard Schauder basis for $ \ell^p(\mathbb{J})$,  for each $ j \in \mathbb{J}$ and $ \widehat{L}_j:\ell^p(\mathbb{J}) \otimes \mathcal{X}_0 \rightarrow \mathcal{X}_0$ by $\widehat{L}_j(\{a_j\}_{j\in \mathbb{J}}\otimes x)=a_jx $, at elementary tensors $ \{a_j\}_{j\in \mathbb{J}}\otimes x \in \ell^p(\mathbb{J}) \otimes \mathcal{X}_0$,  extend by linearity,  for each $ j \in \mathbb{J}$. A collection $ \{A_j\}_{j \in \mathbb{J}} $  in $ \mathcal{B}(\mathcal{X}, \mathcal{X}_0)$ is said to be a p-operator-valued frame in  $ \mathcal{B}(\mathcal{X}, \mathcal{X}_0)$  with respect to a collection  $ \{\widehat{\Psi}_j\}_{j \in \mathbb{J}}  $ in $ \mathcal{B}(\mathcal{X}_0, \mathcal{X}) $ if 
\begin{enumerate}[\upshape(i)]
\item the series $ \widehat{S}_{A, \Psi}\coloneqq\sum_{j\in \mathbb{J}} \widehat{\Psi}_jA_j$ (p-(ovf) operator)  converges in the pointwise limit  in $ \mathcal{B}(\mathcal{X})$ to a  bounded  operator whose resolvent contains $ (-\infty, 0]$.
\item both $ \theta_A \coloneqq \sum_{j\in \mathbb{J}} L_jA_j$ (analysis operator), $\widehat{\theta}_\Psi\coloneqq \sum_{j\in \mathbb{J}}\widehat{\Psi}_j \widehat{L}_j$ (synthesis operator) converge in the pointwise limit  in $ \mathcal{B}(\mathcal{X}, \ell^p(\mathbb{J})\otimes \mathcal{X}_0 )$ and $ \mathcal{B}(\ell^p(\mathbb{J}) \otimes \mathcal{X}_0 , \mathcal{X})$, respectively, to  bounded operators. 
\end{enumerate}
In this situation, we write $(\{A_j\}_{j \in \mathbb{J}},\{\widehat{\Psi}_j\}_{j \in \mathbb{J}})$ is a  p-(ovf) in $ \mathcal{B}(\mathcal{X}, \mathcal{X}_0) $.
 Positive  $ \alpha, \beta $ satisfying 
\begin{equation}\label{FRAMEINEQUALITYBANACH}
\alpha^{1/p} \|x\| \leq \|\widehat{S}_{A, \Psi}^{1/p}x\| \leq \beta^{1/p}\|x\|, \quad \forall x \in \mathcal{X} 
\end{equation}
are called as lower and upper p-operator-valued frame bounds, taken in order.

Let $ a= \sup\{\alpha :\alpha^{1/p} \|x\| \leq \|\widehat{S}^{1/p}_{A, \Psi}x\|, \forall x \in \mathcal{X}\}$, $ b= \inf\{\beta :  \|\widehat{S}^{1/p}_{A, \Psi}x\|\leq \beta^{1/p} \|x\|, \forall x \in \mathcal{X} \}$ which are precisely  $ a=\|\widehat{S}_{A,\Psi}^{-1/p}\|^{-p}$ and $ b = \|\widehat{S}^{1/p}_{A,\Psi}\|^p$.  We call $ a$ as the optimal lower p-(ovf) bound, $ b$ as the optimal upper p-(ovf) bound for the p-(ovf) $(\{A_j \}_{j \in \mathbb{J}}, \{\widehat{\Psi}_j\}_{j \in \mathbb{J}}).$   Whenever $\widehat{S}_{A,\Psi}=\alpha I_\mathcal{X}$, for some $ \alpha \in \mathbb{K}$,    we say p-(ovf) is tight and whenever 
\begin{equation}\label{PARDEFINITION}
\widehat{S}_{A,\Psi}=I_\mathcal{X}
 \end{equation}
 we call p-(ovf) as Parseval p-(ovf).

For fixed $ \mathbb{J}$, $\mathcal{X}, \mathcal{X}_0, p $ and $ \{\widehat{\Psi}_j \}_{j \in \mathbb{J}}$, the set of all $p$-operator-valued frames in $ \mathcal{B}(\mathcal{X}, \mathcal{X}_0)$
 with respect to collection  $ \{\widehat{\Psi}_j \}_{j \in \mathbb{J}}$ is denoted by $ \widehat{\mathscr{F}}_{\Psi, p}.$
\end{definition}
\begin{caution}
\begin{enumerate}[\upshape(i)]
\item Definition \ref{HM1}  when considered on Hilbert spaces, includes a lot more than the Definition \ref{1}. It is obvious that if  $(\{A_j \}_{j \in \mathbb{J}}, \{\widehat{\Psi}_j \}_{j \in \mathbb{J}})$ is an (ovf)  in $ \mathcal{B}(\mathcal{H},\mathcal{H}_0)$, then $(\{A_j \}_{j \in \mathbb{J}}, \{\Psi^*_j \}_{j \in \mathbb{J}})$ is a 2-(ovf) because the spectrum  of $S_{A,\Psi}$ is in $(0,\infty)$. Consider  $\{A_n\coloneqq\frac{1+i}{n}I_{\ell^2(\mathbb{N})}\}_{n=1}^\infty $ and  $\{\widehat{\Psi}_n\coloneqq\frac{i}{n}I_{\ell^2(\mathbb{N})}\}_{n=1}^\infty $. Then  $(\{A_n\}_{n=1}^\infty ,\{\widehat{\Psi}_n\}_{n=1}^\infty)$  is a 2-(ovf)   in 
$ \mathcal{B}(\ell^2(\mathbb{N}),\ell^2(\mathbb{N}))$  for the reason that  the spectrum of $\widehat{S}_{A,\Psi} $ is $\{(-1+i)\pi^2/6\}$. But $(\{A_n=\frac{1+i}{n}I_{\ell^2(\mathbb{N})}\}_{n=1}^\infty,  \{(\widehat{\Psi}_n)^*=\frac{-i}{n}I_{\ell^2(\mathbb{N})}\}_{n=1}^\infty )$ is not an (ovf) in $ \mathcal{B}(\ell^2(\mathbb{N}),\ell^2(\mathbb{N}))$  because  the spectrum of $S_{A,\Psi} $  is $\{(-1+i)\pi^2/6\}$  and hence it is not self-adjoint (also it is not positive).
\item  We have not defined the definition of Parsevalness as -
\begin{equation}\label{PARDIFFERENCE}
\text{p-(ovf) is Parseval if $\alpha=\beta=1$ in}~  \operatorname{Inequality} ~ (\ref{FRAMEINEQUALITYBANACH}). 
\end{equation}
 In Hilbert spaces,  Equation (\ref{PARDEFINITION}) and Sentence (\ref{PARDIFFERENCE}) are equivalent. But these are  not same in Banach spaces, one can consider $ \widehat{S}_{A,\Psi}=iI_{\ell^1(\mathbb{J})}$. Of course,  Equation (\ref{PARDEFINITION}) implies  Sentence (\ref{PARDIFFERENCE}). Similar comments hold for the definition of p-tight (ovf).
\end{enumerate}
\end{caution}
 We note the following. 
\begin{enumerate}[(i)]
\item $ \theta_Ax =\sum_{j\in \mathbb{J}}e_j\otimes A_jx , \forall  x \in \mathcal{X}.$
\item The operators $ L_j$'s defined in Definition \ref{HM1} are  isometries from $\mathcal{X}_0 $ to $ \ell^p(\mathbb{J})\otimes \mathcal{X}_0$, and  for
 $  j,k \in \mathbb{J }$ we have  $$\widehat{L}_jL_k =
\left\{
\begin{array}{ll}
I_{\mathcal{X}_0 } & \mbox{if } j=k \\
0 & \mbox{if } j\neq k
\end{array}
\right. $$
and 
$$ \sum\limits_{j\in \mathbb{J}} L_j\widehat{L}_j=I_{\ell^p(\mathbb{J})}\otimes I_{\mathcal{X}_0}$$
where  the convergence is in the pointwise limit. 
\item If  $ \{A_j\}_{j \in \mathbb{J}}, $ $ \{B_j\}_{j \in \mathbb{J}}  \in \widehat{\mathscr{F}}_{\Psi,p} $, then $\{A_j+B_j\}_{j \in \mathbb{J}} \in \widehat{\mathscr{F}}_{\Psi, p} $, and $\{\alpha A_j\}_{j \in \mathbb{J}} \in \widehat{\mathscr{F}}_{\Psi,p}, \forall \alpha >0.$
\end{enumerate}
\begin{definition}\label{BANACHOPERATORORTHOGONALSET}
A collection  $ \{A_j\}_{j \in \mathbb{J}} $ in  $ \mathcal{B}(\mathcal{X},  \mathcal{X}_0 )$ is said to be a p-orthogonal set if the following    conditions hold.
\begin{enumerate}[\upshape(i)]
\item There exists  unique   $ \widehat{A}_k$ in $\mathcal{B}(\mathcal{X}_0,  \mathcal{X}) $ for each  $ A_k, k \in \mathbb{J}$ such that $A_j\widehat{A}_k=0,  \forall j \neq k, \forall j \in \mathbb{J}$.
\item If $\mathbb{L}\subseteq\mathbb{J} $  is such that $\sum_{j\in \mathbb{L}}L_jA_j \in \mathcal{B}(\mathcal{X}, \ell^p(\mathbb{J})\otimes \mathcal{X}_0) $, then  $\|\sum_{j\in \mathbb{L}}L_jA_jx\|^p=\sum_{j\in \mathbb{L}}\|A_jx\|^p, \forall x \in \mathcal{X}. $
\item If $\mathbb{L}\subseteq\mathbb{J} $  is such that $\sum_{j\in \mathbb{L}}\widehat{A}_j \widehat{L}_j\in \mathcal{B}(\ell^p(\mathbb{J}) \otimes \mathcal{X}_0 , \mathcal{X})$, then $ \|\sum_{j\in \mathbb{L}}\widehat{A}_j\widehat{L}_jy\|^p=\sum_{j\in \mathbb{L}}\|\widehat{A}_j\widehat{L}_jy\|^p, \forall y \in \mathcal{X}_0$.
\end{enumerate}
In this case, we say that  $\widehat{A}_j$ is the right p-orthogonal inverse of $A_j$ for each $ j \in \mathbb{J}$.
\end{definition}
\begin{theorem}
\begin{enumerate}[\upshape(i)]
\item  Let  $ \{A_n\}_{n=1}^m $ be  p-orthogonal  in  $ \mathcal{B}(\mathcal{X}, \mathcal{X}_0)$,  $\widehat{A}_n$ be the  right p-orthogonal inverse of $A_n$ for each $ n=1,...,m.$ Then $ \|\sum_{n=1}^m\widehat{A}_ny_n\|^p=\sum_{n=1}^m\|\widehat{A}_ny_n\|^p, \forall y_1,...,y_n \in \mathcal{X}_0$. In particular, $ \|\widehat{A}_1+\cdots+\widehat{A}_m\|^p\leq \|\widehat{A}_1\|^p+\cdots+\|\widehat{A}_m\|^p.$
\item If $ \{A_j\}_{j \in \mathbb{J}} $ is p-orthogonal in   $ \mathcal{B}(\mathcal{X}, \ell^p(\mathbb{J})\otimes \mathcal{X}_0 )$ such that $A_j\widehat{A}_j $ is invertible for each $ j \in \mathbb{J}$, where $\widehat{A}_j$ is the  right p-orthogonal inverse of $A_j$, then it is linearly independent over $ \mathbb{K}$ as well as over $ \mathcal{B}(\mathcal{X}_0).$
\end{enumerate}
\end{theorem}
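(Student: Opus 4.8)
The plan is to read both parts directly off Definition \ref{BANACHOPERATORORTHOGONALSET}, exactly as the analogous Hilbert-space statement (the theorem on orthogonal sets in $\mathcal{B}(\mathcal{H},\mathcal{H}_0)$) was deduced from the definition of an orthogonal set; the only genuinely new ingredient is manufacturing the right test vector in $\ell^p(\mathbb{J})\otimes\mathcal{X}_0$ to activate condition \textup{(iii)}.

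For \textup{(i)}, I would first observe that since $\{1,\dots,m\}$ is finite, the partial synthesis operator $\sum_{n=1}^m \widehat{A}_n\widehat{L}_n$ is a finite sum of bounded operators, hence lies in $\mathcal{B}(\ell^p(\mathbb{J})\otimes\mathcal{X}_0,\mathcal{X})$, so condition \textup{(iii)} of Definition \ref{BANACHOPERATORORTHOGONALSET} is available with $\mathbb{L}=\{1,\dots,m\}$. Next, given $y_1,\dots,y_m\in\mathcal{X}_0$, I set $y\coloneqq\sum_{n=1}^m e_n\otimes y_n=\sum_{n=1}^m L_n y_n\in\ell^p(\mathbb{J})\otimes\mathcal{X}_0$ and use the relation $\widehat{L}_jL_k=\delta_{j,k}I_{\mathcal{X}_0}$ recorded after Definition \ref{HM1} to conclude $\widehat{L}_j y=y_j$ for each $j\in\{1,\dots,m\}$. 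Feeding $y$ into \textup{(iii)}, and noting $\widehat{A}_j\widehat{L}_j y=\widehat{A}_j y_j$, yields
$$\Bigl\|\sum_{n=1}^m\widehat{A}_n y_n\Bigr\|^p=\Bigl\|\sum_{j=1}^m\widehat{A}_j\widehat{L}_j y\Bigr\|^p=\sum_{j=1}^m\|\widehat{A}_j\widehat{L}_j y\|^p=\sum_{n=1}^m\|\widehat{A}_n y_n\|^p,$$
which is the asserted identity. For the ``in particular'' clause I would specialize all $y_n$ to a single $y\in\mathcal{X}_0$, so that $\sum_n\widehat{A}_n y=(\widehat{A}_1+\cdots+\widehat{A}_m)y$, bound $\sum_{n=1}^m\|\widehat{A}_n y\|^p\le\bigl(\sum_{n=1}^m\|\widehat{A}_n\|^p\bigr)\|y\|^p$, and take the supremum over $\|y\|=1$.

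For \textup{(ii)}, I would run the scalar and operator cases in parallel. Suppose $\sum_{j\in\mathbb{S}}c_jA_j=0$ (resp.\ $\sum_{j\in\mathbb{S}}T_jA_j=0$) for a finite $\mathbb{S}\subseteq\mathbb{J}$, with $c_j\in\mathbb{K}$ (resp.\ $T_j\in\mathcal{B}(\mathcal{X}_0)$). Fixing $k\in\mathbb{S}$ and right-multiplying by the right $p$-orthogonal inverse $\widehat{A}_k$ annihilates every term with $j\ne k$ because $A_j\widehat{A}_k=0$, leaving $c_kA_k\widehat{A}_k=0$ (resp.\ $T_kA_k\widehat{A}_k=0$); since $A_k\widehat{A}_k$ is invertible, right-multiplication by $(A_k\widehat{A}_k)^{-1}$ forces $c_k=0$ (resp.\ $T_k=0$). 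As $k$ was arbitrary, linear independence over $\mathbb{K}$ and over $\mathcal{B}(\mathcal{X}_0)$ both follow.

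The only bookkeeping is the boundedness of the finite partial sums and the identity $\widehat{L}_j y=y_j$, and I expect no real obstacle here: the finiteness of the index set makes every operator in sight automatically bounded, and the single structural fact required, $\widehat{L}_jL_k=\delta_{j,k}I_{\mathcal{X}_0}$, is stated immediately after Definition \ref{HM1}. If anything deserves care, it is simply that conditions \textup{(ii)} and \textup{(iii)} of $p$-orthogonality are only asserted when the relevant operator is bounded, which is precisely why the reduction to a finite $\mathbb{L}$ must be flagged before invoking them.
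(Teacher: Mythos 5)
Your proposal is correct and follows essentially the same route as the paper: both parts are read off Definition \ref{BANACHOPERATORORTHOGONALSET} by testing condition (iii) on the vector $\sum_{n=1}^m L_n y_n$ and using $\widehat{L}_jL_k=\delta_{j,k}I_{\mathcal{X}_0}$, and part (ii) is the identical right-multiplication-by-$\widehat{A}_k$ argument. If anything, your version is slightly more careful than the paper's, which in the displayed chain for (i) writes the intermediate sum as $\sum_n\|\widehat{L}_n(\cdot)\|^p$ (the orthonormal form of condition (iii)) where the orthogonal form $\sum_n\|\widehat{A}_n\widehat{L}_n(\cdot)\|^p$ is what is actually needed and used.
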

\begin{proof}
\begin{enumerate}[\upshape(i)]
\item  $\|\sum_{n=1}^{m}\widehat{A}_ny_n\|^p = \|\sum_{n=1}^{m}\widehat{A}_n\widehat{L}_n(\sum_{k=1}^{m}L_ky_k)\|^p=\sum_{n=1}^{m}\|\widehat{L}_n(\sum_{k=1}^{m}L_ky_k)\|^p  =\sum_{n=1}^{m}\|\widehat{A}_ny_n\|^p$, $ \forall y_1,...,y_n \in \mathcal{X}_0$ and  $\|\widehat{A}_1+\cdots+\widehat{A}_m\|=\sup_{y \in \mathcal{X}_0, \|y\|=1}\|(\widehat{A}_1+\cdots+\widehat{A}_m)y\|  \leq( \sum_{n=1}^{m}\|\widehat{A}_n\|^p)^{1/p}$.
\item Let $ \mathbb{S} \subseteq \mathbb{J}$ be finite  and $ c_j \in \mathbb{K}$ (resp. $T_j \in \mathcal{B}(\mathcal{X}_0)$), $j \in \mathbb{S}$  be such that $ \sum_{j\in\mathbb{S}}c_jA_j=0$ (resp. $\sum_{j\in\mathbb{S}}T_jA_j=0$). Fixing $ k \in  \mathbb{S} $, we get $c_kA_k\widehat{A}_k=\sum_{j\in\mathbb{S}}c_jA_j\widehat{A}_k=0$ (resp. $T_kA_k\widehat{A}_k=\sum_{j\in\mathbb{S}}T_jA_j\widehat{A}_k=0$) which implies $ c_k=0$ (resp. $ T_k=0$).
\end{enumerate}	
\end{proof}

\begin{definition}\label{BANACHOPERATORORTHONORMALSET}
A collection  $ \{A_j\}_{j \in \mathbb{J}} $ in  $ \mathcal{B}(\mathcal{X},  \mathcal{X}_0 )$ is said to be  a p-orthonormal set  if the following conditions hold.
\begin{enumerate}[\upshape(i)]
\item There exists  unique   $ \widehat{A}_k$ in $\mathcal{B}(\mathcal{X}_0,  \mathcal{X}) $ for each  $ A_k, k \in \mathbb{J}$ such that $A_j\widehat{A}_k=\delta_{j,k}I_{\mathcal{X}_0},  \forall j \in \mathbb{J}$.
\item If $\mathbb{L}\subseteq\mathbb{J} $  is such that $\sum_{j\in \mathbb{L}}L_jA_j \in \mathcal{B}(\mathcal{X}, \ell^p(\mathbb{J})\otimes \mathcal{X}_0) $, then  $\|\sum_{j\in \mathbb{L}}L_jA_jx\|^p=\sum_{j\in \mathbb{L}}\|A_jx\|^p\leq \|x\|^p, \forall x \in \mathcal{X}. $
\item If $\mathbb{L}\subseteq\mathbb{J} $  is such that $\sum_{j\in \mathbb{L}}\widehat{A}_j \widehat{L}_j\in \mathcal{B}(\ell^p(\mathbb{J}) \otimes \mathcal{X}_0 , \mathcal{X})$, then $ \|\sum_{j\in \mathbb{L}}\widehat{A}_j\widehat{L}_jy\|^p=\sum_{j\in \mathbb{L}}\|\widehat{L}_jy\|^p, \forall y \in \mathcal{X}_0$.
\item The maps $ \theta_A : \mathcal{X} \ni x \mapsto \sum_{j\in \mathbb{J}}L_jA_jx\in \ell^p(\mathbb{J})\otimes \mathcal{X}_0 $ and $ \widehat{\theta}_A :  \ell^p(\mathbb{J})\otimes \mathcal{X}_0 \ni y \mapsto \sum_{j\in \mathbb{J}}\widehat{A}_j\widehat{L}_jy \in \mathcal{X}$ are well-defined bounded linear operators.

\end{enumerate}
In this case, we write $\widehat{A}_j$ is the right p-orthonormal inverse of $A_j$ for each $ j \in \mathbb{J}$.
\end{definition}

As a consequence of (ii) in the previous definition, we see that whenever $ \{A_j\}_{j \in \mathbb{J}} $ is p-orthonormal in  $ \mathcal{B}(\mathcal{X},  \mathcal{X}_0)$, then $ \|A_j\|\leq 1,\forall  j \in \mathbb{J}.$
\begin{theorem}
Let $ \{A_j\}_{j \in \mathbb{J}} $ be  p-orthonormal  in $ \mathcal{B}(\mathcal{X}, \mathcal{X}_0)$,  $ \{U_j\}_{j \in \mathbb{J}} $ be  in $ \mathcal{B}(\mathcal{X}_0)$,  $\widehat{A}_j$ be the right p-orthonormal inverse of $A_j$ for each $ j \in \mathbb{J}$ and $ y \in \mathcal{X}_0$.  Then
$$\sum_{j\in\mathbb{J}}\widehat{A}_jU_jy ~\text{converges in} ~  \mathcal{X} ~ \text{if and only if}~ \sum_{j\in\mathbb{J}}\|U_jy\|^p ~\text{converges} . $$ 
\end{theorem}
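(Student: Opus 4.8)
The plan is to reduce the whole statement to a single exact identity valid on finite index sets, in direct analogy with the proof of Theorem \ref{ERFOV}, and then invoke completeness to pass from that identity to the claimed equivalence of convergence.

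First I would fix $y \in \mathcal{X}_0$ and a finite subset $\mathbb{S}\subseteq\mathbb{J}$, and introduce the auxiliary element $z \coloneqq \sum_{k\in\mathbb{S}}L_k U_k y \in \ell^p(\mathbb{J})\otimes\mathcal{X}_0$. Using the relation $\widehat{L}_jL_k = \delta_{j,k}I_{\mathcal{X}_0}$ recorded immediately after Definition \ref{HM1}, one gets $\widehat{L}_j z = U_j y$ for $j\in\mathbb{S}$ and $\widehat{L}_j z = 0$ otherwise, so that $\sum_{j\in\mathbb{S}}\widehat{A}_j\widehat{L}_j z = \sum_{j\in\mathbb{S}}\widehat{A}_j U_j y$. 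Since $\mathbb{S}$ is finite, $\sum_{j\in\mathbb{S}}\widehat{A}_j\widehat{L}_j$ is a finite sum of bounded operators and hence lies in $\mathcal{B}(\ell^p(\mathbb{J})\otimes\mathcal{X}_0, \mathcal{X})$; therefore condition (iii) of Definition \ref{BANACHOPERATORORTHONORMALSET} is applicable to the vector $z$, and it yields
$$\left\|\sum_{j\in\mathbb{S}}\widehat{A}_j U_j y\right\|^p = \left\|\sum_{j\in\mathbb{S}}\widehat{A}_j\widehat{L}_j z\right\|^p = \sum_{j\in\mathbb{S}}\|\widehat{L}_j z\|^p = \sum_{j\in\mathbb{S}}\|U_j y\|^p.$$

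With this identity in hand, the equivalence follows by the standard Cauchy-net argument on the collection of finite subsets of $\mathbb{J}$. For finite $\mathbb{S}_1\subseteq\mathbb{S}_2$, applying the identity to $\mathbb{S}_2\setminus\mathbb{S}_1$ gives $\bigl\|\sum_{j\in\mathbb{S}_2}\widehat{A}_j U_j y - \sum_{j\in\mathbb{S}_1}\widehat{A}_j U_j y\bigr\|^p = \sum_{j\in\mathbb{S}_2\setminus\mathbb{S}_1}\|U_j y\|^p$. Hence the net of partial sums $\{\sum_{j\in\mathbb{S}}\widehat{A}_j U_j y\}$ is Cauchy in $\mathcal{X}$ precisely when the net $\{\sum_{j\in\mathbb{S}}\|U_j y\|^p\}$ is Cauchy in $\mathbb{R}$, and since both $\mathcal{X}$ and $\mathbb{R}$ are complete, Cauchy is equivalent to convergent; this settles both implications simultaneously.

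The only genuinely delicate point is the correct application of Definition \ref{BANACHOPERATORORTHONORMALSET}(iii): I must confirm that its hypothesis (boundedness of $\sum_{j\in\mathbb{L}}\widehat{A}_j\widehat{L}_j$) is met, which is automatic for finite $\mathbb{L}=\mathbb{S}$. I expect the main obstacle to be purely bookkeeping — namely, recognizing that the role of the ``coefficient vector'' must be played by $z = \sum_{k\in\mathbb{S}}L_k U_k y$, so that $\widehat{L}_j z = U_j y$, rather than by $y$ itself; this is exactly what converts the abstract $\widehat{L}_j$-identity of the p-orthonormal definition into the desired statement about the $U_j y$. Notably, none of the invertibility or resolvent hypotheses of the p-(ovf) definition are needed; only the defining properties of p-orthonormality and completeness of the underlying spaces enter.
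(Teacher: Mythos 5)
Your proof is correct and follows essentially the same route as the paper's: rewrite the finite partial sum as $\sum_{j\in\mathbb{S}}\widehat{A}_j\widehat{L}_j\bigl(\sum_{k\in\mathbb{S}}L_kU_ky\bigr)$, apply condition (iii) of the p-orthonormality definition to obtain $\bigl\|\sum_{j\in\mathbb{S}}\widehat{A}_jU_jy\bigr\|^p=\sum_{j\in\mathbb{S}}\|U_jy\|^p$, and conclude by completeness via the Cauchy-net argument. Your explicit identification of the ``coefficient vector'' $z$ and the check that boundedness of the finite sum $\sum_{j\in\mathbb{S}}\widehat{A}_j\widehat{L}_j$ makes condition (iii) applicable is exactly the (implicit) content of the paper's computation.
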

\begin{proof}
We use condition (iii) in Definition \ref{BANACHOPERATORORTHONORMALSET}. Let $ \mathbb{S} \subseteq \mathbb{J}$ be finite. Then
\begin{align*}
\left\|\sum_{j\in\mathbb{S}}\widehat{A}_jU_jy\right\|^p=\left\|\sum_{j\in\mathbb{S}}\widehat{A}_j\widehat{L}_j\left(\sum_{k\in\mathbb{S}}L_kU_ky\right)\right\|^p=\sum_{j\in\mathbb{S}}\left\|\widehat{L}_j\left(\sum_{k\in\mathbb{S}}L_kU_ky\right)\right\|^p=\sum_{j\in\mathbb{S}}\|U_jy\|^p .
\end{align*}
Now, using the completeness of $  \mathcal{X}$ and $ \mathbb{K}$, we see that  $\sum_{j\in\mathbb{J}}\widehat{A}_jU_jy $ exists if and only if  $\sum_{j\in\mathbb{J}}\|U_jy\|^p$ exists. 
\end{proof}
\begin{corollary}
Let $ \{A_j\}_{j \in \mathbb{J}} $ be  p-orthonormal  in $ \mathcal{B}(\mathcal{X}, \mathcal{X}_0)$,  $ \{c_j\}_{j \in \mathbb{J}} $ be a sequence of scalars, $\widehat{A}_j$ be the right p-orthonormal inverse of $A_j$ for each $ j \in \mathbb{J}$  and $ y \in \mathcal{X}_0$. Then
$$\sum_{j\in\mathbb{J}}c_j\widehat{A}_jy ~\text{converges in} ~  \mathcal{X} ~ \text{if and only if}~\{c_j\|y\|\}_{j \in \mathbb{J}} \in \ell^p(\mathbb{J}).$$ 	
In particular, if $y \in \mathcal{X}_0 $ is nonzero, then  $\sum_{j\in\mathbb{J}}c_j\widehat{A}_jy $ converges in $ \mathcal{X}$ if and only if $\{c_j\}_{j \in \mathbb{J}} \in \ell^p(\mathbb{J}).$ 	
\end{corollary}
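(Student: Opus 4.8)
The plan is to obtain this corollary as a direct specialization of the Riesz--Fischer type theorem that immediately precedes it, namely the statement that for p-orthonormal $\{A_j\}_{j\in\mathbb{J}}$ with right p-orthonormal inverses $\{\widehat{A}_j\}_{j\in\mathbb{J}}$, the series $\sum_{j\in\mathbb{J}}\widehat{A}_jU_jy$ converges in $\mathcal{X}$ if and only if $\sum_{j\in\mathbb{J}}\|U_jy\|^p$ converges, for $\{U_j\}_{j\in\mathbb{J}}$ in $\mathcal{B}(\mathcal{X}_0)$ and $y\in\mathcal{X}_0$. The idea is simply to choose the operators $U_j$ to be scalar multiples of the identity, so that the operator series reduces to the scalar-weighted series appearing in the corollary.

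Concretely, I would set $U_j \coloneqq c_j I_{\mathcal{X}_0}$ for each $j\in\mathbb{J}$. Each such $U_j$ lies in $\mathcal{B}(\mathcal{X}_0)$, so the hypotheses of the preceding theorem are met. First I would record the two elementary identities $\widehat{A}_jU_jy = c_j\widehat{A}_jy$ and $\|U_jy\|^p = |c_j|^p\|y\|^p = \big(|c_j|\,\|y\|\big)^p$, valid for every $j\in\mathbb{J}$. Consequently $\sum_{j\in\mathbb{J}}\|U_jy\|^p = \sum_{j\in\mathbb{J}}\big(|c_j|\,\|y\|\big)^p$, and this series converges precisely when $\{c_j\|y\|\}_{j\in\mathbb{J}}\in\ell^p(\mathbb{J})$. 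Substituting these identities into the conclusion of the theorem yields exactly the asserted equivalence: $\sum_{j\in\mathbb{J}}c_j\widehat{A}_jy$ converges in $\mathcal{X}$ if and only if $\{c_j\|y\|\}_{j\in\mathbb{J}}\in\ell^p(\mathbb{J})$.

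For the \emph{in particular} clause, I would note that when $y\in\mathcal{X}_0$ is nonzero we have $\|y\|>0$, so the fixed positive scalar factor $\|y\|^p$ can be pulled out of the sum: $\sum_{j\in\mathbb{J}}\big(|c_j|\,\|y\|\big)^p = \|y\|^p\sum_{j\in\mathbb{J}}|c_j|^p$. Since $\|y\|^p$ is a nonzero constant, this converges if and only if $\sum_{j\in\mathbb{J}}|c_j|^p$ converges, i.e.\ if and only if $\{c_j\}_{j\in\mathbb{J}}\in\ell^p(\mathbb{J})$, which gives the stated refinement.

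There is no genuine obstacle here, as the result is a routine corollary: the entire content is the choice $U_j = c_jI_{\mathcal{X}_0}$ together with the homogeneity of the norm. The only points requiring a line of verification are that $c_jI_{\mathcal{X}_0}\in\mathcal{B}(\mathcal{X}_0)$ (immediate) and that the equivalence $\{c_j\|y\|\}_{j\in\mathbb{J}}\in\ell^p(\mathbb{J})\iff\{c_j\}_{j\in\mathbb{J}}\in\ell^p(\mathbb{J})$ does require $y\neq 0$, which is precisely why the second assertion is stated separately under that hypothesis.
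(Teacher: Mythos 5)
Your proof is correct and is precisely the intended derivation: specializing the preceding Riesz--Fischer type theorem to $U_j = c_jI_{\mathcal{X}_0}$ and using homogeneity of the norm, with the nonzero hypothesis on $y$ needed exactly where you invoke it. Nothing further is required.
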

\begin{definition}\label{BANACHOPERATORORTHONORMALBASIS}
A collection  $ \{A_j\}_{j \in \mathbb{J}} $ in  $ \mathcal{B}(\mathcal{X},  \mathcal{X}_0)$ is said to be  a p-orthonormal basis  if the following conditions hold.
\begin{enumerate}[\upshape(i)]
\item  There exists  unique   $ \widehat{A}_k$ in $\mathcal{B}(\mathcal{X}_0,  \mathcal{X}) $ for each  $ A_k, k \in \mathbb{J}$ such that $A_j\widehat{A}_k=\delta_{j,k}I_{\mathcal{X}_0},  \forall j \in \mathbb{J}$ and  $ \sum_{j\in\mathbb{J}}\widehat{A}_jA_j=I_\mathcal{X}$.
\item If $\mathbb{L}\subseteq\mathbb{J} $  is such that $\sum_{j\in \mathbb{L}}L_jA_j \in \mathcal{B}(\mathcal{X}, \ell^p(\mathbb{J})\otimes \mathcal{X}_0) $, then  $\|\sum_{j\in \mathbb{L}}L_jA_jx\|^p=\sum_{j\in \mathbb{L}}\|A_jx\|^p\leq \|x\|^p, \forall x \in \mathcal{X}. $
\item If $\mathbb{L}\subseteq\mathbb{J} $  is such that $\sum_{j\in \mathbb{L}}\widehat{A}_j \widehat{L}_j\in \mathcal{B}(\ell^p(\mathbb{J}) \otimes \mathcal{X}_0 , \mathcal{X})$, then $ \|\sum_{j\in \mathbb{L}}\widehat{A}_j\widehat{L}_jy\|^p=\sum_{j\in \mathbb{L}}\|\widehat{L}_jy\|^p, \forall y \in \mathcal{X}_0$.
\item The maps $ \theta_A : \mathcal{X} \ni x \mapsto \sum_{j\in \mathbb{J}}L_jA_jx\in \ell^p(\mathbb{J})\otimes \mathcal{X}_0 $ and $ \widehat{\theta}_A :  \ell^p(\mathbb{J})\otimes \mathcal{X}_0 \ni y \mapsto \sum_{j\in \mathbb{J}}\widehat{A}_j\widehat{L}_jy \in \mathcal{X}$ are well-defined bounded linear isometries.
\end{enumerate}
\end{definition}
\begin{example}
Let $\mathcal{H}$ and $\mathcal{H}_0$ be Hilbert spaces. If  $ \{A_j\}_{j \in \mathbb{J}} $ is an orthonormal basis in  $ \mathcal{B}(\mathcal{H},  \mathcal{H}_0)$, then we argue that it is a 2-orthonormal basis in  $ \mathcal{B}(\mathcal{H},  \mathcal{H}_0)$. In fact, 
\begin{enumerate}[\upshape(i)]
\item For each $A_k, k\in \mathbb{J}$, $ A_k^*$ in $ \mathcal{B}(\mathcal{H}_0,  \mathcal{H})$ satisfies $A_jA_k^*=\delta_{j,k}I_{\mathcal{H}_0},  \forall j \in \mathbb{J}$ and  $ \sum_{j\in\mathbb{J}}A^*_jA_j=I_\mathcal{H}$. Let $k\in \mathbb{J}$ be fixed. If there exists $T$ in  $ \mathcal{B}(\mathcal{H}_0,  \mathcal{H})$ such that $A_jT=\delta_{j,k}I_{\mathcal{H}_0},  \forall j \in \mathbb{J}$, then $ T=\sum_{j\in \mathbb{J}}A_j^*A_jT=A_k^*A_kT+\sum_{j\in \mathbb{J}, j \neq k}A_j^*A_jT=A_k^*I_{\mathcal{H}_0}+\sum_{j\in \mathbb{J}, j \neq k}A_j^*0=A_k^*$. Hence the right 2-orthonormal inverse of $A_k$ is unique. Thus $\widehat{A}_k=A_k^*,\forall k \in \mathbb{J}$.
\item If $\mathbb{L}\subseteq\mathbb{J} $  is such that $\sum_{j\in \mathbb{L}}L_jA_j \in \mathcal{B}(\mathcal{H}, \ell^2(\mathbb{J})\otimes \mathcal{H}_0) $, then  $\|\sum_{j\in \mathbb{L}}L_jA_jh\|^2= \langle\sum_{j\in \mathbb{L}}L_jA_jh ,\sum_{k\in \mathbb{L}}L_kA_kh \rangle =\sum_{j\in \mathbb{L}}\langle A_jh,L_j^*(\sum_{k\in \mathbb{L}}L_kA_kh) \rangle =\sum_{j\in \mathbb{L}}\langle A_jh,A_jh\rangle  =\sum_{j\in \mathbb{L}}\|A_jh\|^2\leq \sum_{j\in \mathbb{J}}\|A_jh\|^2=\|h\|^2, \forall h \in \mathcal{H}. $
\item We first identify that $\widehat{L}_j=L_j^*,\forall j \in \mathbb{J}$. If $\mathbb{L}\subseteq\mathbb{J} $  is such that $\sum_{j\in \mathbb{L}}A^*_j L^*_j\in \mathcal{B}(\ell^2(\mathbb{J}) \otimes \mathcal{H}_0 , \mathcal{H})$, then $ \|\sum_{j\in \mathbb{L}}A^*_jL^*_jy\|^2=\sum_{j\in \mathbb{L}}\langle L^*_jy, A_j(\sum_{k\in \mathbb{L}}A^*_kL^*_ky)\rangle =\sum_{j\in \mathbb{L}}\langle L^*_jy,L^*_jy \rangle =\sum_{j\in \mathbb{L}}\|L^*_jy\|^2, \forall y \in \mathcal{H}_0$.
\item $\|\theta_Ah\|^2=\|\sum_{j\in \mathbb{J}}L_jA_jh\|^2= \sum_{j\in \mathbb{J}}\|A_jh\|^2=\|h\|^2,\forall h \in \mathcal{H}$. From \text{\upshape(i)} and \text{\upshape(iii)}, we have $ \widehat{\theta}_A=\theta_A^*$. Hence  $\| \widehat{\theta}_Ay\|^2=\|\theta_A^*y\|^2=\|\sum_{j\in \mathbb{J}}A^*_jL^*_jy\|^2=\sum_{j\in \mathbb{J}}\langle L^*_jy,L^*_jy \rangle=\langle \sum_{j\in \mathbb{J}}L_jL_j^*y, y\rangle=\langle (I_{\ell^2(\mathbb{J})}\otimes I_{\mathcal{H}_0})y, y\rangle =\|y\|^2 ,\forall y \in \mathcal{H}_0$.
\end{enumerate}
\end{example}
\begin{remark}\label{P-ONBREMARK}
 If $ \{A_j\}_{j \in \mathbb{J}} $ is a p-orthonormal basis in  $ \mathcal{B}(\mathcal{X},  \mathcal{X}_0 )$, then from \text{\upshape{(iii)}} of Definition \ref{BANACHOPERATORORTHONORMALBASIS} we see $\sum_{j\in \mathbb{J}}L_jA_j$ and  $\sum_{j\in \mathbb{J}}\widehat{A}_j\widehat{L}_j $ are  isometries and using this in \text{\upshape{(ii)}} we get $\|x\|^p=\|\sum_{j\in \mathbb{J}}L_jA_jx\|^p =\sum_{j\in \mathbb{J}}\|A_jx\|^p, \forall x \in \mathcal{X}$, and $ \|y\|^p=\|\sum_{j\in \mathbb{J}}\widehat{A}_j\widehat{L}_jy\|^p=\sum_{j\in \mathbb{J}}\|\widehat{L}_jy\|^p, \forall y \in \mathcal{X}_0$.
\end{remark}

\begin{definition}\label{RELATIVEPORTHONORMALANDRIESZBANACH}
Let  $ \{A_j\}_{j \in \mathbb{J}} $ be in $ \mathcal{B}(\mathcal{X}, \mathcal{X}_0)$, and $ \{\widehat{\Psi}_j\}_{j \in \mathbb{J}} $ be in $ \mathcal{B}(\mathcal{X}_0, \mathcal{X}).$
We say 
\begin{enumerate}[\upshape(i)]
\item $ \{A_j\}_{j \in \mathbb{J}}$ is a p-orthonormal set (resp. basis)  w.r.t. $\{\widehat{\Psi}_j\}_{j \in \mathbb{J}} $ if  $ \{A_j\}_{j \in \mathbb{J}}$  is a p-orthonormal set (resp. basis), say $ \{A_j\}_{j \in \mathbb{J}}$ is a p-orthonormal set  (resp. basis),   and there exists a sequence  $\{c_j\}_{j \in \mathbb{J}} $  of   reals such that  $ 0<\inf\{c_j\}_{j \in \mathbb{J}}\leq \sup\{c_j\}_{j \in \mathbb{J}}<\infty$   and $ \widehat{\Psi}_j=c_j\widehat{A}_j$, where $\widehat{A}_j$ is  the right p-orthonormal inverse of $A_j$ for each $ j \in \mathbb{J}$. We write $ (\{A_j\}_{j \in \mathbb{J}}, \{\widehat{\Psi}_j\}_{j \in \mathbb{J}})$ is a p-orthonormal set (resp. basis).
\item $ \{A_j\}_{j \in \mathbb{J}}$ is a Riesz p-basis  w.r.t. $\{\widehat{\Psi}_j\}_{j \in \mathbb{J}} $ if there exists a  p-orthonormal basis $ \{F_j\}_{j \in \mathbb{J}}$ in $ \mathcal{B}(\mathcal{X}, \mathcal{X}_0)$ and   invertible $ U, V \in \mathcal{B}(\mathcal{X})$   with the resolvent of $ VU$ contains  $ (-\infty, 0]$    such that $ A_j=F_jU, \widehat{\Psi}_j=V\widehat{F}_j,  \forall j \in \mathbb{J}$, where  $\widehat{F}_j$ is the right p-orthonormal inverse of $F_j$ for each $ j \in \mathbb{J}$. We write $ (\{A_j\}_{j \in \mathbb{J}}, \{\widehat{\Psi}_j\}_{j \in \mathbb{J}})$ is a Riesz p-basis.
\end{enumerate}
\end{definition}
\begin{theorem}
\begin{enumerate}[\upshape(i)]
\item If $ (\{A_j\}_{j \in \mathbb{J}},\{\widehat{\Psi}_j\}_{j \in \mathbb{J}}) $ is a p-orthonormal basis in  $ \mathcal{B}(\mathcal{X}, \mathcal{X}_0)$, then it is a Riesz p-basis.
\item If $ (\{A_j=F_jU\}_{j \in \mathbb{J}},\{\widehat{\Psi}_j=V\widehat{F}_j\}_{j \in \mathbb{J}}) $ is a Riesz  p-basis in  $ \mathcal{B}(\mathcal{X}, \mathcal{X}_0)$, then it  is a p-(ovf) with optimal frame bounds $ \|(VU)^{-1/p}\|^{-p}$ and  $\|(VU)^{1/p}\|^p $.
\end{enumerate}
\end{theorem}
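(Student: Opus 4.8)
The plan is to follow the Hilbert space result Theorem \ref{ONBIMPLIESOVF}, systematically replacing the adjoint $A_j^*$ by the right p-orthonormal inverse $\widehat{A}_j$, the positivity of the frame operator by the resolvent condition of Definition \ref{KOMATSU}, and the orthogonality relations by the identities $A_j\widehat{A}_k=\delta_{j,k}I_{\mathcal{X}_0}$ and $\sum_{j\in\mathbb{J}}\widehat{A}_jA_j=I_\mathcal{X}$ furnished by Definition \ref{BANACHOPERATORORTHONORMALBASIS}.

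For (i), since $(\{A_j\}_{j\in\mathbb{J}},\{\widehat{\Psi}_j\}_{j\in\mathbb{J}})$ is a p-orthonormal basis, $\{A_j\}_{j\in\mathbb{J}}$ is a p-orthonormal basis and there is a sequence of reals $\{c_j\}_{j\in\mathbb{J}}$ with $0<\inf\{c_j\}\le\sup\{c_j\}<\infty$ and $\widehat{\Psi}_j=c_j\widehat{A}_j$. I would take $F_j:=A_j$ (so that $\widehat{F}_j=\widehat{A}_j$ by the uniqueness of the right p-orthonormal inverse), $U:=I_\mathcal{X}$, and $V:=\sum_{k\in\mathbb{J}}c_k\widehat{A}_kA_k$. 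To see that $V$ is a well-defined bounded operator I would factor it as $V=\widehat{\theta}_A D_c\theta_A$, where $D_c$ is the bounded diagonal scaling $e_k\otimes y\mapsto c_k(e_k\otimes y)$ on $\ell^p(\mathbb{J})\otimes\mathcal{X}_0$; boundedness of $\theta_A$, $\widehat{\theta}_A$ and $D_c$ (the first two from the p-orthonormal basis hypothesis, the last from $\sup\{c_j\}<\infty$) then gives boundedness of $V$. Using $A_k\widehat{A}_j=\delta_{k,j}I_{\mathcal{X}_0}$ one checks $F_jU=A_j$ and $V\widehat{F}_j=c_j\widehat{A}_j=\widehat{\Psi}_j$, so the algebraic requirements of Definition \ref{RELATIVEPORTHONORMALANDRIESZBANACH}(ii) are met.

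The crux of (i) is the spectral requirement that the resolvent of $VU=V$ contains $(-\infty,0]$. The key observation is that $P_k:=\widehat{A}_kA_k$ are mutually orthogonal idempotents, $P_k^2=P_k$ and $P_kP_l=0$ for $k\neq l$, with $\sum_k P_k=I_\mathcal{X}$, so that $V=\sum_k c_kP_k$ acts like a diagonal operator with ``eigenvalues'' $c_k$. For $\lambda\notin\overline{\{c_k:k\in\mathbb{J}\}}$, setting $\delta:=\operatorname{dist}(\lambda,\{c_k\})>0$, I would define $R_\lambda:=\sum_k(c_k-\lambda)^{-1}P_k=\widehat{\theta}_A D_{(c-\lambda)^{-1}}\theta_A$, which is bounded since $|c_k-\lambda|^{-1}\le\delta^{-1}$; because $\sum_k P_k=I_\mathcal{X}$ one computes $V-\lambda I=\sum_k(c_k-\lambda)P_k$ and hence $(V-\lambda I)R_\lambda=R_\lambda(V-\lambda I)=\sum_k P_k=I_\mathcal{X}$, placing $\lambda$ in the resolvent set. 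Since $\{c_k\}\subseteq(0,\infty)$ is bounded away from $0$, every $\lambda\le0$ has $\operatorname{dist}(\lambda,\{c_k\})\ge\inf\{c_k\}>0$, so $(-\infty,0]$ lies in the resolvent set and, taking $\lambda=0$, $V$ is invertible (with $V^{-1}=\sum_k c_k^{-1}P_k$). This exhibits $(\{A_j\}_{j\in\mathbb{J}},\{\widehat{\Psi}_j\}_{j\in\mathbb{J}})$ as a Riesz p-basis.

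For (ii) the computations are direct. Using $\sum_j\widehat{F}_jF_j=I_\mathcal{X}$ together with $A_j=F_jU$ and $\widehat{\Psi}_j=V\widehat{F}_j$, I get $\widehat{S}_{A,\Psi}=\sum_j\widehat{\Psi}_jA_j=V\bigl(\sum_j\widehat{F}_jF_j\bigr)U=VU$, whose resolvent contains $(-\infty,0]$ by the Riesz p-basis hypothesis; moreover $\theta_A=\theta_FU$ and $\widehat{\theta}_\Psi=V\widehat{\theta}_F$ are bounded because $\theta_F$ and $\widehat{\theta}_F$ are isometries for the p-orthonormal basis $\{F_j\}$. Hence all conditions of Definition \ref{HM1} hold and $(\{A_j\}_{j\in\mathbb{J}},\{\widehat{\Psi}_j\}_{j\in\mathbb{J}})$ is a p-(ovf) with p-(ovf) operator $\widehat{S}_{A,\Psi}=VU$; the optimal bounds $a=\|\widehat{S}_{A,\Psi}^{-1/p}\|^{-p}=\|(VU)^{-1/p}\|^{-p}$ and $b=\|\widehat{S}_{A,\Psi}^{1/p}\|^{p}=\|(VU)^{1/p}\|^{p}$ then follow verbatim from Definition \ref{HM1}. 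The main obstacle throughout is the spectral bookkeeping in (i): whereas in the Hilbert space setting positivity of the frame operator is immediate, here one must exploit the orthogonal-idempotent decomposition of $V$ to control its spectrum and verify the resolvent condition, all while justifying the pointwise convergence of each infinite operator series in the Banach space.
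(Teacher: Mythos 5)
Your proof is correct and follows essentially the same route as the paper's: the same choices $F_j=A_j$, $U=I_\mathcal{X}$, $V=\sum_{j}c_j\widehat{A}_jA_j$ in (i), the same resolvent inverse $\sum_{j}(\lambda-c_j)^{-1}\widehat{A}_jA_j$ exploiting $\inf\{c_j\}>0$, and the same identities $\theta_A=\theta_FU$, $\widehat{\theta}_\Psi=V\widehat{\theta}_F$, $\widehat{S}_{A,\Psi}=VU$ in (ii). Your factorization $V=\widehat{\theta}_AD_c\theta_A$ and the explicit orthogonal-idempotent bookkeeping for $P_k=\widehat{A}_kA_k$ only make more careful what the paper asserts in a line, so the two arguments coincide in substance.
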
 
\begin{proof}
\begin{enumerate}[\upshape(i)]
\item Let  $\{c_j\}_{j \in \mathbb{J}} $ be a sequence of     reals such that  $ 0<\inf\{c_j\}_{j \in \mathbb{J}}\leq \sup\{c_j\}_{j \in \mathbb{J}}<\infty$   and $ \widehat{\Psi}_j=c_j\widehat{A}_j, \forall j \in \mathbb{J}.$ Define $ F_j\coloneqq A_j, \forall j \in \mathbb{J},  U\coloneqq I_\mathcal{X}$ and  $ V \coloneqq \sum_{j\in \mathbb{J}}c_j\widehat{A}_jA_j.$ Since $\sup\{c_j\}_{j \in \mathbb{J}}<\infty $ and $ \sum_{j\in \mathbb{J}}\widehat{A}_jA_j$  converges, $ V$ is a well-defined bounded operator. Then $ A_j=F_jU,V\widehat{F}_j =\sum_{k\in \mathbb{J}}c_k\widehat{A}_kA_k\widehat{F}_j=\sum_{k\in \mathbb{J}}c_k\widehat{A}_kA_k\widehat{A}_j=c_j\widehat{A}_j=\widehat{\Psi}_j, \forall j \in \mathbb{J}.$  Since  $c_j>0, \forall j \in \mathbb{J} $, we see   $ V$ is invertible, whose inverse is $\sum_{j\in \mathbb{J}}c_j^{-1}\widehat{A}_jA_j. $ An  important fact which  remained is to show  the containment of $(-\infty, 0]$ in the   resolvent of $ VU=VI_\mathcal{X}=V$. Let $ \lambda \in  (-\infty, 0]$. Then $\lambda-c_j\neq 0, \forall j \in \mathbb{J} $. Therefore $ \lambda I_\mathcal{X}-V=\lambda I_\mathcal{X}-\sum_{j\in \mathbb{J}}c_j\widehat{A}_jA_j=\lambda\sum_{j\in \mathbb{J}}\widehat{A}_jA_j-\sum_{j\in \mathbb{J}}c_j\widehat{A}_jA_j=\sum_{j\in \mathbb{J}}(\lambda-c_j)\widehat{A}_jA_j$ is bounded invertible with inverse $ \sum_{j\in \mathbb{J}}(\lambda-c_j)^{-1}\widehat{A}_jA_j$. Thus the  resolvent of $ VU$ contains  $(-\infty, 0]$.
\item $ \theta_A=\sum_{j\in \mathbb{J}}L_jA_j=\sum_{j\in \mathbb{J}}L_jF_jU=\theta_FU,\widehat{\theta}_\Psi=\sum_{j\in \mathbb{J}}\widehat{\Psi}_j\widehat{L}_j=V\sum_{j\in \mathbb{J}}\widehat{F}_j\widehat{L}_j=V\widehat{\theta}_F,$ and $\widehat{S}_{A,\Psi}=\sum_{j\in \mathbb{J}}\widehat{\Psi}_jA_j=V\sum_{j\in \mathbb{J}}\widehat{F}_jF_jU =VI_\mathcal{X}U=VU$ whose resolvent contains $(-\infty, 0]$. Optimal bounds are clear.
\end{enumerate}	
\end{proof}
\begin{proposition}
Let $ (\{A_j\}_{j\in \mathbb{J}}, \{\widehat{\Psi}_j\}_{j\in \mathbb{J}}) $  be a  p-(ovf) in    $\mathcal{B}(\mathcal{X}, \mathcal{X}_0)$. Then the bounded 
\begin{enumerate}[\upshape(i)]
\item left-inverses of  $ \theta_A$ are precisely   $\widehat{S}_{A,\Psi}^{-1}\widehat{\theta}_\Psi+U(I_{\ell^p(\mathbb{J})\otimes\mathcal{X}_0}-\theta_A\widehat{S}_{A,\Psi}^{-1}\widehat{\theta}_\Psi)$, where $U\in \mathcal{B}( \ell^p(\mathbb{J})\otimes\mathcal{X}_0, \mathcal{X})$.
\item right-inverses of $ \widehat{\theta}_\Psi$ are precisely   $\theta_A\widehat{S}_{A,\Psi}^{-1}+(I_{\ell^p(\mathbb{J})\otimes\mathcal{X}_0}-\theta_A \widehat{S}_{A,\Psi}^{-1}\widehat{\theta}_\Psi)V$, where $V\in \mathcal{B}(\mathcal{X}, \ell^p(\mathbb{J})\otimes\mathcal{X}_0)$.
\end{enumerate}	
\end{proposition}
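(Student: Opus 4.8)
The plan is to imitate the Hilbert-space proposition on left-inverses of analysis operators from Section~\ref{MK}, replacing the adjoint $\theta_\Psi^*$ by the synthesis operator $\widehat{\theta}_\Psi$, since in the Banach setting we no longer have adjoints at our disposal. The whole argument rests on a single algebraic identity together with the invertibility of the p-(ovf) operator. First I would record that $\widehat{\theta}_\Psi\theta_A = \big(\sum_{j}\widehat{\Psi}_j\widehat{L}_j\big)\big(\sum_{k}L_kA_k\big) = \sum_{j}\widehat{\Psi}_jA_j = \widehat{S}_{A,\Psi}$, where the cross terms vanish because $\widehat{L}_jL_k=\delta_{j,k}I_{\mathcal{X}_0}$. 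By condition (i) of Definition~\ref{HM1} the resolvent of $\widehat{S}_{A,\Psi}$ contains $(-\infty,0]$, so in particular $0$ lies in the resolvent and $\widehat{S}_{A,\Psi}$ is boundedly invertible; hence $\widehat{S}_{A,\Psi}^{-1}\widehat{\theta}_\Psi\theta_A=I_{\mathcal{X}}$ and $\widehat{\theta}_\Psi\theta_A\widehat{S}_{A,\Psi}^{-1}=I_{\mathcal{X}}$, which are the only facts the rest of the computation needs.

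For the sufficiency halves I would simply substitute and cancel. In (i), given any $U\in\mathcal{B}(\ell^p(\mathbb{J})\otimes\mathcal{X}_0,\mathcal{X})$, composing the proposed operator with $\theta_A$ on the right and using $\widehat{\theta}_\Psi\theta_A=\widehat{S}_{A,\Psi}$ gives $\widehat{S}_{A,\Psi}^{-1}\widehat{S}_{A,\Psi}+U\theta_A-U\theta_A\widehat{S}_{A,\Psi}^{-1}\widehat{S}_{A,\Psi}=I_\mathcal{X}+U\theta_A-U\theta_A=I_\mathcal{X}$, so the operator is a bounded left-inverse of $\theta_A$. Symmetrically, in (ii), for $V\in\mathcal{B}(\mathcal{X},\ell^p(\mathbb{J})\otimes\mathcal{X}_0)$, composing the proposed operator with $\widehat{\theta}_\Psi$ on the left yields $I_\mathcal{X}+\widehat{\theta}_\Psi V-\widehat{\theta}_\Psi V=I_\mathcal{X}$, so it is a bounded right-inverse of $\widehat{\theta}_\Psi$. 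Boundedness of both candidates is automatic since $\theta_A$, $\widehat{\theta}_\Psi$, $\widehat{S}_{A,\Psi}^{-1}$, $U$, $V$ are all bounded.

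For the necessity halves I would show every one-sided inverse already has the stated shape. In (i), if $L\in\mathcal{B}(\ell^p(\mathbb{J})\otimes\mathcal{X}_0,\mathcal{X})$ satisfies $L\theta_A=I_\mathcal{X}$, I set $U:=L$ and compute $\widehat{S}_{A,\Psi}^{-1}\widehat{\theta}_\Psi+L(I-\theta_A\widehat{S}_{A,\Psi}^{-1}\widehat{\theta}_\Psi)=\widehat{S}_{A,\Psi}^{-1}\widehat{\theta}_\Psi+L-L\theta_A\widehat{S}_{A,\Psi}^{-1}\widehat{\theta}_\Psi=\widehat{S}_{A,\Psi}^{-1}\widehat{\theta}_\Psi+L-\widehat{S}_{A,\Psi}^{-1}\widehat{\theta}_\Psi=L$, where $L\theta_A=I_\mathcal{X}$ is used in the last cancellation. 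In (ii), if $R\in\mathcal{B}(\mathcal{X},\ell^p(\mathbb{J})\otimes\mathcal{X}_0)$ satisfies $\widehat{\theta}_\Psi R=I_\mathcal{X}$, I set $V:=R$ and the analogous cancellation (now invoking $\widehat{\theta}_\Psi R=I_\mathcal{X}$) recovers $R$.

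The computations are entirely routine; the only genuine point to get right is the bookkeeping of domains and codomains and making sure that the role played by $\theta_\Psi^*$ in the Hilbert-space argument is taken over cleanly by $\widehat{\theta}_\Psi$ here, which is exactly what the identity $\widehat{S}_{A,\Psi}=\widehat{\theta}_\Psi\theta_A$ encodes. I do not anticipate any real obstacle beyond checking that $\widehat{S}_{A,\Psi}$ is invertible (supplied by the resolvent hypothesis) and that the two families indeed land in $\mathcal{B}(\ell^p(\mathbb{J})\otimes\mathcal{X}_0,\mathcal{X})$ and $\mathcal{B}(\mathcal{X},\ell^p(\mathbb{J})\otimes\mathcal{X}_0)$, respectively.
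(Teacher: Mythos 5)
Your proof is correct and is essentially the same as the paper's: both directions of (i) and (ii) are handled by the identical substitute-and-cancel computation, using the identity $\widehat{S}_{A,\Psi}=\widehat{\theta}_\Psi\theta_A$ and the invertibility of $\widehat{S}_{A,\Psi}$, and taking $U:=L$ (resp. $V:=R$) for the necessity halves. No issues.
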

\begin{proof}
\begin{enumerate}[\upshape(i)]
\item $(\Leftarrow)$ Let $U: \ell^p(\mathbb{J})\otimes\mathcal{X}_0\rightarrow \mathcal{X}$ be a bounded operator. Then $(\widehat{S}_{A,\Psi}^{-1}\widehat{\theta}_\Psi+U(I_{\ell^p(\mathbb{J})\otimes\mathcal{X}_0}-\theta_A\widehat{S}_{A,\Psi}^{-1}\widehat{\theta}_\Psi))\theta_A=I_\mathcal{X}+U\theta_A-U\theta_A I_\mathcal{X}=I_\mathcal{X}$. Therefore  $\widehat{S}_{A,\Psi}^{-1}\widehat{\theta}_\Psi+U(I_{\ell^p(\mathbb{J})\otimes\mathcal{X}_0}-\theta_A\widehat{S}_{A,\Psi}^{-1}\widehat{\theta}_\Psi)$ is a bounded left-inverse of $\theta_A$.

$(\Rightarrow)$ Let $ L:\ell^p(\mathbb{J})\otimes\mathcal{X}_0\rightarrow \mathcal{X}$ be a bounded left-inverse of $ \theta_A$. Define $U\coloneqq L$. Then $\widehat{S}_{A,\Psi}^{-1}\widehat{\theta}_\Psi+U(I_{\ell^p(\mathbb{J})\otimes\mathcal{X}_0}-\theta_A\widehat{S}_{A,\Psi}^{-1}\widehat{\theta}_\Psi) =\widehat{S}_{A,\Psi}^{-1}\widehat{\theta}_\Psi+L(I_{\ell^p(\mathbb{J})\otimes\mathcal{X}_0}-\theta_A\widehat{S}_{A,\Psi}^{-1}\widehat{\theta}_\Psi)=\widehat{S}_{A,\Psi}^{-1}\widehat{\theta}_\Psi+L-I_{\mathcal{X}}\widehat{S}_{A,\Psi}^{-1}\widehat{\theta}_\Psi= L$. 
\item  $(\Leftarrow)$ Let $V: \mathcal{X} \mapsto \ell^p(\mathbb{J})\otimes\mathcal{X}_0$ be a bounded operator. Then $\widehat{\theta}_\Psi(\theta_A\widehat{S}_{A,\Psi}^{-1}+(I_{\ell^p(\mathbb{J})\otimes\mathcal{X}_0}-\theta_A \widehat{S}_{A,\Psi}^{-1}\widehat{\theta}_\Psi)V)=I_\mathcal{X}+\widehat{\theta}_\Psi V- I_\mathcal{X}\widehat{\theta}_\Psi V=I_\mathcal{X}$. Therefore  $\theta_A\widehat{S}_{A,\Psi}^{-1}+(I_{\ell^p(\mathbb{J})\otimes\mathcal{X}_0}-\theta_A \widehat{S}_{A,\Psi}^{-1}\widehat{\theta}_\Psi)V$ is a bounded right-inverse of $\widehat{\theta}_\Psi $.

$(\Rightarrow)$ Let $ R:\mathcal{X} \rightarrow \ell^p(\mathbb{J})\otimes\mathcal{X}_0$ be a bounded right-inverse of $\widehat{\theta}_\Psi$. Define $V\coloneqq R$. Then $ \theta_A\widehat{S}_{A,\Psi}^{-1}+(I_{\ell^p(\mathbb{J})\otimes\mathcal{X}_0}-\theta_A \widehat{S}_{A,\Psi}^{-1}\widehat{\theta}_\Psi)V=\theta_A\widehat{S}_{A,\Psi}^{-1}+(I_{\ell^p(\mathbb{J})\otimes\mathcal{X}_0}-\theta_A \widehat{S}_{A,\Psi}^{-1}\widehat{\theta}_\Psi)R=\theta_A\widehat{S}_{A,\Psi}^{-1}+R-\theta_A \widehat{S}_{A,\Psi}^{-1}I_\mathcal{X}= R$.

\end{enumerate}

\end{proof}

\begin{proposition}\label{BANACHSPACEOVFFUNDAMENTALLEMMA}
For every $ \{A_j\}_{j \in \mathbb{J}}  \in \widehat{\mathscr{F}}_{\Psi,p}$,
\begin{enumerate}[\upshape (i)]
\item $ \widehat{S}_{A, \Psi} = \widehat{\theta}_\Psi\theta_A .$  
\item $( \{A_j\}_{j \in \mathbb{J}}, \{\widehat{\Psi}_j \}_ {j \in \mathbb{J}})$ is Parseval if and only if $  \widehat{\theta}_\Psi\theta_A =I_\mathcal{X}.$ 
\item $( \{A_j\}_{j \in \mathbb{J}}, \{\widehat{\Psi}_j \}_ {j \in \mathbb{J}})$ is Parseval  if and only if $ \theta_A\widehat{\theta}_\Psi $ is idempotent.
\item $  A_j=\widehat{L}_j\theta_A, \forall j\in \mathbb{J}.$
\item $ \widehat{\Psi}_j=\widehat{\theta}_{\Psi}L_j, \forall j\in \mathbb{J}.$
\item $\theta_A\widehat{S}_{A,\Psi}^{-1}\widehat{\theta}_\Psi$ is idempotent.
\item $\theta_A $ is  injective whose range is closed.
\item  $\widehat{\theta}_\Psi $ is  surjective.
\end{enumerate}
\end{proposition}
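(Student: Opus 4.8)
The plan is to transcribe the proof of Proposition \ref{2.2} into the Banach setting, systematically replacing Hilbert-space adjoints by the dedicated objects $\widehat{L}_j$, $\widehat{\theta}_\Psi$ supplied in Definition \ref{HM1}. The engine driving every part is the biorthogonality relation $\widehat{L}_k L_j = \delta_{j,k} I_{\mathcal{X}_0}$ recorded immediately after Definition \ref{HM1}, together with the fact that $\widehat{S}_{A,\Psi}$ is invertible (since its resolvent contains $(-\infty,0]$, in particular $0$).

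First I would prove (i) by a direct pointwise computation: for $x \in \mathcal{X}$,
$$\widehat{\theta}_\Psi \theta_A x = \sum_{k\in\mathbb{J}} \widehat{\Psi}_k \widehat{L}_k\left(\sum_{j\in\mathbb{J}} L_j A_j x\right) = \sum_{k,j} \widehat{\Psi}_k(\widehat{L}_k L_j)A_j x = \sum_{k\in\mathbb{J}} \widehat{\Psi}_k A_k x = \widehat{S}_{A,\Psi} x,$$
the collapse using $\widehat{L}_k L_j = \delta_{j,k} I_{\mathcal{X}_0}$. Statement (ii) is then immediate from the Parseval convention (\ref{PARDEFINITION}): $\widehat{S}_{A,\Psi}=I_\mathcal{X}$ iff $\widehat{\theta}_\Psi\theta_A = I_\mathcal{X}$. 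The coordinate formulas (iv) and (v) follow the same way, using linearity of $\widehat{L}_j$ and the same biorthogonality, namely $\widehat{L}_j\theta_A = \sum_k \widehat{L}_j L_k A_k = A_j$ and $\widehat{\theta}_\Psi L_j = \sum_k \widehat{\Psi}_k \widehat{L}_k L_j = \widehat{\Psi}_j$.

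For the idempotency statements I would substitute (i) into the relevant products. For (vi), $(\theta_A \widehat{S}_{A,\Psi}^{-1}\widehat{\theta}_\Psi)^2 = \theta_A \widehat{S}_{A,\Psi}^{-1}(\widehat{\theta}_\Psi\theta_A)\widehat{S}_{A,\Psi}^{-1}\widehat{\theta}_\Psi = \theta_A \widehat{S}_{A,\Psi}^{-1}\widehat{\theta}_\Psi$. For the forward direction of (iii), Parseval gives $\widehat{\theta}_\Psi\theta_A = I_\mathcal{X}$, whence $(\theta_A\widehat{\theta}_\Psi)^2 = \theta_A(\widehat{\theta}_\Psi\theta_A)\widehat{\theta}_\Psi = \theta_A\widehat{\theta}_\Psi$. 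For the converse, starting from $\theta_A\widehat{\theta}_\Psi\theta_A\widehat{\theta}_\Psi = \theta_A\widehat{\theta}_\Psi$ I would multiply on the left by $\widehat{\theta}_\Psi$ and on the right by $\theta_A$ and collapse each $\widehat{\theta}_\Psi\theta_A$ to $\widehat{S}_{A,\Psi}$ via (i), obtaining $\widehat{S}_{A,\Psi}^3 = \widehat{S}_{A,\Psi}^2$; invertibility of $\widehat{S}_{A,\Psi}$ then cancels two factors to give $\widehat{S}_{A,\Psi}=I_\mathcal{X}$, i.e.\ Parseval.

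Statements (vii) and (viii) rest on the invertibility of $\widehat{S}_{A,\Psi} = \widehat{\theta}_\Psi\theta_A$. Injectivity of $\theta_A$ and surjectivity of $\widehat{\theta}_\Psi$ are immediate, since a bijective composite forces its right factor injective and its left factor surjective; explicitly $x = \widehat{\theta}_\Psi(\theta_A \widehat{S}_{A,\Psi}^{-1}x)$ for all $x$. The one genuinely analytic point, which I expect to be the main obstacle, is closedness of $\theta_A(\mathcal{X})$: I would take a sequence $x_n$ with $\theta_A x_n \to y$, apply the bounded operator $\widehat{\theta}_\Psi$ to obtain $\widehat{S}_{A,\Psi} x_n \to \widehat{\theta}_\Psi y$, apply the bounded $\widehat{S}_{A,\Psi}^{-1}$ to get $x_n \to \widehat{S}_{A,\Psi}^{-1}\widehat{\theta}_\Psi y$, and finally apply the continuous $\theta_A$ to conclude $y = \theta_A(\widehat{S}_{A,\Psi}^{-1}\widehat{\theta}_\Psi y) \in \theta_A(\mathcal{X})$ by uniqueness of limits and completeness of $\ell^p(\mathbb{J})\otimes\mathcal{X}_0$. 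The only care needed beyond the Hilbert-space template is that here $\widehat{\theta}_\Psi$ is not an adjoint of $\theta_A$ but an independently prescribed operator, so I must invoke its boundedness from Definition \ref{HM1}(ii) rather than any automatic duality.
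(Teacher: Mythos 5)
Your proof is correct and is essentially the argument the paper intends: the paper's own proof says only that (i) is a direct verification and the rest is "similar to the proof of Proposition \ref{2.2}," and your write-up is exactly that transcription, with the biorthogonality $\widehat{L}_kL_j=\delta_{j,k}I_{\mathcal{X}_0}$ replacing the adjoint identities and invertibility of $\widehat{S}_{A,\Psi}$ (from $0$ lying in its resolvent) doing the work in (iii), (vi)--(viii). No gaps.
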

\begin{proof}
A direct verification gives (i), and (ii) is a consequence of that. Justifications for   the remainings are similar to the proof of Proposition \ref{2.2}.
\end{proof}
The idempotent operator $\widehat{P}_{A, \Psi}\coloneqq \theta_A\widehat{S}_{A,\Psi}^{-1}\widehat{\theta}_\Psi$ is called as the \textit{frame idempotent} for $(\{A_j\}_{j \in \mathbb{J}}, \{\widehat{\Psi}_j \}_ {j \in \mathbb{J}}).$
\begin{definition}
A p-(ovf)  $(\{A_j\}_{j\in \mathbb{J}}, \{\widehat{\Psi}_j\}_{j\in \mathbb{J}})$  in $\mathcal{B}(\mathcal{X}, \mathcal{X}_0)$ is said to be a Riesz  p-(ovf)  if $ \widehat{P}_{A,\Psi}= I_{\ell^p(\mathbb{J})}\otimes I_{\mathcal{X}_0}$. A Parseval and  Riesz p-(ovf) (i.e., $\widehat{\theta}_\Psi\theta_A=I_\mathcal{X} $ and  $\theta_A\widehat{\theta}_\Psi=I_{\ell^p(\mathbb{J})}\otimes I_{\mathcal{X}_0} $) is called as an orthonormal p-(ovf).
\end{definition}
 \begin{proposition}
 A p-(ovf) $ (\{A_j\}_{j\in \mathbb{J}}, \{\widehat{\Psi}_j\}_{j\in \mathbb{J}}) $ in $ \mathcal{B}(\mathcal{X}, \mathcal{X}_0)$ is a Riesz p-(ovf)  if and only if   $\theta_A(\mathcal{X})=\ell^p(\mathbb{J})\otimes \mathcal{X}_0.$ 
 \end{proposition}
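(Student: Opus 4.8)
The plan is to mirror the proof of Proposition \ref{RIESZOVFCHARACTERIZATIONPROPOSITION}, the Hilbert-space analogue, but using the synthesis operator $\widehat{\theta}_\Psi$ in place of the adjoint $\theta_\Psi^*$ and invoking Proposition \ref{BANACHSPACEOVFFUNDAMENTALLEMMA} at the two places where the Hilbert argument used adjoint identities. By Definition \ref{HM1} the operator $\widehat{S}_{A,\Psi}$ has resolvent containing $(-\infty,0]$, so in particular $0$ lies in the resolvent and $\widehat{S}_{A,\Psi}$ is invertible; this is all the invertibility I need. The frame idempotent is $\widehat{P}_{A,\Psi}=\theta_A\widehat{S}_{A,\Psi}^{-1}\widehat{\theta}_\Psi$, and by definition $(\{A_j\}_{j\in\mathbb{J}},\{\widehat{\Psi}_j\}_{j\in\mathbb{J}})$ is a Riesz p-(ovf) exactly when $\widehat{P}_{A,\Psi}=I_{\ell^p(\mathbb{J})}\otimes I_{\mathcal{X}_0}$.

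For the forward direction, I would assume $\widehat{P}_{A,\Psi}=I_{\ell^p(\mathbb{J})}\otimes I_{\mathcal{X}_0}$ and chase ranges:
\[
\ell^p(\mathbb{J})\otimes\mathcal{X}_0=\widehat{P}_{A,\Psi}\big(\ell^p(\mathbb{J})\otimes\mathcal{X}_0\big)=\theta_A\widehat{S}_{A,\Psi}^{-1}\widehat{\theta}_\Psi\big(\ell^p(\mathbb{J})\otimes\mathcal{X}_0\big)\subseteq\theta_A(\mathcal{X})\subseteq\ell^p(\mathbb{J})\otimes\mathcal{X}_0,
\]
forcing $\theta_A(\mathcal{X})=\ell^p(\mathbb{J})\otimes\mathcal{X}_0$. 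This uses only that $\widehat{\theta}_\Psi$ maps into $\mathcal{X}$ and that $\theta_A$ maps out of $\mathcal{X}$, so no structure beyond boundedness is required.

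For the converse, I would take any $y\in\ell^p(\mathbb{J})\otimes\mathcal{X}_0$ and, using the surjectivity hypothesis $\theta_A(\mathcal{X})=\ell^p(\mathbb{J})\otimes\mathcal{X}_0$, write $y=\theta_A x$ for some $x\in\mathcal{X}$. The key input is part (i) of Proposition \ref{BANACHSPACEOVFFUNDAMENTALLEMMA}, namely $\widehat{S}_{A,\Psi}=\widehat{\theta}_\Psi\theta_A$, which gives the factorization
\[
y=\theta_A x=\theta_A\widehat{S}_{A,\Psi}^{-1}\widehat{S}_{A,\Psi}x=\theta_A\widehat{S}_{A,\Psi}^{-1}\widehat{\theta}_\Psi\theta_A x=\widehat{P}_{A,\Psi}\,y.
\]
Since $y$ is arbitrary, $\widehat{P}_{A,\Psi}=I_{\ell^p(\mathbb{J})}\otimes I_{\mathcal{X}_0}$, i.e.\ the frame is Riesz p-(ovf). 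I do not expect a genuine obstacle here, since the argument is formal once Proposition \ref{BANACHSPACEOVFFUNDAMENTALLEMMA} is in hand; the only point demanding care is that, unlike the Hilbert-space statement in Proposition \ref{RIESZOVFCHARACTERIZATIONPROPOSITION}, there is no second equivalent condition phrased via $\widehat{\theta}_\Psi$, because the Banach setting has no adjoint-induced symmetry between the analysis operator $\theta_A$ and the synthesis operator $\widehat{\theta}_\Psi$. Thus the statement correctly records only the single range condition on $\theta_A$, and I would make sure the write-up does not inadvertently claim the symmetric version.
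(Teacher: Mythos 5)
Your proof is correct and is exactly the argument the paper intends: it is the direct transcription of the paper's proof of Proposition \ref{RIESZOVFCHARACTERIZATIONPROPOSITION} with $\theta_\Psi^*$ replaced by $\widehat{\theta}_\Psi$ and the identity $\widehat{S}_{A,\Psi}=\widehat{\theta}_\Psi\theta_A$ from Proposition \ref{BANACHSPACEOVFFUNDAMENTALLEMMA} supplying the key factorization. Your remark that the Banach statement correctly omits the symmetric condition on $\widehat{\theta}_\Psi$ (which in the Hilbert case follows from the adjoint symmetry) is also accurate.
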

\begin{definition}
A p-(ovf) $ (\{B_j\}_{j\in \mathbb{J}}, \{\widehat{\Phi}_j\}_{j\in \mathbb{J}}) $  in $ \mathcal{B}(\mathcal{X}, \mathcal{X}_0)$ is said to be a dual of  p-(ovf) $ (\{A_j\}_{j\in \mathbb{J}}, \{\widehat{\Psi}_j\}_{j\in \mathbb{J}}) $ in $ \mathcal{B}(\mathcal{X}, \mathcal{X}_0)$  if  $ \widehat{\theta}_\Phi\theta_A= \widehat{\theta}_\Psi\theta_B=I_{\mathcal{X}}$. The `p-(ovf)' $(\{\widetilde{A}_j\coloneqq A_j\widehat{S}_{A,\Psi}^{-1}\}_{j\in \mathbb{J}}, \{\widetilde{\Psi}_j\coloneqq\widehat{S}_{A,\Psi}^{-1}\widehat{\Psi}_j\}_{j \in \mathbb{J}} )$, which is a `dual' of $ (\{A_j\}_{j\in \mathbb{J}}, \{\widehat{\Psi}_j\}_{j\in \mathbb{J}})$ is called the canonical dual of $ (\{A_j\}_{j\in \mathbb{J}}, \{\widehat{\Psi}_j\}_{j\in \mathbb{J}})$.
\end{definition}
 \begin{theorem}
 Let $(\{A_j\}_{j\in \mathbb{J}},\{\widehat{\Psi}_j\}_{j\in \mathbb{J}})$ be a p-(ovf) with frame bounds $ a$ and $ b.$ Then
 \begin{enumerate}[\upshape(i)]
 \item The canonical dual p-(ovf) of the canonical dual p-(ovf)  of $ (\{A_j\}_{j\in \mathbb{J}} ,\{\widehat{\Psi}_j\}_{j\in \mathbb{J}} )$ is itself.
 \item$ \frac{1}{b}, \frac{1}{a}$ are frame bounds for the canonical dual of $ (\{A_j\}_{j\in \mathbb{J}},\{\widehat{\Psi}_j\}_{j\in \mathbb{J}}).$
 \item If $ a, b $ are optimal frame bounds for $( \{A_j\}_{j\in \mathbb{J}} , \{\widehat{\Psi}_j\}_{j\in \mathbb{J}}),$ then $ \frac{1}{b}, \frac{1}{a}$ are optimal  frame bounds for its canonical dual.
 \end{enumerate} 
 \end{theorem}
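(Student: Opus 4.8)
The plan is to follow the template of the proof of Theorem~\ref{CANONICALDUALFRAMEPROPERTYOPERATORVERSION}, the only genuinely new ingredient being that the frame bounds are now read off from the Komatsu fractional-power calculus of Definition~\ref{KOMATSU} rather than from the order structure of positive operators. First I would compute the p-(ovf) operator of the canonical dual. Writing $\widetilde{A}_j = A_j\widehat{S}_{A,\Psi}^{-1}$ and $\widetilde{\Psi}_j = \widehat{S}_{A,\Psi}^{-1}\widehat{\Psi}_j$, substitution into the definition $\widehat{S}_{\widetilde{A},\widetilde{\Psi}} = \sum_{j\in\mathbb{J}}\widetilde{\Psi}_j\widetilde{A}_j$ and pulling the bounded operator $\widehat{S}_{A,\Psi}^{-1}$ through the pointwise-convergent sum (justified by continuity) gives
$$\widehat{S}_{\widetilde{A},\widetilde{\Psi}} = \widehat{S}_{A,\Psi}^{-1}\left(\sum_{j\in\mathbb{J}}\widehat{\Psi}_jA_j\right)\widehat{S}_{A,\Psi}^{-1} = \widehat{S}_{A,\Psi}^{-1}\widehat{S}_{A,\Psi}\widehat{S}_{A,\Psi}^{-1} = \widehat{S}_{A,\Psi}^{-1},$$
the exact analogue of the first display in the proof of Theorem~\ref{CANONICALDUALFRAMEPROPERTYOPERATORVERSION}.

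For part (i), since $\widehat{S}_{\widetilde{A},\widetilde{\Psi}} = \widehat{S}_{A,\Psi}^{-1}$ is invertible, the canonical dual of the canonical dual has $j$-th entries $\widetilde{A}_j\,\widehat{S}_{\widetilde{A},\widetilde{\Psi}}^{-1} = A_j\widehat{S}_{A,\Psi}^{-1}\widehat{S}_{A,\Psi} = A_j$ and $\widehat{S}_{\widetilde{A},\widetilde{\Psi}}^{-1}\widetilde{\Psi}_j = \widehat{S}_{A,\Psi}\widehat{S}_{A,\Psi}^{-1}\widehat{\Psi}_j = \widehat{\Psi}_j$, recovering the original pair.

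For part (ii) I would invoke the fractional-power laws of \cite{KOMATSU1} for operators whose resolvent contains $(-\infty,0]$. One first checks by the spectral mapping theorem that $\widehat{S}_{A,\Psi}^{-1}$ again has resolvent containing $(-\infty,0]$: its spectrum is the image of $\sigma(\widehat{S}_{A,\Psi})$ under $\lambda\mapsto 1/\lambda$, and since $\sigma(\widehat{S}_{A,\Psi})$ is disjoint from $(-\infty,0]$ and $0$, so is its image, so the $p$-th root of $\widehat{S}_{\widetilde{A},\widetilde{\Psi}}$ is defined and the exponent law yields $\widehat{S}_{\widetilde{A},\widetilde{\Psi}}^{1/p} = (\widehat{S}_{A,\Psi}^{-1})^{1/p} = (\widehat{S}_{A,\Psi}^{1/p})^{-1} = \widehat{S}_{A,\Psi}^{-1/p}$. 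The upper estimate is then immediate from $\|\widehat{S}_{\widetilde{A},\widetilde{\Psi}}^{1/p}x\| = \|\widehat{S}_{A,\Psi}^{-1/p}x\| \le \|\widehat{S}_{A,\Psi}^{-1/p}\|\,\|x\| = (1/a)^{1/p}\|x\|$, while the lower estimate follows from $\|x\| = \|\widehat{S}_{A,\Psi}^{1/p}\widehat{S}_{A,\Psi}^{-1/p}x\| \le \|\widehat{S}_{A,\Psi}^{1/p}\|\,\|\widehat{S}_{A,\Psi}^{-1/p}x\| = b^{1/p}\|\widehat{S}_{\widetilde{A},\widetilde{\Psi}}^{1/p}x\|$, giving $(1/b)^{1/p}\|x\| \le \|\widehat{S}_{\widetilde{A},\widetilde{\Psi}}^{1/p}x\|$. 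Thus $1/b,1/a$ are frame bounds for the canonical dual.

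For part (iii) I would argue exactly as in Theorem~\ref{CANONICALDUALFRAMEPROPERTYOPERATORVERSION}(iii): if $c$ is the optimal upper bound of the canonical dual, then, $a$ being a lower bound of the original, (ii) forces $c\le 1/a$; applying (ii) to the canonical dual and using (i) to identify \emph{its} canonical dual with the original shows $1/c$ is a lower bound of the original, whence $1/c\le a$, so $c=1/a$, and the optimal lower bound is treated symmetrically. Alternatively the explicit formulas $a=\|\widehat{S}_{A,\Psi}^{-1/p}\|^{-p}$, $b=\|\widehat{S}_{A,\Psi}^{1/p}\|^{p}$ combined with $\widehat{S}_{\widetilde{A},\widetilde{\Psi}}^{1/p}=\widehat{S}_{A,\Psi}^{-1/p}$ deliver the optimal bounds of the dual directly. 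The main obstacle is precisely this fractional-power bookkeeping: unlike the Hilbert-space setting, where the monotone functional calculus makes the passage from $S_{A,\Psi}$ to $S_{A,\Psi}^{-1}$ transparent, here $\widehat{S}_{A,\Psi}$ is neither self-adjoint nor positive, so one must verify carefully that $\widehat{S}_{A,\Psi}^{-1}$ lies in the domain of Komatsu's calculus and that $(\widehat{S}_{A,\Psi}^{-1})^{1/p}=(\widehat{S}_{A,\Psi}^{1/p})^{-1}$; once these are in hand, the remaining steps are a routine transcription of the Hilbert-space proof.
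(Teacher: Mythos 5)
Your proposal is correct and follows essentially the same route as the paper: the paper's proof consists of the same computation $\widehat{S}_{\widetilde{A},\widetilde{\Psi}}=\widehat{S}_{A,\Psi}^{-1}$ followed by the remark that the rest follows from considering $\widehat{S}_{A,\Psi}^{-1/p}$ and the frame bound definition, which is precisely the fractional-power bookkeeping you spell out. One cosmetic point: in your upper estimate the equality $\|\widehat{S}_{A,\Psi}^{-1/p}\|=(1/a)^{1/p}$ holds only for the optimal lower bound; for a general lower bound $a$ you should write $\|\widehat{S}_{A,\Psi}^{-1/p}\|\leq(1/a)^{1/p}$, which is all the argument needs.
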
 
\begin{proof}
 Frame operator for the canonical dual $(\{\widetilde{A}_j\coloneqq A_j\widehat{S}_{A,\Psi}^{-1}\}_{j\in \mathbb{J}}, \{\widetilde{\Psi}_j\coloneqq\widehat{S}_{A,\Psi}^{-1}\widehat{\Psi}_j\}_{j \in \mathbb{J}} )$ is 
 $$ \sum\limits_{j\in \mathbb{J}}(\widehat{S}_{A,\Psi}^{-1}\widehat{\Psi}_j ) (A_j\widehat{S}_{A,\Psi}^{-1}) =\widehat{S}_{A,\Psi}^{-1}\left(\sum\limits_{j\in \mathbb{J}}\widehat{\Psi}_jA_j\right)\widehat{S}_{A,\Psi}^{-1} =\widehat{S}_{A,\Psi}^{-1}\widehat{S}_{A,\Psi}\widehat{S}_{A,\Psi}^{-1}= \widehat{S}_{A,\Psi}^{-1}.$$
 Therefore, its canonical dual is $(\{(A_j\widehat{S}_{A,\Psi}^{-1})\widehat{S}_{A,\Psi}^{-1}\}_{j \in \mathbb{J}} ,\{\widehat{S}_{A,\Psi}(\widehat{S}_{A,\Psi}^{-1}\widehat{\Psi}_j)\}_{j \in \mathbb{J}} )$. Rest follows from the consideration of $ \widehat{S}_{A,\Psi}^{-1/p}$ and frame bound definition.
\end{proof}
\begin{proposition}
Let  $ (\{A_j\}_{j\in \mathbb{J}}, \{\widehat{\Psi}_j\}_{j\in \mathbb{J}}) $ and $ (\{B_j\}_{j\in \mathbb{J}}, \{\widehat{\Phi}_j\}_{j\in \mathbb{J}}) $ be p-operator-valued frames in $ \mathcal{B}(\mathcal{X}, \mathcal{X}_0)$. Then the following are equivalent.
\begin{enumerate}[\upshape(i)]
\item $ (\{B_j\}_{j\in \mathbb{J}},\{\widehat{\Phi}_j\}_{j\in \mathbb{J}}) $ is dual of $( \{A_j\}_{j\in \mathbb{J}},   \{\widehat{\Psi}_j\}_{j\in \mathbb{J}}) $. 
\item $ \sum_{j\in \mathbb{J}}\widehat{\Phi}_jA_j = \sum_{j\in \mathbb{J}}\widehat{\Psi}_jB_j=I_\mathcal{X}$. 
\end{enumerate}
\end{proposition}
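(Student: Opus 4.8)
The plan is to mimic the proof of Proposition \ref{DUALOVFCHARACTERIZATION} from the Hilbert-space setting, replacing the adjoint-based synthesis operators by the $\widehat{(\cdot)}$-operators appropriate to the Banach framework. By the definition of a dual p-(ovf), statement (i) says precisely that $\widehat{\theta}_\Phi\theta_A = \widehat{\theta}_\Psi\theta_B = I_\mathcal{X}$, so the entire content of the proposition reduces to establishing the pair of operator identities
\[
\widehat{\theta}_\Phi\theta_A = \sum_{j\in\mathbb{J}}\widehat{\Phi}_jA_j, \qquad \widehat{\theta}_\Psi\theta_B = \sum_{j\in\mathbb{J}}\widehat{\Psi}_jB_j.
\]
Once these are in hand, the equivalence (i) $\iff$ (ii) is immediate, since both sides of each identity are then literally the same operator.

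First I would recall from Definition \ref{HM1} that $\theta_A = \sum_{j\in\mathbb{J}} L_jA_j$ and $\widehat{\theta}_\Phi = \sum_{k\in\mathbb{J}}\widehat{\Phi}_k\widehat{L}_k$, both convergent to bounded operators in the pointwise limit, together with the key relation $\widehat{L}_kL_j = \delta_{k,j}I_{\mathcal{X}_0}$ recorded among the elementary observations following Definition \ref{HM1}. The computation then proceeds by evaluating at an arbitrary $x\in\mathcal{X}$: since $\theta_A x = \sum_{j}(e_j\otimes A_jx)$ and $\widehat{L}_k(e_j\otimes y) = \delta_{k,j}y$, applying the bounded operator $\widehat{\theta}_\Phi$ collapses the resulting double sum to $\sum_{j}\widehat{\Phi}_jA_jx$. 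The identity for $\widehat{\theta}_\Psi\theta_B$ follows by the same argument with the roles of the two frames interchanged.

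The only step requiring genuine care --- and the one I expect to be the main obstacle --- is justifying the term-by-term manipulation of the two series when composing $\widehat{\theta}_\Phi$ with $\theta_A$. In the Hilbert-space proof this was a routine strong-operator-topology computation; here the boundedness of $\theta_A$ and $\widehat{\theta}_\Phi$, guaranteed by the p-(ovf) hypothesis, ensures the composition is a well-defined bounded operator, and the interchange of summations is legitimate precisely because $\widehat{L}_kL_j = \delta_{k,j}I_{\mathcal{X}_0}$ annihilates every off-diagonal term. I would phrase the argument so that the collapse happens after applying the continuous operator $\widehat{\theta}_\Phi$ to the already-convergent $\theta_A x$, thereby avoiding any appeal to inner-product or adjoint structure and keeping the entire calculation within the pointwise-limit topology used throughout Section \ref{OPERATOR-VALUEDFRAMESFRAMESFORBANACHSPACES}.
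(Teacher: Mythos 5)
Your proposal is correct and follows exactly the paper's argument: the paper's entire proof consists of the two identities $\widehat{\theta}_\Phi\theta_A= \sum_{j\in \mathbb{J}}\widehat{\Phi}_jA_j$ and $\widehat{\theta}_\Psi\theta_B= \sum_{j\in \mathbb{J}}\widehat{\Psi}_jB_j$, which reduce (i) to (ii) via the definition of duality. You simply supply the routine verification of these identities (via $\widehat{L}_kL_j=\delta_{k,j}I_{\mathcal{X}_0}$ and continuity of $\widehat{\theta}_\Phi$) that the paper leaves implicit.
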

\begin{proof}
$\widehat{\theta}_\Phi\theta_A= \sum_{j\in \mathbb{J}}\widehat{\Phi}_jA_j, \widehat{\theta}_\Psi\theta_B= \sum_{j\in \mathbb{J}}\widehat{\Psi}_jB_j.$
\end{proof}
\begin{definition}
A p-(ovf)  $(\{B_j\}_{j\in \mathbb{J}},  \{\widehat{\Phi}_j\}_{j\in \mathbb{J}})$  in $ \mathcal{B}(\mathcal{X}, \mathcal{X}_0)$ is said to be orthogonal to a p-(ovf)  $( \{A_j\}_{j\in \mathbb{J}}, \{\widehat{\Psi}_j\}_{j\in \mathbb{J}})$ in $ \mathcal{B}(\mathcal{X}, \mathcal{X}_0)$ if $ \widehat{\theta}_\Phi\theta_A= \widehat{\theta}_\Psi\theta_B=0$.
\end{definition}
\begin{proposition}
Let  $ (\{A_j\}_{j\in \mathbb{J}}, \{\widehat{\Psi}_j\}_{j\in \mathbb{J}}) $ and $ (\{B_j\}_{j\in \mathbb{J}}, \{\widehat{\Phi}_j\}_{j\in \mathbb{J}}) $ be p-operator-valued frames in  $ \mathcal{B}(\mathcal{X}, \mathcal{X}_0)$. Then the following are equivalent.
\begin{enumerate}[\upshape(i)]
\item $ (\{B_j\}_{j\in \mathbb{J}},\{\widehat{\Phi}_j\}_{j\in \mathbb{J}}) $ is orthogonal to  $(\{A_j\}_{j\in \mathbb{J}}, \{\widehat{\Psi}_j\}_{j\in \mathbb{J}}) $. 
\item $ \sum_{j\in \mathbb{J}}\widehat{\Phi}_jA_j = \sum_{j\in \mathbb{J}}\widehat{\Psi}_jB_j=0$. 
\end{enumerate}
\end{proposition}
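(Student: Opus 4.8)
The plan is to reduce both statements to the single algebraic identity relating the cross frame operators to the series of compositions, exactly as was done for the dual characterization in the proposition immediately preceding this one. The key computation is that the composition of the synthesis operator of one frame with the analysis operator of the other recovers the defining series. Concretely, I would first record that for the two p-operator-valued frames in question one has
\begin{equation*}
\widehat{\theta}_\Phi\theta_A=\sum_{j\in \mathbb{J}}\widehat{\Phi}_jA_j,\qquad \widehat{\theta}_\Psi\theta_B=\sum_{j\in \mathbb{J}}\widehat{\Psi}_jB_j,
\end{equation*}
which is the same identity underlying Proposition \ref{BANACHSPACEOVFFUNDAMENTALLEMMA}(i) with the two collections crossed.

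To establish this identity I would expand $\widehat{\theta}_\Phi=\sum_{j\in \mathbb{J}}\widehat{\Phi}_j\widehat{L}_j$ and $\theta_A=\sum_{k\in \mathbb{J}}L_kA_k$ and compose, obtaining $\sum_{j,k}\widehat{\Phi}_j\widehat{L}_jL_kA_k$; then invoke the relation $\widehat{L}_jL_k=\delta_{j,k}I_{\mathcal{X}_0}$ from the list of elementary properties following Definition \ref{HM1} to collapse the double sum to $\sum_{j\in \mathbb{J}}\widehat{\Phi}_jA_j$, and symmetrically for the second equality. With this in hand, the equivalence is immediate: by the definition of orthogonality, $(\{B_j\}_{j\in \mathbb{J}},\{\widehat{\Phi}_j\}_{j\in \mathbb{J}})$ is orthogonal to $(\{A_j\}_{j\in \mathbb{J}},\{\widehat{\Psi}_j\}_{j\in \mathbb{J}})$ precisely when $\widehat{\theta}_\Phi\theta_A=\widehat{\theta}_\Psi\theta_B=0$, and substituting the identity turns this verbatim into condition (ii). No separate $(\mathrm{i})\Rightarrow(\mathrm{ii})$ and $(\mathrm{ii})\Rightarrow(\mathrm{i})$ arguments are really needed, since the two conditions are literally the same statement written in operator language versus series language.

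The only point requiring a little care — and the closest thing to an obstacle — is the justification of the composition/interchange of the two series in the Banach-space setting, where convergence of $\theta_A$, $\widehat{\theta}_\Phi$, and $\widehat{S}_{A,\Psi}$ is only in the pointwise limit rather than in norm. I would handle this by noting that $\theta_A$ and $\widehat{\theta}_\Phi$ are already genuine bounded operators (this is part of Definition \ref{HM1}), so the composition $\widehat{\theta}_\Phi\theta_A$ is a bounded operator; evaluating it on a fixed $x\in\mathcal{X}$ and using the boundedness of $\widehat{\theta}_\Phi$ to pass it through the pointwise-convergent sum $\theta_Ax=\sum_k L_kA_kx$ legitimizes the termwise use of $\widehat{L}_jL_k=\delta_{j,k}I_{\mathcal{X}_0}$. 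This is exactly the style of argument already used implicitly in Proposition \ref{BANACHSPACEOVFFUNDAMENTALLEMMA}, so I expect it to go through with no new ideas; the proposition is essentially a formal restatement of the orthogonality definition and can be dispatched in a single line once the crossed identity is on record.
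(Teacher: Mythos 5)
Your proposal is correct and coincides with the paper's own (implicit) argument: the paper proves the analogous dual-frame proposition by the one-line identity $\widehat{\theta}_\Phi\theta_A=\sum_{j\in \mathbb{J}}\widehat{\Phi}_jA_j$, $\widehat{\theta}_\Psi\theta_B=\sum_{j\in \mathbb{J}}\widehat{\Psi}_jB_j$, and leaves the orthogonality version to the same observation. Your extra remarks on justifying the termwise collapse via boundedness of $\widehat{\theta}_\Phi$ and pointwise convergence of $\theta_A$ are a reasonable elaboration of what the paper takes for granted.
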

\begin{proposition}
Let $ (\{A_j\}_{j\in \mathbb{J}}, \{\widehat{\Psi}_j\}_{j\in \mathbb{J}}) $ and $ (\{B_j\}_{j\in \mathbb{J}}, \{\widehat{\Phi}_j\}_{j\in \mathbb{J}}) $ be  two Parseval p-operator-valued frames in   $\mathcal{B}(\mathcal{X}, \mathcal{X}_0)$ which are  orthogonal. If $C,D,E,F \in \mathcal{B}(\mathcal{X})$ are such that $ EC+FD=I_\mathcal{X}$, then  $ (\{A_jC+B_jD\}_{j\in \mathbb{J}}, \{E\widehat{\Psi}_j+F\widehat{\Phi}_j\}_{j\in \mathbb{J}}) $ is a  Parseval p-(ovf) in  $\mathcal{B}(\mathcal{X}, \mathcal{X}_0)$. In particular,  if scalars $ c,d,e,f$ satisfy $ec+fd =1$, then $ (\{cA_j+dB_j\}_{j\in \mathbb{J}}, \{e\widehat{\Phi}_j+f\widehat{\Phi}_j\}_{j\in \mathbb{J}}) $ is  a  Parseval p-(ovf).
\end{proposition}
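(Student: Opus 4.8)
The plan is to follow the template of the interpolation results already established for operator-valued frames (Section \ref{MK}) and for weak operator-valued frames (Section \ref{FURTHEREXTENSION}), adapting it to the Banach-space setting via the fundamental identities of Proposition \ref{BANACHSPACEOVFFUNDAMENTALLEMMA}. Write $G_j \coloneqq A_jC + B_jD$ and $\widehat{H}_j \coloneqq E\widehat{\Psi}_j + F\widehat{\Phi}_j$ for the data of the candidate frame; since $C,D,E,F \in \mathcal{B}(\mathcal{X})$, these belong to $\mathcal{B}(\mathcal{X}, \mathcal{X}_0)$ and $\mathcal{B}(\mathcal{X}_0, \mathcal{X})$ respectively, so the pair is at least a legitimate candidate.

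First I would compute the analysis and synthesis operators of the new pair. Using the linearity of $L_j$ and $\widehat{L}_j$ together with the definitions in Definition \ref{HM1},
\begin{align*}
\theta_G &= \sum_{j\in\mathbb{J}} L_j(A_jC + B_jD) = \theta_A C + \theta_B D, \\
\widehat{\theta}_H &= \sum_{j\in\mathbb{J}} (E\widehat{\Psi}_j + F\widehat{\Phi}_j)\widehat{L}_j = E\widehat{\theta}_\Psi + F\widehat{\theta}_\Phi.
\end{align*}
Because the four operators $\theta_A, \theta_B, \widehat{\theta}_\Psi, \widehat{\theta}_\Phi$ are bounded and $C,D,E,F$ are bounded, $\theta_G$ and $\widehat{\theta}_H$ exist as bounded operators; this disposes of condition (ii) of Definition \ref{HM1} immediately.

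The central step is the frame-operator computation. By Proposition \ref{BANACHSPACEOVFFUNDAMENTALLEMMA}(i),
\begin{align*}
\widehat{S}_{G,H} = \widehat{\theta}_H \theta_G &= (E\widehat{\theta}_\Psi + F\widehat{\theta}_\Phi)(\theta_A C + \theta_B D) \\
&= E\widehat{\theta}_\Psi\theta_A C + E\widehat{\theta}_\Psi\theta_B D + F\widehat{\theta}_\Phi\theta_A C + F\widehat{\theta}_\Phi\theta_B D.
\end{align*}
Parsevalness of the two given frames yields $\widehat{\theta}_\Psi\theta_A = \widehat{\theta}_\Phi\theta_B = I_\mathcal{X}$ by Proposition \ref{BANACHSPACEOVFFUNDAMENTALLEMMA}(ii), while their orthogonality yields $\widehat{\theta}_\Psi\theta_B = \widehat{\theta}_\Phi\theta_A = 0$. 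Substituting these, the two cross terms vanish and the expression collapses to $EC + FD = I_\mathcal{X}$. Consequently the spectrum of $\widehat{S}_{G,H}$ equals $\{1\}$, so its resolvent contains $(-\infty,0]$ and condition (i) of Definition \ref{HM1} holds; hence $(\{G_j\}_{j\in\mathbb{J}}, \{\widehat{H}_j\}_{j\in\mathbb{J}})$ is a p-(ovf), and since $\widehat{S}_{G,H} = I_\mathcal{X}$ it is Parseval by Equation (\ref{PARDEFINITION}).

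The scalar statement follows by specializing $C = cI_\mathcal{X}$, $D = dI_\mathcal{X}$, $E = eI_\mathcal{X}$, $F = fI_\mathcal{X}$, for which $EC + FD = (ec + fd)I_\mathcal{X} = I_\mathcal{X}$. I do not anticipate a serious obstacle: the argument is purely algebraic once the decompositions of $\theta_G$ and $\widehat{\theta}_H$ are in hand. The only point requiring mild care, as against the Hilbert-space analogue, is that one must check the resolvent condition of Definition \ref{HM1}(i) rather than positivity of the frame operator; but this is automatic here precisely because the computation delivers $\widehat{S}_{G,H} = I_\mathcal{X}$ exactly, so no genuine spectral estimate is needed.
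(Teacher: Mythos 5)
Your proof is correct and follows essentially the same route as the paper's: decompose $\theta_{AC+BD}=\theta_AC+\theta_BD$ and $\widehat{\theta}_{E\Psi+F\Phi}=E\widehat{\theta}_\Psi+F\widehat{\theta}_\Phi$, expand $\widehat{S}_{AC+BD,E\Psi+F\Phi}$, and use Parsevalness and orthogonality to collapse it to $EC+FD=I_\mathcal{X}$. Your additional remarks on boundedness of the analysis/synthesis operators and the resolvent condition are sound elaborations of points the paper leaves implicit.
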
   
\begin{proof}
We see $ \theta_{AC+BD} =\sum_{j\in \mathbb{J}}L_j(A_jC+B_jD)=\theta_AC+\theta_BD, $ $\widehat{\theta}_{E\Psi +F\Phi }=\sum_{j\in \mathbb{J}}(E\widehat{\Psi}_j+F\widehat{\Phi}_j)\widehat{L}_j=E\widehat{\theta}_\Psi +F\widehat{\theta}_\Phi $ and hence $\widehat{S}_{AC+BD,E\Psi +F\Phi }= \widehat{\theta}_{E\Psi +F\Phi } \theta_{AC+BD}=(E\widehat{\theta}_\Psi +F\widehat{\theta}_\Phi )(\theta_AC+\theta_BD)=E\widehat{\theta}_\Psi\theta_AC+E\widehat{\theta}_\Psi\theta_BD+F\widehat{\theta}_\Phi\theta_AC+F\widehat{\theta}_\Phi\theta_BD=E\widehat{S}_{A,\Psi}C+E0D+F0C+F\widehat{S}_{B,\Phi }D=EI_\mathcal{X}C+FI_\mathcal{X}D=I_\mathcal{X}.$
\end{proof}

\textbf{Characterization}
\begin{theorem}\label{POVFCHARACTERIZATIONBANACH}
Let $ \{F_j\}_{j \in \mathbb{J}}$ be an arbitrary p-orthonormal basis in $ \mathcal{B}(\mathcal{X},\mathcal{X}_0),$ $\widehat{F}_j$ be the right p-orthonormal inverse of $F_j$ for each $ j \in \mathbb{J}$. Then 
\begin{enumerate}[\upshape(i)]
\item The p--orthonormal  bases   $ (\{A_j\}_{j \in \mathbb{J}},\{\widehat{\Psi}_j\}_{j \in \mathbb{J}})$ in  $ \mathcal{B}(\mathcal{X},\mathcal{X}_0)$ are precisely $( \{F_jU\}_{j \in \mathbb{J}},\{c_jU^{-1}\widehat{F}_j\}_{j \in \mathbb{J}}) $, 
where $ U \in \mathcal{B}(\mathcal{X}) $ is invertible isometry  and $ c_j \in \mathbb{R}, \forall j \in \mathbb{J}$ such that $ 0<\inf\{c_j\}_{j \in \mathbb{J}}\leq \sup\{c_j\}_{j \in \mathbb{J}}< \infty.$
\item The Riesz p-bases   $ (\{A_j\}_{j \in \mathbb{J}},\{\widehat{\Psi}_j\}_{j \in \mathbb{J}})$ in  $ \mathcal{B}(\mathcal{X},\mathcal{X}_0)$  are precisely $( \{F_jU\}_{j \in \mathbb{J}},\{V \widehat{F}_j\}_{j \in \mathbb{J}}) $, where $ U,V \in \mathcal{B}(\mathcal{X}) $ are  invertible  such that   the resolvent of $ VU$ contains $ (-\infty, 0]$.
\item The p-operator-valued frames $ (\{A_j\}_{j \in \mathbb{J}},\{\widehat{\Psi}_j\}_{j \in \mathbb{J}})$ in  $ \mathcal{B}(\mathcal{X},\mathcal{X}_0)$  are precisely $( \{F_jU\}_{j \in \mathbb{J}},\{V \widehat{F}_j\}_{j \in \mathbb{J}}) $, where $ U,V \in \mathcal{B}(\mathcal{X}) $ are such that  the resolvent of $ VU$ contains $ (-\infty, 0]$.
\item The Riesz p-operator-valued frames $ (\{A_j\}_{j \in \mathbb{J}},\{\widehat{\Psi}_j\}_{j \in \mathbb{J}})$ in  $ \mathcal{B}(\mathcal{X},\mathcal{X}_0)$  are precisely $( \{F_jU\}_{j \in \mathbb{J}},\{V \widehat{F}_j\}_{j \in \mathbb{J}}) $, where $ U,V \in \mathcal{B}(\mathcal{X}) $ are such that  the resolvent of $ VU$ contains $ (-\infty, 0]$ and $ U(VU)^{-1}V=I_\mathcal{X}$.
\item The orthonormal p-operator-valued frames $ (\{A_j\}_{j \in \mathbb{J}},\{\widehat{\Psi}_j\}_{j \in \mathbb{J}})$ in  $ \mathcal{B}(\mathcal{X},\mathcal{X}_0)$  are precisely $( \{F_jU\}_{j \in \mathbb{J}},\{V \widehat{F}_j\}_{j \in \mathbb{J}}) $, where $ U,V \in \mathcal{B}(\mathcal{X}) $ are such that the resolvent of $ VU$ contains $ (-\infty, 0]$ and $VU=I_\mathcal{X}=UV$.
\end{enumerate}
\end{theorem}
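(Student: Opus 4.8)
I would fix the arbitrary $p$-orthonormal basis $\{F_j\}_{j\in\mathbb{J}}$, with right $p$-orthonormal inverses $\widehat{F}_j$, and set up a dictionary between pairs $(\{A_j\},\{\widehat{\Psi}_j\})$ and pairs of operators $(U,V)\in\mathcal{B}(\mathcal{X})^2$. The two maps are: given a candidate frame, put $U\coloneqq\widehat{\theta}_F\theta_A=\sum_{j}\widehat{F}_jA_j$ and $V\coloneqq\widehat{\theta}_\Psi\theta_F=\sum_{j}\widehat{\Psi}_jF_j$; conversely, given $U,V$, put $A_j\coloneqq F_jU$ and $\widehat{\Psi}_j\coloneqq V\widehat{F}_j$. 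The whole proof rests on the basis identities $\widehat{L}_jL_k=\delta_{j,k}I_{\mathcal{X}_0}$, $\sum_jL_j\widehat{L}_j=I_{\ell^p(\mathbb{J})\otimes\mathcal{X}_0}$, $F_j\widehat{F}_k=\delta_{j,k}I_{\mathcal{X}_0}$, $\sum_j\widehat{F}_jF_j=I_\mathcal{X}$, together with their consequence $\widehat{\theta}_F\theta_F=I_\mathcal{X}$ (all from the remarks after Definition \ref{HM1} and from Definition \ref{BANACHOPERATORORTHONORMALBASIS}), and on the isometry of $\theta_F,\widehat{\theta}_F$ from Remark \ref{P-ONBREMARK}. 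I would verify once and for all that $F_jU=A_j$, $V\widehat{F}_j=\widehat{\Psi}_j$, $\theta_A=\theta_FU$, $\widehat{\theta}_\Psi=V\widehat{\theta}_F$, and that in each direction the two constructions are mutually inverse.

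\textbf{The template: part (iii).} I would prove (iii) first as the model. For $(\Leftarrow)$, starting from $U,V$ whose product $VU$ has resolvent containing $(-\infty,0]$, set $A_j=F_jU$, $\widehat{\Psi}_j=V\widehat{F}_j$; then $\theta_A=\theta_FU$ and $\widehat{\theta}_\Psi=V\widehat{\theta}_F$ are bounded, and $\widehat{S}_{A,\Psi}=\widehat{\theta}_\Psi\theta_A=V\widehat{\theta}_F\theta_FU=VU$ by $\widehat{\theta}_F\theta_F=I_\mathcal{X}$, so condition (i) of Definition \ref{HM1} holds and Komatsu's real power $\widehat{S}_{A,\Psi}^{1/p}$ is defined. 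For $(\Rightarrow)$, from a $p$-(ovf) I set $U=\widehat{\theta}_F\theta_A$, $V=\widehat{\theta}_\Psi\theta_F$ (bounded, being compositions), and compute $VU=(\sum_j\widehat{\Psi}_jF_j)(\sum_k\widehat{F}_kA_k)=\sum_j\widehat{\Psi}_jA_j=\widehat{S}_{A,\Psi}$ using $F_j\widehat{F}_k=\delta_{j,k}I_{\mathcal{X}_0}$; likewise $F_jU=A_j$ and $V\widehat{F}_j=\widehat{\Psi}_j$ via Proposition \ref{BANACHSPACEOVFFUNDAMENTALLEMMA}(iv),(v). This gives the resolvent condition on $VU$ directly.

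\textbf{Parts (iv), (v), (ii), (i).} For (iv) and (v) I would route everything through the frame idempotent $\widehat{P}_{A,\Psi}=\theta_A\widehat{S}_{A,\Psi}^{-1}\widehat{\theta}_\Psi$. In the dictionary, $\widehat{\theta}_F\widehat{P}_{A,\Psi}\theta_F=U(VU)^{-1}V$ and $\theta_F\bigl(U(VU)^{-1}V\bigr)\widehat{\theta}_F=\widehat{P}_{A,\Psi}$ (using $\theta_F\widehat{\theta}_F=\sum_jL_j\widehat{L}_j=I$ in the reverse computation); hence the Riesz condition $\widehat{P}_{A,\Psi}=I_{\ell^p(\mathbb{J})\otimes\mathcal{X}_0}$ is equivalent to $U(VU)^{-1}V=I_\mathcal{X}$, giving (iv). For (v) the orthonormal $p$-(ovf) means Parseval and Riesz: $VU=\widehat{S}_{A,\Psi}=I_\mathcal{X}$ forces $(VU)^{-1}=I$, and then $UV=\widehat{\theta}_F(\theta_A\widehat{\theta}_\Psi)\theta_F=\widehat{\theta}_F\theta_F=I_\mathcal{X}$ exactly when $\theta_A\widehat{\theta}_\Psi=I$, yielding $VU=I=UV$. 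Part (ii) reverse is immediate since $\{F_j\}$ is itself a $p$-orthonormal basis, matching Definition \ref{RELATIVEPORTHONORMALANDRIESZBANACH}(ii) verbatim; the forward direction reduces to connecting the abstract basis $\{G_j\}$ underlying the Riesz $p$-basis to $\{F_j\}$ by an invertible isometry $T$ (the content of part (i)), after which $U=TR$, $V=ST^{-1}$ and $VU=SR$. For (i) I would show $U=\widehat{\theta}_F\theta_A$ is an invertible isometry: isometry because $\|Ux\|^p=\|\widehat{\theta}_F\theta_Ax\|^p=\|\theta_Ax\|^p=\sum_j\|A_jx\|^p=\|x\|^p$ by Remark \ref{P-ONBREMARK}, and invertible with $U^{-1}=\sum_j\widehat{A}_jF_j$, whence $\widehat{A}_j=U^{-1}\widehat{F}_j$ and $\widehat{\Psi}_j=c_j\widehat{A}_j=c_jU^{-1}\widehat{F}_j$.

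\textbf{Main obstacle.} The delicate part is the reverse direction of (i): I must show that $\{F_jU\}$ is genuinely a $p$-orthonormal basis in the sense of Definition \ref{BANACHOPERATORORTHONORMALBASIS}, which has no inner product to lean on. Concretely, I have to (a) prove that $U^{-1}\widehat{F}_j$ is the \emph{unique} right $p$-orthonormal inverse of $F_jU$ (uniqueness being part of the definition), and (b) transfer the $\ell^p$-additivity identities $\|\sum_jL_jA_jx\|^p=\sum_j\|A_jx\|^p$ and $\|\sum_j\widehat{A}_j\widehat{L}_jy\|^p=\sum_j\|\widehat{L}_jy\|^p$ through the invertible isometry $U$, using $\theta_A=\theta_FU$ and $\widehat{\theta}_A=U^{-1}\widehat{\theta}_F$ together with the fact that a bijective isometry has isometric inverse. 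I expect this bookkeeping, rather than the algebra of the dictionary, to be where care is needed; the resolvent/real-power conditions throughout are supplied cleanly by Definition \ref{KOMATSU} and by the already-established implication that a Riesz $p$-basis is a $p$-(ovf).
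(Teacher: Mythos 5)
Your proposal is correct and follows essentially the same route as the paper: the same dictionary $U=\widehat{\theta}_F\theta_A$, $V=\widehat{\theta}_\Psi\theta_F$ (with inverse construction $A_j=F_jU$, $\widehat{\Psi}_j=V\widehat{F}_j$), the identity $\widehat{S}_{A,\Psi}=VU$ for (iii), conjugation of the frame idempotent by $\theta_F,\widehat{\theta}_F$ for (iv) and (v), and the invertible-isometry argument with $U^{-1}=\sum_j\widehat{A}_jF_j$ for (i). The obstacle you flag — verifying all the axioms of Definition \ref{BANACHOPERATORORTHONORMALBASIS} for $\{F_jU\}$, including uniqueness of the right inverse and the $\ell^p$-additivity transferred through the isometry — is exactly the bookkeeping the paper carries out in the $(\Leftarrow)$ direction of (i).
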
  
\begin{proof}
\begin{enumerate}[\upshape(i)]
\item $(\Leftarrow)$ Let  $ U : \mathcal{X} \rightarrow  \mathcal{X} $ be invertible isometry,  $ c_j \in \mathbb{R}, \forall j \in \mathbb{J}$ with $ 0<\inf\{c_j\}_{j \in \mathbb{J}}\leq \sup\{c_j\}_{j \in \mathbb{J}}< \infty$. We need to show that $( \{F_jU\}_{j \in \mathbb{J}},\{c_jF_jU\}_{j \in \mathbb{J}}) $ is a p-orthonormal basis in  $ \mathcal{B}(\mathcal{X},\mathcal{X}_0).$ To show this, we try to get   $ \{F_jU\}_{j \in \mathbb{J}}$ is a p-orthonormal basis. Note that $U^{-1}\widehat{F}_j $ is the unique right bounded inverse of $F_jU$  for each $ j \in \mathbb{J}.$ In fact, $F_jUU^{-1}\widehat{F}_j=I_{\mathcal{X}_0} $, and if $ T \in \mathcal{B}(\mathcal{X}_0,\mathcal{X})$
is any right bounded inverse of $F_jU$, then $ F_jUT=I_{\mathcal{X}_0}\Rightarrow \widehat{F}_j=UT$ (from the uniqueness of right bounded inverse of $ F_j$) $\Rightarrow U^{-1}\widehat{F}_j=T.$ Then $F_jU\widehat{F_kU}=F_jUU^{-1}\widehat{F}_k=F_j\widehat{F}_k =\delta_{j,k}I_{\mathcal{X}_0},\forall j , k \in \mathbb{J}$.  If $\mathbb{L}\subseteq\mathbb{J} $  is such that $\sum_{j\in \mathbb{L}}L_jF_jU \in \mathcal{B}(\mathcal{X}, \ell^p(\mathbb{J})\otimes \mathcal{X}_0) $, then  $\|\sum_{j\in \mathbb{L}}L_jF_jUx\|^p=\sum_{j\in \mathbb{L}}\|F_jUx\|^p\leq \|Ux\|^p=\|x\|^p, \forall x \in \mathcal{X}. $ If $\mathbb{L}\subseteq\mathbb{J} $  is such that $\sum_{j\in \mathbb{L}}\widehat{F_jU} \widehat{L}_j\in \mathcal{B}(\ell^p(\mathbb{J}) \otimes \mathcal{X}_0 , \mathcal{X})$, then $ \|\sum_{j\in \mathbb{L}}\widehat{F_jU}\widehat{L}_jy\|^p=\|\sum_{j\in \mathbb{L}}U^{-1}\widehat{F}_j\widehat{L}_jy\|^p= \|U^{-1}(\sum_{j\in \mathbb{L}}\widehat{F}_j\widehat{L}_jy)\|^p=\|\sum_{j\in \mathbb{L}}\widehat{F}_j\widehat{L}_jy\|^p =\sum_{j\in \mathbb{L}}\|\widehat{L}_jy\|^p, \forall y \in \mathcal{X}_0$. In a similar fashion, we get    the maps $ \theta_A : \mathcal{X} \ni x \mapsto \sum_{j\in \mathbb{J}}L_jF_jUx\in \ell^p(\mathbb{J})\otimes \mathcal{X}_0 $ and $ \widehat{\theta}_A :  \ell^p(\mathbb{J})\otimes \mathcal{X}_0 \ni y \mapsto \sum_{j\in \mathbb{J}}\widehat{F_jU}\widehat{L}_jy \in \mathcal{X}$ are well-defined bounded linear isometries.

$(\Rightarrow)$ We may take  $ \{A_j\}_{j \in \mathbb{J}}$  as a p-orthonormal basis in  $ \mathcal{B}(\mathcal{X},\mathcal{X}_0)$. Then there exists $ c_j \in \mathbb{R}, \forall j \in \mathbb{J} $ with $ 0<\inf\{c_j\}_{j \in \mathbb{J}}\leq \sup\{c_j\}_{j \in \mathbb{J}}< \infty$ and $ \widehat{\Psi}_j=c_j\widehat{A}_j, \forall j \in \mathbb{J}.$ Define $ U\coloneqq \sum_{j\in \mathbb{J}}\widehat {F}_jA_j.$ This operator exists in pointwise limit, since for every finite subset $\mathbb{S}$ of  $\mathbb{J}$ and $ x \in \mathcal{X},$ 
\begin{align*}
\left\|\sum_{j\in \mathbb{S}}\widehat{F}_jA_jx\right\|^p=\left\|\sum_{j\in \mathbb{S}}\widehat{F}_j\widehat{L}_j\left(\sum_{k\in \mathbb{S}}L_kA_kx\right)\right\|^p=\sum_{j\in \mathbb{S}}\left\|\widehat{L}_j\left(\sum_{k\in \mathbb{S}}L_kA_kx\right)\right\|^p =\sum_{j\in \mathbb{S}}\|A_jx\|^p.
\end{align*}
We see $U(\sum_{j\in \mathbb{J}}\widehat{A}_jF_j)=(\sum_{k\in \mathbb{J}}\widehat {F}_kA_k)(\sum_{j\in \mathbb{J}}\widehat{A}_jF_j)=\sum_{k\in \mathbb{J}}\widehat {F}_k(\sum_{j\in \mathbb{J}}A_k\widehat{A}_jF_j) =\sum_{k\in \mathbb{J}}\widehat {F}_kF_k=I_\mathcal{X}$, $(\sum_{j\in \mathbb{J}}\widehat{A}_jF_j)U=(\sum_{j\in \mathbb{J}}\widehat{A}_jF_j)(\sum_{k\in \mathbb{J}}\widehat {F}_kA_k) =\sum_{j\in \mathbb{J}}\widehat{A}_j(\sum_{k\in \mathbb{J}}F_j\widehat {F}_kA_k)=\sum_{j\in \mathbb{J}}\widehat{A}_jA_j=I_\mathcal{X}$. Therefore $U^{-1}=\sum_{j\in \mathbb{J}}\widehat{A}_jF_j.$ For $ x \in \mathcal{X}$,
\begin{align*}
\|Ux\|^p=\left\|\sum_{j\in \mathbb{J}}\widehat {F}_jA_jx\right\|^p&=\left\|\sum_{j\in \mathbb{J}}\widehat {F}_j\widehat {L}_j\left(\sum_{k\in \mathbb{J}} L_kA_kx\right)\right\|^p=\sum_{j\in \mathbb{J}}\left\| \widehat {L}_j\left(\sum_{k\in \mathbb{J}} L_kA_kx\right)\right\|^p\\
&=\sum_{j\in \mathbb{J}}\|A_jx\|^p=\|x\|^p,
\end{align*}
where we used Remark \ref{P-ONBREMARK} to get the last equality. Thus $U$ is invertible  isometry. Now, $ F_jU=F_j(\sum_{k\in \mathbb{J}}\widehat{F}_kA_k)=A_j, c_jU^{-1}\widehat{F}_j=c_j\sum_{k\in \mathbb{J}}\widehat{A}_kF_k\widehat{F}_j=c_j\widehat{A}_j=\widehat{\Psi}_j,  \forall j \in \mathbb{J}.$ 
\item $(\Leftarrow)$ This is the definition of Riesz p-basis.

$(\Rightarrow)$ There exists a  p-orthonormal basis $ \{G_j\}_{j \in \mathbb{J}}$ in $ \mathcal{B}(\mathcal{X}, \mathcal{X}_0)$ and  bounded invertible operators $ R, S : \mathcal{X} \rightarrow \mathcal{X}$  with the resolvent of $ SR$ contains  $ (-\infty, 0]$    such that $ A_j=G_jR, \widehat{\Psi}_j=S\widehat{G}_j,  \forall j \in \mathbb{J}$, where  $\widehat{G}_j$ is the right p-orthonormal inverse of $G_j$ for each $ j \in \mathbb{J}$. Define $ U\coloneqq\sum_{j\in \mathbb{J}}\widehat{F}_jG_jR, V\coloneqq S\sum_{j\in \mathbb{J}}\widehat{G}_jF_j.$ Since $ \{\widehat{F}_j\}_{j \in \mathbb{J}}$ and  $\{\widehat{G}_j\}_{j \in \mathbb{J}}$ are p-orthonormal bases, as in the proof of (i), $ U,V$ are well-defined. We find $F_jU=F_j\sum_{k\in \mathbb{J}}\widehat{F}_kG_kR=G_jR=A_j,V\widehat{F}_j=S\sum_{k\in\mathbb{J}}\widehat{G}_kF_k\widehat{F}_j=S\widehat{G}_j=\widehat{\Psi}_j,\forall j \in \mathbb{J}$. Consider $ U(R^{-1}\sum_{j\in \mathbb{J}}\widehat{G}_jF_j)=(\sum_{k\in \mathbb{J}}\widehat{F}_kG_kR)(R^{-1}\sum_{j\in \mathbb{J}}\widehat{G}_jF_j)=I_\mathcal{X}$, $(R^{-1}\sum_{j\in \mathbb{J}}\widehat{G}_jF_j)U=(R^{-1}\sum_{j\in \mathbb{J}}\widehat{G}_jF_j)(\sum_{k\in \mathbb{J}}\widehat{F}_kG_kR) =I_\mathcal{X}$, and $V(\sum_{j\in \mathbb{J}}\widehat{F}_jG_jS^{-1})=(S\sum_{k\in \mathbb{J}}\widehat{G}_kF_k)(\sum_{j\in \mathbb{J}}\widehat{F}_jG_jS^{-1})=I_\mathcal{X}$,
$(\sum_{j\in \mathbb{J}}\widehat{F}_jG_jS^{-1})V=(\sum_{j\in \mathbb{J}}\widehat{F}_jG_jS^{-1})(S\sum_{k\in \mathbb{J}}\widehat{G}_kF_k)=I_\mathcal{X}$. Hence $U$, and $V$ are invertible. At the end, the resolvent of $ VU=(S\sum_{j\in \mathbb{J}}\widehat{G}_jF_j)(\sum_{k\in \mathbb{J}}\widehat{F}_kG_kR)=SR$ contains $(-\infty,0]$.
\item $(\Leftarrow)$ $ \theta_FU= (\sum_{j\in\mathbb{J}}L_jF_j)U=\sum_{j\in\mathbb{J}}L_j(F_jU),V\widehat{\theta}_{\widehat{F}}=V (\sum_{j\in\mathbb{J}}\widehat{F}_j\widehat{L}_j) =\sum_{j\in\mathbb{J}}(V\widehat{F}_j)\widehat{L}_j .$ Therefore $ \theta_{FU}=\theta_FU$, and $\widehat{\theta}_{V\widehat{F}}=V\widehat{\theta}_{\widehat{F}}$. Now $ \widehat{S}_{FU,V\widehat{F}}=\sum_{j\in\mathbb{J}}V\widehat{F}_jF_jU=V(\sum_{j\in\mathbb{J}}\widehat{F}_jF_j)U=VI_\mathcal{X}U=VU$ exists and whose resolvent contains  $(-\infty,0]$.
$(\Rightarrow)$  Define $U\coloneqq \widehat{\theta}_F\theta_A=\sum_{j\in \mathbb{J}}\widehat{F}_jA_j $,  $V\coloneqq \widehat{\theta}_\Psi\theta_F=\sum_{j\in \mathbb{J}}\widehat{\Psi}_jF_j $. Then $F_jU=F_j\sum_{k\in \mathbb{J}}\widehat{F}_kA_k=A_j$, $V\widehat{F}_j=\sum_{k\in \mathbb{J}}\widehat{\Psi}_kF_k\widehat{F}_j=\widehat{\Psi}_j ,\forall j \in \mathbb{J}$, and the resolvent of $VU=(\sum_{j\in \mathbb{J}}\widehat{\Psi}_jF_j)(\sum_{k\in \mathbb{J}}\widehat{F}_kA_k)=\sum_{j\in \mathbb{J}}\widehat{\Psi}_jA_j =\widehat{S}_{A,\Psi}$ contains $ (-\infty,0].$
\item From (iii).  $ (\Leftarrow)$ $ \widehat{P}_{FU,V\widehat{F}}=\theta_{FU}\widehat{S}^{-1}_{FU,V\widehat{F}}\widehat{\theta}_{V\widehat{F}}=\theta_FU(VU)^{-1}V\widehat{\theta}_{\widehat{F}}=\theta_FI_\mathcal{X}\widehat{\theta}_{\widehat{F}}=(\sum_{j\in \mathbb{J}}L_jF_j)(\sum_{k\in \mathbb{J}}\widehat{F}_k\widehat{L}_k)=I_{\ell^p(\mathbb{J})}\otimes I_{\mathcal{X}_0}$.
$(\Rightarrow)$ $ U(VU)^{-1}V=\widehat{\theta}_F\theta_A   \widehat{S}_{A,\Psi}^{-1}\widehat{\theta}_\Psi\theta_F =\widehat{\theta}_F\widehat{P}_{A,Psi}\theta_F =\widehat{\theta}_F(I_{\ell^p(\mathbb{J})}\otimes I_{\mathcal{X}_0})\theta_F=(\sum_{j\in \mathbb{J}}\widehat{F}_j\widehat{L}_j)(\sum_{k\in \mathbb{J}}L_kF_k)=I_\mathcal{X}.$
\item From (iv). $ (\Leftarrow)$ $\widehat{S}_{FU,V\widehat{F}}=VU=I_\mathcal{X} $, $\widehat{P}_{FU,V\widehat{F}}=\theta_{FU}\widehat{S}_{FU,V\widehat{F}}^{-1}\widehat{\theta}_{V\widehat{F}}=\theta_{FU}I_\mathcal{X}\widehat{\theta}_{V\widehat{F}}=\theta_FUV\widehat{\theta}_{\widehat{F}}=\theta_FI_{\mathcal{X}}\widehat{\theta}_{\widehat{F}}=I_{\ell^p(\mathbb{J})}\otimes I_{\mathcal{X}_0}.$
 $(\Rightarrow)$ 
 $VU=\widehat{S}_{A,\Psi}=I_\mathcal{X},$ 
\begin{align*}
UV&= \left(\sum_{j\in \mathbb{J}}\widehat{F}_jA_j\right)\left( \sum_{k\in \mathbb{J}}\widehat{\Psi}_kF_k\right) =
\left(\sum_{j\in \mathbb{J}}\widehat{F}_j\widehat{L}_j \left(\sum_{l\in \mathbb{J}}L_lA_l\right)\right) \left( \sum_{k\in \mathbb{J}}\widehat{\Psi}_k\widehat{L}_k \left( \sum_{m\in \mathbb{J}}L_mF_m\right)\right) \\ &=\widehat{\theta}_F\theta_A\widehat{\theta}_\Psi\theta_F=\widehat{\theta}_F\widehat{P}_{A,\Psi}\theta_F =\widehat{\theta}_F(I_{\ell^p(\mathbb{J})}\otimes I_{\mathcal{X}_0})\theta_F =I_\mathcal{X}. 
\end{align*}
\end{enumerate}
\end{proof}
\begin{corollary}
\begin{enumerate}[\upshape(i)]
\item If   $ (\{A_j\}_{j \in \mathbb{J}},\{\widehat{\Psi}_j\}_{j \in \mathbb{J}})$ is a p-orthonormal basis  in  $ \mathcal{B}(\mathcal{X}, \mathcal{X}_0)$, then 
$$ \sup\{\|A_j\|\}_{j\in\mathbb{J}}\leq1,~ \|\widehat{\Psi}_j\|\leq c_j \|\widehat{F}_j\|, ~\forall j \in \mathbb{J}, ~ A_j \widehat{\Psi}_j=c_jI_{\mathcal{X}_0},  ~\forall j \in \mathbb{J}.$$
\item If   $ (\{A_j\}_{j \in \mathbb{J}},\{\widehat{\Psi}_j\}_{j \in \mathbb{J}})$ is  p-(ovf)  in $ \mathcal{B}(\mathcal{X}, \mathcal{X}_0)$, then 
$$ \sup\{\|A_j\|\}_{j\in\mathbb{J}}\leq\|U\|, ~\|\widehat{\Psi}_j\|\leq\|V\|\|\widehat{F}_j\|,~ \forall j \in \mathbb{J},  ~ \|A_j\widehat{\Psi}_j\|\leq\|UV\widehat{F}_j\|, ~ \forall j \in \mathbb{J}.$$
\end{enumerate}	
\end{corollary}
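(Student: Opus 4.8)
The plan is to derive both parts as immediate consequences of the characterization Theorem \ref{POVFCHARACTERIZATIONBANACH}, which supplies the explicit representations of a p-orthonormal basis and of a p-operator-valued frame in terms of the fixed p-orthonormal basis $\{F_j\}_{j\in\mathbb{J}}$. Once these representations are in hand, every inequality in the statement reduces to submultiplicativity of the operator norm together with two structural facts: first, that each $F_j$ has norm at most $1$ (this is the consequence of condition (ii) in Definition \ref{BANACHOPERATORORTHONORMALSET} recorded immediately after that definition, applied to the p-orthonormal basis $\{F_j\}_{j\in\mathbb{J}}$); and second, that $F_j\widehat{F}_j = I_{\mathcal{X}_0}$, which is the $j=k$ instance of the defining relation $F_j\widehat{F}_k=\delta_{j,k}I_{\mathcal{X}_0}$.

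For part (i), I would invoke Theorem \ref{POVFCHARACTERIZATIONBANACH}(i) to write $A_j=F_jU$ and $\widehat{\Psi}_j=c_jU^{-1}\widehat{F}_j$ with $U\in\mathcal{B}(\mathcal{X})$ an invertible isometry and $\{c_j\}_{j\in\mathbb{J}}$ the associated bounded family of positive reals. The bound $\sup\{\|A_j\|\}_{j\in\mathbb{J}}\le 1$ then follows either directly from $\|A_j\|=\|F_jU\|\le\|F_j\|\,\|U\|=\|F_j\|\le 1$, or simply because $\{A_j\}_{j\in\mathbb{J}}$ is itself a p-orthonormal set. For the second inequality I would note that an invertible isometry satisfies $\|U^{-1}\|=1$ (writing $y=Ux$ gives $\|U^{-1}y\|=\|x\|=\|Ux\|=\|y\|$), so $\|\widehat{\Psi}_j\|=c_j\|U^{-1}\widehat{F}_j\|\le c_j\|U^{-1}\|\,\|\widehat{F}_j\|=c_j\|\widehat{F}_j\|$. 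The identity $A_j\widehat{\Psi}_j=c_jI_{\mathcal{X}_0}$ is a one-line computation: $A_j\widehat{\Psi}_j=(F_jU)(c_jU^{-1}\widehat{F}_j)=c_jF_j\widehat{F}_j=c_jI_{\mathcal{X}_0}$.

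For part (ii), I would apply Theorem \ref{POVFCHARACTERIZATIONBANACH}(iii) to obtain $A_j=F_jU$ and $\widehat{\Psi}_j=V\widehat{F}_j$ with $U,V\in\mathcal{B}(\mathcal{X})$ invertible and the resolvent of $VU$ containing $(-\infty,0]$. Then $\|A_j\|=\|F_jU\|\le\|F_j\|\,\|U\|\le\|U\|$ and $\|\widehat{\Psi}_j\|=\|V\widehat{F}_j\|\le\|V\|\,\|\widehat{F}_j\|$, while the product estimate follows from $A_j\widehat{\Psi}_j=F_j(UV\widehat{F}_j)$, giving $\|A_j\widehat{\Psi}_j\|\le\|F_j\|\,\|UV\widehat{F}_j\|\le\|UV\widehat{F}_j\|$, again using $\|F_j\|\le 1$.

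There is no genuine obstacle here; the corollary is purely an unpacking of the representations from the characterization theorem. The only points requiring a moment's care are the justification that $\|U^{-1}\|=1$ for the invertible isometry in part (i) — which is what makes the clean bound $\|\widehat{\Psi}_j\|\le c_j\|\widehat{F}_j\|$ possible rather than one involving $\|U^{-1}\|$ — and the careful placement of factors when computing $A_j\widehat{\Psi}_j$, so that the cancellations $UU^{-1}=I_{\mathcal{X}}$ and $F_j\widehat{F}_j=I_{\mathcal{X}_0}$ are applied in the correct order. Both are routine, so the whole argument is short.
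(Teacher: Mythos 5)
Your proof is correct and is essentially the argument the paper intends (the corollary is stated without proof immediately after Theorem \ref{POVFCHARACTERIZATIONBANACH}, and the only route is to substitute the representations $A_j=F_jU$, $\widehat{\Psi}_j=c_jU^{-1}\widehat{F}_j$ resp. $\widehat{\Psi}_j=V\widehat{F}_j$ and use $\|F_j\|\le 1$ and $F_j\widehat{F}_j=I_{\mathcal{X}_0}$, exactly as you do). Your explicit justification that $\|U^{-1}\|=1$ for the invertible isometry in part (i) is the one small point worth writing down, and you handle it correctly.
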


\textbf{Similarity}
\begin{definition}
Let $(\{A_j\}_{j \in \mathbb{J}}, \{\widehat{\Psi}_j \}_ {j \in \mathbb{J}})$ and $(\{B_j\}_{j \in \mathbb{J}}, \{\widehat{\Phi}_j \}_ {j \in \mathbb{J}})$ be  p-operator-valued frames in  $ \mathcal{B}(\mathcal{X}, \mathcal{X}_0).$ We say that  $( \{B_j\}_{j \in \mathbb{J}}, \{\widehat{\Phi}_j \}_ {j \in \mathbb{J}})$ is   
\begin{enumerate}[\upshape(i)]
\item right-similar  to $ (\{A_j\}_{j \in \mathbb{J}}, \{\widehat{\Psi}_j \}_ {j \in \mathbb{J}})$ if there exist invertible $R_{A,B} \in \mathcal{B}(\mathcal{X}), R_{\Psi, \Phi} \in   \mathcal{B}(\mathcal{X}_0)$ such that $B_j=A_jR_{A,B} , \widehat{\Phi}_j=\widehat{\Psi}_jR_{\Psi, \Phi}, \forall j \in \mathbb{J}.$
\item left-similar to $ ( \{A_j\}_{j \in \mathbb{J}},\{\widehat{\Psi}_j \}_ {j \in \mathbb{J}})$ if there exist invertible $L_{A,B}  \in \mathcal{B}(\mathcal{X}_0), L_{\Psi, \Phi} \in \mathcal{B}(\mathcal{X}) $ such that $B_j=L_{A,B}A_j , \widehat{\Phi}_j=L_{\Psi, \Phi}\widehat{\Psi}_j , \forall j \in \mathbb{J}.$
\end{enumerate}
\end{definition}
\begin{proposition}
Let $ \{A_j\}_{j\in \mathbb{J}}\in \widehat{\mathscr{F}}_{\Psi,p}$, $ \{B_j\}_{j\in \mathbb{J}}\in \widehat{\mathscr{F}}_{\Phi,p}$,  $R_{A,B}  \in \mathcal{B}(\mathcal{X})$, $R_{\Psi, \Phi} \in   \mathcal{B}(\mathcal{X}_0) $, both $R_{A,B},R_{\Psi, \Phi} $ be  invertible and  $B_j=A_jR_{A,B} ,  \widehat{\Phi}_j=\widehat{\Psi}_jR_{\Psi, \Phi}, \forall j \in \mathbb{J}.$ Then $ \theta_B=\theta_A R_{A,B}, \widehat{\theta}_\Phi= \widehat{\theta}_\Psi(I_{\ell^p(\mathbb{J})}\otimes R_{\Psi, \Phi}), \widehat{S}_{B,\Phi}=\widehat{\theta}_\Phi\theta_B=\widehat{\theta}_\Psi(I_{\ell^p(\mathbb{J})}\otimes R_{\Psi, \Phi})\theta_A R_{A,B},  \widehat{P}_{B,\Phi}=\theta_A (\widehat{\theta}_\Psi(I_{\ell^p(\mathbb{J})}\otimes R_{\Psi, \Phi})\theta_A )^{-1}\widehat{\theta}_\Psi(I_{\ell^p(\mathbb{J})}\otimes R_{\Psi, \Phi}).$	
\end{proposition}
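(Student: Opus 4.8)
The plan is to verify the four asserted identities in turn, each of which reduces to an elementary manipulation of the defining series once the right commutation relations between the constant operators $R_{A,B}, R_{\Psi,\Phi}$ and the building blocks $L_j, \widehat{L}_j$ are in place. First I would handle the analysis operator: by definition $\theta_B=\sum_{j\in\mathbb{J}}L_jB_j$, and substituting $B_j=A_jR_{A,B}$ lets me pull the fixed bounded operator $R_{A,B}$ out of the pointwise-convergent sum, giving $\theta_B=\big(\sum_{j\in\mathbb{J}}L_jA_j\big)R_{A,B}=\theta_AR_{A,B}$. Existence and boundedness are inherited because $R_{A,B}$ is a fixed bounded operator and $\{A_j\}_{j\in\mathbb{J}}\in\widehat{\mathscr{F}}_{\Psi,p}$ already guarantees $\theta_A\in\mathcal{B}(\mathcal{X},\ell^p(\mathbb{J})\otimes\mathcal{X}_0)$.

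The only genuinely non-routine step is the synthesis operator $\widehat{\theta}_\Phi$, since $R_{\Psi,\Phi}$ multiplies $\widehat{\Psi}_j$ on the right (acting on $\mathcal{X}_0$) whereas in $\widehat{\theta}_\Psi$ the factors $\widehat{\Psi}_j$ are followed by $\widehat{L}_j$. The key lemma I would establish is the intertwining identity
$$R_{\Psi,\Phi}\,\widehat{L}_j=\widehat{L}_j\,(I_{\ell^p(\mathbb{J})}\otimes R_{\Psi,\Phi}),\qquad\forall j\in\mathbb{J},$$
which I check on elementary tensors $\{a_k\}_{k\in\mathbb{J}}\otimes x$: both sides send this vector to $a_jR_{\Psi,\Phi}x$, after which I extend by linearity and continuity. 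This is the $p$-Banach analogue of the identity $L_jR=(I_{\ell^2}\otimes R)L_j$ used in the operator-valued section (proof of Proposition \ref{3.3}), transplanted to the synthesis side. Granting it, $\widehat{\theta}_\Phi=\sum_{j\in\mathbb{J}}\widehat{\Psi}_jR_{\Psi,\Phi}\widehat{L}_j=\sum_{j\in\mathbb{J}}\widehat{\Psi}_j\widehat{L}_j(I_{\ell^p(\mathbb{J})}\otimes R_{\Psi,\Phi})=\widehat{\theta}_\Psi(I_{\ell^p(\mathbb{J})}\otimes R_{\Psi,\Phi})$.

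With $\theta_B$ and $\widehat{\theta}_\Phi$ computed, the frame operator is immediate from Proposition \ref{BANACHSPACEOVFFUNDAMENTALLEMMA}(i), which gives $\widehat{S}_{B,\Phi}=\widehat{\theta}_\Phi\theta_B$; substituting the two expressions yields $\widehat{S}_{B,\Phi}=\widehat{\theta}_\Psi(I_{\ell^p(\mathbb{J})}\otimes R_{\Psi,\Phi})\theta_AR_{A,B}$. For the frame idempotent I would then plug into $\widehat{P}_{B,\Phi}=\theta_B\widehat{S}_{B,\Phi}^{-1}\widehat{\theta}_\Phi$. Because $R_{A,B}$ is invertible and $\widehat{S}_{B,\Phi}$ is invertible (the latter holds since $\{B_j\}_{j\in\mathbb{J}}\in\widehat{\mathscr{F}}_{\Phi,p}$), the middle factor $\widehat{\theta}_\Psi(I_{\ell^p(\mathbb{J})}\otimes R_{\Psi,\Phi})\theta_A$ is itself invertible and $\widehat{S}_{B,\Phi}^{-1}=R_{A,B}^{-1}\big(\widehat{\theta}_\Psi(I_{\ell^p(\mathbb{J})}\otimes R_{\Psi,\Phi})\theta_A\big)^{-1}$. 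The factor $R_{A,B}$ coming from $\theta_B$ then cancels against $R_{A,B}^{-1}$, leaving exactly $\widehat{P}_{B,\Phi}=\theta_A\big(\widehat{\theta}_\Psi(I_{\ell^p(\mathbb{J})}\otimes R_{\Psi,\Phi})\theta_A\big)^{-1}\widehat{\theta}_\Psi(I_{\ell^p(\mathbb{J})}\otimes R_{\Psi,\Phi})$.

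I do not anticipate a serious obstacle: once the intertwining identity is secured, the remainder is purely algebraic substitution and cancellation. The one place requiring care is precisely that identity, where the tensor-product structure of $\ell^p(\mathbb{J})\otimes\mathcal{X}_0$ must be used correctly so that right multiplication by $R_{\Psi,\Phi}$ on $\widehat{L}_j$ is realized by the operator $I_{\ell^p(\mathbb{J})}\otimes R_{\Psi,\Phi}$ and not by some other lift. A minor bookkeeping point I would state explicitly is that the constant bounded operators $R_{A,B}$ and $I_{\ell^p(\mathbb{J})}\otimes R_{\Psi,\Phi}$ may be pulled through the pointwise-convergent series defining $\theta_A$ and $\widehat{\theta}_\Psi$, which is justified by their boundedness together with the convergence guaranteed by membership in $\widehat{\mathscr{F}}_{\Psi,p}$ and $\widehat{\mathscr{F}}_{\Phi,p}$.
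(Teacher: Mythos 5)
Your proposal is correct and follows essentially the same route as the paper: pull $R_{A,B}$ out of the series for $\theta_B$, use the intertwining identity $R_{\Psi,\Phi}\widehat{L}_j=\widehat{L}_j(I_{\ell^p(\mathbb{J})}\otimes R_{\Psi,\Phi})$ to compute $\widehat{\theta}_\Phi$, and then substitute into $\widehat{S}_{B,\Phi}=\widehat{\theta}_\Phi\theta_B$ and $\widehat{P}_{B,\Phi}=\theta_B\widehat{S}_{B,\Phi}^{-1}\widehat{\theta}_\Phi$ with the $R_{A,B}$ cancellation. The only difference is that you state and verify the intertwining identity explicitly on elementary tensors, which the paper uses silently; that is a welcome clarification rather than a deviation.
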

\begin{proof}
$ \theta_B=\sum_{j\in \mathbb{J}}L_jB_j=\sum_{j\in \mathbb{J}}L_jA_jR_{A,B}=\theta_A R_{A,B}$, $\widehat{\theta}_\Phi=\sum_{j\in \mathbb{J}}\widehat{\Phi}_j\widehat{L}_j=\sum_{j\in \mathbb{J}}\widehat{\Psi}_jR_{\Psi, \Phi}\widehat{L}_j=\sum_{j\in \mathbb{J}}\widehat{\Psi}_j\widehat{L}_j(I_{\ell^p(\mathbb{J})}\otimes R_{\Psi, \Phi})= \widehat{\theta}_\Psi(I_{\ell^p(\mathbb{J})}\otimes R_{\Psi, \Phi}) $, $\widehat{S}_{B,\Phi}=\widehat{\theta}_\Phi\theta_B=\widehat{\theta}_\Psi(I_{\ell^p(\mathbb{J})}\otimes R_{\Psi, \Phi})\theta_A R_{A,B} $, $\widehat{P}_{B,\Phi}=\theta_B\widehat{S}_{B,\Phi}^{-1}\widehat{\theta}_\Phi=\theta_A R_{A,B}(\widehat{\theta}_\Psi(I_{\ell^p(\mathbb{J})}\otimes R_{\Psi, \Phi})\theta_A R_{A,B})^{-1}\widehat{\theta}_\Psi(I_{\ell^p(\mathbb{J})}\otimes R_{\Psi, \Phi})=\theta_A (\widehat{\theta}_\Psi(I_{\ell^p(\mathbb{J})}\otimes R_{\Psi, \Phi})\theta_A )^{-1}\widehat{\theta}_\Psi(I_{\ell^p(\mathbb{J})}\otimes R_{\Psi, \Phi}).$
\end{proof}
\begin{proposition}
Let $ \{A_j\}_{j\in \mathbb{J}}\in \widehat{\mathscr{F}}_{\Psi,p}$, $ \{B_j\}_{j\in \mathbb{J}}\in \widehat{\mathscr{F}}_{\Phi,p}$,  $L_{A,B}  \in \mathcal{B}(\mathcal{X}_0)$, $L_{\Psi, \Phi} \in   \mathcal{B}(\mathcal{X}) $, both $L_{A,B},L_{\Psi, \Phi} $ be  invertible and  $B_j=L_{A,B}A_j ,  \widehat{\Phi}_j=L_{\Psi, \Phi}\widehat{\Psi}_j, \forall j \in \mathbb{J}.$ Then $ \theta_B=(I_{\ell^p(\mathbb{J})}\otimes L_{A,B})\theta_A, \widehat{\theta}_\Phi=L_{\Psi, \Phi}\widehat{\theta}_\Psi , \widehat{S}_{B,\Phi}=L_{\Psi, \Phi}\widehat{\theta}_\Psi(I_{\ell^p(\mathbb{J})}\otimes L_{A,B})\theta_A,  \widehat{P}_{B,\Phi}=(I_{\ell^p(\mathbb{J})}\otimes L_{A,B})\theta_A(\widehat{\theta}_\Psi(I_{\ell^p(\mathbb{J})}\otimes L_{A,B})\theta_A)^{-1}\widehat{\theta}_\Psi.$	
\end{proposition}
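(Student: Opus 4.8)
The plan is to reduce every one of the four identities to the analysis and synthesis operators by tracking how the two similarity operators interact with the building blocks $L_j$ and $\widehat{L}_j$. The one relation that drives the whole argument is a commutation identity: for each $j$ and each $x\in\mathcal{X}_0$,
\[
L_j L_{A,B} x = e_j\otimes(L_{A,B}x) = (I_{\ell^p(\mathbb{J})}\otimes L_{A,B})(e_j\otimes x) = (I_{\ell^p(\mathbb{J})}\otimes L_{A,B})L_j x,
\]
so that $L_j L_{A,B} = (I_{\ell^p(\mathbb{J})}\otimes L_{A,B})L_j$ for every $j$. By contrast $L_{\Psi,\Phi}$ acts on $\mathcal{X}$ itself, sitting on the left of each $\widehat{\Psi}_j$, and so it simply factors out of the sum defining the synthesis operator without any ampliation.

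With this in hand the first two identities are immediate. Substituting $B_j = L_{A,B}A_j$ into $\theta_B=\sum_j L_j B_j$ and pulling the tensor-factor operator through the sum gives
\[
\theta_B = \sum_j L_j L_{A,B} A_j = (I_{\ell^p(\mathbb{J})}\otimes L_{A,B})\sum_j L_j A_j = (I_{\ell^p(\mathbb{J})}\otimes L_{A,B})\theta_A,
\]
while substituting $\widehat{\Phi}_j = L_{\Psi,\Phi}\widehat{\Psi}_j$ into $\widehat{\theta}_\Phi=\sum_j \widehat{\Phi}_j\widehat{L}_j$ and factoring $L_{\Psi,\Phi}$ out on the left yields $\widehat{\theta}_\Phi = L_{\Psi,\Phi}\widehat{\theta}_\Psi$. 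Boundedness of both operators is inherited from the hypothesis $\{B_j\}\in\widehat{\mathscr{F}}_{\Phi,p}$, so no separate convergence check is needed. The frame-operator identity then follows by composing, using $\widehat{S}_{B,\Phi}=\widehat{\theta}_\Phi\theta_B$ from Proposition~\ref{BANACHSPACEOVFFUNDAMENTALLEMMA}:
\[
\widehat{S}_{B,\Phi} = \widehat{\theta}_\Phi\theta_B = L_{\Psi,\Phi}\widehat{\theta}_\Psi(I_{\ell^p(\mathbb{J})}\otimes L_{A,B})\theta_A.
\]

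For the frame idempotent I would substitute the three formulas into $\widehat{P}_{B,\Phi}=\theta_B\widehat{S}_{B,\Phi}^{-1}\widehat{\theta}_\Phi$; since $\{B_j\}\in\widehat{\mathscr{F}}_{\Phi,p}$ the operator $\widehat{S}_{B,\Phi}$ is invertible, so this is legitimate. Writing $X=\widehat{\theta}_\Psi(I_{\ell^p(\mathbb{J})}\otimes L_{A,B})\theta_A$, we have $\widehat{S}_{B,\Phi}=L_{\Psi,\Phi}X$ and hence $\widehat{S}_{B,\Phi}^{-1}=X^{-1}L_{\Psi,\Phi}^{-1}$; the point to watch is that it is exactly the factor $L_{\Psi,\Phi}^{-1}$ meeting the leftmost $L_{\Psi,\Phi}$ of $\widehat{\theta}_\Phi$ that cancels. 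Carrying this out,
\[
\widehat{P}_{B,\Phi} = (I_{\ell^p(\mathbb{J})}\otimes L_{A,B})\theta_A\,X^{-1}L_{\Psi,\Phi}^{-1}L_{\Psi,\Phi}\widehat{\theta}_\Psi = (I_{\ell^p(\mathbb{J})}\otimes L_{A,B})\theta_A\big(\widehat{\theta}_\Psi(I_{\ell^p(\mathbb{J})}\otimes L_{A,B})\theta_A\big)^{-1}\widehat{\theta}_\Psi,
\]
which is the asserted expression.

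There is no deep obstruction here; the result is structurally identical to the right-similar and RL/LR-similar propositions that precede it, and the computation is entirely formal once the commutation relation is in place. The only place demanding genuine care is bookkeeping the two different spaces on which $L_{A,B}$ and $L_{\Psi,\Phi}$ act: $L_{A,B}$ enters $\theta_B$ only through its ampliation $I_{\ell^p(\mathbb{J})}\otimes L_{A,B}$ and therefore survives in $\widehat{P}_{B,\Phi}$, whereas $L_{\Psi,\Phi}$ enters $\widehat{\theta}_\Phi$ undressed and cancels against $L_{\Psi,\Phi}^{-1}$. Keeping the order of composition correct in this noncommutative setting, and invoking the frame hypothesis on $\{B_j\}$ to license the inverse of $\widehat{S}_{B,\Phi}$, is all that the argument requires.
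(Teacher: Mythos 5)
Your proof is correct and follows essentially the same route as the paper's: the commutation relation $L_jL_{A,B}=(I_{\ell^p(\mathbb{J})}\otimes L_{A,B})L_j$, factoring $L_{\Psi,\Phi}$ out of the synthesis operator, composing to get $\widehat{S}_{B,\Phi}$, and cancelling $L_{\Psi,\Phi}^{-1}L_{\Psi,\Phi}$ inside $\widehat{P}_{B,\Phi}$. Nothing further is needed.
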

\begin{proof}
$\theta_B=\sum_{j\in \mathbb{J}}L_jB_j=\sum_{j\in \mathbb{J}}L_jL_{A,B}A_j=(I_{\ell^p(\mathbb{J})}\otimes L_{A,B})\sum_{j\in \mathbb{J}}L_jA_j=(I_{\ell^p(\mathbb{J})}\otimes L_{A,B})\theta_A $, $\widehat{\theta}_\Phi=\sum_{j\in \mathbb{J}}\widehat{\Phi}_j\widehat{L}_j=\sum_{j\in \mathbb{J}}L_{\Psi, \Phi}\widehat{\Psi}_j\widehat{L}_j=L_{\Psi, \Phi}\sum_{j\in \mathbb{J}}\widehat{\Psi}_j\widehat{L}_j=L_{\Psi, \Phi}\widehat{\theta}_\Psi $, $\widehat{S}_{B,\Phi}=\widehat{\theta}_\Phi\theta_B=L_{\Psi, \Phi}\widehat{\theta}_\Psi(I_{\ell^p(\mathbb{J})}\otimes L_{A,B})\theta_A $, $\widehat{P}_{B,\Phi}=\theta_B\widehat{S}_{B,\Phi}^{-1}\widehat{\theta}_\Phi=(I_{\ell^p(\mathbb{J})}\otimes L_{A,B})\theta_A(L_{\Psi, \Phi}\widehat{\theta}_\Psi(I_{\ell^p(\mathbb{J})}\otimes L_{A,B})\theta_A)^{-1}L_{\Psi, \Phi}\widehat{\theta}_\Psi=(I_{\ell^p(\mathbb{J})}\otimes L_{A,B})\theta_A(\widehat{\theta}_\Psi(I_{\ell^p(\mathbb{J})}\otimes L_{A,B})\theta_A)^{-1}\widehat{\theta}_\Psi $.
\end{proof}

\begin{definition}
Let $(\{A_j\}_{j \in \mathbb{J}}, \{\widehat{\Psi}_j \}_ {j \in \mathbb{J}})$ and $(\{B_j\}_{j \in \mathbb{J}}, \{\widehat{\Phi}_j \}_ {j \in \mathbb{J}})$ be  p-operator-valued frames in  $  \mathcal{B}(\mathcal{X}, \mathcal{X}_0)$. We say that  $( \{B_j\}_{j \in \mathbb{J}}, \{\widehat{\Phi}_j \}_ {j \in \mathbb{J}})$ is   
\begin{enumerate}[\upshape(i)]
\item RL-similar (right-left-similar) to $ (\{A_j\}_{j \in \mathbb{J}}, \{\widehat{\Psi}_j \}_ {j \in \mathbb{J}})$ if there exist invertible $R_{A,B}, L_{\Psi, \Phi}  \in \mathcal{B}(\mathcal{X})  $ such that $B_j=A_jR_{A,B} , \widehat{\Phi}_j=L_{\Psi, \Phi}\widehat{\Psi}_j, \forall j \in \mathbb{J}.$
\item LR-similar (left-right-similar) to $ ( \{A_j\}_{j \in \mathbb{J}},\{\widehat{\Psi}_j \}_ {j \in \mathbb{J}})$ if there exist invertible $L_{A,B}, R_{\Psi, \Phi}  \in \mathcal{B}(\mathcal{X}_0)$ such that $B_j=L_{A,B}A_j , \widehat{\Phi}_j=\widehat{\Psi}_jR_{\Psi, \Phi} , \forall j \in \mathbb{J}.$
\end{enumerate}
\end{definition}

\begin{lemma}\label{P-OVFLEMMA}
Let $ \{A_j\}_{j\in \mathbb{J}}\in \widehat{\mathscr{F}}_{\Psi,p}$, $ \{B_j\}_{j\in \mathbb{J}}\in \widehat{\mathscr{F}}_{\Phi,p}$,  $R_{A,B}, L_{\Psi, \Phi} \in \mathcal{B}(\mathcal{X})$ be  invertible and  $B_j=A_jR_{A,B} , \widehat{\Phi}_j=L_{\Psi, \Phi}\widehat{\Psi}_j,  \forall j \in \mathbb{J}.$ Then $ \theta_B=\theta_A R_{A,B}, \widehat{\theta}_\Phi= L_{\Psi,\Phi}\widehat{\theta}_\Psi, \widehat{S}_{B,\Phi}=L_{\Psi,\Phi}\widehat{S}_{A, \Psi}R_{A,B},  \widehat{P}_{B,\Phi}=\widehat{P}_{A, \Psi}.$ Assuming that $ (\{A_j\}_{j\in \mathbb{J}},\{\widehat{\Psi}_j\}_{j\in \mathbb{J}})$ is a Parseval p-(ovf), then $ (\{B_j\}_{j\in \mathbb{J}},\{\widehat{\Phi}_j\}_{j\in \mathbb{J}})$ is a Parseval  p-(ovf) if and only if   $ L_{\Psi, \Phi}R_{A,B}=I_\mathcal{X}.$ 
\end{lemma}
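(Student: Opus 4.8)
The plan is to follow the template of Lemma \ref{SIM} in the operator-valued setting, the only wrinkle being that here the right-similarity acts on the analysis side through $R_{A,B}$ while the left-similarity acts on the synthesis side through $L_{\Psi,\Phi}$; this asymmetry of RL-similarity is precisely what will make the final Parseval criterion read $L_{\Psi, \Phi}R_{A,B}=I_\mathcal{X}$ rather than a starred version as in the Hilbert-space case.

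First I would establish the two operator identities directly from the definitions of the analysis and synthesis operators. Since $B_j=A_jR_{A,B}$ and $R_{A,B}$ is a fixed bounded operator, pulling it out of the (pointwise-convergent) sum gives $\theta_B=\sum_{j\in\mathbb{J}}L_jB_j=\left(\sum_{j\in\mathbb{J}} L_jA_j\right)R_{A,B}=\theta_AR_{A,B}$; similarly $\widehat{\Phi}_j=L_{\Psi,\Phi}\widehat{\Psi}_j$ yields $\widehat{\theta}_\Phi=\sum_{j\in\mathbb{J}}\widehat{\Phi}_j\widehat{L}_j=L_{\Psi,\Phi}\sum_{j\in\mathbb{J}}\widehat{\Psi}_j\widehat{L}_j=L_{\Psi,\Phi}\widehat{\theta}_\Psi$. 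Convergence is not an issue because $\{B_j\}_{j\in\mathbb{J}}\in\widehat{\mathscr{F}}_{\Phi,p}$ is assumed, so $\theta_B$ and $\widehat{\theta}_\Phi$ already exist as bounded operators.

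Next I would substitute these into the frame operator and the frame idempotent. Using Proposition \ref{BANACHSPACEOVFFUNDAMENTALLEMMA}(i), $\widehat{S}_{B,\Phi}=\widehat{\theta}_\Phi\theta_B=L_{\Psi,\Phi}\widehat{\theta}_\Psi\theta_AR_{A,B}=L_{\Psi,\Phi}\widehat{S}_{A,\Psi}R_{A,B}$; since $L_{\Psi,\Phi}$, $R_{A,B}$ and $\widehat{S}_{A,\Psi}$ are all invertible, so is $\widehat{S}_{B,\Phi}$. For the idempotent, I would invert $\widehat{S}_{B,\Phi}=L_{\Psi,\Phi}\widehat{S}_{A,\Psi}R_{A,B}$ as $R_{A,B}^{-1}\widehat{S}_{A,\Psi}^{-1}L_{\Psi,\Phi}^{-1}$ and compute $\widehat{P}_{B,\Phi}=\theta_B\widehat{S}_{B,\Phi}^{-1}\widehat{\theta}_\Phi=\theta_AR_{A,B}\,R_{A,B}^{-1}\widehat{S}_{A,\Psi}^{-1}L_{\Psi,\Phi}^{-1}\,L_{\Psi,\Phi}\widehat{\theta}_\Psi=\theta_A\widehat{S}_{A,\Psi}^{-1}\widehat{\theta}_\Psi=\widehat{P}_{A,\Psi}$, the two interior cancellations being the whole point of the computation.

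Finally, for the Parseval equivalence I would use that $(\{A_j\}_{j\in\mathbb{J}},\{\widehat{\Psi}_j\}_{j\in\mathbb{J}})$ Parseval means $\widehat{S}_{A,\Psi}=I_\mathcal{X}$ (equivalently $\widehat{\theta}_\Psi\theta_A=I_\mathcal{X}$ by Proposition \ref{BANACHSPACEOVFFUNDAMENTALLEMMA}(ii)), so the formula $\widehat{S}_{B,\Phi}=L_{\Psi,\Phi}\widehat{S}_{A,\Psi}R_{A,B}$ collapses to $\widehat{S}_{B,\Phi}=L_{\Psi,\Phi}R_{A,B}$; hence $(\{B_j\}_{j\in\mathbb{J}},\{\widehat{\Phi}_j\}_{j\in\mathbb{J}})$ is Parseval, i.e. $\widehat{S}_{B,\Phi}=I_\mathcal{X}$, exactly when $L_{\Psi,\Phi}R_{A,B}=I_\mathcal{X}$. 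I anticipate no serious obstacle, since every step is an algebraic manipulation of bounded operators; the only point demanding a moment's care is keeping the left/right placement of $R_{A,B}$ and $L_{\Psi,\Phi}$ straight when inverting $\widehat{S}_{B,\Phi}$, which is exactly what yields the unstarred criterion $L_{\Psi,\Phi}R_{A,B}=I_\mathcal{X}$ in contrast to the adjointed form $R_{\Psi,\Phi}^*R_{A,B}=I_\mathcal{H}$ of the Hilbert-space Lemma \ref{SIM}.
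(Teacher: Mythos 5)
Your proof is correct and follows essentially the same route as the paper's: pull the fixed operators out of the defining sums to get $\theta_B=\theta_AR_{A,B}$ and $\widehat{\theta}_\Phi=L_{\Psi,\Phi}\widehat{\theta}_\Psi$, substitute into $\widehat{S}_{B,\Phi}$ and $\widehat{P}_{B,\Phi}$ so that the interior factors cancel, and read off the Parseval criterion from $\widehat{S}_{B,\Phi}=L_{\Psi,\Phi}\widehat{S}_{A,\Psi}R_{A,B}$. Nothing is missing.
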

\begin{proof}
$ \theta_B=\sum_{j\in\mathbb{J}}L_jB_j=\sum_{j\in\mathbb{J}}L_jA_jR_{A,B}=\theta_AR_{A,B}$, $\widehat{\theta}_\Phi=\sum_{j\in\mathbb{J}}\widehat{\Phi}_j\widehat{L}_j=\sum_{j\in\mathbb{J}}L_{\Psi,\Phi}\widehat{\Psi}_j\widehat{L}_j= L_{\Psi,\Phi}\widehat{\theta}_\Psi $, $\widehat{S}_{B,\Phi}=\widehat{\theta}_\Phi\theta_B=L_{\Psi,\Phi}\widehat{\theta}_\Psi\theta_AR_{A,B} =L_{\Psi,\Phi}\widehat{S}_{A, \Psi}R_{A,B} $, $\widehat{P}_{B,\Phi}=\theta_B\widehat{S}_{B, \Phi}^{-1}\widehat{\theta}_\Phi=(\theta_AR_{A,B})(L_{\Psi,\Phi}\widehat{S}_{A, \Psi}R_{A,B})^{-1}(L_{\Psi,\Phi}\widehat{\theta}_\Psi)=\theta_A\widehat{S}_{A, \Psi}^{-1}\widehat{\theta}_\Psi=\widehat{P}_{A, \Psi} $.
\end{proof}
\begin{theorem}\label{RIGHTLEFTSIMILARITYPOVF}
Let $ \{A_j\}_{j\in \mathbb{J}}\in \widehat{\mathscr{F}}_{\Psi,p},$ $ \{B_j\}_{j\in \mathbb{J}}\in \widehat{\mathscr{F}}_{\Phi,p}.$ The following are equivalent.
\begin{enumerate}[\upshape(i)]
\item $B_j=A_jR_{A,B} , \widehat{\Phi}_j=L_{\Psi, \Phi}\widehat{\Psi}_j ,  \forall j \in \mathbb{J},$ for some invertible  $R_{A,B}, L_{\Psi, \Phi} \in \mathcal{B}(\mathcal{X}).$
\item $\theta_B=\theta_AR_{A,B}' , \widehat{\theta}_\Phi= L_{\Psi, \Phi}'\widehat{\theta}_\Psi  $ for some invertible  $ R_{A,B}' ,L_{\Psi, \Phi}' \in \mathcal{B}(\mathcal{X}). $
\item $\widehat{P}_{B,\Phi}=\widehat{P}_{A,\Psi}.$
\end{enumerate}
If one of the above conditions is satisfied, then  invertible operators in  $ \operatorname{(i)}$ and  $ \operatorname{(ii)}$ are unique and are given by $R_{A,B}=\widehat{S}_{A,\Psi}^{-1}\widehat{\theta}_\Psi\theta_B, L_{\Psi, \Phi}=\widehat{\theta}_\Phi\theta_A\widehat{S}_{A,\Psi}^{-1}.$
In the case that $(\{A_j\}_{j\in \mathbb{J}},  \{\widehat{\Psi}_j\}_{j\in \mathbb{J}})$ is a Parseval p-(ovf), then $(\{B_j\}_{j\in \mathbb{J}},  \{\widehat{\Phi}_j\}_{j\in \mathbb{J}})$ is  a Parseval p-(ovf) if and only if $L_{\Psi, \Phi}R_{A,B}=I_\mathcal{X} $  if and only if $R_{A,B}L_{\Psi, \Phi}=I_\mathcal{X}.$  
\end{theorem}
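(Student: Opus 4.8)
The plan is to mirror the Hilbert-space argument of Theorem \ref{RIGHTSIMILARITY}, replacing adjoints by the synthesis operators $\widehat{\theta}$ and the frame projection by the frame idempotent $\widehat{P}$. I would prove the circle $(i)\Rightarrow(ii)\Rightarrow(iii)\Rightarrow(ii)\Rightarrow(i)$, with the equivalence of (i) and (ii) supplied by the reconstruction formulas of Proposition \ref{BANACHSPACEOVFFUNDAMENTALLEMMA}. First, $(i)\Rightarrow(ii)$ is immediate from Lemma \ref{P-OVFLEMMA}, which already records that $B_j=A_jR_{A,B}$ and $\widehat{\Phi}_j=L_{\Psi,\Phi}\widehat{\Psi}_j$ force $\theta_B=\theta_AR_{A,B}$ and $\widehat{\theta}_\Phi=L_{\Psi,\Phi}\widehat{\theta}_\Psi$; one takes $R_{A,B}'=R_{A,B}$ and $L_{\Psi,\Phi}'=L_{\Psi,\Phi}$. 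For $(ii)\Rightarrow(i)$, I would apply $\widehat{L}_j$ on the left to $\theta_B=\theta_AR_{A,B}'$ and invoke $B_j=\widehat{L}_j\theta_B$, $A_j=\widehat{L}_j\theta_A$ from Proposition \ref{BANACHSPACEOVFFUNDAMENTALLEMMA}(iv) to recover $B_j=A_jR_{A,B}'$; dually, composing $\widehat{\theta}_\Phi=L_{\Psi,\Phi}'\widehat{\theta}_\Psi$ with $L_j$ on the right and using $\widehat{\Phi}_j=\widehat{\theta}_\Phi L_j$, $\widehat{\Psi}_j=\widehat{\theta}_\Psi L_j$ from (v) gives $\widehat{\Phi}_j=L_{\Psi,\Phi}'\widehat{\Psi}_j$. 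The step $(ii)\Rightarrow(iii)$ is then just the identity $\widehat{P}_{B,\Phi}=\widehat{P}_{A,\Psi}$ from Lemma \ref{P-OVFLEMMA}.

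The substantive step is $(iii)\Rightarrow(ii)$. Here I would first record the two absorption identities $\widehat{P}_{B,\Phi}\theta_B=\theta_B$ and $\widehat{\theta}_\Phi\widehat{P}_{B,\Phi}=\widehat{\theta}_\Phi$, which follow from $\widehat{S}_{B,\Phi}=\widehat{\theta}_\Phi\theta_B$ and the definition $\widehat{P}_{B,\Phi}=\theta_B\widehat{S}_{B,\Phi}^{-1}\widehat{\theta}_\Phi$, together with the companion identities $\widehat{P}_{A,\Psi}\theta_A=\theta_A$ and $\widehat{\theta}_\Psi\widehat{P}_{A,\Psi}=\widehat{\theta}_\Psi$. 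Substituting the hypothesis $\widehat{P}_{B,\Phi}=\widehat{P}_{A,\Psi}=\theta_A\widehat{S}_{A,\Psi}^{-1}\widehat{\theta}_\Psi$ yields $\theta_B=\widehat{P}_{A,\Psi}\theta_B=\theta_A\bigl(\widehat{S}_{A,\Psi}^{-1}\widehat{\theta}_\Psi\theta_B\bigr)$ and $\widehat{\theta}_\Phi=\widehat{\theta}_\Phi\widehat{P}_{A,\Psi}=\bigl(\widehat{\theta}_\Phi\theta_A\widehat{S}_{A,\Psi}^{-1}\bigr)\widehat{\theta}_\Psi$, so (ii) holds with $R_{A,B}'=\widehat{S}_{A,\Psi}^{-1}\widehat{\theta}_\Psi\theta_B$ and $L_{\Psi,\Phi}'=\widehat{\theta}_\Phi\theta_A\widehat{S}_{A,\Psi}^{-1}$.

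The main obstacle is invertibility of these two operators, since in the Banach setting there is no self-adjointness to lean on. I expect to verify it directly by exhibiting the inverses: $\widehat{S}_{B,\Phi}^{-1}\widehat{\theta}_\Phi\theta_A$ for $R_{A,B}'$, and $\widehat{\theta}_\Psi\theta_B\widehat{S}_{B,\Phi}^{-1}$ for $L_{\Psi,\Phi}'$. Each of the four product computations collapses to $I_\mathcal{X}$ after inserting $\widehat{P}_{A,\Psi}=\widehat{P}_{B,\Phi}$ and using $\widehat{\theta}_\Psi\theta_A=\widehat{S}_{A,\Psi}$, $\widehat{\theta}_\Phi\theta_B=\widehat{S}_{B,\Phi}$ with the absorption identities; for instance $(\widehat{S}_{A,\Psi}^{-1}\widehat{\theta}_\Psi\theta_B)(\widehat{S}_{B,\Phi}^{-1}\widehat{\theta}_\Phi\theta_A)=\widehat{S}_{A,\Psi}^{-1}\widehat{\theta}_\Psi\widehat{P}_{B,\Phi}\theta_A=\widehat{S}_{A,\Psi}^{-1}\widehat{\theta}_\Psi\widehat{P}_{A,\Psi}\theta_A=I_\mathcal{X}$, and similarly for the remaining three.

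Finally, for uniqueness I would note that any $R_{A,B},L_{\Psi,\Phi}$ realizing (i) satisfy $\theta_B=\theta_AR_{A,B}$ and $\widehat{\theta}_\Phi=L_{\Psi,\Phi}\widehat{\theta}_\Psi$; composing the first with $\widehat{\theta}_\Psi$ on the left gives $\widehat{S}_{A,\Psi}R_{A,B}=\widehat{\theta}_\Psi\theta_B$, hence $R_{A,B}=\widehat{S}_{A,\Psi}^{-1}\widehat{\theta}_\Psi\theta_B$, while composing the second with $\theta_A$ on the right gives $L_{\Psi,\Phi}=\widehat{\theta}_\Phi\theta_A\widehat{S}_{A,\Psi}^{-1}$, which are exactly the claimed formulas. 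For the Parseval clause I would use $\widehat{S}_{B,\Phi}=L_{\Psi,\Phi}\widehat{S}_{A,\Psi}R_{A,B}$ from Lemma \ref{P-OVFLEMMA}: when $\widehat{S}_{A,\Psi}=I_\mathcal{X}$ this reduces to $\widehat{S}_{B,\Phi}=L_{\Psi,\Phi}R_{A,B}$, so $(\{B_j\}_{j\in\mathbb{J}},\{\widehat{\Phi}_j\}_{j\in\mathbb{J}})$ is Parseval precisely when $L_{\Psi,\Phi}R_{A,B}=I_\mathcal{X}$; and since $R_{A,B},L_{\Psi,\Phi}$ are invertible, the elementary fact that $ab=e$ implies $ba=(a^{-1}(ab)b^{-1})^{-1}=e$ (as used in Theorem \ref{RIGHTSIMILARITY}) makes this equivalent to $R_{A,B}L_{\Psi,\Phi}=I_\mathcal{X}$.
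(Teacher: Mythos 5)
Your proposal is correct and follows essentially the same route as the paper's own proof: the implications $(i)\Rightarrow(ii)\Rightarrow(iii)$ via Lemma \ref{P-OVFLEMMA}, the recovery of (i) from (ii) using $B_j=\widehat{L}_j\theta_B$ and $\widehat{\Phi}_j=\widehat{\theta}_\Phi L_j$, the absorption identities and explicit two-sided inverses for $(iii)\Rightarrow(ii)$, and the same derivation of the uniqueness formulas. The only point the paper leaves implicit and you spell out is the Parseval clause via $\widehat{S}_{B,\Phi}=L_{\Psi,\Phi}\widehat{S}_{A,\Psi}R_{A,B}$, which is exactly the intended argument.
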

\begin{proof}
(i) $\Rightarrow $ (ii) $\Rightarrow $ (iii) are from Lemma \ref{P-OVFLEMMA}. Assume (ii). From Proposition \ref{BANACHSPACEOVFFUNDAMENTALLEMMA}, $ B_j=\widehat{L}_j\theta_B=\widehat{L}_j\theta_AR_{A,B}'=A_jR_{A,B}',\forall j \in \mathbb{J}.$ Again Proposition \ref{BANACHSPACEOVFFUNDAMENTALLEMMA} gives $ \widehat{\Phi}_j=\widehat{\theta}_{\Phi}L_j=L_{\Psi, \Phi}'\widehat{\theta}_\Psi L_j=L_{\Psi, \Phi}\widehat{\Psi}_j, \forall j\in \mathbb{J}.$ Hence (i) holds. For (iii) $\Rightarrow $ (ii), $\theta_B=\widehat{P}_{B,\Phi}\theta_B=\widehat{P}_{A,\Psi}\theta_B=\theta_A(\widehat{S}_{A,\Psi}^{-1}\widehat{\theta}_{\Psi}\theta_B) $, $ \widehat{\theta}_\Phi=\widehat{\theta}_\Phi\widehat{P}_{B,\Phi}=\widehat{\theta}_\Phi\widehat{P}_{A,\Psi}=(\widehat{\theta}_\Phi\theta_A\widehat{S}_{A,\Psi}^{-1})\widehat{\theta}_\Psi$. To show $\widehat{S}_{A,\Psi}^{-1}\widehat{\theta}_{\Psi}\theta_B $, and $\widehat{\theta}_\Phi\theta_A\widehat{S}_{A,\Psi}^{-1} $ are invertible: $(\widehat{S}_{A,\Psi}^{-1}\widehat{\theta}_{\Psi}\theta_B)(\widehat{S}_{B,\Phi}^{-1}\widehat{\theta}_{\Phi}\theta_A)=\widehat{S}_{A,\Psi}^{-1}\widehat{\theta}_{\Psi}\widehat{P}_{B,\Phi}\theta_A=\widehat{S}_{A,\Psi}^{-1}\widehat{\theta}_{\Psi}\widehat{P}_{A,\Psi}\theta_A=I_\mathcal{X} $, $(\widehat{S}_{B,\Phi}^{-1}\widehat{\theta}_{\Phi}\theta_A)(\widehat{S}_{A,\Psi}^{-1}\widehat{\theta}_{\Psi}\theta_B)=\widehat{S}_{B,\Phi}^{-1}\widehat{\theta}_{\Phi}\widehat{P}_{A,\Psi}\theta_B=\widehat{S}_{B,\Phi}^{-1}\widehat{\theta}_{\Phi}\widehat{P}_{B,\Phi}\theta_B=I_\mathcal{X} $, and $(\widehat{\theta}_\Phi\theta_A\widehat{S}_{A,\Psi}^{-1})(\widehat{\theta}_\Psi\theta_B\widehat{S}_{B,\Phi}^{-1})=\widehat{\theta}_\Phi\widehat{P}_{A,\Psi}\theta_B\widehat{S}_{B,\Phi}^{-1}=\widehat{\theta}_\Phi\widehat{P}_{B,\Phi}\theta_B\widehat{S}_{B,\Phi}^{-1}=I_\mathcal{X} $, $(\widehat{\theta}_\Psi\theta_B\widehat{S}_{B,\Phi}^{-1})(\widehat{\theta}_\Phi\theta_A\widehat{S}_{A,\Psi}^{-1})=\widehat{\theta}_\Psi\widehat{P}_{B,\Phi}\theta_A\widehat{S}_{A,\Psi}^{-1}=\widehat{\theta}_\Psi\widehat{P}_{A,\Psi}\theta_A\widehat{S}_{A,\Psi}^{-1}=I_\mathcal{X} $.

For the formula, let invertible  $R_{A,B}, L_{\Psi, \Phi} \in \mathcal{B}(\mathcal{X})$ satisfy $\theta_B=\theta_AR_{A,B},\widehat{\theta}_\Phi= L_{\Psi, \Phi}\widehat{\theta}_\Psi $. Hence $\widehat{\theta}_\Psi\theta_B=\widehat{\theta}_\Psi\theta_AR_{A,B}=\widehat{S}_{A,\Psi}R_{A,B},\widehat{\theta}_\Phi\theta_A= L_{\Psi, \Phi}\widehat{\theta}_\Psi\theta_A=L_{\Psi, \Phi}\widehat{S}_{A,\Psi} $, from which the formula follows.
\end{proof}
\begin{corollary}
For a given p-(ovf) $ (\{A_j\}_{j \in \mathbb{J}} , \{\widehat{\Psi}_j\}_{j \in \mathbb{J}})$, the canonical dual of $ (\{A_j\}_{j \in \mathbb{J}} , \{\widehat{\Psi}_j\}_{j \in \mathbb{J}}  )$ is the only dual p-(ovf) that is RL-similar to $ (\{A_j\}_{j \in \mathbb{J}} , \{\widehat{\Psi}_j\}_{j \in \mathbb{J}} )$.
\end{corollary}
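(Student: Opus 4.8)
The plan is to derive the result directly from the uniqueness statement in Theorem \ref{RIGHTLEFTSIMILARITYPOVF}, exactly mirroring the argument for the corresponding corollary after Theorem \ref{RIGHTSIMILARITY}. Suppose $(\{B_j\}_{j\in\mathbb{J}}, \{\widehat{\Phi}_j\}_{j\in\mathbb{J}})$ is simultaneously a dual of $(\{A_j\}_{j\in\mathbb{J}}, \{\widehat{\Psi}_j\}_{j\in\mathbb{J}})$ and RL-similar to it. By the definition of RL-similarity there exist invertible $R_{A,B}, L_{\Psi,\Phi} \in \mathcal{B}(\mathcal{X})$ with $B_j = A_jR_{A,B}$ and $\widehat{\Phi}_j = L_{\Psi,\Phi}\widehat{\Psi}_j$ for all $j \in \mathbb{J}$; since RL-similarity holds, Theorem \ref{RIGHTLEFTSIMILARITYPOVF} applies and forces these operators to be the explicit ones
\[
R_{A,B} = \widehat{S}_{A,\Psi}^{-1}\widehat{\theta}_\Psi\theta_B, \qquad L_{\Psi,\Phi} = \widehat{\theta}_\Phi\theta_A\widehat{S}_{A,\Psi}^{-1}.
\]

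Next I would feed in the duality hypothesis. By definition a dual p-(ovf) satisfies $\widehat{\theta}_\Phi\theta_A = \widehat{\theta}_\Psi\theta_B = I_{\mathcal{X}}$, so substituting these identities into the two formulas above collapses both right-hand sides to $\widehat{S}_{A,\Psi}^{-1}$, giving $R_{A,B} = L_{\Psi,\Phi} = \widehat{S}_{A,\Psi}^{-1}$. Consequently $B_j = A_j\widehat{S}_{A,\Psi}^{-1} = \widetilde{A}_j$ and $\widehat{\Phi}_j = \widehat{S}_{A,\Psi}^{-1}\widehat{\Psi}_j = \widetilde{\Psi}_j$ for every $j$, which is precisely the canonical dual. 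This establishes that the canonical dual is the \emph{only} candidate.

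To complete the equivalence I would also check that the canonical dual genuinely is both a dual and RL-similar, so that the statement is non-vacuous. RL-similarity is immediate: $\widetilde{A}_j = A_j\widehat{S}_{A,\Psi}^{-1}$ and $\widetilde{\Psi}_j = \widehat{S}_{A,\Psi}^{-1}\widehat{\Psi}_j$ exhibit the invertible operators $R_{A,B} = L_{\Psi,\Phi} = \widehat{S}_{A,\Psi}^{-1}$. For duality I would use the relations $\theta_{\widetilde{A}} = \theta_A\widehat{S}_{A,\Psi}^{-1}$ and $\widehat{\theta}_{\widetilde{\Psi}} = \widehat{S}_{A,\Psi}^{-1}\widehat{\theta}_\Psi$ together with $\widehat{\theta}_\Psi\theta_A = \widehat{S}_{A,\Psi}$ (Proposition \ref{BANACHSPACEOVFFUNDAMENTALLEMMA}) to compute $\widehat{\theta}_{\widetilde{\Psi}}\theta_A = \widehat{S}_{A,\Psi}^{-1}\widehat{S}_{A,\Psi} = I_{\mathcal{X}}$ and $\widehat{\theta}_\Psi\theta_{\widetilde{A}} = \widehat{S}_{A,\Psi}\widehat{S}_{A,\Psi}^{-1} = I_{\mathcal{X}}$.

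There is no real obstacle here, as the heavy lifting was already carried out in Theorem \ref{RIGHTLEFTSIMILARITYPOVF}; the only point requiring a moment of care is the direction of the products in the two uniqueness formulas, since in the Banach-space p-framework the analysis operator $\theta_A$ and synthesis operator $\widehat{\theta}_\Psi$ are genuinely distinct (not adjoints of one another), so one must keep $R_{A,B}$ acting on the right of $A_j$ and $L_{\Psi,\Phi}$ acting on the left of $\widehat{\Psi}_j$ and verify that both reduce to $\widehat{S}_{A,\Psi}^{-1}$ rather than prematurely identifying them.
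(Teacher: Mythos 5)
Your proposal is correct and follows essentially the same route as the paper: invoke the uniqueness formulas $R_{A,B}=\widehat{S}_{A,\Psi}^{-1}\widehat{\theta}_\Psi\theta_B$ and $L_{\Psi,\Phi}=\widehat{\theta}_\Phi\theta_A\widehat{S}_{A,\Psi}^{-1}$ from Theorem \ref{RIGHTLEFTSIMILARITYPOVF}, then use the duality conditions $\widehat{\theta}_\Phi\theta_A=\widehat{\theta}_\Psi\theta_B=I_{\mathcal{X}}$ to collapse both to $\widehat{S}_{A,\Psi}^{-1}$. Your extra paragraph verifying that the canonical dual is indeed a dual and RL-similar is a harmless addition the paper leaves implicit.
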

\begin{proof}
Whenever $ (\{B_j\}_{j \in \mathbb{J}} , \{\widehat{\Phi}_j\}_{j \in \mathbb{J}} )$ is dual of $ (\{A_j\}_{j \in \mathbb{J}} , \{\widehat{\Psi}_j\}_{j \in \mathbb{J}})$ as well as RL-similar to $ (\{A_j\}_{j \in \mathbb{J}} , \{\widehat{\Psi}_j\}_{j \in \mathbb{J}} )$, we have $ \widehat{\theta}_\Psi\theta_B=I_\mathcal{X}=\widehat{\theta}_\Phi\theta_A$ and  there exist invertible $ R_{A,B},L_{\Psi,\Phi}\in \mathcal{B}(\mathcal{X})$ such that  $B_j=A_jR_{A,B} , \widehat{\Phi}_j=L_{\Psi, \Phi}\widehat{\Psi}_j ,  \forall j \in \mathbb{J} $. Theorem \ref{RIGHTLEFTSIMILARITYPOVF} tells  $R_{A,B}=\widehat{S}_{A,\Psi}^{-1}\widehat{\theta}_\Psi\theta_B$, $L_{\Psi, \Phi}=\widehat{\theta}_\Phi\theta_A\widehat{S}_{A,\Psi}^{-1}.$ But then $R_{A,B}=\widehat{S}_{A,\Psi}^{-1}I_\mathcal{X}=\widehat{S}_{A,\Psi}^{-1}, L_{\Psi, \Phi}=I_\mathcal{X}\widehat{S}_{A,\Psi}^{-1}=\widehat{S}_{A,\Psi}^{-1}.$ So $ (\{B_j=A_j\widehat{S}_{A,\Psi}^{-1}\}_{j \in \mathbb{J}} , \{\widehat{\Phi}_j=\widehat{S}_{A,\Psi}^{-1}\widehat{\Psi}_j\}_{j \in \mathbb{J}} )$ is canonical dual of $ (\{A_j\}_{j \in \mathbb{J}} , \{\widehat{\Psi}_j\}_{j \in \mathbb{J}})$.
\end{proof}
\begin{corollary}
Two RL-similar p-operator-valued frames cannot be orthogonal.
\end{corollary}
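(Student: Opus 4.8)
This corollary is the p-operator-valued analogue of the two earlier "similar frames cannot be orthogonal" statements, and I would prove it by exactly the same contradiction argument, now driven by the RL-similarity characterization in Theorem \ref{RIGHTLEFTSIMILARITYPOVF} and the identity $\widehat{S}_{A,\Psi}=\widehat{\theta}_\Psi\theta_A$ from Proposition \ref{BANACHSPACEOVFFUNDAMENTALLEMMA}. The idea is that RL-similarity forces the "cross" operator $\widehat{\theta}_\Psi\theta_B$ to be a product of invertible operators, while orthogonality demands that same operator vanish; these are incompatible.

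First I would suppose, toward a contradiction, that $(\{B_j\}_{j\in\mathbb{J}},\{\widehat{\Phi}_j\}_{j\in\mathbb{J}})$ is both RL-similar to and orthogonal to $(\{A_j\}_{j\in\mathbb{J}},\{\widehat{\Psi}_j\}_{j\in\mathbb{J}})$. By the definition of RL-similarity there are invertible $R_{A,B},L_{\Psi,\Phi}\in\mathcal{B}(\mathcal{X})$ with $B_j=A_jR_{A,B}$ and $\widehat{\Phi}_j=L_{\Psi,\Phi}\widehat{\Psi}_j$ for all $j\in\mathbb{J}$. Invoking the implication $\operatorname{(i)}\Rightarrow\operatorname{(ii)}$ of Theorem \ref{RIGHTLEFTSIMILARITYPOVF} (equivalently, a direct computation $\theta_B=\sum_{j\in\mathbb{J}}L_jB_j=\sum_{j\in\mathbb{J}}L_jA_jR_{A,B}=\theta_AR_{A,B}$ as in Lemma \ref{P-OVFLEMMA}) I obtain $\theta_B=\theta_AR_{A,B}$. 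Then, using Proposition \ref{BANACHSPACEOVFFUNDAMENTALLEMMA}$\operatorname{(i)}$,
\[
\widehat{\theta}_\Psi\theta_B=\widehat{\theta}_\Psi\theta_AR_{A,B}=\widehat{S}_{A,\Psi}R_{A,B}.
\]

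The crucial observation is that $\widehat{S}_{A,\Psi}$ is invertible: by Definition \ref{HM1} the resolvent of the $p$-(ovf) operator $\widehat{S}_{A,\Psi}$ contains $(-\infty,0]$, so in particular $0$ lies in its resolvent set, making $\widehat{S}_{A,\Psi}$ a bounded invertible operator on $\mathcal{X}$. Since $R_{A,B}$ is invertible as well, the product $\widehat{S}_{A,\Psi}R_{A,B}$ is invertible and therefore nonzero. This contradicts the orthogonality requirement $\widehat{\theta}_\Psi\theta_B=0$, which completes the proof. I do not anticipate any genuine obstacle here; the only point requiring care is to record explicitly why $\widehat{S}_{A,\Psi}$ is invertible (it is exactly the spectral condition built into the definition of a $p$-(ovf)), and one could equally well run the argument on the other cross term via $\widehat{\theta}_\Phi\theta_A=L_{\Psi,\Phi}\widehat{\theta}_\Psi\theta_A=L_{\Psi,\Phi}\widehat{S}_{A,\Psi}$, again a product of invertibles and hence nonzero.
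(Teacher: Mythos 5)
Your proof is correct and follows the same route as the paper's: use the RL-similarity to write $\theta_B=\theta_AR_{A,B}$, compute $\widehat{\theta}_\Psi\theta_B=\widehat{S}_{A,\Psi}R_{A,B}$, and note this is a product of invertibles hence nonzero, contradicting orthogonality. Your explicit remark that invertibility of $\widehat{S}_{A,\Psi}$ comes from $0$ lying in its resolvent set is a welcome clarification that the paper leaves implicit.
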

\begin{proof}
Let $ (\{B_j\}_{j \in \mathbb{J}} , \{\widehat{\Phi}_j\}_{j \in \mathbb{J}} )$ be RL-similar to $ (\{A_j\}_{j \in \mathbb{J}} , \{\widehat{\Psi}_j\}_{j \in \mathbb{J}})$. Choose invertible $ R_{A,B},L_{\Psi,\Phi}\in \mathcal{B}(\mathcal{X})$ such that  $B_j=A_jR_{A,B} , \widehat{\Phi}_j=L_{\Psi, \Phi}\widehat{\Psi}_j ,  \forall j \in \mathbb{J} $. Using 	Theorem \ref{RIGHTLEFTSIMILARITYPOVF} we get $\widehat{\theta}_\Psi\theta_B=\widehat{\theta}_\Psi\theta_AR_{A,B}=\widehat{S}_{A,\Psi}R_{A,B} \neq 0.$ 
\end{proof}

\begin{remark}
For every p-(ovf) $(\{A_j\}_{j \in \mathbb{J}}, \{\widehat{\Psi}_j\}_{j \in \mathbb{J}}),$ each  of `p-operator-valued frames'  $( \{A_j\widehat{S}_{A, \Psi}^{-1}\}_{j \in \mathbb{J}}, \{\widehat{\Psi}_j\}_{j \in \mathbb{J}}),$   $( \{A_j\widehat{S}_{A, \Psi}^{-1/2}\}_{j \in \mathbb{J}}, \{\widehat{S}_{A,\Psi}^{-1/2}\widehat{\Psi}_j\}_{j \in \mathbb{J}}),$ and  $ (\{A_j \}_{j \in \mathbb{J}}, \{\widehat{S}_{A,\Psi}^{-1}\widehat{\Psi}_j\}_{j \in \mathbb{J}})$ is a Parseval p-(ovf) which is RL-similar to  $ (\{A_j\}_{j \in \mathbb{J}} , \{\widehat{\Psi}_j\}_{j \in \mathbb{J}}).$  Thus every p-(ovf) is RL-similar to  Parseval  p-operator-valued frames.
\end{remark}
\begin{proposition}
Let $ \{A_j\}_{j\in \mathbb{J}}\in \widehat{\mathscr{F}}_{\Psi,p}$, $ \{B_j\}_{j\in \mathbb{J}}\in \widehat{\mathscr{F}}_{\Phi,p}$ and   $B_j=L_{A,B}A_j , \widehat{\Phi}_j=\widehat{\Psi}_jR_{\Psi, \Phi},  \forall j \in \mathbb{J}$, for some invertible $ L_{A,B} ,R_{\Psi, \Phi} \in \mathcal{B}(\mathcal{X}_0).$ Then 
\begin{enumerate}[\upshape(i)]
\item $ \theta_B=(I_{\ell^p(\mathbb{J})}\otimes L_{A,B})\theta_A , \widehat{\theta}_\Phi=\widehat{\theta}_\Psi(I_{\ell^p(\mathbb{J})}\otimes R_{\Psi,\Phi}), \widehat{S}_{B,\Phi}=\widehat{\theta}_\Psi(I_{\ell^p(\mathbb{J})}\otimes R_{\Psi,\Phi}L_{A,B})\theta_A,  \widehat{P}_{B,\Phi}=(I_{\ell^p(\mathbb{J})}\otimes L_{A,B})\theta_A(\widehat{\theta}_\Psi(I_{\ell^p(\mathbb{J})}\otimes R_{\Psi,\Phi}L_{A,B})\theta_A)^{-1}\widehat{\theta}_\Psi(I_{\ell^p(\mathbb{J})}\otimes R_{\Psi,\Phi}).$ 
\item Assuming $( \{A_j\}_{j \in \mathbb{J}}, \{\widehat{\Psi}_j\}_{j \in \mathbb{J}})$ is a Parseval p-(ovf), then $( \{B_j\}_{j \in \mathbb{J}}, \{\widehat{\Phi}_j\}_{j \in \mathbb{J}})$ is a Parseval p-(ovf) if and only if $ \widehat{P}_{A, \Psi}(I_{\ell^p(\mathbb{J})}\otimes R_{\Psi,\Phi}L_{A,B})\widehat{P}_{A,\Psi}=\widehat{P}_{A,\Psi}$ if and only if $ \widehat{P}_{B,\Phi}=(I_{\ell^p(\mathbb{J})}\otimes L_{A,B})\widehat{P}_{A,\Psi}(I_{\ell^p(\mathbb{J})}\otimes R_{\Psi, \Phi}).$
\end{enumerate}	
\end{proposition}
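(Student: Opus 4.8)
The plan is to reduce everything to two commutation identities between the coordinate maps of Definition \ref{HM1} and scalar operators on $\mathcal{X}_0$. First I would record that for every $R\in\mathcal{B}(\mathcal{X}_0)$ and every $j\in\mathbb{J}$ one has $L_jR=(I_{\ell^p(\mathbb{J})}\otimes R)L_j$ and $\widehat{L}_j(I_{\ell^p(\mathbb{J})}\otimes R)=R\widehat{L}_j$; both are verified on elementary tensors $\{a_k\}_{k\in\mathbb{J}}\otimes x$ straight from the definitions of $L_j$ and $\widehat{L}_j$, then extended by linearity and boundedness. With these, the first two formulas of (i) are immediate: since $B_j=L_{A,B}A_j$, $\theta_B=\sum_{j\in\mathbb{J}}L_jL_{A,B}A_j=(I_{\ell^p(\mathbb{J})}\otimes L_{A,B})\sum_{j\in\mathbb{J}}L_jA_j=(I_{\ell^p(\mathbb{J})}\otimes L_{A,B})\theta_A$, and since $\widehat{\Phi}_j=\widehat{\Psi}_jR_{\Psi,\Phi}$, $\widehat{\theta}_\Phi=\sum_{j\in\mathbb{J}}\widehat{\Psi}_jR_{\Psi,\Phi}\widehat{L}_j=\widehat{\theta}_\Psi(I_{\ell^p(\mathbb{J})}\otimes R_{\Psi,\Phi})$.

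For the remaining two formulas of (i) I would invoke Proposition \ref{BANACHSPACEOVFFUNDAMENTALLEMMA}, which supplies $\widehat{S}_{B,\Phi}=\widehat{\theta}_\Phi\theta_B$ and $\widehat{P}_{B,\Phi}=\theta_B\widehat{S}_{B,\Phi}^{-1}\widehat{\theta}_\Phi$. Substituting the two expressions just obtained and collapsing $(I_{\ell^p(\mathbb{J})}\otimes R_{\Psi,\Phi})(I_{\ell^p(\mathbb{J})}\otimes L_{A,B})=I_{\ell^p(\mathbb{J})}\otimes R_{\Psi,\Phi}L_{A,B}$ gives $\widehat{S}_{B,\Phi}=\widehat{\theta}_\Psi(I_{\ell^p(\mathbb{J})}\otimes R_{\Psi,\Phi}L_{A,B})\theta_A$, and then directly the stated formula for $\widehat{P}_{B,\Phi}$. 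This is pure bookkeeping and runs exactly parallel to the Hilbert-space Proposition \ref{3.3}.

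For (ii) I would use the Parseval hypothesis on $(\{A_j\},\{\widehat{\Psi}_j\})$ in the form $\widehat{S}_{A,\Psi}=I_\mathcal{X}$, so that $\widehat{\theta}_\Psi\theta_A=I_\mathcal{X}$ and $\widehat{P}_{A,\Psi}=\theta_A\widehat{\theta}_\Psi$. By the formula from (i), $(\{B_j\},\{\widehat{\Phi}_j\})$ is Parseval precisely when $\widehat{\theta}_\Psi(I_{\ell^p(\mathbb{J})}\otimes R_{\Psi,\Phi}L_{A,B})\theta_A=I_\mathcal{X}$. For the first equivalence, premultiplying this identity by $\theta_A$ and postmultiplying by $\widehat{\theta}_\Psi$ and using $\widehat{P}_{A,\Psi}=\theta_A\widehat{\theta}_\Psi$ yields $\widehat{P}_{A,\Psi}(I_{\ell^p(\mathbb{J})}\otimes R_{\Psi,\Phi}L_{A,B})\widehat{P}_{A,\Psi}=\widehat{P}_{A,\Psi}$; conversely, recognizing the bracketed middle factor as $\widehat{S}_{B,\Phi}$ and then premultiplying by $\widehat{\theta}_\Psi$ and postmultiplying by $\theta_A$ (using $\widehat{\theta}_\Psi\theta_A=I_\mathcal{X}$) collapses $\theta_A\widehat{S}_{B,\Phi}\widehat{\theta}_\Psi=\theta_A\widehat{\theta}_\Psi$ back to $\widehat{S}_{B,\Phi}=I_\mathcal{X}$. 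For the second equivalence, the forward direction is a substitution: when $\widehat{S}_{B,\Phi}=I_\mathcal{X}$ we have $\widehat{P}_{B,\Phi}=\theta_B\widehat{\theta}_\Phi$, and inserting $\theta_B=(I_{\ell^p(\mathbb{J})}\otimes L_{A,B})\theta_A$, $\widehat{\theta}_\Phi=\widehat{\theta}_\Psi(I_{\ell^p(\mathbb{J})}\otimes R_{\Psi,\Phi})$ with $\widehat{P}_{A,\Psi}=\theta_A\widehat{\theta}_\Psi$ gives $\widehat{P}_{B,\Phi}=(I_{\ell^p(\mathbb{J})}\otimes L_{A,B})\widehat{P}_{A,\Psi}(I_{\ell^p(\mathbb{J})}\otimes R_{\Psi,\Phi})$. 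For the converse I would start from that identity, premultiply by $\widehat{\theta}_\Phi$ and postmultiply by $\theta_B$: the left side telescopes via $\widehat{\theta}_\Phi\theta_B=\widehat{S}_{B,\Phi}$ to $\widehat{S}_{B,\Phi}$, while the right side, after re-expressing $\widehat{\theta}_\Phi$ and $\theta_B$ through $\widehat{\theta}_\Psi$ and $\theta_A$, factors as $\widehat{S}_{B,\Phi}\cdot\widehat{S}_{B,\Phi}$, so that $\widehat{S}_{B,\Phi}=\widehat{S}_{B,\Phi}^2$.

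I do not expect a genuine obstacle: the argument is entirely algebraic and mirrors Proposition \ref{3.3}, the only points demanding care being the absence of adjoints (every synthesis map enters as the hatted operator $\widehat{\theta}$ rather than a star) and the correct placement of the tensor-leg factors $I_{\ell^p(\mathbb{J})}\otimes(\cdot)$. The one substantive ingredient beyond routine computation is the invertibility of $\widehat{S}_{B,\Phi}$ invoked at the very end to pass from $\widehat{S}_{B,\Phi}=\widehat{S}_{B,\Phi}^2$ to $\widehat{S}_{B,\Phi}=I_\mathcal{X}$; this is guaranteed the moment $(\{B_j\},\{\widehat{\Phi}_j\})$ is assumed to be a p-(ovf), since then the resolvent of $\widehat{S}_{B,\Phi}$ contains $(-\infty,0]$ by Definition \ref{HM1}.
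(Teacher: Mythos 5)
Your proof is correct and follows exactly the route the paper intends: the paper's own proof of this proposition consists of the single remark that it is ``similar to the proof of Proposition \ref{3.3}, with hats in place of stars,'' and your argument is precisely that translation, including the commutation identities $L_jR=(I_{\ell^p(\mathbb{J})}\otimes R)L_j$ and $R\widehat{L}_j=\widehat{L}_j(I_{\ell^p(\mathbb{J})}\otimes R)$, the pre/post-multiplication scheme for both equivalences in (ii), and the final passage from $\widehat{S}_{B,\Phi}=\widehat{S}_{B,\Phi}^2$ to $\widehat{S}_{B,\Phi}=I_\mathcal{X}$ via invertibility. No gaps.
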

\begin{proof}
Similar to the proof of Proposition \ref{3.3}. Please note that rather considering ``stars" we have to consider ``hats".
\end{proof}

\section{Sequential version of p-operator-valued frames and p-bases}\label{SVSECTION}

Here we develop abstractly the notion of sequential version of p-frames in Banach spaces. Our definition is different than that  of frames for Banach spaces by Grochenig \cite{GROCHENIG1} as well as by Aldroubi, Sun, and Tang \cite{ALDROUBISUNTANG}.

\begin{definition}
Let  $ \mathcal{X}$ be a Banach space, $ p \in[1, \infty)$. A sequence $\{f_j\}_{j\in \mathbb{J}} $ in $\mathcal{X}^*$ is said to be a p-frame w.r.t. a sequence $\{\tau_j\}_{j\in \mathbb{J}} $ in $\mathcal{X}$  if 
\begin{enumerate}[\upshape(i)]
\item $\theta_f: \mathcal{X} \ni x \mapsto \{f_j(x)\}_{j\in \mathbb{J}} \in \ell^p(\mathbb{J})  $ (analysis operator) and $ \widehat{\theta}_\tau:\ell^p(\mathbb{J}) \ni \{a_j\}_{j\in \mathbb{J}} \mapsto  \sum_{j\in \mathbb{J}}a_j\tau_j\in \mathcal{X} $ (synthesis operator) are well-defined bounded  operators, 
\item  $  \widehat{S}_{f,\tau}: \mathcal{X} \ni x \mapsto \sum_{j\in \mathbb{J}}f_j(x)\tau_j \in \mathcal{X}$ (p-frame operator) is a bounded operator whose resolvent contains $(-\infty, 0] $. 
\end{enumerate}
In this situation,  we write $ (\{f_j\}_{j\in \mathbb{J}}, \{\tau_j\}_{j\in \mathbb{J}})$ is a p-frame for $\mathcal{X}.$ 

Constants $ a, b >0$ satisfying 
$$a^{1/p}\|x \|_\mathcal{X}\leq  \|\widehat{S}_{f,\tau}^{1/p}x\|_\mathcal{X} \leq b^{1/p} \|x\|_\mathcal{X} , ~\forall x \in \mathcal{X}$$ are called as lower and upper p-frame bounds, in order. Supremum of the set of all lower p-frame bounds is called the optimal lower frame  bound and similarly for the optimal upper frame bound. Whenever $\widehat{S}_{f,\tau}=\alpha I_\mathcal{X}$, for some $ \alpha \in \mathbb{K}$, we say p-frame is tight and whenever $\widehat{S}_{f,\tau}=I_\mathcal{X}$ we call p-frame as Parseval p-frame.
\end{definition}

\begin{definition}\label{ORTHOGONALITYINBANACH}
A sequence 	$ \{x_j\}_{j \in \mathbb{J}}$ in $ \mathcal{X}$ is said to be  p-orthogonal for  $ \mathcal{X}$ if   
 $ \mathbb{L}\subseteq \mathbb{J}$ and  $ \{c_j\}_{j \in \mathbb{L}}$ is a sequence of scalars such that $ \sum_{j\in \mathbb{L}}c_jx_j\in \mathcal{X}$, then  $ \|\sum_{j\in \mathbb{L}}c_jx_j\|^p=\sum_{j\in \mathbb{L}}\|c_jx_j\|^p$.
\end{definition}

\begin{definition}\label{ORTHONORMALITYINBANACH}
A sequence 	$ \{x_j\}_{j \in \mathbb{J}}$ in $ \mathcal{X}$ is said to be  p-orthonormal for  $ \mathcal{X}$ if the following conditions hold.
\begin{enumerate}[\upshape(i)]
\item If $ \mathbb{L}\subseteq \mathbb{J}$ and  $ \{c_j\}_{j \in \mathbb{L}}$ is a sequence of scalars such that $ \sum_{j\in \mathbb{L}}c_jx_j\in \mathcal{X}$, then  $ \|\sum_{j\in \mathbb{L}}c_jx_j\|^p=\sum_{j\in \mathbb{L}}|c_j|^p$.
\item If $ \{a_j\}_{j \in \mathbb{J}} \in \ell^p(\mathbb{J})$, then $ \sum_{j\in \mathbb{J}}a_jx_j\in \mathcal{X}$ and $\|\sum_{j\in \mathbb{J}}a_jx_j\|^p=\sum_{j\in \mathbb{J}}|a_j|^p$.
\end{enumerate}
\begin{example}\label{PORTHOGONALASWELLASPORTHONORMALEXAMPLE}
For $1\leq p<\infty$, the standard Schauder basis $ \{e_j\}_{j \in \mathbb{J}}$ for $ \ell^p(\mathbb{J})$ is p-orthogonal as well as p-orthonormal for $ \ell^p(\mathbb{J})$, for $1\leq p<\infty$.
\end{example}
\end{definition}
 Note that every  orthogonal set (resp. orthonormal set)  in  every  Hilbert space satisfies Definition \ref{ORTHOGONALITYINBANACH} (resp. Definition \ref{ORTHONORMALITYINBANACH}).
\begin{theorem}\label{2-OGSIMPLIESOGS}
A 2-orthogonal sequence  for a Hilbert space $\mathcal{H}$ is an orthogonal sequence for $\mathcal{H}$.
\end{theorem}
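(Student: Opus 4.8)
The plan is to exploit the case $p=2$ of Definition \ref{ORTHOGONALITYINBANACH} applied to two-element index sets, and to compare the resulting norm identity with the expansion of the norm coming from the inner product structure of $\mathcal{H}$. First I would fix two distinct indices $j,k \in \mathbb{J}$ and take $\mathbb{L}=\{j,k\}$. For any scalars $c_j,c_k$, the finite sum $c_jx_j+c_kx_k$ automatically lies in $\mathcal{H}$, so the hypothesis of $2$-orthogonality applies and yields
$$
\|c_jx_j+c_kx_k\|^2=\|c_jx_j\|^2+\|c_kx_k\|^2=|c_j|^2\|x_j\|^2+|c_k|^2\|x_k\|^2.
$$

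On the other hand, expanding the left-hand side directly in $\mathcal{H}$ gives
$$
\|c_jx_j+c_kx_k\|^2=|c_j|^2\|x_j\|^2+|c_k|^2\|x_k\|^2+2\operatorname{Re}\big(c_j\overline{c_k}\langle x_j,x_k\rangle\big).
$$
Subtracting the two expressions forces $\operatorname{Re}\big(c_j\overline{c_k}\langle x_j,x_k\rangle\big)=0$ for every choice of scalars $c_j,c_k$. The next step is to specialize the scalars so as to extract the full inner product. Choosing $c_j=c_k=1$ gives $\operatorname{Re}\langle x_j,x_k\rangle=0$. In the complex case I would then choose $c_j=1$, $c_k=i$, so that $c_j\overline{c_k}=-i$ and the condition becomes $\operatorname{Re}\big(-i\langle x_j,x_k\rangle\big)=\operatorname{Im}\langle x_j,x_k\rangle=0$; combining the two yields $\langle x_j,x_k\rangle=0$. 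In the real case the first choice already suffices, since inner products are real-valued. Since $j\neq k$ were arbitrary, $\{x_j\}_{j\in\mathbb{J}}$ is orthogonal for $\mathcal{H}$.

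The argument is essentially a routine polarization, so there is no deep obstacle; the only point requiring care is the complex case, where a single test (the symmetric choice $c_j=c_k=1$) kills only the real part of $\langle x_j,x_k\rangle$, and one must additionally feed in an imaginary scalar to annihilate the imaginary part. A minor bookkeeping check worth stating explicitly is that the relevant sums are finite, so the membership condition ``$\sum_{j\in\mathbb{L}}c_jx_j\in\mathcal{X}$'' in Definition \ref{ORTHOGONALITYINBANACH} is vacuously satisfied and the $2$-orthogonality identity is genuinely available for all scalar coefficients.
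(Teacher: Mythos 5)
Your proposal is correct and is essentially the same argument as the paper's: the paper evaluates the four norms in the polarization identity ($\|x_j\pm x_k\|^2$, $\|x_j\pm ix_k\|^2$) using the 2-orthogonality identity, which is exactly your computation with the coefficient choices $(1,\pm 1)$ and $(1,\pm i)$. Your observation that finiteness of the index set makes the membership hypothesis in Definition \ref{ORTHOGONALITYINBANACH} automatic is correct and is implicitly used in the paper as well.
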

\begin{proof}
Let $ \{x_j\}_{j \in \mathbb{J}}$ be  2-orthogonal for  $\mathcal{H}$.  We have to show  $ \langle x_j, x_k \rangle =0 ,\forall j\neq k,\forall j, k \in \mathbb{J}$. Let $j\neq k$. Case (i): $\mathcal{H}$ is over $ \mathbb{C}$. We use  polarization identity to get, 
\begin{align*}
\langle x_j, x_k \rangle &=\frac{\|x_j+x_k\|^2-\|x_j-x_k\|^2+i\|x_j+ix_k\|^2-i\|x_j-ix_k\|^2}{4}\\	&=\frac{(\|x_j\|^2+\|x_k\|^2)-(\|x_j\|^2+\|-x_k\|^2)+i(\|x_j\|^2+\|ix_k\|^2)-i(\|x_j\|^2+\|-ix_k\|^2)}{4}=0.
\end{align*}
 Case (ii): $\mathcal{H}$ is over $ \mathbb{R}$. We again use  polarization identity to get, 
\begin{align*}
\langle x_j, x_k \rangle =\frac{\|x_j+x_k\|^2-\|x_j-x_k\|^2}{4}	=\frac{(\|x_j\|^2+\|x_k\|^2)-(\|x_j\|^2+\|-x_k\|^2)}{4}=0.
\end{align*}
\end{proof}

\begin{proposition}
If $ \{x_j\}_{j \in \mathbb{J}} $ is  p-orthogonal   for  $ \mathcal{X}$ such that $ x_j \neq 0, \forall j \in \mathbb{J} $, then $ \left\{\frac{x_j}{\|x_j\|}\right\}_{j \in \mathbb{J}} $ is  p-orthonormal   for  $ \mathcal{X}$.	
\end{proposition}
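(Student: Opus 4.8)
The plan is to set $y_j \coloneqq x_j/\|x_j\|$, which is well defined since each $x_j \neq 0$ and satisfies $\|y_j\| = 1$, and then to verify the two conditions of Definition \ref{ORTHONORMALITYINBANACH} for $\{y_j\}_{j\in\mathbb{J}}$ in turn. The whole argument reduces to the single p-orthogonality hypothesis on $\{x_j\}_{j\in\mathbb{J}}$ via the rescaling $d_j \coloneqq c_j/\|x_j\|$, together with the convergence convention for sums over $\mathbb{J}$ fixed in the introduction.

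First I would dispatch condition \text{\upshape(i)}. Given $\mathbb{L}\subseteq\mathbb{J}$ and scalars $\{c_j\}_{j\in\mathbb{L}}$ with $\sum_{j\in\mathbb{L}}c_jy_j\in\mathcal{X}$, setting $d_j = c_j/\|x_j\|$ gives $\sum_{j\in\mathbb{L}}c_jy_j = \sum_{j\in\mathbb{L}}d_jx_j \in \mathcal{X}$, so p-orthogonality of $\{x_j\}_{j\in\mathbb{J}}$ applies and yields $\|\sum_{j\in\mathbb{L}}c_jy_j\|^p = \sum_{j\in\mathbb{L}}\|d_jx_j\|^p = \sum_{j\in\mathbb{L}}|d_j|^p\|x_j\|^p = \sum_{j\in\mathbb{L}}|c_j|^p$. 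This is exactly \text{\upshape(i)}, and in particular it holds for every \emph{finite} $\mathbb{L}$ with no convergence caveat, a fact I will reuse below.

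The main work, and the step I expect to be the genuine obstacle, is the convergence assertion inside condition \text{\upshape(ii)}: for $\{a_j\}_{j\in\mathbb{J}}\in\ell^p(\mathbb{J})$ one must first show that the net of finite partial sums of $\sum_{j\in\mathbb{J}}a_jy_j$ converges in $\mathcal{X}$. Here I would argue that this net is Cauchy. For finite $\mathbb{S}_1,\mathbb{S}_2$ both containing a finite set $\mathbb{S}_0$, the difference of the corresponding partial sums is supported on $(\mathbb{S}_1\setminus\mathbb{S}_2)\cup(\mathbb{S}_2\setminus\mathbb{S}_1)$, each piece disjoint from $\mathbb{S}_0$; applying the finite case of \text{\upshape(i)} to each piece, its norm raised to the $p$-th power equals the corresponding tail $\sum_{j}|a_j|^p$. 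Since $\{a_j\}_{j\in\mathbb{J}}\in\ell^p(\mathbb{J})$ the net $\{\sum_{j\in\mathbb{S}}|a_j|^p\}$ is Cauchy, so $\mathbb{S}_0$ may be chosen to make both tails arbitrarily small; a triangle inequality then controls the difference, showing the net is Cauchy. Completeness of $\mathcal{X}$ gives $\sum_{j\in\mathbb{J}}a_jy_j\in\mathcal{X}$.

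Finally, with convergence in hand the norm identity in \text{\upshape(ii)} is immediate: condition \text{\upshape(i)} applied with $\mathbb{L}=\mathbb{J}$ (equivalently, p-orthogonality of $\{x_j\}_{j\in\mathbb{J}}$ with scalars $a_j/\|x_j\|$, now legitimate since the series converges) gives $\|\sum_{j\in\mathbb{J}}a_jy_j\|^p = \sum_{j\in\mathbb{J}}|a_j|^p$. Together with the first paragraph this verifies both conditions, so $\{y_j\}_{j\in\mathbb{J}}$ is p-orthonormal for $\mathcal{X}$.
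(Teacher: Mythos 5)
Your proof is correct and follows essentially the same route as the paper's: rescale the coefficients by $1/\|x_j\|$ to reduce both conditions to the p-orthogonality of $\{x_j\}_{j\in\mathbb{J}}$, verify the norm identity on finite subsets, and invoke completeness of $\mathcal{X}$ for the convergence in condition \text{\upshape(ii)}. The only difference is that you spell out the Cauchy-net argument that the paper compresses into ``from the existence of $\sum_{j\in\mathbb{J}}|a_j|^p$ we get the existence of $\sum_{j\in\mathbb{J}}a_jx_j/\|x_j\|$,'' which is a welcome clarification but not a different method.
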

\begin{proof}
If  $ \mathbb{L}\subseteq \mathbb{J}$ and  $ \{c_j\}_{j \in \mathbb{L}}$ is a sequence of scalars such that $ \sum_{j\in \mathbb{L}}c_j\frac{x_j}{\|x_j\|}\in \mathcal{X}$, then we see $ \|\sum_{j\in \mathbb{L}}c_j\frac{x_j}{\|x_j\|}\|^p=\sum_{j\in \mathbb{L}}\|c_j\frac{x_j}{\|x_j\|}\|^p=\sum_{j\in \mathbb{L}}|c_j|^p$, and if  $ \{a_j\}_{j \in \mathbb{J}} \in \ell^p(\mathbb{J})$, then  
$$\sum_{j\in \mathbb{S}}|a_j|^p=\sum_{j\in \mathbb{S}}\left\|a_j\frac{x_j}{\|x_j\|}\right\|^p=\sum_{j\in \mathbb{S}}\left\|\frac{a_j}{\|x_j\|}x_j\right\|^p= \left\|\sum_{j\in \mathbb{S}}\frac{a_j}{\|x_j\|}x_j\right\|^p=\left\|\sum_{j\in \mathbb{S}}a_j\frac{x_j}{\|x_j\|}\right\|^p$$
for every finite subset $\mathbb{S} $ of $\mathbb{J}$. From the existence of   $\sum_{j\in \mathbb{J}}|a_j|^p $ we get the existence of  $\sum_{j\in \mathbb{J}}a_j\frac{x_j}{\|x_j\|} $  and $\sum_{j\in \mathbb{J}}|a_j|^p=\|\sum_{j\in \mathbb{J}}a_j\frac{x_j}{\|x_j\|}\|^p$.

\end{proof}
\begin{theorem}
\begin{enumerate}[\upshape(i)]
\item If $ \{x_n\}_{n=1}^m $ is  p-orthogonal  for  $ \mathcal{X}$, then $\|\sum_{n=1}^{m}x_n\|^p=\sum_{n=1}^{m}\| x_n\|^p $.
\item If $ \{x_j\}_{j \in \mathbb{J}} $ is  p-orthogonal   for  $ \mathcal{X}$ such that $ x_j \neq 0, \forall j \in \mathbb{J} $, then $ \{x_j\}_{j \in \mathbb{J}} $ is linearly independent. In particular, if $ \{x_j\}_{j \in \mathbb{J}} $ is  p-orthonormal, then it is linearly independent.
\end{enumerate}
\end{theorem}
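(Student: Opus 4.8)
The plan is to read both parts off directly from Definition~\ref{ORTHOGONALITYINBANACH}, with the only genuine work being the reduction of the ``in particular'' clause to the p-orthogonal case.

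For (i), I would apply Definition~\ref{ORTHOGONALITYINBANACH} with $\mathbb{L}=\{1,\dots,m\}$ and the constant scalar sequence $c_n=1$. Since $\mathbb{L}$ is finite, $\sum_{n=1}^{m}x_n$ automatically lies in $\mathcal{X}$, so the defining identity applies and gives $\|\sum_{n=1}^{m}x_n\|^p=\sum_{n=1}^{m}\|c_n x_n\|^p=\sum_{n=1}^{m}\|x_n\|^p$, the last equality using $|c_n|=1$. This is immediate.

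For the linear independence in (ii), I would use that linear independence is a statement about finite combinations, so fix a finite subset $\mathbb{S}\subseteq\mathbb{J}$ and scalars $\{c_j\}_{j\in\mathbb{S}}$ with $\sum_{j\in\mathbb{S}}c_j x_j=0$, and show every $c_j=0$. Applying Definition~\ref{ORTHOGONALITYINBANACH} with $\mathbb{L}=\mathbb{S}$ yields
\[
0=\Big\|\sum_{j\in\mathbb{S}}c_j x_j\Big\|^p=\sum_{j\in\mathbb{S}}\|c_j x_j\|^p=\sum_{j\in\mathbb{S}}|c_j|^p\|x_j\|^p.
\]
Each summand is nonnegative, so each vanishes; since $x_j\neq 0$ forces $\|x_j\|\neq 0$, I conclude $c_j=0$ for all $j\in\mathbb{S}$.

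For the concluding assertion, the step I expect to require the most care is verifying that p-orthonormality (Definition~\ref{ORTHONORMALITYINBANACH}) implies the hypotheses of the first half. First, testing the p-orthonormal identity on the single-element sum $\mathbb{L}=\{j\}$, $c_j=1$, forces $\|x_j\|^p=1$, hence $\|x_j\|=1$ and in particular $x_j\neq 0$ for every $j$. Then for any admissible $\mathbb{L}$ and $\{c_j\}$ one has $\sum_{j\in\mathbb{L}}|c_j|^p=\sum_{j\in\mathbb{L}}|c_j|^p\|x_j\|^p=\sum_{j\in\mathbb{L}}\|c_j x_j\|^p$, so the p-orthonormal identity coincides with the p-orthogonal one; thus a p-orthonormal sequence is p-orthogonal with nonzero terms, and the first half of (ii) applies. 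There is no substantial obstacle beyond this routine bookkeeping.
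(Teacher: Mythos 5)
Your proof is correct, but on the only nontrivial point — linear independence in (ii) — you take a genuinely different and shorter route than the paper. You apply the identity of Definition~\ref{ORTHOGONALITYINBANACH} directly to the vanishing combination: from $\sum_{j\in\mathbb{S}}c_jx_j=0$ you get $0=\sum_{j\in\mathbb{S}}|c_j|^p\|x_j\|^p$, and nonnegativity of each summand together with $x_j\neq 0$ kills every $c_j$ at once, uniformly over $\mathbb{R}$ and $\mathbb{C}$. The paper instead isolates one coefficient $c_{j_l}$ at a time via a polarization-style device: it starts from $\|0+x_{j_l}\|^p-\|0-x_{j_l}\|^p+i\|0+ix_{j_l}\|^p-i\|0-ix_{j_l}\|^p=0$, substitutes $0=\sum_k c_{j_k}x_{j_k}$, expands each of the four shifted combinations (coefficients $c_{j_l}\pm1$ and $c_{j_l}\pm i$ in the $l$-th slot) using p-orthogonality, and extracts $|c_{j_l}+1|=|c_{j_l}-1|$ and $|c_{j_l}+i|=|c_{j_l}-i|$, forcing $c_{j_l}=0$; the real case is then treated separately. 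This mirrors the shift trick that is genuinely needed in Theorem~\ref{2-OGSIMPLIESOGS}, but for linear independence it buys nothing over your one-line argument, which needs only homogeneity of the norm and nonnegativity. Your reduction of the ``in particular'' clause (testing the p-orthonormal identity on a singleton to get $\|x_j\|=1$, then observing $\sum_{j\in\mathbb{L}}|c_j|^p=\sum_{j\in\mathbb{L}}\|c_jx_j\|^p$) is also sound and is consistent with Proposition~\ref{ellp}(i); the paper leaves this step implicit. Part (i) is handled identically in spirit — the paper simply declares it trivial.
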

\begin{proof}
Only (ii) is nontrivial, we prove it. Case (i): $ \mathcal{X}$ is over $ \mathbb{C}$. Let $ c_{j_1},..., c_{j_n} \in \mathbb{C}$ be such that $\sum_{k=1}^nc_{j_k}x_{j_k}=0 $, where $ j_1,...,j_n \in \mathbb{J}$. Fix $ 1\leq l\leq n$. Now 
\begin{align*}
&\|0+x_{j_l}\|^p-\|0-x_{j_l}\|^p+i\|0+ix_{j_l}\|^p-i\|0-ix_{j_l}\|^p=0\\
&\Rightarrow \left\|\sum_{k=1}^nc_{j_k}x_{j_k}+x_{j_l}\right\|^p-\left\|\sum_{k=1}^nc_{j_k}x_{j_k}-x_{j_l}\right\|^p+i\left\|\sum_{k=1}^nc_{j_k}x_{j_k}+ix_{j_l}\right\|^p-i\left\|\sum_{k=1}^nc_{j_k}x_{j_k}-ix_{j_l}\right\|^p=0\\
&\Rightarrow  \left\|\sum_{k=1,k\neq l}^nc_{j_k}x_{j_k}+(c_{j_l}+1)x_{j_l}\right\|^p-\left\|\sum_{k=1,k\neq l}^nc_{j_k}x_{j_k}+(c_{j_l}-1)x_{j_l}\right\|^p\\
&\quad+i\left\|\sum_{k=1,k\neq l}^nc_{j_k}x_{j_k}+(c_{j_l}+i)x_{j_l}\right\|^p-i\left\|\sum_{k=1,k\neq l}^nc_{j_k}x_{j_k}+(c_{j_l}-i)x_{j_l}\right\|^p=0\\
&\Rightarrow \sum_{k=1,k\neq l}^n\|c_{j_k}x_{j_k}\|^p+\|(c_{j_l}+1)x_{j_l}\|^p-\sum_{k=1,k\neq l}^n\|c_{j_k}x_{j_k}\|^p-\|(c_{j_l}-1)x_{j_l}\|^p\\
&\quad+i\sum_{k=1,k\neq l}^n\|c_{j_k}x_{j_k}\|^p+i\|(c_{j_l}+i)x_{j_l}\|^p-i\sum_{k=1,k\neq l}^n\|c_{j_k}x_{j_k}\|^p-i\|(c_{j_l}-i)x_{j_l}\|^p=0
\end{align*}
$\Rightarrow |c_{j_l}+1|=|c_{j_l}-1| ~\text{and}~|c_{j_l}+i|=|c_{j_l}-i|\Rightarrow c_{j_l}+\overline{c}_{j_l}=0 ~\text{and}~c_{j_l}-\overline{c}_{j_l}=0$. Hence $ c_{j_l}=0$. Since $ l$ was arbitrary, we must have $ c_{j_1}=\cdots=c_{j_n}=0$. Case (ii): Consider $\|0+x_{j_l}\|^p-\|0-x_{j_l}\|^p=0 $ and proceed.
\end{proof}
\begin{proposition}\label{ellp}
Let $ \{x_j\}_{j \in \mathbb{J}}$  be  p-orthonormal  for $ \mathcal{X}$. Then 
\begin{enumerate}[\upshape(i)]
\item $ \|x_j\|=1, \forall j \in \mathbb{J}$. In particular, if $ \{x_n\}_{n=1}^m $ is  p-orthonormal  in  $ \mathcal{X}$, then   $\|\sum_{n=1}^{m}x_n\|^p=m$. 
\item If $ x \in  \mathcal{X}$ has an expansion $ x=\sum_{j\in \mathbb{J}}c_jx_j$ for some   scalar sequence $ \{c_j\}_{j \in \mathbb{J}}$, then $ \|x\|^p=\sum_{j\in \mathbb{J}}|c_j|^p$. In particular, $ \{c_j\}_{j \in \mathbb{J}} \in \ell^p(\mathbb{J})$.
\item If $\mathbb{S} $ is any finite subset of $\mathbb{J}$, then $\|\sum_{j\in \mathbb{S}}a_jx_j\|^p=\sum_{j\in \mathbb{S}}|a_j|^p$.
\end{enumerate}
\end{proposition}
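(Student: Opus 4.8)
The plan is to derive all three parts directly from the two defining conditions of p-orthonormality in Definition~\ref{ORTHONORMALITYINBANACH}, since the proposition is essentially an unpacking of that definition together with the convention (established in the introduction) that an indexed sum $\sum_{j\in\mathbb{J}}$ converges precisely when the net of its finite partial sums converges. I expect no serious obstacle here; the work is in choosing the correct index set $\mathbb{L}$ in each case.

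First I would prove (i). Fixing $j\in\mathbb{J}$, take $\mathbb{L}=\{j\}$ and the scalar $c_j=1$ in condition \text{\upshape(i)} of Definition~\ref{ORTHONORMALITYINBANACH}; the sum $\sum_{k\in\mathbb{L}}c_kx_k=x_j$ trivially lies in $\mathcal{X}$, so that condition yields $\|x_j\|^p=|1|^p=1$, whence $\|x_j\|=1$. For the ``in particular'' clause I would take $\mathbb{L}=\{1,\dots,m\}$ with all $c_n=1$; the finite sum $\sum_{n=1}^{m}x_n$ belongs to $\mathcal{X}$, and condition \text{\upshape(i)} gives $\|\sum_{n=1}^{m}x_n\|^p=\sum_{n=1}^{m}|1|^p=m$.

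Next I would dispatch (iii), which is the most immediate: for any finite $\mathbb{S}\subseteq\mathbb{J}$ the sum $\sum_{j\in\mathbb{S}}a_jx_j$ is a finite linear combination, hence lies in $\mathcal{X}$, so applying condition \text{\upshape(i)} of Definition~\ref{ORTHONORMALITYINBANACH} with $\mathbb{L}=\mathbb{S}$ gives exactly $\|\sum_{j\in\mathbb{S}}a_jx_j\|^p=\sum_{j\in\mathbb{S}}|a_j|^p$.

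Finally, for (ii) I would suppose $x=\sum_{j\in\mathbb{J}}c_jx_j$ converges in $\mathcal{X}$ and apply condition \text{\upshape(i)} with the full index set $\mathbb{L}=\mathbb{J}$, which is legitimate precisely because $\sum_{j\in\mathbb{J}}c_jx_j\in\mathcal{X}$ by hypothesis; this gives $\|x\|^p=\sum_{j\in\mathbb{J}}|c_j|^p$. The only point requiring a word of care is the concluding membership $\{c_j\}_{j\in\mathbb{J}}\in\ell^p(\mathbb{J})$: since $x\in\mathcal{X}$ we have $\|x\|<\infty$, so the net $\{\sum_{j\in\mathbb{S}}|c_j|^p:\mathbb{S}\subseteq\mathbb{J}\text{ finite}\}$ of nonnegative reals is bounded above by $\|x\|^p$ and monotone, hence convergent, which is exactly the statement that $\{c_j\}_{j\in\mathbb{J}}\in\ell^p(\mathbb{J})$.
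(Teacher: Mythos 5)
Your proposal is correct and follows essentially the same route as the paper: each part is obtained by instantiating condition (i) of Definition \ref{ORTHONORMALITYINBANACH} with a suitable index set $\mathbb{L}$ (a singleton for (i), a finite set for (iii), and the full/partial index set for (ii)), with the paper passing to finite partial sums and using continuity of the norm for (ii) where you invoke the definition with $\mathbb{L}=\mathbb{J}$ directly. The only cosmetic difference is in (iii), where the paper extends the coefficients by zero to all of $\mathbb{J}$ while you simply take $\mathbb{L}=\mathbb{S}$; both are valid readings of the definition.
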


\begin{proof}
\begin{enumerate}[\upshape(i)]
\item $ x_j=1\cdot x_j \Rightarrow \|x_j\|^p=|1|^p,  \forall j \in \mathbb{J}$.
\item For every finite subset $\mathbb{S}\subseteq\mathbb{J}$, $\|\sum_{j\in \mathbb{S}}c_jx_j\|^p=\sum_{j\in \mathbb{S}}|c_j|^p $. From the   convergence of  $\sum_{j\in \mathbb{J}}c_jx_j $, we get the convergence of $\sum_{j\in \mathbb{J}}|c_j|^p $ and $ \|x\|^p=\sum_{j\in \mathbb{J}}|c_j|^p$.
\item Define $d_j\coloneqq a_j $ if $ j\in \mathbb{S}$ and $ 0$ if $ j\in \mathbb{J}\setminus \mathbb{S}$. Now apply (i) of Definition \ref{ORTHONORMALITYINBANACH}.
\end{enumerate}	
\end{proof}
\begin{proposition}
If $ \{x_j\}_{j \in \mathbb{J}} $ is p-orthonormal for $ \mathcal{X}$, then it  is closed.
\end{proposition}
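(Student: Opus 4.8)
The plan is to mimic the argument used for the analogous statement about orthonormal sequences in Hilbert C*-modules earlier in this section (the proposition whose proof invoked $\|2e\|^p = 2$). First I would establish that any two distinct members of the sequence are separated by a fixed positive distance. Concretely, for $j \neq k$ the element $x_j - x_k = 1\cdot x_j + (-1)\cdot x_k$ is a finite sum and hence automatically lies in $\mathcal{X}$, so condition (i) of Definition \ref{ORTHONORMALITYINBANACH} applies with $\mathbb{L} = \{j,k\}$, $c_j = 1$, $c_k = -1$, and gives
$$\|x_j - x_k\|^p = |1|^p + |-1|^p = 2,$$
whence $\|x_j - x_k\| = 2^{1/p}$ for all $j \neq k$. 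I would also record that the $x_j$ are pairwise distinct, since a p-orthonormal sequence is linearly independent; thus distinct indices genuinely produce distinct vectors at distance $2^{1/p}$.

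Next I would verify closedness straight from the definition of a closed set. Let $\{x_{j_n}\}_{n=1}^\infty$ be any sequence drawn from the set $\{x_j\}_{j\in\mathbb{J}}$ converging to some $x \in \mathcal{X}$. Being convergent it is Cauchy, so choosing any $\epsilon$ with $0 < \epsilon < 2^{1/p}$ there exists $N$ with $\|x_{j_n} - x_{j_m}\| < \epsilon$ whenever $n,m \geq N$. By the separation established in the first step, this forces $x_{j_n} = x_{j_m}$ as elements of $\mathcal{X}$ for all $n,m \geq N$; that is, the tail of the sequence is constant and equal to $x_{j_N}$. Consequently $x = \lim_n x_{j_n} = x_{j_N} \in \{x_j\}_{j\in\mathbb{J}}$, so the set contains all of its limit points and is therefore closed.

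I expect no serious obstacle here; the argument is elementary once the uniform separation is in hand. The only points that warrant a line of care are that the hypothesis of condition (i) of Definition \ref{ORTHONORMALITYINBANACH} is satisfied (the pairwise difference is a finite sum, hence in $\mathcal{X}$ without any convergence assumption) and that linear independence guarantees distinct indices give distinct vectors, so the value $2^{1/p}$ is attained as a genuine lower bound on distances between different terms. This is the exact p-analogue of the closedness proof for orthonormal sequences given earlier in the excerpt, with the module computation $\|2e\|^p = 2$ replaced by the scalar computation $|1|^p + |-1|^p = 2$.
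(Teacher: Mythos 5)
Your argument is correct and is essentially the proof the paper gives: establish $\|x_j-x_k\|^p=|1|^p+|-1|^p=2$ for distinct terms via condition (i) of the definition, then conclude that any convergent sequence from the set is eventually constant, so the limit lies in the set. The extra remarks about linear independence and the finite-sum hypothesis are harmless clarifications of points the paper leaves implicit.
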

\begin{proof}
Let $\{ x_{j_n}\}_{n=1}^\infty$	be in  $ \{x_j\}_{j \in \mathbb{J}} $ converging to an element $ x \in \mathcal{X}$. Then $ \|x_{j_l}-x_{j_m}\|^p=|1|^p+|-1|^p=2, \forall  x_{j_l}\neq x_{j_m}$. Therefore $\{ x_{j_n}\}_{n=1}^\infty$ is  eventually constant.
\end{proof}

\begin{proposition}
Let   $  \mathcal{X}$ has  a dense subset indexed with $ \mathbb{J}$. If    $ \{y_l\}_{l \in \mathbb{L}} $ is  p-orthonormal  for  $ \mathcal{X}$, then $ \operatorname{Card}(\mathbb{L}) \leq  \operatorname{Card}(\mathbb{J})$.
\end{proposition}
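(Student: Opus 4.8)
The plan is to mirror the cardinality argument used just above for orthonormal sets in Hilbert C*-modules, replacing the inner-product computation by the $p$-norm identity that $p$-orthonormality provides. First I would fix a dense family $\{x_j\}_{j\in\mathbb{J}}$ in $\mathcal{X}$ and compute the mutual distances among the vectors $y_l$. For distinct $l,m\in\mathbb{L}$, writing $y_l-y_m=1\cdot y_l+(-1)\cdot y_m$ and invoking condition \text{\upshape(i)} of Definition \ref{ORTHONORMALITYINBANACH}, I obtain $\|y_l-y_m\|^p=|1|^p+|-1|^p=2$, that is $\|y_l-y_m\|=2^{1/p}$. This is exactly the uniform separation already exploited in the proof that a $p$-orthonormal set is closed.

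Next I would introduce, for each $l\in\mathbb{L}$, the open ball $B_l\coloneqq\{x\in\mathcal{X}:\|x-y_l\|<2^{1/p}/2\}$ and verify that these balls are pairwise disjoint. Indeed, if some $z$ lay in $B_l\cap B_m$ with $l\neq m$, the triangle inequality would give $2^{1/p}=\|y_l-y_m\|\leq\|y_l-z\|+\|z-y_m\|<2^{1/p}/2+2^{1/p}/2=2^{1/p}$, a contradiction. The radius $2^{1/p}/2$ is chosen precisely so that twice the radius equals the common separation $2^{1/p}$.

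Then I would use density. Each ball $B_l$ is open and nonempty (it contains $y_l$), so it must contain at least one member of the dense family $\{x_j\}_{j\in\mathbb{J}}$. Choosing, for every $l\in\mathbb{L}$, an index $\varphi(l)\in\mathbb{J}$ with $x_{\varphi(l)}\in B_l$ produces a map $\varphi:\mathbb{L}\to\mathbb{J}$; since the balls are pairwise disjoint, distinct $l$ force distinct indices $\varphi(l)$, so $\varphi$ is injective. Hence $\operatorname{Card}(\mathbb{L})\leq\operatorname{Card}(\mathbb{J})$.

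I do not expect any genuine obstacle here: the argument is essentially routine. The only points needing a little care are the exact radius that guarantees disjointness and the (standard) use of a choice function to select one dense point per ball. As an immediate consequence, exactly as in the Hilbert and module settings, separability of $\mathcal{X}$ (a countable dense family, so $\mathbb{J}$ countable) forces every $p$-orthonormal set in $\mathcal{X}$ to be countable.
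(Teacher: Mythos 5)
Your proof is correct and follows essentially the same route as the paper: compute $\|y_l-y_m\|^p=|1|^p+|-1|^p=2$ from condition (i) of the $p$-orthonormality definition, then use pairwise disjoint balls of radius half the separation together with density to build an injection $\mathbb{L}\to\mathbb{J}$. The paper simply records the separation computation and defers the ball argument to the earlier Hilbert C*-module proposition, which is exactly the argument you spell out.
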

\begin{proof}
We just need to observe  $ \|y_l-y_m\|^p=|1|^p+|-1|^p=2, \forall  y_l\neq y_m$. Other arguments are similar with the  corresponding result in  Hilbert C*-module. 
\end{proof}
\begin{corollary}
If $\mathcal{X}$ is separable, then every p-orthonormal set  for  $\mathcal{X}$ is also separable.
\end{corollary}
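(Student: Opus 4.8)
The plan is to deduce the corollary directly from the preceding proposition, the only extra ingredient being the elementary fact that a countable set is separable. First I would unpack the hypothesis: $\mathcal{X}$ being separable means it possesses a countable dense subset, which I can index by some countable set $\mathbb{J}$, so that $\operatorname{Card}(\mathbb{J}) = \aleph_0$. This places us precisely in the situation of the previous proposition, whose hypothesis was that $\mathcal{X}$ has a dense subset indexed with $\mathbb{J}$.

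Next, given an arbitrary p-orthonormal set $\{y_l\}_{l \in \mathbb{L}}$ for $\mathcal{X}$, I would simply apply that proposition to conclude $\operatorname{Card}(\mathbb{L}) \leq \operatorname{Card}(\mathbb{J}) = \aleph_0$. Thus $\mathbb{L}$ is countable, and consequently the family $\{y_l\}_{l \in \mathbb{L}}$ is a countable collection of vectors. Viewing $\{y_l\}_{l \in \mathbb{L}}$ as a metric subspace of $\mathcal{X}$, a countable set is trivially its own countable dense subset, so the set is separable. This completes the argument.

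There is essentially no obstacle in this corollary itself; the substantive content was already carried by the preceding proposition. That proposition hinges on the observation that, for distinct elements of a p-orthonormal set, the expansion $y_l - y_m = 1\cdot y_l + (-1)\cdot y_m$ together with condition (i) in the definition of p-orthonormality yields $\|y_l - y_m\|^p = |1|^p + |-1|^p = 2$, forcing the balls $\{x : \|x - y_l\| < 1\}$ to be pairwise disjoint and hence limiting how many $y_l$ can be packed into $\mathcal{X}$ relative to a dense indexing set. The present statement merely specializes the indexing set $\mathbb{J}$ to a countable one and records that countability implies separability, so I would keep the write-up to these two short steps rather than reproving anything.
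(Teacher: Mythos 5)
Your argument is correct and is exactly the intended one: the paper leaves this corollary without proof precisely because it follows by taking $\mathbb{J}$ countable in the preceding proposition and noting that a countable set is separable. One small caveat in your aside about the earlier proposition: distinct elements of a p-orthonormal set satisfy $\|y_l-y_m\|=2^{1/p}$, so the pairwise disjoint balls should have radius $2^{1/p-1}$ (which is $<1$ for $p>1$), not radius $1$; this does not affect the corollary itself.
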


\begin{definition}\label{orthonormality in Banach}
A sequence 	$ \{x_j\}_{j \in \mathbb{J}}$ in $ \mathcal{X}$ is said to be a p-orthonormal basis for $ \mathcal{X}$ if the following conditions hold.
\begin{enumerate}[\upshape(i)]
\item $ \{x_j\}_{j \in \mathbb{J}}$ is a Schauder basis for  $ \mathcal{X}$.
\item If $ \mathbb{L}\subseteq \mathbb{J}$ and  $ \{c_j\}_{j \in \mathbb{L}}$ is a sequence of scalars such that $ \sum_{j\in \mathbb{L}}c_jx_j\in \mathcal{X}$, then  $ \|\sum_{j\in \mathbb{L}}c_jx_j\|^p=\sum_{j\in \mathbb{L}}|c_j|^p$.
\item If $ \{a_j\}_{j \in \mathbb{J}} \in \ell^p(\mathbb{J})$, then $ \sum_{j\in \mathbb{J}}a_jx_j\in \mathcal{X}$ and $\|\sum_{j\in \mathbb{J}}a_jx_j\|^p=\sum_{j\in \mathbb{J}}|a_j|^p$.
\end{enumerate}
\end{definition}
\begin{example}
We refer Example \ref{PORTHOGONALASWELLASPORTHONORMALEXAMPLE}.
\end{example}
We note that every  orthonormal basis for every  Hilbert space satisfies Definition \ref{orthonormality in Banach}.
\begin{theorem}\label{2-ONBIMPLIESONB}
A 2-orthonormal set in a  Hilbert space $\mathcal{H}$ is an orthonormal set in  $\mathcal{H}$.
\end{theorem}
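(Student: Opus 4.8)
The plan is to reduce this statement to the already-established Theorem \ref{2-OGSIMPLIESOGS} together with the norm-one property of $p$-orthonormal sets recorded in Proposition \ref{ellp}. Recall that for a Hilbert space $\mathcal{H}$ an orthonormal set is precisely a collection $\{x_j\}_{j\in\mathbb{J}}$ satisfying $\langle x_j, x_k\rangle = \delta_{j,k}$ for all $j,k\in\mathbb{J}$; thus I must establish both that distinct vectors are orthogonal and that each vector has unit norm.

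First I would observe that a $2$-orthonormal set is automatically a $2$-orthogonal set. Indeed, by Proposition \ref{ellp} a $p$-orthonormal set has $\|x_j\|=1$ for every $j\in\mathbb{J}$, so whenever $\mathbb{L}\subseteq\mathbb{J}$ and $\{c_j\}_{j\in\mathbb{L}}$ is a scalar sequence with $\sum_{j\in\mathbb{L}}c_jx_j\in\mathcal{H}$, condition (i) of Definition \ref{ORTHONORMALITYINBANACH} gives $\|\sum_{j\in\mathbb{L}}c_jx_j\|^2 = \sum_{j\in\mathbb{L}}|c_j|^2 = \sum_{j\in\mathbb{L}}|c_j|^2\|x_j\|^2 = \sum_{j\in\mathbb{L}}\|c_jx_j\|^2$, which is exactly the defining property of a $2$-orthogonal sequence in Definition \ref{ORTHOGONALITYINBANACH}.

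Next I would invoke Theorem \ref{2-OGSIMPLIESOGS}, which asserts that a $2$-orthogonal sequence for $\mathcal{H}$ is an orthogonal sequence; this yields $\langle x_j, x_k\rangle = 0$ for all $j\neq k$. Combining this with $\langle x_j, x_j\rangle = \|x_j\|^2 = 1$, again from Proposition \ref{ellp}, I obtain $\langle x_j, x_k\rangle = \delta_{j,k}$ for all $j,k\in\mathbb{J}$, which is the claimed orthonormality.

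Since the heavy lifting, namely the polarization-identity computation showing that $2$-orthogonality forces genuine orthogonality, has already been carried out in the proof of Theorem \ref{2-OGSIMPLIESOGS}, I do not expect any real obstacle here. The only point requiring a little care is the bookkeeping verifying that a $2$-orthonormal set genuinely satisfies the hypothesis of that theorem, and this is immediate once the unit-norm property from Proposition \ref{ellp} is recorded.
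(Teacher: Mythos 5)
Your proof is correct. The paper's own proof reaches the same conclusion by redoing the polarization-identity computation directly with $\|x_j\|=1$ substituted in, whereas you factor the argument through Theorem \ref{2-OGSIMPLIESOGS} via the (correct) observation that the unit-norm property from Proposition \ref{ellp} turns condition (i) of $2$-orthonormality into the $2$-orthogonality condition; since the underlying mathematics — polarization plus unit norms — is identical, this is essentially the same argument, merely packaged as a reduction to the earlier theorem.
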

\begin{proof}
Let $ \{x_j\}_{j \in \mathbb{J}}$ be a 2-orthonormal set  in  $\mathcal{H}$.  From Proposition \ref{ellp}, $ \|x_j\|=1, \forall j \in \mathbb{J}$.  Let $j\neq k,  j, k \in \mathbb{J}$. When  $\mathcal{H}$ is  over $ \mathbb{C}$, using polarization identity, 
\begin{align*}
\langle x_j, x_k \rangle &=\frac{\|x_j+x_k\|^2-\|x_j-x_k\|^2+i\|x_j+ix_k\|^2-i\|x_j-ix_k\|^2}{4}\\
&=\frac{(|1|^2+|1|^2)-(|1|^2+|-1|^2)+i(|1|^2+|i|^2)-i(|1|^2+|-i|^2)}{4}=0.
\end{align*}
By using polarization identity for the real case we get $\langle x_j, x_k \rangle=0$ when $\mathcal{H}$ is over $ \mathbb{R}$.
\end{proof}
\begin{corollary}\label{PIMPLIESORTHONORMALFOR HILBERT}
A 2-orthonormal basis for a Hilbert space $\mathcal{H}$ is an orthonormal basis for $\mathcal{H}$.
\end{corollary}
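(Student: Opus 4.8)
The plan is to reduce the statement to the already-established Theorem \ref{2-ONBIMPLIESONB} together with the completeness built into the notion of a Schauder basis. First I would observe that conditions \text{\upshape(ii)} and \text{\upshape(iii)} in the definition of a $2$-orthonormal basis (Definition \ref{orthonormality in Banach} with $p=2$) are \emph{verbatim} the two conditions \text{\upshape(i)} and \text{\upshape(ii)} defining a $2$-orthonormal set (Definition \ref{ORTHONORMALITYINBANACH} with $p=2$). Hence any $2$-orthonormal basis $\{x_j\}_{j\in\mathbb{J}}$ for $\mathcal{H}$ is, in particular, a $2$-orthonormal set for $\mathcal{H}$.

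Next I would invoke Theorem \ref{2-ONBIMPLIESONB}, which asserts that a $2$-orthonormal set in a Hilbert space $\mathcal{H}$ is an orthonormal set in $\mathcal{H}$; that is, $\langle x_j,x_k\rangle=\delta_{j,k}$ for all $j,k\in\mathbb{J}$. This already uses the polarization identity argument carried out in the proof of that theorem, so I may simply cite it.

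It then remains to upgrade \emph{orthonormal set} to \emph{orthonormal basis}. For this I would use condition \text{\upshape(i)} of Definition \ref{orthonormality in Banach}: $\{x_j\}_{j\in\mathbb{J}}$ is a Schauder basis for $\mathcal{H}$, so finite linear combinations of the $x_j$ are dense and therefore $\overline{\operatorname{span}}\{x_j\}_{j\in\mathbb{J}}=\mathcal{H}$. Since a complete orthonormal set in a Hilbert space is precisely an orthonormal basis, the conclusion follows.

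The argument is in essence a matching of definitions, with the only substantive input being Theorem \ref{2-ONBIMPLIESONB}; no genuine obstacle arises. The one point that deserves care is recognizing that beyond orthonormality the sole extra requirement for an orthonormal basis is density of the span, and that this density is supplied exactly by the Schauder-basis clause of the hypothesis.
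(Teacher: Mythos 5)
Your proof is correct and is essentially the argument the paper intends: the corollary is stated immediately after Theorem \ref{2-ONBIMPLIESONB} precisely because a $2$-orthonormal basis is in particular a $2$-orthonormal set (conditions (ii)–(iii) of Definition \ref{orthonormality in Banach} being those of Definition \ref{ORTHONORMALITYINBANACH}), hence an orthonormal set, and the Schauder-basis clause supplies the density of the span needed to conclude it is an orthonormal basis.
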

\begin{lemma}\label{CFLEMMA}
If $ \{x_j\}_{j \in \mathbb{J}}$ is p-orthonormal for $ \mathcal{X}$, then $ f_j: \operatorname{span}\{x_j\}_{j \in \mathbb{J}}  \ni \sum_{\operatorname{finite}}a_jx_j\mapsto a_j \in \mathbb{K} , \forall j \in \mathbb{J}$ are well-defined bounded linear functionals of norm one. In particular,  each $f_j $  has unique extension to $ \overline{\operatorname{span}}\{x_j\}_{j \in \mathbb{J}}$ by preserving the norm.
\end{lemma}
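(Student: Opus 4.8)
The plan is to proceed in three stages: well-definedness of each $f_j$, the norm computation, and the unique norm-preserving extension. First I would verify well-definedness. Since $\{x_j\}_{j\in\mathbb{J}}$ is $p$-orthonormal, it is in particular linearly independent (this was established in the theorem guaranteeing that a $p$-orthonormal sequence is linearly independent). Consequently every element of $\operatorname{span}\{x_j\}_{j\in\mathbb{J}}$ has a \emph{unique} representation as a finite linear combination $\sum_{\operatorname{finite}}a_jx_j$, so the assignment $\sum_{\operatorname{finite}}a_jx_j\mapsto a_j$ is unambiguous, and linearity of $f_j$ is then immediate from uniqueness of the coefficients.

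Next I would establish that $\|f_j\|=1$. Given any $x=\sum_{k\in\mathbb{S}}a_kx_k$ with $\mathbb{S}\subseteq\mathbb{J}$ finite, the finite-combination norm identity of Proposition \ref{ellp}(iii) gives $\|x\|^p=\sum_{k\in\mathbb{S}}|a_k|^p$. Hence $|f_j(x)|^p=|a_j|^p\leq\sum_{k\in\mathbb{S}}|a_k|^p=\|x\|^p$, so that $|f_j(x)|\leq\|x\|$ and therefore $\|f_j\|\leq 1$. For the reverse inequality I would simply evaluate at $x_j$: by Proposition \ref{ellp}(i) we have $\|x_j\|=1$, while $f_j(x_j)=1$ by definition, so $\|f_j\|\geq |f_j(x_j)|/\|x_j\|=1$. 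Combining the two bounds yields $\|f_j\|=1$.

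Finally, for the extension statement I would invoke the standard bounded-linear-transformation (density) argument. By definition $\operatorname{span}\{x_j\}_{j\in\mathbb{J}}$ is dense in $\overline{\operatorname{span}}\{x_j\}_{j\in\mathbb{J}}$, and $f_j$, being bounded and linear, is uniformly continuous into the complete space $\mathbb{K}$; hence it admits a unique continuous extension to the closure, and this extension is again linear with the same norm, namely $1$. (Uniqueness follows from continuity together with density.)

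The computations are routine, so I do not anticipate a serious obstacle. The only points needing care are that the finite-support representation used to define $f_j$ is genuinely unique — which is precisely where $p$-orthonormality enters, through linear independence — and that the norm identity invoked is the one valid for \emph{arbitrary} finite combinations (Proposition \ref{ellp}(iii)) rather than the identity stated only for convergent $\ell^p$-expansions. Once these are in place the norm-preserving extension is a direct application of the density principle.
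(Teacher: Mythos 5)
Your proof is correct and follows essentially the same route as the paper: linear independence of a p-orthonormal sequence gives well-definedness, the finite-combination norm identity gives $|f_j(x)|\leq\|x\|$, evaluation at $x_j$ (with $\|x_j\|=1$) gives the reverse bound, and the extension is the standard density argument. No gaps.
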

\begin{proof}
Since $ \{x_j\}_{j \in \mathbb{J}}$ is linearly independent, all the $ f_j$'s are well-defined. For $x=\sum_{j \in \mathbb{S}} a_jx_j \in  \operatorname{span}\{x_j\}_{j \in \mathbb{J}}$ we get $ |f_j(x)|^p=|a_j|^p\leq\sum_{j\in \mathbb{S}}|a_j|^p=\|\sum_{j\in \mathbb{S}}a_jx_j\|^p=\|x\|^p $  for all  $j \in \mathbb{J} $. We also have $ f_j(x_j)=1$ for all  $j \in \mathbb{J} $. Hence $ \|f_j\|=1, \forall j \in \mathbb{J}$.
\end{proof}
\begin{theorem}\label{P-ORTHONORMALSEQUENCECHARACTERIZATION}
Let  $ \{x_j\}_{j \in \mathbb{J}}$ be a  p-orthonormal sequence for  $\mathcal{X}$ and $ \{a_j\}_{j \in \mathbb{J}}$ be a  sequence of scalars. Then 
$$ \sum_{j\in \mathbb{J}}a_jx_j ~ \text{converges in}~ \mathcal{X}~ \text{if and only if }~\sum_{j\in \mathbb{J}}|a_j|^p ~\text{converges}.$$
In this case, if $x=\sum_{j\in \mathbb{J}}a_jx_j\in \overline{\operatorname{span}}\{x_j\}_{j \in \mathbb{J}}$, then  $ a_j=f_j(x), \forall j \in \mathbb{J}$, where $ f_j$ is the  unique bounded linear extension of $f_j :\operatorname{span}\{x_j\}_{j \in \mathbb{J}}  \ni \sum_{\operatorname{finite}}a_jx_j\mapsto a_j \in \mathbb{K} ,\forall j \in \mathbb{J}$.
\end{theorem}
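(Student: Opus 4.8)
The plan is to reduce the statement to a Cauchy-net argument resting on the single isometric identity that p-orthonormality supplies, exactly in the spirit of the Riesz--Fischer results already proved (Theorem \ref{ERFOV} and Theorem \ref{RIESZFISCHERGENERAL}). First I would record that, by \text{(iii)} of Proposition \ref{ellp}, for every finite subset $\mathbb{S}$ of $\mathbb{J}$ one has $\|\sum_{j\in\mathbb{S}}a_jx_j\|^p=\sum_{j\in\mathbb{S}}|a_j|^p$. This is the only structural input needed, and it couples the two sides of the claimed equivalence exactly.

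Next, recalling that convergence of $\sum_{j\in\mathbb{J}}a_jx_j$ means convergence of the net indexed by the finite subsets of $\mathbb{J}$ ordered by inclusion, I would compare increments. For finite $\mathbb{S}_1\subseteq\mathbb{S}_2$,
$$\left\|\sum_{j\in\mathbb{S}_2}a_jx_j-\sum_{j\in\mathbb{S}_1}a_jx_j\right\|^p=\left\|\sum_{j\in\mathbb{S}_2\setminus\mathbb{S}_1}a_jx_j\right\|^p=\sum_{j\in\mathbb{S}_2\setminus\mathbb{S}_1}|a_j|^p,$$
so the net $\{\sum_{j\in\mathbb{S}}a_jx_j\}$ is Cauchy in $\mathcal{X}$ precisely when the net $\{\sum_{j\in\mathbb{S}}|a_j|^p\}$ is Cauchy in $\mathbb{R}$. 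Since both $\mathcal{X}$ and the scalar field are complete, each net converges if and only if the other does, which is the desired biconditional. One may additionally note that the real-valued net is monotone increasing, so its Cauchyness amounts simply to boundedness.

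For the coefficient identification, suppose $x=\sum_{j\in\mathbb{J}}a_jx_j\in\overline{\operatorname{span}}\{x_j\}_{j\in\mathbb{J}}$. By Lemma \ref{CFLEMMA}, each $f_j$ extends uniquely to a bounded linear functional on $\overline{\operatorname{span}}\{x_j\}_{j\in\mathbb{J}}$ satisfying $f_k\big(\sum_{j\in\mathbb{S}}a_jx_j\big)=a_k$ for every finite $\mathbb{S}\ni k$. Fixing $k$ and passing to the limit along the net, continuity of $f_k$ yields $f_k(x)=\lim_{\mathbb{S}}f_k\big(\sum_{j\in\mathbb{S}}a_jx_j\big)=a_k$.

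There is no genuine obstacle in this argument; the only point requiring care is that ``convergence'' throughout is net convergence over finite subsets, as fixed in the notational conventions of the introduction. Consequently both the Cauchy comparison and the final continuity step must be phrased for nets rather than sequences, and I would apply the increment identity above directly to the net of partial sums rather than to some arbitrarily chosen exhausting sequence of finite subsets.
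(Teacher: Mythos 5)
Your proposal is correct and takes essentially the same route as the paper: the paper's one-line proof (``comes from (iii) in Proposition \ref{ellp}; act by $f_k$ on the sum'') is exactly the Cauchy-net comparison via the finite-subset identity plus continuity of the extended coordinate functionals that you spell out, in the same spirit as the proofs of Theorem \ref{ERFOV} and Theorem \ref{RIESZFISCHERGENERAL}.
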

\begin{proof}
Proof comes from (iii) in Proposition \ref{ellp}. For `in this case', we act by  $f_k$ to the sum $ \sum_{j\in \mathbb{J}}a_jx_j$, for each $k \in \mathbb{J}$.
\end{proof}
\begin{theorem}\label{CHARACTERIZATIONPBASISBANACHSEQUENCE}
Let $ \{x_j\}_{j \in \mathbb{J}}$ be  a p-orthonormal sequence  for $ \mathcal{X}$ such that $ \{f_j(x)\}_{j \in \mathbb{J}} \in \ell^p(\mathbb{J}), \forall x \in \mathcal{X} $, where $ f_j $ is the unique bounded linear extension of   $f_j: \operatorname{span}\{x_j\}_{j \in \mathbb{J}}  \ni \sum_{\operatorname{finite}}a_jx_j\mapsto a_j \in \mathbb{K}  $ for each $j \in \mathbb{J}$ (Lemma \ref{CFLEMMA} gives the existence of $ f_j$'s). If $\overline{\operatorname{span}}\{x_j\}_{j \in \mathbb{J}} = \mathcal{X}$, then the following are equivalent.
\begin{enumerate}[\upshape(i)]
\item $ \{x_j\}_{j \in \mathbb{J}}$ is  a p-orthonormal basis for $ \mathcal{X}$.
\item $ x= \sum_{j \in \mathbb{J}}f_j(x)x_j, \forall x \in \mathcal{X}$, and $ \phi=\sum_{j \in \mathbb{J}}\phi(x_j)f_j, \forall \phi \in  \mathcal{X}^*$, i.e., $ \phi(x)=\sum_{j \in \mathbb{J}}\phi(x_j)f_j(x), \forall x \in \mathcal{X}, \forall \phi \in \mathcal{X}^*$. 
\item $\mathcal{X}^*=\overline{\operatorname{span}}\{f_j\}_{j \in \mathbb{J}}$, in pointwise limit, i.e., for each $ \phi \in \mathcal{X}^*$, there exists a sequence  $\{\phi_n\}_{n=1}^\infty $ in $\operatorname{span}\{f_j\}_{j \in \mathbb{J}}$ such that  $ \phi_n(x) \rightarrow \phi(x)$ as  $n \rightarrow \infty $ for all $ x \in \mathcal{X}$.
\item If $ x \in \mathcal{X}$ is such that $f_j(x)=0,\forall j \in \mathbb{J}$, then $x=0.$
\end{enumerate}	
\end{theorem}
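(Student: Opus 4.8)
The plan is to establish the equivalence by running the cycle (i) $\Rightarrow$ (ii) $\Rightarrow$ (iii) $\Rightarrow$ (iv) $\Rightarrow$ (i). Throughout, the workhorse is Theorem \ref{P-ORTHONORMALSEQUENCECHARACTERIZATION}: for a $p$-orthonormal sequence, $\sum_{j\in\mathbb{J}} a_jx_j$ converges in $\mathcal{X}$ if and only if $\{a_j\}_{j\in\mathbb{J}} \in \ell^p(\mathbb{J})$, and whenever $x = \sum_{j\in\mathbb{J}} a_jx_j$ one has $a_j = f_j(x)$ for every $j$. The standing hypothesis $\{f_j(x)\}_{j\in\mathbb{J}} \in \ell^p(\mathbb{J})$ guarantees that $\sum_{j\in\mathbb{J}} f_j(x)x_j$ always converges, and the completeness hypothesis $\overline{\operatorname{span}}\{x_j\}_{j\in\mathbb{J}} = \mathcal{X}$ will be invoked where needed.

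First I would treat (i) $\Rightarrow$ (ii). Since a $p$-orthonormal basis is by definition a Schauder basis, each $x$ has a unique expansion $x = \sum_{j\in\mathbb{J}} a_jx_j$, and Theorem \ref{P-ORTHONORMALSEQUENCECHARACTERIZATION} identifies $a_j = f_j(x)$, giving $x = \sum_{j\in\mathbb{J}} f_j(x)x_j$. Applying an arbitrary $\phi \in \mathcal{X}^*$ and using its continuity and linearity to pass it inside the net sum yields $\phi(x) = \sum_{j\in\mathbb{J}} \phi(x_j)f_j(x)$, which is the second assertion of (ii). The implication (ii) $\Rightarrow$ (iii) is then immediate: the identity $\phi = \sum_{j\in\mathbb{J}} \phi(x_j)f_j$ exhibits $\phi$ as the pointwise limit of the partial sums $\sum_{j\in\mathbb{S}} \phi(x_j)f_j \in \operatorname{span}\{f_j\}_{j\in\mathbb{J}}$. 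For (iii) $\Rightarrow$ (iv), suppose $f_j(x) = 0$ for all $j$, and let $\phi \in \mathcal{X}^*$ be arbitrary; choose $\phi_n \in \operatorname{span}\{f_j\}_{j\in\mathbb{J}}$ with $\phi_n \to \phi$ pointwise. Each $\phi_n$, being a finite combination of the $f_j$'s, annihilates $x$, so $\phi(x) = \lim_n \phi_n(x) = 0$. As $\phi$ was arbitrary, the Hahn--Banach theorem forces $x = 0$.

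The main obstacle is (iv) $\Rightarrow$ (i), where the pointwise-separation property must be promoted to the Schauder-basis property. Given $x \in \mathcal{X}$, the summability hypothesis together with Theorem \ref{P-ORTHONORMALSEQUENCECHARACTERIZATION} makes $y \coloneqq \sum_{j\in\mathbb{J}} f_j(x)x_j$ a well-defined element of $\mathcal{X}$; the same theorem gives $f_k(y) = f_k(x)$ for every $k$, whence $f_k(y-x) = 0$ for all $k$, and (iv) yields $x = y = \sum_{j\in\mathbb{J}} f_j(x)x_j$. This establishes the expansion for every $x$, and uniqueness follows once more from Theorem \ref{P-ORTHONORMALSEQUENCECHARACTERIZATION}, since any representation $x = \sum_{j\in\mathbb{J}} b_jx_j$ forces $b_j = f_j(x)$. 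Hence $\{x_j\}_{j\in\mathbb{J}}$ is a Schauder basis, and together with the standing $p$-orthonormality conditions it is a $p$-orthonormal basis in the sense of Definition \ref{orthonormality in Banach}.

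The delicate point I expect to wrestle with is precisely that (iv) by itself does not a priori produce any convergent expansion; it is the interaction of (iv) with the $\ell^p$-summability hypothesis (which supplies a concrete candidate limit $y$) and with the coefficient-identification half of Theorem \ref{P-ORTHONORMALSEQUENCECHARACTERIZATION} (which pins down $f_k(y)$) that closes the cycle. I would also take care that ``convergence'' of $\sum_{j\in\mathbb{J}}$ is understood as the net convergence over finite subsets of $\mathbb{J}$ fixed in the introduction, so that passing a bounded functional inside the sum in (i) $\Rightarrow$ (ii) and reading off coefficients in (iv) $\Rightarrow$ (i) are both legitimate.
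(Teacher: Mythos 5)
Your proposal is correct and follows essentially the same route as the paper: the identical cycle (i) $\Rightarrow$ (ii) $\Rightarrow$ (iii) $\Rightarrow$ (iv) $\Rightarrow$ (i), with the same use of Theorem \ref{P-ORTHONORMALSEQUENCECHARACTERIZATION} both to define $y=\sum_{j\in\mathbb{J}}f_j(x)x_j$ and to identify coefficients, and the same argument that $f_k(y-x)=0$ plus (iv) forces $y=x$. The only cosmetic difference is that you spell out (i) $\Rightarrow$ (ii) and explicitly name Hahn--Banach in (iii) $\Rightarrow$ (iv), where the paper leaves these implicit.
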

\begin{proof}
(i) $\Rightarrow $ (ii) is clear.  For (ii) $\Rightarrow $ (iii), we observe  that for each $x $ in $ \mathcal{X}$ and for each $ \phi $ in $ \mathcal{X}^*$, the net  $\{\sum_{j\in \mathbb{S}}\phi(x_j)f_j(x):\mathbb{S} \subseteq \mathbb{J}, \mathbb{S} ~\text{is finite}\} $ converges to $\phi(x)$. For (iii) $\Rightarrow $ (iv), let $ x \in \mathcal{X}$ be such that  $f_j(x)=0, \forall j \in \mathbb{J}$. Let $ \phi \in \mathcal{X}^* $. Then there exists a sequence  $\{\phi_n\}_{n=1}^\infty $ in $\operatorname{span}\{f_j\}_{j \in \mathbb{J}}$ such that  $ \phi_n(x) \rightarrow \phi(x)$ as  $n \rightarrow \infty $. Since $\phi_n\in \operatorname{span}\{f_j\}_{j \in \mathbb{J}}, \forall n \in \mathbb{N},$ we have  $\phi_n(x)=0,\forall n \in \mathbb{N}$. Therefore $\phi(x)=\lim_{n\rightarrow \infty}\phi_n(x)=0.$ Since $ \phi \in \mathcal{X}^* $ was arbitrary,  $x=0$.
Now, for (iv) $\Rightarrow $ (i),  since $ \{f_j(x)\}_{j \in \mathbb{J}} \in \ell^p(\mathbb{J}), \forall x \in \mathcal{X} $, from Theorem  \ref{P-ORTHONORMALSEQUENCECHARACTERIZATION},  $\sum_{j \in \mathbb{J}}f_j(x) x_j $ converges in $ \mathcal{X}, \forall x \in \mathcal{X}.$ Let $x \in \mathcal{X} $ be fixed.  Define $y\coloneqq\sum_{j \in \mathbb{J}}f_j(x) x_j.$ Then $ f_j (y-x)=\sum_{k \in \mathbb{J}}f_k(x)f_j(x_k)-f_j(x) =f_j(x)-f_j(x)=0, \forall j \in \mathbb{J}.$ Therefore (iv) gives $ y=x.$ If $ x$ also has representation $ \sum_{j \in \mathbb{J}}a_j x_j,$ for some $ a_j \in \mathbb{K}, j\in \mathbb{J}$, then $ f_k(x)= f_k(\sum_{j \in \mathbb{J}}a_j x_j)= a_k, \forall k \in \mathbb{J}.$  Thus $ \{x_j\}_{j \in \mathbb{J}} $ is a p-orthonormal basis for $\mathcal{X}.$	
\end{proof}
\begin{remark}
In Theorem \ref{CHARACTERIZATIONPBASISBANACHSEQUENCE},  only in proving $\operatorname{(iv)}$ $ \Rightarrow $ $\operatorname{(i)}$ we used $ \{f_j(x)\}_{j \in \mathbb{J}} \in \ell^p(\mathbb{J}), \forall x \in \mathcal{X} $. Thus even without assumption $ \{f_j(x)\}_{j \in \mathbb{J}} \in \ell^p(\mathbb{J}), \forall x \in \mathcal{X} $  in Theorem \ref{CHARACTERIZATIONPBASISBANACHSEQUENCE} we get $\operatorname{(i)}$  $ \Rightarrow $ $\operatorname{(ii)}$ $ \Rightarrow $ $\operatorname{(iii)}$ $ \Rightarrow $ $\operatorname{(iv)}$.
\end{remark}

\begin{theorem}
Let $ \{x_j\}_{j \in \mathbb{J}}$ be a p-orthonormal basis for $\mathcal{X}$ and $ f_j : \mathcal{X} \ni \sum_{k\in\mathbb{J}}a_kx_k \mapsto a_j \in  \mathbb{K}, \forall j \in \mathbb{J}$. Then for each $ x \in \mathcal{X}$, the set $ Y_x\coloneqq \{f_j: f_j(x)\neq 0, j \in \mathbb{J}\}$ is either  finite or countable.
\end{theorem}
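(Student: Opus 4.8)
The plan is to reduce the statement to a summability fact and then run the same dyadic-threshold counting argument that this paper has already used for the analogous operator- and sequential-version theorems. First I would observe that since $\{x_j\}_{j\in\mathbb{J}}$ is a p-orthonormal basis, it is in particular a Schauder basis for $\mathcal{X}$, so every $x\in\mathcal{X}$ has a unique expansion $x=\sum_{j\in\mathbb{J}}a_jx_j$; by the very definition $f_j\colon \sum_{k\in\mathbb{J}}a_kx_k\mapsto a_j$, this forces $a_j=f_j(x)$ and hence $x=\sum_{j\in\mathbb{J}}f_j(x)x_j$ for every $x\in\mathcal{X}$. This puts us exactly in the hypothesis of Proposition \ref{ellp}(ii).

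Next I would invoke Proposition \ref{ellp}(ii) with the scalar sequence $\{f_j(x)\}_{j\in\mathbb{J}}$: since $x$ admits the expansion $x=\sum_{j\in\mathbb{J}}f_j(x)x_j$, we obtain
$$\|x\|^p=\sum_{j\in\mathbb{J}}|f_j(x)|^p,$$
so in particular $\{f_j(x)\}_{j\in\mathbb{J}}\in\ell^p(\mathbb{J})$ and the series $\sum_{j\in\mathbb{J}}|f_j(x)|^p$ converges to the finite number $\|x\|^p$. This convergence of a nonnegative series over the index set $\mathbb{J}$ is the crucial ingredient that makes the support countable.

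With summability in hand, for each $n\in\mathbb{N}$ I would define the threshold set
$$Y_{n,x}\coloneqq\Big\{f_j : |f_j(x)|^p>\tfrac{1}{n},\ j\in\mathbb{J}\Big\},$$
and argue that each $Y_{n,x}$ is finite: if $Y_{n,x}$ contained $N$ distinct functionals $f_{j_1},\dots,f_{j_N}$, then $\|x\|^p=\sum_{j\in\mathbb{J}}|f_j(x)|^p\geq\sum_{k=1}^{N}|f_{j_k}(x)|^p>\tfrac{N}{n}$, forcing $N<n\|x\|^p$, hence $\operatorname{Card}(Y_{n,x})\leq n\|x\|^p<\infty$. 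Finally, since $f_j(x)\neq0$ if and only if $|f_j(x)|^p>\tfrac1n$ for some $n$, we have $Y_x=\bigcup_{n=1}^{\infty}Y_{n,x}$, a countable union of finite sets, which is therefore finite or countable, completing the argument.

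I do not expect a genuine obstacle here; the only point requiring care is the first reduction, namely that the coefficients $f_j(x)$ really coincide with the Schauder-basis coefficients so that Proposition \ref{ellp}(ii) applies verbatim. Once $\sum_{j\in\mathbb{J}}|f_j(x)|^p=\|x\|^p<\infty$ is established, the counting step is routine and is formally identical to the corresponding arguments given earlier for operator-valued frames and for the sequential version.
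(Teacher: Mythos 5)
Your proof is correct and follows essentially the same route as the paper: both establish $\sum_{j\in\mathbb{J}}|f_j(x)|^p=\|x\|^p<\infty$ via the p-orthonormal basis property (Proposition \ref{ellp}) and then write $Y_x$ as a countable union of finite threshold sets. The only cosmetic difference is that the paper thresholds at $\|x\|^p/n$ (getting $\operatorname{Card}(Y_{n,x})\leq n-1$) while you threshold at $1/n$ (getting $\operatorname{Card}(Y_{n,x})<n\|x\|^p$); both bounds suffice.
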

\begin{proof}
For $ n \in \mathbb{N}$, define 
$$ Y_{n,x}\coloneqq \left\{f_j: |f_j(x)|^p> \frac{\|x\|^p}{n},j\in \mathbb{J}\right\}.$$

Suppose, for some $n$, $ Y_{n,x}$ has more than $n-1$ elements, say $f_1,...,f_n$. Then $ \sum_{j=1 }^n|f_j(x)|^p>n\frac{\|x\|^p}{n}=\|x\|^p$. We already have $\sum_{j=1}^n|f_j(x)|^p \leq \|x\|^p .$ This gives $ \|x\|^p< \|x\|^p $ which is impossible. Hence $ \operatorname{Card}(Y_{n,x})\leq n-1$. We next note 
 $Y_x=\cup_{n=1}^\infty Y_{n,x}$, being a countable union of finite sets is finite or countable.
\end{proof}
\begin{theorem}
\begin{enumerate}[\upshape(i)]	
\item If $\mathcal{X}$ has a p-orthonormal basis which is also a Hamel basis for $\mathcal{X}$,  then $\mathcal{X}$ is finite dimensional.
\item An infinite p-orthonormal  basis for $\mathcal{X}$ can not be a Hamel basis  for $\mathcal{X}$.
\end{enumerate}
\end{theorem}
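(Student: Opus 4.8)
The plan is to prove (i) by contradiction and then deduce (ii) as an immediate consequence, the key tool being the Riesz--Fischer type result Theorem \ref{P-ORTHONORMALSEQUENCECHARACTERIZATION}. Suppose $\{x_j\}_{j\in\mathbb{J}}$ is simultaneously a p-orthonormal basis and a Hamel basis for $\mathcal{X}$, and assume for contradiction that $\mathbb{J}$ is infinite. First I would extract a countably infinite subfamily $\{x_{j_n}\}_{n=1}^\infty$ of the basis and fix a scalar sequence $\{a_n\}_{n=1}^\infty$ with infinitely many nonzero entries for which $\sum_{n=1}^\infty|a_n|^p<\infty$; the concrete choice $a_n=n^{-2/p}$ works, since then $\sum_{n=1}^\infty|a_n|^p=\sum_{n=1}^\infty n^{-2}<\infty$ for every $p\in[1,\infty)$.

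Next I would invoke Theorem \ref{P-ORTHONORMALSEQUENCECHARACTERIZATION}: because $\sum_{n=1}^\infty|a_n|^p$ converges, the series $x\coloneqq\sum_{n=1}^\infty a_nx_{j_n}$ converges to an element $x\in\mathcal{X}$, and the same theorem recovers the coordinates of $x$ through the coordinate functionals, namely $f_{j_n}(x)=a_n\neq0$ for every $n$ (and $f_j(x)=0$ for the remaining indices), where the $f_j$ are the functionals of Lemma \ref{CFLEMMA}. Thus $x$ has infinitely many nonzero coordinates relative to $\{x_j\}_{j\in\mathbb{J}}$. On the other hand, since $\{x_j\}_{j\in\mathbb{J}}$ is a Hamel basis, $x$ must be a finite linear combination $x=\sum_{k=1}^m b_k x_{l_k}$; applying $f_j$ to this finite representation forces $f_j(x)=0$ for all but finitely many $j$, because a given index $j$ coincides with some $l_k$ for at most finitely many $k$. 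This contradicts $f_{j_n}(x)\neq0$ for all $n$, so $\mathbb{J}$ must be finite, i.e., $\mathcal{X}$ is finite dimensional, which settles (i).

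For (ii), I would argue that a p-orthonormal basis is linearly independent, so an infinite one contains infinitely many linearly independent vectors and hence forces $\dim\mathcal{X}=\infty$; were it simultaneously a Hamel basis, part (i) would give $\dim\mathcal{X}<\infty$, a contradiction. Thus an infinite p-orthonormal basis cannot be a Hamel basis.

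The main obstacle I anticipate is the reconciliation of the two expansions of $x$: one must be certain that the convergent p-expansion $x=\sum_{n}a_nx_{j_n}$ and the putative finite Hamel expansion assign the same coefficients to each basis vector, so that comparing them is legitimate. This is precisely what the uniqueness clause of Theorem \ref{P-ORTHONORMALSEQUENCECHARACTERIZATION}---the identity $a_j=f_j(x)$ for convergent p-expansions---supplies, and the coordinate functionals $f_j$ of Lemma \ref{CFLEMMA} are the single device that makes both representations comparable and pins down the contradiction; everything else is routine once this bookkeeping is in place.
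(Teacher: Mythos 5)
Your proof is correct and follows essentially the same route as the paper: both construct an element with infinitely many nonzero coordinates as a convergent $p$-summable series (the paper uses coefficients $1/j^{p+1}$, you use $n^{-2/p}$), and then contradict the finite Hamel expansion by applying the coordinate functionals from Lemma \ref{CFLEMMA} and Theorem \ref{P-ORTHONORMALSEQUENCECHARACTERIZATION}. Your explicit extraction of a countable subfamily and your deduction of (ii) from (i) are minor tidy-ups of the same argument.
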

\begin{proof}
\begin{enumerate}[\upshape(i)]	
\item Let $ \{x_j\}_{j \in \mathbb{J}}$ be a p-orthonormal basis as well as a Hamel basis  for $\mathcal{X}$,  $f_j:\mathcal{X}  \ni  \sum_{k \in \mathbb{J}}a_kx_k \mapsto a_j \in \mathbb{K}$, $\forall j \in \mathbb{J}$.  Since  $\sum_{j\in \mathbb{J}} \frac{1}{j^{p^2+p}}$ is a convergent series, from Theorem \ref{P-ORTHONORMALSEQUENCECHARACTERIZATION}, $\sum_{j\in \mathbb{J}} \frac{x_j}{j^{p+1}}$ converges in  $\mathcal{X}$, say to $x$. Since  $ \{x_j\}_{j\in \mathbb{J}}$ is a Hamel basis for $\mathcal{X}$, there exist $x_{j_1},...,x_{j_m} $ in $ \{x_j\}_{j \in \mathbb{J}}$ such that  $x=a_{j_1} x_{j_1}+\cdots +a_{j_m}x_{j_m}, a_{j_1},...,a_{j_m} \in \mathbb{K} $, uniquely. 
Suppose dimension of  $\mathcal{X}$ is infinite.  Then $ \{x_j\}_{j \in \mathbb{J}}$ is infinite. So there exists a $k\in \mathbb{J}$ such that $x_k\neq x_{j_l},1\leq l\leq m $.
 Then 
$$0=f_k(a_{j_1} x_{j_1}+\cdots +a_{j_m}x_{j_m}) =f_k(x)=f_k\left(\sum_{j\in \mathbb{J}} \frac{x_j}{j^{p+1}}\right)=f_k\left(\frac{x_k}{k^{p+1}}\right)=\frac{1}{k^{p+1}},$$
 which is a contradiction. Hence $\mathcal{X}$ is finite dimensional.
\item Similar to the proof of (i).
\end{enumerate}	
\end{proof}

\begin{theorem}
Let $ \{x_j\}_{j \in \mathbb{J}}$ be a p-orthonormal basis for $\mathcal{X}$.
\begin{enumerate}[\upshape(i)]
\item If $ T:\mathcal{X}\rightarrow \mathcal{X}_0 $ is  an isometric isomorphism, then  $ \{Tx_j\}_{j \in \mathbb{J}}$ is a p-orthonormal basis for $\mathcal{X}_0$.
\item If $ \{y_j\}_{j \in \mathbb{J}}$ is a p-orthonormal basis for $\mathcal{X}_0$, then the map $T:\mathcal{X} \ni \sum_{j\in \mathbb{J}}a_jx_j\mapsto \sum_{j\in \mathbb{J}}a_jy_j \in \mathcal{X}_0 $ is a well-defined isometric isomorphism.
\end{enumerate}
\end{theorem}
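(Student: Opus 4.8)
The plan is to read off all three defining conditions of a p-orthonormal basis (Definition \ref{orthonormality in Banach}) directly, using the fact that an isometric isomorphism intertwines the Schauder expansions in $\mathcal{X}$ and $\mathcal{X}_0$, together with the coefficient-control supplied by Proposition \ref{ellp}.

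For (i), I would first verify that $\{Tx_j\}_{j\in\mathbb{J}}$ is a Schauder basis for $\mathcal{X}_0$. Given $y\in\mathcal{X}_0$, apply the Schauder basis property of $\{x_j\}$ to $T^{-1}y$ to obtain a unique expansion $T^{-1}y=\sum_{j}a_jx_j$; since $T$ is bounded and bijective with bounded inverse, applying $T$ termwise gives $y=\sum_{j}a_jTx_j$, and injectivity of $T$ transfers the uniqueness of the coefficients. Conditions (ii) and (iii) of Definition \ref{orthonormality in Banach} then follow from the isometry of $T$. Indeed, for any admissible scalar family, continuity of $T$ and $T^{-1}$ shows that the partial-sum net of $\sum c_jTx_j$ converges if and only if that of $\sum c_jx_j$ does, and in that case $\|\sum c_jTx_j\|^p=\|T(\sum c_jx_j)\|^p=\|\sum c_jx_j\|^p=\sum|c_j|^p$; the argument for (iii) is identical, starting from $\{a_j\}\in\ell^p(\mathbb{J})$ and using that $\sum a_jx_j$ converges in $\mathcal{X}$.

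For (ii), the first task is well-definedness of $T$. Each $x\in\mathcal{X}$ has a unique Schauder expansion $x=\sum_{j}a_jx_j$, and Proposition \ref{ellp}(ii) guarantees $\{a_j\}_{j\in\mathbb{J}}\in\ell^p(\mathbb{J})$; condition (iii) for $\{y_j\}$ then ensures $\sum_{j}a_jy_j$ converges in $\mathcal{X}_0$, so $Tx$ is defined, and linearity is immediate. Isometry follows by combining condition (iii) for both bases: $\|Tx\|^p=\sum_{j}|a_j|^p=\|x\|^p$. For surjectivity, expand $y\in\mathcal{X}_0$ as $y=\sum_{j}b_jy_j$ with $\{b_j\}\in\ell^p(\mathbb{J})$ by Proposition \ref{ellp}(ii), observe that $\sum_{j}b_jx_j$ converges in $\mathcal{X}$ to some $x$ by condition (iii) for $\{x_j\}$, and check $Tx=y$. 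Injectivity is then a consequence of the isometry.

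The main obstacle is precisely the well-definedness step in (ii): before one can even form the series $\sum_{j}a_jy_j$ in $\mathcal{X}_0$, one must know that the coefficient sequence arising from the Schauder expansion of an arbitrary $x\in\mathcal{X}$ lies in $\ell^p(\mathbb{J})$. This is exactly what the p-orthonormality encoded in Proposition \ref{ellp}(ii) provides; once it is in hand, every remaining verification is a routine application of the norm-preservation conditions (ii) and (iii) of Definition \ref{orthonormality in Banach}.
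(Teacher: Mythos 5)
Your proof is correct and follows essentially the same route as the paper: for (i) you transport the Schauder basis property through $T$ and use the isometry to verify the two norm conditions of Definition \ref{orthonormality in Banach}, and for (ii) you obtain well-definedness and surjectivity from the equivalence ``$\sum a_jx_j$ converges iff $\{a_j\}\in\ell^p(\mathbb{J})$'' (which the paper packages as Theorem \ref{P-ORTHONORMALSEQUENCECHARACTERIZATION} and you extract from Proposition \ref{ellp}(ii) together with condition (iii) of the definition). No gaps.
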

\begin{proof}
\begin{enumerate}[\upshape(i)]
\item Since $ T$ is bounded invertible, $ \{Tx_j\}_{j \in \mathbb{J}}$  is a Scahauder basis for $\mathcal{X}_0$. If $ \mathbb{L}\subseteq \mathbb{J}$ and  $ \{c_j\}_{j \in \mathbb{L}}$ is a sequence of scalars such that $ \sum_{j\in \mathbb{L}}c_jTx_j\in \mathcal{X}$, then  $ \|\sum_{j\in \mathbb{L}}c_jTx_j\|^p=\|T(\sum_{j\in \mathbb{L}}c_jx_j)\|^p =\|\sum_{j\in \mathbb{L}}c_jx_j\|^p =\sum_{j\in \mathbb{L}}|c_j|^p$. Also, if  $ \{a_j\}_{j \in \mathbb{J}} \in \ell^p(\mathbb{J})$, then $ \sum_{j\in \mathbb{J}}a_jTx_j\in \mathcal{X}_0$ and $\|\sum_{j\in \mathbb{J}}a_jTx_j\|^p=\sum_{j\in \mathbb{J}}|a_j|^p$.
\item  From Theorem \ref{P-ORTHONORMALSEQUENCECHARACTERIZATION}, $T$ is well-defined. Again, Theorem \ref{P-ORTHONORMALSEQUENCECHARACTERIZATION} says that $T$ is surjective. Orthonormality of $ \{y_j\}_{j \in \mathbb{J}}$ gives the injectiveness of $T$. For $x=\sum_{j\in \mathbb{J}}a_jx_j \in\mathcal{X},$  $ \|Tx\|^p=\|\sum_{j\in \mathbb{J}}a_jy_j\|^p=\sum_{j\in \mathbb{J}}|a_j|^p=\|x\|^p$. Thus $T$ is isometry.
\end{enumerate}
\end{proof}
One can see the  following inequality, we call 4-inequality, which  looks like Cauchy-Schwarz inequality (reason: if  $\mathcal{H}$ is a real Hilbert space, then  $ (\|x+y\|^2- \|x-y\|^2)/4=\langle x, y\rangle \leq \|x\|\|y\|, \forall x, y \in \mathcal{H}$).
\begin{theorem}\label{4INEQUALITY}
(4-inequality) If $\mathcal{X}$ has a 4-orthonormal basis, then 
$$\frac{\|x+y\|^4- \|x-y\|^4}{8}\leq (\|x\|^2+\|y\|^2)\|x\|\|y\| , ~\forall x ,y \in \mathcal{X}.$$
\end{theorem}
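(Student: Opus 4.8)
The plan is to reduce everything to a coordinatewise estimate in $\ell^4(\mathbb{J})$, exploiting that a $4$-orthonormal basis linearizes the norm. First I would fix a $4$-orthonormal basis $\{x_j\}_{j\in\mathbb{J}}$ for $\mathcal{X}$ and expand arbitrary $x,y\in\mathcal{X}$ as $x=\sum_{j\in\mathbb{J}}a_jx_j$ and $y=\sum_{j\in\mathbb{J}}b_jx_j$; this is legitimate since a $4$-orthonormal basis is in particular a Schauder basis, so the expansions exist, are unique, and are additive. Because a $4$-orthonormal basis is $4$-orthonormal (its defining conditions contain those of Definition~\ref{ORTHONORMALITYINBANACH}), Proposition~\ref{ellp}(ii) applies and gives $\|x\|^4=\sum_j|a_j|^4$ and $\|y\|^4=\sum_j|b_j|^4$ with $\{a_j\},\{b_j\}\in\ell^4(\mathbb{J})$; applying the same proposition to $x\pm y=\sum_j(a_j\pm b_j)x_j$ yields $\|x+y\|^4=\sum_j|a_j+b_j|^4$ and $\|x-y\|^4=\sum_j|a_j-b_j|^4$.

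Next I would record the scalar identity underlying the whole estimate. For any $a,b\in\mathbb{K}$, expanding $|a\pm b|^2=|a|^2\pm2\operatorname{Re}(a\bar b)+|b|^2$ and squaring gives
$$\frac{|a+b|^4-|a-b|^4}{8}=(|a|^2+|b|^2)\operatorname{Re}(a\bar b).$$
Summing this over $j$ (every series converges absolutely since $\{a_j\},\{b_j\}\in\ell^4(\mathbb{J})$) produces the exact formula
$$\frac{\|x+y\|^4-\|x-y\|^4}{8}=\sum_{j\in\mathbb{J}}(|a_j|^2+|b_j|^2)\operatorname{Re}(a_j\bar b_j).$$

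The final step is to bound this sum. Using $\operatorname{Re}(a_j\bar b_j)\leq|a_j||b_j|$ and splitting the resulting expression gives $\sum_j(|a_j|^2+|b_j|^2)|a_j||b_j|=\sum_j|a_j|^3|b_j|+\sum_j|a_j||b_j|^3$, and H\"older's inequality with conjugate exponents $4/3$ and $4$ yields $\sum_j|a_j|^3|b_j|\leq\|x\|^3\|y\|$ together with $\sum_j|a_j||b_j|^3\leq\|x\|\|y\|^3$. Adding and factoring gives $(\|x\|^2+\|y\|^2)\|x\|\|y\|$, the desired right-hand side.

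The argument is essentially routine bookkeeping rather than a deep obstacle: the one place genuine care is needed is the identification of the $\mathcal{X}$-norm with the $\ell^4$-norm of the coordinates, since this is what guarantees that all the series converge absolutely and that the rearrangement into the two H\"older sums is valid; without $4$-orthonormality (through Proposition~\ref{ellp}) neither the coordinate formula for $\|x\pm y\|^4$ nor the convergence would be available. Once this reduction is in place, the inequality is simply H\"older's inequality applied twice, playing the role that Cauchy--Schwarz plays in the classical parallelogram-type estimate on a Hilbert space.
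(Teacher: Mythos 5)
Your proof is correct, and the setup (expanding $x,y$ in the $4$-orthonormal basis, reducing $\|x\pm y\|^4$ to $\ell^4$-sums via Proposition \ref{ellp}, and using the scalar identity $\tfrac{1}{8}(|a+b|^4-|a-b|^4)=(|a|^2+|b|^2)\operatorname{Re}(a\bar b)$) is exactly the paper's. Where you diverge is the final estimation: the paper bounds $\sum_j(|a_j|^2+|b_j|^2)|a_jb_j|$ by a single Cauchy--Schwarz step into $\left(\sum_j(|a_j|^2+|b_j|^2)^2\right)^{1/2}\left(\sum_j|a_jb_j|^2\right)^{1/2}$, applies Cauchy--Schwarz again to $\sum_j|a_jb_j|^2$, and then expands $(\|x\|^4+\|y\|^4+2\|x\|^2\|y\|^2)^{1/2}=\|x\|^2+\|y\|^2$; you instead split the sum as $\sum_j|a_j|^3|b_j|+\sum_j|a_j||b_j|^3$ and hit each piece with H\"older at exponents $(4/3,4)$, getting $\|x\|^3\|y\|+\|x\|\|y\|^3$ directly. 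Both chains land on the same bound $(\|x\|^2+\|y\|^2)\|x\|\|y\|$; yours is a little shorter and avoids the nested Cauchy--Schwarz and the algebraic regrouping, at the cost of invoking H\"older with non-symmetric exponents rather than only the $p=q=2$ case. All convergence issues are handled correctly since $\{a_j\},\{b_j\}\in\ell^4(\mathbb{J})$ makes every series in your argument absolutely convergent.
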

\begin{proof}
Starting with a 4-orthonormal basis $ \{x_j\}_{j \in \mathbb{J}}$ for  $\mathcal{X}$, and $x ,y \in \mathcal{X}$, we can write  $ x=\sum_{j\in \mathbb{J}}a_jx_j, y=\sum_{j\in \mathbb{J}}b_jx_j $ for unique $  \{a_j\}_{j \in \mathbb{J}},  \{b_j\}_{j \in \mathbb{J}} \in \ell^4(\mathbb{J})$. Then
 
\begin{align*}
\|x+y\|^4- \|x-y\|^4&=\left\|\sum_{j\in \mathbb{J}}(a_j+b_j)x_j\right\|^4-\left\|\sum_{j\in \mathbb{J}}(a_j-b_j)x_j\right\|^4
 =\sum_{j\in \mathbb{J}}|a_j+b_j|^4-\sum_{j\in \mathbb{J}}|a_j-b_j|^4\\
& =4\sum_{j\in \mathbb{J}}(|a_j|^2+|b_j|^2)(a_j\bar{b_j}+\bar{a_j}b_j)
 \leq 8\sum_{j\in \mathbb{J}}(|a_j|^2+|b_j|^2)|a_jb_j|\\
 &\leq 8\left(\sum_{j\in \mathbb{J}}(|a_j|^2+|b_j|^2)^2\right)^\frac{1}{2}\left(\sum_{j\in \mathbb{J}}|a_jb_j|^2\right)^\frac{1}{2}\\
& \leq 8\left(\sum_{j\in\mathbb{J}}(|a_j|^2+|b_j|^2)^2\right)^\frac{1}{2}\left(\sum_{j\in \mathbb{J}}|a_j|^4\right)^\frac{1}{4} \left(\sum_{j\in \mathbb{J}}|b_j|^4\right)^\frac{1}{4}\\
 &=8\left(\sum_{j\in \mathbb{J}}|a_j|^4+\sum_{j\in \mathbb{J}}|b_j|^4+2\sum_{j\in \mathbb{J}}|a_jb_j|^2\right)^\frac{1}{2}\|x\|\|y\|\\
 &=8\left(\|x\|^4+\|y\|^4+2\sum_{j\in \mathbb{J}}|a_jb_j|^2\right)^\frac{1}{2}\|x\|\|y\|\\
 &\leq 8\left(\|x\|^4+\|y\|^4+2\left(\sum_{j\in \mathbb{J}}|a_j|^4\right)^\frac{1}{2}\left(\sum_{j\in \mathbb{J}}|b_j|^4\right)^\frac{1}{2}\right)^\frac{1}{2}\|x\|\|y\|\\
 &=8(\|x\|^4+\|y\|^4+2\|x\|^2\|y\|^2)^\frac{1}{2}\|x\|\|y\|=8(\|x\|^2+\|y\|^2)\|x\|\|y\|.
\end{align*}
\end{proof}
\begin{theorem}\label{SPECIALPL}
(4-parallelogram law)
If $\mathcal{X}$ has a 4-orthonormal basis, then
\begin{align*} 
 \|x+y\|^4+ \|x-y\|^4&\leq2(\|x\|^4+\|y\|^4)+12\|x\|^2y\|^2\\
 &=2((\|x\|^2+\|y\|^2)^2+4\|x\|^2\|y\|^2) ,~\forall x ,y \in \mathcal{X}.
\end{align*}
\end{theorem}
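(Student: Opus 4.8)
The plan is to mirror the proof of the 4-inequality (Theorem \ref{4INEQUALITY}): fix a 4-orthonormal basis $\{x_j\}_{j\in\mathbb{J}}$ for $\mathcal{X}$ and, given $x,y\in\mathcal{X}$, write $x=\sum_{j\in\mathbb{J}}a_jx_j$ and $y=\sum_{j\in\mathbb{J}}b_jx_j$ for unique $\{a_j\}_{j\in\mathbb{J}},\{b_j\}_{j\in\mathbb{J}}\in\ell^4(\mathbb{J})$ (existence and uniqueness come from the p-orthonormal basis definition, here with $p=4$). Using condition (iii) of Definition \ref{orthonormality in Banach} applied to the vectors $x+y=\sum_{j\in\mathbb{J}}(a_j+b_j)x_j$ and $x-y=\sum_{j\in\mathbb{J}}(a_j-b_j)x_j$, the left-hand side immediately collapses to a scalar sum:
\begin{align*}
\|x+y\|^4+\|x-y\|^4=\sum_{j\in\mathbb{J}}|a_j+b_j|^4+\sum_{j\in\mathbb{J}}|a_j-b_j|^4.
\end{align*}

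The heart of the argument is then a purely scalar estimate. First I would record the exact identity: writing $u=|a|^2+|b|^2$ and $v=a\bar{b}+\bar{a}b$ (a real quantity), one has $|a+b|^2=u+v$ and $|a-b|^2=u-v$, whence $|a+b|^4+|a-b|^4=2u^2+2v^2$. Since $v=2\operatorname{Re}(a\bar b)$ gives $v^2\le 4|a|^2|b|^2$, this yields
\begin{align*}
|a+b|^4+|a-b|^4\le 2(|a|^2+|b|^2)^2+8|a|^2|b|^2=2|a|^4+2|b|^4+12|a|^2|b|^2
\end{align*}
for all scalars $a,b$ (the real case is simply the special case $v=2ab$, and the same bound holds). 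Applying this termwise and summing over $j$ reduces the theorem to controlling the three resulting sums.

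To finish, I would use the 4-orthonormality once more to identify $\sum_{j\in\mathbb{J}}|a_j|^4=\|x\|^4$ and $\sum_{j\in\mathbb{J}}|b_j|^4=\|y\|^4$, and bound the cross term by the Cauchy--Schwarz inequality (exactly as in the proof of Theorem \ref{4INEQUALITY}):
\begin{align*}
\sum_{j\in\mathbb{J}}|a_j|^2|b_j|^2\le\left(\sum_{j\in\mathbb{J}}|a_j|^4\right)^{1/2}\left(\sum_{j\in\mathbb{J}}|b_j|^4\right)^{1/2}=\|x\|^2\|y\|^2.
\end{align*}
Combining these gives $\|x+y\|^4+\|x-y\|^4\le 2\|x\|^4+2\|y\|^4+12\|x\|^2\|y\|^2$, and a one-line expansion confirms $2\|x\|^4+2\|y\|^4+12\|x\|^2\|y\|^2=2\big((\|x\|^2+\|y\|^2)^2+4\|x\|^2\|y\|^2\big)$, matching the two stated forms of the right-hand side. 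I do not expect a genuine obstacle here: the only nontrivial point is the scalar inequality, and it reduces to the elementary bound $\operatorname{Re}(a\bar b)\le|a||b|$ together with Cauchy--Schwarz, so the entire proof is a routine adaptation of the 4-inequality argument.
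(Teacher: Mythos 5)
Your proposal is correct and follows essentially the same route as the paper: expand $x\pm y$ in the 4-orthonormal basis, reduce $\|x+y\|^4+\|x-y\|^4$ to $\sum_j\bigl(|a_j+b_j|^4+|a_j-b_j|^4\bigr)$, bound the cross terms by $|a_j|^2|b_j|^2$, and finish with Cauchy--Schwarz on $\sum_j|a_jb_j|^2$. Your pointwise scalar identity $|a+b|^4+|a-b|^4=2u^2+2v^2$ with $v^2\le 4|a|^2|b|^2$ is exactly the estimate the paper carries out inside the sums (there written as $2\sum_j((\bar{a_j}b_j)^2+(a_j\bar{b_j})^2)\le 4\sum_j|a_jb_j|^2$), so the two arguments differ only in bookkeeping.
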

\begin{proof}
Let $ \{x_j\}_{j \in \mathbb{J}}$ be a 4-orthonormal  basis for $\mathcal{X}$, and $x ,y \in \mathcal{X}.$ Then $ x=\sum_{j\in \mathbb{J}}a_jx_j, y=\sum_{j\in \mathbb{J}}b_jx_j $ for unique $  \{a_j\}_{j \in \mathbb{J}},  \{b_j\}_{j \in \mathbb{J}} \in \ell^4(\mathbb{J})$. Consider

\begin{align*}
 \|x+y\|^4+ \|x-y\|^4&=\left\|\sum_{j\in \mathbb{J}}(a_j+b_j)x_j\right\|^4+\left\|\sum_{j\in \mathbb{J}}(a_j-b_j)x_j\right\|^4
 =\sum_{j\in \mathbb{J}}|a_j+b_j|^4+\sum_{j\in \mathbb{J}}|a_j-b_j|^4\\
 &=2\sum_{j\in \mathbb{J}}|a_j|^4+2\sum_{j\in \mathbb{J}}|b_j|^4+2\sum_{j\in \mathbb{J}}((\bar{a_j}b_j)^2+(a_j\bar{b_j})^2)+8\sum_{j\in \mathbb{J}}|a_jb_j|^2\\
 &=2\|x\|^4+2\|y\|^4+2\sum_{j\in \mathbb{J}}((\bar{a_j}b_j)^2+(a_j\bar{b_j})^2)+8\sum_{j\in \mathbb{J}}|a_jb_j|^2\\
 &\leq 2(\|x\|^4+\|y\|^4)+4\sum_{j\in \mathbb{J}}|\bar{a_j}b_j|^2+8\sum_{j\in \mathbb{J}}|a_jb_j|^2
=2(\|x\|^4+\|y\|^4)+12\sum_{j\in \mathbb{J}}|a_jb_j|^2\\
&\leq 2(\|x\|^4+\|y\|^4)+12\left(\sum_{j\in \mathbb{J}}|a_j|^4\right)^\frac{1}{2}\left(\sum_{j\in \mathbb{J}}|b_j|^4\right)^\frac{1}{2}
=2(\|x\|^4+\|y\|^4)+12\|x\|^2y\|^2.
\end{align*}
\end{proof}
\begin{theorem}\label{4PROJECTIONTHEOREM}
(4-projection theorem) If $ Y$ is a closed convex subset of   $\mathcal{X}$ which has a  4-orthonormal basis, then for every $ x \in \mathcal{X},$ there exists unique $ y \in Y$  such that 
$$\|x-y\|=\inf_{z\in Y} \|x-z\|.$$
\end{theorem}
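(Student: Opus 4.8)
The plan is to mimic the classical Hilbert space projection theorem onto a closed convex set, replacing the parallelogram law by the 4-parallelogram law (Theorem \ref{SPECIALPL}), whose right-hand side is engineered to collapse to zero along a minimizing sequence. Throughout I read the hypothesis as supplying a 4-orthonormal basis on $\mathcal{X}$, so that Theorem \ref{SPECIALPL} applies to arbitrary pairs of vectors of $\mathcal{X}$ (this is what is actually needed, since the vectors $x-y_n$ below need not lie in $Y$).

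First I would set $d \coloneqq \inf_{z \in Y}\|x - z\|$ and choose a minimizing sequence $\{y_n\}_{n=1}^\infty$ in $Y$ with $\|x - y_n\| \to d$. To show $\{y_n\}_{n=1}^\infty$ is Cauchy, apply Theorem \ref{SPECIALPL} to $u = x - y_n$ and $v = x - y_m$. Then $u - v = y_m - y_n$ and $u + v = 2\big(x - \tfrac{y_n + y_m}{2}\big)$; convexity of $Y$ gives $\tfrac{y_n+y_m}{2} \in Y$, so $\|u+v\| \geq 2d$ and hence $\|u+v\|^4 \geq 16 d^4$. Rearranging the 4-parallelogram inequality then yields
\begin{align*}
\|y_m - y_n\|^4 &= \|u-v\|^4 \leq 2(\|u\|^4 + \|v\|^4) + 12\|u\|^2\|v\|^2 - \|u+v\|^4 \\
&\leq 2(\|x-y_n\|^4 + \|x-y_m\|^4) + 12\|x-y_n\|^2\|x-y_m\|^2 - 16 d^4.
\end{align*}
As $n, m \to \infty$ the right-hand side tends to $2(d^4 + d^4) + 12 d^4 - 16 d^4 = 0$, so $\{y_n\}_{n=1}^\infty$ is Cauchy. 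Since $\mathcal{X}$ is complete and $Y$ is closed, $y_n \to y$ for some $y \in Y$, and continuity of the norm gives $\|x - y\| = d$, establishing existence.

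For uniqueness, suppose $y, y' \in Y$ both attain the distance $d$. Put $u = x - y$ and $v = x - y'$, so $\|u\| = \|v\| = d$. Again $\tfrac{y + y'}{2} \in Y$, whence $\|u + v\|^4 \geq 16 d^4$, and the identical rearrangement of Theorem \ref{SPECIALPL} forces $\|y - y'\|^4 = \|u - v\|^4 \leq 4 d^4 + 12 d^4 - 16 d^4 = 0$, i.e.\ $y = y'$.

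I expect the only delicate point to be the bookkeeping that makes the limiting right-hand side vanish \emph{exactly}: the coefficients $2$ and $12$ in the 4-parallelogram law must combine with the factor $16 = 2^4$ arising from $\|u+v\|^4 \geq (2d)^4$ to annihilate $2(d^4+d^4) + 12 d^4 = 16 d^4$. This cancellation is precisely what the 4-orthonormal structure guarantees (a generic uniformly convex norm would not give it on the nose), and once it is in hand, completeness of $\mathcal{X}$ and closedness of $Y$ are invoked exactly as in the Hilbert space argument.
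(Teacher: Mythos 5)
Your proof is correct and follows essentially the same route as the paper's: a minimizing sequence made Cauchy by rearranging the 4-parallelogram law against the convexity bound $\|u+v\|\geq 2d$, followed by completeness, closedness, and the identical computation for uniqueness. Your side remark that the 4-orthonormal basis must be read as living on $\mathcal{X}$ (not merely on $Y$) is a fair observation, since both your argument and the paper's apply Theorem \ref{SPECIALPL} to vectors of the form $x-y_n$ that need not lie in $Y$.
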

\begin{proof}
Let $ \{x_n\}_{n=1}^\infty$ be a sequence in $ Y$ such that 
$\lim_{n\rightarrow\infty}\|x_n-x\| =\inf_{z\in Y} \|x-z\|.$
Let $ d=\inf_{z\in Y} \|x-z\|$. Using the convexity of $Y$, we see that 
$$\left\|\frac{x_n+x_m}{2}-x\right\| \geq d , \forall n , m \in \mathbb{N},~\text{i.e.,}~ \left\|x_n-x_m-2x\right\| \geq 2d,~ \forall n , m \in \mathbb{N}.$$
By using 4-parallelogram law (Theorem \ref{SPECIALPL}) we get 
\begin{align*}
\|x_n-x_m\|^4&=\|(x_n-x)-(x_m-x)\|^4\\
&\leq 2\|x_n-x\|^4+2\|x_m-x\|^4+12\|x_n-x\|^2\|x_m-x\|^2-\|(x_n-x)+(x_m-x)\|^4\\
&\leq 2\|x_n-x\|^4+2\|x_m-x\|^4+12\|x_n-x\|^2\|x_m-x\|^2-16d^4, ~\forall n , m \in \mathbb{N}.
\end{align*}
Therefore $\|x_n-x_m\|^4 \rightarrow2d^4+2d^4+12d^2d^2-16d^4 =0 $ as $ n,m \rightarrow \infty$. Let $ y=\lim_{n\rightarrow \infty}x_n$. Since $ Y$ is closed, $ y \in Y$. Now $ \|x-y\|=\|x-\lim_{n\rightarrow \infty}x_n\|=\lim_{n\rightarrow \infty}\|x-x_n\|=d$. Let  $y^* \in Y$ be such that $\|x-y^*\|=\inf_{z\in Y} \|x-z\|.$ We again use the convexity of $ Y$ to get $(y+y^*)/2 \in Y $ and hence $ \|x-(y+y^*)/2\|\geq d $.  Then 
\begin{align*}
\|y-y^*\|^4&=\|(y-x)-(y^*-x)\|^4\\
&\leq 2\|y-x\|^4+2\|y^*-x\|^4+12\|y-x\|^2\|y^*-x\|^2-\|(y-x)+(y^*-x)\|^4\\
&= 2d^4+2d^4+12d^2d^2-\|y+y^*-2x\|^4\leq 16d^4-16d^4=0.
\end{align*} 
Therefore $ y=y^*$.
\end{proof}
\begin{corollary}
If $ Y$ is a closed  subspace  of   $\mathcal{X}$ which has a  4-orthonormal basis, then for every $ x \in \mathcal{X},$ there exists unique $ y \in Y$  such that $\|x-y\|=\inf_{z\in Y} \|x-z\|.$
\end{corollary}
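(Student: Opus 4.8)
The plan is to obtain this corollary as an immediate specialization of the 4-projection theorem (Theorem \ref{4PROJECTIONTHEOREM}), since every subspace is in particular a convex set. Concretely, the only thing that needs to be checked is that the hypotheses of Theorem \ref{4PROJECTIONTHEOREM} are met by a closed subspace, after which its conclusion — existence and uniqueness of a norm-minimizing point — is exactly the assertion to be proved.

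First I would verify convexity: if $Y$ is a subspace of $\mathcal{X}$ and $z_1, z_2 \in Y$, then for any $t \in [0,1]$ the combination $tz_1 + (1-t)z_2$ is again in $Y$ because $Y$ is closed under scalar multiplication and addition. Hence $Y$ is convex, and being additionally closed by hypothesis, it is a closed convex subset of $\mathcal{X}$. The second hypothesis of Theorem \ref{4PROJECTIONTHEOREM}, namely that the subset possesses a $4$-orthonormal basis, is carried over verbatim, since it is the very same set $Y$ that is assumed here to have a $4$-orthonormal basis.

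Having matched the hypotheses, I would simply invoke Theorem \ref{4PROJECTIONTHEOREM} applied to the closed convex set $Y$: for every $x \in \mathcal{X}$ there is a unique $y \in Y$ with $\|x-y\| = \inf_{z \in Y}\|x-z\|$, which is the desired conclusion.

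There is essentially no obstacle here; the content of the result lives entirely in Theorem \ref{4PROJECTIONTHEOREM}, whose proof already carried out the Cauchy-sequence argument built on the $4$-parallelogram law (Theorem \ref{SPECIALPL}). The corollary is a one-line deduction once convexity of subspaces is recorded, and so I would keep the proof to a single sentence noting that a subspace is convex and then citing Theorem \ref{4PROJECTIONTHEOREM}.
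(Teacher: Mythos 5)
Your proposal is correct and matches the paper's intent exactly: the paper states this corollary without proof precisely because it is the immediate specialization of Theorem \ref{4PROJECTIONTHEOREM} to the closed convex set $Y$, convexity of a subspace being automatic. Nothing further is needed.
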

\begin{remark}
Since $ \ell^4(\mathbb{J})$ has a 4-orthonormal basis (the standard Schauder basis $ \{e_j\}_{j \in \mathbb{J}}$  is a 4-orthonormal basis for $ \ell^4(\mathbb{J})$), 4-inequality, 4-parallelogram law, and 4-projection theorem hold in $ \ell^4(\mathbb{J})$.
\end{remark}
\begin{proposition}
If $\mathcal{X}$ has a 4-orthonormal basis, then $\mathcal{X}$ is uniformly convex.
\end{proposition}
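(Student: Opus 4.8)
The plan is to verify uniform convexity straight from the definition, using the 4-parallelogram law (Theorem \ref{SPECIALPL}) as the only tool. Recall that $\mathcal{X}$ is uniformly convex if for each $\epsilon \in (0,2]$ there is a $\delta>0$ such that whenever $\|x\|=\|y\|=1$ and $\|x-y\|\geq \epsilon$, one has $\|(x+y)/2\|\leq 1-\delta$. So I would fix an arbitrary such pair $x,y$ and feed it into the 4-parallelogram law.

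First I would specialize Theorem \ref{SPECIALPL} to unit vectors $\|x\|=\|y\|=1$, which gives
$$\|x+y\|^4+\|x-y\|^4\leq 2(1+1)+12=16.$$
The crucial observation is that this inequality is oriented in the direction useful for convexity: it bounds $\|x+y\|^4$ from above in terms of $\|x-y\|$. Indeed, rearranging yields $\|x+y\|^4\leq 16-\|x-y\|^4$, so under the hypothesis $\|x-y\|\geq\epsilon$ we obtain $\|x+y\|^4\leq 16-\epsilon^4$.

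Next I would extract the modulus of convexity explicitly. Taking fourth roots and dividing by $2$,
$$\left\|\frac{x+y}{2}\right\|\leq \frac{(16-\epsilon^4)^{1/4}}{2}=\left(1-\frac{\epsilon^4}{16}\right)^{1/4}.$$
Since $0<\epsilon\leq 2$, the quantity $1-\epsilon^4/16$ lies in $[0,1)$, so its fourth root is strictly less than $1$; hence setting $\delta\coloneqq 1-\left(1-\frac{\epsilon^4}{16}\right)^{1/4}>0$ completes the verification, and this $\delta$ depends only on $\epsilon$, as required.

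There is no serious obstacle here: the argument is essentially a one-line consequence of Theorem \ref{SPECIALPL}, mirroring the classical deduction of uniform convexity of a Hilbert space from the parallelogram law (there one uses the equality $\|x+y\|^2=4-\|x-y\|^2$, while here that equality is replaced by the 4-parallelogram \emph{inequality}). The only points genuinely worth checking are that the inequality in Theorem \ref{SPECIALPL} points the correct way — it gives an upper bound on $\|x+y\|^4+\|x-y\|^4$, which is exactly what produces an upper bound on $\|(x+y)/2\|$ — and that the resulting $\delta$ is both positive and independent of the particular pair $x,y$, both of which hold by the computation above.
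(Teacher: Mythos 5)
Your proof is correct and follows essentially the same route as the paper: both apply the 4-parallelogram law to unit vectors (you to $x,y$ directly, the paper to $x/2,y/2$, which is the same by homogeneity) and arrive at $\|(x+y)/2\|^4\leq 1-\epsilon^4/16$. Your choice $\delta=1-(1-\epsilon^4/16)^{1/4}$ is in fact the correct one — the paper's stated condition $\delta\leq(1-\epsilon^4/16)^{1/4}$ appears to be a slip for $\delta\leq 1-(1-\epsilon^4/16)^{1/4}$.
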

\begin{proof}
Let  $0< \epsilon\leq 2$. Let $ x, y \in\mathcal{X} $ such that $ \|x\|= 1, \|y\|= 1$ and $ \|x-y\|\geq \epsilon $. Take  $ 0<\delta\leq (1-\epsilon^4/16)^{1/4} $. Then by  the 4-parallelogram law,
\begin{align*}
 \left\|\frac{x+y}{2}\right\|^4&\leq 2\left\|\frac{x}{2}\right\|^4+2\left\|\frac{x}{2}\right\|^4+12\left\|\frac{x}{2}\right\|^2 \left\|\frac{y}{2}\right\|^2-\left\|\frac{x-y}{2}\right\|^4\leq \frac{1}{8}+\frac{1}{8}+\frac{3}{4}-\frac{\epsilon^4}{16}\\
 &=1-\frac{\epsilon^4}{16}\leq (1-\delta)^4.
\end{align*} 
\end{proof}
\begin{theorem}
\begin{enumerate}[\upshape(i)]
\item Every p-orthonormal set $Y$ for $\mathcal{X}$ is contained in a maximal p-orthonormal set.
\item Every $\mathcal{X}$ has a maximal p-orthonormal set.
\end{enumerate}
\end{theorem}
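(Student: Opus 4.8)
The plan is to prove (i) by a routine application of Zorn's lemma and to deduce (ii) by applying (i) to a suitable singleton, exactly as in the analogous results for $\mathcal{B}(\mathcal{H},\mathcal{H}_0)$ and for Hilbert C*-modules established earlier in the paper. First I would fix a p-orthonormal set $Y$ for $\mathcal{X}$ and consider the poset $(\mathscr{P},\preceq)$, where $\mathscr{P}=\{Z : Z \text{ is a p-orthonormal set for } \mathcal{X} \text{ with } Z\supseteq Y\}$ and $Z_1\preceq Z_2$ means $Z_1\subseteq Z_2$. The collection $\mathscr{P}$ is nonempty since $Y\in\mathscr{P}$ (this also covers the empty chain, whose upper bound is $Y$). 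To invoke Zorn's lemma it suffices to show that every chain in $\mathscr{P}$ has an upper bound, and the natural candidate is the union of the chain.

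The key step, and the only nontrivial point, is verifying that the union $W=\bigcup_{Z\in\mathscr{C}}Z$ of a chain $\mathscr{C}\subseteq\mathscr{P}$ is again p-orthonormal, i.e. that it satisfies both conditions of Definition \ref{ORTHONORMALITYINBANACH}. The guiding observation is that any finite subfamily of $W$ already lies in a single member of the chain: given finitely many vectors of $W$, each belongs to some $Z\in\mathscr{C}$, and since $\mathscr{C}$ is totally ordered the largest of these finitely many sets contains them all. Consequently the finite-subset identity $\|\sum_{j\in\mathbb{S}}a_jx_j\|^p=\sum_{j\in\mathbb{S}}|a_j|^p$ of Proposition \ref{ellp}, valid in each chain member, holds for every finite index set $\mathbb{S}$ of vectors drawn from $W$.

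From this finite identity the two infinite-sum conditions for $W$ follow by soft analysis. For the first condition, if $\sum_{j\in\mathbb{L}}c_jx_j$ converges in $\mathcal{X}$, then passing to the net of finite partial sums and using continuity of the norm gives $\sum_{j\in\mathbb{L}}|c_j|^p=\|\sum_{j\in\mathbb{L}}c_jx_j\|^p$. For the second condition, given a scalar family $\{a_j\}$ in $\ell^p$ indexed by $W$, the finite identity shows the net of finite partial sums is Cauchy, since $\|\sum_{j\in\mathbb{S}_2\setminus\mathbb{S}_1}a_jx_j\|^p=\sum_{j\in\mathbb{S}_2\setminus\mathbb{S}_1}|a_j|^p$ is small once $\mathbb{S}_1$ is large; hence by completeness of $\mathcal{X}$ the sum $\sum_j a_jx_j$ exists, and continuity of the norm yields $\|\sum_j a_jx_j\|^p=\sum_j|a_j|^p$. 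Thus $W\in\mathscr{P}$ is an upper bound for $\mathscr{C}$, Zorn's lemma supplies a maximal element, and (i) is proved.

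Finally, for (ii) I would choose any nonzero $x\in\mathcal{X}$ and set $u=x/\|x\|$; the singleton $\{u\}$ is trivially p-orthonormal, since for a single unit vector $u$ both conditions of Definition \ref{ORTHONORMALITYINBANACH} reduce to the identity $\|cu\|^p=|c|^p$. Applying (i) to $\{u\}$ then produces a maximal p-orthonormal set for $\mathcal{X}$. The main obstacle throughout is the second condition of p-orthonormality for the union: unlike the finite conditions, it quantifies over $\ell^p$ families whose supports need not lie in any single chain member, and this is precisely where completeness of $\mathcal{X}$ must be combined with the total ordering of the chain.
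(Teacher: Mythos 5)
Your proposal is correct and follows essentially the same route as the paper, which simply invokes Zorn's lemma for (i) and applies it to the normalized singleton $\{\|x\|^{-1}x\}$ for (ii). The only difference is that you supply the chain-union verification (finite subfamilies lie in a single chain member, then completeness and norm-continuity handle the two infinite-sum conditions) that the paper leaves implicit; that verification is sound.
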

\begin{proof}
(i) comes from the use of Zorn's lemma and (ii) comes from (i) by considering a nonzero $x \in \mathcal{X}$ and $Y=\{\|x\|^{-1}x\}$.
\end{proof}
\begin{theorem}
If $\mathcal{X}$ has a p-orthonormal basis $ \{x_j\}_{j \in \mathbb{J}}$,  then $\mathcal{X}$ is isometrically isomorphic to $\ell^p(\mathbb{J})$.
\end{theorem}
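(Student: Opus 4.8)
The plan is to produce the isometric isomorphism explicitly as the coordinate map associated with the given $p$-orthonormal basis. Since $\{x_j\}_{j\in\mathbb{J}}$ is a $p$-orthonormal basis for $\mathcal{X}$, in particular it is a Schauder basis, so each $x\in\mathcal{X}$ has a \emph{unique} representation $x=\sum_{j\in\mathbb{J}}a_jx_j$. First I would define $U:\mathcal{X}\to\ell^p(\mathbb{J})$ by $U(\sum_{j\in\mathbb{J}}a_jx_j)=\{a_j\}_{j\in\mathbb{J}}$; uniqueness of the expansion makes $U$ well-defined, and linearity is immediate from linearity of the coordinate functionals.

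Next I would verify that $U$ is a surjective isometry. The isometry is exactly the content of the defining condition of a $p$-orthonormal basis (see Proposition \ref{ellp}): for $x=\sum_{j\in\mathbb{J}}a_jx_j$ we have $\|x\|^p=\sum_{j\in\mathbb{J}}|a_j|^p=\|\{a_j\}_{j\in\mathbb{J}}\|_{\ell^p(\mathbb{J})}^p$, whence $\|Ux\|=\|x\|$; in particular $U$ is injective. For surjectivity I would invoke Theorem \ref{P-ORTHONORMALSEQUENCECHARACTERIZATION}: given any $\{a_j\}_{j\in\mathbb{J}}\in\ell^p(\mathbb{J})$, the series $\sum_{j\in\mathbb{J}}a_jx_j$ converges in $\mathcal{X}$ precisely because $\sum_{j\in\mathbb{J}}|a_j|^p$ converges, so $\{a_j\}_{j\in\mathbb{J}}$ lies in the range of $U$. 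Thus $U$ is a linear isometric bijection, i.e. an isometric isomorphism, which is what is claimed.

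Alternatively, and even more economically, I would simply apply the preceding theorem (part (ii), on transporting a $p$-orthonormal basis across the coordinate map) with $\mathcal{X}_0=\ell^p(\mathbb{J})$ and $\{y_j\}_{j\in\mathbb{J}}=\{e_j\}_{j\in\mathbb{J}}$ the standard Schauder basis. That basis is $p$-orthonormal for $\ell^p(\mathbb{J})$ by Example \ref{PORTHOGONALASWELLASPORTHONORMALEXAMPLE}, so the map $\sum_{j\in\mathbb{J}}a_jx_j\mapsto\sum_{j\in\mathbb{J}}a_je_j=\{a_j\}_{j\in\mathbb{J}}$ is immediately a well-defined isometric isomorphism onto $\ell^p(\mathbb{J})$.

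There is essentially no genuine obstacle here: the entire substance of the statement is already packaged in the two-sided convergence criterion of Theorem \ref{P-ORTHONORMALSEQUENCECHARACTERIZATION} (the Riesz--Fischer-type result) together with the norm identity built into Definition \ref{orthonormality in Banach}. The only point that deserves care is making sure surjectivity is argued from the ``if'' direction of Theorem \ref{P-ORTHONORMALSEQUENCECHARACTERIZATION} (so that every $\ell^p$-sequence actually yields a norm-convergent series in $\mathcal{X}$), rather than merely from denseness of $\operatorname{span}\{x_j\}_{j\in\mathbb{J}}$; everything else is formal bookkeeping.
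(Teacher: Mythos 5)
Your proposal is correct and follows essentially the same route as the paper: both construct the coordinate map $U(\sum_{j\in\mathbb{J}}a_jx_j)=\sum_{j\in\mathbb{J}}a_je_j$, obtain well-definedness and the isometry from Proposition \ref{ellp}, and get surjectivity from condition (iii) of Definition \ref{orthonormality in Banach} (equivalently, the ``if'' direction of Theorem \ref{P-ORTHONORMALSEQUENCECHARACTERIZATION}). Your alternative one-line derivation from part (ii) of the preceding theorem, with $\mathcal{X}_0=\ell^p(\mathbb{J})$ and $y_j=e_j$, is also valid and is just a repackaging of the same argument.
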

\begin{proof}
 Let $ \{e_j\}_{j \in \mathbb{J}}$	 be the standard Schauder basis for $\ell^p(\mathbb{J})$. Define $ U : \mathcal{X} \ni \sum_{j \in \mathbb{J}}a_jx_j\mapsto \sum_{j \in \mathbb{J}}a_je_j \in \ell^p(\mathbb{J})$. From Proposition \ref{ellp}, $ U$ is well-defined. Clearly it is linear. For $ x=\sum_{j \in \mathbb{J}}a_jx_j \in \mathcal{X}$, $ \|Ux\|^p=\|\sum_{j \in \mathbb{J}}a_je_j\|^p=\sum_{j \in \mathbb{J}}|a_j|^p=\|x\|^p$. Thus $ U$ is isometry. Let $ \sum_{j \in \mathbb{J}}a_je_j \in\ell^p(\mathbb{J}).$ From condition (iii) of Definition \ref{orthonormality in Banach}, we get $\sum_{j \in \mathbb{J}}a_jx_j \in \mathcal{X}$. Now it is transparent that $ U(\sum_{j \in \mathbb{J}}a_jx_j)=\sum_{j \in \mathbb{J}}a_je_j$.
\end{proof}
\begin{theorem}\label{P-ORTHONORMALBASISCHARACTERIZATION}
Let  $ \{x_j\}_{j \in \mathbb{J}}$ be a p-orthonormal basis for  $\mathcal{X}$. Then the p-orthonormal bases for $\mathcal{X}$ are precisely  $ \{Ux_j\}_{j \in \mathbb{J}}$, where $ U: \mathcal{X}\rightarrow \mathcal{X}$ is invertible isometry. 
\end{theorem}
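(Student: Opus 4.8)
The plan is to prove the two inclusions separately, and in both directions to reduce to the theorem immediately preceding this statement (the one describing how isometric isomorphisms act on $p$-orthonormal bases) specialized to the case $\mathcal{X}_0 = \mathcal{X}$, after the elementary observation that an invertible isometry of $\mathcal{X}$ onto itself is precisely an isometric isomorphism $\mathcal{X}\to\mathcal{X}$.

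For the inclusion that every family $\{Ux_j\}_{j\in\mathbb{J}}$ with $U$ an invertible isometry is a $p$-orthonormal basis, I would verify the three conditions of Definition \ref{orthonormality in Banach}. First, $U$ is bounded (being an isometry) and boundedly invertible, hence it carries the Schauder basis $\{x_j\}$ to a Schauder basis $\{Ux_j\}$, giving condition (i). For condition (ii), given $\mathbb{L}\subseteq\mathbb{J}$ and scalars $\{c_j\}_{j\in\mathbb{L}}$ with $\sum_{j\in\mathbb{L}}c_jUx_j\in\mathcal{X}$, continuity of $U$ lets me pull it out of the sum, so that $\|\sum_{j\in\mathbb{L}}c_jUx_j\|^p = \|U\sum_{j\in\mathbb{L}}c_jx_j\|^p = \|\sum_{j\in\mathbb{L}}c_jx_j\|^p = \sum_{j\in\mathbb{L}}|c_j|^p$, using that $U$ preserves norms and that $\{x_j\}$ is a $p$-orthonormal basis. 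Condition (iii) is analogous: for $\{a_j\}\in\ell^p(\mathbb{J})$, property (iii) for $\{x_j\}$ gives $\sum_{j\in\mathbb{J}}a_jx_j\in\mathcal{X}$, whence $\sum_{j\in\mathbb{J}}a_jUx_j = U\sum_{j\in\mathbb{J}}a_jx_j\in\mathcal{X}$ with the same $p$-th power of the norm. This is exactly part (i) of the preceding theorem applied with $T=U$.

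For the converse, given a $p$-orthonormal basis $\{y_j\}_{j\in\mathbb{J}}$ of $\mathcal{X}$, I would define $U:\mathcal{X}\ni\sum_{j\in\mathbb{J}}a_jx_j\mapsto\sum_{j\in\mathbb{J}}a_jy_j\in\mathcal{X}$. By Theorem \ref{P-ORTHONORMALSEQUENCECHARACTERIZATION} the target series converges precisely when $\sum_{j\in\mathbb{J}}|a_j|^p$ does, which holds because the expansion $x=\sum_{j\in\mathbb{J}}a_jx_j$ forces $\{a_j\}\in\ell^p(\mathbb{J})$; this makes $U$ well-defined, and it is linear and norm-preserving since $\|Ux\|^p=\sum_{j\in\mathbb{J}}|a_j|^p=\|x\|^p$. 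Surjectivity follows again from Theorem \ref{P-ORTHONORMALSEQUENCECHARACTERIZATION} applied to $\{y_j\}$, and injectivity from norm preservation, so $U$ is an invertible isometry; finally $Ux_j=y_j$ because $x_j$ has the trivial expansion with coefficients $\delta_{jk}$. This is part (ii) of the preceding theorem with $\mathcal{X}_0=\mathcal{X}$.

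I expect no genuinely hard step here, since the preceding theorem carries the real weight. The only points deserving care are the interchange of $U$ with the infinite sums in the first direction, which is handled by continuity of $U$, and the bookkeeping identity $Ux_j=y_j$ in the second direction, together with the recognition that an isometric isomorphism of $\mathcal{X}$ onto itself is the same object as an invertible isometry. Once these are in place, the two inclusions combine to give exactly the claimed characterization.
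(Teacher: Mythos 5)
Your proposal is correct and follows essentially the same route as the paper: the converse direction defines $U$ by coefficient matching on the expansion $\sum_{j\in\mathbb{J}}a_jx_j\mapsto\sum_{j\in\mathbb{J}}a_jy_j$ exactly as the paper does, and the forward direction verifies the three conditions of Definition \ref{orthonormality in Banach} directly (equivalently, invokes the earlier theorem on isometric isomorphisms with $\mathcal{X}_0=\mathcal{X}$). The only point to tighten is that in condition (ii) the convergence of $\sum_{j\in\mathbb{L}}c_jx_j$ must first be deduced from the assumed convergence of $\sum_{j\in\mathbb{L}}c_jUx_j$ by applying the continuous operator $U^{-1}$ (not $U$), which is precisely the step the paper records before pulling $U$ out of the sum.
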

\begin{proof}
$(\Leftarrow)$ Since $ U$ is invertible, clearly  $ \{Ux_j\}_{j \in \mathbb{J}}$ is a Schauder basis for $\mathcal{X}$. Let $ \{c_j\}_{j \in \mathbb{L}}$ be any sequence of scalars such that $ \sum_{j\in \mathbb{L}}c_jUx_j\in \mathcal{X}$. Then, linearity and continuity of $U ^{-1}$ gives $ \sum_{j\in \mathbb{L}}c_jx_j\in \mathcal{X}$. But then  $ \|\sum_{j\in \mathbb{L}}c_jUx_j\|^p = \|U(\sum_{j\in \mathbb{L}}c_jx_j)\|^p=\|\sum_{j\in \mathbb{L}}c_jx_j\|^p=\sum_{j\in \mathbb{J}}|c_j|^p$. Now,  let 
$ \{a_j\}_{j \in \mathbb{J}} \in \ell^p(\mathbb{J})$. Then we have $ \sum_{j\in \mathbb{J}}a_jx_j\in \mathcal{X}$ and $\|\sum_{j\in \mathbb{J}}a_jx_j\|^p=\sum_{j\in \mathbb{J}}|a_j|^p$ $ \Rightarrow $  $ \sum_{j\in \mathbb{J}}a_jUx_j=U(\sum_{j\in \mathbb{J}}a_jx_j)\in \mathcal{X}$ and $\|\sum_{j\in \mathbb{J}}a_jUx_j\|^p=\|U(\sum_{j\in \mathbb{J}}a_jx_j)\|^p=\|\sum_{j\in \mathbb{J}}a_jx_j\|^p=\sum_{j\in \mathbb{J}}|a_j|^p$. 

$(\Rightarrow)$	Let  $ \{y_j\}_{j \in \mathbb{J}}$ be an arbitrary  p-orthonormal basis for $\mathcal{X}$. Define $V: \mathcal{X} \ni \sum_{j \in \mathbb{J}}a_jx_j\mapsto \sum_{j \in \mathbb{J}}a_jy_j\in \mathcal{X}$. From Proposition \ref{ellp} and condition (iii) of Definiton \ref{orthonormality in Banach} we get the well-definedness of $V$. Then $ y_j=Vx_j, \forall j \in \mathbb{J}$ and  for $ x=\sum_{j \in \mathbb{J}}a_jx_j \in \mathcal{X}$, $ \|Vx\|^p=\|\sum_{j \in \mathbb{J}}a_jy_j\|^p=\sum_{j \in \mathbb{J}}|a_j|^p=\|x\|^p$, so $ U$ is an isometry. One can easily check that the inverse of $V$ is $ V^{-1}: \mathcal{X} \ni \sum_{j \in \mathbb{J}}b_jy_j\mapsto \sum_{j \in \mathbb{J}}b_jx_j\in \mathcal{X}$.
\end{proof}
\begin{definition}
A sequence $\{g_j\}_{j\in \mathbb{J}} $ in $\mathcal{X}^*$ is said to be a Riesz p-basis  w.r.t. a sequence $\{\omega_j\}_{j\in \mathbb{J}} $ in $\mathcal{X}$  if   there exist a p-orthonormal basis $ \{\tau_j\}_{j \in \mathbb{J}}$ for  $ \mathcal{X}$ and bounded invertible operators $ U,V : \mathcal{X} \rightarrow \mathcal{X}$ with resolvent of $ VU$ contains $(-\infty, 0] $ such that   $ \omega_j=V\tau_j, \forall j \in \mathbb{J}$ and $g_j=f_jU, \forall j \in \mathbb{J} $, where $ f_j :\mathcal{X} \ni \sum_{k \in \mathbb{J}}a_k\tau_k \mapsto a_j \in \mathbb{K} , \forall j \in \mathbb{J}$. We write $(\{g_j\}_{j\in \mathbb{J}},\{\omega_j\}_{j\in \mathbb{J}} )$ is a Riesz p-basis for $\mathcal{X}$.
\end{definition}
\begin{remark}
Definition of Riesz p-basis demands existence  of a  p-orthonormal basis, whereas that of p-frame doesn't.
\end{remark}

\begin{theorem}
If  $(\{g_j=f_jU\}_{j\in \mathbb{J}},\{\omega_j=V\tau_j\}_{j \in \mathbb{J}})$ is a Riesz p-basis for $ \mathcal{X}$, then there exist a unique sequence $\{h_j\}_{j\in \mathbb{J}} $ in $\mathcal{X}^*$, and a unique sequence $\{\rho_j\}_{j\in \mathbb{J}} $ in $\mathcal{X}$ such that 
$$ x=\sum_{j\in\mathbb{J}}h_j(x)\omega_j=\sum_{j\in\mathbb{J}}g_j(x)\rho_j, \forall x \in \mathcal{X}, \text{and}~ \{h_j(x)\}_{j\in \mathbb{J}}  \in \ell^p(\mathbb{J}),~\forall x \in \mathcal{X}.$$
Moreover, $(\{h_j\}_{j\in \mathbb{J}},\{\rho_j\}_{j \in \mathbb{J}})$ is a Riesz p-basis. 
\end{theorem}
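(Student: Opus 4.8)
The plan is to construct $h_j$ and $\rho_j$ explicitly from the Riesz p-basis data and then verify each claim in turn. Since $\{\tau_j\}_{j\in\mathbb{J}}$ is a p-orthonormal basis, every $x\in\mathcal{X}$ has the expansion $x=\sum_{j\in\mathbb{J}}f_j(x)\tau_j$. First I would set $h_j\coloneqq f_jV^{-1}$ and $\rho_j\coloneqq U^{-1}\tau_j$ for each $j\in\mathbb{J}$; these are manifestly a sequence in $\mathcal{X}^*$ (composition of the bounded functional $f_j$ with the bounded operator $V^{-1}$) and a sequence in $\mathcal{X}$, respectively. To get the first expansion, I would apply the expansion of $V^{-1}x$, namely $V^{-1}x=\sum_{j\in\mathbb{J}}f_j(V^{-1}x)\tau_j=\sum_{j\in\mathbb{J}}h_j(x)\tau_j$, and then push the bounded operator $V$ through the convergent series to obtain $x=V(V^{-1}x)=\sum_{j\in\mathbb{J}}h_j(x)V\tau_j=\sum_{j\in\mathbb{J}}h_j(x)\omega_j$. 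Symmetrically, writing $Ux=\sum_{j\in\mathbb{J}}f_j(Ux)\tau_j=\sum_{j\in\mathbb{J}}g_j(x)\tau_j$ and applying $U^{-1}$ gives $x=\sum_{j\in\mathbb{J}}g_j(x)U^{-1}\tau_j=\sum_{j\in\mathbb{J}}g_j(x)\rho_j$. The membership $\{h_j(x)\}_{j\in\mathbb{J}}\in\ell^p(\mathbb{J})$ follows at once from Proposition \ref{ellp}(ii) applied to the element $V^{-1}x$, since $\sum_{j\in\mathbb{J}}|h_j(x)|^p=\sum_{j\in\mathbb{J}}|f_j(V^{-1}x)|^p=\|V^{-1}x\|^p<\infty$.

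Next I would establish uniqueness. If $\{h_j'\}_{j\in\mathbb{J}}$ in $\mathcal{X}^*$ also satisfies $x=\sum_{j\in\mathbb{J}}h_j'(x)\omega_j$ for all $x$, then $\sum_{j\in\mathbb{J}}(h_j(x)-h_j'(x))V\tau_j=0$, whence $V\big(\sum_{j\in\mathbb{J}}(h_j(x)-h_j'(x))\tau_j\big)=0$; injectivity of $V$ and the uniqueness of Schauder-basis coefficients for $\{\tau_j\}$ force $h_j(x)=h_j'(x)$ for every $j$ and every $x$, i.e. $h_j=h_j'$. For the $\rho_j$, if $\{\rho_j'\}_{j\in\mathbb{J}}$ satisfies $x=\sum_{j\in\mathbb{J}}g_j(x)\rho_j'$ for all $x$, I would test this against the particular vectors $x=U^{-1}\tau_k$, for which $g_j(U^{-1}\tau_k)=f_j(\tau_k)=\delta_{j,k}$, so the expansion collapses to $\rho_k-\rho_k'=0$ for each $k$.

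Finally, to show $(\{h_j\}_{j\in\mathbb{J}},\{\rho_j\}_{j\in\mathbb{J}})$ is itself a Riesz p-basis, I would reuse the \emph{same} p-orthonormal basis $\{\tau_j\}_{j\in\mathbb{J}}$ (so the coefficient functionals are again the $f_j$) and take the operators $V^{-1}$ and $U^{-1}$ in the roles of the defining operators, since $\rho_j=U^{-1}\tau_j$ and $h_j=f_jV^{-1}$ fit the required forms $\rho_j=(U^{-1})\tau_j$ and $h_j=f_j(V^{-1})$. The one substantive point — which I expect to be the main obstacle — is the resolvent condition: the definition requires the resolvent of the product $U^{-1}V^{-1}=(VU)^{-1}$ to contain $(-\infty,0]$, and this must be deduced from the hypothesis that the resolvent of $VU$ contains $(-\infty,0]$. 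I would argue this through the spectral mapping for inverses: since $VU$ is invertible, $\sigma\big((VU)^{-1}\big)=\{\lambda^{-1}:\lambda\in\sigma(VU)\}$, so any $\mu\le 0$ lying in $\sigma\big((VU)^{-1}\big)$ would be nonzero and would yield $\mu^{-1}<0$ in $\sigma(VU)$, contradicting $(-\infty,0]\cap\sigma(VU)=\emptyset$; hence $(-\infty,0]$ lies in the resolvent of $(VU)^{-1}$, completing the verification.
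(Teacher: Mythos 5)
Your proposal is correct and follows essentially the same route as the paper: the same choices $h_j=f_jV^{-1}$, $\rho_j=U^{-1}\tau_j$, the same verification of the two expansions by pushing $V$ (resp.\ $U^{-1}$) through the convergent series, and the same uniqueness arguments via the coefficient functionals $f_k$ and the test vectors $U^{-1}\tau_k$. The only difference is that you supply a spectral-mapping justification for why the resolvent of $(VU)^{-1}$ contains $(-\infty,0]$, a point the paper simply asserts; that added detail is sound.
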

\begin{proof}
Define $ h_j\coloneqq f_jV^{-1}$, $ \rho_j \coloneqq U^{-1}\tau_j,\forall j \in \mathbb{J}$. Then $ \sum_{j\in\mathbb{J}}h_j(x)\omega_j=\sum_{j\in\mathbb{J}}f_j(V^{-1}x)V\tau_j=V(\sum_{j\in\mathbb{J}}f_j(V^{-1}x)\tau_j)=V(V^{-1}x)=x$, $\sum_{j\in\mathbb{J}}g_j(x)\rho_j=\sum_{j\in\mathbb{J}}f_j(Ux)U^{-1}\tau_j=U^{-1}(\sum_{j\in\mathbb{J}}f_j(Ux)\tau_j)=U^{-1}(Ux)=x,\forall x \in \mathcal{X}$. Now resolvent of $ U^{-1}V^{-1}=(VU)^{-1}$ contains $(-\infty, 0] $. Hence $(\{h_j\}_{j\in \mathbb{J}},\{\rho_j\}_{j \in \mathbb{J}})$ is a Riesz p-basis. Clearly $\{h_j(x)\}_{j\in \mathbb{J}}  \in \ell^p(\mathbb{J}),\forall x \in \mathcal{X}$. Suppose there are  $\{\phi_j\}_{j\in \mathbb{J}} $ in $\mathcal{X}^*$, and  $\{x_j\}_{j\in \mathbb{J}} $ in $\mathcal{X}$ such that $x=\sum_{j\in\mathbb{J}}\phi_j(x)\omega_j=\sum_{j\in\mathbb{J}}g_j(x)x_j, \forall x \in \mathcal{X}.$ Then $x=\sum_{j\in\mathbb{J}}\phi_j(x)V\tau_j=\sum_{j\in\mathbb{J}}f_j(Ux)x_j, \forall x \in \mathcal{X}.$ First equality gives $ V^{-1}x=\sum_{j\in\mathbb{J}}\phi_j(x)\tau_j,\forall x \in \mathcal{X}$. By applying  $f_k$ on this  gives $f_k(V^{-1}x)=\phi_k(x),\forall x \in \mathcal{X},\forall k \in \mathbb{J}$. Therefore $ \phi_j=h_j, \forall j \in \mathbb{J}$. Since $U$ is invertible, from  $ x=\sum_{j\in\mathbb{J}}f_j(Ux)x_j, \forall x \in \mathcal{X}$ we get $ U^{-1}y=\sum_{j\in\mathbb{J}}f_j(y)x_j, \forall y \in \mathcal{X}$. We evaluate this at $\tau_k$ to get $U^{-1}\tau_k=x_k, \forall k \in \mathbb{J}$. Hence $ x_j=\rho_j, \forall j \in \mathbb{J}$.
\end{proof} 
\begin{proposition}
If $(\{g_j\}_{j\in \mathbb{J}},\{\omega_j\}_{j \in \mathbb{J}})$ is a Riesz p-basis for $ \mathcal{X}$, then
\begin{enumerate}[\upshape(i)]
\item $\{\omega_j\}_{j \in \mathbb{J}}$ is complete in  $ \mathcal{X}$, and there exist $a, b>0$ such that for every finite subset $ \mathbb{S}\subseteq\mathbb{J}$, 
$$ a\sum_{j\in \mathbb{S}}|c_j|^p\leq \left\|\sum_{j\in \mathbb{S}}c_j\omega_j \right\|^p\leq b\sum_{j\in \mathbb{S}}|c_j|^p,~\forall  c_j \in \mathbb{K}, \forall j \in \mathbb{S}, \text{and}$$
\item the collection of functionals $ \{g_j\}_{j\in \mathbb{J}}$ is uniformly bounded (in the operator-norm) in $ \mathcal{X}^*$.
\end{enumerate}
\end{proposition}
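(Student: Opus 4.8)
The plan is to unwind the definition of Riesz $p$-basis and transport every claim through the two invertible operators that the definition provides. By definition there are a $p$-orthonormal basis $\{\tau_j\}_{j\in\mathbb{J}}$ for $\mathcal{X}$ and bounded invertible operators $U,V\in\mathcal{B}(\mathcal{X})$, with the resolvent of $VU$ containing $(-\infty,0]$, such that $\omega_j=V\tau_j$ and $g_j=f_jU$ for all $j$, where $f_j$ is the $j$-th coordinate functional associated with $\{\tau_j\}_{j\in\mathbb{J}}$. The invertibility of $V$ handles everything in (i), and the structure $g_j=f_jU$ together with the norm-one property of the $f_j$ handles (ii).

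First I would establish completeness of $\{\omega_j\}_{j\in\mathbb{J}}$. Since $\{\tau_j\}_{j\in\mathbb{J}}$ is in particular a Schauder basis, $\overline{\operatorname{span}}\{\tau_j\}_{j\in\mathbb{J}}=\mathcal{X}$. Because $V$ is a bounded invertible operator, hence a homeomorphism, it commutes with closure, so
$$\overline{\operatorname{span}}\{\omega_j\}_{j\in\mathbb{J}}=\overline{V(\operatorname{span}\{\tau_j\}_{j\in\mathbb{J}})}=V\big(\overline{\operatorname{span}}\{\tau_j\}_{j\in\mathbb{J}}\big)=V(\mathcal{X})=\mathcal{X}.$$
For the two-sided inequality, I would fix a finite set $\mathbb{S}\subseteq\mathbb{J}$ and scalars $c_j$ and factor the sum through $V$, writing $\big\|\sum_{j\in\mathbb{S}}c_j\omega_j\big\|^p=\big\|V\big(\sum_{j\in\mathbb{S}}c_j\tau_j\big)\big\|^p$. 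Combining the elementary estimate $\|V^{-1}\|^{-1}\|y\|\leq\|Vy\|\leq\|V\|\|y\|$ with the $p$-orthonormality identity $\big\|\sum_{j\in\mathbb{S}}c_j\tau_j\big\|^p=\sum_{j\in\mathbb{S}}|c_j|^p$ from Proposition \ref{ellp} gives
$$\|V^{-1}\|^{-p}\sum_{j\in\mathbb{S}}|c_j|^p\leq\left\|\sum_{j\in\mathbb{S}}c_j\omega_j\right\|^p\leq\|V\|^p\sum_{j\in\mathbb{S}}|c_j|^p,$$
so one may take $a=\|V^{-1}\|^{-p}$ and $b=\|V\|^p$.

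For (ii) I would appeal to Lemma \ref{CFLEMMA}: since $\{\tau_j\}_{j\in\mathbb{J}}$ is $p$-orthonormal with $\overline{\operatorname{span}}\{\tau_j\}_{j\in\mathbb{J}}=\mathcal{X}$, each coordinate functional $f_j$ extends uniquely to all of $\mathcal{X}$ with $\|f_j\|=1$. Hence $\|g_j\|=\|f_jU\|\leq\|f_j\|\,\|U\|=\|U\|$ for every $j\in\mathbb{J}$, which is the desired uniform bound (by $\|U\|$) in $\mathcal{X}^*$.

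The entire argument is a direct transport through the invertible maps $U$ and $V$, so I do not anticipate a serious obstacle. The only two points requiring a little care are the observation that $V$, being a homeomorphism, commutes with closure (needed for completeness), and the fact that the coordinate functionals $f_j$ have norm exactly one, which is precisely the content of Lemma \ref{CFLEMMA} and is what makes the bound in (ii) independent of $j$.
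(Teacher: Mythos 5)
Your proposal is correct and follows essentially the same route as the paper: completeness of $\{\omega_j\}_{j\in\mathbb{J}}$ from invertibility of $V$, the two-sided bound by factoring $\sum_{j\in\mathbb{S}}c_j\omega_j=V\bigl(\sum_{j\in\mathbb{S}}c_j\tau_j\bigr)$ and using $\bigl\|\sum_{j\in\mathbb{S}}c_j\tau_j\bigr\|^p=\sum_{j\in\mathbb{S}}|c_j|^p$, and the uniform bound $\|g_j\|\leq\|f_j\|\,\|U\|=\|U\|$ from $\|f_j\|=1$. The only cosmetic difference is that you cite Lemma \ref{CFLEMMA} for $\|f_j\|=1$ while the paper re-derives it inline via $|f_j(x)|^p\leq\sum_{k\in\mathbb{J}}|f_k(x)|^p=\|x\|^p$ and $f_j(\tau_j)=1$; both are the same estimate.
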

\begin{proof}
Let  $ \{\tau_j\}_{j \in \mathbb{J}}$ be a p-orthonormal basis for   $ \mathcal{X}$ and  $ U,V : \mathcal{X} \rightarrow \mathcal{X}$ be  bounded invertible with resolvent of $ VU$ contains $(-\infty, 0] $ such that   $ \omega_j=V\tau_j, \forall j \in \mathbb{J}$ and $g_j=f_jU, \forall j \in \mathbb{J} $, where $ f_j :\mathcal{X} \ni \sum_{k \in \mathbb{J}}a_k\tau_k \mapsto a_j \in \mathbb{K} , \forall j \in \mathbb{J}$.
\begin{enumerate}[\upshape(i)]
\item Since $V$ is invertible, $\{\omega_j\}_{j \in \mathbb{J}}$ is complete in  $ \mathcal{X}$. We find
\begin{align*}
\frac{1}{\|V^{-1}\|^p}\sum_{j\in \mathbb{S}}|c_j|^p&=\frac{1}{\|V^{-1}\|^p}\left\| \sum_{j\in \mathbb{S}}c_j\tau_j\right\|^p\leq\left\| \sum_{j\in \mathbb{S}}c_jV\tau_j\right\|^p=\left\| \sum_{j\in \mathbb{S}}c_j\omega_j\right\|^p\\
&\leq \|V\|^p\left\| \sum_{j\in \mathbb{S}}c_j\tau_j\right\|^p= \|V\|^p\sum_{j\in \mathbb{S}}|c_j|^p, ~\forall c_j \in \mathbb{K}, \forall j \in \mathbb{S}.
\end{align*}
\item We first see $ |f_j(x)|^p\leq \sum_{k\in \mathbb{J}}|f_k(x)|^p=\|\sum_{k\in \mathbb{J}}f_k(x)\tau_k\|^p=\|x\|^p ,\forall x \in \mathcal{X},  f_j(\tau_j) =1, \forall j \in \mathbb{J}$. Therefore $\|f_j\|=1, \forall j \in \mathbb{J}$ and hence  $\sup\{\|g_j\|\}_{j \in \mathbb{J}}\leq\sup\{\|f_j\|\|U\|\}_{j \in \mathbb{J}}=\|U\|$.
\end{enumerate}	
\end{proof}

\begin{theorem}
\begin{enumerate}[\upshape(i)]
\item If $ \{\tau_j\}_{j \in \mathbb{J}}$  is a p-orthonormal basis for  $ \mathcal{X}$, then $(\{f_j\}_{j\in \mathbb{J}},\{\tau_j\}_{j \in \mathbb{J}})$ is p-frame for  $ \mathcal{X}$, where $ f_j :\mathcal{X} \ni \sum_{k \in \mathbb{J}}a_k\tau_k \mapsto a_j \in \mathbb{K} , \forall j \in \mathbb{J}$. 
\item A Riesz p-basis $(\{g_j=f_jU\}_{j\in \mathbb{J}},\{\omega_j=V\tau_j\}_{j \in \mathbb{J}})$ for $ \mathcal{X}$ is a p-frame for $ \mathcal{X}$ with optimal bounds $\|(VU)^{-1/p}\|^{-p} $ and $ \|(VU)^{1/p}\|^p$.
\end{enumerate}
\end{theorem}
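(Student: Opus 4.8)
The plan is to reduce both parts to the single identity $\widehat{S}_{f,\tau}=I_\mathcal{X}$ and then transport it through the invertible operators $U,V$, exactly as in the operator-valued version proved earlier in Section \ref{OPERATOR-VALUEDFRAMESFRAMESFORBANACHSPACES}; the only difference is that here the synthesis side involves scalar coefficients rather than adjointable operators.

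For part (i) I would first verify the two well-definedness/boundedness requirements in the definition of a $p$-frame. Since $\{\tau_j\}_{j\in\mathbb{J}}$ is a $p$-orthonormal basis, every $x\in\mathcal{X}$ has a unique expansion $x=\sum_{k\in\mathbb{J}}a_k\tau_k$, and by construction $f_j(x)=a_j$ for each $j$; Proposition \ref{ellp} then gives $\{f_j(x)\}_{j\in\mathbb{J}}=\{a_j\}_{j\in\mathbb{J}}\in\ell^p(\mathbb{J})$ together with $\|\theta_f x\|^p=\sum_{j\in\mathbb{J}}|a_j|^p=\|x\|^p$, so $\theta_f$ is a well-defined isometry. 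Condition (iii) of Definition \ref{orthonormality in Banach} makes $\widehat{\theta}_\tau$ a well-defined isometry in the same way. The key step is evaluating the frame operator on a basis expansion: for $x=\sum_{k}a_k\tau_k$ one has $\widehat{S}_{f,\tau}x=\sum_{j}f_j(x)\tau_j=\sum_{j}a_j\tau_j=x$, hence $\widehat{S}_{f,\tau}=I_\mathcal{X}$, whose resolvent is $\mathbb{C}\setminus\{1\}$ and therefore contains $(-\infty,0]$. Thus $(\{f_j\}_{j\in\mathbb{J}},\{\tau_j\}_{j\in\mathbb{J}})$ is in fact a Parseval $p$-frame.

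For part (ii) I would first record how the three operators transform. From $g_j=f_jU$ we get $\theta_g x=\{f_j(Ux)\}_{j}=\theta_f(Ux)$, i.e. $\theta_g=\theta_f U$; from $\omega_j=V\tau_j$ and the linearity and continuity of $V$ we get $\widehat{\theta}_\omega=V\widehat{\theta}_\tau$. Both are bounded, since $\theta_f$ and $\widehat{\theta}_\tau$ are bounded by part (i) and $U,V\in\mathcal{B}(\mathcal{X})$. Composing and using $\widehat{S}_{f,\tau}=I_\mathcal{X}$,
\begin{align*}
\widehat{S}_{g,\omega}x=\sum_{j\in\mathbb{J}}f_j(Ux)\,V\tau_j=V\left(\sum_{j\in\mathbb{J}}f_j(Ux)\tau_j\right)=V\widehat{S}_{f,\tau}(Ux)=VUx,\quad\forall x\in\mathcal{X},
\end{align*}
so $\widehat{S}_{g,\omega}=VU$. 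By the definition of a Riesz $p$-basis the resolvent of $VU$ contains $(-\infty,0]$, so clause (ii) of the $p$-frame definition is met and $(\{g_j\}_{j\in\mathbb{J}},\{\omega_j\}_{j\in\mathbb{J}})$ is a $p$-frame. To identify the bounds I would invoke that $\widehat{S}_{g,\omega}=VU$ has a well-defined $p$-th root $(VU)^{1/p}$ (Definition \ref{KOMATSU}), so the frame inequality reads $\alpha^{1/p}\|x\|\le\|(VU)^{1/p}x\|\le\beta^{1/p}\|x\|$; for the invertible operator $T=(VU)^{1/p}$ the standard identities $\inf_{\|x\|=1}\|Tx\|=\|T^{-1}\|^{-1}$ and $\sup_{\|x\|=1}\|Tx\|=\|T\|$ yield the optimal lower bound $\|(VU)^{-1/p}\|^{-p}$ and the optimal upper bound $\|(VU)^{1/p}\|^p$.

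These computations are essentially bookkeeping, and no genuinely hard obstacle arises. The only place demanding care is the justification that the scalar series $\sum_{j}f_j(Ux)\tau_j$ converges in $\mathcal{X}$ and that $V$ may be pulled outside it; this rests on the boundedness of $\widehat{\theta}_\tau$ established in part (i) together with the continuity of $V$. The substance of the argument is precisely the clean factorization $\widehat{S}_{g,\omega}=VU$, after which the claimed optimal bounds follow from the general relations $a=\|\widehat{S}^{-1/p}\|^{-p}$ and $b=\|\widehat{S}^{1/p}\|^{p}$ recorded in the $p$-frame definition.
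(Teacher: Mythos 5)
Your proposal is correct and follows essentially the same route as the paper: part (i) reduces to $\widehat{S}_{f,\tau}=I_\mathcal{X}$ with $\theta_f$ and $\widehat{\theta}_\tau$ isometries coming from conditions (ii) and (iii) of the $p$-orthonormal basis definition, and part (ii) rests on the factorizations $\theta_g=\theta_f U$, $\widehat{\theta}_\omega=V\widehat{\theta}_\tau$, and $\widehat{S}_{g,\omega}=VU$, with the optimal bounds read off from Definition \ref{KOMATSU} and the formulas $a=\|\widehat{S}_{A,\Psi}^{-1/p}\|^{-p}$, $b=\|\widehat{S}_{A,\Psi}^{1/p}\|^{p}$. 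Your added remarks on convergence of $\sum_j f_j(Ux)\tau_j$ and pulling $V$ outside the sum are sound and only make explicit what the paper leaves implicit.
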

\begin{proof}
\begin{enumerate}[\upshape(i)]
\item Second condition in Definition \ref{orthonormality in Banach} says $ \theta_f$ exists and is an isometry, whereas  third condition says $ \widehat{\theta}_\tau$ exists and is an isometry. Now the p-frame operator  $ \widehat{S}_{f,\tau}x=\sum_{j \in \mathbb{J}}f_j(x)\tau_j=I_\mathcal{X}x, \forall x \in \mathcal{X}$.
\item For $ x=\sum_{j \in \mathbb{J}}f_j(x)\tau_j \in \mathcal{X}$ we see $ \theta_g(x)=\{(f_jU)(x)\}_{j\in \mathbb{J}}=\theta_f(Ux)\in \ell^p(\mathbb{J})$, and for $ \{a_j\}_{j\in \mathbb{J}}\in \ell^p(\mathbb{J})$ we see $ \widehat{\theta}_\omega(\{a_j\}_{j\in \mathbb{J}})=\sum_{j \in \mathbb{J}}a_j\omega_j=\sum_{j \in \mathbb{J}}a_jV\tau_j=V\widehat{\theta}\tau(\{a_j\}_{j\in \mathbb{J}})$. Also $ \widehat{S}_{g,\omega}x=\sum_{j \in \mathbb{J}}g_j(x)\omega_j=\sum_{j \in \mathbb{J}}f_j(Ux)V\tau_j=V(\sum_{j \in \mathbb{J}}f_j(Ux)\tau_j)=V(Ux)$. Therefore $ (\{g_j\}_{j\in \mathbb{J}},\{\omega_j\}_{j\in \mathbb{J}})$ is a p-frame; for optimal bounds we note that $ VU$  invertible and $(VU)^{-1/p}$, $ (VU)^{1/p} $ exist (from Definition \ref{KOMATSU}).
\end{enumerate}
\end{proof}

\begin{proposition}
For every $ \{f_j\}_{j \in \mathbb{J}}  \in \widehat{\mathscr{F}}_{\tau,p}$,
\begin{enumerate}[\upshape (i)]
\item $ \widehat{S}_{f, \tau} = \widehat{\theta}_\tau\theta_f .$  
\item $ (\{f_j\}_{j \in \mathbb{J}}, \{\tau_j \}_ {j \in \mathbb{J}})$ is Parseval if and only if $  \widehat{\theta}_\tau\theta_f =I_\mathcal{X}.$ 
\item $ (\{f_j\}_{j \in \mathbb{J}}, \{\tau_j \}_ {j \in \mathbb{J}})$ is Parseval  if and only if $ \theta_f\widehat{\theta}_\tau $ is idempotent.
\item $\theta_f\widehat{S}_{f,\tau}^{-1}\widehat{\theta}_\tau$ is idempotent.
\item $\theta_f $ is  injective whose range is closed.
\item  $\widehat{\theta}_\tau $ is  surjective.
\end{enumerate}
\end{proposition}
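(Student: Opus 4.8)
The plan is to mirror the proof of Proposition~\ref{BANACHSPACEOVFFUNDAMENTALLEMMA} (its operator-version counterpart) and of Proposition~\ref{2.2}, since the sequential $p$-frame setting is governed by exactly the same algebraic relations among $\theta_f$, $\widehat{\theta}_\tau$ and $\widehat{S}_{f,\tau}$. The cornerstone is item (i), which I would establish by a one-line pointwise computation: for $x\in\mathcal{X}$,
$$\widehat{\theta}_\tau\theta_f x=\widehat{\theta}_\tau(\{f_j(x)\}_{j\in\mathbb{J}})=\sum_{j\in\mathbb{J}}f_j(x)\tau_j=\widehat{S}_{f,\tau}x,$$
so $\widehat{S}_{f,\tau}=\widehat{\theta}_\tau\theta_f$. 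Everything else is extracted from this identity together with the invertibility of $\widehat{S}_{f,\tau}$: indeed $0$ lies in its resolvent set because $(-\infty,0]$ does, so $0\cdot I_\mathcal{X}-\widehat{S}_{f,\tau}=-\widehat{S}_{f,\tau}$, and hence $\widehat{S}_{f,\tau}$, is invertible.

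Item (ii) is immediate, since Parseval means $\widehat{S}_{f,\tau}=I_\mathcal{X}$, which by (i) is exactly $\widehat{\theta}_\tau\theta_f=I_\mathcal{X}$. For (iii), the forward implication uses $\widehat{\theta}_\tau\theta_f=I_\mathcal{X}$ to compute $(\theta_f\widehat{\theta}_\tau)^2=\theta_f(\widehat{\theta}_\tau\theta_f)\widehat{\theta}_\tau=\theta_f\widehat{\theta}_\tau$; for the converse I would follow the device in Proposition~\ref{2.2}, namely from the idempotency $\theta_f\widehat{\theta}_\tau\theta_f\widehat{\theta}_\tau=\theta_f\widehat{\theta}_\tau$ premultiply by $\widehat{\theta}_\tau$ and postmultiply by $\theta_f$ to obtain $\widehat{S}_{f,\tau}^3=\widehat{S}_{f,\tau}^2$, then cancel using invertibility of $\widehat{S}_{f,\tau}$ to reach $\widehat{S}_{f,\tau}=I_\mathcal{X}$. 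Item (iv) is a direct computation once (i) is in hand:
$$(\theta_f\widehat{S}_{f,\tau}^{-1}\widehat{\theta}_\tau)^2=\theta_f\widehat{S}_{f,\tau}^{-1}(\widehat{\theta}_\tau\theta_f)\widehat{S}_{f,\tau}^{-1}\widehat{\theta}_\tau=\theta_f\widehat{S}_{f,\tau}^{-1}\widehat{S}_{f,\tau}\widehat{S}_{f,\tau}^{-1}\widehat{\theta}_\tau=\theta_f\widehat{S}_{f,\tau}^{-1}\widehat{\theta}_\tau.$$

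Items (v) and (vi) I would read off from the factorization $\widehat{\theta}_\tau\theta_f=\widehat{S}_{f,\tau}$. Since $\widehat{S}_{f,\tau}^{-1}\widehat{\theta}_\tau$ is a bounded left inverse of $\theta_f$, the map $\theta_f$ is injective; its range is closed because if $\theta_f x_n\to y$ then $x_n=(\widehat{S}_{f,\tau}^{-1}\widehat{\theta}_\tau)\theta_f x_n$ converges to some $x$, whence $y=\theta_f x$ by continuity. Dually, $\theta_f\widehat{S}_{f,\tau}^{-1}$ is a bounded right inverse of $\widehat{\theta}_\tau$, so $\widehat{\theta}_\tau$ is surjective.

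None of the steps presents a genuine obstacle; the only points requiring care are deducing the invertibility of $\widehat{S}_{f,\tau}$ from the resolvent condition rather than from self-adjointness (there being no inner product available in $\mathcal{X}$), and observing that in a Banach space a bounded left inverse already forces a closed range, so that no orthogonality argument is needed. I would therefore present (i) in full and note that (ii)--(vi) follow exactly as in Propositions~\ref{2.2} and~\ref{BANACHSPACEOVFFUNDAMENTALLEMMA}.
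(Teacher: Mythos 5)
Your proposal is correct and follows essentially the same route as the paper, whose proof of this proposition is simply the remark that (i) is a direct verification, (ii) follows from it, and the remaining items are proved as in Proposition \ref{2.2}; your fleshed-out computations (the pointwise identity for (i), the $\widehat{S}_{f,\tau}^{3}=\widehat{S}_{f,\tau}^{2}$ device for the converse of (iii), and the left/right-inverse factorizations for (v)--(vi)) are exactly the intended arguments. Your explicit observations that invertibility of $\widehat{S}_{f,\tau}$ comes from $0$ lying in the resolvent set and that a bounded left inverse already yields closed range without any orthogonality are the right adaptations to the Banach-space setting.
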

\begin{proof}
(i) is direct verification,  and   (ii) comes from that. Arguments for the remainings are similar to the proof of Proposition \ref{2.2}.
\end{proof}
The idempotent operator $\widehat{P}_{f, \tau}\coloneqq \theta_f\widehat{S}_{f,\tau}^{-1}\widehat{\theta}_\tau$ is called as the \textit{frame idempotent} for $(\{f_j\}_{j \in \mathbb{J}}, \{\tau_j\}_ {j \in \mathbb{J}}).$
\begin{definition}
A p-frame $ (\{f_j\}_{j\in \mathbb{J}}, \{\tau_j\}_{j\in \mathbb{J}}) $ for  $ \mathcal{X}$ is said to be a Riesz  p-frame   if $ \widehat{P}_{f,\tau}= I_{\ell^p(\mathbb{J})}$. A Parseval and  Riesz p-frame (i.e., $\widehat{\theta}_\tau\theta_x=I_\mathcal{X} $ and  $\theta_f\widehat{\theta}_\tau=I_{\ell^p(\mathbb{J})}  $) is called as an orthonormal p-frame.
\end{definition}
\begin{proposition}
A p-frame $ (\{f_j\}_{j\in \mathbb{J}}, \{\tau_j\}_{j\in \mathbb{J}}) $ for  $ \mathcal{X}$ is a Riesz p-frame  if and only if  $\theta_f(\mathcal{X})=\ell^p(\mathbb{J}) .$ 
\end{proposition}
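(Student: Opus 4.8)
The plan is to imitate the Hilbert-space arguments of Proposition \ref{RIESZOVFCHARACTERIZATIONPROPOSITION} and its sequential analogue Proposition \ref{RIESZFRAMECHARACTERIZATION}, transplanting them to the Banach p-frame setting. The only structural inputs I need are supplied by the immediately preceding proposition: the factorization $\widehat{S}_{f,\tau}=\widehat{\theta}_\tau\theta_f$, the idempotency of $\widehat{P}_{f,\tau}=\theta_f\widehat{S}_{f,\tau}^{-1}\widehat{\theta}_\tau$, and the invertibility of $\widehat{S}_{f,\tau}$ (its resolvent contains $(-\infty,0]$ by the very definition of a p-frame, so in particular $0$ lies in the resolvent, whence $\widehat{S}_{f,\tau}^{-1}$ exists as a bounded operator). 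Throughout, $\theta_f$ maps $\mathcal{X}$ into $\ell^p(\mathbb{J})$ and $\widehat{\theta}_\tau$ maps $\ell^p(\mathbb{J})$ into $\mathcal{X}$.

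For the forward implication I would assume $\widehat{P}_{f,\tau}=I_{\ell^p(\mathbb{J})}$ and exhibit the inclusion chain
\[
\ell^p(\mathbb{J})=\widehat{P}_{f,\tau}(\ell^p(\mathbb{J}))=\theta_f\widehat{S}_{f,\tau}^{-1}\widehat{\theta}_\tau(\ell^p(\mathbb{J}))\subseteq\theta_f(\mathcal{X})\subseteq\ell^p(\mathbb{J}),
\]
where the middle inclusion holds because $\widehat{P}_{f,\tau}$ factors through $\theta_f$ on the left, and the last inclusion is merely the codomain of $\theta_f$. This forces $\theta_f(\mathcal{X})=\ell^p(\mathbb{J})$.

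For the converse I would assume $\theta_f(\mathcal{X})=\ell^p(\mathbb{J})$ and check that $\widehat{P}_{f,\tau}$ acts as the identity on $\ell^p(\mathbb{J})$. Given $y\in\ell^p(\mathbb{J})$, surjectivity of $\theta_f$ produces $x\in\mathcal{X}$ with $y=\theta_f x$; then, using $\widehat{S}_{f,\tau}=\widehat{\theta}_\tau\theta_f$,
\[
\widehat{P}_{f,\tau}y=\theta_f\widehat{S}_{f,\tau}^{-1}\widehat{\theta}_\tau\theta_f x=\theta_f\widehat{S}_{f,\tau}^{-1}\widehat{S}_{f,\tau}x=\theta_f x=y.
\]
Since $y$ was arbitrary, $\widehat{P}_{f,\tau}=I_{\ell^p(\mathbb{J})}$, i.e.\ the p-frame is a Riesz p-frame.

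I expect no genuine obstacle: the argument is purely algebraic once the factorization and the invertibility of $\widehat{S}_{f,\tau}$ are in hand. The one point deserving care—and the reason this is not a mere corollary of the earlier Hilbert statements—is that the p-frame definition is asymmetric, with $\{f_j\}_{j\in\mathbb{J}}\subseteq\mathcal{X}^*$ but $\{\tau_j\}_{j\in\mathbb{J}}\subseteq\mathcal{X}$, so there is no adjoint or duality available to interchange the roles of $\theta_f$ and $\widehat{\theta}_\tau$. Consequently, unlike Proposition \ref{RIESZFRAMECHARACTERIZATION}, I cannot append a second equivalent condition phrased through a synthesis range, and the statement legitimately characterizes the Riesz property via $\theta_f$ alone.
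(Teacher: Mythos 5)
Your proof is correct and is essentially the same argument the paper uses for the Hilbert-space analogues (Propositions \ref{RIESZOVFCHARACTERIZATIONPROPOSITION} and \ref{RIESZFRAMECHARACTERIZATION}): the inclusion chain $\ell^p(\mathbb{J})=\widehat{P}_{f,\tau}(\ell^p(\mathbb{J}))\subseteq\theta_f(\mathcal{X})$ for the forward direction, and the computation $\widehat{P}_{f,\tau}\theta_f x=\theta_f\widehat{S}_{f,\tau}^{-1}\widehat{S}_{f,\tau}x=\theta_f x$ for the converse. Your closing remark about the asymmetry of the p-frame definition correctly explains why, unlike the Hilbert-space versions, no second equivalent condition via the synthesis operator is available here.
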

\begin{definition}
A p-frame $ (\{g_j\}_{j\in \mathbb{J}}, \{\omega_j\}_{j\in \mathbb{J}})$  for $\mathcal{X}$ is said to be a dual of a p-frame $ (\{f_j\}_{j\in \mathbb{J}}, \{\tau_j\}_{j\in \mathbb{J}})$  for $\mathcal{X}$ if  $ \widehat{\theta}_\omega\theta_f=\widehat{\theta}_\tau\theta_g=I_\mathcal{X}$. The `p-frame' $ (\{\widetilde{f}_j\coloneqq f_j \widehat{S}_{f,\tau}^{-1}\}_{j\in \mathbb{J}}, \{\widetilde{\tau}_j\coloneqq \widehat{S}_{f,\tau}^{-1}\tau_j\}_{j\in \mathbb{J}})$ for $\mathcal{X}$ which is a `dual' of  $ (\{f_j\}_{j\in \mathbb{J}}, \{\tau_j\}_{j\in \mathbb{J}})$ is called as the  canonical dual of   $ (\{f_j\}_{j\in \mathbb{J}}, \{\tau_j\}_{j\in \mathbb{J}})$.
\end{definition}
We see whenever $ (\{g_j\}_{j\in \mathbb{J}}, \{\omega_j\}_{j\in \mathbb{J}})$  is a dual of $ (\{f_j\}_{j\in \mathbb{J}}, \{\tau_j\}_{j\in \mathbb{J}})$, then $ (\{f_j\}_{j\in \mathbb{J}}, \{\tau_j\}_{j\in \mathbb{J}})$ is a dual of $ (\{g_j\}_{j\in \mathbb{J}}, \{\omega_j\}_{j\in \mathbb{J}})$.
\begin{theorem}
Let $( \{f_j\}_{j\in \mathbb{J}},\{\tau_j\}_{j\in \mathbb{J}} )$ be a  p-frame for $ \mathcal{X}$ with frame bounds $ a$ and $ b.$ Then
\begin{enumerate}[\upshape(i)]
\item The canonical dual p-frame of the canonical dual p-frame  of $ (\{f_j\}_{j\in \mathbb{J}} ,\{\tau_j\}_{j\in \mathbb{J}} )$ is itself.
\item$ \frac{1}{b}, \frac{1}{a}$ are frame bounds for the canonical dual of $ (\{f_j\}_{j\in \mathbb{J}},\{\tau_j\}_{j\in \mathbb{J}}).$
\item If $ a, b $ are optimal frame bounds for $( \{f_j\}_{j\in \mathbb{J}} , \{\tau_j\}_{j\in \mathbb{J}}),$ then $ \frac{1}{b}, \frac{1}{a}$ are optimal  frame bounds for its canonical dual.
\end{enumerate} 
\end{theorem}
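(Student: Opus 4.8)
The plan is to follow the template of the p-operator-valued analogue (and of Theorem \ref{CANONICALDUALFRAMEPROPERTYSEQUENTIALVERSION}), with the understanding that the clean operator-monotonicity argument of the Hilbert space proofs is unavailable here; all quantitative information must instead be read off from the norms of the real powers $\widehat{S}_{f,\tau}^{\pm 1/p}$ (Definition \ref{KOMATSU}). First I would compute the p-frame operator of the canonical dual $(\{\widetilde{f}_j = f_j\widehat{S}_{f,\tau}^{-1}\}_{j\in\mathbb{J}}, \{\widetilde{\tau}_j = \widehat{S}_{f,\tau}^{-1}\tau_j\}_{j\in\mathbb{J}})$. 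For $x \in \mathcal{X}$,
$$\widehat{S}_{\widetilde{f},\widetilde{\tau}}x = \sum_{j\in\mathbb{J}}\widetilde{f}_j(x)\widetilde{\tau}_j = \sum_{j\in\mathbb{J}}f_j(\widehat{S}_{f,\tau}^{-1}x)\widehat{S}_{f,\tau}^{-1}\tau_j = \widehat{S}_{f,\tau}^{-1}\left(\sum_{j\in\mathbb{J}}f_j(\widehat{S}_{f,\tau}^{-1}x)\tau_j\right) = \widehat{S}_{f,\tau}^{-1}\widehat{S}_{f,\tau}\widehat{S}_{f,\tau}^{-1}x = \widehat{S}_{f,\tau}^{-1}x,$$
so $\widehat{S}_{\widetilde{f},\widetilde{\tau}} = \widehat{S}_{f,\tau}^{-1}$. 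Since $\sigma(\widehat{S}_{f,\tau}^{-1}) = \{\lambda^{-1}:\lambda\in\sigma(\widehat{S}_{f,\tau})\}$ again avoids $(-\infty,0]$ (because $0\notin\sigma(\widehat{S}_{f,\tau})$ and $\sigma(\widehat{S}_{f,\tau})\cap(-\infty,0]=\emptyset$), the canonical dual is itself a legitimate p-frame, so this step is justified.

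Statement (i) then drops out with only ordinary inverses: the $j$-th analysis functional of the canonical dual of the canonical dual is $\widetilde{f}_j\widehat{S}_{\widetilde{f},\widetilde{\tau}}^{-1} = f_j\widehat{S}_{f,\tau}^{-1}(\widehat{S}_{f,\tau}^{-1})^{-1} = f_j$, and its $j$-th synthesis vector is $\widehat{S}_{\widetilde{f},\widetilde{\tau}}^{-1}\widetilde{\tau}_j = (\widehat{S}_{f,\tau}^{-1})^{-1}\widehat{S}_{f,\tau}^{-1}\tau_j = \tau_j$, so the original p-frame is recovered.

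For (ii) and (iii) I would argue entirely through the frame-bound definition $a^{1/p}\|x\| \leq \|\widehat{S}_{f,\tau}^{1/p}x\| \leq b^{1/p}\|x\|$. As in Definition \ref{HM1}, the identity $\inf_{\|x\|=1}\|Tx\| = \|T^{-1}\|^{-1}$ for invertible $T$ identifies the optimal lower and upper bounds of $(\{f_j\},\{\tau_j\})$ as $\|\widehat{S}_{f,\tau}^{-1/p}\|^{-p}$ and $\|\widehat{S}_{f,\tau}^{1/p}\|^{p}$. The algebraic engine is the fractional-power calculus of Definition \ref{KOMATSU}, which gives $\widehat{S}^{\alpha}\widehat{S}^{\beta}=\widehat{S}^{\alpha+\beta}$ and hence the identities $(\widehat{S}_{f,\tau}^{-1})^{1/p} = (\widehat{S}_{f,\tau}^{1/p})^{-1} = \widehat{S}_{f,\tau}^{-1/p}$. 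Applying the optimal-bound formulas to the canonical dual, whose frame operator is $\widehat{S}_{f,\tau}^{-1}$, its optimal lower bound is $\|(\widehat{S}_{f,\tau}^{-1})^{-1/p}\|^{-p} = \|\widehat{S}_{f,\tau}^{1/p}\|^{-p}$ and its optimal upper bound is $\|(\widehat{S}_{f,\tau}^{-1})^{1/p}\|^{p} = \|\widehat{S}_{f,\tau}^{-1/p}\|^{p}$. For (ii), a lower bound $a$ satisfies $a \leq \|\widehat{S}_{f,\tau}^{-1/p}\|^{-p}$ and an upper bound $b$ satisfies $b \geq \|\widehat{S}_{f,\tau}^{1/p}\|^{p}$, so $1/b \leq \|\widehat{S}_{f,\tau}^{1/p}\|^{-p}$ and $1/a \geq \|\widehat{S}_{f,\tau}^{-1/p}\|^{p}$, exhibiting $1/b$ and $1/a$ as frame bounds of the canonical dual. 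For (iii), optimality of $a,b$ turns these into equalities, so $1/b$ and $1/a$ are exactly the optimal bounds.

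The main obstacle is precisely the verification of the fractional-power identities $(\widehat{S}_{f,\tau}^{-1})^{1/p} = (\widehat{S}_{f,\tau}^{1/p})^{-1} = \widehat{S}_{f,\tau}^{-1/p}$ within the holomorphic functional calculus of Definition \ref{KOMATSU}: one must check that $\zeta \mapsto \zeta^{1/p}$ applied to $\widehat{S}_{f,\tau}^{-1}$ agrees with $\zeta \mapsto \zeta^{-1/p}$ applied to $\widehat{S}_{f,\tau}$, and that the calculus is multiplicative on the relevant slit domain. Once these standard but essential facts are in place, the remainder is a direct transcription of the p-operator-valued argument and presents no further difficulty.
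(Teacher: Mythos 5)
Your proposal is correct and follows the same route as the paper: the core computation $\widehat{S}_{\widetilde{f},\widetilde{\tau}}=\widehat{S}_{f,\tau}^{-1}$ is identical, (i) follows by the same double inversion, and your treatment of (ii)--(iii) via $\|\widehat{S}_{f,\tau}^{\pm 1/p}\|$ is exactly the bookkeeping the paper compresses into ``Other facts are easy.'' The fractional-power identities you flag as the remaining obstacle are standard consequences of Komatsu's calculus and are likewise taken for granted by the paper, so nothing essential is missing.
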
 
\begin{proof}
For $ x \in \mathcal{X},$ 
$$ \sum\limits_{j\in \mathbb{J}}\widetilde{f}_j(x)\widetilde{\tau}_j= \sum\limits_{j\in \mathbb{J}}f_j(\widehat{S}_{f,\tau}^{-1}x)\widehat{S}_{f,\tau}^{-1}\tau_j = \widehat{S}_{f,\tau}^{-1}\left(\sum\limits_{j\in \mathbb{J}}f_j(\widehat{S}_{f,\tau}^{-1}x)\tau_j\right)=\widehat{S}_{f,\tau}^{-1}\widehat{S}_{f,\tau}(\widehat{S}_{f,\tau}^{-1}x)=\widehat{S}_{f,\tau}^{-1}x.$$
Thus p-frame operator for the canonical dual $(\{\widetilde{f}_j\}_{j \in \mathbb{J}},   \{\widetilde{\tau}_j\}_{j\in \mathbb{J}} )$ is $ \widehat{S}_{f,\tau}^{-1}.$
Therefore, its canonical dual is $(\{ f_j\widehat{S}_{f,\tau}^{-1}\widehat{S}_{f,\tau}\}_{j \in \mathbb{J}}, \{\widehat{S}_{f,\tau}\widehat{S}_{f,\tau}^{-1}\tau_j\}_{j\in \mathbb{J}}).$ Other facts are easy.
\end{proof}
\begin{proposition}
Let $ (\{f_j\}_{j\in \mathbb{J}}, \{\tau_j\}_{j\in \mathbb{J}})$  and $ (\{g_j\}_{j\in \mathbb{J}}, \{\omega_j\}_{j\in \mathbb{J}})$  be  p-frames for   $\mathcal{X}$. Then the following are equivalent.
\begin{enumerate}[\upshape(i)]
\item  $(\{g_j\}_{j\in \mathbb{J}}, \{\omega_j\}_{j\in \mathbb{J}}) $ is dual of $ (\{f_j\}_{j\in \mathbb{J}}, \{\tau_j\}_{j\in \mathbb{J}})$.   
\item $ \sum_{j\in \mathbb{J}}f_j(x)\omega_j=\sum_{j\in \mathbb{J}}g_j(x)\tau_j=x, \forall x \in \mathcal{X}.$
\end{enumerate}
\end{proposition}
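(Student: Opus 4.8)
The plan is to unwind the definition of a dual p-frame, which is phrased in terms of the composition of synthesis and analysis operators, into the pointwise expansion statement in (ii). The key observation is that the two conditions are literally the same equality read at two different levels: one as an identity of operators on $\mathcal{X}$, the other as that identity evaluated at an arbitrary point $x \in \mathcal{X}$. This mirrors exactly the proofs of the analogous characterizations for operator-valued frames (Proposition \ref{DUALOVFCHARACTERIZATION}), their sequential version, and the p-operator-valued case.

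First I would recall that, by definition, $(\{g_j\}_{j\in \mathbb{J}}, \{\omega_j\}_{j\in \mathbb{J}})$ is a dual of $(\{f_j\}_{j\in \mathbb{J}}, \{\tau_j\}_{j\in \mathbb{J}})$ precisely when $\widehat{\theta}_\omega\theta_f=\widehat{\theta}_\tau\theta_g=I_\mathcal{X}$. Both compositions are well-defined bounded operators on $\mathcal{X}$, since each of the four p-frames supplies bounded analysis and synthesis operators by the very definition of a p-frame. Next, for a fixed $x \in \mathcal{X}$ I would compute the action of each composition: using $\theta_f(x)=\{f_j(x)\}_{j\in \mathbb{J}}\in\ell^p(\mathbb{J})$ followed by $\widehat{\theta}_\omega(\{a_j\}_{j\in \mathbb{J}})=\sum_{j\in \mathbb{J}}a_j\omega_j$ yields $\widehat{\theta}_\omega\theta_f(x)=\sum_{j\in \mathbb{J}}f_j(x)\omega_j$, and symmetrically $\widehat{\theta}_\tau\theta_g(x)=\sum_{j\in \mathbb{J}}g_j(x)\tau_j$. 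Since $x$ is arbitrary, the operator identities $\widehat{\theta}_\omega\theta_f=I_\mathcal{X}$ and $\widehat{\theta}_\tau\theta_g=I_\mathcal{X}$ are equivalent, respectively, to the pointwise equalities $\sum_{j\in \mathbb{J}}f_j(x)\omega_j=x$ and $\sum_{j\in \mathbb{J}}g_j(x)\tau_j=x$ for all $x \in \mathcal{X}$, which is exactly (i) $\Leftrightarrow$ (ii).

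There is essentially no obstacle here beyond bookkeeping: the content is carried entirely by the definitions of the analysis/synthesis operators and of the dual, and the only point worth a brief remark is that passing between an operator equality on $\mathcal{X}$ and the equality of images at every $x$ is legitimate, which is immediate. Thus the proof collapses to recording the two displayed formulas $\widehat{\theta}_\omega\theta_f\,x=\sum_{j\in \mathbb{J}}f_j(x)\omega_j$ and $\widehat{\theta}_\tau\theta_g\,x=\sum_{j\in \mathbb{J}}g_j(x)\tau_j$, after which the equivalence is read off directly.
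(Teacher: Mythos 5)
Your proof is correct and follows exactly the paper's argument: the paper's own proof consists precisely of recording the two identities $\widehat{\theta}_\omega\theta_f x=\sum_{j\in \mathbb{J}}f_j(x)\omega_j$ and $\widehat{\theta}_\tau\theta_g x=\sum_{j\in \mathbb{J}}g_j(x)\tau_j$ and reading off the equivalence from the definition of duality. Nothing is missing.
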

\begin{proof}
$\widehat{\theta}_\omega\theta_f x= \sum_{j\in \mathbb{J}}f_j(x)\omega_j, \widehat{\theta}_\tau\theta_g x= \sum_{j\in \mathbb{J}}g_j(x)\tau_j, \forall x \in \mathcal{X}$.
\end{proof}
\begin{proposition}
Let $ (\{f_j\}_{j\in \mathbb{J}}, \{\tau_j\}_{j\in \mathbb{J}})$ be a p-frame for   $\mathcal{X}$. If $\{\tau_j\}_{j\in \mathbb{J}}$ is a Schauder basis for   $\mathcal{X}$ and $ f_j(\tau_k)=\delta_{j,k},\forall j, k \in \mathbb{J}$, then $ (\{f_j\}_{j\in \mathbb{J}}, \{\tau_j\}_{j\in \mathbb{J}}) $ has unique dual.
\end{proposition}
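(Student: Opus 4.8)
The plan is to exploit the rigidity coming from the two hypotheses — that $\{\tau_j\}_{j\in\mathbb{J}}$ is a Schauder basis and that $f_j(\tau_k)=\delta_{j,k}$ — to pin down both components of any dual. First I would show that the functionals $f_j$ are forced to be the coordinate functionals of the basis $\{\tau_j\}_{j\in\mathbb{J}}$. Indeed, each $f_j$ lies in $\mathcal{X}^*$ and is therefore continuous, so for an arbitrary $x\in\mathcal{X}$ with (unique) Schauder expansion $x=\sum_{k\in\mathbb{J}}c_k\tau_k$ one has
\[
f_j(x)=\lim_N f_j\Bigl(\sum_{k=1}^N c_k\tau_k\Bigr)=\lim_N\sum_{k=1}^N c_k f_j(\tau_k)=c_j.
\]
Hence $x=\sum_{j\in\mathbb{J}}f_j(x)\tau_j$ for every $x$, which says $\widehat{S}_{f,\tau}=I_\mathcal{X}$; in particular $(\{f_j\}_{j\in\mathbb{J}},\{\tau_j\}_{j\in\mathbb{J}})$ is a Parseval p-frame that is its own dual.

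Next, let $(\{g_j\}_{j\in\mathbb{J}},\{\omega_j\}_{j\in\mathbb{J}})$ be any dual. By the preceding proposition, the defining identities $\widehat{\theta}_\omega\theta_f=\widehat{\theta}_\tau\theta_g=I_\mathcal{X}$ are equivalent to
\[
\sum_{j\in\mathbb{J}}f_j(x)\omega_j=\sum_{j\in\mathbb{J}}g_j(x)\tau_j=x,\qquad\forall x\in\mathcal{X}.
\]
To recover the $\omega_j$, I would evaluate the first identity at $x=\tau_k$: since $f_j(\tau_k)=\delta_{j,k}$, the sum collapses to $\omega_k$, giving $\omega_k=\tau_k$ for every $k\in\mathbb{J}$. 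Thus the second component of any dual is forced to equal $\{\tau_j\}_{j\in\mathbb{J}}$.

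To recover the $g_j$, I would compare the second identity $x=\sum_{j\in\mathbb{J}}g_j(x)\tau_j$ with the expansion $x=\sum_{j\in\mathbb{J}}f_j(x)\tau_j$ established in the first step. Because $\{\tau_j\}_{j\in\mathbb{J}}$ is a Schauder basis, the coefficients in such an expansion are unique, so $g_j(x)=f_j(x)$ for all $x$ and all $j$, i.e. $g_j=f_j$. Hence every dual coincides with $(\{f_j\}_{j\in\mathbb{J}},\{\tau_j\}_{j\in\mathbb{J}})$, proving uniqueness. The only real point requiring care is the first step: one must be sure that the continuity of $f_j$ together with $f_j(\tau_k)=\delta_{j,k}$ genuinely identifies $f_j$ with the $j$-th coordinate functional, so that the identity $\widehat{S}_{f,\tau}=I_\mathcal{X}$ is available; everything afterwards is a direct evaluation at basis vectors plus the uniqueness of Schauder coefficients.
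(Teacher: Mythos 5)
Your proof is correct and uses essentially the same two ingredients as the paper's: evaluating the reconstruction identity at the basis vectors $\tau_k$ (where $f_j(\tau_k)=\delta_{j,k}$ collapses the sum) and invoking uniqueness of Schauder coefficients. The only difference is cosmetic — the paper subtracts two arbitrary duals to show they coincide, while you identify the unique dual explicitly as $(\{f_j\}_{j\in\mathbb{J}},\{\tau_j\}_{j\in\mathbb{J}})$ itself, which is a slightly stronger (and equally valid) conclusion.
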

\begin{proof}
Let $(\{g_j\}_{j\in \mathbb{J}}, \{\omega_j\}_{j\in \mathbb{J}})$  and $ (\{h_j\}_{j\in \mathbb{J}}, \{\rho_j\}_{j\in \mathbb{J}})$ be two dual p-frames of $ (\{f_j\}_{j\in \mathbb{J}}, \{\tau_j\}_{j\in \mathbb{J}})$. Then $ \sum_{j\in \mathbb{J}}(g_j(x)-h_j(x))\tau_j=0, \forall x \in \mathcal{X}$ and $ \sum_{j\in \mathbb{J}}f_j(x)(\omega_j-\rho_j)=0, \forall x \in \mathcal{X}$. First equality gives $ g_j=h_j, \forall j \in \mathbb{J}$ and by evaluating second equality at a fixed $ x_k$ gives $ \omega_k=\rho_k$, and $ k$ is free.
\end{proof}
\begin{lemma}\label{DUALCHARATERIZATIONLEMMABANACH1}
Let  $ (\{f_j\}_{j\in \mathbb{J}}, \{\tau_j\}_{j\in \mathbb{J}}) $  be a  p-frame for   $\mathcal{X}$ and $ \{e_j\}_{j\in \mathbb{J}}$ be the standard  Schauder basis for $ \ell^p(\mathbb{J})$. Let $ h_j : \sum_{k\in\mathbb{J}}a_ke_k\mapsto a_j \in \mathbb{K}, \forall j\in \mathbb{J}$.  Then the dual p-frames  of $ (\{f_j\}_{j\in \mathbb{J}}, \{\tau_j\}_{j\in \mathbb{J}})$ are precisely $ (\{g_j=h_jU\}_{j\in \mathbb{J}}, \{\omega_j=Ve_j\}_{j\in \mathbb{J}})$, where $ U:\mathcal{X} \rightarrow\ell^p(\mathbb{J})$ is  bounded right-inverse of $ \widehat{\theta}_\tau$, and  $V: \ell^p(\mathbb{J}) \rightarrow \mathcal{X}$ is  bounded left-inverse of $ \theta_f$ such that the resolvent of $ VU$ contains $(-\infty,0]$.
\end{lemma}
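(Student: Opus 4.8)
The plan is to mirror the Hilbert-space argument of Lemma \ref{DUALCHARATERIZATIONLEMMA1}, reducing both directions to the two bookkeeping identities $\theta_g = U$ and $\widehat{\theta}_\omega = V$; once these are in place, the duality condition and the p-frame condition translate verbatim into the stated hypotheses on $U$ and $V$. First I would record these identities. For the analysis operator: if $g_j = h_jU$ for all $j$, then since $h_j$ is precisely the $j$-th coordinate functional associated with the Schauder basis $\{e_j\}_{j\in\mathbb{J}}$, we get $g_j(x) = h_j(Ux) = (Ux)_j$ for each $j$, whence $\theta_g x = \{g_j(x)\}_{j\in\mathbb{J}} = Ux$, i.e.\ $\theta_g = U$. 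For the synthesis operator: if $\omega_j = Ve_j$ for all $j$, then by linearity and continuity of $V$ we have $\widehat{\theta}_\omega(\{a_j\}_{j\in\mathbb{J}}) = \sum_{j\in\mathbb{J}}a_j\omega_j = V\big(\sum_{j\in\mathbb{J}}a_je_j\big)$, so $\widehat{\theta}_\omega = V$.

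For the $(\Leftarrow)$ direction I would take $U$ a bounded right-inverse of $\widehat{\theta}_\tau$ and $V$ a bounded left-inverse of $\theta_f$ with the resolvent of $VU$ containing $(-\infty,0]$, set $g_j \coloneqq h_jU$ and $\omega_j \coloneqq Ve_j$, and verify that $(\{g_j\}_{j\in\mathbb{J}},\{\omega_j\}_{j\in\mathbb{J}})$ is a p-frame dual to $(\{f_j\}_{j\in\mathbb{J}},\{\tau_j\}_{j\in\mathbb{J}})$. The two identities give $\theta_g = U$ and $\widehat{\theta}_\omega = V$, both bounded; the p-frame operator is $\widehat{S}_{g,\omega} = \widehat{\theta}_\omega\theta_g = VU$, which is bounded with resolvent containing $(-\infty,0]$ by hypothesis, so the defining axioms of a p-frame hold. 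The duality conditions $\widehat{\theta}_\omega\theta_f = V\theta_f = I_\mathcal{X}$ and $\widehat{\theta}_\tau\theta_g = \widehat{\theta}_\tau U = I_\mathcal{X}$ are exactly the assumed left- and right-inverse properties.

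For the $(\Rightarrow)$ direction I would start with an arbitrary dual p-frame $(\{g_j\}_{j\in\mathbb{J}},\{\omega_j\}_{j\in\mathbb{J}})$ and define $U \coloneqq \theta_g$ and $V \coloneqq \widehat{\theta}_\omega$. Boundedness of $U$ and $V$ is built into the p-frame definition; the duality conditions $\widehat{\theta}_\tau\theta_g = I_\mathcal{X}$ and $\widehat{\theta}_\omega\theta_f = I_\mathcal{X}$ say precisely that $U$ is a right-inverse of $\widehat{\theta}_\tau$ and $V$ is a left-inverse of $\theta_f$, while $VU = \widehat{\theta}_\omega\theta_g = \widehat{S}_{g,\omega}$ has resolvent containing $(-\infty,0]$ because $(\{g_j\}_{j\in\mathbb{J}},\{\omega_j\}_{j\in\mathbb{J}})$ is a p-frame. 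It then remains to recover the representations, which follows from $h_jUx = h_j(\theta_g x) = g_j(x)$, giving $g_j = h_jU$, and $Ve_j = \widehat{\theta}_\omega e_j = \omega_j$.

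I do not expect a genuine obstacle, since the role played by ``$VU^*$ positive invertible'' in Lemma \ref{DUALCHARATERIZATIONLEMMA1} is now taken over by ``resolvent of $VU$ contains $(-\infty,0]$'', which is exactly the invertibility-type requirement that makes $\widehat{S}_{g,\omega} = VU$ a legitimate p-frame operator. The only point demanding care is the identification $\theta_g = U$: one must use that $h_j$ is the coordinate functional dual to $e_j$ and that $\{e_j\}_{j\in\mathbb{J}}$ is a Schauder basis of $\ell^p(\mathbb{J})$, so that $Ux$ is recovered coordinatewise from $\{h_j(Ux)\}_{j\in\mathbb{J}}$. This is the one place where the Banach setting (absence of an inner product and of adjoints) enters, and it is handled entirely by the coordinate-functional description rather than by duality.
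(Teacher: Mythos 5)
Your proof is correct and follows essentially the same route as the paper's: both directions hinge on the identifications $\theta_g=U$ and $\widehat{\theta}_\omega=V$, after which the duality conditions become the stated inverse properties and $\widehat{S}_{g,\omega}=VU$ carries the resolvent condition. Nothing further is needed.
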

\begin{proof}
$(\Leftarrow)$ We see $ \theta_gx=\{g_j(x) \}_{{j\in \mathbb{J}}}=\{f_j(Ux) \}_{{j\in \mathbb{J}}}=\sum_{j\in \mathbb{J}}f_j(Ux) e_j=Ux, \forall x \in \mathcal{X}$, and 
$\widehat{\theta}_\omega(\{a_j \}_{{j\in \mathbb{J}}})=\sum_{j\in \mathbb{J}}a_j\omega_j=\sum_{j\in \mathbb{J}}a_jVe_j=V(\{a_j \}_{{j\in \mathbb{J}}}) , \forall \{a_j \}_{{j\in \mathbb{J}}} \in \ell^p(\mathbb{J})$. Then  $ \widehat{S}_{g,\omega}=\widehat{\theta}_\omega\theta_g=VU$ whose resolvent  contains $(-\infty,0]$. Hence $ (\{g_j\}_{j\in \mathbb{J}}, \{\omega_j\}_{j\in \mathbb{J}})$ is a  p-frame. Now we check duality:  $\widehat{\theta}_\tau\theta_g=\widehat{\theta}_\tau U=I_\mathcal{X} $, $ \widehat{\theta}_\omega\theta_f=V\theta_f =I_\mathcal{X}$.

$(\Rightarrow)$ Let $ (\{g_j\}_{j\in \mathbb{J}}, \{\omega_j\}_{j\in \mathbb{J}})$ be a dual frame of $ (\{f_j\}_{j\in \mathbb{J}}, \{\tau_j\}_{j\in \mathbb{J}})$.  Then $\widehat{\theta}_\tau\theta_g =I_\mathcal{X} $, $ \widehat{\theta}_\omega\theta_f =I_\mathcal{X}$. Define $ U\coloneqq\theta_g, V\coloneqq\widehat{\theta}_\omega.$ Then $ U:\mathcal{X} \rightarrow\ell^p(\mathbb{J})$ is a bounded right-inverse of $ \widehat{\theta}_\tau$, and  $V: \ell^p(\mathbb{J}) \rightarrow \mathcal{X}$ is  a bounded left-inverse of $ \theta_f$ such that the resolvent of $ VU=\widehat{\theta}_\omega\theta_g=\widehat{S}_{g,\omega}$ contains $(-\infty,0]$. Moreover, $ h_jU(x)=h_j(\sum_{k\in \mathbb{J}}g_k(x)e_k)=\sum_{k\in \mathbb{J}}g_k(x)h_j(e_k)=g_j(x), \forall x \in \mathcal{X}, \forall j \in \mathbb{J}$, and $Ve_j=\widehat{\theta}_\omega e_j=\omega_j, \forall j \in \mathbb{J} $.
\end{proof}
\begin{lemma}\label{DUALCHARATERIZATIONLEMMABANACH2}
Let $ (\{f_j\}_{j\in \mathbb{J}}, \{\tau_j\}_{j\in \mathbb{J}}) $  be a  p-frame for   $\mathcal{X}$. Then the bounded 
\begin{enumerate}[\upshape(i)]
\item  right-inverses of $ \widehat{\theta}_\tau$ are precisely  $\theta_f\widehat{S}_{f,\tau}^{-1}+(I_{\ell^p(\mathbb{J})}-\theta_f\widehat{S}_{f,\tau}^{-1}\widehat{\theta}_\tau)U,$ where $U\in \mathcal{B}(\mathcal{X}, \ell^p(\mathbb{J}))$.
\item  left-inverses of $ \theta_f$ are precisely  $\widehat{S}_{f,\tau}^{-1}\widehat{\theta}_\tau+V(I_{\ell^p(\mathbb{J})}-\theta_f\widehat{S}_{f,\tau}^{-1}\widehat{\theta}_\tau)$, where $V\in \mathcal{B}(\ell^p(\mathbb{J}), \mathcal{X})$. 
\end{enumerate}	
\end{lemma}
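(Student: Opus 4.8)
The plan is to prove both parts by direct algebraic verification, in complete parallel with the operator-valued statement in Section~\ref{OPERATOR-VALUEDFRAMESFRAMESFORBANACHSPACES} (characterizing left-inverses of $\theta_A$ and right-inverses of $\widehat{\theta}_\Psi$) and with Lemma~\ref{DUALCHARATERIZATIONLEMMA2} in the Hilbert-space sequential setting. The one structural fact I will lean on repeatedly is that, by the p-frame hypothesis together with the identity $\widehat{S}_{f,\tau}=\widehat{\theta}_\tau\theta_f$ established earlier, $\widehat{S}_{f,\tau}$ is a bounded invertible operator on $\mathcal{X}$, so that $\widehat{S}_{f,\tau}^{-1}$ makes sense and the idempotent $\widehat{P}_{f,\tau}=\theta_f\widehat{S}_{f,\tau}^{-1}\widehat{\theta}_\tau$ is available.

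For part (i), I first show that each operator of the displayed form is a right-inverse of $\widehat{\theta}_\tau$. Pre-composing with $\widehat{\theta}_\tau$ and using $\widehat{\theta}_\tau\theta_f=\widehat{S}_{f,\tau}$, the first summand contributes $\widehat{\theta}_\tau\theta_f\widehat{S}_{f,\tau}^{-1}=I_\mathcal{X}$, while the second summand is annihilated because $\widehat{\theta}_\tau(I_{\ell^p(\mathbb{J})}-\theta_f\widehat{S}_{f,\tau}^{-1}\widehat{\theta}_\tau)=\widehat{\theta}_\tau-\widehat{S}_{f,\tau}\widehat{S}_{f,\tau}^{-1}\widehat{\theta}_\tau=0$. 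Conversely, given any bounded right-inverse $R$ of $\widehat{\theta}_\tau$, I set $U:=R$ and simplify $\theta_f\widehat{S}_{f,\tau}^{-1}+(I_{\ell^p(\mathbb{J})}-\theta_f\widehat{S}_{f,\tau}^{-1}\widehat{\theta}_\tau)R$; the term $-\theta_f\widehat{S}_{f,\tau}^{-1}\widehat{\theta}_\tau R$ collapses to $-\theta_f\widehat{S}_{f,\tau}^{-1}$ using $\widehat{\theta}_\tau R=I_\mathcal{X}$, leaving exactly $R$. This exhibits $R$ in the required form and completes the ``precisely'' claim.

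For part (ii), the argument is dual: I post-compose the displayed operator with $\theta_f$. The first summand $\widehat{S}_{f,\tau}^{-1}\widehat{\theta}_\tau\theta_f$ equals $\widehat{S}_{f,\tau}^{-1}\widehat{S}_{f,\tau}=I_\mathcal{X}$, and the second summand vanishes since $(I_{\ell^p(\mathbb{J})}-\theta_f\widehat{S}_{f,\tau}^{-1}\widehat{\theta}_\tau)\theta_f=\theta_f-\theta_f\widehat{S}_{f,\tau}^{-1}\widehat{S}_{f,\tau}=0$. For the reverse inclusion I take a bounded left-inverse $L$ of $\theta_f$, put $V:=L$, and use $L\theta_f=I_\mathcal{X}$ to reduce $\widehat{S}_{f,\tau}^{-1}\widehat{\theta}_\tau+L(I_{\ell^p(\mathbb{J})}-\theta_f\widehat{S}_{f,\tau}^{-1}\widehat{\theta}_\tau)$ to $L$.

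I do not anticipate a genuine obstacle: the content is purely the identity $\widehat{\theta}_\tau\theta_f=\widehat{S}_{f,\tau}$ and the invertibility of $\widehat{S}_{f,\tau}$, after which every step is a one-line cancellation. The only point requiring care---and the reason the two parts must be proved separately rather than deduced from one another by duality---is that in the Banach setting there are no adjoints linking $\theta_f$ and $\widehat{\theta}_\tau$, unlike the Hilbert case where left-inverses of $\theta_x$ and right-inverses of $\theta_\tau$ are interchanged via adjunction. Consequently I will keep track of the composition side (pre-composition with $\widehat{\theta}_\tau$ in (i), post-composition with $\theta_f$ in (ii)) throughout, and record the domains of the free parameters $U\in\mathcal{B}(\mathcal{X},\ell^p(\mathbb{J}))$ and $V\in\mathcal{B}(\ell^p(\mathbb{J}),\mathcal{X})$ explicitly to ensure every composition is well-typed.
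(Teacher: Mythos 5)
Your proposal is correct and follows essentially the same route as the paper's proof: both directions of each part are one-line cancellations using $\widehat{\theta}_\tau\theta_f=\widehat{S}_{f,\tau}$, the invertibility of $\widehat{S}_{f,\tau}$ guaranteed by the p-frame definition, and the substitution $U:=R$ (resp. $V:=L$) for the converse inclusion. Your closing remark about the absence of adjoints forcing separate proofs of (i) and (ii) is a fair observation but does not change the argument, which matches the paper's.
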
 
\begin{proof}
\begin{enumerate}[\upshape(i)]
\item  $(\Leftarrow)$ Let $U:\mathcal{X} \rightarrow \ell^p(\mathbb{J})$ be  a bounded operator. Then $\widehat{\theta}_\tau(\theta_f\widehat{S}_{f,\tau}^{-1}+(I_{\ell^p(\mathbb{J})}-\theta_f\widehat{S}_{f,\tau}^{-1}\widehat{\theta}_\tau)U)=I_\mathcal{X}+\widehat{\theta}_\tau U-I_\mathcal{X}\widehat{\theta}_\tau U=I_\mathcal{X}$. Therefore $\theta_f\widehat{S}_{f,\tau}^{-1}+(I_{\ell^p(\mathbb{J})}-\theta_f\widehat{S}_{f,\tau}^{-1}\widehat{\theta}_\tau)U$ is a bounded right-inverse of $ \widehat{\theta}_\tau$.  

$(\Rightarrow)$ Let $ R :\mathcal{X} \rightarrow \ell^p(\mathbb{J})$ be a bounded right-inverse of $ \widehat{\theta}_\tau$. Define $U\coloneqq R $. Then $\theta_f\widehat{S}_{f,\tau}^{-1}+(I_{\ell^p(\mathbb{J})}-\theta_f\widehat{S}_{f,\tau}^{-1}\widehat{\theta}_\tau)U=\theta_f\widehat{S}_{f,\tau}^{-1}+(I_{\ell^p(\mathbb{J})}-\theta_f\widehat{S}_{f,\tau}^{-1}\widehat{\theta}_\tau)R=\theta_f\widehat{S}_{f,\tau}^{-1}+R-\theta_f\widehat{S}_{f,\tau}^{-1}=R$.
\item
$(\Leftarrow)$ Let $V: \ell^p(\mathbb{J})\rightarrow \mathcal{X}$ be a bounded operator. Then $(\widehat{S}_{f,\tau}^{-1}\widehat{\theta}_\tau+V(I_{\ell^p(\mathbb{J})}-\theta_f\widehat{S}_{f,\tau}^{-1}\widehat{\theta}_\tau))\theta_f=I_\mathcal{X}+V\theta_f-V\theta_fI_\mathcal{X}=I_\mathcal{X}$. Therefore  $\widehat{S}_{f,\tau}^{-1}\widehat{\theta}_\tau+V(I_{\ell^p(\mathbb{J})}-\theta_f\widehat{S}_{f,\tau}^{-1}\widehat{\theta}_\tau)$ is a bounded left-inverse of $\theta_f$.

$(\Rightarrow)$ Let $ L:\ell^p(\mathbb{J})\rightarrow \mathcal{X}$ be a bounded left-inverse of $ \theta_f$. Define $V\coloneqq L$. Then $\widehat{S}_{f,\tau}^{-1}\widehat{\theta}_\tau+V(I_{\ell^p(\mathbb{J})}-\theta_f\widehat{S}_{f,\tau}^{-1}\widehat{\theta}_\tau) =\widehat{S}_{f,\tau}^{-1}\widehat{\theta}_\tau+L(I_{\ell^p(\mathbb{J})}-\theta_f\widehat{S}_{f,\tau}^{-1}\widehat{\theta}_\tau)=\widehat{S}_{f,\tau}^{-1}\widehat{\theta}_\tau+L-\widehat{S}_{f,\tau}^{-1}\widehat{\theta}_\tau= L$.
\end{enumerate}	
\end{proof} 
\begin{theorem}
Let  $ (\{f_j\}_{j\in \mathbb{J}}, \{\tau_j\}_{j\in \mathbb{J}}) $  be a  p-frame for   $\mathcal{X}$. The dual p-frames 	 $ (\{g_j\}_{j\in \mathbb{J}}, \{\omega_j\}_{j\in \mathbb{J}}) $ of $ (\{f_j\}_{j\in \mathbb{J}}, \{\tau_j\}_{j\in \mathbb{J}}) $ are precisely 
\begin{align*}
(\{g_j=f_j\widehat{S}_{f,\tau}^{-1}+h_jU-f_j\widehat{S}_{f,\tau}^{-1}\widehat{\theta}_\tau U\}_{j\in \mathbb{J}},
\{\omega_j=\widehat{S}_{f,\tau}^{-1}\tau_j+Ve_j-V\theta_f\widehat{S}_{f,\tau}^{-1}\tau_j\}_{j\in \mathbb{J}} )
\end{align*}
such that the resolvent of 
$$\widehat{S}_{f,\tau}^{-1}+VU-V\theta_f\widehat{S}_{f,\tau}^{-1}\widehat{\theta}_\tau U$$
contains $(-\infty,0]$, where  $ \{e_j\}_{j\in \mathbb{J}}$ is  the standard  Schauder basis for $ \ell^p(\mathbb{J})$, $ h_j : \sum_{k\in\mathbb{J}}a_ke_k\mapsto a_j \in \mathbb{K}, \forall j\in \mathbb{J}$, and  $U\in \mathcal{B}(\mathcal{X}, \ell^p(\mathbb{J})), V\in \mathcal{B} (\ell^p(\mathbb{J}), \mathcal{X})$.
\end{theorem}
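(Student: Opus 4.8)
The plan is to combine Lemma \ref{DUALCHARATERIZATIONLEMMABANACH1} and Lemma \ref{DUALCHARATERIZATIONLEMMABANACH2}, exactly mirroring the Hilbert-space argument used for the sequential version in Section \ref{SEQUENTIAL}. Lemma \ref{DUALCHARATERIZATIONLEMMABANACH1} already reduces the task to a description of all bounded right-inverses $R$ of $\widehat{\theta}_\tau$ and all bounded left-inverses $L$ of $\theta_f$, the associated dual p-frame being $g_j=h_jR$, $\omega_j=Le_j$, subject to the resolvent of $LR$ containing $(-\infty,0]$. Lemma \ref{DUALCHARATERIZATIONLEMMABANACH2} then supplies those two families explicitly, namely $R=\theta_f\widehat{S}_{f,\tau}^{-1}+(I_{\ell^p(\mathbb{J})}-\theta_f\widehat{S}_{f,\tau}^{-1}\widehat{\theta}_\tau)U$ and $L=\widehat{S}_{f,\tau}^{-1}\widehat{\theta}_\tau+V(I_{\ell^p(\mathbb{J})}-\theta_f\widehat{S}_{f,\tau}^{-1}\widehat{\theta}_\tau)$, with $U\in\mathcal{B}(\mathcal{X},\ell^p(\mathbb{J}))$ and $V\in\mathcal{B}(\ell^p(\mathbb{J}),\mathcal{X})$ arbitrary. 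Substituting these expressions into the formulas of Lemma \ref{DUALCHARATERIZATIONLEMMABANACH1} and simplifying will produce the stated parametrization.

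First I would record the elementary identities that drive every simplification: $h_j\theta_f=f_j$ (since $h_j$ reads off the $j$-th coordinate of $\theta_f x=\{f_k(x)\}_{k\in\mathbb{J}}$) and $\widehat{\theta}_\tau e_j=\tau_j$, together with $\widehat{\theta}_\tau\theta_f=\widehat{S}_{f,\tau}$ and the idempotence of $\widehat{P}_{f,\tau}=\theta_f\widehat{S}_{f,\tau}^{-1}\widehat{\theta}_\tau$, all of which are contained in the fundamental proposition for sequential p-frames established just above. Applying $h_j$ to $R$ and using $h_j\theta_f=f_j$ yields $g_j=f_j\widehat{S}_{f,\tau}^{-1}+h_jU-f_j\widehat{S}_{f,\tau}^{-1}\widehat{\theta}_\tau U$, while evaluating $L$ at $e_j$ and using $\widehat{\theta}_\tau e_j=\tau_j$ yields $\omega_j=\widehat{S}_{f,\tau}^{-1}\tau_j+Ve_j-V\theta_f\widehat{S}_{f,\tau}^{-1}\tau_j$, exactly the two families in the statement.

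The only computation with any content is the evaluation of the product $LR$, which converts the resolvent condition of Lemma \ref{DUALCHARATERIZATIONLEMMABANACH1} into the displayed one. Writing $P:=\widehat{P}_{f,\tau}$ and expanding $LR$ into four terms, the relations $\widehat{\theta}_\tau\theta_f=\widehat{S}_{f,\tau}$, $\widehat{\theta}_\tau P=\widehat{\theta}_\tau$ and $P\theta_f=\theta_f$ collapse the first term to $\widehat{S}_{f,\tau}^{-1}$ and annihilate the two cross terms, while the last term simplifies via $(I_{\ell^p(\mathbb{J})}-P)^2=I_{\ell^p(\mathbb{J})}-P$ to $VU-V\theta_f\widehat{S}_{f,\tau}^{-1}\widehat{\theta}_\tau U$; hence $LR=\widehat{S}_{f,\tau}^{-1}+VU-V\theta_f\widehat{S}_{f,\tau}^{-1}\widehat{\theta}_\tau U$. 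Consequently the condition that the resolvent of $LR$ contain $(-\infty,0]$ becomes precisely the displayed resolvent condition. Since, by Lemma \ref{DUALCHARATERIZATIONLEMMABANACH2}, the assignment $(U,V)\mapsto(R,L)$ sweeps out exactly all right-inverses of $\widehat{\theta}_\tau$ and all left-inverses of $\theta_f$, the two descriptions coincide and the characterization follows.

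I do not expect a genuine obstacle here: the argument is a substitution followed by a bounded-operator identity. The one point demanding care is the bookkeeping with the hats, since the Banach-space duality employs $\widehat{\theta}_\tau$, the coordinate functionals $h_j$, and $\widehat{S}_{f,\tau}$ in the roles played by adjoints in the Hilbert-space proof; keeping the order of composition straight (so that $P\theta_f=\theta_f$ and $\widehat{\theta}_\tau P=\widehat{\theta}_\tau$ are applied on the correct side) is the only place where an error could creep in.
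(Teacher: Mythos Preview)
Your proposal is correct and follows exactly the same route as the paper: invoke Lemma \ref{DUALCHARATERIZATIONLEMMABANACH1} to reduce to right-inverses of $\widehat{\theta}_\tau$ and left-inverses of $\theta_f$, parametrize these via Lemma \ref{DUALCHARATERIZATIONLEMMABANACH2}, substitute, and expand the product $LR$. Your expansion of $LR$ is in fact more detailed than the paper's, which simply writes ``We expand and see'' before stating the result.
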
 
\begin{proof}
 From Lemma \ref{DUALCHARATERIZATIONLEMMABANACH1} and Lemma \ref{DUALCHARATERIZATIONLEMMABANACH2} we can  characterize   the dual frames of  $ (\{f_j\}_{j\in \mathbb{J}}, \{\tau_j\}_{j\in \mathbb{J}}) $ as the families 
 \begin{align*}
 &\left\{g_j=h_j\theta_f\widehat{S}_{f,\tau}^{-1}+h_jU-h_j\theta_f\widehat{S}_{f,\tau}^{-1}\widehat{\theta}_\tau U=f_j\widehat{S}_{f,\tau}^{-1}+h_jU-f_j\widehat{S}_{f,\tau}^{-1}\widehat{\theta}_\tau U\right\}_{j\in \mathbb{J}},\\
 &\left\{\omega_j=\widehat{S}_{f,\tau}^{-1}\widehat{\theta}_\tau e_j+Ve_j-V\theta_f\widehat{S}_{f,\tau}^{-1}\widehat{\theta}_\tau e_j=\widehat{S}_{f,\tau}^{-1}\tau_j+Ve_j-V\theta_f\widehat{S}_{f,\tau}^{-1}\tau_j\right\}_{j\in \mathbb{J}}
 \end{align*}
 such that the resolvent of 
 $$(\widehat{S}_{f,\tau}^{-1}\widehat{\theta}_\tau+V(I_{\ell^p(\mathbb{J})}-\theta_f\widehat{S}_{f,\tau}^{-1}\widehat{\theta}_\tau))(\theta_f\widehat{S}_{f,\tau}^{-1}+(I_{\ell^p(\mathbb{J})}-\theta_f\widehat{S}_{f,\tau}^{-1}\widehat{\theta}_\tau)U) $$
 contains $(-\infty,0]$, where  $ \{e_j\}_{j\in \mathbb{J}}$ is  the standard Schauder  basis for $ \ell^p(\mathbb{J})$, $ h_j : \sum_{k\in\mathbb{J}}a_ke_k\mapsto a_j \in \mathbb{K}, \forall j\in \mathbb{J}$, and  $U\in \mathcal{B}(\mathcal{X},\ell^p(\mathbb{J})), V\in \mathcal{B}( \ell^p(\mathbb{J}),\mathcal{X})$. We expand and see 
 \begin{align*}
 &(\widehat{S}_{f,\tau}^{-1}\widehat{\theta}_\tau+V(I_{\ell^p(\mathbb{J})}-\theta_f\widehat{S}_{f,\tau}^{-1}\widehat{\theta}_\tau))(\theta_f\widehat{S}_{f,\tau}^{-1}+(I_{\ell^p(\mathbb{J})}-\theta_f\widehat{S}_{f,\tau}^{-1}\widehat{\theta}_\tau)U)\\
&=\widehat{S}_{f,\tau}^{-1}+VU-V\theta_f\widehat{S}_{f,\tau}^{-1}\widehat{\theta}_\tau U.
 \end{align*}
 \end{proof} 
 
\begin{definition}
A p-frame $ (\{g_j\}_{j\in \mathbb{J}}, \{\omega_j\}_{j\in \mathbb{J}})$  for $\mathcal{X}$ is said to be orthogonal to    a p-frame $ (\{f_j\}_{j\in \mathbb{J}}, \{\tau_j\}_{j\in \mathbb{J}})$  for $\mathcal{X}$ if  $ \widehat{\theta}_\omega\theta_f=\widehat{\theta}_\tau\theta_g=0.$
\end{definition}
Orthogonality is symmetric. Dual p-frames cannot be orthogonal to each other and orthogonal p-frames cannot be dual to each other. Also,  if $ (\{g_j\}_{j\in \mathbb{J}}, \{\omega_j\}_{j\in \mathbb{J}})$ is orthogonal to $ (\{f_j\}_{j\in \mathbb{J}}, \{\tau_j\}_{j\in \mathbb{J}})$, then  both $ (\{f_j\}_{j\in \mathbb{J}}, \{\omega_j\}_{j\in \mathbb{J}})$ and $ (\{g_j\}_{j\in \mathbb{J}}, \{\tau_j\}_{j\in \mathbb{J}})$ are not p-frames.
\begin{proposition}
Let   $ (\{f_j\}_{j\in \mathbb{J}}, \{\tau_j\}_{j\in \mathbb{J}})$ and $(\{g_j\}_{j\in \mathbb{J}}, \{\omega_j\}_{j\in \mathbb{J}}) $ be  p-frames for  $\mathcal{X}$. Then the following are equivalent.
\begin{enumerate}[\upshape(i)]
\item  $(\{g_j\}_{j\in \mathbb{J}}, \{\omega_j\}_{j\in \mathbb{J}}) $ is orthogonal to  $ (\{f_j\}_{j\in \mathbb{J}}, \{\tau_j\}_{j\in \mathbb{J}})$.   
\item $ \sum_{j\in \mathbb{J}}f_j(x)\omega_j=\sum_{j\in \mathbb{J}}g_j(x)\tau_j=0, \forall x \in \mathcal{X}.$
\end{enumerate}
\end{proposition}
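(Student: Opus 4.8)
The plan is to unwind the definition of orthogonality directly in terms of the analysis and synthesis operators, exactly as was done for the analogous dual-frame characterization stated just above. By definition, $(\{g_j\}_{j\in\mathbb{J}},\{\omega_j\}_{j\in\mathbb{J}})$ is orthogonal to $(\{f_j\}_{j\in\mathbb{J}},\{\tau_j\}_{j\in\mathbb{J}})$ precisely when $\widehat{\theta}_\omega\theta_f=\widehat{\theta}_\tau\theta_g=0$. Since both pairs are p-frames, the operators $\theta_f,\theta_g\in\mathcal{B}(\mathcal{X},\ell^p(\mathbb{J}))$ and $\widehat{\theta}_\omega,\widehat{\theta}_\tau\in\mathcal{B}(\ell^p(\mathbb{J}),\mathcal{X})$ are all well-defined and bounded, so the compositions $\widehat{\theta}_\omega\theta_f$ and $\widehat{\theta}_\tau\theta_g$ are bona fide bounded operators on $\mathcal{X}$.

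First I would compute these compositions pointwise. For $x\in\mathcal{X}$, applying $\theta_f$ gives the sequence $\{f_j(x)\}_{j\in\mathbb{J}}\in\ell^p(\mathbb{J})$, and then applying $\widehat{\theta}_\omega$ yields $\widehat{\theta}_\omega\theta_f x=\sum_{j\in\mathbb{J}}f_j(x)\omega_j$; symmetrically $\widehat{\theta}_\tau\theta_g x=\sum_{j\in\mathbb{J}}g_j(x)\tau_j$. These are exactly the two series appearing in statement (ii). With these identifications in hand the equivalence is immediate: the bounded operator $\widehat{\theta}_\omega\theta_f$ is the zero operator if and only if $\sum_{j\in\mathbb{J}}f_j(x)\omega_j=0$ for every $x\in\mathcal{X}$, and likewise $\widehat{\theta}_\tau\theta_g=0$ if and only if $\sum_{j\in\mathbb{J}}g_j(x)\tau_j=0$ for every $x$. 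Hence (i) and (ii) are merely two formulations of one and the same condition.

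There is essentially no obstacle here; the only point requiring a word of care is confirming that the series converge so that the pointwise formula for each composition is legitimate, but this is furnished directly by the p-frame hypothesis, which guarantees that $\widehat{\theta}_\omega$ and $\widehat{\theta}_\tau$ are defined and bounded on all of $\ell^p(\mathbb{J})$. In effect the argument is the verbatim analogue of the dual-p-frame proposition proved a few lines earlier, with the identity $I_\mathcal{X}$ on the right-hand side replaced throughout by $0$, so I would present it in the same one-line style.
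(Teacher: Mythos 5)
Your proposal is correct and is exactly the paper's argument: identify $\widehat{\theta}_\omega\theta_f x=\sum_{j\in\mathbb{J}}f_j(x)\omega_j$ and $\widehat{\theta}_\tau\theta_g x=\sum_{j\in\mathbb{J}}g_j(x)\tau_j$, so that (i) and (ii) are the same condition read two ways. The paper leaves this proposition without a written proof precisely because it is the verbatim analogue (with $0$ in place of $I_\mathcal{X}$) of the one-line proof given for the dual-frame characterization immediately preceding it.
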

\begin{proposition}
Let $ (\{f_j\}_{j\in \mathbb{J}}, \{\tau_j\}_{j\in \mathbb{J}}) $ and $ (\{g_j\}_{j\in \mathbb{J}}, \{\omega_j\}_{j\in \mathbb{J}}) $ be  two Parseval p-frames for  $\mathcal{X}$ which are  orthogonal. If $A,B,C,D \in \mathcal{B}(\mathcal{X})$ are such that $ CA+DB=I_\mathcal{X}$, then  $ (\{f_jA+g_jB\}_{j\in \mathbb{J}}, \{C\tau_j+D\omega_j\}_{j\in \mathbb{J}}) $ is a  Parseval p-frame for  $\mathcal{X}$. In particular,  if scalars $ a,b,c,d$ satisfy $ca+db =1$, then $ (\{af_j+bg_j\}_{j\in \mathbb{J}}, \{c\tau_j+d\omega_j\}_{j\in \mathbb{J}}) $ is a  Parseval p-frame for  $\mathcal{X}$.
\end{proposition}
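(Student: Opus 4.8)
The plan is to follow the same strategy used for the analogous interpolation results earlier in the paper (the corresponding proposition for $p$-operator-valued frames and its Hilbert space ancestors): express the analysis and synthesis operators of the candidate system $(\{f_jA + g_jB\}_{j\in\mathbb{J}}, \{C\tau_j + D\omega_j\}_{j\in\mathbb{J}})$ in terms of the analysis and synthesis operators of the two given Parseval $p$-frames, and then read off its $p$-frame operator.

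First I would compute the analysis operator. Since $(f_jA + g_jB)(x) = f_j(Ax) + g_j(Bx)$, collecting these coordinates gives $\theta_{fA+gB} = \theta_f A + \theta_g B$; this is bounded because $\theta_f, \theta_g, A, B$ are all bounded. Dually, from $\sum_j a_j(C\tau_j + D\omega_j) = C\sum_j a_j\tau_j + D\sum_j a_j\omega_j$ I would obtain $\widehat{\theta}_{C\tau+D\omega} = C\widehat{\theta}_\tau + D\widehat{\theta}_\omega$, again bounded. This settles condition (i) of the $p$-frame definition for the combined system.

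The heart of the argument is then the computation of the $p$-frame operator $\widehat{S}_{fA+gB,\, C\tau+D\omega} = \widehat{\theta}_{C\tau+D\omega}\,\theta_{fA+gB}$. Expanding the product $(C\widehat{\theta}_\tau + D\widehat{\theta}_\omega)(\theta_f A + \theta_g B)$ produces four terms. The diagonal terms use the Parseval hypothesis $\widehat{\theta}_\tau\theta_f = \widehat{S}_{f,\tau} = I_\mathcal{X}$ and $\widehat{\theta}_\omega\theta_g = \widehat{S}_{g,\omega} = I_\mathcal{X}$, while the two cross terms vanish by orthogonality, namely $\widehat{\theta}_\tau\theta_g = 0$ and $\widehat{\theta}_\omega\theta_f = 0$. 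What survives is $CA + DB$, which equals $I_\mathcal{X}$ by hypothesis. Since the $p$-frame operator is the identity, its resolvent is $\mathbb{C}\setminus\{1\}$, which contains $(-\infty, 0]$, so condition (ii) holds and the system is in fact a Parseval $p$-frame.

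The ``in particular'' statement follows by specializing to the scalar operators $A = aI_\mathcal{X}$, $B = bI_\mathcal{X}$, $C = cI_\mathcal{X}$, $D = dI_\mathcal{X}$, for which $CA + DB = (ca + db)I_\mathcal{X} = I_\mathcal{X}$ and $f_jA = af_j$, $g_jB = bg_j$. I do not anticipate a genuine obstacle here: the only point demanding care is the asymmetry in how the four operators enter — the functionals $f_j, g_j$ are pre-composed with $A, B$ (so these appear on the right of $\theta_f, \theta_g$), whereas the vectors $\tau_j, \omega_j$ are acted on the left by $C, D$ (so these appear on the left of $\widehat{\theta}_\tau, \widehat{\theta}_\omega$). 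This is exactly why the correct hypothesis is $CA + DB = I_\mathcal{X}$ rather than the adjoint-type condition $C^*E + D^*F = I$ appearing in the Hilbert space formulation, and keeping the composition order straight is the one thing one must not get wrong.
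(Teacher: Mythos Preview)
Your proposal is correct and follows essentially the same approach as the paper's own proof: compute $\theta_{fA+gB}=\theta_fA+\theta_gB$ and $\widehat{\theta}_{C\tau+D\omega}=C\widehat{\theta}_\tau+D\widehat{\theta}_\omega$, expand the product $\widehat{S}_{fA+gB,C\tau+D\omega}=(C\widehat{\theta}_\tau+D\widehat{\theta}_\omega)(\theta_fA+\theta_gB)$, and use Parsevalness and orthogonality to kill the cross terms and reduce to $CA+DB=I_\mathcal{X}$. Your additional remarks on the resolvent check and the scalar specialization are accurate and simply make explicit what the paper leaves implicit.
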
 
\begin{proof}
We observe $ \theta_{fA+gB} x = \{(f_jA+g_jB)(x) \}_{j\in \mathbb{J}}=\{f_j(Ax) \}_{j\in \mathbb{J}}+\{g_j(Bx) \}_{j\in \mathbb{J}}=\theta_f(Ax)+\theta_g(Bx),\forall x \in \mathcal{X}$ and  $ \widehat{\theta}_{C\tau+D\omega}(\{a_j \}_{j\in \mathbb{J}})=\sum_{j\in \mathbb{J}}a_j(C\tau_j+D\omega_j)=C\widehat{\theta}_\tau(\{a_j \}_{j\in \mathbb{J}})+D\widehat{\theta}_\omega(\{a_j \}_{j\in \mathbb{J}}) $. Therefore 	$\widehat{S}_{fA+gB,C\tau+D\omega} =\widehat{\theta}_{C\tau+D\omega} \theta_{fA+gB}= ( C\widehat{\theta}_\tau+ D\widehat{\theta}_\omega)(\theta_fA+\theta_gB)=C\widehat{\theta}_\tau\theta_fA+C\widehat{\theta}_\tau\theta_gB+D\widehat{\theta}_\omega\theta_fA+D\widehat{\theta}_\omega\theta_gB=C\widehat{S}_{f,\tau}A+0+0+D\widehat{S}_{g,\omega}B=CI_\mathcal{X}A+DI_\mathcal{X}B=I_\mathcal{X}.$
\end{proof} 
\textbf{Characterizations}
\begin{theorem}\label{PSEQUENTIALCHARACTERIZATIONBANACH}
Let $ \{\tau_j\}_{j \in \mathbb{J}}$  be  a p-orthonormal basis for  $ \mathcal{X}$ and  $ f_j :\mathcal{X} \ni \sum_{k \in \mathbb{J}}a_k\tau_k \mapsto a_j \in \mathbb{K} , \forall j \in \mathbb{J}$.  Then 
\begin{enumerate}[\upshape(i)]
\item The Riesz p-bases $(\{g_j\}_{j\in \mathbb{J}},\{\omega_j\}_{j \in \mathbb{J}})$ for $ \mathcal{X}$ are precisely  $(\{f_jU\}_{j\in \mathbb{J}},\{V\tau_j\}_{j \in \mathbb{J}})$, where $ U,V \in \mathcal{B}(\mathcal{X})$ are invertible such that the  resolvent of $ VU$ contains $(-\infty, 0]$. 
\item The p-frames $(\{g_j\}_{j\in \mathbb{J}},\{\omega_j\}_{j \in \mathbb{J}})$ for $ \mathcal{X}$ are precisely  $(\{f_jU\}_{j\in \mathbb{J}},\{V\tau_j\}_{j \in \mathbb{J}})$, where $ U,V : \mathcal{X} \rightarrow \mathcal{X}$ are such that the resolvent of $ VU$ contains $(-\infty, 0]$. 
\item The Riesz p-frames  $(\{g_j\}_{j\in \mathbb{J}},\{\omega_j\}_{j \in \mathbb{J}})$ for $ \mathcal{X}$ are precisely  $(\{f_jU\}_{j\in \mathbb{J}},\{V\tau_j\}_{j \in \mathbb{J}})$, where $ U,V \in \mathcal{B}(\mathcal{X})$ are such that the  resolvent of $ VU$ contains $(-\infty, 0]$ and $ U(VU)^{-1}V =I_\mathcal{X}$.
\item The orthonormal  p-frames  $(\{g_j\}_{j\in \mathbb{J}},\{\omega_j\}_{j \in \mathbb{J}})$ for $ \mathcal{X}$ are precisely  $(\{f_jU\}_{j\in \mathbb{J}},\{V\tau_j\}_{j \in \mathbb{J}})$, where $ U,V \in \mathcal{B}(\mathcal{X})$ are such that   the  resolvent of $ VU$ contains $(-\infty, 0]$ and $UV=I_\mathcal{X} =VU$.
\end{enumerate}	
\end{theorem}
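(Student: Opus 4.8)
The plan is to reduce the whole theorem to a single structural fact about the fixed data. Since $\{\tau_j\}_{j\in\mathbb{J}}$ is a p-orthonormal basis with coordinate functionals $\{f_j\}_{j\in\mathbb{J}}$, the pair $(\{f_j\}_{j\in\mathbb{J}},\{\tau_j\}_{j\in\mathbb{J}})$ is an orthonormal p-frame. Indeed, conditions (ii) and (iii) of Definition \ref{orthonormality in Banach} say exactly that $\theta_f$ and $\widehat{\theta}_\tau$ are well-defined isometries, while $\widehat{\theta}_\tau\theta_f x=\sum_{j\in\mathbb{J}}f_j(x)\tau_j=x$ and $\theta_f\widehat{\theta}_\tau(\{a_j\}_{j\in\mathbb{J}})=\{f_k(\sum_j a_j\tau_j)\}_k=\{a_k\}_k$ give $\widehat{\theta}_\tau\theta_f=I_\mathcal{X}$ and $\theta_f\widehat{\theta}_\tau=I_{\ell^p(\mathbb{J})}$. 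Thus $\theta_f$ and $\widehat{\theta}_\tau$ are mutually inverse isometries between $\mathcal{X}$ and $\ell^p(\mathbb{J})$; this is the sequential analogue of the role of $\theta_F,\widehat{\theta}_F$ in the operator-version Theorem \ref{POVFCHARACTERIZATIONBANACH}, and it is precisely what lets me pass between the identities $\theta_g=\theta_f U$, $\widehat{\theta}_\omega=V\widehat{\theta}_\tau$ and the operators $U,V$.

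For the ``if'' directions of (ii)--(iv) I would start from a pair $(\{f_jU\}_{j\in\mathbb{J}},\{V\tau_j\}_{j\in\mathbb{J}})$ and compute the associated operators directly: $\theta_{fU}=\theta_f U$ (since $(f_jU)(x)=f_j(Ux)$), $\widehat{\theta}_{V\tau}=V\widehat{\theta}_\tau$ (since $\sum_j a_j V\tau_j=V\sum_j a_j\tau_j$), hence $\widehat{S}_{fU,V\tau}=V\widehat{\theta}_\tau\theta_f U=VU$ and $\widehat{P}_{fU,V\tau}=\theta_f U(VU)^{-1}V\widehat{\theta}_\tau$. These yield a p-frame when the resolvent of $VU$ contains $(-\infty,0]$; a Riesz p-frame when in addition $U(VU)^{-1}V=I_\mathcal{X}$, so that $\widehat{P}=\theta_f\widehat{\theta}_\tau=I_{\ell^p(\mathbb{J})}$; and an orthonormal p-frame when $VU=I_\mathcal{X}=UV$, since then $\widehat{S}=I_\mathcal{X}$ and $\widehat{P}=\theta_f UV\widehat{\theta}_\tau=\theta_f\widehat{\theta}_\tau=I_{\ell^p(\mathbb{J})}$. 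For the ``only if'' directions I would, given a p-frame $(\{g_j\}_{j\in\mathbb{J}},\{\omega_j\}_{j\in\mathbb{J}})$, set $U:=\widehat{\theta}_\tau\theta_g$ and $V:=\widehat{\theta}_\omega\theta_f$. Reading off coordinates gives $f_jU=g_j$ (because $Ux=\sum_k g_k(x)\tau_k$ and $f_j$ extracts the $\tau_j$-coefficient) and $V\tau_j=\widehat{\theta}_\omega\theta_f\tau_j=\widehat{\theta}_\omega e_j=\omega_j$. Then $\theta_f\widehat{\theta}_\tau=I_{\ell^p(\mathbb{J})}$ forces $VU=\widehat{\theta}_\omega(\theta_f\widehat{\theta}_\tau)\theta_g=\widehat{S}_{g,\omega}$, whose resolvent contains $(-\infty,0]$, and the extra identities for the Riesz and orthonormal cases come from $U(VU)^{-1}V=\widehat{\theta}_\tau\widehat{P}_{g,\omega}\theta_f$ and $UV=\widehat{\theta}_\tau(\theta_g\widehat{\theta}_\omega)\theta_f$ combined with $\widehat{\theta}_\tau\theta_f=I_\mathcal{X}$.

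Part (i) (Riesz p-bases) is where I expect the genuine work, because here $U$ and $V$ must be truly invertible, which cannot be extracted from the p-frame construction alone. The ``if'' direction is merely the definition of a Riesz p-basis. For the ``only if'' direction I would unwind the definition: a Riesz p-basis provides \emph{some} p-orthonormal basis $\{\sigma_j\}_{j\in\mathbb{J}}$ with coordinate functionals $\{\phi_j\}_{j\in\mathbb{J}}$ and invertible $R,S$ (resolvent of $SR$ containing $(-\infty,0]$) with $g_j=\phi_jR$, $\omega_j=S\sigma_j$. By Theorem \ref{P-ORTHONORMALBASISCHARACTERIZATION} there is an invertible isometry $W$ with $W\tau_j=\sigma_j$, and comparing actions on the dense span of the basis yields $\phi_j=f_jW^{-1}$; then $U:=W^{-1}R$ and $V:=SW$ are invertible with $g_j=f_jU$, $\omega_j=V\tau_j$ and $VU=SR$, as required.

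The main obstacle throughout is bookkeeping rather than analysis: one must keep the two mutually inverse isometries $\theta_f,\widehat{\theta}_\tau$ straight, check that each constructed operator is genuinely bounded on all of $\mathcal{X}$ and not merely defined on the basis, and invoke Definition \ref{KOMATSU} to make sense of $(VU)^{1/p}$ when reading off the optimal frame bounds. No analytic input beyond the orthonormal-p-frame identities $\widehat{\theta}_\tau\theta_f=I_\mathcal{X}$ and $\theta_f\widehat{\theta}_\tau=I_{\ell^p(\mathbb{J})}$ is needed, so the computations parallel the operator-version proof of Theorem \ref{POVFCHARACTERIZATIONBANACH} almost verbatim, with $\theta_F,\widehat{\theta}_F$ replaced by $\theta_f,\widehat{\theta}_\tau$.
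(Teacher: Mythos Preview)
Your proposal is correct and follows essentially the same approach as the paper. The only difference is cosmetic: the paper introduces an auxiliary isometric isomorphism $T:\mathcal{X}\ni\sum_j a_j\tau_j\mapsto\sum_j a_j e_j\in\ell^p(\mathbb{J})$ and sets $U\coloneqq T^{-1}\theta_g$, $V\coloneqq\widehat{\theta}_\omega T$, whereas you observe directly that $T=\theta_f$ and $T^{-1}=\widehat{\theta}_\tau$ and work with $U=\widehat{\theta}_\tau\theta_g$, $V=\widehat{\theta}_\omega\theta_f$; the computations for parts (ii)--(iv) then coincide line by line, and your handling of (i) via Theorem~\ref{P-ORTHONORMALBASISCHARACTERIZATION} is exactly the paper's change-of-basis argument with $W$ playing the role of the paper's $T$.
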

\begin{proof}
\begin{enumerate}[\upshape(i)]
\item $(\Leftarrow)$ This is the definition of Riesz p-basis.
		
$(\Rightarrow)$ Let $ \{\rho_j\}_{j \in \mathbb{J}}$  be  a p-orthonormal basis for  $ \mathcal{X}$, $ R,S :\mathcal{X} \rightarrow \mathcal{X}$ be bounded invertible with resolvent of $ SR$ contains $(-\infty, 0]$ such that  $g_j=h_jR, \omega_j=S\rho_j,\forall j \in \mathbb{J}  $, where $ h_j :\mathcal{X} \ni \sum_{k \in \mathbb{J}}a_k\rho_k \mapsto a_j \in \mathbb{K} , \forall j \in \mathbb{J}$. Define $T :\mathcal{X} \ni \sum_{j\in \mathbb{J}}a_j\tau_j \mapsto \sum_{j\in \mathbb{J}}a_j\rho_j \in \mathcal{X} $. Then $ T$ is bounded invertible with $T ^{-1}:\mathcal{X} \ni \sum_{j\in \mathbb{J}}b_j\rho_j \mapsto \sum_{j\in \mathbb{J}}b_j\tau_j \in \mathcal{X}$. Define $ U\coloneqq T^{-1}R$ and $ V\coloneqq ST$. Then  $ U,V$ are bounded invertible and the resolvent of $ VU=STT^{-1}R=SR$ contains $(-\infty, 0]$. Let $j\in \mathbb{J} $. Now $ V\tau_j=ST\tau_j=S\rho_j=\omega_j$ and for $ x=\sum_{j\in \mathbb{J}}f_j(x)\tau_j \in \mathcal{X}$, we get $ (h_jT)x=h_j(\sum_{k\in \mathbb{J}}f_k(x)T\tau_k)=h_j(\sum_{k\in \mathbb{J}}f_k(x)\rho_k)= \sum_{k\in \mathbb{J}}f_k(x)h_j(\rho_k)=\sum_{k\in \mathbb{J}}f_k(x)\delta_{j,k}=f_j(x)$. Thus $h_jT=f_j \Rightarrow h_j=f_jT^{-1}\Rightarrow h_jR=f_jT^{-1}R=f_jU$. But $ h_jR=g_j$ and $j$ was arbitrary. This gives $ f_jU=g_j, \forall j \in \mathbb{J}$.

\item $(\Leftarrow)$ $ \theta_{g}x=\{f_jUx\}_{j\in \mathbb{J}}=\theta_f(Ux),\forall x \in \mathcal{X},$ $\widehat{\theta}_{\omega}(\{a_j\}_{j\in \mathbb{J}})=\sum_{j\in\mathbb{J}}a_jV\tau_j=V(\sum_{j\in\mathbb{J}}a_j\tau_j)=V\widehat{\theta}_\tau(\{a_j\}_{j\in \mathbb{J}})$, $\forall \{a_j\}_{j\in \mathbb{J}}\in \ell^p(\mathbb{J}).$ So $ \theta_{g}$ and $ \widehat{\theta}_{\omega}$ exist and are bounded linear. Now $ \widehat{S}_{g, \omega}x= \sum_{j\in\mathbb{J}}(f_jU)(x)V\tau_j=V(\sum_{j\in\mathbb{J}}f_j(Ux)\tau_j)=V(Ux), \forall x \in \mathcal{X}$. Therefore $\widehat{S}_{g, \omega}=VU $ whose resolvent contains $(-\infty, 0]$.
		
$(\Rightarrow)$ Let $ \{e_j\}_{j\in \mathbb{J}}$ be the standard Schauder basis for $\ell^p(\mathbb{J})$. Since $ \{\tau_j\}_{j\in \mathbb{J}}$ is an orthonormal basis for $ \mathcal{X}$, the map defined by $ T:\mathcal{X}\ni \sum_{j\in\mathbb{J}}a_j\tau_j \mapsto   \sum_{j\in\mathbb{J}}a_je_j \in \ell^p(\mathbb{J})$ is an isometric isomorphism with inverse $ T^{-1} :\ell^p(\mathbb{J}) \ni \sum_{j\in\mathbb{J}}b_je_j \mapsto \sum_{j\in\mathbb{J}}b_j\tau_j \in \mathcal{X}$. Define $ U\coloneqq T^{-1}\theta_g$ and $V\coloneqq\widehat{\theta}_\omega T$. Then $ U,V$ are bounded  with resolvent of $ VU=(\widehat{\theta}_\omega T)(T^{-1}\theta_g)=\widehat{\theta}_\omega\theta_g=\widehat{S}_{g,\omega}$ contains $(-\infty, 0]$ and  for $ x=\sum_{j\in \mathbb{J}}f_j(x)\tau_j \in \mathcal{X}$ we have $ (f_jU)x= f_j(T^{-1}\theta_gx)=f_j(T^{-1}(\{g_k(x)\}_{k\in\mathbb{J}})) = f_j(\sum_{k\in \mathbb{J}}g_k(x)T^{-1}e_k)=f_j(\sum_{k\in \mathbb{J}}g_k(x)\tau_k)=\sum_{k\in \mathbb{J}}g_k(x)\delta_{j,k}=g_j(x) , V\tau_j=\widehat{\theta}_\omega T\tau_j=\widehat{\theta}_\omega e_j=\omega_j, \forall x \in \mathcal{X}, \forall j \in \mathbb{J}$.
\item From (ii). $(\Leftarrow)$
We see $\theta_f\widehat{\theta}_\tau(\{a_j\}_{j \in \mathbb{J}})=\sum_{j\in \mathbb{J}}a_j\theta_f(\tau_j)=\sum_{j\in \mathbb{J}}a_j\sum_{k\in \mathbb{J}}f_k(\tau_j)e_k=\sum_{j\in \mathbb{J}}a_je_j=\{a_j\}_{j \in \mathbb{J}}, \forall \{a_j\}_{j \in \mathbb{J}} \in \ell^p(\mathbb{J}) $. Hence  $\widehat{P}_{g,\omega}=\theta_g\widehat{S}_{g,\omega}^{-1}\widehat{\theta}_\omega=\theta_fU(VU)^{-1}V\widehat{\theta}_\tau =\theta_fI_{\ell^p(\mathbb{J})}\widehat{\theta}_\tau=I_{\ell^p(\mathbb{J})}$.

$(\Rightarrow)$  $U(VU)^{-1}V=(T^{-1}\theta_g)\widehat{S}_{g,\omega}^{-1}(\widehat{\theta}_\omega T)=T^{-1}\widehat{P}_{g,\omega} T =T^{-1}I_{\ell^p(\mathbb{J})} T =I_{\ell^p(\mathbb{J})}$.
\item From (iii).  $(\Leftarrow)$ $\widehat{S}_{g,\omega}=VU=I_{\mathcal{X}},  \widehat{P}_{g,\omega}=\theta_g\widehat{S}_{g,\omega}^{-1}\widehat{\theta}_\omega=\theta_gI_{\mathcal{X}}\widehat{\theta}_\omega =\theta_fUV\widehat{\theta}_\tau =\theta_fI_{\mathcal{X}}\widehat{\theta}_\tau=\theta_f\widehat{\theta}_\tau=I_{\ell^p(\mathbb{J})}$.

 $(\Rightarrow)$  $ VU=\widehat{\theta}_\omega TT^{-1}\theta_g=\widehat{\theta}_\omega \theta_g=\widehat{S}_{g,\omega}=I_\mathcal{X}, UV=T^{-1}\theta_g\widehat{\theta}_\omega T=T^{-1}\widehat{P}_{g,\omega}T=T^{-1}I_{\ell^p(\mathbb{J})}T =I_\mathcal{X}.$
\end{enumerate}
\end{proof}
\begin{theorem}(cf.\cite{OLE1})\label{COORDINATEFUNCTIONAL}
Let $\{x_j\}_{j \in \mathbb{J}} $ be a Schauder basis for $ \mathcal{X}$. Then the functionals $ f_j: \mathcal{X}\ni \sum_{k\in \mathbb{J}}a_kx_k \mapsto a_j \in  \mathbb{K} ,\forall  j \in \mathbb{J}$ are bounded. If there exists $ r>0$ such that $\|x_j\|\geq r , \forall j \in \mathbb{J}$, then the collection $\{f_j\}_{j \in \mathbb{J}} $ is uniformly bounded.
\end{theorem}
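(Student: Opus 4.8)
The plan is to run the classical argument for the continuity of the coefficient (coordinate) functionals of a Schauder basis, which rests on constructing an auxiliary complete norm and then invoking the open mapping theorem. Since $\{x_j\}_{j\in\mathbb{J}}$ is a Schauder basis, I treat $\mathbb{J}$ as a countable index set with its fixed ordering, so that for $x=\sum_{k\in\mathbb{J}}a_kx_k$ the partial sums $\sum_{k=1}^n a_kx_k$ are meaningful. First I would introduce on $\mathcal{X}$ the quantity
$$|||x|||\coloneqq \sup_{n}\left\|\sum_{k=1}^n a_kx_k\right\|, \quad \text{where } x=\sum_{k\in\mathbb{J}}a_kx_k.$$
Because the defining series for $x$ converges its partial sums are bounded, so $|||x|||<\infty$; uniqueness of the basis expansion makes $|||\cdot|||$ well-defined, and it is routine to check it is a norm satisfying $\|x\|\le|||x|||$ (let $n\to\infty$ inside the supremum).

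The crucial step is to show that $(\mathcal{X},|||\cdot|||)$ is a Banach space. Given a $|||\cdot|||$-Cauchy sequence $(y^{(m)})_m$ with $y^{(m)}=\sum_k a_k^{(m)}x_k$, one first fixes $k$ and uses the telescoping estimate
$$\left\|(a_k^{(m)}-a_k^{(l)})x_k\right\|=\left\|\sum_{i=1}^k(a_i^{(m)}-a_i^{(l)})x_i-\sum_{i=1}^{k-1}(a_i^{(m)}-a_i^{(l)})x_i\right\|\le 2\,|||y^{(m)}-y^{(l)}|||,$$
so that each scalar sequence $(a_k^{(m)})_m$ is Cauchy and converges to some $a_k$ (here $x_k\neq 0$). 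Passing to the limit $l\to\infty$ in the uniform-in-$n$ bound $\|\sum_{k=1}^n(a_k^{(m)}-a_k^{(l)})x_k\|<\epsilon$ and combining with the convergence of $\sum_k a_k^{(N)}x_k$ shows that $\sum_k a_kx_k$ converges to an element $y\in\mathcal{X}$ with $|||y^{(m)}-y|||\to 0$. This completeness verification is the main obstacle: the delicate point is to upgrade the coordinatewise convergence to $|||\cdot|||$-convergence while simultaneously confirming that the candidate limit genuinely lies in $\mathcal{X}$ (i.e.\ that its formal coefficient series converges in the original norm). Once completeness is in hand, the identity map $(\mathcal{X},|||\cdot|||)\to(\mathcal{X},\|\cdot\|)$ is a continuous bijection between Banach spaces (continuity from $\|x\|\le|||x|||$), so the open mapping theorem yields a constant $C>0$ with $|||x|||\le C\|x\|$ for all $x\in\mathcal{X}$.

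Finally I would exploit this equivalence to bound the functionals. For each $j\in\mathbb{J}$ and each $x=\sum_k a_kx_k$,
$$\|f_j(x)\,x_j\|=\left\|\sum_{k=1}^j a_kx_k-\sum_{k=1}^{j-1}a_kx_k\right\|\le 2\,|||x|||\le 2C\|x\|,$$
whence $|f_j(x)|\le \dfrac{2C}{\|x_j\|}\|x\|$, proving each $f_j$ is bounded with $\|f_j\|\le 2C/\|x_j\|$. For the second assertion, the hypothesis $\|x_j\|\ge r>0$ for all $j$ immediately gives the uniform bound $\|f_j\|\le 2C/r$ for every $j\in\mathbb{J}$, so $\{f_j\}_{j\in\mathbb{J}}$ is uniformly bounded. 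I expect the only genuinely nontrivial ingredient to be the completeness of $(\mathcal{X},|||\cdot|||)$; the rest is bookkeeping with the telescoping identity and the triangle inequality.
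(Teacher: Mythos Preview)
Your argument is correct and is precisely the classical proof via the auxiliary norm $|||\cdot|||$ and the open mapping theorem. Note, however, that the paper does not supply its own proof of this theorem: it is stated with a ``(cf.\ \cite{OLE1})'' and used as a known result, so there is nothing to compare against beyond observing that what you wrote is the standard textbook argument one finds in the cited reference.
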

\begin{corollary}
\begin{enumerate}[\upshape(i)]
\item If  $(\{g_j=f_jU\}_{j\in \mathbb{J}},\{\omega_j=V\tau_j\}_{j \in \mathbb{J}})$ is a Riesz p-basis for $ \mathcal{X}$, then  $\{g_j\}_{j \in \mathbb{J}} $ is uniformly bounded, and $\|V^{-1}\|^{-1} \leq \|\omega_j\|\leq \|V\|,\forall j \in \mathbb{J}.$
\item If  $(\{g_j\}_{j\in \mathbb{J}},\{\omega_j\}_{j \in \mathbb{J}})$ is a p-frame for $ \mathcal{X}$, then  $\{g_j\}_{j \in \mathbb{J}} $ is uniformly bounded, and $ \|\omega_j\|\leq \|V\|,\forall j \in \mathbb{J}.$
\end{enumerate}
\end{corollary}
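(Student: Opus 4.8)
The plan is to deduce both statements directly from the characterization in Theorem \ref{PSEQUENTIALCHARACTERIZATIONBANACH}, combined with the elementary norm properties of a $p$-orthonormal basis and its coordinate functionals. First I would record two facts about the fixed $p$-orthonormal basis $\{\tau_j\}_{j\in\mathbb{J}}$ appearing in that theorem and the associated functionals $f_j:\mathcal{X}\ni\sum_{k\in\mathbb{J}}a_k\tau_k\mapsto a_j\in\mathbb{K}$: by Proposition \ref{ellp}(i) one has $\|\tau_j\|=1$ for every $j\in\mathbb{J}$, and by Lemma \ref{CFLEMMA} each coordinate functional satisfies $\|f_j\|=1$. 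These are the only ingredients needed beyond the structural form $g_j=f_jU$, $\omega_j=V\tau_j$ supplied by Theorem \ref{PSEQUENTIALCHARACTERIZATIONBANACH}.

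For part (i), since $(\{g_j\}_{j\in\mathbb{J}},\{\omega_j\}_{j\in\mathbb{J}})$ is a Riesz $p$-basis, Theorem \ref{PSEQUENTIALCHARACTERIZATIONBANACH}(i) furnishes invertible $U,V\in\mathcal{B}(\mathcal{X})$ with $g_j=f_jU$ and $\omega_j=V\tau_j$ for all $j$. Uniform boundedness of $\{g_j\}_{j\in\mathbb{J}}$ then follows at once from $\|g_j\|=\|f_jU\|\leq\|f_j\|\,\|U\|=\|U\|$. For the two-sided estimate on $\|\omega_j\|$ I would use $\omega_j=V\tau_j$ to obtain the upper bound $\|\omega_j\|\leq\|V\|\,\|\tau_j\|=\|V\|$, and then exploit the invertibility of $V$ to write $\tau_j=V^{-1}\omega_j$, whence $1=\|\tau_j\|\leq\|V^{-1}\|\,\|\omega_j\|$, that is, $\|V^{-1}\|^{-1}\leq\|\omega_j\|$.

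For part (ii), the $p$-frame $(\{g_j\}_{j\in\mathbb{J}},\{\omega_j\}_{j\in\mathbb{J}})$ is again of the form $g_j=f_jU$, $\omega_j=V\tau_j$ by Theorem \ref{PSEQUENTIALCHARACTERIZATIONBANACH}(ii), but now $U,V$ need only satisfy that the resolvent of $VU$ contains $(-\infty,0]$ and are not assumed invertible. The identical computation yields $\|g_j\|\leq\|U\|$, giving uniform boundedness, and $\|\omega_j\|\leq\|V\|$; here the uniform boundedness of $\{g_j\}_{j\in\mathbb{J}}$ could alternatively be routed through Theorem \ref{COORDINATEFUNCTIONAL}. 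The lower bound on $\|\omega_j\|$ is deliberately absent because $V$ may fail to be invertible, so $V^{-1}$ is unavailable.

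I do not expect a genuine obstacle, since the argument is essentially bookkeeping; the main points requiring care are citing the correct clause of the characterization theorem in each case and being explicit that $\|f_j\|=1$ and $\|\tau_j\|=1$ hold because the underlying basis is \emph{$p$-orthonormal}, rather than an arbitrary Schauder basis (for which Theorem \ref{COORDINATEFUNCTIONAL} would only deliver uniform boundedness under the separate lower-norm hypothesis $\|\tau_j\|\geq r$). Keeping the invertibility of $V$ confined to part (i) is what correctly produces the asymmetry between the two conclusions.
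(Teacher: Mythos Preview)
Your proposal is correct and matches the paper's approach: both use $\|\tau_j\|=1$ together with $\|g_j\|\leq\|f_j\|\,\|U\|$ to get the conclusions, the only cosmetic difference being that you invoke Lemma~\ref{CFLEMMA} directly for $\|f_j\|=1$, whereas the paper cites Theorem~\ref{COORDINATEFUNCTIONAL} for the uniform boundedness of $\{f_j\}$.
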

\begin{proof}
We recall $\|\tau_j\|=1,\forall j \in \mathbb{J}$. Remainings follow from $\|g_j\|\leq \|f_j\|\|U\|,\forall j \in \mathbb{J} $ and Theorem \ref{COORDINATEFUNCTIONAL}.
\end{proof}

\textbf{Similarity }
\begin{definition}
Let  $(\{f_j\}_{j\in \mathbb{J}}, \{\tau_j\}_{j\in \mathbb{J}})$ and $(\{g_j\}_{j\in \mathbb{J}}, \{\omega_j\}_{j\in \mathbb{J}}) $ be  p-frames for  $\mathcal{X}$. We say that $(\{g_j\}_{j\in \mathbb{J}}, \{\omega_j\}_{j\in \mathbb{J}}) $ is similar to $(\{f_j\}_{j\in \mathbb{J}}, \{\tau_j\}_{j\in \mathbb{J}})$ if there exist invertible $T_{f,g}, T_{\tau,\omega}\in \mathcal{B}(\mathcal{X}) $ such that $ g_j=f_jT_{f,g}, \omega_j= T_{\tau,\omega}\tau_j, \forall j \in \mathbb{J}$.
\end{definition}
 \begin{lemma}\label{P-FRAMELEMMA}
 Let $ \{f_j\}_{j\in \mathbb{J}}\in \widehat{\mathscr{F}}_\tau,$ $ \{g_j\}_{j\in \mathbb{J}}\in \widehat{\mathscr{F}}_\omega$ and   $g_j=f_jT_{f, g} , \omega_j=T_{\tau,\omega}\tau_j,  \forall j \in \mathbb{J}$, for some invertible $T_{f,g}, T_{\tau,\omega}\in \mathcal{B}(\mathcal{X}) .$ Then 
 $ \theta_g=\theta_f T_{f,g}, \widehat{\theta}_\omega=T_{\tau,\omega}\widehat{\theta}_\tau ,\widehat{S}_{g,\omega}=T_{\tau,\omega}\widehat{S}_{f, \tau}T_{f,g},  \widehat{P}_{g,\omega}=\widehat{P}_{f, \tau}.$ Assuming that  $(\{f_j\}_{j\in \mathbb{J}}, \{\tau_j\}_{j\in \mathbb{J}})$ is a Parseval p-frame, then $(\{g_j\}_{j\in \mathbb{J}}, \{\omega_j\}_{j\in \mathbb{J}})$  is a Parseval p-frame if and only if $T_{\tau,\omega}T_{f,g}=I_\mathcal{X} .$
 \end{lemma}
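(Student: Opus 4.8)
The plan is to follow exactly the template of the operator-version Lemma \ref{P-OVFLEMMA} and the Hilbert-space sequential Lemma \ref{SEQUENTIALSIMILARITYLEMMA}: everything reduces to unwinding the definitions of the analysis, synthesis, frame, and idempotent operators and then composing. First I would establish the two basic operator identities. For $x \in \mathcal{X}$, the hypothesis $g_j = f_j T_{f,g}$ means $g_j(x) = f_j(T_{f,g}x)$, so
\begin{align*}
\theta_g x = \{g_j(x)\}_{j\in\mathbb{J}} = \{f_j(T_{f,g}x)\}_{j\in\mathbb{J}} = \theta_f(T_{f,g}x),
\end{align*}
giving $\theta_g = \theta_f T_{f,g}$. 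Dually, for $\{a_j\}_{j\in\mathbb{J}} \in \ell^p(\mathbb{J})$, using $\omega_j = T_{\tau,\omega}\tau_j$ and the boundedness (hence continuity and linearity) of $T_{\tau,\omega}$,
\begin{align*}
\widehat{\theta}_\omega(\{a_j\}_{j\in\mathbb{J}}) = \sum_{j\in\mathbb{J}} a_j \omega_j = \sum_{j\in\mathbb{J}} a_j T_{\tau,\omega}\tau_j = T_{\tau,\omega}\sum_{j\in\mathbb{J}} a_j \tau_j = T_{\tau,\omega}\widehat{\theta}_\tau(\{a_j\}_{j\in\mathbb{J}}),
\end{align*}
so $\widehat{\theta}_\omega = T_{\tau,\omega}\widehat{\theta}_\tau$.

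Next I would invoke part (i) of the sequential p-frame fundamental proposition, which gives $\widehat{S}_{f,\tau} = \widehat{\theta}_\tau\theta_f$ and likewise $\widehat{S}_{g,\omega} = \widehat{\theta}_\omega\theta_g$. Substituting the two identities just obtained yields
\begin{align*}
\widehat{S}_{g,\omega} = \widehat{\theta}_\omega\theta_g = (T_{\tau,\omega}\widehat{\theta}_\tau)(\theta_f T_{f,g}) = T_{\tau,\omega}\widehat{S}_{f,\tau}T_{f,g}.
\end{align*}
For the frame idempotent, the one point requiring a word of justification is that $\widehat{S}_{g,\omega}$ is invertible so that $\widehat{P}_{g,\omega} = \theta_g \widehat{S}_{g,\omega}^{-1}\widehat{\theta}_\omega$ is defined: this holds because $T_{\tau,\omega}$ and $T_{f,g}$ are invertible by hypothesis and $\widehat{S}_{f,\tau}$ is invertible since $0$ lies in its resolvent set (the resolvent contains $(-\infty,0]$). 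Then substituting and cancelling the invertible factors gives
\begin{align*}
\widehat{P}_{g,\omega} = \theta_g \widehat{S}_{g,\omega}^{-1}\widehat{\theta}_\omega = (\theta_f T_{f,g})(T_{\tau,\omega}\widehat{S}_{f,\tau}T_{f,g})^{-1}(T_{\tau,\omega}\widehat{\theta}_\tau) = \theta_f \widehat{S}_{f,\tau}^{-1}\widehat{\theta}_\tau = \widehat{P}_{f,\tau}.
\end{align*}

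Finally, for the Parseval statement, I would simply specialize the frame-operator identity: if $(\{f_j\}_{j\in\mathbb{J}},\{\tau_j\}_{j\in\mathbb{J}})$ is Parseval then $\widehat{S}_{f,\tau} = I_\mathcal{X}$, whence $\widehat{S}_{g,\omega} = T_{\tau,\omega}T_{f,g}$. Since $(\{g_j\}_{j\in\mathbb{J}},\{\omega_j\}_{j\in\mathbb{J}})$ is Parseval precisely when $\widehat{S}_{g,\omega} = I_\mathcal{X}$, the equivalence $T_{\tau,\omega}T_{f,g} = I_\mathcal{X}$ follows immediately. I do not anticipate any genuine obstacle here; the entire argument is bookkeeping with the definitions, and the only substantive ingredient beyond direct computation is the cited formula $\widehat{S} = \widehat{\theta}\,\theta$ together with the invertibility of $\widehat{S}_{f,\tau}$, both already available in the excerpt.
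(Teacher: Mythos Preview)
Your proof is correct and follows essentially the same route as the paper's: both unwind the definitions of $\theta_g$ and $\widehat{\theta}_\omega$, substitute into $\widehat{S}_{g,\omega}=\widehat{\theta}_\omega\theta_g$ and then into $\widehat{P}_{g,\omega}=\theta_g\widehat{S}_{g,\omega}^{-1}\widehat{\theta}_\omega$, and read off the Parseval equivalence from $\widehat{S}_{g,\omega}=T_{\tau,\omega}T_{f,g}$. If anything, your remark that $\widehat{S}_{f,\tau}$ is invertible (since its resolvent contains $0$) is a small justification the paper leaves implicit.
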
 
\begin{proof}
$ \theta_g(x)=\{g_j(x)\}_{j\in \mathbb{J}}=\{f_j(T_{f,g}x)\}_{j\in \mathbb{J}}=\theta_f(T_{f,g}x), \forall x \in \mathcal{X}$, 	 $\widehat{\theta}_\omega(\{a_j\}_{j\in \mathbb{J}})=\sum_{j\in \mathbb{J}}a_j\omega_j=\sum_{j\in \mathbb{J}}a_jT_{\tau,\omega}\tau_j=T_{\tau,\omega}(\widehat{\theta}_\tau(\{a_j\}_{j\in \mathbb{J}})) , \forall \{a_j\}_{j\in \mathbb{J}} \in \ell^p(\mathbb{J})$, $ \widehat{S}_{g,\omega}=\widehat{\theta}_\omega\theta_g=T_{\tau,\omega}\widehat{\theta}_\tau\theta_f T_{f,g} =T_{\tau,\omega}\widehat{S}_{f, \tau}T_{f,g},$  $\widehat{P}_{g,\omega}=\theta_g\widehat{S}_{g,\omega}^{-1}\widehat{\theta}_\omega=(\theta_f T_{f,g})(T_{\tau,\omega}\widehat{S}_{f, \tau}T_{f,g})^{-1}(T_{\tau,\omega}\widehat{\theta}_\tau)=\widehat{P}_{f, \tau}.$
\end{proof}
\begin{theorem}\label{SEQUENTIALSIMILARITYCHARACTERIZATIONPFRAME}
Let $ \{f_j\}_{j\in \mathbb{J}}\in \widehat{\mathscr{F}}_\tau,$ $ \{g_j\}_{j\in \mathbb{J}}\in \widehat{\mathscr{F}}_\omega.$ The following are equivalent.
\begin{enumerate}[\upshape(i)]
\item $ \{g_j\}_{j\in \mathbb{J}}\in \widehat{\mathscr{F}}_\omega$ and   $g_j=f_jT_{f, g} , \omega_j=T_{\tau,\omega}\tau_j,  \forall j \in \mathbb{J}$, for some invertible $T_{f,g}, T_{\tau,\omega}\in \mathcal{B}(\mathcal{X}) .$ 
\item $\theta_g=\theta_f T'_{f,g}, \widehat{\theta}_\omega=T'_{\tau,\omega}\widehat{\theta}_\tau  $ for some invertible $T'_{f,g}, T'_{\tau,\omega}\in \mathcal{B}(\mathcal{X}) .$
\item $\widehat{P}_{g,\omega}=\widehat{P}_{f, \tau}.$
\end{enumerate}
If one of the above conditions is satisfied, then  invertible operators in  $\operatorname{(i)}$ and  $\operatorname{(ii)}$ are unique and are given by  $T_{f,g}= \widehat{S}_{f,\tau}^{-1}\widehat{\theta}_\tau\theta_g, T_{\tau, \omega}=\widehat{\theta}_\omega\theta_f\widehat{S}_{f,\tau}^{-1}.$
In the case that $(\{f_j\}_{j\in \mathbb{J}},  \{\tau_j\}_{j\in \mathbb{J}})$ is a Parseval p-frame, then $(\{g_j\}_{j\in \mathbb{J}},  \{\omega_j\}_{j\in \mathbb{J}})$ is  a Parseval p-frame if and only if $T_{\tau, \omega}T_{f,g} =I_\mathcal{X}$   if and only if $ T_{f,g}T_{\tau, \omega} =I_\mathcal{X}$. 
\end{theorem}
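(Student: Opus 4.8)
The plan is to mirror the arguments used for the operator version in Theorem \ref{RIGHTSIMILARITY} and its $p$-analogue Theorem \ref{RIGHTLEFTSIMILARITYPOVF}, transporting them to the sequential setting where the roles played by $\widehat{L}_j$ and $L_j$ in Proposition \ref{BANACHSPACEOVFFUNDAMENTALLEMMA} are now taken by the coordinate functional $h_j:\ell^p(\mathbb{J})\ni\{a_k\}_{k\in\mathbb{J}}\mapsto a_j\in\mathbb{K}$ and the standard Schauder basis $\{e_j\}_{j\in\mathbb{J}}$ of $\ell^p(\mathbb{J})$. The two recovery identities $f_j=h_j\theta_f$ and $\tau_j=\widehat{\theta}_\tau e_j$, valid for every $j\in\mathbb{J}$, will be the substitutes for $A_j=\widehat{L}_j\theta_A$ and $\widehat{\Psi}_j=\widehat{\theta}_\Psi L_j$. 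Throughout I will lean on the identity $\widehat{S}_{f,\tau}=\widehat{\theta}_\tau\theta_f$ and on the idempotent-absorption relations $\widehat{P}_{f,\tau}\theta_f=\theta_f$ and $\widehat{\theta}_\tau\widehat{P}_{f,\tau}=\widehat{\theta}_\tau$, both of which follow at once from $\widehat{P}_{f,\tau}=\theta_f\widehat{S}_{f,\tau}^{-1}\widehat{\theta}_\tau$ together with $\widehat{\theta}_\tau\theta_f=\widehat{S}_{f,\tau}$; the same relations hold with $(f,\tau)$ replaced by $(g,\omega)$.

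First I would dispose of $\text{(i)}\Rightarrow\text{(ii)}\Rightarrow\text{(iii)}$. The implication $\text{(i)}\Rightarrow\text{(ii)}$ is immediate from Lemma \ref{P-FRAMELEMMA}, which records $\theta_g=\theta_f T_{f,g}$ and $\widehat{\theta}_\omega=T_{\tau,\omega}\widehat{\theta}_\tau$; the same lemma also gives $\widehat{P}_{g,\omega}=\widehat{P}_{f,\tau}$, which is $\text{(iii)}$. For $\text{(ii)}\Rightarrow\text{(i)}$, assuming $\theta_g=\theta_f T'_{f,g}$ and $\widehat{\theta}_\omega=T'_{\tau,\omega}\widehat{\theta}_\tau$, I apply $h_j$ on the left of the first relation and evaluate the second on $e_j$, obtaining $g_j=h_j\theta_g=h_j\theta_f T'_{f,g}=f_j T'_{f,g}$ and $\omega_j=\widehat{\theta}_\omega e_j=T'_{\tau,\omega}\widehat{\theta}_\tau e_j=T'_{\tau,\omega}\tau_j$, so $\text{(i)}$ holds with the same invertible operators.

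The substantive step is $\text{(iii)}\Rightarrow\text{(ii)}$. Using the absorption relations I would write $\theta_g=\widehat{P}_{g,\omega}\theta_g=\widehat{P}_{f,\tau}\theta_g=\theta_f(\widehat{S}_{f,\tau}^{-1}\widehat{\theta}_\tau\theta_g)$ and $\widehat{\theta}_\omega=\widehat{\theta}_\omega\widehat{P}_{g,\omega}=\widehat{\theta}_\omega\widehat{P}_{f,\tau}=(\widehat{\theta}_\omega\theta_f\widehat{S}_{f,\tau}^{-1})\widehat{\theta}_\tau$, which already exhibit the desired factorisations with $T'_{f,g}=\widehat{S}_{f,\tau}^{-1}\widehat{\theta}_\tau\theta_g$ and $T'_{\tau,\omega}=\widehat{\theta}_\omega\theta_f\widehat{S}_{f,\tau}^{-1}$. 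The main obstacle, and the most delicate bookkeeping, is proving these two operators are invertible. I would verify that $\widehat{S}_{g,\omega}^{-1}\widehat{\theta}_\omega\theta_f$ is a two-sided inverse of $T'_{f,g}$ and that $\widehat{\theta}_\tau\theta_g\widehat{S}_{g,\omega}^{-1}$ is a two-sided inverse of $T'_{\tau,\omega}$, in each case inserting the hypothesis $\widehat{P}_{g,\omega}=\widehat{P}_{f,\tau}$ at the middle of the product and then collapsing it by repeated use of $\widehat{\theta}_\tau\widehat{P}_{f,\tau}=\widehat{\theta}_\tau$, $\widehat{P}_{g,\omega}\theta_g=\theta_g$, and $\widehat{\theta}_\omega\theta_g=\widehat{S}_{g,\omega}$.

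Finally, the uniqueness and the explicit formulae follow by feeding $\theta_g=\theta_f T_{f,g}$ and $\widehat{\theta}_\omega=T_{\tau,\omega}\widehat{\theta}_\tau$ back into $\widehat{\theta}_\tau\theta_g$ and $\widehat{\theta}_\omega\theta_f$, giving $\widehat{\theta}_\tau\theta_g=\widehat{S}_{f,\tau}T_{f,g}$ and $\widehat{\theta}_\omega\theta_f=T_{\tau,\omega}\widehat{S}_{f,\tau}$, hence $T_{f,g}=\widehat{S}_{f,\tau}^{-1}\widehat{\theta}_\tau\theta_g$ and $T_{\tau,\omega}=\widehat{\theta}_\omega\theta_f\widehat{S}_{f,\tau}^{-1}$. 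For the Parseval clause, when $\widehat{S}_{f,\tau}=I_\mathcal{X}$ the relation $\widehat{S}_{g,\omega}=T_{\tau,\omega}\widehat{S}_{f,\tau}T_{f,g}$ from Lemma \ref{P-FRAMELEMMA} reduces to $\widehat{S}_{g,\omega}=T_{\tau,\omega}T_{f,g}$, so $(\{g_j\}_{j\in\mathbb{J}},\{\omega_j\}_{j\in\mathbb{J}})$ is Parseval exactly when $T_{\tau,\omega}T_{f,g}=I_\mathcal{X}$; the equivalence with $T_{f,g}T_{\tau,\omega}=I_\mathcal{X}$ is then the elementary fact that for invertible operators $ab=I_\mathcal{X}$ forces $ba=I_\mathcal{X}$, exactly as invoked at the end of Theorem \ref{RIGHTSIMILARITY}.
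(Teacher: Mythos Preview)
Your proposal is correct and follows essentially the same approach as the paper: Lemma \ref{P-FRAMELEMMA} handles (i) $\Rightarrow$ (ii) $\Rightarrow$ (iii); (ii) $\Rightarrow$ (i) is obtained by reading off coordinates (the paper compares coefficients of $e_j$, you apply $h_j$ and evaluate at $e_j$, which is the same thing); and (iii) $\Rightarrow$ (ii) uses the absorption identities $\widehat{P}_{g,\omega}\theta_g=\theta_g$, $\widehat{\theta}_\omega\widehat{P}_{g,\omega}=\widehat{\theta}_\omega$ to factor $\theta_g$ and $\widehat{\theta}_\omega$, with invertibility of the factors verified via the explicit two-sided inverses $\widehat{S}_{g,\omega}^{-1}\widehat{\theta}_\omega\theta_f$ and $\widehat{\theta}_\tau\theta_g\widehat{S}_{g,\omega}^{-1}$, exactly as in the paper. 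The uniqueness and Parseval arguments also match.
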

\begin{proof}
Lemma \ref{P-FRAMELEMMA} gives (i) $\Rightarrow $ (ii) $\Rightarrow $ (iii). (ii) $\Rightarrow $ (i) Let $\{e_j\}_{j\in \mathbb{J}}$ be standard Schauder basis for $\ell^p(\mathbb{J}).$ Then $ \sum_{j\in\mathbb{J}}g_j(x)e_j=\theta_g(x)=\theta_f(T_{f,g}x)=\sum_{j\in\mathbb{J}}f_j(T_{f,g}x)e_j, \forall x \in \mathcal{X}.$ (iii) $\Rightarrow $ (ii) $\theta_g=\widehat{P}_{g,\omega}\theta_g=\widehat{P}_{f,\tau}\theta_g=\theta_f(\widehat{S}_{f,\tau}^{-1}\widehat{\theta}_{\tau}\theta_g)$, and $\widehat{\theta}_\omega=\widehat{\theta}_\omega\widehat{P}_{g,\omega}=\widehat{\theta}_\omega\widehat{P}_{f,\tau}=(\widehat{\theta}_\omega\theta_f\widehat{S}_{f,\tau}^{-1})\widehat{\theta}_\tau $. We  show  $\widehat{S}_{f,\tau}^{-1}\widehat{\theta}_{\tau}\theta_g$ and $\widehat{\theta}_\omega\theta_f\widehat{S}_{f,\tau}^{-1} $ are invertible. In fact, $(\widehat{S}_{f,\tau}^{-1}\widehat{\theta}_{\tau}\theta_g)(\widehat{S}_{g,\omega}^{-1}\widehat{\theta}_{\omega}\theta_f)=\widehat{S}_{f,\tau}^{-1}\widehat{\theta}_{\tau}\widehat{P}_{g,\omega}\theta_f=\widehat{S}_{f,\tau}^{-1}\widehat{\theta}_{\tau}\widehat{P}_{f,\tau}\theta_f=I_\mathcal{X} $,   $(\widehat{S}_{g,\omega}^{-1}\widehat{\theta}_{\omega}\theta_f)(\widehat{S}_{f,\tau}^{-1}\widehat{\theta}_{\tau}\theta_g)=\widehat{S}_{g,\omega}^{-1}\widehat{\theta}_{\omega}\widehat{P}_{f,\tau}\theta_g=\widehat{S}_{g,\omega}^{-1}\widehat{\theta}_{\omega}\widehat{P}_{g,\omega}\theta_g=I_\mathcal{X} $, and $ (\widehat{\theta}_\omega\theta_f\widehat{S}_{f,\tau}^{-1})(\widehat{\theta}_\tau\theta_g\widehat{S}_{g,\omega}^{-1})=\widehat{\theta}_\omega\widehat{P}_{f,\tau}\theta_g\widehat{S}_{g,\omega}^{-1}=\widehat{\theta}_\omega\widehat{P}_{g,\omega}\theta_g\widehat{S}_{g,\omega}^{-1}=I_\mathcal{X}$, $(\widehat{\theta}_\tau\theta_g\widehat{S}_{g,\omega}^{-1})(\widehat{\theta}_\omega\theta_f\widehat{S}_{f,\tau}^{-1})=\widehat{\theta}_\tau\widehat{P}_{g,\omega}\theta_f\widehat{S}_{f,\tau}^{-1}=\widehat{\theta}_\tau\widehat{P}_{f,\tau}\theta_f\widehat{S}_{f,\tau}^{-1}=I_\mathcal{X} $.

Let $T_{f,g}, T_{\tau,\omega}\in \mathcal{B}(\mathcal{X})$ be invertible and $g_j=f_jT_{f, g}, \omega_j=T_{\tau,\omega}\tau_j,  \forall j \in \mathbb{J}$. Then $\theta_g=\theta_fT_{f, g} $ implies $\widehat{\theta}_\tau\theta_g=\widehat{\theta}_\tau\theta_fT_{f, g}=\widehat{S}_{f,\tau}T_{f, g}  $ implies $ T_{f, g} =\widehat{S}_{f,\tau}^{-1}\widehat{\theta}_\tau\theta_g$, and $\widehat{\theta}_\omega=T_{\tau,\omega}\widehat{\theta}_\tau $ implies $\widehat{\theta}_\omega\theta_f=T_{\tau,\omega}\widehat{\theta}_\tau\theta_f=T_{\tau,\omega}\widehat{S}_{f,\tau} $, hence  $T_{\tau,\omega}=\widehat{\theta}_\omega\theta_f\widehat{S}_{f,\tau}^{-1} $. 
\end{proof}

\begin{corollary}
For a given p-frame $ (\{f_j\}_{j \in \mathbb{J}} , \{\tau_j\}_{j \in \mathbb{J}})$, the canonical dual of $ (\{f_j\}_{j \in \mathbb{J}} , \{\tau_j\}_{j \in \mathbb{J}}  )$ is the only dual p-frame that is similar to $ (\{f_j\}_{j \in \mathbb{J}} , \{\tau_j\}_{j \in \mathbb{J}} )$.
\end{corollary}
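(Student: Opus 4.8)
The plan is to mirror the structure of the two companion corollaries already proved in the excerpt (the RL-similar p-(ovf) version and the Hilbert-space sequential version), since the present statement is the exact analogue for p-frames and all the needed machinery—namely the definition of dual p-frame, the definition of similar p-frame, and the uniqueness clause of Theorem \ref{SEQUENTIALSIMILARITYCHARACTERIZATIONPFRAME}—is in place. The statement has two halves: existence (the canonical dual really is a dual p-frame that is similar to the original) and uniqueness (no other dual p-frame is similar to it), and I would address them in that order.

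For existence, recall the canonical dual is $(\{\widetilde{f}_j\coloneqq f_j \widehat{S}_{f,\tau}^{-1}\}_{j\in \mathbb{J}}, \{\widetilde{\tau}_j\coloneqq \widehat{S}_{f,\tau}^{-1}\tau_j\}_{j\in \mathbb{J}})$, which by definition is a dual of $(\{f_j\}_{j\in \mathbb{J}}, \{\tau_j\}_{j\in \mathbb{J}})$. Since $\widehat{S}_{f,\tau}$ is invertible (its resolvent contains $(-\infty,0]$, so $\widehat{S}_{f,\tau}^{-1}\in\mathcal{B}(\mathcal{X})$ is invertible), I would take $T_{f,g}\coloneqq \widehat{S}_{f,\tau}^{-1}$ and $T_{\tau,\omega}\coloneqq \widehat{S}_{f,\tau}^{-1}$ and observe $\widetilde{f}_j=f_jT_{f,g}$ and $\widetilde{\tau}_j=T_{\tau,\omega}\tau_j$ for all $j\in\mathbb{J}$; this is precisely the definition of the canonical dual being similar to the original.

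For uniqueness, I would suppose $(\{g_j\}_{j \in \mathbb{J}} , \{\omega_j\}_{j \in \mathbb{J}})$ is simultaneously a dual of and similar to $(\{f_j\}_{j \in \mathbb{J}} , \{\tau_j\}_{j \in \mathbb{J}})$. Duality gives $\widehat{\theta}_\tau\theta_g=I_\mathcal{X}=\widehat{\theta}_\omega\theta_f$, and similarity gives invertible $T_{f,g},T_{\tau,\omega}\in\mathcal{B}(\mathcal{X})$ with $g_j=f_jT_{f,g}$ and $\omega_j=T_{\tau,\omega}\tau_j$ for all $j$. Theorem \ref{SEQUENTIALSIMILARITYCHARACTERIZATIONPFRAME} pins these operators down uniquely as $T_{f,g}=\widehat{S}_{f,\tau}^{-1}\widehat{\theta}_\tau\theta_g$ and $T_{\tau,\omega}=\widehat{\theta}_\omega\theta_f\widehat{S}_{f,\tau}^{-1}$; substituting the duality identities collapses both to $\widehat{S}_{f,\tau}^{-1}$. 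Hence $g_j=f_j\widehat{S}_{f,\tau}^{-1}=\widetilde{f}_j$ and $\omega_j=\widehat{S}_{f,\tau}^{-1}\tau_j=\widetilde{\tau}_j$ for every $j$, so the given frame coincides with the canonical dual.

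I expect no genuine obstacle here: the argument is a direct specialization of the earlier corollaries, and the only point requiring a moment's care is confirming that the hypotheses of Theorem \ref{SEQUENTIALSIMILARITYCHARACTERIZATIONPFRAME} are met (both collections being p-frames, which is given) so that its uniqueness formula for $T_{f,g}$ and $T_{\tau,\omega}$ applies verbatim. The substitution of $\widehat{\theta}_\tau\theta_g=I_\mathcal{X}$ and $\widehat{\theta}_\omega\theta_f=I_\mathcal{X}$ into those formulas is the crux, and it is immediate once duality is invoked.
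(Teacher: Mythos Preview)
Your proof is correct and follows essentially the same route as the paper: invoke the uniqueness formula from Theorem \ref{SEQUENTIALSIMILARITYCHARACTERIZATIONPFRAME} for the similarity operators and substitute the duality identities $\widehat{\theta}_\tau\theta_g=I_\mathcal{X}=\widehat{\theta}_\omega\theta_f$ to force $T_{f,g}=T_{\tau,\omega}=\widehat{S}_{f,\tau}^{-1}$. The paper omits your explicit existence verification, treating it as immediate from the definition of the canonical dual.
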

\begin{proof}
If $ (\{f_j\}_{j \in \mathbb{J}} , \{\tau_j\}_{j \in \mathbb{J}})$ and $ (\{g_j\}_{j \in \mathbb{J}} , \{\omega_j\}_{j \in \mathbb{J}})$  are similar and dual to each other, then there exist invertible  $T_{f,g}, T_{\tau,\omega} \in \mathcal{B}(\mathcal{X})$  such that $ g_j=f_jT_{f,g},\omega_j=T_{\tau,\omega}\tau_j ,\forall j \in \mathbb{J}$. Theorem \ref{SEQUENTIALSIMILARITYCHARACTERIZATIONPFRAME} gives $T_{f,g}=\widehat{S}_{f,\tau}^{-1}\widehat{\theta}_\tau\theta_g=\widehat{S}_{f,\tau}^{-1}I_\mathcal{X}=\widehat{S}_{f,\tau}^{-1}$, $T_{\tau, \omega}=\widehat{\theta}_\omega\theta_f\widehat{S}_{f,\tau}^{-1}=I_\mathcal{X}\widehat{S}_{f,\tau}^{-1}=\widehat{S}_{f,\tau}^{-1}.$ Hence $ (\{g_j\}_{j \in \mathbb{J}} , \{\omega_j\}_{j \in \mathbb{J}})$ is the canonical dual of $ (\{f_j\}_{j \in \mathbb{J}} , \{\tau_j\}_{j \in \mathbb{J}})$.	
\end{proof}
\begin{corollary}
 Two similar  p-frames cannot be orthogonal.
\end{corollary}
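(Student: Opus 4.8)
The plan is to mimic exactly the two earlier ``cannot be orthogonal'' corollaries (for similar frames and for RL-similar p-operator-valued frames), since the argument transfers verbatim once the right factorization of the analysis operator is in hand. First I would assume, for the sake of reaching the desired negative conclusion directly rather than by contradiction, that $(\{g_j\}_{j\in \mathbb{J}}, \{\omega_j\}_{j\in \mathbb{J}})$ is similar to $(\{f_j\}_{j\in \mathbb{J}}, \{\tau_j\}_{j\in \mathbb{J}})$ and simply exhibit that the orthogonality equation fails. So I would choose invertible $T_{f,g}, T_{\tau,\omega}\in \mathcal{B}(\mathcal{X})$ with $g_j=f_jT_{f,g}$ and $\omega_j=T_{\tau,\omega}\tau_j$ for all $j \in \mathbb{J}$.

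Next I would invoke Lemma \ref{P-FRAMELEMMA} (equivalently the implication $\operatorname{(i)}\Rightarrow\operatorname{(ii)}$ of Theorem \ref{SEQUENTIALSIMILARITYCHARACTERIZATIONPFRAME}) to pass from the similarity of the frame elements to the corresponding factorization at the level of the analysis and synthesis operators, namely $\theta_g=\theta_f T_{f,g}$ and $\widehat{\theta}_\omega=T_{\tau,\omega}\widehat{\theta}_\tau$. The key computation is then the single line
$$\widehat{\theta}_\tau\theta_g=\widehat{\theta}_\tau\theta_f T_{f,g}=\widehat{S}_{f,\tau}T_{f,g},$$
where I use $\widehat{S}_{f,\tau}=\widehat{\theta}_\tau\theta_f$ from the fundamental proposition on the sequential p-frame operators. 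The point is that $\widehat{S}_{f,\tau}T_{f,g}$ is a composition of two invertible operators: $T_{f,g}$ is invertible by the similarity hypothesis, and $\widehat{S}_{f,\tau}$ is invertible because its resolvent contains $(-\infty,0]$, so in particular $0$ lies in the resolvent set. Hence $\widehat{\theta}_\tau\theta_g=\widehat{S}_{f,\tau}T_{f,g}\neq 0$, which contradicts the orthogonality requirement $\widehat{\theta}_\tau\theta_g=0$ in the definition of orthogonal p-frames. Therefore the two similar p-frames cannot be orthogonal.

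The argument is entirely routine, so I do not anticipate a genuine obstacle; the only point requiring a moment's care is justifying that $\widehat{S}_{f,\tau}$ is invertible from the resolvent condition rather than from self-adjointness (which is unavailable in the Banach-space setting), and confirming that only one of the two orthogonality equations needs to be contradicted to finish. I would present the proof in essentially three lines, exactly parallel to the proof that two RL-similar p-operator-valued frames cannot be orthogonal.
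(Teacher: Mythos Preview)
Your proof is correct and follows essentially the same route as the paper's own proof: choose invertible $T_{f,g}, T_{\tau,\omega}$ from the similarity hypothesis, use Theorem \ref{SEQUENTIALSIMILARITYCHARACTERIZATIONPFRAME} (or equivalently Lemma \ref{P-FRAMELEMMA}) to get $\theta_g=\theta_f T_{f,g}$, and compute $\widehat{\theta}_\tau\theta_g=\widehat{S}_{f,\tau}T_{f,g}\neq 0$. Your added remark that $\widehat{S}_{f,\tau}$ is invertible because $0$ lies in its resolvent set is a nice clarification that the paper leaves implicit.
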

\begin{proof}
Let $ (\{f_j\}_{j \in \mathbb{J}} , \{\tau_j\}_{j \in \mathbb{J}})$ and $ (\{g_j\}_{j \in \mathbb{J}} , \{\omega_j\}_{j \in \mathbb{J}})$  be similar.  Then there exist invertible  $T_{f,g}, T_{\tau,\omega} \in \mathcal{B}(\mathcal{X})$  such that $ g_j=f_jT_{f,g},\omega_j=T_{\tau,\omega}\tau_j ,\forall j \in \mathbb{J}$. From Theorem \ref{SEQUENTIALSIMILARITYCHARACTERIZATIONPFRAME},  $\theta_g=\theta_f T_{f,g}, \widehat{\theta}_\omega=T_{\tau,\omega}\widehat{\theta}_\tau  $.   Therefore $ \widehat{\theta}_\tau \theta_g=\widehat{\theta}_\tau\theta_f T_{f,g}=\widehat{S}_{f,\tau}T_{f,g}\neq0. $ 
\end{proof}
\begin{remark}
For every p-frame  $(\{f_j\}_{j \in \mathbb{J}}, \{\tau_j\}_{j \in \mathbb{J}}),$ each  of `p-frames'  $( \{f_j\widehat{S}_{x, \tau}^{-1}\}_{j \in \mathbb{J}}, \{\tau_j\}_{j \in \mathbb{J}})$,    $( \{f_j\widehat{S}_{x, \tau}^{-1/2}\}_{j \in \mathbb{J}}, \{\widehat{S}_{x,\tau}^{-1/2}\tau_j\}_{j \in \mathbb{J}}),$ and  $ (\{f_j \}_{j \in \mathbb{J}}, \{\widehat{S}_{x,\tau}^{-1}\tau_j\}_{j \in \mathbb{J}})$ is a  Parseval p-frame which is similar to  $ (\{f_j\}_{j \in \mathbb{J}} , \{\tau_j\}_{j \in \mathbb{J}}).$  Hence each p-frame is similar to  Parseval p-frames.
\end{remark} 
\textbf{A finite dimensional result}

We refer \cite{PIETSCH1}, for the definition of trace of operators in Banach spaces.
\begin{theorem}\label{EXTENDEDDIMENSINANDTRACEFORMULABANACH}
If $(\{f_j\}_{j=1}^n,\{\tau_j\}_{j=1}^n)$ is a Parseval p-frame for  a finite dimensional Banach space $\mathcal{X}$, then
\begin{enumerate}[\upshape(i)]
\item (Extended dimension formula) $ \dim\mathcal{X}=\sum_{j=1}^nf_j(\tau_j).$
\item (Extended trace formula) For any $ T \in \mathcal{B}(\mathcal{X})$, $ \operatorname{Trace}(T)=\sum_{j=1}^nf_j(T\tau_j).$
\end{enumerate}	
\end{theorem}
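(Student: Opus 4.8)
The plan is to reduce both formulas to a single observation: the Parseval condition $\widehat{S}_{f,\tau}=I_\mathcal{X}$ rewrites the identity operator as a finite sum of rank-one operators, and the trace of a rank-one operator $x\mapsto f(x)\tau$ is exactly $f(\tau)$.

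First I would unpack the hypothesis. By the definition of Parseval p-frame in Section \ref{SVSECTION}, the p-frame operator satisfies $\widehat{S}_{f,\tau}=I_\mathcal{X}$, which by its defining formula means $x=\sum_{j=1}^n f_j(x)\tau_j$ for every $x\in\mathcal{X}$. Writing $\tau_j\otimes f_j$ for the rank-one operator $x\mapsto f_j(x)\tau_j$, this says precisely that $I_\mathcal{X}=\sum_{j=1}^n \tau_j\otimes f_j$ in $\mathcal{B}(\mathcal{X})$.

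Next, for the trace formula (ii), I would left-multiply this identity by an arbitrary $T\in\mathcal{B}(\mathcal{X})$. Since $T(\tau_j\otimes f_j)$ is the operator $x\mapsto f_j(x)\,T\tau_j=((T\tau_j)\otimes f_j)(x)$, we obtain $T=\sum_{j=1}^n (T\tau_j)\otimes f_j$. Because $\mathcal{X}$ is finite dimensional, every operator has finite rank and the trace (in the sense of \cite{PIETSCH1}) is a linear functional on $\mathcal{B}(\mathcal{X})$ whose value on a rank-one operator $\tau\otimes f$ is $f(\tau)$. Applying linearity of the trace together with this rank-one evaluation termwise gives $\operatorname{Trace}(T)=\sum_{j=1}^n\operatorname{Trace}((T\tau_j)\otimes f_j)=\sum_{j=1}^n f_j(T\tau_j)$, which is (ii). The dimension formula (i) then follows at once by specializing to $T=I_\mathcal{X}$ and using $\operatorname{Trace}(I_\mathcal{X})=\dim\mathcal{X}$.

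The main obstacle is not the algebra, which is routine, but making precise the one structural ingredient: that the abstract trace of \cite{PIETSCH1}, restricted to the finite-dimensional space $\mathcal{X}$, coincides with the ordinary linear trace and in particular evaluates a rank-one operator $\tau\otimes f$ to $f(\tau)$. I would justify this by fixing any (algebraic) basis of $\mathcal{X}$ together with its dual coordinate functionals, writing down the matrix of $\tau\otimes f$, and summing its diagonal entries to recover $f(\tau)$; since in finite dimensions all the competing notions of trace agree, this identification is legitimate and everything else is bookkeeping.
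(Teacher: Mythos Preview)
Your proposal is correct and rests on the same idea as the paper's proof: the Parseval identity $x=\sum_{j=1}^n f_j(x)\tau_j$ combined with the rank-one trace formula $\operatorname{Trace}(\tau\otimes f)=f(\tau)$. The only difference is organizational: the paper proves (i) and (ii) separately---for (i) it fixes a basis of $\mathbb{K}^m$ with coordinate functionals and expands directly, and for (ii) it starts from an arbitrary finite-rank representation $Tx=\sum_j g_j(x)\omega_j$ and then inserts the Parseval expansion of each $\omega_j$---whereas you obtain the rank-one decomposition $T=\sum_{j=1}^n (T\tau_j)\otimes f_j$ by left-multiplying $I_\mathcal{X}=\sum_j \tau_j\otimes f_j$, prove (ii) in one line, and read off (i) as the special case $T=I_\mathcal{X}$. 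Your route is slightly more economical, but the substance is the same.
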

\begin{proof}
\begin{enumerate}[\upshape(i)]
\item We may assume $\mathcal{X}=\mathbb{K}^m$ and let $\{e_j\}_{j=1}^m$ be the standard basis for $\mathbb{K}^m$. Define $ p_j: \mathbb{K}^m \ni (x_1,...,x_m) \mapsto x_j \in  \mathbb{K}, j=1,...,m$. Then
\begin{align*}
m=\sum_{j=1}^mp_j(e_j)=\sum_{j=1 }^mp_j\left(\sum_{k=1 }^nf_k(e_j)\tau_k\right)=\sum_{k=1}^n\sum_{j=1}^mf_k(e_j)p_j(\tau_k)
=\sum_{k=1}^nf_k\left(\sum_{j=1}^mp_j(\tau_k)e_j\right)=\sum_{k=1}^nf_k(\tau_k).
\end{align*}
\item Since $\mathcal{X} $ is finite dimensional, $ T$ becomes a  finite rank operator. Then there exist $\{g_j\}_{j=1}^m$ in $\mathcal{X}^*$ and $\{\omega_j\}_{j=1}^m$ in $\mathcal{X}$ such that $Tx=\sum_{j=1}^mg_j(x)\omega_j, \forall x \in \mathcal{X}$, for some $m$. Starting from the definition of trace, we get 
\begin{align*}
\operatorname{Trace}(T)&=\sum_{j=1}^mg_j(\omega_j)=\sum_{j=1 }^mg_j\left(\sum_{k=1 }^nf_k(\omega_j)\tau_k\right)=\sum_{k=1}^n\sum_{j=1}^mf_k(\omega_j)g_j(\tau_k)\\
&=\sum_{k=1}^nf_k\left(\sum_{j=1}^mg_j(\tau_k)\omega_j\right)=\sum_{k=1}^nf_k(T\tau_k).
\end{align*}
\end{enumerate}
\end{proof}

\section{Appendix}\label{APPENDIX}
With the analogy of Hilbert spaces, and since  we have defined the notion of p-orthonormal basis for Banach spaces (Section \ref{SVSECTION}) we define
\begin{definition}\label{SINGLEPRIESZ}
A collection $ \{x_j\}_{j\in \mathbb{J}}$  in  $ \mathcal{X}$ is said to be a Riesz p-basis for $ \mathcal{X}$ if  there exists a p-orthonormal basis $ \{\tau_j\}_{j\in \mathbb{J}}$ for $ \mathcal{X}$ and a bounded invertible operator $ T :  \mathcal{X}  \rightarrow \mathcal{X}$ such that $ x_j=T\tau_j, \forall j \in \mathbb{J}$.
\end{definition}
\begin{remark}\label{PRIESZTOUSUALRIESZ}
In Corollary \ref{PIMPLIESORTHONORMALFOR HILBERT} we proved that 2-orthonormal basis for a Hilbert space $\mathcal{H}$ is an orthonormal basis for $\mathcal{H}$. Hence Definition  \ref{SINGLEPRIESZ} gives Riesz basis definition  when considered on Hilbert spaces.
\end{remark}
We  can get Riesz p-bases by perturbing p-orthonormal bases, in precise  we can say the following theorem (which is an  extension of Paley-Wiener theorem).
\begin{theorem}\label{GENERALIZEDPALEYWEINER}
Let $\{x_j\}_{j\in \mathbb{J}} $  be a  p-orthonormal basis  for  $ \mathcal{X}$. If $ \{y_j\}_{j\in \mathbb{J}} $  is a sequence in $\mathcal{X}$ such that there exists $ 0\leq\lambda<1$ and 
$$ \left\|\sum_{j\in \mathbb{S}}c_j(x_j-y_j)\right\|\leq \lambda \left(\sum_{j\in \mathbb{S}}|c_j|^p\right)^\frac{1}{p} ,~ \forall c_j \in \mathbb{K}, \forall j \in \mathbb{S}$$
for every finite subset $\mathbb{S}\subseteq\mathbb{J}$, then $\{y_j\}_{j\in \mathbb{J}} $ is a  Riesz p-basis  for  $ \mathcal{X}$.
\end{theorem}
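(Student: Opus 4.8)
The plan is to read off the Riesz $p$-basis definition (Definition \ref{SINGLEPRIESZ}) literally: I must exhibit a $p$-orthonormal basis and a bounded invertible operator $T$ carrying it onto $\{y_j\}_{j\in\mathbb{J}}$. The given $p$-orthonormal basis $\{x_j\}_{j\in\mathbb{J}}$ is the obvious candidate for the first ingredient, so the whole task reduces to building a bounded invertible $T$ with $Tx_j=y_j$ for all $j$. The standard Paley--Wiener device is to control not $T$ directly but its deviation from the identity, namely the perturbation operator $U$ defined by $Ux_j=x_j-y_j$, and then set $T=I_\mathcal{X}-U$.

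First I would define $U$ on the finite span $\operatorname{span}\{x_j\}_{j\in\mathbb{J}}$ by
$$U\left(\sum_{j\in\mathbb{S}}c_jx_j\right)=\sum_{j\in\mathbb{S}}c_j(x_j-y_j).$$
This is well defined because a $p$-orthonormal basis is linearly independent (it is in particular a $p$-orthonormal sequence, so the coefficient functionals are unambiguous). The crucial observation is that condition (ii) of Definition \ref{orthonormality in Banach} rewrites the right-hand side of the hypothesis as
$$\lambda\left(\sum_{j\in\mathbb{S}}|c_j|^p\right)^{1/p}=\lambda\left\|\sum_{j\in\mathbb{S}}c_jx_j\right\|,$$
so the assumed estimate becomes precisely $\|U(\sum_{j\in\mathbb{S}}c_jx_j)\|\le\lambda\|\sum_{j\in\mathbb{S}}c_jx_j\|$. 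Hence $U$ is bounded on the dense subspace $\operatorname{span}\{x_j\}_{j\in\mathbb{J}}$ (dense since $\{x_j\}$ is a Schauder basis) with operator norm at most $\lambda$, and by the continuous-extension (bounded linear transformation) theorem it extends uniquely to a bounded operator on all of $\mathcal{X}$ with $\|U\|\le\lambda<1$.

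Next I would set $T\coloneqq I_\mathcal{X}-U$, so that $Tx_j=x_j-(x_j-y_j)=y_j$ for every $j\in\mathbb{J}$. Because $\|U\|\le\lambda<1$, the operator $T=I_\mathcal{X}-U$ is invertible via the Neumann series $T^{-1}=\sum_{n=0}^{\infty}U^n$, which converges in the operator norm. Thus $\{x_j\}_{j\in\mathbb{J}}$ is a $p$-orthonormal basis, $T$ is a bounded invertible operator on $\mathcal{X}$, and $y_j=Tx_j$ for all $j$; by Definition \ref{SINGLEPRIESZ} this exhibits $\{y_j\}_{j\in\mathbb{J}}$ as a Riesz $p$-basis for $\mathcal{X}$.

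I do not expect a genuine obstacle here, since the argument is a direct transcription of the classical Paley--Wiener proof; the only points demanding care are purely bookkeeping. The one step worth double-checking is the extension of $U$ by density, which relies on $\operatorname{span}\{x_j\}$ being dense (guaranteed by the Schauder-basis clause of Definition \ref{orthonormality in Banach}) and on the norm bound holding uniformly over all finite combinations, exactly as the hypothesis provides. The conceptual content is simply the recognition that $p$-orthonormality is what converts the $\ell^p$-norm of the coefficients into the ambient norm $\|\sum c_jx_j\|$, turning the perturbation hypothesis into the contraction estimate $\|U\|<1$ that powers the Neumann series.
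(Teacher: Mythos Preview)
Your proof is correct and follows essentially the same route as the paper: define the perturbation operator on $\operatorname{span}\{x_j\}$ by $\sum c_jx_j\mapsto\sum c_j(x_j-y_j)$, use $p$-orthonormality to convert the $\ell^p$ bound into a norm bound giving operator norm at most $\lambda<1$, extend by density, and invoke the Neumann series to invert $I_\mathcal{X}$ minus this perturbation. The only cosmetic difference is that the paper names the perturbation operator $T$ (and works with $I_\mathcal{X}-T$) whereas you call it $U$ and reserve $T$ for the invertible map; your write-up is in fact more explicit about the well-definedness and extension steps than the paper's terse version.
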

\begin{proof}
Given inequality shows that the map $ T:  \mathcal{X} \ni  \sum_{j\in \mathbb{J}}c_jx_j \mapsto \sum_{j\in \mathbb{J}}c_j(x_j-y_j) \in \mathcal{X}$ is a bounded linear operator of norm atmost $ \lambda$. Since $\lambda<1 $, $ I_ \mathcal{X}-T$ is bounded invertible. Now  $ ( I_ \mathcal{X}-T)x_j=y_j, \forall j \in \mathbb{J}$. Hence $\{y_j\}_{j\in \mathbb{J}} $ is a  Riesz p-basis  for  $ \mathcal{X}$.
\end{proof}
\begin{remark}
From Corollary \ref{PIMPLIESORTHONORMALFOR HILBERT} we see that Theorem \ref{GENERALIZEDPALEYWEINER} gives Paley-Wiener theorem (in Hilbert space) when $p=2$.
\end{remark}
\begin{corollary}
 Let $\{x_j\}_{j\in \mathbb{J}} $  be a  p-orthonormal basis  for  $ \mathcal{X}$, $q$ be the conjugate index of $p$. If $ \{y_j\}_{j\in \mathbb{J}} $  is a sequence in $\mathcal{X}$ such that $ \sum_{j\in \mathbb{J}}\|x_j-y_j\|^q<1$, then $\{y_j\}_{j\in \mathbb{J}} $ is a  Riesz p-basis  for  $ \mathcal{X}$.
\end{corollary}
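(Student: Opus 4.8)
The plan is to reduce the corollary directly to the generalized Paley-Wiener theorem (Theorem \ref{GENERALIZEDPALEYWEINER}). Since the conclusion we want, namely that $\{y_j\}_{j\in\mathbb{J}}$ is a Riesz p-basis for $\mathcal{X}$, is exactly the conclusion of that theorem, all I need is to exhibit a constant $0 \leq \lambda < 1$ for which the perturbation estimate
$$\left\|\sum_{j\in\mathbb{S}}c_j(x_j-y_j)\right\| \leq \lambda\left(\sum_{j\in\mathbb{S}}|c_j|^p\right)^{1/p}$$
holds for every finite subset $\mathbb{S}\subseteq\mathbb{J}$ and all scalars $c_j$.

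First I would apply the triangle inequality to obtain $\left\|\sum_{j\in\mathbb{S}}c_j(x_j-y_j)\right\| \leq \sum_{j\in\mathbb{S}}|c_j|\,\|x_j-y_j\|$. The essential step is then H\"older's inequality with the conjugate exponents $p$ and $q$ (so that $1/p+1/q=1$), applied to the finite sequences $(|c_j|)_{j\in\mathbb{S}}$ and $(\|x_j-y_j\|)_{j\in\mathbb{S}}$, which yields
$$\sum_{j\in\mathbb{S}}|c_j|\,\|x_j-y_j\| \leq \left(\sum_{j\in\mathbb{S}}|c_j|^p\right)^{1/p}\left(\sum_{j\in\mathbb{S}}\|x_j-y_j\|^q\right)^{1/q}.$$
This is precisely the point at which the conjugate index $q$ enters: the $p$-power on the coefficient side is forced by the form of the Paley-Wiener estimate, so the complementary $q$-power must land on the perturbation terms, explaining why the summability hypothesis is stated in terms of $\sum_{j\in\mathbb{J}}\|x_j-y_j\|^q$.

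Next I would set $\lambda := \left(\sum_{j\in\mathbb{J}}\|x_j-y_j\|^q\right)^{1/q}$. By hypothesis $\sum_{j\in\mathbb{J}}\|x_j-y_j\|^q < 1$, so $0 \leq \lambda < 1$; moreover, since $\sum_{j\in\mathbb{S}}\|x_j-y_j\|^q \leq \sum_{j\in\mathbb{J}}\|x_j-y_j\|^q = \lambda^q$ for every finite $\mathbb{S}$, the H\"older bound above delivers exactly the required estimate with this $\lambda$. Invoking Theorem \ref{GENERALIZEDPALEYWEINER} with this $\lambda$ then completes the proof.

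As for obstacles, there is essentially none of substance: the corollary is a direct specialization of the generalized Paley-Wiener theorem, and the only bookkeeping to watch is that H\"older's inequality produces the $p$-norm on the coefficient side (matching the theorem's hypothesis) while the perturbation is controlled in the conjugate $q$-norm, together with the elementary observation that the partial sums $\sum_{j\in\mathbb{S}}\|x_j-y_j\|^q$ over finite $\mathbb{S}$ are dominated by the full sum, which is strictly less than $1$.
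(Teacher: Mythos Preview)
Your proposal is correct and follows essentially the same route as the paper: triangle inequality, then H\"older's inequality with exponents $p,q$, then invoke Theorem~\ref{GENERALIZEDPALEYWEINER}. The only cosmetic difference is that you set $\lambda = \bigl(\sum_{j\in\mathbb{J}}\|x_j-y_j\|^q\bigr)^{1/q}$ whereas the paper sets $\lambda = \sum_{j\in\mathbb{J}}\|x_j-y_j\|^q$; your choice is the more precise one, since the H\"older bound naturally produces the $q$-th root, and both quantities are strictly less than $1$ under the hypothesis so either serves for invoking the theorem.
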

\begin{proof}
Define $ \lambda \coloneqq \sum_{j\in \mathbb{J}}\|x_j-y_j\|^q$, and let   $\mathbb{S}$ be a finite subset of $\mathbb{J}$. By using Holder's inequality, we get 
$$\left\|\sum_{j\in \mathbb{S}}c_j(x_j-y_j)\right\|\leq \left(\sum_{j\in \mathbb{S}}|c_j|^p\right)^\frac{1}{p}\left(\sum_{j\in \mathbb{S}}\|x_j-y_j\|^q\right)^\frac{1}{q}\leq \lambda\left(\sum_{j\in \mathbb{S}}|c_j|^p\right)^\frac{1}{p} .$$
 Now apply Theorem \ref{GENERALIZEDPALEYWEINER}.
\end{proof}
\begin{theorem}
If  $\{x_j\}_{j\in \mathbb{J}}$ is a Riesz p-basis for $ \mathcal{X}$, then there exists a unique sequence $\{g_j\}_{j\in \mathbb{J}} $ in $\mathcal{X}^*$ such that 
$$ x=\sum_{j\in\mathbb{J}}g_j(x)x_j, \forall x \in \mathcal{X}, \text{and}~ \{g_j(x)\}_{j\in \mathbb{J}}  \in \ell^p(\mathbb{J}),~\forall x \in \mathcal{X}.$$
\end{theorem}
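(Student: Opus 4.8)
The plan is to reduce everything to the defining p-orthonormal basis and its coordinate functionals. By Definition~\ref{SINGLEPRIESZ} there exist a p-orthonormal basis $\{\tau_j\}_{j\in\mathbb{J}}$ for $\mathcal{X}$ and a bounded invertible operator $T\in\mathcal{B}(\mathcal{X})$ with $x_j=T\tau_j$ for all $j\in\mathbb{J}$. Let $f_j:\mathcal{X}\ni\sum_{k\in\mathbb{J}}a_k\tau_k\mapsto a_j\in\mathbb{K}$ denote the coordinate functionals of $\{\tau_j\}_{j\in\mathbb{J}}$; Lemma~\ref{CFLEMMA} guarantees that each $f_j$ is a bounded linear functional, and since $\{\tau_j\}_{j\in\mathbb{J}}$ is a basis we have the expansion $y=\sum_{j\in\mathbb{J}}f_j(y)\tau_j$ for every $y\in\mathcal{X}$. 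Moreover, Proposition~\ref{ellp}(ii) applied to this expansion shows $\{f_j(y)\}_{j\in\mathbb{J}}\in\ell^p(\mathbb{J})$ for every $y\in\mathcal{X}$.

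First I would simply set $g_j\coloneqq f_jT^{-1}\in\mathcal{X}^*$ for each $j\in\mathbb{J}$; these are bounded, being compositions of the bounded $T^{-1}$ with the bounded $f_j$. The $\ell^p$-membership is then immediate: for any $x\in\mathcal{X}$, the sequence $\{g_j(x)\}_{j\in\mathbb{J}}=\{f_j(T^{-1}x)\}_{j\in\mathbb{J}}$ lies in $\ell^p(\mathbb{J})$ by the previous paragraph applied to $y=T^{-1}x$. For the reconstruction identity, I would use the boundedness (hence continuity and linearity) of $T$ to move it across the convergent series:
\begin{align*}
\sum_{j\in\mathbb{J}}g_j(x)x_j=\sum_{j\in\mathbb{J}}f_j(T^{-1}x)T\tau_j=T\Big(\sum_{j\in\mathbb{J}}f_j(T^{-1}x)\tau_j\Big)=T(T^{-1}x)=x,
\end{align*}
valid for all $x\in\mathcal{X}$, where the middle equality is the one point needing care: it is justified because the partial sums $\sum_{j\in\mathbb{S}}f_j(T^{-1}x)\tau_j$ converge to $T^{-1}x$ and $T$ is continuous.

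For uniqueness, suppose $\{h_j\}_{j\in\mathbb{J}}\subseteq\mathcal{X}^*$ also satisfies $x=\sum_{j\in\mathbb{J}}h_j(x)x_j$ for all $x\in\mathcal{X}$. Subtracting the two expansions gives $\sum_{j\in\mathbb{J}}(g_j(x)-h_j(x))x_j=0$; applying the bounded operator $T^{-1}$ and using $T^{-1}x_j=\tau_j$ yields $\sum_{j\in\mathbb{J}}(g_j(x)-h_j(x))\tau_j=0$ for every $x$. Acting on this by the coordinate functional $f_k$ (equivalently, invoking the uniqueness of basis expansions for $\{\tau_j\}_{j\in\mathbb{J}}$) forces $g_k(x)-h_k(x)=0$ for each $k$ and each $x$, that is, $g_k=h_k$ for all $k\in\mathbb{J}$.

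I expect no serious obstacle here: the only genuine point is the interchange of $T$ with the infinite sum, and that is handled by continuity of $T$ together with convergence of the $\{\tau_j\}_{j\in\mathbb{J}}$-expansion. The remaining verifications (boundedness of the $g_j$, the $\ell^p$ property, and uniqueness via coordinate functionals) follow directly from the structure transported through the invertible operator $T$ and the properties of p-orthonormal bases established earlier in the section.
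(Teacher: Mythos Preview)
Your proof is correct and follows essentially the same approach as the paper: define $g_j\coloneqq f_jT^{-1}$ using the coordinate functionals of the underlying p-orthonormal basis, verify the reconstruction identity by pushing $T$ through the convergent $\tau_j$-expansion, and establish uniqueness by applying $T^{-1}$ and then the coordinate functionals $f_k$. Your presentation is in fact slightly more careful than the paper's in explicitly justifying the interchange of $T$ with the infinite sum via continuity.
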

\begin{proof}
There exist a bounded invertible operator $T:\mathcal{X}\rightarrow \mathcal{X}  $ and a p-orthonormal basis $\{\tau_j\}_{j \in \mathbb{J}} $ for $\mathcal{X} $ such that $x_j=T\tau_j , \forall j \in \mathbb{J}$.   Let $ f_j : \mathcal{X} \ni\sum_{k\in\mathbb{J}}a_k\tau_k  \mapsto a_j \in \mathbb{K}, \forall j \in \mathbb{J}$. Define $ g_j\coloneqq f_jT^{-1},\forall j \in \mathbb{J}$. Then   $ x=TT^{-1}x=T(\sum_{j\in\mathbb{J}}f_j(T^{-1}x)\tau_j)=\sum_{j\in\mathbb{J}}f_j(T^{-1}x)T\tau_j=\sum_{j\in\mathbb{J}}g_j(x)x_j, \forall x \in \mathcal{X} $. Let $\{h_j\}_{j\in \mathbb{J}} $ in $\mathcal{X}^*$ be such that $ x=\sum_{j\in\mathbb{J}}h_j(x)x_j=\sum_{j\in\mathbb{J}}h_j(x)T\tau_j, \forall x \in \mathcal{X} \Rightarrow T^{-1}x=\sum_{j\in\mathbb{J}}h_j(x)\tau_j,\forall x \in \mathcal{X}.$ An action of $f_k$  on this sum gives $ g_k(x)=f_k(T^{-1}x)=h_k(x), \forall x \in \mathcal{X}, \forall k \in \mathbb{J}.$ Moreover, $ \sum_{j\in\mathbb{J}}|g_j(x)|^p=\sum_{j\in\mathbb{J}}|f_j(T^{-1}x)|^p=\|T^{-1}x\|^p\leq \|T^{-1}\|^p\|x\|^p <\infty,\forall x \in \mathcal{X} .$
\end{proof}
\begin{lemma}\label{RIESZCHARACTERIZATIONBANACH}
Let $ \{x_j\}_{j \in \mathbb{J}}$ be complete in $\mathcal{X}$, $ \{y_j\}_{j \in \mathbb{J}}$ be  in $\mathcal{X}_0$. Suppose $a, b>0$ are such that for all finite subsets $ \mathbb{S}_\mathcal{X},\mathbb{S}_{\mathcal{X}_0}\subseteq\mathbb{J}$, 
\begin{align*}
 a\sum_{j\in \mathbb{S}_\mathcal{X}}|c_j|^p&\leq \left\|\sum_{j\in \mathbb{S}_\mathcal{X}}c_jx_j \right\|^p, ~\forall c_j \in \mathbb{K}, \forall j \in \mathbb{S}_\mathcal{X} ,\\
 \left\|\sum_{j\in \mathbb{S}_{\mathcal{X}_0}}d_jy_j \right\|^p&\leq b\sum_{j\in \mathbb{S}_{\mathcal{X}_0}}|d_j|^p, ~\forall d_j \in \mathbb{K}, \forall j \in \mathbb{S}_{\mathcal{X}_0} .
 \end{align*}
Then $ T : \operatorname{span}\{x_j\}_{ j \in \mathbb{J}}\ni\sum_{\operatorname{finite}}a_jx_j \mapsto\sum_{\operatorname{finite}}a_jy_j  \in \operatorname{span}\{y_j\}_{j \in \mathbb{J}}$ defines a bounded linear operator  and extends uniquely as a bounded linear  operator from  $\mathcal{X}$ into $\mathcal{X}_0$ and the norm of the extended operator is less than or equal to $ (\frac{b}{a})^{1/p}$.
\end{lemma}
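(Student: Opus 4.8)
The plan is to follow verbatim the strategy used for Lemma \ref{HOMOMORPHISMEXISTENCELEMMA}, specializing the C*-module argument to the scalar/Banach setting. First I would establish that $\{x_j\}_{j\in\mathbb{J}}$ is linearly independent, which is exactly what the lower estimate delivers: if $\sum_{j\in\mathbb{S}}c_jx_j=0$ for some finite $\mathbb{S}$, then $a\sum_{j\in\mathbb{S}}|c_j|^p\leq\|\sum_{j\in\mathbb{S}}c_jx_j\|^p=0$, and since $a>0$ this forces $c_j=0$ for all $j\in\mathbb{S}$. Linear independence guarantees that each element of $\operatorname{span}\{x_j\}_{j\in\mathbb{J}}$ has a unique finite expansion in the $x_j$, so the assignment $\sum_{\operatorname{finite}}a_jx_j\mapsto\sum_{\operatorname{finite}}a_jy_j$ is well-defined; linearity of $T$ is then immediate from the uniqueness of expansions.

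Next I would prove boundedness by chaining the two hypothesized inequalities. For any finite $\mathbb{S}\subseteq\mathbb{J}$ and scalars $\{c_j\}_{j\in\mathbb{S}}$,
\begin{align*}
\left\|T\left(\sum_{j\in\mathbb{S}}c_jx_j\right)\right\|^p=\left\|\sum_{j\in\mathbb{S}}c_jy_j\right\|^p\leq b\sum_{j\in\mathbb{S}}|c_j|^p\leq\frac{b}{a}\left\|\sum_{j\in\mathbb{S}}c_jx_j\right\|^p,
\end{align*}
where the first inequality is the upper bound on $\{y_j\}$ and the second is the lower bound on $\{x_j\}$. Taking $p$-th roots gives $\|Tz\|\leq(b/a)^{1/p}\|z\|$ for every $z\in\operatorname{span}\{x_j\}_{j\in\mathbb{J}}$, so $T$ is a bounded linear operator on the subspace $\operatorname{span}\{x_j\}_{j\in\mathbb{J}}$ with norm at most $(b/a)^{1/p}$.

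Finally, I would extend $T$ to all of $\mathcal{X}$ by the standard density argument. Completeness of $\{x_j\}_{j\in\mathbb{J}}$ means $\operatorname{span}\{x_j\}_{j\in\mathbb{J}}$ is dense in $\mathcal{X}$; a bounded linear operator from a dense subspace of a normed space into a Banach space (here $\mathcal{X}_0$, which is complete) extends uniquely to a bounded operator on the whole space, preserving the norm bound. Concretely, given $x\in\mathcal{X}$, pick $z_n\in\operatorname{span}\{x_j\}_{j\in\mathbb{J}}$ with $z_n\to x$; then $\{Tz_n\}$ is Cauchy in $\mathcal{X}_0$ because $\|Tz_n-Tz_m\|\leq(b/a)^{1/p}\|z_n-z_m\|$, and its limit is independent of the approximating sequence, defining the extension. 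The extended operator inherits $\|T\|\leq(b/a)^{1/p}$ by continuity of the norm.

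There is no serious obstacle here; the statement is the Banach-space scalar analogue of the already-proved module lemma, and the entire content is the double inequality together with the density/completeness extension. The only points requiring a word of care are (i) invoking the strict positivity $a>0$ to secure linear independence and well-definedness, and (ii) noting that it is the completeness of the target space $\mathcal{X}_0$ (not of $\mathcal{X}$) that makes the unique continuous extension possible.
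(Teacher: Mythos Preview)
Your proposal is correct and follows essentially the same approach as the paper: the paper's proof likewise deduces linear independence from the lower inequality, chains the two hypothesized bounds to get $\|T(\sum c_jx_j)\|^p\leq (b/a)\|\sum c_jx_j\|^p$, and then appeals to density of $\operatorname{span}\{x_j\}_{j\in\mathbb{J}}$ for the unique extension. Your write-up is simply more explicit about the extension step and the role of completeness of $\mathcal{X}_0$.
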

\begin{proof}
First inequality in the statement says that $ \{x_j\}_{j \in \mathbb{J}}$ is linearly independent in $\mathcal{X}$. Clearly $T$ is linear. Now 
\begin{align*}
\left\|T\left(\sum_{j\in \mathbb{S}_{\mathcal{X}}}c_jx_j \right)\right\|^p=\left\|\sum_{j\in \mathbb{S}_{\mathcal{X}}}c_jy_j \right\|^p\leq b\sum_{j\in \mathbb{S}_{\mathcal{X}}}|c_j|^p\leq \frac{b}{a}\left\|\sum_{j\in \mathbb{S}_\mathcal{X}}c_jx_j \right\|^p.
\end{align*}
Therefore $\|T\|\leq (\frac{b}{a})^{1/p}$. Since $ \overline{\operatorname{span}}\{x_j\}_{j \in \mathbb{J}}=\mathcal{X}$, lemma follows.	
\end{proof}
\begin{theorem}\label{CHARACTERIZATIONOFP-RIESZBASIS}
Let $ \{x_j\}_{j \in \mathbb{J}}$ be in $\mathcal{X}$. Then the following are equivalent.
\begin{enumerate}[\upshape(i)]
\item $ \{x_j\}_{j \in \mathbb{J}}$ is a Riesz p-basis for $\mathcal{X}$.
\item $\overline{\operatorname{span}}\{x_j\}_{j \in \mathbb{J}}=\mathcal{X}$, and there exist $a, b>0$ such that for every finite subset $ \mathbb{S}\subseteq\mathbb{J}$, 
$$ a\sum_{j\in \mathbb{S}}|c_j|^p\leq \left\|\sum_{j\in \mathbb{S}}c_jx_j \right\|^p\leq b\sum_{j\in \mathbb{S}}|c_j|^p, ~ \forall c_j \in \mathbb{K}, \forall j \in \mathbb{S} .$$
\end{enumerate}
\end{theorem}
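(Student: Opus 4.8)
The plan is to establish the two implications separately, with the forward direction a short computation and the reverse direction resting on two applications of Lemma \ref{RIESZCHARACTERIZATIONBANACH}.

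For (i) $\Rightarrow$ (ii), I would start from Definition \ref{SINGLEPRIESZ}: there is a p-orthonormal basis $\{\tau_j\}_{j\in\mathbb{J}}$ for $\mathcal{X}$ and a bounded invertible $T\in\mathcal{B}(\mathcal{X})$ with $x_j=T\tau_j$. First I would transfer completeness: since $T$ is a linear homeomorphism of $\mathcal{X}$ onto itself and $\overline{\operatorname{span}}\{\tau_j\}_{j\in\mathbb{J}}=\mathcal{X}$, we get $\overline{\operatorname{span}}\{x_j\}_{j\in\mathbb{J}}=T(\overline{\operatorname{span}}\{\tau_j\}_{j\in\mathbb{J}})=\mathcal{X}$. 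Then for any finite $\mathbb{S}$ and scalars $\{c_j\}$ I would write $\sum_{j\in\mathbb{S}}c_jx_j=T\big(\sum_{j\in\mathbb{S}}c_j\tau_j\big)$ and combine the elementary estimates $\|T^{-1}\|^{-1}\|y\|\le\|Ty\|\le\|T\|\,\|y\|$ with the defining identity $\|\sum_{j\in\mathbb{S}}c_j\tau_j\|^p=\sum_{j\in\mathbb{S}}|c_j|^p$ of a p-orthonormal basis (Definition \ref{orthonormality in Banach}), obtaining the two-sided bound with $a=\|T^{-1}\|^{-p}$ and $b=\|T\|^p$.

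For (ii) $\Rightarrow$ (i), the strategy is to manufacture both the p-orthonormal basis and the invertible operator by running Lemma \ref{RIESZCHARACTERIZATIONBANACH} in each direction. Fixing a p-orthonormal basis $\{f_j\}_{j\in\mathbb{J}}$ of $\mathcal{X}$, I would take as source family $\{f_j\}$ (complete, with the trivial lower estimate $\sum|c_j|^p=\|\sum c_jf_j\|^p$) and as target family $\{x_j\}$ (upper estimate $b$ from (ii)) to produce a bounded $T\in\mathcal{B}(\mathcal{X})$ with $Tf_j=x_j$; interchanging the roles, with source $\{x_j\}$ (complete by (ii), lower estimate $a$) and target $\{f_j\}$ (upper estimate $1$), I would produce $S\in\mathcal{B}(\mathcal{X})$ with $Sx_j=f_j$. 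Then $STf_j=f_j$ on the complete family $\{f_j\}$ and $TSx_j=x_j$ on the complete family $\{x_j\}$, so by density $ST=TS=I_\mathcal{X}$; hence $T$ is bounded invertible and $x_j=Tf_j$, which is exactly the presentation required by Definition \ref{SINGLEPRIESZ}.

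The hard part will be the very first move of the reverse direction, namely securing a p-orthonormal basis $\{f_j\}$ of $\mathcal{X}$ at all. This is not a formality, since the mere existence of such a basis forces $\mathcal{X}$ to be isometrically isomorphic to $\ell^p(\mathbb{J})$, whereas hypothesis (ii) a priori supplies only a topological isomorphism $U:\ell^p(\mathbb{J})\to\mathcal{X}$, $U(\{c_j\})=\sum_{j\in\mathbb{J}}c_jx_j$, which is bounded and bounded below by (ii) and has dense range, hence is invertible by the open mapping theorem. Passing from this isomorphism to the isometric structure demanded by the definition is where the argument must do real work; the setting in which it closes transparently is $\mathcal{X}=\ell^p(\mathbb{J})$ itself (or any space already carrying a p-orthonormal basis), where one takes $\{f_j\}$ to be the standard Schauder basis and the twice-applied lemma concludes at once.
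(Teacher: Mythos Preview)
Your argument is essentially identical to the paper's: the same operator-norm sandwich for (i) $\Rightarrow$ (ii) with constants $a=\|T^{-1}\|^{-p}$, $b=\|T\|^p$, and the same double application of Lemma \ref{RIESZCHARACTERIZATIONBANACH} for (ii) $\Rightarrow$ (i) to build mutually inverse bounded operators. The concern you flag at the end is legitimate, but the paper handles it exactly as you anticipated---it simply writes ``Let $\{e_j\}_{j\in\mathbb{J}}$ be a p-orthonormal basis for $\mathcal{X}$'' and proceeds, so the existence of such a basis is treated as an implicit standing hypothesis rather than something derived from (ii).
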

\begin{proof}
(i) $\Rightarrow$	(ii) Let   $ \{\tau_j\}_{j\in \mathbb{J}}$ be  a p-orthonormal basis for $ \mathcal{X}$ and  $ T :  \mathcal{X}  \rightarrow \mathcal{X}$ be  bounded invertible such that $ x_j=T\tau_j, \forall j \in \mathbb{J}$. Since $\overline{\operatorname{span}}\{\tau_j\}_{j \in \mathbb{J}}=\mathcal{X}$ and $T$ is invertible, we clearly have $\overline{\operatorname{span}}\{x_j\}_{j \in \mathbb{J}}=\mathcal{X}$. If $ \mathbb{S} \subset \mathbb{J}$ is finite, then 
\begin{align*}
\frac{1}{\|T^{-1}\|^p}\sum_{j\in \mathbb{S}}|c_j|^p&=\frac{1}{\|T^{-1}\|^p}\left\| \sum_{j\in \mathbb{S}}c_j\tau_j\right\|^p\leq\left\| \sum_{j\in \mathbb{S}}c_jT\tau_j\right\|^p=\left\| \sum_{j\in \mathbb{S}}c_jx_j\right\|^p\\
&\leq \|T\|^p\left\| \sum_{j\in \mathbb{S}}c_j\tau_j\right\|^p= \|T\|^p\sum_{j\in \mathbb{S}}|c_j|^p, ~\forall c_j \in \mathbb{K}, \forall j \in \mathbb{S}.
\end{align*}
(ii) $\Rightarrow$ (i) 
 Let   $ \{e_j\}_{j\in \mathbb{J}}$ be  a p-orthonormal basis for $ \mathcal{X}$.	From Lemma \ref{RIESZCHARACTERIZATIONBANACH}, the operators $ U : \operatorname{span}\{e_j\}_{ j \in \mathbb{J}}\ni\sum_{\operatorname{finite}}a_je_j \mapsto\sum_{\operatorname{finite}}a_jx_j  \in \operatorname{span}\{x_j\}_{j \in \mathbb{J}}$, $V : \operatorname{span}\{x_j\}_{ j \in \mathbb{J}}\ni\sum_{\operatorname{finite}}a_jx_j \mapsto\sum_{\operatorname{finite}}a_je_j  \in \operatorname{span}\{e_j\}_{j \in \mathbb{J}}$ extend uniquely as bounded operators on  $ \mathcal{X}$, which we again denote by $U,V$, respectively. Then $UV=VU=I_\mathcal{X} $ and hence $ U$ is invertible. Therefore $ \{x_j=Ue_j\}_{j\in \mathbb{J}}$ is a Riesz p-basis for  $ \mathcal{X}$.
\end{proof}
\begin{corollary}
Let $ \overline{\operatorname{span}}\{x_j\}_{j \in \mathbb{J}}=\mathcal{X}$, and for each finite subset $ \mathbb{S} \subseteq \mathbb{J}$,
\begin{align}\label{FINITESETP-EQUATION}
\left\| \sum_{j\in \mathbb{S}}c_jx_j\right\|^p= \sum_{j\in \mathbb{S}}|c_j|^p , ~\forall c_j \in \mathbb{K}, \forall j \in \mathbb{S}.
\end{align}
Then $\{x_j\}_{j \in \mathbb{J}}$ is a p-orthonormal basis for $\mathcal{X}.$
\end{corollary}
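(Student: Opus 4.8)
The plan is to recognize that the hypothesis is precisely condition (ii) of Theorem \ref{CHARACTERIZATIONOFP-RIESZBASIS} specialized to $a=b=1$, and then to upgrade the resulting Riesz p-basis structure to a genuine p-orthonormal basis by checking that the operator witnessing the Riesz p-basis property is in fact an isometry.

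First I would invoke Theorem \ref{CHARACTERIZATIONOFP-RIESZBASIS}: since $\overline{\operatorname{span}}\{x_j\}_{j\in\mathbb{J}}=\mathcal{X}$ and Equation (\ref{FINITESETP-EQUATION}) gives $1\cdot\sum_{j\in\mathbb{S}}|c_j|^p \leq \|\sum_{j\in\mathbb{S}}c_jx_j\|^p \leq 1\cdot\sum_{j\in\mathbb{S}}|c_j|^p$ for every finite subset $\mathbb{S}\subseteq\mathbb{J}$, the hypotheses of condition (ii) there are met, so $\{x_j\}_{j\in\mathbb{J}}$ is a Riesz p-basis for $\mathcal{X}$. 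Tracing the proof of (ii) $\Rightarrow$ (i) in that theorem, one obtains a p-orthonormal basis $\{e_j\}_{j\in\mathbb{J}}$ for $\mathcal{X}$ together with the bounded invertible operator $U$ that is the unique continuous extension of $\sum_{\operatorname{finite}}a_je_j\mapsto\sum_{\operatorname{finite}}a_jx_j$, so that $x_j=Ue_j$ for all $j\in\mathbb{J}$.

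Next I would show that this $U$ is an isometry. For a finite combination $x=\sum_{j\in\mathbb{S}}a_je_j$ I compute
$$\|Ux\|^p=\left\|\sum_{j\in\mathbb{S}}a_jx_j\right\|^p=\sum_{j\in\mathbb{S}}|a_j|^p=\left\|\sum_{j\in\mathbb{S}}a_je_j\right\|^p=\|x\|^p,$$
where the second equality is the hypothesis (\ref{FINITESETP-EQUATION}) and the third is Proposition \ref{ellp}(iii) applied to the p-orthonormal sequence $\{e_j\}_{j\in\mathbb{J}}$. Thus $\|Ux\|=\|x\|$ holds on $\operatorname{span}\{e_j\}_{j\in\mathbb{J}}$; since $U$ is continuous, the norm is continuous, and $\operatorname{span}\{e_j\}_{j\in\mathbb{J}}$ is dense in $\mathcal{X}$, the identity $\|Ux\|=\|x\|$ persists for all $x\in\mathcal{X}$. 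Hence $U$ is an invertible isometry.

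Finally, with $\{e_j\}_{j\in\mathbb{J}}$ a p-orthonormal basis and $U$ an invertible isometry, Theorem \ref{P-ORTHONORMALBASISCHARACTERIZATION} yields that $\{x_j=Ue_j\}_{j\in\mathbb{J}}$ is a p-orthonormal basis for $\mathcal{X}$, which is the desired conclusion. I do not expect a serious obstacle here: the only point requiring care is the passage from the isometry relation on the dense subspace $\operatorname{span}\{e_j\}_{j\in\mathbb{J}}$ to all of $\mathcal{X}$, which is a routine density-and-continuity argument. One could alternatively bypass Theorem \ref{P-ORTHONORMALBASISCHARACTERIZATION} and verify the three defining conditions of Definition \ref{orthonormality in Banach} directly — the Schauder-basis property from the invertibility of $U$, and the two norm identities by transporting them through the isometry $U$ from $\{e_j\}_{j\in\mathbb{J}}$ — but invoking the characterization theorem is the cleaner route.
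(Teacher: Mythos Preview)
Your proof is correct and follows essentially the same route as the paper: invoke Theorem \ref{CHARACTERIZATIONOFP-RIESZBASIS} to get a Riesz p-basis, show the witnessing invertible operator is an isometry, and conclude via Theorem \ref{P-ORTHONORMALBASISCHARACTERIZATION}. The only cosmetic difference is that the paper verifies $\|Tx\|=\|x\|$ directly for every $x=\sum_{j\in\mathbb{J}}c_j\tau_j$ (after noting that Equation (\ref{FINITESETP-EQUATION}) forces $\sum_{j\in\mathbb{J}}c_jx_j$ to converge), whereas you check it on finite sums and pass to the closure by density and continuity.
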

\begin{proof}
From Theorem \ref{CHARACTERIZATIONOFP-RIESZBASIS} we see that $\{x_j\}_{ j \in \mathbb{J}}$ is a Riesz p-basis for $\mathcal{X}$. So, there exist a p-orthonormal basis $\{\tau_j\}_{j \in \mathbb{J}}$  for $\mathcal{X}$ and an invertible operator $ T:\mathcal{X} \rightarrow \mathcal{X} $ such that $ x_j=T\tau_j , \forall j \in \mathbb{J}$. Then for each $ x=\sum_{j\in \mathbb{J}}c_j\tau_j\in \mathcal{X}$ we see that $\{c_j\}_{j \in \mathbb{J}}\in \ell^p(\mathbb{J}) $ (from Proposition \ref{ellp}) and hence (from   Equation (\ref{FINITESETP-EQUATION})) $\sum_{j\in \mathbb{J}}c_jx_j $ exists in $ \mathcal{X}$. Then
\begin{align*}
\|Tx\|^p=\left\|\sum_{j\in \mathbb{J}}c_jx_j \right\|^p=\sum_{j\in \mathbb{J}}|c_j|^p =\left\|\sum_{j\in \mathbb{J}}c_j\tau_j \right\|^p=\|x\|^p.
\end{align*}
 Hence $ T$ is an isometry. Theorem \ref{P-ORTHONORMALBASISCHARACTERIZATION} now says that  $\{x_j\}_{j \in \mathbb{J}}$ is a p-orthonormal basis for $\mathcal{X}$.
\end{proof}
\begin{definition}
A collection of vectors $ \{y_j\}_{j \in \mathbb{J}}$ in $\mathcal{X}$ is said to be a Riesz  p-basis  w.r.t. $\{\omega_j\}_{j \in \mathbb{J}} $ in $\mathcal{X}$ if there  are  bounded invertible operators $ U,V:\mathcal{X}\rightarrow \mathcal{X} $ and a p-orthonormal basis $\{\tau_j\}_{j\in \mathbb{J}} $ for $\mathcal{X}$ such that $x_j=U\tau_j, \omega_j=V\tau_j, \forall j \in \mathbb{J}$ and the resolvent of $ VU^{-1}$ contains $(-\infty,0]$. We write $ (\{y_j\}_{j\in \mathbb{J}}, \{\omega_j\}_{j\in \mathbb{J}})$ is Riesz  p-basis.
\end{definition}
\begin{theorem}\label{RIESZPBASISSEQUENTIALCHARACTERIZATIONAPPENDIX}
Let  $ \{\tau_j\}_{j \in \mathbb{J}}$ be an arbitrary p-orthonormal basis for $\mathcal{X}$. Then the Riesz p-bases  $ (\{y_j\}_{j \in \mathbb{J}},\{\omega_j\}_{j \in \mathbb{J}})$ for  $\mathcal{X}$  are precisely $( \{U\tau_j\}_{j \in \mathbb{J}},\{V\tau_j\}_{j \in \mathbb{J}}) $, where $ U,V :  \mathcal{X} \rightarrow   \mathcal{X} $ are bounded invertible  with the resolvent of $ VU^{-1}$ contains $(-\infty,0]$.
\end{theorem}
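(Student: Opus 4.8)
The plan is to prove the two inclusions separately: the reverse inclusion is essentially immediate from the definition of a relative Riesz p-basis, while the forward inclusion rests on the fact that any two p-orthonormal bases of $\mathcal{X}$ differ by an invertible isometry, a fact supplied by Theorem \ref{P-ORTHONORMALBASISCHARACTERIZATION}.

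For the $(\Leftarrow)$ direction, suppose $y_j=U\tau_j$ and $\omega_j=V\tau_j$ for all $j\in\mathbb{J}$, where $U,V\in\mathcal{B}(\mathcal{X})$ are invertible and the resolvent of $VU^{-1}$ contains $(-\infty,0]$. Since $\{\tau_j\}_{j\in\mathbb{J}}$ is itself a p-orthonormal basis for $\mathcal{X}$, the pair $(\{y_j\}_{j\in\mathbb{J}},\{\omega_j\}_{j\in\mathbb{J}})$ satisfies the defining requirements of a Riesz p-basis verbatim, with $\{\tau_j\}_{j\in\mathbb{J}}$ playing the role of the underlying p-orthonormal basis. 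Hence it is a Riesz p-basis and nothing further is needed.

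For the $(\Rightarrow)$ direction, assume $(\{y_j\}_{j\in\mathbb{J}},\{\omega_j\}_{j\in\mathbb{J}})$ is a Riesz p-basis. By definition there exist bounded invertible $R,S\in\mathcal{B}(\mathcal{X})$ and a p-orthonormal basis $\{\sigma_j\}_{j\in\mathbb{J}}$ such that $y_j=R\sigma_j$ and $\omega_j=S\sigma_j$ for all $j$, with the resolvent of $SR^{-1}$ containing $(-\infty,0]$. The key step is to replace $\{\sigma_j\}_{j\in\mathbb{J}}$ by the prescribed basis $\{\tau_j\}_{j\in\mathbb{J}}$: since both $\{\sigma_j\}_{j\in\mathbb{J}}$ and $\{\tau_j\}_{j\in\mathbb{J}}$ are p-orthonormal bases, Theorem \ref{P-ORTHONORMALBASISCHARACTERIZATION} furnishes an invertible isometry $T\colon\mathcal{X}\to\mathcal{X}$ with $\sigma_j=T\tau_j$ for all $j$. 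Setting $U\coloneqq RT$ and $V\coloneqq ST$, both operators are bounded and invertible as products of invertibles, and $y_j=RT\tau_j=U\tau_j$, $\omega_j=ST\tau_j=V\tau_j$ for all $j$. Finally $VU^{-1}=ST(RT)^{-1}=STT^{-1}R^{-1}=SR^{-1}$, so the resolvent of $VU^{-1}$ coincides with that of $SR^{-1}$ and therefore contains $(-\infty,0]$, which completes the representation.

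The only delicate point is the passage between two arbitrary p-orthonormal bases via Theorem \ref{P-ORTHONORMALBASISCHARACTERIZATION}; once the intertwining isometry $T$ is in hand, the remaining verifications are purely algebraic and the spectral condition transfers unchanged precisely because $T$ cancels in $VU^{-1}$. I do not expect any genuine obstacle here, since every ingredient---the definition of relative Riesz p-basis and the characterization of p-orthonormal bases up to invertible isometry---is already established earlier in the excerpt.
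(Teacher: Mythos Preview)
Your proof is correct and follows essentially the same route as the paper: for the nontrivial direction you pass from the given p-orthonormal basis $\{\sigma_j\}$ to the prescribed one $\{\tau_j\}$ via an invertible isometry $T$, set $U=RT$, $V=ST$, and observe that $VU^{-1}=SR^{-1}$ so the spectral condition carries over. The only cosmetic difference is that the paper constructs $T$ explicitly as $\sum a_j\tau_j\mapsto\sum a_j\rho_j$ rather than invoking Theorem~\ref{P-ORTHONORMALBASISCHARACTERIZATION}, but this is the same map.
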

\begin{proof}
 We have to justify only the direct part.  Let $ \{\rho_j\}_{j \in \mathbb{J}}$ be a p-orthonormal basis for $  \mathcal{X}$ and $ R,S: \mathcal{X}\rightarrow \mathcal{X} $  be bounded invertible such that $y_j=R\rho_j, \omega_j=S\rho_j, \forall j \in \mathbb{J}$ and the resolvent of $ SR^{-1}$ contains $(-\infty,0]$.  Define  $T:\mathcal{X}\ni \sum_{j\in \mathbb{J}}a_j\tau_j\mapsto \sum_{j\in \mathbb{J}}a_j\rho_j \in\mathcal{X} $. Clearly $T$ is a bounded invertible operator (also, $ T$ is an isometry).  Define $ U\coloneqq RT, V\coloneqq ST.$ Then $ U,V$ are invertible, $ U\tau_j=RT\tau_j=R\rho_j=y_j, V\tau_j=ST\tau_j=S\rho_j=\omega_j, \forall j \in \mathbb{J}$ and the resolvent of $ VU^{-1}=STT^{-1}R^{-1}=SR^{-1}$ contains $(-\infty,0]$.
\end{proof}
\begin{corollary}
If $ (\{x_j\}_{j \in \mathbb{J}},\{\tau_j\}_{j \in \mathbb{J}})$ is a Riesz p-basis  for  $\mathcal{X}$, then 
$\|U^{-1}\|^{-1} \leq \|y_j\|\leq \|U\|, \forall j \in \mathbb{J},  \|V^{-1}\|^{-1} \leq \|\omega_j\|\leq \|V\| ,\forall j \in \mathbb{J}.$
\end{corollary}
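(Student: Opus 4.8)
The plan is to read the statement through the lens of the immediately preceding characterization, Theorem \ref{RIESZPBASISSEQUENTIALCHARACTERIZATIONAPPENDIX}, so that the structural decomposition of the Riesz p-basis is available for free. Fixing an arbitrary p-orthonormal basis $\{\tau_j\}_{j\in\mathbb{J}}$ for $\mathcal{X}$, that theorem tells us a Riesz p-basis $(\{y_j\}_{j\in\mathbb{J}},\{\omega_j\}_{j\in\mathbb{J}})$ is precisely of the form $(\{U\tau_j\}_{j\in\mathbb{J}},\{V\tau_j\}_{j\in\mathbb{J}})$ for bounded invertible $U,V\in\mathcal{B}(\mathcal{X})$ (with the resolvent of $VU^{-1}$ containing $(-\infty,0]$, though that hypothesis plays no role in the norm estimate). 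So the very first step is to write $y_j=U\tau_j$ and $\omega_j=V\tau_j$ for all $j\in\mathbb{J}$.

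The single quantitative input I would then invoke is the normalization of a p-orthonormal basis: by Proposition \ref{ellp}(i) we have $\|\tau_j\|=1$ for every $j\in\mathbb{J}$, since $\tau_j=1\cdot\tau_j$ forces $\|\tau_j\|^p=|1|^p$. With this in hand the four inequalities split into two symmetric pairs. For the upper bounds I would estimate directly,
$$\|y_j\|=\|U\tau_j\|\le\|U\|\,\|\tau_j\|=\|U\|,\qquad \|\omega_j\|=\|V\tau_j\|\le\|V\|\,\|\tau_j\|=\|V\|,$$
valid for all $j\in\mathbb{J}$. For the lower bounds I would push the unit vector $\tau_j$ back through the inverse: since $U$ is invertible, $\tau_j=U^{-1}y_j$, hence
$$1=\|\tau_j\|=\|U^{-1}y_j\|\le\|U^{-1}\|\,\|y_j\|,$$
which rearranges to $\|y_j\|\ge\|U^{-1}\|^{-1}$; the identical argument with $V^{-1}$ gives $\|\omega_j\|\ge\|V^{-1}\|^{-1}$.

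I do not anticipate a genuine obstacle here, as every ingredient is already established; the only point demanding care is notational bookkeeping. The corollary as stated phrases its hypothesis in terms of $(\{x_j\},\{\tau_j\})$ while its conclusion speaks of $y_j,\omega_j,U,V$, so I would open the proof by explicitly identifying the Riesz p-basis with the normal form $(\{U\tau_j\},\{V\tau_j\})$ supplied by Theorem \ref{RIESZPBASISSEQUENTIALCHARACTERIZATIONAPPENDIX}, thereby fixing the meaning of $U$, $V$, and the generating p-orthonormal basis before any estimate is written. After that, the whole argument is the two displays above together with $\|\tau_j\|=1$, and the proof terminates in a few lines.
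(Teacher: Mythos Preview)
Your proposal is correct and matches the paper's intent: the corollary is stated without proof immediately after Theorem \ref{RIESZPBASISSEQUENTIALCHARACTERIZATIONAPPENDIX}, so the paper treats it as an immediate consequence of that characterization together with $\|\tau_j\|=1$ from Proposition \ref{ellp}(i), exactly as you argue. Your remark about the notational mismatch in the hypothesis (the pair should be $(\{y_j\},\{\omega_j\})$ rather than $(\{x_j\},\{\tau_j\})$) is well taken and is simply a typo in the paper.
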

\begin{proposition}
If $ (\{y_j=U\tau_j\}_{j\in \mathbb{J}}, \{\omega_j=V\tau_j\}_{j\in \mathbb{J}})$ is a Riesz  p-basis for $\mathcal{X}$, then $ (\{g_j\coloneqq f_jU^{-1}\}_{j\in \mathbb{J}}, \{\omega_j\}_{j\in \mathbb{J}})$ is a p-frame for $\mathcal{X}$, where $ f_j: \mathcal{X} \ni \sum_{k\in \mathbb{J}}a_k\tau_k\mapsto a_j\in  \mathbb{K}, \forall j \in \mathbb{J}$.
\end{proposition}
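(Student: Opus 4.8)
The plan is to verify directly that the pair $(\{g_j\}_{j\in\mathbb{J}},\{\omega_j\}_{j\in\mathbb{J}})$ meets the three requirements in the definition of a p-frame: boundedness of the analysis operator $\theta_g$, boundedness of the synthesis operator $\widehat{\theta}_\omega$, and the frame-operator condition that $\widehat{S}_{g,\omega}$ is bounded with resolvent containing $(-\infty,0]$. The whole argument factors each of these objects through the corresponding object for the p-orthonormal basis $\{\tau_j\}_{j\in\mathbb{J}}$, composed with the bounded operators $U^{-1}$ and $V$; so the first thing I would record is the behaviour of $\{\tau_j\}_{j\in\mathbb{J}}$. Since $\{\tau_j\}_{j\in\mathbb{J}}$ is a p-orthonormal basis, the theorem asserting that $(\{f_j\}_{j\in\mathbb{J}},\{\tau_j\}_{j\in\mathbb{J}})$ is a p-frame gives exactly what is needed: the analysis operator $\theta_f: x\mapsto\{f_j(x)\}_{j\in\mathbb{J}}$ and the synthesis operator $\widehat{\theta}_\tau: \{a_j\}_{j\in\mathbb{J}}\mapsto\sum_{j\in\mathbb{J}}a_j\tau_j$ are well-defined bounded (isometric) operators, and the frame operator satisfies $\widehat{S}_{f,\tau}=I_\mathcal{X}$.

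Next I would handle the analysis operator. For $x\in\mathcal{X}$ one has $g_j(x)=f_j(U^{-1}x)$ for every $j$, so $\theta_g(x)=\{f_j(U^{-1}x)\}_{j\in\mathbb{J}}=\theta_f(U^{-1}x)$; that is, $\theta_g=\theta_f U^{-1}$. As the composite of the bounded $\theta_f$ with the bounded $U^{-1}$, the operator $\theta_g$ is well-defined and bounded into $\ell^p(\mathbb{J})$. Dually, for $\{a_j\}_{j\in\mathbb{J}}\in\ell^p(\mathbb{J})$ I would use $\omega_j=V\tau_j$ and continuity of $V$ to write $\widehat{\theta}_\omega(\{a_j\}_{j\in\mathbb{J}})=\sum_{j\in\mathbb{J}}a_jV\tau_j=V\Big(\sum_{j\in\mathbb{J}}a_j\tau_j\Big)=V\widehat{\theta}_\tau(\{a_j\}_{j\in\mathbb{J}})$, i.e. $\widehat{\theta}_\omega=V\widehat{\theta}_\tau$, again a composite of bounded maps, hence well-defined and bounded. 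This settles condition (i).

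Finally I would compute the frame operator. Combining the two factorisations with $\widehat{S}_{f,\tau}=I_\mathcal{X}$, for each $x\in\mathcal{X}$,
\[
\widehat{S}_{g,\omega}(x)=\sum_{j\in\mathbb{J}}g_j(x)\omega_j=V\sum_{j\in\mathbb{J}}f_j(U^{-1}x)\tau_j=V\widehat{S}_{f,\tau}(U^{-1}x)=VU^{-1}x,
\]
the series converging in $\mathcal{X}$ because $\sum_{j\in\mathbb{J}}f_j(U^{-1}x)\tau_j=U^{-1}x$ converges and $V$ is continuous. Thus $\widehat{S}_{g,\omega}=VU^{-1}$, which is bounded, and by the defining hypothesis of the (relative) Riesz p-basis the resolvent of $VU^{-1}$ contains $(-\infty,0]$; this is precisely condition (ii). Hence $(\{g_j\}_{j\in\mathbb{J}},\{\omega_j\}_{j\in\mathbb{J}})$ is a p-frame for $\mathcal{X}$.

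There is no serious obstacle here; the proposition is essentially a bookkeeping consequence of the identity $\widehat{S}_{g,\omega}=VU^{-1}$ together with the resolvent hypothesis that was built into the relative Riesz p-basis definition. The only point requiring mild care is the convergence of the defining series for $\theta_g$, $\widehat{\theta}_\omega$, and $\widehat{S}_{g,\omega}$; I would dispatch each by factoring through the already-established convergence for $\{\tau_j\}_{j\in\mathbb{J}}$ and invoking continuity of $U^{-1}$ and $V$, so that well-definedness follows simultaneously with the boundedness estimates.
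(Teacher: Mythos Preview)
Your proof is correct and follows the same route as the paper: compute $\widehat{S}_{g,\omega}=VU^{-1}$ by factoring through the p-orthonormal basis, then invoke the resolvent hypothesis from the Riesz p-basis definition. In fact you are more careful than the paper's own proof, which records only the frame-operator computation and leaves the boundedness of $\theta_g=\theta_f U^{-1}$ and $\widehat{\theta}_\omega=V\widehat{\theta}_\tau$ implicit; your explicit verification of condition~(i) via these factorizations is entirely appropriate.
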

\begin{proof}
$\widehat{S}_{g,\omega}x=\sum_{j\in \mathbb{J}}g_j(x)V\omega_j =\sum_{j\in \mathbb{J}}f_j(U^{-1}x)V\tau_j=V(\sum_{j\in \mathbb{J}}f_j(U^{-1}x)\tau_j)=VU^{-1}x, \forall x \in \mathcal{X}$. Therefore $\widehat{S}_{g,\omega}=VU^{-1} $  whose resolvent contains $(-\infty,0]$.
\end{proof}

\section{Conjectures}\label{CONJECTURE}
We end this paper, the Part I, with the following conjectures.
\begin{statement}
Let $\pi$  be a unitary representation of a discrete group $G$
 on $ \mathcal{H}$, and let $\Psi \in  \mathcal{B}(\mathcal{H},\mathcal{H}_0)$. Suppose 
\begin{align*}
\emptyset \neq \mathscr{F}_{G,\Psi}\coloneqq\{A \in \mathcal{B}(\mathcal{H},\mathcal{H}_0): (\{A\pi_{g^{-1}}\}_{g \in G},\{\Psi\pi_{g^{-1}}\}_{g \in G}) ~\text{is an  operator-valued  frame  in}~ \mathcal{B}(\mathcal{H},\mathcal{H}_0)\}.
\end{align*}
\begin{enumerate}[\upshape{CONJECTURE} (i)]
\item  If $ \dim \mathcal{H}_0<\infty$, then $ \mathscr{F}_{G,\Psi}$  need not be path-connected  in the operator-norm topology on $\mathcal{B}(\mathcal{H},\mathcal{H}_0)$.
\item If $ \dim \mathcal{H}_0=\infty$, then the following statement is not true:

$ \mathscr{F}_{G,\Psi}$ is path-connected  in the operator-norm topology on $\mathcal{B}(\mathcal{H},\mathcal{H}_0)$ if and only if the von Neumann algebra $\mathscr{R}(G)$ generated by the right regular representations of $G$ is diffuse (i.e., $\mathscr{R}(G)$ has no nonzero minimal projections).
\end{enumerate}
\end{statement}
\begin{statement}
Let $\pi$  be a unitary representation of a group-like unitary  system  $\mathcal{U}$  on $ \mathcal{H}$, and let $\Psi \in  \mathcal{B}(\mathcal{H},\mathcal{H}_0)$. Suppose
\begin{align*}
\emptyset \neq \mathscr{F}_{\mathcal{U},\Psi}\coloneqq\{A \in \mathcal{B}(\mathcal{H},\mathcal{H}_0): (\{A\pi(U)^{-1}\}_{U \in \mathcal{U}},\{\Psi\pi(U)^{-1}\}_{U \in \mathcal{U}}) ~\text{is an operator-valued  frame  in}~ \mathcal{B}(\mathcal{H},\mathcal{H}_0)\}.
\end{align*}
\begin{enumerate}[\upshape{CONJECTURE} (i)]
\item  If $ \dim \mathcal{H}_0<\infty$, then $ \mathscr{F}_{\mathcal{U},\Psi}$  need not be path-connected  in the operator-norm topology on $\mathcal{B}(\mathcal{H},\mathcal{H}_0)$.
\item If $ \dim \mathcal{H}_0=\infty$, then the following statement is not true:
		
$ \mathscr{F}_{\mathcal{U},\Psi}$ is path-connected  in the operator-norm topology on $\mathcal{B}(\mathcal{H},\mathcal{H}_0)$ if and only if the von Neumann algebra $\mathscr{R}(\mathcal{U})$ generated by the right regular representations of $\mathcal{U}$ is diffuse. 
\end{enumerate}
\end{statement}
\begin{statement}
Let $\pi$  be a unitary representation of a discrete group $G$
on $ \mathcal{H}$, and let $\tau \in  \mathcal{H}$. Suppose 
\begin{align*}
\emptyset \neq \mathscr{F}_{G,\tau}\coloneqq\{x \in \mathcal{H}: (\{\pi_{g}x\}_{g \in G},\{\pi_{g}\tau\}_{g \in G}) ~\text{is   a frame  for}~ \mathcal{H}\}.
\end{align*}
CONJECTURE:  $ \mathscr{F}_{G,\tau}$  need not be path-connected  in the norm topology on $\mathcal{H}$.
\end{statement}
\begin{statement}
Let $\pi$  be a unitary representation of a group-like unitary  system  $\mathcal{U}$  on $ \mathcal{H}$, and let $\tau \in  \mathcal{H}$. Suppose
\begin{align*}
\emptyset \neq \mathscr{F}_{\mathcal{U},\tau}\coloneqq\{x \in \mathcal{H}: (\{\pi(U)x\}_{U \in \mathcal{U}},\{\pi(U)\tau\}_{U \in \mathcal{U}}) ~\text{is a  frame  for}~ \mathcal{H}\}.
\end{align*}
CONJECTURE:  $ \mathscr{F}_{\mathcal{U},\tau}$  need not be path-connected  in  the norm topology on $\mathcal{H}$.
\end{statement}

\section{Acknowledgments}
We thank  Prof. Victor Kaftal, University of Cincinnati, Ohio for   giving reasons of some of the arguments in the  paper ``Operator-valued frames"  \cite{KAFTALLARSONZHANG1} coauthored by him. The first author thanks the National Institute of Technology Karnataka (NITK), Surathkal for giving financial support and the present work of the second author was partially supported by Science and Engineering Research Council (SERC), DST, Government of India, through the Fast Track  Scheme for Young Scientists (D.O. No. SR/FTP/MS-050/2011).

 \bibliographystyle{plain}
 \bibliography{reference.bib}
 
 \end{document}